\newtheorem{theorem}{Theorem}[section]
\newtheorem{observation}[theorem]{Observation}
\newtheorem{lemma}[theorem]{Lemma}
\newtheorem{proposition}[theorem]{Proposition}
\newtheorem{corollary}[theorem]{Corollary}
\newtheorem*{theorem*}{Theorem}
\numberwithin{equation}{section}
\theoremstyle{definition}
\newtheorem{definition}[theorem]{Definition}
\newtheorem{assumption}[theorem]{Assumption}
\theoremstyle{remark}
\newtheorem{remark}[theorem]{Remark}
\newtheorem{example}[theorem]{Example}
\newcommand\R{\mathbb{R}}
\newcommand\C{\mathbb{C}}
\newcommand\Z{\mathbb{Z}}
\newcommand\N{\mathbb{N}}
\newcommand\E{\mathbb{E}}
\newcommand{\cA}{\mathcal{A}}
\newcommand{\cB}{\mathcal{B}}
\newcommand\varstar{\text{\ding{86}}}
\DeclareMathOperator{\id}{id}
\DeclareMathOperator{\Id}{Id}
\DeclareMathOperator{\Span}{Span}
\DeclareMathOperator{\re}{Re}
\DeclareMathOperator{\im}{Im}
\DeclareMathOperator{\Tr}{Tr}
\DeclareMathOperator{\tr}{tr}
\DeclareMathOperator{\TrP}{TrP}
\DeclareMathOperator{\sa}{sa}
\DeclareMathOperator{\ev}{ev}
\DeclareMathOperator{\Perm}{Perm}
\DeclareMathOperator{\supp}{supp}
\DeclareMathOperator{\Diff}{Diff}
\DeclareMathOperator{\BDiff}{BDiff}
\DeclareMathOperator{\app}{app}
\DeclareMathOperator{\Div}{div}
\DeclareMathOperator{\Vect}{Vect}
\DeclareMathOperator{\grad}{grad}
\DeclarePairedDelimiter{\norm}{\lVert}{\rVert}
\DeclarePairedDelimiter{\ip}{\langle}{\rangle}
\begin{document}
	
	

	\title{Tracial smooth functions of non-commuting variables and the free Wasserstein manifold}
	
	\author{David Jekel, Wuchen Li, Dimitri Shlyakhtenko}
	
	\maketitle

	
	

	\begin{abstract}
		Using new spaces of tracial non-commutative smooth functions, we formulate a free probabilistic analog of the Wasserstein manifold on $\R^d$ (the formal Riemannian manifold of smooth probability densities on $\R^d$), and we use it to study smooth non-commutative transport of measure.  The points of the free Wasserstein manifold $\mathscr{W}(\R^{*d})$ are smooth tracial non-commutative functions $V$ with quadratic growth at $\infty$, which correspond to minus the log-density in the classical setting.  The space of non-commutative diffeomorphisms $\mathscr{D}(\R^{*d})$ acts on $\mathscr{W}(\R^{*d})$ by transport, and the basic relationship between tangent vectors for $\mathscr{D}(\R^{*d})$ and tangent vectors for $\mathscr{W}(\R^{*d})$ is described using the Laplacian $L_V$ associated to $V$ and its pseudo-inverse $\Psi_V$ (when defined).
		
		Following similar arguments to \cite{GS2014,DGS2016,JekelExpectation}, we prove the existence of smooth transport along any path $t \mapsto V_t$ when $V_t$ is sufficiently close $(1/2) \sum_j \tr(x_j^2)$, as well as smooth triangular transport.  The two main ingredients are (1) the construction of $\Psi_V$ through the heat semigroup and (2) the theory of free Gibbs laws, that is, non-commutative laws maximizing the free entropy minus the expectation with respect to $V$.  We conclude with a mostly heuristic discussion of the smooth structure on $\mathscr{W}(\R^{*d})$ and hence of the free heat equation, optimal transport equations, incompressible Euler equation, and inviscid Burgers' equation.
	\end{abstract}
	
	\subsection*{Acknowledgements}
	
	We thank Alice Guionnet, Yoann Dabrowski, and Wilfrid Gangbo for various useful discussions.  In particular, we have used many ideas of the joint work of Dabrowski, Guionnet, and Shlyakhtenko \cite{DGS2016}.  Moreover, Jekel would like to thank Guionnet and Dabrowski for enlightening discussions about free Gibbs laws and non-commutative smooth functions at the \'{E}cole Normale Superieure Lyon in March 2020, as well as the Mathematische Forschungsinstitut Oberwolfach for travel support for that visit.  Jekel was supported by a Dissertation Year Fellowship from the UCLA Graduate Division and by the NSF postdoctoral grant DMS-2002826. Li was supported by start-up funding from the University of South Carolina.  Shlyakhenko was partially supported by NSF grant DMS-1762360.
	
	\newpage
	
	\tableofcontents
	
	\newpage
	
	\section{Introduction}
	
	\subsection{Motivation}
	
	Voiculescu's free probability theory treats tracial von Neumann algebras as a non-commutative analog of probability spaces, and studies an analog of probabilistic independence, called free independence, which relates to free products of these von Neumann algebras.  Free probability also describes the large $N$ behavior of certain probability distributions on $N \times N$ matrices, and more generally $d$-tuples of $N \times N$ matrices.  Free probability uses both complex-analytic and combinatorial tools, and relates to the large $N$ representation theory of unitary, orthogonal, and symmetric groups.  For background, see e.g.\ \cite{Voiculescu1991,VDN1992,AGZ2009}.
	
		Voiculescu's theory of free entropy \cite{VoiculescuFE1,VoiculescuFE2,VoiculescuFE5,VoiculescuFE6} is the beginning of free information theory.  As in classical information theory, there are versions of entropy and Fisher's information, which satisfy inequalities similar to the classical entropy and Fisher information.  Voiculescu actually initiated two approaches to free entropy theory.  The first approach uses matricial microstates, or $d$-tuples of matrices that approximate the behavior of the $d$-tuple of operators we want to study; the microstates free entropy describes the $\limsup$ exponential growth rate of the volume of the microstate spaces \cite{VoiculescuFE2}.  Thus, free entropy is the rate function for a (still partially conjectural) large deviation principle in random matrix theory; see \cite{BCG2003}.  The second ``infinitesimal approach'' defines free entropy via the free Fisher information and perturbation by freely independent semicircular families (the free version of Gaussian random variables) \cite{VoiculescuFE5}.
	
	Our main motivation is to find a free version of the Wasserstein manifold.  The classical Wasserstein manifold $\mathscr{P}(\R^d)$ is a formal infinite-dimensional Riemannian manifold whose points are smooth probability densities $\rho$, which has many natural properties \cite{Lafferty1988, LiGeometry, Villani2008}.  By taking the infimum of the lengths of smooth curves in the manifold, the Riemannian metric gives rise to the ($L^2$) Wasserstein distance of two probability measures $\mu$ and $\nu$, which describes the $L^2(\mu)$ distance between an optimal transport map $f$ from $\mu$ to $\nu$ and the identity function \cite{Villani2008}.  The gradient structure of $\mathscr{P}(\R^d)$ describes the differentiation with respect to $\rho$ of certain functionals on the space of probability measures \cite{Otto2001}, and the evolution of a measure under Brownian diffusion turns out to be the gradient flow of the entropy functional \cite{JKO1998} \cite{OV2000}.  Furthermore, the tangent manifold of $\mathscr{P}(\R^d)$ has a symplectic structure \cite{Lafferty1988}, which relates to the geodesic equations on this space.  With suitable modifications, one can connect these results to hydrodynamic equations, including the compressible Euler equation, Schr{\"o}dinger equation, Schr{\"o}dinger bridge problem, and mean field games \cite{CLZ2019,LiGeometry}. The field of transport information geometry is active, and the Hessian operators on the Wasserstein manifold are useful in studying fluid dynamics and formulating functional inequalities \cite{Li2019_diffusionb, Li2020_hessian, Villani2008}.
	
	Although a Wasserstein manifold has never been systematically described for multivariable free probability, some of the key ideas of information geometry have been present as motivation throughout the development of free information theory.  This includes the relationship between entropy and Fisher information \cite{VoiculescuFE1,VoiculescuFE5}, Talagrand inequalities \cite{BV2001,HPU2004,HU2006}, and the relationship between entropy and transport of measure \cite[\S 3]{VoiculescuFE2}.  Seeking a free analog of optimal transport, the third author and Alice Guionnet solved a free Monge-Amp\`ere equation to obtain free monotone transport \cite{GS2014}.  The third author and Yoann Dabrowski and Alice Guionnet used constructed transport along a path of potentials using the relationship between infinitesimal transport and perturbations of the potential, which is the approach we will follow here in \S \ref{sec:pseudoinverse} and \ref{subsec:constructtransport}.  Moreover, the first author used ideas from transport theory (as in \cite{Lafferty1988,OV2000,Otto2001}) to construct free (non-optimal) transport as a large $N$ limit of transport of measure on the space of $N \times N$ matrices \cite{JekelExpectation,JekelThesis}.  Non-commutative transport ideas have been generalized beyond the setting of tracial von Neumann algebras \cite{Shlyakhtenko2003,Nelson2015a,Nelson2015b}.
	
	For a single variable, free entropy has been studied as a functional on the Wasserstein manifold of $\R$, and the relationship between optimal transport for probability measures on $\R$ and optimal transport for random matrix models is better understood \cite{BS2001,HPU2004,MMS2014,LLX2020}.  The setting of several non-commuting variables is significantly more challenging, as is apparent for instance from the open problems about free entropy (see \cite{Voiculescu2002}).  We also point out that several other non-commutative variants of the Wasserstein manifold in quantum information theory.  Carlen and Mass \cite{CM2014} studied the Wasserstein distance related to Gross's Fermionic Fokker Planck equation, which pertains to states on the (finite-dimensional) Clifford algebra.  These states are represented by positive operators of trace $1$, which are a substitute for densities in quantum information theory. Several recent papers have also described Wasserstein manifolds whose points are matrix-valued densities on $\R^d$ or another classical manifold $M$ \cite{NGT2015,CGT2018,BV2020}, positive elements of $L^\infty(M;M_n(\C))$ that integrate to $1$.  But rather than studying matrix-valued densities on $\R^d$, this paper concerns (scalar-valued) densities on the space of $d$-tuples of self-adjoint $N \times N$ matrices and their free probabilistic large-$N$ limit.  As we will see, there is not a direct analog of density in our setting, only of log-density.
	
	We define the free Wasserstein manifold as a space of certain ``smooth (minus) log-densities,'' which are smooth scalar-valued functions of several non-commuting self-adjoint operators (see \S \ref{sec:NCfunc1}).  We define the tangent space at a log-density $V$ in terms of perturbations of $V$, and we describe the relationship between tangent vectors and infinitesimal transport maps through a Laplacian operator $L_V$ associated to $V$ and its pseudo-inverse.  Following the same strategy as \cite{DGS2016} (but in a different technical framework), we give a rigorous treatment in the case of log-densities $V$ that are sufficiently close to the quadratic $V(x_1,\dots,x_d) = (1/2) \sum_{j=1}^d \tr(x_j^2)$, which leads to a free transport result similar to \cite{GS2014,DGS2016} as well as a new $\mathrm{C}^*$ version of the triangular transport results of \cite{JekelExpectation,JekelThesis}.  We conclude by stating versions of the heat equation, Wasserstein geodesic equation, incompressible Euler equation, and inviscid Burgers' equation in our tracial non-commutative framework.
	
	The results in this paper, even though they are technically new, have a large overlap with previous work such as \cite{GS2014,DGS2016,JekelExpectation}, and this is because our goals are largely expository.  The free Wasserstein manifold has been treated in prior work only as motivation or as interpretation a posteriori of analytically rigorous results.  We want to bring it to center stage as a unifying framework that simultaneously provides a heuristic and a proof strategy for rigorous results, playing a similar role to that of the classical Wasserstein manifold in \cite{OV2000}.  With the benefit of hindsight, we strive to organize and present the proofs in the most natural way possible.
	
	The end goals of defining the Wasserstein manifold and constructing transport for potentials close to $(1/2) \sum_j \tr(x_j^2)$ seem modest compared to wealth of knowledge that exists about the classical Wasserstein manifold.  However, as in \cite{GS2014,DGS2016,JekelExpectation,JekelThesis}, even results that are basic in the classical setting require a lot of technical preparation in the free setting.  When developing the classical Wasserstein manifold, people already had a clear understanding of smooth functions, measure and probability theory, and partial differential equations.  By contrast, there is not even a well-established definition of smooth functions for several non-commuting real variables.  Thus, in \S \ref{sec:NCfunc1} and \S \ref{sec:NCfunc2}, we define new spaces of tracial non-commutative smooth functions of several self-adjoint operators in a tracial von Neumann algebra.  Like \cite{DGS2016}, the functions are based on trace polynomials, but the approach to defining the norms is completely different.
	
	Another technical difficulty that arises in the free setting is that there is no direct analog of density in the free setting.  We only know how to pass from a log-density $V$ to a non-commutative law $\mu_V$ through free entropy/random matrix theory or through the heat semigroup associated to $V$ (and the related stochastic differential equations), and in fact we will combine both of these approaches in this paper (see \S \ref{sec:freeGibbslaws} and \S \ref{sec:pseudoinverse} respectively).  In particular, in \S \ref{sec:freeGibbslaws}, we define free Gibbs laws for $V$ as the maximizers of free entropy minus the expectation of $V$, giving for the first time a proof of their existence and properties directly from the definition of free entropy, as motivated by \cite[\S 3.7]{Voiculescu2002} and \cite{Hiai2005}.
	
	We hope that the framework of tracial non-commutative functions in the first part of this paper will be a starting point for future work on the free Wasserstein manifold, non-commutative SDE and PDE theory, and non-commutative optimal transport, and thus that the detailed discussion of the properties of these smooth functions will save time for later work.  In particular, in \S \ref{sec:applications}, we formulate several differential equations of interest for free transport information geometry and operator algebras, including the geodesic equation and gradient flow on the Wasserstein manifold and the compressible Euler equation. Our framework allows for a closer resemblance of these equations with their classical analogs than previously understood, because it includes a natural description of scalar-valued smooth functions of several operators.  Of course, the rigorous study of these equations will be another undertaking, and we do not expect all the results from the classical setting to carry over in the same level of generality.  Nonetheless, it is a crucial first step to clarify the connection between the classical and free versions of an equation and what it would mean for a smooth function to solve the equation.
	
	In the remainder of the introduction, \S \ref{subsec:summary} gives an executive summary of key constructions and results, \S \ref{subsec:heuristics} describes the random matrix heuristics for our technical framework as well as the challenges that arise in the non-commutative setting, and \S \ref{subsec:outline} describes the organization of the paper.  We will give brief explanations of terminology we use in the introduction when possible, but the reader may also refer as needed to \S \ref{sec:preliminaries} for background on operator algebras and on the classical Wasserstein manifold.
	
	\subsection{Summary of constructions and results} \label{subsec:summary}
	
	We will set up the free Wasserstein manifold as follows:
	\begin{itemize}
		\item We define a space $\tr(C_{\tr}^\infty(\R^{*d}))$ of scalar-valued smooth functions of several self-adjoint operators in a tracial von Neumann algebra.  Another space $C_{\tr}^\infty(\R^{*d})_{\sa}^d$ provides the analog of smooth functions $\R^d \to \R^d$ (a.k.a.\ vector fields on $\R^d$).
		\item The free Wasserstein manifold $\mathscr{W}(\R^{*d})$ is defined as the space of $V \in \tr(C_{\tr}^\infty(\R^{*d}))$ such that $V$ is bounded above and below by a quadratic function, that is, $a + b V_0 \leq V \leq a' + b' V_0$ for some constants with $b, b' > 0$, where $V_0(\mathbf{x}) = (1/2) \sum_{j=1}^d \tr(x_j^2)$.
		\item The tangent space to $\mathscr{W}(\R^{*d})$ consists of $\tr(C_{\tr}^\infty(\R^{*d}))$ functions with some bounds on the first and second derivatives.
		\item For $V \in \tr(C_{\tr}^\infty(\R^{*d}))$, we define the associated free Gibbs laws as non-commutative laws that maximize a certain entropy functional.  A free Gibbs law $\nu$ must satisfy the integration-by-parts relation $\nu(\nabla_V^* \mathbf{h}) = 0$ for any vector field $\mathbf{h}$, where $\nabla_V^*$ is the free analog of the divergence operator associated to $V$.  If there is a unique law satisfying this equation, we denote it by $\mu_V$.
		\item The Riemannian metric at $V$ for two tangent vectors $W_1$ and $W_2$ is given by $\nu_V(\ip{\nabla L_V^{-1}W_1, \nabla L_V^{-1}W_2})$, where $L_V = -\nabla_V^* \nabla$ is a Laplacian operator associated to $V$, whenever the above expression makes sense.
		\item We show rigorously that the definition makes sense for $V$ sufficiently close to the quadratic $V_0$.
	\end{itemize}
	
	We have the following definitions and results relating to non-commutative transport of measure:
	\begin{itemize}
		\item We define an analog of diffeomorphisms of $\R^d$, as well as a construction of certain diffeomorphisms as flows along vector fields.  A Lie bracket on vector fields is defined analogous to the classical case.
		\item For a diffeomorphism $\mathbf{f}$ and a potential $V$, there is a push-forward defined by $\mathbf{f}_* V = V \circ \mathbf{f}^{-1} - \log \Delta_\# (\partial \mathbf{f}^{-1})$, where $\log \Delta_\#$ is an analog of the log-determinant.  The push-forward defines an action of the diffeomorphism group on the Wasserstein manifold.
		\item With certain assumptions on $V$, if there is a unique free Gibbs law $\mu_V$, then $\mathbf{f}_* \mu_V$ is the unique free Gibbs law for $\mathbf{f}_*V$ (see Proposition \ref{prop:changeofvariables}).
		\item Given a one-parameter group of diffeomorphisms $\mathbf{f}_t$ generated by a vector field $\mathbf{h}$, the tangent vector $(d/dt)|_{t=0} (\mathbf{f}_t)_* V$ is given by $\nabla_V^* \mathbf{h}$.
		\item Conversely, for a tangent vector $W$, a possible vector field $\mathbf{h}$ for producing transport is given by $\nabla (-L_V)^{-1} W$, provided that the latter makes sense.
		\item When $V$ is sufficiently close to the quadratic, we can make this relationship between tangent vectors and infinitesimal tranport rigorous.  Thus, for any continuously differentiable path $t \mapsto V_t$ of potentials close to the quadratic, we can naturally produce a family of transport maps $\mathbf{f}_t$ with $(\mathbf{f}_t)_* V_0 = V_t$ (see Theorem \ref{thm:transport}).
		\item We can also arrange that the transport maps $\mathbf{f}_t$ are lower-triangular functions in the sense that for $j = 1$, \dots, $d$, the $j$th coordinate of $\mathbf{f}_t(\mathbf{x}_1,\dots,\mathbf{x}_d)$ depends only on $\mathbf{x}_1$, \dots, $\mathbf{x}_j$ (see Theorem \ref{thm:triangulartransport}).
	\end{itemize}
	
	The last result on triangular transport is a partial analog of classical triangular transport of measure studied in \cite{BKM2005}.  It has the following consequence for operator algebras, which is given in further detail in Corollary \ref{cor:triangulartransport}.
	
	\begin{theorem*}
		Let $V \in \tr(C_{\tr}^\infty(\R^{*d}))_{\sa}$ be sufficiently close to $V_0(\mathbf{x}) = (1/2) \sum_j \tr(x_j^2)$ (more precisely, assume that the first and second derivatives are sufficiently close and third derivative is uniformly bounded).  Let $\mu_V$ be the associated free Gibbs law, and let $(\cA,\tau)$ be the tracial $\mathrm{W}^*$-algebra associated to $\mu_V$, with the canonical generators $\mathbf{X} = (X_1,\dots,X_d)$. Let $(\cB,\sigma)$ be the tracial $\mathrm{W}^*$-algebra generated by a standard free semicircular family $\mathbf{S} = (S_1,\dots,S_d)$.  Then there exists an isomorphism of tracial von Neumann algebras $\phi: (\cA,\tau) \to (\cB,\sigma)$ such that for each $j = 1, \dots, d$, we have
		\[
		\phi(\mathrm{C}^*(X_1,\dots,X_j)) = \mathrm{C}^*(S_1,\dots,S_j).
		\]
	\end{theorem*}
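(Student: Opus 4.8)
The plan is to join $V$ to the quadratic $V_0$ by a $C^1$ path of admissible potentials, invoke the triangular transport theorem to produce a lower-triangular non-commutative diffeomorphism $\mathbf{f}$ pushing $V_0$ to $V$, and then extract the filtered isomorphism from $\mathbf{f}$ and its inverse. For the path, take $V_t = (1-t)V_0 + tV$ for $t \in [0,1]$; this is smooth in $t$ with constant derivative $V - V_0 \in \tr(C_{\tr}^\infty(\R^{*d}))_{\sa}$. Since the hypotheses of Theorem \ref{thm:triangulartransport} --- first and second derivatives close to those of $V_0$, third derivative uniformly bounded --- form a convex condition on the potential that $V_0$ satisfies trivially (its third derivative vanishing), every $V_t$ is admissible, so the whole path lies in the region where that theorem applies.

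Applying Theorem \ref{thm:triangulartransport} gives a family of lower-triangular diffeomorphisms $\mathbf{f}_t \in \mathscr{D}(\R^{*d})$ with $(\mathbf{f}_t)_* V_0 = V_t$; set $\mathbf{f} = \mathbf{f}_1$, so $\mathbf{f}_* V_0 = V$ and $f_j$ depends only on $\mathbf{x}_1, \dots, \mathbf{x}_j$. Let $\mathbf{g} = \mathbf{f}^{-1}$; because $\mathbf{f}$ acts triangularly and invertibly, $\mathbf{g}$ is again lower-triangular with $g_j(\mathbf{f}(\mathbf{x})) = x_j$. The free Gibbs law $\mu_{V_0}$ is the unique law $\sigma$ of a standard free semicircular family $\mathbf{S}$, so Proposition \ref{prop:changeofvariables} (applied with potential $V_0$) gives that $\mathbf{f}_* \mu_{V_0}$ is the unique free Gibbs law for $\mathbf{f}_* V_0 = V$, i.e.\ $\mathbf{f}_* \mu_{V_0} = \mu_V$. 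Concretely, $\mathbf{f}(\mathbf{S}) = (f_1(\mathbf{S}), \dots, f_d(\mathbf{S}))$ has the same non-commutative law as the canonical generators $\mathbf{X}$ of $(\cA, \tau)$, so there is a trace-preserving $*$-isomorphism $\phi$ from $(\cA, \tau)$ onto $\mathrm{W}^*(\mathbf{f}(\mathbf{S}))$ with $\phi(X_j) = f_j(\mathbf{S})$.

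It remains to identify $\mathrm{W}^*(\mathbf{f}(\mathbf{S}))$ with $\cB$ and to verify the filtration. Since the diffeomorphisms in $\mathscr{D}(\R^{*d})$ are in particular $\mathrm{C}^*$-functions, triangularity gives $f_j(\mathbf{S}) \in \mathrm{C}^*(S_1, \dots, S_j)$, and dually $S_j = g_j(\mathbf{f}(\mathbf{S})) \in \mathrm{C}^*(f_1(\mathbf{S}), \dots, f_j(\mathbf{S}))$. With $j = d$ this yields $\mathrm{W}^*(\mathbf{f}(\mathbf{S})) = \mathrm{W}^*(\mathbf{S}) = \cB$, so $\phi \colon (\cA,\tau) \to (\cB,\sigma)$ is an isomorphism of tracial von Neumann algebras; with general $j$ the two inclusions give $\phi(\mathrm{C}^*(X_1, \dots, X_j)) = \mathrm{C}^*(f_1(\mathbf{S}), \dots, f_j(\mathbf{S})) = \mathrm{C}^*(S_1, \dots, S_j)$, which is the assertion.

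The substantive content is packaged in Theorem \ref{thm:triangulartransport} and Proposition \ref{prop:changeofvariables}; what remains is mostly bookkeeping. The step requiring the most care is the handling of $\mathbf{g} = \mathbf{f}^{-1}$: one must know that the inverse of a triangular non-commutative diffeomorphism is again triangular, and that composition and evaluation of tracial non-commutative functions on a concrete tuple such as $\mathbf{S}$ behave as expected, so that $g_j(\mathbf{f}(\mathbf{S})) = S_j$ holds on the nose and $f_j(\mathbf{S})$ genuinely lies in $\mathrm{C}^*(S_1, \dots, S_j)$ rather than merely in $\mathrm{W}^*(S_1,\dots,S_j)$. These should follow from the structure theory of $\mathscr{D}(\R^{*d})$ and the composition rules developed earlier in the paper; one should also confirm that the linear interpolation in the first step meets the precise quantitative smallness demanded by Theorem \ref{thm:triangulartransport}, shrinking the allowed neighborhood of $V_0$ if necessary.
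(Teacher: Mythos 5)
Your proposal is correct and follows essentially the same route as the paper: the paper's Corollary \ref{cor:triangulartransport} is obtained exactly by applying Theorem \ref{thm:triangulartransport} along the linear path from $V_0$ to $V$, identifying $\mathbf{f}_*\mu_{V_0}=\mu_V$ via Proposition \ref{prop:convexDSE} and Proposition \ref{prop:changeofvariables}, and extracting the filtered $\mathrm{C}^*$/$\mathrm{W}^*$ isomorphism from the triangularity of $\mathbf{f}_{t,s}$ and its inverse (cf.\ Observation \ref{obs:pushforwardisomorphism2} and Lemma \ref{lem:lawisomorphism}). The point you flag about the inverse being triangular is handled in the paper not by an abstract argument but because $\mathbf{f}_{1,0}^{-1}=\mathbf{f}_{0,1}$ is itself a flow of the triangular vector fields, so it is triangular by construction.
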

	
	This is in some sense an improvement of the triangular transport results from \cite{JekelExpectation,JekelThesis}; it asserts an isomorphism of $\mathrm{C}^*$-algebras not only of $\mathrm{W}^*$-algebras, but it also has stronger smoothness hypotheses on $V$.  Of course, the existence of transport that was not necessarily triangular was already known from \cite{GS2014,DGS2016}.
	
	In the final section, we present several differential equations related to the free Wasserstein manifold for future study, including the following:
	\begin{itemize}
		\item We differentiate the functional $V \mapsto \mu_V(f)$ for $V \in \mathscr{W}(\R^{*d})$.
		\item We explain how the non-commutative heat equation $\dot{V}_t = L_{V_t} V_t$ represents the gradient flow of free entropy, similar to the classical case \cite{Otto2001}.
		\item We state the free version of the geodesic equations on $\mathscr{W}(\R^{*d})$, which are $\dot{V}_t = L_{V_t} \phi_t$ and $\dot{\phi}_t = -(1/2) \ip{\nabla \phi_t, \nabla \phi_t}_{\tr}$.  We show that smooth solutions satisfy $V_t = (\id + t \nabla \dot{\phi}_0)_* V_0$.  We also show that the path $t \mapsto \mu_{V_t}$ is a minimal curve in the $L^2$-coupling distance on the space of non-commutative laws.
		\item We state a non-commutative incompressible Euler equation with respect to a potential $V$ in a similar spirit to \cite{VoiculescuEuler}.  Similar to the classical case \cite{Arnold1966}, this represents the geodesic equation on the group of non-commutative diffeomorphisms that preserve $V$.  Similarly, the geodesic equation on the entire non-commutative diffeomorphism group is the non-commutative inviscid Burgers' equation.
	\end{itemize}
	
	\subsection{Random matrix heuristics} \label{subsec:heuristics}
	
	Our formulation of the free Wasserstein manifold is closely linked with random matrix theory and free Gibbs laws.  One branch of random matrix theory studies probability measures $\mu^{(N)}$ on $M_N(\C)_{\sa}^d$ (the space of $d$-tuples of self-adjoint $N \times N$ matrices) of the form
	\[
	d\mu_f^{(N)}(\mathbf{X}) = \text{constant} \, e^{-N^2 \tr_N(f(\mathbf{X}))}\,d\mathbf{X}.
	\]
	Here $\mathbf{X} = (X_1,\dots,X_d) \in M_N(\C)_{\sa}^d$; $\tr_N$ denotes the normalized trace $(1/N) \Tr$ on $M_N(\C)$, and $d\mathbf{X}$ is Lebesgue measure on $M_N(\C)_{\sa}^d$, which we view as a real inner product space of dimension $dN^2$ with the inner product $\ip{\mathbf{X},\mathbf{Y}} = \sum_{j=1}^d \tr_N(X_j Y_j)$; and $f$ is a non-commutative polynomial in $d$-variables such that $\tr_N(f(\mathbf{X}))$ is real for $\mathbf{X} \in M_N(\C)_{\sa}^d$.  More generally, we can consider
	\[
	d\mu_V^{(N)}(\mathbf{X}) = \text{constant} \, e^{-N^2 V(\mathbf{X})}\,d\mathbf{X},
	\]
	where $V$ is a \emph{trace polynomial}, that is, a formal linear combination of terms of the form $\tr(f_1) \dots \tr(f_k)$ for some $k \in \N$ and non-commutative polynomials $f_1$, \dots, $f_k$.  Such models were first studied for a single matrix in \cite{BGK2015} and then for multiple matrices in \cite{DGS2016}.  Here $V$ is evaluated on some $X \in M_N(\C)_{\sa}$ by replacing each term $\tr(f_j)$ by $\tr_N(f_j(X))$.  This more general class of trace polynomials is quite natural because, every polynomial function $M_N(\C)_{\sa}^d \to \R$ (that is, polynomial with respect to the real and imaginary parts of the matrix entries) that is invariant under conjugation by unitary matrices must be given by a trace polynomial, which follows from the work of Procesi \cite{Procesi1976}.  For prior work relating trace polynomials with random matrix theory, see \cite{Rains1997,Sengupta2008,Cebron2013,DHK2013,Kemp2016,Kemp2017,DGS2016}.
	
	The measure $\mu_V^{(N)}$ is an element of the classical Wasserstein manifold $\mathscr{P}(M_N(\C)_{\sa}^d)$ since it has a smooth density.  However, the density does not have a large $N$ limit since there is an $N^2$ in the exponent.  However, $-1/N^2$ times the log of density is precisely $V$, which is dimension-independent by assumption.  This leads us to the following heuristic for studying the free Wasserstein manifold:  Reparametrize $\mathcal{P}(M_N(\C)_{\sa}^d)$ in terms of $V = -(1/N^2) \log \rho$ instead of in terms of the density $\rho$.  Compute the Riemannian metric (and whatever other objects of differential equations we wish to study) in terms of $V$ rather than $\rho$.  Then study the behavior of this object as $N \to \infty$.  The reparametrization in terms of the log-density for the classical Wasserstein manifold $\mathscr{P}(\R^d)$ is explained in \S \ref{subsec:logdensity}.
	
	Following this recipe, to define the Riemannian metric for the tangent space at $V$, consider two different trace polynomials $W_1$ and $W_2$.  Then the curves $t \mapsto V + t W_j$ represent tangent vectors in $\mathscr{P}(M_N(\C)_{\sa}^d)$.  Since $V + tW_j$ is considered up to an additive constant, assume that $\int W_j\,d\mu = 0$.  It follows from the computations in \S \ref{subsec:logdensity} that the inner product of the two tangent vectors with respect to the Riemannian metric on $\mathscr{P}(M_N(\C)_{\sa}^d)$ is given by
	\begin{equation} \label{eq:metricheuristic}
		\int \ip{\nabla (L_V^{(N)})^{-1} W_1, \nabla (L_V^{(N)})^{-1} W_2 }\,d\mu_V^{(N)},
	\end{equation}
	where
	\[
	L_V^{(N)} f = \frac{1}{N^2} \Delta f - \ip{\nabla V, \nabla f}.
	\]
	If $f$ is a scalar-valued trace polynomial, then $\nabla f$ is dimension-independent and $(1/N^2) \Delta f$ on $M_N(\C)_{\sa}^d$ is given by a trace polynomial which converges coefficient-wise as $N \to \infty$ to some trace polynomial $Lf$; see \cite[\S 2]{Cebron2013}, \cite[\S 3]{DHK2013}, \cite[\S 14.1]{JekelThesis}, or Lemma \ref{lem:asymptoticLaplacian} below.  Hence, the normalization of $L_V^{(N)}$ above is dimension-independent for our random matrix setting.  The Riemannian metric for the free Wasserstein manifold should heuristically be the large $N$ limit of \eqref{eq:metricheuristic}.
	
	Several ingredients are desirable to make this heuristic precise:
	\begin{enumerate}[(1)]
		\item We want to understand the large $N$ behavior of $\mu_V^{(N)}$.
		\item We want a notion of ``trace $C^\infty$ functions'' that generalizes trace polynomials, such that $L_V$ is well-defined on any trace $C^\infty$ function.  Of course, we will replace the trace polynomials in the definition with these smooth functions.
		\item We want to study the pseudo-inverse of $L_V$ on the space of trace smooth functions (and we hope that the kernel and cokernel are $1$-dimensional).
	\end{enumerate}
	Let us discuss each of these questions in more detail.
	
	(1) In the case where $V$ is a perturbation of the quadratic, prior work has shown that $\int f\,d\mu_V^{(N)}$ converges almost surely to some deterministic limit when $f$ is a scalar-valued trace polynomial \cite{GMS2006,GS2009,Jekel2018}.  This limit is described in terms of a tuple $\mathbf{X}$ of self-adjoint operators from a von Neumann algebra $\cA$ equipped with a (faithful, normal) tracial linear functional $\tau: \cA \to \C$.  We have $\int f\,d\mu_V^{(N)} \to f(\mathbf{X})$ for all scalar-valued trace polynomials $f$, where the evaluation of $f$ on $\mathbf{X}$ is given in the same way as the evaluation on a tuple of matrices, with $\tau$ instead of $\tr_N$.  In fact, the evaluation $f(\mathbf{X})$ for a trace polynomial is completely determined by the evaluations $\tau(p(\mathbf{X}))$ for non-commutative polynomials $p$.  Thus, the (bulk) large $N$ behavior of $\mu_V^{(N)}$ is described by the non-commutative law of $\mathbf{X}$, that is, the linear functional $\C\ip{x_1,\dots,x_d} \to \C$ given by $p \mapsto \tau(p(\mathbf{X}))$.
	
	For more general $V$, a sufficient condition for such convergence to happen is if there is a unique non-commutative law $\nu_V$ that maximizes $\chi(\nu) - \nu(V)$, where $\chi$ is Voiculescu's microstates free entropy.  We discuss this approach in \S \ref{sec:freeGibbslaws}.
	
	(2) The second ingredient is to develop a notion of ``trace smooth functions'' which generalizes trace polynomials and which is closed under natural operations such as differentiation and composition.  In fact, to consider the derivatives of trace polynomials, we must consider more general objects than trace polynomials maps $M_N(\C)_{\sa}^d \to \C$.  Indeed, the gradient of such a function will be a map $M_N(\C)_{\sa}^d \to M_N(\C)^d$, which is a $d$-tuple of \emph{operator-valued trace polynomials} $M_N(\C)_{\sa}^d \to M_N(\C)$.  The operator-valued trace polynomials are linear combinations of terms such as $f_0 \tr(f_1) \dots \tr(f_k)$ where $f_0$, \dots, $f_k$ are non-commutative polynomials.  Of course, since $f_0$ can be $1$, any scalar-valued trace polynomial can be viewed as an operator-valued trace polynomial, and thus we can pass to the more general consideration of operator-valued trace polynomials.  If $f$ is an operator-valued trace polynomial, and if $\mathbf{X}$, $\mathbf{Y}_1$, \dots, $\mathbf{Y}_k$ are in $M_N(\C)_{\sa}^d$, then the iterated directional derivative
	\[
	\frac{d}{dt_1}\biggr|_{t_1=0} \dots \frac{d}{dt_k} \biggr|_{t_k=0} f(\mathbf{X} + t_1 \mathbf{Y}_1 + \dots + t_k \mathbf{Y}_k)
	\]
	defines an operator-valued trace polynomial in $\mathbf{X}$, $\mathbf{Y}_1$, \dots, $\mathbf{Y}_d$ that is multilinear in $\mathbf{Y}_1$, \dots, $\mathbf{Y}_d$.
	
	We define $C_{\tr}(\R^{*d},\mathscr{M}^k)$ as the completion of the space of operator-valued trace polynomials in $\mathbf{X}$, $\mathbf{Y}_1$, \dots, $\mathbf{Y}_k$ that are multilinear in $\mathbf{Y}_1$, \dots, $\mathbf{Y}_k$, with respect to a certain family of seminorms $\norm{f}_{C_{\tr}(\R^{*d},\mathscr{M}^k),R}$for $R > 0$.  Here for each radius $R$, the seminorm $\norm{f}_{C_{\tr}(\R^{*d},\mathscr{M}^k),R}$ is defined as follows:  Fix a tracial von Neumann algebra $(\cA,\tau)$ and $\alpha$, $\alpha_1$, \dots, $\alpha_k \in [1,\infty]$ with $1/\alpha = 1/\alpha_1 + \dots + 1/\alpha_k$.  Take the supremum of $\norm{f(\mathbf{X})[\mathbf{Y}_1,\dots,\mathbf{Y}_d]}_{L^\alpha(\cA,\tau)}$ over $\mathbf{X}$ in an operator norm ball of radius $R$ and $\mathbf{Y}_j$ in the unit ball of $L^{\alpha_j}(\cA,\tau)$.  Then take the supremum over $(\cA,\tau)$ and $\alpha$, $\alpha_1$, \dots, $\alpha_k$.
	
	Then $C_{\tr}^k(\R^{*d})$ is defined as the space of functions whose derivatives of order $k' \leq k$ are in $C_{\tr}(\R^{*d},\mathscr{M}^{k'})$.  On $C_{\tr}^\infty(\R^{*d})$, differentiation and composition are well-defined, and there is a Laplacian operator $L_V$ that describes the large $N$ behavior of $L_V^{(N)}$.
	
	\begin{remark}
		Our space $C_{\tr}^k(\R^{*d})$ is closely related to the definition in \cite{DGS2016} of trace $C^k$ functions on the operator norm ball of radius $R$.  However, the definition in \cite{DGS2016} was more complicated because it involved separating out different types of terms in the derivative and using Haagerup tensor norms.  The norms used in this paper have some of the same desirable properties, such as good behavior under conditional expectations and the ability to control the Lipschitz norms of a function with respect to $\norm{\cdot}_2$.  The definition in \cite{DGS2016} also had some unavoidable complexity due to working in setting of operator-valued free probability which replaced the scalars $\C$ with some von Neumann algebra $\mathcal{B}$.
	\end{remark}
	
	(3) We study the pseudo-inverse of $L_V$ rigorously in the case where $V$ is sufficiently close to a quadratic.  The strategy is the same as previous works such as \cite{BS2001,GS2009,GS2014,DGS2016}.  In fact, the results about the expectation with respect to $\nu_V$ discussed above in (1) and the results about the pseudo-inverse $\Psi_V$ both follow from the study of the heat semigroup $e^{tL_V}$.  Indeed, we hope to obtain the expectation map the $\mathbb{E}_V: C_{\tr}(\R^{*d}) \to \C$ associated to $\nu_V$ as
	\[
	\mathbb{E}_V f = \lim_{t \to \infty} e^{tL_V} f
	\]
	and the pseudo-inverse of $L_V$ as
	\[
	\Psi_V f = \int_0^\infty (e^{tL_V} - \mathbb{E}_V)f\,dt.
	\]
	
	The most explicit known method of constructing the heat semigroup in the free setting is using free stochastic differential equations, as in the papers cited above. Let $(\cA,\tau)$ be a tracial $\mathrm{W}^*$-algebra and $\mathbf{X} \in \cA_{\sa}^d$.  Let $\mathcal{X}(\mathbf{X},t)$ be a stochastic process solving the equation
	\[
	d\mathcal{X}(\mathbf{X},t) = d\mathcal{S}(t) - \frac{1}{2} \nabla_x V(\mathcal{X}(\mathbf{X},t))\,dt, \qquad \mathcal{X}(\mathbf{X},0) = \mathbf{X}.
	\]
	where $(\mathcal{S}(t))_{t \in [0,\infty)}$ is a free Brownian motion in $d$ variables, freely independent of $\mathbf{X}$.  Then we define $(e^{tL_V} f)(\mathbf{X}) = E_{\cA} f(\mathcal{X}(\mathbf{X},2t))$ for $\mathbf{X} \in \cA_{\sa}^d$.
	
	We prove in \S \ref{sec:pseudoinverse} that for smooth $V$, the resulting stochastic process and the heat semigroup are smooth functions of $\mathbf{X}$ and depend continuously on $V$.  This argument is closely parallel to \cite[\S 3]{DGS2016}, only with different spaces of functions and with more details given for the inductive arguments.  More importantly, the results are proved more generally in the conditional setting where the functions depend on an auxiliary $d'$-tuple of variables $\mathbf{X}'$.  This is what enables us to prove the triangular transport theorem in \S \ref{subsec:triangulartransport}.
	
	Unfortunately, we do not expect that $L_V$ will be invertible for arbitrary $V \in \mathscr{W}(\R^{*d})$.  As we discuss in \S \ref{subsec:discussion}, the work of \cite{BS2001,BG2013multi,BGK2015} and others on the $d = 1$ case shows that in general the Laplacian might have a kernel of dimension larger than $1$ when acting the $L^2$ space associated to the free Gibbs law.
	
	We conclude the discussion by pointing out an (at first) counterintuitive feature of our definition of $\mathscr{W}(\R^{*d})$:  There could in principle be many different functions $V$ satisfying Assumptions \ref{ass:freeGibbs} and \ref{ass:Laplacian} which produce the same non-commutative law $\mu_V$.  This is unavoidable because if $\mu_V$ is realized by a $d$-tuple of bounded operators with norm $ < R$, then we could perturb $V$ outside the ball of radius $R$ and end up with the same law $\mu_V$.
	
	Besides perturbing $V$ outside the ``support'' of $\mu_V$, there is another way in which such degeneracy can arise, which is easier to describe from the point of view of the tangent space.  The Riemannian metric $\ip{\cdot,\cdot}_V$ could have a very large kernel in $T_V \mathscr{W}(\R^{*d})$.  Indeed, suppose $(\cA,\tau)$ is the tracial von Neumann algebra associated to the GNS representation of $\mu_V$ and $\mathbf{X}$ is the canonical generating tuple (see Proposition \ref{prop:GNS}).  Then for tangent vectors $\dot{V}$ and $\dot{W}$, we have
	\[
	\ip{\dot{V},\dot{W}}_V = \ip{(\nabla \Psi_V \dot{V})^{\cA,\tau}(\mathbf{X}), (\nabla \Psi_V \dot{W})^{\cA,\tau}(\mathbf{X})}_\tau.
	\]
	Thus, $\dot{V}$ will be in the kernel of $\ip{\cdot,\cdot}_V$ if and only if $\nabla \Psi_V \dot{V}$ evaluates to zero on $\mathbf{X}$.  There are many functions in $C_{\tr}(\R^{*d})$ which evaluate to zero on $\mathbf{X}$; for instance, for any trace polynomial $\mathbf{f}$, there will be a non-commutative polynomial $\mathbf{g}$ with $\mathbf{f}^{\cA,\tau}(\mathbf{X}) = \mathbf{g}^{\cA,\tau}(\mathbf{X})$.
	
	The fact that $\mu_V$ does not uniquely determine $V$ might seem like a defect in the definition.  In the classical case, the space of probability measures on $\R^d$ is the completion of smooth positive densities with respect to a certain topology.  But to obtain some space of non-commutative laws from the free Wasserstein manifold defined here, one has to first quotient out by the equivalence relation that $V \sim W$ if $\mu_V = \mu_W$, that is, we must use a separation-completion rather than a completion.
	
	A heuristic explanation for why this degeneration occurs is because the random matrix models often have exponential concentration of measure as $N \to \infty$ (see e.g.\ \cite{GZ2000}).  Although the measures $\mu_V^{(N)}$ are supported on all of $M_N(\C)_{\sa}^d$, their mass concentrates on much smaller sets, namely the matricial microstate spaces of Voiculescu.  Due to the concentration of measure, one must be very careful about the normalization of various quantities associated to $V$ and $\mu_V^{(N)}$.  For instance, we earlier gave the formula $\int \ip{\nabla (L_V^{(N)})^{-1} W_1, \nabla (L_V^{(N)})^{-1} W_2 }\,d\mu_V^{(N)}$ for the Riemannian metric which turns out to be dimension-independent, but the metric could also be written as
	\[
	N^2 \int (-L_V^{(N)})^{-1} W_1 \cdot W_2\,d\mu^{(N)}.
	\]
	Thus, it turns out that $\int (-L_V^{(N)})^{-1} W_1 \cdot W_2\,d\mu^{(N)}$ goes to zero as $N \to \infty$ (we can also see this because both $(-L_V^{(N)})^{-1} W_1$ and $W_2$ are close their mean, which is zero, with high probability).  Thus, the Riemannian metric cannot be defined by this formula in the large-$N$ limit.
	
	The choice to work with globally defined functions in $C_{\tr}(\R^{*d})^d$ rather than only their projections in $L^2(\mu_V)^d$ enables us to more easily apply the ideas of classical analysis.  This is conceptually similar to how might study functions on some small and complicated compact subset $K$ of $\R^d$ by first analyzing those which extend to smooth functions in a neighborhood of $K$.  Prior work on free transport such as \cite{GS2014} and \cite{DGS2016} has also used functions that are globally defined (at least on some operator-norm ball) rather than only on the specific $d$-tuple of operators realizing the law $\mu_V$.  Since degeneration is unavoidable in any case, we might as well frame the Wasserstein manifold in terms of the globally defined functions that are more analytically tractable rather than attempting to sort out the difficult technical question of exactly how much degeneration occurs.
	
	Besides, as seen in \cite{Jekel2018,JekelExpectation,JekelThesis} as well as \S \ref{subsec:matrixapproximation} - \ref{subsec:conditional} of this paper, for $V$ sufficiently close to $\sum_j \tr(x_j^2)$, various functions $\mathbf{f}^{(N)}$ on $M_N(\C)_{\sa}^d$ associated to $\mu_V^{(N)}$ will as $N \to \infty$ be asymptotically close to corresponding non-commutative functions $\mathbf{f}$ in $C_{\tr}(\R^{*d})_{\sa}^d$ \emph{everywhere} (uniformly on each operator-norm ball) rather than only the microstate spaces associated to $\mu_V$.  These results are better than we might expect; due to concentration of measure, there is no way to deduce them simply from studying the ``bulk behavior'', or knowing the $L^2(\mu_V^{(N)})$-norms of non-commutative functions on $M_N(\C)_{\sa}^d$ as $N \to \infty$.  Another way to describe this phenomenon is that the $C_{\tr}(\R^{*d})_{\sa}^d$ functions carry more information about the large $N$ behavior of the random matrix models than could be detected from the non-commutative law $\mu_V$ alone.  However, it is unclear to what extent this generalizes when $V$ is not close to $(1/2) \sum_j \tr(x_j^2)$ or not uniformly convex.
	
	Another difficulty in framing the free Wasserstein manifold is that the non-commutative laws $\mu_V$ associated to our smooth potentials $V \in \mathscr{W}(\R^{*d})$ might not be dense in the space of all non-commutative laws.  Certainly, we can only approximate non-commutative laws that can be approximated by the non-commutative laws of matrix tuples (or laws whose associated von Neumann algebras are Connes-embeddable); and we now know that not all tracial $\mathrm{W}^*$-algebras are Connes-embeddable due to the recent work on related problems in quantum information theory \cite{JNVWY2020}.  But even after we restrict our attention to Connes-embeddable von Neumann algebras, it is unlikely than an arbitrary potential $V \in \mathscr{W}(\R^{*d})$ can be approximated by other potentials $W$ such that $L_W$ has a one-dimensional kernel, in light of the counterexamples in the single-matrix setting (see \ref{subsec:discussion}).
	
	\subsection{Outline} \label{subsec:outline}
	
	In \S \ref{sec:preliminaries}, we explain background material and terminology.  In \S \ref{subsec:operatoralgebras}, we summarize definitions and results about $\mathrm{C}^*$ and von Neumann algebras that will be used throughout the paper.  In \S \ref{subsec:logdensity}, as a heuristic reference point, we describe the classical Wasserstein manifold and give a parametrization of it in terms of the log-density rather than the density.
	
	In \S \ref{sec:NCfunc1}, we define spaces of tracial non-commutative $C^k$ functions, and describe their basic properties, such as the chain rule for composition.  In \S \ref{sec:NCfunc2}, we relate non-commutative functions with smooth functional calculus for self-adjoint operators, and we describe differential operators on non-commutative smooth functions that mimic the gradient and Laplacian of trace polynomial functions on $M_N(\C)_{\sa}^d$.
	
	In \S \ref{sec:manifold}, we define the free Wasserstein manifold $\mathscr{W}(\R^{*d})$, diffeomorphism group $\mathscr{D}(\R^{*d})$, and action $\mathscr{D}(\R^{*d}) \curvearrowright \mathscr{W}(\R^{*d})$ by transport.
	
	In \S \ref{sec:pseudoinverse}, we analyze the heat semigroup, expectation, and pseudo-inverse associated to the Laplacian $L_V$ when $V$ is sufficiently close to the quadratic $V_0$.  In particular, we construct an operator $\Psi_V$ such that $-\Psi_V L_V f = f - \mathbb{E}_V(f)$, where $\mathbb{E}_V$ is the expectation functional (which will turn out to agree with $\mu_V$).
	
	In \S \ref{sec:freeGibbslaws}, we discuss a version of Voiculescu's free entropy $\chi$ defined on (a slight generalization of) non-commutative laws.  We show that for certain $V$ (with quadratic growth at $\infty$ but not necessarily convex), there always exist non-commutative laws $\nu$ maximizing $\chi(\nu) - \nu(V)$.  Any free Gibbs law must satisfy the equation $\nu(\nabla_V^* \mathbf{h}) = 0$ (Proposition \ref{prop:integrationbyparts}).  Finally, when $\partial V$ and $\partial^2 V$ are bounded, this equation implies that $\nu$ can be realized by a $d$-tuple of bounded operators (Theorem \ref{thm:magicnormbound}).
	
	In \S \ref{subsec:constructtransport}, the results from \S \ref{sec:pseudoinverse} and \S \ref{sec:freeGibbslaws} are combined in the framework of the free Wasserstein manifold to yield a rigorous construction of transport of measure for $V$ sufficiently close to $\sum_j \tr(x_j^2)$.  More precisely, for any continuously differentiable path $t \mapsto V_t$ with $V_t$ sufficiently close to $\sum_j \tr(x_j^2)$, there is a path $t \mapsto \mathbf{f}_t$ of diffeomorphisms with $(\mathbf{f}_t)_* V_0 = V_t$, and our choice of $t \mapsto \mathbf{f}_t$ is ``infinitesimally optimal'' (Theorem \ref{thm:transport}).
	
	In the remainder of \S \ref{sec:rigoroustransport}, we adapt the technique to prove triangular transport (Theorem \ref{thm:triangulartransport}) by studying conditional expectations and transport.  An important tool for the enterprise, which is interesting in its own right, is a precise connection between non-commutative functions and functions on $N \times N$ matrices in the large $N$ limit.  In particular, similar to \cite{JekelExpectation,JekelThesis}, we show that a certain conditional expectation operator from \S \ref{sec:pseudoinverse} describes the large-$N$ limit of conditional expectations for the matrix models.
	
	Finally, \S \ref{sec:applications} suggests directions for future research.  In particular, we state and heuristically derive non-commutative versions of the heat equation, Wasserstein geodesic equation, incompressible Euler equation, and inviscid Burgers' equation.
	
	\section{Preliminaries} \label{sec:preliminaries}

\subsection{Operator algebras and free probability} \label{subsec:operatoralgebras}

We recall some standard definitions and results about $\mathrm{C}^*$ and von Neumann algebras, non-commutative laws, and free independence.  For background material on $\mathrm{C}^*$ and von Neumann algebras, see e.g.\ \cite{KadRing1,KadRing2}.

\begin{definition}[$*$-algebra]
	A \emph{unital $*$-algebra (over $\C$)} is a unital algebra $\cA$ over $\C$ equipped with a skew-linear involution $*$: $\cA \to \cA$ satisfying $(ab)^* = b^*a^*$.  We call $a^*$ the \emph{adjoint} of $a$, and we say $a$ is \emph{self-adjoint} if $a^* = a$.  We denote by $\cA_{\sa}$ the set of self-adjoint elements (which is a vector space over $\R$).
\end{definition}

\begin{definition}[$\mathrm{C}^*$-algebra]
	Let $B(H)$ denote the $*$-algebra of bounded operators on a Hilbert space $H$ (where the $*$-operation is the adjoint in the usual sense).  A \emph{(unital) $\mathrm{C}^*$-algebra} is a unital $*$-subalgebra of $B(H)$ that is closed with respect to the operator norm.
\end{definition}

\begin{definition}[$\mathrm{W}^*$-algebra]
	The \emph{$\sigma$-weak operator topology ($\sigma$-WOT)} on $B(H)$ is the topology generated by all maps $B(H) \to \C$ of the form
	\[
	T \mapsto \sum_{j=1}^\infty \ip{\xi_j, T \xi_j},
	\]
	where $(\xi_j)_{j \in \N}$ is a sequence of vectors with $\sum_j \norm{\xi_j}^2 < \infty$.  (Equivalently, the $\sigma$-WOT is weak-$\star$ topology on $B(H)$ obtained from viewing it as the dual of the space of trace-class operators.)  A \emph{von Neumann algebra} or \emph{$\mathrm{W}^*$-algebra} is a unital $*$-subalgebra of $B(H)$ that is closed in the $\sigma$-WOT.
\end{definition}

\begin{definition}[States and traces]
	If $\cA$ is a unital $*$-algebra, then a linear functional $\phi: \cA \to \C$ is said to be \emph{positive} if $\phi(a^*a) \geq 0$ for all $a \in \cA$, \emph{unital} if $\phi(1) = 1$, \emph{tracial} if $\phi(ab) = \phi(ba)$ for $a, b \in \cA$, \emph{faithful} if $\phi(a^*a) = 0$ implies $a = 0$.  If $\cA$ is a $\mathrm{W}^*$-algebra, then $\phi$ is said to be \emph{normal} if it is continuous with respect to the $\sigma$-WOT.  A \emph{state} is unital positive functional, and a \emph{trace} is a unital positive tracial functional.
\end{definition}

\begin{definition}[Tracial $\mathrm{C}^*$ and $\mathrm{W}^*$-algebras]
	A \emph{tracial $\mathrm{C}^*$-algebra} is a pair $(\cA,\tau)$ where $\cA$ is a $\mathrm{C}^*$-algebra and $\tau$ is a faithful trace.  A \emph{tracial $\mathrm{W}^*$-algebra} is a pair $(\cA,\tau)$ where $\cA$ is a $\mathrm{W}^*$-algebra and $\tau$ is a faithful normal trace.
\end{definition}

\begin{definition}[$*$-homomorphisms]
	A \emph{$*$-homomorphism} from one $*$-algebra to another is a linear map which respects multiplication and the $*$-operation.  A $*$-homomorphism of unital $*$-algebras is called \emph{unital} if it preserves $1$.  A $*$-homomorphism of $\mathrm{W}^*$-algebras is said to be \emph{normal} if it is $\sigma$-WOT continuous.  An \emph{isomorphism of tracial $\mathrm{C}^*$-algebras} is a $*$-isomorphism that preserves the trace; we make the same definition for tracial $\mathrm{W}^*$-algebras but with the added requirement that the map and its inverse are normal.
\end{definition}

\begin{lemma}[Properties of $*$-homomorphisms]
	Any $*$-homomorphism of $\mathrm{C}^*$-algebras is contractive and any injective $*$-homomorphism is isometric.
\end{lemma}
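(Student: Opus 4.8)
The plan is to reduce to the unital case and then exploit the fact that in a C*-algebra the norm of a self-adjoint element coincides with its spectral radius. First I would dispose of a minor technicality: a $*$-homomorphism $\pi\colon\cA\to\cB$ need not be unital, but $p=\pi(1_{\cA})$ is a self-adjoint idempotent, $\pi$ takes values in the unital C*-algebra $p\cB p$ (whose unit is $p$), and $\|b\|_{p\cB p}=\|b\|_{\cB}$ for $b\in p\cB p$. Replacing $\cB$ by $p\cB p$ — which changes neither norms nor injectivity of $\pi$ — we may therefore assume $\pi$ is unital.

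Next, contractivity. A unital $*$-homomorphism carries invertible elements to invertible elements (it carries inverses to inverses), so $\sigma_{\cB}(\pi(a))\subseteq\sigma_{\cA}(a)$ for every $a$, and hence the spectral radii satisfy $r(\pi(a))\le r(a)$. Iterating the C*-identity $\|x^{2}\|=\|x\|^{2}$ shows that $\|x\|=r(x)$ whenever $x=x^{*}$; applying this in $\cB$ and in $\cA$ to the self-adjoint element $\pi(a)^{*}\pi(a)=\pi(a^{*}a)$ gives
\[
\|\pi(a)\|^{2}=\|\pi(a^{*}a)\|=r(\pi(a^{*}a))\le r(a^{*}a)=\|a^{*}a\|=\|a\|^{2},
\]
so $\pi$ is contractive.

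Finally, suppose $\pi$ is injective; by the C*-identity once more it suffices to prove $\|\pi(c)\|=\|c\|$ for positive elements $c$ (apply it to $c=a^{*}a$). If this failed, say $\|\pi(c)\|<\|c\|=:t_{0}=\max\sigma_{\cA}(c)$, then $\sigma_{\cB}(\pi(c))\subseteq[0,\|\pi(c)\|]$ with $\|\pi(c)\|<t_{0}$, so I could choose a continuous function $g\colon[0,t_{0}]\to[0,\infty)$ vanishing on $[0,\|\pi(c)\|]$ with $g(t_{0})=1$. Continuous functional calculus then gives $g(c)\neq0$, since $\|g(c)\|=\max_{\sigma_{\cA}(c)}g\ge 1$, whereas $g(\pi(c))=0$; and because a $*$-homomorphism intertwines the continuous functional calculus of self-adjoint elements we would obtain $\pi(g(c))=g(\pi(c))=0$, contradicting injectivity.

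The only substantive input is the spectral radius formula for self-adjoint elements in a C*-algebra; everything else is bookkeeping. The step most deserving of care is the identity $\pi(g(c))=g(\pi(c))$, which I would justify first for polynomials and then extend to continuous $g$ by uniform approximation on $\sigma_{\cA}(c)\cup\sigma_{\cB}(\pi(c))$, using the contractivity of $\pi$ established above together with the isometry of the functional calculi — the unital reduction conveniently removes the usual need to restrict to $g$ with $g(0)=0$.
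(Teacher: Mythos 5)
Your proof is correct and is precisely the standard argument (unital reduction, spectral inclusion plus the identity $\|x\|=r(x)$ for self-adjoint $x$, and a functional-calculus bump function for the injective case); the paper states this lemma as standard background without proof, deferring to \cite{KadRing1,KadRing2}, where this is exactly the argument given. Nothing further is needed.
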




For any tracial $\mathrm{C}^*$-algebra, there is a non-commutative analog of the $L^\alpha$ spaces for $\alpha \in [0,\infty]$ (we use $\alpha$ rather than $p$ to reserve the letter $p$ for polynomials), and they satisfy the non-commutative H\"older's inequality.

\begin{definition}[Non-commutative $L^\alpha$ norms] \label{def:NCLp}
	Let $(\cA,\tau)$ be a tracial $\mathrm{C}^*$-algebra.  For $\alpha \in [0,\infty]$ and $\mathbf{X} \in \cA^d$, we write
	\[
	\norm{\mathbf{X}}_\alpha = \begin{cases} \left( \sum_{j=1}^d \tau((X_j^*X_j)^{\alpha/2}) \right)^{1/\alpha}, & \alpha < \infty \\ \max_j \norm{X_j}_\infty, & \alpha = \infty. \end{cases}
	\]
	Here $(X_j^*X_j)^{\alpha/2}$ is defined by functional calculus.
\end{definition}

\begin{lemma}[Non-commutative H\"older's inequality] \label{lem:NCHolder}
	Let $\alpha$, $\alpha_1$, \dots, $\alpha_n \in [0,\infty]$ with $1/\alpha = \sum_{j=1}^n 1 / \alpha_j$.  Let $(\cA,\tau)$ be a tracial $\mathrm{C}^*$-algebra and let $a_1$, \dots, $a_n \in \cA$.  Then
	\[
	\norm{a_1 \dots a_n}_\alpha \leq \norm{a_1}_{\alpha_1} \dots \norm{a_n}_{\alpha_n}.
	\]
	Also, we have $\lim_{\alpha \to \infty} \norm{\mathbf{X}}_\alpha = \norm{\mathbf{X}}_\infty$ for $\mathbf{X} \in \cA^d$.
\end{lemma}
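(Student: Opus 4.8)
The plan is to reduce the general inequality to the classical scalar Hölder inequality by passing through the functional calculus and the GNS picture. First I would treat the case $\alpha=\infty$ (equivalently some $\alpha_j=\infty$) separately: there the inequality $\norm{a_1\cdots a_n}_\infty\le\prod_j\norm{a_j}_\infty$ is just submultiplicativity of the $\mathrm{C}^*$-norm, and more generally if exactly one $\alpha_j$ is finite we combine submultiplicativity with the monotonicity $\norm{ab}_{\alpha}\le\norm a_\infty\norm b_\alpha$, which itself follows from $b^*a^*ab\le\norm a_\infty^2\, b^*b$ and functional-calculus monotonicity of $t\mapsto t^{\alpha/2}$. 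So the substantive case is all $\alpha_j\in(0,\infty)$ with $\sum_j 1/\alpha_j=1/\alpha$.

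For the finite case the key step is the two-factor inequality $\norm{ab}_\gamma\le\norm a_\alpha\norm b_\beta$ when $1/\gamma=1/\alpha+1/\beta$; the $n$-factor statement then follows by an easy induction on $n$, grouping $a_1\cdots a_{n-1}$ and $a_n$ and using $\sum_{j=1}^{n-1}1/\alpha_j=1/\gamma$ with $1/\alpha=1/\gamma+1/\alpha_n$. To prove the two-factor inequality I would use polar decomposition (valid in the $\mathrm{W}^*$-completion, which does not change the relevant $L^\alpha$-norms since $\tau$ is faithful and normal): write $a=u|a|$, $b^*=v|b^*|$, so that $\norm a_\alpha=\big(\tau(|a|^\alpha)\big)^{1/\alpha}$ and $\norm b_\beta=\norm{b^*}_\beta=\big(\tau(|b^*|^\beta)\big)^{1/\beta}$. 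Then $\tau(|ab|^\gamma)$ should be estimated by interpolation; concretely, one expands $\tau((b^*a^*ab)^{\gamma/2})$ and applies the trace version of the Cauchy--Schwarz / three-line argument. A clean route is to invoke the classical Riesz--Thoren / complex interpolation for the family $z\mapsto\tau\big(|a|^{z\alpha}\,w\,|b^*|^{(1-z)\beta}\big)$ along the strip $0\le\re z\le1$, where $w$ is the partial isometry bundling $u$ and $v$; the boundary estimates at $\re z=0$ and $\re z=1$ reduce to $\norm{\cdot}_\infty\cdot\norm{\cdot}_1$-type bounds and the maximum principle gives the intermediate bound with exponent $\gamma$. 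Alternatively, and perhaps more in keeping with the elementary spirit of this preliminaries section, one can cite the standard references (e.g.\ \cite{KadRing1,KadRing2} or the literature on non-commutative $L^p$-spaces) since this is a well-known fact; but if a self-contained argument is wanted, the interpolation proof above is the cleanest.

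Finally, for the limiting statement $\lim_{\alpha\to\infty}\norm{\mathbf X}_\alpha=\norm{\mathbf X}_\infty$: since $\norm{\mathbf X}_\alpha^\alpha=\sum_j\tau((X_j^*X_j)^{\alpha/2})$, it suffices to prove $\lim_{\alpha\to\infty}\tau(|X|^\alpha)^{1/\alpha}=\norm X_\infty$ for a single $X$, i.e.\ the convergence of $L^\alpha(\mu)$-norms to the $L^\infty$-norm where $\mu$ is the spectral distribution of $|X|$ with respect to $\tau$. This is immediate from the classical fact for a finite measure: $\tau(|X|^\alpha)^{1/\alpha}\le\norm X_\infty$ always (since $\tau$ is a state, so $\mu$ is a probability measure), and conversely for any $\varepsilon>0$ faithfulness of $\tau$ forces the spectral projection of $|X|$ onto $[\norm X_\infty-\varepsilon,\norm X_\infty]$ to have strictly positive trace $c>0$, whence $\tau(|X|^\alpha)^{1/\alpha}\ge c^{1/\alpha}(\norm X_\infty-\varepsilon)\to\norm X_\infty-\varepsilon$. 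The main obstacle, such as it is, is the two-factor interpolation inequality; everything else is routine functional calculus and classical measure theory.
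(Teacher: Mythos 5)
For this lemma the paper does not give an argument at all: it simply cites the literature (Simon's trace-ideal book for the matrix case, Dixmier and the notes of da Silva for the von Neumann case, Pisier--Xu for a survey). So your fallback option of quoting standard references is exactly what the paper does, and to that extent your proposal is consistent with it; the interesting question is whether your self-contained sketch would actually close the argument.

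There it has a real gap. Your interpolation family $z\mapsto\tau\bigl(|a|^{z\alpha}\,w\,|b^*|^{(1-z)\beta}\bigr)$ can only produce an estimate at the interior point where $z\alpha=1$ and $(1-z)\beta=1$, i.e.\ it proves $|\tau(ab)|\le\norm{a}_\alpha\norm{b}_\beta$ for conjugate exponents $1/\alpha+1/\beta=1$; it does not by itself bound $\norm{ab}_\gamma$ for general $\gamma$. To upgrade the pairing inequality to $\norm{ab}_\gamma\le\norm{a}_\alpha\norm{b}_\beta$ one normally dualizes, $\norm{x}_\gamma=\sup\{|\tau(xc)|:\norm{c}_{\gamma'}\le1\}$, and runs a three-factor three-line argument --- but this only works for $\gamma\ge1$. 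The lemma as stated (and as used in this paper: in the multilinear-form norms one takes $1/\alpha=1/\alpha_1+\dots+1/\alpha_\ell$ with each $\alpha_j\ge1$, so $\alpha<1$ occurs constantly) includes the quasi-norm range $\gamma<1$, where $\norm{\cdot}_\gamma$ has no dual description and the maximum-principle route stalls; there one needs a genuinely different device, e.g.\ submajorization of generalized singular values $\mu_t(ab)\le\norm{a}_\infty\mu_t(b)$ together with the standard $s$-number proof, or the argument in the cited notes of da Silva. Two smaller points: your reduction of the $n$-factor case to the two-factor case and your treatment of the $\alpha_j=\infty$ cases are fine, and your proof of $\lim_{\alpha\to\infty}\norm{\mathbf{X}}_\alpha=\norm{\mathbf{X}}_\infty$ is correct, except that in a tracial $\mathrm{C}^*$-algebra the spectral projection of $|X|$ onto $[\norm{X}_\infty-\varepsilon,\norm{X}_\infty]$ need not lie in $\cA$; replace it by $f(|X|)$ for a continuous $0\le f\le\mathbf{1}_{[\norm{X}_\infty-\varepsilon,\norm{X}_\infty]}$ not vanishing at $\norm{X}_\infty$, so that faithfulness still gives $\tau(f(|X|))=c>0$ and $\tau(|X|^\alpha)\ge c\,(\norm{X}_\infty-\varepsilon)^\alpha$, and note also the harmless factor $d^{1/\alpha}\to1$ when passing from tuples to a single operator.
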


Modulo renormalization of the trace, the inequality for matrices follows from the treatment of trace-class operators in \cite{Simon2005}; see especially Thm.\ 1.15 and Thm.\ 2.8, as well as the references cited on p.\ 31.  The von Neumann algebraic setting was studied by Dixmier \cite{Dixmier1953}, and a convenient proof can be found in \cite[Thm.\ 2.4 - 2.6]{daSilva2018}; for an overview and further history see \cite[\S 2]{PX2003}.

\begin{definition}[Conditional expectation]
	Let $\cA$ be a $\mathrm{C}^*$-algebra and $\cB$ a unital $\mathrm{C}^*$-subalgebra.  A \emph{conditional expectation} $E: \cA \to \cB$ is a linear map such that
	\begin{enumerate}[(1)]
		\item $E$ is \emph{positive}, that is, it maps any operator of the form $a^*a \in \cA$ to an operator of the form $b^*b \in \cB$.
		\item $E$ is a $\cB$-$\cB$-bimodule map, that is, $E[b_1ab_2] = b_1 E[a] b_2$ for $a \in \cA$ and $b_1$, $b_2 \in \cB$.
		\item $E|_{\cB} = \id$.
	\end{enumerate}
\end{definition}

The following result about tracial $\mathrm{W}^*$-algebras is well-known.

\begin{lemma}[Conditional expectations for tracial $\mathrm{W}^*$-algebras]
	Let $(\cA,\tau)$ be a tracial $\mathrm{W}^*$-algebra and let $\cB$ be a $\mathrm{W}^*$-subalgebra.  Then there exists a unique trace-preserving conditional expectation $E: \cA \to \cB$, and this $E$ is $\sigma$-WOT continuous.  For each $a \in \cA$, the conditional expectation $E[a]$ is characterized by the condition that $\tau(E[a] b) = \tau(ab)$ for all $b \in \mathcal{B}$.  Moreover, $\norm{E[\mathbf{X}]}_\alpha \leq \norm{\mathbf{X}}_\alpha$ for any $\mathbf{X} \in \cA^d$ and $\alpha \in [1,\infty]$.
\end{lemma}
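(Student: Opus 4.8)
The plan is to realize $E$ by Umegaki's construction, as the compression to $\cA$ of the orthogonal projection onto the $L^2$-closure of $\cB$, and then to read off every assertion from the GNS picture. Concretely, I would pass to $H = L^2(\cA,\tau)$ with cyclic vector $\xi = \hat 1$ (writing $\hat a$ for the image of $a \in \cA$, and recalling $\tau(ab) = \ip{\hat a, \widehat{b^*}}$ and that $\cA$ acts on $H$ by commuting left and right multiplications $\lambda,\rho$), let $K$ be the closure of $\{\hat b : b \in \cB\}$, and let $e \in B(H)$ be the orthogonal projection onto $K$. Since $\cB$ is a $*$-subalgebra and $\tau$ is tracial, $K$ is invariant under $\rho(b)$ and under $\rho(b)^* = \rho(b^*)$ for $b \in \cB$, so $e$ commutes with $\rho(\cB)$. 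The crucial step is then: for $a \in \cA$, the operator $e\lambda(a)e$ restricted to $K$ commutes with $\rho(\cB)|_K$, hence lies in $\lambda(\cB)|_K$ by the commutation theorem for finite von Neumann algebras in standard form; one then \emph{defines} $E[a] \in \cB$ to be the element with $\lambda(E[a])|_K = e\lambda(a)e|_K$, equivalently $\widehat{E[a]} = e\hat a$.

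From here I would verify, in order: (i) $E$ is linear and $E|_{\cB} = \id$, because $\lambda(b)$ preserves $K$ for $b \in \cB$; (ii) $E$ is a $\cB$-bimodule map, since $\lambda(b)$ commutes with $e$; (iii) $E$ is positive, as $e\lambda(c^*c)e = (\lambda(c)e)^*(\lambda(c)e) \ge 0$ on $K$ forces $E[c^*c] \ge 0$ in $\cB$, whence $E$ is a conditional expectation; (iv) $E$ is trace-preserving, since $\tau(E[a]) = \ip{\widehat{E[a]},\xi} = \ip{e\hat a,\xi} = \ip{\hat a,\xi} = \tau(a)$ using $e\xi = \xi$; (v) the identity $\tau(E[a]b) = \ip{e\hat a,\widehat{b^*}} = \ip{\hat a,\widehat{b^*}} = \tau(ab)$ for $b \in \cB$, which pins down $E[a]$ by faithfulness of $\tau$; and (vi) uniqueness: any trace-preserving conditional expectation $E'$ satisfies $E'[ab] = E'[a]b$ for $b \in \cB$ (bimodule property with $b_1 = 1$), hence $\tau(E'[a]b) = \tau(E'[ab]) = \tau(ab)$, matching the characterization in (v), so $E' = E$.

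For the quantitative part, $L^\infty$-contractivity is automatic for a positive unital map of $\mathrm{C}^*$-algebras, and $\norm{E[a]}_2 \le \norm{a}_2$ is immediate since $e$ is a projection. For general $\alpha \in [1,\infty]$ I would avoid interpolation and argue by duality: for $z = E[a] \in \cB$ one has $\norm{z}_\alpha = \sup\{|\tau(zy)| : y \in \cB,\ \norm{y}_{\alpha'} \le 1\}$ with $\alpha'$ the H\"older conjugate, and then $|\tau(E[a]y)| = |\tau(ay)| \le \norm{a}_\alpha\norm{y}_{\alpha'}$ by the non-commutative H\"older inequality (Lemma \ref{lem:NCHolder}), so $\norm{E[a]}_\alpha \le \norm{a}_\alpha$; the $d$-tuple bound then follows by applying $E$ coordinatewise and summing in the norm of Definition \ref{def:NCLp}. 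For normality, I would test on bounded nets: the identity $\tau(E[x]c) = \tau(E[x]E[c]) = \tau(xE[c])$ for $c \in \cA$ (from trace-preservation plus the bimodule property) shows that $a_i \to a$ in $\sigma$-WOT with $\sup_i\norm{a_i}_\infty < \infty$ forces $\tau(E[a_i]c) \to \tau(E[a]c)$ for all $c \in \cA$; since the functionals $x \mapsto \tau(xc)$ are norm-dense in the predual and the net is bounded, $E[a_i] \to E[a]$ in $\sigma$-WOT, and a bounded map that is $\sigma$-WOT continuous on bounded sets is normal.

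The one step I expect to need genuine input rather than bookkeeping is the appeal to the commutation theorem, i.e.\ the fact that the compression $e$ carries $\cA$ \emph{into $\cB$} and not merely into the $L^2$-completion $L^2(\cB,\tau)$; for finite von Neumann algebras this has an elementary proof via the canonical conjugation $J\hat a = \widehat{a^*}$, so there is no circularity with the present development, but it is the place where the argument uses more than the definitions and H\"older's inequality. Everything else — the bimodule property, positivity, trace-preservation, the characterization, uniqueness, normality, and the $L^\alpha$ bounds — then falls out mechanically from the GNS picture.
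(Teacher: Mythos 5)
The paper does not prove this lemma at all — it is stated as a well-known fact and used as a black box — so there is nothing to compare against; your argument is correct and is essentially the standard Umegaki/Takesaki construction of the trace-preserving conditional expectation. You correctly isolate the one nontrivial input, namely that the compression $e\lambda(a)e|_K$ lands in $\lambda(\cB)|_K$, which for a finite algebra follows from the commutation theorem $(\lambda(\cB)|_K)'=\rho(\cB)|_K$ proved elementarily via $J\hat b=\widehat{b^*}$; note only that you should check $(K,\lambda|_K,\xi)$ really is the standard form of $(\cB,\tau|_\cB)$ (it is, since $\xi$ is cyclic for $\lambda(\cB)|_K$ and implements the normal faithful trace) and that $\lambda|_K$ is faithful on $\cB$, so $E[a]$ is well defined with $\norm{E[a]}_\infty\le\norm{a}_\infty$ even without invoking Russo--Dye. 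Two cosmetic points: your identity $\tau(ab)=\ip{\hat a,\widehat{b^*}}$ uses the convention $\ip{\hat x,\hat y}=\tau(xy^*)$, whereas the paper's GNS inner product is $\ip{p,q}=\tau(p^*q)$, so adjust the bookkeeping accordingly; and the duality step $\norm{z}_\alpha=\sup\{|\tau(zy)|:y\in\cB,\ \norm{y}_{\alpha'}\le 1\}$ for $z\in\cB$ is a standard consequence of polar decomposition and H\"older but deserves a one-line justification (including the remark that the $L^\alpha$-norm of $z$ computed in $\cB$ agrees with that computed in $\cA$). With those small touches the proof is complete, including the uniqueness, the $\sigma$-WOT continuity via boundedness plus convergence against the norm-dense family $x\mapsto\tau(xc)$, and the coordinatewise passage to $d$-tuples.
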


Next, we describe the space of non-commutative laws.  A non-commutative law is the analog of a linear functional $\C[x_1,\dots,x_d] \to \R$ given by $f \mapsto \int f\,d\mu$ for some compactly supported measure on $\R^d$.  Instead of $\C[x_1,\dots,x_d]$, we use the non-commutative polynomial algebra in $d$ variables.

\begin{definition}[Non-commutative polynomial algebra] \label{def:NCpolynomial}
	We denote by $\C\ip{x_1,\dots,x_d}$ the universal unital algebra generated by variables $x_1$, \dots, $x_d$.  As a vector space, $\C\ip{x_1,\dots,x_d}$ has a basis consisting of all products $x_{i_1} \dots x_{i_\ell}$ for $\ell \geq 0$ and $i_1$, \dots, $i_\ell \in \{1,\dots,d\}$.  We equip $\C\ip{x_1,\dots,x_d}$ with the unique $*$-operation such that $x_j^* = x_j$.
\end{definition}

\begin{definition}[Non-commutative law]
	A linear functional $\lambda: \C\ip{x_1,\dots,x_d} \to \C$ is said to be \emph{exponentially bounded} if there exists $R > 0$ such that $|\lambda(x_{i_1} \dots x_{i_\ell})| \leq R^\ell$ for all $\ell \in \N_0$ and $i_1$, \dots, $i_\ell \in \{1,\dots,d\}$, and in this case we say $R$ is an \emph{exponential bound} for $\lambda$.  A \emph{non-commutative law} is a unital, positive, tracial, exponentially bounded linear functional $\lambda: \C\ip{x_1,\dots,x_d} \to \C$. We denote the space of non-commutative laws by $\Sigma_d$, and we equip it with the weak-$\star$ topology (that is, the topology of pointwise convergence on $\C\ip{x_1,\dots,x_d}$).  We denote by $\Sigma_{d,R}$ the subset of $\Sigma_d$ comprised of non-commutative laws with exponential bound $R$.
\end{definition}

\begin{observation}
	The space $\Sigma_{d,R}$ is compact and metrizable.
\end{observation}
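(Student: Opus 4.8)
The plan is to realize $\Sigma_{d,R}$ as a closed subset of a product of compact discs, so that compactness follows from Tychonoff's theorem, and then to exhibit an explicit metric. First I would set up the ambient space. For each multi-index $(i_1,\dots,i_\ell)$ with $\ell \in \N_0$ and $i_1,\dots,i_\ell \in \{1,\dots,d\}$, let $D_\ell = \{z \in \C : |z| \le R^\ell\}$, a compact disc, and consider the product space $P = \prod_{(i_1,\dots,i_\ell)} D_{\ell}$, indexed by all finite words in the alphabet $\{1,\dots,d\}$. By Tychonoff's theorem, $P$ is compact in the product topology, and since the index set (finite words) is countable and each $D_\ell$ is metrizable, $P$ is metrizable; an explicit metric is obtained by enumerating the words as $w_1, w_2, \dots$ and setting $\rho(\lambda,\mu) = \sum_{n=1}^\infty 2^{-n} \min(1, |\lambda(w_n) - \mu(w_n)|)$, which induces the topology of pointwise convergence on the words, hence on all of $\C\ip{x_1,\dots,x_d}$ by linearity.

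Next I would identify $\Sigma_{d,R}$ with a subset of $P$. A linear functional $\lambda: \C\ip{x_1,\dots,x_d} \to \C$ is determined by its values on the basis monomials $x_{i_1}\cdots x_{i_\ell}$, so the assignment $\lambda \mapsto (\lambda(x_{i_1}\cdots x_{i_\ell}))_{(i_1,\dots,i_\ell)}$ is an injection of the set of exponentially bounded functionals with bound $R$ into $P$, and by construction the subspace topology on the image is exactly the weak-$\star$ topology on $\Sigma_{d,R}$. It remains to check that the image of $\Sigma_{d,R}$ is closed in $P$, i.e.\ that the defining conditions — unital, positive, tracial — are all closed conditions under pointwise convergence. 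Unitality is the single equation $\lambda(1) = 1$, which is closed. The tracial condition amounts to the family of linear equations $\lambda(x_{i_1}\cdots x_{i_\ell}) = \lambda(x_{i_2}\cdots x_{i_\ell} x_{i_1})$ (cyclic permutations of monomials), each of which is closed. For positivity, note that $\lambda(p^*p) \ge 0$ for all $p$ is equivalent to: for every finite set of monomials $m_1,\dots,m_k$, the Hermitian matrix $(\lambda(m_i^* m_j))_{i,j}$ is positive semidefinite; each such condition (positive semidefiniteness of a fixed finite matrix whose entries are coordinate functions on $P$) is closed, and the intersection over all finite sets of monomials is closed. Hence the image is an intersection of closed sets, so closed, hence compact; and being a subset of the metrizable space $P$, it is metrizable.

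The main technical point — really the only one requiring a moment's care — is the reformulation of positivity as a family of finitely-many-variables conditions, since positivity is \emph{a priori} a condition quantified over the infinite-dimensional space $\C\ip{x_1,\dots,x_d}$; but writing $p = \sum_i c_i m_i$ as a finite linear combination of monomials gives $\lambda(p^*p) = \sum_{i,j} \overline{c_i} c_j \lambda(m_i^* m_j)$, so $\lambda(p^*p) \ge 0$ for all $p$ supported on $\{m_1,\dots,m_k\}$ is exactly positive semidefiniteness of the moment matrix $(\lambda(m_i^*m_j))_{i,j}$, and this is manifestly a closed condition in the $\lambda(m_i^*m_j)$, hence in $P$. (One should also note that $\lambda(m_i^* m_j)$ is, up to sign conventions on $*$, again a value of $\lambda$ on a single monomial $x_{k_1}\cdots x_{k_s}$ with $s \le $ some bound, so it is a genuine coordinate function on $P$.) Everything else is routine: the weak-$\star$ topology is by definition the initial topology from the evaluation maps, which matches the product/subspace topology, and metrizability transfers to subspaces.
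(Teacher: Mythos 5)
Your proof is correct and is exactly the standard argument the paper leaves implicit in stating this as an Observation: embed $\Sigma_{d,R}$ coordinatewise (via monomial moments) into a countable product of compact discs, check that unitality, traciality (cyclic invariance on monomials), and positivity (positive semidefiniteness of all finite moment matrices) are closed conditions, and conclude compactness from Tychonoff and metrizability from the countable product. Nothing to add.
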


\begin{observation}
	Let $\cA$ be a $*$-algebra and $\mathbf{X} = (X_1, \dots, X_d) \in \cA_{\sa}^d$.  Then there is a unique $*$-homomorphism $\rho_{\mathbf{X}}: \C\ip{x_1,\dots,x_d} \to \cA$ such that $\rho_{\mathbf{X}}(x_j) = X_j$ for $j = 1$, \dots, $d$.
\end{observation}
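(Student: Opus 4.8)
The plan is to exploit the universal property built into Definition \ref{def:NCpolynomial}: since $\C\ip{x_1,\dots,x_d}$ has a vector-space basis consisting of the words $x_{i_1}\cdots x_{i_\ell}$ with $\ell \geq 0$ and $i_1,\dots,i_\ell \in \{1,\dots,d\}$, a linear map out of it is determined by its values on these words, and those values may be prescribed arbitrarily. So first I would \emph{define} $\rho_{\mathbf{X}}$ on the basis by sending the empty word to $1_\cA$ and sending $x_{i_1}\cdots x_{i_\ell}$ to $X_{i_1}\cdots X_{i_\ell}$, then extend linearly to all of $\C\ip{x_1,\dots,x_d}$. By construction $\rho_{\mathbf{X}}$ is a well-defined $\C$-linear map with $\rho_{\mathbf{X}}(1) = 1_\cA$ and $\rho_{\mathbf{X}}(x_j) = X_j$.

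Next I would check that $\rho_{\mathbf{X}}$ is multiplicative. It suffices to verify $\rho_{\mathbf{X}}(uv) = \rho_{\mathbf{X}}(u)\rho_{\mathbf{X}}(v)$ when $u = x_{i_1}\cdots x_{i_\ell}$ and $v = x_{j_1}\cdots x_{j_m}$ are basis words, since both sides are bilinear in $(u,v)$ and the words span. But the product $uv$ in $\C\ip{x_1,\dots,x_d}$ is exactly the concatenated word $x_{i_1}\cdots x_{i_\ell} x_{j_1}\cdots x_{j_m}$, which $\rho_{\mathbf{X}}$ sends to $X_{i_1}\cdots X_{i_\ell} X_{j_1}\cdots X_{j_m} = \rho_{\mathbf{X}}(u)\rho_{\mathbf{X}}(v)$, using associativity in $\cA$. (The cases where $u$ or $v$ is the empty word are handled by unitality.)

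Then I would check compatibility with $*$. Since the $*$-operation on $\C\ip{x_1,\dots,x_d}$ is the unique one with $x_j^* = x_j$, skew-linearity and $(ab)^* = b^*a^*$ force $(x_{i_1}\cdots x_{i_\ell})^* = x_{i_\ell}\cdots x_{i_1}$. Applying $\rho_{\mathbf{X}}$ gives $X_{i_\ell}\cdots X_{i_1}$, and since each $X_j$ is self-adjoint this equals $(X_{i_1}\cdots X_{i_\ell})^* = \rho_{\mathbf{X}}(x_{i_1}\cdots x_{i_\ell})^*$; by skew-linearity of $*$ on both sides this extends from words to all of $\C\ip{x_1,\dots,x_d}$. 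Hence $\rho_{\mathbf{X}}$ is a unital $*$-homomorphism. Finally, uniqueness: any unital $*$-homomorphism (indeed any unital algebra homomorphism) $\sigma$ with $\sigma(x_j) = X_j$ must satisfy $\sigma(x_{i_1}\cdots x_{i_\ell}) = X_{i_1}\cdots X_{i_\ell}$ by multiplicativity and $\sigma(1) = 1_\cA$ by unitality, so $\sigma$ agrees with $\rho_{\mathbf{X}}$ on a spanning set and therefore everywhere. There is no real obstacle here — this is simply the universal property of the free unital $*$-algebra on self-adjoint generators, and the only point requiring the self-adjointness hypothesis on $\mathbf{X}$ is the compatibility with $*$; without it one would only get an algebra homomorphism.
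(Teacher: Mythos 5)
Your proof is correct and is exactly the routine universal-property argument the paper leaves implicit (the Observation is stated without proof): define $\rho_{\mathbf{X}}$ on the word basis, check multiplicativity via concatenation, use self-adjointness of the $X_j$ for $*$-compatibility, and get uniqueness from agreement on a spanning set. Nothing to add.
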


\begin{definition}[Non-commutative law of a $d$-tuple]
	Let $(\cA,\tau)$ be a tracial $\mathrm{C}^*$-algebra.  Let $\mathbf{X} = (X_1,\dots,X_d) \in \cA_{\sa}^d$.  Then we define $\lambda_{\mathbf{X}}: \C\ip{x_1,\dots,x_d} \to \C$ by $\lambda_{\mathbf{X}} = \tau \circ \rho_{\mathbf{X}}$.
\end{definition}

\begin{observation}
	If $(\cA,\tau)$ and $\mathbf{X}$ are as above, then $\lambda_{\mathbf{X}}$ is a non-commutative law with exponential bound $\norm{\mathbf{X}}_\infty$.  Conversely, if $R$ is an exponential bound for $\lambda_{\mathbf{X}}$, then
	\[
	\norm{\mathbf{X}}_\infty = \max_j \lim_{n \to \infty} [\sum_j \tau(X_j^{2n})]^{1/2n} \leq R.
	\]
	Hence, $\norm{\mathbf{X}}_\infty$ is the smallest exponential bound for $\lambda_{\mathbf{X}}$ and in particular it is uniquely determined by $\lambda_{\mathbf{X}}$.
\end{observation}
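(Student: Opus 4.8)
The plan is to check the two assertions in turn. For the first, recall that $\rho_{\mathbf{X}}\colon \C\ip{x_1,\dots,x_d}\to\cA$ is a unital $*$-homomorphism and $\lambda_{\mathbf{X}} = \tau\circ\rho_{\mathbf{X}}$, so unitality, positivity, and traciality of $\lambda_{\mathbf{X}}$ are inherited directly from those of $\tau$: for instance $\lambda_{\mathbf{X}}(p^*p) = \tau(\rho_{\mathbf{X}}(p)^*\rho_{\mathbf{X}}(p)) \ge 0$, and $\lambda_{\mathbf{X}}(pq) = \tau(\rho_{\mathbf{X}}(p)\rho_{\mathbf{X}}(q)) = \tau(\rho_{\mathbf{X}}(q)\rho_{\mathbf{X}}(p)) = \lambda_{\mathbf{X}}(qp)$. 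For the exponential bound I would use that a state on a unital $\mathrm{C}^*$-algebra is contractive for the operator norm (by Cauchy--Schwarz and $\phi(b)\le\norm{b}_\infty$ for $b\ge 0$), so that
\[
|\lambda_{\mathbf{X}}(x_{i_1}\cdots x_{i_\ell})| = |\tau(X_{i_1}\cdots X_{i_\ell})| \le \norm{X_{i_1}\cdots X_{i_\ell}}_\infty \le \norm{\mathbf{X}}_\infty^{\,\ell},
\]
using submultiplicativity of the operator norm together with $\norm{\mathbf{X}}_\infty = \max_j\norm{X_j}_\infty$; hence $\norm{\mathbf{X}}_\infty$ is an exponential bound.

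For the second assertion, the key observation is that, by Definition \ref{def:NCLp}, $\bigl[\sum_j \tau(X_j^{2n})\bigr]^{1/2n}$ is exactly the norm $\norm{\mathbf{X}}_{2n}$, and Lemma \ref{lem:NCHolder} already gives $\lim_{\alpha\to\infty}\norm{\mathbf{X}}_\alpha = \norm{\mathbf{X}}_\infty$; restricting to the even integers $\alpha = 2n$ yields the displayed equality $\norm{\mathbf{X}}_\infty = \lim_{n\to\infty}\bigl[\sum_j\tau(X_j^{2n})\bigr]^{1/2n}$. (If one prefers to argue directly rather than cite Lemma \ref{lem:NCHolder}, the $\le$ direction is $\tau(X_j^{2n}) = \tau((X_j^n)^*X_j^n) \le \norm{X_j}_\infty^{2n}$, while the $\ge$ direction follows from continuous functional calculus and faithfulness of $\tau$: a nonzero continuous bump supported near a point of $\mathrm{spec}(X_j)$ of modulus $\norm{X_j}_\infty$ cannot be annihilated by $\tau$, which forces $\tau(X_j^{2n})\ge c\,(\norm{X_j}_\infty-\epsilon)^{2n}$.) Now if $R$ is an exponential bound for $\lambda_{\mathbf{X}}$, then $\tau(X_j^{2n}) = \lambda_{\mathbf{X}}(x_j^{2n}) \le R^{2n}$ for every $j$ and $n$, so $\norm{\mathbf{X}}_{2n} = \bigl[\sum_j\tau(X_j^{2n})\bigr]^{1/2n} \le d^{1/2n}R$, and letting $n\to\infty$ gives $\norm{\mathbf{X}}_\infty \le R$.

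Combining the two parts, $\norm{\mathbf{X}}_\infty$ is an exponential bound for $\lambda_{\mathbf{X}}$ and is dominated by every exponential bound, hence is the smallest one; and since it equals $\lim_{n\to\infty}\bigl[\sum_j\lambda_{\mathbf{X}}(x_j^{2n})\bigr]^{1/2n}$, an expression involving only $\lambda_{\mathbf{X}}$, it is uniquely determined by $\lambda_{\mathbf{X}}$. The only non-formal ingredient is the limit formula identifying $\lim_n\norm{\mathbf{X}}_{2n}$ with $\norm{\mathbf{X}}_\infty$, which is where faithfulness of $\tau$ enters essentially (without it the left side could be strictly smaller); since this is already packaged into Lemma \ref{lem:NCHolder}, I do not expect any real obstacle beyond bookkeeping — in particular noting that the even integers suffice to recover the limit and that the factor $d^{1/2n}\to 1$ absorbs the $\ell^{2n}$-sum over coordinates.
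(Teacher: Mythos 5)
Your proof is correct and follows exactly the route the paper intends: the first part is the standard contractivity of a trace together with submultiplicativity of the operator norm, and the second part is precisely the moment-limit identity $\lim_{n\to\infty}\bigl[\sum_j\tau(X_j^{2n})\bigr]^{1/2n}=\norm{\mathbf{X}}_\infty$ already packaged in Lemma \ref{lem:NCHolder} (with your functional-calculus/faithfulness argument being the standard proof of that fact). The paper states this as an observation without proof, and your write-up, including the $d^{1/2n}\to 1$ bookkeeping and the remark that faithfulness is where the lower bound comes from, is a complete justification.
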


In the case of a single operator $X$, we can apply the spectral theorem to show that there is a unique probability measure $\mu_X$ on $\R$ satisfying
\[
\int_{\R} f\,d\mu_X = \tau(f(X)) \text{ for } f \in C_0(\R).
\]
Since $X$ is bounded, $\mu_X$ is compactly supported and thus makes sense to evaluate on polynomials.  If $p$ is a polynomial, then $\lambda_X[p] = \int_{\R} p\,d\mu_X$.  Thus, $\lambda_X$ is simply the linear functional on polynomials corresponding to the spectral distribution.

We use the notation $\lambda_{\mathbf{X}}$ in particular when $\cA = M_N(\C)$.  We denote by $\tr_N$ the normalized trace $(1/N) \Tr$ on $M_N(\C)$; recall that this is the unique (unital) trace on $M_N(\C)$.  Thus, for any $\mathbf{X} \in M_N(\C)_{\sa}^d$, a non-commutative law $\lambda_{\mathbf{X}}$ is unambiguously specified by the previous definition.  In the $d = 1$ case, the non-commutative law is given by the empirical spectral distribution.  Note that when $\mathrm{X}$ is a random $d$-tuple of matrices, we will use the notation $\lambda_{\mathbf{X}}$ by default to refer to the empirical non-commutative law, that is, the (random) non-commutative law of $\mathbf{X}$ with respect to $\tr_N$.

The next proposition shows that any non-commutative law can be realized by a self-adjoint $d$-tuple in  some tracial $\mathrm{C}^*$ or $\mathrm{W}^*$-algebra.  This is a version of the \emph{Gelfand-Naimark-Segal construction} (or \emph{GNS construction}).  A proof can be found in \cite[Proposition 5.2.14(d)]{AGZ2009}.

\begin{proposition}[GNS construction for non-commutative laws] \label{prop:GNS}
	Let $\lambda \in \Sigma_{d,R}$.  Then we may define a semi-inner product on $\C\ip{x_1,\dots,x_d}$ by
	\[
	\ip{p,q}_\lambda = \lambda(p^*q).
	\]
	Let $H_\lambda$ be the separation-completion of $\C\ip{x_1,\dots,x_d}$ with respect to this inner product, that is, the completion of $\C\ip{x_1,\dots,x_d} / \{p: \lambda(p^*p) = 0\}$, and let $[p]$ denote the equivalence class of a polynomial $p$ in $H_\lambda$.
	
	There is a unique unital $*$-homomorphism $\pi: \C\ip{x_1,\dots,x_d} \to B(H_\lambda)$ satisfying $\rho(p)[q] = [pq]$ for $p$, $q \in \C\ip{x_1,\dots,x_d}$.  Moreover, $\norm{\pi(x_j)} \leq R$.
	
	Let $X_j = \pi(x_j)$, let $\mathbf{X} = (X_1,\dots,X_d)$ and let $\mathrm{C}^*(\mathbf{X})$ and $\mathrm{W}^*(\mathbf{X})$ denote respectively the $\mathrm{C}^*$ and $\mathrm{W}^*$-algebras generated by the image of $\pi$.  Define $\tau: \mathrm{W}^*(\mathbf{X}) \to \C$ by $\tau(T) = \ip{[1],T[1]}_\lambda$.  Then $\tau$ is a faithful normal trace on $\mathrm{W}^*(\mathbf{X})$ and in particular a faithful trace on $\mathrm{C}^*(\mathbf{X})$.
\end{proposition}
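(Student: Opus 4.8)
The plan is to carry out the usual Gelfand--Naimark--Segal construction, the only genuinely analytic input being the operator-norm bound $\norm{\pi(x_j)} \le R$, which is where exponential boundedness of $\lambda$ enters. First I would record the formal preliminaries: positivity and traciality of $\lambda$ make $\ip{p,q}_\lambda = \lambda(p^*q)$ a positive semi-definite sesquilinear form on $\C\ip{x_1,\dots,x_d}$, so the separation-completion $H_\lambda$ is a genuine Hilbert space, $[\,\cdot\,]$ is well defined, and Cauchy--Schwarz holds; since $\lambda(1)=1$ we have $\norm{[1]}_\lambda = 1$, in particular $[1] \neq 0$.

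The main step is to show that for each fixed $q$ the left-multiplication map $p \mapsto pq$ descends to a bounded operator on $H_\lambda$, and it suffices to bound multiplication by each generator $x_j$. Fix $q$ and consider the functional $\phi_q$ on $\C\ip{x_1,\dots,x_d}$ given by $\phi_q(p) = \lambda(q^*pq)$; it is positive since $\phi_q(p^*p) = \lambda((pq)^*(pq)) \ge 0$, and it is exponentially bounded with growth rate $R$ because $q^* x_{i_1}\cdots x_{i_\ell} q$ is a bounded-size sum of words of length $\le \ell + 2\deg q$, so $|\phi_q(x_{i_1}\cdots x_{i_\ell})| \le C_q R^\ell$. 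Precomposing with the $*$-homomorphism $\C[t] \to \C\ip{x_1,\dots,x_d}$, $t \mapsto x_j$, yields a positive (hence Hamburger-moment) functional on the one-variable polynomial algebra with moment bound $\phi_q(x_j^{2n}) \le C_q R^{2n}$; I would invoke the Hamburger moment problem to get a finite positive Borel measure $\nu_{q,j}$ on $\R$ representing it, and the moment bound forces $\supp \nu_{q,j} \subseteq [-R,R]$ (if $\nu_{q,j}(\{|t|>R+\varepsilon\}) = \delta > 0$ then $\phi_q(x_j^{2n}) \ge \delta(R+\varepsilon)^{2n}$, impossible for large $n$). Hence
\[
\norm{[x_j q]}_\lambda^2 = \phi_q(x_j^2) = \int t^2 \,d\nu_{q,j} \le R^2 \int d\nu_{q,j} = R^2\,\phi_q(1) = R^2 \norm{[q]}_\lambda^2,
\]
so left-multiplication by $x_j$ extends to $\pi(x_j) \in B(H_\lambda)$ with $\norm{\pi(x_j)}\le R$, and then left-multiplication by an arbitrary $p$ extends to a bounded $\pi(p)$. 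That $p \mapsto \pi(p)$ is a unital $*$-homomorphism follows from $\pi(p_1p_2)[q] = [p_1p_2q] = \pi(p_1)\pi(p_2)[q]$ on the dense set $\{[q]\}$ and from $\ip{\pi(p)[q_1],[q_2]}_\lambda = \lambda(q_1^* p^* q_2) = \ip{[q_1],\pi(p^*)[q_2]}_\lambda$; uniqueness is immediate since $\pi(p)[1] = [p]$ and $[1]$ is cyclic. In particular each $X_j = \pi(x_j)$ is self-adjoint with $\norm{X_j}\le R$.

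Finally, set $\tau(T) = \ip{[1],T[1]}_\lambda$ on $\mathrm{W}^*(\mathbf{X})$. As a vector state it is positive, unital ($\tau(1) = \norm{[1]}_\lambda^2 = 1$), and normal, i.e.\ $\sigma$-WOT continuous. On the $\sigma$-WOT-dense $*$-subalgebra $\pi(\C\ip{x_1,\dots,x_d})$ it is tracial: $\tau(\pi(p)\pi(q)) = \ip{[1],[pq]}_\lambda = \lambda(pq) = \lambda(qp) = \tau(\pi(q)\pi(p))$. Since left- and right-multiplication by a fixed operator and $\tau$ itself are all $\sigma$-WOT continuous, fixing $S \in \pi(\C\ip{x_1,\dots,x_d})$ the functional $T \mapsto \tau(TS) - \tau(ST)$ vanishes on a dense set hence identically, and then fixing arbitrary $T$ and varying $S$ gives traciality of $\tau$ on all of $\mathrm{W}^*(\mathbf{X})$. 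For faithfulness, suppose $\tau(T^*T)=0$; for any $S\in\mathrm{W}^*(\mathbf{X})$, traciality and the inequality $\tau(AB)\le \norm{A}\tau(B)$ for $A,B\ge 0$ give $\tau((TS)^*(TS)) = \tau(T^*T\,SS^*) \le \norm{S}^2\tau(T^*T) = 0$, so $TS[1]=0$; taking $S=\pi(q)$ over all $q$ shows $T$ annihilates the dense set $\{[q]\}$, hence $T=0$. Thus $\tau$ is a faithful normal trace on $\mathrm{W}^*(\mathbf{X})$ and, restricting, a faithful trace on $\mathrm{C}^*(\mathbf{X})$. The hard part is the second paragraph --- extracting the operator-norm estimate $\norm{\pi(x_j)}\le R$ from exponential boundedness via the one-variable moment problem; once the left regular representation is known to be bounded, everything else is routine GNS bookkeeping.
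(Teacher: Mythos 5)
Your proof is correct. Note that the paper does not prove this proposition itself but simply cites \cite[Proposition 5.2.14(d)]{AGZ2009}, so the relevant comparison is with the standard GNS argument given there. The only non-routine point, as you correctly identify, is the bound $\norm{\pi(x_j)}\leq R$, and here your route differs from the usual one: the standard argument applies Cauchy--Schwarz iteratively to the positive functional $\phi_q(p)=\lambda(q^*pq)$, namely $\phi_q(x_j^2)\leq \phi_q(x_j^4)^{1/2}\phi_q(1)^{1/2}\leq\dots\leq \phi_q(x_j^{2^k})^{2^{1-k}}\phi_q(1)^{1-2^{1-k}}$, and then lets $k\to\infty$ using $\phi_q(x_j^{2^k})\leq C_q R^{2^k}$; this gives $\phi_q(x_j^2)\leq R^2\phi_q(1)$ in a few lines with no external input. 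You instead restrict $\phi_q$ to the polynomials in the single variable $x_j$, invoke Hamburger's theorem to get a representing measure, and use the moment growth to force the support into $[-R,R]$. This is valid (the restricted functional is indeed positive on squares since $x_j^*=x_j$, and your contradiction argument for the support is correct), but it is heavier machinery than needed for the same one-line conclusion $\phi_q(x_j^2)\leq R^2\phi_q(1)$; the iteration trick also has the minor advantage of being purely algebraic and not requiring the (nontrivial) existence half of the moment problem. Your handling of the remaining points --- extension of traciality from the $\sigma$-WOT-dense image of $\pi$ to $\mathrm{W}^*(\mathbf{X})$ via separate $\sigma$-WOT continuity of multiplication and normality of the vector state, and faithfulness via $\tau(T^*T\,SS^*)\leq\norm{S}^2\tau(T^*T)$ together with cyclicity of $[1]$ --- is the standard bookkeeping and is correct as written.
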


\begin{definition}
	In the situation of the previous proposition, we call $\mathrm{C}^*(\mathbf{X})$ and $\mathrm{W}^*(\mathbf{X})$, the \emph{$\mathrm{C}^*$ and $\mathrm{W}^*$-algebras associated to $\lambda$}.
\end{definition}

The operator algebras associated to $\lambda$ are canonical in the sense that any other construction would yield an isomorphic $\mathrm{W}^*$ or $\mathrm{C}^*$-algebra.  The following lemma can be deduced from the well-known properties of the GNS representation associated to a faithful trace $\tau$ on a $\mathrm{C}^*$ or $\mathrm{W}^*$-algebra $\cA$ (which gives the so-called standard form of a tracial $\mathrm{W}^*$-algebra).

\begin{lemma} \label{lem:lawisomorphism}
	Let $(\cA,\tau)$ and $(\cB,\sigma)$ be tracial $\mathrm{C}^*$-algebras.  Let $\mathbf{X} \in \cA_{\sa}^d$ and $\mathbf{Y} \in \cB_{\sa}^d$ such that $\lambda_{\mathbf{X}} = \lambda_{\mathbf{Y}}$.  Let $\mathrm{C}^*(\mathbf{X})$ and $\mathrm{C}^*(\mathbf{Y})$ be the $\mathrm{C}^*$-subalgebras of $\cA$ and $\cB$ generated by $\mathbf{X}$ and $\mathrm{Y}$ respectively.  Then there is a unique tracial  $\mathrm{C}^*$-isomorphism $\rho: \mathrm{C}^*(\mathbf{X}) \to \mathrm{C}^*(\mathbf{Y})$ such that $\rho(X_j) = Y_j$.  The same result holds with tracial $\mathrm{W}^*$-algebras rather than tracial $\mathrm{C}^*$-algebras.
\end{lemma}

Next, we review Voiculescu's definition of free independence \cite{Voiculescu1985,Voiculescu1986}, which provides a probabilistic viewpoint on classical notion of free products of tracial $\mathrm{W}^*$-algebras.  For background material, see e.g.\ \cite{VDN1992,NS2006,MS2017}.

\begin{definition}[Free independence]
	Let $\cA$ be a $*$-algebra and $\tau: \cA \to \C$ a trace.  Then unital $*$-subalgebras $(\cA_i)_{i \in I}$ are said to be \emph{freely independent} if $\tau(a_1 \dots a_\ell) = 0$ whenever $a_1 \in \cA_{i_1}$, \dots, $a_\ell \in \cA_{i_\ell}$ such that $\tau(a_j) = 0$ and $i_1 \neq i_2 \neq \dots \neq i_\ell$.  Similarly, if $I$ is an index set and $\mathbf{X}_i$ is a $d_i$-tuple of operators in $\cA$ for each $i \in I$, we say that $(\mathbf{X}_i)_{i \in I}$ \emph{freely independent} if the $*$-algebras $\cA_i$ generated by $\mathbf{X}_i$ are freely independent.
\end{definition}

\begin{lemma}[Free independence determines joint moments]
	Let $(\cA,\tau)$ be a $*$-algebra with a trace.  Suppose that $\mathbf{X}_i = (X_{i,d_1},\dots,X_{i,d_i})$ is a $d_i$-tuple of self-adjoint operators for each $i$ in some index set $I$, such that $(\mathbf{X}_i)_{i \in I}$ are freely independent.  Then for any non-commutative polynomial $p$ in $(\mathbf{X}_i)_{i\in I}$, the trace $\tau(p((\mathbf{X}_i)_{i \in I}))$ is uniquely determined from the traces $\tau(q(\mathbf{X}_i))$ for $q \in \C\ip{x_1,\dots,x_{d_i}}$ and $i \in I$.  In fact, there is a universal formula for $\tau(p((\mathbf{X}_i)_{i \in I}))$ using sums and products of the traces $\tau(q(\mathbf{X}_i))$ that does not depend on the particular $\cA$ and $\tau$.  In particular, (if $I$ is finite) the non-commutative law of $(\mathbf{X}_i)_{i \in I}$ is uniquely determined by $(\lambda_{\mathbf{X}_i})_{i \in I}$.
\end{lemma}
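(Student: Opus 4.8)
The plan is to prove the classical moment formula for freeness by the usual centering argument. For each $i \in I$ let $\cA_i \subseteq \cA$ be the unital $*$-subalgebra generated by $\mathbf{X}_i$; by hypothesis the family $(\cA_i)_{i \in I}$ is freely independent. Since $p$ is a finite $\C$-linear combination of words in the entries of the $\mathbf{X}_i$ and $\tau$ is linear, it suffices to treat a single word: I would show that for every $\ell \geq 0$, every $i_1, \dots, i_\ell \in I$, and every $a_j \in \cA_{i_j}$, the number $\tau(a_1 \cdots a_\ell)$ is given by a formula built from sums and products of the ``single-algebra moments'' $\tau(b)$, $b$ a product of some of the $a_j$'s lying in a common $\cA_i$ --- and hence, once each $a_j$ is decomposed into monomials in $\mathbf{X}_{i_j}$, out of the numbers $\tau(q(\mathbf{X}_i))$ with $q$ a monomial --- by a formula whose combinatorial shape depends only on $(i_1,\dots,i_\ell)$ and the $a_j$'s, not on $(\cA,\tau)$.

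First I would prove this claim by strong induction on $\ell$. For $\ell = 0$ we have $\tau(1) = 1$, and for $\ell = 1$ the quantity $\tau(a_1)$ with $a_1\in\cA_{i_1}$ is itself a single-algebra moment. For $\ell \geq 2$: if two adjacent indices agree, $i_j = i_{j+1}$, then $a_j a_{j+1} \in \cA_{i_j}$, so $\tau(a_1 \cdots a_\ell)$ is a moment of a word with $\ell - 1$ factors and the inductive hypothesis applies. Otherwise the word is linearly alternating, $i_1 \neq i_2 \neq \cdots \neq i_\ell$; set $\mathring{a}_j := a_j - \tau(a_j)\,1 \in \cA_{i_j}$ so that $\tau(\mathring{a}_j) = 0$. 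Substituting $a_j = \mathring{a}_j + \tau(a_j)\,1$ and expanding gives
\[
a_1 \cdots a_\ell = \mathring{a}_1 \cdots \mathring{a}_\ell + \sum_{\emptyset \neq S \subseteq \{1,\dots,\ell\}} \Bigl( \prod_{j \in S} \tau(a_j) \Bigr) w_S,
\]
where $w_S$ is the ordered product of the $\mathring{a}_j$ with $j \notin S$. Applying $\tau$, the first term vanishes by the very definition of free independence (the $\mathring{a}_j$ are centered and their indices are linearly alternating). For each nonempty $S$, $w_S$ is a product of at most $\ell - 1$ elements of the $\cA_i$'s; re-merging any adjacent factors that share an index (each merged factor still lies in one $\cA_i$), we turn $\tau(w_S)$ into a moment of a strictly shorter alternating word and apply the inductive hypothesis. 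Together with the fact that each $\tau(a_j)$ is a single-algebra moment, the identity $\tau(a_1 \cdots a_\ell) = \sum_{S \neq \emptyset} \bigl( \prod_{j \in S} \tau(a_j) \bigr) \tau(w_S)$ exhibits $\tau(a_1 \cdots a_\ell)$ in the desired form. Since every single-algebra moment of monomials in one $\mathbf{X}_i$ is again such a monomial's trace, applying $\tau$ and linearity to the monomial expansion of $p$ completes the first two assertions.

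For the last sentence, assume $I$ finite. The non-commutative law of the concatenated tuple $(\mathbf{X}_i)_{i \in I}$ is the linear functional on the non-commutative polynomial algebra in all the variables whose value on each monomial is exactly a quantity $\tau(a_1 \cdots a_\ell)$ of the kind analyzed above; by the universal formulas, this value depends only on the numbers $\tau(q(\mathbf{X}_i)) = \lambda_{\mathbf{X}_i}(q)$, that is, only on $(\lambda_{\mathbf{X}_i})_{i \in I}$. (Exponential boundedness of the joint law, where that is part of the setup, is automatic since each $\mathbf{X}_i$ sits in a tracial $\mathrm{C}^*$-algebra with finite $\norm{\mathbf{X}_i}_\infty$.) Hence the joint law is determined by the individual laws.

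I do not expect a genuine obstacle here: this is the textbook fact that free independence forces a universal moment formula. The only point requiring attention is the bookkeeping that makes the induction well-founded --- each reduction step (merging two adjacent same-index factors, or replacing a factor by its scalar trace in the expansion) strictly decreases the number of factors, so the recursion terminates --- together with the observation that the process never leaves the class of elements that are $\C$-combinations of monomials in a single $\mathbf{X}_i$, so that the formula really does bottom out in the numbers $\tau(q(\mathbf{X}_i))$.
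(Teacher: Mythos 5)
Your proof is correct and is essentially the standard centering-and-induction argument that the paper itself does not spell out but delegates to the cited reference (Voiculescu--Dykema--Nica, Proposition 2.5.5), so the two approaches coincide. One tiny remark: since $\cA$ is only assumed to be a $*$-algebra with a trace, your parenthetical about exponential boundedness via $\norm{\mathbf{X}_i}_\infty$ is extraneous here --- the final assertion is just about the joint moments being determined by the individual moment functionals, which your universal formula already gives.
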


For proof, see \cite[Proposition 2.5.5]{VDN1992}.

\begin{lemma}[Free conditional expectations] \label{lem:freeCE}
	Let $\mathbf{X} \in \cA_{\sa}^d$ and $\mathbf{Y} \in \cA_{\sa}^{d'}$ be freely independent in $(\cA,\tau)$.  Let $E_{\mathrm{W}^*(\mathbf{X})}: \cA \to \mathrm{W}^*(\mathbf{X})$ be the unique trace-preserving conditional expectation.  If $p(\mathbf{X},\mathbf{Y})$ is a non-commutative polynomial of $\mathbf{X}$ and $\mathbf{Y}$, then $E_{\mathrm{W}^*(\mathbf{X})}[p(\mathbf{X},\mathbf{Y})]$ is a non-commutative polynomial of $\mathbf{X}$.  Furthermore, the coefficients are given by a universal formula in terms of sums and products of traces of non-commutative polynomials in $\mathbf{X}$ and traces of non-commutative polynomials in $\mathbf{Y}$.
\end{lemma}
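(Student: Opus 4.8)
The plan is to reduce to monomials and then produce an explicit ``universal'' non-commutative polynomial $q$ in $\mathbf{X}$ that must coincide with $E_{\mathrm{W}^*(\mathbf{X})}[p(\mathbf{X},\mathbf{Y})]$. By linearity it suffices to treat the case that $p$ is a single monomial; grouping consecutive letters of the same kind, I would write
\[
p(\mathbf{X},\mathbf{Y}) = v_0(\mathbf{X})\,w_1(\mathbf{Y})\,v_1(\mathbf{X})\cdots w_k(\mathbf{Y})\,v_k(\mathbf{X}),
\]
with the $v_i$ (possibly trivial) words in $\mathbf{X}$ and the $w_i$ nontrivial words in $\mathbf{Y}$. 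Using the $\mathrm{W}^*(\mathbf{X})$-bimodule property of $E := E_{\mathrm{W}^*(\mathbf{X})}$, we may assume $v_0 = v_k = 1$, so either $p$ is a word in $\mathbf{X}$ alone (and $E$ acts as the identity) or $p$ begins and ends with a $\mathbf{Y}$-word. Recall that $E$ is the orthogonal projection of $L^2(\cA,\tau)$ onto the $\norm{\cdot}_2$-closure of the polynomials in $\mathbf{X}$; so it is enough to exhibit $q \in \C\ip{x_1,\dots,x_d}$ whose coefficients are given by a fixed polynomial expression in the numbers $\tau(a(\mathbf{X}))$ ($a \in \C\ip{x_1,\dots,x_d}$) and $\tau(b(\mathbf{Y}))$ ($b \in \C\ip{x_1,\dots,x_{d'}}$) such that $\tau(z(\mathbf{X})\,q(\mathbf{X})) = \tau(z(\mathbf{X})\,p(\mathbf{X},\mathbf{Y}))$ for every word $z$ in $\mathbf{X}$; uniqueness of the trace-preserving conditional expectation then forces $q(\mathbf{X}) = E[p(\mathbf{X},\mathbf{Y})]$, and the universality of the coefficient formula is automatic since every step of the construction below refers only to $\lambda_{\mathbf{X}}$ and $\lambda_{\mathbf{Y}}$.

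To build $q$ I would run the centering recursion that underlies the proof that free independence determines joint moments (\cite[Prop.\ 2.5.5]{VDN1992}). Decompose each $\mathbf{Y}$-factor as $w_i(\mathbf{Y}) = \tau(w_i(\mathbf{Y}))\,1 + \mathring{w}_i(\mathbf{Y})$ and expand $p$ into $2^k$ terms indexed by the subset $S$ of $\mathbf{Y}$-factors kept centered; deleting the factors outside $S$ merges the surrounding $\mathbf{X}$-words, so the $S$-term equals $\bigl(\prod_{i \notin S}\tau(w_i(\mathbf{Y}))\bigr)$ times an alternating word $\mathring{w}_{i_1}(\mathbf{Y})\,u_1(\mathbf{X})\cdots u_{s-1}(\mathbf{X})\,\mathring{w}_{i_s}(\mathbf{Y})$ (in the normalization $v_0=v_k=1$). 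The term $S=\varnothing$ is the scalar $\prod_i \tau(w_i(\mathbf{Y}))$, which $E$ fixes. For $S\neq\varnothing$ I would then center the internal $\mathbf{X}$-words $u_1,\dots,u_{s-1}$; each time one of them is replaced by its scalar $\tau(u_j(\mathbf{X}))$, the two flanking centered $\mathbf{Y}$-words become adjacent, their product lies in $\mathrm{W}^*(\mathbf{Y})$, and recentering that product strictly decreases the number of centered $\mathbf{Y}$-factors. Iterating (a double induction, first on the number of $\mathbf{Y}$-factors and then on total word length) rewrites $p$ as a linear combination, with coefficients polynomial in the $\tau(a(\mathbf{X}))$'s and $\tau(b(\mathbf{Y}))$'s, of (i) words in $\mathbf{X}$ alone, and (ii) alternating words all of whose factors are centered and which both begin and end with a centered $\mathbf{Y}$-factor.

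Applying $E$ then finishes it. Type-(i) terms are fixed by $E$; for a type-(ii) term $\xi = \mathring{w}_1(\mathbf{Y})\,\mathring{u}_1(\mathbf{X})\cdots\mathring{w}_m(\mathbf{Y})$ I would check that $[\xi]$ is orthogonal in $L^2(\cA,\tau)$ to every polynomial in $\mathbf{X}$: for a word $z$ in $\mathbf{X}$, splitting $z(\mathbf{X}) = \tau(z(\mathbf{X}))\,1 + \mathring{z}(\mathbf{X})$ makes $\tau(z(\mathbf{X})\,\xi)$ a sum of traces of alternating words whose factors are all centered and whose consecutive algebra-indices differ, both of which vanish by the definition of free independence (the second because $\tau(\xi)=0$ already). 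Hence $E[\xi]=0$, and collecting the surviving type-(i) contributions yields the polynomial $q$ with the asserted universal coefficient formula.

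The only genuinely delicate part is the combinatorial bookkeeping in the middle step: verifying that the double induction terminates and that the coefficients remain \emph{polynomial} (rather than merely rational) in the moments at every stage. This is purely formal and exactly parallel to the argument in \cite[Prop.\ 2.5.5]{VDN1992}. An alternative route, which avoids writing out the recursion, is to invoke the description of the free product of $\mathrm{W}^*(\mathbf{X})$ and $\mathrm{W}^*(\mathbf{Y})$ amalgamated over $\C$ from operator-valued free probability, for which the conditional expectation onto a single factor is known to be given by precisely such a universal formula in the individual laws.
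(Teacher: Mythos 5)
Your argument is correct, and it is essentially the standard one: the paper itself does not prove this lemma but only cites \cite[\S 2.5, Theorem 19]{MS2017}, \cite{DY2020}, and \cite[Proposition 3.2]{Biane1997}, and the centering recursion plus the observation that an alternating word of centered factors beginning and ending in $\mathrm{W}^*(\mathbf{Y})$ is $L^2$-orthogonal to $\mathrm{W}^*(\mathbf{X})$ is exactly the mechanism behind those references. One small bookkeeping slip: after expanding over the subset $S$ of centered $\mathbf{Y}$-factors (with $v_0=v_k=1$), the $S$-term need not begin and end with a centered $\mathbf{Y}$-word --- if $1\notin S$ or $k\notin S$ it acquires an $\mathbf{X}$-word at the left or right end --- so its general form is $u_0(\mathbf{X})\,\mathring{w}_{i_1}(\mathbf{Y})\cdots\mathring{w}_{i_s}(\mathbf{Y})\,u_s(\mathbf{X})$; this is harmless, since the bimodule property you already invoke lets you pull $u_0,u_s$ out of $E$ and reduce to the case you treat. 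With that repair, the double induction terminates, the coefficients stay polynomial in the moments of $\mathbf{X}$ and $\mathbf{Y}$ (no division ever occurs), and the orthogonality step correctly uses that $\tau$ vanishes on alternating words of centered elements together with $L^2$-density of polynomials in $\mathbf{X}$ in $\mathrm{W}^*(\mathbf{X})$ to conclude $E[\xi]=0$.
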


See \cite[\S 2.5, Theorem 19]{MS2017} or \cite[proof of Lemma 2.1]{DY2020}; it can also be proved from the argument used much earlier in \cite[proof of Proposition 3.2]{Biane1997}.

\begin{lemma}[Free products]
	Let $(\cA_1,\tau_1)$, \dots, $(\cA_n,\tau_n)$ be tracial $\mathrm{W}^*$-algebras.  Then there exists a tracial $\mathrm{W}^*$-algebra
	\[
	(\cA,\tau) = (\cA_1 * \dots * \cA_n, \tau_1 * \dots * \tau_n)
	\]
	with canonical trace-preserving inclusions $\iota_j: (\cA_j, \tau_j) \to (\cA,\tau)$ such that $\cA$ is the $\mathrm{W}^*$-algebra generated by the images $\iota_1(\cA_1)$, \dots, $\iota_n(\cA_n)$ and these images are freely independent.  The free product is commutative and associative up to a canonical isomorphism.
\end{lemma}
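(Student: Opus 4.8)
The plan is to construct $(\cA_1 * \cdots * \cA_n, \tau_1 * \cdots * \tau_n)$ through the free product of pointed Hilbert spaces, as in \cite[\S 1.6]{VDN1992}. First I would take the GNS data of each factor: $(\cA_j,\tau_j)$ acts on $L^2(\cA_j,\tau_j)$ with distinguished cyclic and separating unit vector $\hat 1_j$, and I write $H_j^\circ = L^2(\cA_j,\tau_j) \ominus \C\hat 1_j$. Then form the pointed Hilbert space
\[
(H,\xi) = \C\xi \oplus \bigoplus_{m\geq 1} \bigoplus_{i_1\neq i_2\neq\cdots\neq i_m} H_{i_1}^\circ\otimes\cdots\otimes H_{i_m}^\circ .
\]
For fixed $j$, rearranging the summands according to whether the leftmost tensor leg lies in $H_j^\circ$ yields a unitary $H \cong L^2(\cA_j,\tau_j)\otimes K_j$, with $K_j$ the analogous direct sum over words whose leftmost index is $\neq j$; conjugating the left action of $\cA_j$ on $L^2(\cA_j,\tau_j)$ through this identification gives a normal unital injective $*$-homomorphism $\lambda_j\colon \cA_j \to B(H)$ with $\lambda_j(a)\xi = a\hat 1_j$. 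I then set $\cA = \mathrm{W}^*\bigl(\bigcup_j \lambda_j(\cA_j)\bigr) \subseteq B(H)$, $\tau(T) = \ip{\xi, T\xi}$ (a normal state on $\cA$), and take $\iota_j := \lambda_j$.

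Next I would dispatch the structural claims other than traciality. Trace-preservation is immediate from $\tau(\lambda_j(a)) = \ip{\hat 1_j, a\hat 1_j}_{\tau_j} = \tau_j(a)$. For cyclicity and free independence, decompose $a\hat 1_j = \tau_j(a)\hat 1_j + (a\hat 1_j)^\circ$ with $(a\hat 1_j)^\circ \in H_j^\circ$ and run an induction on word length: a product $\lambda_{i_1}(a_1)\cdots\lambda_{i_m}(a_m)$ with $i_1\neq\cdots\neq i_m$ and $\tau_{i_k}(a_k) = 0$, applied to $\xi$, equals the elementary tensor $(a_1\hat 1_{i_1})^\circ\otimes\cdots\otimes(a_m\hat 1_{i_m})^\circ$ plus a combination of strictly shorter alternating elementary tensors. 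This triangular system shows that the span of $\cA\xi$ contains every elementary tensor, so $\xi$ is cyclic for $\cA$; and pairing the displayed vector against $\xi$ gives $\tau(\lambda_{i_1}(a_1)\cdots\lambda_{i_m}(a_m)) = 0$, which is exactly free independence of the $\iota_j(\cA_j)$.

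The step I expect to be the main obstacle is showing that $\tau$ is a trace. For this I would introduce the canonical conjugation $J_j$ on $L^2(\cA_j,\tau_j)$, $J_j(x\hat 1_j) = x^*\hat 1_j$ (isometric because $\tau_j$ is a trace), which fixes $\hat 1_j$ and hence restricts to an antiunitary involution of $H_j^\circ$; then define an antiunitary $J$ on $H$ by $J\xi = \xi$ and $J(\eta_1\otimes\cdots\otimes\eta_m) = (J_{i_m}\eta_m)\otimes\cdots\otimes(J_{i_1}\eta_1)$. The crux is the identity $Jx\xi = x^*\xi$ for every $x$ in the $*$-algebra $\cA_0$ generated by $\bigcup_j\lambda_j(\cA_j)$, which I would prove by induction on word length, using the explicit formula for the action of $\lambda_j(a)$ on a word together with the relation $J_j\,a\,J_j = \rho_j(a^*)$ (right multiplication) on $L^2(\cA_j,\tau_j)$. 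Granting this, a short computation with the antiunitary involution $J$ gives $\tau(xy) = \tau(yx)$ for $x,y\in\cA_0$, and since $\cA_0$ is $\sigma$-weakly dense in $\cA$ and $\tau$ is normal, $\tau$ is a trace on all of $\cA$.

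Finally, $\tau$ is faithful: $N = \{x\in\cA : x\xi = 0\} = \{x : \tau(x^*x) = 0\}$ is a $\sigma$-weakly closed two-sided ideal (a left ideal by Cauchy–Schwarz, a right ideal by traciality and positivity of $\tau$), hence $N = \cA p$ for a central projection $p$ with $p\xi = 0$; then $\cA p\,\xi = 0$, so $\overline{\cA\xi}\subseteq (1-p)H$, and cyclicity of $\xi$ forces $p = 0$. Thus $(\cA,\tau)$ is a tracial $\mathrm{W}^*$-algebra with the stated generation and freeness properties. Commutativity up to canonical isomorphism is visible from the symmetry of the construction under permuting the index set (the unitary reordering the tensor legs intertwines the two representations), and associativity reduces to the associativity of the pointed Hilbert space free product together with the observation that, by construction, $L^2(\cA_1 * \cA_2, \tau_1 * \tau_2)$ with its distinguished vector is canonically isometric to $L^2(\cA_1,\tau_1) * L^2(\cA_2,\tau_2)$ (cyclicity of $\xi$ again) — so the iterated and the direct free products act on the same pointed Hilbert space with matching generating algebras.
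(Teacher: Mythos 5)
Your construction is correct and is precisely the standard Voiculescu free-product-of-pointed-Hilbert-spaces argument that the paper itself invokes by citation (\cite[Propositions 1.5.5 and 2.5.3]{VDN1992}, \cite[Lectures 6-7]{NS2006}): GNS spaces, the identification $H\cong L^2(\cA_j,\tau_j)\otimes K_j$, the vacuum vector state, traciality via the reversed conjugation $J$, and faithfulness via the central projection supporting $\{x:x\xi=0\}$. The only cosmetic remark is that when all $a_k$ are centered the vector $\lambda_{i_1}(a_1)\cdots\lambda_{i_m}(a_m)\xi$ is exactly the elementary tensor (no shorter terms appear), which slightly streamlines your cyclicity and freeness step but changes nothing in substance.
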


For proof, refer to \cite[Propositions 1.5.5 and 2.5.3]{VDN1992} or \cite[Lectures 6-7]{NS2006}.

\begin{definition}[Standard semicircular family]
	A $d$-tuple $\mathbf{S} = (S_1,\dots,S_d)$ from $(\cA,\tau)$ is said to be a \emph{standard semicircular family} if $S_1$, \dots, $S_d$ are freely independent and the spectral measure of each $S_j$ with respect to $\tau$ is $(1/2\pi) \sqrt{4 - t^2} \mathbf{1}_{[-2,2]}(t)\,dt$.
\end{definition}

\begin{lemma}[Free Brownian motion] \label{lem:Brownian}
	There exists a tracial $\mathrm{W}^*$-algebra $(\cB,\sigma)$ and self-adjoint $d$-tuples $(\mathcal{S}(t))_{t \in [0,\infty)}$ from $\cB$ such that
	\begin{enumerate}[(1)]
		\item $\mathcal{S}(0) = 0$;
		\item $(\mathcal{S}(t_1) - \mathcal{S}(t_0)) / (t_1 - t_0)^{1/2}$ is a standard semicircular family for each $t_0 < t_1$;
		\item $\mathcal{S}(t_1) - \mathcal{S}(t_0)$, \dots, $\mathcal{S}(t_m) - \mathcal{S}(t_{m-1})$ are freely independent whenever $t_0 < t_1 < \dots < t_m$;
		\item $(\cB,\sigma)$ is generated as a $\mathrm{W}^*$-algebra by $(\mathcal{S}(t))_{t \in [0,\infty)}$.
	\end{enumerate}
	Moreover, $(\cB,\sigma)$ and $(\mathcal{S}(t))_{t \in [0,\infty)}$ are unique up to a $\mathrm{W}^*$-isomorphism that preserves the generators.  We call $\mathcal{S}(t)$ a \emph{$d$-variable free Brownian motion}.
\end{lemma}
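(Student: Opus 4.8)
The plan is to build $(\cB,\sigma)$ concretely on a full (free) Fock space, read off properties (1)--(4) from orthogonality relations among indicator functions, and then prove uniqueness by showing that (1)--(3) already determine every mixed moment of the family $(\mathcal{S}_j(t))_{t\ge 0,\ 1\le j\le d}$, so that the GNS/standard-form argument behind Lemma \ref{lem:lawisomorphism} (which applies verbatim to a countable generating family) yields the required isomorphism.

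\textbf{Existence.} Let $H$ be the real Hilbert space $L^2([0,\infty);\R)\otimes \R^d$, with complexification $H_\C$, and let $\mathcal{F}(H_\C)=\C\Omega\oplus\bigoplus_{n\ge 1}H_\C^{\otimes n}$ be the full Fock space with vacuum vector $\Omega$. For $h\in H_\C$ let $\ell(h)$ be the left creation operator and $s(h)=\ell(h)+\ell(h)^*$, which is self-adjoint for $h\in H$ and satisfies $\norm{s(h)}\le 2\norm{h}$. Recall the standard facts (see e.g.\ \cite{VDN1992,NS2006}): the vacuum state $\tau_\Omega(T)=\ip{\Omega,T\Omega}$ restricts to a faithful normal trace on $\Gamma(H)=\mathrm{W}^*(\{s(h):h\in H\})$; for any family of pairwise orthogonal closed subspaces $(K_i)$ of $H$ the algebras $\mathrm{W}^*(\{s(h):h\in K_i\})$ are freely independent; and if $h_1,\dots,h_k$ are orthonormal then $(s(h_1),\dots,s(h_k))$ is a standard semicircular family. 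Define $\mathcal{S}_j(t)=s(\mathbf{1}_{[0,t]}\otimes e_j)$ for the standard basis $e_1,\dots,e_d$ of $\R^d$, put $\mathcal{S}(t)=(\mathcal{S}_1(t),\dots,\mathcal{S}_d(t))$, let $\cB=\mathrm{W}^*(\{\mathcal{S}_j(t):t\ge 0,\ 1\le j\le d\})$, and let $\sigma=\tau_\Omega|_{\cB}$, a faithful normal trace. Then (1) holds since $\mathbf{1}_{[0,0]}=0$; for $t_0<t_1$ the vectors $(t_1-t_0)^{-1/2}\,\mathbf{1}_{(t_0,t_1]}\otimes e_j$ are orthonormal, giving (2); for $t_0<\dots<t_m$ the subspaces $\Span\{\mathbf{1}_{(t_{i-1},t_i]}\otimes e_j:j\}$ are pairwise orthogonal, giving (3); and since step functions are dense in $H$ and $h\mapsto s(h)$ is norm-continuous, $s(h)\in\cB$ for all $h\in H$, so $\cB=\Gamma(H)$ and (4) holds.

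\textbf{Uniqueness.} Suppose $(\cB',\sigma')$ with $(\mathcal{S}'(t))_{t\ge 0}$ also satisfies (1)--(4). Any word in the operators $\mathcal{S}'_j(t)$ involves finitely many times $0\le t_1<\dots<t_m$; setting $t_0=0$, writing $\mathcal{S}'_j(t_i)=\sum_{r=1}^i\big(\mathcal{S}'_j(t_r)-\mathcal{S}'_j(t_{r-1})\big)$ and expanding, $\sigma'$ of that word becomes a finite sum of traces of words in the increment tuples $\mathcal{S}'(t_r)-\mathcal{S}'(t_{r-1})$; by (3) these tuples are freely independent, by (2) each is a rescaled standard semicircular family with explicitly known moments, so the lemma that free independence determines joint moments expresses the value by a universal formula. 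The same computation applies to $(\cB,\sigma)$, so the two families have identical mixed moments. Hence $\mathcal{S}_j(t)\mapsto\mathcal{S}'_j(t)$ extends to a trace-preserving $*$-isomorphism of the generated $*$-algebras, and the GNS/standard-form argument underlying Lemma \ref{lem:lawisomorphism}, applied to this countable generating family, promotes it to a normal trace-preserving $\mathrm{W}^*$-isomorphism $\cB\to\cB'$; property (4) on both sides makes it onto $\cB'$.

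\textbf{Main obstacle.} The only non-formal input is the structural fact that $\tau_\Omega$ is a \emph{faithful normal trace} on $\Gamma(H)$: traciality is a non-crossing pair-partition computation on words in the $s(h)$, and faithfulness then follows because $\Omega$ is cyclic for $\Gamma(H)$ and hence separating in the presence of a trace. The bookkeeping in expanding a word into increments for the uniqueness step is routine but must be done carefully. Both points are standard in free probability, so I would cite them rather than reprove them.
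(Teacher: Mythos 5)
Your proposal is correct, and it is essentially the proof the paper delegates to its citations (\cite[\S 5]{Speicher1990}, \cite[\S 2.6]{VDN1992}): the standard full-Fock-space construction with $s(h)=\ell(h)+\ell(h)^*$ over $L^2([0,\infty);\R)\otimes\R^d$, plus uniqueness by increment-expansion of moments and the GNS argument behind Lemma \ref{lem:lawisomorphism}. The only quibble is the phrase ``countable generating family'' — the family $(\mathcal{S}(t))_{t\ge 0}$ is uncountable, though this is harmless since the moment/GNS argument works for arbitrary families (or one may pass to rational times using $\norm{\mathcal{S}_j(t)-\mathcal{S}_j(s)}=2|t-s|^{1/2}$).
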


For proof, refer to \cite[\S 5]{Speicher1990} or \cite[\S 2.6]{VDN1992}.

\subsection{The classical Wasserstein manifold and log-density coordinates} \label{subsec:logdensity}

To motivate our construction of the free Wasserstein manifold, we briefly review the classical Wasserstein manifold and discuss an alternate coordinate system based on minus the log-density rather than the density itself, as was done to some extent in \cite{Lafferty1988} and \cite{OV2000}.  In the following, $M$ will be a Riemannian manifold of dimension $d$.  We denote by $\ip{v,w}$ the inner product of two tangent vectors $v$ and $w$ at some point $x \in M$ with respect to the Riemannian metric, by $d_M$ the geodesic distance on $M$, and by $dx$ the canonical volume form associated to the Riemannian metric.  In this discussion, we will mostly assume that $M$ is compact because it makes the analysis simpler; and for instance, the rigorous formulation of $\mathscr{P}(\R^d)$ as a Fr\'echet manifold is easiest when $M$ is compact, see e.g.\ \cite{Lafferty1988}.  However, readers who are less familiar with Riemannian geometry may focus on the case $M = \R^d$ to understand the computations.  Our non-commutative Wasserstein manifold is the analog of the case $M = \R^d$.

\begin{definition}[Wasserstein manifold]
	We define the \emph{manifold of probability densities} or \emph{Wasserstein manifold} of $M$ by
	\[
	\mathscr{P}(M) := \left\{\rho \in C^\infty(M;\R): \rho > 0, \int_M \rho\,dx = 1 \right\}.
	\]
	For each density $\rho$, the tangent space is defined by
	\[
	T_\rho \mathscr{P}(M) := \left\{\sigma \in C^\infty(M;\R): \int_M \sigma\,dx = 0 \right\}.
	\]
\end{definition}

The Riemannian metric for $\mathscr{P}(\R^d)$ is defined in terms of the elliptic differential operator $\Delta_\rho: C^\infty(M) \to C^\infty(M)$ given by
\[
\Delta_\rho f := \nabla^\dagger(\rho \nabla f) = \rho \Delta f + \ip{\nabla \rho, \nabla f}, 
\]
where $\nabla^\dagger$ denotes the divergence operator from vector fields on $M$ to smooth functions.  When $M$ is compact, $\Delta_\rho$ defines a unbounded self-adjoint operator on $L^2(dx)$ with $\Delta_\rho \leq 0$.  The kernel is the space of constant functions and its orthogonal complement in $L^2(\rho)$ is the space of functions $\sigma$ with $\int \sigma \,dx = 0$.  Thanks to the theory of elliptic PDE, there is a pseudo-inverse operator $\Delta_\rho^{-1}: C^\infty(M) \to C^\infty(M)$ satisfying $\Delta_\rho^{-1} f = g$ if and only if $\int_M g \,dx = 0$ and $\Delta_\rho g = f - \int_M f\,dx$.

\begin{definition}[Riemannian metric on $\mathscr{P}(\R^d)$] \label{eqn:riemannian whole density space}
	Let $M$ be compact.  For each $\rho \in \mathscr{P}(M)$, we define a Riemannian metric $\ip{\cdot,\cdot}_{T_\rho \mathscr{P}(M)}$ on the tangent space by
	\[
	\ip{\sigma_1,\sigma_2}_{T_\rho \mathscr{P}(M)} := \int_M \sigma_1(x) (-\Delta_\rho^{-1} \sigma_2)(x) \,dx,
	\]
	or equivalently (using integration by parts),
	\[
	\ip{\sigma_1,\sigma_2}_{T_\rho \mathscr{P}(M)} := \int_M \ip{\nabla (\Delta_{\rho}^{-1} \sigma_1),  \nabla (\Delta_\rho^{-1} \sigma_2)}\rho(x)\,dx
	\]
\end{definition}

Next, we define alternative coordinates in terms of minus the log-density, and we compute the Riemannian metric in these new coordinates. 

\begin{definition}[Log-density manifold]
	Let
	\[
	\mathscr{W}(M) := \left\{ V \in C^\infty(M,\R): \int_M e^{-V}\,dx = 1 \right\}
	\]
	and
	\[
	T_V \mathscr{W}(M) := \left\{W \in C^\infty(M,\R): \int_M W e^{-V}\,dx = 0 \right\}.
	\]
\end{definition}

\begin{lemma}[Change of coordinates between density and log-density]
	Let $M$ be compact.  There is a bijection $\mathcal{E}: \mathscr{W}(M) \to \mathscr{P}(M)$ given by $V \mapsto e^{-V}$.  The corresponding map $d\mathcal{E}_V: T_V \mathscr{W}(M) \to T_\rho \mathscr{P}(M)$ is $W \mapsto -W e^{-V}$.  Moreover, the Riemannian metric on $\mathscr{P}(M)$ corresponds to the Riemannian metric on $\mathscr{W}(M)$ given by
	\[
	\ip{W_1,W_2}_{T_V \mathscr{W}(M)} := -\int_M W_1 (L_V^{-1} W_2) e^{-V}\,dx = \int_M \ip{\nabla(L_V^{-1} W_1), \nabla(L_V^{-1} W_1)}e^{-V}\,dx,
	\]
	where
	\[
	L_V f := \Delta f - \ip{\nabla f,\nabla V}
	\]
	and $L_V^{-1}$ is the pseudo-inverse of $L_V$ given by
	\[
	L_V(L_V^{-1} f) = L_V^{-1} (L_V f) = f - \int_M f e^{-V}\,dx, \qquad L_V^{-1}(1) = 0.
	\]
\end{lemma}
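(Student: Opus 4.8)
The plan is to reduce the whole statement to a single pointwise identity relating $\Delta_\rho$ and $L_V$ when $\rho = e^{-V}$, and then to track carefully the two different mean-zero normalizations. First I would check that $\mathcal{E}\colon V \mapsto e^{-V}$ is a well-defined bijection: for $V \in C^\infty(M,\R)$ the function $e^{-V}$ is smooth and strictly positive, and $\int_M e^{-V}\,dx = 1$ is precisely the defining constraint of $\mathscr{W}(M)$, so $\mathcal{E}(V) \in \mathscr{P}(M)$; conversely, since $M$ is compact and $\rho > 0$, the function $-\log\rho$ is smooth and lies in $\mathscr{W}(M)$ because $\int_M e^{-(-\log\rho)}\,dx = \int_M \rho\,dx = 1$, and the two maps are mutually inverse. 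Differentiating $t \mapsto e^{-(V+tW)}$ at $t=0$ gives $d\mathcal{E}_V(W) = -We^{-V}$, and $\int_M(-We^{-V})\,dx = 0$ exactly when $W \in T_V\mathscr{W}(M)$, so $d\mathcal{E}_V$ carries $T_V\mathscr{W}(M)$ isomorphically onto $T_{e^{-V}}\mathscr{P}(M)$, with inverse $\sigma \mapsto -e^V\sigma$.

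The algebraic heart is the product rule: with $\rho = e^{-V}$ one has $\nabla\rho = -\rho\,\nabla V$, hence
\[
\Delta_\rho f = \rho\,\Delta f + \ip{\nabla\rho,\nabla f} = \rho\bigl(\Delta f - \ip{\nabla f,\nabla V}\bigr) = e^{-V}L_V f,
\]
that is, $\Delta_\rho$ equals multiplication by $e^{-V}$ composed with $L_V$; equivalently $e^{-V}L_V f = \nabla^\dagger(e^{-V}\nabla f)$, from which (using that $M$ is closed) the integration-by-parts identity $\int_M (L_V f)g\,e^{-V}\,dx = -\int_M \ip{\nabla f,\nabla g}\,e^{-V}\,dx$ follows for all $f,g \in C^\infty(M)$. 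In particular $L_V$ is symmetric and $\leq 0$ on $L^2(e^{-V}\,dx)$, with kernel the constants and range $\{f : \int_M f e^{-V}\,dx = 0\}$. I would then deduce that the pseudo-inverse $L_V^{-1}$ in the statement is well-defined directly from $\Delta_\rho^{-1}$: given $f$, put $\tilde f = f - \int_M f e^{-V}\,dx$, note $e^{-V}\tilde f$ has $dx$-mean zero, set $g_0 = \Delta_\rho^{-1}(e^{-V}\tilde f)$ so that $L_V g_0 = \tilde f$, and subtract the constant $\int_M g_0 e^{-V}\,dx$; uniqueness is immediate since any two solutions differ by a constant with the same $e^{-V}\,dx$-mean. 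The relation $L_V^{-1}(1) = 0$ then follows from $\int_M e^{-V}\,dx = 1$.

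Finally I would match the metrics. Writing $\sigma_i = d\mathcal{E}_V(W_i) = -W_i e^{-V}$, the function $g_2 := \Delta_\rho^{-1}\sigma_2$ satisfies (since $\int_M\sigma_2\,dx = 0$) $\Delta_\rho g_2 = \sigma_2$, i.e.\ $e^{-V}L_V g_2 = -W_2 e^{-V}$, i.e.\ $L_V g_2 = -W_2$; comparing with $L_V(-L_V^{-1}W_2) = -W_2$ (valid because $\int_M W_2 e^{-V}\,dx = 0$) gives $g_2 = -L_V^{-1}W_2 + c$ for some constant $c$. Substituting into $\ip{\sigma_1,\sigma_2}_{T_\rho\mathscr{P}(M)} = \int_M \sigma_1(-\Delta_\rho^{-1}\sigma_2)\,dx = \int_M(-W_1 e^{-V})(L_V^{-1}W_2 - c)\,dx$ and using $\int_M W_1 e^{-V}\,dx = 0$ to kill the constant term, this collapses to $-\int_M W_1(L_V^{-1}W_2)\,e^{-V}\,dx$, the asserted value of $\ip{W_1,W_2}_{T_V\mathscr{W}(M)}$. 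For the divergence-form expression, set $h_i = L_V^{-1}W_i$, note $L_V h_1 = W_1$ because $\int_M W_1 e^{-V}\,dx = 0$, and apply the integration-by-parts identity from the previous paragraph.

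The step I expect to require the most care — though it is bookkeeping rather than a genuine obstacle — is the normalization mismatch in the last paragraph: $\Delta_\rho^{-1}$ is pinned down by $dx$-mean zero while $L_V^{-1}$ is pinned down by $e^{-V}\,dx$-mean zero, so $\Delta_\rho^{-1}\sigma_2$ and $-L_V^{-1}W_2$ agree only up to an additive constant, and the argument hinges on observing that this constant is annihilated upon pairing against $\sigma_1 = -W_1 e^{-V}$, which has $dx$-mean zero. Everything else is the elementary product rule for $\Delta_\rho$ together with the elliptic theory for $\Delta_\rho$ already granted in the excerpt.
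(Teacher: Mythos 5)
Your proposal is correct and follows essentially the same route as the paper's proof: the pointwise identity $\Delta_{e^{-V}}f = e^{-V}L_V f$, the tangent computation $d\mathcal{E}_V(W) = -We^{-V}$, and the metric identification via the pseudo-inverses followed by integration by parts. The only difference is cosmetic: you make explicit the additive-constant discrepancy between the $dx$-mean-zero normalization of $\Delta_\rho^{-1}$ and the $e^{-V}dx$-mean-zero normalization of $L_V^{-1}$, and note that it is killed when paired against $-W_1e^{-V}$, a point the paper passes over silently.
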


\begin{proof}
	$\mathcal{E}$ defines a bijection since the inverse is given by $\rho \mapsto -\log \rho$.  A tangent vector $W \in T_V \mathscr{W}(M)$ represents the equivalence class of the path $t \mapsto V + tW$ in $\mathscr{W}(M)$.  The corresponding path in $\mathscr{P}(M)$ is $t \mapsto e^{-(V + tW)}$.  Differentiating at $t = 0$ yields $-W e^{-V}$, hence this is the corresponding element of $T_\rho \mathscr{P}(M)$.
	
	Note that
	\[
	\Delta_{e^{-V}} f = e^{-V} \Delta f - e^{-V} \ip{\nabla V,\nabla f} = e^{-V} L_V f,
	\]
	and that $e^{-V} f$ integrates to zero with respect to $dx$ if and only if $f$ integrates to zero with respect to $e^{-V}\,dx$.  Hence,
	\[
	\Delta_{e^{-V}}^{-1}(e^{-V} f) = L_V^{-1} f,
	\]
	so
	\[
	\ip{d\mathcal{E}_V(W_1), d\mathcal{E}_V(W_2)}_{T_{e^{-V}}\mathscr{P}(M)} = -\int_M e^{-V} W_1 \Delta_{e^{-V}}^{-1}[e^{-V} W_2]\,dx = -\int_M W_1 L_V^{-1}(W_2) e^{-V}\,dx.
	\]
	Using integration by parts, this is equivalent to $\int_M \ip{\nabla L_V^{-1}(W_1), \nabla L_V^{-1}(W_2)} e^{-V}\,dx$. 
\end{proof}

We point out that $L_V$ defines a self-adjoint unbounded operator on $L^2(e^{-V}\,dx)$ satisfying $L_V \leq 0$.  In fact, $L_V = -\nabla_V^* \nabla$, where
\[
\nabla_V^* \mathbf{f} := -\nabla^\dagger \mathbf{f} + \ip{\mathbf{f},\nabla V}
\]
when $\mathbf{f}$ is a vector field on $M$.  When $M$ is compact, the kernel of $L_V$ is precisely the space of constant functions.  The operator $L_V$ seems more intrinsic than $\Delta_\rho$ since it is defined directly in terms of the measure $e^{-V} \,dx$ rather than $dx$.

Smooth transport of measure, or in other words, the transport action of the diffeomorphism group of $M$ on $\mathscr{P}(M)$, is of central importance for our work.  Let $\mathscr{D}(M)$ denote the group of diffeomorphisms of the compact Riemannian manifold $M$, where the group operation is composition.  We can consider $\mathscr{D}(M)$ as an infinite-dimensional Lie group.  The corresponding Lie algebra is the algebra of smooth vector fields on $M$, which we denote by $\Vect(M)$, and the exponential map sends a vector field $\mathbf{f}$ to the diffeomorphism obtained from the flow along $\mathbf{f}$ at time $1$.  The Lie bracket for the Lie algebra of vector fields is known as the \emph{Poisson bracket}; application of the Poisson bracket to vector fields corresponds (up to varying sign conventions) to taking the commutator of the differential operators associated to those vector fields.

\begin{observation}[Transport action]
	There is a group action $\mathscr{D}(M) \curvearrowright \mathscr{P}(M)$ given by 
	\[
	(\mathbf{f},\rho) \mapsto \mathbf{f}_* \rho := (\rho \circ \mathbf{f}^{-1}) |\det d\mathbf{f}^{-1}|,
	\]
	or in other words, the push-forward of the measure $\rho \,dx$ by the function $\mathbf{f}$ is $(\mathbf{f}_* \rho)\,dx$.  The corresponding action $\mathscr{D}(M) \curvearrowright \mathscr{W}(M)$ is given by
	\[
	(\mathbf{f},V) \mapsto \mathbf{f}_* V := V \circ \mathbf{f}^{-1} - \log |\det d\mathbf{f}^{-1}|.
	\]
\end{observation}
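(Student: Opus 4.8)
The plan is to verify three things in turn: that $\mathbf{f}_*\rho$ is again an element of $\mathscr{P}(M)$, that $(\mathbf{f},\rho)\mapsto\mathbf{f}_*\rho$ obeys the axioms of a left group action of $(\mathscr{D}(M),\circ)$, and that conjugating this action through the bijection $\mathcal{E}\colon\mathscr{W}(M)\to\mathscr{P}(M)$ of the previous lemma reproduces exactly the displayed formula on $\mathscr{W}(M)$.

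First I would recall the measure-theoretic push-forward. For a diffeomorphism $\mathbf{f}$ and the probability measure $\mu=\rho\,dx$, the push-forward $\mathbf{f}_*\mu$ is defined by $(\mathbf{f}_*\mu)(A)=\mu(\mathbf{f}^{-1}(A))$, equivalently $\int_M g\,d(\mathbf{f}_*\mu)=\int_M(g\circ\mathbf{f})\,\rho\,dx$ for continuous $g$. The change-of-variables formula for the substitution $y=\mathbf{f}(x)$ rewrites the right-hand side as $\int_M g(y)\,\rho(\mathbf{f}^{-1}(y))\,|\det d\mathbf{f}^{-1}(y)|\,dy$, where on a general Riemannian manifold $|\det d\mathbf{f}^{-1}|$ is to be read as the density of the pulled-back volume form relative to $dx$ (on $M=\R^d$ it is literally the Jacobian determinant). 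Hence $\mathbf{f}_*\mu$ has density $(\rho\circ\mathbf{f}^{-1})\,|\det d\mathbf{f}^{-1}|$ with respect to $dx$; this is the claimed $\mathbf{f}_*\rho$, and it is well-defined since the density of a measure relative to $dx$ is unique. Membership in $\mathscr{P}(M)$ is then immediate: $\mathbf{f}^{-1}$ is a diffeomorphism, so $d\mathbf{f}^{-1}$ is smooth and everywhere invertible, which makes $\mathbf{f}_*\rho$ a smooth strictly positive function, and $\int_M\mathbf{f}_*\rho\,dx=\mu(M)=1$ since the push-forward preserves total mass.

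Next, the group-action axioms follow from the functoriality of the push-forward on measures: $(\mathrm{id})_*\mu=\mu$ and $(\mathbf{f}\circ\mathbf{g})_*\mu=\mathbf{f}_*(\mathbf{g}_*\mu)$, the latter because $(\mathbf{f}\circ\mathbf{g})^{-1}=\mathbf{g}^{-1}\circ\mathbf{f}^{-1}$. Comparing densities with respect to $dx$ and invoking their uniqueness yields $\mathrm{id}_*\rho=\rho$ and $(\mathbf{f}\circ\mathbf{g})_*\rho=\mathbf{f}_*(\mathbf{g}_*\rho)$, so this is a genuine left action. For the action on $\mathscr{W}(M)$, I would simply define $\mathbf{f}_*V:=\mathcal{E}^{-1}(\mathbf{f}_*\mathcal{E}(V))$, so that $\mathcal{E}$ intertwines the two actions and the $\mathscr{W}(M)$-version is automatically a group action with values in $\mathscr{W}(M)$; then $\mathbf{f}_*\mathcal{E}(V)=(e^{-V}\circ\mathbf{f}^{-1})\,|\det d\mathbf{f}^{-1}|=e^{-(V\circ\mathbf{f}^{-1})}\,|\det d\mathbf{f}^{-1}|$, and applying $\mathcal{E}^{-1}(\rho)=-\log\rho$ gives $\mathbf{f}_*V=V\circ\mathbf{f}^{-1}-\log|\det d\mathbf{f}^{-1}|$, as asserted.

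The argument is entirely routine; the only places demanding care are the bookkeeping of the Jacobian factor in the change of variables on a curved manifold and making sure the composition convention $(\mathbf{f}\circ\mathbf{g})^{-1}=\mathbf{g}^{-1}\circ\mathbf{f}^{-1}$ produces a left action rather than one twisted by inversion. For the free analog the relevant case is $M=\R^d$, where both points are transparent, so there is no real obstacle here beyond organizing these standard facts.
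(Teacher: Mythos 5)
Your argument is correct and is exactly the justification the paper intends: the Observation is stated without proof precisely because it amounts to the change-of-variables identification of the push-forward density, functoriality of push-forward for the group-action axioms, and conjugation through $\mathcal{E}(V)=e^{-V}$ to transfer the action to $\mathscr{W}(M)$. Nothing in your write-up deviates from or adds beyond that standard route, and the two points you flag (the Jacobian factor on a curved manifold and the left-action convention) are handled correctly.
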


\begin{lemma}[Differential of transport action]
	Fix $\rho \in \mathscr{P}(M)$, and consider the map $\mathscr{S}: \mathscr{D}(M) \to \mathscr{P}(M)$ given by $\rho \mapsto \mathbf{f}_* \rho$.  Then the differential satisfies
	\[
	d\mathscr{S}_{\id}: \Vect(M) \to T_\rho \mathscr{P}(M): \mathbf{h} \mapsto -\nabla^\dagger(\rho \mathbf{h}) = -\ip{\nabla \rho, \mathbf{h}} - \rho \nabla^\dagger \mathbf{h}.
	\]
	Fix $V \in \mathscr{W}(M)$, and consider the map $\mathscr{T}: \mathscr{D}(M) \to \mathscr{W}(M)$ given by $\mathbf{f} \mapsto \mathbf{f}_*V$.  Then the differential satisfies
	\[
	d\mathscr{T}_{\id}: \Vect(M) \to T_V \mathscr{W}(M): \mathbf{h} \mapsto -\nabla_V^* \mathbf{h} = \nabla^\dagger \mathbf{h} - \ip{\nabla \mathbf{h}, \nabla V}.
	\]
\end{lemma}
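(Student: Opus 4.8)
The plan is to realize the two differentials by flowing along the generating vector field, and then to carry out the standard first-variation computation for the push-forward of a measure, the only nonroutine ingredient being Jacobi's formula for the derivative of a determinant (equivalently, Cartan's formula for the Lie derivative of a top-degree form). Fix $\mathbf{h}\in\Vect(M)$ and let $(\mathbf{f}_t)_t$ be its flow, so that $\partial_t\mathbf{f}_t=\mathbf{h}\circ\mathbf{f}_t$ and $\mathbf{f}_0=\id$. Since the exponential map of $\mathscr{D}(M)$ sends $\mathbf{h}$ to the time-one flow, the curve $t\mapsto\mathbf{f}_t$ represents the tangent vector $\mathbf{h}\in T_{\id}\mathscr{D}(M)=\Vect(M)$, so that $d\mathscr{S}_{\id}(\mathbf{h})$ and $d\mathscr{T}_{\id}(\mathbf{h})$ are the $t$-derivatives at $t=0$ of $(\mathbf{f}_t)_*\rho$ and $(\mathbf{f}_t)_*V$. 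A key observation is that $\mathbf{f}_t^{-1}$ is the flow of $-\mathbf{h}$, so that at $t=0$ we have $\mathbf{f}_0^{-1}=\id$, $\partial_t\mathbf{f}_t^{-1}=-\mathbf{h}$, $d\mathbf{f}_0^{-1}=\Id$, and $\partial_t(d\mathbf{f}_t^{-1})=-d\mathbf{h}$.

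For the first formula I would differentiate the push-forward measure $(\mathbf{f}_t)_*(\rho\,dx)=((\mathbf{f}_t)_*\rho)\,dx$ as a top-degree form. Using that push-forward along $\mathbf{f}_t$ is pull-back along its inverse (the flow of $-\mathbf{h}$) together with Cartan's formula, $\partial_t|_{t=0}(\mathbf{f}_t)_*(\rho\,dx)=-\mathcal{L}_{\mathbf{h}}(\rho\,dx)=-d\bigl(\iota_{\mathbf{h}}(\rho\,dx)\bigr)=-\nabla^\dagger(\rho\mathbf{h})\,dx$, where the last equality is the characterization of $\nabla^\dagger$ by $\mathcal{L}_{\mathbf{h}}(dx)=(\nabla^\dagger\mathbf{h})\,dx$ together with the Leibniz rule. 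Dividing by the fixed volume form yields $d\mathscr{S}_{\id}(\mathbf{h})=-\nabla^\dagger(\rho\mathbf{h})=-\ip{\nabla\rho,\mathbf{h}}-\rho\nabla^\dagger\mathbf{h}$. (For $M=\R^d$ one may instead differentiate $\rho(\mathbf{f}_t^{-1}(y))\,|\det d\mathbf{f}_t^{-1}(y)|$ directly, using Jacobi's formula $\partial_t\det A_t=\det A_t\,\tr(A_t^{-1}\partial_t A_t)$ at $t=0$ to get $\partial_t|_{t=0}\det d\mathbf{f}_t^{-1}=\tr(-d\mathbf{h})=-\nabla^\dagger\mathbf{h}$, and the chain rule on the first factor to get $\ip{\nabla\rho,-\mathbf{h}}$.) Finally $d\mathscr{S}_{\id}(\mathbf{h})$ integrates to zero against $dx$ by the divergence theorem --- or simply because $\int(\mathbf{f}_t)_*\rho\,dx\equiv1$ --- so it indeed lies in $T_\rho\mathscr{P}(M)$.

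For the second formula I would differentiate $(\mathbf{f}_t)_*V=V\circ\mathbf{f}_t^{-1}-\log|\det d\mathbf{f}_t^{-1}|$ at $t=0$: the chain rule gives $\partial_t|_{t=0}(V\circ\mathbf{f}_t^{-1})=\ip{\nabla V,-\mathbf{h}}=-\ip{\nabla V,\mathbf{h}}$, while Jacobi's formula together with $d\mathbf{f}_0^{-1}=\Id$ gives $\partial_t|_{t=0}\log|\det d\mathbf{f}_t^{-1}|=\tr(-d\mathbf{h})=-\nabla^\dagger\mathbf{h}$. Hence $d\mathscr{T}_{\id}(\mathbf{h})=\nabla^\dagger\mathbf{h}-\ip{\mathbf{h},\nabla V}$, which is exactly $-\nabla_V^*\mathbf{h}$ by the definition of $\nabla_V^*$. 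Equivalently, since $\mathcal{E}\circ\mathscr{T}=\mathscr{S}$ when $\rho=e^{-V}$ (because $e^{-(\mathbf{f}_t)_*V}=(\mathbf{f}_t)_*e^{-V}$), one may deduce this from the first formula via the change-of-coordinates Lemma, by applying $d\mathcal{E}_V\colon W\mapsto-We^{-V}$ and expanding $\nabla^\dagger(e^{-V}\mathbf{h})=e^{-V}(\nabla^\dagger\mathbf{h}-\ip{\nabla V,\mathbf{h}})$. As before, $d\mathscr{T}_{\id}(\mathbf{h})$ integrates to zero against $e^{-V}\,dx$ since $\int e^{-(\mathbf{f}_t)_*V}\,dx\equiv1$, so it lies in $T_V\mathscr{W}(M)$.

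There is no serious obstacle here; the only points requiring care are the sign bookkeeping induced by $(\mathbf{f}_t)_*=(\mathbf{f}_t^{-1})^*$ and the fact that $\mathbf{f}_t^{-1}$ flows along $-\mathbf{h}$, and --- on a curved $M$ --- the coordinate-free identity $\tr(d\mathbf{h})=\nabla^\dagger\mathbf{h}$, which follows from the volume-form characterization of the divergence (or, pointwise, from geodesic normal coordinates) and is immediate for $M=\R^d$.
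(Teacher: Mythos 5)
Your proposal is correct and follows essentially the same route as the paper: differentiate $(\rho\circ\mathbf{f}_t^{-1})\,|\det d\mathbf{f}_t^{-1}|$ and $V\circ\mathbf{f}_t^{-1}-\log|\det d\mathbf{f}_t^{-1}|$ at $t=0$, using the chain rule on the first factor and Jacobi's formula (i.e.\ $\partial_t|_{t=0}\det d\mathbf{f}_t^{-1}=\tr(-d\mathbf{h})=-\nabla^\dagger\mathbf{h}$) on the determinant. The extra touches you add --- the Lie-derivative/Cartan formulation for curved $M$ and the check that the derivatives land in the tangent spaces --- are fine but not needed beyond what the paper records.
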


\begin{proof}
	Let $\mathbf{f}_t$ be a path of diffeomorphisms with $\mathbf{f}_0 = \id$ and $\dot{\mathbf{f}}_0 = \mathbf{h}$.  Then using the product rule
	\[
	\frac{d}{dt} \Bigr|_{t=0} ((\rho \circ \mathbf{f}_t^{-1}) |\det d\mathbf{f}_t^{-1}|) = -\ip{\nabla \rho, \mathbf{h}} - \Tr(d \mathbf{h}) = -\nabla^\dagger(\rho \mathbf{h})
	\]
	and
	\[
	\frac{d}{dt} \Bigr|_{t=0} (V \circ \mathbf{f}_t^{-1} - \log |\det d \mathbf{f}_t^{-1}| ) = -\ip{\nabla V, \mathbf{h}} + \Tr(d\mathbf{h}) = -\nabla_V^* \mathbf{h}.  \qedhere
	\]
\end{proof}

If $M$ is compact, then the action of $\mathscr{D}(M)$ on $\mathscr{P}(M)$ is transitive \cite{EM1970}.  Moreover, if we fix some $\rho$, then the map $\mathbf{f} \mapsto \mathbf{f}_* \rho$ is a submersion $\mathscr{D}(M) \to \mathscr{P}(M)$, which can be used to define local coordinates on $\mathscr{P}(M)$ \cite[\S 3]{Lafferty1988}.  In hindsight, one heuristic for these results is that the map $-\nabla_V^*: \Vect(M) \to C^\infty(M)$ modulo constants has a right-inverse given by $\nabla L_V^{-1}$ since $-\nabla_V^* \nabla L_V^{-1} f = f - \int f e^{-V}\,dx$.  Thus, $\nabla L_V^{-1}$ transforms a change in $V$ into an infinitesimal transport map.  We shall use this idea to construct families of transport maps along paths in the free Wasserstein manifold.

The stabilizer in $\mathscr{D}(M)$ of some $V \in \mathscr{W}(M)$ is the group $\mathscr{D}(M,V)$ of diffeomorphisms that preserve the measure $e^{-V}\,dx$. If $\mathbf{h} \in \Vect(M)$, then $\exp(t\mathbf{h})$ preserves $V$ for all $t$ if and only if $\nabla_V^* \mathbf{h} = 0$.  Hence, Lie algebra for the stabilizer consists of \emph{divergence-free vector fields} with respect to $V$, which is the orthogonal complement in $L^2(e^{-V}\,dx)$ of the space of gradients.  For each $V$, we can define an inner product on vector fields by integrating the Riemannian metric of $M$ with respect to the measure $e^{-V}\,dx$, and this can be extended to a right-invariant Riemannian metric on the diffeomorphism group.  Geodesic equations on $\mathscr{D}(M)$ and $\mathscr{D}(M,V)$ yield respectively the inviscid Burgers' equation and incompressible Euler's equation \cite{Arnold1966}; we formulate the non-commutative versions in \S \ref{subsec:Euler}.

Next, we turn our attention to the differentials and the gradient flow of functionals on $\mathscr{P}(M)$ or $\mathscr{W}(M)$.

\begin{definition}[Wasserstein differential and gradient]
	For a $\mathscr{F}: \mathscr{P}(M) \to \R$, we denote the differential (when defined) by
	\[
	\delta_\rho \mathscr{F}(\rho): T_\rho \mathscr{P}(M) \to \R.
	\]
	Moreover, $\grad_\rho \mathscr{F}(\rho)$ is the unique element of $T_\rho \mathscr{P}(M)$ satisfying
	\[
	\ip{\grad_\rho \mathscr{F}(\rho),\sigma}_{T_\rho \mathscr{P}(M)} = \delta_\rho \mathscr{F}(\rho)[\sigma].
	\]
	For functionals $\mathscr{F}$ on $\mathscr{W}(M)$, we make the analogous definitions of $\delta_V \mathscr{F}$ and $\grad_V \mathbf{F}$.
\end{definition}

Often, the functionals are given by integration of some function of $\rho$ over $M$, and then the gradients are computed using integration by parts.  We illustrate this technique on one of the most important functionals, the \emph{entropy functional}
\[
h(\rho) := \int -\rho \log \rho\,dx.
\]

\begin{lemma}[Wasserstein gradient of entropy]
	We have
	\[
	\grad_\rho[h(\rho)] = \Delta \rho.
	\]
	and
	\[
	\grad_V[h(e^{-V})] = L_V V.
	\]
\end{lemma}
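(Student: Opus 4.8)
The plan is to compute both gradients directly from the definitions, exploiting the self-adjointness of $\Delta_\rho$ on $L^2(dx)$ (respectively of $L_V$ on $L^2(e^{-V}\,dx)$) together with the zero-integral constraints that define the tangent spaces.

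First I would compute the Wasserstein differential of $h$ on $\mathscr{P}(M)$. Differentiating $t \mapsto h(\rho+t\sigma)$ at $t=0$ and using $\int_M \sigma\,dx = 0$ gives $\delta_\rho h(\rho)[\sigma] = -\int_M \sigma \log\rho\,dx$. Comparing with the Riemannian metric, I want $g \in T_\rho\mathscr{P}(M)$ with $\int_M g(-\Delta_\rho^{-1}\sigma)\,dx = -\int_M \sigma\log\rho\,dx$ for all $\sigma \in T_\rho\mathscr{P}(M)$; since $\Delta_\rho^{-1}$ is self-adjoint, this forces $-\Delta_\rho^{-1} g = -\log\rho + c$, i.e.\ $g = \Delta_\rho(\log\rho - c) = \Delta_\rho \log\rho$ (the additive constant is annihilated by $\Delta_\rho$, which also shows $g$ is insensitive to the normalization of $\log\rho$). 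One checks $g \in T_\rho\mathscr{P}(M)$ because $\int_M \Delta_\rho f\,dx = 0$ for every $f$. It then remains to prove the pointwise identity $\Delta_\rho\log\rho = \Delta\rho$: expanding $\Delta_\rho f = \rho\Delta f + \ip{\nabla\rho,\nabla f}$ at $f = \log\rho$ and substituting $\nabla\log\rho = \rho^{-1}\nabla\rho$ and $\Delta\log\rho = \rho^{-1}\Delta\rho - \rho^{-2}\ip{\nabla\rho,\nabla\rho}$, the two copies of $\rho^{-1}\ip{\nabla\rho,\nabla\rho}$ cancel, leaving $\Delta\rho$. Hence $\grad_\rho h(\rho) = \Delta\rho$.

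For the log-density statement I would run the parallel argument. Writing $h(e^{-V}) = \int_M V e^{-V}\,dx$ and differentiating, the constraint $\int_M W e^{-V}\,dx = 0$ gives $\delta_V h(e^{-V})[W] = -\int_M VW e^{-V}\,dx$. Guessing $g := L_V V$ and using self-adjointness of $L_V$ on $L^2(e^{-V}\,dx)$ together with $L_V L_V^{-1} W = W$ yields $\ip{g, W}_{T_V\mathscr{W}(M)} = -\int_M (L_V V)(L_V^{-1}W)e^{-V}\,dx = -\int_M V W e^{-V}\,dx$, as desired; and $L_V V \in T_V\mathscr{W}(M)$ since $\int_M (L_V V) e^{-V}\,dx = -\int_M \ip{\nabla V, \nabla 1}\,e^{-V}\,dx = 0$ by the integration-by-parts identity $\int_M (\nabla_V^* \mathbf{f}) g\,e^{-V}\,dx = \int_M \ip{\mathbf{f},\nabla g}\, e^{-V}\,dx$. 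Alternatively, since the change-of-coordinates lemma shows $\mathcal{E}: V \mapsto e^{-V}$ is an isometry, one may simply transport the first result: $\grad_V h(e^{-V}) = (d\mathcal{E}_V)^{-1}\grad_\rho h(\rho) = -e^{V}\Delta e^{-V}$, and a one-line computation using $\nabla e^{-V} = -e^{-V}\nabla V$ and the product rule for $\nabla^\dagger$ gives $-e^{V}\Delta e^{-V} = \Delta V - \ip{\nabla V,\nabla V} = L_V V$.

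I do not expect a genuine obstacle; the argument is a short computation. The only points requiring a little care are: (i) handling the additive constants, since $\log\rho$ need not have vanishing $dx$-integral, but $\Delta_\rho$ annihilates constants so the normalization is immaterial; and (ii) verifying that the candidate gradients $\Delta\rho$ and $L_V V$ actually lie in the respective tangent spaces, which is exactly the divergence-theorem fact that $\Delta_\rho f$ integrates to zero against $dx$ and $\nabla_V^* \mathbf{h}$ integrates to zero against $e^{-V}\,dx$ on the compact manifold $M$.
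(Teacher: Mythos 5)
Your proof is correct and follows essentially the same route as the paper: compute the first variation of the entropy, use self-adjointness of $\Delta_\rho$ (resp.\ $L_V$) against the definition of the Riemannian metric to identify the gradient, and then reduce to the pointwise identities $\Delta_\rho \log \rho = \Delta \rho$ and $-e^{V}\Delta e^{-V} = L_V V$. The only cosmetic difference is that for the log-density half the paper differentiates along the transport path $\exp(t\nabla L_V^{-1}W)_* V$ rather than the linear perturbation $V + tW$, and your change-of-coordinates alternative via $\mathcal{E}\colon V \mapsto e^{-V}$ is exactly the alternative the paper notes at the end of its own proof.
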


\begin{proof}
	Consider the perturbation $\rho + t \sigma$ for some $\sigma \in T_\rho \mathscr{P}(M)$.  Note that
	\begin{align*}
		\frac{d}{dt} \Bigr|_{t=0} \int -(\rho + t \sigma) \log (\rho + t \sigma) \,dx
		&= - \int \sigma (1 + \log \rho)\,dx \\
		&= \int \Delta_\rho(-\Delta_\rho^{-1} \sigma)(1 + \log \rho)\,dx \\
		&= \int \Delta_\rho (1 + \log \rho) (-\Delta_\rho)^{-1} \sigma\,dx.
	\end{align*}
	Then note that $\Delta_\rho (1 + \log \rho) = \nabla^\dagger (\rho \nabla \log \rho) = \nabla^\dagger \nabla \rho = \Delta \rho$.
	
	Similarly, consider $W \in T_V \mathscr{W}(M)$.  Let $\mathbf{h} = \nabla L_V^{-1} W$ and let $V_t = \exp(t \mathbf{h})_* V$, so that $\dot{V}_0 = -\nabla_V^* \mathbf{h} = W$.  Then
	\begin{align*}
		\frac{d}{dt} \Bigr|_{t=0} \int e^{-V_t}V_t\,dx &= \int W(1 + V)e^{-V}\,dx \\
		&= \int L_V (L_V^{-1} W)(1 + V) e^{-V}\,dx \\
		&= \int L_V(1 + V) L_V^{-1}W e^{-V}\,dx \\
		&= \ip{L_V(1 + V), W}_{T_V \mathscr{W}(M)}.
	\end{align*}
	Hence, $\grad_V[h(e^{-V})] = L_V(1 + V) = L_V V$.  Alternatively, we can deduce this from the computation for $\mathscr{P}(M)$ and the relation that $-e^{-V} L_V V = \Delta[e^{-V}]$.
\end{proof}

Hence, as observed by Otto \cite{Otto2001}, the upward gradient flow on $\mathscr{P}(M)$ for the entropy functional is described by the heat equation $\dot{\rho} = \Delta \rho$.  The corresponding equation on $\mathscr{W}(M)$ is $\dot{V} = L_V V$.

Next, we discuss Hamiltonian flows on $\mathscr{W}(M)$ and in particular the geodesic equation.  Hamiltonian flows on a the tangent manifold $TM$ are related to the natural symplectic form $TM$ coming from the Riemannian metric on $M$.  While we could write the Hamiltonian flows either in terms of the density $\rho$ or the log-density $V$, we will focus on the log-density case since it is less standard and more relevant to our work.  It will be convenient for use to reparametrize the tangent space $T_V \mathscr{W}(M)$ using $\phi = L_V^{-1} W$ as our coordinate.  More precisely, write
\[
T_V' \mathscr{W}(M) = C^\infty(M,\R) / \R 1,
\]
where $\R 1$ is the vector space of constant functions.  The map $L_V$ sends $T_V' \mathscr{W}(M)$ onto $T_V \mathscr{W}(M)$ and the Riemannian metric on $T_V' \mathscr{W}(M)$ is the Dirichlet inner product with respect to $e^{-V}\,dx$, that is,
\[
\ip{\phi_1,\phi_2}_{T_V'\mathscr{W}(M)} = \int \ip{\nabla \phi_1, \nabla \phi_2} e^{-V}\,dx.
\]
Let $T' \mathscr{W}(M)$ be the corresponding tangent bundle
\[
T' \mathscr{W}(M) = \mathscr{W}(M) \times C^\infty(M,\R) / \R 1.
\]
We denote by $\grad_V' \mathscr{F}(V) = L_V^{-1} \grad_V \mathscr{F}(V)$ the gradient of $\mathscr{F}(V)$ expressed in these new coordinates.

\begin{definition}[Hamiltonian flow]
	Let $\mathscr{H}: T' \mathscr{W}(M) \to \R: (V,\phi) \mapsto \mathscr{H}(V,\phi)$.  We call $V$ the \emph{position variable} and $\phi$ the \emph{momentum variable}.  Then the \emph{Hamiltonian flow} associated to $\mathscr{H}$ is the pair of equations
	\[
	\left\{ \begin{aligned}
		\dot{V}_t &= L_{V_t} \grad_\phi' \mathscr{H}(V,\phi) \\
		\dot{\phi}_t &= -\grad_V' \mathscr{H}(V,\phi),
	\end{aligned} \right.
	\]
	where $t \mapsto (V_t,\phi_t)$ is a path in $T' \mathscr{W}(M)$ and $\dot{~}$ denotes the time derivative.  The $L_{V_t}$ term is included to transform $T_V' \mathscr{W}(M)$ to $T_V \mathscr{W}(M)$ and thus to interpret the tangent vector as the rate of change of $V$.
\end{definition}

\begin{lemma} \label{lem:Hamiltonian}
	Let $\mathscr{F}: \mathscr{W}(M) \to \R$.  The Hamiltonian flow associated to
	\[
	\mathscr{H}(V,\phi) := \frac{1}{2} \ip{\phi,\phi}_{T_V'\mathscr{W}(M)} + \mathscr{F}(V)
	\]
	is
	\[
	\left\{ \begin{aligned}
		\dot{V}_t &= L_{V_t} \phi \\
		\dot{\phi}_t &= -\frac{1}{2} \ip{\nabla \phi, \nabla \phi} - \grad_V' \mathscr{F}(V)
	\end{aligned} \right.
	\]
\end{lemma}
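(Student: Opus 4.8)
The plan is to compute the two ``primed gradients'' $\grad_\phi' \mathscr{H}$ and $\grad_V' \mathscr{H}$ that appear in the definition of the Hamiltonian flow, and then substitute them. Write $\mathscr{H} = \mathscr{K} + \mathscr{F}$, where
\[
\mathscr{K}(V,\phi) := \tfrac12 \ip{\phi,\phi}_{T_V'\mathscr{W}(M)} = \tfrac12 \int_M \ip{\nabla\phi,\nabla\phi}\,e^{-V}\,dx,
\]
so that the $\phi$-dependence of $\mathscr{H}$ sits entirely in $\mathscr{K}$, while its $V$-dependence splits into a $\mathscr{K}$-part and the $\mathscr{F}$-part.

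First I would handle the $\phi$ variable. For fixed $V$, the functional $\phi \mapsto \tfrac12 \ip{\phi,\phi}_{T_V'\mathscr{W}(M)}$ is one-half the squared norm of the metric on $T_V'\mathscr{W}(M)$, whose gradient with respect to that very metric is $\phi$; since $\mathscr{F}(V)$ is $\phi$-independent, this gives $\grad_\phi'\mathscr{H}(V,\phi) = \phi$. Plugging into $\dot V_t = L_{V_t}\grad_\phi'\mathscr{H}(V,\phi)$ yields the first equation $\dot V_t = L_{V_t}\phi$.

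Next I would handle the $V$ variable, where the only real work lies. The $\mathscr{F}$-contribution to $\grad_V'\mathscr{H}$ is $\grad_V'\mathscr{F}(V)$ by definition, so it remains to show $\grad_V'\mathscr{K}(V,\phi) = \tfrac12\ip{\nabla\phi,\nabla\phi}$ in $T_V'\mathscr{W}(M) = C^\infty(M,\R)/\R 1$. Differentiating along $V \mapsto V + tW$ with $W \in T_V\mathscr{W}(M)$ and using $\tfrac{d}{dt}\big|_{t=0} e^{-(V+tW)} = -W e^{-V}$ gives
\[
\delta_V\mathscr{K}[W] = -\tfrac12 \int_M \ip{\nabla\phi,\nabla\phi}\,W\,e^{-V}\,dx.
\]
To convert $\delta_V\mathscr{K}$ into $\grad_V'\mathscr{K}$, I use the characterization $\ip{\grad_V'\mathscr{K},\psi}_{T_V'\mathscr{W}(M)} = \delta_V\mathscr{K}[L_V\psi]$ for all $\psi \in T_V'\mathscr{W}(M)$, which follows from $\grad_V'\mathscr{K} = L_V^{-1}\grad_V\mathscr{K}$, the definitions of the two metrics, and the fact that $L_V L_V^{-1}$ is the projection off the constants. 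Then I verify directly that $g := \tfrac12\ip{\nabla\phi,\nabla\phi}$ satisfies it: by the Green identity $\int_M \ip{\nabla f,\nabla h}\,e^{-V}\,dx = -\int_M (L_V f)\,h\,e^{-V}\,dx$ (valid since $M$ is compact and $L_V = -\nabla_V^*\nabla$),
\[
\ip{g,\psi}_{T_V'\mathscr{W}(M)} = \int_M \ip{\nabla g,\nabla\psi}\,e^{-V}\,dx = -\tfrac12 \int_M \ip{\nabla\phi,\nabla\phi}\,(L_V\psi)\,e^{-V}\,dx = \delta_V\mathscr{K}[L_V\psi].
\]
Hence $\grad_V'\mathscr{K}(V,\phi) = \tfrac12\ip{\nabla\phi,\nabla\phi}$, so $\grad_V'\mathscr{H}(V,\phi) = \tfrac12\ip{\nabla\phi,\nabla\phi} + \grad_V'\mathscr{F}(V)$, and substituting into $\dot\phi_t = -\grad_V'\mathscr{H}(V,\phi)$ gives the second equation.

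The one point requiring care, and the main obstacle, is exactly this $V$-derivative of $\mathscr{K}$: one must remember that the position variable $V$ enters $\mathscr{K}$ not through an integrand but through the reference measure $e^{-V}\,dx$ of the metric itself, and must keep the ``modulo constants'' bookkeeping straight --- $\delta_V\mathscr{K}$ is only defined on $W$ with $\int_M W\, e^{-V}\,dx = 0$, and $\grad_V'\mathscr{K}$ is determined only up to an additive constant, which is precisely why the answer is legitimately stated as the class of $\tfrac12\ip{\nabla\phi,\nabla\phi}$ in $C^\infty(M,\R)/\R 1$. Everything else is the integration-by-parts identity for $L_V$ recorded just before the lemma, together with the elementary chain rule for $e^{-V}$.
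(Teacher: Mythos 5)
Your proposal is correct and follows essentially the same route as the paper: the identification $\grad_\phi'\mathscr{H} = \phi$, differentiation of the kinetic term through the reference measure $e^{-V}\,dx$, and the integration-by-parts identity for $L_V$ to recast the result as the Dirichlet pairing with $\psi$ are exactly the steps in the paper's proof, the only cosmetic difference being that you compute $\delta_V\mathscr{K}[W]$ for general $W$ and then substitute $W=L_V\psi$, whereas the paper differentiates directly along a path with $\dot V_0 = L_V\psi$.
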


\begin{proof}
	It is clear that $\grad_\phi' \mathscr{H}(V,\phi) = \phi$.  To compute $\grad_V'[\ip{\phi,\phi}_{T_V'\mathscr{W}(M)}]$, consider $\psi \in T_V' \mathscr{W}(M)$, and the corresponding vector $L_V \psi \in T_V \mathscr{W}(M)$.  Let $t \mapsto V_t$ be some path such that $\dot{V}_0 = L_V \psi$.  Note that
	\begin{align*}
		\frac{d}{dt} \Bigr|_{t=0} \ip{\phi,\phi}_{T_{V_t}'\mathscr{W}(M)} &= \frac{d}{dt} \Bigr|_{t=0} \int_M \ip{\nabla \phi, \nabla \phi} e^{-V_t}\,dx \\
		&= \int_M \ip{\nabla \phi, \nabla \phi} (-L_V \psi) e^{-V}\,dx \\
		&= \int_M \ip{\nabla \ip{\nabla \phi, \nabla \phi}, \nabla \psi} e^{-V}\,dx \\
		&= \ip{\ip{\nabla \phi, \nabla \phi}, \psi}_{T_V' \mathscr{W}(M)}.
	\end{align*}
	With this computation in hand, we obtain
	\[
	\grad_V' \mathscr{H}(V,\phi) = \frac{1}{2} \ip{\nabla \phi, \nabla \phi} + \grad_V' \mathscr{F}(V)
	\]
	which yields the asserted equations for the Hamiltonian flow.
\end{proof}

We remark that the Wasserstein Hamiltonian flow with $\mathcal{F}(V) = 0$ is the geodesic equation on $\mathscr{W}(M)$, which is closely related to optimal transport theory; we will discuss the non-commutative version in \S \ref{subsec:geodesic}. The Hamiltonian flows for nonzero $\mathcal{F}$ often arise as Nash equilibria in mean field games (see \cite{CLOY2019,LLO2021}).

\section{Non-commutative smooth functions: definition and properties} \label{sec:NCfunc1}

\subsection{Trace polynomials}

While there is a not a universally agreed upon analog of $C^\infty$ functions of several self-adjoint operators, it has at least become clear that in the random matrix setting these functions should include trace polynomials.  Trace polynomials were first studied from an algebraic viewpoint since the give all the unitarily invariant polynomials over $n \times n$ matrices for every $n$ \cite{Razmyslov1974,Procesi1976,Leron1976,Razmyslov1987}.  Their applications to Brownian motion on matrix groups and to probability theory are evident from \cite{Rains1997,Sengupta2008,Cebron2013,DHK2013,Kemp2016,Kemp2017,DGS2016}.

Trace polynomials are functions of several self-adjoint operators obtained by mixing non-commutative polynomials with applications of the trace from the ambient von Neumann algebra. Let $\C\ip{x_1,\dots,x_d}$ be the $*$-algebra of non-commutative polynomials (Definition \ref{def:NCpolynomial}).  Any non-commutative polynomial $p$ can be evaluated on self-adjoint $d$-tuples in a tracial $\mathrm{C}^*$-algebra.  If $(\cA,\tau)$ is a tracial $\mathrm{C}^*$-algebra and $\mathbf{X} = (X_1,\dots,X_d) \in \cA_{\sa}^d$, then we write $p(\mathbf{X}) =  \rho_{\mathbf{X}}(p)$, where $\rho_{\mathbf{X}}$ is the unique $*$-homomorphism $\C\ip{x_1,\dots,x_d} \to \cA$ mapping $x_j$ to $X_j$.  Then $\mathbf{X} \mapsto p(\mathbf{X})$ defines a function $p^{\cA,\tau}: \cA_{\sa}^d \to \cA$.  Moreover, there is a function $(\tr(p))^{\cA,\tau}: \cA_{\sa}^d \to \C$ given by $\mathbf{X} \mapsto \tau(p(\mathbf{X}))$.  In fact, $(\tr(p))^{\cA,\tau}(\mathbf{X})$ depends only on the non-commutative law $\lambda_{\mathbf{X}}$ and defines a continuous function on the space of laws $\Sigma_d$ (by definition of non-commutative laws).  We obtain the algebra of \emph{scalar-valued trace polynomials} $\TrP_d^0$ by taking sums and products of functions of the form $\tr(p)$, for instance,
\[
\tr(x_1 x_2) \tr(x_3) - 3 \tr(x_2) + 5 \tr(x_3)^2 \tr(x_1^2 x_2 x_3).
\]
In fact, using the Stone-Weierstrass theorem, this algebra is dense in $C(\Sigma_{d,R})$ (see \cite[Proposition 13.6.3]{JekelThesis}).

These scalar-valued trace polynomials sit inside a larger algebra $\TrP_d$ obtained by multiplying scalar-valued trace polynomials and non-commutative polynomials, which would contain for instance
\[
\tr(x_1 x_2) x_3 + x_1 - 3 \tr(x_2)1 + 5 \tr(x_3)^2 x_1^2 x_2 x_3.
\]
The space of trace polynomials is defined algebraically as follows.

\begin{definition}
	We define $\tr(\C\ip{x_1,\dots,x_d})$ to be the vector space
	\[
	\C\ip{x_1,\dots,x_d} / \Span \{pq - qp: p, q \in \C\ip{x_1,\dots,x_d} \}.
	\]
	Then $\TrP^0(\R^{*d})$ is defined to be the symmetric tensor algebra over $\tr(\C\ip{x_1,\dots,x_d})$ modulo the relation $\tr(1) = 1$.  We also define $\TrP(x_1,\dots,x_d) = \TrP^0(x_1,\dots,x_d) \otimes \C\ip{x_1,\dots,x_d}$ $*$-algebras.
\end{definition}

	For $p \in \C\ip{x_1,\dots,x_d}$, we denote the corresponding element of $\tr(\C\ip{x_1,\dots,x_d})$ by $\tr(p)$.  Elements in the algebra $\TrP(x_1,\dots,x_d)$ will be written as linear combinations of expressions such as $\tr(p_1) \dots \tr(p_n) p_0$.  Note that $\C\ip{x_1,\dots,x_d}$ has a natural $\Z_{\geq 0}^d$-grading by the degrees in each variable.  The quotient $\tr(\C\ip{x_1,\dots,x_d})$ is defined by relations $pq - qp = 0$, and it suffices to take $p$ and $q$ monomials, so that $pq - qp$ is in a single graded component.  Therefore, $\tr(\C\ip{x_1,\dots,x_d})$ inherits the $\Z_{\geq 0}^d$-grading.  From this, we obtain a grading on the tensor algebra $\TrP^0(x_1,\dots,x_d)$ and then on $\TrP(x_1, \dots, x_d)$, which is the tensor product of $\TrP^0(x_1,\dots,x_d)$ and $\C\ip{x_1,\dots,x_d}$.  We also identify $\TrP^0(x_1,\dots,x_d)$ with the subalgebra $\TrP^0(x_1,\dots,x_d) \otimes 1$ of $\TrP(x_1,\dots,x_d)$.

Just as commutative polynomials in $d$ variables can be interpreted as functions $\R^d \to \R$, a trace polynomial $f$ defines a function $\cA_{sa}^d \to \cA$ for every tracial $\mathrm{C}^*$-algebra $(\cA,\tau)$.  This is done through evaluation maps which naturally extend the evaluation maps on $\C\ip{x_1,\dots,x_d}$.

\begin{definition}
	Let $(\cA,\tau)$ be a tracial $\mathrm{C}^*$-algebra, and let $X_1$, \dots, $X_d \in \cA$ be self-adjoint.  Then we define the evaluation map $\ev_{X_1,\dots,X_d}^{\cA,\tau}: \TrP(x_1,\dots,x_d) \to \cA$ as the unique $*$-homomorphism satisfying
	\begin{align*}
		\ev_{X_1,\dots,X_d}^{\cA,\tau}(p(x_1,\dots,x_d)) &= p(X_1,\dots,X_d) \\
		\ev_{X_1,\dots,X_d}^{\cA,\tau}(\tr(p(x_1,\dots,x_d))) &= \tau(p(X_1,\dots,X_d)) 1.
	\end{align*}
\end{definition}

To see that this is well-defined, note $\ev_{X_1,\dots,X_d}^{\cA,\tau}$ passes to well-defined linear map from the $\tr(\C\ip{x_1,\dots,x_d})$ into $\cA$ since $\tau$ is invariant under cyclic symmetry.  Using the universal property of the symmetric tensor algebra, we obtain a map $\TrP^0(X_1,\dots,X_d) \to \cA$.  Finally, we tensor this map with the well-known evaluation map $\C\ip{x_1,\dots,x_d} \to \cA$ to obtain a map $\TrP(x_1,\dots,x_d) \to \cA$.

\begin{definition}
	With $(\cA,\tau)$ a tracial $\mathrm{C}^*$-algebra and $f \in \TrP(x_1,\dots,x_d)$, we define $f^{\cA,\tau}: \cA_{\sa}^d \to \cA$ by
	\[
	f^{\cA,\tau}(X_1,\dots,X_d) = \ev_{X_1,\dots,X_d}|_{\cA,\tau}(f).
	\]
\end{definition}

Thus, a trace polynomial $f$ defines a function $\cA_{\sa}^d \to \cA$.  We next explain how to differentiate the function $f^{\cA,\tau}$, and this will motivate the construction of non-commutative $C^k$ functions.  Given $f: \cA_{\sa}^d \to \cA$ for some tracial $\mathrm{C}^*$-algebra, we define
\[
\partial_j f: \cA_{\sa}^d \times \cA_{\sa} \to \cA
\]
by
\begin{equation} \label{eq:concretederivative}
	\partial_j f(X_1,\dots,X_d)[Y] = \frac{d}{dt} \biggr|_{t=0} f(X_1,\dots,X_{j-1}, X_j + tY, X_{j+1}, \dots, X_d)
\end{equation}
whenever the limit defining the derivative exists in norm.  (Of course, this definition makes sense for maps between Banach spaces in general, and one could also consider differentiation in the weak topology.)  Similarly, for $j_1 \in \{1,\dots,d\}$, we can view $\partial_{j_1} f(X)[Y_1]$ as a function of $d+1$ variables, and then take a second directional derivative with respect to the $j_2$th variable in another direction $Y_2$.  In general, we denote the iterated directional derivatives of order $k$ by
\[
\partial_{j_k} \dots \partial_{j_1} f(X_1,\dots,X_d)[Y_1,\dots,Y_k]
\]
for $j_1$, \dots, $j_k \in \{1,\dots,d\}$ and $X_1$, \dots, $X_d$ and $Y_1$, \dots, $Y_k$ in $\cA_{\sa}$.

We claim that if $f \in \TrP(x_1,\dots,x_d)$, then the directional derivative $\partial_j (f^{\cA,\tau})(X)[Y]$ is given by $g^{\cA,\tau}(X,Y)$ for some trace polynomial $g$ that is independent of $(\cA,\tau)$.  In fact, we will describe abstract differentiation operators on the algebra $\TrP(x_1,\dots,x_d)$ such that the abstract derivatives of $f$ evaluate to the directional derivatives of $f^{\cA,\tau}$ for every $(\cA,\tau)$.  Since a trace polynomial is smooth in the sense of Fr\'echet differentiation, the $k$th directional derivatives of a function $f(X_1,\dots,X_d)$ in directions $(Y_1,\dots,Y_k)$ will be multilinear in $(Y_1,\dots,Y_k)$.  Hence, the $k$th directional derivatives ought to be given by trace polynomials in $(x_1,\dots,x_d, y_1,\dots,y_k)$ that are multilinear in $(y_1,\dots,y_k)$, which motivates the following definition.

\begin{definition}
	Let $\TrP(x_1,\dots, x_d ; y_1, \dots, y_\ell)$ be the subspace of $\TrP(x_1,\dots,x_d, y_1, \dots, y_\ell)$ consisting of trace polynomials that are linear in each $y_j$, that is, it is the sum of the graded components with grading in $\Z_{\geq 0}^d \times \{1\}^\ell$.  An element $f \in \TrP(x_1,\dots,x_d; y_1, \dots, y_\ell)$ will often be denoted $f(x_1,\dots,x_d)[y_1,\dots,y_\ell]$ rather than $f(x_1,\dots,x_d,y_1,\dots,y_\ell)$.
\end{definition}

Of course, if $f \in \TrP(x_1,\dots,x_d; y_1,\dots, y_k)$, then $f|_{\cA,\tau}$ defines a map $\cA_{\sa}^{d+k} \to \cA$ that is multilinear in the last $k$ variables.  To define the abstract derivative operators, we start with the case of first-order derivatives.

\begin{lemma}
	There is a unique linear operator
	\[
	\partial_{x_j}: \TrP(x_1,\dots,x_d) \to \TrP(x_1,\dots,x_d; y)
	\]
	satisfying
	\begin{align*}
		\partial_{x_j}(x_j)[y] &= y \\
		\partial_{x_j}(x_i)[y] &= 0 \text{ for } i \neq j \\
		\partial_{x_j}[\tr(p(x))][y] &= \tr(\partial_{x_j}[p(x)][y]) \text{ for } p \in \C\ip{x_1,\dots,x_d} \\
		\partial_{x_j}[f(x) g(x)] &= \partial_{x_j}f(x)[y] g(x) + f(x) \partial_{x_j}g(x)[y].
	\end{align*}
\end{lemma}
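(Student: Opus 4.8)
The plan is to build $\partial_{x_j}$ purely algebraically, in three stages mirroring the construction of $\TrP(x_1,\dots,x_d)$ as $\TrP^0(x_1,\dots,x_d) \otimes \C\ip{x_1,\dots,x_d}$, where $\TrP^0(x_1,\dots,x_d)$ is the symmetric algebra on $\tr(\C\ip{x_1,\dots,x_d})$ modulo $\tr(1)=1$. At each stage $\partial_{x_j}$ will be a derivation into an appropriate module, the prescribed formula on generators will force well-definedness, and uniqueness will then be automatic.

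First I would define $\partial_{x_j}$ on the free algebra $\C\ip{x_1,\dots,x_d}$. Since this algebra is free, the universal property supplies a unique derivation into the $\C\ip{x_1,\dots,x_d}$-bimodule of noncommutative polynomials in $x_1,\dots,x_d,y$ that are linear in $y$ (with left and right multiplication as the bimodule actions) satisfying $\partial_{x_j}(x_i)[y] = \delta_{ij}\,y$; explicitly,
\[
\partial_{x_j}(x_{i_1} \cdots x_{i_n})[y] = \sum_{k\,:\,i_k = j} x_{i_1} \cdots x_{i_{k-1}}\, y\, x_{i_{k+1}} \cdots x_{i_n},
\]
which is Voiculescu's free difference quotient under the identification of this bimodule with $\C\ip{x_1,\dots,x_d}^{\otimes 2}$. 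I would then push $p \mapsto \tr(\partial_{x_j}(p)[y])$ through the quotient defining $\tr(\C\ip{x_1,\dots,x_d})$: by the Leibniz rule and cyclicity of $\tr$,
\[
\tr\big(\partial_{x_j}(pq)[y]\big) = \tr\big(\partial_{x_j}(p)[y]\,q\big) + \tr\big(p\,\partial_{x_j}(q)[y]\big) = \tr\big(\partial_{x_j}(qp)[y]\big),
\]
so $p \mapsto \tr(\partial_{x_j}(p)[y])$ annihilates commutators and descends to a linear map on $\tr(\C\ip{x_1,\dots,x_d})$ taking values in the scalar-valued trace polynomials in $x_1,\dots,x_d,y$ that are linear in $y$.

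Next I would extend this to a derivation of $\TrP^0(x_1,\dots,x_d)$ (valued in that same space of scalar-valued linear-in-$y$ trace polynomials, regarded as a $\TrP^0(x_1,\dots,x_d)$-module) via the Leibniz rule $\partial_{x_j}(\tr(p_1)\cdots\tr(p_n)) = \sum_i \tr(p_1)\cdots \partial_{x_j}(\tr(p_i))\cdots\tr(p_n)$; this is well defined on the symmetric algebra by symmetry of the right-hand side, and it is compatible with the relation $\tr(1)=1$ precisely because $\partial_{x_j}(\tr(1))[y] = \tr(\partial_{x_j}(1)[y]) = 0$. Finally I would extend to $\TrP(x_1,\dots,x_d) = \TrP^0(x_1,\dots,x_d) \otimes \C\ip{x_1,\dots,x_d}$ by the tensor-product derivation formula $\partial_{x_j}(f \otimes p) = \partial_{x_j}(f)\otimes p + f \otimes \partial_{x_j}(p)$, which is bilinear in the two tensor factors, hence well defined, and takes values in $\TrP(x_1,\dots,x_d;y)$. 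The four displayed identities are then read off directly: (1), (2), (3) are immediate from the construction, and (4) holds because the map was built as a derivation at every stage and the product on $\TrP(x_1,\dots,x_d)$ is the tensor product of the products on the two factors. For uniqueness, observe that $\TrP(x_1,\dots,x_d)$ is generated as an algebra by $x_1,\dots,x_d$ together with the elements $\tr(p)$: properties (1)--(2) determine $\partial_{x_j}$ on the $x_i$, property (3) determines it on each $\tr(p)$, and linearity together with the Leibniz rule (4) then forces its value on every product.

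The only genuine content is the two well-definedness checks — that $p \mapsto \tr(\partial_{x_j}(p)[y])$ is cyclically invariant (a one-line trace computation) and that the derivation on the symmetric algebra is compatible with $\tr(1) = 1$. Everything else is formal bookkeeping with derivations of free, symmetric, and tensor-product algebras, so I do not anticipate a serious obstacle; the lemma is essentially a careful unwinding of the algebraic structure of $\TrP(x_1,\dots,x_d)$, and the effort will mostly go into stating the intermediate universal properties precisely.
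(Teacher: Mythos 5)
Your proposal is correct and follows essentially the same route as the paper: define $\partial_{x_j}$ on $\C\ip{x_1,\dots,x_d}$ by the same explicit formula, check that it respects cyclic symmetry so that it descends to $\tr(\C\ip{x_1,\dots,x_d})$, and then extend to $\TrP(x_1,\dots,x_d)$ by the Leibniz rule, with uniqueness following from generation by the $x_i$ and the $\tr(p)$. The only difference is packaging: you organize the extension through universal properties of the free, symmetric, and tensor-product algebras (explicitly checking compatibility with $\tr(1)=1$ and centrality of the scalar-valued factors), whereas the paper defines the operator directly on the monomial basis $\tr(p_1)\cdots\tr(p_n)p_0$ and leaves those verifications as an exercise.
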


\begin{proof}
	First, for a monomial $p(x) = x_{j(1)} \dots x_{j(k)}$, define
	\[
	\partial_{x_j} p(x) = \sum_{i: j(i) = j} x_{j(1)} \dots x_{j(i-1)} y x_{j(i+1)} \dots x_{j(k)}.
	\]
	Since monomials are a basis for $\C\ip{x_1,\dots,x_d}$, this extends to a linear operator $\C\ip{x_1,\dots,x_d} \to \C\ip{x_1,\dots,x_d,y}$.  Then observe that if $q$ is cyclically equivalent to $p$, then $\partial_{x_j} q$ is cyclically equivalent to $\partial_{x_j} p$.  Thus, $\partial_{x_j}$ also defines a map $\tr(\C\ip{x_1,\dots,x_d}) \to \tr(\C\ip{x_1,\dots,x_d,y})$.  Recall that a basis for $\TrP(x_1,\dots,x_d)$ is given by elements of the form $\tr(p_1) \dots \tr(p_n) p_0$, where $p_1$, \dots, $p_n$ are monomials up to cyclic symmetry and $p_0$ is a monomial.  Thus, there is a unique linear operator $\TrP(x_1,\dots,x_d) \to \TrP(x_1,\dots,x_d,y)$ satisfying
	\[
	\partial_{x_j}[\tr(p_1) \dots \tr(p_n) p_0] = \sum_{i=1}^n \tr(\partial_{x_j}p_i) \prod_{i' \in [n] \setminus \{i\}} \tr(p_{i'}) p_0 + \prod_{i=1}^n \tr(p_i) \partial_{x_j} p_0.
	\]
	whenever $p_0$, \dots, $p_n$ are monomials.  We leave it as an exercise to check that this operator $\partial_{x_j}$ satisfies all the desired properties and is uniquely determined by those properties, and moreover that it maps into $\TrP(x_1,\dots,x_d;y)$.
\end{proof}

\begin{remark}
	The action of $\partial_{x_j}$ can be described in words as ``find each occurrence of $x_j$ and replace it by $y$ and then add the resulting trace polynomials.''  For instance, with $d = 2$, $j = 1$,
	\[
	\partial_{x_1}[\tr(x_1 x_2) \tr(x_2) x_1^2][y]
	= \tr(y x_2) \tr(x_1) x_1^2 + \tr(x_1 x_2) \tr(x_2) y x_1 + \tr(x_1 x_2) \tr(x_2) x_1 y.
	\]
\end{remark}

To define higher order derivatives, note that $\TrP(x_1,\dots,x_d,y_1,\dots,y_k)$ is isomorphic to $\TrP(x_1,\dots,x_{d+k})$, and hence for $j=1$,\dots,$d$, we can define
\[
\partial_{x_j}: \TrP(x_1,\dots,x_d,y_1,\dots,y_k) \to \TrP(x_1,\dots,x_d,y_1,\dots,y_k; y_{k+1}),
\]
where $y_{k+1}$ stands for the extra variable $y$ that is introduced when differentiating.  In fact, this operator maps
\[
\TrP(x_1,\dots,x_d; y_1,\dots,y_k) \to \TrP(x_1,\dots,x_d; y_1, \dots, y_{k+1}).
\]

\begin{lemma} \label{lem:TrPdirectionalderivatives}
	Let $f \in \TrP(x_1,\dots,x_d;y_1,\dots,y_\ell)$, and let $(\cA,\tau)$ be a tracial $\mathrm{C}^*$-algebra.  Then
	\begin{multline*}
		\partial_{j_k} \dots \partial_{j_1} (f^{\cA,\tau})(X_1,\dots,X_d)[Y_1,\dots,Y_{k+\ell}] \\
		= (\partial_{x_{j_k}} \dots \partial_{x_{j_1}} f)|_{\cA,\tau}(X_1,\dots,X_d)[Y_1,\dots,Y_{k+\ell}]
	\end{multline*}
	for $X_1$, \dots, $X_d$, $Y_1$, \dots, $Y_{k+\ell} \in \cA_{\sa}$.  Here the left-hand side denotes the iterated directional derivative of $f^{\cA,\tau}$ as a function on $\cA_{\sa}^d$ while the right-hand side denotes abstract differentiation operators which we introduced algebraically.
\end{lemma}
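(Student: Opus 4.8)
The plan is to reduce the statement to the first-order case $k=1$ and then verify that case by a direct computation on a basis of $\TrP(x_1,\dots,x_d)$.

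\emph{Step 1: reduction to first order.} I would induct on $k$, the base case $k=0$ being the definition of $f^{\cA,\tau}$. For the inductive step, set $g := \partial_{x_{j_{k-1}}}\cdots\partial_{x_{j_1}} f \in \TrP(x_1,\dots,x_d;y_1,\dots,y_{\ell+k-1})$. The inductive hypothesis gives $\partial_{j_{k-1}}\cdots\partial_{j_1}(f^{\cA,\tau}) = g^{\cA,\tau}$ as functions of $(X_1,\dots,X_d)$ and the relevant direction variables; differentiating both sides once more in the $j_k$-th coordinate and applying the first-order identity (proved in Step 2) to $g$ — viewed as an element of $\TrP(x_1,\dots,x_{d+\ell+k-1})$ under the natural identification, with $j_k \le d$ — yields $\partial_{j_k}\cdots\partial_{j_1}(f^{\cA,\tau}) = (\partial_{x_{j_k}} g)^{\cA,\tau}$, which is the claim. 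Here I use that $\partial_{x_{j_k}}$ on the larger polynomial algebra restricts to the $y$-multilinear subspace exactly as $\partial_{x_{j_k}}$ was defined just above the lemma, and that all directional derivatives in question genuinely exist in operator norm, since after applying linearity of $\tau$ the map $t \mapsto g^{\cA,\tau}(\cdots + t(\cdots))$ is a polynomial in $t$ with coefficients in $\cA$.

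\emph{Step 2: the first-order identity.} It remains to show $\partial_i(g^{\cA,\tau})(\mathbf{Z})[W] = (\partial_{x_i} g)^{\cA,\tau}(\mathbf{Z})[W]$ for $g \in \TrP(x_1,\dots,x_m)$, $i \in \{1,\dots,m\}$, $(\cA,\tau)$ a tracial $\mathrm{C}^*$-algebra, $\mathbf{Z} \in \cA_{\sa}^m$, $W \in \cA_{\sa}$. Both sides are linear in $g$, so it suffices to check a basis element $g = \tr(p_1)\cdots\tr(p_n)\,p_0$ with $p_0,\dots,p_n$ monomials. (i) For a monomial $p_0 = x_{k(1)}\cdots x_{k(r)}$, applying the Leibniz rule for differentiating a product of norm-differentiable $\cA$-valued maps to $t \mapsto \prod_s (Z_{k(s)} + t\,\delta_{k(s),i}\,W)$ produces $\sum_{s:\,k(s)=i} Z_{k(1)}\cdots Z_{k(s-1)}\,W\,Z_{k(s+1)}\cdots Z_{k(r)}$, which is precisely the evaluation of $\partial_{x_i}p_0$ (the ``replace each occurrence of $x_i$ by $y$'' description). (ii) For $\tr(p)$: since $\tau$ is a bounded linear functional and the difference quotient for $p$ converges in norm, pull the derivative through $\tau$ and use (i) to get $\partial_i((\tr p)^{\cA,\tau})(\mathbf{Z})[W] = \tau\bigl((\partial_{x_i}p)^{\cA,\tau}(\mathbf{Z})[W]\bigr)\,1 = \bigl(\tr(\partial_{x_i}p)\bigr)^{\cA,\tau}(\mathbf{Z})[W]$, matching the defining relation $\partial_{x_i}[\tr(p)] = \tr(\partial_{x_i}p)$. (iii) For the product: apply the general Leibniz rule to the product of the scalar-valued maps $(\tr p_1)^{\cA,\tau},\dots,(\tr p_n)^{\cA,\tau}$ and the $\cA$-valued map $p_0^{\cA,\tau}$; the scalar factors commute with everything, so there is no issue with the order of the noncommuting factors, and combining with (i)--(ii) recovers exactly the explicit formula for $\partial_{x_i}[\tr(p_1)\cdots\tr(p_n)\,p_0]$ recorded in the proof of the lemma that constructs $\partial_{x_j}$.

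\emph{Main obstacle.} There is no conceptual difficulty here; the content is that the algebraically defined operators $\partial_{x_j}$ encode the genuine directional derivatives, and the verification is essentially bookkeeping. The two points deserving care are the interchange of $\tau$ with the limit defining the derivative — legitimate because $\tau$ is bounded and the relevant difference quotients converge in operator norm, everything in sight being polynomial in the deformation parameter $t$ — and, in Step 1, the compatibility of the identifications $\TrP(x_1,\dots,x_d,y_1,\dots,y_k) \cong \TrP(x_1,\dots,x_{d+k})$ and of the restriction of $\partial_{x_j}$ to the $y$-multilinear subspaces with the evaluation maps; both are immediate from the constructions given just before the lemma.
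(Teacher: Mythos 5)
Your proposal is correct and follows essentially the same route as the paper: induct on $k$ to reduce to the first-order case, absorb the $y$-variables into the $x$-variables via the identification $\TrP(x_1,\dots,x_d,y_1,\dots,y_\ell)\cong\TrP(x_1,\dots,x_{d+\ell})$, and then verify the first-order identity on basis elements by the Leibniz rule for monomials, pulling the derivative through $\tau$ for traced terms, and the Leibniz rule again for products. Your added care about norm convergence of the difference quotients and boundedness of $\tau$ is fine but not a departure from the paper's argument.
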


\begin{proof}
	By induction, it suffices to prove the case where $k = 1$.  Then, since a function in $\Tr(x_1,\dots,x_d; y_1,\dots,y_\ell)$ can be viewed as a function of $d+\ell$ variables, we can assume without loss of generality that $\ell = 0$ by changing $d$ if necessary.  Hence, it suffices to show that for $f \in \TrP(x_1,\dots,x_d)$,
	\[
	\partial_j (f|_{\cA,\tau})(X_1,\dots,X_k)[Y_1] = (\partial_{x_j} f])_{\cA,\tau}(X_1,\dots,X_k)[Y_1].
	\]
	The two sides of the equation agree when $f(x_1,\dots,x_k) = x_i$ for some $i$, hence they agree for non-commutative monomials using the Leibniz rule and for non-commutative polynomials by linearity.  Then because both $\partial_{x_j}$ and the directional derivative operations commute with the application of the trace, the relation also holds for $f \in \tr(\C\ip{x_1,\dots,x_d})$.  Finally, by the Leibniz rule, it extends to all of $\TrP(x_1,\dots,x_d)$.
\end{proof}

\subsection{The spaces $C_{\tr}^k(\R^{*d}, \mathscr{M}(\R^{*d_1},\dots,\R^{*d_\ell}))$}

Now we are ready to define a certain non-commutative analog of $C^k$ functions.  These are, roughly speaking, functions whose derivatives up to order $k$ can be approximated by trace polynomials.  But we must first decide what norm to use for the approximation, and there are many possible choices. Thus, we will first give some motivation for our definitions. What is most important is for the resulting function spaces to have good closure properties; for instance, closure under addition, multiplication, and more generally composition.

The first derivative of a trace polynomial $f$ in $(x_1,\dots,x_d)$ is a trace polynomial in $(x_1,\dots,x_d,y_1)$ that is linear in $y_1$.  Thus, $\partial_{x_j} f(X_1,\dots,X_d)$ defines a linear map $\cA \to \cA$ for each tracial $\mathrm{C}^*$-algebra $\cA$ and $X_1$, \dots, $X_d$ in $\cA_{\sa}$.  Obviously, it is natural to consider the norm of $\partial_{x_j} f(X_1,\dots,X_d)$ as a linear map with respect to the operator norm of $\cA$.  However, $\cA$ also has a $2$-norm with respect to the trace (Definition \ref{def:NCLp}).  The $2$-norm is important in the study of von Neumann algebras since it allows us to apply Hilbert space theory.  And the $2$-norm on $M_n(\C)$ is a rescaling of the standard Euclidean norm on $M_n(\C) \cong C^{n^2}$.  Thus, we want to take into consideration
\[
\norm{\partial_{x_j} f(X_1,\dots,X_d)}_{2;2} = \sup \{ \norm{\partial_{x_j} f(X_1,\dots,X_d)[Y]}_2: \norm{Y}_2 \leq 1\}.
\]

Higher order derivatives will be multilinear forms $\cA_{\sa}^k \to \cA$.  For instance, one term might be the multilinear form $f(x_1,x_2)[y_1,y_2,y_3] = x_1 y_2 x_2^2 x_1 y_1 y_3$.  If $X_1$, $X_2 \in \cA_{sa}$, then $f(X_1,X_2)$ will not be bounded as a map from $(\cA_{\sa}, \norm{\cdot}_2)^3 \to (\cA,\norm{\cdot}_2)$.  However, by the non-commutative H\"older's inequality (Lemma \ref{lem:NCHolder}), if $\alpha$, $\alpha_1$, $\alpha_2$, $\alpha_3 \in [1,\infty]$ satisfy $1/\alpha = 1/\alpha_1 + 1/\alpha_2 + 1/\alpha_3$, then we have
\[
\norm{X_1 Y_2 X_2^2 X_1 Y_1 Y_3}_\alpha \leq \norm{X_1}_\infty^2 \norm{X_2}_\infty^2 \norm{Y_1}_{\alpha_1} \norm{Y_2}_{\alpha_2} \norm{Y_3}_{\alpha_3},
\]
where $\norm{Y}_\alpha = \tau((Y^*Y)^{\alpha/2})^{1/\alpha}$ for $\alpha < \infty$ and $\norm{Y}_\infty$ is the operator norm.

These considerations will lead to the definition of the space $C_{\tr}^k(\R^{*d})$, which we think of as an analog of the classical space $C^k(\R^d)$.  Before explaining the formal definition, let us first discuss the notation and type of object we aim to describe.  The symbol $\R^{*d}$ does not have a literal meaning but it expresses the idea of a functions of $d$ free real (that is, self-adjoint) variables.  The derivatives of these functions will live in certain spaces of functions of self-adjoint variables which output $\ell$-multilinear forms.  Thus, for instance for $f \in C_{\tr}^k(\R^{*d})$, the total derivative $\partial^k f$ will be define for each $(\cA,\tau)$ a function of $d$-tuples $\mathbf{X}$, $\mathbf{Y}_1$, \dots, $\mathbf{Y}_\ell$ which is real-multilinear in the last $\ell$ arguments (i.e.\ an $\ell$-multilinear function of $\mathbf{Y}_1$, \dots, $\mathbf{Y}_\ell$ that depends on $\mathbf{X}$).  Here, for the sake of compact notation, we want to denote a tuple $(X_1,\dots,X_d) \in \cA_{\sa}^d$ by a single letter $\mathbf{X}$, akin to the common notation for vectors in $\R^d$.  Thus the derivative $\partial^k f$ will collect all the partial derivatives of $f$ of order $k$ (discussed in the previous section) into a single gadget.

Although in many applications the variables $\mathbf{X}$ and $\mathbf{Y}_1$, \dots, $\mathbf{Y}_\ell$ will be vectors with the same number of components, we will need each of them to have a different number of components on some occasions.  The space $C_{\tr}^k(\R^{*d}, \mathscr{M}(\R^{*d_1}, \dots, \R^{*d_\ell}))^{d'}$ will describe functions which assign, to each $(\cA,\tau)$ and each $\mathbf{X}$ in $\cA_{\sa}^d$, a multilinear form $\cA_{\sa}^{d_1} \times \dots \times \cA_{\sa}^{d_\ell} \to \cA^{d'}$.

The entries of the output vector are not restricted to be self-adjoint; thus, this is the non-commutative analog of functions from $\R^d$ to the space of $\R$-multilinear maps $\R^{d_1} \times \dots \times \R^{d_\ell} \to \C^{d'}$.  Moreover, just as every $\R$-multilinear map $\R^{d_1} \times \dots \times \R^{d_\ell} \to \C^{d'}$ extends to a unique $\C$-multilinear map $\C^{d_1} \times \dots \C^{d_\ell} \to \C^{d'}$, any $\R$-multilinear map $\cA_{\sa}^{d_1} \times \dots \times \cA_{\sa}^{d_\ell} \to \cA^{d'}$ extends uniquely to a $\C$-multilinear map $\cA^{d_1} \times \dots \times \cA^{d_\ell} \to \cA$.  We will define norms of multilinear forms using the ``complexified'' versions since they are slightly better behaved (although this only makes a difference up to a constant factor).  Now let us give the precise definitions.

\begin{definition} \label{def:Lalphamultilinearnorm}
	If $\Lambda: \cA^{d_1} \times \dots \times \cA^{d_\ell} \to \cA^{d'}$ is a $\C$-multilinear form and $\alpha$, $\alpha_1$, \dots, $\alpha_\ell \in [0,\infty]$, then we define
	\[
	\norm{\Lambda}_{\alpha;\alpha_1,\dots,\alpha_\ell} = \sup \{\norm{\Lambda[Y_1,\dots,Y_\ell]}_\alpha: Y_1 \in \cA^{d_1}, \dots, Y_\ell \in \cA^{d_\ell}, \norm{Y_1}_{\alpha_1} \leq 1, \dots \norm{Y_\ell}_{\alpha_\ell} \leq 1\}.
	\]
	We also define
	\[
	\norm{\Lambda}_{\mathscr{M}^\ell,\tr} = \sup \{ \norm{\Lambda}_{\alpha;\alpha_1,\dots,\alpha_\ell}: \alpha^{-1} = \alpha_1^{-1} + \dots + \alpha_\ell^{-1} \}.
	\]
	Note that in the case $\ell = 0$, the multilinear form reduces to an element of $\cA^{d'}$ and $\norm{\Lambda}_{\mathscr{M}^0,\tr} = \norm{\Lambda}_\infty$.
\end{definition}

\begin{observation}
	Every $\mathbf{Y} \in \cA^d$ can be written uniquely as $\re(\mathbf{Y}) + i \im(\mathbf{Y})$, where $\re(\mathbf{Y})$ and $\im(\mathbf{Y}) \in \cA_{\sa}^d$, and we have $\norm{\re(\mathbf{Y})}_\alpha, \norm{\im(\mathbf{Y})}_\alpha \leq \norm{\mathbf{Y}}_\alpha$.  Therefore, We have
	\begin{align*}
		\frac{1}{2^\ell} \norm{\Lambda}_{\alpha;\alpha_1,\dots,\alpha_\ell} &\leq \sup \{\norm{\Lambda[Y_1,\dots,Y_\ell]}_\alpha: Y_1 \in \cA_{\sa}^{d_1}, \dots, Y_\ell \in \cA_{\sa}^{d_\ell}, \norm{Y_1}_{\alpha_1} \leq 1, \dots \norm{Y_\ell}_{\alpha_\ell} \leq 1\} \\
		&\leq \norm{\Lambda}_{\alpha;\alpha_1,\dots,\alpha_\ell}.
	\end{align*}
\end{observation}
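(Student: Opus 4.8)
The plan is to treat the three assertions in turn---existence and uniqueness of the Cartesian decomposition, the monotonicity $\norm{\re(\mathbf Y)}_\alpha,\norm{\im(\mathbf Y)}_\alpha \le \norm{\mathbf Y}_\alpha$, and the two-sided estimate for $\Lambda$---since each one feeds into the next.

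For the decomposition I would define $\re(\mathbf Y)$ and $\im(\mathbf Y)$ componentwise by $\re(Y_j) = (Y_j + Y_j^*)/2$ and $\im(Y_j) = (Y_j - Y_j^*)/(2i)$. Each $\re(Y_j)$ and $\im(Y_j)$ is visibly fixed by $*$, and $\re(Y_j) + i\im(Y_j) = Y_j$, so $\re(\mathbf Y) + i\im(\mathbf Y) = \mathbf Y$ with both tuples in $\cA_{\sa}^d$. Uniqueness is the usual linear-algebra argument: if $\mathbf Y = \mathbf A + i\mathbf B$ with $\mathbf A, \mathbf B \in \cA_{\sa}^d$, then applying $*$ entrywise gives $Y_j^* = A_j - iB_j$, and solving the resulting $2\times 2$ system forces $A_j = (Y_j + Y_j^*)/2$ and $B_j = (Y_j - Y_j^*)/(2i)$.

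For the norm bound I would first record the single-operator identity $\norm{a^*}_\alpha = \norm{a}_\alpha$ for $a \in \cA$ and $\alpha \in [1,\infty]$: when $\alpha = \infty$ this is the $\mathrm C^*$-identity, and when $\alpha < \infty$ it follows from traciality, since cycling a single factor gives $\tau((aa^*)^n) = \tau\!\bigl(a(a^*a)^{n-1}a^*\bigr) = \tau((a^*a)^n)$ for every $n$, so $aa^*$ and $a^*a$ have the same $\tau$-moments and hence $\tau((aa^*)^{\alpha/2}) = \tau((a^*a)^{\alpha/2})$ after a Stone--Weierstrass approximation of $t \mapsto t^{\alpha/2}$ inside the functional calculus. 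Since $\norm{\cdot}_\alpha$ is a norm for $\alpha \in [1,\infty]$, the triangle inequality then yields $\norm{\re(Y_j)}_\alpha = \tfrac12\norm{Y_j + Y_j^*}_\alpha \le \tfrac12\bigl(\norm{Y_j}_\alpha + \norm{Y_j^*}_\alpha\bigr) = \norm{Y_j}_\alpha$ and likewise for $\im(Y_j)$; raising to the $\alpha$-th power and summing over $j$ (or taking the maximum when $\alpha = \infty$) upgrades this to the asserted tuple inequality $\norm{\re(\mathbf Y)}_\alpha, \norm{\im(\mathbf Y)}_\alpha \le \norm{\mathbf Y}_\alpha$.

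Finally, for the estimate on $\Lambda$ the upper bound is immediate because the supremum in the middle quantity runs over the subcollection of $\{(Y_1,\dots,Y_\ell) : \norm{Y_j}_{\alpha_j} \le 1\}$ consisting of self-adjoint tuples. For the lower bound, write $S$ for that middle quantity, fix $Y_j \in \cA^{d_j}$ with $\norm{Y_j}_{\alpha_j} \le 1$, decompose $Y_j = \re(Y_j) + i\im(Y_j)$, and expand using $\C$-multilinearity of $\Lambda$:
\[
\Lambda[Y_1,\dots,Y_\ell] = \sum_{\varepsilon \in \{0,1\}^\ell} i^{\varepsilon_1 + \dots + \varepsilon_\ell}\,\Lambda\!\bigl[Z_1^{(\varepsilon_1)},\dots,Z_\ell^{(\varepsilon_\ell)}\bigr], \qquad Z_j^{(0)} := \re(Y_j),\ \ Z_j^{(1)} := \im(Y_j).
\]
By the previous step each $Z_j^{(\varepsilon_j)}$ is self-adjoint with $\norm{Z_j^{(\varepsilon_j)}}_{\alpha_j} \le 1$, so each of the $2^\ell$ terms has $\norm{\cdot}_\alpha \le S$, and the triangle inequality gives $\norm{\Lambda[Y_1,\dots,Y_\ell]}_\alpha \le 2^\ell S$; taking the supremum over admissible $Y_1,\dots,Y_\ell$ gives $\norm{\Lambda}_{\alpha;\alpha_1,\dots,\alpha_\ell} \le 2^\ell S$, as claimed. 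There is no real obstacle here---everything is linear algebra plus standard facts about the non-commutative $L^\alpha$ norms. The one point requiring a word of care is the adjoint-isometry $\norm{a^*}_\alpha = \norm{a}_\alpha$ together with the restriction $\alpha \ge 1$, which is what makes $\norm{\cdot}_\alpha$ genuinely subadditive and the constant exactly $2^\ell$; for $\alpha < 1$ one has only a quasi-triangle inequality and hence a worse constant, but that range does not arise in the applications.
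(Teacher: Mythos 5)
Your proof is correct and is exactly the argument the paper intends for this Observation (which it states without proof): the componentwise Cartesian decomposition, the bound $\norm{\re(Y_j)}_\alpha,\norm{\im(Y_j)}_\alpha\le\norm{Y_j}_\alpha$ via adjoint-isometry and the triangle inequality for $\alpha\in[1,\infty]$, and the $2^\ell$-term multilinear expansion. Your closing caveat about $\alpha\ge 1$ is apt, since the seminorms are only ever used with exponents in $[1,\infty]$ in the sequel.
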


\begin{definition} \label{def:basicnorm}
	Suppose that $(\cA,\tau)$ is a tracial $\mathrm{C}^*$-algebra and $f: \cA_{\sa}^d \times \cA_{\sa}^{d_1} \dots \cA_{\sa}^{d_\ell} \to \cA^{d'}$ is a function that is real-multilinear in the last $\ell$ arguments.  Then we define
	\[
	\norm{f}_{\mathscr{M}^\ell,\tr,R} = \sup \{\norm{f(\mathbf{X})}_{\mathscr{M}^\ell,\tr}: \mathbf{X} \in \cA_{\sa}^d, \norm{\mathbf{X}}_\infty \leq R \}.
	\]
	In the case $\ell = 0$, we write it simply as $\norm{\mathbf{f}}_{\tr,R}$.
\end{definition}

The seminorm of a function $f$ in $C_{\tr}^k(\R^{*d},\mathscr{M}(\R^{*d_1},\dots,\R^{*d_\ell}))^{d'}$ with radius $R$ will be defined below essentially as the supremum of $\norm{f^{\cA,\tau}}_{\mathscr{M}^\ell,\tr,R}$ over tracial $\mathrm{C}^*$-algebras $(\cA,\tau)$, but there is a small technical issue that the classes of tracial $\mathrm{C}^*$-algebras and of tracial $\mathrm{W}^*$-algebras are not sets.  However, this issue is easily resolved as follows (for a moment, we assume a greater background knowledge about operator algebras):  There does exist a set $\mathbb{W}$ of isomorphism class representatives for tracial $\mathrm{W}^*$-algebras that are separable in $\sigma$-WOT.  This is because a separable tracial $\mathrm{W}^*$-algebra with a choice of a countable set of self-adjoint generators is equivalent to a non-commutative law in countably many variables, that is, unital, positive, tracial, exponentially bounded linear maps $\C\ip{x_j: j \in \N} \to \C$.  These linear functionals evidently form a set.  Isomorphism between the $\mathrm{W}^*$-algebras defines an equivalence relation on the space of laws, hence we can define $\mathbb{W}$ as the set of equivalence classes.  Of course, if we take the supremum over separable tracial $\mathrm{W}^*$-algebras, the supremum is the same as if we used all tracial $\mathrm{W}^*$-algebras since
\[
\norm{f(\mathbf{X})[\mathbf{Y}_1,\dots,\mathbf{Y}_\ell]}_\alpha
\]
can be evaluated only using the $\sigma$-WOT-separable subalgebra $\mathrm{W}^*(\mathbf{X};\mathbf{Y}_1,\dots,\mathbf{Y}_\ell)$ and its trace.  Moreover, it is the same as the supremum over all tracial $\mathrm{C}^*$-algebras, since any tracial $\mathrm{C}^*$-algebra can be completed to a tracial $\mathrm{W}^*$-algebra through the Gelfand-Naimark-Segal construction.

\begin{definition}
	We denote by $\TrP(\R^{*d},\mathscr{M}(\R^{*d_1},\dots,\R^{*d_\ell}))^{d'}$ vector space of $d'$-tuples $\mathbf{g}$ of trace polynomials in the indeterminates or formal variables
	\[
	\mathbf{x} = (x_1,\dots,x_d), \quad \mathbf{y}_1 = (y_{1,1},\dots,y_{1,d_1}), \quad \dots, \quad \mathbf{y}_\ell = (y_{\ell,1},\dots,y_{\ell,d_\ell})
	\]
	that are multilinear in $\mathbf{y}_1$, \dots, $\mathbf{y}_\ell$ (as above).
\end{definition}

We observe that for every $\mathbf{g} \in \TrP(\R^{*d},\mathscr{M}(\R^{*d_1},\dots,\R^{*d_\ell})^{d'}$, we have
\[
\sup_{(\cA,\tau) \in \mathbb{W}} \norm{\mathbf{g}}_{\mathscr{M}^\ell,\tr,R} < \infty.
\]
To verify this, it suffices to check the case $d' = 1$.  By linearity, we reduce to the case where $g = p_0 \tr(p_1) \dots \tr(p_n)$ where $p_0$, \dots, $p_n$ are non-commutative monomials in $\mathbf{x} = (x_1, \dots, x_d)$ and $\mathbf{y}_1$, \dots, $\mathbf{y}_\ell$, such that each $y_j$ occurs exactly once in the entire expression.  When evaluating this function on $\mathbf{X}$ and $\mathbf{Y}_1 \in \cA_{\sa}^{d_1}$, \dots, $\mathbf{Y}_\ell \in \cA_{\sa}^{d_\ell}$ for some $(\cA,\tau) \in \mathbb{W}$, one estimates the result by applying the non-commutative H\"older's inequality to $\tau(p_i)$ for each $i$, using $\norm{\mathbf{Y}_j}_{\alpha_j}$ and $\norm{\mathbf{X}}_\infty$ for each occurrence of $X_i$ (and $\norm{\mathbf{X}}_\infty$ in turn is bounded by $R$).

\begin{definition}
	We define $C_{\tr}(\R^{*d},\mathscr{M}(\R^{d_1},\dots,\R^{d_\ell}))^{d'}$ as the set of tuples $(\mathbf{f}^{\cA,\tau})_{(\cA,\tau) \in \mathbb{W}}$ such that $f^{\cA,\tau}: \cA_{\sa}^d \times \cA_{\sa}^{d_1} \times \dots \times \cA_{\sa}^{d_\ell} \to \cA^{d'}$ that are real-multilinear in the last $\ell$ variables and such that for every $R > 0$ and $\epsilon > 0$, there exists a $d'$-tuple $\mathbf{g} \in \TrP(\R^{*d},\mathscr{M}(\R^{*d_1},\dots,\R^{*d_\ell})^{d'}$ such that 
	\[
	\sup_{(\cA,\tau) \in \mathbb{W}} \norm{\mathbf{f}^{\cA,\tau} - \mathbf{g}^{\cA,\tau}}_{\mathscr{M}^\ell,\tr,R} < \epsilon.
	\]
	We also define
	\[
	\norm{\mathbf{f}}_{C_{\tr}(\R^{*d},\mathscr{M}(\R^{*d_1},\dots,\R^{*d_\ell}))^{d'},R} = \sup_{(\cA,\tau) \in \mathbb{W}} \norm{\mathbf{f}^{\cA,\tau}}_{\mathscr{M}^\ell,\tr,R}.
	\]
	Because writing down $\R^{*d_1}$, \dots, $\R^{*d_\ell}$ is rather cumbersome, we will also use the shorthand
	\[
	\norm{\mathbf{f}}_{C_{\tr}(\R^{*d},\mathscr{M}^\ell)^{d'},R}
	\]
	when the dimensions $d_1$, \dots, $d_\ell$ are understood from context.  Finally, we write
	\[
	C_{\tr}(\R^{*d},\mathscr{M}^\ell(\R^{*d})) = C_{\tr}(\R^{*d},\mathscr{M}(\underbrace{\R^{*d},\dots,\R^{*d}}_{\ell})).
	\]
\end{definition}

Evidently, there is a canonical linear map
\[
\TrP(\R^{*d},\mathscr{M}(\R^{*d_1},\dots,\R^{*d_\ell}))^{d'} \to C_{\tr}(\R^{*d},\mathscr{M}(\R^{*d_1},\dots,\R^{*d_\ell}))^{d'}.
\]
In fact, this map is injective.  For any trace polynomial $f$, it makes sense to evaluate $f^{M_N(\C),\tr_N}$ on arbitrary matrix $d$-tuples (not necessarily self-adjoint), although this extended evaluation map does not respect the $*$-operation.  Let $\mathcal{E}_0$ be an orthonormal basis for $M_N(\C)_{\sa}^d$ as a real inner product space, hence also an orthonormal basis for $M_N(\C)^d$ as a complex inner product space.  For any trace polynomial $f$ and $b \in \mathcal{E}_0$, the function $g(\mathbf{X}) = \ip{b,f^{M_N(\C),\tr_N}(\mathbf{X})}_{\tr_N}$ is a complex analytic function in the coefficients $z_b = \ip{b,\mathbf{X}}$.  Hence, by analytic continuation, it is uniquely determined by the values of $g$ when $z_b \in \R$, that is, by $g$ restricted to self-adjoint $d$-tuples.  Since this is true for each basis element $b$, we see that if $f^{M_N(\C),\tr_N} = 0$ for self-adjoint $\mathbf{X}$, then it is zero for arbitrary $d$-tuple of $N \times N$ matrices.  If a trace polynomial $f$ satisfies $f^{M_N(\C),\tr_N} = 0$ for all $N$, then $f$ must equal zero by \cite[Corollary 4.4]{Procesi1976}.  Hence if $\mathbf{f}^{\cA,\tau} = \mathbf{g}^{\cA,\tau}$ for all $(\cA,\tau) \in \mathscr{W}$, then $\mathbf{f} = \mathbf{g}$ as trace polynomials, which is what we wanted to prove.  While this is not essential to any of our main results, it is notationally and conceptually convenient to treat $\TrP(\R^{*d},\mathscr{M}(\R^{*d_1},\dots,\R^{*d_\ell}))^{d'}$ as a dense subspace of $C_{\tr}(\R^{*d},\mathscr{M}(\R^{*d_1},\dots,\R^{*d_\ell}))^{d'}$.

The following observations are straightforward exercises:
\begin{itemize}
	\item $C_{\tr}(\R^{*d},\mathscr{M}(\R^{*d_1},\dots,\R^{*d_\ell}))$ is a Fr\'echet space with respect to the family of seminorms $\norm{f}_{C_{\tr}(\R^{*d},\mathscr{M}^\ell),R}$ for $R > 0$ (or for any countable set of values of $R$ which tends to $\infty$).
	\item If $\mathbf{f} \in C_{\tr}(\R^{*d},\mathscr{M}(\R^{*d_1},\dots,\R^{*d_\ell}))^{d'}$, then it makes sense to evaluate $\mathbf{f}$ on any tuple $(\mathbf{X},\mathbf{Y}_1,\dots,\mathbf{Y}_\ell) \in \cA_{\sa}^d \times \cA^{d_1} \times \dots \times \cA^{d_\ell}$ for any tracial $\mathrm{C}^*$-algebra $(\cA,\tau)$.  Indeed, we restrict to the $\mathrm{C}^*$-algebra generated by $\mathbf{X}$ and $\mathbf{Y}_1$, \dots, $\mathbf{Y}_\ell$, then complete it to a tracial $\mathrm{W}^*$-algebra.
	\item Given such an $(\cA,\tau)$ and $\mathbf{X}$, $\mathbf{Y}_1$, \dots, $\mathbf{Y}_\ell$, the evaluation $f^{\cA,\tau}(\mathbf{X})[\mathbf{Y}_1,\dots,\mathbf{Y}_\ell]$ is always a $d'$-tuple from the $\mathrm{C}^*$-algebra generated by $\mathbf{X}$, $\mathbf{Y}_1$, \dots, $\mathbf{Y}_\ell$ because $f$ can be approximated in $\norm{\cdot}_{\mathscr{M}^\ell,\tr,R}$ by trace polynomials.  Moreover, the value of $f(\mathbf{X})[\mathbf{Y}_1,\dots,\mathbf{Y}_d]$ only depends on $\tau|_{\mathrm{C}^*(\mathbf{X},\mathbf{Y}_1,\dots,\mathbf{Y}_\ell)}$.
	\item There is a unique $*$-operation on $C_{\tr}(\R^{*d},\mathscr{M}(\R^{*d_1},\dots,\R^{*d_\ell}))^{d'}$ that is continuous and extends the $*$-operation on trace polynomials.  This is given by
	\[
	(f^*)^{\cA,\tau}(\mathbf{X})[\mathbf{Y}_1,\dots,\mathbf{Y}_\ell] = (f^{\cA,\tau}(\mathbf{X})[\mathbf{Y}_1^*,\dots,\mathbf{Y}_\ell^*])^*.
	\]
	This $*$-operation is isometric with respect to each of the seminorms $\norm{\cdot}_{C_{\tr}(\R^{*d},\mathscr{M}^\ell)^{d'},R}$ for $R > 0$.
\end{itemize}

\begin{definition} \label{def:traceCk}
	For $k \in \N_0 \cup \{\infty\}$, we define $C_{\tr}^k(\R^{*d},\mathscr{M}^\ell)^{d'}$ as the set of tuples $\mathbf{f} = (\mathbf{f}^{\cA,\tau})_{(\cA,\tau) \in \mathbb{W}}$ such that for $k' \leq k$, there exists a function
	\[
	\mathbf{f}_{k'} \in C(\R^{*d},\mathscr{M}(\R^{*d_1},\dots,\R^{*d_\ell}, \underbrace{\R^{*d}, \dots, \R^{*d}}_{k'} ))^{d'}
	\]
	such that for every $(\cA,\tau) \in \mathbb{W}$, for $\mathbf{X}$, $\mathbf{Y}_1 \in \cA_{\sa}^{d_1}$, \dots, $\mathbf{Y}_\ell \in \cA_{\sa}^{d_\ell}$, and $\mathbf{Y}_{\ell+1}, \dots, \mathbf{Y}_{\ell+k'} \in \cA_{\sa}^d$, we have
	\[
	\frac{d}{dt_{k'}} \biggr|_{t_{k'} = 0} \dots \frac{d}{dt_1} \biggr|_{t_1 = 0}\mathbf{f}^{\cA,\tau}(\mathbf{X} + t_1 \mathbf{Y}_{\ell+1} + \dots + t_{k'} \mathbf{Y}_{\ell+k'})[\mathbf{Y}_1,\dots,\mathbf{Y}_\ell] = \mathbf{f}_{k'}^{\cA,\tau}(\mathbf{X})[\mathbf{Y}_1,\dots,\mathbf{Y}_{\ell+k'}].
	\]
	In other other words, for each $(\cA,\tau) \in \mathbb{W}$, each iterated directional derivative of $\mathbf{f}^{\cA,\tau}$ exists, and it agrees some function in $C_{\tr}(\R^{*d},\mathscr{M}^{\ell+k'})^{d'}$ that is independent of the choice of $(\cA,\tau)$.  For each $k' \leq k$, the function $\mathbf{f}_{k'}$ is uniquely determined, and we will denote this function by $\partial^{k'} \mathbf{f}$.
\end{definition}

The following observations are immediate:
\begin{itemize}
	\item If $\mathbf{f} = (\mathbf{f}^{\cA,\tau})_{(\cA,\tau) \in \mathbb{W}} \in C_{\tr}^k(\R^{*d},\mathscr{M}(\R^{*d_1},\dots,\R^{*d_\ell}))^{d'}$, and if $k' \leq k$, then $\partial^{k'} \mathbf{f}$ is an element of $C_{\tr}^{k-k'}(\R^{*d},\mathscr{M}^{\ell+k'})^{d'}$.
	\item Every element of $\TrP(\R^{*d},\mathscr{M}(\R^{*d_1},\dots,\R^{*d_\ell}))^{d'}$ defines an element of $C_{\tr}^\infty(\R^{*d},\mathscr{M}(\R^{*d_1},\dots,\R^{*d_\ell}))^{d'}$.
	\item $C_{\tr}^k(\R^{*d},\mathscr{M}(\R^{*d_1},\dots,\R^{*d_\ell}))^{d'}$ is a Fr\'echet space with the topology given by the seminorms
	\[
	\norm{\partial^{k'} \mathbf{f}}_{C_{\tr}(\R^{*d},\mathscr{M}^{\ell+k'})^{d'},R}
	\]
	for $R > 0$ and $k' \leq k$.
	\item If $k \leq k'$, then
	\[
	C_{\tr}^k(\R^{*d},\mathscr{M}(\R^{*d_1},\dots,\R^{*d_\ell}))^{d'} \subseteq C_{\tr}^{k'}(\R^{*d},\mathscr{M}(\R^{*d_1},\dots,\R^{*d_\ell}))^{d'},
	\]
	and the inclusion map is continuous.
	\item If $d_1 \leq d_2$, then there is a continuous inclusion
	\[
	C_{\tr}^k(\R^{*d_1},\mathscr{M}(\R^{*d_1},\dots,\R^{*d_\ell}))^{d'} \to C_{\tr}^k(\R^{*d_2},\mathscr{M}(\R^{*d_1},\dots,\R^{*d_\ell}))^{d'}
	\]
	given by sending $\mathbf{f}$ to the function $(X_1,\dots,X_{d_2}) \mapsto \mathbf{f}(X_1,\dots,X_{d_1})$.
\end{itemize}

It is often convenient to work with bounded functions so as not to worry about growth conditions at $\infty$.  Thus, we define the following $BC_{\tr}^k$ spaces.

\begin{definition}
	For $\mathbf{f} \in C_{\tr}^k(\R^{*d},\mathscr{M}(\R^{*d_1},\dots,\R^{*d_\ell}))^{d'}$, we define
	\[
	\norm{\mathbf{f}}_{BC_{\tr}(\R^{*d},\mathscr{M}^\ell)^{d'}} := \sup_R \norm{\mathbf{f}}_{C_{\tr}(\R^{*d},\mathscr{M}^\ell)^{d'},R}.
	\]
	For $k \in \N_0 \cup \{\infty\}$, we define $BC_{\tr}^k(\R^{*d},\mathscr{M}^\ell)^{d'}$ as the set of $\mathbf{f} \in C_{\tr}^k(\R^{*d},\mathscr{M}^\ell)^{d'}$ such that
	\[
	\norm{\partial^{k'} \mathbf{f}}_{BC_{\tr}(\R^{*d},\mathscr{M}^\ell)^{d'}} < \infty
	\]
	for $k' \in \N_0$ with $k' \leq k$.
\end{definition}

We equip $BC_{\tr}^k(\R^{*d},\mathscr{M}(\R^{*d_1},\dots,\R^{*d_\ell}))^{d'}$ with the topology given by these seminorms.  If $k < \infty$, there are only finitely many of these seminorms, so we have a Banach space.  Note that this topology on $BC_{\tr}^k(\R^{*d},\mathscr{M}^\ell)^{d'}$ is stronger than the subspace topology from $C_{\tr}^k(\R^{*d},\mathscr{M}^\ell)^{d'}$.  Moreover, $BC_{\tr}^k(\R^{*d},\mathscr{M}^\ell)^{d'}$ is a Banach space for $k \in \N_0$ and a Fr\'echet space for $k = \infty$.

\begin{remark}
	At this point, it may not be clear whether there are any nontrivial functions $BC_{\tr}^k(\R^{*d},\mathscr{M}^\ell)^{d'}$.  However, it turns out that these functions are quite abundant.  It follows from Proposition \ref{prop:smoothfunctionalcalculus} below that if $\phi: \R \to \R$ is a function whose Fourier transform satisfies $\int_{\R} |s^n \phi(s)|\,ds < \infty$ for all $n$, then an element of $BC_{\tr}^\infty(\R)$ is defined applying $\phi$ to self-adjoint operators through functional calculus.  Furthermore, it follows Theorem \ref{thm:chainrule} below that $BC_{\tr}^\infty$ functions are closed under composition (hence also under multiplication).  Moreover, if $f \in BC_{\tr}^\infty(\R^{*d},\mathscr{M}(\R^{*d_1}, \dots, \R^{*d_\ell}))$, then so is $\tr(f)$.
\end{remark}

\subsection{Continuity and differentiability properties}

Functions in $C_{\tr}(\R^{*d},\mathscr{M}(\R^{*d_1},\dots,\R^{*d_\ell}))^{d'}$ have the following continuity property, which is a type of uniform continuity for $\mathbf{X}$ in the $\norm{\cdot}_\infty$-ball of radius $R$.

\begin{lemma} \label{lem:Ctrcontinuity}
	Let $\mathbf{f} = (\mathbf{f}^{\cA,\tau})_{(\cA,\tau) \in \mathbb{W}} \in C_{\tr}(\R^{*d},\mathscr{M}(\R^{*d_1},\dots,\R^{*d_\ell}))^{d'}$.  Then for every $R > 0$ and $\epsilon > 0$, there exists a $\delta > 0$ such that for every $(\cA,\tau) \in \mathbb{W}$, if $\mathbf{X}$ and $\mathbf{X}' \in \cA_{\sa}^d$ with $\norm{\mathbf{X}}_\infty \leq R$ and $\norm{\mathbf{X}'}_\infty\leq R$ and $\norm{\mathbf{X} - \mathbf{X}'}_\infty < \delta$ for each $i$, then $\norm{\mathbf{f}^{\cA,\tau}(\mathbf{X}) - \mathbf{f}^{\cA,\tau}(\mathbf{X}')}_{\mathscr{M}^\ell,\tr} < \epsilon$.
\end{lemma}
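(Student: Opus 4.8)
The plan is to reduce the statement to the case of trace polynomials via the defining approximation property of $C_{\tr}(\R^{*d},\mathscr{M}(\R^{*d_1},\dots,\R^{*d_\ell}))^{d'}$, and then to prove directly that trace polynomials are Lipschitz on operator-norm balls, uniformly over $\mathbb{W}$. First I would fix $R>0$ and $\epsilon>0$ and choose, using the definition of the space, a trace polynomial $\mathbf{g}\in\TrP(\R^{*d},\mathscr{M}(\R^{*d_1},\dots,\R^{*d_\ell}))^{d'}$ with $\sup_{(\cA,\tau)\in\mathbb{W}}\norm{\mathbf{f}^{\cA,\tau}-\mathbf{g}^{\cA,\tau}}_{\mathscr{M}^\ell,\tr,R}<\epsilon/3$. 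Since $\norm{\mathbf{X}}_\infty\le R$ and $\norm{\mathbf{X}'}_\infty\le R$, the triangle inequality bounds $\norm{\mathbf{f}^{\cA,\tau}(\mathbf{X})-\mathbf{f}^{\cA,\tau}(\mathbf{X}')}_{\mathscr{M}^\ell,\tr}$ by $\tfrac{2\epsilon}{3}+\norm{\mathbf{g}^{\cA,\tau}(\mathbf{X})-\mathbf{g}^{\cA,\tau}(\mathbf{X}')}_{\mathscr{M}^\ell,\tr}$, so it suffices to produce $\delta>0$, depending only on $R$, $\epsilon$, and $\mathbf{g}$, such that $\norm{\mathbf{g}^{\cA,\tau}(\mathbf{X})-\mathbf{g}^{\cA,\tau}(\mathbf{X}')}_{\mathscr{M}^\ell,\tr}<\epsilon/3$ whenever $\norm{\mathbf{X}-\mathbf{X}'}_\infty<\delta$.

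For the trace polynomial $\mathbf{g}$, I would set $C_R:=\sup_{(\cA,\tau)\in\mathbb{W}}\norm{\partial\mathbf{g}}_{\mathscr{M}^{\ell+1},\tr,R}$, which is finite because $\partial\mathbf{g}$ is again a trace polynomial and trace polynomials were already observed to be uniformly bounded in $\norm{\cdot}_{\mathscr{M}^{\ell+1},\tr,R}$ over $\mathbb{W}$. Given $\mathbf{X},\mathbf{X}'\in\cA_{\sa}^d$ of operator norm $\le R$, interpolate linearly: $\mathbf{X}_s:=(1-s)\mathbf{X}'+s\mathbf{X}$ for $s\in[0,1]$; convexity of the $\norm{\cdot}_\infty$-ball gives $\norm{\mathbf{X}_s}_\infty\le R$. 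For fixed $\mathbf{Y}_1\in\cA^{d_1},\dots,\mathbf{Y}_\ell\in\cA^{d_\ell}$, the function $s\mapsto\mathbf{g}^{\cA,\tau}(\mathbf{X}_s)[\mathbf{Y}_1,\dots,\mathbf{Y}_\ell]$ is polynomial in $s$, hence norm-differentiable, and by Lemma~\ref{lem:TrPdirectionalderivatives} its derivative is $\partial\mathbf{g}^{\cA,\tau}(\mathbf{X}_s)[\mathbf{Y}_1,\dots,\mathbf{Y}_\ell,\mathbf{X}-\mathbf{X}']$. The fundamental theorem of calculus then gives
\begin{multline*}
\mathbf{g}^{\cA,\tau}(\mathbf{X})[\mathbf{Y}_1,\dots,\mathbf{Y}_\ell] - \mathbf{g}^{\cA,\tau}(\mathbf{X}')[\mathbf{Y}_1,\dots,\mathbf{Y}_\ell] \\
= \int_0^1 \partial\mathbf{g}^{\cA,\tau}(\mathbf{X}_s)[\mathbf{Y}_1,\dots,\mathbf{Y}_\ell,\mathbf{X}-\mathbf{X}']\,ds.
\end{multline*}

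To estimate the $\mathscr{M}^\ell$-norm, I would fix exponents $\alpha,\alpha_1,\dots,\alpha_\ell\in[0,\infty]$ with $\alpha^{-1}=\alpha_1^{-1}+\dots+\alpha_\ell^{-1}$ and $\mathbf{Y}_j$ in the unit $\norm{\cdot}_{\alpha_j}$-ball, adjoin the extra exponent $\alpha_{\ell+1}=\infty$ (which preserves the relation $\alpha^{-1}=\sum_{j=1}^{\ell+1}\alpha_j^{-1}$), place $\mathbf{X}-\mathbf{X}'$ in that last slot, and apply the non-commutative H\"older inequality (Lemma~\ref{lem:NCHolder}) to the integrand; integrating over $s$ yields $\norm{(\mathbf{g}^{\cA,\tau}(\mathbf{X})-\mathbf{g}^{\cA,\tau}(\mathbf{X}'))[\mathbf{Y}_1,\dots,\mathbf{Y}_\ell]}_\alpha\le C_R\norm{\mathbf{X}-\mathbf{X}'}_\infty$. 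Taking suprema over the $\mathbf{Y}_j$ and over H\"older-compatible tuples of exponents gives
\[
\norm{\mathbf{g}^{\cA,\tau}(\mathbf{X}) - \mathbf{g}^{\cA,\tau}(\mathbf{X}')}_{\mathscr{M}^\ell,\tr} \le C_R \norm{\mathbf{X} - \mathbf{X}'}_\infty,
\]
uniformly in $(\cA,\tau)\in\mathbb{W}$, so $\delta:=\epsilon/(3(C_R+1))$ finishes the proof.

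I do not expect a serious obstacle: the content of the lemma is essentially that the uniform-over-$\mathbb{W}$ trace-polynomial approximants of $\mathbf{f}$ are uniformly continuous (in fact locally Lipschitz) on operator-norm balls. The only points needing care are (i) keeping the interpolating segment $\mathbf{X}_s$ inside the $\norm{\cdot}_\infty$-ball, which is why linear interpolation and convexity of the ball are used, and (ii) measuring the perturbation $\mathbf{X}-\mathbf{X}'$ in $\norm{\cdot}_\infty$ rather than in some $\norm{\cdot}_\alpha$, so that the exponent constraint in the definition of $\norm{\cdot}_{\mathscr{M}^{\ell+1},\tr}$ is not disturbed and $C_R$ is exactly the $\mathscr{M}^{\ell+1}$-seminorm of $\partial\mathbf{g}$ on the $R$-ball. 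One could replace the fundamental theorem of calculus by a term-by-term telescoping estimate on monomials $p_0\,\tr(p_1)\cdots\tr(p_n)$, swapping one occurrence of an $X_i$ for $X_i'$ at a time and bounding each factor by H\"older, but the integral form makes the uniform constant more transparent.
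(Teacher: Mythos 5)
Your proof is correct and follows essentially the same route as the paper: reduce to a trace-polynomial approximant on the $R$-ball, then use Lemma \ref{lem:TrPdirectionalderivatives} along the linear interpolation $(1-s)\mathbf{X}'+s\mathbf{X}$ together with the bound $\norm{\partial\mathbf{g}}_{C_{\tr}(\R^{*d},\mathscr{M}^{\ell+1})^{d'},R}$ (finite since $\partial\mathbf{g}$ is again a trace polynomial) to get a Lipschitz estimate uniform over $\mathbb{W}$. The only cosmetic difference is the order of the two steps — the paper proves the polynomial case first and then passes to general $\mathbf{f}$ by noting uniform continuity survives uniform limits, while you fix the approximant up front and use an $\epsilon/3$ triangle inequality — which is the same argument.
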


\begin{proof}
	First, consider the case where $\mathbf{f} \in \TrP(\R^{*d},\mathscr{M}(\R^{*d_1},\dots,\R^{*d_\ell}))^{d'}$.  Let $\mathbf{X}$ and $\mathbf{X}'$ be self-adjoint $d$-tuples from $(\cA,\tau)$ with $\norm{\mathbf{X}}_\infty \leq R$ and $\norm{\mathbf{X}'} \leq R$ and $\norm{\mathbf{X} - \mathbf{X}'}_\infty < \delta$.  Let $\alpha$, $\alpha_1$, \dots $\alpha_\ell \in [1,\infty]$ with $1/\alpha = 1/\alpha_1 + \dots + 1/\alpha_\ell$, and let $\mathbf{Y}_1 \in \cA^{d_1}$, \dots, $\mathbf{Y}_\ell \in \cA^{d_\ell}$ with $\norm{\mathbf{Y}_j}_{\alpha_j} \leq 1$.  It follows from Lemma \ref{lem:TrPdirectionalderivatives} that
	\[
	\frac{d}{dt} \mathbf{f}^{\cA,\tau}((1 - t)\mathbf{X} + t \mathbf{X}')[\mathbf{Y}_1,\dots,\mathbf{Y}_\ell]
	= (\partial \mathbf{f})^{\cA,\tau}((1 - t)\mathbf{X} + t \mathbf{X}')[\mathbf{Y}_1,\dots,\mathbf{Y}_\ell,\mathbf{X}'-\mathbf{X}].
	\]
	Since $\norm{(1 - t)\mathbf{X} + t\mathbf{X}'}_\infty \leq R$ for $t \in [0,1]$, we get
	\begin{multline*}
		\norm{(\partial f)^{\cA,\tau}((1 - t)\mathbf{X} + t \mathbf{X}')[\mathbf{Y}_1,\dots,\mathbf{Y}_\ell,\mathbf{X}'-\mathbf{X}]}_\alpha \\
		\leq \norm{\partial \mathbf{f}}_{C_{\tr}(\R^{*d},\mathscr{M}^{\ell+1})^{d'},R} \norm{\mathbf{Y}_1}_{\alpha_1} \dots \norm{\mathbf{Y}_\ell}_{\alpha_\ell} \norm{\mathbf{X}' - \mathbf{X}}_\infty \\
		\leq \norm{\partial_{x_1} \mathbf{f}}_{C_{\tr}(\R^{*d},\mathscr{M}^{\ell+1})} \delta.
	\end{multline*}
	Hence,
	\[
	\norm{\mathbf{f}^{\cA,\tau}(\mathbf{X}')[\mathbf{Y}_1,\dots,\mathbf{Y}_\ell] - \mathbf{f}^{\cA,\tau}(\mathbf{X})[\mathbf{Y}_1,\dots,\mathbf{Y}_\ell]}_\alpha
	\leq \norm{\partial \mathbf{f}}_{C_{\tr}(\R^{*d},\mathscr{M}^{\ell+1})^{d'},R} \delta.
	\]
	This implies the desired uniform continuity property for $\mathbf{f} \in \TrP(\R^{*d},\mathscr{M}(\R^{*d_1},\dots,\R^{*d_\ell}))^{d'}$.
	
	In general, if $\mathbf{f} \in C_{\tr}(\R^{*d}, \mathscr{M}^{\ell+1})^{d'}$, then there is a sequence of trace polynomials $\mathbf{f}^{(n)}$ that converge to $\mathbf{f}$ in $C_{\tr}(\R^{*d}, \mathscr{M}^{\ell+1})^{d'}$.  For a given $R > 0$, this implies that $\mathbf{f}^{(n)} \to \mathbf{f}$ with respect to $\norm{\cdot}_{C_{\tr}(\R^{*d}, \mathscr{M}^{\ell+1})^{d'},R}$.  The uniform continuity property asserted in the lemma holds for $\mathbf{f}$ by the principle that uniform continuity is preserved under uniform limits.
\end{proof}

Next, we discuss how the non-commutative derivatives defined in this paper related to the more standard notions of Fr\'echet differentiation for functions between Banach spaces.  While this discussion is of interest in its own right, it is also helpful for our proof of the chain rule in the next section, since it allows us to deduce properties of $C_{\tr}(\R^{*d},\mathscr{M}(\R^{*d_1},\dots,\R^{*d_\ell}))$ from the better known properties of Fr\'echet derivatives.

Let $\mathcal{X}$ and $\mathcal{Y}$ be Banach spaces over $\R$, and let $f: \mathcal{X} \to \mathcal{Y}$.  We say that $f$ is \emph{Fr\'echet-differentiable at $x_0 \in \mathcal{X}$} if there is a bounded linear map $T: \mathcal{X} \to \mathcal{Y}$ such that
\[
\lim_{x \to x_0} \frac{\norm{f(x) - f(x_0) - T(x - x_0)}}{\norm{x - x_0}} = 0.
\]
This $T$ is unique and is denoted $Df(x_0)$.  We say that $f$ is \emph{Fr\'echet-$C^1$} if $f$ is Fr\'echet-differentiable at every point and $x \mapsto Df(x)$ is a continuous function $\mathcal{X} \to \mathscr{L}(\mathcal{X},\mathcal{Y})$, where $\mathscr{L}(\mathcal{X},\mathcal{Y})$ is the Banach space of bounded linear transformations $\mathcal{X} \to \mathcal{Y}$.  By induction, we say that $f$ is \emph{Fr\'echet-$C^k$} if it is Fr\'echet-differentiable at every point and $Df$ is Fr\'echet-$C^{k-1}$.  We say that $f$ is \emph{Fr\'echet-$C^\infty$} if it is Fr\'echet-$C^k$ for every $k \in \N_0$.

If $f$ is Fr\'echet-$C^k$, then the $k$th-order Fr\'echet derivatives $D^k f$ are multilinear maps $\mathcal{X}^k \to \mathcal{Y}$ defined as follows.  For $k = 2$, note that $D(Df)(x)$ is an element of $\mathscr{L}(\mathcal{X},\mathscr{L}(\mathcal{X},\mathcal{Y}))$.  But a linear map from $\mathcal{X}$ to $\mathscr{L}(\mathcal{X},\mathcal{Y})$ is equivalent to a bilinear map $\mathcal{X} \times \mathcal{X} \to \mathcal{Y}$.  The operator norm on $\mathscr{L}(\mathcal{X},\mathscr{L}(\mathcal{X},\mathcal{Y}))$ agrees with the norm on bilinear forms given by
\[
\norm{\Lambda} = \sup \{\norm{\Lambda[x_1,x_2]}: \norm{x_1}, \norm{x_2} \leq 1\}.
\]
In a similar way, let $\mathscr{M}^k(\mathcal{X},\mathcal{Y})$ be the space of $k$-linear forms $\mathcal{X}^k \to \mathcal{Y}$.  Then the $k$-fold application of $D$ to a Fr\'echet-$C^k$ function $f$ produces a function $D^k f$ from $\mathcal{X}$ to $\mathscr{M}^k(\mathcal{X},\mathcal{Y})$.

The spaces $C_{\tr}^k(\R^{*d},\mathscr{M}(\R^{*d_1},\dots,\R^{*d_\ell}))^{d'}$ can be described alternatively as follows.

\begin{lemma}
	Let $\mathbf{f} = (\mathbf{f}^{\cA,\tau})_{(\cA,\tau) \in \mathbb{W}}$ be a tuple of functions $\cA_{\sa}^d \times \cA_{\sa}^{d_1} \times \dots \times \cA_{\sa}^{d_\ell} \to \cA^{d'}$ that is multilinear in the last $\ell$ variables.  Then $f \in C_{\tr}^k(\R^d,\mathscr{M}(\R^{*d_1},\dots,\R^{*d_\ell}))^{d'}$ if and only if the following hold:
	\begin{enumerate}[(1)]
		\item For each $(\cA,\tau)$, $f^{\cA,\tau}$ is a Fr\'echet-$C^k$ function $\cA_{\sa}^d \to \mathscr{M}^\ell(\cA_{\sa},\cA^{d'})$, where $\cA_{\sa}^d$ and $\cA^{d
		}$ are viewed as Banach spaces with respect to $\norm{\cdot}_\infty$.
		\item For $k' \leq k$, there exists \[
		\mathbf{f}_{k'} \in C_{\tr}(\R^{*d},\mathscr{M}(\R^{*d_1},\dots,\R^{*d_\ell} \times \underbrace{\R^{*d} \times \dots \times \R^{*d}}_{k'}))^{d'}
		\]
		such that for all $(\cA,\tau) \in \mathbb{W}$,
		\[
		D^{k'} (\mathbf{f}^{\cA,\tau}) =  \mathbf{f}_{k'}^{\cA,\tau}.
		\]
	\end{enumerate}
\end{lemma}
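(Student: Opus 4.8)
The plan is to reduce the statement to the classical principle that a map between Banach spaces which is Gâteaux-differentiable at every point with a \emph{continuous} derivative is automatically Fréchet-$C^1$, and to supply the required continuity from the uniform-continuity Lemma~\ref{lem:Ctrcontinuity}. I will use repeatedly that for a multilinear form $\Lambda$ one has $\norm{\Lambda}_{\infty;\infty,\dots,\infty}\le\norm{\Lambda}_{\mathscr{M}^m,\tr}$ (the left side is one of the terms in the supremum defining the right side), so that $\norm{\cdot}_{\mathscr{M}^m,\tr}$-continuity of a multilinear-form-valued function implies continuity into the Banach space of those forms with the plain operator norm, which is the space appearing in condition~(1).

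The backward implication is essentially a restatement of standard facts about Fréchet derivatives. If each $\mathbf{f}^{\cA,\tau}$ is Fréchet-$C^k$ as a map $\cA_{\sa}^d\to\mathscr{M}^\ell(\cA_{\sa},\cA^{d'})$, then for $k'\le k$ all its iterated directional derivatives of order $k'$ exist, are multilinear in the directions, and agree with $D^{k'}(\mathbf{f}^{\cA,\tau})$ applied to the direction tuple under the canonical isometric identification $\mathscr{M}^{k'}(\cA_{\sa}^d,\mathscr{M}^\ell)\cong\mathscr{M}^{\ell+k'}$. By hypothesis~(2) this Fréchet derivative equals $\mathbf{f}_{k'}^{\cA,\tau}$, and $\mathbf{f}_{k'}\in C_{\tr}(\R^{*d},\mathscr{M}^{\ell+k'})^{d'}$ is independent of $(\cA,\tau)$; comparing with Definition~\ref{def:traceCk}, this is exactly the statement $\mathbf{f}\in C_{\tr}^k(\R^{*d},\mathscr{M}^\ell)^{d'}$ with $\partial^{k'}\mathbf{f}=\mathbf{f}_{k'}$.

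For the forward implication, assume $\mathbf{f}\in C_{\tr}^k(\R^{*d},\mathscr{M}^\ell)^{d'}$ and set $g_{k'}:=(\partial^{k'}\mathbf{f})^{\cA,\tau}\in C_{\tr}(\R^{*d},\mathscr{M}^{\ell+k'})^{d'}$ for $0\le k'\le k$, so $g_0=\mathbf{f}^{\cA,\tau}$. I will prove by induction on $k'$ that for every $k'<k$ the map $g_{k'}$ is Fréchet-$C^1$ from $\cA_{\sa}^d$ into the Banach space $\mathscr{M}^{\ell+k'}$ with $Dg_{k'}(\mathbf{X})[\mathbf{H}]=g_{k'+1}(\mathbf{X})[\,\cdot\,,\mathbf{H}]$ under the identification above; iterating this for $k'=0,\dots,k-1$ gives that $\mathbf{f}^{\cA,\tau}$ is Fréchet-$C^k$ with $D^{k'}(\mathbf{f}^{\cA,\tau})=g_{k'}=\mathbf{f}_{k'}^{\cA,\tau}$, which is precisely~(1) and~(2) (for $k=0$, continuity of $\mathbf{f}^{\cA,\tau}$ is immediate from Lemma~\ref{lem:Ctrcontinuity}). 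Fix $k'<k$, $R>0$, and $\mathbf{X},\mathbf{H}\in\cA_{\sa}^d$ with $\norm{\mathbf{X}}_\infty,\norm{\mathbf{X}+\mathbf{H}}_\infty\le R$. For a fixed direction tuple $\mathbf{Y}$, the $\cA^{d'}$-valued function $t\mapsto g_{k'}(\mathbf{X}+t\mathbf{H})[\mathbf{Y}]$ has, at each $t\in[0,1]$, derivative $g_{k'+1}(\mathbf{X}+t\mathbf{H})[\mathbf{Y},\mathbf{H}]$: indeed $\frac{d}{ds}\big|_{s=0}g_{k'}(\mathbf{X}'+s\mathbf{H})[\mathbf{Y}]$ is, with $\mathbf{X}'=\mathbf{X}+t\mathbf{H}$, exactly an order-$(k'+1)$ iterated directional derivative of $\mathbf{f}^{\cA,\tau}$ in the sense of Definition~\ref{def:traceCk} (the $\mathbf{H}$-derivative taken last), hence equals $(\partial^{k'+1}\mathbf{f})^{\cA,\tau}(\mathbf{X}')[\mathbf{Y},\mathbf{H}]$. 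This derivative is continuous in $t$ by Lemma~\ref{lem:Ctrcontinuity}, so $t\mapsto g_{k'}(\mathbf{X}+t\mathbf{H})[\mathbf{Y}]$ is $C^1$ on $[0,1]$; since moreover $t\mapsto g_{k'+1}(\mathbf{X}+t\mathbf{H})[\,\cdot\,,\mathbf{H}]$ is norm-continuous and bounded on $[0,1]$ (again Lemma~\ref{lem:Ctrcontinuity}), hence Bochner-integrable in $\mathscr{M}^{\ell+k'}$, and evaluation at a fixed $\mathbf{Y}$ commutes with the Bochner integral, the fundamental theorem of calculus gives
\[
g_{k'}(\mathbf{X}+\mathbf{H})-g_{k'}(\mathbf{X})=\int_0^1 g_{k'+1}(\mathbf{X}+t\mathbf{H})[\,\cdot\,,\mathbf{H}]\,dt
\]
as an identity in $\mathscr{M}^{\ell+k'}$. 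Subtracting $g_{k'+1}(\mathbf{X})[\,\cdot\,,\mathbf{H}]$, the remainder has operator norm at most $\norm{\mathbf{H}}_\infty\sup_{t\in[0,1]}\norm{g_{k'+1}(\mathbf{X}+t\mathbf{H})-g_{k'+1}(\mathbf{X})}_{\mathscr{M}^{\ell+k'+1},\tr}=o(\norm{\mathbf{H}}_\infty)$ by Lemma~\ref{lem:Ctrcontinuity}, so $g_{k'}$ is Fréchet-differentiable at $\mathbf{X}$ with the asserted derivative; continuity of $\mathbf{X}\mapsto Dg_{k'}(\mathbf{X})$ into $\mathscr{L}(\cA_{\sa}^d,\mathscr{M}^{\ell+k'})$ is one more application of Lemma~\ref{lem:Ctrcontinuity}, completing the induction.

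The main obstacle is the step that upgrades the \emph{pointwise-in-direction} directional derivatives supplied by Definition~\ref{def:traceCk} to genuine Banach-space Fréchet derivatives with values in $\mathscr{M}^{\ell+k'}$; this is exactly where the uniformity in Lemma~\ref{lem:Ctrcontinuity}---a modulus of continuity of $\mathbf{X}\mapsto(\partial^{k'+1}\mathbf{f})^{\cA,\tau}(\mathbf{X})$ that is uniform over an operator-norm ball and over $(\cA,\tau)$---is indispensable. A secondary, purely bookkeeping point is keeping track of the canonical identifications $\mathscr{M}^{k'}(\cA_{\sa}^d,\mathscr{M}^\ell)\cong\mathscr{M}^{\ell+k'}$ and of the comparison between the various multilinear-form norms, so that conditions~(1) and~(2) of the statement come out matched exactly.
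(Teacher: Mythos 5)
Your proof is correct and takes essentially the same route as the paper: the converse is immediate because Fr\'echet differentiability supplies the iterated directional derivatives, while the forward direction combines Lemma \ref{lem:Ctrcontinuity} (via the comparison $\norm{\Lambda}_{\infty;\infty,\dots,\infty}\leq\norm{\Lambda}_{\mathscr{M}^m,\tr}$, giving operator-norm continuity of the derivative functions) with the classical upgrade from continuous iterated directional derivatives to Fr\'echet-$C^k$. The only difference is that you write out this classical upgrade in detail through the fundamental-theorem-of-calculus estimate, whereas the paper simply invokes it as a standard fact.
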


\begin{proof}
	Suppose that $\mathbf{f} \in C_{\tr}^k(\R^{*d},\mathscr{M}^\ell)^{d'}$.  By Definition \ref{def:traceCk}, this means that all the iterated directional derivatives up of order $k' \leq k$ exist and are given by functions $\mathbf{f}_{k'}$ in $C_{\tr}(\R^{*d},\mathscr{M}^{\ell+k'})^{d'}$.  Now observe that for each $(\cA,\tau)$, the function $f_{k'}$ defines a continuous map from $\cA_{\sa}^d$ to the space of multilinear forms
	\[
	\cA_{\sa}^{d_1} \times \dots \times \cA_{\sa}^{d_\ell} \times \cA_{\sa}^d \times \dots \times \cA_{\sa}^d \to \cA^{d'}
	\]
	endowed with $\norm{\cdot}_{\infty;\infty,\dots,\infty}$.  This follows from Lemma \ref{lem:Ctrcontinuity} because for a multilinear form $\Lambda: \cA_{\sa}^{\ell+k'} \to \cA^{d'
	}$, we have $\norm{\Lambda}_{\infty;\infty,\dots,\infty} \leq \norm{\Lambda}_{\mathscr{M}^\ell,\tr}$.  Once we have this continuity, it is a standard argument to show that $f^{\cA,\tau}$ is Fr\'echet-$C^k$; this is a generalization of the well-known fact from multivariable calculus that if a function has continuous iterated directional derivatives up to order $k$, then it is $C^k$.
	
	The converse direction of the lemma is immediate.  Indeed, the combination of statements (1) and (2) is stronger than Definition \ref{def:traceCk} since Fr\'echet-differentiability implies the existence of directional derivatives.
\end{proof}

\begin{remark}[Equality of mixed partials]
	The equality of mixed partials generalizes to the setting of Fr\'echet differentiation:  If $f$ is a Fr\'echet-$C^k$ function, then $D^k f$ is a symmetric multilinear form, that is, it is invariant under permutation of the arguments.  For $\mathbf{f} \in C_{\tr}^k(\R^{*d}, \mathscr{M}(\R^{*d_1},\dots,\R^{*d_\ell}))^{d'}$ and $\sigma$ in the symmetric group $\Perm(\ell)$, we denote by $\mathbf{f}_\sigma \in C_{\tr}^k(\R^{*d},\mathscr{M}(\R^{*d_{\sigma^{-1}(1)}}, \dots, \R^{*d_{\sigma^{-1}(\ell)}}))$ the function given by
	\[
	(\mathbf{f}_\sigma)^{\cA,\tau}(\mathbf{X})[\mathbf{Y}_1,\dots,\mathbf{Y}_\ell] = \mathbf{f}^{\cA,\tau}(\mathbf{X})[\mathbf{Y}_{\sigma^{-1}(1)},\dots,\mathbf{Y}_{\sigma^{-1}(\ell)}].
	\]
	This defines a right action of $\Perm(\ell)$ on $C_{\tr}^k(\R^{*d},\mathscr{M}^\ell)^{d'}$, and this action is isometric for each seminorm $\norm{\cdot}_{C_{\tr}(\R^{*d},\mathscr{M}^\ell),R}$.  
	
	Equality of mixed partials means that if $\mathbf{f} \in C_{\tr}^k(\R^{*d},\mathscr{M}(\R^{*d_1},\dots,\R^{*d_\ell}))^{d'}$, then $(\partial^k \mathbf{f})_\sigma = \partial^k \mathbf{f}$ for every permutation $\sigma$ that only affects the last $k$ elements (that is, the indices corresponding to the multilinear arguments introduced by differentiation).
\end{remark}

\begin{remark}[Lipschitz bounds] \label{rem:Lipschitz}
	Similar reasoning as in the proof of Lemma \ref{lem:Ctrcontinuity} shows the following Lipschitz-type bound:  Let $\mathbf{f} \in C_{\tr}^1(\R^{*d};\mathscr{M}(\R^{*d_1},\dots,\R^{*d_\ell}))^{d'}$.  Then for $(\cA,\tau) \in \mathbb{W}$ and $R > 0$ and $\alpha_1$, \dots $\alpha_\ell, \alpha, \beta \in [1,\infty]$ with $1/\alpha = 1/\alpha_1 + \dots + 1/\alpha_\ell + 1/\beta$, we have
	\begin{multline*}
		\norm{\mathbf{f}^{\cA,\tau}(\mathbf{X}')[\mathbf{Y}_1,\dots,\mathbf{Y}_\ell] - \mathbf{f}^{\cA,\tau}(\mathbf{X})[\mathbf{Y}_1,\dots,\mathbf{Y}_\ell]}_\alpha \\
		\leq \norm{\partial \mathbf{f}}_{C_{\tr}(\R^{*d},\mathscr{M}^{\ell+1})^{d'},R} \norm{\mathbf{X} - \mathbf{X}'}_\beta \norm{\mathbf{Y}_1}_{\alpha_1} \dots \norm{\mathbf{Y}_\ell}_{\alpha_\ell}
	\end{multline*}
	In particular, taking $\ell = 0$, we see that for every $\mathbf{f} \in C_{\tr}^1(\R^{*d})^{d'}$, for every $\alpha \in [1,\infty]$, for every $(\cA,\tau) \in \mathbb{W}$, the function $\mathbf{f}^{\cA,\tau}$ is Lipschitz with respect to $\norm{\cdot}_\alpha$ on the $\norm{\cdot}_\infty$ ball of $\cA_{\sa}^d$ radius $R$, with Lipschitz constant bounded by $\norm{\partial \mathbf{f}}_{C_{\tr}(\R^{*d_1},\mathscr{M}(\R^{*d_1}))^{d_2},R}$.
\end{remark}

\subsection{Composition}

In this section, we will discuss composition of functions in $C_{\tr}^{k,\ell}(\R^{*d})^{d'}$ and the chain rule.  The first lemma describes composition in our spaces of non-commutative continuous functions.

\begin{lemma} \label{lem:composition}
	Let $\mathbf{f} \in C_{\tr}(\R^{*d'},\mathscr{M}(\R^{*d_1},\dots,\R^{*d_n}))^{d''}$ for some $n, d' \in \N_0$ and $d''$, $d_1$, \dots, $d_n \in \N$.  Let $\mathbf{g} \in C_{\tr}(\R^{*d})_{\sa}^{d'}$ for some $d \in \N_0$.  For each $m = 1$, \dots, $n$, let $\mathbf{h}_m \in C_{\tr}(\R^{*d}, \mathscr{M}(\R^{*d_{m,1}},\dots,\R^{*d_{m,\ell_m}}))^{d_m}$ for some $\ell_m \in \N_0$ and $d_{m,1}$, \dots, $d_{m,\ell_m}$.  Let $L_m = \ell_1 + \dots + \ell_m$.  Then there exists a (unique) function
	\[
	\mathbf{f}(\mathbf{g}) \# [\mathbf{h}_1,\dots,\mathbf{h}_n] \in C_{\tr}(\R^{*d},\mathscr{M}(\R^{*d_{1,1}}, \dots, \R^{*d_{1,\ell_1}}, \dots \dots, \R^{*d_{m,1}}, \dots, \R^{*d_{n,\ell_n}}))^{d''}
	\]
	given by
	\begin{multline*}
		(\mathbf{f}(\mathbf{g}) \# [\mathbf{h}_1,\dots,\mathbf{h}_n])^{\cA,\tau}(\mathbf{X})[\mathbf{Y}_1,\dots,\mathbf{Y}_{L_n}]
		\\ :=
		\mathbf{f}^{\cA,\tau}(\mathbf{g}^{\cA,\tau}(\mathbf{X}))[\mathbf{h}_1^{\cA,\tau}(\mathbf{X})[\mathbf{Y}_1,\dots,\mathbf{Y}_{L_1}], \dots, \mathbf{h}_n^{\cA,\tau}(\mathbf{X})[\mathbf{Y}_{L_{n-1}+1},\dots,\mathbf{Y}_{L_n}]].
	\end{multline*}
	Moreover, if we fix $R > 0$ and if
	\[
	R' = \norm*{\mathbf{g}}_{C_{\tr}(\R^{*d})^{d'},R},
	\]
	then
	\begin{multline*}
		\norm{\mathbf{f}(\mathbf{g}) \# [\mathbf{h}_1,\dots,\mathbf{h}_n]}_{C_{\tr}(\R^{*d_1},\mathscr{M}^{L_n})^{d''},R'} \\
		\leq \norm{\mathbf{f}}_{C_{\tr}(\R^{*d'},\mathscr{M}^n)^{d''},R'} \norm{\mathbf{h}_1}_{C_{\tr}(\R^{*d_1},\mathscr{M}^{\ell_1}),R} \dots \norm{\mathbf{h}_n}_{C_{\tr}(\R^{d_1},\mathscr{M}^{\ell_n}),R}.
	\end{multline*}
	Moreover, the composition map
	\begin{multline*}
		C_{\tr}(\R^{*d})_{\sa}^{d'} \times \prod_{m=1}^n C_{\tr}(\R^{*d}, \mathscr{M}(\R^{*d_{m,1}},\dots,\R^{*d_{m,\ell_m}}))^{d_m} \\
		\to C_{\tr}(\R^{*d},\mathscr{M}(\R^{*d_{1,1}}, \dots, \R^{*d_{1,\ell_1}}, \dots \dots, \R^{*d_{m,1}}, \dots, \R^{*d_{n,\ell_n}}))^{d''}
	\end{multline*}
	is jointly continuous.
\end{lemma}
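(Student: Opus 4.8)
The plan is to obtain the norm estimate first as a purely pointwise consequence of the non-commutative H\"older inequality (Lemma \ref{lem:NCHolder}), then to upgrade the pointwise formula to a genuine element of $C_{\tr}$ by approximating all of $\mathbf{f}$, $\mathbf{g}$, $\mathbf{h}_1,\dots,\mathbf{h}_n$ by trace polynomials, and finally to read off joint continuity from the same estimate together with Lemma \ref{lem:Ctrcontinuity}.

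\textbf{Step 1 (the norm estimate, pointwise).} Fix $R>0$ and set $R' = \norm{\mathbf{g}}_{C_{\tr}(\R^{*d})^{d'},R}$. For $(\cA,\tau)\in\mathbb{W}$ and $\mathbf{X}\in\cA_{\sa}^d$ with $\norm{\mathbf{X}}_\infty\le R$, the tuple $\mathbf{Z}:=\mathbf{g}^{\cA,\tau}(\mathbf{X})$ is self-adjoint (this is where the hypothesis $\mathbf{g}\in C_{\tr}(\R^{*d})_{\sa}^{d'}$ is used) with $\norm{\mathbf{Z}}_\infty\le R'$, so $\mathbf{f}$ may be evaluated on it. Given exponents $\alpha,\alpha_1,\dots,\alpha_{L_n}\in[0,\infty]$ with $1/\alpha=\sum_j 1/\alpha_j$ and elements $\mathbf{Y}_j$ in the $\norm{\cdot}_{\alpha_j}$-unit ball, put $1/\beta_m := \sum_{j=L_{m-1}+1}^{L_m}1/\alpha_j$ and $\mathbf{W}_m := \mathbf{h}_m^{\cA,\tau}(\mathbf{X})[\mathbf{Y}_{L_{m-1}+1},\dots,\mathbf{Y}_{L_m}]$, so $\norm{\mathbf{W}_m}_{\beta_m}\le \norm{\mathbf{h}_m}_{C_{\tr}(\R^{*d},\mathscr{M}^{\ell_m}),R}$ by definition of that seminorm and $1/\alpha=\sum_m 1/\beta_m$. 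Since $\norm{\mathbf{Z}}_\infty\le R'$, the definition of $\norm{\mathbf{f}}_{C_{\tr}(\R^{*d'},\mathscr{M}^n)^{d''},R'}$ gives $\norm{\mathbf{f}^{\cA,\tau}(\mathbf{Z})[\mathbf{W}_1,\dots,\mathbf{W}_n]}_\alpha\le \norm{\mathbf{f}}_{C_{\tr}(\R^{*d'},\mathscr{M}^n)^{d''},R'}\prod_m\norm{\mathbf{W}_m}_{\beta_m}$. Chaining these and taking suprema over the $\mathbf{Y}_j$, the exponents, $\mathbf{X}$ with $\norm{\mathbf{X}}_\infty\le R$, and $(\cA,\tau)$ yields
\[
\norm{\mathbf{f}(\mathbf{g})\#[\mathbf{h}_1,\dots,\mathbf{h}_n]}_{C_{\tr}(\R^{*d},\mathscr{M}^{L_n})^{d''},R}\le \norm{\mathbf{f}}_{C_{\tr}(\R^{*d'},\mathscr{M}^n)^{d''},R'}\prod_{m=1}^n\norm{\mathbf{h}_m}_{C_{\tr}(\R^{*d},\mathscr{M}^{\ell_m}),R},
\]
so in particular the pointwise-defined tuple has finite seminorms.

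\textbf{Step 2 (the composition lies in $C_{\tr}$).} Working componentwise we may take $d''=1$. Given $R$ and $\epsilon$, choose trace-polynomial approximants in the following order: $\tilde{\mathbf{f}}$ with $\norm{\mathbf{f}-\tilde{\mathbf{f}}}_{C_{\tr}(\R^{*d'},\mathscr{M}^n),R'+1}<\delta_1$ for a small $\delta_1<1$, so that $\norm{\tilde{\mathbf{f}}}_{C_{\tr},R'+1}\le \norm{\mathbf{f}}_{C_{\tr},R'+1}+1=:C_f$; then each $\tilde{\mathbf{h}}_m$ with $\norm{\mathbf{h}_m-\tilde{\mathbf{h}}_m}_{C_{\tr},R}<\delta_2$; and finally a self-adjoint-valued trace polynomial $\tilde{\mathbf{g}}$ with $\norm{\mathbf{g}-\tilde{\mathbf{g}}}_{C_{\tr}(\R^{*d})^{d'},R}<\min(1,\delta_3)$, where $\delta_3>0$ is the modulus of continuity furnished by Lemma \ref{lem:Ctrcontinuity} for $\mathbf{f}$ at radius $R'+1$ and tolerance $\epsilon/\bigl(3\max(1,\prod_m\norm{\mathbf{h}_m}_{C_{\tr},R})\bigr)$; a self-adjoint-valued $\tilde{\mathbf{g}}$ is obtained by symmetrizing an ordinary approximant of $\mathbf{g}=\mathbf{g}^*$, using that $*$ is isometric on $C_{\tr}$. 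Then $\tilde{\mathbf{f}}(\tilde{\mathbf{g}})\#[\tilde{\mathbf{h}}_1,\dots,\tilde{\mathbf{h}}_n]$ is again a trace polynomial (the elementary algebraic fact that substituting trace polynomials for the variables of a trace polynomial, also inside the traces, stays inside $\TrP$). Using that $\#$ is linear in each slot (because $\mathbf{f}$ is multilinear), write
\begin{align*}
	\mathbf{f}(\mathbf{g})\#[\mathbf{h}] - \tilde{\mathbf{f}}(\tilde{\mathbf{g}})\#[\tilde{\mathbf{h}}]
	&= \bigl(\mathbf{f}(\mathbf{g})\#[\mathbf{h}] - \mathbf{f}(\tilde{\mathbf{g}})\#[\mathbf{h}]\bigr) + (\mathbf{f}-\tilde{\mathbf{f}})(\tilde{\mathbf{g}})\#[\mathbf{h}] \\
	&\quad + \sum_{m=1}^n \tilde{\mathbf{f}}(\tilde{\mathbf{g}})\#[\tilde{\mathbf{h}}_1,\dots,\tilde{\mathbf{h}}_{m-1},\,\mathbf{h}_m-\tilde{\mathbf{h}}_m,\,\mathbf{h}_{m+1},\dots,\mathbf{h}_n].
\end{align*}
The term $(\mathbf{f}-\tilde{\mathbf{f}})(\tilde{\mathbf{g}})\#[\mathbf{h}]$ and each term of the sum are estimated by Step 1 (applied to the differences $\mathbf{f}-\tilde{\mathbf{f}}$, resp.\ $\mathbf{h}_m-\tilde{\mathbf{h}}_m$ with outer function $\tilde{\mathbf{f}}$ and $\norm{\tilde{\mathbf{g}}}_{C_{\tr},R}\le R'+1$, using $C_f$ and $\norm{\tilde{\mathbf{h}}_{m'}}_{C_{\tr},R}\le\norm{\mathbf{h}_{m'}}_{C_{\tr},R}+1$), so they are $<\epsilon/3$ once $\delta_1,\delta_2$ are small enough. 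The remaining term $\mathbf{f}(\mathbf{g})\#[\mathbf{h}] - \mathbf{f}(\tilde{\mathbf{g}})\#[\mathbf{h}]$ is $\mathbf{f}$ evaluated at the two self-adjoint base points $\mathbf{g}^{\cA,\tau}(\mathbf{X})$ and $\tilde{\mathbf{g}}^{\cA,\tau}(\mathbf{X})$, which lie in the radius-$(R'+1)$ ball and are at distance $<\delta_3$; Lemma \ref{lem:Ctrcontinuity} bounds $\norm{\mathbf{f}^{\cA,\tau}(\mathbf{g}^{\cA,\tau}(\mathbf{X})) - \mathbf{f}^{\cA,\tau}(\tilde{\mathbf{g}}^{\cA,\tau}(\mathbf{X}))}_{\mathscr{M}^n,\tr}$ by the chosen tolerance, and feeding this multilinear form the arguments $\mathbf{h}_m^{\cA,\tau}(\mathbf{X})[\cdots]$ (each bounded by $\norm{\mathbf{h}_m}_{C_{\tr},R}$) makes this term $<\epsilon/3$ too. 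Hence $\mathbf{f}(\mathbf{g})\#[\mathbf{h}_1,\dots,\mathbf{h}_n]\in C_{\tr}$; uniqueness is trivial, since an element of $C_{\tr}$ is by definition a tuple of functions with prescribed pointwise values.

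\textbf{Step 3 (joint continuity and the main obstacle).} Fix $\mathbf{f}$. For $(\mathbf{g}',\mathbf{h}_1',\dots,\mathbf{h}_n')$ close to $(\mathbf{g},\mathbf{h}_1,\dots,\mathbf{h}_n)$ and $R$ fixed (so $\norm{\mathbf{g}'}_{C_{\tr}(\R^{*d})^{d'},R}\le R'+1$), the decomposition
\[
\mathbf{f}(\mathbf{g}')\#[\mathbf{h}'] - \mathbf{f}(\mathbf{g})\#[\mathbf{h}] = \bigl(\mathbf{f}(\mathbf{g}')\#[\mathbf{h}'] - \mathbf{f}(\mathbf{g})\#[\mathbf{h}']\bigr) + \sum_{m=1}^n \mathbf{f}(\mathbf{g})\#[\mathbf{h}_1',\dots,\mathbf{h}_{m-1}',\,\mathbf{h}_m-\mathbf{h}_m',\,\mathbf{h}_{m+1},\dots,\mathbf{h}_n]
\]
handles everything: the first bracket tends to $0$ in $\norm{\cdot}_{\mathscr{M}^{L_n},\tr,R}$ by Lemma \ref{lem:Ctrcontinuity} for $\mathbf{f}$ at radius $R'+1$ (the multilinear arguments bounded by $\norm{\mathbf{h}_m}_{C_{\tr},R}+1$), and each term of the sum tends to $0$ by the Step 1 estimate, the outer $\mathbf{f}$ contributing the fixed finite factor $\norm{\mathbf{f}}_{C_{\tr}(\R^{*d'},\mathscr{M}^n)^{d''},R'}$; all bounds are uniform in $(\cA,\tau)$. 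The main obstacle is Step 2: one must choose $\tilde{\mathbf{g}}$ last (and self-adjoint-valued, and close enough to $\mathbf{g}$ to remain in the ball where $\mathbf{f}$ and $\tilde{\mathbf{f}}$ are close) so that the modulus of closeness of $\tilde{\mathbf{g}}$ can be governed by the modulus of continuity of $\mathbf{f}$ from Lemma \ref{lem:Ctrcontinuity} and so that evaluating $\mathbf{f}$ on $\tilde{\mathbf{g}}^{\cA,\tau}(\mathbf{X})$ is legitimate; and the algebraic closure of $\TrP$ under substitution, though elementary, has to be stated carefully since it underpins the density argument.
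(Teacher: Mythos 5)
Your proof is correct and takes essentially the same route as the paper's: the non-commutative H\"older estimate with the exponents grouped into blocks $\beta_m$, then approximation of $\mathbf{f}$, $\mathbf{g}$, $\mathbf{h}_1,\dots,\mathbf{h}_n$ by trace polynomials combined with the uniform continuity of Lemma \ref{lem:Ctrcontinuity} to swap out the base point, and the same two ingredients again for joint continuity --- your single telescoping decomposition simply repackages the paper's four-step approximation chain, and your explicit symmetrization to get a self-adjoint-valued $\tilde{\mathbf{g}}$ is a point the paper leaves implicit. The only blemish is a harmless sign in your Step 3 identity (the $m$-th slot should carry $\mathbf{h}_m'-\mathbf{h}_m$), which does not affect any of the norm estimates.
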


\begin{proof}
	Let $\mathbf{F} = \mathbf{f}(\mathbf{g})[\mathbf{h}_1,\dots,\mathbf{h}_n]$.  Fix $R$ and let $R'$ be as above.  We begin by proving the inequality that for each $(\cA,\tau)$,
	\begin{equation} \label{eq:niceHolderinequality}
		\norm{\mathbf{F}^{\cA,\tau}}_{\mathscr{M}^{L_n},\tr,R} \leq \norm{\mathbf{f}^{\cA,\tau}}_{\mathscr{M}^n,\tr,R'} \norm{\mathbf{h}_1^{\cA,\tau}}_{\mathscr{M}^{\ell_1},\tr,R} \dots \norm{\mathbf{h}_n^{\cA,\tau}}_{\mathscr{M}^{\ell_n},\tr,R}.
	\end{equation}
	Let $\alpha$, $\alpha_1$, \dots, $\alpha_{L_n} \in [1,\infty]$ such that
	\[
	\frac{1}{\alpha} = \frac{1}{\alpha_1} + \dots + \frac{1}{\alpha_{L_n}}.
	\]
	Let $\beta_1$, \dots, $\beta_n$ be given by
	\[
	\frac{1}{\beta_m} = \sum_{j=1}^{\ell_m} \frac{1}{\alpha_{L_{m-1}+j}}.
	\]
	Let $\mathbf{X} \in \cA_{\sa}^d$ with $\norm{\mathbf{X}} \leq R$.  For each $m \leq n$ and $j \leq \ell_m$, let $\mathbf{Y}_{L_{m-j}+j} \in \cA^{d_{m,j}}$ such that $\norm{\mathbf{Y}_i}_{\alpha_i} \leq 1$ for each $i = 1$, \dots $L_n$.  Note that
	\[
	\norm{\mathbf{g}^{\cA,\tau}(\mathbf{X})}_\infty \leq \norm{\mathbf{g}}_{\tr,R} \leq R'.
	\]
	Hence,
	\begin{multline*}
		\norm{\mathbf{F}^{\cA,\tau}(\mathbf{X})[\mathbf{Y}_1,\dots,\mathbf{Y}_{L_n}]}_\alpha
		\leq \norm{\mathbf{f}^{\cA,\tau}}_{\mathscr{M}^n,\tr,R'} \norm{\mathbf{h}_1^{\cA,\tau}(\mathbf{X})[\mathbf{Y}_1, \dots, Y_{L_1}]}_{\beta_1} \dots \\ \dots \norm{\mathbf{h}_n^{\cA,\tau}(\mathbf{X})[\mathbf{Y}_{L_{n-1}+1}, \dots, \mathbf{Y}_{L_n}]}_{\beta_n}.
	\end{multline*}
	Moreover, for each $m$, by the definition of $\beta_m$ and of $\norm{\mathbf{h}_m^{\cA,\tau}}_{\mathscr{M}^{\ell_m}\tr,R}$, we have
	\[
	\norm{\mathbf{h}_m^{\cA,\tau}(\mathbf{X})[\mathbf{Y}_{L_{m-1}+1}, \dots, \mathbf{Y}_{L_m}]}_{\beta_m}
	\leq \norm{\mathbf{h}_m^{\cA,\tau}}_{\mathscr{M}^{\ell_1},\tr,R} \norm{\mathbf{Y}_{L_{m-1}+1}}_{\alpha_{L_{m-1}+1}} \dots \norm{\mathbf{Y}_{L_m}}_{\alpha_{L_m}} \leq \norm{\mathbf{h}_m^{\cA,\tau}}_{\mathscr{M}^{\ell_n},\tr,R}.
	\]
	Therefore, \eqref{eq:niceHolderinequality} holds.
	
	Now let us prove that $\mathbf{F} \in C_{\tr}(\R^{*d},\mathscr{M}(\R^{*d_{1,1}}, \dots, \R^{*d_{1,\ell_1}}, \dots \dots, \R^{*d_{m,1}}, \dots, \R^{*d_{n,\ell_n}}))^{d''}$.  We proceed in several steps.
	\begin{enumerate}[(1)]
		\item Suppose that $\mathbf{f}$, $\mathbf{g}$, and the $\mathbf{h}_m$'s are all trace polynomials.  Then clearly $\mathbf{F}$ is a trace polynomial.
		\item Next, suppose that $\mathbf{f}$ and the $\mathbf{h}_m$'s are trace polynomials, while $\mathbf{g}$ is in $C_{\tr}(\R^{*d})_{\sa}^{d'}$.  Let $\mathbf{g}^{(N)} \in \TrP(\R^{*d})_{\sa}^{d'}$ such that $\mathbf{g}^{(N)} \to \mathbf{g}$ in $C_{\tr}(\R^{*d})_{\sa}^{d'}$ as $N \to \infty$.  If we fix $R > 0$, then
		\[
		R^* := \sup_N \norm{\mathbf{g}^{(N)}}_{C_{\tr}(\R^{*d})^{d'},R} < \infty.
		\]
		Applying Lemma \ref{lem:Ctrcontinuity} with the radius $R^*$, we see that
		\[
		\lim_{N \to \infty} \sup_{(\cA,\tau) \in \mathbb{W}} \norm{\mathbf{f}^{\cA,\tau}((\mathbf{g}^{(N)})^{\cA,\tau}) - \mathbf{f}^{\cA,\tau}(\mathbf{g}) }_{\mathscr{M}^n,\tr,R} = 0.
		\]
		Let $\mathbf{F}^{(N)}$ be defined analogously to $\mathbf{F}$ except using $\mathbf{g}^{(N)}$ instead of $\mathbf{g}$.  By the same argument as \eqref{eq:niceHolderinequality},
		\begin{multline*}
			\norm{(\mathbf{F}^{(N)})^{\cA,\tau} - \mathbf{F}^{\cA,\tau}}_{\mathscr{M}^{L_n},\tr,R} \\ \leq \norm{\mathbf{f}^{\cA,\tau}((\mathbf{g}^{(N)})^{\cA,\tau}) - \mathbf{f}^{\cA,\tau}(\mathbf{g}^{\cA,\tau}) }_{\mathscr{M}^n,\tr,R}  \norm{\mathbf{h}_1}_{\mathscr{M}^{\ell_1},\tr,R} \dots \norm{\mathbf{h}_n}_{\mathscr{M}^{\ell_n},\tr,R}.
		\end{multline*}
		Hence,
		\[
		\lim_{N \to \infty} \sup_{(\cA,\tau) \in \mathbb{W}} \norm{ (\mathbf{F}^{(N)})^{\cA,\tau} - \mathbf{F}^{\cA,\tau} }_{\mathscr{M}^{L_n},\tr,R} = 0,
		\]
		so that $\mathbf{F} \in C_{\tr}(\R^{*d},\mathscr{M}(\R^{*d_{1,1}}, \dots, \R^{*d_{1,\ell_1}}, \dots \dots, \R^{*d_{m,1}}, \dots, \R^{*d_{n,\ell_n}}))^{d''}$ because this space is complete with respect to the family of seminorms.
		\item Next, suppose $\mathbf{f}$ is a trace polynomial, while $\mathbf{h}_m \in C_{\tr}(\R^{*d},\mathscr{M}(\R^{*d_{m,1}},\dots,\R^{d_{m,\ell_m}}))_{\sa}^{d_m}$ and $\mathbf{g} \in C_{\tr}(\R^{*d})_{\sa}^{d'}$.  We approximate $\mathbf{h}_m$ by trace polynomials $\mathbf{h}_m^{(N)}$ as $N \to \infty$.  Then using \eqref{eq:niceHolderinequality}, we conclude that the function $\mathbf{F}^{(N)}$ obtained from composing $\mathbf{f}$ with $\mathbf{g}$ and $\mathbf{h}_m^{(N)}$ converges to $\mathbf{F}$ with respect to the seminorms used to define $C_{\tr}(\R^{*d},\mathscr{M}(\R^{*d_{1,1}}, \dots, \R^{*d_{1,\ell_1}}, \dots \dots, \R^{*d_{m,1}}, \dots, \R^{*d_{n,\ell_n}}))^{d''}$, hence $\mathbf{F}$ is in this space.
		\item Finally, we consider the general case.  In the last step we approximate $\mathbf{f}$ by trace polynomials $\mathbf{f}^{(N)}$ as $N \to \infty$.  The argument is similar to the previous step, so we leave the details as an exercise.
	\end{enumerate}
	Finally, to prove continuity, it suffices to show that given $\mathbf{f}$, $\mathbf{g}$, $\mathbf{h}_1$, \dots, $\mathbf{h}_n$ and given $R_1$ and $\epsilon > 0$, there exist $R_2$, $\delta_1$, $\delta_2$, and $\eta_1$, \dots, $\eta_n$ such that if
	\begin{align*}
		\norm{\mathbf{f}' - \mathbf{f}}_{C_{\tr}(\R^{*d_2},\mathscr{M}^n),R_2} &< \delta_2, \\
		\norm{\mathbf{g}' - \mathbf{g}}_{C_{\tr}(\R^{*d_1}),R_1} &< \delta_1, \\
		\norm{\mathbf{h}_m' - \mathbf{h}_m}_{C_{\tr}(\R^{*d_1},\mathscr{M}^{\ell_m}),R_1} &< \eta_m,
	\end{align*}
	then
	\[
	\norm{\mathbf{F}' - \mathbf{F}}_{C_{\tr}(\R^{*d_1},\mathscr{M}^{L_n})^{d_3},R_1} < \epsilon.
	\]
	Let $R_2 = \norm{\mathbf{g}}_{C_{\tr}(\R^{*d_1})^{d_2},R_1} + 1$.  Then by choosing $\delta_2$ small enough, we can guarantee that $\norm{\mathbf{g}'}_{C_{\tr}(\R^{*d_1})^{d_2},R_1} < R_2$.  Then we use the uniform continuity of $\mathbf{f}$ as in (2) to control the error when we swap out $\mathbf{g}$ for $\mathbf{g}'$.  Proceeding as in (3) and (4), we can control the errors when swapping out $\mathbf{f}$ for $\mathbf{f}'$ and $\mathbf{h}_m$ for $\mathbf{h}_m'$ by choosing $\delta_1$ and $\eta_1$, \dots, $\eta_n$ small enough.  We leave the details as an exercise.
\end{proof}

\begin{theorem} \label{thm:chainrule}
	Let $k \in \N_0 \cup \{\infty\}$ and $n \in \N_0$.  Let $\mathbf{f} \in C_{\tr}^k(\R^{*d'},\mathscr{M}(\R^{*d_1},\dots,\R^{*d_n}))^{d''}$ for some $d' \in \N_0$ and $d''$, $d_1$, \dots, $d_n \in \N$.  Let $\mathbf{g} \in C_{\tr}^k(\R^{*d})_{\sa}^{d'}$ for some $d \in \N_0$.  For each $m = 1$, \dots, $n$, let $\mathbf{h}_m \in C_{\tr}^k(\R^{*d}, \mathscr{M}(\R^{*d_{m,1}},\dots,\R^{*d_{m,\ell_m}}))^{d_m}$ for some $\ell_m \in \N_0$ and $d_{m,1}$, \dots, $d_{m,\ell_m}$.  Let $L_m = \ell_1 + \dots + \ell_m$.  Then 
	\[
	\mathbf{f}(\mathbf{g}) \# [\mathbf{h}_1,\dots,\mathbf{h}_n] \in C_{\tr}^k(\R^{*d},\mathscr{M}(\R^{*d_{1,1}}, \dots, \R^{*d_{1,\ell_1}}, \dots \dots, \R^{*d_{m,1}}, \dots, \R^{*d_{n,\ell_n}}))^{d''},
	\]
	and for $k' \leq k$, we have
	\begin{multline*}
		\partial^{k'} [\mathbf{f}(\mathbf{g}) \# [\mathbf{h}_1,\dots,\mathbf{h}_n]] \\
		= \sum_{j=0}^{k'} \sum_{\substack{(B_1,\dots,B_n,B_1',\dots,B_j') \\ \text{partition of } [L_n+k'], \\ \min B_1' < \dots < \min B_j'}}
		\left( \partial^j \mathbf{f}(\mathbf{g}) \# [\partial^{|B_1|} \mathbf{h}_1, \dots, \partial^{|B_n|} \mathbf{h}_n, \partial^{|B_1'|} \mathbf{g}, \dots, \partial^{|B_j'|}\mathbf{g}] \right)_\sigma,
	\end{multline*}
	where $\sigma$ is the permutation given by
	\begin{align*}
		(\sigma(1), \dots, \sigma(L_n + k')) = (I_1,\dots,I_n,B_1,\dots,B_n,B_1',\dots,B_j'),
	\end{align*}
	where
	\[
	I_m = \{|B_1| + \dots + |B_m| + L_{m+1} + 1,\dots,L_{m+1} + |B_1| + \dots + |B_m| + L_m\},
	\]
	and where each of the sets $I_i$, $B_i$, and $B_i'$ is interpreted in the definition of $\sigma$ as a list of elements in order from least to greatest.  Here the blocks $B_1$, \dots, $B_n$, $B_1'$, \dots, $B_j'$ are regarded as an ordered tuple rather than a set, so that the same partition (set of blocks) can occur several times.  Moreover, the composition map
	\begin{multline*}
		C_{\tr}^k(\R^{*d})_{\sa}^{d'} \times \prod_{m=1}^n C_{\tr}^k(\R^{*d}, \mathscr{M}(\R^{*d_{m,1}},\dots,\R^{*d_{m,\ell_m}}))^{d_m} \\
		\to C_{\tr}^k(\R^{*d},\mathscr{M}(\R^{*d_{1,1}}, \dots, \R^{*d_{1,\ell_1}}, \dots \dots, \R^{*d_{m,1}}, \dots, \R^{*d_{n,\ell_n}}))^{d''}
	\end{multline*}
	is jointly continuous.
\end{theorem}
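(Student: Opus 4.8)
The plan is to induct on $k \in \N_0$, with the case $k = 0$ being exactly Lemma \ref{lem:composition}, which already supplies the well-definedness of $\mathbf{f}(\mathbf{g}) \# [\mathbf{h}_1,\dots,\mathbf{h}_n]$ as an element of $C_{\tr}(\R^{*d},\mathscr{M}(\R^{*d_{1,1}}, \dots, \R^{*d_{n,\ell_n}}))^{d''}$, the submultiplicative norm bound, and joint continuity of composition at the $C_{\tr}$ level. For the inductive step I would first prove the $k' = 1$ instance of the displayed formula and then bootstrap to all orders $k' \le k$ by feeding the summands back into the inductive hypothesis.

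For the first-order step, fix $(\cA,\tau) \in \mathbb{W}$. By the Fr\'echet-differentiability characterization of $C_{\tr}^k$ established above, $\mathbf{f}^{\cA,\tau}$, $\mathbf{g}^{\cA,\tau}$, and each $\mathbf{h}_m^{\cA,\tau}$ are Fr\'echet-$C^k$ maps between the relevant Banach spaces equipped with $\norm{\cdot}_\infty$. Since $\mathbf{F}^{\cA,\tau}(\mathbf{X})[\mathbf{Y}_1,\dots,\mathbf{Y}_{L_n}]$ is obtained by evaluating the multilinear-form-valued map $\mathbf{f}^{\cA,\tau}$ at the point $\mathbf{g}^{\cA,\tau}(\mathbf{X})$ and on the vectors $\mathbf{h}_m^{\cA,\tau}(\mathbf{X})[\cdots]$, the classical chain and Leibniz rules for Fr\'echet derivatives on Banach spaces show $\mathbf{F}^{\cA,\tau}$ is Fr\'echet-$C^1$ with $D(\mathbf{F}^{\cA,\tau})$ equal to a sum of $n+1$ terms: one in which $\mathbf{f}$ is differentiated (producing $\partial\mathbf{f}(\mathbf{g})$ evaluated with the new direction $\partial\mathbf{g}(\mathbf{X})[\cdot]$ inserted in its last slot), and one for each $m$ in which $\mathbf{h}_m$ is differentiated (producing $\mathbf{f}(\mathbf{g})$ with $\partial\mathbf{h}_m(\mathbf{X})[\cdots,\cdot]$ in the $m$th slot). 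Each of these terms is, by inspection, a $\#$-composition of $\partial^{\le 1}\mathbf{f}$, $\partial^{\le 1}\mathbf{g}$, $\partial^{\le 1}\mathbf{h}_m$ up to the indicated permutation $\sigma$ of the multilinear slots; since all these factors lie in the appropriate $C_{\tr}$ spaces (being derivatives of $C_{\tr}^k$ functions with $k \ge 1$), Lemma \ref{lem:composition} shows each term is a genuine $C_{\tr}$ function independent of $(\cA,\tau)$, and so is their sum. Hence $\mathbf{F} \in C_{\tr}^1$ and $\partial\mathbf{F}$ equals the $k'=1$ right-hand side.

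Now each summand of the $k'=1$ formula is a $\#$-composition whose constituents ($\partial^j\mathbf{f}$ with $j \le 1$, and the derivatives of the $\mathbf{h}$'s and of $\mathbf{g}$) all lie in $C_{\tr}^{k-1}$; applying the inductive hypothesis to each summand gives that it is $C_{\tr}^{k-1}$ with derivatives of order $\le k-1$ governed by the Fa\`a di Bruno formula. Therefore $\partial\mathbf{F} \in C_{\tr}^{k-1}$, so $\mathbf{F} \in C_{\tr}^k$, and expanding $\partial^{k'}\mathbf{F} = \partial^{k'-1}(\partial\mathbf{F})$ through the inductive formula and regrouping yields the stated expression for $\partial^{k'}\mathbf{F}$. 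The regrouping — determining into which partition block each new differentiation direction lands, and assembling the permutation $\sigma$ that sorts the multilinear slots of $\partial^j\mathbf{f}(\mathbf{g}) \# [\cdots]$ back into canonical order — is a purely formal exercise using equality of mixed partials (symmetry of $\partial^j\mathbf{f}$ in its last $j$ arguments and of each $\partial^{|B|}\mathbf{h}$, $\partial^{|B'|}\mathbf{g}$ in theirs) to identify terms arising from the same partition; I would prove it by induction on $k'$, checking that differentiating one block of a partition of $[L_n + k' - 1]$ either enlarges that block or creates a new singleton block $B_j'$, which reproduces the sum over partitions of $[L_n + k']$. Finally, joint continuity at level $k$ follows because $\partial^{k'}\mathbf{F}$ is a finite sum of $\Perm$-symmetrizations of $\#$-compositions of $\partial^j\mathbf{f}$, $\partial^{|B_i|}\mathbf{h}_i$, $\partial^{|B_i'|}\mathbf{g}$, each depending continuously on $(\mathbf{f},\mathbf{g},(\mathbf{h}_m))$ since differentiation $C_{\tr}^k \to C_{\tr}^{k-1}(\cdots,\mathscr{M}^{+1})$ is continuous by definition of the Fr\'echet topologies, $\#$-composition is jointly continuous by Lemma \ref{lem:composition}, and addition and the $\Perm$-action are isometric.

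The genuinely analytic content — existence of the derivatives and their identification with $C_{\tr}$ functions — is immediate from the Banach-space chain rule combined with Lemma \ref{lem:composition}. The main obstacle is thus the combinatorial verification that iterating the first-order chain rule produces precisely the partition-indexed sum with the permutation $\sigma$ as written; this is the most error-prone part, and I would isolate it as a formal lemma about the abstract derivative operators on $\TrP$ (proved by induction on $k'$), so that the analytic and combinatorial parts of the argument are cleanly separated.
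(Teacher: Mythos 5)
Your proposal is correct and follows essentially the same route as the paper: apply the Banach-space (Fr\'echet) chain rule pointwise for each fixed $(\cA,\tau)$, identify each resulting term as a $\#$-composition of derivatives lying in the appropriate $C_{\tr}$ spaces via Lemma \ref{lem:composition}, track the combinatorics of which block each new differentiation lands in (enlarging a block $B_m$ or $B_i'$, or appending a new singleton $B_{j+1}'$), and deduce joint continuity from the resulting formula together with the continuity statement of Lemma \ref{lem:composition}. Your explicit organization as an induction on $k$ (first-order step, then bootstrap) is just a slightly more structured packaging of the paper's ``iterate the chain rule and keep track of all possibilities'' argument.
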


\begin{remark}
	It is immediate from the theorem that the $BC_{\tr}^k$ spaces are also closed under composition.
\end{remark}

\begin{proof}
	Fix $(\cA,\tau) \in \mathbb{W}$.  Then by iteratively applying the chain rule for Fr\'echet-$C^k$ functions (which is standard), we obtain the formula asserted above with $\mathbf{f}^{\cA,\tau}$, $\mathbf{g}^{\cA,\tau}$, and $\mathbf{h}^{\cA,\tau}$ rather than $\mathbf{f}$, $\mathbf{g}$, and $\mathbf{h}_m$.  Because of Lemma \ref{lem:composition}, the resulting expression is an element of $C_{\tr}(\R^{*d_1},\mathscr{M}^{L_n+k'})^{d_3}$.
	
	To explain the formula, note that when we apply $\partial$ iteratively $k'$ times, the operator $\partial$ at each stage could ``hit'' three different things:
	\begin{enumerate}[(1)]
		\item It could differentiate $\partial^j \mathbf{f}(\mathbf{g})$ by the chain rule which will change it to $\partial^{j+1} \mathbf{f}(\mathbf{g})$ and produce another term $\partial \mathbf{g}$, which we append as the $(j+1)$th argument for $\partial^{j+1} \mathbf{f}(\mathbf{g})$ (thus, setting $t_{j+1} = 0$).
		\item It could differentiate an already existing term $\partial^{t_i} \mathbf{g}$ that is one of the multilinear arguments (which was originally produced by step (1)).
		\item It could differentiate one of the multilinear arguments $\partial^{s_m} \mathbf{h}_m$.
	\end{enumerate}
	We arrive at the formula by keeping track of all these possibilities.  Here $B_m$ represents the set of time indices when $\mathbf{h}_m$ is differentiated and $B_i'$ represents the set of indices in which the $i$th derivative of $\mathbf{g}$ is appended and differentiated.  Since the copies are appended in order, we have $\min B_1' < \dots < \min B_j'$.  The first $L_n$ input vectors into $\partial^{k'} [\mathbf{f}(\mathbf{g}) \# [\mathbf{h}_1,\dots,\mathbf{h}_n]]$ are supposed to represent the multilinear arguments in the positions that already existed at stage $0$; or in other words, $\mathbf{Y}_{L_{m-1}+1}$, \dots, $\mathbf{Y}_{L_m}$ should be plugged into the first $\ell_m$ places of $\mathbf{h}_m$ for each $m$, which is the index set $I_m$.  The permutation $\sigma$ is defined to put these vectors into the correct locations, and the same for the tangent vectors corresponding to differentiation of the terms of the form $\mathrm{h}_i$ or $\partial^i \mathbf{g}$.
	
	Continuity of the composition operation follows from the formula for derivatives and the continuity claim in Lemma \ref{lem:composition}.
\end{proof}

\begin{corollary} \label{cor:Ckalgebra}
	$C_{\tr}^k(\R^{*d})$ is a $*$-algebra.
\end{corollary}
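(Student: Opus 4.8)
The plan is to obtain this as an immediate consequence of Theorem~\ref{thm:chainrule} together with the continuous $*$-operation on the spaces $C_{\tr}(\R^{*d},\mathscr{M}^{k'})^1$ recorded above. Since $C_{\tr}^k(\R^{*d}) = C_{\tr}^k(\R^{*d},\mathscr{M}^0)^1$ is already a Fr\'echet space, the $\C$-vector space structure is in hand; moreover associativity and distributivity of the (pointwise) multiplication, the identities $(fg)^* = g^*f^*$ and $(f^*)^* = f$, conjugate-linearity of $*$, and the presence of the unit (the constant trace polynomial $1 \in C_{\tr}^\infty(\R^{*d})$) are all statements that hold inside each $(\cA,\tau)$ and so are inherited for free. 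Thus only two things remain to be checked: closure of $C_{\tr}^k(\R^{*d})$ under $*$ and under multiplication.

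First I would establish closure under $*$. Given $f \in C_{\tr}^k(\R^{*d})$, the element $f^*$ defined above lies in $C_{\tr}(\R^{*d})$, and since $(f^*)^{\cA,\tau}(\mathbf{X}) = (f^{\cA,\tau}(\mathbf{X}))^*$ and the map $a \mapsto a^*$ is an isometric $\R$-linear involution of $\cA$, differentiating along self-adjoint directions $\mathbf{Y}_1,\dots,\mathbf{Y}_{k'}$ (for which $\mathbf{Y}_i^* = \mathbf{Y}_i$) shows that the iterated directional derivatives of $(f^*)^{\cA,\tau}$ exist and coincide with $(\partial^{k'}f)^*$; the latter belongs to $C_{\tr}(\R^{*d},\mathscr{M}^{k'})^1$ by the $*$-operation on that space and is independent of $(\cA,\tau)$. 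By Definition~\ref{def:traceCk} this yields $f^* \in C_{\tr}^k(\R^{*d})$. In particular the self-adjoint part $C_{\tr}^k(\R^{*d})_{\sa}^1 = \{f \in C_{\tr}^k(\R^{*d}) : f^* = f\}$ is a real subspace, and any $f$ decomposes as $f = \re f + i\,\im f$ with $\re f = \tfrac12(f + f^*)$ and $\im f = \tfrac{1}{2i}(f - f^*)$, both in $C_{\tr}^k(\R^{*d})_{\sa}^1$.

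Next I would prove closure under multiplication, first for self-adjoint-valued functions. Given $u, v \in C_{\tr}^k(\R^{*d})_{\sa}^1$, I would apply Theorem~\ref{thm:chainrule} with $n = 0$, with outer function the trace polynomial $x_1 x_2 \in C_{\tr}^\infty(\R^{*2}) \subseteq C_{\tr}^k(\R^{*2})$ and inner function $\mathbf{g} = (u,v) \in C_{\tr}^k(\R^{*d})_{\sa}^2$; the composition has evaluation $(\mathbf{f}(\mathbf{g}))^{\cA,\tau}(\mathbf{X}) = u^{\cA,\tau}(\mathbf{X})\,v^{\cA,\tau}(\mathbf{X})$, so the theorem gives $uv \in C_{\tr}^k(\R^{*d})$. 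For general $f, g \in C_{\tr}^k(\R^{*d})$, I would decompose $f = \re f + i\,\im f$ and $g = \re g + i\,\im g$ as above and expand
\[
fg = \big(\re f\,\re g - \im f\,\im g\big) + i\big(\re f\,\im g + \im f\,\re g\big),
\]
a $\C$-linear combination of products of self-adjoint-valued $C_{\tr}^k$ functions; by the self-adjoint case and the vector space structure, $fg \in C_{\tr}^k(\R^{*d})$.

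I do not expect any real obstacle here: the corollary is essentially a repackaging of Theorem~\ref{thm:chainrule} and Lemma~\ref{lem:composition}. The only point that warrants care is the commutation of the $C_{\tr}$-level $*$-operation with iterated differentiation used in the second paragraph, which however reduces at once to the facts that $a \mapsto a^*$ is a bounded $\R$-linear isometry of $\cA$ and that one differentiates along self-adjoint directions.
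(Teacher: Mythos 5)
Your proof is correct and follows essentially the same route as the paper: closure under multiplication via Theorem~\ref{thm:chainrule} applied to the outer trace polynomial $x_1x_2$ composed with a self-adjoint pair $(u,v)$, followed by the real/imaginary decomposition to treat general elements. Your extra verification that $\partial^{k'}(f^*) = (\partial^{k'}f)^*$ is a harmless elaboration of the $*$-closure the paper takes as already established.
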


\begin{proof}
	We already explained the $*$-operation on $C_{\tr}^k(\R^{*d})$.  If $f$ and $g$ are self-adjoint, then the product $fg$ is the same as $h(f,g)$ where $h(x_1,x_2) = x_1x_2 \in \TrP(\R^{*2})$.  Since $h$ is $C_{\tr}^\infty$, it follows from Theorem \ref{thm:chainrule} that if $f$ and $g$ are $C_{\tr}^k$ and self-adjoint, then $fg$ is $C_{\tr}^k$.  The restriction of self-adjointness for $f$ and $g$ can be removed by decomposing a general element into its real and imaginary (that is, self-adjoint and anti-self-adjoint) parts.
\end{proof}

\begin{corollary} \label{cor:tracemap}
	There is a continuous map
	\[
	\tr: C_{\tr}^k(\R^{*d},\mathscr{M}(\R^{*d_1},\dots,\R^{*d_\ell})) \to C_{\tr}^k(\R^{*d},\mathscr{M}(\R^{*d_1},\dots,\R^{*d_\ell}))
	\]
	defined by
	\[
	(\tr(f))^{\cA,\tau}(\mathbf{X})[\mathbf{Y}_1,\dots,\mathbf{Y}_\ell] = \tau(f^{\cA,\tau}(\mathbf{X})[\mathbf{Y}_1,\dots,\mathbf{Y}_\ell]).
	\]
	Moreover, $\partial^{k'}[\tr(f)] = \tr[\partial^{k'} f]$ for $k' \leq k$.
\end{corollary}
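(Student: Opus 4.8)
The plan is to reduce everything to two facts: that $\tau$ is a contractive linear functional, and that bounded linear maps commute with differentiation. I would first handle $C_{\tr}$ (no differentiation, $\ell$ fixed), obtaining that $f \mapsto \tr(f)$ is a well-defined linear map that does not increase any seminorm, and then bootstrap to $C_{\tr}^k$.

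\textbf{First}, define $\tr$ on trace polynomials: if $f \in \TrP(\R^{*d},\mathscr{M}(\R^{*d_1},\dots,\R^{*d_\ell}))$ is a linear combination of monomials $p_0\tr(p_1)\cdots\tr(p_n)$, let $\tr(f)$ be the corresponding linear combination of $\tr(p_0)\tr(p_1)\cdots\tr(p_n) \in \TrP^0(\R^{*d})$. This again lies in $\TrP(\R^{*d},\mathscr{M}(\R^{*d_1},\dots,\R^{*d_\ell}))$ since linearity of $\tau$ preserves multilinearity in $\mathbf{y}_1,\dots,\mathbf{y}_\ell$, and directly from the definition of the evaluation maps $\ev^{\cA,\tau}$ one reads off that $(\tr f)^{\cA,\tau}(\mathbf{X})[\mathbf{Y}_1,\dots,\mathbf{Y}_\ell] = \tau(f^{\cA,\tau}(\mathbf{X})[\mathbf{Y}_1,\dots,\mathbf{Y}_\ell])$. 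The key estimate is that for every tracial $\mathrm{C}^*$-algebra $(\cA,\tau)$ and $\mathbf{X} \in \cA_{\sa}^d$,
\[
\norm{(\tr f)^{\cA,\tau}(\mathbf{X})}_{\mathscr{M}^\ell,\tr} \le \norm{f^{\cA,\tau}(\mathbf{X})}_{\mathscr{M}^\ell,\tr}.
\]
Indeed, given exponents $\alpha,\alpha_1,\dots,\alpha_\ell \in [1,\infty]$ with $\alpha^{-1} = \alpha_1^{-1}+\dots+\alpha_\ell^{-1}$ and tuples $\mathbf{Y}_j$ with $\norm{\mathbf{Y}_j}_{\alpha_j}\le 1$, the output is a scalar multiple of $1$ and $\tau(1)=1$, so writing $a = f^{\cA,\tau}(\mathbf{X})[\mathbf{Y}_1,\dots,\mathbf{Y}_\ell]$ we get $\norm{(\tr f)^{\cA,\tau}(\mathbf{X})[\mathbf{Y}_1,\dots,\mathbf{Y}_\ell]}_\alpha = |\tau(a)| \le \norm{a}_1 \le \norm{a}_\alpha$ (the last step using $\alpha \ge 1$), and $\norm{a}_\alpha \le \norm{f^{\cA,\tau}(\mathbf{X})}_{\mathscr{M}^\ell,\tr}$ by definition. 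Taking suprema over $\norm{\mathbf{X}}_\infty \le R$ yields $\norm{\tr(f)}_{C_{\tr}(\R^{*d},\mathscr{M}^\ell),R} \le \norm{f}_{C_{\tr}(\R^{*d},\mathscr{M}^\ell),R}$ for trace polynomials $f$.

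\textbf{Next}, I would extend $\tr$ to all of $C_{\tr}(\R^{*d},\mathscr{M}^\ell)$ by density: for such an $f$, pick trace polynomials $g^{(n)} \to f$; the estimate above makes $(\tr g^{(n)})_n$ Cauchy in each seminorm, hence convergent in the Fréchet space $C_{\tr}(\R^{*d},\mathscr{M}^\ell)$ to a function $\tr(f)$, independent of the chosen sequence, which satisfies the stated pointwise formula (using continuity of $\tau$) and the same contractive bound. To upgrade to $C_{\tr}^k$, I would use that for each $(\cA,\tau)$ the map $\tau$ is bounded and linear on $(\cA,\norm{\cdot}_\infty)$ and hence commutes with iterated directional derivatives: for $k' \le k$, the $k'$-th iterated directional derivative of $(\tr f)^{\cA,\tau}$ equals $\tau$ applied to that of $f^{\cA,\tau}$, which by Definition \ref{def:traceCk} is $(\partial^{k'}f)^{\cA,\tau}(\mathbf{X})[\mathbf{Y}_1,\dots,\mathbf{Y}_{\ell+k'}]$. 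Since $\partial^{k'}f \in C_{\tr}(\R^{*d},\mathscr{M}^{\ell+k'})$, the first step (applied with $\ell+k'$ variables) gives $\tr(\partial^{k'}f) \in C_{\tr}(\R^{*d},\mathscr{M}^{\ell+k'})$, and this is precisely the function with which the $k'$-th directional derivatives of $(\tr f)^{\cA,\tau}$ agree, uniformly in $(\cA,\tau)$. By Definition \ref{def:traceCk} this shows $\tr(f) \in C_{\tr}^k(\R^{*d},\mathscr{M}^\ell)$ with $\partial^{k'}[\tr(f)] = \tr[\partial^{k'}f]$; continuity of $\tr$ on $C_{\tr}^k$ then follows since each defining seminorm $\norm{\partial^{k'}(\cdot)}_{C_{\tr}(\R^{*d},\mathscr{M}^{\ell+k'}),R}$ is non-increased.

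The only delicate point is the key estimate in the first step, and inside it the inequality $\norm{a}_1 \le \norm{a}_\alpha$: this relies on the fact that the Hölder exponent $\alpha$ occurring in the $\norm{\cdot}_{\mathscr{M}^\ell,\tr}$-norm satisfies $\alpha \ge 1$ (equivalently $\sum_j \alpha_j^{-1}\le 1$), together with $\tau(1)=1$, so that the non-commutative $L^\alpha$ norms dominate $\norm{\cdot}_1$ on $(\cA,\tau)$. Everything else — that $\tr$ of a trace polynomial is again a trace polynomial of the right multilinear type, the density argument, and the commutation of $\tau$ with Fréchet (hence directional) differentiation — is routine bookkeeping with the definitions.
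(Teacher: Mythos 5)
Your proof is correct, but it takes a genuinely different route from the paper's. The paper realizes the trace itself as a fixed element $g \in C_{\tr}^\infty(\R^{*0},\mathscr{M}(\R^{*1}))$ with $g^{\cA,\tau}[Y] = \tau(Y)$, sets $\tr(f) := g[f]$, and then obtains both continuity and $\partial^{k'}[\tr(f)] = \tr[\partial^{k'}f]$ in one stroke from the composition machinery (Lemma \ref{lem:composition} and Theorem \ref{thm:chainrule}), since $\partial^j g = 0$ for $j \geq 1$ and $\norm{g}_{C_{\tr}(\R^{*0},\mathscr{M}^1),R} = 1$ (with a splitting into self-adjoint and anti-self-adjoint parts to fit that framework). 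You instead rebuild the map by hand: define it on trace polynomials, prove the seminorm contraction $\norm{\tr(f)}_{C_{\tr}(\R^{*d},\mathscr{M}^\ell),R} \leq \norm{f}_{C_{\tr}(\R^{*d},\mathscr{M}^\ell),R}$, extend by density, and get the derivative identity by commuting the bounded linear functional $\tau$ with iterated directional derivatives. Both arguments rest on the same basic fact $|\tau(a)| \leq \norm{a}_\alpha$ for $\alpha \in [1,\infty]$, which is exactly how the paper bounds $g$; your caveat that the exponents in Definition \ref{def:Lalphamultilinearnorm} must be read as lying in $[1,\infty]$ matches the convention stated in the introduction and used tacitly in the paper's own proof (and elsewhere, e.g.\ Lemma \ref{lem:duality}), so it is not a gap relative to the paper. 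What your route buys is self-containedness and an explicit contractivity statement; what the paper's route buys is brevity, since the chain rule already packages the density argument, the estimates, and the step where differentiation commutes with $\tau$.
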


\begin{proof}
	The trace $\tr$ can be viewed as an element $g$ of $C_{\tr}^\infty(\R^{*0},\mathscr{M}(\R^{*1}))$ that is given by $g^{\cA,\tau}[Y] = \tau(Y)$. Recall that $|\tau(X)| \leq \norm{X}_\alpha$ for every $\alpha \in [1,\infty]$ and hence $\norm{g}_{C_{\tr}(\R^{*0},\mathscr{M}^1),R} = 1$ for all $R$.  Also, $\partial^k g = 0$ for $k \geq 1$.  For $f \in C_{\tr}^k(\R^{*d},\mathscr{M}^\ell)_{\sa}$, we define $\tr(f) := g[f]$.  Then the relation $\partial^{k'}[\tr(f)] = \tr[\partial^{k'} f]$ follows from the chain rule.  A general $f \in C_{\tr}^k(\R^{*d},\mathscr{M}(\R^{*d_1},\dots,\R^{*d_\ell}))$ can be broken into its self-adjoint and anti-self-adjoint parts, and thus the map $\tr$ can be extended to all of $C_{\tr}^k(\R^{*d},\mathscr{M}(\R^{*d_1},\dots,\R^{*d_\ell}))$.
\end{proof}

As a consequence, if $\mathbf{f}$, $\mathbf{g} \in C_{\tr}^k(\R^{*d})^{d'}$, we can define a new function $\ip{\mathbf{f},\mathbf{g}}_{\tr} \in \tr(C_{\tr}^k(\R^{*d}))$ by
\[
\ip{\mathbf{f},\mathbf{g}}_{\tr}^{\cA,\tau}(\mathbf{X}) = \ip{\mathbf{f}^{\cA,\tau}(\mathbf{X}),\mathbf{g}^{\cA,\tau}(\mathbf{X})}_\tau.
\]
In particular, we will denote by $\ip{\mathbf{x},\mathbf{x}}_{\tr}$ the function whose evaluation on $(\cA,\tau)$ and $\mathbf{X}$ is $\norm{\mathbf{X}}_2^2$.

\subsection{An inverse function theorem}

The following result is a version of the inverse function theorem.  Although it would be possible to prove inverse function theorems on an operator norm ball, it is sufficient for our purposes to use the ``cheap'' global version that comes from a contraction mapping principle.

\begin{proposition}[Global inverse function theorem] \label{prop:IFT}
	Let $k \geq 1$.  Let $\mathbf{f} \in C_{\tr}^k(\R^{*d})_{\sa}^d$ for some $k \geq 1$.  Suppose that for some $0 < K < K'$, we have $\norm{\partial \mathbf{f} - K' \Id}_{BC_{\tr}(\R^{*d},\mathscr{M}^1)^d} \leq K$.  Then there exists (a unique) $\mathbf{g} \in C_{\tr}^k(\R^{*d})_{\sa}^d$ such that $\mathbf{f} \circ \mathbf{g} = \mathbf{g} \circ \mathbf{f} = \id$.
	
	Let us denote this function by $\mathbf{f}^{-1}$.  For a given $K' < K$, we have continuity of the map
	\[
	\mathbf{f} \mapsto \mathbf{f}^{-1}: \Bigl\{\mathbf{f} \in C_{\tr}^k(\R^{*d})_{\sa}^d: \norm{\partial \mathbf{f} - K' \Id}_{BC_{\tr}(\R^{*d},\mathscr{M}^1(\R^{*d}))^d} \leq K \Bigr\} \to C_{\tr}^k(\R^{*d})_{\sa}^d,
	\]
	where we use the subspace topology from $C_{\tr}^k(\R^{*d})_{\sa}^d$ on the domain.
\end{proposition}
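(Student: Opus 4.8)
The plan is to apply the contraction mapping principle on each tracial $\mathrm{W}^*$-algebra and then bootstrap for regularity. First, write $\mathbf{f} = K'\,\mathbf{x} + \mathbf{r}$ with $\mathbf{r} := \mathbf{f} - K'\,\mathbf{x} \in C_{\tr}^k(\R^{*d})_{\sa}^d$, so that $\partial\mathbf{r} = \partial\mathbf{f} - K'\Id$ and $\norm{\partial\mathbf{r}}_{C_{\tr}(\R^{*d},\mathscr{M}^1)^d,R} \leq K$ for every $R > 0$. By the Lipschitz bound of Remark~\ref{rem:Lipschitz} (with $\ell = 0$ and $\alpha = \beta = \infty$, applied on any operator-norm ball containing the two arguments), $\norm{\mathbf{r}^{\cA,\tau}(\mathbf{Z}_1) - \mathbf{r}^{\cA,\tau}(\mathbf{Z}_2)}_\infty \leq K\norm{\mathbf{Z}_1 - \mathbf{Z}_2}_\infty$ for all $(\cA,\tau) \in \mathbb{W}$ and $\mathbf{Z}_1, \mathbf{Z}_2 \in \cA_{\sa}^d$. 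Hence for each fixed $(\cA,\tau)$ and $\mathbf{X} \in \cA_{\sa}^d$, the map $\mathbf{Z} \mapsto \tfrac1{K'}(\mathbf{X} - \mathbf{r}^{\cA,\tau}(\mathbf{Z}))$ is a $\tfrac{K}{K'}$-contraction of the complete metric space $(\cA_{\sa}^d,\norm{\cdot}_\infty)$; I call its unique fixed point $\mathbf{g}^{\cA,\tau}(\mathbf{X})$, so that $\mathbf{f}^{\cA,\tau}(\mathbf{g}^{\cA,\tau}(\mathbf{X})) = \mathbf{X}$. Since also $\norm{\mathbf{f}^{\cA,\tau}(\mathbf{Z}_1) - \mathbf{f}^{\cA,\tau}(\mathbf{Z}_2)}_\infty \geq (K'-K)\norm{\mathbf{Z}_1 - \mathbf{Z}_2}_\infty$, each $\mathbf{f}^{\cA,\tau}$ is injective, hence $\mathbf{g}^{\cA,\tau}\circ\mathbf{f}^{\cA,\tau} = \id$ and the family $\mathbf{g} = (\mathbf{g}^{\cA,\tau})$ is unique. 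To see that $\mathbf{g}$ lies in $C_{\tr}^0(\R^{*d})_{\sa}^d$, I would run the Picard iteration $\mathbf{g}_0 := \tfrac1{K'}\mathbf{x}$, $\mathbf{g}_{n+1} := \tfrac1{K'}\mathbf{x} + \mathbf{g}_n - \tfrac1{K'}\,\mathbf{f}(\mathbf{g}_n)$: each $\mathbf{g}_n$ is self-adjoint-valued and lies in $C_{\tr}^0$ (indeed $C_{\tr}^k$) by induction using Lemma~\ref{lem:composition} for the composition $\mathbf{f}(\mathbf{g}_n)$, and $\norm{\mathbf{g}_1^{\cA,\tau}(\mathbf{X}) - \mathbf{g}_0^{\cA,\tau}(\mathbf{X})}_\infty = \tfrac1{K'}\norm{\mathbf{r}^{\cA,\tau}(\tfrac1{K'}\mathbf{X})}_\infty$ is bounded, on the $R$-ball, by a constant depending only on $\mathbf{f}$ and $R$; so the iterates form a Cauchy sequence uniformly for $\norm{\mathbf{X}}_\infty \leq R$ and uniformly over $(\cA,\tau)$, whence $\mathbf{g}_n \to \mathbf{g}$ in $C_{\tr}(\R^{*d})_{\sa}^d$ and $\mathbf{g} \in C_{\tr}^0(\R^{*d})_{\sa}^d$.

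Next I would upgrade $\mathbf{g}$ to a $C_{\tr}^k$ function. Since $\norm{\partial\mathbf{f} - K'\Id}_{C_{\tr}(\R^{*d},\mathscr{M}^1)^d,R} \leq K < K'$ for all $R$, the self-composition powers $\mathbf{h}^{\circ m}$ of $\mathbf{h} := -\tfrac1{K'}(\partial\mathbf{f} - K'\Id)$, formed by plugging $\mathbf{h}$ into itself via the $\#$-composition of Lemma~\ref{lem:composition}, lie in $C_{\tr}^{k-1}(\R^{*d},\mathscr{M}^1)^d$ with $\norm{\mathbf{h}^{\circ m}}_{C_{\tr}(\R^{*d},\mathscr{M}^1)^d,R} \leq (K/K')^m$; a combinatorial (Fa\`a di Bruno) count using Theorem~\ref{thm:chainrule} bounds the higher $C_{\tr}^{k-1}$-seminorms of $\mathbf{h}^{\circ m}$ by a constant times $m^{k'}(K/K')^{m-k'}$, so the Neumann series $H := \tfrac1{K'}\sum_{m \geq 0}\mathbf{h}^{\circ m}$ converges in $C_{\tr}^{k-1}(\R^{*d},\mathscr{M}^1)^d$, and $H^{\cA,\tau}(\mathbf{Z})$ is the (bounded) inverse of the operator $\partial\mathbf{f}^{\cA,\tau}(\mathbf{Z}) = K'\Id + (\partial\mathbf{f}^{\cA,\tau}(\mathbf{Z}) - K'\Id)$, which is invertible since the perturbation has operator norm $\leq K < K'$. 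For each $(\cA,\tau)$, $\mathbf{f}^{\cA,\tau}$ is a Fr\'echet-$C^k$ bijection of $\cA_{\sa}^d$ with everywhere-invertible derivative, so the classical inverse function theorem gives that $\mathbf{g}^{\cA,\tau}$ is Fr\'echet-$C^k$ with $D\mathbf{g}^{\cA,\tau}(\mathbf{X}) = (D\mathbf{f}^{\cA,\tau}(\mathbf{g}^{\cA,\tau}(\mathbf{X})))^{-1} = H^{\cA,\tau}(\mathbf{g}^{\cA,\tau}(\mathbf{X}))$. The right-hand side equals the common function $H(\mathbf{g}) \in C_{\tr}(\R^{*d},\mathscr{M}^1)^d$ provided by Lemma~\ref{lem:composition} (i.e.\ $\mathbf{X} \mapsto H^{\cA,\tau}(\mathbf{g}^{\cA,\tau}(\mathbf{X}))$), so by the characterization of $C_{\tr}^k$ in terms of Fr\'echet derivatives (together with Definition~\ref{def:traceCk}) we get $\mathbf{g} \in C_{\tr}^1(\R^{*d})_{\sa}^d$ with $\partial\mathbf{g} = H(\mathbf{g})$. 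A bootstrap then finishes: if $\mathbf{g} \in C_{\tr}^j(\R^{*d})_{\sa}^d$ with $j \leq k - 1$, then $H(\mathbf{g}) \in C_{\tr}^j(\R^{*d},\mathscr{M}^1)^d$ by Theorem~\ref{thm:chainrule} (as $H \in C_{\tr}^{k-1}$), whence $\partial\mathbf{g} \in C_{\tr}^j$ and $\mathbf{g} \in C_{\tr}^{j+1}$; iterating up to $j = k - 1$ gives $\mathbf{g} \in C_{\tr}^k(\R^{*d})_{\sa}^d$, and for $k = \infty$ the induction never terminates.

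For the continuity statement, fix $\mathbf{f}$ and compare it with a nearby $\tilde{\mathbf{f}}$ satisfying the same hypothesis; write $\mathbf{g} = \Phi_{\mathbf{f}}(\mathbf{g})$ and $\tilde{\mathbf{g}} = \Phi_{\tilde{\mathbf{f}}}(\tilde{\mathbf{g}})$ where $\Phi_{\mathbf{f}}(\mathbf{k}) := \tfrac1{K'}(\mathbf{x} - (\mathbf{f} - K'\mathbf{x})(\mathbf{k}))$. From the fixed-point equation one reads off an a priori bound on $\norm{\mathbf{g}}_{C_{\tr}(\R^{*d})^d,R}$ depending only on $\mathbf{f}$ and $R$ (and similarly for $\tilde{\mathbf{g}}$, uniformly for $\tilde{\mathbf{f}}$ near $\mathbf{f}$), and then the uniform contraction yields $\norm{\mathbf{g} - \tilde{\mathbf{g}}}_{C_{\tr}(\R^{*d})^d,R} \leq \tfrac1{K'-K}\norm{\mathbf{f} - \tilde{\mathbf{f}}}_{C_{\tr}(\R^{*d})^d,R'}$ for a radius $R'$ depending only on $\mathbf{f}$ and $R$, proving continuity in $C_{\tr}^0$. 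Since $\partial\mathbf{g} = H_{\mathbf{f}}(\mathbf{g})$ with $H_{\mathbf{f}}$ depending continuously on $\mathbf{f}$ in $C_{\tr}^{k-1}(\R^{*d},\mathscr{M}^1)^d$ (joint continuity of $\#$-composition from Lemma~\ref{lem:composition} together with the uniformly convergent geometric series for $H$), the same bootstrap propagates continuity up to $C_{\tr}^k$.

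I expect the main obstacle to be the regularity step: constructing the operator inverse $H$ as a genuine element of $C_{\tr}^{k-1}(\R^{*d},\mathscr{M}^1)^d$, which requires controlling \emph{all} the $C_{\tr}^{k-1}$-seminorms of the self-composition powers $\mathbf{h}^{\circ m}$ with geometrically weighted combinatorial estimates, and then splicing the Banach-space inverse function theorem on each $(\cA,\tau)$ together with the Fr\'echet-derivative description of $C_{\tr}^k$ so as to conclude that the iterated derivatives $D^{k'}\mathbf{g}^{\cA,\tau}$ are given by trace-smooth functions independent of $(\cA,\tau)$. By comparison the contraction argument, the $C_{\tr}^0$-membership, and the parameter continuity should be routine.
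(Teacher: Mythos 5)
Your proposal is correct, but the regularity step is carried out by a genuinely different route than the paper's. For existence and the $C_{\tr}^0$ statement you do essentially what the paper does: after writing $\mathbf{f} = K'\,\mathrm{id} + \mathbf{r}$ your Picard iteration $\mathbf{g}_{n+1} = \tfrac1{K'}(\mathbf{x} - \mathbf{r}(\mathbf{g}_n))$ is, up to the rescaling the paper performs at the outset, the same iteration $\mathbf{g}_{n+1} = \id + (\id - \mathbf{f})\circ\mathbf{g}_n$. The divergence is in how you get $\mathbf{g}\in C_{\tr}^k$: the paper differentiates the iteration itself, builds the candidate limits $\mathbf{g}^{(k')}$ as fixed points of the differentiated equations (again contractions, since $\norm{\Id - \partial\mathbf{f}}\le K<1$ after rescaling), and then shows $\partial^{k'}\mathbf{g}_n \to \mathbf{g}^{(k')}$ by a Gr\"onwall/dominated-convergence estimate — everything stays inside the $C_{\tr}$ framework and the explicit rates feed directly into the continuity claim. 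You instead build the inverse Jacobian $H = \tfrac1{K'}\sum_m \mathbf{h}^{\# m}$ as a Neumann series in the $\#$-algebra (your phrase ``plugging $\mathbf{h}$ into itself'' is misleading — what your estimates actually use, and what is needed, is the pointwise $\#$-power $\mathbf{h}(\mathbf{x})\#\cdots\#\mathbf{h}(\mathbf{x})$, not a composition in the $\mathbf{x}$-variable), invoke the classical Banach-space inverse function theorem on each $(\cA,\tau)$ to get $D\mathbf{g}^{\cA,\tau} = H^{\cA,\tau}\circ\mathbf{g}^{\cA,\tau}$, and then bootstrap $\mathbf{g}\in C_{\tr}^{j}\Rightarrow H(\mathbf{g})\in C_{\tr}^{j}\Rightarrow \mathbf{g}\in C_{\tr}^{j+1}$ using Lemma~\ref{lem:composition}, Theorem~\ref{thm:chainrule}, and the Fr\'echet-derivative characterization of $C_{\tr}^k$. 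This works: the key points you correctly identify are (i) the refined combinatorial bound $\lesssim m^{k'}(K/K')^{m-k'}$ for $\norm{\partial^{k'}(\mathbf{h}^{\# m})}_{C_{\tr},R}$ — plain submultiplicativity of the weighted norm of Lemma~\ref{lem:submultiplicativenorm} would not suffice, since only the zeroth-order seminorm of $\mathbf{h}$ is below $1$ — and (ii) that knowing $D\mathbf{g}^{\cA,\tau}$ is given by the single $C_{\tr}$-function $H(\mathbf{g})$, uniformly over $(\cA,\tau)$, is exactly what Definition~\ref{def:traceCk} demands. What your approach buys is the closed formula $\partial\mathbf{g} = H(\mathbf{g})$ (the analogue of $Dg = (Df\circ g)^{-1}$) and a short continuity argument via continuity of $\mathbf{f}\mapsto H_{\mathbf{f}}$ and of composition; what it costs is the extra Neumann-series machinery and the reliance on the Banach-space IFT on each algebra (one should also note, as a one-line remark, that $\partial\mathbf{f}^{\cA,\tau}(\mathbf{Z})$ preserves $\cA_{\sa}^d$, so the restriction of $H^{\cA,\tau}(\mathbf{Z})$ to self-adjoint tuples is indeed the inverse of the real-linear Fr\'echet derivative). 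The paper's argument is more self-contained and elementary, but yours is a legitimate and arguably more conceptual alternative.
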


\begin{proof}
	By substituting $(1/K') \mathbf{f}$ for $\mathbf{f}$ and $\mathbf{g}(K'(\cdot))$ for $\mathbf{g}$, we may assume without loss of generality that $K' = 1$.  Define $\mathbf{g}_0 = \id$ and inductively
	\[
	\mathbf{g}_{n+1} = \id + (\id - \mathbf{f}) \circ \mathbf{g}_n.
	\]
	Note that $\norm{(\id - \mathbf{f})^{\cA,\tau}(\mathbf{X}) - (\id - \mathbf{f})^{\cA,\tau}(\mathbf{Y})}_\infty \leq K \norm{\mathbf{X} - \mathbf{Y}}_\infty$ for $\mathbf{X}$, $\mathbf{Y} \in \cA_{\sa}^d$ for any $(\cA,\tau) \in \mathbb{W}$.  It follows that
	\[
	\norm{(\id - \mathbf{f}) \circ \mathbf{h} - (\id - \mathbf{f}) \circ \mathbf{h}' }_{C_{\tr}(\R^{*d})^d,R} \leq K \norm{\mathbf{h} - \mathbf{h}'}_{C_{\tr}(\R^{*d})^d,R}
	\]
	for $\mathbf{h}$, $\mathbf{h}' \in C_{\tr}(\R^{*d})_{\sa}^d$ and $R > 0$.  In particular, for $R > 0$,
	\[
	\norm{\mathbf{g}_{n+1} - \mathbf{g}_n}_{C_{\tr}(\R^{*d})_{\sa}^d,R} \leq K^n \norm{\mathbf{g}_1 - \mathbf{g}_0}_{C_{\tr}(\R^{*d})_{\sa}^d,R} = K^n \norm{\id - \mathbf{f}}_{C_{\tr}(\R^{*d})_{\sa}^d,R}.
	\]
	Hence, $\mathbf{g}_n$ converges as $n \to \infty$ to some $\mathbf{g} \in C_{\tr}(\R^{*d})_{\sa}^d$, which must also $\mathbf{g} = \id + (\id - \mathbf{f}) \circ \mathbf{g}$, or in other words $\mathbf{f} \circ \mathbf{g} = \id$.  Since $\id - \mathbf{f}$ is $K$-Lipschitz on $\cA_{\sa}^d$ for any $(\cA,\tau)$ and $K < 1$, it follows that $\mathbf{f}^{\cA,\tau}$ is injective.  Thus, in the relation $\mathbf{f}^{\cA,\tau} \circ \mathbf{g}^{\cA,\tau} \circ \mathbf{f}^{\cA,\tau} = \mathbf{f}^{\cA,\tau}$, we may cancel $\mathbf{f}^{\cA,\tau}$ on the left-hand side and thus obtain $\mathbf{g} \circ \mathbf{f} = \id$.  Since the rate of convergence in $\norm{\cdot}_{C_{\tr}(\R^{*d})^d,R}$ only depends on $K$ and $\norm{\id - \mathbf{f}}_{C_{\tr}(\R^{*d})_{\sa}^d,R}$, it follows that $\mathbf{g}$ depends continuously on $\mathbf{f}$ in $C_{\tr}^k(\R^{*d})_{\sa}^d$.
	
	Note that by the chain rule and induction, $\mathbf{g}_n \in C_{\tr}^k(\R^{*d})_{\sa}^d$ and we have for $1 \leq k' \leq k$ that
	\[
	\partial^{k'} \mathbf{g}_{n+1} = \sum_{j=1}^{k'} \sum_{\substack{(B_1,\dots,B_j) \\ \text{partition of } [k'] \\ \min B_1 < \dots < \min B_j}} (\partial^j(\id -  \mathbf{f}) \circ \mathbf{g}_n)[\partial^{|B_1|} \mathbf{g}_n,\dots,\partial^{|B_j|}\mathbf{g}_n].
	\]
	We claim that $\partial^{k'} \mathbf{g}_n$ converges as $n \to \infty$.  We first describe the candidate limit functions $\mathbf{g}^{(k')}$ as fixed points of the equation where we substitute $\mathbf{g}^{(k')}$ for $\partial^{k'} \mathbf{g}_n$ and $\partial^{k'} \mathbf{g}_{n+1}$.  Of course $\mathbf{g}^{(0)}$ will simply be $\mathbf{g}$.  Separating out the $j = 1$ term on the right-hand side, this equation becomes
	\[
	\mathbf{g}^{(k')} = (\Id - \partial \mathbf{f} \circ \mathbf{g}) \# \mathbf{g}^{(k')} - \sum_{j=2}^{k'} \sum_{\substack{(B_1,\dots,B_j) \\ \text{partition of } [k'] \\ \min B_1 < \dots < \min B_j}} (\partial^j \mathbf{f} \circ \mathbf{g})[\mathbf{g}^{(|B_1|)},\dots,\mathbf{g}^{(|B_j|)}].
	\]
	Since $\norm{\Id - \partial \mathbf{f}}_{BC_{\tr}(\R^{*d},\mathscr{M}^1)^d} \leq K < 1$, it follows that the right-hand side is $K$-contractive as a function of $\mathbf{g}^{(k')}$.  Thus, we may construct the functions $\mathbf{g}^{(k')}$ by induction on $k'$; assuming the previous terms have been defined, $\mathbf{g}^{(k')}$ is obtained by iteration of the right-hand side, starting with the function $\Id$ for $k' = 1$ and $0$ for $k' > 1$.  The rate of convergence of the iterates with respect to $\norm{\cdot}_{C_{\tr}(\R^{*d}),R}$ is controlled completely by the constant $K$, the norms of the derivatives of $\mathbf{f}$ on the ball of radius $R' := \norm{\mathbf{g}}_{C_{\tr}(\R^{*d})^d,R}$, and the norms of the previous terms $\mathbf{g}^{(j)}$ on the ball of radius $R$.  In particular, it follows that $\mathbf{g}^{(k')} \in C_{\tr}(\R^{*d},\mathscr{M}^{k'}(\R^{*d}))^d$ depends continuously on $\mathbf{f} \in C_{\tr}^k(\R^{*d})_{\sa}^d$ using induction on $k'$.  Indeed, once we know the claim for $j < k'$, then the iterates for $\mathbf{g}^{(k')}$ depend continuously on $\mathbf{f}$, and the preceding remarks show that for each $R$, the rate of convergence will be uniform on some open set in $C_{\tr}^k(\R^{*d})_{\sa}^d$ containing $\mathbf{f}$.
	
	To finish the proof, it only remains to show that $\mathbf{g}$ is in $C_{\tr}^k(\R^{*d})_{\sa}^d$ and $\partial^{k'} \mathbf{g} = \mathbf{g}^{(k')}$ for $k' \leq k$.  To this end, it suffices to show that $\partial^{k'} \mathbf{g}_n \to \mathbf{g}^{(k')}$ as $n \to \infty$.  We proceed by induction on $k' \geq 1$ (with $k' = 0$ already proved).  Subtracting the relations for $\partial^{k'} \mathbf{g}_{n+1}$ and $\mathbf{g}^{(k')}$, we get
	\begin{multline*}
		\partial^{k'} \mathbf{g}_{n+1} - \mathbf{g}^{(k')} = (\Id - \partial \mathbf{f} \circ \mathbf{g}_n) \# (\partial^{k'} \mathbf{g}_n - \mathbf{g}^{(k')}) + (\partial \mathbf{f} \circ \mathbf{g}_n - \partial \mathbf{f} \circ \mathbf{g}) \# \mathbf{g}^{(k')} \\
		+ \sum_{j=2}^{k'} \sum_{\substack{(B_1,\dots,B_j) \\ \text{partition of } [k'] \\ \min B_1 < \dots < \min B_j}} \bigl[ (\partial^j(\id -  \mathbf{f}) \circ \mathbf{g}_n)[\partial^{|B_1|} \mathbf{g}_n,\dots,\partial^{|B_j|}\mathbf{g}_n] - (\partial^j \mathbf{f} \circ \mathbf{g})[\mathbf{g}^{(|B_1|)},\dots,\mathbf{g}^{(|B_j|)}] \bigr].
	\end{multline*}
	Let $\epsilon_{n,R}$ be the norm of $(\partial \mathbf{f} \circ \mathbf{g}_n - \partial \mathbf{g}) \# \mathbf{g}^{(k')}$ plus the norms of the terms in the summation.  By the induction hypothesis and by continuity of composition $\epsilon_{n,R} \to 0$ as $n \to \infty$, and we also have
	\[
	\norm{\partial^{k'} \mathbf{g}_{n+1} - \mathbf{g}^{(k')}}_{C_{\tr}(\R^{*d},\mathscr{M}^{k'}(\R^{*d}))^d,R} \leq K \norm{\partial^{k'} \mathbf{g}_n - \mathbf{g}^{(k')}}_{C_{\tr}(\R^{*d},\mathscr{M}^{k'}(\R^{*d}))^d,R} + \epsilon_{n,R}.
	\]
	A straightforward induction on $n$ shows that
	\[
	\norm{\partial^{k'} \mathbf{g}_n - \mathbf{g}^{(k')}}_{C_{\tr}(\R^{*d},\mathscr{M}^{k'}(\R^{*d}))^d,R} \leq K^n \norm{\partial^{k'} \mathbf{g}_0 - \mathbf{g}^{(k')}}_{C_{\tr}(\R^{*d},\mathscr{M}^{k'}(\R^{*d}))^d,R} + \sum_{m=0}^n K^m \epsilon_{n-m,R}.
	\]
	Clearly, the first term on the right-hand side goes to zero as $n \to \infty$.  For the second term, note that the bi-infinite sequence $(\mathbf{1}_{m\leq n} \epsilon_{n-m,R})_{m,n}$ is bounded and $\lim_{n \to \infty} \mathbf{1}_{m\leq n} \epsilon_{n-m,R} = 0$.  Because $\sum_{m=0}^\infty K^m < \infty$, the dominated convergence theorem implies that
	\[
	\lim_{n \to \infty} \sum_{m=0}^n K^m \epsilon_{n-m,R} = \lim_{n \to \infty} \sum_{m=0}^\infty K^m \mathbf{1}_{m \leq n} \epsilon_{n-m} = 0.
	\]
	Thus, $\partial^{k'} \mathbf{g}_n \to \mathbf{g}^{(k')}$ as desired.
\end{proof}

\section{Non-commutative smooth functions: connections} \label{sec:NCfunc2}

\subsection{Scalar-valued functions, non-commutative laws, and operator algebras}

The trace map in Corollary \ref{cor:tracemap} leads to the following definition.

\begin{definition}
	We denote the image of $\tr$ in $C_{\tr}^k(\R^{*d},\mathscr{M}(\R^{*d_1},\dots,\R^{*d_\ell}))$ by \newline $\tr(C_{\tr}^k(\R^{*d},\mathscr{M}(\R^{*d_1},\dots,\R^{*d_\ell})))$.
\end{definition}

\begin{observation}
	Let $f \in C_{\tr}^k(\R^{*d},\mathscr{M}(\R^{*d_1},\dots,\R^{*d_\ell}))$.  Then the following are equivalent:
	\begin{enumerate}[(1)]
		\item $f \in \tr(C_{\tr}^k(\R^{*d},\mathscr{M}(\R^{*d_1},\dots,\R^{*d_\ell})))$,
		\item $f^{\cA,\tau}(\mathbf{X})[\mathbf{Y}_1,\dots,\mathbf{Y}_\ell] \in \C$ for every $(\cA,\tau)$ and $\mathbf{X}$, $\mathbf{Y}_1$, \dots, $\mathbf{Y}_\ell \in \cA_{\sa}$.
		\item $f = \tr(f)$.
	\end{enumerate}
\end{observation}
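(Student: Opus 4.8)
The plan is to establish the cyclic chain of implications $(1) \Rightarrow (3) \Rightarrow (2) \Rightarrow (1)$, each of which is a formal consequence of the definition of the trace map $\tr$ from Corollary \ref{cor:tracemap} together with the unitality of $\tau$; here I read the inclusion $\C \subseteq \cA$ in statement (2) as the embedding $c \mapsto c1$.

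For $(1) \Rightarrow (3)$, I would first record that $\tr$ is idempotent. Writing $h = \tr(g)$, the definition gives $h^{\cA,\tau}(\mathbf{X})[\mathbf{Y}_1,\dots,\mathbf{Y}_\ell] = \tau\bigl(g^{\cA,\tau}(\mathbf{X})[\mathbf{Y}_1,\dots,\mathbf{Y}_\ell]\bigr)1$; applying $\tr$ a second time and using $\tau(c1) = c$ (valid since $\tau(1) = 1$) shows $(\tr h)^{\cA,\tau} = h^{\cA,\tau}$ for every $(\cA,\tau) \in \mathbb{W}$, hence $\tr(\tr(g)) = \tr(g)$. Consequently, if $f = \tr(g)$ for some $g \in C_{\tr}^k(\R^{*d},\mathscr{M}(\R^{*d_1},\dots,\R^{*d_\ell}))$, then $\tr(f) = \tr(\tr(g)) = \tr(g) = f$. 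The implication $(3) \Rightarrow (2)$ is then immediate: if $f = \tr(f)$, then $f^{\cA,\tau}(\mathbf{X})[\mathbf{Y}_1,\dots,\mathbf{Y}_\ell] = \tau\bigl(f^{\cA,\tau}(\mathbf{X})[\mathbf{Y}_1,\dots,\mathbf{Y}_\ell]\bigr)1 \in \C 1$ for all $(\cA,\tau)$, all $\mathbf{X}$, and all self-adjoint $\mathbf{Y}_j$.

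For $(2) \Rightarrow (1)$, the one point needing attention is that the hypothesis is stated only for self-adjoint directions $\mathbf{Y}_j \in \cA_{\sa}^{d_j}$, whereas $f^{\cA,\tau}(\mathbf{X})$ is a $\C$-multilinear form on all of $\cA^{d_1} \times \dots \times \cA^{d_\ell}$. I would close this gap by writing each $\mathbf{Y}_j = \re(\mathbf{Y}_j) + i\im(\mathbf{Y}_j)$ with self-adjoint parts and expanding by multilinearity; since $\C 1$ is a linear subspace of $\cA$, it follows that $f^{\cA,\tau}(\mathbf{X})[\mathbf{Y}_1,\dots,\mathbf{Y}_\ell] \in \C 1$ for arbitrary $\mathbf{Y}_j$, and applying $\tau$ identifies this scalar with $\tau\bigl(f^{\cA,\tau}(\mathbf{X})[\mathbf{Y}_1,\dots,\mathbf{Y}_\ell]\bigr)$. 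Hence $f^{\cA,\tau} = (\tr f)^{\cA,\tau}$ for every $(\cA,\tau) \in \mathbb{W}$, so $f = \tr(f)$; and because $f$ is assumed to lie in $C_{\tr}^k(\R^{*d},\mathscr{M}(\R^{*d_1},\dots,\R^{*d_\ell}))$, it is the image under $\tr$ of an element of that space, i.e.\ $f \in \tr(C_{\tr}^k(\R^{*d},\mathscr{M}(\R^{*d_1},\dots,\R^{*d_\ell})))$. There is no serious obstacle in this argument; the only slightly non-formal steps are the idempotence of $\tr$ (where unitality of $\tau$ is used) and the complexification reduction just described, both of which are routine.
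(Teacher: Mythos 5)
Your chain of implications is correct, and the two points you flag (idempotence of $\tr$ via $\tau(1)=1$, and extending from self-adjoint to arbitrary $\mathbf{Y}_j$ by splitting into real and imaginary parts) are exactly the routine verifications involved; the paper states this as an Observation without proof, and your argument is the intended one.
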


Thus, $\tr(C_{\tr}^k(\R^{*d},\mathscr{M}^\ell)$ may be viewed as the subspace of $C_{\tr}^k(\R^{*d},\mathscr{M}(\R^{*d_1},\dots,\R^{*d_\ell}))$ consisting of scalar-valued functions.  Similarly, $f \in \tr(C_{\tr}^k(\R^{*d},\mathscr{M}(\R^{*d_1},\dots,\R^{*d_\ell})))$ is self-adjoint if and only if $f^{\cA,\tau}$ is real-valued for every $(\cA,\tau) \in \mathbb{W}$.

Non-commutative laws can be characterized as certain linear functionals on $C_{\tr}(\R^{*d})$.  To state this result, we use the following definitions.

\begin{definition}
	We say that $f \in C_{\tr}^k(\R^{*d})$ is \emph{positive} if $f^{\cA,\tau}(\mathbf{X}) \geq 0$ in $\cA$ for every $(\cA,\tau) \in \mathbb{W}$ and $\mathbf{X} \in \cA_{\sa}^d$.  We say that a map $\Phi: C_{\tr}^k(\R^{*d_1}) \to C_{\tr}(\R^{*d_2})$ is \emph{positive} if it maps positive elements to positive elements.
\end{definition}

\begin{definition}
	Let $\cA$ be an algebra.  We say that map $\Phi: C_{\tr}^k(\R^{*d}) \to \cA$ is \emph{multiplicative over $\tr(C_{\tr}(\R^{*d}))$} if $\Phi(fg) = \Phi(f) \Phi(g)$ whenever $f \in \tr(C_{\tr}^k(\R^{*d}))$. 
\end{definition}

\begin{lemma} \label{lem:Claw}
	The following three sets are in bijection with each other:
	\begin{enumerate}[(1)]
		\item the space $\Sigma_d$ of non-commutative laws $\lambda$,
		\item the set of continuous positive algebra homomorphisms $\rho: \tr(C_{\tr}(\R^{*d})) \to \C$,
		\item the set of continuous unital positive maps $\Phi: C_{\tr}(\R^{*d}) \to \C$ that are multiplicative over $\tr(C_{\tr}(\R^{*d}))$ and satisfy $\Phi = \Phi \circ \tr$.
	\end{enumerate}
	The bijections are given by
	\begin{align*}
		\lambda &= \rho \circ \tr|_{\C\ip{x_1,\dots,x_d}} \\
		\lambda &= \Phi |_{\C\ip{x_1,\dots,x_d}} \\
		\Phi &= \rho \circ \tr \\
		\rho &= \Phi|_{\tr(C_{\tr}(\R^{*d}))}
	\end{align*}
\end{lemma}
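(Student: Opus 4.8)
The plan is to write down each of the six maps appearing in the statement and check that the three pairs of maps between $(1)$, $(2)$, $(3)$ are mutually inverse. Almost everything is bookkeeping with the identities $\tr\circ\tr=\tr$, $\tr(gf)=g\,\tr(f)$ when $g$ is scalar-valued, and $\tr(pq)=\tr(qp)$ for $p,q\in\C\ip{x_1,\dots,x_d}$, together with continuity of $\tr$ (Corollary \ref{cor:tracemap}) and density of $\TrP(\R^{*d})$ in $C_{\tr}(\R^{*d})$. The one construction requiring genuine work is producing a functional $\Phi$ as in $(3)$ attached to a given non-commutative law, for which I would use the GNS construction.

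First I would treat $(2)\leftrightarrow(3)$. Given $\rho$ as in $(2)$, set $\Phi:=\rho\circ\tr$: continuity is immediate, $\Phi(1)=\rho(1)=1$, and positivity holds because $f^{\cA,\tau}(\mathbf{X})\geq 0$ forces $\tau(f^{\cA,\tau}(\mathbf{X}))\geq 0$, so $\tr$ carries positive elements to positive elements of $\tr(C_{\tr}(\R^{*d}))$; the relation $\Phi=\Phi\circ\tr$ is idempotence of $\tr$, and multiplicativity over $\tr(C_{\tr}(\R^{*d}))$ is the identity $\tr(gf)=g\,\tr(f)$ for scalar-valued $g$. Conversely, from $\Phi$ as in $(3)$ set $\rho:=\Phi|_{\tr(C_{\tr}(\R^{*d}))}$; positivity and continuity are inherited, and the algebra-homomorphism property is exactly multiplicativity over $\tr(C_{\tr}(\R^{*d}))$ applied to two scalar-valued factors, using $\Phi(g)=\Phi(\tr g)=\rho(g)$ there (the resulting $\rho$ is unital; conversely every nonzero positive algebra homomorphism into $\C$ is automatically unital, so nothing is lost). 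That the two assignments are mutually inverse is again the relation $\Phi=\Phi\circ\tr$.

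Next I would match $(3)$ with $(1)$. Given $\Phi$ as in $(3)$, put $\lambda:=\Phi|_{\C\ip{x_1,\dots,x_d}}$; unitality and positivity ($\lambda(p^*p)=\Phi(p^*p)\geq 0$ since $p(\mathbf{X})^*p(\mathbf{X})\geq 0$) are clear, traciality follows from $\lambda(pq)=\Phi(\tr(pq))=\Phi(\tr(qp))=\lambda(qp)$, and exponential boundedness follows from continuity of $\Phi$: if $|\Phi(f)|\leq C\norm{f}_{C_{\tr}(\R^{*d}),R_0}$, then since $\norm{x_{i_1}\cdots x_{i_\ell}}_{C_{\tr}(\R^{*d}),R_0}=R_0^\ell$, the number $R:=R_0\max(C,1)$ is an exponential bound for $\lambda$. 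Moreover $\Phi$ is recovered from $\lambda$: on a basis element $\tr(p_1)\cdots\tr(p_n)p_0$ of $\TrP(\R^{*d})$, multiplicativity over $\tr(C_{\tr}(\R^{*d}))$ and $\Phi=\Phi\circ\tr$ force $\Phi(\tr(p_1)\cdots\tr(p_n)p_0)=\lambda(p_1)\cdots\lambda(p_n)\lambda(p_0)$, and by continuity and density of $\TrP(\R^{*d})$ this pins down $\Phi$ on all of $C_{\tr}(\R^{*d})$.

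It remains to produce, from a given $\lambda\in\Sigma_{d,R}$, a functional $\Phi$ as in $(3)$ whose restriction to polynomials is $\lambda$. I would realize $\lambda$ via GNS (Proposition \ref{prop:GNS}), obtaining a tracial $\mathrm{W}^*$-algebra $(\cA,\tau)$ with $\mathbf{X}\in\cA_{\sa}^d$, $\norm{\mathbf{X}}_\infty\leq R$, $\lambda_{\mathbf{X}}=\lambda$, and set $\Phi(f):=\tau(f^{\cA,\tau}(\mathbf{X}))$. Then $\Phi(1)=1$, $\Phi$ is positive, $|\Phi(f)|\leq\norm{f^{\cA,\tau}(\mathbf{X})}_\infty\leq\norm{f}_{C_{\tr}(\R^{*d}),R}$ so $\Phi$ is continuous, $\Phi=\Phi\circ\tr$ because $(\tr f)^{\cA,\tau}(\mathbf{X})=\tau(f^{\cA,\tau}(\mathbf{X}))\,1$, and $\Phi$ is multiplicative over $\tr(C_{\tr}(\R^{*d}))$ since scalar-valued functions evaluate to scalars at $\mathbf{X}$; also $\Phi|_{\C\ip{x_1,\dots,x_d}}=\lambda_{\mathbf{X}}=\lambda$. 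The main obstacle is that this $\Phi$ must be shown \textbf{independent of the chosen GNS realization}: given another realization $(\cB,\sigma,\mathbf{Y})$ of $\lambda$, Lemma \ref{lem:lawisomorphism} supplies a trace-preserving $\mathrm{W}^*$-isomorphism $\mathrm{W}^*(\mathbf{X})\to\mathrm{W}^*(\mathbf{Y})$ sending $X_j\mapsto Y_j$; approximating $f$ in $\norm{\cdot}_{C_{\tr}(\R^{*d}),R}$ by trace polynomials, whose evaluations are $*$-homomorphic and hence intertwined by this isomorphism, and passing to the limit shows $f^{\cB,\sigma}(\mathbf{Y})$ is the image of $f^{\cA,\tau}(\mathbf{X})$, so the traces agree. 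With well-definedness established, $(1)\to(3)\to(1)$ is the identity by construction and $(3)\to(1)\to(3)$ is the identity by the uniqueness step of the previous paragraph, and the displayed formulas relating $\lambda$, $\rho$, and $\Phi$ follow by chasing the definitions.
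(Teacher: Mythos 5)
Your proposal is correct and follows essentially the same route as the paper: establish $(2)\leftrightarrow(3)$ via $\Phi=\rho\circ\tr$ and restriction, then pass between laws and functionals using the GNS realization, continuity of the functional for exponential boundedness, and density of trace polynomials to pin the functional down. The only cosmetic differences are that you pair $(1)$ with $(3)$ rather than $(2)$ and spell out the independence of the GNS realization, which the paper already has available from its earlier observation that evaluations of $C_{\tr}(\R^{*d})$ functions depend only on the trace restricted to the generated subalgebra.
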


\begin{proof}
	First, we show the bijection between (2) and (3).  Note that $\tr$ is a continuous unital positive map $C_{\tr}(\R^{*d}) \to \tr(C_{\tr}(\R^{*d}))$ that is multiplicative over $\tr(C_{\tr}(\R^{*d}))$.  Hence, if $\rho$ satisfies (2), then $\Phi = \rho \circ \tr$ satisfies (3).  Conversely, if $\Phi$ satisfies (3), then $\Phi|_{\tr(C_{\tr}(\R^{*d}))}$ satisfies (2), and the maps $\rho \mapsto \rho \circ \tr$ and $\Phi \mapsto \Phi|_{\tr(C_{\tr}(\R^{*d}))}$ are mutually inverse.
	
	Next, we show the bijection between (1) and (2).  If $\rho$ satisfies (2), then let $\lambda(p) = \rho(\tr(p))$ for $p \in \C\ip{x_1,\dots,x_d}$.  Since $\rho$ is an algebra homomorphism it is unital and hence $\lambda(1) = 1$.  Also, $\lambda(pq) = \lambda(qp)$ since $\tr(pq) = \tr(qp)$ in $C_{\tr}(\R^{*d})$.  Thirdly, $\tr(p^*p)$ is positive in $\tr(C_{\tr}(\R^{*d}))$, hence $\lambda(p^*p) \geq 0$.  Finally, since $\rho$ is continuous, there exists $R > 0$ and $\delta > 0$ such that
	\[
	\norm{f}_{C_{\tr}(\R^{*d}),R} \leq \delta \implies |\rho(\tr(f))| < 1.
	\]
	Taking $p(x) = x_{i_1} \dots x_{i_\ell}$, we have $\norm{p}_{C_{\tr}(\R^{*d}),R} = R^\ell$ and hence
	\[
	|\lambda(p)| = |\rho(\tr(p))| \leq \frac{R^\ell}{\delta}.
	\]
	Since this holds for all $\ell$, we know $\lambda$ is exponentially bounded and hence is a non-commutative law.
	
	Conversely, suppose that $\lambda$ is a non-commutative law in $\Sigma_{d,R}$.  Let $\mathbf{X}$ be a $d$-tuple of self-adjoint operators in $(\cA,\tau)$ which realize the law $\lambda$.  Then define $\rho: \tr(C_{\tr}(\R^{*d})) \to \C$ by $\rho(f) = f(\mathbf{X})$.  Clearly, $f$ is a positive homomorphism, and also $\rho$ is continuous since $|\rho(f)| \leq \norm{f}_{C_{\tr}(\R^{*d}),R}$.
	
	Now, let us show that the maps $\lambda \mapsto \rho$ and $\rho \mapsto \lambda$ described above are mutually inverse.  If we start with $\lambda$ and define $\rho(f) = f(\mathbf{X})$ using $\cA$, $\tau$, and $\mathbf{X}$ as above, then $\rho(\tr(p)) = \tau(p(\mathbf{X})) = \lambda(p)$.  On the other hand, suppose we start with $\rho$ and let $\lambda = \rho \circ \tr|_{\C\ip{x_1,\dots,x_d}}$.  Let $\mathbf{X}$ be a tuple realizing the law $\lambda$.  Then clearly $\rho(\tr(p)) = \tau(p(\mathbf{X}))$.  Since $\rho$ is a homomorphism, it follows that $\rho(f) = f(\mathbf{X})$ holds for all scalar-valued trace polynomials.  But the trace polynomials are dense in $C_{\tr}(\R^{*d})$ and hence this equality holds for all $f$.
\end{proof}

This lemma allows us to describe the push-forward of non-commutative laws by functions $\mathbf{f} \in C_{\tr}(\R^{*d})_{\sa}^{d'}$.  Indeed, if $\mathbf{f} \in C_{\tr}(\R^{*d})_{\sa}^{d'}$, then there is a continuous positive homomorphism $\tr(\C_{\tr}(\R^{*d'})) \to \tr(C_{\tr}(\R^{*d}))$ given by $g \mapsto g \circ \mathbf{f}$.  Continuity follows because $\mathbf{f}$ is bounded in $\norm{\cdot}_\infty$ on each $\norm{\cdot}_\infty$-ball.  If $\rho$ is a positive homomorphism $\tr(C_{\tr}(\R^{*d})_{\sa}) \to \C$, then $\mathbf{f}_* \rho := \rho \circ \mathbf{f}$ is a continuous positive homomorphism $\tr(C_{\tr}(\R^{*d'})) \to \C$.  Since the continuous homomorphisms are in bijection with non-commutative laws, there is a corresponding push-forward operation $\mathbf{f}_*: \Sigma_d \to \Sigma_{d'}$.  Furthermore, the push-forward map $\mathbf{f}_*$ is characterized by the property that for every  $(\cA,\tau) \in \mathbb{W}$ and $\mathbb{X} \in \cA_{\sa}^d$, we have $\lambda_{\mathbf{f}(\mathbf{X})} = \mathbf{f}_* \lambda_{\mathbf{X}}$.

	Push-forwards of non-commutative laws lead naturally to inclusions and isomorphisms of tracial $\mathrm{C}^*$- and $\mathrm{W}^*$-algebras.  The next observation is immediate from Lemma \ref{lem:lawisomorphism}.

\begin{observation}
	Let $\mathbf{f} \in C_{\tr}(\R^{*d})_{\sa}^d$.  Let $\mu \in \Sigma_d$, and let $(\cA_1,\tau_1)$ be the $\mathrm{W}^*$ GNS representation of $\mu$, and let $\mathbf{X} \in (\cA_1)_{\sa}^d$ be the canonical generators having the non-commutative law $\mu$.  Similarly, let $(\cA_2,\tau_2)$ be the GNS representation for $\mathbf{f}_* \mu$ with its canonical generators $\mathbf{Y} \in (\cA_2)_{\sa}^{d'}$.  Then there is a unique inclusion map $\iota: (\cA_2,\tau_2) \to (\cA_1,\tau_1)$ of tracial $\mathrm{W}^*$-algebras such that $\iota(\mathbf{Y}) = \mathbf{f}^{\cA_1,\tau_1}(\mathbf{X})$.  We also have $\iota(\mathrm{C}^*(\mathbf{Y})) \subseteq \mathrm{C}^*(\mathbf{X})$.
\end{observation}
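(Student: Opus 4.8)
\emph{Proof plan.} The idea is that this is an immediate consequence of Lemma~\ref{lem:lawisomorphism}, once the tuple $\mathbf{f}^{\cA_1,\tau_1}(\mathbf{X})$ is identified as a realization of the law $\mathbf{f}_*\mu$ sitting inside $\cA_1$.

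First I would set $\mathbf{Z} := \mathbf{f}^{\cA_1,\tau_1}(\mathbf{X})$. Since $\mathbf{f}$ is self-adjoint, $\mathbf{Z}$ is a tuple of self-adjoint elements of $\cA_1$; and by the observation in \S\ref{sec:NCfunc1} that evaluation of a $C_{\tr}$ function lands in the $\mathrm{C}^*$-algebra generated by the inputs (approximate $\mathbf{f}$ in $\norm{\cdot}_{\tr,R}$ by trace polynomials, $R = \norm{\mathbf{X}}_\infty$), we get $\mathbf{Z} \in \mathrm{C}^*(\mathbf{X})^{d'}\subseteq(\cA_1)_{\sa}^{d'}$; in particular each coordinate is bounded. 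Next I would compute the law of $\mathbf{Z}$: by the characterization of the push-forward $\mathbf{f}_*$ on $\Sigma_d$ recorded just above (namely $\lambda_{\mathbf{f}(\mathbf{X})} = \mathbf{f}_*\lambda_{\mathbf{X}}$) and the fact that $\mathbf{X}$ realizes $\mu$, we obtain $\lambda_{\mathbf{Z}} = \mathbf{f}_*\mu$. On the other hand $\mathbf{Y}$ is by construction the canonical generating tuple of the GNS representation of $\mathbf{f}_*\mu$, so $\lambda_{\mathbf{Y}} = \mathbf{f}_*\mu$; hence $\lambda_{\mathbf{Z}} = \lambda_{\mathbf{Y}}$.

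With this in hand I would apply the $\mathrm{W}^*$ case of Lemma~\ref{lem:lawisomorphism} to the pair $\mathbf{Y}\in(\cA_2)_{\sa}^{d'}$ and $\mathbf{Z}\in(\cA_1)_{\sa}^{d'}$: it yields a unique tracial $\mathrm{W}^*$-isomorphism $\rho:\mathrm{W}^*(\mathbf{Y})\to\mathrm{W}^*(\mathbf{Z})$ with $\rho(Y_j)=Z_j$. Since Proposition~\ref{prop:GNS} gives $\cA_2=\mathrm{W}^*(\mathbf{Y})$, composing $\rho$ with the inclusion $\mathrm{W}^*(\mathbf{Z})\hookrightarrow\cA_1$ produces a normal, trace-preserving, injective $*$-homomorphism $\iota:(\cA_2,\tau_2)\to(\cA_1,\tau_1)$ with $\iota(\mathbf{Y})=\mathbf{Z}=\mathbf{f}^{\cA_1,\tau_1}(\mathbf{X})$, i.e.\ an inclusion of tracial $\mathrm{W}^*$-algebras. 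Uniqueness is immediate: $\cA_2$ is generated by $\mathbf{Y}$ as a $\mathrm{W}^*$-algebra and any normal $*$-homomorphism is $\sigma$-WOT continuous, hence determined by its values on $\mathbf{Y}$. For the final assertion, the $\mathrm{C}^*$ case of Lemma~\ref{lem:lawisomorphism} shows $\rho$ restricts to a $*$-isomorphism $\mathrm{C}^*(\mathbf{Y})\to\mathrm{C}^*(\mathbf{Z})$, and since $\mathbf{Z}\in\mathrm{C}^*(\mathbf{X})^{d'}$ we have $\mathrm{C}^*(\mathbf{Z})\subseteq\mathrm{C}^*(\mathbf{X})$, so $\iota(\mathrm{C}^*(\mathbf{Y}))\subseteq\mathrm{C}^*(\mathbf{X})$.

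I do not expect a genuine obstacle here; the only step needing a word of care is checking that $\mathbf{f}^{\cA_1,\tau_1}(\mathbf{X})$ really lies in $\mathrm{C}^*(\mathbf{X})$ (so that the inclusion lands where claimed), which follows from the density of trace polynomials in $C_{\tr}(\R^{*d})$ with respect to $\norm{\cdot}_{\tr,R}$.
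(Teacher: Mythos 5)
Your argument is exactly the one the paper intends: the Observation is stated as "immediate from Lemma \ref{lem:lawisomorphism}," and your route — identify $\mathbf{Z}=\mathbf{f}^{\cA_1,\tau_1}(\mathbf{X})$ as a bounded self-adjoint tuple in $\mathrm{C}^*(\mathbf{X})$ realizing $\mathbf{f}_*\mu$, apply the lemma to get the trace-preserving isomorphism onto $\mathrm{W}^*(\mathbf{Z})$, compose with the inclusion, and get uniqueness from normality plus generation — is precisely that argument, correctly filled in. No gaps; the care you take with the density of trace polynomials to ensure $\mathbf{Z}\in\mathrm{C}^*(\mathbf{X})^{d'}$ is the right (and only) point needing verification.
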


\begin{observation} \label{obs:pushforwardisomorphism}
	Consider the same situation as above, and suppose there exists a function $\mathbf{g} \in C_{\tr}(\R^{*d'})_{\sa}^d$ such that $\mathbf{g}^{\cA_2,\tau_2}(\mathbf{Y}) = \mathbf{X}$.  Then $\iota$ is an isomorphism of tracial $\mathrm{W}^*$-algebras, which also restricts to an isomorphism $\mathrm{C}^*(\mathbf{Y}) \to \mathrm{C}^*(\mathbf{X})$.
\end{observation}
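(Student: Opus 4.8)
The plan is to show that the inclusion $\iota$ from the preceding observation is surjective; since it is already injective, normal, and trace-preserving, this will make it an isomorphism of tracial $\mathrm{W}^*$-algebras. The key is to use the function $\mathbf{g}$ to locate the generators $\mathbf{X}$ of $\cA_1$ inside $\iota(\cA_2)$. I would set $\mathbf{Z} := \mathbf{g}^{\cA_2,\tau_2}(\mathbf{Y}) \in \cA_2^d$. By the closure properties of $C_{\tr}$-functions recorded after Definition~\ref{def:traceCk}, evaluating a function in $C_{\tr}(\R^{*d'})_{\sa}^d$ on $\mathbf{Y}$ produces a tuple lying in the $\mathrm{C}^*$-algebra generated by $\mathbf{Y}$, so $\mathbf{Z} \in \mathrm{C}^*(\mathbf{Y})^d \subseteq \cA_2^d$. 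Furthermore, a trace-preserving normal $*$-homomorphism intertwines evaluation of $C_{\tr}$-functions: this holds for trace polynomials by inspection, since such a map preserves products, adjoints and traces, and it then passes to all of $C_{\tr}$ by density together with the fact that $\norm{\cdot}_\alpha$ is preserved. Applying $\iota$ therefore gives $\iota(\mathbf{Z}) = \mathbf{g}^{\cA_1,\tau_1}(\iota(\mathbf{Y})) = \mathbf{g}^{\cA_1,\tau_1}\bigl(\mathbf{f}^{\cA_1,\tau_1}(\mathbf{X})\bigr)$, which is precisely $\mathbf{X}$ by the hypothesis $\mathbf{g}^{\cA_2,\tau_2}(\mathbf{Y}) = \mathbf{X}$ read through the identification $\iota$.

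With $\iota(\mathbf{Z}) = \mathbf{X}$ in hand, surjectivity is immediate: $\iota(\cA_2)$ is a $\mathrm{W}^*$-subalgebra of $\cA_1$, because a normal $*$-homomorphism of von Neumann algebras has $\sigma$-weakly closed range, and this subalgebra contains $\iota(Z_j) = X_j$ for each $j$, hence contains $\mathrm{W}^*(\mathbf{X}) = \cA_1$. A bijective, normal, trace-preserving $*$-homomorphism of tracial $\mathrm{W}^*$-algebras has an automatically normal and trace-preserving inverse (every $*$-isomorphism of von Neumann algebras is normal), so $\iota$ is an isomorphism of tracial $\mathrm{W}^*$-algebras. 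The same mechanism works one level down for the $\mathrm{C}^*$-algebras: $\iota(\mathrm{C}^*(\mathbf{Y}))$ is a $\mathrm{C}^*$-subalgebra of $\cA_1$ (the image of a $\mathrm{C}^*$-algebra under a $*$-homomorphism) and it contains $\iota(\mathbf{Z}) = \mathbf{X}$ because $\mathbf{Z} \in \mathrm{C}^*(\mathbf{Y})^d$, so $\mathrm{C}^*(\mathbf{X}) \subseteq \iota(\mathrm{C}^*(\mathbf{Y}))$. Combined with the reverse inclusion $\iota(\mathrm{C}^*(\mathbf{Y})) \subseteq \mathrm{C}^*(\mathbf{X})$ already noted in the preceding observation, this yields that $\iota$ restricts to an isomorphism $\mathrm{C}^*(\mathbf{Y}) \to \mathrm{C}^*(\mathbf{X})$.

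There is no deep obstacle here; this is essentially the non-commutative analogue of the classical change-of-variables argument. The only points requiring care are the identifications — one must interpret the hypothesis $\mathbf{g}^{\cA_2,\tau_2}(\mathbf{Y}) = \mathbf{X}$ through the inclusion $\iota$ — together with the two soft operator-algebraic facts invoked above, namely that $C_{\tr}$-functions commute with trace-preserving normal $*$-homomorphisms and that such a homomorphism of von Neumann algebras has $\sigma$-weakly closed range. Everything else follows from the generation properties of GNS representations and the basic closure properties of $C_{\tr}$-functions already established.
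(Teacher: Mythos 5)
Your proof is correct and is exactly the argument the paper leaves implicit (it states the result as an Observation with no proof): use $\mathbf{g}$ and the compatibility of $C_{\tr}$-evaluation with the trace-preserving embedding $\iota$ to see that $\mathbf{X}$ lies in $\iota(\mathrm{C}^*(\mathbf{Y})) \subseteq \iota(\cA_2)$, and conclude surjectivity at both the $\mathrm{W}^*$ and $\mathrm{C}^*$ levels from the generation properties, together with the reverse inclusion $\iota(\mathrm{C}^*(\mathbf{Y})) \subseteq \mathrm{C}^*(\mathbf{X})$ from the preceding observation. Your explicit handling of the identification of $\mathbf{g}^{\cA_2,\tau_2}(\mathbf{Y})$ with $\mathbf{X}$ through $\iota$ is the right reading of the hypothesis.
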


\begin{observation} \label{obs:pushforwardisomorphism2}
	Suppose that $\mathbf{f} \in C_{\tr}(\R^{*d})_{\sa}^{d'}$ and $C_{\tr}(\R^{*d'})_{\sa}^d$ satisfy $\mathbf{f} \circ \mathbf{g} = \id$ and $\mathbf{g} \circ \mathbf{f} = \id$.  Let $\mu \in \Sigma_d$.  Then by the previous observations there is an isomorphism of the tracial $\mathrm{W}^*$-algebras associated to $\mu$ and $\mathbf{f}_*\mu$ respectively, which also restricts to an isomorphism of the $\mathrm{C}^*$-algebras associated to the two laws.
\end{observation}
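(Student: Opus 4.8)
The plan is to read off the statement directly from the two preceding observations, so that essentially all that remains is to check the hypothesis of Observation \ref{obs:pushforwardisomorphism}. First I would fix the tracial $\mathrm{W}^*$-algebra $(\cA_1,\tau_1)$ associated to $\mu$ with canonical generators $\mathbf{X} \in (\cA_1)_{\sa}^d$ (as in Proposition \ref{prop:GNS}), and set $\mathbf{Y} := \mathbf{f}^{\cA_1,\tau_1}(\mathbf{X}) \in (\cA_1)_{\sa}^{d'}$. Since the non-commutative law of $\mathbf{Y}$ is $\mathbf{f}_* \lambda_{\mathbf{X}} = \mathbf{f}_* \mu$ by the characterization of $\mathbf{f}_*$ recorded after Lemma \ref{lem:Claw}, Lemma \ref{lem:lawisomorphism} allows me to take the $\mathrm{W}^*$-algebra associated to $\mathbf{f}_* \mu$ to be $(\cA_2,\tau_2) := (\mathrm{W}^*(\mathbf{Y}),\, \tau_1|_{\mathrm{W}^*(\mathbf{Y})})$ sitting inside $\cA_1$, with canonical generators $\mathbf{Y}$, and with the inclusion $\iota : \cA_2 \hookrightarrow \cA_1$ being precisely the map furnished by the first of the two observations above (so $\iota(\mathbf{Y}) = \mathbf{f}^{\cA_1,\tau_1}(\mathbf{X})$, and $\iota(\mathrm{C}^*(\mathbf{Y})) \subseteq \mathrm{C}^*(\mathbf{X})$).

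Next I would evaluate $\mathbf{g}$ at $\mathbf{Y}$. The key point is that the evaluation of a function in $C_{\tr}(\R^{*d'})_{\sa}^d$ at a tuple depends only on the restriction of the trace to the $\mathrm{C}^*$-algebra generated by that tuple, and lands in that $\mathrm{C}^*$-algebra (one of the bullet-point properties following Definition \ref{def:traceCk}); hence $\mathbf{g}^{\cA_2,\tau_2}(\mathbf{Y})$ and $\mathbf{g}^{\cA_1,\tau_1}(\mathbf{Y})$ are the same element of $\mathrm{C}^*(\mathbf{Y}) \subseteq \cA_1$. Combining this with the composition identity of Lemma \ref{lem:composition} (taken with no multilinear arguments) and the hypothesis $\mathbf{g}\circ\mathbf{f}=\id$, I obtain
\[
\iota\bigl(\mathbf{g}^{\cA_2,\tau_2}(\mathbf{Y})\bigr) = \mathbf{g}^{\cA_1,\tau_1}\bigl(\mathbf{f}^{\cA_1,\tau_1}(\mathbf{X})\bigr) = (\mathbf{g}\circ\mathbf{f})^{\cA_1,\tau_1}(\mathbf{X}) = \mathbf{X}.
\]
This is exactly the hypothesis ``$\mathbf{g}^{\cA_2,\tau_2}(\mathbf{Y}) = \mathbf{X}$'' appearing in Observation \ref{obs:pushforwardisomorphism}, so that observation immediately gives that $\iota$ is an isomorphism of tracial $\mathrm{W}^*$-algebras which restricts to an isomorphism $\mathrm{C}^*(\mathbf{Y}) \to \mathrm{C}^*(\mathbf{X})$ — this is the asserted isomorphism between the operator algebras associated to $\mu$ and to $\mathbf{f}_*\mu$.

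I do not expect a genuine obstacle, since the statement is phrased as an immediate consequence of the previous observations; the one step that deserves a sentence of justification is the insensitivity of the evaluation of $\mathbf{g}$ to the ambient algebra, which follows by approximating $\mathbf{g}$ by trace polynomials uniformly on an operator-norm ball containing $\mathbf{Y}$ and using that $\iota$, being an injective $*$-homomorphism of $\mathrm{C}^*$-algebras, is isometric and commutes with the trace-polynomial functional calculus, then passing to the limit. (Alternatively, and equally quickly, one could use the relation $\mathbf{f}\circ\mathbf{g}=\id$ to build the inverse isomorphism by hand and check the two are mutually inverse on generators, but routing through Observation \ref{obs:pushforwardisomorphism} is the cleanest path.)
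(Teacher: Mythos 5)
Your proposal is correct and is exactly the argument the paper intends: the statement is left as an observation precisely because it follows by feeding $\mathbf{Y} = \mathbf{f}^{\cA_1,\tau_1}(\mathbf{X})$ and the identity $\mathbf{g}^{\cA_2,\tau_2}(\mathbf{Y}) = (\mathbf{g}\circ\mathbf{f})^{\cA_1,\tau_1}(\mathbf{X}) = \mathbf{X}$ into Observation \ref{obs:pushforwardisomorphism}, using that evaluation depends only on the trace restricted to the generated $\mathrm{C}^*$-algebra. Your extra sentence justifying that insensitivity of evaluation to the ambient algebra is a reasonable elaboration of the bullet point the paper already records after the definition of $C_{\tr}$, so there is nothing to add.
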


\begin{remark}
	If $\mathbf{f}$ and $\mathbf{g}$ as above satisfy $\mathbf{f} \circ \mathbf{g} = \id$ and $\mathbf{g} \circ \mathbf{f} = \id$, then we must have $d = d'$.  This is because $\mathbf{f}$ defines a homeomorphism $M_N(\C)_{\sa}^d \to M_N(\C)_{\sa}^{d'}$ for every $N$, so it follows from the invariance of domain theorem in topology (and in fact, we would only need the homeomorphism for a single value of $N$ to make this conclusion).  However, if we only assume that $\mathbf{g}^{\cA,\tau}(\mathbf{f}^{\cA,\tau}(\mathbf{X})) = \mathbf{X}$ for a particular $d$-tuple of operators $\mathbf{X}$ in a particular $(\cA,\tau)$, then it is a difficult question whether $d$ must equal $d'$, and the answer will likely depend on the properties of the tuple $\mathbf{X}$.
\end{remark}

\subsection{One-variable functional calculus} \label{subsec:smoothfunctionalcalculus}

\begin{lemma}
	If $\phi \in C(\R)$, then the function $f = (f^{\cA,\tau})_{(\cA,\tau) \in \mathbb{W}}$ given by $f^{\cA,\tau}(X) = \phi(X)$ for every $(\cA,\tau) \in \mathbb{W}$ and $X \in \cA_{\sa}$ is an element of $C_{\tr}(\R^{*1})$.
\end{lemma}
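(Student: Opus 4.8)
The plan is to reduce the statement directly to the Weierstrass approximation theorem together with the norm bound coming from continuous functional calculus. First I would verify that $f$ is a legitimate member of the tuple space: for any $(\cA,\tau) \in \mathbb{W}$ and any $X \in \cA_{\sa}$, the spectrum $\sigma(X)$ is a compact subset of $\R$, so $\phi|_{\sigma(X)}$ is a well-defined continuous function, and the continuous functional calculus for the self-adjoint element $X$ produces an element $\phi(X)$ lying in the $\mathrm{C}^*$-subalgebra $\mathrm{C}^*(X) \subseteq \cA$. Thus $f^{\cA,\tau} \colon \cA_{\sa} \to \cA$ is genuinely defined; since here $d = d' = 1$ and $\ell = 0$ there is no multilinearity requirement to check, so the only remaining thing is the approximation condition in the definition of $C_{\tr}(\R^{*1})$.

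The key ingredient is the estimate: if $p \in \C[t]$ is an ordinary polynomial, then for $X \in \cA_{\sa}$ the functional calculus value $p(X)$ coincides with $p^{\cA,\tau}(X)$ where $p$ is viewed as a (scalar) trace polynomial in the single variable $x_1$, and since functional calculus for a self-adjoint operator is an isometric $*$-isomorphism of $C(\sigma(X))$ onto $\mathrm{C}^*(X)$,
\[
\norm{\phi(X) - p(X)}_\infty = \norm{(\phi - p)(X)}_\infty = \sup_{t \in \sigma(X)} |\phi(t) - p(t)| \le \sup_{t \in [-R,R]} |\phi(t) - p(t)|
\]
whenever $\norm{X}_\infty \le R$, using $\sigma(X) \subseteq [-R,R]$. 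The right-hand side does not depend on $(\cA,\tau)$, so taking the supremum over $\mathbf{X}$ in the operator-norm ball of radius $R$ and then over $\mathbb{W}$ gives
\[
\sup_{(\cA,\tau) \in \mathbb{W}} \norm{f^{\cA,\tau} - p^{\cA,\tau}}_{\tr,R} \le \norm{\phi - p}_{C([-R,R])}.
\]

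Now fix $R > 0$ and $\epsilon > 0$. By the Weierstrass approximation theorem applied to the real and imaginary parts of $\phi$ on $[-R,R]$, there is a polynomial $p \in \C[t]$ with $\norm{\phi - p}_{C([-R,R])} < \epsilon$. Regarding $p$ as a trace polynomial in $x_1$ (no trace factors are needed), the previous display yields $\sup_{(\cA,\tau)\in\mathbb{W}} \norm{f^{\cA,\tau} - p^{\cA,\tau}}_{\tr,R} < \epsilon$, which is exactly the defining condition for membership in $C_{\tr}(\R^{*1})$. Since $R$ and $\epsilon$ were arbitrary, $f \in C_{\tr}(\R^{*1})$.

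There is no genuine obstacle here; the only point that warrants a sentence of care is that the approximants required by the definition of $C_{\tr}(\R^{*1})$ are trace polynomials in one self-adjoint variable, and scalar polynomials qualify as such — which is immediate. Alternatively, one could phrase the argument as: $f$ is the limit, in every seminorm $\norm{\cdot}_{C_{\tr}(\R^{*1}),R}$, of the sequence $X \mapsto p_n(X)$ for polynomials $p_n \to \phi$ uniformly on $[-n,n]$, and then invoke completeness of $C_{\tr}(\R^{*1})$ in its Fr\'echet topology.
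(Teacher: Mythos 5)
Your proposal is correct and follows essentially the same route as the paper: approximate $\phi$ by polynomials uniformly on compact intervals, use the spectral mapping theorem to transfer the uniform bound on $[-R,R]$ to an operator-norm bound on each ball of radius $R$, and conclude membership in $C_{\tr}(\R^{*1})$ (your closing alternative with a single sequence $p_n \to \phi$ on $[-n,n]$ is exactly the paper's phrasing).
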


\begin{proof}
	Let $(\phi^{(N)})_{N \in \N}$ be a sequence of polynomials which converge to $\phi$ uniformly on compact subsets of $\R$.  By the spectral mapping theorem, for any $(\cA,\tau)$ and any self-adjoint operator $X$ in $\cA$ with $\norm{X} \leq R$, we have
	\[
	\norm{\phi^{(N)}(X) - \phi(X)}_\infty \leq \sup_{t \in [-R,R]} |\phi^{(N)}(t) - \phi(t)|.
	\]
	Hence, the sequence of polynomials $\phi^{(N)}(x) \in \C[x] \subseteq C_{\tr}(\R)$ converges in $C_{\tr}(\R)$ to some function $f$, which clearly must satisfy $f^{\cA,\tau}(X) = \phi(X)$ for self-adjoint $X$ in $(\cA,\tau)$.
\end{proof}

\begin{definition}
	Given $\phi \in C(\R)$, we denote the corresponding element of $C_{\tr}(\R)$ by $\phi(x)$, where $x$ is the same formal variable used for defining the trace polynomials in $C_{\tr}(\R)$.  Similarly, for $j \leq d$, we may define an element $\phi(x_j)$ in $C_{\tr}(\R^{*d})$ as the element sending a self-adjoint tuple $(X_1,\dots,X_d)$ in $(\cA,\tau)$ to $\phi(X_j)$.
\end{definition}

Under what conditions is $\phi(x) \in C_{\tr}^k(\R^{*d})$?  Peller, Aleksandrov, and Nazarov have studied the free difference quotients of functions on the real line for the sake of understanding the perturbations of self-adjoint operators \cite{Peller2006,AlPe2010a,AlPe2010b,ANP2016,AlPe2017}, and concluded that Besov spaces are natural spaces of functions on $\R$ that lead to operator $C^k$ functions; for a self-contained development of operator $C^k$ functions, see \cite{Nikitopoulos2020}.  However, we do not need the full strength of these results, and we will be content to directly apply one of the key basic ideas, Fourier decomposition, to our current context.  We also point out that the recent papers \cite{CGP2019} and \cite{Parraud2020} have applied the same functional calculus/Fourier decomposition techniques to study the finer properties of random matrix models.  We begin by describing the non-commutative derivatives of the complex exponential $e^{ix} \in C_{\tr}(\R)$ for each $t \in \R$.  In the formula for derivatives, we recall that the theory of Riemann integration is valid for continuous functions on polytopes taking values in a Fr\'echet space, with all the same proofs that are learned in undergraduate calculus.

\begin{lemma}
	For each $t \in \R$, the function $e^{itx}$ is in $BC_{\tr}^\infty(\R)$ and satisfies
	\begin{equation} \label{eq:exponentialderivativebound}
		\norm{\partial^k[e^{itx}]}_{BC_{\tr}(\R,\mathscr{M}^k)} \leq t^k.
	\end{equation}
	The derivatives are given explicitly as follows.  Let $\Delta_k$ denote the simplex
	\[
	\Delta_k := \{(s_0,\dots,s_k): s_j \geq 0, s_0 + \dots + s_k = 1\},
	\]
	and let $\rho_k$ be the standard uniform probability measure on $\Delta_k$.  Then
	\begin{equation} \label{eq:exponentialderivatives}
		\partial^k[e^{itx}][y_1,\dots,y_k] = \frac{(it)^k}{k!} \sum_{\sigma \in \Perm(k)} \int_{\Delta_k} e^{its_0x} y_{\sigma(1)} e^{its_1x} \dots y_{\sigma(k)} e^{its_kx} \,d\rho_k(s_0,\dots,s_k).
	\end{equation}
	Here $y_1$, \dots, $y_k$ denote the formal variables occurring as multilinear arguments of the derivative, and the integral is interpreted as a Riemann integral with values in the Fr\'echet space $C_{\tr}(\R,\mathscr{M}^k)$.
\end{lemma}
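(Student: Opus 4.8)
The plan is to construct the derivatives of $e^{itx}$ by hand: define candidate functions by the right-hand side of \eqref{eq:exponentialderivatives}, show they are genuine elements of $C_{\tr}(\R,\mathscr{M}^k)$ with the stated norm bound, and then verify that the $k$-th one is indeed the $k$-th derivative of $e^{itx}$. Concretely, set $g_0 := e^{itx}$ and, for $k\geq 1$,
\[
g_k := \frac{(it)^k}{k!}\sum_{\sigma\in\Perm(k)}\int_{\Delta_k} e^{its_0x}\,y_{\sigma(1)}\,e^{its_1x}\cdots y_{\sigma(k)}\,e^{its_kx}\,d\rho_k(s_0,\dots,s_k).
\]
For each fixed $(s_0,\dots,s_k)\in\Delta_k$, every factor $e^{its_jx}$ belongs to $C_{\tr}(\R)$ by the first lemma of \S\ref{subsec:smoothfunctionalcalculus} applied to the continuous function $u\mapsto e^{its_ju}$, so the product $e^{its_0x}y_{\sigma(1)}\cdots y_{\sigma(k)}e^{its_kx}$ lies in $C_{\tr}(\R,\mathscr{M}^k)$ by the multiplicativity properties established in Lemma \ref{lem:composition}; it depends continuously on $(s_0,\dots,s_k)$ (using $\|e^{itsX}-e^{its'X}\|_\infty\leq |t|\,|s-s'|\,\|X\|$ and continuity of multiplication), so the Fréchet-space-valued Riemann integral defining $g_k$ makes sense. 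Once I establish $g_k=\partial g_{k-1}$ for all $k\geq 1$, the Fréchet-$C^k$ reformulation of Definition \ref{def:traceCk} — together with the continuity of $X\mapsto g_k^{\cA,\tau}(X)$ coming from Lemma \ref{lem:Ctrcontinuity} — gives $e^{itx}\in C_{\tr}^\infty(\R)$ with $\partial^k[e^{itx}]=g_k$, and the norm bound upgrades this to membership in $BC_{\tr}^\infty(\R)$.

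The bound \eqref{eq:exponentialderivativebound} is the easy part. Fix $(\cA,\tau)\in\mathbb{W}$, a self-adjoint tuple $X$, and exponents $\alpha^{-1}=\alpha_1^{-1}+\dots+\alpha_k^{-1}$. Since $X$ is self-adjoint and each $s_j\in\R$, the operator $e^{its_jX}$ is unitary, so $\|e^{its_jX}\|_\infty=1$; the non-commutative Hölder inequality (Lemma \ref{lem:NCHolder}) then gives $\|e^{its_0X}Y_{\sigma(1)}e^{its_1X}\cdots Y_{\sigma(k)}e^{its_kX}\|_\alpha\leq 1$ whenever $\|Y_j\|_{\alpha_j}\leq 1$, for every $(s_0,\dots,s_k)\in\Delta_k$. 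Integrating against the probability measure $\rho_k$, summing over the $k!$ permutations, and multiplying by $|t|^k/k!$ yields $\|g_k^{\cA,\tau}(X)\|_{\alpha;\alpha_1,\dots,\alpha_k}\leq |t|^k$; taking suprema over exponents, over $X$ in any operator-norm ball, and over $(\cA,\tau)$ gives $\|g_k\|_{BC_{\tr}(\R,\mathscr{M}^k)}\leq |t|^k$, which is the content of \eqref{eq:exponentialderivativebound}.

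The core of the proof is the recursion $g_k=\partial g_{k-1}$. For $k=1$ this is the Duhamel formula $\frac{d}{d\epsilon}\big|_{\epsilon=0}e^{it(X+\epsilon Y)}=it\int_0^1 e^{itux}Ye^{it(1-u)x}\,du$, proved by letting $W(s)=\partial_\epsilon\big|_{\epsilon=0}e^{its(X+\epsilon Y)}$, noting $W$ solves the linear ODE $W'(s)=itXW(s)+itYe^{itsX}$ with $W(0)=0$, and solving by variation of constants (then substituting $u=1-r$). For the inductive step, I differentiate the integrand of $g_{k-1}$ in a new direction $Y_k$: only the factors $e^{its_jX}$ depend on $X$, so by the product rule and the $(ts_j)$-rescaled Duhamel formula, differentiating the $j$-th factor replaces $e^{its_jX}$ by $its_j\int_0^1 e^{its_juX}Y_k e^{its_j(1-u)X}\,du$, i.e.\ it splits the $j$-th block of the partition $(s_0,\dots,s_{k-1})$ of $[0,1]$ into lengths $s_ju$ and $s_j(1-u)$ and inserts $Y_k$ at the new break point. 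Writing $d\rho_{k-1}=(k-1)!$ times Lebesgue measure in free coordinates, the weight $s_j$ produced by Duhamel is exactly the Jacobian of the splitting map $(s_0,\dots,s_{k-1},u)\mapsto(s_0,\dots,s_{j-1},s_ju,s_j(1-u),s_{j+1},\dots,s_{k-1})$, so this map is measure-preserving from $\Delta_{k-1}^{\mathrm{Leb}}\times[0,1]$ onto $\Delta_k^{\mathrm{Leb}}$; and the assignment sending $(\sigma,j)$ to the ordering of $Y_1,\dots,Y_k$ obtained by inserting $Y_k$ into slot $j+1$ of $\sigma$ is a bijection onto $\Perm(k)$. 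Tracking the scalars ($(it)^{k-1}\cdot it=(it)^k$, and $(k-1)!$ first absorbed into Lebesgue measure then re-expressed as $1/k!$ on $\Delta_k$) turns the differentiated expression into precisely $g_k^{\cA,\tau}(X)[Y_1,\dots,Y_k]$; moreover the same Duhamel estimate shows the Gateaux derivative here is the Fréchet derivative. I expect the main obstacle to be exactly this bookkeeping — justifying differentiation under the Fréchet-valued integral and getting every combinatorial factor (the $k!$ versus $(k-1)!$, the permutation sum, the sign of the Jacobian) to line up — while the norm bound, the well-definedness of $g_k$, and the final passage to $BC_{\tr}^\infty(\R)$ are routine given the machinery already developed.
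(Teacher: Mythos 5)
Your proposal is correct and follows essentially the same route as the paper: induction on $k$, with the inductive step given by differentiating the integrand via the $k=1$ Duhamel formula, a change of variables on the simplex that absorbs the weight $s_j$ (the paper's identity $k!\int_{\Delta_k}\int_0^{s_\ell}\phi\,dv\,d\rho_k=(k+1)!\int_{\Delta_{k+1}}\phi\,d\rho_{k+1}$ is exactly your block-splitting map), the insertion bijection on permutations, and the bound \eqref{eq:exponentialderivativebound} from unitarity of $e^{itX}$ plus the non-commutative H\"older inequality. The only divergence is the base case: the paper proves $\partial[e^{itx}][y]=it\int_0^1 e^{itsx}ye^{it(1-s)x}\,ds$ by termwise differentiation of the series $\sum_n \frac{1}{n!}(itx)^n$, which converges in $C_{\tr}^1(\R)$, combined with the Beta integral $\int_0^1\frac{1}{\ell!}s^\ell\frac{1}{m!}(1-s)^m\,ds=\frac{1}{(\ell+m+1)!}$, whereas you use a variation-of-constants argument for $W(s)=\partial_\epsilon|_{\epsilon=0}e^{its(X+\epsilon Y)}$; this is also fine, but you should note that it presupposes differentiability of $\epsilon\mapsto e^{its(X+\epsilon Y)}$ and the exchange of $\partial_s$ and $\partial_\epsilon$ (justified, e.g., by the power series or by the integral form of Duhamel's identity), a point the series argument gets for free.
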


\begin{proof}
	First, we prove the formula for the derivative.  Consider the projection map $\pi_k: \R^{k+1} \to \R^k$ onto the first $k$ coordinates.  Note that $\pi_k$ gives an affine bijection from $\Delta_k$ onto the simplex $\{s_j \geq 0, s_0 + \dots + s_{k-1} \leq 1\}$, and therefore this map is measure-preserving up to a constant factor.  The Lebesgue measure on $\R^k$ assigns total mass $1/k!$ to the simplex $\pi_k(\Delta_k)$ and hence \eqref{eq:exponentialderivatives} is equivalent to
	\begin{multline} \label{eq:exponentialderivatives2}
		\partial^k[e^{itx}][y_1,\dots,y_k] \\ = (it)^k \sum_{\sigma \in \Perm(k)} \int_{\pi_k(\Delta_k)} e^{its_0x} y_{\sigma(1)} e^{its_1x} \dots y_{\sigma(k)} e^{it(1 - s_0 + \dots + s_{k-1})x} \,ds_0 \dots ds_{k-1}.
	\end{multline}
	
	We prove this formula by induction.  First, consider $k = 1$.  For $n \in \N$, the function $x^n$ is in $C_{\tr}(\R)$ with $\norm{x^n}_{C_{\tr}(\R),R} = R^n$.  Moreover, using the product rule,
	\[
	\partial[x^n][y] = \sum_{m=0}^{n-1} x^{n-1-m} y x^m,
	\]
	so clearly $\norm{\partial[x^n]}_{C_{\tr}(\R,\mathscr{M}^1),R} \leq nR^{n-1}$.  It follows that the series
	\[
	\sum_{n=0}^\infty \frac{1}{n!} (itx)^n
	\]
	converges in $C_{\tr}^1(\R)$.  This series must agree with $e^{itx}$ since they agree when evaluating on any self-adjoint operator $X$.  We thus have
	\begin{multline*}
		\partial[e^{itx}][y] = \sum_{n=0}^\infty \frac{(it)^n}{n!} \sum_{m=0}^{n-1} x^{n-1-m} y x^m
		= \sum_{\ell,m \geq 0} \frac{(it)^{\ell+m+1}}{(\ell+m+1)!} x^\ell y x^m \\
		= it \sum_{\ell,m \geq 0} \frac{1}{(\ell+m+1)!} (itx)^\ell y (itx)^m.
	\end{multline*}
	Observe that by repeated integration by parts
	\begin{multline*}
		\int_0^1 \frac{1}{\ell!} s^\ell \frac{1}{m!} (1 - s)^m\,ds = \int_0^1 \frac{1}{(\ell + 1)!} s^{\ell+1} \frac{1}{(m-1)!} (1 - s)^{m-1}\,ds = \dots \\
		= \int_0^1 \frac{1}{(\ell+m)!} s^{\ell+m}\,ds = \frac{1}{(\ell + m + 1)!},
	\end{multline*}
	so that
	\begin{align*}
		\partial[e^{itx}][y]
		&= it \sum_{\ell,m \geq 0} \left( \int_0^1 \frac{1}{\ell!} s^\ell \frac{1}{m!} (1 - s)^m\,ds \right) (itx)^\ell y (itx)^m \\
		&= it \int_0^1 \sum_{\ell,m \geq 0} \frac{1}{\ell!} (itsx)^\ell \frac{1}{m!} (it(1 - s)x)^m\,ds \\
		&= it \int_0^1 e^{itsx} y e^{it(1 - s)x}\,ds.
	\end{align*}
	Note that $(itsx)^\ell y (itx(1-s))^m$ is an element of $C_{\tr}(\R,\mathscr{M}^1)$ that depends continuously on $s$ and its norm on the $R$-ball is bounded by $(|t|R)^{\ell+m}$.  This implies uniform convergence of the series and hence the $C_{\tr}(\R,\mathscr{M}^1)$-valued summation and integration are defined and exchangeable.  This proves \eqref{eq:exponentialderivatives} and hence \eqref{eq:exponentialderivatives2} in the case $k = 1$.
	
	For the induction step, assume \eqref{eq:exponentialderivatives} holds for $k$.  Then by applying the product rule inside the integral, we evaluate $\partial^{k+1}[e^{itx}][y_1,\dots,y_k,y_{k+1}]$ as
	\begin{multline*}
		\frac{(it)^k}{k!} \sum_{\sigma \in \Perm(k)} \int_{\Delta_k} \sum_{\ell=0}^k e^{its_0x} y_{\sigma(1)} \dots e^{its_{\ell-1}x} y_{\sigma(\ell)} \, \partial[e^{its_\ell}][y_{k+1}] \\
		\, y_{\sigma(\ell+1)} e^{its_{\ell+1}x} \dots y_{\sigma(k)} e^{its_kx} \,d\rho_k(s_0,\dots,s_k).
	\end{multline*}
	Using the $k = 1$ case,
	\[
	\partial[e^{its_\ell}][y_{k+1}] = its_\ell \int_0^1 e^{its_\ell u x} y_{k+1} e^{ist_\ell(1-u)x}\,du = it \int_0^{s_\ell} e^{itvx} y_{k+1} e^{it(s_\ell - v)x}\,dv.
	\]
	We substitute this into the above equation.  Then we observe for any function $\phi$ on  $\Delta_{k+1}$, we have
	\begin{multline*}
		k! \int_{\Delta_k} \int_0^{s_\ell} \phi(s_0,\dots,s_k,s_\ell-v)\,dv\,d\rho_k(s_0,\dots,s_k) \\
		= (k+1)! \int_{\Delta_{k+1}} \phi(s_0,\dots,s_{k+1})\,d\rho_{k+1}(s_0,\dots,s_{k+1}),
	\end{multline*}
	which follows using the parametrization of $\Delta_k$ by $\pi_k(\Delta_k)$.  Also, recall that $\rho_k$ is permutation invariant.  Thus, $\partial^{k+1}[e^{itx}][y_1,\dots,y_k,y_{k+1}]$ becomes
	\begin{multline*}
		\frac{(it)^{k+1}}{(k+1)!} \sum_{\sigma \in \Perm(k)} \sum_{\ell=0}^k \int_{\Delta_{k+1}} e^{its_0x} y_{\sigma(1)} \dots e^{its_{\ell-1}x} y_{\sigma(\ell)} e^{its_\ell x} y_{k+1} \\
		e^{its_{\ell+1}x} y_{\sigma(\ell+1)} \dots e^{its_kx} y_{\sigma(k)} e^{its_{k+1}x} \,d\rho_{k+1}(s_0,\dots,s_{k+1}).
	\end{multline*}
	It is a straightforward combinatorial manipulation to reduce this to \eqref{eq:exponentialderivatives} for $k + 1$; the idea is that by choosing a permutation $\sigma \in \Perm(k)$ and then inserting $k+1$ at every possible position before, between, or after the existing elements, we achieve every permutation of $k+1$ elements.
	
	Now note that for any operator $X$, $e^{itX}$ is unitary.  This implies that $\norm{e^{itx}}_{BC_{\tr}(\R)} = 1$.  By substituting this into \eqref{eq:exponentialderivatives}, we get \eqref{eq:exponentialderivativebound}.
\end{proof}

The role of the Fourier transform is to decompose a function on $\R$ into a linear combination of complex exponentials.  For $\phi \in L^1(\R)$, the \emph{Fourier transform} is given by
\[
\widehat{\phi}(s) = \int_{\R} e^{-2\pi i st} \phi(t)\,dt.
\]
If $\widehat{\phi} \in L^1(\R)$, then we have the \emph{Fourier inversion formula}
\[
\phi(t) = \int_{\R} e^{2\pi i ts}\widehat{\phi}(s)\,ds.
\]
The Fourier transform extends to a well-defined operator on the space of \emph{tempered distributions} and in particular is well-defined for any continuous function of polynomial growth at $\infty$.  We also have
\[
\widehat{\phi'}(s) = 2\pi i s \widehat{\phi}(s)
\]
for all tempered distributions.  In particular, this implies that if $s^k \widehat{\phi}(s)$ is in $L^1(\R)$, then $(d/dt)^k \phi$ is in $BC(\R)$.  In fact, we will show a similar property for the non-commutative derivatives of $\phi(x)$ in $C_{\tr}(\R)$.

\begin{proposition} \label{prop:smoothfunctionalcalculus}
	Let $k \in \N$.
	\begin{enumerate}[(1)]
		\item Suppose that $\phi \in BC(\R)$ and that $\int_{\R} (1 + |s|^k) |\widehat{\phi}(s)|\,ds$ is finite.  Then $\phi(x) \in BC_{\tr}^k(\R)$ with
		\[
		\norm{\partial^\ell \phi(x)}_{BC_{\tr}(\R,\mathscr{M}^\ell)} \leq \int_{\R} |(2 \pi is)^\ell \widehat{\phi}(s)|\,ds
		\]
		for each $\ell \leq k$.
		\item If $\phi \in C^{k+2}(\R)$, then $\phi(x) \in C_{\tr}^k(\R)$.
	\end{enumerate}
\end{proposition}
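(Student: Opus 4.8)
The plan is to prove (1) by Fourier inversion---representing $\phi(x)$ as a Riemann integral $\int_{\R} e^{2\pi i sx}\,\widehat\phi(s)\,ds$ valued in the Fr\'echet space $C_{\tr}^k(\R)$---and then to deduce (2) from (1) by a compactly supported cutoff.

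For (1), the first step is to check that $s \mapsto e^{2\pi i sx}$ is a continuous map from $\R$ into $C_{\tr}^k(\R)$ with its (finite-radius) Fr\'echet topology. Continuity at $s = 0$ is immediate from \eqref{eq:exponentialderivativebound}: $\norm{\partial^\ell[e^{2\pi i sx}]}_{BC_{\tr}(\R,\mathscr{M}^\ell)} \le |2\pi s|^\ell \to 0$ for $\ell \ge 1$, while $\norm{e^{2\pi i sX} - 1}_\infty \le 2\pi|s|\,\norm{X}$ handles $\ell = 0$; continuity at a general $s_0$ then follows from the identity $e^{2\pi i sx} = e^{2\pi i s_0 x}\cdot e^{2\pi i(s - s_0)x}$ together with the continuity of multiplication on $C_{\tr}^k(\R)$ (Corollary \ref{cor:Ckalgebra} and Theorem \ref{thm:chainrule}). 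The second step uses \eqref{eq:exponentialderivativebound} and the hypothesis $\int_{\R}(1 + |s|^k)|\widehat\phi(s)|\,ds < \infty$ to get, for every $R > 0$ and every $\ell \le k$, the bound $\int_{\R} \norm{\partial^\ell[e^{2\pi i sx}]\,\widehat\phi(s)}_{C_{\tr}(\R,\mathscr{M}^\ell),R}\,ds \le \int_{\R} |(2\pi i s)^\ell\widehat\phi(s)|\,ds < \infty$; that is, the integrand $s \mapsto e^{2\pi i sx}\,\widehat\phi(s)$ is absolutely integrable in every defining seminorm of $C_{\tr}^k(\R)$, and since that space is complete the improper Riemann integral $F := \int_{\R} e^{2\pi i sx}\,\widehat\phi(s)\,ds$ converges there. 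Because each differentiation operator $\partial^\ell \colon C_{\tr}^k(\R) \to C_{\tr}(\R,\mathscr{M}^\ell)$ is continuous and linear, it passes under the integral, giving $\partial^\ell F = \int_{\R}\partial^\ell[e^{2\pi i sx}]\,\widehat\phi(s)\,ds$ and hence $\norm{\partial^\ell F}_{BC_{\tr}(\R,\mathscr{M}^\ell)} \le \int_{\R}|(2\pi i s)^\ell\widehat\phi(s)|\,ds$, which is finite and independent of $R$, so $F \in BC_{\tr}^k(\R)$ with exactly the asserted bounds. Finally $F = \phi(x)$: evaluating at a self-adjoint $X$, a continuous linear operation $C_{\tr}(\R) \to \cA$, yields $F^{\cA,\tau}(X) = \int_{\R} e^{2\pi i sX}\,\widehat\phi(s)\,ds = \phi(X)$ by the spectral theorem and the Fourier inversion formula (valid since $\widehat\phi \in L^1(\R)$).

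For (2), fix $R > 0$ and pick $\chi \in C_c^\infty(\R)$ with $\chi \equiv 1$ on $[-R-1,R+1]$. Then $\chi\phi \in C_c^{k+2}(\R)$, so $(\chi\phi)^{(m)} \in L^1(\R)$ and $(2\pi i s)^m\widehat{\chi\phi}(s) = \widehat{(\chi\phi)^{(m)}}(s)$ is bounded for every $m \le k+2$; applying this with $m = 0$ for $|s| \le 1$ and with $m = k+2$ for $|s| \ge 1$ shows $\int_{\R}(1 + |s|^k)|\widehat{\chi\phi}(s)|\,ds < \infty$, so part (1) gives $(\chi\phi)(x) \in BC_{\tr}^k(\R)$. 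On the other hand, for any $(\cA,\tau) \in \mathbb{W}$ and $X \in \cA_{\sa}$ with $\norm{X} \le R$, and for $|t_1|,\dots,|t_\ell|$ small enough (depending on $Y_1,\dots,Y_\ell$), the operator $X + \sum_j t_j Y_j$ has spectrum in $[-R-1,R+1]$, where $\phi$ and $\chi\phi$ agree; hence the iterated directional derivatives of $\phi(x)$ up to order $k$ exist at such $X$ and equal those of $(\chi\phi)(x)$. Thus $\partial^\ell[\phi(x)]$ restricted to the $\norm{\cdot}_\infty$-ball of radius $R$ coincides with $\partial^\ell[(\chi\phi)(x)]$ there, which is approximable in $\norm{\cdot}_{\mathscr{M}^\ell,\tr,R}$ by trace polynomials; since $R$ is arbitrary, $\partial^\ell[\phi(x)] \in C_{\tr}(\R,\mathscr{M}^\ell)$ for all $\ell \le k$, i.e.\ $\phi(x) \in C_{\tr}^k(\R)$.

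The main obstacle is the set-up in the first step of (1): one must work in the Fr\'echet topology of $C_{\tr}^k(\R)$ rather than in $BC_{\tr}^k(\R)$ (in which $s \mapsto e^{2\pi i sx}$ is \emph{not} continuous, since $\norm{X}$ is unbounded), while still extracting from \eqref{eq:exponentialderivativebound} the $R$-independent bound that places $\phi(x)$ in $BC_{\tr}^k(\R)$. Once the integral representation is legitimate, commuting $\partial^\ell$ with the integral is automatic and everything else---including the Fourier-transform estimates in (2)---is routine.
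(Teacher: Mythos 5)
Your proof is correct and follows essentially the same route as the paper's: for (1), representing $\phi(x)$ as an improper Riemann integral $\int_{\R} e^{2\pi i sx}\widehat{\phi}(s)\,ds$ in the Fr\'echet space using \eqref{eq:exponentialderivativebound} and the continuity of $s \mapsto e^{2\pi i sx}$, then identifying it via the spectral theorem and passing $\partial^\ell$ under the integral; for (2), cutting off to a compactly supported $C^{k+2}$ function, checking the Fourier decay, and invoking locality on each operator-norm ball. Your handling of the slightly enlarged interval $[-R-1,R+1]$ in (2) to justify agreement of the directional derivatives is a small extra precision over the paper's wording, but the argument is the same.
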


\begin{proof}
	(1) In light of \eqref{eq:exponentialderivativebound}, we have for every $R > 0$ and $\ell \leq k$ that
	\[
	\norm{\partial^\ell(e^{2\pi i sx})}_{C_{\tr}(\R),R} \leq |2\pi s|^\ell.
	\]
	Moreover, the map $s \mapsto \partial^k[e^{2\pi i sx}]$ from $\R$ to $C_{\tr}(\R,\mathscr{M}^\ell)$ is continuous by continuity of composition in Lemma \ref{lem:composition}.  Moreover, $\widehat{\phi}$ is continuous.  Thus, the improper Riemann integral
	\[
	\int_{\R} \partial^\ell[e^{2\pi i s x}] \widehat{\phi}(s)\,ds = \lim_{S \to \infty} \int_{-S}^S \partial^\ell[e^{2\pi i s x} \widehat{\phi}(s)\,ds
	\]
	is well-defined in $C_{\tr}(\R,\mathscr{M}^\ell)$ for each $\ell \leq k$.  Or equivalently, the improper Riemann integral $\int_{\R} e^{2\pi i s x} \widehat{\phi}(s)\,ds$ is well-defined in $C_{\tr}^k(\R)$.  By evaluating this on any self-adjoint operator $X$ and using the spectral decomposition of $X$, we see that $\phi(x) = \int_{\R} e^{2 \pi i s x} \widehat{\phi}(s)\,ds$ in $C_{\tr}(\R)$.  Therefore, $\phi \in C_{\tr}^k(\R)$.  Also,
	\[
	\partial^\ell[\phi(x)] = \int_{\R} \partial^\ell[e^{2\pi i s x}] \widehat{\phi}(s)\,ds,
	\]
	so that $\norm{\partial^\ell[\phi(x)]}_{C_{\tr}(\R,\mathscr{M}^\ell),R} \leq \int_{\R} |(2\pi i s)^k \widehat{\phi}(s)|\,ds$ for all $R$, which implies that $\phi \in BC_{\tr}^k(\R)$.
	
	(2) Since the definition of $C_{\tr}^k(\R)$ requires approximation of $\phi(x)$ and its derivatives on each operator norm ball, it suffices to show that $\phi(x)$ agrees with a $C_{\tr}^k(\R)$ function on each operator norm ball.  Fix $R$, and let $\psi \in C_c^{k+2}(\R)$ such that $\psi|_{[-R,R]} = \phi|_{[-R,R]}$.  Clearly, $\psi(x)$ agrees with $\phi(x)$ on the operator norm ball of radius $R$.  Note that $s^\ell \widehat{\psi}(s)$ is bounded for $\ell \leq k+2$.  In particular, $(1 + |s|^k) |\widehat{\psi}(s)|$ is bounded by a constant times $1 / (1 + s^2)$, and hence it is integrable.  Thus, (1) shows that $\psi \in C_{\tr}^k(\R)$ as required.
\end{proof}

The following is a technical variant of the previous proposition which we will use later in the proof of Theorem \ref{thm:magicnormbound}.  The point is that we can control $\partial \phi(x)$ with only information about $\widehat{\phi'}$ and not $\widehat{\phi}$.

\begin{lemma} \label{lem:Fouriertransformmisc}
	Suppose that $\phi \in C^1(\R)$ with polynomial growth at $\infty$.  If $s \widehat{\phi}(s)$ is in $C(\R) \cap L^1(\R)$, then $\phi(x) \in C_{\tr}^1(\R)$ with $\partial \phi(x) \in BC_{\tr}(\R,\mathscr{M}(\R^{*1}))$.
\end{lemma}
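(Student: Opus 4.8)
The plan is to adapt the Fourier-decomposition argument behind Proposition~\ref{prop:smoothfunctionalcalculus}, but to decompose $\phi$ using the functions $\frac{e^{2\pi i sx}-1}{2\pi i s}$ rather than $e^{2\pi i sx}$, so that the only Fourier-analytic input is the hypothesis $\widehat{\phi'} = 2\pi i s\,\widehat{\phi} \in C(\R)\cap L^1(\R)$, and $\widehat{\phi}$ itself (which may be singular at $0$) never has to be integrable.

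First I would record the properties of these building blocks. For $s\neq 0$ set $u_s(x) := \frac{e^{2\pi i sx}-1}{2\pi i s}$, and $u_0(x) := x$; then $u_s \in C_{\tr}^\infty(\R)$ for every $s$ (a linear combination of $e^{2\pi i sx}$ and $1$, both in $BC_{\tr}^\infty(\R)$, when $s\neq 0$; the monomial $x$ when $s=0$). Dividing the $k=1$ case of \eqref{eq:exponentialderivatives} by $2\pi i s$ gives
\[
\partial u_s[y] = \int_0^1 e^{2\pi i stx}\, y\, e^{2\pi i s(1-t)x}\,dt =: K_s[y],
\]
which also holds at $s=0$ since then both sides equal $y$. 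Because each $e^{2\pi i stX}$ is unitary, the non-commutative H\"older inequality (Lemma~\ref{lem:NCHolder}) gives $\norm{K_s[Y]}_\alpha \le \norm{Y}_\alpha$ for all $\alpha$, hence $\norm{K_s}_{BC_{\tr}(\R,\mathscr{M}^1)} \le 1$ uniformly in $s$; and $s\mapsto K_s$ is continuous from $\R$ into $C_{\tr}(\R,\mathscr{M}^1)$, using $\norm{e^{itX}-e^{it'X}}_\infty \le |t-t'|R$ on the $R$-ball and continuity of multiplication (Lemma~\ref{lem:composition}). Similarly $s\mapsto u_s$ is continuous into $C_{\tr}(\R)$, with $\norm{u_s}_{C_{\tr}(\R),R}\le\min(R,\tfrac{1}{\pi|s|})$.

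Next, Fourier inversion recovers $\phi$. Since $\widehat{\phi'}\in L^1(\R)$ and $\phi\in C^1$, inversion gives $\phi'(r)=\int_\R e^{2\pi i rs}\widehat{\phi'}(s)\,ds$ for all $r$, and integrating from $0$ to $t$ and applying Fubini (valid as $\int_0^t\!\int_\R|\widehat{\phi'}(s)|\,ds\,dr=|t|\,\norm{\widehat{\phi'}}_{L^1}<\infty$) yields $\phi(t)=\phi(0)+\int_\R u_s(t)\,\widehat{\phi'}(s)\,ds$. I would then define
\[
F := \phi(0)\,1 + \int_\R u_s\,\widehat{\phi'}(s)\,ds, \qquad g := \int_\R K_s\,\widehat{\phi'}(s)\,ds,
\]
as improper Riemann integrals valued in the Fr\'echet spaces $C_{\tr}(\R)$ and $C_{\tr}(\R,\mathscr{M}^1)$ respectively, the integrands being continuous in $s$ (as $\widehat{\phi'}$ is continuous). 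The integral for $g$ converges absolutely with $\norm{g}_{BC_{\tr}(\R,\mathscr{M}^1)}\le\norm{\widehat{\phi'}}_{L^1}$, so $g\in BC_{\tr}(\R,\mathscr{M}^1)$; the integral for $F$ converges on each $R$-ball since $\norm{u_s\widehat{\phi'}(s)}_{C_{\tr}(\R),R}\le R\,|\widehat{\phi'}(s)|\in L^1$. Evaluating $F$ on a self-adjoint $X$ and invoking the spectral theorem together with the pointwise formula for $\phi(t)$ shows $F^{\cA,\tau}(X)=\phi(X)$, so $F=\phi(x)$ in $C_{\tr}(\R)$ (recall continuity of $\phi$ already places $\phi(x)$ in $C_{\tr}(\R)$).

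Finally I would promote this to a $C_{\tr}^1$ statement. The truncations $F_S:=\phi(0)1+\int_{-S}^S u_s\widehat{\phi'}(s)\,ds$ are Riemann integrals over compact intervals of continuous $C_{\tr}^1(\R)$-valued functions, hence lie in $C_{\tr}^1(\R)$ with $\partial F_S=\int_{-S}^S K_s\widehat{\phi'}(s)\,ds$; by the dominations above (by $R|\widehat{\phi'}|$ and by $|\widehat{\phi'}|$) and dominated convergence, $F_S\to F$ in $C_{\tr}(\R)$ and $\partial F_S\to g$ in $C_{\tr}(\R,\mathscr{M}^1)$. Passing to the limit $S\to\infty$ in $F_S^{\cA,\tau}(X+tY)-F_S^{\cA,\tau}(X)=\int_0^t(\partial F_S)^{\cA,\tau}(X+rY)[Y]\,dr$ (the left side converging in $\norm{\cdot}_\infty$, the integrand on the right uniformly in $r\in[0,t]$ on each ball) gives $\phi(x)^{\cA,\tau}(X+tY)-\phi(x)^{\cA,\tau}(X)=\int_0^t g^{\cA,\tau}(X+rY)[Y]\,dr$; since $g\in C_{\tr}(\R,\mathscr{M}^1)$ the map $g^{\cA,\tau}$ is continuous, so differentiating at $t=0$ shows every directional derivative of $\phi(x)^{\cA,\tau}$ exists and equals $g^{\cA,\tau}$, whence $\phi(x)\in C_{\tr}^1(\R)$ with $\partial\phi(x)=g\in BC_{\tr}(\R,\mathscr{M}^1)$. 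The main obstacle is the bookkeeping for differentiating under the improper integral sign — making the truncated integrals and their derivatives converge in the correct Fr\'echet topologies and checking that the $C_{\tr}^1$ structure survives the limit — which is exactly where the uniform bound $\norm{K_s}_{BC_{\tr}(\R,\mathscr{M}^1)}\le 1$ and the two $L^1(\R)$ dominations do the work; the rest is routine.
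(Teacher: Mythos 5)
Your proof is correct, and it takes a genuinely different route from the paper's. The paper regularizes the singularity of $\widehat{\phi}$ at $s=0$ with a Gaussian multiplier: it sets $\widehat{\phi_R}(s)=(1-e^{-Rs^2})\widehat{\phi}(s)$, which lies in $C(\R)\cap L^1(\R)$ because $(1-e^{-Rs^2})/s$ is bounded, in effect applies Proposition \ref{prop:smoothfunctionalcalculus}(1) to each $\phi_R$, and then argues that $\phi_R(x)-\phi_R(0)+\phi(0)\to\phi(x)$ in $C_{\tr}(\R)$ while $\partial\phi_R(x)$ is Cauchy in $BC_{\tr}(\R,\mathscr{M}(\R^{*1}))$ (since $2\pi i s\,\widehat{\phi_R}\to 2\pi i s\,\widehat{\phi}$ in $L^1$), so the limit is the Fr\'echet derivative of $\phi(x)$. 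You instead absorb the factor $1/s$ into the building blocks themselves, writing $\phi-\phi(0)$ as an integral of $u_s=(e^{2\pi i sx}-1)/(2\pi i s)$ against $\widehat{\phi'}(s)\,ds$ and differentiating term by term via the $k=1$ case of \eqref{eq:exponentialderivatives}; the uniform bound $\norm{\partial u_s}_{BC_{\tr}(\R,\mathscr{M}^1)}\le 1$ together with $\widehat{\phi'}\in L^1(\R)$ makes the truncated integrals Cauchy in $C_{\tr}^1(\R)$, and your fundamental-theorem-of-calculus limit argument is a correct (indeed more detailed) version of the paper's closing assertion that a uniform limit of derivatives is the derivative of the uniform limit. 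Both routes rest on the same Fourier-analytic input --- the tempered-distribution identity $\widehat{\phi'}(s)=2\pi i s\,\widehat{\phi}(s)$ and inversion for the $L^1$ function $\widehat{\phi'}$, which identifies $\phi'$ pointwise since it is continuous --- and on the exponential derivative bound \eqref{eq:exponentialderivativebound}. What your route buys is an explicit divided-difference representation
\[
\partial[\phi(x)][y]=\int_{\R}\Bigl(\int_0^1 e^{2\pi i stx}\,y\,e^{2\pi i s(1-t)x}\,dt\Bigr)\widehat{\phi'}(s)\,ds,
\]
hence the clean bound $\norm{\partial\phi(x)}_{BC_{\tr}(\R,\mathscr{M}^1)}\le\norm{\widehat{\phi'}}_{L^1(\R)}$, with no auxiliary cutoff; the paper's route is shorter because it reuses the estimate of Proposition \ref{prop:smoothfunctionalcalculus}(1) as a black box. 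The only points worth spelling out slightly more in your write-up are the continuity of $s\mapsto u_s$ and $s\mapsto K_s$ at $s=0$ (which does hold, e.g.\ by the power-series estimate for $e^{2\pi i sX}$), so that the Fr\'echet-space Riemann integrals over $[-S,S]$ are well defined, and the justification that Fourier inversion applies to $\phi'$ (via the tempered-distribution argument just mentioned), which is the same implicit step the paper takes.
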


\begin{proof}
	Note that for any $R > 0$, $(1 - e^{-Rs^2}) \widehat{\phi}(s)$ is in $C(\R) \cap L^1(\R)$.  Thus, we may define
	\[
	\phi_R(t) = \int_{\R} e^{2\pi i ts} (1 - e^{-Rs^2}) \widehat{\phi}(s)\,ds.
	\]
	Thus, $\widehat{\phi_R}(s) = (1 - e^{-Rs^2}) \widehat{\phi}(s)$ and $\widehat{\phi_R'}(s) = 2\pi i s (1 - e^{-Rs^2}) \widehat{\phi}(s)$.  Because $2\pi i s \widehat{\phi}(s)$ is in $L^1(\R) \cap C(\R)$, we have $2 \pi i s (1 - e^{-Rs^2}) \widehat{\phi}(s) \to 2\pi i s \widehat{\phi}(s)$ in $L^1(\R)$ as $R \to \infty$.  In particular, it follows that $\phi_R' \to \phi'$ uniformly, hence $\phi_R - \phi_R(0) \to \phi - \phi(0)$ uniformly on compact sets, and so $\phi_R(x) - \phi_R(0) + \phi(0) \to \phi(x)$ in $C_{\tr}(\R)$.  Now because $2\pi i s \widehat{\phi}_R(s) \to 2\pi i s \widehat{\phi}(s)$ in $L^1(\R)$, we see in particular that $2\pi i s \widehat{\phi}_R(s)$ is Cauchy in $L^1(\R)$ as $R \to \infty$, and hence $\partial \phi_R(x)$ is Cauchy in $BC_{\tr}(\R,\mathscr{M}(\R^{*1}))$ as $R \to \infty$, and thus converges to some limit.  The limit must give the Fr\'echet derivative of $\phi(x)$ and hence $\phi \in C_{\tr}^1(\R)$ and $\partial \phi \in BC_{\tr}(\R)$.
\end{proof}

\subsection{The gradient, divergence, and Laplacian}

A function $f \in \tr(C_{\tr}^1(\R^{*d}))$ defines for each $(\cA,\tau) \in \mathbb{W}$ a map $\cA_{\sa}^d \to \C$.  Since $\cA_{\sa}^d$ is contained in the Hilbert space $L^2(\cA,\tau)_{\sa}^d$, it makes sense at least formally to speak of the gradient of $f$.  In fact, taking $\cA = M_N(\C)$ with its canonical trace $\tr_N$, we obtain a $C^1$ function $f^{M_N(\C),\tr_N}: M_N(\C)_{\sa}^d \to \C$, which certainly has a gradient with respect to the inner product coming from $\tr_N$.  The rigorous construction of the gradient in fact makes sense for $f \in \tr(\C_{\tr}^1(\R^{*d},\mathscr{M}(\R^{*d_1},\dots,\R^{*d_\ell})))$.  We start with an auxiliary technical lemma.

\begin{lemma} \label{lem:duality}
	There is a Fr\'echet-space isomorphism
	\[
	\Phi: \tr(C_{\tr}(\R^{*d},\mathscr{M}(\R^{*d_1},\dots,\R^{*d_\ell},\R^{*d}))) \to C_{\tr}(\R^{*d},\mathscr{M}(\R^{*d_1},\dots,\R^{*d_\ell}))^d
	\]
	such that $\Phi(g)$ is the unique element satisfying
	\begin{equation} \label{eq:duality1}
		g^{\cA,\tau}(\mathbf{X})[\mathbf{Y}_1,\dots,\mathbf{Y}_\ell,\mathbf{Y}] = \ip{\mathbf{Y},  \Phi(g)^{\cA,\tau}(\mathbf{X})[\mathbf{Y}_1,\dots,\mathbf{Y}_\ell] }_{\tau}.
	\end{equation}
	Furthermore, we have
	\begin{equation} \label{eq:duality2}
		\norm{\Phi(g)}_{C_{\tr}(\R^{*d},\mathscr{M}^\ell),R} \leq \norm{g}_{C_{\tr}(\R^{*d},\mathscr{M}^{\ell+1}),\R} \leq d \norm{\Phi(g)}_{C_{\tr}(\R^{*d},\mathscr{M}^\ell),R}
	\end{equation}
	Finally, for $k \in \N_0 \cup \{\infty\}$, $\Phi$ maps $\tr(C_{\tr}^k(\R^{*d},\mathscr{M}(\R^{*d_1},\dots,\R^{*d_\ell},\R^{*d}))$ isomorphically (as Fr\'echet spaces) onto $C_{\tr}^k(\R^{*d},\mathscr{M}(\R^{*d_1},\dots,\R^{*d_\ell})^d$, and it satisfies
	\begin{equation} \label{eq:duality3}
		\partial^{k'}(\Phi(g)) = \Phi((\partial^{k'} g)_\sigma) \text{ for } k' \leq k, 
	\end{equation}
	where $\sigma$ is the permutation of $\{1,\dots,\ell+1+k'\}$ that moves $\ell + 1$ to the last position and leaves the other indices in the same order.
\end{lemma}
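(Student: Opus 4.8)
The plan is to construct $\Phi$ first on trace polynomials, extend it by density, and then deal with the $C^k$ statement separately. A scalar trace polynomial $g \in \tr(\TrP(\R^{*d},\mathscr{M}(\R^{*d_1},\dots,\R^{*d_\ell},\R^{*d})))$ is a combination of terms $\tr(p_1)\cdots\tr(p_n)$ in which each coordinate $y_{\ell+1,m}$ of the last tuple occurs exactly once, say inside $\tr(p_i)$ with $p_i = a\,y_{\ell+1,m}\,b$ for monomials $a,b$ in the remaining variables; I set the $m$-th coordinate of $\Phi$ of that term to be $b a \prod_{i'\neq i}\tr(p_{i'})$. Cyclic invariance of $\tr$ makes this independent of how $p_i$ is written, so $\Phi$ is a well-defined linear map into $\TrP(\R^{*d},\mathscr{M}(\R^{*d_1},\dots,\R^{*d_\ell}))^d$, and \eqref{eq:duality1} holds for trace polynomials immediately from traciality, $\tau(aY_mb)=\tau((ba)Y_m)$. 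The same computation run backwards shows $\Phi$ is a bijection on trace polynomials, with inverse $\mathbf{h}\mapsto\tr(\sum_m y_{\ell+1,m}h_m)$ up to reindexing the multilinear slots. For \eqref{eq:duality2}, the $L^\alpha$–$L^{\alpha'}$ duality of the bilinear trace pairing on $\cA^d$ ($1/\alpha+1/\alpha'=1$) gives, for $1/\alpha = 1/\alpha_1+\dots+1/\alpha_\ell$,
\[
\norm{\Phi(g)^{\cA,\tau}(\mathbf{X})[\mathbf{Y}_1,\dots,\mathbf{Y}_\ell]}_\alpha = \sup_{\norm{\mathbf{Y}}_{\alpha'}\le 1}\bigl|g^{\cA,\tau}(\mathbf{X})[\mathbf{Y}_1,\dots,\mathbf{Y}_\ell,\mathbf{Y}]\bigr| \le \norm{g}_{C_{\tr}(\R^{*d},\mathscr{M}^{\ell+1}),R},
\]
since $1/\alpha_1+\dots+1/\alpha_\ell+1/\alpha'=1$ is an admissible exponent tuple and $g$ is scalar-valued; conversely, expanding $\ip{\mathbf{Y},\Phi(g)^{\cA,\tau}(\mathbf{X})[\cdots]}_\tau=\sum_m\tau(Y_m(\Phi(g))_m)$ and applying the non-commutative H\"older inequality (Lemma \ref{lem:NCHolder}) coordinatewise, together with a power-mean estimate for the sum over $m$, produces the factor $d$.

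Next I would pass to the $C_{\tr}$ level by density: scalar trace polynomials are dense in $\tr(C_{\tr}(\R^{*d},\mathscr{M}^{\ell+1}))$ and trace polynomials are dense in $C_{\tr}(\R^{*d},\mathscr{M}^\ell)^d$, and by \eqref{eq:duality2} the map $\Phi$ is bi-Lipschitz for each seminorm $\norm{\cdot}_{\cdot,R}$, so it extends uniquely to a Fr\'echet isomorphism between the two completions, which are precisely the stated spaces. Identity \eqref{eq:duality1} persists: for fixed $(\cA,\tau)$, $\mathbf{X}$, $\mathbf{Y}_1,\dots,\mathbf{Y}_\ell,\mathbf{Y}$ both sides are continuous in $g$ (evaluation on a fixed operator-norm ball and pairing with a fixed $\mathbf{Y}$ are continuous for the relevant seminorm). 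Uniqueness of the element satisfying \eqref{eq:duality1} follows from faithfulness of $\tau$: if $\mathbf{h},\mathbf{h}'$ both work, then $\ip{\mathbf{Y},(\mathbf{h}-\mathbf{h}')^{\cA,\tau}(\mathbf{X})[\cdots]}_\tau=0$ for all $\mathbf{Y}\in\cA^d$, and choosing $\mathbf{Y}$ to be the coordinatewise adjoint of $(\mathbf{h}-\mathbf{h}')^{\cA,\tau}(\mathbf{X})[\cdots]$ forces that element to vanish.

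For the $C^k$ statement and \eqref{eq:duality3}, the candidate is $\partial^{k'}\Phi(g)=\Phi((\partial^{k'}g)_\sigma)$, where $\sigma$ is the permutation of $\{1,\dots,\ell+1+k'\}$ moving $\ell+1$ (the slot $\Phi$ contracts) to the last position and leaving the others in order; note $\partial^{k'}g$ is again scalar-valued, hence in the domain of $\Phi$. I would prove the case $k'=1$ directly. For fixed $\mathbf{X},\mathbf{Y}_1,\dots,\mathbf{Y}_\ell,\mathbf{Y},\mathbf{Z}$ with all $\mathbf{Y}_j$ and $\mathbf{Z}$ of operator norm $\le 1$ and $\norm{\mathbf{Y}}_1\le 1$, Taylor's theorem with integral remainder applied to $s\mapsto g^{\cA,\tau}(\mathbf{X}+s\mathbf{Z})[\mathbf{Y}_\bullet,\mathbf{Y}]$ gives
\[
\frac1t\bigl(g^{\cA,\tau}(\mathbf{X}+t\mathbf{Z})[\mathbf{Y}_\bullet,\mathbf{Y}]-g^{\cA,\tau}(\mathbf{X})[\mathbf{Y}_\bullet,\mathbf{Y}]\bigr)-(\partial g)^{\cA,\tau}(\mathbf{X})[\mathbf{Y}_\bullet,\mathbf{Y},\mathbf{Z}]=\frac1t\int_0^t\bigl((\partial g)^{\cA,\tau}(\mathbf{X}+s\mathbf{Z})-(\partial g)^{\cA,\tau}(\mathbf{X})\bigr)[\mathbf{Y}_\bullet,\mathbf{Y},\mathbf{Z}]\,ds,
\]
whose absolute value is at most $\sup_{|s|\le t}\norm{(\partial g)^{\cA,\tau}(\mathbf{X}+s\mathbf{Z})-(\partial g)^{\cA,\tau}(\mathbf{X})}_{\mathscr{M}^{\ell+2},\tr}$ because the exponent tuple $(\infty,\dots,\infty,1,\infty)$ (operator norm on each $\mathbf{Y}_j$ and on $\mathbf{Z}$, and $L^1$ on $\mathbf{Y}$) sums to $1$ and so is admissible; by Lemma \ref{lem:Ctrcontinuity} this tends to $0$ as $t\to0$, uniformly over $(\cA,\tau)$ and over $\mathbf{X}$ in a ball. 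Rewriting the left side via \eqref{eq:duality1} as $\ip{\mathbf{Y},\,\frac1t(\Phi(g)^{\cA,\tau}(\mathbf{X}+t\mathbf{Z})[\mathbf{Y}_\bullet]-\Phi(g)^{\cA,\tau}(\mathbf{X})[\mathbf{Y}_\bullet])-\Phi((\partial g)_\sigma)^{\cA,\tau}(\mathbf{X})[\mathbf{Y}_\bullet,\mathbf{Z}]}_\tau$ and taking $\sup$ over $\norm{\mathbf{Y}}_1\le 1$ (which recovers the operator norm of the bracketed difference) and then over $\norm{\mathbf{Y}_j}_\infty\le 1$ shows the directional derivative of $\Phi(g)^{\cA,\tau}$ exists in the norm $\norm{\cdot}_{\infty;\infty,\dots,\infty}$ relevant to Definition \ref{def:traceCk} and equals the $C_{\tr}$ function $\Phi((\partial g)_\sigma)$. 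So $\Phi(g)\in C^1_{\tr}$ with $\partial\Phi(g)=\Phi((\partial g)_\sigma)$; since $\partial g\in\tr(C^{k-1}_{\tr})$, induction on $k$ (applied to $\partial g$, tracking how $\sigma$ grows) yields $\Phi(g)\in C^k_{\tr}$ and \eqref{eq:duality3}, and running the same argument through $\Phi^{-1}$ plus applying \eqref{eq:duality2} to each $\partial^{k'}$ shows $\Phi$ restricts to a Fr\'echet isomorphism of the $C^k$ spaces (the permutation action being isometric).

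The one genuinely non-bookkeeping point — the expected main obstacle — is precisely this last step: \eqref{eq:duality1} only gives that every scalar map $\mathbf{X}\mapsto\ip{\mathbf{Y},\Phi(g)^{\cA,\tau}(\mathbf{X})[\cdots]}_\tau$ is differentiable (weak differentiability), and one must upgrade this to norm-differentiability of the $\cA^d$-valued map $\mathbf{X}\mapsto\Phi(g)^{\cA,\tau}(\mathbf{X})[\cdots]$ demanded by Definition \ref{def:traceCk}. The Taylor-remainder estimate above does exactly this, and it is essential that the target norm in Definition \ref{def:traceCk} is $\norm{\cdot}_{\infty;\infty,\dots,\infty}$, so that assigning $\mathbf{Y}$ the $L^1$-exponent keeps the total H\"older budget equal to $1$ and lets Lemma \ref{lem:Ctrcontinuity} apply. (That $\Phi(g)^{\cA,\tau}(\mathbf{X})[\cdots]$ lies in $\cA^d$ rather than merely in $L^2$ is automatic, since $\Phi(g)$ was already built as an element of $C_{\tr}(\R^{*d},\mathscr{M}^\ell)^d$.)
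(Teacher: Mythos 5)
Your proposal is correct and follows essentially the same route as the paper: define $\Phi$ on trace polynomials by the cyclic rearrangement, obtain \eqref{eq:duality2} from $L^\alpha$--$L^\beta$ duality of the trace pairing (with the factor $d$ from comparing tuple norms at different exponents), extend by density, and read off \eqref{eq:duality3} from the characterization \eqref{eq:duality1}. The only difference is that where the paper simply says \eqref{eq:duality3} is ``checked directly,'' you supply the verification explicitly --- the Taylor-remainder estimate with $\mathbf{Y}$ in the $L^1$ unit ball (so the exponent tuple $(\infty,\dots,\infty,1,\infty)$ is admissible) together with Lemma \ref{lem:Ctrcontinuity} --- which is exactly the argument the paper leaves implicit.
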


\begin{proof}
	Consider a trace polynomial $g$ in $C_{\tr}(\R^{*d},\mathscr{M}(\R^{*d_1},\dots,\R^{*d_\ell}))$ that is expressed as a product of monomials
	\[
	\tau(g_1(\mathbf{x},\mathbf{y}_1,\dots,\mathbf{y}_\ell)) \dots \tau(g_k(\mathbf{x},\mathbf{y}_1,\dots,\mathbf{y}_\ell)) \tau(h_1(\mathbf{x},\mathbf{y}_1,\dots,\mathbf{y}_\ell) y_i h_2(\mathbf{x},\mathbf{y}_1,\dots,\mathbf{y}_\ell)),
	\]
	such that the overall expression is multilinear in $\mathbf{y}_1$, \dots, $\mathbf{y}_\ell$, $\mathbf{y}$, where $\mathbf{y} = (y_1,\dots,y_d)$.  Then set
	\[
	\Phi(g) = (\underbrace{0,\dots,0}_{i-1}, h_2 h_1, \underbrace{0,\dots,0}_{d-i}).
	\]
	Straightforward computation checks that $\Phi(g)$ satisfies \eqref{eq:duality1}.  The map $\Phi$ extends to all trace polynomials by linearity.
	
	Next, we must be pass to the completion $C_{\tr}(\R^{*d},\mathscr{M}(\R^{*d_1},\dots,\R^{*d_\ell},\R^{*d}))$.  To this end, we first show \eqref{eq:duality2} in the special case where $g$ is a trace polynomial.  Let $(\cA,\tau) \in \mathbb{W}$, let $\mathbf{X} \in \cA_{\sa}^d$ with $\norm{\mathbf{X}}_\infty \leq R$, let $\alpha$, $\alpha_1$, \dots, $\alpha_\ell \in [1,\infty]$ with $1/\alpha = 1 / \alpha_1 + \dots + 1 / \alpha_\ell$, and let $\mathbf{Y}_j \in \cA^{d_j}$ with $\norm{\mathbf{Y}_j}_{\alpha_j} \leq 1$.  Let $1/\alpha + 1/\beta = 1$, and let $\mathbf{Y} \in \cA^d$ with $\norm{\mathbf{Y}}_\beta \leq 1$.  Then
	\[
	| \ip{\mathbf{Y}, \Phi(g)^{\cA,\tau}(\mathbf{X})[\mathbf{Y}_1,\dots,\mathbf{Y}_\ell]}_\tau | = |g^{\cA,\tau}(\mathbf{X})[\mathbf{Y}_1,\dots,\mathbf{Y}_\ell,\mathbf{Y}]| \leq \norm{g}_{C_{\tr}(\R^{*d},\mathscr{M}^{\ell+1}),R}.
	\]
	Since $\mathbf{Y}$ was arbitrary with $\norm{\mathbf{Y}}_\beta \leq 1$, we have
	\[
	\norm{\Phi(g)^{\cA,\tau}(\mathbf{X})[\mathbf{Y}_1,\dots,\mathbf{Y}_\ell]}_\alpha \leq \norm{g}_{C_{\tr}(\R^{*d},\mathscr{M}^{\ell+1}),R}.
	\]
	Then taking the supremum over $\mathbf{X}$, $\mathbf{Y}_1$, \dots, $\mathbf{Y}_\ell$ and $\alpha$, $\alpha_1$, \dots, $\alpha_\ell$ satisfying the conditions given above, and over $(\cA,\tau) \in \mathbb{W}$, we obtain
	\[
	\norm{\Phi(g)}_{C_{\tr}(\R^{*d},\mathscr{M}^\ell)^d,R} \leq \norm{g}_{C_{\tr}(\R^{*d},\mathscr{M}^{\ell+1}),R}.
	\]
	Conversely, to estimate $g$ in terms of $\Phi(g)$, let $(\cA,\tau)$ and $\mathbf{X}$ be as above and consider $\alpha$, $\alpha_1$, \dots, $\alpha_\ell$, $\beta$ with $1/\alpha = 1/ \alpha_1 + \dots + 1 /\alpha_\ell + 1 / \beta$.  For $j = 1$, \dots, $\ell$, let $\mathbf{Y}_j \in \cA^{d_j}$ with $\norm{\mathbf{Y}_j}_{\alpha_j} \leq 1$ and let $\mathbf{Y} \in \cA^d$ with $\norm{\mathbf{Y}}_\beta \leq 1$.  Let $\beta'$ be such that $1 / \alpha_1 + \dots + 1 / \alpha_\ell + 1 / \beta' = 1$.  Then $\beta' \leq \beta$ and hence $\norm{\mathbf{Y}}_{\beta'} \leq d \norm{\mathbf{Y}}_\beta \leq d$.  Since $g^{\cA,\tau}(\mathbf{X})[\mathbf{Y}_1,\dots,\mathbf{Y}_\ell,\mathbf{Y}]$ is a scalar, its norm in $L^\alpha(\cA,\tau)$ is equal to its absolute value, hence
	\[
	|g^{\cA,\tau}(\mathbf{X})[\mathbf{Y}_1,\dots,\mathbf{Y}_\ell] | = | \ip{\mathbf{Y}, \Phi(g)^{\cA,\tau}(\mathbf{X})[\mathbf{Y}_1,\dots,\mathbf{Y}_\ell]}_\tau | \leq d \norm{\Phi(g)}_{C_{\tr}(\R^{*d},\mathscr{M}^\ell),R}.
	\]
	Hence, \eqref{eq:duality2} holds when $f$ is a trace polynomial. It follows that the map $\Phi$ extends to the unique map
	\[
	\tr(C_{\tr}(\R^{*d},\mathscr{M}(\R^{*d_1},\dots,\R^{*d_\ell},\R^{*d})) \to C_{\tr}(\R^{*d},\mathscr{M}(\R^{*d_1},\dots,\R^{*d_\ell})^d
	\]
	and that this map (still denoted by $\Phi$) is injective.  To see that $\Phi$ is surjective, let $\mathbf{h} \in C_{\tr}(\R^{*d},\mathscr{M}(\R^{*d_1},\dots,\R^{*d_\ell}))^d$.  Let $g \in \tr(C_{\tr}(\R^{*d},\mathscr{M}(\R^{*d_1},\dots,\R^{*d_\ell},\R^{*d})))$ be given by
	\[
	g^{\cA,\tau}(\mathbf{X})[\mathbf{Y}_1,\dots,\mathbf{Y}_\ell,\mathbf{Y}] = \ip{\mathbf{Y}, \mathbf{h}^{\cA,\tau}(\mathbf{X})[\mathbf{Y}_1,\dots,\mathbf{Y}_\ell]}_\tau.
	\]
	Then $\Phi(g) = \mathbf{h}$.  So $\Phi$ is a linear isomorphism.  Continuity of $\Phi$ and $\Phi^{-1}$ is clear from \eqref{eq:duality2}.
	
	Finally, one checks \eqref{eq:duality3} directly from the characterization \eqref{eq:duality1} of $\Phi$, and it follows that $\Phi$ maps $\tr(C_{\tr}^k(\R^{*d},\mathscr{M}(\R^{*d_1},\dots,\R^{*d_\ell},\R^{*d})))$ isomorphically onto $C_{\tr}^k(\R^{*d},\mathscr{M}(\R^{*d_1},\dots,\R^{*d_\ell}))^d$.
\end{proof}

\begin{definition} \label{def:gradient}
	For $f \in \tr(C^1(\R^{*d},\mathscr{M}(\R^{*d_1},\dots,\R^{*d_\ell}))$, we define $\nabla f := \Phi(\partial f)$, where $\Phi$ is the map in the previous lemma.  Equivalently, $\nabla f$ is characterized by the relation that for every $(\cA,\tau)$, for $\mathbf{X} \in \cA_{\sa}^d$, and $\mathbf{Y}_1 \in \cA_{\sa}^{d_1}$, \dots, $\mathbf{Y}_\ell \in \cA_{\sa}^{d_\ell}$, and $\mathbf{Y} \in \cA_{\sa}^d$, we have
	\[
	(\partial f)^{\cA,\tau}(\mathbf{X})[\mathbf{Y}_1,\dots,\mathbf{Y}_\ell,\mathbf{Y}] = \ip{\mathbf{Y},  \nabla f^{\cA,\tau}(\mathbf{X})[\mathbf{Y}_1,\dots,\mathbf{Y}_\ell] }_{\tau}.
	\]
\end{definition}

The previous lemma implies in particular that for each $R > 0$,
\begin{equation} \label{eq:gradientestimate}
	\norm{\nabla f}_{C_{\tr}(\R^{*d},\mathscr{M}^\ell),R}
	\leq \norm{\partial f}_{C_{\tr}(\R^{*d},\mathscr{M}^{\ell+1}),R} 
	\leq d \norm{\nabla f}_{C_{\tr}(\R^{*d},\mathscr{M}^\ell),R}.
\end{equation}
Also, for $k \in \N_0 \cup \{\infty\}$, we have $f \in \tr(C_{\tr}^{k+1}(\R^{*d},\mathscr{M}(\R^{*d_1},\dots,\R^{*d_\ell})))$ if and only if $\nabla f$ is in $C_{\tr}^k(\R^{*d},\mathscr{M}(\R^{*d_1},\dots,\R^{*d_\ell}))^d$.  Intuition for the gradient comes from the following special cases.

\begin{remark}
	Suppose that $f(x) = \tau(\phi(x))$ for some $C^1$ function $\phi: \R \to \C$.  Then we claim that $f \in \tr(C_{\tr}^1(\R^{*d}))$ and $\nabla f(x) = \phi'(x)$.  To prove this, first consider the case where $\phi(t) = t^n$.  Then
	\[
	\partial f^{\cA,\tau}(X)[Y] = \sum_{j=0}^{n-1} \tau(X^j Y X^{n-1-j}) = \tau(n X^{n-1} Y) = \tau(\phi'(X)Y),
	\]
	so that $\nabla f^{\cA,\tau}(X) = \phi'(X)$.  By linearity, the same holds whenever $\phi$ is a polynomial.  Finally, if $\phi$ is $C^1$, then there exist polynomials $\phi_N$ such that $\phi_N \to \phi$ and $\phi_N' \to \phi'$ uniformly on compact subsets of $\R$.  Hence, $\nabla[\tr(\phi_N(x))] = \phi_N'(x) \to \phi'(x)$ in $C_{\tr}(\R)$, which implies that $\partial[\tr(\phi_N(x))]$ converges in $C_{\tr}(\R,\mathscr{M}(\R))$.  The limit clearly gives $\partial[\tr(\phi(x))]$, hence $\nabla[\tr(\phi(x))] = \phi'(x)$ as desired.
\end{remark}

\begin{remark}
	Suppose that $f(x) = \tau(p(x))$ for some non-commutative polynomial $p$.  Then $\nabla f$ as defined in Definition \ref{def:gradient} is the same as the cyclic gradient of the non-commutative polynomial $p$ introduced by Voiculescu in \cite{VoiculescuFE5,Voiculescu2002cyclo,Voiculescu2004}.  For further explanation, see \cite{Cebron2013}, \cite[\S 3]{DHK2013}, \cite[\S 14.1]{JekelThesis}.
\end{remark}

Consider the matrix algebra $(M_N(\C),\tr_N)$. Recall that $M_N(\C)_{\sa}^d$ with the inner product coming from $\tr_N$ is a real inner-product space of dimension $dN^2$, and hence can be mapped by a linear isometry onto $\R^{dN^2}$.  Hence, the classical gradient, divergence, Jacobian, and Hessian all make sense for $M_N(\C)_{\sa}^d$.  If $f \in \tr(C_{\tr}^1(\R^{*d}))$, then $f^{M_N(\C),\tr_N}: M_N(\C)_{\sa}^d \to \C$ has its gradient given by $(\nabla f)^{M_N(\C),\tr_N}$.  Moreover, if $\mathbf{f} \in C_{\tr}^1(\R^{*d})^d$, then the Jacobian matrix of $\mathbf{f}^{M_N(\C),\tr_N}(\mathbf{X})$ corresponds to the linear transformation $(\partial \mathbf{f})^{M_N(\C),\tr_N}(\mathbf{X}): M_N(\C)_{\sa}^d \to M_N(\C)^d$.

It is natural to ask whether the \emph{divergence} also has an analog defined on $C_{\tr}(\R^{*d})^d$.  Recall that if $\mathbf{f}: \R^d \to \C^d$, then $\Div(\mathbf{f}) = \sum_{j=1}^d \partial_j f_j$.  The divergence is the trace of the Jacobian matrix $Df$ (that is, the Fr\'echet derivative).  Moreover, it can be expressed in probabilistic terms as follows.  Let $\mathbf{Z}$ be a standard Gaussian (random) vector in $\R^d$.  Then
\[
\Div(\mathbf{f})(\mathbf{x}) = \Tr(D\mathbf{f}(\mathbf{x})) = \E[\ip{\mathbf{Z},D\mathbf{f}(\mathbf{x}) \mathbf{Z}}].
\]
Now the analog of the standard Gaussian vector in free probability is a standard semicircular family $\mathbf{S} = (S_1,\dots,S_d)$, where the $S_j$'s are freely independent of each other and each $S_j$ has the spectral measure $(1/2\pi) \sqrt{4 - t^2} \mathbf{1}_{[-2,2]}(t)\,dt$.  Let $(\cB,\sigma)$ be the tracial $\mathrm{W}^*$-algebra generated by the standard semicircular family $\mathbf{S}$.  Then we want to define, for $\mathbf{f} \in C_{\tr}^1(\R^{*d})^d$,
\[
\Div(\mathbf{f})^{\cA,\tau}(\mathbf{X}) = \ip{\mathbf{S}, \partial \mathbf{f}^{\cA*\cB,\tau*\sigma}(\mathbf{X})[\mathbf{S}]}_{\tau * \sigma},
\]
where $(\cA * \cB, \tau * \sigma)$ denotes the $\mathrm{W}^*$-algebraic free product of $(\cA,\tau)$ and $(\cB,\sigma)$.  As in the case of the gradient, we will phrase the definition in greater generality to work with multilinear forms.  As in the study of the gradient, we begin with an auxiliary technical lemma.

\begin{lemma} \label{lem:tracelike}
	Let $\ell \in \N_0$ and $d$, $d'$, $d_1$, \dots, $d_\ell \in \N$.  Let $(\cB,\sigma)$ be the tracial $\mathrm{W}^*$-algebra generated by a standard semicircular family $\mathbf{S}$.
	\begin{enumerate}[(1)]
		\item There exists a unique continuous map
		\[
		\Upsilon: C_{\tr}(\R^{*d}, \mathscr{M}(\R^{*d_1}, \dots, \R^{*d_\ell}, \R^{*d}, \R^{*d}))^{d'} \to C_{\tr}(\R^{*d}, \mathscr{M}(\R^{*d_1},\dots,\R^{*d_\ell}))^{d'}
		\]
		satisfying
		\begin{equation} \label{eq:upsilondefinition}
			\Upsilon(f)^{\cA,\tau}(\mathbf{X})[\mathbf{Y}_1,\dots,\mathbf{Y}_\ell] = E_{\cA}[f^{\cA*\cB,\tau*\sigma}(\mathbf{X})[\mathbf{Y}_1,\dots,\mathbf{Y}_\ell,\mathbf{S},\mathbf{S}]],
		\end{equation}
		where $E_{\cA}: \cA * \cB \to \cA$ is the unique trace-preserving conditional expectation.
		\item We have
		\[
		\norm{\Upsilon(f)}_{C_{\tr}(\R^{*d},\mathscr{M}^\ell)^{d'},R} \leq \norm{f}_{C_{\tr}(\R^{*d},\mathscr{M}^{\ell+2})^{d'},R}.
		\]
		\item For $k \in \N_0 \cup \{\infty\}$, $\Upsilon$ maps $C_{\tr}^k(\R^{*d}, \mathscr{M}(\R^{*d_1}, \dots, \R^{*d_\ell}, \R^{*d}, \R^{*d}))$ into \newline $C_{\tr}^k(\R^{*d}, \mathscr{M}(\R^{*d_1},\dots,\R^{*d_\ell})$, and we have
		\[
		\partial^{k'}(\Upsilon(f)) = \Upsilon((\partial^{k'}f)_\pi) \text{ for } k' \leq k,
		\]
		where $\pi$ is the permutation of $\{1,\dots,\ell+k'+2\}$ that moves the elements $\ell + 1$ and $\ell + 2$ to the end and keeps the others in the same order.
	\end{enumerate}
\end{lemma}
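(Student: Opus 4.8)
\emph{Proof plan.} The plan is to follow the blueprint of Lemma~\ref{lem:duality}: define $\Upsilon$ first on trace polynomials by an explicit combinatorial formula, verify the norm bound~(2) there, and then extend by density and continuity, with the regularity statement~(3) coming essentially for free afterwards. First I would handle a trace polynomial $f$, which by linearity may be taken to be a single monomial $g_0\,\tr(g_1)\cdots\tr(g_m)$ in which each of the $\ell+2$ multilinear formal variables occurs exactly once. Fixing $(\cA,\tau)\in\mathbb{W}$, the element $f^{\cA*\cB,\tau*\sigma}(\mathbf{X})[\mathbf{Y}_1,\dots,\mathbf{Y}_\ell,\mathbf{S},\mathbf{S}]$ is a non-commutative polynomial in the free tuples $\mathbf{X},\mathbf{Y}_1,\dots,\mathbf{Y}_\ell$ (from $\cA$) and $\mathbf{S}$ (from $\cB$), so by Lemma~\ref{lem:freeCE} its image under $E_{\cA}$ is a polynomial in $\mathbf{X},\mathbf{Y}_1,\dots,\mathbf{Y}_\ell$ whose coefficients are universal (independent of $(\cA,\tau)$) polynomial expressions in traces of monomials in $\mathbf{X},\mathbf{Y}_j$ and in the moments of $\mathbf{S}$. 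Since $f$ is linear in each of the two $\mathbf{S}$-slots, the only moments of $\mathbf{S}$ that enter are $\sigma(S_i)=0$ and $\sigma(S_iS_j)=\delta_{ij}$, and the result is the ``middle contraction'' of $f$ in its last two slots: each monomial of the shape $\cdots S_i\,A\,S_j\cdots$ is replaced by $\delta_{ij}\,\tau(A)\,(\cdots 1\cdots)$, while configurations in which the two slots lie in different trace blocks contribute $0$. This is manifestly an element of $\TrP(\R^{*d},\mathscr{M}(\R^{*d_1},\dots,\R^{*d_\ell}))^{d'}$ obeying \eqref{eq:upsilondefinition}, and it defines $\Upsilon$ on all of $\TrP$.

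The crux is the bound~(2), where the key trick I would use is to replace the semicircular family in the two slots by a tuple of free Haar unitaries, which have operator norm $1$. Let $(\cC,\omega)\in\mathbb{W}$ be generated by a $d$-tuple $\mathbf{u}=(u_1,\dots,u_d)$ of freely independent Haar unitaries, free from $\cA$ in $(\cA*\cC,\tau*\omega)$. Because $\omega(u_i)=\omega(u_i^*)=0$, $\omega(u_iu_j^*)=\omega(u_j^*u_i)=\delta_{ij}$, and $(\tau*\omega)(u_iXu_i^*Y)=\tau(X)\tau(Y)=(\tau*\sigma)(S_iXS_iY)$ for $X,Y\in\cA$, the universal conditional-expectation formula of Lemma~\ref{lem:freeCE} produces precisely the same middle contraction when we feed $u_i$ into the first slot and $u_j^*$ into the second; that is, for a trace polynomial $f$,
\[
\Upsilon(f)^{\cA,\tau}(\mathbf{X})[\mathbf{Y}_1,\dots,\mathbf{Y}_\ell]=E_{\cA}\bigl[f^{\cA*\cC,\tau*\omega}(\mathbf{X})[\mathbf{Y}_1,\dots,\mathbf{Y}_\ell,\mathbf{u},\mathbf{u}^*]\bigr].
\]
Since $E_{\cA}$ contracts every $L^\alpha$-norm and $\norm{\mathbf{u}}_\infty=\norm{\mathbf{u}^*}_\infty=1$ --- and since the H\"older exponents of $\Upsilon(f)$ satisfy $1/\alpha=\sum_j 1/\alpha_j$, forcing the two substituted slots to be measured in $\norm{\cdot}_\infty$ --- one gets, for $\norm{\mathbf{X}}_\infty\le R$ and $\norm{\mathbf{Y}_j}_{\alpha_j}\le1$,
\begin{multline*}
\norm{\Upsilon(f)^{\cA,\tau}(\mathbf{X})[\mathbf{Y}_1,\dots,\mathbf{Y}_\ell]}_\alpha\le\norm{f^{\cA*\cC,\tau*\omega}(\mathbf{X})[\mathbf{Y}_1,\dots,\mathbf{Y}_\ell,\mathbf{u},\mathbf{u}^*]}_\alpha \\
\le\norm{f}_{C_{\tr}(\R^{*d},\mathscr{M}^{\ell+2})^{d'},R},
\end{multline*}
using that $(\cA*\cC,\tau*\omega)\in\mathbb{W}$. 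Taking the supremum over $\mathbb{W}$ yields~(2) for trace polynomials; since $\TrP$ is dense and $C_{\tr}(\R^{*d},\mathscr{M}^\ell)^{d'}$ is complete, $\Upsilon$ extends uniquely to a continuous map on $C_{\tr}(\R^{*d},\mathscr{M}(\R^{*d_1},\dots,\R^{*d_\ell},\R^{*d},\R^{*d}))^{d'}$, and a routine limiting argument (continuity of $E_{\cA}$ and of evaluation at the fixed tuple $(\mathbf{X},\mathbf{Y}_1,\dots,\mathbf{Y}_\ell,\mathbf{S},\mathbf{S})$) shows the extension still satisfies \eqref{eq:upsilondefinition}, which also gives uniqueness.

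For~(3), I would argue that because neither $E_{\cA}$ nor the fixed tuple $\mathbf{S}$ depends on $\mathbf{X}$, iterated differentiation in $\mathbf{X}$ commutes past the operation $E_{\cA}[\,\cdot\,[\dots,\mathbf{S},\mathbf{S}]]$, so for trace polynomials $\partial^{k'}(\Upsilon(f))=\Upsilon((\partial^{k'}f)_\pi)$ with $\pi$ exactly the stated permutation --- it merely records that the two $\mathbf{S}$-slots, originally in positions $\ell+1,\ell+2$, are pushed behind the $k'$ differentiation slots before the last-two-slot contraction is applied. The general $C_{\tr}^k$ case then follows from this identity together with the continuity of $\Upsilon$ from~(2), the density of trace polynomials, and the Fr\'echet-differentiation characterization of the $C_{\tr}^k$ spaces.

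I expect the main obstacle to be obtaining the \emph{sharp} constant $1$ in~(2): estimating directly with the semicircular family costs a spurious factor $\norm{\mathbf{S}}_\infty^2=4$, since the exponent bookkeeping forces the two $\mathbf{S}$-slots into $\norm{\cdot}_\infty$. The resolution --- recognizing the middle contraction as the conditional expectation of $f$ evaluated on unit-norm Haar unitaries (morally, as $\int u A u^*\,du$), for which plugging into the $\norm{\cdot}_\infty$-slots is free --- is the one genuinely non-formal step; the remaining verifications (that the two realizations of the contraction agree via Lemma~\ref{lem:freeCE}, closure properties, and the passage to the density completion) are routine in the style already used for $\Phi$ in Lemma~\ref{lem:duality}.
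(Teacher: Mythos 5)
Your proposal is correct and follows essentially the same route as the paper's proof: define $\Upsilon$ on trace polynomials via the free-independence ``middle contraction,'' obtain the sharp constant in (2) by replacing the semicircular tuple with a unit-operator-norm family having the same first and second moments, extend by density and continuity, and get (3) by commuting differentiation past the substitution and the conditional expectation. The only difference is cosmetic: where you substitute free Haar unitaries as $(\mathbf{u},\mathbf{u}^*)$ into the two slots (legitimate, since the multilinear norms allow non-self-adjoint arguments), the paper substitutes a free family of symmetric Bernoulli ($\pm 1$) variables, which are self-adjoint and have the same mean, covariance, and norm one, so the resulting bound is identical.
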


\begin{proof}
	First, we show that if $f \in C_{\tr}(\R^{*d}, \mathscr{M}(\R^{*d_1}, \dots, \R^{*d_\ell}, \R^{*d}, \R^{*d}))^{d'} \to C_{\tr}(\R^{*d}, \mathscr{M}(\R^{*d_1},\dots,\R^{*d_\ell}))^{d'}$ is a trace polynomial, then there is a trace polynomial $\Upsilon(f)$ satisfying \eqref{eq:upsilondefinition} (which is clearly uniquely determined by this relation).  We may consider each coordinate $1$, \dots, $d'$ individually and thus assume without loss of generality that $d' = 1$.  By linearity, it suffices to consider the case where $f = \tr(p_1) \dots \tr(p_n) q$ where $p_1$, \dots, $p_n$, $q$ are non-commutative monomials (and $f$ satisfies the appropriate multilinearity conditions).  We then consider the following cases.  To make the discussion clearer, we shall assume the polynomial is evaluated on some $(\cA,\tau)$, $\mathbf{X}$, $\mathbf{Y}_1$, \dots, $\mathbf{Y}_\ell$, and $\mathbf{S}$ as in \eqref{eq:upsilondefinition} when referring to the different arguments of the function, but of course the statements are equally valid for all instances of $(\cA,\tau)$, $\mathbf{X}$, and so forth.
	\begin{enumerate}[(a)]
		\item Suppose that one of the monomials $p_j$ is linear in $\mathbf{S}$, or more precisely, it contains one occurrence of $S_i$ for one value of $i$.  Then it will evaluate to zero by free independence.  Thus, we may take $\Upsilon(f) = 0$.
		\item Similarly, if one of the monomials $p_j$ contains an occurrence of $S_i$ and $S_j$ for $i \neq j$, then it has the form
		\[
		g_1(\mathbf{X}, \mathbf{Y}_1, \dots, \mathbf{Y}_\ell) S_i g_2(\mathbf{X}, \mathbf{Y}_1, \dots, \mathbf{Y}_\ell) S_j g_3(\mathbf{X}, \mathbf{Y}_1, \dots, \mathbf{Y}_\ell)
		\]
		where the $g_j$'s are non-commutative monomials.  By free independence, the trace will be zero, and hence we may again take $\Upsilon(f) = 0$.
		\item Suppose that one of the monomials $p_j$ contains two occurrences of $S_i$ for some $i$.  Then it has the form
		\[
		g_1(\mathbf{X}, \mathbf{Y}_1, \dots, \mathbf{Y}_\ell) S_i g_2(\mathbf{X}, \mathbf{Y}_1, \dots, \mathbf{Y}_\ell) S_i g_3(\mathbf{X}, \mathbf{Y}_1, \dots, \mathbf{Y}_\ell)
		\]
		where the $g_j$'s are non-commutative monomials.  By free independence the trace is $\tr(g_3 g_1) \tr(g_2)$ evaluated on $\mathbf{X}$, $\mathbf{Y}_1$, \dots, $\mathbf{Y}_\ell$.  Thus, $\Upsilon(f)$ is obtained from $f$ by replacing $\tr(p_j)$ with $\tr(g_3g_1) \tr(g_2)$.
		\item Suppose that $q$ contains an occurrence of $S_i$ and an occurrence of $S_j$ for $i \neq j$.  Then using free independence (similar to case (2)), we see that $E_{\cA}[q(\mathbf{X},\mathbf{Y}_1,\dots,\mathbf{Y}_\ell,\mathbf{S},\mathbf{S})] = 0$, so we can take $\Upsilon(f) = 0$.
		\item Suppose that $q$ contains two occurrences of $S_i$ for some $i$.  Then $q(\mathbf{X},\mathbf{Y}_1,\dots,\mathbf{Y}_\ell,\mathbf{S},\mathbf{S})$ can be written as
		\[
		g_1(\mathbf{X}, \mathbf{Y}_1, \dots, \mathbf{Y}_\ell) S_i g_2(\mathbf{X}, \mathbf{Y}_1, \dots, \mathbf{Y}_\ell) S_i g_3(\mathbf{X}, \mathbf{Y}_1, \dots, \mathbf{Y}_\ell).
		\]
		Since the remaining terms in $f$ are scalar-valued, they can be factored out of the conditional expectation $E_{\cA}$.  The conditional expectation onto $\cA$ of $q(\mathbf{X},\mathbf{Y}_1,\dots,\mathbf{Y}_\ell,\mathbf{S},\mathbf{S})$ will be
		\[
		g_1(\mathbf{X}, \mathbf{Y}_1, \dots, \mathbf{Y}_\ell) \tau[g_2(\mathbf{X}, \mathbf{Y}_1, \dots, \mathbf{Y}_\ell)] g_3(\mathbf{X}, \mathbf{Y}_1, \dots, \mathbf{Y}_\ell).
		\]
		Hence, $\Upsilon(f)$ will be obtained from $f$ by replacing $q$ by $g_1 g_3 \tr(g_2)$.
	\end{enumerate}
	
	Next, let us prove (2) for the trace polynomial case.  In all the above computations with free independence, we only had to use the first and second moments of $\mathbf{S}$ with respect to the trace $\sigma$.  Thus, we would have gotten the same result if we took $S_1$, \dots, $S_d$ to be freely independent operators, each of which has as its spectral distribution the Bernoulli measure $(1/2)(\delta_{-1} + \delta_1)$.  In particular, for these operators $\norm{\mathbf{S}}_\infty = 1$.  Thus, (2) follows directly from our definitions of the norms.
	
	Then using (2), we can extend the claim about existence of $\Upsilon(f)$ satisfying \eqref{eq:upsilondefinition} from the case of trace polynomial $f$ to general $f \in C_{\tr}(\R^{*d}, \mathscr{M}(\R^{*d_1}, \dots, \R^{*d_\ell}, \R^{*d}, \R^{*d}))^{d'}$.  The extended map $\Upsilon$ clearly still satisfies (2), which in turn implies it is continuous.
	
	Finally, to prove (3), the equality $\partial^{k'}(\Upsilon(f)) = \Upsilon((\partial^{k'}f)_\sigma)$ can be checked directly from \eqref{eq:upsilondefinition} since the substitution of $\mathbf{S}$ into two places commutes with the operation of Fr\'echet differentiation.  But the relation $\partial^{k'}(\Upsilon(f)) = \Upsilon((\partial^{k'}f)_\sigma)$ implies that $\Upsilon$ maps $C_{\tr}^k(\R^{*d}, \mathscr{M}(\R^{*d_1}, \dots, \R^{*d_\ell}, \R^{*d}, \R^{*d}))$ into $C_{\tr}^k(\R^{*d}, \mathscr{M}(\R^{*d_1},\dots,\R^{*d_\ell})$.
\end{proof}

\begin{remark} \label{rem:alternativeupsilon}
	In the proof, we saw that the ``cross terms'' that mix $S_i$ and $S_j$ for $i \neq j$ will cancel.  Thus, we can in fact rewrite $\Upsilon$ as
	\[
	\Upsilon(f)^{\cA,\tau}(\mathbf{X})[\mathbf{Y}_1,\dots,\mathbf{Y}_\ell] = \sum_{j=1}^d E_{\cA}[f^{\cA*\cB,\tau*\sigma}(\mathbf{X})[\mathbf{Y}_1,\dots,\mathbf{Y}_\ell,\tilde{S}_j,\tilde{S}_j]],
	\]
	where $\tilde{S}_j = (0,\dots,0,S_j,0,\dots,0)$ where $S_j$ occurs in the $j$th position.
\end{remark}

\begin{definition} \label{def:freedivergence}
	We define the \emph{divergence}
	\[
	\nabla^\dagger: C_{\tr}^1(\R^{*d})^d \to \tr(C_{\tr}(\R^{*d}))
	\]
	by $\nabla^\dagger = \Upsilon \circ \partial \circ \Phi^{-1}$ where $\Phi$ is as in Lemma \ref{lem:duality} and $\Upsilon$ is as in Lemma \ref{lem:tracelike}.  In other words,
	\[
	\nabla^\dagger (\mathbf{f})^{\cA,\tau}(\mathbf{X}) = \ip{\mathbf{S}, \partial \mathbf{f}^{\cA*\cB,\sigma*\tau}(\mathbf{X})[\mathbf{S}] }_{\tau * \sigma},
	\]
	where $(\cB,\sigma)$ is the tracial $\mathrm{W}^*$-algebra generated by a standard semicircular family $\mathbf{S} = (S_1,\dots,S_d)$.
\end{definition}

We can define a similar operation more generally on multilinear forms.

\begin{definition}
	Let $\ell \in \N_0$ and $d$, $d_1$,\dots, $d_\ell \in \N$, we define
	\[
	\partial^\dagger: C_{\tr}^1(\R^{*d},\mathscr{M}(\R^{*d_1},\dots,\R^{*d_\ell},\R^{*d})) \to C_{\tr}(\R^{*d},\mathscr{M}(\R^{*d_1},\dots,\R^{*d_\ell}))
	\]
	by $\partial^\dagger = \Upsilon \circ \partial$.
\end{definition}

This leads to the definition of the free Laplacian.

\begin{definition} \label{def:freeLaplacian}
	Define
	\[
	L: C_{\tr}^2(\R^{*d},\mathscr{M}(\R^{*d_1},\dots,\R^{*d_\ell}))^{d'} \to C_{\tr}(\R^{*d},\mathscr{M}(\R^{*d_1},\dots,\R^{*d_\ell}))^{d'}
	\]
	by $L:= \partial^\dagger \partial$.
\end{definition}

\begin{observation}
	If $f \in \tr(C_{\tr}^2(\R^{*d}))$, we have $Lf = \nabla^\dagger \nabla f$.
\end{observation}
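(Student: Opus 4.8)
The plan is to unwind the definitions of $\nabla$, $\nabla^\dagger$, $L$, and $\partial^\dagger$ and observe that they compose to the same operator; the only non-formal input is that the auxiliary map $\Phi$ of Lemma \ref{lem:duality} is a bijection.

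First I would check that all the objects in sight are defined. Since $f \in \tr(C_{\tr}^2(\R^{*d}))$ we have $f = \tr(f)$, so Corollary \ref{cor:tracemap} gives $\partial f = \partial[\tr(f)] = \tr[\partial f]$; hence $\partial f$ is scalar-valued and lies in $\tr(C_{\tr}^1(\R^{*d},\mathscr{M}(\R^{*d})))$. Therefore $\Phi$ applies to $\partial f$, and $\nabla f = \Phi(\partial f)$ is a genuine element of $C_{\tr}^1(\R^{*d})^d$ by the last part of Lemma \ref{lem:duality}, so $\nabla^\dagger \nabla f$ is defined; and of course $Lf = \partial^\dagger \partial f$ is defined since $f \in C_{\tr}^2(\R^{*d})$.

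Next I would simply compute. By Definition \ref{def:freedivergence}, $\nabla^\dagger = \Upsilon \circ \partial \circ \Phi^{-1}$, and by Definition \ref{def:gradient}, $\nabla f = \Phi(\partial f)$; hence
\[
\nabla^\dagger \nabla f = \Upsilon\bigl(\partial\bigl(\Phi^{-1}(\Phi(\partial f))\bigr)\bigr) = \Upsilon\bigl(\partial(\partial f)\bigr),
\]
using that $\Phi^{-1} \circ \Phi$ is the identity on $\tr(C_{\tr}^1(\R^{*d},\mathscr{M}(\R^{*d})))$ (Lemma \ref{lem:duality}). On the other hand, by the definition of $\partial^\dagger$ as $\Upsilon \circ \partial$ together with Definition \ref{def:freeLaplacian},
\[
Lf = \partial^\dagger(\partial f) = \Upsilon\bigl(\partial(\partial f)\bigr).
\]
Comparing the two displays yields $Lf = \nabla^\dagger \nabla f$.

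There is no real obstacle beyond this bookkeeping: one only has to keep track of which function spaces the intermediate objects live in (the scalar-valuedness of $\partial f$, and the fact from Lemma \ref{lem:duality} that $\Phi$ and $\Phi^{-1}$ respect the $C_{\tr}^k$ filtration), after which the identity is a formal consequence of $\Phi^{-1} \circ \Phi = \mathrm{id}$ together with the definitions of the gradient, divergence, and Laplacian.
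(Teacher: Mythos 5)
Your proposal is correct and is exactly the verification the paper leaves implicit: since $\nabla f = \Phi(\partial f)$, $\nabla^\dagger = \Upsilon \circ \partial \circ \Phi^{-1}$, and $L = \partial^\dagger \partial = \Upsilon \circ \partial \circ \partial$, both sides collapse to $\Upsilon(\partial(\partial f))$ once one notes $\Phi^{-1}\circ\Phi = \mathrm{id}$ and that $\partial f$ is scalar-valued (Corollary \ref{cor:tracemap}) and lies in the right $C_{\tr}^1$ space (Lemma \ref{lem:duality}). The bookkeeping about scalar-valuedness and the $C_{\tr}^k$ filtration is the only content, and you have it right.
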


\begin{remark}
	In the next section, we shall state an analog of the classical fact that the divergence is the trace of the Jacobian and the Laplacian is the trace of the Hessian after we discuss the trace on $C_{\tr}(\R^{*d},\mathscr{M}(\R^{*d}))^d$.
\end{remark}

\begin{remark} \label{rem:partialdifferentiation}
	There is a generalization of all the above differential operators to functions that depend not only on $\mathbf{X}$ but also on an auxiliary variable $\mathbf{X}'$.  More precisely, let $\ell \in \N_0$, let $d, d', d'' \in \N$, and let $d_1$, \dots, $d_\ell \in \N$.  Then we may consider $d''$-tuples of functions of $(\cA,\tau)$ and $\mathbf{X} \in \cA_{\sa}^d$, $\mathbf{X}' \in \cA_{\sa}^{d'}$, and $\mathbf{Y}_j \in \cA^{d_j}$.  Let
	\[
	\partial_{\mathbf{x}}: C_{\tr}^1(\R^{*(d+d')},\mathscr{M}(\R^{*d_1},\dots,\R^{*d_\ell})) \to C_{\tr}(\R^{*(d+d')},\mathscr{M}(\R^{*d_1},\dots,\R^{*d_\ell},\R^{*d}))
	\]
	be the operation of differentiation with respect to the first $d$-variables, which are represented by the formal variable $\mathbf{x} = (x_1,\dots,x_d)$.  Lemma \ref{lem:duality} generalizes to define an isomorphism
	\[
	\Phi: \tr(C_{\tr}(\R^{*(d+d')},\mathscr{M}(\R^{*d_1},\dots,\R^{*d_\ell},\R^{*d}))) \to C_{\tr}(\R^{*(d+d')},\mathscr{M}(\R^{*d_1},\dots,\R^{*d_\ell}))^d,
	\]
	and hence Definition \ref{def:gradient} generalizes to define $\nabla_{\mathbf{x}}$.  Moreover, Lemma \ref{lem:tracelike} generalizes to define a map
	\[
	C_{\tr}(\R^{*(d+d')}, \mathscr{M}(\R^{*d_1}, \dots, \R^{*d_\ell}, \R^{*d}, \R^{*d}))^{d''} \to C_{\tr}(\R^{*(d+d')}, \mathscr{M}(\R^{*d_1},\dots,\R^{*d_\ell}))^{d''}
	\]
	by
	\[
	\Upsilon(f)^{\cA,\tau}(\mathbf{X},\mathbf{X}')[\mathbf{Y}_1,\dots,\mathbf{Y}_\ell] = E_{\cA}[f^{\cA*\cB,\tau*\sigma}(\mathbf{X},\mathbf{X}')[\mathbf{Y}_1,\dots,\mathbf{Y}_\ell,\mathbf{S},\mathbf{S}]].
	\]
	Hence, we can define $\partial_{\mathbf{x}}^\dagger$ and $L_{\mathbf{x}}$ analogously to $\partial^\dagger$ and $L$.  Finally, if $L_{\mathbf{x}'}$ denotes the Laplacian with respect to the last $d'$ variables rather than the first $d$ variables, and if $L$ denotes the Laplacian with respect to the entire collection of variables $(\mathbf{x},\mathbf{x}')$, we have
	\[
	L_{\mathbf{x}} + L_{\mathbf{x}'} = L.
	\]
	This follows from Remark \ref{rem:alternativeupsilon}.
\end{remark}

\subsection{The $*$-algebra $C_{\tr}(\R^{*d},\mathscr{M}(\R^{*d}))^d$, its trace, and its log-determinant}

In this section, we endow $C_{\tr}(\R^{*d},\mathscr{M}(\R^{*d}))^d$ with the structure of a tracial $*$-algebra, which we view as a tracial non-commutative analog of $C(\R^d,M_d(\C))$ with the pointwise adjoint and trace operations.

Recall that if $\mathbf{F} \in C_{\tr}(\R^{*d},\mathscr{M}(\R^{*d}))^d$, then for each $(\cA,\tau) \in \mathbb{W}$ and $\mathbf{X} \in \cA_{\sa}^d$, $\mathbf{F}^{\cA,\tau}(\mathbf{X})$ defines a (complex) linear transformation $\cA^d \to \cA^d$. Moreover, for $\mathbf{F}, \mathbf{G} \in C_{\tr}(\R^{*d},\mathscr{M}(\R^{*d}))^d$, we have
\[
(\mathbf{F} \# \mathbf{G})^{\cA,\tau}(\mathbf{X})[\mathbf{Y}] = \mathbf{F}^{\cA,\tau}(\mathbf{X})[\mathbf{G}^{\cA,\tau}(\mathbf{X})[\mathbf{Y}]].
\]
By Lemma \ref{lem:composition}, $\mathbf{F} \# \mathbf{G} \in C_{\tr}(\R^{*d},\mathscr{M}(\R^{*d}))^d$, and more generally, by Theorem \ref{thm:chainrule}, if $\mathbf{F}$ and $\mathbf{G}$ are in $C_{\tr}^k(\R^{*d},\mathscr{M}(\R^{*d}))^d$, then so is $\mathbf{F} \# \mathbf{G}$.  In other words, $C_{\tr}^k(\R^{*d},\mathscr{M}(\R^{*d}))^d$ is an algebra under $\#$-multiplication.

Moreover, the identity element of $C_{\tr}(\R^{*d},\mathscr{M}(\R^{*d}))$ is the function $\Id$ given by
\[
\Id(\mathbf{x})[\mathbf{y}] = \mathbf{y}.
\]
(We use the lowercase $\id$ to denote the identity function in $C_{\tr}(\R^{*d})^d$.)

In fact, for $k \in \N_0$, $C_{\tr}(\R^{*d},\mathscr{M}(\R^{*d}))^d$ behaves like a Banach algebra in the following way.  This will be useful for proving smoothness of functions defined by $\#$-power series, such as the logarithm used in the proof of Proposition \ref{prop:logdeterminant}.

\begin{lemma} \label{lem:submultiplicativenorm}
	Let $k \in \N_0$.  For $\mathbf{F} \in C_{\tr}^k(\R^{*d},\mathscr{M}(\R^{*d}))^d$, define
	\[
	\norm{\mathbf{F}}_{C_{\tr}^k(\R^{*d},\mathscr{M}^1)^d,R} = \sum_{j=0}^k \frac{1}{j!} \norm{\partial^j \mathbf{F}}_{C_{\tr}(\R^{*d},\mathscr{M}^{1+j})^d,R}.
	\]
	Then
	\[
	\norm{\mathbf{F} \# \mathbf{G}}_{C_{\tr}^k(\R^{*d},\mathscr{M}^1,R} \leq \norm{\mathbf{F}}_{C_{\tr}^k(\R^{*d},\mathscr{M}^1)^d,R} \norm{\mathbf{G}}_{C_{\tr}^k(\R^{*d},\mathscr{M}^1)^d,R}.
	\]
\end{lemma}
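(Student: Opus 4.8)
The plan is to apply the chain rule of Theorem~\ref{thm:chainrule} to the composition $\mathbf{F} \# \mathbf{G} = \mathbf{F}(\id) \# [\mathbf{G}]$, in which $\mathbf{F}$ plays the role of the outer function (a function of $d$ variables, multilinear in a single $d$-dimensional argument, so $n = 1$ and $\ell_1 = 1$), $\mathbf{g} = \id$ is the inner function, and $\mathbf{h}_1 = \mathbf{G}$. The key simplification is that $\partial \id = \Id$ while $\partial^m \id = 0$ for $m \geq 2$. Hence in the Fa\`{a} di Bruno expansion for $\partial^{k'}[\mathbf{F}\#\mathbf{G}]$, every block $B_i'$ recording a differentiation of the inner function $\id$ must be a singleton (larger blocks kill the term), and for each $j \leq k'$ the surviving terms are in bijection with the ways to split the $k'$ differentiation indices into one block of size $k' - j$ (routed into $\mathbf{G}$) and $j$ singletons (routed into the $j$ appended copies of $\Id$), ordered by their unique element; there are $\binom{k'}{j}$ of these, each equal, up to a permutation $(\cdot)_\sigma$ of the multilinear arguments, to $\partial^j \mathbf{F}(\id) \# [\partial^{k'-j}\mathbf{G}, \Id, \dots, \Id]$ with $j$ copies of $\Id$. (This is just the Leibniz-rule structure of $\#$: $\mathbf{F}\#\mathbf{G}$ is "bilinear" in $(\mathbf{F},\mathbf{G})$, so its $k'$-th derivative is a binomial sum.)

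Next I would bound each such term by Lemma~\ref{lem:composition}. Because the inner function is $\id$, the radius produced there is $R' = \norm{\id}_{C_{\tr}(\R^{*d})^d,R} = R$; moreover $\norm{\Id}_{C_{\tr}(\R^{*d},\mathscr{M}^1)^d,R} = 1$ since $\Id[\mathbf{y}] = \mathbf{y}$, and the action $\mathbf{f}\mapsto\mathbf{f}_\sigma$ is isometric for each seminorm $\norm{\cdot}_{C_{\tr}(\R^{*d},\mathscr{M}^{\ell}),R}$. So each term has norm at most $\norm{\partial^j \mathbf{F}}_{C_{\tr}(\R^{*d},\mathscr{M}^{1+j})^d,R}\,\norm{\partial^{k'-j}\mathbf{G}}_{C_{\tr}(\R^{*d},\mathscr{M}^{1+k'-j})^d,R}$, and summing over the $\binom{k'}{j}$ terms and over $j$ gives
\begin{multline*}
	\norm{\partial^{k'}[\mathbf{F}\#\mathbf{G}]}_{C_{\tr}(\R^{*d},\mathscr{M}^{1+k'})^d,R} \\
	\leq \sum_{j=0}^{k'}\binom{k'}{j}\,\norm{\partial^j\mathbf{F}}_{C_{\tr}(\R^{*d},\mathscr{M}^{1+j})^d,R}\,\norm{\partial^{k'-j}\mathbf{G}}_{C_{\tr}(\R^{*d},\mathscr{M}^{1+k'-j})^d,R}.
\end{multline*}

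Finally, dividing by $k'!$ turns $\binom{k'}{j}/k'!$ into $1/(j!\,(k'-j)!)$, so that
\begin{multline*}
	\norm{\mathbf{F}\#\mathbf{G}}_{C_{\tr}^k(\R^{*d},\mathscr{M}^1)^d,R} = \sum_{k'=0}^{k}\frac{1}{k'!}\norm{\partial^{k'}[\mathbf{F}\#\mathbf{G}]}_{C_{\tr}(\R^{*d},\mathscr{M}^{1+k'})^d,R} \\
	\leq \sum_{k'=0}^{k}\sum_{j=0}^{k'}\frac{1}{j!}\norm{\partial^j\mathbf{F}}_{C_{\tr}(\R^{*d},\mathscr{M}^{1+j})^d,R}\cdot\frac{1}{(k'-j)!}\norm{\partial^{k'-j}\mathbf{G}}_{C_{\tr}(\R^{*d},\mathscr{M}^{1+k'-j})^d,R},
\end{multline*}
and re-indexing the double sum by $(i,i') = (j,k'-j)$, which runs over all $i,i'\geq 0$ with $i + i' \leq k$, bounds it above by the product $\bigl(\sum_{i=0}^k \tfrac1{i!}\norm{\partial^i\mathbf{F}}_{C_{\tr}(\R^{*d},\mathscr{M}^{1+i})^d,R}\bigr)\bigl(\sum_{i'=0}^k \tfrac1{i'!}\norm{\partial^{i'}\mathbf{G}}_{C_{\tr}(\R^{*d},\mathscr{M}^{1+i'})^d,R}\bigr)$, which is exactly $\norm{\mathbf{F}}_{C_{\tr}^k(\R^{*d},\mathscr{M}^1)^d,R}\norm{\mathbf{G}}_{C_{\tr}^k(\R^{*d},\mathscr{M}^1)^d,R}$. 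The main obstacle is the first step: extracting from the general chain-rule formula precisely which terms survive when the inner map is the identity and confirming that their number is $\binom{k'}{j}$; once that bookkeeping is done the rest is a direct application of Lemma~\ref{lem:composition} together with the binomial identity.
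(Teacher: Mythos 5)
Your proof is correct and follows essentially the same route as the paper: apply the chain rule of Theorem \ref{thm:chainrule} with inner function $\id$, observe that only singleton blocks for $\id$ survive so that $\partial^{k'}[\mathbf{F}\#\mathbf{G}]$ reduces to a binomial sum of terms $\partial^{j}\mathbf{F}\#[\partial^{k'-j}\mathbf{G},\Id,\dots,\Id]_\sigma$, estimate each by Lemma \ref{lem:composition}, and conclude with the $\binom{k'}{j}/k'! = 1/(j!(k'-j)!)$ Cauchy-product argument. Your extra remarks (radius $R'=R$, $\norm{\Id}=1$, isometry of the permutation action) just make explicit what the paper leaves implicit.
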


\begin{proof}
	Let $k' \leq k$.  We apply the formula from Theorem \ref{thm:chainrule} to compute $\partial^{k'}[\mathbf{F} \# \mathbf{G}]$ by taking $n = 1$ and $\mathbf{f} = \mathbf{F}$ and $\mathbf{g} = \id$ and $\mathbf{h}_1 = \mathbf{G}$.  Note that $|B_i'| = 1$ and hence $|B_1| = k' - j$.  Since the blocks $B_i'$ must have their minimal elements ordered, they are uniquely determined by the choice of the block $B_1$.  Thus,
	\[
	\partial^{k'} [\mathbf{F} \# \mathbf{G}] = \sum_{B_1 \subseteq \{2,\dots,k'+1\}} \partial^{k' - |B_1|} \mathbf{F} \# [\partial^{|B_1|} \mathbf{G}, \Id, \dots, \Id]_\sigma,
	\]
	where $\sigma$ is the permutation sending $1$ to $1$ and mapping $2$, \dots, $1 + |B_1|$ onto $B_1$ and sending the rest of $2 + |B_1|$, \dots, $1 + k'$ in order onto the remaining points in $[k'+1]$.  For each $j \leq k'$, there are $k'$ choose $j$ choices of $B_1$ with $|B_1| = j$, which results in the estimate
	\[
	\norm{\partial^{k'} [\mathbf{F} \# \mathbf{G}]}_{C_{\tr}(\R^{*d},\mathscr{M}^{k'+1})^d,R} \leq \sum_{j=1}^{k'} \binom{k'}{j} \norm{\partial^{k'-j} \mathbf{F}}_{C_{\tr}(\R^{*d},\mathscr{M}^{1+k'-j})^d,R} \norm{\partial^j \mathbf{G}}_{C_{\tr}(\R^{*d},\mathscr{M}^{1+j})^d,R}.
	\]
	Hence,
	\begin{align*}
		\norm{\mathbf{F} \# & \mathbf{G}}_{C_{\tr}^k(\R^{*d},\mathscr{M}^1)^d,R} \\
		&= \sum_{k'=0}^k \frac{1}{k'!} \norm{\partial^{k'} [\mathbf{F} \# \mathbf{G}]}_{C_{\tr}(\R^{*d},\mathscr{M}^{k'+1})^d,R} \\
		&\leq \sum_{k'=0}^k \sum_{j=1}^{k'} \frac{1}{(k'-j)! j!} \norm{\partial^{k'-j} \mathbf{F} }_{C_{\tr}(\R^{*d},\mathscr{M}^{1+k'-j})^d,R} \norm{\partial^j \mathbf{G} }_{C_{\tr}(\R^{*d},\mathscr{M}^{1+j})^d,R} \\
		&\leq \left( \sum_{i=0}^k \frac{1}{i!} \norm{\partial^i \mathbf{F} }_{C_{\tr}(\R^{*d},\mathscr{M}^{1+i})^d,R} \right)
		\left( \sum_{j=1}^k \frac{1}{j!} \norm{\partial^j \mathbf{G} }_{C_{\tr}(\R^{*d},\mathscr{M}^{1+j})^d,R} \right) \\
		&= \norm{\mathbf{F}}_{C_{\tr}^k(\R^{*d},\mathscr{M}^1)^d,R} \norm{\mathbf{G}}_{C_{\tr}^k(\R^{*d},\mathscr{M}^1)^d,R}.  \qedhere
	\end{align*}
\end{proof}

Next, we claim that  $C_{\tr}^k(\R^{*d},\mathscr{M}(\R^{*d}))^d$ is a $*$-algebra with respect to some involution $\varstar$ that is compatible with the $\#$-multiplication structure.  Recall that we have already defined an involution $*$ by pointwise application of $*$, that is, $(\mathbf{F}^*)^{\cA,\tau}(\mathbf{X})[\mathbf{Y}] = \mathbf{F}^{\cA,\tau}(\mathbf{X})[\mathbf{Y}]^*$ for $\mathbf{X}$, $\mathbf{Y} \in \cA_{\sa}^d$.  However, this involution is analogous to applying entrywise complex conjugation to a matrix rather than taking the adjoint.  To prevent ambiguity, we will use the symbol $\varstar$ for the new adjoint operation.

\begin{lemma} \label{lem:staroperation}
	There exists a unique involution $\varstar$ on $C_{\tr}(\R^{*d},\mathscr{M}(\R^{*d}))^d$ such that for every $(\cA,\tau) \in \mathbb{W}$ and $\mathbf{X} \in \cA_{\sa}^d$ and $\mathbf{Y}_1$, $\mathbf{Y}_2 \in \cA^d$, we have
	\begin{equation} \label{eq:fancyadjoint}
		\ip{(\mathbf{F}^{\varstar})^{\cA,\tau}(\mathbf{X})[\mathbf{Y}_1], \mathbf{Y}_2}_\tau = \ip{\mathbf{Y_1}, \mathbf{F}^{\cA,\tau}(\mathbf{X})[\mathbf{Y}_2]}_\tau.
	\end{equation}
	Moreover, $\varstar$ defines a continuous map $C_{\tr}^k(\R^{*d}, \mathscr{M}(\R^{*d})) \to C_{\tr}^k(\R^{*d},\mathscr{M}(\R^{*d}))^d$ for every $k$ with
	\begin{equation} \label{eq:superadjointequality}
		\norm{\partial^k \mathbf{F}^{\varstar}}_{C_{\tr}(\R^{*d},\mathscr{M}^{k+1})^d,R} = \norm{\partial^k \mathbf{F}}_{C_{\tr}(\R^{*d},\mathscr{M}^{k+1}),R} \text{ for } R > 0,
	\end{equation}
	and hence for $k \in \N$ and $R > 0$,
	\begin{equation} \label{eq:superadjointequality2}
		\norm{\mathbf{F}^{\varstar}}_{C_{\tr}^k(\R^{*d},\mathscr{M}^1)^d,R} = \norm{\mathbf{F}}_{C_{\tr}^k(\R^{*d},\mathscr{M}^1)^d,R}.
	\end{equation}
	We also have
	\begin{equation} \label{eq:staroperation}
		(\mathbf{F} \# \mathbf{G})^{\varstar} = \mathbf{G}^{\varstar} \# \mathbf{F}^{\varstar}.
	\end{equation}
\end{lemma}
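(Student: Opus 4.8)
The plan is to construct $\varstar$ abstractly from the duality isomorphism $\Phi$ of Lemma \ref{lem:duality}, applied with $\ell = 1$ and $d_1 = d$, which identifies $C_{\tr}^k(\R^{*d},\mathscr{M}(\R^{*d}))^d$ with $\tr(C_{\tr}^k(\R^{*d},\mathscr{M}(\R^{*d},\R^{*d})))$ by the rule $g^{\cA,\tau}(\mathbf{X})[\mathbf{Y}_1,\mathbf{Y}] = \ip{\mathbf{Y},\Phi(g)^{\cA,\tau}(\mathbf{X})[\mathbf{Y}_1]}_\tau$. Writing $g = \Phi^{-1}(\mathbf{F})$ and taking complex conjugates in the target relation \eqref{eq:fancyadjoint} shows that the desired $\mathbf{F}^{\varstar}$ must satisfy $\Phi^{-1}(\mathbf{F}^{\varstar})^{\cA,\tau}(\mathbf{X})[\mathbf{Y}_1,\mathbf{Y}] = \overline{g^{\cA,\tau}(\mathbf{X})[\mathbf{Y},\mathbf{Y}_1]}$; since a $\C$-multilinear form is determined by its restriction to self-adjoint arguments, the right-hand side, as a function of $(\mathbf{Y}_1,\mathbf{Y})$, is exactly $((g_\sigma)^*)^{\cA,\tau}(\mathbf{X})[\mathbf{Y}_1,\mathbf{Y}]$, where $\sigma \in \Perm(2)$ transposes the two multilinear slots and $(\cdot)^*$ is the pointwise adjoint on $C_{\tr}$. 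I would therefore \emph{define} $\mathbf{F}^{\varstar} := \Phi\bigl((\Phi^{-1}(\mathbf{F}))_\sigma{}^{*}\bigr)$. Because $\Phi$, $\Phi^{-1}$, the $\Perm(2)$-action, and $(\cdot)^*$ are continuous linear maps (the last two isometric for every seminorm), this produces a well-defined, representation-independent, continuous linear operator, and unwinding the definition recovers \eqref{eq:fancyadjoint}. Uniqueness is then automatic: for fixed $\mathbf{X}$ and $\mathbf{Y}_1$, the element $(\mathbf{F}^{\varstar})^{\cA,\tau}(\mathbf{X})[\mathbf{Y}_1] \in \cA^d$ is pinned down by its $\tau$-pairings against all $\mathbf{Y}_2 \in \cA^d$, by faithfulness of $\tau$. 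The involution property $\varstar\varstar = \id$ and the anti-multiplicativity \eqref{eq:staroperation} both follow by substituting into \eqref{eq:fancyadjoint} and invoking uniqueness; for instance $\ip{(\mathbf{F} \# \mathbf{G})^{\varstar,\cA,\tau}(\mathbf{X})[\mathbf{Y}_1],\mathbf{Y}_2}_\tau = \ip{\mathbf{Y}_1,\mathbf{F}^{\cA,\tau}(\mathbf{X})[\mathbf{G}^{\cA,\tau}(\mathbf{X})[\mathbf{Y}_2]]}_\tau = \ip{(\mathbf{G}^{\varstar} \# \mathbf{F}^{\varstar})^{\cA,\tau}(\mathbf{X})[\mathbf{Y}_1],\mathbf{Y}_2}_\tau$.

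The sharp seminorm identities \eqref{eq:superadjointequality}--\eqref{eq:superadjointequality2} need a separate argument, because the factor $d$ in \eqref{eq:duality2} makes the $\Phi$-description too lossy; instead I would argue directly from \eqref{eq:fancyadjoint} and $L^\alpha$--$L^{\alpha'}$ duality (which follows from Lemma \ref{lem:NCHolder}). For fixed $(\cA,\tau)$ and $\mathbf{X}$, and $1/\alpha + 1/\alpha' = 1$,
\[
\norm{(\mathbf{F}^{\varstar})^{\cA,\tau}(\mathbf{X})}_{\alpha;\alpha} = \sup_{\norm{\mathbf{Y}_1}_\alpha \le 1}\ \sup_{\norm{\mathbf{Y}_2}_{\alpha'} \le 1} \bigl| \ip{\mathbf{Y}_1, \mathbf{F}^{\cA,\tau}(\mathbf{X})[\mathbf{Y}_2]}_\tau \bigr| = \norm{\mathbf{F}^{\cA,\tau}(\mathbf{X})}_{\alpha';\alpha'},
\]
and taking the supremum over $\alpha \in [1,\infty]$ gives $\norm{(\mathbf{F}^{\varstar})^{\cA,\tau}(\mathbf{X})}_{\mathscr{M}^1,\tr} = \norm{\mathbf{F}^{\cA,\tau}(\mathbf{X})}_{\mathscr{M}^1,\tr}$, whence \eqref{eq:superadjointequality} with $k = 0$ after taking the supremum over the operator-norm $R$-ball and over $(\cA,\tau) \in \mathbb{W}$. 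For $k \geq 1$, differentiating \eqref{eq:fancyadjoint} in $\mathbf{X}$ and using that $\partial^k$ records the iterated directional derivatives yields $\ip{(\partial^k \mathbf{F}^{\varstar})^{\cA,\tau}(\mathbf{X})[\mathbf{Y}_1,\mathbf{Z}_1,\dots,\mathbf{Z}_k],\mathbf{Y}_2}_\tau = \ip{\mathbf{Y}_1,(\partial^k \mathbf{F})^{\cA,\tau}(\mathbf{X})[\mathbf{Y}_2,\mathbf{Z}_1,\dots,\mathbf{Z}_k]}_\tau$, i.e.\ $\partial^k \mathbf{F}^{\varstar}$ is the ``adjoint in the first multilinear slot'' of $\partial^k \mathbf{F}$, with the $k$ differentiation slots inert; repeating the duality computation with those extra slots absorbed into one Hölder exponent (the constraint $1/\alpha_0' = 1/\alpha' + \sum_i 1/\gamma_i$ matches up because $1/\alpha = 1/\alpha_0 + \sum_i 1/\gamma_i$) gives \eqref{eq:superadjointequality} for all $k$, and summing against the weights $1/j!$ gives \eqref{eq:superadjointequality2}. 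That $\varstar$ preserves $C_{\tr}^k$ together with the promised relation between $\partial^k(\cdot)^{\varstar}$ and $\partial^k$ follows either from this same computation or from the corresponding properties of $\Phi$ (via \eqref{eq:duality3}), the $\Perm$-action, and $(\cdot)^*$.

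I expect the only genuinely substantive point to be the identification $\mathbf{F}^{\varstar} = \Phi\bigl((\Phi^{-1}(\mathbf{F}))_\sigma{}^{*}\bigr)$ --- precisely, the step that passes from the conjugate-linear expression $\overline{g^{\cA,\tau}(\mathbf{X})[\mathbf{Y},\mathbf{Y}_1]}$ to the $\C$-multilinear form $(g_\sigma)^*$, which relies on the principle (used already in \S\ref{sec:NCfunc1} to define the norms on multilinear forms) that a $\C$-multilinear form on $\cA^d \times \cA^d$ is determined by its values on self-adjoint arguments, so that the two forms, agreeing there, agree everywhere. Everything else is formal, modulo the routine check that $(\cdot)^*$ and the $\Perm$-action commute with $\Phi$ as claimed. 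As an alternative that avoids $\Phi$ entirely, one may construct $\varstar$ directly on $\TrP(\R^{*d},\mathscr{M}(\R^{*d}))^d$ by the recipe ``transpose the $d \times d$ array of component forms, and in each monomial term reverse and $*$ the non-scalar part while replacing every scalar factor $\tr(p(\mathbf{x}))$ by $\tr(p^*(\mathbf{x}))$'', verify \eqref{eq:fancyadjoint} on a single such monomial using traciality of $\tau$ and $\overline{\tau(p(\mathbf{X}))} = \tau(p^*(\mathbf{X}))$, and then extend to all of $C_{\tr}(\R^{*d},\mathscr{M}(\R^{*d}))^d$ using the $k=0$ isometry of the previous paragraph and density of trace polynomials; uniqueness makes the explicit recipe automatically independent of the chosen representation.
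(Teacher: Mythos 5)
Your proposal is correct and follows essentially the same route as the paper: the paper also defines $\mathbf{F}^{\varstar} = \Phi\bigl((\Phi^{-1}(\mathbf{F}))_\sigma^*\bigr)$, verifies \eqref{eq:fancyadjoint} directly from \eqref{eq:duality1}, obtains the exact seminorm equalities by the same $L^\alpha$--$L^\beta$ duality computation on the pairing (not from the lossy bound \eqref{eq:duality2}), handles derivatives via the commutation $\partial[\mathbf{F}^{\varstar}] = \Omega[(\partial\mathbf{F})_\sigma]$ as in your $\Phi$/\eqref{eq:duality3} alternative, and proves \eqref{eq:staroperation} by the same pairing-plus-nondegeneracy argument (first on self-adjoint tuples, then by linearity). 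No substantive differences.
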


\begin{example}
	Let $p_{i,j}$ and $q_{i,j}$ for $i, j = 1$, \dots, $d$ be non-commutative polynomials (or more generally operator-valued trace polynomials).  Define $\mathbf{F} \in C_{\tr}(\R^{*d},\mathscr{M}(\R^{*d}))^d$ by
	\[
	(\mathbf{F}^{\cA,\tau}(\mathbf{X})[\mathbf{Y}])_i = \sum_{j=1}^d p_{i,j}(\mathbf{X}) Y_j q_{i,j}(\mathbf{X}),
	\]
	where $( \cdot)_i$ denotes the $i$th component of the $d$-tuple.  Then
	\[
	((\mathbf{F}^{\varstar})^{\cA,\tau}(\mathbf{X})[\mathbf{Y}])_i = \sum_{j=1}^d p_{j,i}(\mathbf{X})^* Y_j q_{j,i}(\mathbf{X})^*;
	\]
	this follows from the lemma and a direct computation with traciality that the expression here satisfies \eqref{eq:fancyadjoint} for $\mathbf{F}$.
	For another example, let $\mathbf{G}\in C_{\tr}(\R^{*d},\mathscr{M}(\R^{*d}))^d$ be given by
	\[
	(\mathbf{G}^{\cA,\tau}(\mathbf{X})[\mathbf{Y}])_i = \sum_{j=1}^d p_{i,j}(\mathbf{X}) \tau(Y_j q_{i,j}(\mathbf{X})).
	\]
	Then
	\[
	((\mathbf{G}^{\varstar})^{\cA,\tau}(\mathbf{X})[\mathbf{Y}])_i = q_{j,i}(\mathbf{X})^* \tau(Y_j p_{j,i}(\mathbf{X})^*).
	\]
\end{example}

\begin{proof}[Proof of Lemma \ref{lem:staroperation}]
	Let $\Phi: \tr(C_{\tr}(\R^{*d},\mathscr{M}^{k+2}(\R^{*d}))) \to C_{\tr}(\R^{*d},\mathscr{M}^{k+1}(\R^{*d}))^d$ be as in Lemma \ref{lem:duality} for each $k \in \N$.  Let $\sigma$ be the element of $\Perm(k+2)$ that switches the last $2$ indices.  Then we define $\Omega: C_{\tr}(\R^{*d},\mathscr{M}^{k+1}(\R^{*d}))^d \to C_{\tr}(\R^{*d},\mathscr{M}^{k+1}(\R^{*d}))^d$ by
	\[
	\Omega(\mathbf{F}) := \Phi(\Phi^{-1}(\mathbf{F})_\sigma^*),
	\]
	In the case $k = 1$, $\Omega$ defines a map from $C_{\tr}(\R^{*d},\mathscr{M}(\R^{*d}))$ to itself, and we define $F^{\varstar} := \Omega(\mathbf{F})$.  By Lemma \ref{lem:duality}, $\Omega$ is a continuous involution.  By direct computation from \eqref{eq:duality1}, for any $k$, for any $(\cA,\tau) \in \mathbb{W}$ and $\mathbf{X}$, $\mathbf{Y}$, $\mathbf{Y}_1$, \dots, $\mathbf{Y}_{k+1} \in \cA^d$, we have
	\[
	\ip{\Omega(\mathbf{F})^{\cA,\tau}(\mathbf{X})[\mathbf{Y}_1,\dots,\mathbf{Y}_{k+1}], \mathbf{Y}}_\tau = \ip{\mathbf{Y}_{k+1}, \mathbf{F}^{\cA,\tau}(\mathbf{X})[\mathbf{Y}_1,\dots,\mathbf{Y}_k,\mathbf{Y}]}_\tau,
	\]
	and hence in particular \eqref{eq:fancyadjoint} holds.  Moreover, for any $k$, if $1/\alpha = 1/\alpha_1 + \dots + 1/\alpha_{k+1}$ and $1/\alpha + 1/\beta = 1$, then
	\begin{align*}
		& \quad \norm{\Omega(\mathbf{F})^{\cA,\tau}(\mathbf{X})}_{\alpha;\alpha_1,\dots,\alpha_k} \\
		&= \sup \{ \norm{\Omega(\mathbf{F})^{\cA,\tau}(\mathbf{X})[\mathbf{Y}_1,\dots,\mathbf{Y}_{k+1}]}_{\tau,\alpha}:  \norm{\mathbf{Y}_j}_{\tau,\alpha_j} \leq  1 \} \\
		&= \sup \{ \ip{\mathbf{Y},\Omega(\mathbf{F})^{\cA,\tau}(\mathbf{X})[\mathbf{Y}_1,\dots,\mathbf{Y}_{k+1}]}_{\tau}:  \norm{\mathbf{Y}}_\beta \leq 1, \norm{\mathbf{Y}_j}_{\tau,\alpha_j} \leq  1 \} \\
		&= \sup \{ \ip{\mathbf{F}^{\cA,\tau}(\mathbf{X})[\mathbf{Y}_1,\dots,\mathbf{Y}_k,\mathbf{Y}], \mathbf{Y}_{k+1}}_{\tau}:  \norm{\mathbf{Y}}_\beta \leq 1, \norm{\mathbf{Y}_j}_{\tau,\alpha_j} \leq  1 \} \\
		&= \norm{\mathbf{F}^{\cA,\tau}(\mathbf{X})}_{(1 - 1/\alpha_{k+1})^{-1};\alpha_1,\dots,\alpha_k,\beta}.
	\end{align*}
	It follows that
	\[
	\norm{\Omega(\mathbf{F})}_{C_{\tr}(\R^{*d},\mathscr{M}^{k+1})^d,R} = \norm{\mathbf{F}}_{C_{\tr}(\R^{*d},\mathscr{M}^{k+1})^d,R}
	\]
	for all $R$.  Then we observe that $\partial[\Omega(\mathbf{F})] = \Omega[(\partial \mathbf{F})_\sigma]$, and hence by induction $\partial^j[\Omega(\mathbf{F})]$ is $\Omega$ of a permutation of $\partial^j \mathbf{F}$ whenever $\mathbf{F}$ is a $C_{\tr}^j$ function.  It follows that $\varstar$, which is the $k = 1$ case of $\Omega$, satisfies \eqref{eq:superadjointequality} and \eqref{eq:superadjointequality2}.  Finally, to show \eqref{eq:staroperation}, note that by \eqref{eq:fancyadjoint}, we have for any $(\cA,\tau)$, $\mathbf{X}$, $\mathbf{Y}_1$, $\mathbf{Y}_2 \in \cA_{\sa}^d$ that
	\[
	\ip{\mathbf{Y}_1, [(\mathbf{F} \# \mathbf{G})^{\varstar}]^{\cA,\tau}(\mathbf{X})[\mathbf{Y}_2]}_\tau = \ip{\mathbf{Y}_1, [\mathbf{G}^{\varstar} \# \mathbf{F}^{\varstar}]^{\cA,\tau}(\mathbf{X})[\mathbf{Y}_2]}_\tau.
	\]
	By linearity, the same relation holds if $\mathbf{Y}_1$ is taken from $\cA^d$ rather than $\cA_{\sa}^d$.  This implies that $[(\mathbf{F} \# \mathbf{G})^{\varstar}]^{\cA,\tau}(\mathbf{X})[\mathbf{Y}_2] = \mathbf{Y}_1, [\mathbf{G}^{\varstar} \# \mathbf{F}^{\varstar}]^{\cA,\tau}(\mathbf{X})[\mathbf{Y}_2]$, and since $(\cA,\tau)$, $\mathbf{X}$, and $\mathbf{Y}_2$ were arbitrary \eqref{eq:staroperation} holds.
\end{proof}

Next, we construct a trace functional on $C_{\tr}(\R^{*d},\mathscr{M}(\R^{*d}))^d$.

\begin{lemma} \label{lem:tracehash}
	There exists a unique linear functional $\Tr_{\#}: C_{\tr}(\R^{*d},\mathscr{M}(\R^{*d}))^d \to \tr(C_{\tr}(\R^{*d}))$ satisfying
	\begin{equation} \label{eq:semicirculartrace}
		[\Tr_\#(\mathbf{F})]^{\cA,\tau}(\mathbf{X}) = \ip{\mathbf{S},\mathbf{F}^{\cA*\cB,\tau*\sigma}(\mathbf{X})[\mathbf{S}]}_{\tau*\sigma}
	\end{equation}
	for $(\cA,\tau) \in \mathbb{W}$, where $(\cB,\sigma)$ is the tracial $\mathrm{W}^*$-algebra generated by a standard free semicircular family $\mathbf{S} = (S_1,\dots,S_d)$. We have
	\begin{equation} \label{eq:adjointtrace}
		\Tr_\#(\mathbf{F}^{\varstar}) = \Tr_\#(\mathbf{F})^*
	\end{equation}
	and
	\begin{equation} \label{eq:traciality}
		\Tr_\#(\mathbf{F} \# \mathbf{G}) = \Tr_\#(\mathbf{G} \# \mathbf{F}).
	\end{equation}
	Furthermore, $\Tr_{\#}$ maps $C_{\tr}^k(\R^{*d},\mathscr{M}(\R^{*d}))^d$ into $\tr(C_{\tr}^k(\R^{*d}))$ for each $k \in \N_0 \cup \{\infty\}$, and we have for $k' \leq k$ that
	\begin{equation} \label{eq:traceboundedness}
		\norm{\partial^{k'} \Tr_{\#}(\mathbf{F})}_{C_{\tr}(\R^{*d},\mathscr{M}^{1+k'}),R} \leq d \norm{\partial^{k'} \mathbf{F}}_{C_{\tr}(\R^{*d},\mathscr{M}^{1+k'})^d,R}.
	\end{equation}
\end{lemma}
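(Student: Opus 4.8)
The plan is to realize $\Tr_\#$ as a composition of the two transformations already constructed in this section.  Apply Lemma~\ref{lem:duality} with $\ell = 1$ and $d_1 = d$ to obtain the Fr\'echet-space isomorphism
\[
\Phi\colon \tr(C_{\tr}(\R^{*d},\mathscr{M}(\R^{*d},\R^{*d}))) \to C_{\tr}(\R^{*d},\mathscr{M}(\R^{*d}))^d,
\]
and apply Lemma~\ref{lem:tracelike} with $\ell = 0$ and $d' = 1$ to obtain
\[
\Upsilon\colon C_{\tr}(\R^{*d},\mathscr{M}(\R^{*d},\R^{*d})) \to C_{\tr}(\R^{*d}).
\]
I then set $\Tr_\# := \Upsilon \circ \Phi^{-1}$.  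By \eqref{eq:duality1}, $\Phi^{-1}(\mathbf{F})$ is the scalar-valued function $g$ with $g^{\cA,\tau}(\mathbf{X})[\mathbf{Y}_1,\mathbf{Y}_2] = \ip{\mathbf{Y}_2, \mathbf{F}^{\cA,\tau}(\mathbf{X})[\mathbf{Y}_1]}_\tau$; since $g$ is scalar-valued, $\Upsilon(g)^{\cA,\tau}(\mathbf{X}) = E_{\cA}[g^{\cA*\cB,\tau*\sigma}(\mathbf{X})[\mathbf{S},\mathbf{S}]]$ is again scalar-valued, so $\Upsilon$ lands in $\tr(C_{\tr}(\R^{*d}))$, and unwinding \eqref{eq:upsilondefinition} gives $[\Tr_\#(\mathbf{F})]^{\cA,\tau}(\mathbf{X}) = \ip{\mathbf{S}, \mathbf{F}^{\cA*\cB,\tau*\sigma}(\mathbf{X})[\mathbf{S}]}_{\tau*\sigma}$, which is \eqref{eq:semicirculartrace}.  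Uniqueness is automatic, since \eqref{eq:semicirculartrace} prescribes $[\Tr_\#(\mathbf{F})]^{\cA,\tau}(\mathbf{X})$ for all $(\cA,\tau)$ and $\mathbf{X} \in \cA_{\sa}^d$, and an element of $\tr(C_{\tr}(\R^{*d})) \subseteq C_{\tr}(\R^{*d})$ is determined by all such evaluations.

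The $C_{\tr}^k$-statement and the estimate \eqref{eq:traceboundedness} then follow by chaining the corresponding properties of $\Phi$ and $\Upsilon$.  By the last part of Lemma~\ref{lem:duality}, $\Phi^{-1}$ carries $C_{\tr}^k(\R^{*d},\mathscr{M}(\R^{*d}))^d$ isomorphically onto $\tr(C_{\tr}^k(\R^{*d},\mathscr{M}(\R^{*d},\R^{*d})))$ and, by \eqref{eq:duality3}, intertwines $\partial^{k'}$ with $\partial^{k'}$ up to a permutation; by Lemma~\ref{lem:tracelike}(3), $\Upsilon$ maps $C_{\tr}^k$ into $C_{\tr}^k$ with $\partial^{k'}\Upsilon(g) = \Upsilon((\partial^{k'}g)_\pi)$.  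Hence $\Tr_\#$ maps $C_{\tr}^k(\R^{*d},\mathscr{M}(\R^{*d}))^d$ into $\tr(C_{\tr}^k(\R^{*d}))$.  For the norm bound, \eqref{eq:duality2} together with \eqref{eq:duality3} and the fact that permutations act isometrically gives $\norm{\partial^{k'}(\Phi^{-1}\mathbf{F})} \leq d\,\norm{\partial^{k'}\mathbf{F}}$ in the relevant seminorms, while Lemma~\ref{lem:tracelike}(2) shows $\Upsilon$ is norm-nonincreasing; composing yields $\norm{\partial^{k'}\Tr_\#(\mathbf{F})} \leq d\,\norm{\partial^{k'}\mathbf{F}}$, which is \eqref{eq:traceboundedness}, the single factor of $d$ coming entirely from the $\Phi$ step.

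The adjoint identity \eqref{eq:adjointtrace} is a short computation from \eqref{eq:semicirculartrace}: for $\mathbf{X} \in \cA_{\sa}^d$ and $\mathbf{S}$ self-adjoint, traciality of $\tau*\sigma$ and $\overline{\tau(a)} = \tau(a^*)$ give $\overline{\ip{\mathbf{S}, \mathbf{F}^{\cA*\cB}(\mathbf{X})[\mathbf{S}]}_{\tau*\sigma}} = \ip{\mathbf{S}, (\mathbf{F}^{*})^{\cA*\cB}(\mathbf{X})[\mathbf{S}]}_{\tau*\sigma}$, where $*$ is pointwise complex conjugation; since the $*$-operation on $\tr(C_{\tr}(\R^{*d}))$ is pointwise complex conjugation, this says $\Tr_\#(\mathbf{F})^* = \Tr_\#(\mathbf{F}^*)$.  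On the other hand, by the construction of $\varstar$ in Lemma~\ref{lem:staroperation} together with \eqref{eq:fancyadjoint} (and the fact that $\Phi$ is compatible with $*$, verified on monomials via traciality), $\mathbf{F}^\varstar$ and $\mathbf{F}^*$ correspond under $\Phi^{-1}$ to two functions differing only by transposing the two slots into which $\Tr_\#$ substitutes the single family $\mathbf{S}$; such a transposition is invisible to $\Upsilon$, so $\Tr_\#(\mathbf{F}^\varstar) = \Tr_\#(\mathbf{F}^*) = \Tr_\#(\mathbf{F})^*$.

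The traciality \eqref{eq:traciality} is the main obstacle.  It is proved by density: $\mathbf{F}\#\mathbf{G}$ depends continuously on $(\mathbf{F},\mathbf{G})$ by Lemma~\ref{lem:composition} and $\Tr_\#$ is continuous by the $k=0$ case of \eqref{eq:traceboundedness}, and operator-valued trace polynomials are dense, so it suffices to prove $[\Tr_\#(\mathbf{F}\#\mathbf{G})]^{\cA,\tau}(\mathbf{X}) = [\Tr_\#(\mathbf{G}\#\mathbf{F})]^{\cA,\tau}(\mathbf{X})$ when $\mathbf{F}$ and $\mathbf{G}$ are operator-valued trace polynomials.  By multilinearity, reduce $\mathbf{F}$ and $\mathbf{G}$ to monomial building blocks of the two types appearing in the example after Lemma~\ref{lem:staroperation}, namely the ``sandwich'' type with $i$th component $p(\mathbf{X})\,a(\mathbf{X})Y_j b(\mathbf{X})$ and the ``trace'' type $p(\mathbf{X})\,a(\mathbf{X})\,\tau(b(\mathbf{X})Y_j c(\mathbf{X}))$.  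Then $\ip{\mathbf{S}, \mathbf{F}^{\cA*\cB}(\mathbf{X})[\mathbf{G}^{\cA*\cB}(\mathbf{X})[\mathbf{S}]]}_{\tau*\sigma}$ is a trace monomial in $\mathbf{X}$ with exactly two occurrences of the free semicircular family $\mathbf{S}$, evaluated by exactly the case analysis of cases (a)--(e) in the proof of Lemma~\ref{lem:tracelike}, using freeness of $\mathbf{S}$ from $\cA$ and the covariance identity $E_{\cA}[S_i a S_j] = \delta_{ij}\tau(a)1$ for $a \in \cA$.  After simplifying with traciality of $\tau*\sigma$, the outcome is symmetric in $\mathbf{F}$ and $\mathbf{G}$; the only non-formal step is that the two ways of contracting the $S$-indices produce equal sums, which reduces to the elementary identity $|\mathrm{Fix}(\alpha\circ\beta)| = |\mathrm{Fix}(\beta\circ\alpha)|$ for self-maps $\alpha,\beta$ of a finite set (equivalently, cyclicity of $\Tr$ for the $\{0,1\}$-matrices of $\alpha$ and $\beta$).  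I expect this last verification, though routine in each individual case, to be the most laborious part of the proof.
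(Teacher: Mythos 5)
Your proposal is correct and follows essentially the same route as the paper: define $\Tr_{\#} = \Upsilon \circ \Phi^{-1}$, get \eqref{eq:semicirculartrace}, the $C_{\tr}^k$ statement and \eqref{eq:traceboundedness} by chaining Lemmas \ref{lem:duality} and \ref{lem:tracelike}, prove \eqref{eq:adjointtrace} by a short inner-product/traciality computation, and prove \eqref{eq:traciality} by density plus an explicit trace-polynomial computation in which free independence kills the trace-type and off-diagonal terms. The only cosmetic caveat is your closing appeal to $|\mathrm{Fix}(\alpha\beta)|=|\mathrm{Fix}(\beta\alpha)|$: equality of cardinalities alone would not suffice — what is actually used (and what your building-block reduction already supplies, as in the paper's $j\leftrightarrow m$ relabeling) is the explicit bijection between the two index contractions together with cyclicity of $\tau$ to match the scalar weights term by term.
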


\begin{proof}
	We define $\Tr_{\#}(\mathbf{F}) = \Upsilon \circ \Phi^{-1}(\mathbf{F})$ where $\Phi$ is as in Lemma \ref{lem:duality} and $\Upsilon$ is as in Lemma \ref{lem:tracelike}.  Then \eqref{eq:semicirculartrace} is verified from the definitions of $\Phi$ and $\Upsilon$.  The relation \eqref{eq:adjointtrace} follows because
	\[
	\ip{\mathbf{S},\mathbf{F}^{\cA*\cB,\tau*\sigma}(\mathbf{X})[\mathbf{S}]}_{\tau*\sigma} = \ip{(\mathbf{F}^{\varstar})^{\cA*\cB,\tau*\sigma}(\mathbf{X})[\mathbf{S}],\mathbf{S}}_{\tau*\sigma} = \overline{\ip{\mathbf{S},(\mathbf{F}^{\varstar})^{\cA*\cB,\tau*\sigma}(\mathbf{X})[\mathbf{S}]}_{\tau*\sigma}}.
	\]
	The claim about $C_{\tr}^k$ functions and \eqref{eq:traceboundedness} follow from \eqref{eq:duality2} and \eqref{eq:duality3} together with Lemma \ref{lem:tracelike} (2) and (3).
	
	It remains to prove \eqref{eq:traciality}.  By density and by continuity of the composition operations, it suffices to consider elements $\mathbf{F}$, $\mathbf{G}$ of $C_{\tr}(\R^{*d},\mathscr{M}(\R^{*d}))^d$ given by trace polynomials.  Then there are trace polynomials $F_{i,j,k,\ell}$ fo $i, j \in [d]$ and $k = 1,\dots,K$ and $\ell = 1, \dots, 4$ such that for all $(\cA,\tau)$,
	\[
	F_i^{\cA,\tau}(\mathbf{X})[\mathbf{Y}] = \sum_{k=1}^K \sum_{j=1}^d \left( F_{i,j,k,1}^{\cA,\tau}(\mathbf{X}) Y_j F_{i,j,k,2}^{\cA,\tau}(\mathbf{X}) + F_{i,j,k,3}^{\cA,\tau}(\mathbf{X}) \tau(F_{i,j,k,4}^{\cA,\tau}(\mathbf{X})Y_j) \right)
	\]
	and similarly we may write
	\[
	G_i^{\cA,\tau}(\mathbf{X})[\mathbf{Y}] = \sum_{k'=1}^{K'} \sum_{j=1}^d \left( G_{i,j,k',1}^{\cA,\tau}(\mathbf{X}) Y_j G_{i,j,k',2}^{\cA,\tau}(\mathbf{X}) + G_{i,j,k',3}^{\cA,\tau}(\mathbf{X}) \tau(G_{i,j,k',4}^{\cA,\tau}(\mathbf{X})Y_j) \right).
	\]
	By free independence,
	\[
	(\tau*\sigma)(F_{i,j,k,4}^{\cA,\tau}(\mathbf{X})S_j) = 0
	\]
	so that
	\[
	F_i^{\cA*\cB,\tau*\sigma}(\mathbf{X})[\mathbf{S}] = F_i^{\cA,\tau}(\mathbf{X})[\mathbf{Y}] = \sum_{k=1}^K \sum_{j=1}^d F_{i,j,k,1}^{\cA,\tau}(\mathbf{X}) S_j F_{i,j,k,2}^{\cA,\tau}(\mathbf{X}).
	\]
	Again using free independence, we have
	\[
	(\tau*\sigma)(G_{i,m,k',4}^{\cA,\tau}(\mathbf{X}) F_{m,j,k,1}^{\cA,\tau}(\mathbf{X}) S_j F_{m,j,k,2}^{\cA,\tau}(\mathbf{X}) ) = 0.
	\]
	Hence,
	\[
	\left( \mathbf{G}^{\cA*\cB,\tau*\sigma}(\mathbf{X})[ \mathbf{F}^{\cA*\cB,\tau*\sigma}(\mathbf{X})[\mathbf{S}]] \right)_i = \sum_{k,k'} \sum_{j=1}^d \sum_{m=1}^d G_{i,m,k',1}^{\cA,\tau}(\mathbf{X}) F_{m,j,k,1}^{\cA,\tau}(\mathbf{X}) S_j F_{m,j,k,2}^{\cA,\tau}(\mathbf{X})
	G_{i,m,k',2}^{\cA,\tau}(\mathbf{X}),
	\]
	and thus
	\[
	\ip{\mathbf{S}, \mathbf{G}^{\cA*\cB,\tau*\sigma}(\mathbf{X})[ \mathbf{F}^{\cA*\cB,\tau*\sigma}(\mathbf{X})[\mathbf{S}]]}_{\tau * \sigma} = \sum_{k,k'} \sum_{i,j,m=1}^d (\tau * \sigma)\left[ S_i G_{i,m,k',1}^{\cA,\tau}(\mathbf{X}) F_{m,j,k,1}^{\cA,\tau}(\mathbf{X}) S_j F_{m,j,k,2}^{\cA,\tau}(\mathbf{X})
	G_{i,m,k',2}^{\cA,\tau}(\mathbf{X})\right].
	\]
	If $i \neq j$, then the trace of the expression in the sum is zero by free independence.  Moreover, the $i = j$ can be evaluated using free independence as follows:
	\begin{multline*}
		\sum_{k,k'} \sum_{j,m=1}^d (\tau * \sigma)\left[ S_j G_{i,m,k',1}^{\cA,\tau}(\mathbf{X}) F_{m,j,k,1}^{\cA,\tau}(\mathbf{X}) S_j F_{m,j,k,2}^{\cA,\tau}(\mathbf{X})
		G_{i,m,k',2}^{\cA,\tau}(\mathbf{X})\right] \\
		= \sum_{k,k'} \sum_{j,m=1}^d \tau\left[G_{i,m,k',1}^{\cA,\tau}(\mathbf{X}) F_{m,j,k,1}^{\cA,\tau}(\mathbf{X})\right] \tau\left[F_{m,j,k,2}^{\cA,\tau}(\mathbf{X})
		G_{i,m,k',2}^{\cA,\tau}(\mathbf{X})\right].
	\end{multline*}
	This expression is invariant if we switch $\mathbf{F}$ and $\mathbf{G}$, by applying traciality of $\tau$ and interchanging the indices $j$ and $m$.  Thus, \eqref{eq:traciality} holds.
\end{proof}

We will next discuss the log-determinant described by the trace $\Tr_\#$ on $C_{\tr}(\R^{*d},\mathscr{M}(\R^{*d}))^d$.  It is easiest to define this trace in terms of the Fuglede-Kadison determinant on tracial $\mathrm{W}^*$-algebras.  To this end, let us interpret the trace $\Tr_\#$ in terms of traces on a $\mathrm{C}^*$-algebra.

Observe that for each $(\cA,\tau) \in \mathbb{W}$ and each $\mathbf{X} \in \cA_{\sa}^d$ with $\norm{\mathbf{X}}_\infty \leq R$, the function $\mathbf{F}(\mathbf{X})$ defines a bounded linear transformation $\pi_{\mathbf{X}}^{\cA,\tau}(\mathbf{F}): L^2(\mathcal{A},\tau)^d \to L^2(\mathcal{A},\tau)^d$ with
\[
\norm{\pi_{\mathbf{X}}^{\cA,\tau}(\mathbf{F})} \leq \norm{\mathbf{F}}_{C_{\tr}(\R^{*d},\mathscr{M}(\R^{*d}))^d,R}.
\]
We define a $\mathrm{C}^*$-semi-norm on $C_{\tr}(\R^{*d},\mathscr{M}(\R^{*d}))^d$ by
\[
\norm{\mathbf{F}}_{\mathbf{C}^*,R} = \sup \{ \norm{\pi_{\mathbf{X}}^{\cA,\tau}(\mathbf{F})}: (\cA,\tau) \in \mathbb{W}, \mathbf{X} \in \cA_{\sa}^d, \norm{\mathbf{X}}_\infty \leq R \}.
\]
The separation-completion of $C_{\tr}(\R^{*d},\mathscr{M}(\R^{*d}))^d$ with respect to this seminorm is thus a $\mathrm{C}^*$-algebra.  We will (temporarily) denote this $\mathbf{C}^*$-algebra by $\mathcal{C}_R$ and the quotient map $C_{\tr}(\R^{*d},\mathscr{M}(\R^{*d}))^d \to \mathcal{C}_R$ by $\pi_R$.  Letting $(\cB,\sigma)$ be the tracial $\mathrm{W}^*$-algebra generated by a free semicircular family $\mathbf{S}$, we have
\[
|\Tr_{\#}(\mathbf{F})^{\cA,\tau}(\mathbf{X})| = \ip{\mathbf{S}, \pi_{\mathbf{X}}^{\cA*\cB,\tau*\sigma}(\mathbf{F}) \mathbf{S}}_{\tau * \sigma} \leq d \norm{\pi_{\mathbf{X}}^{\cA*\cB,\tau*\sigma}(\mathbf{F})}.
\]
Thus, $\mathbf{F} \mapsto (1/d) \Tr_{\#}(\mathbf{F})^{\cA,\tau}$ passes to a well-defined tracial state $\tr_{\mathbf{X}}^{\cA,\tau}$ on the $\mathrm{C}^*$-algebra $\mathcal{C}_R$.  In particular, after constructing the GNS representation of $\mathcal{C}_R$ associated to $\tr_{\mathbf{X}}^{\cA,\tau}$, we can obtain a tracial $\mathrm{W}^*$-algebra as the WOT-closure of the image of this representation.

For an algebra $\mathcal{A}$, let $GL(\mathcal{A})$ denote the group of invertible elements.  For $\mathbf{F} \in GL(C_{\tr}(\R^{*d},\mathscr{M}(\R^{*d}))^d)$ and $(\cA,\tau) \in \mathbb{W}$ and $\mathbf{X} \in \cA_{\sa}^d$ with $\norm{\mathbf{X}}_\infty \leq R$, consider the Fuglede-Kadison log-determinant
\[
\log \Delta_{\mathbf{X}}^{\cA,\tau}(\mathbf{F}) := d \tr_{\mathbf{X}}^{\cA,\tau} \log \pi_R(\mathbf{F}^{\varstar} \mathbf{F})^{1/2}.
\]
It follows from the work of Fuglede and Kadison \cite[Theorem 1, property $1^\circ$]{FK1952} that
\[
\log \Delta_{\mathbf{X}}^{\cA,\tau}(\mathbf{F} \# \mathbf{G}) = \log \Delta_{\mathbf{X}}^{\cA,\tau}(\mathbf{F}) + \log \Delta_{\mathbf{X}}^{\cA,\tau}(\mathbf{G}).
\]
Our goal is to show that if $\mathbf{F}$ is in $GL(C_{\tr}^k(\R^{*d},\mathscr{M}(\R^{*d}))^d)$, then the log-determinant defines a function in $\tr(C_{\tr}(\R^{*d}))$.  We will use the path-connectedness of the general linear group.

\begin{lemma}
	Let $k \in \N_0 \cup \{\infty\}$.  Then $GL(C_{\tr}^k(\R^{*d},\mathscr{M})^d)$ is path-connected.
\end{lemma}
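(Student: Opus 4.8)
The plan is to join any $\mathbf{F} \in GL(C_{\tr}^k(\R^{*d},\mathscr{M}(\R^{*d}))^d)$ to the identity element $\Id$ by a continuous path of invertibles, in two stages: first deform $\mathbf{F}$ to a \emph{constant} function by rescaling its argument, then move that constant to $\Id$ inside a finite-dimensional subalgebra.

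\emph{Stage 1 (rescaling).} Let $\mathbf{G}$ be the $\#$-inverse of $\mathbf{F}$, and for $s \in [0,1]$ set $\mathbf{F}_s := \mathbf{F}(s\cdot\id)\#[\Id]$, that is, $\mathbf{F}_s^{\cA,\tau}(\mathbf{X})[\mathbf{Y}] = \mathbf{F}^{\cA,\tau}(s\mathbf{X})[\mathbf{Y}]$, and likewise $\mathbf{G}_s$. Since $s \mapsto s\cdot\id$ is linear, hence continuous, from $[0,1]$ into $C_{\tr}^k(\R^{*d})_{\sa}^d$, and composition is jointly continuous by Theorem \ref{thm:chainrule}, the assignment $s \mapsto \mathbf{F}_s$ is a continuous path in $C_{\tr}^k(\R^{*d},\mathscr{M}(\R^{*d}))^d$ from $\mathbf{F}_1 = \mathbf{F}$ to the constant function $\mathbf{F}_0 \colon \mathbf{X} \mapsto \mathbf{F}^{\cA,\tau}(\mathbf{0})$. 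The operation $\mathbf{H} \mapsto \mathbf{H}_s$ is multiplicative for $\#$ and fixes $\Id$, so $\mathbf{F}_s \# \mathbf{G}_s = \Id = \mathbf{G}_s \# \mathbf{F}_s$ for every $s$; thus the whole path lies in the invertibles, and $\mathbf{F}_0$ is invertible with inverse $\mathbf{G}_0 = \mathbf{G}(\mathbf{0})$.

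\emph{Stage 2 (the constant subalgebra).} The constant elements form a unital subalgebra $A_0$ under $\#$, and it is finite-dimensional: the only constant trace polynomials multilinear of degree one in $\mathbf{y}$ are $\mathbf{Y}\mapsto Y_j$ and $\mathbf{Y}\mapsto\tau(Y_j)1$, so $A_0$ consists of the maps $\mathbf{Y}\mapsto\bigl(\sum_j a_{ij}Y_j+\sum_j b_{ij}\tau(Y_j)1\bigr)_{i=1}^d$ with $a,b\in M_d(\C)$ — a space of dimension at most $2d^2$, closed under composition and closed in the seminorms since it is finite-dimensional. Evaluation at $\mathbf{0}$ is a unital algebra homomorphism $C_{\tr}^k(\R^{*d},\mathscr{M}(\R^{*d}))^d \to A_0$, so $\mathbf{F}_0 \in GL(A_0)$ (and indeed $\mathbf{G}_0 \in A_0$ is its inverse). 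In a finite-dimensional unital complex algebra $A$, an element $a$ is invertible iff left multiplication $L_a$ is bijective iff the polynomial $a \mapsto \det L_a$ is nonzero at $a$; this polynomial is not identically zero (it equals $1$ at $\Id$), so $GL(A_0)$ is the complement in $A_0 \cong \C^{2d^2}$ of the zero set of a nonzero polynomial. Any two of its points lie on a complex line meeting that zero set in only finitely many points, so $GL(A_0)$ is connected, and being open in a real vector space it is path-connected. Concatenating a path in $GL(A_0) \subseteq GL(C_{\tr}^k(\R^{*d},\mathscr{M}(\R^{*d}))^d)$ from $\mathbf{F}_0$ to $\Id$ with the path $(\mathbf{F}_s)$ proves the lemma for each $k \in \N_0 \cup \{\infty\}$.

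\emph{On the difficulty.} There is no essential obstacle here; the statement is soft once the rescaling deformation and the finite-dimensionality of $A_0$ are in hand. The one point requiring care is that $s \mapsto \mathbf{F}_s$ be continuous in all the derivative seminorms simultaneously, but since $\partial^{k'}[\mathbf{F}_s](\mathbf{X})[\cdots] = s^{k'}(\partial^{k'}\mathbf{F})(s\mathbf{X})[\cdots]$, this reduces to the uniform continuity of each $\partial^{k'}\mathbf{F}$ on operator-norm balls (Lemma \ref{lem:Ctrcontinuity}), which is precisely what the continuity clause of Theorem \ref{thm:chainrule} packages. The remaining verifications — multiplicativity of $\mathbf{H}\mapsto\mathbf{H}_s$ and of evaluation at $\mathbf{0}$ (the chain rule read at a point), and the description of $A_0$ — are routine.
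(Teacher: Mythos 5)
Your proof is correct and follows essentially the same route as the paper: rescale via $s \mapsto \mathbf{F}(s\,\id)$, note this is a unital $\#$-homomorphism so invertibility persists along the path, and land in the finite-dimensional algebra of constants, whose invertible group is connected. The only cosmetic difference is that the paper identifies that constant algebra (generated by $\phi(M_d(\C))$ and the idempotent $\mathbf{tr}$) with $M_d(\C)\oplus M_d(\C)$ and quotes connectedness of its $GL$, whereas you give a direct Zariski-complement argument for $GL(A_0)$ — both are fine.
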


\begin{proof}
	Let $\mathbf{tr} \in C_{\tr}^\infty(\R^{*d},\mathscr{M}(\R^{*d}))^d$ denote the function $\mathbf{tr}(\mathbf{x})[\mathbf{y}] = (\tr(y_1),\dots,\tr(y_d))$.  Note that $\mathbf{tr} \# \mathbf{tr} = \mathbf{tr}$ and $\mathbf{tr}^{\varstar} = \mathbf{tr}$.
	
	There is a $*$-homomorphism $\phi: M_d(\C) \to C_{\tr}^\infty(\R^{*d},\mathscr{M}(\R^{*d}))^d$ given by
	\[
	\phi(M)(\mathbf{x}) = \left(\sum_{j=1}^d m_{1,j} x_j, \dots, \sum_{j=1}^d m_{d,j} x_j \right).
	\]
	Since $\phi(M)$ commutes with the self-adjoint idempotent $\mathbf{tr}$, the $*$-algebra $\mathcal{N}$ generated by $\phi(M_d(\C))$ and $\mathbf{tr}$ is isomorphic to $M_d(\C) \oplus M_d(\C)$, where for matrices $M_1$, $M_2 \in M_n(\C)$, the element $M_1 \oplus M_2$ in $M_d(\C) \oplus M_d(\C)$ corresponds to $M_1 (\Id - \mathbf{tr}) + M_2 \mathbf{tr}$.  Thus, $GL(\mathcal{N})$ is path-connected.
	
	It remains to show that every $\mathbf{F}$ in $C_{\tr}^k(\R^{*d},\mathscr{M}(\R^{*d}))^d$ is path-connected to some element of $GL(\mathcal{N})$.  For $t \in [0,1]$, let $\mathbf{F}(t \id)$ be the composition of $\mathbf{F}$ with $t \id$.  By Theorem \ref{thm:chainrule}, $t \mapsto \mathbf{F}(t \id)$ is a continuous function $[0,1] \to C_{\tr}^k(\R^{*d},\mathscr{M}(\R^{*d}))^d$.  Since $\mathbf{F} \mapsto \mathbf{F}(t \id)$ is a $*$-homomorphism, $\mathbf{F}(t \id) \in GL(C_{\tr}^k(\R^{*d},\mathscr{M}(\R^{*d}))^d)$ for all $t$. Hence, $\mathbf{F}$ is path-connected to $\mathbf{F}(0) = \mathbf{F} \circ (0 \id)$ in $GL(C_{\tr}^k(\R^{*d},\mathscr{M}(\R^{*d}))^d)$.  In the case where $\mathbf{F}$ is a trace polynomial, it is easy to check that $\mathbf{F}(0) \in \mathcal{N}$ since all the monomials involving $\mathbf{x}$ will disappear when we compose with the zero function.  Since $\mathcal{N}$ is closed, it follows that $\mathbf{F}(0) \in \mathcal{N}$ for all $\mathbf{F} \in GL(C_{\tr}^k(\R^{*d},\mathscr{M}(\R^{*d}))^d)$.
\end{proof}

\begin{proposition} \label{prop:logdeterminant}
	Let $k \in \N_0 \cup \{\infty\}$.  Then there exists a unique map
	\[\log \Delta_\#: GL(C_{\tr}^k(\R^{*d},\mathscr{M}(\R^{*d}))^d) \to \tr(C_{\tr}^k(\R^{*d}))
	\]
	such that for each $(\cA,\tau) \in \mathbb{W}$ and $\mathbf{X} \in \cA_{\sa}^d$, we have
	\[
	(\log \Delta_\#(\mathbf{F}))^{\cA,\tau}(\mathbf{X}) = \log \Delta_{\mathbf{X}}^{\cA,\tau}(\mathbf{F}).
	\]
	Moreover, $\log \Delta_{\#}$ is a continuous group homomorphism with respect to multiplication in the domain and addition in the codomain.
\end{proposition}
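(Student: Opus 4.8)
The plan is the following. Uniqueness is immediate: an element of $C_{\tr}(\R^{*d})$ \emph{is} the tuple of its evaluations, so any map obeying the displayed evaluation identity is determined by it. For existence I would first construct $\log\Delta_\#$ near the identity $\Id$ of the $\#$-algebra and then globalize. Fix $R>0$; by Lemma \ref{lem:submultiplicativenorm} the seminorm $\norm{\cdot}_{C_{\tr}^k(\R^{*d},\mathscr{M}^1)^d,R}$ is submultiplicative, so whenever $\norm{\mathbf{H}-\Id}_{C_{\tr}^k(\R^{*d},\mathscr{M}^1)^d,R}<1$ the $\#$-power series $\log_\#\mathbf{H}:=\sum_{n\ge1}\tfrac{(-1)^{n+1}}{n}(\mathbf{H}-\Id)^{\#n}$ converges in every seminorm $\norm{\partial^{k'}(\cdot)}_{C_{\tr}(\R^{*d},\mathscr{M}^{k'}),R}$ with $k'\le k$; since $\norm{\cdot}_{\mathbf{C}^*,R}\le\norm{\cdot}_{C_{\tr}(\R^{*d},\mathscr{M})^d,R}\le\norm{\cdot}_{C_{\tr}^k(\R^{*d},\mathscr{M}^1)^d,R}$ it also converges in $\mathcal{C}_R$, where it coincides with the $\mathrm{C}^*$-functional-calculus logarithm, and hence evaluates at $(\cA,\tau)$ and $\mathbf{X}$ with $\norm{\mathbf{X}}_\infty\le R$ to $\log\pi_{\mathbf{X}}^{\cA,\tau}(\mathbf{H})$. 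For $\mathbf{F}$ close enough to $\Id$ in this seminorm that $\mathbf{H}:=\mathbf{F}^{\varstar}\#\mathbf{F}$ lands in this region (using \eqref{eq:superadjointequality2} and submultiplicativity) I would set
\[
\Lambda_R(\mathbf{F}):=\tfrac12\,\Tr_\#\bigl(\log_\#(\mathbf{F}^{\varstar}\#\mathbf{F})\bigr).
\]
By Lemma \ref{lem:tracehash} this is a $\tr(C_{\tr}^k)$-function on the ball of radius $R$, it is self-adjoint by \eqref{eq:adjointtrace} (as $\mathbf{F}^{\varstar}\#\mathbf{F}$ is $\varstar$-self-adjoint), and since $\tfrac1d\Tr_\#(\cdot)^{\cA,\tau}(\mathbf{X})=\tr_{\mathbf{X}}^{\cA,\tau}(\pi_R(\cdot))$, comparing with the definition of $\log\Delta_{\mathbf{X}}^{\cA,\tau}$ gives $\Lambda_R(\mathbf{F})^{\cA,\tau}(\mathbf{X})=\log\Delta_{\mathbf{X}}^{\cA,\tau}(\mathbf{F})$ for $\norm{\mathbf{X}}_\infty\le R$.

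For a general $\mathbf{F}\in GL(C_{\tr}^k(\R^{*d},\mathscr{M}(\R^{*d}))^d)$ I would use path-connectedness of this group (the preceding lemma) and multiplicativity of the Fuglede--Kadison determinant \cite[Theorem 1, property $1^\circ$]{FK1952} to transport the local formula. Pick a path $\gamma\colon[0,1]\to GL(C_{\tr}^k(\R^{*d},\mathscr{M})^d)$ with $\gamma(0)=\Id$ and $\gamma(1)=\mathbf{F}$; the explicit paths from that lemma (composition with $t\,\id$, followed by a path in the finite-dimensional group $GL(\mathcal{N})$) have $t\mapsto\gamma(t)^{-1}$ continuous, by Theorem \ref{thm:chainrule}. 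Fixing $R$, the image of $\gamma$ in the completion of $C_{\tr}^k(\R^{*d},\mathscr{M})^d$ under $\norm{\cdot}_{C_{\tr}^k(\R^{*d},\mathscr{M}^1)^d,R}$ is a continuous path of invertibles of a Banach algebra, so by compactness there is a partition $0=t_0<\dots<t_m=1$ with each increment $\mathbf{K}_j:=\gamma(t_j)\#\gamma(t_{j-1})^{-1}$ close enough to $\Id$ in this seminorm for $\Lambda_R(\mathbf{K}_j)$ to be defined. Since $\mathbf{F}=\mathbf{K}_m\#\dots\#\mathbf{K}_1$, multiplicativity of $\log\Delta_{\mathbf{X}}^{\cA,\tau}$ yields $\sum_j\Lambda_R(\mathbf{K}_j)^{\cA,\tau}(\mathbf{X})=\sum_j\log\Delta_{\mathbf{X}}^{\cA,\tau}(\mathbf{K}_j)=\log\Delta_{\mathbf{X}}^{\cA,\tau}(\mathbf{F})$ for $\norm{\mathbf{X}}_\infty\le R$. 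Thus the pointwise function $(\cA,\tau,\mathbf{X})\mapsto\log\Delta_{\mathbf{X}}^{\cA,\tau}(\mathbf{F})$ agrees on the ball of radius $R$ with the element $\sum_j\Lambda_R(\mathbf{K}_j)$ of $\tr(C_{\tr}^k)$; since $R$ is arbitrary and these agree on overlapping balls (being the same pointwise function), the argument of the proof of Proposition \ref{prop:smoothfunctionalcalculus}(2) shows that $\log\Delta_\#(\mathbf{F})\colon(\cA,\tau,\mathbf{X})\mapsto\log\Delta_{\mathbf{X}}^{\cA,\tau}(\mathbf{F})$ is a well-defined element of $\tr(C_{\tr}^k(\R^{*d}))$ with the asserted evaluations, and the case $k=\infty$ follows since $C_{\tr}^\infty=\bigcap_kC_{\tr}^k$.

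The remaining properties reduce to the evaluation identity. For $\mathbf{F},\mathbf{G}\in GL$, the functions $\log\Delta_\#(\mathbf{F}\#\mathbf{G})$ and $\log\Delta_\#(\mathbf{F})+\log\Delta_\#(\mathbf{G})$ evaluate at every $(\cA,\tau,\mathbf{X})$ to $\log\Delta_{\mathbf{X}}^{\cA,\tau}(\mathbf{F}\#\mathbf{G})$ and to $\log\Delta_{\mathbf{X}}^{\cA,\tau}(\mathbf{F})+\log\Delta_{\mathbf{X}}^{\cA,\tau}(\mathbf{G})$ respectively, which coincide by Fuglede--Kadison multiplicativity, so $\log\Delta_\#$ is a group homomorphism. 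For continuity, writing $\mathbf{F}=(\mathbf{F}\#\mathbf{F}_0^{-1})\#\mathbf{F}_0$ and using this homomorphism property together with continuity of $\#$ (Theorem \ref{thm:chainrule}), it suffices to prove continuity at $\Id$; and there, on each ball of radius $R$, $\log\Delta_\#=\Lambda_R$ is a composition of $\mathbf{F}\mapsto\mathbf{F}^{\varstar}$ (isometric in each seminorm by \eqref{eq:superadjointequality}), $\#$-multiplication, the power series $\log_\#$ (continuous on the open unit ball of $\norm{\cdot}_{C_{\tr}^k(\R^{*d},\mathscr{M}^1)^d,R}$ by submultiplicativity), and $\Tr_\#$ (continuous by \eqref{eq:traceboundedness}), hence continuous.

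I expect the main obstacle to be the globalization step. Because $C_{\tr}^k(\R^{*d},\mathscr{M}(\R^{*d}))^d$ is only a Fr\'echet algebra, the set on which $\log_\#$ converges is not a neighbourhood of $\Id$, so one cannot write a general invertible element as a single finite $\#$-product of ``small'' elements with estimates uniform in $R$. The resolution is precisely to perform the product-decomposition of the path $\gamma$ \emph{separately for each} $R$ --- where the relevant seminorm is submultiplicative, putting one in a genuine Banach-algebra setting --- and to let the pointwise numbers $\log\Delta_{\mathbf{X}}^{\cA,\tau}(\mathbf{F})$, together with multiplicativity of the Fuglede--Kadison determinant, guarantee that all these ball-wise constructions are automatically consistent and assemble into one function in $\tr(C_{\tr}^k(\R^{*d}))$. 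A secondary, routine point is the compatibility of the $\#$-power-series logarithm with the $\mathrm{C}^*$-functional-calculus logarithm entering the definition of $\log\Delta_{\mathbf{X}}^{\cA,\tau}$, which follows once the seminorm $\norm{\cdot}_{C_{\tr}^k(\R^{*d},\mathscr{M}^1)^d,R}$ is seen to dominate the $\mathrm{C}^*$-seminorm $\norm{\cdot}_{\mathbf{C}^*,R}$.
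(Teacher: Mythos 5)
Your proposal is correct and follows essentially the same route as the paper: a per-radius decomposition of $\mathbf{F}$ into $\#$-factors close to $\Id$ (obtained from the path-connectedness lemma), the $\#$-power series for $\log_\#(\mathbf{F}_j^{\varstar}\#\mathbf{F}_j)$ controlled via Lemma \ref{lem:submultiplicativenorm} and fed into $\Tr_\#$, Fuglede--Kadison additivity to assemble and to get the homomorphism property, and continuity checked at $\Id$ through the locally uniform convergence of the series. The only differences are expository --- you make explicit the path-partition, the compatibility with $\mathrm{C}^*$ functional calculus, and the ball-wise consistency that the paper leaves implicit.
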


\begin{proof}
	The claim for $k = \infty$ will follow if we can prove it for $k < \infty$, so assume $k < \infty$.  Let $\mathbf{F} \in GL(C_{\tr}^k(\R^{*d},\mathscr{M}(\R^{*d}))^d$, and fix $R > 0$.  Since there is a continuous path from $\mathbf{F}$ to $\Id$, we can write
	\[
	\mathbf{F} = \mathbf{F}_1 \dots \mathbf{F}_n
	\]
	with $\norm{\mathbf{F}_j^{\varstar} \mathbf{F}_j - \Id}_{C_{\tr}^k(\R^{*d},\mathscr{M}(\R^{*d}))^d} < 1$.  Then by additivity of the Fuglede-Kadison determinant, for each $(\cA,\tau) \in \mathbb{W}$ and $\mathbf{X} \in \cA_{\sa}^d$ with $\norm{\mathbf{X}}_\infty \leq R$, we have
	\[
	\log \Delta_{\mathbf{X}}^{\cA,\tau}(\mathbf{F}) = \sum_{j=1}^n \log \Delta_{\mathbf{X}}^{\cA,\tau}(\mathbf{F}_j).
	\]
	Since $\norm{\mathbf{F}_j^{\varstar} \mathbf{F}_j - \Id}_{C_{\tr}^k(\R^{*d},\mathscr{M}(\R^{*d}))^d,R} < 1$ and because of Lemma \ref{lem:submultiplicativenorm} we have convergence of the power series 
	\[
	\log_\#(\mathbf{F}_j^{\varstar} \mathbf{F}_j) = - \sum_{m=1}^\infty \frac{1}{m} (\id - \mathbf{F}_j^{\varstar} \mathbf{F}_j)^{\# m}
	\]
	with respect to $\norm{\cdot}_{C_{\tr}^k(\R^{*d},\mathscr{M}(\R^{*d}))^d,R}$.  Since the representation $\pi_{\mathbf{X}}^{\cA,\tau}$ is bounded by in norm by $\norm{\cdot}_{C_{\tr}(\R^{*d},\mathscr{M}(\R^{*d}))^d,R}$ and respects analytic functional calculus, we have
	\[
	\log \Delta_{\mathbf{X}}^{\cA,\tau}(\mathbf{F}_j) = - \frac{1}{2} \sum_{m=1}^\infty \frac{1}{m} (\Tr_\#[(\Id - \mathbf{F}_j^{\varstar} \mathbf{F}_j)^{\# m}])^{\cA,\tau}(\mathbf{X}).
	\]
	Because of convergence of the series
	\begin{equation} \label{eq:logseries}
		- \frac{1}{2} \sum_{j=1}^n \sum_{m=1}^\infty \frac{1}{m} \Tr_\#[(\Id - \mathbf{F}_j^{\varstar} \mathbf{F}_j)^{\# m}]
	\end{equation}
	in $\norm{\cdot}_{C_{\tr}^k(\R^{*d},\mathscr{M}(\R^{*d}))^d,R}$, it follows that $\log \Delta_{\mathbf{X}}^{\cA,\tau}(\mathbf{F})$ is a Fr\'echet-$C^k$ function of $\mathbf{X}$ on the ball over radius $R$, and that this function, as well as its derivatives up to order $k$, be approximated on the ball of radius $R$ of every $(\cA,\tau) \in \mathbb{W}$ by functions in $C_{\tr}^k(\R^{*d},\mathscr{M}(\R^{*d}))^d$, where the approximation of the $k'$ derivative occurs with respect to $\norm{\cdot}_{C_{\tr}(\R^{*d},\mathscr{M}^{k'}),R}$.  Since this holds for every $R$, we conclude that $\log \Delta_{\mathbf{X}}^{\cA,\tau}(\mathbf{F})$ defines a function $\log \Delta_{\#}(\mathbf{F})$ in $\tr(C_{\tr}^k(\R^{*d}))$.
	
	The fact that $\log \Delta_\#(\mathbf{F} \# \mathbf{G}) = \log \Delta_\#(\mathbf{F}) + \log \Delta_\#(\mathbf{G})$ follows immediately from additivity of the Fuglede-Kadison determinant.  Next, to prove continuity of $\log \Delta_\#$, it suffices to check continuity at the point $\Id$.  Fix $R > 0$.  Then in a neighborhood of $\Id$, the power series expansion $\log_\#$ converges uniformly with respect to $\norm{\cdot}_{C_{\tr}^k(\R^{*d},\mathscr{M}(\R^{*d}))^d,R}$, and hence in this neighborhood $\log \Delta_\#(\mathbf{F}^{\varstar} \mathbf{F})$ and its derivatives up to order $k$ depend continuously on $\mathbf{F}$ respect to $\norm{\cdot}_{C_{\tr}^k(\R^{*d},\mathscr{M}(\R^{*d}))^d,R}$ in the domain and $\sum_{k'=0}^k \norm{\partial^{k'}(\cdot)}_{C_{\tr}(\R^{*d},\mathscr{M}^{k'}),R}$ in the target space.
\end{proof}

The following gives an explicit formula for $\partial \log \Delta_\#(\mathbf{F})$ which is helpful for assessing the boundedness properties of the derivative.

\begin{lemma} \label{lem:differentiatedeterminant}
	Let $\mathbf{F} \in GL(C_{\tr}^1(\R^{*d},\mathscr{M}(\R^{*d}))^d)$ and let $\mathbf{G}$ be the $\#$-inverse of $\mathbf{F}$.  For $(\cA,\tau) \in \mathbb{W}$ and $\mathbf{X}$, $\mathbf{Y} \in \cA_{\sa}^d$, we have
	\[
	\partial[\log \Delta_\#(\mathbf{F})]^{\cA,\tau}(\mathbf{X})[\mathbf{Y}] = \ip*{\mathbf{S},[\mathbf{G} \# \partial \mathbf{F} + \mathbf{G}^{\varstar} \# \partial \mathbf{F}^{\varstar}]^{\cA*\cB,\tau*\sigma}(\mathbf{X})[\mathbf{S},\mathbf{Y}]}_{\tau * \sigma},
	\]
	where $(\cB,\sigma)$ is the tracial $\mathrm{W}^*$-algebra generated by a family of freely independent operators $\mathbf{S}$ each of which has mean zero and variance $1$.  In particular, if $\mathbf{G} \in BC_{\tr}(\R^{*d},\mathscr{M}(\R^{*d}))$ and $\partial \mathbf{F} \in BC_{\tr}(\R^{*d},\mathscr{M}^2)$, then $\partial [\log \Delta_\#(\mathbf{F})] \in BC_{\tr}(\R^{*d},\mathscr{M}(\R^{*d}))$.
\end{lemma}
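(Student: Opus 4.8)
The plan is to differentiate the power-series representation of $\log \Delta_\#$ used in the proof of Proposition~\ref{prop:logdeterminant}. First I would reduce to the case where $\mathbf{F}$ is close to $\Id$. Since $\log \Delta_\#$ is a continuous group homomorphism (Proposition~\ref{prop:logdeterminant}) and $\partial$ is continuous and linear, and since $GL(C_{\tr}^1(\R^{*d},\mathscr{M}(\R^{*d}))^d)$ is path-connected, write $\mathbf{F} = \mathbf{F}_1 \# \cdots \# \mathbf{F}_n$ with $\norm{\mathbf{F}_j^{\varstar} \# \mathbf{F}_j - \Id}_{C_{\tr}^1(\R^{*d},\mathscr{M}^1)^d} < 1$; then $\partial[\log \Delta_\#(\mathbf{F})] = \sum_j \partial[\log \Delta_\#(\mathbf{F}_j)]$. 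To close the reduction I would verify that the right-hand side of the asserted formula is itself additive under $\#$-products: if $\mathbf{G} = \mathbf{G}_n \# \cdots \# \mathbf{G}_1$ is the $\#$-inverse of such a product, the $\#$-product rule for $\partial$ (Theorem~\ref{thm:chainrule}) expands $\partial \mathbf{F}$ and $\partial(\mathbf{F}^{\varstar})$ into telescoping sums, and traciality of $\Tr_\#$ (equation~\eqref{eq:traciality}) together with $\mathbf{F}_i \# \mathbf{G}_i = \mathbf{G}_i^{\varstar} \# \mathbf{F}_i^{\varstar} = \Id$ cancels all cross terms, leaving $\sum_j \Tr_\#[\mathbf{G}_j \# \partial \mathbf{F}_j + \mathbf{G}_j^{\varstar} \# \partial \mathbf{F}_j^{\varstar}]$.

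In the regime $\norm{\mathbf{F}^{\varstar} \# \mathbf{F} - \Id}_{C_{\tr}^1(\R^{*d},\mathscr{M}^1)^d} < 1$, set $\mathbf{A} = \mathbf{F}^{\varstar} \# \mathbf{F}$, so that by (the proof of) Proposition~\ref{prop:logdeterminant} and~\eqref{eq:logseries},
\[
\log \Delta_\#(\mathbf{F}) = -\tfrac{1}{2} \sum_{m=1}^\infty \tfrac{1}{m} \Tr_\#\bigl[(\Id - \mathbf{A})^{\# m}\bigr],
\]
the series converging in $C_{\tr}^1$ by Lemmas~\ref{lem:submultiplicativenorm} and~\ref{lem:tracehash}. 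I would then apply $\partial$ term by term (legitimate since $\partial$ is continuous on the relevant Banach spaces), differentiate $(\Id - \mathbf{A})^{\# m}$ by the $\#$-product rule, and use traciality of $\Tr_\#$ to observe that the $m$ resulting summands coincide; this cancels the $1/m$ and turns the series into the geometric series $\sum_{m \geq 1} (\Id - \mathbf{A})^{\#(m-1)} = \mathbf{A}^{-1}$, giving
\[
\partial[\log \Delta_\#(\mathbf{F})] = \tfrac{1}{2} \Tr_\#\bigl[\mathbf{A}^{-1} \# \partial \mathbf{A}\bigr].
\]
Throughout, $\Tr_\#$ here is taken in the form that carries the differentiation direction as a frozen argument (that is, via the map $\Upsilon$ of Lemma~\ref{lem:tracelike}), and one must keep track of which slot of each bilinear form is that direction; the argument permutations in Lemma~\ref{lem:staroperation} relating $\partial(\mathbf{F}^{\varstar})$ to a permuted adjoint of $\partial \mathbf{F}$ are what make this precise.

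To reach the stated formula I would substitute $\mathbf{A}^{-1} = (\mathbf{F}^{\varstar} \# \mathbf{F})^{-1} = \mathbf{G} \# \mathbf{G}^{\varstar}$ (using $(\mathbf{F}^{\varstar})^{-1} = \mathbf{G}^{\varstar}$, which follows from~\eqref{eq:staroperation}) and $\partial \mathbf{A} = \partial(\mathbf{F}^{\varstar}) \# \mathbf{F} + \mathbf{F}^{\varstar} \# \partial \mathbf{F}$ (product rule for $\#$). The second half contributes $\tfrac{1}{2}\Tr_\#[\mathbf{G} \# \mathbf{G}^{\varstar} \# \mathbf{F}^{\varstar} \# \partial \mathbf{F}] = \tfrac{1}{2}\Tr_\#[\mathbf{G} \# \partial \mathbf{F}]$ since $\mathbf{G}^{\varstar} \# \mathbf{F}^{\varstar} = \Id$; the first half, after cycling $\mathbf{F}$ to the front via~\eqref{eq:traciality} and using $\mathbf{F} \# \mathbf{G} = \Id$, contributes $\tfrac{1}{2}\Tr_\#[\mathbf{G}^{\varstar} \# \partial \mathbf{F}^{\varstar}]$. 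Unwinding $\Tr_\#$ through~\eqref{eq:semicirculartrace} then yields $\ip*{\mathbf{S}, [\mathbf{G} \# \partial \mathbf{F} + \mathbf{G}^{\varstar} \# \partial \mathbf{F}^{\varstar}]^{\cA*\cB,\tau*\sigma}(\mathbf{X})[\mathbf{S},\mathbf{Y}]}_{\tau*\sigma}$ for $\mathbf{Y}$ self-adjoint (the two summands being complex conjugates of one another, which also exhibits reality of the expression), matching the claim.

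For the boundedness conclusion: if $\mathbf{G} \in BC_{\tr}(\R^{*d},\mathscr{M}(\R^{*d}))$ and $\partial \mathbf{F} \in BC_{\tr}(\R^{*d},\mathscr{M}^2)$, then $\mathbf{G}^{\varstar}$ and $\partial \mathbf{F}^{\varstar}$ lie in the same spaces by the isometry properties~\eqref{eq:superadjointequality}--\eqref{eq:superadjointequality2}; the composition estimate of Lemma~\ref{lem:composition} (with inner function $\id$, so the output radius equals the input radius) places $\mathbf{G} \# \partial \mathbf{F}$ and $\mathbf{G}^{\varstar} \# \partial \mathbf{F}^{\varstar}$ in $BC_{\tr}(\R^{*d},\mathscr{M}^2(\R^{*d}))$; and applying $\Upsilon$, whose norm bound~\eqref{eq:traceboundedness} is uniform in $R$, gives $\partial[\log \Delta_\#(\mathbf{F})] \in BC_{\tr}(\R^{*d},\mathscr{M}(\R^{*d}))$. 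The main obstacle I anticipate is not any single analytic estimate — the termwise differentiation of the $\#$-power series is routine given Lemma~\ref{lem:submultiplicativenorm} — but the combinatorial bookkeeping of $\#$-composition of multilinear forms: keeping straight the trace-pairing slot, the semicircular slot $\mathbf{S}$, and the differentiation direction $\mathbf{Y}$, and correctly invoking the permutations of Lemma~\ref{lem:staroperation} so that traciality of $\Tr_\#$ may be applied. Once that is set up carefully, the algebraic identities $\mathbf{F} \# \mathbf{G} = \mathbf{G}^{\varstar} \# \mathbf{F}^{\varstar} = \Id$ finish the computation.
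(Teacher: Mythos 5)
Your strategy is sound, but it is a genuinely different route from the paper's. The paper argues pointwise: for fixed $(\cA,\tau)$, $\mathbf{X}$, $\mathbf{Y}$ it sets $\Phi(t) = \pi_{\mathbf{X}+t\mathbf{Y}}^{\cA*\cB,\tau*\sigma}(\mathbf{F})$, checks that $\Phi$ is operator-norm differentiable and that the spectrum of $\Phi(t)^*\Phi(t)$ stays in a fixed interval $[\epsilon,2R-\epsilon]$ for small $t$, differentiates $\log(\Phi(t)^*\Phi(t))$ using the power series for $\log$ centered at $R$, and invokes traciality of $\ip{\mathbf{S},(-)\mathbf{S}}_{\tau*\sigma}$ on the algebra generated by $\Phi(0)$ and $\Phi'(0)$; no factorization of $\mathbf{F}$ into pieces near $\Id$ is needed, since the $\log$ series centered inside the spectral interval converges for every invertible $\mathbf{F}$. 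You instead differentiate the function-space series \eqref{eq:logseries}: reduce via the homomorphism property and path-connectedness to factors with $\norm{\mathbf{F}_j^{\varstar}\#\mathbf{F}_j-\Id}<1$, differentiate termwise in the $C_{\tr}^1$ norms (Lemma \ref{lem:submultiplicativenorm}), collapse by cyclicity, and resum the geometric series to get $\tfrac12\Tr_{\#}[\mathbf{A}^{\#-1}\#\partial\mathbf{A}]$. This works, but it costs two verifications the paper's route avoids: additivity of the claimed right-hand side under $\#$-products (your telescoping sketch is fine, but it is an extra step, and the factorization from Proposition \ref{prop:logdeterminant} is radius-dependent, which is harmless only because the identity is checked ball by ball), and the extension of the cyclicity \eqref{eq:traciality} to bilinear forms with the differentiation direction frozen — legitimate, and essentially the same pointwise traciality the paper uses, but it should be stated as such rather than cited as \eqref{eq:traciality} verbatim (your appeal to $\Upsilon$ and Lemma \ref{lem:staroperation} is the right way to set it up). What your route buys is that it stays inside the $\#$-Banach-algebra calculus and never needs the representation $\pi_{\mathbf{X}}$ or spectral localization; the boundedness conclusion is handled the same way in both arguments.

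One caveat about your last step. Your computation honestly lands on $\tfrac12\Tr_{\#}[\mathbf{G}\#\partial\mathbf{F}]+\tfrac12\Tr_{\#}[\mathbf{G}^{\varstar}\#\partial\mathbf{F}^{\varstar}]$ (direction $\mathbf{Y}$ frozen), i.e.\ the real part of $\Tr_{\#}[\mathbf{G}\#\partial\mathbf{F}]$, and the remark that "the two summands are complex conjugates" does not remove the $\tfrac12$: $\tfrac12(z+\bar z)\neq z+\bar z$. Since $\log\Delta_{\#}(\mathbf{F})=\tfrac12\Tr_{\#}[\log_{\#}(\mathbf{F}^{\varstar}\#\mathbf{F})]$, the correct derivative carries that $\tfrac12$ (test it on $\mathbf{F}(\mathbf{x})[\mathbf{y}]=e^{x_1}y_1$ in one variable, where $\log\Delta_{\#}(\mathbf{F})=\tr(x_1)$ has derivative $\tau(Y)$ while the stated right-hand side evaluates to $2\tau(Y)$). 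The same $\tfrac12$ is silently dropped at the first equality of the paper's own displayed computation, so the formula as printed appears to be off by a factor of $2$; your derivation actually exposes this, so record the $\tfrac12$ (or write the right-hand side as a real part) rather than asserting a match. This has no effect on the boundedness statement.
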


\begin{proof}
	Let us compute the directional derivatives.  Fix $(\cA,\tau) \in \mathbb{W}$.  Let $\mathbf{X}$ and $\mathbf{Y} \in \cA_{\sa}^d$, and let
	\[
	\Phi(t) = \pi_{\mathbf{X}+t\mathbf{Y}}^{\cA * \cB, \tau * \sigma}(\mathbf{F}).
	\]
	Note that for $\mathbf{Z} \in \cA_{\sa}^d$,
	\[
	\frac{d}{dt} \biggr|_{t=0} [\Phi(t)\mathbf{Z}] = \partial \mathbf{F}^{\cA*\cB,\tau*\sigma}(\mathbf{X}+t\mathbf{Y})[\mathbf{Z},\mathbf{Y}].
	\]
	Note that $\partial \mathbf{F}^{\cA*\cB,\tau*\sigma}(\mathbf{X}+t\mathbf{Y})[-,\mathbf{Y}]$ defines a bounded operator on $L^2(\cA,\tau)^d$ which depends continuously on $t$, and hence $\Phi(t)$ is differentiable in the operator norm. In particular.  For $t$ in a neighborhood of zero, $\Phi(t)$ is contained in some interval of the form $[\epsilon, 2R - \epsilon]$.  We can compute $(d/dt)|_{t=0} \log \Phi(t)^* \Phi(t)$ using the power series for $\log$ centered at $R$.  If we also apply the fact that $\ip{\mathbf{S}, (-) \mathbf{S}}_{\tau * \sigma}$ is tracial on the algebra generated by $\Phi(0)$ and $\Phi'(0)$ (for the same reason that $\Tr_\#$ is a trace), we obtain 
	\begin{align*}
		\frac{d}{dt}\biggr|_{t=0} \ip*{\mathbf{S}, \frac{1}{2} \log \Phi(t)^* \Phi(t) \mathbf{S} }_{\tau * \sigma} &= \ip*{\mathbf{S}, (\Phi(0)^* \Phi(0))^{-1} \frac{d}{dt}|_{t=0}[\Phi(t)^* \Phi(t)] \mathbf{S}}_{\tau * \sigma} \\
		&= \ip*{\mathbf{S}, \Phi(0)^{-1}(\Phi(0)^*)^{-1} [\Phi'(0)^* \Phi(0) + \Phi(0)^* \Phi'(0)] \mathbf{S}}_{\tau * \sigma} \\
		&= \ip*{\mathbf{S},[(\Phi(0)^*)^{-1} \Phi'(0)^* + \Phi(0)^{-1} \Phi'(0)] \mathbf{S}}_{\tau * \sigma},
	\end{align*}
	where the last equality follows using traciality.  This reduces to the asserted formula.  The boundedness statement then follows by inspection from the formula and the definitions of the norms.
\end{proof}

\subsection{Large $N$ limits of differential operators on $M_N(\C)_{\sa}^d$}

We have defined non-commutative analogs of the gradient, divergence, and Laplacian as well as the trace on matrix-valued functions.  Note that if $\mathbf{f} \in C_{\tr}^1(\R^{*d})^d$, then $\partial \mathbf{f}$ is the analog of the Jacobian, and we have
\[
\nabla^\dagger \mathbf{f} = \Tr_{\#}(\partial \mathbf{f}).
\]
For $f \in \tr(C_{\tr}^2(\R^{*d}))$, the analog of the Hessian matrix would be $\partial \nabla f$, and it is straightforward to check that
\[
Lf = \Tr_{\#}(\partial \nabla f).
\]

Let us now explain how the differential operators on non-commutative smooth functions describe in some sense the large $N$ limit of differential operators on $M_N(\C)_{\sa}^d$.  We have already seen that if $f \in \tr(C^1(\R^{*d}))$, then $(\nabla f)^{M_N(\C),\tr_N}$ is the classical gradient of $f^{M_N(\C),\tr_N}$ as a function on the $dN^2$-dimensional inner product space $M_N(\C)_{\sa}^d$, where the inner product is the one defined by $\tr_N$.  If $\mathbf{f} \in C_{\tr}^1(\R^{*d})^d$, then the classical divergence of $\mathbf{f}^{M_N(\C),\tr_N}$ does not equal $(\nabla^\dagger \mathbf{f})^{M_N(\C),\tr_N}$ precisely, but they agree asymptotically as $N \to \infty$ in the following sense.

\begin{lemma} \label{lem:asymptoticdivergence}
	Let $\mathbf{f} \in C_{\tr}^1(\R^{*d})^d$.  Let $\Div(\mathbf{f}^{M_N(\C),\tr_N})$ denote the classical divergence of $\mathbf{f}^{M_N(\C),\tr_N}$ as a function on the inner product space $M_N(\C)_{\sa}^d$.  Then for every $R > 0$,
	\[
	\lim_{N \to \infty} \norm*{\frac{1}{N^2} \Div(\mathbf{f}^{M_N(\C),\tr_N}) - (\nabla^\dagger \mathbf{f})^{M_N(\C),\tr_N}}_{\tr,R} = 0,
	\]
	where $\norm{\cdot}_{\tr,R}$ is as in Definition \ref{def:basicnorm} for $\cA = M_N(\C)$.  Or more explicitly,
	\begin{multline*}
		\lim_{N \to \infty} \sup \biggl\{ \norm*{\frac{1}{N^2} \Div(\mathbf{f}^{M_N(\C),\tr_N})(\mathbf{X}) - (\nabla^\dagger \mathbf{f})^{M_N(\C),\tr_N}(\mathbf{X})}_\infty: \\ \mathbf{X} \in M_N(\C)_{\sa}^d, \norm{\mathbf{X}}_\infty \leq R \biggr\} = 0.
	\end{multline*}
\end{lemma}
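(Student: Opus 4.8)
\section*{Proof proposal}

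The plan is to observe that both sides of the claimed limit depend on $\mathbf{f}$ only through $\partial\mathbf{f}$, reduce by density to the case that $\partial\mathbf{f}$ is a trace polynomial, and in that case evaluate both sides explicitly via the classical and free ``magic formulas'' for an orthonormal basis. Fix an $\R$-orthonormal basis $b_1,\dots,b_{N^2}$ of $M_N(\C)_{\sa}$ with respect to $\ip{\cdot,\cdot}_{\tr_N}$; since the $b_k$ are self-adjoint, this is also a $\C$-orthonormal basis of $M_N(\C)$. As the classical divergence is the trace of the Jacobian,
\[
\Div(\mathbf{f}^{M_N(\C),\tr_N})(\mathbf{X}) = \sum_{j=1}^d\sum_{k=1}^{N^2}\tr_N\bigl(b_k\,\partial_j f_j^{M_N(\C),\tr_N}(\mathbf{X})[b_k]\bigr),
\]
which depends on $\mathbf{f}$ only through $\partial\mathbf{f}$, while $(\nabla^\dagger\mathbf{f})^{M_N(\C),\tr_N} = (\Tr_\#(\partial\mathbf{f}))^{M_N(\C),\tr_N}$ as recalled just before the lemma. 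So it is enough to show: for every $\mathbf{F}\in C_{\tr}(\R^{*d},\mathscr{M}(\R^{*d}))^d$ and $R>0$, $\tfrac1{N^2}\sum_{j,k}\tr_N\bigl(b_k\,(\mathbf{F}^{M_N(\C),\tr_N}(\mathbf{X})[\tilde b_k^{(j)}])_j\bigr) - \Tr_\#(\mathbf{F})^{M_N(\C),\tr_N}(\mathbf{X})$ tends to $0$ in $\norm{\cdot}_{\tr,R}$, where $\tilde b_k^{(j)}$ denotes the $d$-tuple with $b_k$ in the $j$-th slot and $0$ elsewhere; the lemma is this statement applied to $\mathbf{F} = \partial\mathbf{f}$.

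For the density step I would first record two uniform bounds. By Cauchy--Schwarz, $\norm{b_k}_2 = 1$, and the fact that $\norm{\cdot}_{\mathscr{M}^1,\tr}$ dominates the $\norm{\cdot}_{2;2}$ operator norm (take $\alpha = \alpha_1 = 2$ in Definition \ref{def:Lalphamultilinearnorm}), each of the $dN^2$ summands above has modulus at most $\norm{\mathbf{F}}_{C_{\tr}(\R^{*d},\mathscr{M}^1)^d,R}$, so the first term is bounded in $\norm{\cdot}_{\tr,R}$ by $d\norm{\mathbf{F}}_{C_{\tr}(\R^{*d},\mathscr{M}^1)^d,R}$ \emph{uniformly in $N$}; and $\norm{\Tr_\#(\mathbf{F})}_{\tr,R}\le d\norm{\mathbf{F}}_{C_{\tr}(\R^{*d},\mathscr{M}^1)^d,R}$ by \eqref{eq:traceboundedness}. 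Since $\TrP(\R^{*d},\mathscr{M}(\R^{*d}))^d$ is dense in $C_{\tr}(\R^{*d},\mathscr{M}(\R^{*d}))^d$, approximating $\mathbf{F}$ by trace polynomials and using the triangle inequality (then letting $N\to\infty$ and afterward the approximation error $\to 0$) reduces the problem to $\mathbf{F}\in\TrP(\R^{*d},\mathscr{M}(\R^{*d}))^d$.

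For such an $\mathbf{F}$, each component $(\mathbf{F}(\mathbf{X})[\tilde b_k^{(j)}])_j$ is a finite sum of terms of the shapes $A(\mathbf{X})b_kB(\mathbf{X})$ and $\tr(A(\mathbf{X})b_kB(\mathbf{X}))C(\mathbf{X})$ with $A,B,C$ trace polynomials. The matrix identity $\sum_k b_kZb_k = N^2\tr_N(Z)I$ (valid for any self-adjoint $\tr_N$-orthonormal basis; see \cite[\S 2]{Cebron2013}, \cite[\S 3]{DHK2013}, \cite[\S 14.1]{JekelThesis}) gives $\tfrac1{N^2}\sum_k\tr_N(b_kAb_kB) = \tr_N(A)\tr_N(B)$ \emph{exactly}, while $\sum_k\tr_N(Pb_k)\tr_N(Qb_k) = \tr_N(PQ)$ gives $\tfrac1{N^2}\sum_k\tr_N(b_k\tr_N(Ab_kB)C) = \tfrac1{N^2}\tr_N(BAC)$, which is $O(1/N^2)$ uniformly on $\norm{\mathbf{X}}_\infty\le R$. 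On the free side, after freeness cancels the cross terms $i\ne j$ one has $\Tr_\#(\mathbf{F})(\mathbf{X}) = \sum_j(\tau*\sigma)\bigl(S_j(\mathbf{F}(\mathbf{X})[\tilde S_j^{(j)}])_j\bigr)$; the semicircular magic formula $(\tau*\sigma)(S_jAS_jB) = \tau(A)\tau(B)$ handles the first shape, and $(\tau*\sigma)(S_j\tr(AS_jB)C) = (\tau*\sigma)(BAS_j)(\tau*\sigma)(CS_j) = 0$ (since $BA,C$ are polynomials in $\mathbf{X}$, hence free from $S_j$, and $\tau(S_j) = 0$) kills the second. Comparing, $\tfrac1{N^2}\sum_{j,k}\tr_N(b_k(\mathbf{F}^{M_N}(\mathbf{X})[\tilde b_k^{(j)}])_j)$ and $\Tr_\#(\mathbf{F})^{M_N}(\mathbf{X})$ both equal $\sum_j\sum_{\text{terms }AYB}\tr_N(A)\tr_N(B)$ up to an error coming solely from the $\tr(AYB)C$ terms, which is $O(1/N^2)$ uniformly on the ball. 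Letting $N\to\infty$ completes the trace-polynomial case, hence the lemma.

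The one point requiring genuine care is the uniform-in-$N$ bound $\tfrac1{N^2}\norm{\Div(\mathbf{f}^{M_N(\C),\tr_N})}_{\tr,R}\le d\norm{\partial\mathbf{f}}_{C_{\tr}(\R^{*d},\mathscr{M}^1)^d,R}$: this is exactly what legitimizes the reduction to trace polynomials by density, and it works precisely because there are exactly $N^2$ basis elements and each contributes $O(\norm{\partial\mathbf{f}}_{C_{\tr},R})$, so the $1/N^2$ normalization is dimension-independent. It is also worth emphasizing, when organizing the trace-polynomial computation, that the entire $N$-dependence is confined to the ``trace-type'' terms $\tr(AYB)C$, where it is $O(1/N^2)$; the magic-formula evaluations themselves are standard and appear in the references cited above.
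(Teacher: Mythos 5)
Your proposal is correct, and its core — reducing by density to trace polynomials after establishing a uniform-in-$N$ bound, then matching the finite-$N$ magic formulas \eqref{eq:magicformula}--\eqref{eq:magicformula3} against the semicircular computation of $\Tr_\#$ term by term, with the trace-type terms $\tr(A y_j B)C$ producing the only discrepancy, of size $O(1/N^2)$ — is the same mechanism as in the paper. The organizational difference is worth noting: the paper does not prove the lemma directly but deduces it (together with Lemma \ref{lem:asymptoticLaplacian} and Lemma \ref{lem:classicaltrace}) from the more general Lemma \ref{lem:asymptotictrace}, which compares $\Upsilon^{(N)}$ with the operator-valued map $\Upsilon$ of Lemma \ref{lem:tracelike}; there the uniform-in-$N$ bound needed for the density step is obtained by rewriting the basis sum as a Gaussian expectation \eqref{eq:upsilonN2} and invoking $\mathbb{E}\norm{\mathbf{S}^{(N)}}_\infty^2 \leq C$, a genuine random-matrix input (cf.\ Lemma \ref{lem:operatornormtailbound}). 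You instead exploit the fact that the divergence is a scalar pairing with a single multilinear slot, so Cauchy--Schwarz in $L^2$ together with the $(2;2)$-bound contained in $\norm{\cdot}_{\mathscr{M}^1,\tr}$ and the exact count of $N^2$ basis elements already gives the bound $d\norm{\partial\mathbf{f}}_{C_{\tr}(\R^{*d},\mathscr{M}^1)^d,R}$, with \eqref{eq:traceboundedness} handling the $\Tr_\#$ side. This buys a more elementary, self-contained proof of this particular lemma (no operator-norm estimate on GUE), at the cost of generality: your bound does not extend to the operator-valued setting of Lemma \ref{lem:asymptoticLaplacian}, where the basis elements enter slots measured in $\norm{\cdot}_\infty$ and the Gaussian estimate is genuinely used, which is why the paper proves the general statement once and specializes.
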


Of course, the previous lemma also applies to the Laplacian of functions $f \in \tr(C_{\tr}(\R^{*d}))$ since the Laplacian is the divergence of the gradient.  Similar statements hold more generally for the Laplacian of functions $\mathbf{f} \in C_{\tr}(\R^{*d},\mathscr{M}(\R^{*d_1},\dots,\R^{*d_\ell}))$.  Note that $\mathbf{f}^{M_N(\C),\tr_N}$ is a map from $M_N(\C)_{\sa}^d$ to the vector space of multilinear forms $M_N(\C)_{\sa}^{d_1} \times \dots \times M_N(\C)_{\sa}^{d_\ell} \to M_N(\C)$.  The classical Laplacian of vector-valued functions on a real inner product space is defined as the sum of the second directional derivatives over an orthonormal basis (which is the same as choosing a vector basis for the target space and computing the Laplacian coordinatewise).  As per Remark \ref{rem:partialdifferentiation}, we will state the next lemma more generally in the case of the Laplacian with respect to a subset of the variables.

\begin{lemma} \label{lem:asymptoticLaplacian}
	Let $\mathbf{f} \in C_{\tr}^2(\R^{*(d+d')},\mathscr{M}(\R^{*d_1},\dots,\R^{*d_\ell}))$.  Let $\Delta_{\mathbf{x}}$ denote the Laplacian with respect to $\mathbf{x}$ of a function of variables $(\mathbf{x},\mathbf{x}') \in M_N(\C)_{\sa}^d \times M_N(\C)_{\sa}^{d'}$.  Then for every $R > 0$, we have
	\[
	\lim_{N \to \infty} \norm*{\frac{1}{N^2} \Delta_{\mathbf{x}}[\mathbf{f}^{M_N(\C),\tr_N}] - [L_{\mathbf{x}} \mathbf{f}]^{M_N(\C),\tr_N}}_{\mathscr{M}^\ell,\tr,R} = 0,
	\]
	where $\norm{\cdot}_{\mathscr{M}^\ell,\tr,R}$ is as in Definition \ref{def:basicnorm}.
\end{lemma}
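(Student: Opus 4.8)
The plan is to reduce everything to the case of a single trace polynomial and then invoke the known asymptotic formula for $(1/N^2)\Delta$ acting on trace polynomials. The first reduction is to replace $\mathbf{f}$ by a trace polynomial $\mathbf{g}$; since the seminorm $\norm{\cdot}_{\mathscr{M}^\ell,\tr,R}$ on $M_N(\C)$ is dominated (uniformly in $N$) by $\norm{\cdot}_{C_{\tr}(\R^{*(d+d')},\mathscr{M}^\ell),R}$ as in Definition \ref{def:basicnorm}, and since $L_{\mathbf{x}}$ is continuous with respect to these seminorms by Definition \ref{def:freeLaplacian} and Lemma \ref{lem:tracelike}(2), it suffices to treat trace polynomials provided I can also control $(1/N^2)\Delta_{\mathbf{x}}$ applied to $\mathbf{f}-\mathbf{g}$ uniformly in $N$. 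That uniform control is exactly the content of the classical analog of Lemma \ref{lem:submultiplicativenorm}-type estimates: $(1/N^2)\Delta_{\mathbf{x}}$ applied to a function equals the trace-over-an-orthonormal-basis of its second directional derivatives, which is bounded by $d\,\norm{\partial^2(\cdot)}_{C_{\tr},R}$ via the probabilistic representation $(1/N^2)\Delta_{\mathbf{x}} h(\mathbf{X}) = \E[\ip{\mathbf{Z},\partial^2 h(\mathbf{X})[\mathbf{Z},\mathbf{Z}]}]$ for a standard Gaussian vector $\mathbf{Z}$ in the $\tr_N$-inner-product space $M_N(\C)_{\sa}^d$ — this is the matricial counterpart of the formula defining $L$ via a semicircular family, and the Gaussian second-moment bound gives the constant $d$ uniformly in $N$. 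So a standard $3\epsilon$-argument reduces to the trace polynomial case.

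Once $\mathbf{f}$ is a trace polynomial, the statement is a known computation. By linearity and by treating coordinates separately, one reduces to a single term of the form $p_0 \tr(p_1)\cdots\tr(p_n)$ multilinear in the $\mathbf{y}$-variables. Applying $(1/N^2)\Delta_{\mathbf{x}}$ to such a product on $M_N(\C)_{\sa}^d$ produces, via the product rule, a sum of three kinds of terms: terms where both derivatives hit the same factor, terms where the two derivatives hit two different factors, and terms involving the "completion of the square" identity $\sum_b b M b = N \tr_N(M) 1$ for an orthonormal basis $b$ of $M_N(\C)_{\sa}$ (up to the correct normalization). The cross terms between two different trace factors carry an extra factor of $1/N^2$ and vanish in the limit, while the surviving terms converge coefficient-wise to precisely the formula for $L_{\mathbf{x}}$ obtained from $\partial^\dagger_{\mathbf{x}}\partial_{\mathbf{x}}$ with the semicircular substitution, because the semicircular (or Bernoulli) family reproduces exactly the same "pair up the two copies and replace by a trace" combinatorics as the $\sum_b b(\cdot)b$ operation after normalization. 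This computation has appeared in the cited references \cite{Cebron2013}, \cite{DHK2013}, \cite[\S 14.1]{JekelThesis}; here I would simply cite it, or sketch the three-term bookkeeping and the $O(1/N^2)$ estimate on the cross terms. The convergence is uniform for $\mathbf{X}$ in the operator-norm ball of radius $R$ because each error term is a trace polynomial with an explicit $O(1/N^2)$ coefficient, and trace polynomials are uniformly bounded on such balls by the estimate preceding the definition of $C_{\tr}$.

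Assembling these: given $\epsilon>0$ and $R>0$, pick a trace polynomial $\mathbf{g}$ with $\norm{\mathbf{f}-\mathbf{g}}_{C_{\tr}(\R^{*(d+d')},\mathscr{M}^\ell),R'}$ small for a slightly larger radius $R'$ controlling second derivatives; then
\[
\norm*{\frac{1}{N^2}\Delta_{\mathbf{x}}[\mathbf{f}^{M_N(\C),\tr_N}] - [L_{\mathbf{x}}\mathbf{f}]^{M_N(\C),\tr_N}}_{\mathscr{M}^\ell,\tr,R}
\]
is bounded by the corresponding quantity for $\mathbf{g}$ plus $d\,\norm{\partial^2(\mathbf{f}-\mathbf{g})}_{C_{\tr},R} + \norm{L_{\mathbf{x}}(\mathbf{f}-\mathbf{g})}_{C_{\tr}(\R^{*(d+d')},\mathscr{M}^\ell),R}$, and the first term tends to $0$ as $N\to\infty$ by the trace polynomial case while the latter two are made small by choice of $\mathbf{g}$. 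The main obstacle is none of the steps individually but rather setting up cleanly the matricial second-order analog of the probabilistic representation of $L$ — i.e., verifying that $(1/N^2)\Delta_{\mathbf{x}}$ on $M_N(\C)_{\sa}^d$ is bounded by $d$ times the $\norm{\partial^2\cdot}_{C_{\tr},R}$-seminorm uniformly in $N$, and that on trace polynomials it converges to the semicircular-defined $L_{\mathbf{x}}$ — and making sure the radius bookkeeping ($R$ versus the $R'$ needed for the second-derivative seminorm, and the slightly-enlarged ball forced by composition-type estimates) is consistent. The final assertion $L_{\mathbf{x}}+L_{\mathbf{x}'}=L$ from Remark \ref{rem:partialdifferentiation} shows there is no loss in phrasing the result for a sub-collection of variables, so one may even just prove the full-Laplacian case and deduce the rest.
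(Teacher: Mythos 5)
Your overall strategy -- a uniform-in-$N$ bound for $\tfrac{1}{N^2}\Delta_{\mathbf{x}}$ via the Gaussian/orthonormal-basis representation, plus the exact "magic formula'' computation on trace polynomials -- is the same as the paper's, but there is a concrete gap in your reduction step. You approximate $\mathbf{f}$ itself by a trace polynomial $\mathbf{g}$ and then need $\norm{\partial^2_{\mathbf{x}}(\mathbf{f}-\mathbf{g})}_{C_{\tr},R}$ small (both of your error terms, $d\norm{\partial^2(\mathbf{f}-\mathbf{g})}$ and $\norm{L_{\mathbf{x}}(\mathbf{f}-\mathbf{g})}$, are controlled only by the second derivative of the error). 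That requires density of trace polynomials in the $C_{\tr}^2$ \emph{topology}, which the paper neither proves nor uses: membership of $\mathbf{f}$ in $C_{\tr}^2$ only says that $\partial^2\mathbf{f}$ is, by itself, a $C_{\tr}$ element of the $(\ell+2)$-multilinear space, hence approximable by trace polynomials \emph{in that space}; those approximants need not be second derivatives of polynomials approximating $\mathbf{f}$. Your remark that choosing a slightly larger radius $R'$ "controls second derivatives'' does not repair this -- uniform closeness on a larger operator-norm ball gives no interior estimate on derivatives of the error in this framework (the paper even flags, after Proposition \ref{prop:kernelprojection}, that such density questions are unresolved).

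The fix is to run your own argument one level up, at $h := \partial_{\mathbf{x}}^2\mathbf{f}$, which is exactly what the paper does via Lemma \ref{lem:asymptotictrace}: write $\tfrac{1}{N^2}\Delta_{\mathbf{x}}[\mathbf{f}^{M_N(\C),\tr_N}] = \Upsilon^{(N)}\bigl((\partial_{\mathbf{x}}^2\mathbf{f})^{M_N(\C),\tr_N}\bigr)$ and $L_{\mathbf{x}}\mathbf{f} = \Upsilon\bigl((\partial_{\mathbf{x}}^2\mathbf{f})_\pi\bigr)$, prove the uniform bound $\norm{\Upsilon^{(N)}\Lambda}_{\mathscr{M}^\ell,\tr} \leq C\norm{\Lambda}_{\mathscr{M}^{\ell+2},\tr}$ and the convergence $\Upsilon^{(N)}\mathbf{g}^{M_N(\C),\tr_N} \to (\Upsilon\mathbf{g})^{M_N(\C),\tr_N}$ for trace polynomials $\mathbf{g}$, and then approximate $h$ directly by trace polynomials of $(\ell+2)$ multilinear arguments, with no requirement that they be second derivatives of anything. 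Two secondary points: your formula $\E[\ip{\mathbf{Z},\partial^2 h(\mathbf{X})[\mathbf{Z},\mathbf{Z}]}]$ conflates the divergence and Laplacian representations (it should be $\E[\partial^2 h(\mathbf{X})[\dots,\mathbf{Z},\mathbf{Z}]]$), and the constant is not $d$ from the Gaussian $L^2$ second moment: the H\"older bookkeeping in Definition \ref{def:Lalphamultilinearnorm} forces the two Gaussian slots to be measured in $L^\infty$, so the uniform constant comes from $\E\norm{\mathbf{Z}/N}_\infty^2 \leq C$ for a GUE tuple (cf.\ the discussion before Lemma \ref{lem:operatornormtailbound}), which is harmless but should be stated correctly.
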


Because the Laplacian and the divergence are both defined in terms of the map $\Upsilon$ in Lemma \ref{lem:tracelike} (and its generalization in Remark \ref{rem:partialdifferentiation}), Lemmas \ref{lem:asymptoticdivergence} and \ref{lem:asymptoticLaplacian} will follow from relating $\Upsilon$ to the trace map in the finite-dimensional setting, as we will do in Lemma \ref{lem:classicaltrace}.

We begin with some notation.  Let $d, d', \ell \in \N_0$ and $d''$, $d_1$, \dots, $d_\ell \in \N$.  Let $\mathscr{M}(M_N(\C)_{\sa}^{d_1},\dots,M_N(\C)_{\sa}^{d_\ell}; M_N(\C)^{d''})$ denote the space of real-multilinear forms $M_N(\C)_{\sa}^{d_1} \times \dots \times M_N(\C)_{\sa}^{d_\ell} \to M_N(\C)^{d''}$.

Let $\mathcal{E}$ be an orthonormal basis of $M_N(\C)_{\sa}^d$.  Then we define
\begin{multline*}
	\Upsilon^{(N)}: \mathscr{M}(M_N(\C)_{\sa}^{d_1},\dots,M_N(\C)_{\sa}^{d_\ell},M_N(\C)_{\sa}^d,M_N(\C)_{\sa}^d; M_N(\C)^{d''}) \\ \to \mathscr{M}(M_N(\C)_{\sa}^{d_1},\dots,M_N(\C)_{\sa}^{d_\ell}; M_N(\C)^{d''})
\end{multline*}
by
\begin{equation} \label{eq:upsilonN}
	(\Upsilon^{(N)}\Lambda)[\mathbf{Y}_1,\dots,\mathbf{Y}_\ell] = \sum_{\mathbf{E} \in \mathcal{E}} \Lambda[\mathbf{Y}_1,\dots,\mathbf{Y}_\ell,\mathbf{E},\mathbf{E}].
\end{equation}

\begin{lemma} \label{lem:asymptotictrace}
	Let $\Upsilon^{(N)}$ be as above and let
	\[
	\Upsilon: C_{\tr}(\R^{*(d+d')},\mathscr{M}(\R^{*d_1},\dots,\R^{*d_\ell},\R^{*d},\R^{*d}))^{d''} \to C_{\tr}(\R^{*(d+d')},\mathscr{M}(\R^{*d_1},\dots,\R^{*d_\ell}))^{d''}
	\]
	be given by
	\[
	(\Upsilon \mathbf{f})^{\cA,\tau}(\mathbf{X},\mathbf{X}')[\mathbf{Y}_1,\dots,\mathbf{Y}_\ell] = E_{\cA}[ \mathbf{f}^{\cA*\cB,\tau*\sigma}(\mathbf{X},\mathbf{X}')[\mathbf{Y}_1,\dots,\mathbf{Y}_\ell,\mathbf{S},\mathbf{S}]],
	\]
	where $(\cB,\sigma)$ is the tracial $\mathrm{W}^*$-algebra generated by a standard semicircular $d$-tuple $\mathbf{S}$.  Then for $\mathbf{f} \in C_{\tr}(\R^{*(d+d')},\mathscr{M}(\R^{*d_1},\dots,\R^{*d_\ell},\R^{*d},\R^{*d}))^{d''}$, for every $R > 0$,
	\begin{equation} \label{eq:asymptotictrace}
		\lim_{N \to \infty} \norm*{\Upsilon^{(N)} \mathbf{f}^{M_N(\C),\tr_N} - (\Upsilon \mathbf{f})^{M_N(\C),\tr_N}}_{\mathscr{M}^\ell,\tr,R} = 0.
	\end{equation}
\end{lemma}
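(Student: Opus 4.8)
The plan is to reduce to the case that $\mathbf{f}$ is an operator-valued trace polynomial and then compute both sides explicitly, matching the outcome of the basis contraction $\Upsilon^{(N)}$ term by term against the case analysis (a)--(e) in the proof of Lemma~\ref{lem:tracelike}. \textbf{Reduction to trace polynomials:} both $\mathbf{f}\mapsto\Upsilon^{(N)}\mathbf{f}^{M_N(\C),\tr_N}$ and $\mathbf{f}\mapsto(\Upsilon\mathbf{f})^{M_N(\C),\tr_N}$ are linear, and I claim they are bounded from $\norm{\cdot}_{C_{\tr}(\R^{*(d+d')},\mathscr{M}^{\ell+2})^{d''},R}$ to $\norm{\cdot}_{\mathscr{M}^\ell,\tr,R}$ by a constant independent of $N$. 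For $\Upsilon$ this is Lemma~\ref{lem:tracelike}(2) together with the fact that evaluation on $(M_N(\C),\tr_N)$ does not increase these seminorms. For $\Upsilon^{(N)}$, the crucial observation --- the finite-dimensional counterpart of the Bernoulli trick in the proof of Lemma~\ref{lem:tracelike}(2) --- is that the basis contraction is independent of the choice of orthonormal basis, so one may take $\mathcal{E}$ to be built from an orthonormal basis of $(M_N(\C)_{\sa},\tr_N)$ consisting of operators of operator norm $O(1)$ (e.g.\ the Hermitian Weyl operators), write the (normalized) contraction as an average over such bounded operators, and apply the non-commutative H\"older inequality. Since trace polynomials are dense in $C_{\tr}(\R^{*(d+d')},\mathscr{M}^{\ell+2})^{d''}$ and $\Upsilon$ is continuous, it then suffices to prove \eqref{eq:asymptotictrace} for $\mathbf{f}$ a trace polynomial.

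\textbf{The trace-polynomial computation:} by linearity and by treating output coordinates separately we may take $\mathbf{f}=\tr(p_1)\cdots\tr(p_n)\,q$ with $p_1,\dots,p_n,q$ non-commutative monomials in the $d+d'$ base variables $\mathbf{x}$, in $\mathbf{y}_1,\dots,\mathbf{y}_\ell$, and in the two extra $d$-tuples occupying slots $\ell+1,\ell+2$, the whole expression being multilinear in all the $\mathbf{y}$'s. Applying $\Upsilon^{(N)}$ substitutes the same basis element $\mathbf{E}$ into both of those slots and sums over $\mathcal{E}$. Writing $\mathcal{E}=\bigcup_{m=1}^d\{\tilde E_m:E\in\mathcal{E}_{\sa}\}$ with $\mathcal{E}_{\sa}$ an orthonormal basis of $(M_N(\C)_{\sa},\tr_N)$ and $\tilde E_m$ the $d$-tuple with $E$ in coordinate $m$, one sees at once that the sum vanishes unless the coordinate of $\mathbf{y}$ that appears equals that of $\mathbf{y}'$ (the matrix analogue of Remark~\ref{rem:alternativeupsilon}, that only $S_i$ pairs with $S_i$), and the remaining sum over $E\in\mathcal{E}_{\sa}$ is evaluated via the elementary identities
\[
\sum_{E\in\mathcal{E}_{\sa}}E\,A\,E=N^2\,\tr_N(A)\,1,\qquad \sum_{E\in\mathcal{E}_{\sa}}\tr_N(EA)\,\tr_N(EB)=\tr_N(AB),\qquad \sum_{E\in\mathcal{E}_{\sa}}\tr_N(EA)\,E=A,
\]
valid for all $A,B\in M_N(\C)$ (the first two coming from $\sum_{E\in\mathcal{E}_{\sa}}E\otimes E=N\cdot\mathrm{SWAP}$). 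Enumerating where the two insertions land --- both in one trace factor $\tr(p_k)$; both in $q$; one in a trace factor and one in $q$; in two distinct trace factors; or in distinct coordinates --- reproduces the five cases of Lemma~\ref{lem:tracelike}: the ``diagonal'' placements (both insertions inside the same factor) yield exactly the terms of $(\Upsilon\mathbf{f})^{M_N(\C),\tr_N}$, and the ``off-diagonal'' placements --- the ones annihilated by free independence in Lemma~\ref{lem:tracelike} --- yield fixed operator-valued trace polynomials carrying an extra factor of $N^{-2}$ relative to the diagonal terms. Hence $\Upsilon^{(N)}\mathbf{f}^{M_N(\C),\tr_N}=(\Upsilon\mathbf{f})^{M_N(\C),\tr_N}+R_N$ with $R_N$ a finite sum of operator-valued trace polynomials whose coefficients are $O(N^{-2})$; since a trace polynomial has $\norm{\cdot}_{\mathscr{M}^\ell,\tr,R}$ bounded uniformly in $N$, we get $\norm{R_N}_{\mathscr{M}^\ell,\tr,R}\to0$ for every $R$, which is \eqref{eq:asymptotictrace}.

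\textbf{Expected obstacle, and consequences:} the delicate part is the combinatorial bookkeeping in the trace-polynomial step --- arranging the enumeration of the two basis-vector insertions so that the diagonal placements visibly reproduce, line for line, the computations (a)--(e) of Lemma~\ref{lem:tracelike}, and so that the off-diagonal placements are visibly subleading (this is the usual $1/N^2$ genus expansion of matrix integrals appearing in bare-hands form); the analytic reduction to trace polynomials is then routine once the uniform-in-$N$ boundedness of $\Upsilon^{(N)}$ is in place. Finally, Lemmas~\ref{lem:asymptoticdivergence} and \ref{lem:asymptoticLaplacian} follow from \eqref{eq:asymptotictrace}: on $M_N(\C)_{\sa}^d$ the quantity $N^{-2}\Div(\mathbf{f}^{M_N(\C),\tr_N})$ is the normalized basis contraction applied to $\partial\circ\Phi^{-1}(\mathbf{f})$, and $N^{-2}\Delta_{\mathbf{x}}[\mathbf{f}^{M_N(\C),\tr_N}]$ is the normalized basis contraction applied to $\partial_{\mathbf{x}}^2\mathbf{f}$, so the identifications $\nabla^\dagger=\Upsilon\circ\partial\circ\Phi^{-1}$ and $L_{\mathbf{x}}=\Upsilon\circ\partial_{\mathbf{x}}^2$ turn \eqref{eq:asymptotictrace} into the asserted limits.
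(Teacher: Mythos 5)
Your proof is correct, and its skeleton is the same as the paper's: reduce to trace polynomials using a uniform-in-$N$ bound on $\Upsilon^{(N)}$ together with the continuity bound for $\Upsilon$ from Lemma \ref{lem:tracelike}(2), then verify the identity term by term for monomial-type trace polynomials via the basis-contraction identities (the paper's magic formulas \eqref{eq:magicformula}--\eqref{eq:magicformula3}), with the coordinate-matching ``diagonal'' placements reproducing $(\Upsilon\mathbf{f})^{M_N(\C),\tr_N}$ exactly and the remaining placements contributing a fixed finite family of trace polynomials with $O(N^{-2})$ coefficients, hence vanishing in $\norm{\cdot}_{\mathscr{M}^\ell,\tr,R}$. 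Where you genuinely diverge is the uniform boundedness of $\Upsilon^{(N)}$: the paper rewrites the contraction as a Gaussian average $\mathbb{E}\,\Lambda[\dots,\mathbf{S}^{(N)},\mathbf{S}^{(N)}]$ over a GUE tuple and invokes the random-matrix input $\mathbb{E}\norm{\mathbf{S}^{(N)}}_\infty^2 \le C$, whereas you use basis-independence of the contraction and pick an orthonormal basis of $(M_N(\C)_{\sa},\tr_N)$ with uniformly bounded operator norms, then apply the non-commutative H\"older inequality. This works: the contraction of a real-bilinear form over a real orthonormal basis is basis-independent, and Hermitian combinations of the clock-and-shift (Weyl) unitaries do furnish such a basis with operator norm at most $\sqrt{2}$ (the orthonormality check uses $\tr_N(W^2)=0$ for the non-self-paired Weyl elements), giving $\norm{\Upsilon^{(N)}\Lambda}_{\mathscr{M}^\ell,\tr} \le d\,C_0^2 \norm{\Lambda}_{\mathscr{M}^{\ell+2},\tr}$ uniformly in $N$; this is more elementary than the paper's route, and is precisely the finite-dimensional analogue of the Bernoulli replacement the paper itself uses in the proof of Lemma \ref{lem:tracelike}(2). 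Two incidental remarks: both your argument and the paper's implicitly work with the $1/N^2$-normalized contraction, consistent with the magic formulas and with the intended applications to $N^{-2}\Delta_{\mathbf{x}}$ and $N^{-2}\Div$, even though \eqref{eq:upsilonN} as printed omits the normalization; and your enumeration of insertion placements is, if anything, slightly more complete than the paper's explicit case list, since it covers the case of both insertions landing inside a single trace factor, which also matches $\Upsilon\mathbf{f}$ exactly.
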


\begin{proof}
	Note that we can also write
	\begin{equation} \label{eq:upsilonN2}
		(\Upsilon^{(N)}\Lambda)[\mathbf{Y}_1,\dots,\mathbf{Y}_\ell] = \mathbb{E} \Lambda[\mathbf{Y}_1,\dots,\mathbf{Y}_\ell,\mathbf{Z},\mathbf{Z}],
	\end{equation}
	where $\mathbf{Z}$ is a standard Gaussian random vector in $M_N(\C)_{\sa}^d$, that is, a Gaussian random vector with mean zero and covariance matrix $I$.  In this case $\mathbf{S}^{(N)} = (1/N^2) \mathbf{Z}$ is Gaussian unitary ensemble.  It is well-known that
	\[
	\mathbb{E} \norm{\mathbf{S}^{(N)}}_\infty^2 \leq C
	\]
	for some constant independent of $N$ (and in fact much more is true); see Lemma \ref{lem:operatornormtailbound} and the references cited in the discussion preceding that lemma.  It follows that for $\Lambda \in \mathscr{M}(M_N(\C)_{\sa}^{d_1},\dots,M_N(\C)_{\sa}^{d_\ell}, M_N(\C)_{\sa}^d, M_N(\C)_{\sa}^d; M_N(\C)^{d''})$, we have
	\[
	\norm{\Upsilon^{(N)} \Lambda}_{\mathscr{M}^\ell,\tr} \leq C \norm{\Lambda}_{\mathscr{M}^{\ell+2},\tr}.
	\]
	In particular, for $\mathbf{f} \in C_{\tr}(\R^{*(d+d')},\mathscr{M}(\R^{*d_1},\dots,\R^{*d_\ell},\R^{*d},\R^{*d}))^{d''}$, we have
	\[
	\norm{\Upsilon^{(N)} \mathbf{f}^{M_N(\C),\tr_N}}_{\mathscr{M}^\ell,\tr,R} \leq C \norm{\mathbf{f}}_{C_{\tr}(\R^{*d},\mathscr{M}^{\ell+2})^{d''},R}.
	\]
	Therefore, it suffices to prove \eqref{eq:asymptotictrace} for a dense set of $\mathbf{f} \in C_{\tr}(\R^{*(d+d')},\mathscr{M}(\R^{*d_1},\dots,\R^{*d_\ell},\R^{*d},\R^{*d}))^{d''}$, for instance for those given by trace polynomials.  Furthermore, it suffices to consider the case $d'' = 1$ since we can handle each coordinate of $\mathbf{f}$ individually.
	
	To evaluate $\Upsilon^{(N)}$ for trace polynomials, we use the following magic formula:
	\begin{equation} \label{eq:magicformula}
		\frac{1}{N^2} \sum_{\mathbf{E} \in \mathcal{E}} A E_i B E_j C = \mathbb{E} \left[ A S_i^{(N)} B S_j^{(N)} C\right] = \delta_{i=j} A \tr_N(B) C \text{ for } A, B, C \in M_N(\C).
	\end{equation}
	This can be proved, for instance, by direct computation using the orthonormal basis $\mathcal{E}_0$ given by $M_N(\C)_{\sa}$
	\[
	\mathcal{E}_0 = \{N^{1/2} E_{j,j}\}_{j=1}^N \cup \{(N/2)^{1/2}(E_{j,k} + E_{k,j})\}_{j < k} \cup \{(N/2)^{1/2}(iE_{j,k} - i E_{k,j})\}_{j < k}.
	\]
	For further detail, see \cite[Lemma 4.1]{Sengupta2008} or \cite[Proposition 3.1]{DHK2013}.  Furthermore, using traciality and the properties of orthonormal bases, we get
	\begin{multline} \label{eq:magicformula2}
		\frac{1}{N^2} \sum_{\mathbf{E} \in \mathcal{E}_0} \tr_N(AE_i) \tr_N(BE_jC) = \mathbb{E} \left[ \tr_N(AS_i) \tr_N(BS_jC) \right] \\
		= \frac{1}{N^2} \delta_{i=j} \tr_N(ACB) = \frac{1}{N^2} \delta_{i=j} \tr_N(BAC).
	\end{multline}
	This implies also that
	\begin{equation} \label{eq:magicformula3}
		\frac{1}{N^2} \sum_{\mathbf{E} \in \mathcal{E}_0} \tr_N(AE_i) BE_jC = \mathbb{E} \tr_N(AS_i) \tr_N(BS_jC) = \frac{1}{N^2} \delta_{i=j} BAC;
	\end{equation}
	this follows by computing the inner product of this matrix with any $D \in M_N(\C)$ using \eqref{eq:magicformula2} with $CD$ instead of $C$.
	
	By linearity, it suffices to evaluate $\Upsilon^{(N)}$ on the following types of polynomials in $C_{\tr}(\R^{*(d+d')},\mathscr{M}(\R^{*d_1},\dots,\R^{*d_\ell},\R^{*d},\R^{*d}))$.
	\begin{enumerate}[(a)]
		\item Suppose that
		\[
		f(\mathbf{x},\mathbf{x}')[\mathbf{y}_1,\dots,\mathbf{y}_\ell,\mathbf{s},\mathbf{s}] = f_1(\mathbf{x},\mathbf{x}',\mathbf{y}_1,\dots,\mathbf{y}_\ell) s_i f_2(\mathbf{x},\mathbf{x}',\mathbf{y}_1,\dots,\mathbf{y}_\ell) s_j f_3(\mathbf{x},\mathbf{x}',\mathbf{y}_1,\dots,\mathbf{y}_\ell),
		\]
		for some trace polynomials $f_1$, $f_2$, $f_3$.  Then we use \eqref{eq:magicformula} to compute that
		\begin{align*}
			\Upsilon^{(N)} & f^{M_N(\C),\tr_N}(\mathbf{X},\mathbf{X}')[\mathbf{Y}_1,\dots,\mathbf{Y}_\ell] \\
			&= \delta_{i=j} f_1^{M_N(\C),\tr_N}(\mathbf{X},\mathbf{X}',\mathbf{Y}_1,\dots,\mathbf{Y}_\ell) \tr_N[ f_2^{M_N(\C),\tr_N}(\mathbf{X},\mathbf{X}',\mathbf{Y}_1,\dots,\mathbf{Y}_\ell)] \\
			& \qquad f_3^{M_N(\C),\tr_N}(\mathbf{X},\mathbf{X}',\mathbf{Y}_1,\dots,\mathbf{Y}_\ell) \\
			&= (\Upsilon f)^{M_N(\C),\tr_N}(\mathbf{X},\mathbf{X}')[\mathbf{Y}_1,\dots,\mathbf{Y}_\ell].
		\end{align*}
		Hence, \eqref{eq:asymptotictrace} holds.
		\item Suppose that
		\[
		f(\mathbf{x},\mathbf{x}')[\mathbf{y}_1,\dots,\mathbf{y}_\ell,\mathbf{s},\mathbf{s}] = \tr[f_1(\mathbf{x},\mathbf{x}',\mathbf{y}_1,\dots,\mathbf{y}_\ell) s_i] f_2(\mathbf{x},\mathbf{x}',\mathbf{y}_1,\dots,\mathbf{y}_\ell) s_j f_3(\mathbf{x},\mathbf{x}',\mathbf{y}_1,\dots,\mathbf{y}_\ell).
		\]
		Then using \eqref{eq:magicformula3}, we get
		\begin{multline*}
			\Upsilon^{(N)} f^{M_N(\C),\tr_N}(\mathbf{X},\mathbf{X}')[\mathbf{Y}_1,\dots,\mathbf{Y}_\ell] \\
			= \frac{1}{N^2} \delta_{i=j} f_2^{M_N(\C),\tr_N}(\mathbf{X},\mathbf{X}',\mathbf{Y}_1,\dots,\mathbf{Y}_\ell) f_1(\mathbf{X},\mathbf{X}',\mathbf{Y}_1,\dots,\mathbf{Y}_\ell) \\ f_3^{M_N(\C),\tr_N}(\mathbf{X},\mathbf{X}',\mathbf{Y}_1,\dots,\mathbf{Y}_\ell).
		\end{multline*}
		As $N \to \infty$, the $\norm{\cdot}_{\mathscr{M}^\ell,\tr,R}$ of this expression tends to zero.  Moreover, $\Upsilon f = 0$, so \eqref{eq:asymptotictrace} holds.
		\item Finally, suppose that
		\begin{multline*}
			f(\mathbf{x},\mathbf{x}')[\mathbf{y}_1,\dots,\mathbf{y}_\ell,\mathbf{s},\mathbf{s}] \\
			= \tr[f_1(\mathbf{x},\mathbf{x}',\mathbf{y}_1,\dots,\mathbf{y}_\ell) s_i] \tr[f_2(\mathbf{x},\mathbf{x}',\mathbf{y}_1,\dots,\mathbf{y}_\ell) s_j] f_3(\mathbf{x},\mathbf{x}',\mathbf{y}_1,\dots,\mathbf{y}_\ell).
		\end{multline*}
		Then using \eqref{eq:magicformula2}, we see that $\norm{\Upsilon^{(N)} f^{M_N(\C),\tr_N}}_{\mathscr{M}^\ell,\tr,R} \to 0$ as $N \to \infty$, and also $\Upsilon f = 0$.
	\end{enumerate}
	This completes the argument.
\end{proof}

As consequences, we obtain Lemmas \ref{lem:asymptoticdivergence} and \ref{lem:asymptoticLaplacian} as well as the following lemma about the trace and log-determinant of linear transformations.

\begin{lemma} \label{lem:classicaltrace}
	Let $\mathbf{F} \in C_{\tr}(\R^{*d},\mathscr{M}^1)^d$. Then $\mathbf{F}^{M_N(\C),\tr_N}(\mathbf{X})$ defines a linear transformation $M_N(\C)^d \to M_N(\C)^d$, which has a well-defined trace $\Tr(\mathbf{F}^{M_N(\C),\tr_N}(\mathbf{X}))$.  Then for each $R > 0$,
	\[
	\lim_{N \to \infty} \sup_{\substack{\mathbf{X} \in M_N(\C)_{\sa}^d \\ \norm{\mathbf{X}}_\infty \leq R}} \left| \frac{1}{N^2} \Tr[\mathbf{F}^{M_N(\C),\tr_N}(\mathbf{X})] - [\Tr_{\#}(\mathbf{F})]^{M_N(\C),\tr_N}(\mathbf{X}) \right| = 0.
	\]
	Similarly, for each $\mathbf{F} \in GL(C_{\tr}(\R^{*d},\mathscr{M}^1)^d)$ and for every $R > 0$, we have
	\[
	\lim_{N \to \infty} \sup_{\substack{\mathbf{X} \in M_N(\C)_{\sa}^d \\ \norm{\mathbf{X}}_\infty \leq R}} \left| \frac{1}{N^2} \log \bigl|\det [\mathbf{F}^{M_N(\C),\tr_N}(\mathbf{X})] \bigr| - [\log \Delta_{\#}(\mathbf{F})]^{M_N(\C),\tr_N}(\mathbf{X}) \right| = 0.
	\]
\end{lemma}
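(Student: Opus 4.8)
The plan is to obtain both statements as corollaries of Lemma \ref{lem:asymptotictrace}, which identifies the (appropriately normalized) classical ``trace over an orthonormal basis'' operation $\Upsilon^{(N)}$ with the semicircular trace $\Upsilon$ in the large-$N$ limit, together with the structure of $\log \Delta_\#$ already set up in Proposition \ref{prop:logdeterminant}.

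\emph{The trace statement.} Fix $\mathbf{F} \in C_{\tr}(\R^{*d},\mathscr{M}^1)^d$. For $(M_N(\C),\tr_N)$ the linear operator $\mathbf{F}^{M_N(\C),\tr_N}(\mathbf{X})$ on the inner product space $M_N(\C)^d$ satisfies $\Tr[\mathbf{F}^{M_N(\C),\tr_N}(\mathbf{X})] = \sum_{\mathbf{E}\in\mathcal{E}} \ip{\mathbf{E},\mathbf{F}^{M_N(\C),\tr_N}(\mathbf{X})[\mathbf{E}]}_{\tr_N}$ for any orthonormal basis $\mathcal{E}$ of $M_N(\C)_{\sa}^d$. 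Let $g = \Phi^{-1}(\mathbf{F}) \in \tr(C_{\tr}(\R^{*d},\mathscr{M}^2))$ be the scalar-valued function with $g^{\cA,\tau}(\mathbf{X})[\mathbf{Y}_1,\mathbf{Y}_2] = \ip{\mathbf{Y}_2,\mathbf{F}^{\cA,\tau}(\mathbf{X})[\mathbf{Y}_1]}_\tau$ supplied by Lemma \ref{lem:duality}. Then $\Tr[\mathbf{F}^{M_N(\C),\tr_N}(\mathbf{X})] = \sum_{\mathbf{E}\in\mathcal{E}} g^{M_N(\C),\tr_N}(\mathbf{X})[\mathbf{E},\mathbf{E}]$, so $\tfrac1{N^2}\Tr[\mathbf{F}^{M_N(\C),\tr_N}(\mathbf{X})] = (\Upsilon^{(N)} g^{M_N(\C),\tr_N})(\mathbf{X})$ in the notation of Lemma \ref{lem:asymptotictrace} (with $\ell=0$, using the $N^{-2}$ normalization of the magic formula \eqref{eq:magicformula}), while $[\Tr_\#(\mathbf{F})]^{M_N(\C),\tr_N}(\mathbf{X}) = (\Upsilon g)^{M_N(\C),\tr_N}(\mathbf{X})$ since $\Tr_\# = \Upsilon \circ \Phi^{-1}$ (Lemma \ref{lem:tracehash}). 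The first claim is thus exactly Lemma \ref{lem:asymptotictrace} with $\ell = 0$.

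\emph{The log-determinant statement.} The first step is to reduce to $\mathbf{F}^{\varstar}\#\mathbf{F}$ close to $\Id$. Both sides are additive under $\#$-multiplication: the matrix side because the determinant of a composition of linear maps is the product of determinants, and the right side by Proposition \ref{prop:logdeterminant}. By the path-connectedness argument in the proof of Proposition \ref{prop:logdeterminant}, $\mathbf{F}$ factors as $\mathbf{F} = \mathbf{F}_1 \# \cdots \# \mathbf{F}_n$ with $\norm{\mathbf{F}_j^{\varstar}\#\mathbf{F}_j - \Id}_{C_{\tr}(\R^{*d},\mathscr{M}^1)^d,R} < 1$, so it suffices to assume $\rho := \norm{\mathbf{F}^{\varstar}\#\mathbf{F} - \Id}_{C_{\tr}(\R^{*d},\mathscr{M}^1)^d,R} < 1$. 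Since $[\mathbf{F}^{\varstar}]^{M_N(\C),\tr_N}(\mathbf{X})$ is the Hilbert-space adjoint of $\mathbf{F}^{M_N(\C),\tr_N}(\mathbf{X})$ (Lemma \ref{lem:staroperation}) and the matrix evaluation factors through the quotient map $\pi_R$, which is contractive for $\norm{\cdot}_{C_{\tr}(\R^{*d},\mathscr{M}^1)^d,R}$, one has $\norm{I - [\mathbf{F}^{\varstar}\#\mathbf{F}]^{M_N(\C),\tr_N}(\mathbf{X})}_{\mathrm{op}} \leq \rho < 1$ uniformly in $N$ and in $\mathbf{X}$ on the radius-$R$ ball. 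Expanding the operator logarithm on the left, and on the right the $\#$-logarithm of the proof of Proposition \ref{prop:logdeterminant}, one obtains
\[
\tfrac1{N^2}\log\bigl|\det[\mathbf{F}^{M_N(\C),\tr_N}(\mathbf{X})]\bigr| = -\tfrac1{2N^2}\sum_{m\geq 1}\tfrac1m \Tr\bigl[(I - [\mathbf{F}^{\varstar}\#\mathbf{F}]^{M_N(\C),\tr_N}(\mathbf{X}))^m\bigr]
\]
and
\[
[\log\Delta_\#(\mathbf{F})]^{M_N(\C),\tr_N}(\mathbf{X}) = -\tfrac12 \sum_{m\geq 1}\tfrac1m [\Tr_\#((\Id - \mathbf{F}^{\varstar}\#\mathbf{F})^{\#m})]^{M_N(\C),\tr_N}(\mathbf{X}).
\]
For each fixed $m$, the trace statement applied to $(\Id - \mathbf{F}^{\varstar}\#\mathbf{F})^{\#m} \in C_{\tr}(\R^{*d},\mathscr{M}^1)^d$ (whose matrix evaluation at $\mathbf{X}$ is $(I - [\mathbf{F}^{\varstar}\#\mathbf{F}]^{M_N(\C),\tr_N}(\mathbf{X}))^m$) shows that the $m$th terms converge uniformly on the $R$-ball; and by Lemma \ref{lem:submultiplicativenorm} and \eqref{eq:traceboundedness} both $m$th terms are bounded by $d\rho^m/m$, uniformly in $N$ and $\mathbf{X}$. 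Hence $\lim_N$ may be interchanged with $\sum_m$, which gives the second claim in the near-identity case; the general case then follows by additivity.

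\emph{Main obstacle.} The substance is the uniform (in $N$ and in $\mathbf{X}$ over the fixed radius-$R$ ball) interchange of limit and infinite sum in the log-determinant part: one needs the operator-norm ball $\{\norm{I - T}_{\mathrm{op}} \leq \rho < 1\}$ governing the convergence of the logarithmic series to be independent of $N$ and of $\mathbf{X}$, which is exactly what the factorization of the matrix evaluation through $\pi_R$ together with the submultiplicative norm of Lemma \ref{lem:submultiplicativenorm} provide. Apart from this bookkeeping, everything reduces cleanly to Lemma \ref{lem:asymptotictrace} and to the machinery already in place in Proposition \ref{prop:logdeterminant}.
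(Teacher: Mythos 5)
Your proposal is correct and follows essentially the same route as the paper: the trace statement is the $\ell=0$ case of Lemma \ref{lem:asymptotictrace} once one identifies $\frac{1}{N^2}\Tr$ with $\Upsilon^{(N)}\circ\Phi^{-1}$ (the paper's $\Tr_\# = \Upsilon\circ\Phi^{-1}$ from Lemma \ref{lem:tracehash}), and the log-determinant statement is obtained by factoring $\mathbf{F}$ into near-identity pieces and comparing the matrix logarithm series with the $\#$-series \eqref{eq:logseries} from Proposition \ref{prop:logdeterminant}, with the uniform bound $d\rho^m/m$ justifying the interchange of limit and sum. You simply supply the bookkeeping the paper leaves implicit; there is no gap.
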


\begin{proof}
	The first claim is immediate since the trace was defined in terms of $\Upsilon$ in Lemma \ref{lem:tracehash}.  The claim about the log-determinant follows by expressing the log-determinant as the trace of some function as in the proof of Proposition \ref{prop:logdeterminant}; see \eqref{eq:logseries}.
\end{proof}

We also have the following refinement which allows for uniform convergence on $\norm{\cdot}_2$-balls if $\partial \mathbf{F}$ is bounded.

\begin{lemma} \label{lem:classicaltrace2}
	Let $\mathbf{F} \in C_{\tr}^1(\R^{*d},\mathscr{M}^1)^d$ with $\partial \mathbf{F} \in BC_{\tr}(\R^{*d},\mathscr{M}^2)^d$.  Then for each $R > 0$,
	\[
	\lim_{N \to \infty} \sup_{\substack{\mathbf{X} \in M_N(\C)_{\sa}^d \\ \norm{\mathbf{X}}_2 \leq R}} \left| \frac{1}{N^2} \Tr[\mathbf{F}^{M_N(\C),\tr_N}(\mathbf{X})] - [\Tr_{\#}(\mathbf{F})]^{M_N(\C),\tr_N}(\mathbf{X}) \right| = 0.
	\]
	Similarly, if $\mathbf{F} \in GL(C_{\tr}^1(\R^{*d},\mathscr{M}^1))^d$ with $\#$-inverse given by $\mathbf{G}$, and if $\mathbf{G} \in BC_{\tr}(\R^{*d},\mathscr{M}(\R^{*d}))^d$ and $\partial \mathbf{F} \in BC_{\tr}(\R^{*d},\mathscr{M}(\R^{*d},\R^{*d}))^d$, then
	\[
	\lim_{N \to \infty} \sup_{\substack{\mathbf{X} \in M_N(\C)_{\sa}^d \\ \norm{\mathbf{X}}_2 \leq R}} \left| \frac{1}{N^2} \log \bigl|\det [\mathbf{F}^{M_N(\C),\tr_N}(\mathbf{X})] \bigr| - [\log \Delta_{\#}(\mathbf{F})]^{M_N(\C),\tr_N}(\mathbf{X}) \right| = 0.
	\]
\end{lemma}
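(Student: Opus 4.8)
The plan is to upgrade Lemma~\ref{lem:classicaltrace} from operator-norm balls to $\norm{\cdot}_2$-balls by showing that, under the stated hypotheses, both sides of each claimed identity are Lipschitz with respect to $\norm{\cdot}_1$ on $M_N(\C)_{\sa}^d$ with a Lipschitz constant independent of $N$, and then truncating eigenvalues to land inside a fixed operator-norm ball. I would first do the trace statement and then transfer the argument to the log-determinant.

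For the trace statement, write $g_N(\mathbf{X}) = \tfrac1{N^2}\Tr[\mathbf{F}^{M_N(\C),\tr_N}(\mathbf{X})]$ and $h_N(\mathbf{X}) = [\Tr_{\#}(\mathbf{F})]^{M_N(\C),\tr_N}(\mathbf{X})$. Using that $\tfrac1{N^2}\Tr[T] = \E\langle \mathbf{S}^{(N)}, T[\mathbf{S}^{(N)}]\rangle_{\tr_N}$ for a $\C$-linear $T$ on $M_N(\C)^d$, where $\mathbf{S}^{(N)}$ is a standard GUE $d$-tuple (the rescaled standard Gaussian vector of $M_N(\C)_{\sa}^d$ appearing in the magic-formula computation behind Lemma~\ref{lem:asymptotictrace}), the directional derivative of $g_N$ at $\mathbf{X}$ in a direction $\mathbf{Y}\in M_N(\C)_{\sa}^d$ is $\E\langle \mathbf{S}^{(N)}, \partial\mathbf{F}^{M_N(\C),\tr_N}(\mathbf{X})[\mathbf{S}^{(N)},\mathbf{Y}]\rangle_{\tr_N}$. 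I would bound the integrand by the non-commutative H\"older inequality, using exponents $(\infty,1)$ for the two multilinear slots of $\partial\mathbf{F}$ and once more for the pairing:
\[
\bigl|\langle \mathbf{S}^{(N)}, \partial\mathbf{F}^{M_N(\C),\tr_N}(\mathbf{X})[\mathbf{S}^{(N)},\mathbf{Y}]\rangle_{\tr_N}\bigr| \le \norm{\mathbf{S}^{(N)}}_\infty^2\, \norm{\partial\mathbf{F}}_{BC_{\tr}(\R^{*d},\mathscr{M}^2)^d}\, \norm{\mathbf{Y}}_1 .
\]
Taking expectations (so $\norm{\mathbf{S}^{(N)}}_\infty^2$ becomes $\E\norm{\mathbf{S}^{(N)}}_\infty^2$, which is bounded uniformly in $N$ by Lemma~\ref{lem:operatornormtailbound}) and integrating along segments shows $g_N$ is $C$-Lipschitz with respect to $\norm{\cdot}_1$ for a $C$ independent of $N$. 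For $h_N$: \eqref{eq:traceboundedness} with $k'=1$ together with $\partial\mathbf{F}\in BC_{\tr}$ places $\Tr_{\#}(\mathbf{F})$ in $\tr(BC_{\tr}^1(\R^{*d}))$ with $\norm{\partial\Tr_{\#}(\mathbf{F})}_{BC_{\tr}(\R^{*d},\mathscr{M}^1)}\le d\norm{\partial\mathbf{F}}_{BC_{\tr}(\R^{*d},\mathscr{M}^2)^d}$, so Remark~\ref{rem:Lipschitz} (with $\alpha=1$) makes $h_N$ uniformly $\norm{\cdot}_1$-Lipschitz too. Then, given $R>0$ and $\epsilon>0$, I would truncate: with $\psi_K(t)=\max(-K,\min(K,t))$ and $\mathbf{X}_K=(\psi_K(X_1),\dots,\psi_K(X_d))$ via functional calculus, one has $\norm{\mathbf{X}_K}_\infty\le K$ and, since $(|t|-K)_+\le t^2/K$ for $|t|>K$, $\norm{\mathbf{X}-\mathbf{X}_K}_1=\sum_j\tr_N((|X_j|-K)_+)\le \norm{\mathbf{X}}_2^2/K\le dR^2/K$ for $\norm{\mathbf{X}}_2\le R$, which is small uniformly in $N$. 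Picking $K$ with $2C\,dR^2/K<\epsilon/2$ and then $N$ large so $\sup_{\norm{\mathbf{X}'}_\infty\le K}|g_N(\mathbf{X}')-h_N(\mathbf{X}')|<\epsilon/2$ by Lemma~\ref{lem:classicaltrace}, the three-term split $|g_N(\mathbf{X})-h_N(\mathbf{X})|\le|g_N(\mathbf{X})-g_N(\mathbf{X}_K)|+|g_N(\mathbf{X}_K)-h_N(\mathbf{X}_K)|+|h_N(\mathbf{X}_K)-h_N(\mathbf{X})|$ gives $<\epsilon$ on the whole $\norm{\cdot}_2$-ball of radius $R$.

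For the log-determinant, let $\mathbf{G}$ be the $\#$-inverse of $\mathbf{F}$, so $\mathbf{G}^{M_N(\C),\tr_N}(\mathbf{X})$ is the inverse of the linear transformation $\mathbf{F}^{M_N(\C),\tr_N}(\mathbf{X})$. Differentiating $\tfrac1{N^2}\log|\det\mathbf{F}^{M_N(\C),\tr_N}(\mathbf{X})|$ yields $\re\,\E\langle\mathbf{S}^{(N)},\mathbf{G}^{M_N}(\mathbf{X})[\partial\mathbf{F}^{M_N}(\mathbf{X})[\mathbf{S}^{(N)},\mathbf{Y}]]\rangle_{\tr_N}$, which the hypotheses $\mathbf{G}\in BC_{\tr}(\R^{*d},\mathscr{M}(\R^{*d}))^d$ and $\partial\mathbf{F}\in BC_{\tr}(\R^{*d},\mathscr{M}^2)^d$ bound by $C\norm{\mathbf{Y}}_1$ uniformly in $N$ through the same H\"older bookkeeping (exponent $1$ on $\mathbf{Y}$, $\infty$ on the semicircular slots). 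On the non-commutative side, Lemma~\ref{lem:differentiatedeterminant} applied with a Bernoulli (hence operator-norm-one) $\mathbf{S}$ gives the matching estimate $|\partial[\log\Delta_{\#}(\mathbf{F})](\mathbf{X})[\mathbf{Y}]|\le C\norm{\mathbf{Y}}_1$, so $\log\Delta_{\#}(\mathbf{F})\in\tr(BC_{\tr}^1(\R^{*d}))$ is again uniformly $\norm{\cdot}_1$-Lipschitz. The truncation argument of the previous paragraph then applies verbatim with the log-determinant part of Lemma~\ref{lem:classicaltrace} replacing its trace part.

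The hard part will be recognizing that $\norm{\cdot}_2$-Lipschitz control is \emph{not} enough — a single large eigenvalue prevents $\norm{\mathbf{X}-\mathbf{X}_K}_2$ from being small uniformly over $\norm{\mathbf{X}}_2\le R$, so eigenvalue truncation fails at the level of $\norm{\cdot}_2$ — and that one must instead extract the stronger $\norm{\cdot}_1$-Lipschitz bound. This is what forces the choice of H\"older exponents $(\infty,1)$ and the appeal to the uniform bound on $\E\norm{\mathbf{S}^{(N)}}_\infty^2$; once the $\norm{\cdot}_1$-Lipschitz bounds are in place the rest is routine.
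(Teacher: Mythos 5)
Your proof is correct and follows essentially the same route as the paper: truncate the spectrum so that $\norm{\mathbf{X}-\mathbf{X}_K}_1\le \norm{\mathbf{X}}_2^2/K$, control the resulting error via the bound $\norm{\partial\mathbf{F}}_{BC_{\tr}(\R^{*d},\mathscr{M}^2)^d}\,\norm{\mathbf{S}}_\infty^2\,\norm{\cdot}_1$ together with $\E\norm{\mathbf{S}^{(N)}}_\infty^2\le C$ (and Lemma \ref{lem:differentiatedeterminant} for the log-determinant), and then invoke Lemma \ref{lem:classicaltrace} on the operator-norm ball of radius $K$. Your packaging as uniform $\norm{\cdot}_1$-Lipschitz bounds for both sides plus a three-term split is only a cosmetic variant of the paper's direct comparison of $\mathbf{F}(\mathbf{X})$ with $\mathbf{F}$ evaluated at the truncated tuple.
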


\begin{proof}
	Fix $R > 0$ and $R' > 0$.  Let $\phi_{R'}(t) = \max(-R',\min(t,R'))$.  For $(\cA,\tau) \in \mathbb{W}$ and $X \in \cA_{\sa}$, we have
	\[
	\norm{\phi_{R'}(X) - X}_1 \leq \tau(1_{\R \setminus [-R',R']}(X)) \leq \frac{1}{R'} \norm{X}_2^2
	\]
	using properties of functional calculus and Chebyshev's inequality.  Hence, letting $\mathbf{g}_{R'}^{\cA,\tau}(\mathbf{X}) = (\phi_{R'}(X_1),\dots,\phi_{R'}(X_d))$, we have
	\[
	\norm{\mathbf{g}_{R'}^{\cA,\tau}(\mathbf{X}) - \mathbf{X}}_1 \leq \frac{1}{R'} \norm{\mathbf{X}}_2^2.
	\]
	Now $\mathbf{F}(\mathbf{g}_{R'}) \in C_{\tr}(\R^{*d},\mathscr{M}^1)^d$.  Moreover, if $\mathbf{S} \in \cA_{\sa}^d$ and if $\norm{\mathbf{X}}_2 \leq R$, then by Remark \ref{rem:Lipschitz},
	\[
	|\ip{\mathbf{S}, \mathbf{F}^{\cA,\tau}(\mathbf{g}_{R'}^{\cA,\tau}(\mathbf{X})) [\mathbf{S}] }_{\tau} - \ip{\mathbf{S}, \mathbf{F}^{\cA,\tau}(\mathbf{X})[\mathbf{S}]}_{\tau}| 
	\leq \norm{\partial \mathbf{F}}_{BC_{\tr}(\R^{*d},\mathscr{M}^2)} \norm{\mathbf{S}}_\infty^2 \norm{\mathbf{g}_{R'}^{\cA,\tau}(\mathbf{X}) - \mathbf{X}}_1
	\]
	In particular, since $\Tr(\mathbf{F}^{M_N(\C),\tr_N}(\mathbf{X})$ is computed using Gaussian random vectors by \eqref{eq:upsilonN2}, and since the Gaussian unitary ensemble $\mathbf{S}^{(N)}$ satisfies $\mathbb{E} \norm{\mathbf{S}^{(N)}}_\infty^2 \leq C$ for some constant $C$, this implies that for each $N$
	\[
	\sup_{\substack{\mathbf{X} \in M_N(\C)_{\sa}^d \\ \norm{\mathbf{X}}_2 \leq R}} \left| \frac{1}{N^2} \Tr[\mathbf{F}^{M_N(\C),\tr_N}(\mathbf{X})] - \frac{1}{N^2} \Tr[(\mathbf{F}  \circ \mathbf{g}_{R'})^{M_N(\C),\tr_N}(\mathbf{X})] \right| \leq C \frac{R^2}{R'} \norm{\partial \mathbf{F}}_{BC_{\tr}(\R^{*d},\mathscr{M}^2)}.
	\]
	A similar bound holds for the error from replacing $\mathbf{F}$ with $\mathbf{F} \circ \mathbf{g}_{R'}$ in $\Tr_{\#}$.  Since $\norm{g_{R'}^{\cA,\tau}(\mathbf{X})}_\infty \leq R'$, we have
	\[
	\lim_{N \to \infty} \sup_{\substack{\mathbf{X} \in M_N(\C)_{\sa}^d \\ \norm{\mathbf{X}}_2 \leq R}} \left| \frac{1}{N^2} \Tr[(\mathbf{F} \circ \mathbf{g}_{R'})^{M_N(\C),\tr_N}(\mathbf{X})] - [\Tr_{\#}(\mathbf{F} \circ \mathbf{g})]^{M_N(\C),\tr_N}(\mathbf{X}) \right| = 0.
	\]
	Thus,
	\[
	\limsup_{N \to \infty} \sup_{\substack{\mathbf{X} \in M_N(\C)_{\sa}^d \\ \norm{\mathbf{X}}_2 \leq R}} \left| \frac{1}{N^2} \Tr[\mathbf{F}^{M_N(\C),\tr_N}(\mathbf{X})] - [\Tr_{\#}(\mathbf{F})]^{M_N(\C),\tr_N}(\mathbf{X}) \right| \leq \frac{2R^2}{R'} \norm{\partial \mathbf{F}}_{BC_{\tr}(\R^{*d},\mathscr{M}^2)}.
	\]
	Since $R'$ was arbitrary, we have finished proving the first claim.  The proof of the second claim is similar using Lemma \ref{lem:differentiatedeterminant}.
\end{proof}

\section{The free Wasserstein manifold and diffeomorphism group} \label{sec:manifold}

This section will give the definition of the free Wasserstein manifold $\mathscr{W}(\R^{*d})$ consisting of non-commutative log-densities $V$, the non-commutative diffeomorphism group $\mathscr{D}(\R^{*d})$, and the transport action $\mathscr{D}(\R^{*d}) \curvearrowright \mathscr{W}(\R^{*d})$.  It will explain as many results as can be proved by computation, and then sketch other ideas that will be carried out rigorously in the rest of the paper when $V$ is sufficiently close to the quadratic function $(1/2) \ip{\mathbf{x},\mathbf{x}}_{\tr}$.

\subsection{Definition of the manifolds} \label{subsec:manifolddefinition}

\begin{definition}
	We define the free Wasserstein manifold $\mathscr{W}(\R^{*d})$ be the set of $V \in \tr(C_{\tr}^\infty(\R^{*d}))$ such that $a \ip{\mathbf{x},\mathbf{x}}_{\tr} + b \leq V \leq a' \ip{\mathbf{x},\mathbf{x}}_{\tr} + b'$ for some $a, a' > 0$ and $b, b' \in \R$, considered modulo additive constants.  Here the inequality means that for every $(\cA,\tau) \in \mathbb{W}$ and $\mathbf{X} \in \cA_{\sa}^d$, we have $a \norm{\mathbf{X}}_2^2 + b \leq V^{\cA,\tau}(\mathbf{X}) \leq a' \norm{\mathbf{X}}_2^2 + b'$.
\end{definition}

\begin{definition}
	We define the tangent space $T_V \mathscr{W}(\R^{*d})$ as the set of equivalence classes of continuously differentiable paths $t \mapsto V_t$ from some interval $(-\epsilon,\epsilon)$ to $\tr(C_{\tr}^\infty(\R^{*d}))_{\sa}$ such that $V_0 = V$ modulo constants and such that $a \ip{\mathbf{x},\mathbf{x}}_{\tr} + b \leq V \leq a' \ip{\mathbf{x},\mathbf{x}}_{\tr} + b'$ for some $a, a' > 0$ and $b, b' \in \R$.  Here $t \mapsto V_t$ and $t \mapsto W_t$ are considered to be equivalent if $\dot{V}_0 = \dot{W}_0$ modulo constant functions.  Here ``continuously differentiable'' is interpreted in terms of the Fr\'echet topology on $\tr(C_{\tr}(\R^{*d}))_{\sa}$.
\end{definition}

\begin{definition}
	For $k \in \N_0 \cup \{\infty\}$, we define $\Diff_{\tr}^k(\R^{*d})$ as the space of functions $\mathbf{f} \in C_{\tr}^k(\R^{*d})$ such that $\mathbf{f}$ has an inverse function $\mathbf{f}^{-1} \in C_{\tr}^k(\R^{*d})$.  Similarly, we define $\BDiff_{\tr}^k(\R^{*d})$ as the space of functions $\mathbf{f} \in \Diff_{\tr}^k(\R^{*d})$ such that $\partial \mathbf{f}$, \dots, $\partial^k \mathbf{f}$ and $\partial \mathbf{f}^{-1}$, \dots, $\partial^k \mathbf{f}^{-1}$ are bounded.  We also use the notation $\Diff_{\tr}(\R^{*d}) = \Diff_{\tr}^\infty(\R^{*d})$ and $\BDiff_{\tr}(\R^{*d}) = \BDiff_{\tr}^\infty(\R^{*d})$.
\end{definition}

	\begin{observation}
	It follows from the chain rule that $\Diff_{\tr}^k(\R^{*d})$ and $\BDiff_{\tr}^k(\R^{*d})$ are groups under composition.
\end{observation}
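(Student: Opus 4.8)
The plan is to verify the group axioms directly; as the statement indicates, the chain rule (Theorem~\ref{thm:chainrule}) does essentially all the work. Associativity is free: on each $(\cA,\tau) \in \mathbb{W}$ the composition $\mathbf{f} \circ \mathbf{g}$ is defined by ordinary composition of maps $\cA_{\sa}^d \to \cA_{\sa}^d$, which is associative, and two elements of $C_{\tr}^k(\R^{*d})_{\sa}^d$ agree precisely when all of their evaluations agree. The identity element is $\id \in \TrP(\R^{*d})_{\sa}^d \subseteq C_{\tr}^\infty(\R^{*d})_{\sa}^d$, which is its own two-sided inverse; since $\partial \id = \Id$ is bounded and $\partial^j \id = 0$ for $j \geq 2$, we have $\id \in \BDiff_{\tr}^k(\R^{*d}) \subseteq \Diff_{\tr}^k(\R^{*d})$. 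Closure under inversion is immediate from the definitions: if $\mathbf{f}$ lies in $\Diff_{\tr}^k(\R^{*d})$ (resp.\ $\BDiff_{\tr}^k(\R^{*d})$), then $\mathbf{f}^{-1}$ is a $C_{\tr}^k$ function whose $C_{\tr}^k$ inverse is $\mathbf{f}$ (resp.\ and, in addition, $\partial \mathbf{f}^{-1}, \dots, \partial^k \mathbf{f}^{-1}$ and $\partial \mathbf{f}, \dots, \partial^k \mathbf{f}$ are all bounded), so $\mathbf{f}^{-1}$ belongs to the same set.

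The substantive point is closure under composition. For $\Diff_{\tr}^k(\R^{*d})$: given $\mathbf{f}, \mathbf{g}$ in $\Diff_{\tr}^k(\R^{*d})$, Theorem~\ref{thm:chainrule} gives $\mathbf{f} \circ \mathbf{g} \in C_{\tr}^k(\R^{*d})_{\sa}^d$, and likewise $\mathbf{g}^{-1} \circ \mathbf{f}^{-1} \in C_{\tr}^k(\R^{*d})_{\sa}^d$. Comparing evaluations on each $(\cA,\tau)$ shows $(\mathbf{f} \circ \mathbf{g}) \circ (\mathbf{g}^{-1} \circ \mathbf{f}^{-1}) = \mathbf{f} \circ (\mathbf{g} \circ \mathbf{g}^{-1}) \circ \mathbf{f}^{-1} = \id$, and similarly with the order reversed, so $\mathbf{g}^{-1} \circ \mathbf{f}^{-1}$ is the $C_{\tr}^k$ inverse of $\mathbf{f} \circ \mathbf{g}$, and therefore $\mathbf{f} \circ \mathbf{g} \in \Diff_{\tr}^k(\R^{*d})$.

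For $\BDiff_{\tr}^k(\R^{*d})$ it remains to check that $\mathbf{f} \circ \mathbf{g}$ and its inverse $\mathbf{g}^{-1} \circ \mathbf{f}^{-1}$ have bounded derivatives of orders $1$ through $k$. Applying the Fa\`a di Bruno formula of Theorem~\ref{thm:chainrule} with $n = 0$, for each $1 \leq k' \leq k$ the derivative $\partial^{k'}[\mathbf{f}(\mathbf{g})]$ is a finite sum of terms of the form $\bigl(\partial^j \mathbf{f}(\mathbf{g}) \# [\partial^{|B_1'|} \mathbf{g}, \dots, \partial^{|B_j'|} \mathbf{g}]\bigr)_\sigma$, where $1 \leq j \leq k'$ and $(B_1', \dots, B_j')$ is a partition of $[k']$ into nonempty blocks, so that $1 \leq |B_i'| \leq k$. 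Since permuting multilinear arguments is isometric for each seminorm, it suffices to bound the unpermuted terms. Each $\partial^{|B_i'|} \mathbf{g}$ lies in $BC_{\tr}(\R^{*d}, \mathscr{M}^{|B_i'|})^d$ because $\mathbf{g} \in \BDiff_{\tr}^k(\R^{*d})$, and composing the bounded function $\partial^j \mathbf{f}$ with $\mathbf{g}$ keeps it bounded: by the norm estimate in Lemma~\ref{lem:composition}, for every $R$ one has $\norm{\partial^j \mathbf{f}(\mathbf{g})}_{C_{\tr}(\R^{*d},\mathscr{M}^j)^d,R} \leq \norm{\partial^j \mathbf{f}}_{C_{\tr}(\R^{*d},\mathscr{M}^j)^d, R'} \leq \norm{\partial^j \mathbf{f}}_{BC_{\tr}(\R^{*d},\mathscr{M}^j)^d}$ with $R' = \norm{\mathbf{g}}_{C_{\tr}(\R^{*d})^d, R}$, the final bound being independent of $R$. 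The $\#$-product estimate of Lemma~\ref{lem:composition} then bounds the whole term by a product of $BC_{\tr}$-norms, uniformly in $R$, so $\partial^{k'}[\mathbf{f} \circ \mathbf{g}]$ is bounded. Running the same argument with $\mathbf{f}^{-1}, \mathbf{g}^{-1}$ in place of $\mathbf{f}, \mathbf{g}$ shows that $\partial^{k'}[\mathbf{g}^{-1} \circ \mathbf{f}^{-1}]$ is bounded for $1 \leq k' \leq k$; since $(\mathbf{f} \circ \mathbf{g})^{-1} = \mathbf{g}^{-1} \circ \mathbf{f}^{-1}$ by the previous paragraph, we conclude $\mathbf{f} \circ \mathbf{g} \in \BDiff_{\tr}^k(\R^{*d})$. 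There is no genuine obstacle here; the only step requiring a moment's care is reading off from the Fa\`a di Bruno formula that every derivative of $\mathbf{f}$, $\mathbf{f}^{-1}$, $\mathbf{g}$, $\mathbf{g}^{-1}$ that occurs has order between $1$ and $k$, so that the boundedness hypotheses built into $\BDiff_{\tr}^k(\R^{*d})$ genuinely apply to each factor.
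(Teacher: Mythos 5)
Your proof is correct and is exactly the argument the paper intends: the Observation is stated without proof beyond the citation of the chain rule, and your verification (closure via Theorem \ref{thm:chainrule}, with the Fa\`a di Bruno expansion plus the norm estimate of Lemma \ref{lem:composition} giving $R$-uniform bounds for the $\BDiff$ case) fills in precisely those details.
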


\begin{definition}
	Let $\mathscr{D}(\R^{*d}) := \Diff_{\tr}(\R^{*d}) \cap \BDiff_{\tr}^1(\R^{*d})$.  We define $T_{\mathbf{f}} \mathscr{D}(\R^{*d})$ as the set of continuously differentiable paths $t \mapsto \mathbf{f}_t$ from some interval $(-\epsilon,\epsilon)$ to $\mathscr{D}(\R^{*d})$ such that $\mathbf{f}_0 = \mathbf{f}$, the derivatives $\partial \mathbf{f}_t$ and $\partial \mathbf{f}_t^{-1}$ are uniformly bounded, and the maps $t \mapsto \mathbf{f}_t$ and $t \mapsto \mathbf{f}_t^{-1}$ are continuously differentiable $(-\epsilon,\epsilon) \to C_{\tr}^\infty(\R^{*d})$.  Here $t \mapsto \mathbf{f}_t$ and $t \mapsto \mathbf{g}_t$ are considered equivalent if $\dot{\mathbf{f}}_0 = \dot{\mathbf{g}}_0$.
\end{definition}

\begin{lemma} \label{lem:groupaction}
	There is a group action $\mathscr{D}(\R^{*d}) \curvearrowright \mathscr{W}(\R^{*d})$ given by
	\[
	(\mathbf{f},V) \mapsto \mathbf{f}_* V := V \circ \mathbf{f}^{-1} - \log \Delta_{\#}(\partial \mathbf{f}^{-1}).
	\]
	More generally, this formula defines an action $\Diff_{\tr}^{k+1}(\R^{*d}) \curvearrowright \tr(C_{\tr}^k(\R^{*d}))_{\sa}$.
\end{lemma}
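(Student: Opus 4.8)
The plan is to verify three things in turn: that $\mathbf{f}_* V$ is a well-defined element of the target space, that $\id_* V = V$, and that $(\mathbf{f}\circ\mathbf{g})_* = \mathbf{f}_*\circ\mathbf{g}_*$. Most of this is formal once the chain rule (Theorem~\ref{thm:chainrule}) and the properties of $\log\Delta_\#$ (Proposition~\ref{prop:logdeterminant}) are available; the only genuinely analytic point is the quadratic growth bound on $\mathbf{f}_* V$.

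First, for the smoothness part of well-definedness: since $\mathbf{f}\in\mathscr{D}(\R^{*d})\subseteq\Diff_{\tr}(\R^{*d})$ we have $\mathbf{f}^{-1}\in C_{\tr}^\infty(\R^{*d})_{\sa}^d$, so by Theorem~\ref{thm:chainrule} the composition $V\circ\mathbf{f}^{-1}$ lies in $C_{\tr}^\infty(\R^{*d})$, and it is scalar-valued and self-adjoint because $V$ is (using the characterization of $\tr(C_{\tr}^k(\R^{*d}))$ following Corollary~\ref{cor:tracemap}). Differentiating $\mathbf{f}\circ\mathbf{f}^{-1} = \id = \mathbf{f}^{-1}\circ\mathbf{f}$ with the chain rule yields
\[
(\partial\mathbf{f}\circ\mathbf{f}^{-1})\#\partial\mathbf{f}^{-1} = \Id = \partial\mathbf{f}^{-1}\#(\partial\mathbf{f}\circ\mathbf{f}^{-1}),
\]
so $\partial\mathbf{f}^{-1}\in GL(C_{\tr}^\infty(\R^{*d},\mathscr{M}(\R^{*d}))^d)$ with $\#$-inverse $\partial\mathbf{f}\circ\mathbf{f}^{-1}$; hence $\log\Delta_\#(\partial\mathbf{f}^{-1})\in\tr(C_{\tr}^\infty(\R^{*d}))$ by Proposition~\ref{prop:logdeterminant}, and it is self-adjoint (being a log-determinant). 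Thus $\mathbf{f}_* V\in\tr(C_{\tr}^\infty(\R^{*d}))_{\sa}$. The same argument with $k$ in place of $\infty$, using that $\Diff_{\tr}^{k+1}(\R^{*d})$ is a group and that $\mathbf{f}^{-1}\in C_{\tr}^{k+1}$ makes $\partial\mathbf{f}^{-1}\in C_{\tr}^k(\R^{*d},\mathscr{M}(\R^{*d}))^d$ with $\log\Delta_\#(\partial\mathbf{f}^{-1})\in\tr(C_{\tr}^k(\R^{*d}))$ and $V\circ\mathbf{f}^{-1}\in C_{\tr}^k$, disposes of the case $\Diff_{\tr}^{k+1}(\R^{*d})\curvearrowright\tr(C_{\tr}^k(\R^{*d}))_{\sa}$ (no growth hypothesis intervenes).

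\textbf{The quadratic bounds (the main obstacle).} This is where the boundedness built into $\mathscr{D}(\R^{*d})$ is used. Since $\pi_R(\partial\mathbf{f}^{-1})$ is invertible in $\mathcal{C}_R$ with inverse $\pi_R(\partial\mathbf{f}\circ\mathbf{f}^{-1})$ and $\norm{\partial\mathbf{f}\circ\mathbf{f}^{-1}}_{BC_{\tr}(\R^{*d},\mathscr{M}^1)^d}\le\norm{\partial\mathbf{f}}_{BC_{\tr}(\R^{*d},\mathscr{M}^1)^d}$, the positive element $\pi_R((\partial\mathbf{f}^{-1})^{\varstar}\#\partial\mathbf{f}^{-1})$ has spectrum in the fixed interval $[\norm{\partial\mathbf{f}}_{BC_{\tr}}^{-2},\norm{\partial\mathbf{f}^{-1}}_{BC_{\tr}}^{2}]$ independent of $(\cA,\tau)$, $\mathbf{X}$, and $R$; hence $\log\Delta_\#(\partial\mathbf{f}^{-1})$ is a bounded function, say $|\log\Delta_\#(\partial\mathbf{f}^{-1})|\le C$. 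Next, Remark~\ref{rem:Lipschitz} with $\ell = 0$ and $\alpha = 2$ shows that $\mathbf{f}$ and $\mathbf{f}^{-1}$ are globally $\norm{\cdot}_2$-Lipschitz with constants $\norm{\partial\mathbf{f}}_{BC_{\tr}}$ and $\norm{\partial\mathbf{f}^{-1}}_{BC_{\tr}}$; comparing $\mathbf{f}^{-1}(\mathbf{X})$ to $\mathbf{f}^{-1}(0)$ and $\mathbf{X} = \mathbf{f}(\mathbf{f}^{-1}(\mathbf{X}))$ to $\mathbf{f}(0)$ produces constants $c_1,c_2 > 0$ and $c_3,c_4\in\R$ with
\[
c_1\norm{\mathbf{X}}_2^2 + c_3 \le \norm{\mathbf{f}^{-1,\cA,\tau}(\mathbf{X})}_2^2 \le c_2\norm{\mathbf{X}}_2^2 + c_4
\]
for all $(\cA,\tau)\in\mathbb{W}$ and $\mathbf{X}\in\cA_{\sa}^d$. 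Substituting the two-sided bound $a\norm{\mathbf{Y}}_2^2 + b\le V^{\cA,\tau}(\mathbf{Y})\le a'\norm{\mathbf{Y}}_2^2 + b'$ into $\mathbf{f}_* V^{\cA,\tau}(\mathbf{X}) = V^{\cA,\tau}(\mathbf{f}^{-1}(\mathbf{X})) - \log\Delta_\#(\partial\mathbf{f}^{-1})^{\cA,\tau}(\mathbf{X})$ and absorbing $C$ into the additive constants yields $\tilde a\norm{\mathbf{X}}_2^2 + \tilde b\le\mathbf{f}_* V^{\cA,\tau}(\mathbf{X})\le\tilde a'\norm{\mathbf{X}}_2^2 + \tilde b'$ with $\tilde a = ac_1 > 0$ and $\tilde a' = a'c_2 > 0$, so $\mathbf{f}_* V\in\mathscr{W}(\R^{*d})$.

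\textbf{Group axioms.} The identity is immediate: $\id^{-1} = \id$, $\partial\id = \Id$, and $\log\Delta_\#(\Id) = 0$ since $\Id^{\varstar}\#\Id = \Id$, so $\id_* V = V$. For the cocycle, use $(\mathbf{f}\circ\mathbf{g})^{-1} = \mathbf{g}^{-1}\circ\mathbf{f}^{-1}$, hence $V\circ(\mathbf{f}\circ\mathbf{g})^{-1} = (V\circ\mathbf{g}^{-1})\circ\mathbf{f}^{-1}$, together with $\partial(\mathbf{g}^{-1}\circ\mathbf{f}^{-1}) = (\partial\mathbf{g}^{-1}\circ\mathbf{f}^{-1})\#\partial\mathbf{f}^{-1}$ from the chain rule; the homomorphism property in Proposition~\ref{prop:logdeterminant} then gives
\[
\log\Delta_\#\bigl(\partial(\mathbf{f}\circ\mathbf{g})^{-1}\bigr) = \log\Delta_\#(\partial\mathbf{g}^{-1}\circ\mathbf{f}^{-1}) + \log\Delta_\#(\partial\mathbf{f}^{-1}).
\]
The one extra ingredient is the reparametrization identity $\log\Delta_\#(\mathbf{H}\circ\mathbf{h}) = \log\Delta_\#(\mathbf{H})\circ\mathbf{h}$ for $\mathbf{H}\in GL(C_{\tr}^\infty(\R^{*d},\mathscr{M}(\R^{*d}))^d)$ and $\mathbf{h}\in C_{\tr}^\infty(\R^{*d})_{\sa}^d$; this follows from the pointwise characterization $\log\Delta_\#(\mathbf{H})^{\cA,\tau}(\mathbf{X}) = \log\Delta_{\mathbf{X}}^{\cA,\tau}(\mathbf{H})$ in Proposition~\ref{prop:logdeterminant} together with the observations that $\log\Delta_{\mathbf{X}}^{\cA,\tau}(\mathbf{H})$ depends only on the operator $\pi_{\mathbf{X}}^{\cA*\cB,\tau*\sigma}(\mathbf{H})$ and that $(\mathbf{H}\circ\mathbf{h})^{\cA*\cB,\tau*\sigma}(\mathbf{X}) = \mathbf{H}^{\cA*\cB,\tau*\sigma}(\mathbf{h}^{\cA,\tau}(\mathbf{X}))$, since $\mathbf{h}^{\cA,\tau}(\mathbf{X})\in\cA$. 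Combining these and expanding $\mathbf{f}_*(\mathbf{g}_* V) = (\mathbf{g}_* V)\circ\mathbf{f}^{-1} - \log\Delta_\#(\partial\mathbf{f}^{-1})$ directly matches $(\mathbf{f}\circ\mathbf{g})_* V$, so the assignment is a left action with respect to composition in $\mathscr{D}(\R^{*d})$; the identical computation with $C_{\tr}^k$ in place of $C_{\tr}^\infty$ gives the general statement. I expect every step except the quadratic bound to be bookkeeping from the chain rule and the already-established properties of $\log\Delta_\#$.
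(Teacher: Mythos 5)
Your proposal is correct and follows essentially the same route as the paper's proof: chain rule plus Proposition \ref{prop:logdeterminant} for well-definedness and the cocycle identity, and boundedness of $\log\Delta_\#(\partial\mathbf{f}^{-1})$ together with the $\norm{\cdot}_2$-Lipschitz bounds of Remark \ref{rem:Lipschitz} on $\mathbf{f}$ and $\mathbf{f}^{-1}$ for the quadratic growth. The only difference is cosmetic: you spell out the reparametrization identity $\log\Delta_\#(\mathbf{H}\circ\mathbf{h}) = \log\Delta_\#(\mathbf{H})\circ\mathbf{h}$ (and its justification via the pointwise Fuglede--Kadison characterization), which the paper uses silently in its displayed computation.
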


\begin{proof}
	First, note that if $V \in \tr(C_{\tr}^k(\R^{*d}))_{\sa}$ and $\mathbf{f} \in \Diff_{\tr}^{k+1}(\R^{*d})$, then $\mathbf{f}_* V \in \tr(C_{\tr}^k(\R^{*d}))_{\sa}$.  Indeed, Theorem \ref{thm:chainrule} shows that $V \circ \mathbf{f}^{-1} \in \tr(C_{\tr}^k(\R^{*d}))_{\sa}$, and Proposition \ref{prop:logdeterminant} shows that $\log \Delta_\#(\partial \mathbf{f}^{-1}) \in \tr(C_{\tr}^k(\R^{*d}))_{\sa}$.
	
	To show that $\mathbf{f}_* (\mathbf{g}_* V) = (\mathbf{f} \circ \mathbf{g})_* V$, observe that
	\begin{align*}
		V \circ (\mathbf{f} \circ \mathbf{g})^{-1} - \log \Delta_{\#}(\partial(\mathbf{f} \circ \mathbf{g})^{-1}) &= (V \circ \mathbf{g}^{-1}) \circ \mathbf{f}^{-1} - \log \Delta_{\#}((\partial\mathbf{g}^{-1} \circ \mathbf{f}^{-1}) \# \partial \mathbf{f}^{-1}) \\
		&= (V \circ \mathbf{g}^{-1} - \log \Delta_{\#}(\partial \mathbf{g}^{-1})) \circ \mathbf{f}^{-1} - \log \Delta_{\#}(\partial \mathbf{f}^{-1}).
	\end{align*}
	
	To complete the proof that $\mathscr{D}(\R^{*d})$ acts on $\mathscr{W}(\R^{*d})$, it suffices to show that if $\mathbf{f} \in \BDiff_{\tr}^1(\R^{*d})$ and $V \in \tr(C_{\tr}(\R^{*d}))_{\sa}$ satisfies $a \ip{\mathbf{x},\mathbf{x}}_{\tr} + b \leq V \leq a' \ip{\mathbf{x},\mathbf{x}}_{\tr} + b'$, then $\mathbf{f}_* V$ satisfies similar bounds.  Now $\partial \mathbf{f}^{-1}$ and its inverse $\partial \mathbf{f} \circ \mathbf{f}^{-1}$ are both bounded.  This implies a uniform bound, independent of $R$, on the $\mathrm{C}^*$-norms $\norm{\partial \mathbf{f}^{-1}}_{\mathrm{C}^*,R}$ and $\norm{(\partial \mathbf{f}^{-1})^{\# -1}}_{\mathrm{C}^*,R}$ used in the definition of $\log \Delta_\#$.  Hence, $\log \Delta_{\#}(\partial \mathbf{f}^{-1})$ is bounded.  Thus, it remains to show that $V \circ \mathbf{f}^{-1}$ has quadratic upper and lower bounds.  But note that $\mathbf{f}^{-1}$ and $\mathbf{f}$ both have bounded first derivative, and thus they are both uniformly Lipschitz with respect to $\norm{\cdot}_2$ by Remark \ref{rem:Lipschitz}, and hence for all $(\cA,\tau) \in \mathbb{W}$ and $\mathbf{X} \in \cA_{\sa}^d$,
	\[
	\norm{\mathbf{f}^{-1}(0)}_2 + \frac{1}{\norm{\partial \mathbf{f}}_{BC_{\tr}(\R^{*d},\mathscr{M}^1)}} \norm{\mathbf{X}}_2 \leq \norm{(\mathbf{f}^{-1})^{\cA,\tau}(\mathbf{X})}_2 \leq \norm{\mathbf{f}^{-1}(0)}_2 + \norm{\partial \mathbf{f}^{-1}}_{BC_{\tr}(\R^{*d},\mathscr{M}^1)} \norm{\mathbf{X}}_2.
	\]
	Substituting this into the given bounds for $V$ completes the argument.
\end{proof}

The group action $\mathscr{D}(\R^{*d}) \curvearrowright \mathscr{W}(\R^{*d})$ produces a map from $T_{\id}(\mathscr{D}(\R^{*d}))$ to $T_V \mathscr{W}(\R^{*d})$.  This transformation from ``infinitesimal transport maps'' to perturbations of $V$ is described as follows.  For the classical analog, see \cite[Theorem 3.5]{Lafferty1988}.

\begin{lemma} \label{lem:groupactiontangent}
	Let $(-\epsilon,\epsilon) \to \mathscr{D}(\R^{*d}): t \mapsto \mathbf{f}_t$ be a tangent vector at $\id$ in $\mathscr{D}(\R^{*d})$, and let $V \in \mathscr{W}(\R^{*d})$.  Then $t \mapsto V_t := (\mathbf{f}_t)_* V$ is a tangent vector at $V$ in $\mathscr{W}(\R^{*d})$.  Moreover, we have
	\[
	\dot{V}_0 = -\nabla_V^* \dot{\mathbf{f}}_0,
	\]
	where
	\[
	\nabla_V^* \mathbf{h} := -\Tr_{\#}(\partial \mathbf{h}) + \partial V \# \mathbf{h} \text{ for } \mathbf{h} \in C_{\tr}^1(\R^{*d})^d.
	\]
\end{lemma}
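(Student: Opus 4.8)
The plan is to compute $\dot V_0 = \frac{d}{dt}\big|_{t=0}(\mathbf f_t)_* V$ directly from the formula $(\mathbf f_t)_* V = V\circ \mathbf f_t^{-1} - \log\Delta_\#(\partial \mathbf f_t^{-1})$, differentiating the two summands separately via the chain rule (Theorem \ref{thm:chainrule}) and the formula for $\partial \log\Delta_\#$ in Lemma \ref{lem:differentiatedeterminant}. The main preliminary fact needed is the identity $\frac{d}{dt}\big|_{t=0}\mathbf f_t^{-1} = -\dot{\mathbf f}_0 = -\mathbf h$ (writing $\mathbf h := \dot{\mathbf f}_0$), which follows by differentiating $\mathbf f_t \circ \mathbf f_t^{-1} = \id$ at $t=0$: since $\mathbf f_0 = \id$ and $\partial \mathbf f_0 = \Id$, the chain rule gives $\dot{\mathbf f}_0 \circ \id + \partial\mathbf f_0 \#\, \frac{d}{dt}|_{t=0}\mathbf f_t^{-1} = 0$, i.e. $\frac{d}{dt}|_{t=0}\mathbf f_t^{-1} = -\mathbf h$. (One should check that the hypotheses of $T_{\id}\mathscr D(\R^{*d})$ — uniform boundedness of $\partial\mathbf f_t$, $\partial\mathbf f_t^{-1}$, and continuous differentiability of $t\mapsto\mathbf f_t,\mathbf f_t^{-1}$ in $C_{\tr}^\infty$ — make all these manipulations legitimate.)

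Next I would differentiate the first term. By the chain rule, $\frac{d}{dt}\big|_{t=0}(V\circ\mathbf f_t^{-1}) = (\partial V\circ\mathbf f_0^{-1})\#\frac{d}{dt}|_{t=0}\mathbf f_t^{-1} = \partial V \# (-\mathbf h) = -\,\partial V\#\mathbf h$, using $\mathbf f_0^{-1}=\id$. For the second term, I apply Lemma \ref{lem:differentiatedeterminant}: writing $\mathbf F_t = \partial\mathbf f_t^{-1}$ and noting $\mathbf F_0 = \Id$ (so its $\#$-inverse is also $\Id$), the formula there, together with $\frac{d}{dt}|_{t=0}\mathbf F_t = \partial\big(\frac{d}{dt}|_{t=0}\mathbf f_t^{-1}\big) = -\partial\mathbf h$, gives
\[
\frac{d}{dt}\Big|_{t=0}\log\Delta_\#(\partial\mathbf f_t^{-1})^{\cA,\tau}(\mathbf X)[\mathbf Y] = \ip{\mathbf S, [\,\Id\#(-\partial\mathbf h) + \Id\#(-\partial\mathbf h^{\varstar})\,]^{\cA*\cB,\tau*\sigma}(\mathbf X)[\mathbf S,\mathbf Y]}_{\tau*\sigma}.
\]
Since $\mathbf h$ is self-adjoint and its values are self-adjoint tuples, $(\partial\mathbf h)^{\varstar}$ and $\partial\mathbf h$ should agree when paired against $\mathbf S,\mathbf S$ inside the $\ip{\mathbf S,\cdot\,\mathbf S}$ bracket — more precisely, $\ip{\mathbf S,\partial\mathbf h[\mathbf S]}_{\tau*\sigma}$ is real, so $\ip{\mathbf S,\partial\mathbf h^{\varstar}[\mathbf S]}=\overline{\ip{\mathbf S,\partial\mathbf h[\mathbf S]}}=\ip{\mathbf S,\partial\mathbf h[\mathbf S]}$. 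Hence the whole thing equals $-2\cdot\frac12\ip{\mathbf S,\partial\mathbf h^{\cA*\cB,\tau*\sigma}(\mathbf X)[\mathbf S]}_{\tau*\sigma}$... and I should reconcile the factor against the definition $\Tr_\#(\mathbf G)^{\cA,\tau}(\mathbf X) = \ip{\mathbf S,\mathbf G^{\cA*\cB}(\mathbf X)[\mathbf S]}_{\tau*\sigma}$ from Lemma \ref{lem:tracehash}, being careful that Lemma \ref{lem:differentiatedeterminant} already packages the $\varstar$-term so that no spurious factor of $2$ survives; the upshot is $\frac{d}{dt}|_{t=0}\log\Delta_\#(\partial\mathbf f_t^{-1}) = -\Tr_\#(\partial\mathbf h)$.

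Combining, $\dot V_0 = -\partial V\#\mathbf h - (-\Tr_\#(\partial\mathbf h)) = \Tr_\#(\partial\mathbf h) - \partial V\#\mathbf h = -\nabla_V^*\mathbf h$, which is exactly the claimed formula. It remains to note that $t\mapsto V_t$ is genuinely a tangent vector at $V$ in $\mathscr W(\R^{*d})$: this is essentially the content of Lemma \ref{lem:groupaction}, which shows $V_t = (\mathbf f_t)_* V \in \mathscr W(\R^{*d})$ with uniform quadratic bounds, combined with the continuous differentiability of $t\mapsto\mathbf f_t$ and $t\mapsto\mathbf f_t^{-1}$ in $C_{\tr}^\infty$ (from the definition of $T_{\id}\mathscr D(\R^{*d})$), which propagates through the chain rule and through $\log\Delta_\#$ (continuous by Proposition \ref{prop:logdeterminant}) to give continuous differentiability of $t\mapsto V_t$ in $\tr(C_{\tr}^\infty(\R^{*d}))_{\sa}$; one should also note the quadratic bounds on $V_t$ can be taken uniform in $t$ near $0$, so $t\mapsto V_t$ stays in the class defining $T_V\mathscr W(\R^{*d})$. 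The main obstacle I anticipate is purely bookkeeping: correctly tracking the self-adjointness/$\varstar$-symmetry so that the $\log\Delta_\#$ derivative collapses to a single $\Tr_\#(\partial\mathbf h)$ with the right sign and no extraneous factor, and verifying the differentiability/uniform-bound hypotheses are preserved — there is no deep difficulty, just care with the conventions established in Lemmas \ref{lem:duality}, \ref{lem:tracelike}, \ref{lem:tracehash}, and \ref{lem:differentiatedeterminant}.
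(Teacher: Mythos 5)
Your overall plan (differentiate the two summands of $(\mathbf f_t)_*V=V\circ\mathbf f_t^{-1}-\log\Delta_\#(\partial\mathbf f_t^{-1})$, use $\frac{d}{dt}\big|_{t=0}\mathbf f_t^{-1}=-\dot{\mathbf f}_0$, conclude $\dot V_0=-\nabla_V^*\dot{\mathbf f}_0$) is the same as the paper's, and the value you obtain at $t=0$ is correct. However, there are two genuine gaps. First, Lemma \ref{lem:differentiatedeterminant} is not the right tool for the $t$-derivative: it computes the \emph{spatial} derivative $\partial[\log\Delta_\#(\mathbf F)]^{\cA,\tau}(\mathbf X)[\mathbf Y]$ of a \emph{fixed} $\mathbf F$, not the derivative of $\log\Delta_\#$ along a path $t\mapsto\mathbf F_t$ in $GL(C_{\tr}^k(\R^{*d},\mathscr M(\R^{*d}))^d)$. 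Indeed your displayed identity does not type-check: the left side $\frac{d}{dt}\big|_{t=0}\log\Delta_\#(\partial\mathbf f_t^{-1})$ is a scalar-valued function of $\mathbf X$ with no tangent slot $[\mathbf Y]$, while on the right you feed $\partial\mathbf h$ (which accepts one argument) the pair $[\mathbf S,\mathbf Y]$. The correct route, which is what the paper does (and what the internal proof of Lemma \ref{lem:differentiatedeterminant} mimics), is to expand $\log\Delta_\#$ near $\Id$ via the series $-\tfrac12\sum_{m\ge 1}\tfrac1m\Tr_\#[(\Id-\mathbf F_t^{\varstar}\#\mathbf F_t)^{\#m}]$, which gives $\frac{d}{dt}\big|_{t=0}\log\Delta_\#(\mathbf F_t)=\tfrac12\Tr_\#(\dot{\mathbf F}_0+\dot{\mathbf F}_0^{\varstar})$, and then to use that $\partial\mathbf g_t$ maps self-adjoint tuples to self-adjoint tuples, so $\Tr_\#((\partial\mathbf h)^{\varstar})=\Tr_\#(\partial\mathbf h)$ and the half and the two terms combine to give exactly $-\Tr_\#(\partial\mathbf h)$. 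Your remark that one should check ``no spurious factor of $2$ survives'' is precisely the point that requires this argument; it is not delivered by citing Lemma \ref{lem:differentiatedeterminant}.

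Second, the lemma claims that $t\mapsto V_t$ is a tangent vector at $V$, and by the definition of $T_V\mathscr W(\R^{*d})$ this requires \emph{continuous differentiability of $t\mapsto V_t$ on the whole interval}, not merely a derivative at $t=0$. Your closing assertion that this ``propagates through $\log\Delta_\#$ (continuous by Proposition \ref{prop:logdeterminant})'' conflates continuity of the map $\log\Delta_\#$ with differentiability of $t\mapsto\log\Delta_\#(\partial\mathbf g_t)$; continuity gives nothing here. At a general $t$ the element $\partial\mathbf g_t$ is not close to $\Id$, the $\log_\#$ series does not converge there, and no stated lemma provides the derivative of $\log\Delta_\#$ along a path at a general base point. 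The missing idea, supplied in the paper's proof, is the cocycle factorization $\mathbf g_{t+\delta}=\mathbf g_{t+\delta,t}\circ\mathbf g_t$ together with additivity of the Fuglede--Kadison log-determinant, which gives $\log\Delta_\#(\partial\mathbf g_{t+\delta})-\log\Delta_\#(\partial\mathbf g_t)=(\log\Delta_\#\,\partial\mathbf g_{t+\delta,t})\circ\mathbf g_t$ and reduces everything to a derivative at $\Id$, yielding $\frac{d}{dt}\log\Delta_\#(\partial\mathbf g_t)=\Tr_\#(\partial\dot{\mathbf g}_t\#\partial\mathbf f_t\circ\mathbf g_t)$, which is visibly continuous in $t$. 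Without this (or an equivalent factorization using $\log\Delta_\#(\mathbf F\#\mathbf G)=\log\Delta_\#(\mathbf F)+\log\Delta_\#(\mathbf G)$), the ``tangent vector'' half of the statement remains unproved; the quadratic bounds themselves are indeed covered by Lemma \ref{lem:groupaction}, as you note.
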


\begin{proof}
	Let $\mathbf{g}_t = \mathbf{f}_t^{-1}$.  Note that $\dot{V}_t = \partial V(\mathbf{g}_t)[\dot{\mathbf{g}}_t]$, which depends continuously on $t$ in $\tr(C_{\tr}^\infty(\R^{*d}),\mathscr{M}(\R^{*d}))$ by Theorem \ref{thm:chainrule}.  Next, we claim that
	\[
	\frac{d}{dt} \log \Delta_\#(\partial \mathbf{g}_t) = \Tr_{\#}(\partial \dot{\mathbf{g}}_t \# \partial \mathbf{f}_t \circ \mathbf{g}_t ).
	\]
	Let $\mathbf{g}_{s,t} = \mathbf{g}_s \circ \mathbf{g}_t^{-1}$.  Then for small $\delta \in \R$, we have
	\[
	\partial \mathbf{g}_{t + \delta} = (\partial \mathbf{g}_{t+\delta,t} \circ \mathbf{g}_t) \# \partial \mathbf{g}_t,
	\]
	hence
	\[
	\log \Delta_{\#}(\partial \mathbf{g}_{t + \delta}) - \log \Delta_{\#}(\partial \mathbf{g}_t) = (\log \Delta_{\#} \partial \mathbf{g}_{t+\delta,t}) \circ \mathbf{g}_t.
	\]
	Note $\mathbf{g}_{t+\delta,t} \to \id$ in $C_{\tr}(\R^{*d})^d$ as $\delta \to 0$ and satisfies
	\[
	\frac{d}{d\delta} \biggr|_{\delta = 0} \mathbf{g}_{t+\delta,t} = \dot{\mathbf{g}}_t \circ \mathbf{g}_t^{-1}.
	\]
	For each $R > 0$ and $k > 0$, the series expansion
	\[
	\log \Delta_{\#}(\partial \mathbf{g}_{t+\delta,t}) = - \frac{1}{2} \sum_{m=1}^\infty \frac{1}{m} \Tr_{\#}[(\Id - (\partial \mathbf{g}_{t+\delta,t})^{\varstar} \# \partial \mathbf{g}_{t+\delta,t})^{\#m}]
	\]
	converges in $\norm{\cdot}_{C^k(\R^{*d},\mathscr{M}(\R^{*d}))^d,R}$ for sufficiently small $\delta$.  Therefore,
	\begin{align*}
		\frac{d}{d\delta} \biggr|_{\delta = 0} \log \Delta_{\#}(\partial \mathbf{g}_{t+\delta,t})
		&= \frac{1}{2} \Tr_{\#} \left( \frac{d}{d\delta} \biggr|_{\delta = 0} (\mathbf{g}_{t+\delta,t})^{\varstar} \# \partial \mathbf{g}_{t+\delta,t} \right) \\
		&= \frac{1}{2} \Tr_{\#} \left( \frac{d}{d\delta} \biggr|_{\delta = 0} (\partial \mathbf{g}_{t+\delta,t} +  (\partial \mathbf{g}_{t+\delta,t})^{\varstar}) \right).
	\end{align*}
	Now $\partial \mathbf{g}_{t+\delta,t}^{\cA,\tau}(\mathbf{X})$ maps $\cA_{\sa}^d \to \cA_{\sa}^d$ for any $(\cA,\tau)$.  Therefore, if $(\cB,\sigma)$ is the tracial $\mathrm{W}^*$-algebra generated by a semicircular $d$-tuple $\mathbf{S}$, then $\partial \mathbf{g}_{t+\delta,t}^{\cA*\cB,\sigma*\tau}(\mathbf{X})[\mathbf{S}]$ is self-adjoint and hence
	\[
	\ip{\mathbf{S}, \partial \mathbf{g}_{t+\delta,t}^{\cA*\cB,\sigma*\tau}(\mathbf{X})[\mathbf{S}]}_{\tau*\sigma} = \ip{\partial \mathbf{g}_{t+\delta,t}^{\cA*\cB,\sigma*\tau}(\mathbf{X})[\mathbf{S}],\mathbf{S}}_{\tau*\sigma} = \ip{\mathbf{S},((\partial \mathbf{g}_{t+\delta,t})^{\varstar})^{\cA*\cB,\sigma*\tau}(\mathbf{X})[\mathbf{S}]}_{\tau*\sigma}.
	\]
	Hence, $\Tr_{\#}((\partial \mathbf{g}_{t+\delta,t})^{\varstar}) = \Tr_{\#}(\partial \mathbf{g}_{t+\delta,t})$, which implies that
	\begin{align*}
		\frac{d}{d\delta} \biggr|_{\delta = 0} \log \Delta_{\#}(\partial \mathbf{g}_{t+\delta,t}) &= \Tr_{\#} \left( \frac{d}{d\delta} \biggr|_{\delta = 0} \partial \mathbf{g}_{t+\delta,t}  \right) \\
		&= \Tr_{\#}(\partial(\dot{g}_t \circ \mathbf{g}_t^{-1})) \\
		&= \Tr_{\#}(\partial \mathbf{g}_t \circ \mathbf{g}_t^{-1} \# \partial (\mathbf{g}_t^{-1})).
	\end{align*}
	Thus,
	\[
	\frac{d}{dt} \log \Delta_{\#}(\partial \mathbf{g}_t)
	= \Tr_{\#}(\partial \dot{\mathbf{g}}_t \circ \mathbf{g}_t^{-1} \# \partial (\mathbf{g}_t^{-1})) \circ \mathbf{g}_t
	= \Tr_{\#}(\partial \dot{\mathbf{g}}_t \# \partial \mathbf{f}_t \circ \mathbf{g}_t ).
	\]
	This is continuous in $t$ by Theorem \ref{thm:chainrule} and Proposition \ref{prop:logdeterminant}.  Hence, $t \mapsto \log \Delta_{\#}(\partial \mathbf{g}_t)$ is continuously differentiable as desired.  The above computations also show that
	\[
	\dot{V}_0 = \frac{d}{dt} \biggr|_{t=0} [V \circ \mathbf{g}_t - \log \Delta_{\#}(\partial \mathbf{g}_t)]
	= \partial V \# \dot{\mathbf{g}}_0 - \Tr_{\#}(\partial \dot{\mathbf{g}}_0)
	= -\partial V \# \dot{\mathbf{f}}_0 + \Tr_{\#}(\partial \dot{\mathbf{f}}_0)
	= -\nabla_V^* \dot{\mathbf{f}}_0.
	\qedhere
	\]
\end{proof}

\subsection{Paths from infinitesimal transport}

Given a tangent vector $t \mapsto \mathbf{f}_t$ of the identity in $\mathscr{D}(\R^{*d})$, the function $\dot{\mathbf{f}}_0 \in C_{\tr}(\R^{*d})_{\sa}^d$ can be viewed as a $d$-dimensional vector field.  The next lemma describes how to construct a path in $\mathscr{D}(\R^{*d})$ as the flow of a family of vector fields.

\begin{lemma} \label{lem:flow}
	Let $t \mapsto \mathbf{h}_t$ be a continuous map $[0,T] \to C_{\tr}^1(\R^{*d})_{\sa}^d$ such that $\norm{\partial \mathbf{h}_t}_{BC_{\tr}(\R^{*d},\mathscr{M}^1)^d}$ is bounded by a constant $M$.  Then there exist continuous maps $t \mapsto \mathbf{f}_t$ and $t \mapsto \mathbf{g}_t$ from $[0,T]$ to $C_{\tr}^1(\R^{*d})_{\sa}^d$ satisfying
	\begin{align*}
		\mathbf{f}_t &= \id + \int_0^t \mathbf{h}_u \circ \mathbf{f}_u\,du \\
		\mathbf{g}_t &= \id - \int_0^t \mathbf{h}_{t-u} \circ \mathbf{g}_u\,du
	\end{align*}
	and
	\[
	\mathbf{f}_t \circ \mathbf{g}_t = \mathbf{g}_t \circ \mathbf{f}_t = \id
	\]
	and
	\begin{align*}
		\norm{\partial \mathbf{f}_t }_{BC_{\tr}(\R^{*d},\mathscr{M}^1)^d} &\leq e^{Mt}, &
		\norm{\partial \mathbf{g}_t }_{BC_{\tr}(\R^{*d},\mathscr{M}^1)^d} &\leq e^{Mt}.
	\end{align*}
	Furthermore, for $k \geq 1$, if $t \mapsto \mathbf{h}_t$ is a continuous map into $C_{\tr}^k(\R^{*d})_{\sa}^d$, then so are $t \mapsto \mathbf{f}_t$ and $t \mapsto \mathbf{g}_t$.  If in addition $\norm{\partial^{k'} \mathbf{h}_t}_{BC_{\tr}(\R^{*d},\mathscr{M}^{k'})^d}$ is bounded for each $1 \leq k' \leq k$, then the same holds for $\mathbf{f}_t$ and $\mathbf{g}_t$.
\end{lemma}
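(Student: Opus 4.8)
The plan is to construct $\mathbf{f}_t$ and $\mathbf{g}_t$ by Picard iteration in the Fr\'echet space $C_{\tr}^1(\R^{*d})_{\sa}^d$, using the uniform bound $M$ on $\norm{\partial \mathbf{h}_t}$ to obtain the a priori derivative estimate $e^{Mt}$, and then to bootstrap to higher derivatives. First I would set up the integral operator: for a continuous path $t \mapsto \mathbf{u}_t$ in $C_{\tr}^1(\R^{*d})_{\sa}^d$, define $(\Phi \mathbf{u})_t = \id + \int_0^t \mathbf{h}_u \circ \mathbf{u}_u\,du$, where the integral is a Riemann integral in the Fr\'echet space $C_{\tr}^1(\R^{*d})_{\sa}^d$ (valid for continuous integrands, as noted earlier in the paper). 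Using Lemma~\ref{lem:composition} (joint continuity of composition) and the Lipschitz-type bound from Remark~\ref{rem:Lipschitz}, one checks that on a short enough time interval $\Phi$ is a contraction with respect to $\sup_{t \in [0,T_0]} \norm{\cdot}_{C_{\tr}(\R^{*d})^d,R}$ for each fixed $R$; a standard iteration-on-subintervals argument (the rate depending only on $M$ and $\norm{\mathbf{h}}$ on the relevant ball) then produces a global continuous solution $t \mapsto \mathbf{f}_t$. The same construction with $\mathbf{h}_{t-u}$ in place of $\mathbf{h}_u$ gives $\mathbf{g}_t$.

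Next I would establish the derivative bound and the inverse relation. Differentiating $\mathbf{f}_t = \id + \int_0^t \mathbf{h}_u \circ \mathbf{f}_u\,du$ in the space direction (legitimate once we know $\mathbf{f}_t$ is $C_{\tr}^1$, which follows from the chain rule applied inside the integral together with the fact that $\partial$ commutes with the Fr\'echet-valued Riemann integral) gives $\partial \mathbf{f}_t = \Id + \int_0^t (\partial \mathbf{h}_u \circ \mathbf{f}_u) \# \partial \mathbf{f}_u\,du$; taking $\norm{\cdot}_{BC_{\tr}(\R^{*d},\mathscr{M}^1)^d}$-norms, using submultiplicativity of $\#$ (Lemma~\ref{lem:submultiplicativenorm} with $k=0$) and Gr\"onwall's inequality yields $\norm{\partial \mathbf{f}_t} \leq e^{Mt}$, and likewise for $\mathbf{g}_t$. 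For the inverse relation I would compute $\frac{d}{dt}(\mathbf{f}_t \circ \mathbf{g}_{T-t})$ for fixed $T$ using the product/chain rule: the two integral equations are designed precisely so that this derivative vanishes, so $\mathbf{f}_t \circ \mathbf{g}_{T-t}$ is constant in $t$; evaluating at the two endpoints gives $\mathbf{f}_T \circ \mathbf{g}_0 = \mathbf{f}_0 \circ \mathbf{g}_T$, i.e.\ $\mathbf{f}_T = \mathbf{g}_T^{-1}$ after noting $\mathbf{f}_0 = \mathbf{g}_0 = \id$. (Alternatively one checks $\mathbf{g}_t \circ \mathbf{f}_t = \id$ directly by a similar derivative-vanishing argument and then uses that $\mathbf{f}_t$ is injective, since $\partial \mathbf{h}_t$ bounded makes $\mathbf{f}_t$ and $\mathbf{g}_t$ Lipschitz.)

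Finally I would handle the higher-regularity claims by induction on $k$. Assuming $t \mapsto \mathbf{h}_t$ is continuous into $C_{\tr}^k(\R^{*d})_{\sa}^d$ and that the claim holds for $k-1$, the Fa\`a di Bruno-type formula of Theorem~\ref{thm:chainrule} expresses $\partial^{k'}(\mathbf{h}_u \circ \mathbf{f}_u)$ as a sum of terms of the form $(\partial^j \mathbf{h}_u \circ \mathbf{f}_u) \# [\partial^{|B_1|}\mathbf{f}_u, \dots]$; differentiating the integral equation $k'$ times and isolating the term where all of $\partial^{k'}$ falls on a single $\mathbf{f}_u$ factor gives a linear integral equation for $\partial^{k'} \mathbf{f}_t$ whose inhomogeneous part involves only lower-order derivatives (controlled by the induction hypothesis) and whose homogeneous kernel is $\#$-multiplication by $\partial \mathbf{h}_u \circ \mathbf{f}_u$; Gr\"onwall again gives continuity of $t \mapsto \partial^{k'} \mathbf{f}_t$ into $C_{\tr}(\R^{*d},\mathscr{M}^{k'}(\R^{*d}))^d$, hence $\mathbf{f}_t \in C_{\tr}^k(\R^{*d})_{\sa}^d$, and under the additional hypothesis that $\norm{\partial^{k'}\mathbf{h}_t}$ is bounded for all $k' \leq k$, the same recursion gives boundedness of $\partial^{k'}\mathbf{f}_t$ (and $\partial^{k'}\mathbf{g}_t$). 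I expect the main obstacle to be bookkeeping: verifying carefully that $\partial$ commutes with the Fr\'echet-valued Riemann integral and that the recursion for higher derivatives genuinely closes with lower-order data on the right-hand side, so that the Gr\"onwall step applies — the analytic content is routine once the contraction and the commuting-derivative lemma are in place, but the combinatorial structure from Theorem~\ref{thm:chainrule} needs to be tracked with some care.
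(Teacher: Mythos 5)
Your existence/contraction step and the Gr\"onwall derivation of $e^{Mt}$ match the paper's Picard-iteration argument, but two steps do not go through as written. First, your proof of the inverse relation fails: the quantity $\mathbf{f}_t \circ \mathbf{g}_{T-t}$ is \emph{not} constant in $t$. Take $\mathbf{h}_t \equiv \mathbf{h}$ time-independent, so $\mathbf{f}_t = \exp(t\mathbf{h})$ and $\mathbf{g}_t = \exp(-t\mathbf{h})$; then $\mathbf{f}_t \circ \mathbf{g}_{T-t} = \exp((2t-T)\mathbf{h})$, which varies with $t$. Moreover your endpoint identity $\mathbf{f}_T \circ \mathbf{g}_0 = \mathbf{f}_0 \circ \mathbf{g}_T$ reads $\mathbf{f}_T = \mathbf{g}_T$, which is false and in any case would not give $\mathbf{f}_T = \mathbf{g}_T^{-1}$. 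There is also a structural obstruction to any such derivative-in-$t$ computation: the equation $\mathbf{g}_t = \id - \int_0^t \mathbf{h}_{t-u}\circ\mathbf{g}_u\,du$ has $t$ inside the kernel, so computing $\frac{d}{dt}\mathbf{g}_t$ would require time-differentiability of $t \mapsto \mathbf{h}_t$, which is not assumed (only continuity). The paper avoids all of this by solving the equation on subintervals to obtain the two-parameter flow $\mathbf{f}_{t,s} = \id + \int_s^t \mathbf{h}_u \circ \mathbf{f}_{u,s}\,du$, deducing $\mathbf{f}_{t_1,t_2}\circ\mathbf{f}_{t_2,t_3} = \mathbf{f}_{t_1,t_3}$ from uniqueness of solutions, and then taking $\mathbf{g}_t = \mathbf{f}_{0,t}$; you would need to replace your argument with something of this kind.

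Second, your regularity bootstrap is circular at its base. You justify differentiating under the integral by saying it is ``legitimate once we know $\mathbf{f}_t$ is $C_{\tr}^1$, which follows from the chain rule applied inside the integral'' --- but applying the chain rule to $\mathbf{h}_u \circ \mathbf{f}_u$ already presupposes $\mathbf{f}_u \in C_{\tr}^1(\R^{*d})_{\sa}^d$, which is exactly what is to be proved; the fixed point is only produced by a contraction in the $C_{\tr}$-seminorms, so its spatial differentiability is not automatic. The same issue recurs at each order $k'$: you cannot ``differentiate the integral equation $k'$ times'' before knowing $\partial^{k'}\mathbf{f}_t$ exists. The paper's proof spends most of its length on precisely this point: it first constructs candidate functions $\mathbf{f}_t^{(k')}$ by Picard iteration applied to the linear (variational) integral equations, by strong induction on $k'$, and then proves that $\partial^{k'}\mathbf{f}_{t,n} \to \mathbf{f}_t^{(k')}$ as $n \to \infty$ uniformly on $[0,T]$, via an estimate of the form $\norm{\partial^{k'}\mathbf{f}_{t,n} - \mathbf{f}_t^{(k')}} \leq K M^n t^n/n! + \sum_{\ell} \epsilon_{n-\ell,R} M^\ell t^\ell/\ell!$ together with dominated convergence; only this identifies $\partial^{k'}\mathbf{f}_t = \mathbf{f}_t^{(k')}$ and licenses the Gr\"onwall bounds you invoke. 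This is the analytic core of the lemma, not bookkeeping, and your proposal as it stands does not supply it.
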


\begin{proof}
	We focus first on the function $\mathbf{f}_t$ and its derivatives.  We construct the solution $\mathbf{f}_t$ through Picard iteration.  Let
	\begin{align*}
		\mathbf{f}_{t,0} &= \id \\
		\mathbf{f}_{t,n+1} &= \id + \int_0^t \mathbf{h}_u \circ \mathbf{f}_{u,n}
		\,du.
	\end{align*}
	As in \S \ref{sec:NCfunc2}, we understand the right-hand side in terms of Riemann integration for functions with values in a Fr\'echet space.  The same arguments used in single various calculus shows that for any continuous function $\gamma$ from $[0,T]$ into a Fr\'echet space $\mathcal{Y}$, the Riemann integral $\int_0^T \gamma$ is well-defined.  Moreover, $\int_0^t \gamma$ is continuously differentiable with derivative equal to $\gamma$.  Now $C_{\tr}(\R^{*d})_{\sa}^d$ is a Fr\'echet space and the composition operation is continuous, so by induction $\mathbf{f}_{t,n}$ is a well-defined and continuous function $[0,T] \to C_{\tr}(\R^{*d})_{\sa}^d$.  
	
	Next, since $\partial \mathbf{h}_u$ is bounded by $M$ for all $u$, we know that for every $(\cA,\tau) \in \mathbb{W}$, the function $\mathbf{h}_u^{\cA,\tau}: \cA_{\sa}^d \to \cA_{\sa}^d$ is $M$-Lipschitz with respect to $\norm{\cdot}_\infty$.  It follows that
	\[
	\norm{\mathbf{h}_u \circ \mathbf{f}_{u,n} - \mathbf{h}_u \circ \mathbf{f}_{u,n-1}}_{C_{\tr}(\R^{*d}),R} \leq M \norm{\mathbf{f}_{u,n} - \mathbf{f}_{u,n-1}}_{C_{\tr}(\R^{*d}),R}.
	\]
	Therefore,
	\[
	\norm{\mathbf{f}_{t,n+1} - \mathbf{f}_{t,n}}_{C_{\tr}(\R^{*d}),R} \leq M \int_0^t \norm{\mathbf{f}_{u,n} - \mathbf{f}_{u,n-1}}_{C_{\tr}(\R^{*d}),R}\,du.
	\]
	By induction,
	\[
	\norm{\mathbf{f}_{t,n+1} - \mathbf{f}_{t,n}}_{C_{\tr}(\R^{*d}),R} \leq \frac{M^nt^n}{n!} \sup_{t \in [0,T]} \norm{\mathbf{f}_{t,1} - \id}_{C_{\tr}(\R^{*d},R)}.
	\]
	For each $R$, the right-hand side goes to zero.  Hence, $\mathbf{f}_{t,n}$ converges to some function $\mathbf{f}_t$ in $C_{\tr}(\R^{*d})$ as $n \to \infty$ uniformly for all $t$, which satisfies the integral equation as desired.
	
	For $k \geq 1$, suppose that $t \mapsto \mathbf{h}_t$ is a continuous map into $C_{\tr}^k(\R^{*d})^d$, and we will show that $t \mapsto \mathbf{f}_t$ is as well.  Because the composition operation on $C_{\tr}^k$ functions is continuous, we obtain by the chain rule that for $n \in \N_0$,
	\[
	\partial \mathbf{f}_{t,n+1} = \Id + \int_0^t (\partial \mathbf{h}_u \circ \mathbf{f}_{u,n}) \# \partial \mathbf{f}_{u,n}\,du
	\]
	and for $2 \leq k' \leq k$,
	\[
	\partial^{k'} \mathbf{f}_{t,n+1} = \sum_{j=1}^{k'} \sum_{\substack{B_1, \dots, B_j \\ \text{partition of } [k'] \\ \min B_1 < \dots < \min B_j}} \int_0^t (\partial^j \mathbf{h}_u \circ \mathbf{f}_{u,n}) \# [\partial^{|B_1|} \mathbf{f}_{u,n}, \dots, \partial^{|B_j|} \mathbf{f}_{u,n}]\,du.
	\]
	We want to show that $\partial^{k'} \mathbf{f}_{t,n}$ converges as $n \to\infty$ in order to conclude that $\mathbf{f}_t$ is in $C_{\tr}^k(\R^{*d})_{\sa}^d$.
	
	First, we construct the limiting functions.  For $1 \leq k' \leq k$, we claim that there is a continuous function $t \mapsto \mathbf{f}_t^{(k')}$ from $[0,T]$ to $C_{\tr}(\R^{*d},\mathscr{M}(\R^{*d},\dots,\R^{*d}))$ (here the multilinear form has $k'$ arguments) that satisfies
	\begin{equation} \label{eq:flowderivative}
		\mathbf{f}_t^{(1)} = \Id + \int_0^t (\partial \mathbf{h}_u \circ \mathbf{f}_u) \# \partial \mathbf{f}_u^{(1)}\,du
	\end{equation}
	and for $2 \leq k' \leq k$,
	\begin{equation} \label{eq:flowderivative2}
		\mathbf{f}_{t,n+1}^{(k')} = \sum_{j=1}^{k'} \sum_{\substack{B_1, \dots, B_j \\ \text{partition of } [k'] \\ \min B_1 < \dots < \min B_j}} \int_0^t (\partial^j \mathbf{h}_u \circ \mathbf{f}_u) \# [\mathbf{f}_u^{(|B_1|)}, \dots, \mathbf{f}_u^{(|B_j|)}]\,du.
	\end{equation}
	We proceed by strong induction.  Let $k' \geq 1$ and suppose the claim holds for all $1 \leq \ell < k'$.  Note that the right-hand side only has one term which depends on $\mathbf{f}_u^{(k')}$, namely the term $(\partial \mathbf{h}_u \circ \mathbf{f}_u) \# \mathbf{f}_u^{(k')}$ for $j = 1$.  All the other terms $\mathbf{f}_u^{(|B_i|)}$ are already defined by inductive hypothesis and bounded in $\norm{\cdot}_{C_{\tr}(\R^{*d},\mathscr{M}^{|B_i|})^d,R}$.  Since $\partial \mathbf{h}_u$ is bounded by $M$, the right-hand side is thus $M$-Lipschitz in $\mathbf{f}_u^{(k')}$ with respect to $\norm{\cdot}_{C_{\tr}(\R^{*d},\mathscr{M}^{k'})^d,R}$.  Thus, a solution $\mathbf{f}_u^{(k')}$ exists by Picard iteration by the same argument as we used for $\mathbf{f}_t$.
	
	Let $\mathbf{f}_t^{(0)} = \mathbf{f}_t$.  Next, we show by strong induction on $k'$ that for each $R > 0$, we have $\partial^{k'} \mathbf{f}_{t,n} \to \mathbf{f}_t^{(k')}$ in $\norm{\cdot}_{C_{\tr}(\R^{*d},\mathscr{M}^{k'})}$ as $n \to \infty$ uniformly for $t \in [0,T]$.  Suppose $k' \geq 1$ and the claim holds for $\ell < k'$.  Fix $R > 0$.  Observe that
	\begin{align*}
		\partial^{k'} \mathbf{f}_{t,n+1} - \mathbf{f}_t^{(k')} &= \int_0^t (\partial \mathbf{h}_u \circ \mathbf{f}_{u,n}) \# (\partial^{k'} \mathbf{f}_{t,n} - \mathbf{f}_t^{(k')})\,du \\
		&\quad + \int_0^t (\partial \mathbf{h}_u \circ \mathbf{f}_u - \partial \mathbf{h}_u - \circ \mathbf{f}_{u,n}) \# \mathbf{f}_t^{(k')}\,du \\
		&\quad + \sum_{j=2}^{k'} \sum_{\substack{B_1, \dots, B_j \\ \text{partition of } [k'] \\ \min B_1 < \dots < \min B_j}} \int_0^t (\partial^j \mathbf{h}_u \circ \mathbf{f}_{u,n}) \# [\partial^{|B_1|} \mathbf{f}_{u,n}, \dots, \partial^{|B_j|} \mathbf{f}_{u,n}] \\
		-
		&\quad \sum_{j=2}^{k'} \sum_{\substack{B_1, \dots, B_j \\ \text{partition of } [k'] \\ \min B_1 < \dots < \min B_j}} (\partial^j \mathbf{h}_u \circ \mathbf{f}_u) \# [\mathbf{f}_u^{(|B_1|)}, \dots, \mathbf{f}_u^{(|B_j|)}] \,du.
	\end{align*}
	For $n \geq 1$, let
	\begin{multline*}
		\epsilon_{n,R} = \sup_{t \in [0,T]} \norm*{(\partial \mathbf{h}_u \circ \mathbf{f}_u - \partial \mathbf{h}_u - \circ \mathbf{f}_{u,n}) \# \mathbf{f}_t^{(k')}}_{C_{\tr}(\R^{*d},\mathscr{M}^{k'}),R}  \\
		+ \sum_{j=2}^{k'} \sum_{\substack{B_1, \dots, B_j \\ \text{partition of } [k'] \\ \min B_1 < \dots < \min B_j}} \norm*{ (\partial^j \mathbf{h}_u \circ \mathbf{f}_{u,n}) \# [\partial^{|B_1|} \mathbf{f}_{u,n}, \dots, \partial^{|B_j|} \mathbf{f}_{u,n}] - (\partial^j \mathbf{h}_u \circ \mathbf{f}_u) \# [\mathbf{f}_u^{(|B_1|)}, \dots, \mathbf{f}_u^{(|B_j|)}] }_{C_{\tr}(\R^{*d},\mathscr{M}^{k'})^d,R}.
	\end{multline*}
	By the inductive hypothesis and continuity of composition, we have $\epsilon_{n,R} \to 0$ as $n \to \infty$.  We have
	\[
	\norm{\partial^{k'} \mathbf{f}_{t,0} - \mathbf{f}_t^{(k')}}_{C_{\tr}(\R^{*d},\mathscr{M}^{k'}),R} \leq \sup_{u \in [0,T]} \norm{\mathbf{f}_u^{(k')}}_{C_{\tr}(\R^{*d},\mathscr{M}^{k'})^d,R} =: K
	\]
	and
	\[
	\norm{\partial^{k'} \mathbf{f}_{t,n+1} - \mathbf{f}_t^{(k')}}_{C_{\tr}(\R^{*d},\mathscr{M}^{k'})^d,R} \leq \int_0^t \left( M \norm{\partial^{k'} \mathbf{f}_{t,n} - \mathbf{f}_t^{(k')}}_{C_{\tr}(\R^{*d},\mathscr{M}^{k'})^d,R} + \epsilon_{n,R} \right)\,du.
	\]
	A straightforward induction on $n$ shows that
	\[
	\norm{\partial^{k'} \mathbf{f}_{t,n} - \mathbf{f}_t^{(k')}}_{C_{\tr}(\R^{*d},\mathscr{M}^{k'}),R} \leq \frac{KM^nt^n}{n!} + \sum_{\ell=1}^n \frac{\epsilon_{n-\ell,R}  M^\ell t^\ell}{\ell!}.
	\]
	Let $\epsilon_{n,R} = 0$ for $n \leq 0$.  Then
	\[
	\sum_{\ell=1}^n \frac{\epsilon_{n-\ell,R}  M^\ell t^\ell}{\ell!} = \sum_{\ell=1}^\infty \frac{\epsilon_{n-\ell,R}  M^\ell t^\ell}{\ell!} \to 0
	\]
	as $n \to \infty$ using the dominated convergence theorem because $(\epsilon_{n-\ell,R})_{n,\ell \in \N}$ is bounded and $\epsilon_{n-\ell,R} \to 0$ as $n \to \infty$ and $\sum_{m=1}^\infty (Mt)^m/m!$ converges.  Therefore, $\norm{\partial^{k'} \mathbf{f}_{t,n} - \mathbf{f}_t^{(k')}}_{C_{\tr}(\R^{*d},\mathscr{M}^{k'}),R} \to 0$ as $n \to \infty$ as desired.
	
	Because $\partial^{k'} \mathbf{f}_{t,n} \to \mathbf{f}_t^{(k')}$ as $n \to \infty$ for each $k' \leq k$, we conclude that $\mathbf{f}_t \in C_{\tr}^k(\R^{*d})^d$ and $\partial^{k'} \mathbf{f}_t = \mathbf{f}_t^{(k'})$ for $k' \leq k$.  We already showed that $\mathbf{f}_t^{(k')}$ depends continuously on $t$ in $C_{\tr}(\R^{*d},\mathscr{M}(\R^{*d},\dots,\R^{*d}))^d$ and therefore $t \mapsto \mathbf{f}_t$ is a continuous map from $[0,T]$ into $C_{\tr}^k(\R^{*d})^d$.
	
	The bound $\norm{\mathbf{f}_t}_{C_{\tr}(\R^{*d},\mathscr{M}^1)^d} \leq e^{Mt}$ follows from \eqref{eq:flowderivative} by the same argument as Gr\"onwall's inequality in classical ordinary differential equations.  Similarly, if $\partial^{k'} \mathbf{h}_t$ is uniformly bounded for each $k' \leq k$, then one can obtain a Gr\"onwall-type bound and \eqref{eq:flowderivative2} to show that $\partial^{k'} \mathbf{f}_t$ is uniformly bounded for $k' \leq k$.  We leave the details to the reader.
	
	It remains to show that the same claims hold for $\mathbf{g}_t$ as for $\mathbf{f}_t$.  By applying the foregoing argument to a subinterval of $[0,T]$, we obtain functions $\mathbf{f}_{t,s}$ for $s, t \in [0,T]$ such that $t \mapsto \mathbf{f}_{t,s}$ is continuous and
	\[
	\mathbf{f}_{t,s} = \id + \int_s^t \mathbf{h}_u \circ \mathbf{f}_{u,s} \,du.
	\]
	Also, $\mathbf{f}_{t,s} \in C_{\tr}^1(\R^{*d})_{\sa}^d$ and $\norm{\partial \mathbf{f}_{t,s}}_{BC_{\tr}(\R^{*d},\mathscr{M}^1)^d} \leq e^{M|t-s|}$.  One can verify from the integral equations that $\mathbf{f}_{t_1,t_2} \circ \mathbf{f}_{t_2,t_3} = \mathbf{f}_{t_1,t_3}$, which is a standard idea in ordinary differential equations.  In particular, since $\mathbf{f}_t = \mathbf{f}_{t,0}$, the inverse function is given by $\mathbf{g}_t = \mathbf{f}_{0,t}$, which satisfies the integral equation asserted in the proposition after switching the order of the endpoints in the Riemann integral.
\end{proof}

\begin{remark}
	Of course, the lemma applies equally well to negative time intervals.  It also works for unbounded time intervals with the hypotheses and conclusions modified to state uniform bounds on each compact time interval rather than for all time.
\end{remark}

An important special case is when $\mathbf{h}$ is independent of $t$.  Let $\mathbf{h} \in C_{\tr}^\infty(\R^{*d})_{\sa}^d$ with $\partial \mathbf{h}$ bounded.  Then there is a one-parameter group $(\mathbf{f}_t)_{t \in \R}$ in $\mathscr{D}(\R^{*d})$ solving the equation
\[
\mathbf{f}_t = \id + \int_0^t \mathbf{h} \circ \mathbf{f}_u\,du.
\]
In the spirit of Lie theory, we will denote $\mathbf{f}_t$ by $\exp(t \mathbf{h})$.  This description of one-parameter subgroups naturally gives rise to a Lie bracket on $C_{\tr}^\infty(\R^{*d})_{\sa}^d$ analogous to the classical Lie bracket on vector fields associated to the classical diffeomorphism group of $\R^d$ (also known as the Poisson bracket).  Suppose $\mathbf{h}_1$, $\mathbf{h}_2 \in C_{\tr}^\infty(\R^{*d})_{\sa}^d$ have bounded first derivatives.  Then using continuity of $t \mapsto \exp(t \mathbf{h})$ and the differential equation above, one can compute that
\[
\exp(t \mathbf{h}_1) \circ \exp(t \mathbf{h}_2) \circ \exp(-t \mathbf{h}_1) \circ \exp(-t \mathbf{h}_2) = \id + t^2 [\mathbf{h}_1,\mathbf{h}_2] + o(t^2),
\]
where
\[
[\mathbf{h}_1, \mathbf{h}_2] := \partial \mathbf{h}_1 \# \mathbf{h}_2 - \partial \mathbf{h}_2 \# \mathbf{h}_1,
\]
and where ``$o(t^2)$'' means $o(t^2)$ with respect to each of the seminorms in $C_{\tr}^\infty(\R^{*d})_{\sa}^d$.  It is an exercise to check that the Lie bracket is a continuous map $C_{\tr}^\infty(\R^{*d})_{\sa}^d \times C_{\tr}^\infty(\R^{*d})_{\sa}^d \to C_{\tr}^\infty(\R^{*d})_{\sa}^d$ and satisfies the Jacobi identity.  In the special case of non-commutative polynomials and power series, this Lie bracket was studied by \cite[\S 6.1 and \S 6.5]{Voiculescu2002}.

The classical idea that vector fields represent differential operators adapts to this setting as well.  For any $\mathbf{h} \in C_{\tr}^\infty(\R^{*d})_{\sa}^d$, let $\delta_{\mathbf{h}}: C_{\tr}^\infty(\R^{*d}) \to C_{\tr}^\infty(\R^{*d})$ be the map $\partial_{\mathbf{h}} f := \partial f \# \mathbf{h}$.  It follows from the product rule (which is a special case of Theorem \ref{thm:chainrule}) that $\partial_{\mathbf{h}}(fg) = (\partial_{\mathbf{h}} f) \cdot g + f \cdot (\partial_{\mathbf{h}} g)$, that is, $\partial_{\mathbf{h}}$ is a derivation on the algebra $C_{\tr}^\infty(\R^{*d})$.  We also have
\[
\partial_{\mathbf{h}_1} \partial_{\mathbf{h}_2} f = \partial(\partial f \# \mathbf{h}_2) \# \mathbf{h}_1 = \partial^2 f \# [\mathbf{h}_2,\mathbf{h}_1] - \partial f \# \partial \mathbf{h}_2 \# \mathbf{h}_1,
\]
hence
\[
(\partial_{\mathbf{h}_1} \partial_{\mathbf{h}_2} - \partial_{\mathbf{h}_2} \partial_{\mathbf{h}_1})f = -\partial_{[\mathbf{h}_1,\mathbf{h}_2]} f.
\]
In other words, $\mathbf{h} \mapsto - \partial_{\mathbf{h}}$ is a Lie algebra homomorphism from $C_{\tr}^\infty(\R^{*d})_{\sa}^d$ to the Lie algebra of derivations on $C_{\tr}^\infty(\R^{*d})$.

The next lemma describes how the flows $(\mathbf{f}_t)$ of Lemma \ref{lem:flow} will act upon some $V \in \tr(C_{\tr}^1(\R^{*d}))_{\sa}$.  This is the basic computation that underlies our results about free transport.

\begin{lemma} \label{lem:transport0}
	Let $t \mapsto V_t$ be continuously differentiable map $[0,T] \to \tr(C_{\tr}^1(\R^{*d}))_{\sa}$ and let $\dot{V}_t$ be its time derivative.  Let $t \mapsto \mathbf{h}_t$ be a continuous map $[0,T] \to C_{\tr}^1(\R^{*d})_{\sa}^d$ with $\norm{\partial \mathbf{h}_t}_{BC_{\tr}(\R^{*d},\mathscr{M}^1)^d}$ bounded, and let $\mathbf{f}_t$ be the solution from Lemma \ref{lem:flow} to the equation
	\begin{equation} \label{eq:flow}
		\mathbf{f}_t = \id + \int_0^t \mathbf{h}_u \circ \mathbf{f}_u \,du.
	\end{equation}
	Then we have in $\tr(C_{\tr}(\R^{*d}))$ that
	\begin{equation} \label{eq:transdiff}
		\frac{d}{dt} [(\mathbf{f}_t^{-1})_* V_t ] = (\dot{V}_t + \nabla_{V_t}^* \mathbf{h}_t) \circ \mathbf{f}_t.
	\end{equation}
	In particular, $V_t = (\mathbf{f}_t)_* V_0$ modulo constants for all $t$ if and only if $-\nabla_{V_t}^* \mathbf{h}_t = \dot{V}_t$ modulo constants for all $t$.
\end{lemma}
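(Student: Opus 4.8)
The plan is to expand the action and differentiate term by term. Since $\mathbf{f}_t\in\mathscr{D}(\R^{*d})$ with inverse $\mathbf{g}_t:=\mathbf{f}_t^{-1}$, Lemma \ref{lem:groupaction} applied to the diffeomorphism $\mathbf{f}_t^{-1}$ (whose inverse is $\mathbf{f}_t$) gives
\[
(\mathbf{f}_t^{-1})_* V_t = V_t\circ\mathbf{f}_t - \log\Delta_{\#}(\partial\mathbf{f}_t).
\]
Both summands lie in $\tr(C_{\tr}(\R^{*d}))$ for each fixed $t$: the first by Theorem \ref{thm:chainrule}, and the second by Proposition \ref{prop:logdeterminant}, noting that $\partial\mathbf{f}_t\in GL(C_{\tr}(\R^{*d},\mathscr{M}(\R^{*d}))^d)$ with $\#$-inverse $(\partial\mathbf{g}_t)\circ\mathbf{f}_t$, which one checks by differentiating $\mathbf{g}_t\circ\mathbf{f}_t=\mathbf{f}_t\circ\mathbf{g}_t=\id$. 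So it remains to compute the $t$-derivative of each piece in $\tr(C_{\tr}(\R^{*d}))$.

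The key device is the flow identity $\mathbf{f}_{t+\delta}=\mathbf{f}_{t+\delta,t}\circ\mathbf{f}_t$ from the proof of Lemma \ref{lem:flow}, where $\mathbf{f}_{t+\delta,t}=\id+\int_t^{t+\delta}\mathbf{h}_u\circ\mathbf{f}_{u,t}\,du$ is the flow started at time $t$; it satisfies $\mathbf{f}_{t+\delta,t}\to\id$ and $\tfrac{d}{d\delta}\big|_{\delta=0}\mathbf{f}_{t+\delta,t}=\mathbf{h}_t$ in $C_{\tr}^1(\R^{*d})_{\sa}^d$. For the first piece, writing $V_{t+\delta}\circ\mathbf{f}_{t+\delta}=(V_{t+\delta}\circ\mathbf{f}_{t+\delta,t})\circ\mathbf{f}_t$ and applying the chain rule together with the continuous differentiability of $t\mapsto V_t$ yields $\tfrac{d}{dt}[V_t\circ\mathbf{f}_t]=(\dot{V}_t+\partial V_t\#\mathbf{h}_t)\circ\mathbf{f}_t$. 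For the determinant piece, I would first record that $\log\Delta_{\#}(\mathbf{F}\circ\mathbf{g})=\log\Delta_{\#}(\mathbf{F})\circ\mathbf{g}$ (immediate from the pointwise description $(\log\Delta_{\#}(\mathbf{F}))^{\cA,\tau}(\mathbf{X})=\log\Delta_{\mathbf{X}}^{\cA,\tau}(\mathbf{F})$ and from $(\mathbf{F}\circ\mathbf{g})^{\varstar}=\mathbf{F}^{\varstar}\circ\mathbf{g}$); then from $\partial\mathbf{f}_{t+\delta}=(\partial\mathbf{f}_{t+\delta,t}\circ\mathbf{f}_t)\#\partial\mathbf{f}_t$ and the homomorphism property of $\log\Delta_{\#}$,
\[
\log\Delta_{\#}(\partial\mathbf{f}_{t+\delta})-\log\Delta_{\#}(\partial\mathbf{f}_t)=\bigl(\log\Delta_{\#}(\partial\mathbf{f}_{t+\delta,t})\bigr)\circ\mathbf{f}_t.
\]
Now I would reuse verbatim the computation already carried out inside the proof of Lemma \ref{lem:groupactiontangent}: since $\partial\mathbf{f}_{t+\delta,t}^{\cA,\tau}(\mathbf{X})$ maps $\cA_{\sa}^d$ into itself (because $\mathbf{h}$ is self-adjoint-valued), one has $\Tr_{\#}((\partial\mathbf{f}_{t+\delta,t})^{\varstar})=\Tr_{\#}(\partial\mathbf{f}_{t+\delta,t})$, and differentiating the $\log_{\#}$ power series around $\Id$ termwise gives $\tfrac{d}{d\delta}\big|_{\delta=0}\log\Delta_{\#}(\partial\mathbf{f}_{t+\delta,t})=\Tr_{\#}(\partial\mathbf{h}_t)$; hence $\tfrac{d}{dt}\log\Delta_{\#}(\partial\mathbf{f}_t)=\Tr_{\#}(\partial\mathbf{h}_t)\circ\mathbf{f}_t$.

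Subtracting and invoking $\nabla_V^*\mathbf{h}=-\Tr_{\#}(\partial\mathbf{h})+\partial V\#\mathbf{h}$ produces \eqref{eq:transdiff}; genuine continuous differentiability (rather than mere one-sided differentiability) follows because the right-hand side is continuous in $t$ by Theorem \ref{thm:chainrule}, Lemma \ref{lem:tracehash}, and the continuity of $t\mapsto(V_t,\dot{V}_t,\mathbf{h}_t,\mathbf{f}_t)$. The final "in particular" is then formal: $(\mathbf{f}_t^{-1})_*V_t=V_0$ modulo constants for all $t$ iff its $t$-derivative vanishes modulo constants, iff $(\dot{V}_t+\nabla_{V_t}^*\mathbf{h}_t)\circ\mathbf{f}_t$ is constant, iff $\dot{V}_t+\nabla_{V_t}^*\mathbf{h}_t$ is constant (precompose with the bijection $\mathbf{f}_t$, which also sends constants to constants), while $(\mathbf{f}_t^{-1})_*V_t=V_0$ modulo constants is equivalent to $V_t=(\mathbf{f}_t)_*V_0$ modulo constants by applying the group action. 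The main point requiring care is the existence and identification of $\tfrac{d}{d\delta}\big|_{\delta=0}\partial\mathbf{f}_{t+\delta,t}=\partial\mathbf{h}_t$ in $C_{\tr}(\R^{*d},\mathscr{M}(\R^{*d}))^d$ from the integral equation for the flow under the minimal hypothesis that $\mathbf{h}_t$ is only $C_{\tr}^1$ with bounded first derivative; this is exactly where one leans on Lemma \ref{lem:flow} with $k=1$ and its Grönwall-type estimates, and everything else is bookkeeping with composition.
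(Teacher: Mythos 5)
Your proof is correct and follows essentially the same route as the paper: both expand $(\mathbf{f}_t^{-1})_*V_t = V_t\circ\mathbf{f}_t - \log\Delta_\#(\partial\mathbf{f}_t)$, use the two-parameter flow identity $\mathbf{f}_{t+\delta}=\mathbf{f}_{t+\delta,t}\circ\mathbf{f}_t$ together with multiplicativity of the Fuglede--Kadison log-determinant, and then reuse the differentiation of $\log\Delta_\#(\partial\mathbf{f}_{t+\delta,t})$ from the proof of Lemma \ref{lem:groupactiontangent}, with the same caveat (which you correctly flag) that under the $C_{\tr}^1$ hypothesis one only has $\partial\mathbf{f}_{t+\delta,t}\in C_{\tr}(\R^{*d},\mathscr{M}(\R^{*d}))^d$ and must get $\tfrac{d}{d\delta}\big|_{\delta=0}\partial\mathbf{f}_{t+\delta,t}=\partial\mathbf{h}_t$ from the integral equation of Lemma \ref{lem:flow}. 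The only cosmetic difference is that the paper groups the terms $[V_t\circ\mathbf{f}_{t+\delta,t}-V_t]-\log\Delta_\#\partial\mathbf{f}_{t+\delta,t}$ as $(\mathbf{f}_{t+\delta,t}^{-1})_*V_t - V_t$ before invoking that computation, whereas you differentiate the two pieces separately.
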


\begin{proof}
	For $s,t \in [0,T]$, let $\mathbf{f}_{t,s}$ be the solution to the equation
	\[
	\mathbf{f}_{t,s} = \id + \int_s^t \mathbf{h}_u \circ \mathbf{f}_{u,s} \,du,
	\]
	which is guaranteed to exist by Lemma \ref{lem:flow}.  Then for $t \in [0,T]$ and $\epsilon \in \R$ such that $t + \epsilon \in [0,T]$, we have $\mathbf{f}_{t+\epsilon} = \mathbf{f}_{t+\epsilon,t} \circ \mathbf{f}_t$.  Moreover,
	\[
	(\mathbf{f}_t^{-1})_*V_t = V_t \circ \mathbf{f}_t - \log \Delta_\# \partial \mathbf{f}_t,
	\]
	and
	\[
	(\mathbf{f}_{t+\epsilon}^{-1})_* V_{t+\epsilon} = V_{t+\epsilon} \circ \mathbf{f}_{t+\epsilon,t} \circ \mathbf{f}_t - \log \Delta_\# \partial \mathbf{f}_{t+\epsilon,t} \circ \mathbf{f}_t -  \log \Delta_{\#} \partial \mathbf{f}_t.
	\]
	Therefore,
	\begin{equation} \label{eq:transdifflem}
		(\mathbf{f}_{t+\epsilon}^{-1})_* V_{t+\epsilon} - (\mathbf{f}_t^{-1})_*V_t
		=
		\Bigl( (V_{t+\epsilon} - V_t) \circ \mathbf{f}_{t+\epsilon,t} + [V_t \circ \mathbf{f}_{t+\epsilon,t} - V_t] - \log \Delta_\# \partial \mathbf{f}_{t+\epsilon,t} \Bigr) \circ \mathbf{f}_t.
	\end{equation}
	By continuity of composition (see Lemma \ref{lem:composition}), we have
	\[
	\lim_{\epsilon \to 0} \frac{V_{t+\epsilon} - V_t}{\epsilon} \circ \mathbf{f}_{t+\epsilon,t} = \dot{V}_t \circ \mathbf{f}_{t,t} = \dot{V}_t \text{ in } \tr(C_{\tr}(\R^{*d})).
	\]
	Meanwhile, regarding the last two terms on the right-hand side of \eqref{eq:transdifflem}, we have
	\[
	[V_t \circ \mathbf{f}_{t+\epsilon,t} - V_t] - \log \Delta_\# \partial \mathbf{f}_{t+\epsilon,t} = (\mathbf{f}_{t+\epsilon,t}^{-1})_* V_t - V_t.
	\]
	The same reasoning as in Lemma \ref{lem:groupactiontangent} shows that
	\[
	\frac{d}{d\epsilon} \biggr|_{\epsilon = 0} (\mathbf{f}_{t+\epsilon,t}^{-1})_* V_t = \nabla_{V_t}^* \mathbf{h}_t
	\]
	holds in $\tr(C_{\tr}(\R^{*d}))_{\sa}$.  However, $\mathbf{g}_{s,t}$ is replaced by $\mathbf{f}_{s,t}$, which results in the sign of $\mathbf{h}_t$ changing in the final formula.  Moreover, since we have only assumed that $\mathbf{h}_t$ is $C_{\tr}^1(\R^{*d})_{\sa}^d$ rather than $C_{\tr}^\infty(\R^{*d},\mathscr{M}(\R^{*d}))$, we only have $\partial \mathbf{f}_{s,t} \in C_{\tr}(\R^{*d},\mathscr{M}(\R^{*d}))$. Altogether,
	\[
	\lim_{\epsilon \to 0} \frac{1}{\epsilon} \Bigl( (\mathbf{f}_{t+\epsilon}^{-1})_* V_{t+\epsilon} - (\mathbf{f}_t^{-1})_*V_t \Bigr)
	=
	\Bigl( \dot{V}_t + \ip{\nabla V_t,\mathbf{h}_t}_{\tr} - \Tr_{\#}(\partial \mathbf{h}_t) \Bigr) \circ \mathbf{f}_t,
	\]
	which proves \eqref{eq:transdiff}.  The final claim of the Proposition follows immediately.
\end{proof}

The case where $\mathbf{h}$ is independent of $t$ is worthy of special note, since it gives a description of one-parameter subgroups of $\mathscr{D}(\R^{*d})$ that stabilize some $V \in \mathscr{W}(\R^{*d})$ (the analog of measure-preserving transformations).

\begin{corollary} \label{cor:measurepreservinggroup}
	Let $V \in \tr(C_{\tr}^1(\R^{*d}))_{\sa}$, and let $\mathbf{h} \in C_{\tr}(\R^{*d})_{\sa}^d$ with $\partial \mathbf{h} \in BC_{\tr}(\R^{*d},\mathscr{M}(\R^{*d}))^d$.  Let $\mathbf{f}_t = \id + \int_0^t \mathbf{h} \circ \mathbf{f}_u\,du$.  Then $(\mathbf{f}_t)_* V = V$ for all $t$ if and only if $\nabla_V^* \mathbf{h} = 0$.
\end{corollary}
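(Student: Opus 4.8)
The plan is to read Corollary \ref{cor:measurepreservinggroup} off the derivative formula \eqref{eq:transdiff} of Lemma \ref{lem:transport0}, specialized to a \emph{constant} path. Take $V_t := V$ for all $t$, so $\dot{V}_t = 0$, and $\mathbf{h}_t := \mathbf{h}$ for all $t$; the hypotheses of the corollary ($\mathbf{h} \in C_{\tr}(\R^{*d})_{\sa}^d$ with $\partial \mathbf{h} \in BC_{\tr}(\R^{*d},\mathscr{M}(\R^{*d}))^d$) are precisely those needed to invoke Lemma \ref{lem:flow} (and the remark following it, for global time) and Lemma \ref{lem:transport0}, and the flow $\mathbf{f}_t = \id + \int_0^t \mathbf{h} \circ \mathbf{f}_u\,du$ in the corollary coincides with the one produced there. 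Then \eqref{eq:transdiff} becomes
\[
\frac{d}{dt}\bigl[(\mathbf{f}_t^{-1})_* V\bigr] = (\nabla_V^* \mathbf{h}) \circ \mathbf{f}_t \qquad \text{in } \tr(C_{\tr}(\R^{*d})).
\]

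For the ``if'' direction, assume $\nabla_V^* \mathbf{h} = 0$. Then the right-hand side vanishes identically, so $t \mapsto (\mathbf{f}_t^{-1})_* V$ is constant; since $\mathbf{f}_0 = \id$ and $\log \Delta_\#(\Id) = 0$ (the Fuglede--Kadison determinant of the identity being $1$), its value at $t = 0$ is $(\id)_* V = V$. Hence $(\mathbf{f}_t^{-1})_* V = V$ for all $t$. Applying $(\mathbf{f}_t)_*$ and using the cocycle identity established in the proof of Lemma \ref{lem:groupaction} together with $\mathbf{f}_t \circ \mathbf{f}_t^{-1} = \id$ gives $(\mathbf{f}_t)_* V = (\mathbf{f}_t)_*\bigl((\mathbf{f}_t^{-1})_* V\bigr) = (\id)_* V = V$. (Everything here stays within the available regularity, since the change-of-variables formula uses only Theorem \ref{thm:chainrule} and the multiplicativity of $\log \Delta_\#$ from Proposition \ref{prop:logdeterminant}, both valid for $C_{\tr}^1$ diffeomorphisms.) For the ``only if'' direction, assume $(\mathbf{f}_t)_* V = V$ for all $t$. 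The same cocycle identity with the roles of $\mathbf{f}_t$ and $\mathbf{f}_t^{-1}$ interchanged gives $(\mathbf{f}_t^{-1})_* V = (\mathbf{f}_t^{-1})_*\bigl((\mathbf{f}_t)_* V\bigr) = (\id)_* V = V$ for all $t$, so $t \mapsto (\mathbf{f}_t^{-1})_* V$ is constant and its derivative vanishes; the displayed formula then forces $(\nabla_V^* \mathbf{h}) \circ \mathbf{f}_t \equiv 0$, and evaluating at $t = 0$, where $\mathbf{f}_0 = \id$, yields $\nabla_V^* \mathbf{h} = 0$.

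There is essentially no hard step: all of the analytic content is already packaged in Lemma \ref{lem:transport0}, and the corollary is the observation that a constant path is a fixed point of the transport flow exactly when the infinitesimal transport term vanishes. The only points deserving a word of care are (i) that $(\id)_* V = V$ on the nose, which uses $\log \Delta_\#(\Id) = 0$; (ii) the passage between $(\mathbf{f}_t^{-1})_* V = V$ and $(\mathbf{f}_t)_* V = V$ via the cocycle identity, legitimate at the $C_{\tr}^1$ level; and (iii) the distinction between the ``modulo constants'' version of the statement at the end of Lemma \ref{lem:transport0} and the honest equality asserted here --- the point being that the hypothesis/conclusion $\nabla_V^* \mathbf{h} = 0$ is the genuine (non-coset) statement, which is exactly what makes the derivative above vanish honestly rather than only up to a constant.
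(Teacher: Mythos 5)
Your proof is correct and is exactly the intended derivation: the paper states the corollary without proof as an immediate specialization of Lemma \ref{lem:transport0} to the constant path $V_t \equiv V$, $\mathbf{h}_t \equiv \mathbf{h}$, which is what you do. Your use of the derivative formula \eqref{eq:transdiff} directly (rather than the ``modulo constants'' final claim of that lemma), together with the cocycle identity to pass between $\mathbf{f}_t$ and $\mathbf{f}_t^{-1}$, cleanly handles the only subtlety, namely that the corollary asserts honest equalities.
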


\begin{remark}
	Voiculescu \cite[\S 6.12]{Voiculescu2002cyclo} studied the related notion of diffeomorphisms that preserve a given non-commutative law $\mu$.  If there is a law $\mu_V$ canonically associated to $V$ (as described below), then $V$ may not be uniquely determined by $\mu_V$, and thus preserving $\mu_V$ is a weaker condition than preserving $V$.
\end{remark}

Note that the stabilizer $\mathscr{D}(\R^{*d},V) := \{\mathbf{f} \in \mathscr{D}(\R^{*d}): \mathbf{f}_* V = V\}$ is a subgroup that is closed under limits with respect to convergence of $\mathbf{f}$ and $\mathbf{f}^{-1}$ in $C_{\tr}^1(\R^{*d})^d$.  Based on Corollary \ref{cor:measurepreservinggroup}, the tangent space of the subgroup $\mathscr{D}(\R^{*d},V)$ at the identity should naturally be identified with (a subspace of) $\ker(\nabla_V^*) \subseteq C_{\tr}^\infty(\R^{*d})_{\sa}^d$.  Thus, we expect that $\ker(\nabla_V^*)$ is closed under Lie brackets.  To give a rigorous justification for this, we observe the following identity.

	\begin{lemma} \label{lem:Liebracket}
	For $V \in \tr(C_{\tr}^\infty(\R^{*d}))_{\sa}$ and $\mathbf{h}_1$, $\mathbf{h}_2 \in C_{\tr}^\infty(\R^{*d})_{\sa}^d$,
	\[
	\nabla_V^* [\mathbf{h}_1, \mathbf{h}_2] = \partial(\nabla_V^* \mathbf{h}_1) \# \mathbf{h}_2 - \partial(\nabla_V^* \mathbf{h}_2) \# \mathbf{h}_1.
	\]
\end{lemma}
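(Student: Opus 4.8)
The plan is to verify the identity by direct computation, unwinding the definitions of $\nabla_V^*$ and $[\cdot,\cdot]$ and cancelling terms.  Recall $\nabla_V^* \mathbf{h} = -\Tr_\#(\partial \mathbf{h}) + \partial V \# \mathbf{h}$ and $[\mathbf{h}_1,\mathbf{h}_2] = \partial \mathbf{h}_1 \# \mathbf{h}_2 - \partial \mathbf{h}_2 \# \mathbf{h}_1$.  So the left-hand side splits as a ``divergence part'' $-\Tr_\#(\partial[\mathbf{h}_1,\mathbf{h}_2])$ and a ``potential part'' $\partial V \# [\mathbf{h}_1,\mathbf{h}_2]$.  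First I would expand the potential part: $\partial V \# (\partial \mathbf{h}_1 \# \mathbf{h}_2 - \partial \mathbf{h}_2 \# \mathbf{h}_1)$.  On the other side, $\partial(\nabla_V^* \mathbf{h}_i) \# \mathbf{h}_j$ contains the term $\partial(\partial V \# \mathbf{h}_i) \# \mathbf{h}_j$, which by the product rule (a special case of Theorem \ref{thm:chainrule}, applied to the $\#$-pairing of $\partial V \in \tr(C_{\tr}^\infty,\mathscr{M}^1)$ interpreted via $\Phi$ with $\mathbf{h}_i$) equals $\partial^2 V \# [\mathbf{h}_i, \mathbf{h}_j] + \partial V \# \partial \mathbf{h}_i \# \mathbf{h}_j$, where $\partial^2 V$ is here being fed the two tangent arguments $\mathbf{h}_i$ (from differentiating $\partial V$) and $\mathbf{h}_j$ (the original slot of $\mathbf{h}_i$).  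When we take the difference over $(i,j) = (1,2)$ and $(2,1)$, the symmetric-in-the-last-two-arguments Hessian term $\partial^2 V$ cancels by equality of mixed partials, leaving exactly $\partial V \# \partial \mathbf{h}_1 \# \mathbf{h}_2 - \partial V \# \partial \mathbf{h}_2 \# \mathbf{h}_1$, matching the potential part of the left-hand side.

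Next I would handle the divergence part.  Using the chain rule for $\partial$ applied to a $\#$-product of multilinear-form-valued functions, $\partial(\partial \mathbf{h}_1 \# \mathbf{h}_2) = (\partial^2 \mathbf{h}_1) \# [\mathbf{h}_2, -] + \partial \mathbf{h}_1 \# \partial \mathbf{h}_2$ in the appropriate sense (again Theorem \ref{thm:chainrule} with $\mathbf{g} = \id$), so that $\partial[\mathbf{h}_1,\mathbf{h}_2]$ is a sum of four terms: two ``second-derivative'' terms $\partial^2 \mathbf{h}_1 \#[\mathbf{h}_2,-] - \partial^2 \mathbf{h}_2 \#[\mathbf{h}_1,-]$ and two ``product'' terms $\partial \mathbf{h}_1 \# \partial \mathbf{h}_2 - \partial \mathbf{h}_2 \# \partial \mathbf{h}_1$.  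Applying $-\Tr_\#$ and using its traciality \eqref{eq:traciality} (Lemma \ref{lem:tracehash}), the two product terms give $-\Tr_\#(\partial \mathbf{h}_1 \# \partial \mathbf{h}_2) + \Tr_\#(\partial \mathbf{h}_2 \# \partial \mathbf{h}_1) = 0$.  So only the second-derivative terms survive: $-\Tr_\#(\partial^2 \mathbf{h}_1 \# [\mathbf{h}_2, -]) + \Tr_\#(\partial^2 \mathbf{h}_2 \# [\mathbf{h}_1, -])$ — more precisely, $\Tr_\#$ contracts the ``ambient'' slot (the one introduced by the outermost $\partial$ that $\nabla^*$ traces over), while $\mathbf{h}_j$ is plugged into the original slot.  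On the other side, the contribution of the $-\Tr_\#(\partial \mathbf{h}_i)$ piece of $\nabla_V^* \mathbf{h}_i$ to $\partial(\nabla_V^* \mathbf{h}_i)\# \mathbf{h}_j$ is $\partial(-\Tr_\#(\partial \mathbf{h}_i)) \# \mathbf{h}_j = -\Tr_\#(\partial^2 \mathbf{h}_i)\# \mathbf{h}_j$, using that $\partial$ commutes with $\Tr_\#$ (Lemma \ref{lem:tracehash}, \eqref{eq:traceboundedness}-adjacent, i.e. $\partial^{k'}\Tr_\#(\mathbf F) = \Tr_\#((\partial^{k'}\mathbf F)_\pi)$, the permutation bookkeeping).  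Taking the difference over $(1,2)$ and $(2,1)$ and matching the slot assignments shows these agree with the surviving second-derivative terms from the left side.

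The main obstacle I expect is \emph{bookkeeping of the multilinear slots and the permutations $\sigma$, $\pi$} appearing in the chain rule (Theorem \ref{thm:chainrule}), in the definition of $\Tr_\#$ via $\Upsilon \circ \Phi^{-1}$, and in the identity $\partial^{k'}\Tr_\#(\mathbf{F}) = \Tr_\#((\partial^{k'}\mathbf{F})_\pi)$: one must be careful about which tangent argument is being traced by $\Tr_\#$ versus which is the ``free'' argument fed by $\mathbf{h}_j$, and the equality-of-mixed-partials step needs the Hessian $\partial^2 V$ (resp. $\partial^2 \mathbf{h}_i$) to be symmetric precisely in the two slots that get swapped.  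A clean way to organize this, which I would adopt, is to fix an arbitrary $(\cA,\tau) \in \mathbb{W}$, a free semicircular family $\mathbf{S}$ in a free copy $(\cB,\sigma)$, and $\mathbf{X} \in \cA_{\sa}^d$, and verify the scalar identity
\[
\ip{\mathbf{S}, \partial (\nabla_V^*[\mathbf{h}_1,\mathbf{h}_2])^{\cA*\cB,\tau*\sigma}(\mathbf{X})[\mathbf{S}]}_{\tau*\sigma}
\]
— no wait, since $\nabla_V^*[\mathbf{h}_1,\mathbf{h}_2]$ is already scalar-valued, I would instead evaluate both sides directly at $(\cA,\tau)$ and $\mathbf{X} \in \cA_{\sa}^d$, expanding $\Tr_\#$ through \eqref{eq:semicirculartrace} whenever it appears, and then the whole identity reduces to manipulations of traces of operator-valued trace polynomials using traciality of $\tau$ and of $\sigma$ and free independence of $\mathbf{S}$ from $\mathbf{X}$; density of trace polynomials in the $C_{\tr}^k$ spaces together with continuity of all the operations involved (composition, $\#$, $\partial$, $\Tr_\#$) then gives the general case.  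This reduces the problem to a finite symbolic computation with monomials, where the cancellations above are transparent.
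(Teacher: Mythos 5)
Your proof is correct and follows essentially the same route as the paper's: expand via the product rule, cancel the cross terms $\Tr_\#(\partial \mathbf{h}_1 \# \partial \mathbf{h}_2)$ and $\Tr_\#(\partial \mathbf{h}_2 \# \partial \mathbf{h}_1)$ using traciality \eqref{eq:traciality}, invoke equality of mixed partials, and recombine the remaining terms into $\partial(\nabla_V^* \mathbf{h}_i) \# \mathbf{h}_j$ — and your explicit cancellation of the Hessian terms $\partial^2 V$ evaluated at $[\mathbf{h}_1,\mathbf{h}_2]$-slots is, if anything, spelled out more carefully than in the paper's displayed computation. One small caveat: the closing fallback of reducing to trace polynomials is unnecessary (your direct computation already handles general $C_{\tr}^\infty$ functions) and would require density of trace polynomials in the $C_{\tr}^k$-topology, which the paper never establishes, so it is better omitted.
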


\begin{proof}
	Fix $(\cA,\tau) \in \mathbb{W}$.  Let $(\cB,\sigma)$ be the tracial $\mathrm{W}^*$-algebra generated by a freely independent standard semicircular $d$-tuple $\mathbf{S}$.  Then
	\begin{align*}
		&\nabla_V^*(\partial \mathbf{h}_1 \# \mathbf{h}_2)^{\cA,\tau}(\mathbf{X}) \\
		=& -\ip{\mathbf{S}, \partial(\partial \mathbf{h}_1 \# \mathbf{h}_2)^{\cA*\cB,\tau*\sigma}(\mathbf{X})[\mathbf{S}]}_{\tau*\sigma} + (\partial V \# \partial \mathbf{h}_1 \# \mathbf{h}_2)^{\cA,\tau}(\mathbf{X}) \\
		=& -\ip{\mathbf{S}, \partial \mathbf{h}_1 \# \partial \mathbf{h}_2)^{\cA*\cB,\tau*\sigma}(\mathbf{X})[\mathbf{S}]}_{\tau*\sigma}
		-\ip{\mathbf{S}, \partial^2 \mathbf{h}_1^{\cA*\cB,\tau*\sigma}(\mathbf{X})[\mathbf{h}_2^{\cA,\tau}(\mathbf{X}), \mathbf{S}]}_{\tau*\sigma} \\
		& \quad + (\partial V \# \partial \mathbf{h}_1 \# \mathbf{h}_2)^{\cA,\tau}(\mathbf{X}) \\
		=& -\Tr_{\#}(\partial \mathbf{h}_1 \# \partial \mathbf{h}_2)^{\cA,\tau}(\mathbf{X})
		-\ip{\mathbf{S}, \partial^2 \mathbf{h}_1^{\cA*\cB,\tau*\sigma}(\mathbf{X})[\mathbf{S},\mathbf{h}_2^{\cA*\cB,\tau*\sigma}(\mathbf{X})]}_{\tau*\sigma} \\
		& \quad +(\partial V \# \partial \mathbf{h}_1)^{\cA,\tau}(\mathbf{X})[ \mathbf{h}_2)^{\cA,\tau}(\mathbf{X})] \\
		=& -\Tr_{\#}(\partial \mathbf{h}_1 \# \partial \mathbf{h}_2)^{\cA,\tau}(\mathbf{X}) + \partial(\nabla_V^* \mathbf{h}_1)^{\cA,\tau}(\mathbf{X})[\mathbf{h}_2^{\cA,\tau}(\mathbf{X})].
	\end{align*}
	Therefore,
	\[
	\nabla_V^*(\partial \mathbf{h}_1 \# \mathbf{h}_2) = - \Tr(\partial \mathbf{h}_1 \# \partial \mathbf{h}_2) + \partial(\nabla_V^* \mathbf{h}_1) \# \mathbf{h}_2.
	\]
	When we subtract $\nabla_V^*(\partial \mathbf{h}_2 \# \mathbf{h}_1)$ from $\nabla_V^*(\partial \mathbf{h}_1 \# \mathbf{h}_2)$, the terms $\Tr_\#(\partial \mathbf{h}_1 \# \partial \mathbf{h}_2)$ and $\Tr_\#(\partial \mathbf{h}_2 \# \partial \mathbf{h}_1)$ cancel.
\end{proof}

\subsection{The Laplacian and the Riemannian metric} \label{subsec:metric}

Recall that the Riemannian metric on the classical Wasserstein manifold is given by
\[
\int \ip{\nabla L_V^{-1} \dot{V}_1, \nabla L_V^{-1} \dot{V}_2}\,d\mu_V
\]
for two tangent vectors $\dot{V}_1$ and $\dot{V}_2$ at the point $V$ such that $\int \dot{V}_j \,d\mu_V = 0$.  To define the Riemannian metric in free case, we must describe how to associate a non-commutative law $\mu_V$ to some $V \in \mathscr{W}(\R^{*d})$ as well as how to invert $L_V^{-1}$ on the space of functions with expectation zero.  As this section is primarily concerned with formal computation, we will state the necessary ingredients as hypotheses.

There are several ways to approach the problem of associating a non-commutative law $\mu_V$ to a potential $V$.  We will assume here that $\mu_V$ is characterized by $\nabla_V^* \mathbf{h}$ having expectation zero for all $\mathbf{h} \in \tr(C_{\tr}^\infty(\R^{*d}))$, a relation known as the \emph{Dyson-Schwinger equation}.  The analogous property in the classical setting is that
\[
\int (\ip{\nabla V, \mathbf{h}} - \operatorname{div}(\mathbf{h}))\,d\mu = 0,
\]
which holds for the Gibbs measure $d\mu(x) = e^{-V}\,dx / \int e^{-V}$ for the potential $V$ using integration by parts.  In \S \ref{sec:freeGibbslaws}, we will argue that for many choices of $V$, there exist non-commutative laws satisfying the Dyson-Schwinger equation.

\begin{assumption} \label{ass:freeGibbs}
	Suppose that $V \in \mathscr{W}(\R^{*d})$ and there is a unique non-commutative law $\mu_V \in \Sigma_d$ that satisfies the Dyson-Schwinger equation
	\begin{equation} \label{eq:DSE0}
		\tilde{\mu}_V[\nabla_V^* \mathbf{h}] = 0
	\end{equation}
	for $\mathbf{h} \in C_{\tr}^\infty(\R^{*d})^d$, where $\tilde{\mu}_V$ is the positive homomorphism $\tr(C_{\tr}^\infty(\R^{*d})) \to \C$ corresponding to $\mu_V$.
\end{assumption}

The second hypothesis is invertibility of the Laplacian associated to $V$, which we will discuss in \S \ref{sec:pseudoinverse} for potentials $V$ close to $(1/2) \sum_j \tr(x_j^2)$.

\begin{definition}
	For $V \in \mathscr{W}(\R^{*d})$, we define $L_V: \tr(C_{\tr}^\infty(\R^{*d})) \to \tr(C_{\tr}^\infty(\R^{*d}))$ by
	\[
	L_V f := -\nabla_V^* \nabla f = \Tr_{\#}(\partial \nabla f) - \partial V \# \nabla f.
	\]
\end{definition}

\begin{assumption} \label{ass:Laplacian}
	Suppose Assumption \ref{ass:freeGibbs} holds and there is a continuous linear transformation $\Psi_V: \tr(C_{\tr}^\infty(\R^{*d})) \to \ker(\tilde{\mu}_V) \subseteq \tr(C_{\tr}^\infty(\R^{*d}))$ such that $-L_V \Psi_V f = -\Psi_V L_V f = f - \tilde{\mu}_V(f)$.
\end{assumption}

\begin{definition}
	Suppose that $V \in \mathscr{W}(\R^{*d})$ satisfies Assumptions \ref{ass:freeGibbs} and \ref{ass:Laplacian}.  Then we define a formal Riemannian metric $\ip{\cdot,\cdot}_V$ on $T_V \mathscr{W}(\R^{*d})$ by
	\[
	\ip{\dot{V},\dot{W}}_{T_V \mathscr{W}(\R^{*d})} = \tilde{\mu}(\ip{\nabla \Psi_V \dot{V}, \nabla \Psi_V \dot{W}}_{\tr}),
	\]
	where by abuse of notation $\dot{V}$ represents an equivalence class of paths $t \mapsto V_t$ in the tangent space with $\dot{V}_0 = \dot{V}$.
\end{definition}

The operator $\Psi_V$ has another use besides defining the Riemannian metric.  We saw in Lemma \ref{lem:groupactiontangent} that a vector field $\mathbf{h}$, viewed as a tangent vector to $\id$ in $\mathscr{D}(\R^{*d})$, produces a tangent vector $\dot{V} = -\nabla_V^* \mathbf{h}$ to $V$ in $\mathscr{W}(\R^{*d})$.  The operator $\Psi_V$ allows us to reverse this transformation, since for any $\dot{V}$, the vector field $-\nabla \Psi_V \dot{V}$ satisfies
\[
\dot{V} = -\nabla_V^* (-\nabla \Psi_V \dot{V}).
\]
Furthermore, if we go from a vector field $\mathbf{h}$ by $\nabla_V^*$ to a perturbation $\dot{V} = -\nabla_V^* \mathbf{h}$ and then back by $-\nabla \Psi_V$ to a vector field $\nabla \Psi_V \nabla_V^* \mathbf{h}$, then see that any vector field is equivalent modulo $\ker(\nabla_V^*)$ to a gradient.  The operator
\[
\mathbb{P}_V = \nabla \Psi_V \nabla_V^*: C_{\tr}^\infty(\R^{*d})^d \to C_{\tr}^\infty(\R^{*d})^d
\]
thus represents the ``projection of vector fields onto gradients'', and $1 - \mathbb{P}_V$ is the free version of the Leray projection in fluid dynamics.  The operators $L_V$, $\nabla$, $\nabla_V^*$, $\Psi_V$, and $\mathbb{P}_V$ satisfy the following relations.

\begin{proposition} \label{prop:Laplacianrelations2}
	Suppose that $V \in \mathscr{W}(\R^{*d})$ satisfies Assumptions \ref{ass:freeGibbs} and \ref{ass:Laplacian}.  Consider the operators
	\[
	\C \xrightarrow{\iota} \tr(C_{\tr}^\infty(\R^{*d})) \xrightarrow{\nabla} C_{\tr}^\infty(\R^{*d})^d,
	\]
	where $\iota$ maps a scalar to the corresponding constant function, and
	\[
	C_{\tr}^\infty(\R^{*d})^d \xrightarrow{\nabla_V^*} \tr(C_{\tr}^\infty(\R^{*d})) \xrightarrow{\tilde{\mu}_V} \C.
	\]
	Then
	\begin{enumerate}[(1)]
		\item $\ker(\nabla) = \ker(L_V) = \iota(\C)$.
		\item $\im(\nabla_V^*) = \im(L_V) = \ker(\tilde{\mu}_V)$.
		\item $-L_V \Psi_V L_V = L_V$ and $-\Psi_V L_V \Psi_V = \Psi_V$.
		\item $\mathbb{P}_V^2 = \mathbb{P}_V$.
		\item Every $\mathbf{f} \in C_{\tr}^\infty(\R^{*d})^d$ can be uniquely written as $\mathbf{f} = \nabla g + \mathbf{h}$ where $g \in \tr(C_{\tr}^\infty(\R^{*d}))$ and $\nabla_V^* \mathbf{h} = 0$.  Here $\nabla g = \mathbb{P}_V \mathbf{f}$.
	\end{enumerate}
\end{proposition}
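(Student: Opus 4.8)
The plan is to establish the five statements in order, using the hypotheses to reduce everything to standard facts about a pseudo-inverse of a self-adjoint-like operator. First I would record the elementary observations that bootstrap the rest: the operator $\nabla$ has kernel exactly $\iota(\C)$ because $\nabla f$ records the Fr\'echet derivative (via $\Phi$ in Lemma \ref{lem:duality}), and a function in $\tr(C_{\tr}^\infty(\R^{*d}))$ with zero derivative everywhere, on every $(\cA,\tau)$, is constant. Since $L_V = -\nabla_V^* \nabla$, this gives $\iota(\C) \subseteq \ker(L_V)$; conversely, if $L_V f = 0$ then Assumption \ref{ass:Laplacian} gives $f - \tilde\mu_V(f) = -\Psi_V L_V f = 0$, so $f$ is constant. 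This proves (1). For the range statements in (2), the inclusion $\im(L_V) \subseteq \im(\nabla_V^*)$ is immediate from $L_V = -\nabla_V^* \nabla$, and $\im(\nabla_V^*) \subseteq \ker(\tilde\mu_V)$ is exactly the Dyson--Schwinger equation \eqref{eq:DSE0} in Assumption \ref{ass:freeGibbs}. The reverse inclusion $\ker(\tilde\mu_V) \subseteq \im(L_V)$ follows because for $f \in \ker(\tilde\mu_V)$ we have $-L_V(\Psi_V f) = f - \tilde\mu_V(f) = f$, so $f = L_V(-\Psi_V f) \in \im(L_V)$.

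Next I would derive the pseudo-inverse identities (3). From $-\Psi_V L_V g = g - \tilde\mu_V(g)$ and $-L_V \Psi_V g = g - \tilde\mu_V(g)$ (Assumption \ref{ass:Laplacian}), apply the first with $g = \Psi_V f$: since $\Psi_V$ maps into $\ker(\tilde\mu_V)$, we get $-\Psi_V L_V \Psi_V f = \Psi_V f - \tilde\mu_V(\Psi_V f) = \Psi_V f$, which is the second identity in (3). For the first, apply $-L_V \Psi_V L_V f = L_V f - \tilde\mu_V(L_V f)$; but $L_V f \in \im(L_V) = \ker(\tilde\mu_V)$ by (2), so $\tilde\mu_V(L_V f) = 0$ and $-L_V \Psi_V L_V f = L_V f$, as desired. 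Then (4): $\mathbb{P}_V^2 = \nabla \Psi_V \nabla_V^* \nabla \Psi_V \nabla_V^* = \nabla(\Psi_V(-L_V)\Psi_V)\nabla_V^* = \nabla \Psi_V \nabla_V^* = \mathbb{P}_V$, using $-\nabla_V^*\nabla = L_V$ and the second identity in (3) in the form $-\Psi_V L_V \Psi_V = \Psi_V$.

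Finally for (5): given $\mathbf{f} \in C_{\tr}^\infty(\R^{*d})^d$, set $g = \Psi_V \nabla_V^* \mathbf{f} \in \tr(C_{\tr}^\infty(\R^{*d}))$ and $\mathbf{h} = \mathbf{f} - \nabla g = (\Id - \mathbb{P}_V)\mathbf{f}$. Then $\nabla g = \mathbb{P}_V \mathbf{f}$ by definition of $\mathbb{P}_V$, and I must check $\nabla_V^* \mathbf{h} = 0$: compute $\nabla_V^* \mathbf{h} = \nabla_V^* \mathbf{f} - \nabla_V^* \nabla \Psi_V \nabla_V^* \mathbf{f} = \nabla_V^*\mathbf{f} + L_V \Psi_V \nabla_V^* \mathbf{f} = \nabla_V^* \mathbf{f} - (\nabla_V^*\mathbf{f} - \tilde\mu_V(\nabla_V^*\mathbf{f})) = \tilde\mu_V(\nabla_V^*\mathbf{f}) = 0$ by the Dyson--Schwinger equation, so $\nabla_V^*\mathbf{h}=0$ and the constant term vanishes, giving existence. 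For uniqueness, suppose $\nabla g_1 + \mathbf{h}_1 = \nabla g_2 + \mathbf{h}_2$ with both $\mathbf{h}_i \in \ker(\nabla_V^*)$; then $\nabla(g_1 - g_2) = \mathbf{h}_2 - \mathbf{h}_1 \in \ker(\nabla_V^*)$, so $L_V(g_1 - g_2) = -\nabla_V^*\nabla(g_1-g_2) = 0$, whence $g_1 - g_2$ is constant by (1) and thus $\nabla g_1 = \nabla g_2$ and $\mathbf{h}_1 = \mathbf{h}_2$ (note $g_1, g_2$ are only determined modulo constants, which is harmless since only $\nabla g$ appears). The main obstacle, such as it is, is purely bookkeeping: keeping straight the two slightly different identities $-\Psi_V L_V = \Id - \tilde\mu_V$ and $-L_V \Psi_V = \Id - \tilde\mu_V$ and applying each only where the relevant argument lies in $\ker(\tilde\mu_V)$ or in $\im(\Psi_V) \subseteq \ker(\tilde\mu_V)$; once one is careful that $\tilde\mu_V$ vanishes on the appropriate subspaces, every computation is a one-line algebraic manipulation, since all the analytic content has been packaged into Assumptions \ref{ass:freeGibbs} and \ref{ass:Laplacian}.
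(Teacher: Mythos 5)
Your proposal is correct and follows essentially the same route as the paper: all five parts reduce to one-line manipulations with the two identities $-\Psi_V L_V = \id - \tilde{\mu}_V$ and $-L_V \Psi_V = \id - \tilde{\mu}_V$ together with the Dyson--Schwinger equation, exactly as in the paper's proof. The only differences are cosmetic (your direct argument that $\ker(\nabla)=\iota(\C)$, which is anyway subsumed by the chain $\ker(\nabla)\subseteq\ker(L_V)\subseteq\iota(\C)$, and your uniqueness argument in (5) via $L_V(g_1-g_2)=0$ instead of the paper's observation that $\mathbb{P}_V\mathbf{f}$ determines $\nabla g$).
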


\begin{proof}
	(1) Clearly, $\iota(\C) \subseteq \ker(\nabla) \subseteq \ker(L_V)$.  Conversely, if $f \in \ker(L_V)$, then $f = -\Psi_V L_V f + \tilde{\mu}_V f = \tilde{\mu}_V f \in \iota(\C)$.
	
	(2) Clearly, $\im(L_V) \subseteq \im(\nabla_V^*)$.  Moreover, \eqref{eq:DSE0} says precisely that $\im(\nabla_V^*) \subseteq \ker(\tilde{\mu}_V)$.  Finally, if $f \in \ker(\tilde{\mu}_V)$, then $f = -L_V \Psi_V f + \tilde{\mu}_V f = \nabla_V^* \nabla \Psi_V f + 0$.
	
	(3) Note that $-L_V \Psi_V L_V f = L_V(f - \tilde{\mu}_V(f)) = L_V f$ and $-\Psi_V L_V \Psi_V f = \Psi_V f - \tilde{\mu}_V(\Psi_V f) = \Psi_V f$ since $\im(\Psi_V) \subseteq \ker(\tilde{\mu}_V)$.
	
	(4) Note that $\nabla \Psi_V \nabla_V^* \nabla \Psi_V \nabla_V^* = -\nabla \Psi_V L_V \Psi_V \nabla_V^* = \nabla \Psi_V \nabla_V^*$.
	
	(5) To show existence, fix $\mathbf{f}$ and let $g = \Psi_V \nabla_V^* \mathbf{f}$ and $\mathbf{h} = \mathbf{f} - \nabla g = (1 - \mathbb{P}_V) \mathbf{f}$.  Then $\nabla_V^* \mathbf{h} = \nabla_V^* \mathbf{f} - \nabla_V^* \nabla \Psi_V \nabla_V^* \mathbf{f} = (1 + L_V)\nabla_V^* \mathbf{f} = \tilde{\mu}_V \nabla_V^* \mathbf{f} = 0$.  For uniqueness, note that $\mathbb{P}_V \mathbf{f}$ must equal $\nabla g$, and hence $\mathbf{h}$ must equal $(1 - \mathbb{P}_V) \mathbf{f}$.
\end{proof}

In the classical setting, $\mathbb{P}_V$ is the $L^2$-orthogonal projection of the space of vector fields onto the subspace of gradients.  Thus, $\mathbb{P}_V \mathbf{h}$ is a vector field which will produce the same perturbation of $V$ through the transport action as $\mathbf{h}$ does, and which has $L^2$ norm less than or equal to that of $\mathbf{h}$.  That is, $\mathbb{P}_V$ is an infinitesimal version of optimal transport.  For the same idea to apply in the free setting, we would like to show that $\ker(\nabla_V^*)$ and $\im(\nabla)$ are orthogonal with respect to $\tilde{\mu}_V$.

Although this is merely an integration-by-parts computation in the classical case, the same approach does not directly work in the free setting because (despite our choice of notation) $\nabla_V^*$ is not actually the adjoint of $\nabla$.  Rather, it is the large $N$ limit of $1/N^2$ times the adjoint of $\nabla$ on $L^2(\mu_V^{(N)})$, where $\mu_V^{(N)}$ is the measure on $M_N(\C)_{\sa}^d$ with density proportional to $e^{-N^2 V}$.  The adjointness relation as written does not make sense in the large $N$ limit because of the factor of $1/N^2$.

There is another natural heuristic for why $\ker(\nabla_V^*)$ and $\im(\nabla)$ are orthogonal.  If $\mathbf{h} \in \ker(\nabla_V^*)$ with appropriate boundedness assumptions, then $\mathbf{h}$ should generate a one-parameter group of measure-preserving transformations $\mathbf{f}_t$ for $V$ by Corollary \ref{cor:measurepreservinggroup}.  If we differentiate the equation $\tilde{\mu}_V[g \circ \mathbf{f}_t] = \tilde{\mu}_V[g]$ at $t = 0$, we get $\tilde{\mu}_V[\ip{\nabla g, \mathbf{h}}_{\tr}] = 0$.  However, to make a rigorous argument, it is easier to directly use the Lie bracket identity Lemma \ref{lem:Liebracket} (related to the group of measure-preserving transformations) together with the Dyson-Schwinger equation.

\begin{proposition} \label{prop:Laplacianrelations3}
	Suppose that $V$ satisfies Assumption \ref{ass:freeGibbs}, and in (3) - (5) suppose also that $V$ satisfies Assumption \ref{ass:Laplacian}.
	\begin{enumerate}[(1)]
		\item $\tilde{\mu}_V[\ip{\nabla \nabla_V^* \mathbf{h}_1,\mathbf{h}_2}_{\tr}] = \tilde{\mu}_V[\ip{\mathbf{h}_1,\nabla \nabla_V^* \mathbf{h}_2}_{\tr}]$ for $\mathbf{h}_1$, $\mathbf{h}_2 \in C_{\tr}(\R^{*d})^d$.
		\item $\tilde{\mu}_V[\ip{\nabla L_V g_1, \nabla g_2}_{\tr}] = \tilde{\mu}_V[\ip{\nabla g_1, \nabla L_V g_2}_{\tr}]$ for $g_1$, $g_2 \in \tr(C_{\tr}^\infty(\R^{*d}))$.
		\item $\tilde{\mu}_V[\ip{\nabla \Psi_V g_1, \nabla g_2}_{\tr}] = \tilde{\mu}_V[\ip{\nabla g_1, \nabla \Psi_V g_2}_{\tr}]$ for $g_1$, $g_2 \in \tr(C_{\tr}^\infty(\R^{*d}))$.
		\item If $g \in \tr(C_{\tr}^\infty(\R^{*d}))$ and $\mathbf{h} \in \ker(\nabla_V^*)$, then $\tilde{\mu}_V[\ip{\nabla g, \mathbf{h}}_{\tr}] = 0$.
		\item $\tilde{\mu}_V[\ip{\mathbb{P}_V \mathbf{h}_1,\mathbf{h}_2}_{\tr}] = \tilde{\mu}_V[\ip{\mathbf{h}_1, \mathbb{P}_V \mathbf{h}_2}_{\tr}]$ for $\mathbf{h}_1$, $\mathbf{h}_2 \in C_{\tr}(\R^{*d})^d$.
	\end{enumerate}
\end{proposition}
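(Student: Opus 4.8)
The plan is to treat part (1) as the crux and derive (2)--(5) from it by formal substitution. The naive classical argument---integrating by parts against $\tilde\mu_V$ using that $\nabla_V^*$ is adjoint to $\nabla$---fails here, as the discussion preceding the statement explains, because $\nabla_V^*$ is only the $N\to\infty$ renormalization of the genuine adjoint on $L^2(\mu_V^{(N)})$. Instead I would use the Lie bracket identity of Lemma~\ref{lem:Liebracket}. First observe that for scalar-valued $g \in \tr(C_{\tr}^1(\R^{*d}))$ and $\mathbf{h} \in C_{\tr}(\R^{*d})^d$ one has $\partial g \# \mathbf{h} = \ip{\nabla g, \mathbf{h}}_{\tr}$, immediately from Definition~\ref{def:gradient} and traciality (modulo the conjugation bookkeeping handled by decomposing into self-adjoint parts). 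Applying this to $\nabla_V^*\mathbf{h}_1$ and $\nabla_V^*\mathbf{h}_2$, Lemma~\ref{lem:Liebracket} reads
\[
\nabla_V^*[\mathbf{h}_1,\mathbf{h}_2] = \ip{\nabla\nabla_V^*\mathbf{h}_1, \mathbf{h}_2}_{\tr} - \ip{\nabla\nabla_V^*\mathbf{h}_2, \mathbf{h}_1}_{\tr}.
\]
Applying $\tilde\mu_V$, the left side vanishes by the Dyson--Schwinger equation \eqref{eq:DSE0} (Assumption~\ref{ass:freeGibbs}), which is exactly (1). Since Lemma~\ref{lem:Liebracket} is stated for smooth self-adjoint vector fields, I would first prove (1) for $\mathbf{h}_1,\mathbf{h}_2 \in C_{\tr}^\infty(\R^{*d})_{\sa}^d$ and then extend to general $\mathbf{h}_1,\mathbf{h}_2 \in C_{\tr}(\R^{*d})^d$ by $\C$-linearity together with a density/continuity argument, using continuity of $\nabla_V^*$, $\nabla$, and $\tilde\mu_V$ on the relevant Fréchet spaces.

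Parts (2) and (3) are formal consequences obtained by substitution into (1). For (2), plug $\mathbf{h}_j = \nabla g_j$: since $\nabla_V^*\nabla g_j = -L_V g_j$, the two sign changes cancel and we obtain $\tilde\mu_V[\ip{\nabla L_V g_1, \nabla g_2}_{\tr}] = \tilde\mu_V[\ip{\nabla g_1, \nabla L_V g_2}_{\tr}]$. For (3), write $g_j = -L_V\Psi_V g_j + \tilde\mu_V(g_j)$ (Assumption~\ref{ass:Laplacian}), so $\nabla g_j = -\nabla L_V\Psi_V g_j$; substituting this on one factor and invoking (2) with $g_j$ replaced by $\Psi_V g_j$ moves the $L_V$ from one side to the other, giving $\tilde\mu_V[\ip{\nabla\Psi_V g_1, \nabla g_2}_{\tr}] = \tilde\mu_V[\ip{\nabla g_1, \nabla\Psi_V g_2}_{\tr}]$.

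For (4) I would again use Lemma~\ref{lem:Liebracket}, now with $\mathbf{h}_1 = \nabla g$ and $\mathbf{h}_2 = \mathbf{h}\in\ker(\nabla_V^*)$: the identity collapses (the term containing $\nabla_V^*\mathbf{h}$ drops) to $\nabla_V^*[\nabla g,\mathbf{h}] = -\ip{\nabla L_V g,\mathbf{h}}_{\tr}$, and applying $\tilde\mu_V$ shows $\tilde\mu_V[\ip{\nabla L_V g,\mathbf{h}}_{\tr}] = 0$ for every $g$. For arbitrary $g$, writing $g = -L_V\Psi_V g + \mathrm{const}$ gives $\nabla g = -\nabla L_V(\Psi_V g)$, whence $\tilde\mu_V[\ip{\nabla g,\mathbf{h}}_{\tr}] = -\tilde\mu_V[\ip{\nabla L_V(\Psi_V g),\mathbf{h}}_{\tr}] = 0$. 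Finally (5) follows from (4) and the decomposition of Proposition~\ref{prop:Laplacianrelations2}(5): write $\mathbf{h}_i = \nabla g_i + \mathbf{k}_i$ with $g_i = \Psi_V\nabla_V^*\mathbf{h}_i$, $\mathbf{k}_i \in \ker(\nabla_V^*)$, and $\mathbb{P}_V\mathbf{h}_i = \nabla g_i$; expanding $\tilde\mu_V[\ip{\mathbb{P}_V\mathbf{h}_1,\mathbf{h}_2}_{\tr}]$ and $\tilde\mu_V[\ip{\mathbf{h}_1,\mathbb{P}_V\mathbf{h}_2}_{\tr}]$ and discarding the cross terms $\tilde\mu_V[\ip{\nabla g_1,\mathbf{k}_2}_{\tr}]$, $\tilde\mu_V[\ip{\mathbf{k}_1,\nabla g_2}_{\tr}]$ via (4) and the symmetry of $\ip{\cdot,\cdot}_{\tr}$ on self-adjoint tuples, both reduce to the manifestly symmetric $\tilde\mu_V[\ip{\nabla g_1,\nabla g_2}_{\tr}]$.

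The individual steps are short, so the real work is not any single computation but rather (a) recognizing that (1) should be routed through the Lie bracket identity instead of a direct integration by parts, and (b) the regularity and density bookkeeping needed to state the results at the claimed generality, since Lemma~\ref{lem:Liebracket} is phrased for smooth self-adjoint vector fields while (1) and (5) are asserted for all of $C_{\tr}(\R^{*d})^d$. I expect (b) to be the main obstacle: one must verify that $\nabla_V^*$, $\nabla$, $L_V$, $\Psi_V$, and $\tilde\mu_V$ are continuous on the appropriate Fréchet spaces so that the smooth case passes to the limit, and handle the conjugate-linearity of $\ip{\cdot,\cdot}_{\tr}$ in the non-self-adjoint case via the $\re/\im$ decomposition together with $\C$-linearity of $\tilde\mu_V$.
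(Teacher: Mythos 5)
Your proposal is correct and follows essentially the same route as the paper: part (1) via the Lie bracket identity of Lemma~\ref{lem:Liebracket} combined with the Dyson--Schwinger equation (reducing to self-adjoint $\mathbf{h}_j$ by linearity), (2) by substituting $\mathbf{h}_j = \nabla g_j$, (3) by replacing $g_j$ with $\Psi_V g_j$ and using $-L_V\Psi_V = \id - \tilde{\mu}_V$, and (5) by the gradient-plus-kernel decomposition with the cross terms killed by (4). Your (4) re-runs the bracket identity directly instead of citing (1) with $\mathbf{h}_1 = \nabla\Psi_V g$ as the paper does, and your extra density step for (1) is unnecessary (the identity is only meaningful for $\mathbf{h}_j$ smooth enough that $\nabla\nabla_V^*\mathbf{h}_j$ is defined, which is how the paper implicitly reads the statement), but neither point affects correctness.
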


\begin{proof}
	(1) By complex-linearity, it suffices to consider the case when $\mathbf{h}_1$ and $\mathbf{h}_2$ are self-adjoint.  By Lemma \ref{lem:Liebracket}, we have
	\[
	\nabla_V^*[\mathbf{h}_1,\mathbf{h}_2] = \ip{\nabla \nabla_V^* \mathbf{h}_1, \mathbf{h}_2}_{\tr} - \ip{\nabla \nabla_V^* \mathbf{h}_2,\mathbf{h}_1}_{\tr}.
	\]
	When we apply $\tilde{\mu}_V$, the left-hand side evaluates to zero, hence
	\[
	\tilde{\mu}_V[\ip{\nabla \nabla_V^* \mathbf{h}_1, \mathbf{h}_2}_{\tr}] = \tilde{\mu}_V[\ip{\nabla \nabla_V^* \mathbf{h}_2,\mathbf{h}_1}_{\tr}] = \tilde{\mu}_V[\ip{\mathbf{h}_1,\nabla \nabla_V^* \mathbf{h}_2}_{\tr}],
	\]
	since $\mathbf{h}_1$ and $\nabla \nabla_V^* \mathbf{h}_2$ are self-adjoint (which follows since $\nabla_V^* \mathbf{h}_2$ is real-valued).
	
	(2) Substitute $\mathbf{h}_j = \nabla g_j$ into (1) and apply $\nabla_V^* \nabla = -L_V$.
	
	(3) Substitute $\Psi_V g_j$ for $g_j$ in (2) and note that $\nabla L_V \Psi_V g_j = \nabla[\tilde{\mu}_V[g_j] - g_j] = -\nabla g_j$.
	
	(4) Note
	\begin{align*}
		\tilde{\mu}_V[\ip{\nabla g, \mathbf{h}}_{\tr}]
		&= -\tilde{\mu}_V[\ip{\nabla \nabla_V^* \nabla \Psi_V g, \mathbf{h}}_{\tr}] \\
		&= -\tilde{\mu}_V[\ip{\nabla \Psi_V g, \nabla \nabla_V^* \mathbf{h}}_{\tr}] \\
		&= 0.
	\end{align*}
	
	(5)  Since $\mathbb{P}_V \mathbf{h}_1 \in \im(\nabla)$ and $(1 - \mathbb{P}_V) \mathbf{h}_2 \in \ker(\nabla_V^*)$, they are orthogonal with respect to $\tilde{\mu}_V \circ \ip{\cdot,\cdot}_{\tr}$.  Therefore,
	\[
	\tilde{\mu}_V[\ip{\mathbb{P}_V \mathbf{h}_1,\mathbf{h}_2}_{\tr}] = \tilde{\mu}_V[\ip{\mathbb{P}_V \mathbf{h}_1, \mathbb{P}_V \mathbf{h}_2}_{\tr}].
	\]
	By symmetrical reasoning, this equals $\tilde{\mu}_V[\ip{\mathbf{h}_1,\mathbb{P}_V\mathbf{h}_2}_{\tr}]$.
\end{proof}

In contrast to the situation with $\nabla$, the adjoint of the operator $\partial$ can be understood directly from the Dyson-Schwinger equation.  The following lemma is related to computations in \cite[Proposition 21]{Shlyakhtenko2009}.

\begin{lemma} \label{lem:adjointness}
	Let $V$ satisfy Asssumptions \ref{ass:freeGibbs} and \ref{ass:Laplacian}.  Define
	\[
	\partial_V^*: C_{\tr}^1(\R^{*d},\mathscr{M}(\R^{*d}))^d \to C_{\tr}(\R^{*d})^d
	\]
	by
	\[
	\partial_V^* \mathbf{F} = \mathbf{F} \# \nabla V - \partial^\dagger \mathbf{F}.
	\]
	Then for $\mathbf{f} \in C_{\tr}^2(\R^{*d})^d$ and $\mathbf{F} \in C_{\tr}^2(\R^{*d},\mathscr{M}(\R^{*d}))^d$, we have
	\[
	\tilde{\mu}_V \ip{\mathbf{f}, \partial_V^* \mathbf{F}}_{\tr} = \tilde{\mu}_V \Tr_{\#}[(\partial \mathbf{f})^{\varstar} \mathbf{F}].
	\]
\end{lemma}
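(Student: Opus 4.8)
The strategy is to reduce the desired identity to the Dyson--Schwinger equation \eqref{eq:DSE0} by choosing an appropriate test vector field $\mathbf{h} \in C_{\tr}^2(\R^{*d})^d$ and expanding $\tilde\mu_V[\nabla_V^* \mathbf{h}] = 0$. Recall that $\nabla_V^* \mathbf{h} = -\Tr_\#(\partial \mathbf{h}) + \partial V \# \mathbf{h} = \partial V \# \mathbf{h} - \nabla^\dagger \mathbf{h}$, where the second equality uses $\nabla^\dagger = \Tr_\# \circ \partial$ from \S\ref{sec:NCfunc2}. Given $\mathbf{f} \in C_{\tr}^2(\R^{*d})^d$ and $\mathbf{F} \in C_{\tr}^2(\R^{*d},\mathscr{M}(\R^{*d}))^d$, the natural candidate is to take $\mathbf{h}$ to be the pairing of $\mathbf{f}$ against $\mathbf{F}$ in the ``$y$-slot'' of $\mathbf{F}$; more precisely, I would set $\mathbf{h}^{\cA,\tau}(\mathbf{X}) = (\mathbf{F}^{\varstar})^{\cA,\tau}(\mathbf{X})[\mathbf{f}^{\cA,\tau}(\mathbf{X})]$, so that $\mathbf{h} = \mathbf{F}^{\varstar} \# \mathbf{f}$ in the notation of the paper (thinking of $\mathbf{f}$ as a constant-in-$\mathbf{y}$ element, i.e.\ an element of $C_{\tr}^2(\R^{*d})^d$ viewed inside $C_{\tr}(\R^{*d},\mathscr{M}(\R^{*d}))^d$ is not quite right --- rather one pairs $\mathbf{f}$ into the argument of $\mathbf{F}^\varstar$). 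This $\mathbf{h}$ is $C_{\tr}^2$ by Theorem \ref{thm:chainrule} and hence is a valid test function in \eqref{eq:DSE0}. Actually the cleanest choice is $\mathbf{h} = \mathbf{F}^\varstar \# \mathbf{f}$ where we regard $\mathbf{f} \in C_{\tr}^2(\R^{*d})^d = $ the constant multilinear forms, giving $\mathbf{h} \in C_{\tr}^2(\R^{*d})^d$; equivalently $\mathbf{h}^{\cA,\tau}(\mathbf{X}) = (\mathbf{F}^\varstar)^{\cA,\tau}(\mathbf{X})[\mathbf{f}^{\cA,\tau}(\mathbf{X})]$.

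\textbf{Key steps.} First, I would compute $\partial V \# \mathbf{h}$. Using the defining property \eqref{eq:fancyadjoint} of $\varstar$, for the semicircular-independent setup (or pointwise), $\partial V \# \mathbf{h} = \ip{\nabla V, \mathbf{h}}_{\tr} = \ip{\nabla V, \mathbf{F}^\varstar \# \mathbf{f}}_{\tr} = \ip{\mathbf{F} \# \nabla V, \mathbf{f}}_{\tr}$, since $\ip{\nabla V, \mathbf{F}^\varstar[\mathbf{f}]}_\tau = \ip{\mathbf{F}[\nabla V], \mathbf{f}}_\tau$ by \eqref{eq:fancyadjoint}. So the ``potential part'' contributes $\tilde\mu_V \ip{\mathbf{F} \# \nabla V, \mathbf{f}}_{\tr}$. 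Second, I would compute $\nabla^\dagger \mathbf{h} = \Tr_\#(\partial[\mathbf{F}^\varstar \# \mathbf{f}])$. By the product rule (a special case of Theorem \ref{thm:chainrule}), $\partial[\mathbf{F}^\varstar \# \mathbf{f}]$ splits into a term where $\partial$ hits $\mathbf{F}^\varstar$ and a term where it hits $\mathbf{f}$. Applying $\Tr_\#$ and using its traciality \eqref{eq:traciality} plus the adjoint relation \eqref{eq:adjointtrace}, together with $\partial^\dagger = \Upsilon \circ \partial$, the ``$\partial$-hits-$\mathbf{F}^\varstar$'' piece should rearrange to $\tilde\mu_V \ip{(\partial^\dagger \mathbf{F}^\varstar)^\varstar \text{-type term}, \mathbf{f}}$ --- more carefully, I expect $\Tr_\#(\partial \mathbf{F}^\varstar \# \mathbf{f})$ to equal $\ip{\partial^\dagger \mathbf{F}, \mathbf{f}}$ after identifying $(\partial^\dagger \mathbf{F}^\varstar)$ with $\partial^\dagger$ of the adjoint and using that $\mathbf{S}$ is self-adjoint so that the semicircular pairing is symmetric. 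The ``$\partial$-hits-$\mathbf{f}$'' piece is $\Tr_\#(\mathbf{F}^\varstar \# \partial \mathbf{f})$, and by \eqref{eq:traciality} this equals $\Tr_\#(\partial \mathbf{f} \# \mathbf{F}^\varstar)$, and then by \eqref{eq:adjointtrace} applied to $(\partial \mathbf{f} \# \mathbf{F}^\varstar)^\varstar = \mathbf{F} \# (\partial\mathbf{f})^\varstar$ (using \eqref{eq:staroperation}) one gets $\Tr_\#(\mathbf{F}^\varstar \# \partial \mathbf{f}) = \overline{\Tr_\#(\mathbf{F} \# (\partial \mathbf{f})^\varstar)} = \overline{\Tr_\#((\partial\mathbf{f})^\varstar \mathbf{F})}$ by traciality again. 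Putting $0 = \tilde\mu_V[\nabla_V^* \mathbf{h}] = \tilde\mu_V[\partial V \# \mathbf{h}] - \tilde\mu_V[\nabla^\dagger \mathbf{h}]$ and collecting terms, the $\partial^\dagger$ term combines with the $\nabla V$ term to give $\tilde\mu_V\ip{\mathbf{f}, \partial_V^* \mathbf{F}}_{\tr}$ (note $\partial_V^* \mathbf{F} = \mathbf{F} \# \nabla V - \partial^\dagger \mathbf{F}$), and the leftover is exactly $\tilde\mu_V \Tr_\#[(\partial\mathbf{f})^\varstar \mathbf{F}]$, yielding the claimed formula. I would also verify reality/self-adjointness reductions first (by $\C$-linearity, reduce to $\mathbf{f}$, $\mathbf{F}$ with real-valued associated scalar quantities) so the complex conjugates disappear cleanly.

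\textbf{Main obstacle.} The delicate point is tracking the bookkeeping in the product rule for $\Tr_\#(\partial[\mathbf{F}^\varstar \# \mathbf{f}])$: one must be sure that the derivative acting on the \emph{argument slot} of $\mathbf{F}^\varstar$ (which is filled by $\mathbf{f}$, a function of $\mathbf{x}$) versus the derivative acting on the \emph{coefficient} of $\mathbf{F}^\varstar$ are separated correctly, and then that applying $\Upsilon$/$\Tr_\#$ to each piece and using traciality \eqref{eq:traciality}, \eqref{eq:adjointtrace}, and the relation $\partial^\dagger = \Upsilon \circ \partial$ produces precisely $\ip{\partial^\dagger \mathbf{F}, \mathbf{f}}$ and $\Tr_\#[(\partial\mathbf{f})^\varstar \mathbf{F}]$ with the right signs. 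The cleanest way to handle this rigorously is probably to verify the identity first on trace polynomials $\mathbf{f}, \mathbf{F}$ (where everything is an explicit finite computation with free independence, as in the proof of Lemma \ref{lem:tracehash}), and then extend to general $C_{\tr}^2$ functions by density and continuity of all the operators involved ($\partial$, $\partial^\dagger$, $\varstar$, $\Tr_\#$, $\#$, and evaluation against $\tilde\mu_V$), since all relevant norms are controlled on each operator-norm ball. The Dyson--Schwinger equation itself does all the analytic work; the rest is algebraic manipulation of the $\#$-calculus.
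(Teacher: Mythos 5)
Your proposal follows essentially the same route as the paper's proof: the paper applies the Dyson--Schwinger equation \eqref{eq:DSE0} to the test field $\mathbf{h} = (\mathbf{F}^{\varstar} \# \mathbf{f})^*$ (your choice $\mathbf{F}^{\varstar}\#\mathbf{f}$ differs only by an adjoint, which merely conjugates both sides of the resulting identity), computes $\partial V \# \mathbf{h} = \ip{\mathbf{f}, \mathbf{F}\#\nabla V}_{\tr}$ via \eqref{eq:fancyadjoint}, and splits $\Tr_{\#}(\partial \mathbf{h})$ into $\Tr_{\#}[(\partial\mathbf{f})^{\varstar}\mathbf{F}] + \ip{\mathbf{f},\partial^\dagger\mathbf{F}}_{\tr}$. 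The one step you leave heuristic---identifying the piece where $\partial$ hits the coefficients of $\mathbf{F}^{\varstar}$ with $\ip{\mathbf{f},\partial^\dagger\mathbf{F}}_{\tr}$---is made explicit in the paper by writing $\Tr_{\#}(\partial\mathbf{h}) = \Upsilon(\partial\,\Phi^{-1}(\mathbf{h}))$ and differentiating the scalar pairing $\ip{\mathbf{f}(\mathbf{X}+t\mathbf{S}),\mathbf{F}(\mathbf{X}+t\mathbf{S})[\mathbf{S}]}_{\tau*\sigma}$ directly, which is a cleaner (but equivalent) way of closing the gap than your trace-polynomial-plus-density fallback.
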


\begin{remark}
	We can define an semi-inner product on $C_{\tr}^\infty(\R^{*d})^d$ by $(\mathbf{f},\mathbf{g}) \mapsto \tilde{\mu}_V \ip{\mathbf{f},\mathbf{g}}_{\tr}$.  We can also define a semi-inner product on $C_{\tr}^\infty(\R^{*d},\mathscr{M}(\R^{*d}))^d$ by $(\mathbf{F},\mathbf{G}) \mapsto \tilde{\mu}_V \Tr_\#(\mathbf{F} \# \mathbf{G})$.  The lemma then says that $\partial_V^*$ is formally the adjoint of $\partial$ with respect to these inner products.
\end{remark}

\begin{proof}
	We apply \eqref{eq:DSE0} with $\mathbf{h} = (\mathbf{F}^{\varstar} \# \mathbf{f})^*$.  Observe that
	\[
	\partial V \# \mathbf{h}
	= \ip{\nabla V, \mathbf{h}}_{\tr}
	= \ip{\mathbf{h}^*,\nabla V}_{\tr}
	= \ip{\mathbf{F}^{\varstar} \# \mathbf{f}, \nabla V}_{\tr}
	= \ip{\mathbf{f}, \mathbf{F} \# \nabla V}_{\tr}.
	\]
	Next, we compute $\Tr_{\#}(\partial \mathbf{h})$.  Let $\Phi$ and $\Upsilon$ be the maps in Lemmas \ref{lem:duality} and \ref{lem:tracelike} respectively.  Then $(\cA,\tau) \in \mathbb{W}$ and $\mathbf{X}$, $\mathbf{Y} \in \cA_{\sa}^d$, we have
	\begin{align*}
		\Phi^{-1}(\mathbf{h})^{\cA,\tau}(\mathbf{X})[\mathbf{Y}]
		&= \ip{\mathbf{Y}, \mathbf{h}^{\cA,\tau}(\mathbf{X})}_{\tau} \\
		&= \ip{\mathbf{h}^{\cA,\tau}(\mathbf{X})^*,\mathbf{Y}}_{\tau} \\
		&= \ip{(\mathbf{F}^{\varstar})^{\cA,\tau}(\mathbf{X})[\mathbf{f}^{\cA,\tau}(\mathbf{X})], \mathbf{Y}}_\tau \\
		&= \ip{\mathbf{f}^{\cA,\tau}(\mathbf{X}), \mathbf{F}^{\cA,\tau}(\mathbf{X})[\mathbf{Y}]}_\tau.
	\end{align*}
	Now
	\[
	\Tr_{\#}(\partial \mathbf{h}) = \Upsilon(\Phi^{-1}(\partial \mathbf{h})) = \Upsilon(\partial \Phi^{-1} (\mathbf{h}))),
	\]
	where the last equality follows from \eqref{eq:duality3} and the fact that $\Upsilon(\mathbf{g}_\pi) = \Upsilon(\mathbf{g})$ when $\pi$ is the permutation that switches the last two indices.  Let $(\cB,\sigma)$ be generated by a standard semicircular $d$-tuple $\mathbf{S}$.  Using our previous expression for $\Phi^{-1}(\mathbf{h})$, we have
	\begin{align*}
		\Upsilon(\partial \Phi^{-1}(\mathbf{h}))^{\cA,\tau}(\mathbf{X}) &= \frac{d}{dt} \Bigr|_{t=0} \ip{\mathbf{f}^{\cA*\cB,\tau*\sigma}(\mathbf{X}+t\mathbf{S}), \mathbf{F}^{\cA*\cB,\tau*\sigma}(\mathbf{X} + t\mathbf{S})[\mathbf{S}]}_{\tau*\sigma} \\
		&= \ip{\partial \mathbf{f}^{\cA*\cB,\tau*\sigma}(\mathbf{X})[\mathbf{S}], \mathbf{F}^{\cA*\cB,\tau*\sigma}(\mathbf{X})[\mathbf{S}]}_{\tau*\sigma} + \ip{\mathbf{f}^{\cA*\cB,\tau*\sigma}(\mathbf{X}), \partial \mathbf{F}^{\cA*\cB,\tau*\sigma}(\mathbf{X})[\mathbf{S},\mathbf{S}]}_{\tau*\sigma} \\
		&= \ip{\mathbf{S}, (\partial \mathbf{f}^{\varstar} \# \mathbf{F})^{\cA*\cB,\tau*\sigma}(\mathbf{X})[\mathbf{S}]}_{\tau*\sigma} + \ip{\mathbf{f}^{\cA,\tau}(\mathbf{X}), E_{\cA} \partial \mathbf{F}^{\cA*\cB,\tau*\sigma}(\mathbf{X})[\mathbf{S},\mathbf{S}]}_\tau \\
		&= \Tr_{\#}[(\partial \mathbf{f})^{\varstar} \mathbf{F}]^{\cA,\tau}(\mathbf{X}) + \ip{\mathbf{f}^{\cA,\tau}(\mathbf{X}), (\partial^\dagger \mathbf{F})^{\cA,\tau}(\mathbf{X})}_\tau.
	\end{align*}
	Thus, we get
	\[
	\Tr_{\#}(\partial \mathbf{h}) = \Tr_{\#}[(\partial \mathbf{f})^{\varstar} \mathbf{F}] + \ip{\mathbf{f}, \partial^\dagger \mathbf{F}}_{\tr}.
	\]
	So the Dyson-Schwinger equation yields
	\[
	\tilde{\mu}_V \ip{\mathbf{f}, \mathbf{F} \# \nabla V}_{\tr} = \tilde{\mu}_V \Tr_{\#}[(\partial \mathbf{f})^{\varstar} \mathbf{F}] + \tilde{\mu}_V \ip{\mathbf{f}, \partial^\dagger \mathbf{F}}_{\tr},
	\]
	which is the desired equality.
\end{proof}

\subsection{Strategy and discussion} \label{subsec:discussion}

A natural strategy to produce transport maps from one point $V_0$ to another $V_1$ in $\mathscr{W}(\R^{*d})$ is as follows.  Suppose we are given a path $t \mapsto V_t$ from $[0,1]$ into the free Wasserstein manifold.  Suppose all the $V_t$'s satisfy Assumptions \ref{ass:freeGibbs} and \ref{ass:Laplacian}.  Assume without loss generality that $\dot{V}_t$ has expectation zero under $\mu_{V_t}$.  Let $\mathbf{h}_t = -\nabla \Psi_{V_t} \dot{V}_t$, so that $-\nabla_{V_t}^* \mathbf{h}_t = \dot{V}_t$.  Let $\mathbf{f}_t$ solve the equation $\mathbf{f}_t = \id + \int_0^t \mathbf{h}_u \circ \mathbf{f}_u\,du$.  Then $(\mathbf{f}_t)_* V_0$ should equal $V_t$ for all $t$.  Of course, carrying this out rigorously requires additional analytic assumptions.

The remainder of the paper will show that Assumptions \ref{ass:freeGibbs} and \ref{ass:Laplacian} hold and the transport strategy can be carried out rigorously for potentials $V \in C_{\tr}^\infty(\R^{*d})$ of the form $V(\mathbf{x}) = (1/2) \sum_j \tr(x_j^2) + W(\mathbf{x})$ such that $\partial W$ is uniformly bounded and $\partial \nabla W$ is uniformly bounded by a constant strictly less than $1$.  More precisely, \S \ref{sec:pseudoinverse} will study the heat semigroup associated to $L_V$, and from there the associated expectation $\mathbb{E}_V: \tr(C_{\tr}(\R^{*d})) \to \C$ and the pseudo-inverse $\Psi_V$ of the Laplacian $L_V$.  These results will imply that $V$ satisfies Assumption \ref{ass:Laplacian}, and that there is a unique law $\mu_V$ satisfying $\tilde{\mu}_V (L_V f) = 0$ for all $f \in \tr(C_{\tr}^2(\R^{*d}))$.  However, this alone does not imply that $\mu_V$ satisfies \eqref{eq:DSE0}.

	Next, \S \ref{sec:freeGibbslaws} will study the free Gibbs laws associated to a potential $V$, that is, non-commutative law maximizing a certain free entropy functional.  These results will imply that if $\partial W$ and $\partial^2 W$ are bounded (here there are no restrictions on the constant), then there exists a non-commutative law $\nu$ satisfying the Dyson-Schwinger equation $\tilde{\nu}[ \nabla_V^* \mathbf{h}] = 0$ for all sufficiently smooth $\mathbf{h}$.  Hence, in the situation where $\partial \nabla W$ is uniformly smaller than $1$, we have existence and uniqueness of a law $\mu_V$ satisfying \eqref{eq:DSE0}, or in other words, $V$ satisfies Assumption \ref{ass:freeGibbs}.

In order to execute the strategy for constructing transport, we need $\mathbf{h}_t = -\nabla_{V_t}^* \Psi_{V_t} \dot{V}_t$ to have uniformly bounded first derivative and to depend continuously on $t$ in order to apply Lemmas \ref{lem:flow} and \ref{lem:transport0}.  Thus, in our construction of $\Psi_V$ in \S \ref{sec:pseudoinverse}, we have to estimate the derivatives of $\Psi_V f$ and show that $\Psi_V f$ depends continuously on $V$ and $f$ jointly.  The continuity property of course increases the amount of technical work, but it follows quite naturally from the stochastic construction of heat semigroup provided that we have uniform bounds on $\partial V$ and $\partial \nabla V$.  On the other hand, to get $\mathbf{h}_t$ to have bounded first derivative with our methods requires us to assume that $\partial^3 V_t$ is bounded and that $\partial \dot{V}_t$ and $\partial^2 \dot{V}_t$ are bounded.

In \S \ref{sec:rigoroustransport}, we complete the argument for transport by showing that $(\mathbf{f}_1)_* \mu_{V_0} = \mu_{V_1}$, and this yields an isomorphism of the $\mathrm{C}^*$ and $\mathrm{W}^*$-algebras associated to $\mu_{V_0}$ and $\mu_{V_1}$.  In \S \ref{subsec:triangulartransport}, assuming a smaller bound for $\partial^2 V - \Id$, we construct transport functions $\mathbf{h}_t$ and $\mathbf{f}_t$ which are triangular, in the sense that
\[
\mathbf{f}_t(x_1,\dots,x_d) = (f_{t,1}(x_1),f_{t,2}(x_1,x_2),\dots,f_{t,d}(x_1,\dots,x_d)).
\]
This produces a triangular isomorphism of $\mathrm{C}^*$ and $\mathrm{W}^*$-algebras./

It is natural to ask what the minimal assumptions are on $V_0$ and $V_1$ to obtain isomorphisms of the associated $\mathrm{C}^*$ and $\mathrm{W}^*$-algebras.  First, although we assume that $V \in \tr(C_{\tr}^\infty(\R^{*d}))$ throughout, the proof would work just as well if $V$ is merely in $\tr(C_{\tr}^3(\R^{*d}))$ (with of course the required bounds on the derivatives).  We did not wish to get mired down with writing the precise smoothness assumptions needed for each result.  In any case, the smoothness assumptions needed in this proof may not be optimal.  For instance, von Neumann algebraic triangular transport was constructed in \cite{JekelExpectation,JekelThesis} using only assumptions on the first two derivatives of $V$.  We do not yet verified that this would be sufficient for $\mathrm{C}^*$-algebraic triangular transport.

More generally, do we expect such results to hold for functions $V$ which are not perturbations of a quadratic, and especially those which are not even convex?  Unfortunately, the $\mathrm{C}^*$-isomorphism can fail even for $d = 1$ with $V \in \tr(C_{\tr}^\infty(\R^{*d}))$.

Random matrix theorists have carried out a detailed analysis of the case (among others) where $d = 1$ and $V(X) = \tr(f(X))$ for some smooth $f: \R \to \R$; see \cite{BdMPS,BS2001,BG2013,BG2013multi,BGK2015}.  Of course, by \S \ref{subsec:smoothfunctionalcalculus}, such a $V$ will be in $\tr(C_{\tr}^\infty(\R^{*d}))$.  As in \cite[\S 7.1]{BS2001}, consider $f(t) = t^4 / 4 - ct^2$, or $V(x) = \tr(x^4) / 4 - c \tr(x^2)$.  Let $\mu^{(N)}$ be the associated measure on $M_N(\C)_{\sa}$, and let $X^{(N)}$ be a random matrix chosen according to this measure.  It was shown that for large enough $c$, the empirical spectral distribution of $X^{(N)}$ converges in probability to a measure $\rho$ on $\R$ whose support is the disjoint union of two closed intervals.  If $X$ is a self-adjoint operator in $(\cA,\tau)$ with spectral distribution $\rho$, then $\mathrm{C}^*(X) \cong C[0,1] \oplus C[0,1]$.  In particular, it is not isomorphic to the $\mathrm{C}^*$-algebra generated by a self-adjoint operator $S$ with the semicircular distribution.

As a side note, the function $\tr(x^4) / 4 - c \tr(x^2)$ is not a bounded perturbation of $(1/2) \tr(x^2)$, hence not among the class of functions studied in this paper.  However, one can easily modify the function $t^4/4 - ct^2$ near $\infty$ so that it is a bounded perturbation of some constant times $t^2$.  If this modification is close enough to $\infty$, and the values of the modified function remain sufficiently large in that region, then the support of the limiting distribution can be forced to stay inside a bounded set where the function was not changed (using similar techniques as \cite[\S 7.1]{BS2001}, \cite[\S 18.2]{JekelThesis}), and hence the limiting distribution will still be $\rho$ because of \cite[Theorem 1]{BdMPS}.  Similarly, one could consider a function such as $f(t) = t^2/2 + a e^{-bt^2}$ for large constants $a$ and $b$.  By choosing the coefficients correctly, one could presumably produce similar behavior to $t^4/4 - ct^2$ in that the limiting empirical spectral distribution would have a support with two components.

Such examples are an obstruction to $\mathrm{C}^*$ transport results for free Gibbs laws for general $V$.  These examples will in fact fail Assumptions \ref{ass:freeGibbs} and \ref{ass:Laplacian}.  Indeed, by reweighting the pieces of $\mu_V$ on each component of the support, one can obtain a continuum of measures that satisfy the Dyson-Schwinger equation, although it turns out that often there is still a unique maximizer of entropy.  Moreover, if we consider a smooth function $f$ on $\R$ that is constant on each component of the support, then $\nabla (f(x)) = f'(x)$ will evaluate to zero in $L^2$ of the free Gibbs law for $V$.  Although this is not technically the same as $\nabla(f(x))$ being zero in $C_{\tr}(\R^{*d})^d$, this behavior still suggests an obstacle to inverting $L_V$ modulo constant functions.  On the other hand, \cite{BG2013multi} and \cite{BGK2015} were able to invert the Laplacian on $L^2$ modulo a finite-dimensional kernel (still for a single matrix).  It is an intriguing possibility that something like this could work for the multi-matrix setting and lead to a transport result that applies as long as $\mathbf{h}_t$ is in a certain subspace of $C_{\tr}^\infty(\R^{*d})_{\sa}^d$ complementary to the kernel of $L_{V_t}$.

We also remark that since $\mathrm{W}^*$-isomorphism is weaker than $\mathrm{C}^*$-isomorphism, there could be situations in which the former is possible even when the latter is not.  In the case of a single self-adjoint operator, topological obstructions, such as disconnected support, disappear when we pass from the algebra of continuous functions to the $L^\infty$ space.  On the other hand, Brown showed that finite free entropy for a non-commutative law is not sufficient to guarantee $\mathrm{W}^*$-isomorphism with the law of a semicircular family \cite{Brown2005}.  However, we do not know of any counterexamples to having a $\mathrm{W}^*$-isomorphism between $\mu_V$ and the law of a free semicircular family for any smooth $V$ with quadratic growth at $\infty$. Voiculescu conjectured such a $\mathrm{W}^*$-isomorphism for a certain class of potentials in \cite{Voiculescu2006}.

\section{Pseudo-inverse of the Laplacian $L_V$} \label{sec:pseudoinverse}

As we saw in \S \ref{subsec:logdensity} and \S \ref{sec:manifold}, the Laplacian associated to $V$ plays an important role in converting between perturbations of $V$ and infinitesimal transport maps, both in the classical case and in the non-commutative case.  Recall that for $V \in \tr(C_{\tr}^\infty(\R^{*d}))$, the associated Laplacian is defined by
\[
L_Vf = Lf - \sum_{j=1}^d \partial_{x_j} f \# \nabla_{x_j} V.
\]
For each $k \in \N_0 \cup \{\infty\}$, this operator is a continuous linear transformation $C_{\tr}^{k+2}(\R^{*d}) \to C_{\tr}^k(\R^{*d})$.

We seek sufficient conditions for $L_V$ to have a one-dimensional kernel and a well-behaved pseudo-inverse $\Psi_V$.  We will use this in \S \ref{subsec:constructtransport} to verify that $V$ satisfies Assumption \ref{ass:Laplacian}.  As discussed in \S \ref{subsec:discussion}, we do not expect this to hold in all cases, so we will assume that $V$ is close in a certain sense to the quadratic $(1/2) \ip{\mathbf{x},\mathbf{x}}_{\tr}$.  Following similar ideas to \cite{BS2001,BCG2003,Dabrowski2010,GS2009,GS2014,DGS2016} and especially \cite{DGS2016}, since we cannot work directly with the density in the free setting, we will instead recover $\mathbb{E}_V$ and $\Psi_V$ from the heat semigroup $(e^{tL_V})_{t \in [0,\infty)}$, which in turn will be constructed from a free stochastic process $\mathcal{X}(\mathbf{X},t)$ solving the equation
\[
d\mathcal{X}(\mathbf{X},t) = d\mathcal{S}(t) - \frac{1}{2} \nabla_x V(\mathcal{X}(\mathbf{X},t))\,dt, \qquad \mathcal{X}(\mathbf{X},0) = \mathbf{X},
\]
where $(\mathcal{S}(t))_{t \in [0,\infty)}$ is a free Brownian motion in $d$ variables, freely independent of $\mathbf{X}$.  We remark that the technical development of free SDE theory owes a great deal to the work of Biane \cite{Biane1997}, Biane and Speicher \cite{BS1998,BS2001}, and Dabrowski \cite{Dabrowski2010,Dabrowski2017}, although due to the simple nature of the SDE considered here, we opt for a self-contained treatment which does not require any background in free stochastic analysis.

In fact, the SDE construction only depends on $V$ through its gradient $\nabla V$ and nothing about the construction of the SDE and heat semigroup requires us to use a gradient.  Hence, we will prove the results with $\nabla V$ replaced by a function $\mathbf{J} \in C_{\tr}^\infty(\R^{*d})_{\sa}^d$ which is sufficiently close to the identity function.  As motivation, note that in the case where $\mathbf{J} = \nabla V$, the condition $\norm{\partial \mathbf{J} - \Id}_{BC_{\tr}(\R^{*d},\mathscr{M}(\R^{*d}))} < 1$ would mean that the Hessian of $V$ is within $1$ of $\Id$.  In the classical world, this implies that $V$ is uniformly convex.

\begin{definition} \label{def:J}
	For constants $c \in (0,1)$ and $a \in \R$, we define
	\[
	\mathscr{J}_{a,c}^d := \{\mathbf{J} \in C_{\tr}^\infty(\R^{*d}): \norm{\mathbf{J} - \id}_{BC_{\tr}(\R^{*d})^d} \leq a, \norm{\partial \mathbf{J} - \Id}_{BC_{\tr}(\R^{*d},\mathscr{M}^1)^d} \leq 1 - c\}.
	\]
	We also define
	\[
	L_{\mathbf{J}} f := Lf - \partial f \# \mathbf{J}.
	\]
\end{definition}

Thus, in particular, the earlier operator $L_V$ would equal $L_{\nabla V}$ in this notation.  This will not cause any confusion because $V$ and $\nabla V$ are different types of objects:  $V$ is a scalar-valued function while $\nabla V$ is a $d$-tuple of operator-valued functions.  A precise statement of our results is as follows.

\begin{definition} \label{def:Vheatsemigroup1}
	Let $\mathbf{J} \in \mathscr{J}_{a,c}^d$.  Let $(\cA,\tau)$ be a tracial $\mathrm{W}^*$-algebra, let $(\cB,\sigma)$ be the tracial $\mathrm{W}^*$-algebra generated by a $d$-tuple of self-adjoint free Brownian motions $(\mathcal{S}_1(t),\dots,\mathcal{S}_d(t))$ for $t \in [0,\infty)$, and let $(\cA * \cB, \tau * \sigma)$ be the tracial free product of $(\cA,\tau)$ and $(\cB,\sigma)$.  For $\mathbf{X} = (X_1,\dots,X_d) \in \cA_{\sa}^d$, let $\mathcal{X}(\mathbf{X},t) = \mathcal{X}^{\cA,\tau}(\mathbf{X},t)$ be the solution to the integral equation
	\[
	\mathcal{X}(\mathbf{X},t) = X + \mathcal{S}(t) + \int_0^t \mathbf{J}(\mathcal{X}(\mathbf{X},u))\,du
	\]
	(which we will show is well-defined in Lemma \ref{lem:SDEsolution}).  Note that $\mathcal{X}$ is a function $\cA_{\sa}^d \times [0,\infty) \to (\cA * \cB)_{\sa}^d$.  For $f \in C_{\tr}(\R^{*d})$, we define
	\[
	(e^{tL_{\mathbf{J}}}f)^{\cA,\tau}(\mathbf{X}) := E_{\cA}[f^{\cA * \cB, \tau * \sigma}(\mathcal{X}(\mathbf{X},2t))],
	\]
	where $E_{\cA}: \cA * \cB \to \cA$ is the unique trace-preserving conditional expectation.
\end{definition}

\begin{theorem} \label{thm:Vheatsemigroup1}
	Let $\mathbf{J} \in \mathscr{J}_{a,c}^d$ for some $a \in \R$ and $c \in (0,1)$.  Let $f \in C_{\tr}^k(\R^{*d})$.
	\begin{enumerate}[(1)]
		\item We have $e^{tL_{\mathbf{J}}} f \in C_{\tr}^k(\R^{*d})$.
		\item As $t \to \infty$, the function $e^{tL_{\mathbf{J}}} f$ converges in $C_{\tr}^k(\R^{*d})$ to a constant $\mathbb{E}_{\mathbf{J}}f$.
		\item The integral $\Psi_{\mathbf{J}} f = \int_0^\infty [e^{tL_{\mathbf{J}}} - \mathbb{E}_{\mathbf{J}}]f\,dt$ makes sense as an improper Riemann integral in $C_{\tr}^k(\R^{*d})$.
		\item We have
		\[
		-L_{\mathbf{J}} \Psi_{\mathbf{J}} + \mathbb{E}_{\mathbf{J}} = -\Psi_{\mathbf{J}} L_{\mathbf{J}} + \mathbb{E}_{\mathbf{J}} = \id
		\]
		as operators $C_{\tr}^k(\R^{*d}) \to C_{\tr}^k(\R^{*d})$.
	\end{enumerate}
\end{theorem}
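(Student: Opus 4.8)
\textbf{Proof strategy for Theorem \ref{thm:Vheatsemigroup1}.}

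The plan is to build everything from the free SDE and exploit the bounds available from the hypothesis $\mathbf{J} \in \mathscr{J}_{a,c}^d$, following the template of \cite[\S 3]{DGS2016} but in the function spaces of this paper. First I would establish well-posedness of the integral equation defining $\mathcal{X}(\mathbf{X},t)$: because $\mathbf{J}$ has uniformly bounded first derivative (indeed $\norm{\partial \mathbf{J} - \Id}_{BC_{\tr}(\R^{*d},\mathscr{M}^1)^d} \le 1-c$, so $\partial\mathbf{J}$ is bounded), $\mathbf{J}^{\cA*\cB,\tau*\sigma}$ is globally Lipschitz with respect to $\norm{\cdot}_\infty$ and $\norm{\cdot}_2$ on every operator-norm ball, so a Picard iteration in the Fr\'echet space setting (the Riemann-integral formalism of \S\ref{sec:NCfunc2}) converges; this is Lemma \ref{lem:SDEsolution}. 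The key quantitative point is a \emph{contraction estimate}: for two starting tuples $\mathbf{X}, \mathbf{X}'$, differentiating $\norm{\mathcal{X}(\mathbf{X},t) - \mathcal{X}(\mathbf{X}',t)}_2^2$ and using that $\partial\mathbf{J} - \Id$ has norm $\le 1-c$ gives $\norm{\mathcal{X}(\mathbf{X},t) - \mathcal{X}(\mathbf{X}',t)}_2 \le e^{-ct}\norm{\mathbf{X}-\mathbf{X}'}_2$ (up to sharpening of the constant), and similarly the drift term provides a uniform bound on $\norm{\mathcal{X}(\mathbf{X},t)}_\infty$ in terms of $\norm{\mathbf{X}}_\infty$, $a$, and $t$. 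Then I would show $\mathbf{X} \mapsto \mathcal{X}(\mathbf{X},t)$ is itself a $C_{\tr}^\infty$ function by differentiating the SDE: the derivative process $\partial\mathcal{X}(\mathbf{X},t)$ solves a \emph{linear} SDE driven by $\partial\mathbf{J}(\mathcal{X})$, which by the same $1-c$ bound satisfies $\norm{\partial\mathcal{X}(\mathbf{X},t)}_{BC_{\tr}} \le e^{(1-c)\cdot\text{(something)}}$; higher derivatives satisfy linear SDEs with lower-order inhomogeneous terms, handled by induction exactly as in the proof of Lemma \ref{lem:flow} and Proposition \ref{prop:IFT} (a Gr\"onwall/dominated-convergence argument on the Picard iterates). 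This yields (1): $e^{tL_{\mathbf{J}}}f = E_\cA[f(\mathcal{X}(\cdot,2t))]$ is a composition of $f \in C_{\tr}^k$ with a $C_{\tr}^\infty$ map followed by the conditional-expectation operator $\Upsilon$-type contraction, and the chain rule Theorem \ref{thm:chainrule} gives membership in $C_{\tr}^k(\R^{*d})$ with explicit seminorm bounds.

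For (2), the decay comes from the contraction estimate: if $\mathbf{X}$ and $\mathbf{X}'$ are two realizations of the \emph{same} law run with independent Brownian motions, then $E[\norm{\mathcal{X}(\mathbf{X},2t) - \mathcal{X}(\mathbf{X}',2t)}_2] \to 0$ exponentially, which forces $f(\mathcal{X}(\mathbf{X},2t))$ to be asymptotically constant (after applying $E_\cA$ and using the Lipschitz bound Remark \ref{rem:Lipschitz} to pass from $\norm{\cdot}_2$-closeness of the arguments to closeness of the values). More carefully, one shows $e^{tL_{\mathbf{J}}}f - e^{sL_{\mathbf{J}}}f \to 0$ in $C_{\tr}^k(\R^{*d})$ as $s,t\to\infty$ via the semigroup/Markov property of $\mathcal{X}$ and the contraction, so $e^{tL_{\mathbf{J}}}f$ is Cauchy and converges to some limit; the limit is independent of $\mathbf{X}$ (by the two-copies argument) hence a constant $\mathbb{E}_{\mathbf{J}}f$, and the convergence is exponentially fast, $\norm{e^{tL_{\mathbf{J}}}f - \mathbb{E}_{\mathbf{J}}f}_{C_{\tr}^k,R} \le C_R\, e^{-ct}$. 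That exponential rate immediately gives (3): $\int_0^\infty (e^{tL_{\mathbf{J}}} - \mathbb{E}_{\mathbf{J}})f\,dt$ converges absolutely in each seminorm, so it defines $\Psi_{\mathbf{J}}f \in C_{\tr}^k(\R^{*d})$. Along the way I would record that $\frac{d}{dt} e^{tL_{\mathbf{J}}}f = L_{\mathbf{J}} e^{tL_{\mathbf{J}}}f = e^{tL_{\mathbf{J}}} L_{\mathbf{J}} f$ (for $f \in C_{\tr}^{k+2}$), which follows from It\^o's formula for the free SDE — the generator of $\mathcal{X}$ run at time $2t$ is precisely $L_{\mathbf{J}}$ because of the $d$-variable free Brownian motion contributing the $L$ term and the drift $\mathbf{J}$ contributing the $-\partial f\#\mathbf{J}$ term, with the factor $2$ in $\mathcal{X}(\mathbf{X},2t)$ absorbing the $1/2$ in the drift normalization.

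Finally (4) is the fundamental-theorem-of-calculus step. For $f \in C_{\tr}^{k+2}(\R^{*d})$ we have, by the semigroup differentiation identity, $e^{TL_{\mathbf{J}}}f - f = \int_0^T L_{\mathbf{J}} e^{tL_{\mathbf{J}}}f\,dt = \int_0^T e^{tL_{\mathbf{J}}} L_{\mathbf{J}} f\,dt$; letting $T\to\infty$ and using that $L_{\mathbf{J}}f \in \ker(\mathbb{E}_{\mathbf{J}})$ (since $\mathbb{E}_{\mathbf{J}} L_{\mathbf{J}}f = \lim_T e^{TL_{\mathbf{J}}}L_{\mathbf{J}}f$ and on the other hand $\frac{d}{dt}\mathbb{E}_{\mathbf{J}}e^{tL_{\mathbf{J}}}f$ must vanish because $\mathbb{E}_{\mathbf{J}}$ is constant along the flow — or more directly, $\mathbb{E}_{\mathbf{J}} L_{\mathbf{J}} f = \lim_{T}(e^{TL_{\mathbf{J}}}f - f)$ applied to $\mathbb{E}_{\mathbf{J}}$...), one gets $\mathbb{E}_{\mathbf{J}}f - f = \int_0^\infty e^{tL_{\mathbf{J}}}L_{\mathbf{J}}f\,dt = \Psi_{\mathbf{J}} L_{\mathbf{J}} f$, i.e. $-\Psi_{\mathbf{J}} L_{\mathbf{J}} + \mathbb{E}_{\mathbf{J}} = \id$ on $C_{\tr}^{k+2}$; the other identity $-L_{\mathbf{J}}\Psi_{\mathbf{J}} + \mathbb{E}_{\mathbf{J}} = \id$ follows by commuting $L_{\mathbf{J}}$ through the integral (justified since $L_{\mathbf{J}}$ is continuous $C_{\tr}^{k+2}\to C_{\tr}^k$ and the integrand converges in $C_{\tr}^{k+2}$). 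To get the identities as stated on all of $C_{\tr}^k$ rather than $C_{\tr}^{k+2}$, I would use density of smooth approximants together with the continuity of $e^{tL_{\mathbf{J}}}$, $\mathbb{E}_{\mathbf{J}}$, $\Psi_{\mathbf{J}}$, $L_{\mathbf{J}}$ in the relevant topologies — though one has to be slightly careful since $\Psi_{\mathbf{J}}$ followed by $L_{\mathbf{J}}$ loses two derivatives, so the cleanest route is to prove the identity first on $C_{\tr}^{k+2}$ for every $k$ and then note that this covers every $f \in C_{\tr}^k$ by taking $k$ large.

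\textbf{Main obstacle.} The routine parts (well-posedness, chain rule, Riemann integrals in Fr\'echet spaces) are handled by the machinery already in \S\ref{sec:NCfunc1}--\S\ref{sec:NCfunc2}. The genuinely delicate step is establishing the \emph{exponential contraction} $\norm{\mathcal{X}(\mathbf{X},t)-\mathcal{X}(\mathbf{X}',t)}_2 \lesssim e^{-ct}\norm{\mathbf{X}-\mathbf{X}'}_2$ and its derivative-process analogues with constants that are \emph{uniform} over all $(\cA,\tau)\in\mathbb{W}$ and, crucially, \emph{continuous in $\mathbf{J}$} — the latter being needed downstream in \S\ref{subsec:constructtransport} so that $\mathbf{h}_t = -\nabla\Psi_{V_t}\dot V_t$ depends continuously on $t$. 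This requires carefully differentiating $\frac{d}{dt}\norm{\mathcal{X}(\mathbf{X},t)-\mathcal{X}(\mathbf{X}',t)}_2^2 = -\ip{\mathcal{X}(\mathbf{X},t)-\mathcal{X}(\mathbf{X}',t), \mathbf{J}(\mathcal{X}(\mathbf{X},t)) - \mathbf{J}(\mathcal{X}(\mathbf{X}',t))}_{\tau*\sigma}$ (the Brownian increments cancel since the two processes share the same $\mathcal{S}$) and bounding the right side below by $c\norm{\mathcal{X}(\mathbf{X},t)-\mathcal{X}(\mathbf{X}',t)}_2^2$ using that $\partial\mathbf{J} - \Id$ has $BC_{\tr}(\R^{*d},\mathscr{M}^1)^d$-norm at most $1-c$ — i.e., $\mathbf{J} - \id$ is a contraction on $L^2$ in the appropriate monotone sense. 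Getting Gr\"onwall-type control on all higher derivative processes simultaneously, with the dominated-convergence bookkeeping for the Picard iterates (as in the proof of Proposition \ref{prop:IFT}), is where the bulk of the technical work lies, but it is parallel to arguments already carried out in this paper and in \cite{DGS2016}.
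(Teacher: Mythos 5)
Your overall route is the paper's: Picard well-posedness of the SDE (Lemma \ref{lem:SDEsolution}), smoothness of $\mathbf{X}\mapsto\mathcal{X}(\mathbf{X},t)$ by differentiating the equation and a Gr\"onwall/dominated-convergence induction on Picard iterates (Lemmas \ref{lem:niceGronwall} and \ref{lem:processCinfinity}), preservation of $C_{\tr}^k$ under the conditional expectation (Lemma \ref{lem:Ckconditionalexpectation}), a Cauchy-in-$t$ argument for (2), exponential integrability for (3), and the heat-equation/fundamental-theorem-of-calculus step for (4) (Propositions \ref{prop:heatequation} and \ref{prop:Laplacianrelations}). So the architecture matches; the issues are in what you designate as the key estimate and in the finishing steps.

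The step that would fail as written is making the synchronous-coupling contraction $\norm{\mathcal{X}(\mathbf{X},t)-\mathcal{X}(\mathbf{X}',t)}_2\lesssim e^{-ct}\norm{\mathbf{X}-\mathbf{X}'}_2$ the engine of (2)--(3). Convergence in $C_{\tr}^k(\R^{*d})$ means convergence of $e^{tL_{\mathbf{J}}}f$ \emph{and all its derivatives} in the seminorms $\norm{\cdot}_{\mathscr{M}^{\ell},\tr,R}$, which are built from operator norm and all H\"older exponents, uniformly over operator-norm balls and over all $(\cA,\tau)$; an $L^2$ contraction together with $\norm{\cdot}_2$-Lipschitz control of $f$ does not control the operator-valued zeroth-order term in $\norm{\cdot}_\infty$, let alone $\partial^j e^{tL_{\mathbf{J}}}f$ in the $\mathscr{M}^j$-norms. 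The paper never uses that coupling: the decay mechanism is the bound $\norm{\partial_{\mathbf{x}}\partial^j\mathcal{X}(\cdot,t)}\le e^{-ct/2}p_j(t)$ in the $C_{\tr,\mathcal{S}}$ seminorms (Lemma \ref{lem:processCinfinity}, obtained by writing $\mathbf{J}=\pi+\mathbf{K}$ with $\norm{\partial\mathbf{K}}\le 1-c$, multiplying by $e^{t/2}$ and applying Gr\"onwall in Lemma \ref{lem:niceGronwall}), which gives \eqref{eq:heatsemigroupestimate2}; combined with \eqref{eq:derivativedifferenceestimate} and the semigroup law this yields the Cauchy estimate \eqref{eq:heatsemigroupconvergence} behind (2) and (3). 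You do mention the ``derivative-process analogues,'' so the repair is to promote them from bookkeeping to the central estimate and discard the monotonicity/coupling computation. Smaller points: in (2) the two copies must share the \emph{same} Brownian motion (you first say independent, then correct yourself); the argument that $\mathbb{E}_{\mathbf{J}}L_{\mathbf{J}}f=0$ should just be that $e^{tL_{\mathbf{J}}}L_{\mathbf{J}}f=\tfrac{d}{dt}e^{tL_{\mathbf{J}}}f$ and both the function and this derivative converge as $t\to\infty$, or, as in Proposition \ref{prop:Laplacianrelations}, one avoids it by applying $-L_{\mathbf{J}}$ to $\int_0^T(e^{tL_{\mathbf{J}}}-\mathbb{E}_{\mathbf{J}})f\,dt$ directly; and the proposed fix ``prove it on $C_{\tr}^{k+2}$ and take $k$ large'' does not cover finite $k$, since $f\in C_{\tr}^k$ need not lie in $C_{\tr}^{k+2}$, and the density of smoother functions is precisely what the paper says it has \emph{not} established (see the remark after Proposition \ref{prop:kernelprojection}); for finite $k$ one must run the quantitative estimates of Propositions \ref{prop:kernelprojection} and \ref{prop:pseudoinverse} (or settle, as the paper's detailed statement does, for smooth $f$) rather than invoke density.
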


This theorem is a summary of the results we will prove in this section.  In particular,
\begin{enumerate}[(1)]
	\item See Lemma \ref{lem:heatsemigroupbounds}.
	\item See Proposition \ref{prop:kernelprojection} and \eqref{eq:heatsemigroupestimate2}.
	\item See Proposition \ref{prop:pseudoinverse}.
	\item See Proposition \ref{prop:Laplacianrelations}.
\end{enumerate}
Actually, as we are interested in studying conditional distributions and conditional transport, we will prove a more general result, which allows $\mathbf{J}$ and $f$ to depend on an auxiliary variable $\mathbf{x}'$.  We will furthermore allow the function $\mathbf{f}$ to be in $C_{\tr}^k(\R^{*(d+d')},\mathscr{M}(\R^{*d_1},\dots,\R^{*d_\ell}))^{d''}$ for some $\ell \in \N_0$ and $d_1$, \dots, $d_\ell$, and $d'' \in \N$.  The more general definition of the heat semigroup is as follows.

\begin{definition}
	Consider formal variables $\mathbf{x} = (x_1,\dots,x_d)$ and $\mathbf{x}' = (x_1',\dots,x_{d'}')$.  Let $\pi(\mathbf{x},\mathbf{x}') = \mathbf{x}$ and $\pi'(\mathbf{x},\mathbf{x}') = \mathbf{x}'$.  Moreover, let $\Pi(\mathbf{x},\mathbf{x}')[\mathbf{y},\mathbf{y}'] = \mathbf{y}$ and $\Pi'(\mathbf{x},\mathbf{x'})[\mathbf{y},\mathbf{y}'] = \mathbf{y}'$, where $\mathbf{y}$ is a $d$-tuple and $\mathbf{y}'$ is a $d'$-tuple.  Then define
	\begin{multline*}
		\mathscr{J}_{a,b}^{d,d'} := \{\mathbf{J} \in C_{\tr}^\infty(\R^{*(d+d')})_{\sa}^d \}: \norm{\mathbf{J} - \pi}_{BC_{\tr}(\R^{*(d+d')})^d} \leq a, \\
		\norm{\partial \mathbf{J} - \Pi}_{BC_{\tr}(\R^{*(d+d')},\mathscr{M}(\R^{*d}))^d} \leq 1 - c\}.
	\end{multline*}
\end{definition}

\begin{definition} \label{def:Vheatsemigroup2}
	Let $\mathbf{J} \in \mathscr{J}_{a,b}^{d,d'}$.  Let $(\cA,\tau)$ be a tracial $\mathrm{W}^*$-algebra, let $(\cB,\sigma)$ be the tracial $\mathrm{W}^*$-algebra generated by a $d$-tuple of freely independent self-adjoint free Brownian motions $(\mathcal{S}_1(t),\dots,\mathcal{S}_d(t))$ for $t \in [0,\infty)$, and let $(\cA * \cB, \tau * \sigma)$ be the tracial free product of $(\cA,\tau)$ and $(\cB,\sigma)$.  For $\mathbf{X} = (X_1,\dots,X_d) \in \cA_{\sa}^d$ and $\mathbf{X}' = (X_1',\dots,X_{d'}') \in \cA_{\sa}^{d'}$, let $\mathcal{X}^{\cA,\tau}(X,X',t)$ be the solution to the integral equation
	\[
	\mathcal{X}(\mathbf{X},\mathbf{X}',t) = \mathbf{X} + \mathcal{S}(t) + \int_0^t \mathbf{J}(\mathcal{X}(\mathbf{X},\mathbf{X}',u),X')\,du
	\]
	(which we will show is well-defined in Lemma \ref{lem:SDEsolution}).  Note that $\mathcal{X}^{\cA,\tau}$ is a function $\cA_{\sa}^{d + d'} \times [0,\infty) \to (\cA * \cB)_{\sa}^d$.  For $f \in C_{\tr}^k(\R^{*(d+d')},\mathscr{M}(\R^{*d_1},\dots,\R^{*d_\ell}))^{d''}$, we define
	\[
	(e^{tL_{\mathbf{x},\mathbf{J}}}f)^{\cA,\tau}(\mathbf{X},\mathbf{X}')[\mathbf{Y}_1,\dots,\mathbf{Y}_\ell] \\
	= E_{\cA}[f^{\cA * \cB, \tau * \sigma}(\mathcal{X}(X,X',2t),X')[\mathbf{Y}_1,\dots,\mathbf{Y}_\ell]],
	\]
	where $E_{\cA}: \cA * \cB \to \cA$ is the unique trace-preserving conditional expectation.
\end{definition}

We refer to Propositions \ref{prop:kernelprojection} and \ref{prop:pseudoinverse} for the precise generalizations of Theorem \ref{thm:Vheatsemigroup1} to the conditional setting.

\subsection{The process $\mathcal{X}(\mathbf{X},\mathbf{X}',t)$} \label{subsec:stochasicprocess}

	The bulk of the technical work to prove Theorem \ref{thm:Vheatsemigroup1} lies in showing that $\mathcal{X}$ is a ``$C_{\tr}^\infty$ function of $(\mathbf{X},\mathbf{X}')$ and $\mathcal{S}$'' in a certain sense.  Once we prove that, it is relatively easy to deduce that if $f$ is a $C_{\tr}^k$ function of $(\mathbf{X},\mathbf{X}')$, then so is $e^{tL_{\mathbf{x},\mathbf{J}}} f$, as we will do in \S \ref{subsec:heatsemigroup}.  The results of this section are closely parallel to \cite[\S 3.2]{DGS2016}, except with different spaces of functions.

Recall that $\mathcal{X}^{\cA,\tau}(\mathbf{X},\mathbf{X}',t)$ depends on $\mathbf{X}$ and $\mathbf{X}'$ as well as the free Brownian motion $\mathcal{S}(t)$, and thus we want to define a similar space to $C_{\tr}^k(\R^{*(d+d')})$ which also allows dependence on a freely independent free Brownian motion.  Since of course we will need to study the space-derivatives of $\mathcal{X}^{\cA,\tau}(\mathbf{X},\mathbf{X}',t)$ of arbitrary orders, this involves defining analogs of $C_{\tr}(\R^{*(d+d')},\mathscr{M}(\R^{*d_1},\dots,\R^{*d_\ell}))^{d''}$ that also allow dependence on $\mathcal{S}(t)$.  For simplicity, we call the tuple of formal variables $\mathbf{x}$ rather than $(\mathbf{x},\mathbf{x}')$ in the definition.

\begin{definition}
	Let $\mathbf{s}$ denote a collection of formal self-adjoint variables $(s_j(t))_{t \in [0,\infty),j \in [d]}$ and let $\mathbf{x}$ denote a collection of formal self-adjoint variables $x_1$, \dots, $x_{d'}$.  We denote by $\TrP_{\mathbf{s}}(\R^{*d},\mathscr{M}(\R^{*d_1},\dots,\R^{*d_\ell}))$ the space of trace polynomials in the formal variables $x_1$, \dots, $x_d$, $\{s(t)\}_{t \in [0,\infty)}$, and $\mathbf{y}_1$, \dots, $\mathbf{y}_\ell$ (where $\mathbf{y}_j$ is a $d_j$-tuple) that are real-multilinear in $y_1$, \dots, $y_\ell$.
\end{definition}

\begin{definition}
	With $x$ and $s$ as above, suppose that $f = (f^{\cA,\tau})_{(\cA,\tau) \in \mathbb{W}}$ is a tuple of functions where
	\[
	f^{\cA,\tau}: (\cA * \cB)_{\sa}^{d'} \times (\cA * \cB)_{\sa}^{d_1} \times \dots \times (\cA * \cB)_{\sa}^{d_\ell} \to (\cA * \cB)^{d''}
	\]
	is a function which is real-multilinear in the last $\ell$ variables.  We say that $f \in C_{\tr,\mathcal{S}}(\R^{*d'},\mathscr{M}(\R^{*d_1},\dots,\R^{*d_1}))^{d''}$ if for every $R > 0$ and $\epsilon > 0$, there exists a $g \in \TrP_{\mathbf{s}}(\R^{*d},\mathscr{M}(\R^{*d_1},\dots,\R^{*d_\ell})$ such that for every $(\cA,\tau)$ we have
	\[
	\sup \{\norm{\mathbf{f}^{\cA,\tau}(\mathbf{X}) - \mathbf{g}|_{\cA * \cB, \tau * \sigma}(\mathcal{S},\mathbf{X})}_{\mathscr{M}^\ell,\tr}: \mathbf{X} \in (\cA * \cB)_{\sa}^d \text{ with } \norm{\mathbf{X}}_\infty \leq R \} < \epsilon.
	\]
	We equip $C_{\tr,\mathcal{S}}(\R^{*d'},\mathscr{M}(\R^{*d_1},\dots,\R^{*d_\ell}))^{d''}$ with the Fr\'echet topology given by the seminorms
	\[
	\norm{f}_{C_{\tr,\mathcal{S}}(\R^{*d'},\mathscr{M}^\ell),R}
	:= \sup_{(\cA,\tau) \in \mathbb{W}} \sup \{\norm{f^{\cA,\tau}(\mathbf{X})}_{\mathscr{M}^\ell,\tr}: \\ \mathbf{X} \in (\cA * \cB)_{\sa}^d \text{ with } \norm{\mathbf{X}}_\infty \leq R \}
	\]
	for $R > 0$.
\end{definition}

\begin{definition}
	Let $k \in \N_0 \cup \{\infty\}$.  Suppose that $f = (f^{\cA,\tau})_{(\cA,\tau) \in \mathbb{W}}$ is a tuple of functions where
	\[
	\mathbf{f}^{\cA,\tau}: (\cA * \cB)_{\sa}^{d'} \times (\cA * \cB)^{d_1} \times \dots \times (\cA * \cB)^{d_\ell} \to (\cA * \cB)^{d''}
	\]
	is a function which is real-multilinear in the last $\ell$ variables.  We say that $f \in C_{\tr,\mathcal{S}}^k(\R^{*d'},\mathscr{M}^\ell)^{d'}$ if for every $k' \in \N_0$ with $k' \leq k$, there exists $\mathbf{g}_{k'} \in C_{\tr,\mathcal{S}}(\R^{*d'},\mathscr{M}^{\ell+k'})^{d'}$ such that for every $(\cA,\tau) \in \mathbb{W}$,
	\[
	\partial^{k'} \mathbf{f}^{\cA,\tau} = \mathbf{g}_{k'}^{\cA,\tau}
	\]
	as functions $(\cA * \cB)_{\sa}^{d_2+\ell+k'} \to \cA * \cB$.  We equip $C_{\tr,\mathcal{S}}^k(\R^{*d'},\mathscr{M}(\R^{*d_1},\dots,\R^{*d_\ell}))^{d''}$ with the family of seminorms
	\[
	\norm{\partial^{k'} \mathbf{f}}_{C_{\tr,\mathcal{S}}(\R^{*d'},\mathscr{M}^{\ell+k'})^{d''},R}
	\]
	for $k' \leq k$ and $j_1$, \dots, $j_{k'} \in [d']$ and $R > 0$.
\end{definition}

\begin{proposition} \label{prop:chainrule2}
	Lemma \ref{lem:composition} and Theorem \ref{thm:chainrule} hold with each space $C_{\tr}^k(\R^{*d'
	},\mathscr{M}(\R^{*d_1},\dots,\R^{*d_\ell}))^{d''}$ replaced by $C_{\tr,\mathcal{S}}^k(\R^{*d'},\mathscr{M}(\R^{*d_1},\dots,\R^{*d_\ell}))^{d''}$.
\end{proposition}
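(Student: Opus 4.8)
The plan is to derive both statements from the already-proved versions for the $C_{\tr}^k$-spaces by ``freezing'' the free Brownian motion variables, using that every element of $\TrP_{\mathbf{s}}$ involves only finitely many times. First I would record the basic freezing construction. Fix a finite set $T = \{t_1 < \dots < t_m\} \subset [0,\infty)$ and let $\mathbf{S}_T = (\mathcal{S}_j(t_i))_{i \in [m],\, j \in [d]} \in \cB_{\sa}^{dm}$. By Lemma \ref{lem:Brownian}(2) each $\mathcal{S}_j(t_i)$ is $\sqrt{t_i}$ times a standard semicircular element, so $\norm{\mathbf{S}_T}_\infty = 2\sqrt{t_m} < \infty$, and the joint law of $\mathbf{S}_T$ is a fixed element of $\Sigma_{dm}$. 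Substituting $\mathbf{S}_T$ into the first $dm$ arguments of a trace polynomial $h \in \TrP(\R^{*(dm+d')},\mathscr{M}^\ell)^{d''}$ yields an element $\iota_T(h) \in \TrP_{\mathbf{s}}(\R^{*d'},\mathscr{M}^\ell)^{d''}$; conversely every trace polynomial of $\TrP_{\mathbf{s}}(\R^{*d'},\mathscr{M}^\ell)^{d''}$ whose times lie in $T$ is of this form (replace each formal $s_j(t_i)$ by a fresh variable). For every $R>0$, taking $\cA * \cB$ as test algebra and $(\mathbf{S}_T,\mathbf{X})$ as test tuple gives
\[
\norm{\iota_T(h)}_{C_{\tr,\mathcal{S}}(\R^{*d'},\mathscr{M}^\ell)^{d''},R} \;\le\; \norm{h}_{C_{\tr}(\R^{*(dm+d')},\mathscr{M}^\ell)^{d''},\,\max(R,\,2\sqrt{t_m})},
\]
and the analogous bound holds with $\partial$-derivatives inserted, since $\iota_T$ commutes with differentiation in the $\mathbf{x}$-variables. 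By density of trace polynomials, $\iota_T$ extends to a continuous map $C_{\tr}^k(\R^{*(dm+d')},\mathscr{M}^\ell)^{d''} \to C_{\tr,\mathcal{S}}^k(\R^{*d'},\mathscr{M}^\ell)^{d''}$ with $(\iota_T h)^{\cA,\tau}(\mathbf{X}) = h^{\cA * \cB,\tau * \sigma}(\mathbf{S}_T,\mathbf{X})$.

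With this in hand I would run the proof exactly as in Lemma \ref{lem:composition} and Theorem \ref{thm:chainrule}. For trace polynomials $\mathbf{f},\mathbf{g},\mathbf{h}_1,\dots,\mathbf{h}_n$ in the $C_{\tr,\mathcal{S}}$-spaces, choose one finite $T$ containing every time occurring in any of them and write $\mathbf{f} = \iota_T(\hat{\mathbf{f}})$, $\mathbf{g} = \iota_T(\hat{\mathbf{g}})$, $\mathbf{h}_m = \iota_T(\hat{\mathbf{h}}_m)$ for trace polynomials over the enlarged base space $\R^{*(dm+d)}$ (resp.\ $\R^{*(dm+d')}$ for $\hat{\mathbf{f}}$). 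Then $\mathbf{f}(\mathbf{g}) \# [\mathbf{h}_1,\dots,\mathbf{h}_n]$ equals $\iota_T$ applied to the $C_{\tr}$-composition of $\hat{\mathbf{f}}$ with $(\pi_{\mathbf{s}},\hat{\mathbf{g}})$ in the point-argument and $\hat{\mathbf{h}}_1,\dots,\hat{\mathbf{h}}_n$ in the multilinear slots, where $\pi_{\mathbf{s}}$ is the coordinate projection onto the $dm$ frozen coordinates (a $C_{\tr}^\infty$ function). Theorem \ref{thm:chainrule} then gives membership in the right space and the derivative formula, with the $\pi_{\mathbf{s}}$-terms dropping out under $\partial_{\mathbf{x}}$ exactly as in Remark \ref{rem:partialdifferentiation}. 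For general $\mathbf{f},\mathbf{g},\mathbf{h}_m$ I would repeat the original proofs verbatim: the H\"older-type inequality \eqref{eq:niceHolderinequality}, the successive approximation of $\mathbf{f}$, $\mathbf{g}$, and the $\mathbf{h}_m$ by trace polynomials, the invocation of (the $C_{\tr,\mathcal{S}}$ version of) Lemma \ref{lem:Ctrcontinuity}, and the iterated Fr\'echet chain rule all go through word for word once ``$\cA$'' is read as ``$\cA * \cB$''. The only inputs used — the non-commutative H\"older inequality (Lemma \ref{lem:NCHolder}), contractivity of trace-preserving conditional expectations, and completeness of the Fr\'echet spaces — are insensitive to enlarging the ambient tracial $\mathrm{W}^*$-algebra.

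The main obstacle, such as it is, is purely organizational: one must track which coordinates are ``frozen Brownian'' and which are genuine, and in particular ensure that composing two $C_{\tr,\mathcal{S}}$-functions references the \emph{same} Brownian motion, so that their time-sets get merged and the Brownian block is carried consistently through the outer, inner, and multilinear factors via the pass-through pattern above; one also has to check that the $\mathbf{x}$-radius $R$ and the finite Brownian radius $2\sqrt{t_m}$ combine correctly when transporting seminorm bounds across $\iota_T$, and that the density defining $C_{\tr,\mathcal{S}}^k$ holds in the relevant seminorms so that the limiting arguments apply. I expect no genuinely new analytic difficulty: the spaces $C_{\tr,\mathcal{S}}^k$ are, by design, ``$C_{\tr}^k$ with one more frozen tuple of bounded self-adjoint operators,'' and the two original proofs are stable under enlarging the ambient algebra.
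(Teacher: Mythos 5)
Your proposal is correct and, in its operative part, is exactly the paper's argument: the paper simply notes that the proofs of Lemma \ref{lem:composition} and Theorem \ref{thm:chainrule} go through word for word once $\cA$ is replaced by $\cA * \cB$ and trace polynomials are allowed to involve finitely many Brownian increments, leaving the details to the reader. The auxiliary freezing map $\iota_T$ is harmless but unnecessary (and its extension ``by density'' from trace polynomials to $C_{\tr}^k$ for $k \geq 1$ is not actually justified by anything in the paper, since density of trace polynomials in the $C_{\tr}^k$-topology is never established); this does not damage your proof, because your verbatim-repetition argument already handles general $\mathbf{f}$, $\mathbf{g}$, $\mathbf{h}_m$ by approximation in the relevant seminorms.
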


The proof of this proposition is exactly the same as the original statements, and so we leave the details to the reader.  Now we are ready to define the solution to the integral equation.  We continue to use $\mathcal{S}$ to denote a $d$-tuple of free Brownian motions.

\begin{lemma} \label{lem:SDEsolution}
	For each $(\cA,\tau)$, there exists a unique function $\mathcal{X}^{\cA,\tau}: (\cA * \cB)_{\sa}^{d+d'} \times [0,\infty) \to (\cA * \cB)_{\sa}^d$ that is continuous in $t$ and satisfies
	\begin{equation} \label{eq:mainSDE}
		\mathcal{X}^{\cA,\tau}(\mathbf{X},\mathbf{X}',t) = \mathbf{X} + \mathcal{S}(t) - \frac{1}{2} \int_0^t \mathbf{J}^{\cA * \cB,\tau * \sigma}(\mathcal{X}^{\cA,\tau}(\mathbf{X},\mathbf{X}',u),\mathbf{X}')\,du.
	\end{equation}
	Moreover, $\mathcal{X}$ defines a continuous map $[0,\infty) \to C_{\tr,\mathcal{S}}(\R^{*(d+d')})_{\sa}^d$ which satisfies
	\begin{equation} \label{eq:SDEestimate}
		\norm{\mathcal{X}(\cdot,t)}_{C_{\tr,\mathcal{S}}(\R^{*(d+d')})^d,R} \leq e^{-t/2}(R + 2) + (1 - e^{-t/2})\norm{\mathbf{J} - \pi}_{BC_{\tr}(\R^{*(d+d')})^d}.
	\end{equation}
\end{lemma}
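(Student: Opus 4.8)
The plan is to solve the integral equation \eqref{eq:mainSDE} by a Picard iteration that is carried out \emph{simultaneously} for all $(\cA,\tau)$, using the $C_{\tr,\mathcal{S}}$-seminorms rather than working in a fixed von Neumann algebra. Concretely, define $\mathcal{X}_0(\mathbf{X},\mathbf{X}',t) = \mathbf{X} + \mathcal{S}(t)$ (which lies in $C_{\tr,\mathcal{S}}(\R^{*(d+d')})_{\sa}^d$ for each $t$, with seminorm on the radius-$R$ ball bounded by $R + \norm{\mathcal{S}(t)}_\infty$, and recall that a free Brownian motion has $\norm{\mathcal{S}_j(t)}_\infty = 2\sqrt{t}$, so in fact we should absorb the Brownian part into a uniform estimate; more carefully, since $\mathcal{S}(t)$ is evaluated inside $C_{\tr,\mathcal{S}}$ as a formal variable, $\norm{\mathcal{X}_0(\cdot,t)}_{C_{\tr,\mathcal{S}}(\R^{*(d+d')})^d,R}$ is governed by $R$ together with the operator norm of $\mathcal{S}(t)$, which is almost surely $2\sqrt{t}$ but this does not blow up after the exponential-contraction estimate below), and inductively set
\[
\mathcal{X}_{n+1}(\mathbf{X},\mathbf{X}',t) = \mathbf{X} + \mathcal{S}(t) - \frac12 \int_0^t \mathbf{J} \circ (\mathcal{X}_n(\mathbf{X},\mathbf{X}',u), \pi'(\mathbf{X},\mathbf{X}')) \, du,
\]
where the composition and the Riemann integral are taken in the Fr\'echet space $C_{\tr,\mathcal{S}}(\R^{*(d+d')})_{\sa}^d$, using Proposition \ref{prop:chainrule2} for well-definedness and continuity. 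The first task is to show each $\mathcal{X}_n(\cdot,t)$ is a well-defined continuous map $[0,\infty) \to C_{\tr,\mathcal{S}}(\R^{*(d+d')})_{\sa}^d$; this follows by induction exactly as in Lemma \ref{lem:flow}, since Riemann integration of continuous Fr\'echet-valued functions is valid and composition is jointly continuous.

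The second and central task is the contraction estimate. The point is that writing $\mathbf{J} = \pi + (\mathbf{J} - \pi)$ and using that $\norm{\partial(\mathbf{J} - \pi)}_{BC_{\tr}(\R^{*(d+d')},\mathscr{M}(\R^{*d}))^d} = \norm{\partial \mathbf{J} - \Pi}_{\cdots} \leq 1 - c$, the map $\mathbf{J}^{\cA*\cB,\tau*\sigma}$ is $(1-c)$-Lipschitz in its first $d$ arguments with respect to $\norm{\cdot}_\infty$ (Remark \ref{rem:Lipschitz} applied to $\mathbf{J} - \pi$, the $\pi$ part being an isometry in the relevant sense once differentiated). Hence for the differences $\Delta_n(t) := \mathcal{X}_{n+1}(\cdot,t) - \mathcal{X}_n(\cdot,t)$ we get
\[
\norm{\Delta_{n+1}(t)}_{C_{\tr,\mathcal{S}}(\R^{*(d+d')})^d,R} \leq \frac{1 - c}{2} \int_0^t \norm{\Delta_n(u)}_{C_{\tr,\mathcal{S}}(\R^{*(d+d')})^d,R}\, du,
\]
which by the usual iteration gives $\norm{\Delta_n(t)}_{\cdots,R} \leq \frac{((1-c)t/2)^n}{n!}\sup_{u \in [0,t]}\norm{\Delta_0(u)}_{\cdots,R}$, so $\mathcal{X}_n$ converges uniformly on compact time intervals, for each $R$, to a limit $\mathcal{X}(\cdot,t)$ in $C_{\tr,\mathcal{S}}(\R^{*(d+d')})_{\sa}^d$ solving \eqref{eq:mainSDE}; continuity in $t$ passes to the limit. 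Uniqueness follows from the same Gr\"onwall-type inequality applied to the difference of two solutions. Self-adjointness of $\mathcal{X}(\cdot,t)$ is preserved under the iteration because $\mathbf{J}$ is self-adjoint-valued and $\mathbf{X}, \mathcal{S}(t)$ are self-adjoint.

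For the quantitative bound \eqref{eq:SDEestimate}, the plan is a direct Gr\"onwall argument on $\phi(t) := \norm{\mathcal{X}(\cdot,t)}_{C_{\tr,\mathcal{S}}(\R^{*(d+d')})^d,R}$. Splitting $\mathbf{J} = \pi + (\mathbf{J} - \pi)$ inside \eqref{eq:mainSDE}, one estimates
\[
\phi(t) \leq (R + 2) + \frac12 \int_0^t \phi(u)\, du + \frac12 \int_0^t \norm{\mathbf{J} - \pi}_{BC_{\tr}(\R^{*(d+d')})^d}\, du,
\]
where $R + 2$ accounts for $\norm{\mathbf{X}}_\infty \leq R$ and the operator-norm contribution of $\mathcal{S}(t)$ on the radius-$R$ ball (bounded by $2$ after the appropriate normalization built into the $C_{\tr,\mathcal{S}}$-norm on that ball --- this is the step where one must be slightly careful about how $\mathcal{S}(t)$ enters the seminorm). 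Solving this integral inequality via the integrating factor $e^{-t/2}$ yields precisely $\phi(t) \leq e^{-t/2}(R+2) + (1 - e^{-t/2})\norm{\mathbf{J} - \pi}_{BC_{\tr}(\R^{*(d+d')})^d}$. Finally, continuity of $t \mapsto \mathcal{X}(\cdot,t)$ as a map into $C_{\tr,\mathcal{S}}(\R^{*(d+d')})_{\sa}^d$ follows from the integral equation together with the local boundedness just established and joint continuity of composition (Proposition \ref{prop:chainrule2}). I expect the main obstacle to be the careful bookkeeping of how the Brownian motion $\mathcal{S}(t)$ contributes to the $C_{\tr,\mathcal{S}}$-seminorms --- in particular getting the clean constant $R + 2$ rather than something growing in $t$ --- and making sure the Lipschitz constant $(1-c)$ for $\mathbf{J}$ in its first block of variables is applied correctly in the free-product algebra $\cA * \cB$; the Picard iteration itself is routine once those norm estimates are in place.
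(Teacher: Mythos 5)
Your Picard-iteration argument for existence, uniqueness, and continuity is essentially the paper's, and it goes through (one slip: $\mathbf{J}$ itself is only $(2-c)$-Lipschitz in $\mathbf{x}$, not $(1-c)$-Lipschitz --- the $\pi$ part contributes Lipschitz constant $1$, it does not cancel in differences; this is harmless since the factorial from iterating the integral inequality gives convergence for any fixed Lipschitz constant). The genuine gap is in your derivation of \eqref{eq:SDEestimate}. The integral inequality you write,
\[
\phi(t) \leq (R+2) + \frac12 \int_0^t \phi(u)\,du + \frac12 \int_0^t \norm{\mathbf{J}-\pi}_{BC_{\tr}(\R^{*(d+d')})^d}\,du,
\]
is a correct but useless upper bound: with a \emph{plus} sign on $\frac12\int_0^t\phi$, Gr\"onwall yields a bound growing like $e^{t/2}$, and no choice of integrating factor turns it into the decaying bound $e^{-t/2}(R+2)+(1-e^{-t/2})\norm{\mathbf{J}-\pi}$. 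Passing to norms destroys the cancellation you need; the damping term $-\frac12\mathcal{X}$ must be used at the level of the \emph{equation}, not of the norm estimate. Concretely, one writes $\mathbf{J}=\pi+\mathbf{K}$ and derives the Duhamel-type identity
\[
e^{t/2}\mathcal{X}(\mathbf{X},\mathbf{X}',t) - \mathbf{X} = \int_0^t e^{u/2}\,d\mathcal{S}(u) - \frac12\int_0^t e^{u/2}\,\mathbf{K}(\mathcal{X}(\mathbf{X},\mathbf{X}',u),\mathbf{X}')\,du
\]
(the paper does this by an elementary Riemann-sum discretization in place of free It\^o calculus), and only then estimates each term and divides by $e^{t/2}$.

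The second, related, gap is your treatment of the Brownian contribution. Your parenthetical claim that the contribution of $\mathcal{S}(t)$ to the seminorm is ``bounded by $2$ after the appropriate normalization built into the $C_{\tr,\mathcal{S}}$-norm'' is not correct: the seminorm $\norm{\cdot}_{C_{\tr,\mathcal{S}}(\R^{*(d+d')})^d,R}$ restricts only the formal arguments to the operator-norm ball of radius $R$ and imposes no cutoff on $\mathcal{S}$, whose operator norm is $2\sqrt{t}$ and is unbounded in $t$. The clean constant comes instead from free probability applied to the \emph{discounted} stochastic integral: the Riemann sums $\sum_j e^{t_{j-1}/2}\bigl(\mathcal{S}(t_j)-\mathcal{S}(t_{j-1})\bigr)$ are centered semicircular $d$-tuples with variance at most $\int_0^t e^u\,du \leq e^t$, hence operator norm at most $2e^{t/2}$, so after dividing by $e^{t/2}$ this term is uniformly controlled; similarly $\norm{\frac12\int_0^t e^{u/2}\mathbf{K}(\cdots)\,du}_\infty \leq (e^{t/2}-1)\norm{\mathbf{K}}_{BC_{\tr}(\R^{*(d+d')})^d}$ gives the $(1-e^{-t/2})$ factor. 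Without the semicircularity input your argument has no mechanism for producing a $t$-uniform constant from the Brownian motion, and without the Duhamel identity it has no mechanism for producing the decaying factor $e^{-t/2}$ on the initial data.
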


\begin{proof}
	Define Picard iterates inductively by
	\begin{align*}
		\mathcal{X}_0^{\cA,\tau}(\mathbf{X},\mathbf{X}',t) &= \mathbf{X} \\
		\mathcal{X}_{n+1}^{\cA,\tau}(\mathbf{X},\mathbf{X}',t) &= \mathcal{S}(t) - \frac{1}{2} \int_0^t \mathbf{J}^{\cA * \cB,\tau * \sigma}(\mathcal{X}_n^{\cA,\tau}(\mathbf{X},\mathbf{X}',u),\mathbf{X}')\,du.
	\end{align*}
	We will show by induction $\mathcal{X}_n^{\cA,\tau}$ is well-defined and that $t \mapsto \mathcal{X}_n(\cdot,t)$ is a continuous map $[0,\infty) \to C_{\tr}(\R^{*(d+d')})_{\sa}^d$.  The base case is immediate.  For the induction step, recall that composition is a continuous operation by Lemma \ref{lem:composition} / Proposition \ref{prop:chainrule2}, and hence $\mathbf{J}(\mathcal{X}_n(\mathbf{x},\mathbf{x}',t),\mathbf{x}')$ defines a continuous map $[0,\infty) \to C_{\tr}(\R^{*(d+d')})_{\sa}^d$.  Thus, it makes sense to integrate from $0$ to $t$ using Riemann integration for functions taking values in a Fr\'echet space, and of course the output will again be a continuous function $[0,\infty) \to C_{\tr}(\R^{*(d+d')})_{\sa}^d$ (the argument is the same as in \cite[\S 14.3]{JekelThesis}).  Thus, $\mathcal{X}_{n+1}$ defines such a continuous function as desired.
	
	Next, we prove convergence of the Picard iterates as $n \to \infty$.  Because $\partial_{\mathbf{x}} \mathbf{J} - \Pi$ is globally bounded by $c$, it follows that $\mathbf{J}^{\cA * \cB, \tau * \sigma}$ is $(1 + c)$-Lipschitz in $\mathbf{X}$ (with respect to $\norm{\cdot}_\infty$).  This implies that for $n \geq 1$,
	\[
	\norm{\mathcal{X}_{n+1}^{\cA,\tau}(\mathbf{X},\mathbf{X}',t) - \mathcal{X}_n^{\cA,\tau}(\mathbf{X},\mathbf{X}',t)}_\infty \leq \frac{1+c}{2} \int_0^t \norm{\mathcal{X}_n^{\cA,\tau}(\mathbf{X},\mathbf{X}',u) - \mathcal{X}_{n-1}^{\cA,\tau}(\mathbf{X},\mathbf{X}',u)}_\infty\,du,
	\]
	so that
	\begin{equation} \label{eq:Picarderror}
		\norm{\mathcal{X}_{n+1}(\cdot,t) - \mathcal{X}_n(\cdot,t)}_{C_{\tr}(\R^{*(d+d')})_{\sa}^d,R} \leq \frac{1+c}{2} \int_0^t \norm{\mathcal{X}_n(\cdot,u) - \mathcal{X}_{n-1}(\cdot,u)}_{C_{\tr}(\R^{*(d+d')})_{\sa}^d,R}\,du.
	\end{equation}
	Let
	\[
	C(t,R) = \sup_{u \in [0,t]} \norm{\mathcal{X}_1(\cdot,u) - \mathcal{X}_0(\cdot,u)}_{C_{\tr}(\R^{*(d+d')})_{\sa}^d,R}.
	\]
	Then a straightforward induction argument shows that
	\[
	\norm{\mathcal{X}_{n+1}(\cdot,t) - \mathcal{X}_n(\cdot,t)}_{C_{\tr}(\R^{*(d+d')})_{\sa}^d,R} \leq C(T,R) \frac{(1+c)^k t^n}{2^n n!},
	\]
	for $t \in [0,T]$.  This implies the convergence of $\mathcal{X}_n$ in $C_{\tr}(\R^{*(d+d')})_{\sa}^d$ uniformly for $t \in [0,T]$ as $n \to \infty$.  Thus, the limit $\mathcal{X}$ is a solution to the integral equation satisfying the desired continuity property.
	
	Note that we have asserted the uniqueness claim in a weaker setting than that of continuous functions $[0,\infty) \to C_{\tr}(\R^{*(d+d')})_{\sa}^d$.  Indeed, we claim that for a fixed $(\cA,\tau)$ and initial condition $\mathbf{X}$, the trajectory defined by the integral equation is unique.  This follows from the Picard-Lindel\"of theory because $\mathbf{J}^{\cA*\cB,\tau*\sigma}$ is Lipschitz in $\mathbf{X}$.
	
	Finally, to prove \eqref{eq:SDEestimate}, the idea is to ``differentiate'' $e^{t/2} \mathcal{X}^{\cA,\tau}(\mathbf{X},\mathbf{X}',t)$ with respect to $t$.  One can find a stochastic differential equation for $e^{t/2} \mathcal{X}(\mathbf{X},\mathbf{X}',t)$ using free It{\^o} calculus and then use standard SDE techniques to estimate it.  However, let us give this argument in an elementary language that does not require knowledge of free SDE.
	
	Fix $t$ and $n$, and let $t_j = jt/n$ for $j = 0$, \dots, $n$.  Then
	\[
	\mathcal{X}^{\cA,\tau}(\mathbf{X},\mathbf{X}',t_j) - \mathcal{X}^{\cA,\tau}(\mathbf{X},\mathbf{X}',t_{j-1})
	= \mathcal{S}(t_j) - \mathcal{S}(t_{j-1}) - \frac{1}{2} \int_{t_{j-1}}^{t_j} \mathbf{J}^{\cA*\cB,\tau*\sigma} (\mathcal{X}^{\cA,\tau}(\mathbf{X},\mathbf{X}',u),\mathbf{X}')\,du.
	\]
	Let $\mathbf{K} = \mathbf{J} - \pi$.  By continuity of $\mathcal{X}$ in $t$, we have
	\[
	\int_{t_{j-1}}^{t_j} \mathcal{X}^{\cA,\tau}(\mathbf{X},\mathbf{X}',u)\,du = (t/n) \mathcal{X}^{\cA,\tau}(\mathbf{X},\mathbf{X}',t_j) + o(1/n),
	\]
	where the error estimate holds uniformly for $\norm{(\mathbf{X},\mathbf{X}')} \leq R$ and is independent of $j$.  Thus,
	\begin{multline*}
		(1 + t/2n) \mathcal{X}^{\cA,\tau}(\mathbf{X},\mathbf{X}',t_j) - \mathcal{X}^{\cA,\tau}(\mathbf{X},\mathbf{X}',t_{j-1}) \\
		= \mathcal{S}(t_j) - \mathcal{S}(t_{j-1}) - \frac{1}{2} \int_{t_{j-1}}^{t_j} \mathbf{K}^{\cA*\cB,\tau*\sigma}(\mathcal{X}^{\cA,\tau}(\mathbf{X},\mathbf{X}',u),\mathbf{X}')\,du + o(1/n).
	\end{multline*}
	Note that $1 + t/n = e^{t/2n} + o(1/n)$ and hence
	\begin{multline*}
		e^{t/2n} \mathcal{X}^{\cA,\tau}(\mathbf{X},\mathbf{X}',t_j) - \mathcal{X}^{\cA,\tau}(\mathbf{X},\mathbf{X}',t_{j-1}) \\
		= \mathcal{S}(t_j) - \mathcal{S}(t_{j-1}) - \frac{1}{2} \int_{t_{j-1}}^{t_j} e^{(u-t_{j-1})/2} \mathbf{K}^{\cA*\cB,\tau*\sigma}(\mathcal{X}^{\cA,\tau}(\mathbf{X},\mathbf{X}',u),\mathbf{X}')\,du + o(1/n).
	\end{multline*}
	Now multiply by $e^{t_{j-1}/2}$ and sum from $j = 1$ to $n$ to obtain
	\begin{multline} \label{eq:Riemannapproximation}
		e^{t/2} \mathcal{X}^{\cA,\tau}(\mathbf{X},\mathbf{X}',t) - \mathbf{X} \\ = \sum_{j=1}^n e^{t_{j-1}/2} [\mathcal{S}(t_j) - \mathcal{S}(t_{j-1})] + \int_0^t e^{u/2} \mathbf{K}^{\cA*\cB,\tau*\sigma}(\mathcal{X}^{\cA,\tau}(\mathbf{X},\mathbf{X}',u),\mathbf{X}')\,du + o(1),
	\end{multline}
	where the error estimate $o(1)$ holds uniformly as $n \to \infty$ for $\norm{(\mathbf{X},\mathbf{X}')}_\infty \leq R$ (and in fact independently of $(\cA,\tau)$).  Note that
	\[
	\sum_{j=1}^n e^{t_{j-1}/2} [\mathcal{S}(t_j) - \mathcal{S}(t_{j-1})]
	\]
	is a sum of freely independent semicircular $d$-tuples of mean zero and hence it is a free semicircular $d$-tuple of mean zero, such that each coordinate has variance
	\[
	\sum_{j=1}^n e^{t_{j-1}}(t_j - t_{j-1}) \leq \int_0^t e^{u/2}\,du = e^t - 1 \leq e^t.
	\]
	Hence,
	\[
	\norm*{ \sum_{j=1}^n e^{t_{j-1}/2} [\mathcal{S}(t_j) - \mathcal{S}(t_{j-1})] }_\infty \leq 2e^{t/2}.
	\]
	We also have
	\[
	\norm*{ \int_0^t e^{u/2} \mathbf{K}^{\cA*\cB,\tau*\sigma}(\mathcal{X}^{\cA,\tau}(\mathbf{X},\mathbf{X}',u),\mathbf{X}')\,du} \leq (1 - e^{-t/2})\norm{\mathbf{K}}_{BC_{\tr}(\R^{*(d+d')})^d}.
	\]
	Thus, upon taking $n \to \infty$ in \eqref{eq:Riemannapproximation}, we obtain the desired estimate.
\end{proof}

Since $t \mapsto \mathcal{X}(\cdot,t)$ is a continuous map $[0,\infty) \to C_{\tr,\mathcal{S}}(\R^{*(d+d')})_{\sa}^d$, we can define the Riemann integral
\[
\int_0^t \mathbf{J}(\mathcal{X}(\cdot,u),\pi')\,du,
\]
where $\mathbf{J}(\mathcal{X}(\cdot,u),\pi')$ denotes the function in $C_{\tr,\mathcal{S}}(\R^{*(d+d')})_{\sa}^d$ given by composing $\mathcal{X}(\cdot,u)$ and $\pi'$ in the prescribed manner.  Relying once again on the fact that the Riemann integrals are defined for continuous functions from $[0,t]$ to a Fr\'echet space, it follows that the identity
\[
\mathcal{X}(\cdot,t) = \mathcal{S}(t) - \frac{1}{2} \int_0^t \mathbf{J}(\mathcal{X},\pi')\,du
\]
holds in $C_{\tr,\mathcal{S}}(\R^{*(d+d')})_{\sa}^d$.  Similarly, $t \mapsto \mathcal{X}(\cdot,t) - \mathcal{S}(t)$ is a continuously differentiable function $[0,\infty) \to C_{\tr,\mathcal{S}}(\R^{*(d+d')})^d$.  It will be convenient in the rest of the section to view our equations as integral / differential equations in $C_{\tr,\mathcal{S}}(\R^{*(d+d')})_{\sa}^d$ rather than equations for functions on $\cA_{\sa}^{d+d'}$ for every $(\cA,\tau)$ separately.

The next lemma will be used to construct the process $\partial \mathcal{X}(\cdot,t)$.

\begin{lemma} \label{lem:niceGronwall}
	Let $t \mapsto \mathcal{F}(\cdot,t)$ be a continuous function $[0,\infty) \to C_{\tr,\mathcal{S}}(\R^{*{d+d'}},\mathscr{M}(\R^{*d_1},\dots,\R^{*d_\ell}))^d$, and let $\mathcal{G}_0 \in C_{\tr,\mathcal{S}}(\R^{*(d+d')},\mathscr{M}(\R^{*d_1},\dots,\R^{*d_\ell}))^d$.  Then there exists a unique continuous $\mathcal{G}: [0,\infty) \to C_{\tr,\mathcal{S}}(\R^{*(d+d')},\mathscr{M}(\R^{*d_1},\dots,\R^{*d_\ell}))^d$ satisfying
	\begin{align}
		\mathcal{G}(\cdot,0) &= \mathcal{G}_0 \\
		\frac{d}{dt} \mathcal{G}(\cdot,t) &= -\frac{1}{2} \partial_{\mathbf{x}} \mathbf{J}(\mathcal{X}(\cdot,t),\pi') \# \mathcal{G}(\cdot,t) + \mathcal{F}(\cdot,t). \label{eq:mainODE}
	\end{align}
	Moreover, we have
	\begin{multline} \label{eq:niceGronwall}
		\norm{\mathcal{G}(\cdot,t)}_{C_{\tr,\mathcal{S}}(\R^{*(d+d')},\mathscr{M}^\ell)^d,R} \\
		\leq e^{-ct/2} \left(\norm{\mathcal{G}_0}_{C_{\mathcal{S}}(\R^{*(d+d')},\mathscr{M}^\ell)^d,R} + \int_0^t e^{cu/2} \norm{ \mathcal{F}(\cdot,u) }_{C_{\mathcal{S}}(\R^{*(d+d')},\mathscr{M}^\ell)^d,R} \,du \right).
	\end{multline}
\end{lemma}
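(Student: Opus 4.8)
The plan is to prove existence, uniqueness, and the estimate \eqref{eq:niceGronwall} simultaneously by a Picard-iteration/Gr\"onwall argument in the Fr\'echet space $C_{\tr,\mathcal{S}}(\R^{*(d+d')},\mathscr{M}(\R^{*d_1},\dots,\R^{*d_\ell}))^d$, entirely parallel to the proof of Lemma \ref{lem:SDEsolution}. The key structural feature that makes this work is that \eqref{eq:mainODE} is a \emph{linear} (inhomogeneous) ODE in $\mathcal{G}$: the coefficient $\partial_{\mathbf{x}} \mathbf{J}(\mathcal{X}(\cdot,t),\pi')$ is a fixed, $t$-dependent element of $C_{\tr,\mathcal{S}}(\R^{*(d+d')},\mathscr{M}(\R^{*d}))^d$ that does not depend on $\mathcal{G}$. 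I would first note that since $t\mapsto\mathcal{X}(\cdot,t)$ is continuous into $C_{\tr,\mathcal{S}}(\R^{*(d+d')})_{\sa}^d$ by Lemma \ref{lem:SDEsolution}, and since composition and the operators $\partial_{\mathbf x}$, $\#$ are continuous (Proposition \ref{prop:chainrule2} and Lemma \ref{lem:submultiplicativenorm}, or rather their analogs), the map $t\mapsto \partial_{\mathbf{x}}\mathbf{J}(\mathcal{X}(\cdot,t),\pi')$ is a continuous function $[0,\infty)\to C_{\tr,\mathcal{S}}(\R^{*(d+d')},\mathscr{M}(\R^{*d}))^d$; hence the integral equation equivalent to \eqref{eq:mainODE}, namely
\[
\mathcal{G}(\cdot,t) = \mathcal{G}_0 + \int_0^t \left(-\tfrac12 \partial_{\mathbf x}\mathbf{J}(\mathcal{X}(\cdot,u),\pi')\#\mathcal{G}(\cdot,u) + \mathcal{F}(\cdot,u)\right)du,
\]
makes sense as a Riemann-integral equation in the Fr\'echet space.

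Next I would set up Picard iterates $\mathcal{G}_n$ starting from $\mathcal{G}_0$, with $\mathcal{G}_{n+1}(\cdot,t) = \mathcal{G}_0 + \int_0^t(-\tfrac12 \partial_{\mathbf x}\mathbf{J}(\mathcal{X}(\cdot,u),\pi')\#\mathcal{G}_n(\cdot,u) + \mathcal{F}(\cdot,u))\,du$. The crucial estimate is that multiplication by $\partial_{\mathbf{x}}\mathbf{J}(\mathcal{X}(\cdot,u),\pi')$ in the $\#$-sense is a bounded operator on $C_{\tr,\mathcal{S}}(\R^{*(d+d')},\mathscr{M}^\ell)^d$; here I would want $\norm{\partial_{\mathbf x}\mathbf{J}(\mathcal{X}(\cdot,u),\pi')}$ in the appropriate $BC$-norm to be controlled. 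Since $\mathbf{J}\in\mathscr{J}_{a,c}^{d,d'}$ we have $\norm{\partial\mathbf{J}-\Pi}_{BC_{\tr}(\R^{*(d+d')},\mathscr{M}(\R^{*d}))^d}\le 1-c$; composing with $\mathcal{X}(\cdot,u)$ (which only substitutes arguments and does not increase the relevant multilinear-form norm) shows that $\partial_{\mathbf{x}}\mathbf{J}(\mathcal{X}(\cdot,u),\pi') = \Pi$-part plus something of norm $\le 1-c$, so the operator norm of $\mathbf{G}\mapsto \partial_{\mathbf x}\mathbf{J}(\mathcal{X}(\cdot,u),\pi')\#\mathbf{G}$ is at most $1$. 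Consequently the Picard iterates satisfy $\norm{\mathcal{G}_{n+1}(\cdot,t)-\mathcal{G}_n(\cdot,t)}_{\cdots,R} \le \tfrac12\int_0^t \norm{\mathcal{G}_n(\cdot,u)-\mathcal{G}_{n-1}(\cdot,u)}_{\cdots,R}\,du$, which by the usual induction gives a $\tfrac{t^n}{2^n n!}$ decay and hence uniform convergence on compact time intervals to a solution $\mathcal{G}$. Uniqueness follows from the same contraction estimate applied to the difference of two solutions.

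For the quantitative bound \eqref{eq:niceGronwall}, the idea is to differentiate $e^{ct/2}\mathcal{G}(\cdot,t)$ and use the sharper fact that $\partial_{\mathbf{x}}\mathbf{J}(\mathcal{X}(\cdot,t),\pi') = \Pi + (\text{error of norm }\le 1-c)$. Writing $\mathcal{G}(\cdot,t) = \mathcal{G}_0 + \int_0^t(\cdots)\,du$ and noting that $\Pi\#\mathcal{G} = \mathcal{G}$ (since $\Pi$ extracts the $\mathbf{x}$-tangent directions, which are exactly where $\mathcal{G}$ lives), the term $-\tfrac12\partial_{\mathbf x}\mathbf{J}(\mathcal{X}(\cdot,u),\pi')\#\mathcal{G}(\cdot,u)$ splits as $-\tfrac12\mathcal{G}(\cdot,u)$ plus an error term bounded in norm by $\tfrac{1-c}{2}\norm{\mathcal{G}(\cdot,u)}$. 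Hence $\norm{\mathcal{G}(\cdot,t)}_{\cdots,R} \le \norm{\mathcal{G}_0}_{\cdots,R} + \int_0^t\big(-\tfrac12 + \tfrac{1-c}{2}\big)\norm{\mathcal{G}(\cdot,u)}_{\cdots,R}\,du + \int_0^t\norm{\mathcal{F}(\cdot,u)}_{\cdots,R}\,du$ — wait, one must be careful that a norm does not satisfy a signed differential inequality directly; instead I would argue at the level of $\norm{\mathcal{X}}$-Lipschitz constants as in Lemma \ref{lem:SDEsolution}, discretizing time into $t_j = jt/n$, showing $(1+t/2n)\norm{\mathcal{G}(\cdot,t_j)} \le \norm{\mathcal{G}(\cdot,t_{j-1})} + \tfrac{(1-c)t}{2n}\norm{\mathcal{G}(\cdot,t_{j-1})} + (t/n)\norm{\mathcal{F}(\cdot,t_{j-1})} + o(1/n)$, multiplying through by $e^{c t_{j-1}/2}$, telescoping, and passing to the limit $n\to\infty$, exactly mimicking the derivation of \eqref{eq:SDEestimate}. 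The main obstacle — really the only nontrivial point — is verifying cleanly that composing $\mathbf{J}$'s derivative with the process $\mathcal{X}(\cdot,u)$ preserves the bound $\norm{\partial_{\mathbf{x}}\mathbf{J} - \Pi}\le 1-c$ in the $C_{\tr,\mathcal{S}}$-norm and that $\Pi\#\mathcal{G} = \mathcal{G}$; both are immediate from the definitions of the norms (substitution of arguments is norm-nonincreasing) and of $\Pi$, but they are what makes the constant $c$ (rather than $1+c$) appear in the exponential. Everything else is the standard Picard/Gr\"onwall machinery for Fr\'echet-space-valued ODEs already used repeatedly in the paper.
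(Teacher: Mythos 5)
Your proposal is correct and follows essentially the same route as the paper: Picard--Lindel\"of iteration in the Fr\'echet space using the Lipschitz bound on $\mathcal{G}\mapsto -\tfrac12\,\partial_{\mathbf x}\mathbf{J}(\mathcal{X}(\cdot,t),\pi')\#\mathcal{G}$ for existence and uniqueness, followed by the decomposition $\mathbf{J}=\pi+\mathbf{K}$ with $\Pi\#\mathcal{G}=\mathcal{G}$ and $\norm{\partial_{\mathbf x}\mathbf{K}}\le 1-c$, an $e^{t/2}$ weight, and Gr\"onwall. Two small remarks: the operator norm of $\mathcal{G}\mapsto\partial_{\mathbf x}\mathbf{J}(\mathcal{X}(\cdot,u),\pi')\#\mathcal{G}$ is at most $2-c$ rather than $1$ (the $\Pi$-part alone already contributes $1$), which is harmless since the factorial decay of the Picard iterates is unaffected; and the time-discretization you propose to dodge the signed differential inequality is unnecessary, since the paper simply notes that $\tfrac{d}{dt}\bigl[e^{t/2}\mathcal{G}(\cdot,t)\bigr]=-\tfrac12\,\partial_{\mathbf x}\mathbf{K}(\mathcal{X}(\cdot,t),\pi')\#\bigl[e^{t/2}\mathcal{G}(\cdot,t)\bigr]+e^{t/2}\mathcal{F}(\cdot,t)$, bounds the norm of this derivative by $\tfrac{1-c}{2}\norm{e^{t/2}\mathcal{G}(\cdot,t)}+e^{t/2}\norm{\mathcal{F}(\cdot,t)}$ (all coefficients nonnegative, so the norm inequality in integral form is legitimate), and applies Gr\"onwall directly to obtain \eqref{eq:niceGronwall}.
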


\begin{proof}
	Recall our assumption that $\mathbf{J} = \pi + \mathbf{K}$ with
	\[
	\norm{\partial_{\mathbf{x}} \mathbf{K}}_{BC_{\mathcal{S}}(\R^{*(d+d')},\mathscr{M}^1)^d} \leq 1 - c.
	\]
	Hence,
	\[
	\norm{\partial_{\mathbf{x}} \mathbf{J}}_{BC_{\mathcal{S}}(\R^{*(d+d')},\mathscr{M}^1)^d} \leq 2 - c.
	\]
	It follows that for each $t$, the right-hand side of the differential equation depends in a Lipschitz manner upon $\mathcal{G}(\cdot,t)$ with respect to $\norm{\cdot}_{C_{\tr,\mathcal{S}}^k(\R^{*(d+d')},\mathscr{M}^\ell)^d,R}$ for every $R > 0$, with the Lipschitz constant being $(2 - c)/2$.  Hence, the standard Picard-Lindel\"of argument proves the existence and uniqueness of a solution.
	
	Because $\mathbf{J} = \pi + \mathbf{K}$, we also obtain
	\[
	\frac{d}{dt} \mathcal{G}(\cdot,t) + \frac{1}{2} \mathcal{G}(\cdot,t) = -\frac{1}{2} \partial_{\mathbf{x}} \mathbf{K}(\mathcal{X}(\cdot,t),\pi') \# \mathcal{G}(\cdot,t) + \mathcal{F}(\cdot,t).
	\]
	Hence, upon multiplying by $e^{t/2}$ and using the given bound for $\partial_{\mathbf{x}} \mathbf{K}$, we obtain
	\begin{multline*}
		\norm*{ \frac{d}{dt} \left[ e^{t/2} \mathcal{G}(\cdot,t) \right]}_{C_{\tr,\mathcal{S}}(\R^{*(d+d')},\mathscr{M}^\ell)^d,R} \\
		\leq  \frac{1}{2}(1 - c) \norm*{ e^{t/2} \mathcal{G}(\cdot,t) }_{C_{\tr,\mathcal{S}}(\R^{*(d+d')},\mathscr{M}^\ell)^d,R} + e^{t/2} \norm*{ \mathcal{F}(\cdot,t)}_{C_{\tr,\mathcal{S}}(\R^{*(d+d')},\mathscr{M}^\ell)^d,R} .
	\end{multline*}
	Using Gr\"onwall's inequality,
	\begin{multline*}
		\norm*{ e^{t/2} \mathcal{G}(\cdot,t) }_{C_{\tr,\mathcal{S}}(\R^{*(d+d')},\mathscr{M}^\ell)^d,R} \\
		\leq e^{(1-c)t/2} \left( \norm*{ \mathcal{G}_0 }_{C_{\tr,\mathcal{S}}(\R^{*(d+d')},\mathscr{M}^\ell)^d,R} + \int_0^t e^{-(1-c)u/2} e^{u/2} \norm*{ \mathcal{F}(\cdot,t)}_{C_{\tr,\mathcal{S}}(\R^{*(d+d')},\mathscr{M}^\ell)^d,R}\,du \right).
	\end{multline*}
	This simplifies to the desired estimate \eqref{eq:niceGronwall}.
\end{proof}

Next, we explain how to differentiate $\mathcal{G}(\cdot,t)$ with respect to $(\mathbf{x},\mathbf{x}')$ in the situation of Lemma \ref{lem:niceGronwall} when $\mathbf{F}$ is a $C_{\tr,\mathcal{S}}^1$ function.  This will allow us to show that $\mathcal{X}(\cdot,t)$ is a $C_{\tr,\mathcal{S}}^\infty$ function by induction.

\begin{lemma} \label{lem:niceGronwall2}
	Let $t \mapsto \mathcal{F}(\cdot,t)$ be a continuous function $[0,\infty) \to C_{\tr,\mathcal{S}}^1(\R^{*{d+d'}},\mathscr{M}(\R^{*d_1},\dots,\R^{*d_\ell}))^d$, and let $\mathcal{G}_0 \in C_{\mathcal{S}}^1(\R^{*(d+d')},\mathscr{M}(\R^{*d_1},\dots,\R^{*d_\ell}))^d$.  Then the solution $\mathcal{G}$ in Lemma \ref{lem:niceGronwall} is a continuous function $[0,\infty) \to C_{\tr,\mathcal{S}}^1(\R^{*(d+d')},\mathscr{M}(\R^{*d_1},\dots,\R^{*d_\ell}))^d$, and we have
	\begin{equation} \label{eq:exchangedifferentiation}
		\frac{d}{dt} \partial \mathcal{G}(\cdot,t) = -\frac{1}{2} \partial_{\mathbf{x}} \mathbf{J}(\mathcal{X}(\cdot,t),\pi') \# \partial \mathcal{G}(\cdot,t)
		- \frac{1}{2} \partial[ \partial_{\mathbf{x}} \mathbf{J}(\mathcal{X}(\cdot,t),\pi')] \# [\mathcal{G}(\cdot,t), \Pi]
		+ \partial \mathcal{F}(\cdot,t).
	\end{equation}
\end{lemma}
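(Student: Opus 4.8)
\textbf{Proof proposal for Lemma \ref{lem:niceGronwall2}.}

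The plan is to reduce everything to the already-established theory of the ODE \eqref{eq:mainODE} in Fr\'echet spaces, applied twice: once on the ``value'' space and once on the ``derivative'' space, and then to match the two solutions by a Gr\"onwall-type uniqueness argument. First I would set up the candidate equation for the derivative process. Formally differentiating \eqref{eq:mainODE} with respect to $(\mathbf{x},\mathbf{x}')$ and using the chain rule (Theorem \ref{thm:chainrule}, in the $C_{\tr,\mathcal{S}}$ version of Proposition \ref{prop:chainrule2}) on the composed term $\partial_{\mathbf{x}} \mathbf{J}(\mathcal{X}(\cdot,t),\pi') \# \mathcal{G}(\cdot,t)$, one sees that $\partial$ hits three things: the explicit multilinear argument $\mathcal{G}$, giving $\partial_{\mathbf{x}} \mathbf{J}(\mathcal{X},\pi') \# \partial \mathcal{G}$; the inner function $\mathcal{X}(\cdot,t)$, which produces $\partial[\partial_{\mathbf{x}}\mathbf{J}(\mathcal{X},\pi')]\#[\mathcal{G},\partial_{\mathbf{x}}\mathcal{X}(\cdot,t)]$; and the $\mathbf{x}'$-slot of $\mathbf{J}$, contributing an analogous term with $\partial_{\mathbf{x}'}\mathbf{J}$ and $\Pi'$. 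By Lemma \ref{lem:SDEsolution} we already know $t\mapsto\mathcal{X}(\cdot,t)$ is $C_{\tr,\mathcal{S}}(\R^{*(d+d')})_{\sa}^d$-valued; in fact the point of the present lemma's intended use is that $\mathcal{X}$ is $C^1_{\tr,\mathcal{S}}$ (proved by induction using this very lemma with $\mathcal{F}$ a derivative-of-$\mathbf{J}$ term), so I would phrase the argument so that $\partial\mathcal{X}(\cdot,t)$ can be treated on the same footing as $\partial\mathcal{G}$. The cleanest route: regard the pair $(\mathcal{G},\partial\mathcal{G})$ (together with $\partial\mathcal{X}$, already available at the previous inductive stage) as the unknown, so that the right-hand side of \eqref{eq:exchangedifferentiation} is a continuous, affine-in-$\partial\mathcal{G}$ map with Lipschitz constant $(2-c)/2$ in $\norm{\cdot}_{C_{\tr,\mathcal{S}}(\R^{*(d+d')},\mathscr{M}^{\ell+1})^d,R}$ for every $R$, exactly as in the proof of Lemma \ref{lem:niceGronwall}.

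Next I would invoke Lemma \ref{lem:niceGronwall} itself, applied with $\ell$ replaced by $\ell+1$ and with the inhomogeneous term
\[
\widetilde{\mathcal{F}}(\cdot,t) = - \tfrac12 \partial[\partial_{\mathbf{x}}\mathbf{J}(\mathcal{X}(\cdot,t),\pi')]\#[\mathcal{G}(\cdot,t),\Pi] + \partial\mathcal{F}(\cdot,t),
\]
which is a continuous $[0,\infty)\to C_{\tr,\mathcal{S}}(\R^{*(d+d')},\mathscr{M}^{\ell+1})^d$-valued curve because: $t\mapsto\mathcal{G}(\cdot,t)$ is continuous into $C_{\tr,\mathcal{S}}(\R^{*(d+d')},\mathscr{M}^\ell)^d$ by Lemma \ref{lem:niceGronwall}; $\partial[\partial_{\mathbf{x}}\mathbf{J}(\mathcal{X}(\cdot,t),\pi')]$ is continuous by the chain rule together with continuity of $t\mapsto\mathcal{X}(\cdot,t)$; composition and $\#$ are jointly continuous (Lemma \ref{lem:composition}/Proposition \ref{prop:chainrule2}); and $\partial\mathcal{F}(\cdot,t)$ is continuous by hypothesis that $\mathcal{F}$ is a continuous curve into $C^1_{\tr,\mathcal{S}}$. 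This yields a continuous solution curve $\mathcal{H}:[0,\infty)\to C_{\tr,\mathcal{S}}(\R^{*(d+d')},\mathscr{M}^{\ell+1})^d$ to
\[
\mathcal{H}(\cdot,0) = \partial\mathcal{G}_0, \qquad \frac{d}{dt}\mathcal{H}(\cdot,t) = -\tfrac12\partial_{\mathbf{x}}\mathbf{J}(\mathcal{X}(\cdot,t),\pi')\#\mathcal{H}(\cdot,t) + \widetilde{\mathcal{F}}(\cdot,t),
\]
which is precisely \eqref{eq:exchangedifferentiation} with $\mathcal{H}$ in place of $\partial\mathcal{G}$.

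The remaining task — and the step I expect to be the genuine content rather than bookkeeping — is to show that this $\mathcal{H}$ actually equals the Fr\'echet derivative $\partial\mathcal{G}(\cdot,t)$ of the solution $\mathcal{G}(\cdot,t)$ produced in Lemma \ref{lem:niceGronwall}, i.e.\ that differentiation in $(\mathbf{x},\mathbf{x}')$ commutes with solving the ODE in $t$. I would do this by the standard difference-quotient / uniqueness argument: form $\mathcal{G}_0^{(1)} := \mathcal{G}_0$ and, following the Picard-iteration structure used to build $\mathcal{G}$ in Lemma \ref{lem:niceGronwall}, show inductively that each Picard iterate $\mathcal{G}_n(\cdot,t)$ lies in $C^1_{\tr,\mathcal{S}}$ with $\partial\mathcal{G}_n(\cdot,t)$ satisfying the iterated form of \eqref{eq:exchangedifferentiation}, then prove $\partial\mathcal{G}_n(\cdot,t)\to\mathcal{H}(\cdot,t)$ in $\norm{\cdot}_{C_{\tr,\mathcal{S}}(\R^{*(d+d')},\mathscr{M}^{\ell+1})^d,R}$ uniformly on compact $t$-intervals via the same Gr\"onwall estimate \eqref{eq:niceGronwall} (with an $\epsilon_{n,R}\to0$ error term coming from continuity of composition, exactly as in the proof of Proposition \ref{prop:IFT} and Lemma \ref{lem:flow}). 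Since $\mathcal{G}_n\to\mathcal{G}$ and $\partial\mathcal{G}_n\to\mathcal{H}$, and $C^1_{\tr,\mathcal{S}}$-convergence is closed under such limits (the defining property is that the directional derivatives exist and agree with a $C_{\tr,\mathcal{S}}$-function, a property preserved by uniform-on-balls limits just as in Lemma \ref{lem:Ctrcontinuity}), we conclude $\mathcal{G}(\cdot,t)\in C^1_{\tr,\mathcal{S}}$, $\partial\mathcal{G}(\cdot,t)=\mathcal{H}(\cdot,t)$, and hence \eqref{eq:exchangedifferentiation}. Continuity of $t\mapsto\partial\mathcal{G}(\cdot,t)$ into $C_{\tr,\mathcal{S}}(\R^{*(d+d')},\mathscr{M}^{\ell+1})^d$, and thus continuity of $t\mapsto\mathcal{G}(\cdot,t)$ into $C^1_{\tr,\mathcal{S}}$, is then immediate from the continuity of the $\mathcal{H}$-curve already established. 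The main obstacle is purely the careful verification that the formal differentiation is legitimate — everything else is a transcription of arguments already carried out earlier in the paper for composition, flows, and the inverse function theorem.
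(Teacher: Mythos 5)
Your proposal is correct, but it takes a genuinely different route from the paper's proof of this lemma. You construct the candidate derivative $\mathcal{H}$ by a second application of Lemma \ref{lem:niceGronwall} at multilinearity level $\ell+1$ (with the inhomogeneous term $\widetilde{\mathcal{F}}$), and then identify $\mathcal{H}$ with $\partial\mathcal{G}$ by showing that the derivatives of the Picard iterates of \eqref{eq:mainODE} converge to $\mathcal{H}$ via a Gr\"onwall estimate with an $\epsilon_{n,R}\to 0$ error, finally using the standard fact that uniform-on-balls convergence of the iterates and of their derivatives forces the limit to be $C^1$ with the limiting derivative; this is exactly the technique the paper itself uses for Lemma \ref{lem:flow}, Proposition \ref{prop:IFT}, and the base case of Lemma \ref{lem:processCinfinity}, so it is sound here. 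The paper's own proof is shorter: it checks that the right-hand side of \eqref{eq:mainODE} is Lipschitz in $\mathcal{G}$ with respect to the combined seminorm $\norm{\cdot}_{C_{\tr,\mathcal{S}}(\R^{*(d+d')},\mathscr{M}^\ell)^d,R} + \norm{\partial(\cdot)}_{C_{\tr,\mathcal{S}}(\R^{*(d+d')},\mathscr{M}^{\ell+1})^d,R}$ (the new terms being precisely the ones you list, bounded via the chain rule and the bound on $\partial\mathcal{X}$), so Picard--Lindel\"of run directly in $C^1_{\tr,\mathcal{S}}$ produces a solution there; uniqueness of the $C^0$ solution from Lemma \ref{lem:niceGronwall} identifies it with $\mathcal{G}$, and \eqref{eq:exchangedifferentiation} then follows by applying the continuous linear map $\partial$ to both sides of \eqref{eq:mainODE}. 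The paper's route buys the identification $\partial\mathcal{G}=\mathcal{H}$ and the formula for free from uniqueness, with no iterate bookkeeping; your route avoids verifying the Lipschitz property in the stronger norm and only re-runs estimates already available at level $\ell+1$. One caveat common to both arguments, which you rightly flag: the expression $\partial[\partial_{\mathbf{x}}\mathbf{J}(\mathcal{X}(\cdot,t),\pi')]$ only makes sense once $\mathcal{X}(\cdot,t)$ is known to be $C^1_{\tr,\mathcal{S}}$ with bounded derivative, which is supplied by the base case of Lemma \ref{lem:processCinfinity} (proved using only Lemma \ref{lem:niceGronwall}), so neither argument is circular; and your expanded chain-rule bookkeeping with a separate $\partial_{\mathbf{x}'}$-term and $\Pi'$ is just the unpacked form of the single term $\partial[\partial_{\mathbf{x}}\mathbf{J}(\mathcal{X}(\cdot,t),\pi')]\#[\mathcal{G}(\cdot,t),\Pi]$ appearing in \eqref{eq:exchangedifferentiation}.
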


\begin{proof}
	We claim that for each $t$, the right hand side of \eqref{eq:mainODE} depends in a Lipschitz manner upon $\mathcal{G}(\cdot,t)$ in $C_{\tr,\mathcal{S}}^1(\R^{*(d+d')},\mathscr{M}(\R^{*d_1},\dots,\R^{*d_\ell}))^d$.  More precisely, if we subtract the right hand side of \eqref{eq:mainODE} for two different functions $\mathcal{G}$ and $\mathcal{G}'$, then $\norm{\cdot}_{C_{\tr,\mathcal{S}}(\R^{*(d+d')},\mathscr{M}^\ell)^d,R} + \norm{\partial \cdot }_{C_{\tr,\mathcal{S}}(\R^{*(d+d')},\mathscr{M}^{\ell+1})^d,R}$ of the difference is bounded by a constant times
	\[
	\norm{\mathcal{G}(\cdot,t) - \mathcal{G}'(\cdot,t)}_{C_{\mathcal{S}}(\R^{*(d+d')},\mathscr{M}^\ell)^d,R} + \norm{\partial \mathcal{G}(\cdot,t) - \partial \mathcal{G}'(\cdot,t)}_{C_{\mathcal{S}}(\R^{*(d+d')},\mathscr{M}^\ell)^d,R}.
	\]
	We already explained in the proof of Lemma \ref{lem:niceGronwall} how to estimate $\mathcal{G}(\cdot,t) - \mathcal{G}'(\cdot,t)$ with respect to $\norm{\cdot}_{C_{\tr,\mathcal{S}}(\R^{*(d+d')},\mathscr{M}^\ell)^d,R}$.  To estimate $\partial \mathcal{G}(\cdot,t) - \partial \mathcal{G}(\cdot,t)$, note that applying $\partial$ to the right-hand side of \eqref{eq:mainODE} results in the right-hand side of \eqref{eq:exchangedifferentiation}. We subtract the right-hand side of \eqref{eq:exchangedifferentiation} at $\mathcal{G}$ from the corresponding quantity in $\mathcal{G}'$, and then estimate
	\begin{multline*}
		\norm*{\frac{1}{2} \partial_{\mathbf{x}} \mathbf{J}(\mathcal{X}(\cdot,t),\pi') \# (\partial \mathcal{G}'(\cdot,t) - \mathcal{G}(\cdot,t))}_{C_{\tr,\mathcal{S}}(\R^{*(d+d')},\mathscr{M}(\R^{*d_1},\dots,\R^{*d_\ell},\R^{*d}))} \\
		\leq 
		\frac{1}{2} \norm*{\partial_{\mathbf{x}} \mathbf{J}(\mathcal{X}(\cdot,t),\pi')}_{C_{\tr,\mathcal{S}}(\R^{*d},\mathscr{M}(\R^{*d})^d)} \norm{\partial \mathcal{G}'(\cdot,t) - \mathcal{G}(\cdot,t))}_{C_{\tr,\mathcal{S}}(\R^{*(d+d')},\mathscr{M}(\R^{*d_1},\dots,\R^{*d_\ell},\R^{*d}))},
	\end{multline*}
	and in turn,
	\[
	\norm{\partial[ \partial_{\mathbf{x}} \mathbf{J}(\mathcal{X}(\mathbf{X},\mathbf{X}',t),\mathbf{X}')] }_{C_{\tr}(\R^{*(d+d')},\mathscr{M}^2),R} \leq \norm{ \partial[ \partial_{\mathbf{x}} \mathbf{J}]}_{C_{\tr}(\R^{*(d+d')},\mathscr{M}^2),R'},
	\]
	where $R' = \max(R+2,\norm{\mathbf{J} - \pi}_{BC_{\tr}(\R^{*(d+d')})^d})$ using \eqref{eq:SDEestimate}.  The second term
	\[
	-\frac{1}{2} \partial[\partial_{\mathbf{x}} \mathbf{J}(\mathcal{X}(\cdot,t),\pi')] \# [\mathcal{G}'(\cdot,t) - \mathcal{G}(\cdot,t), \Pi]
	\]
	can be estimated similarly.  This shows the desired Lipschitz property, and hence the Picard-Lindel\"of method shows that the equation \eqref{eq:mainODE} has a solution in $C_{\tr,\mathcal{S}}^1(\R^{*(d+d')}, \mathscr{M}(\R^{*d_1},\dots,\R^{*d_\ell}))^d$.  This must agree with the solution in $C_{\tr,\mathcal{S}}(\R^{*(d+d')},\mathscr{M}(\R^{*d_1},\dots,\R^{*d_\ell}))^d$ from Lemma \ref{lem:niceGronwall}.  Then by applying $\partial$ to both sides, we obtain \eqref{eq:exchangedifferentiation}.
\end{proof}

\begin{lemma} \label{lem:processCinfinity}
	The function $\mathcal{X}$ from Lemma \ref{lem:SDEsolution} is a continuous map $[0,\infty) \to C_{\tr,\mathcal{S}}^\infty(\R^{*(d+d')})_{\sa}^d$.  Moreover, there exist constants $C_{k,\mathbf{J},R}$ such that
	\begin{equation} \label{eq:derivativeestimate1}
		\norm{\partial^k \mathcal{X}(\cdot,t)}_{C_{\tr,\mathcal{S}}(\R^{*(d+d')},\mathscr{M}^k)^d,R} \leq C_{k,\mathbf{J},R}
	\end{equation}
	for $k \geq 1$ and polynomials $p_{k,\mathbf{J},R}: \R \to \R$ such that $p_{k,\mathbf{J},R}$ has degree $k$ and
	\begin{equation} \label{eq:derivativeestimate2}
		\norm{\partial_{\mathbf{x}} \partial^k \mathcal{X}(\cdot,t)}_{C_{\tr,\mathcal{S}}(\R^{*(d+d')},\mathscr{M}^{k+1})^d,R} \leq e^{-ct/2} p_{k,\mathbf{J},R}(t)
	\end{equation}
	for $k \geq 0$.
\end{lemma}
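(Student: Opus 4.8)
The plan is to prove Lemma~\ref{lem:processCinfinity} by induction on $k$, treating the process $\partial^k \mathcal{X}(\cdot,t)$ as the solution of a linear ODE of the type handled by Lemmas~\ref{lem:niceGronwall} and~\ref{lem:niceGronwall2}, and then extracting the quantitative estimates \eqref{eq:derivativeestimate1} and \eqref{eq:derivativeestimate2} from the exponential bound \eqref{eq:niceGronwall}. The base case $k=0$ is exactly Lemma~\ref{lem:SDEsolution}: the estimate \eqref{eq:SDEestimate} gives $\norm{\mathcal{X}(\cdot,t)}_{C_{\tr,\mathcal{S}}(\R^{*(d+d')})^d,R} \le e^{-t/2}(R+2) + (1-e^{-t/2})\norm{\mathbf{J}-\pi}_{BC_{\tr}(\R^{*(d+d')})^d} =: R'$, and I would fix this $R'$ once and for all as the radius at which all derivatives of $\mathbf{J}$ will be evaluated along the trajectory. (Note \eqref{eq:derivativeestimate2} for $k=0$ is a statement about $\partial_{\mathbf{x}}\mathcal{X}$, which will come out of the $k=1$ step, so the base case is really $k=1$ for that family of bounds.)

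For the inductive step, suppose $\mathcal{X}(\cdot,t)$ is known to be $C_{\tr,\mathcal{S}}^k$ with the asserted bounds. Differentiating the integral equation \eqref{eq:mainSDE} $k+1$ times and using the chain rule (Theorem~\ref{thm:chainrule}, in the form of Proposition~\ref{prop:chainrule2}) shows that $\mathcal{G}(\cdot,t) := \partial^{k+1}\mathcal{X}(\cdot,t)$ formally satisfies a differential equation of the shape \eqref{eq:mainODE}, namely
\[
\frac{d}{dt}\mathcal{G}(\cdot,t) = -\tfrac12 \partial_{\mathbf{x}}\mathbf{J}(\mathcal{X}(\cdot,t),\pi') \# \mathcal{G}(\cdot,t) + \mathcal{F}_{k+1}(\cdot,t),
\]
with initial condition $\mathcal{G}(\cdot,0) = \partial^{k+1}(\mathbf{X}) = 0$ for $k+1\ge 2$ and $\mathcal{G}(\cdot,0) = \Pi$ for $k+1=1$, where the inhomogeneity $\mathcal{F}_{k+1}$ is a sum of $\#$-products of the form $\partial^j[\partial_{\mathbf{x}}\mathbf{J}(\mathcal{X}(\cdot,t),\pi')] \# [\partial^{|B_1|}\mathcal{X}, \dots]$ over partitions of $[k+1]$ into $j\ge 2$ blocks, built entirely out of lower-order derivatives $\partial^{|B_i|}\mathcal{X}$ with $|B_i| \le k$ and the derivatives of $\mathbf{J}$. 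By the inductive hypothesis each $\partial^{|B_i|}\mathcal{X}(\cdot,u)$ is a continuous $[0,\infty)\to C_{\tr,\mathcal{S}}^{1}$-valued function bounded in the relevant seminorm (with the finer bound $e^{-cu/2}p_{|B_i|,\mathbf{J},R}(u)$ available for $\partial_{\mathbf{x}}$ of it when $|B_i|\ge 1$), and $\partial^j[\partial_{\mathbf{x}}\mathbf{J}]$ is bounded by $\norm{\partial^{j+1}\mathbf{J}}_{BC_{\tr}(\R^{*(d+d')},\mathscr{M}^{j+1})^d}$ on the ball of radius $R'$; hence $\mathcal{F}_{k+1}$ is a continuous $C_{\tr,\mathcal{S}}^1$-valued function with $\norm{\mathcal{F}_{k+1}(\cdot,u)}_{C_{\tr,\mathcal{S}}(\R^{*(d+d')},\mathscr{M}^{k+1})^d,R}$ controlled by a polynomial in $u$ — crucially, by Leibniz and since every term has at least two factors each carrying a $\partial_{\mathbf{x}}$ when we estimate $\partial_{\mathbf{x}}\mathcal{F}_{k+1}$, the decay factor $e^{-cu/2}$ is present. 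Applying Lemma~\ref{lem:niceGronwall2} gives that $\mathcal{G}(\cdot,t)$ genuinely exists as a $C_{\tr,\mathcal{S}}^1$-valued continuous function solving this ODE, and it must coincide with $\partial^{k+1}\mathcal{X}(\cdot,t)$ because differentiating \eqref{eq:mainSDE} $k+1$ times (legitimate once we know the solution is $C^{k+1}$, or by a limiting/Picard argument as in the proof of Lemma~\ref{lem:SDEsolution}) yields the same equation; this upgrades $\mathcal{X}$ to $C_{\tr,\mathcal{S}}^{k+1}$, and iterating over all $k$ gives $C_{\tr,\mathcal{S}}^\infty$.

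The quantitative estimates then follow by plugging into \eqref{eq:niceGronwall}. For \eqref{eq:derivativeestimate1}, using $\norm{\mathcal{F}_{k+1}(\cdot,u)} \le$ (polynomial in $u$) and $\int_0^t e^{cu/2}(\text{poly}(u))e^{-cu/2}\cdots$ — more simply, bounding the inhomogeneity by a constant $C$ over all $u$ (which suffices since the lower-order terms are uniformly bounded by induction) gives $\norm{\partial^{k+1}\mathcal{X}(\cdot,t)} \le e^{-ct/2}\norm{\mathcal{G}_0} + (2C/c)(1-e^{-ct/2}) \le C_{k+1,\mathbf{J},R}$, a finite constant depending only on $k+1$, $\mathbf{J}$ (through finitely many of its $BC_{\tr}$-seminorms and $a,c$) and $R$. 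For \eqref{eq:derivativeestimate2}, one applies the same Grönwall bound to $\partial_{\mathbf{x}}\partial^k\mathcal{X}$, whose inhomogeneity — by the structure of \eqref{eq:exchangedifferentiation} and the fact that every summand in $\partial_{\mathbf{x}}\mathcal{F}_k$ either contains a factor $\partial_{\mathbf{x}}(\text{lower-order }\mathcal{X})$ (which carries $e^{-cu/2}p_{\cdot}(u)$ by induction) or is itself such a factor — is bounded by $e^{-cu/2}$ times a polynomial of degree $\le k$; then $e^{-ct/2}\int_0^t e^{cu/2}\cdot e^{-cu/2}p(u)\,du = e^{-ct/2}\int_0^t p(u)\,du$ is $e^{-ct/2}$ times a polynomial of degree $k+1$, which is the form $e^{-ct/2}p_{k,\mathbf{J},R}(t)$ asserted (bookkeeping the degree: each differentiation in $\mathbf{x}$ raises the polynomial degree by exactly one, starting from degree $0$ at $k=0$ where $\partial_{\mathbf{x}}\mathcal{X}$ satisfies a homogeneous decaying equation with bounded-by-constant forcing). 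I expect the main obstacle to be the bookkeeping in the last paragraph: verifying that the decay factor $e^{-cu/2}$ really does appear in front of the inhomogeneity for $\partial_{\mathbf{x}}\partial^k\mathcal{X}$ — this requires carefully tracking, in each term of the higher chain-rule expansion, that when one differentiates once more in the $\mathbf{x}$-direction the extra derivative lands either on a factor $\partial_{\mathbf{x}}(\text{something})$ already known to decay or creates such a factor, so that no "constant-order, non-decaying" term survives — and confirming the polynomial degrees propagate exactly as claimed. The functional-analytic content (existence, continuity in $t$, the Picard–Lindelöf and Grönwall steps) is entirely routine given Lemmas~\ref{lem:niceGronwall} and~\ref{lem:niceGronwall2}.
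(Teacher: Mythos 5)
Your proposal follows essentially the same route as the paper's proof: induction on $k$ via the differentiated equation for $\partial^k \mathcal{X}$, existence and the $C^1$-upgrade via Lemmas \ref{lem:niceGronwall} and \ref{lem:niceGronwall2} (with the Picard-iterate identification at the bottom level, exactly as the paper does for $k=1$), and then both \eqref{eq:derivativeestimate1} and \eqref{eq:derivativeestimate2} extracted from the Gr\"onwall bound \eqref{eq:niceGronwall}. The only slip is the degree bookkeeping for \eqref{eq:derivativeestimate2}: every summand of the forcing term for $\partial_{\mathbf{x}}\partial^k\mathcal{X}$ contains a factor $\partial_{\mathbf{x}}\partial^j\mathcal{X}$ with $j \le k-1$ (the factors without an extra $\partial_{\mathbf{x}}$ are bounded by constants via \eqref{eq:derivativeestimate1}), so the forcing is $e^{-cu/2}$ times a polynomial of degree at most $k-1$, not $k$, and the integration then yields degree exactly $k$ as the statement requires rather than the $k+1$ you wrote.
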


\begin{proof}
	Let $\pi(\mathbf{X},\mathbf{X}') = \mathbf{X}$ and $\pi'(\mathbf{X},\mathbf{X}') = \mathbf{X}'$.  We claim that for each $k \geq 1$, $t \mapsto \partial^k \mathcal{X}(\cdot,t)$ is a continuous function
	\[
	[0,\infty) \to BC_{\tr,\mathcal{S}}(\R^{*(d+d')},\mathscr{M}(\underbrace{\R^{*(d+d')},\dots,\R^{*(d+d')}}_{k}))^d
	\]
	and it satisfies
	\begin{multline} \label{eq:iterateddifferentiation}
		\frac{d}{dt} \partial^k \mathcal{X}(\cdot,t) = -\frac{1}{2} \sum_{k'=0}^k \sum_{j=0}^{k - k'} \binom{j + k'}{j} \sum_{\substack{(B_1, \dots, B_j) \\ \text{partition of } [j] \\ \min(B_1) < \dots < \min(B_j)}}
		\frac{1}{k!} \sum_{\sigma \in \Perm([k])} \partial_{\mathbf{x}'}^{k'} \partial_{\mathbf{x}}^j \mathbf{J}(\mathcal{X}(\cdot,t),\pi') \\ \# [\partial^{|B_1|} \mathcal{X}(\cdot,t), \dots, \partial^{|B_j|} \mathcal{X}(\cdot,t), \underbrace{\Pi',\dots,\Pi'}_{k'}]_\sigma.
	\end{multline}
	We will deduce this from Lemma \ref{lem:niceGronwall2} by induction.
	
	We make a few preliminary comments on the form of the above equation before we show the terms are well-defined.  We obtained \eqref{eq:iterateddifferentiation} by formally repeatedly differentiating the equation for $\mathcal{X}(\cdot,t)$ using the chain rule.  More precisely, we differentiated the composition of $\mathbf{J}$ with $(\mathcal{X}(\cdot,t),\pi')$, and evaluated the derivative of the inner function as $(\partial \mathcal{X}(\cdot,t), \Pi')$, and then expressed the result in terms of these two pieces.  We moved the occurrences of $\Pi'$ to the right for each term.  In order not to worry about which order to plug in the tangent vectors, we symmetrized over $\Perm(k)$, which is valid because the $k$th derivative is a symmetric $k$-linear map.
	
	On the right-hand side of \eqref{eq:iterateddifferentiation}, the term with $k' = 0$, $j = 1$, and $B_1 = [k]$ is exactly
	\[
	\partial_{\mathbf{x}} \mathbf{J}(\mathcal{X}(\cdot,t),\pi') \# \partial^k \mathcal{X}(\cdot,t),
	\]
	and all the other terms only involve lower-order derivatives of $\mathcal{X}(\cdot,t)$.  We will denote the sum of all these other terms by $\mathcal{F}^{(k)}(\cdot,t)$.
	
	Now we prove by induction on $k$ that $\mathcal{X}(\cdot,t)$ defines a continuous map
	\[
	[0,\infty) \to C_{\tr,\mathcal{S}}(\R^{*(d+d')},\mathscr{M}(\underbrace{\R^{*(d+d')},\dots,\R^{*(d+d')}}_k))^d
	\]
	(and hence $\mathcal{F}^{(k)}$ is also well-defined) and that $\mathbf{X}$ satisfies the formula \eqref{eq:iterateddifferentiation} and the estimate \eqref{eq:derivativeestimate1}.
	
	For the base case $k = 1$, let $\mathcal{G}: [0,\infty) \to C_{\tr}(\R^{*(d+d')}, \mathscr{M}(\R^{*d}))^d$ be the solution to
	\begin{align*}
		\mathcal{G}(\cdot,0) &= \pi, \\
		\mathcal{G}(\cdot,t) &=  -\frac{1}{2} \partial_{\mathbf{x}} \mathbf{J} (\mathcal{X}(\cdot,t),\pi')\# [\mathcal{G}(\cdot,t)] + \partial_{\mathbf{x}'} \mathbf{J}(\mathcal{X}(\cdot,t),\pi') \# \Pi'.
	\end{align*}
	The solution exists by applying Lemma \ref{lem:niceGronwall} with $\mathcal{F}$ given by 
	\[
	\partial_{\mathbf{x}'} \mathbf{J}(\mathcal{X}(\cdot,t),\pi') \# \Pi' = \partial_{\mathbf{x}'} \mathbf{K}(\mathcal{X}(\cdot,t),\pi') \# \Pi',
	\]
	which is bounded by a constant $C_{1,\mathbf{J}}'$ by assumption.  Thus, by \eqref{eq:niceGronwall}, we have
	\[
	\norm{\mathcal{G}(\cdot,t)}_{BC_{\tr,\mathcal{S}}(\R^{*(d+d')})^d} \leq e^{-ct/2}\left(1 + \frac{2}{c} C_{1,\mathbf{J}}'(e^{ct/2} - 1)  \right),
	\]
	which is bounded by a constant $C_{1,\mathbf{J}}$.
	
	To complete the base case, we need to show $\mathcal{G} = \partial \mathcal{X}$.  Let $\mathcal{X}_n$ be the Picard iterate as in the proof of Lemma \ref{lem:SDEsolution}.  Using continuity of the composition operation on $C_{\tr,\mathcal{S}}^1$ functions, we see that $\mathcal{X}_n$ is in $C_{\tr,\mathcal{S}}^1(\R^{*(d+d')})_{\sa}^d$, and we have
	\[
	\partial \mathcal{X}_{n+1}(\cdot,t) = \Pi - \frac{1}{2} \int_0^t \partial_{\mathbf{x}} \mathbf{J}(\mathcal{X}_n(\cdot,u),\pi')\# \partial \mathcal{X}_n(\cdot,u)
	+ \partial_{\mathbf{x}'} \mathbf{J}(\mathcal{X}_n(\cdot,u),\pi') \# \Pi'\,du.
	\]
	By the same token as \eqref{eq:Picarderror}, we have
	\[
	\norm{\mathcal{X}_{n+1}(\cdot,t) - \mathcal{X}(\cdot,t)}_{C_{\tr}(\R^{*(d+d')})_{\sa}^d,R} \leq \frac{1+c}{2} \int_0^t \norm{\mathcal{X}_n(\cdot,u) - \mathcal{X}(\cdot,u)}_{C_{\tr}(\R^{*(d+d')})_{\sa}^d,R}\,du.
	\]
	In a similar way, we have
	\begin{multline*}
		\norm*{ \partial \mathcal{X}_{n+1}(\cdot,t) - \mathcal{F}(\cdot,t) }_{C_{\tr}(\R^{*(d+d')})^d,R} \leq \int_0^t \biggl(\frac{1-c}{2} \norm{ \partial \mathcal{X}_n(\cdot,u) - \mathcal{F}(\cdot,u)}_{C_{\tr,\mathcal{S}}(\R^{*(d+d')},\mathscr{M}^1)^d,R} \\
		+ \norm{\partial_{\mathbf{x}} \partial \mathbf{J}}_{C_{\tr}(\R^{*(d+d')},\mathscr{M}^2)^d,R} \norm{\mathcal{X}_n(\cdot,u) - \mathcal{X}(\cdot,u)}_{C_{\tr,\mathcal{S}}(\R^{*(d+d')})^d,R} (C_{1,\mathbf{J}} + 1) \biggr)\,du
	\end{multline*}
	where the first error term comes from swapping out the $\partial \mathcal{X}_n$ in $\partial_{\mathbf{x}} \mathbf{J}(\mathcal{X}_n(\cdot,u),\pi') \# \partial \mathbf{X}_n(\cdot,u)$ for $\mathcal{F}$, and the second error times comes from swapping out $\mathcal{X}$ for $\mathcal{X}_n$ inside $\partial_{\mathbf{x}} \mathbf{J}$.  Altogether the function
	\[
	\phi_{n,R}(t) := \norm{\mathcal{X}_{n+1}(\cdot,t) - \mathcal{X}(\cdot,t)}_{C_{\tr}(\R^{*(d+d')})_{\sa}^d,R} + \norm{ \partial \mathcal{X}_{n+1}(\cdot,t) - \mathcal{F}(\cdot,t) }_{C_{\tr}(\R^{*(d+d')})^d,R}
	\]
	satisfies
	\[
	\phi_{n+1,R}(t) \leq K \int_0^t \phi_{n,R}(u)\,du
	\]
	for some constant $K$ that depends only on $W$,
	and this implies that $\phi_{n,R} \to 0$ uniformly on compact sets as $n \to \infty$.  Thus, $\partial \mathcal{X}_n$ converges to $\mathcal{F}$ in $C_{\tr}(\R^{*(d+d')},\mathscr{M})^d$ as $n \to \infty$.  It follows that $\mathcal{X}$ is in $C_{\tr}^1(\R^{*(d+d')})_{\sa}^d$ and $\partial \mathcal{X} = \mathcal{F}$.
	
	For the induction step, suppose the claim holds for $k-1$, so that
	\[
	\frac{d}{dt} \partial^{k-1} \mathcal{X}(\cdot,t) =
	- \frac{1}{2} \partial_{\mathbf{x}} \mathbf{J}(\mathcal{X}(\cdot,t), \pi') \# \partial^{k-1} \mathcal{X}(\cdot,t) + \mathcal{F}^{(k)}(\mathbf{X},\mathbf{X}',t).
	\]
	Then by Lemma \ref{lem:niceGronwall2}, we deduce that $\partial^{k-1} \mathcal{X}$ is in $BC_{\tr,\mathcal{S}}^1(\R^{*(d+d')},\mathscr{M}^{k-1})^d$ (and depends continuously on $t$) and that $\partial^k \mathcal{X}$ satisfies the differential equation computed by applying $\partial$ termwise to both sides. This computation of derivatives results in \eqref{eq:iterateddifferentiation}.  Next, by our induction hypothesis the spatial derivatives of $\mathcal{X}(\cdot,t)$ of order $< k$ satisfy \eqref{eq:derivativeestimate1}.  This implies that $\mathcal{F}^{(k)}$ is bounded in $BC_{\tr,\mathcal{S}}(\R^{*(d+d')},\mathscr{M}^k)^d$ by some constant $C_{k,\mathbf{J},R}'$ independent of $t$, because the derivatives of $\mathcal{X}$ of order $< k$ are bounded on each ball of radius $R$, and so are the derivatives of $\mathbf{J}(\mathcal{X},\pi')$.  Now we apply \eqref{eq:niceGronwall} with $\mathcal{G} = \partial^k \mathcal{X}$, noting that $\mathcal{G}_0 = 0$ for $k \geq 2$, and thus conclude that
	\[
	\norm{\partial^k \mathcal{X}(\cdot,t)}_{C_{\tr,\mathcal{S}}(\R^{*(d+d')},\mathscr{M}^k)^d,R} \leq e^{-ct/2} \int_0^t e^{cu/2} C_{k,\mathbf{J},,R}' \,du \leq \frac{2}{c} C_{k,\mathbf{J},R}' =: C_{k,\mathbf{J},R}.
	\]
	
	To show \eqref{eq:derivativeestimate2}, we again proceed by induction on $k$.  We can deduce a different equation for $\partial_{\mathbf{x}} \partial^k \mathcal{X}(\cdot,t)$ from \eqref{eq:iterateddifferentiation}, which has the same type of terms as \eqref{eq:iterateddifferentiation} except that each term has one multilinear argument of the form $\partial^j \mathcal{X}$ replaced by $\partial_{\mathbf{x}} \partial^j \mathcal{X}$.  As before, one of the terms is
	\[
	-\frac{1}{2} \partial_{\mathbf{x}} \mathbf{J}( \mathcal{X}(\cdot,t),\pi') \# \partial_{\mathbf{x}} \partial^k \mathcal{X}(\cdot,t),
	\]
	while all the other terms involve lower-order derivatives of $\mathcal{X}$.  We separate this first term out, and denote the sum of the remaining terms by $\mathcal{H}^{(k)}(\cdot,t)$.
	
	For the base case $k = 0$, we have $\mathcal{H}^{(0)} = 0$ and $\partial_{\mathbf{x}} \mathcal{X}(\cdot,0) = \id_d$.  Thus, using \eqref{eq:niceGronwall} with $\mathcal{F} = \mathcal{H}^{(0)}$, we get
	\[
	\norm{\partial_{\mathbf{x}} \mathcal{X}(\cdot,t)}_{BC_{\tr,\mathcal{S}}(\R^{*(d+d')},\mathscr{M}^1)^d} \leq e^{-ct/2}.
	\]
	Thus, the claim holds with $p_{0,W,R}(t) = 1$.
	
	For the induction step, let $k \geq 2$, and suppose the claim holds for $k - 1$.  Observe that $\mathcal{H}^{(k)}$ is bounded in $\norm{\cdot}_{C_{\tr,\mathcal{S}}(\R^{*(d+d')},\mathscr{M}^k)^d,R}$ by $e^{-ct/2} p_{k,\mathbf{J},R}'(t)$ for some polynomial $p_{k,\mathbf{J},R}'$ of degree $k - 1$.  This is verified by using the induction hypothesis for \eqref{eq:derivativeestimate2} on each occurrence of $\partial_{\mathbf{x}} \partial^j \mathcal{X}$ in $\mathcal{H}^{(k)}$ (there being one occurrence per summand) and applying \eqref{eq:derivativeestimate1} to all the other terms.  Then we apply \eqref{eq:niceGronwall} to $\partial_{\mathbf{x}} \partial^k \mathcal{X}$, noting that it vanishes when $t = 0$, and thus obtain
	\begin{align*}
		\norm{\partial_{\mathbf{x}} \partial^k \mathcal{X}(\cdot,t)}_{C_{\tr,\mathcal{S}}(\R^{*(d+d')},\mathscr{M}^{k+1})^d,R} &\leq e^{-ct/2} \int_0^t e^{cu/2} e^{-cu/2} p_{k,\mathbf{J},R}'(u)\,du \\
		&=: e^{-ct/2} p_{k,\mathbf{J},R}(t).
	\end{align*}
	This completes the inductive step and hence verifies \eqref{eq:derivativeestimate2}.
\end{proof}

\begin{remark} \label{rem:boundedcoefficients}
	From the proof, it is apparent that $C_{1,\mathbf{J},R}$ is independent of $R$.  Moreover, for $k > 1$, the constant $C_{k,\mathbf{J},R}$ only depends on $\norm{\partial^{k'} (\mathbf{J} - \pi)}_{C_{\tr}(\R^{*d},\mathscr{M}^{k'})^d,R'}$ for $k' \leq k$, where $R' = \max(R + 2, \norm{\mathbf{J} - \pi}_{BC_{\tr}(\R^{*(d+d')},\mathscr{M}^1)})$.  In particular, if $\mathbf{J} - \pi \in BC_{\tr}^k(\R^{*(d+d')},\mathscr{M}^1)$, then $\partial \mathcal{X} \in BC_{\tr,\mathcal{S}}(\R^{*(d+d')},\mathscr{M}^1)$.
\end{remark}

\subsection{The semigroup $e^{tL_{\mathbf{x},\mathbf{J}}}$} \label{subsec:heatsemigroup}

Next, we explain results about the heat semigroup parallel to \cite[\S 3.3]{DGS2016}.  To deduce smoothness for the heat semigroup from smoothness of the stochastic process $\mathcal{X}$, we use the following result about conditional expectations.

\begin{lemma} \label{lem:Ckconditionalexpectation}
	Let $k \in \N_0 \cup \{\infty\}$.  Let $d$, $d'$, $d'' \in \N$ and $\ell \in \N_0$ and $d_1$, \dots, $d_\ell \in \N$.  Let $\mathcal{S}$ be a $d$-variable free Brownian motion, and let $(\cB,\sigma)$ be the associated $\mathrm{W}^*$-algebra.  Let $\mathcal{F} \in C_{\tr,\mathcal{S}}^k(\R^{*d'},\mathscr{M}(\R^{*d_1},\dots,\R^{*d_\ell}))^{d''}$.  Recall that
	\[
	\mathcal{F}^{\cA,\tau}: (\cA * \cB)_{\sa}^{d'} \times (\cA * \cB)_{\sa}^{d_1} \times \dots \times (\cA * \cB)^{d_\ell} \to (\cA * \cB)^{d''},
	\]
	and let
	\[
	\mathbf{F}^{\cA,\tau}(\mathbf{X})[\mathbf{Y}_1,\dots,\mathbf{Y}_\ell] = E_{\cA} \left[ \mathcal{F}^{\cA,\tau}(\mathbf{X})[\mathbf{Y}_1,\dots,\mathbf{Y}_\ell] \right]
	\]
	for all $\mathbf{X} \in \mathcal{A}_{\sa}^{d'} \subseteq (\cA*\cB)_{\sa}^{d'}$ and $\mathbf{Y}_1$, \dots, $\mathbf{Y}_\ell$ with $\mathbf{Y}_j \in \cA_{\sa}^{d_j} \subseteq (\cA* \cB)_{\sa}^{d_j}$.  Then $\mathbf{F} = (\mathbf{F}^{\cA,\tau})_{(\cA,\tau) \in \mathbb{W}}$ is in $C_{\tr}^k(\R^{*d_1}, \mathscr{M}^\ell)^{d''}$ and for each $k' \leq k$ and $R > 0$,
	\[
	\norm{\partial^{k'} \mathbf{F}}_{C_{\tr}(\R^{*d_1}, \mathscr{M}^{\ell+k'})^{d''},R} \leq \norm{\partial^{k'} \mathcal{F}}_{C_{\tr,\mathcal{S}}(\R^{*d_1}, \mathscr{M}^{\ell+k'})^{d''},R}.
	\]
\end{lemma}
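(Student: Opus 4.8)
The plan is to show that taking a trace-preserving conditional expectation onto $\cA$ converts a $C_{\tr,\mathcal{S}}^k$ function into a $C_{\tr}^k$ function, with control of all the seminorms. The starting observation is that $E_{\cA}$ is a linear, $\norm{\cdot}_\alpha$-contractive (for $\alpha \in [1,\infty]$), $\sigma$-WOT-continuous $\cA$-$\cA$-bimodule map; this is exactly the content of the lemma on conditional expectations for tracial $\mathrm{W}^*$-algebras from the preliminaries. The key structural fact is that $E_{\cA}$ commutes with directional differentiation in the $\mathbf{X}$ and $\mathbf{Y}_j$ variables: if $\mathcal{F}^{\cA,\tau}$ is (Fréchet-)differentiable in its arguments restricted to $\cA_{\sa}^{\bullet} \subseteq (\cA*\cB)_{\sa}^{\bullet}$, then because $E_{\cA}$ is a bounded linear map it passes through difference quotients and limits, so $\partial^{k'}(E_{\cA}\circ \mathcal{F}^{\cA,\tau}) = E_{\cA}\circ (\partial^{k'}\mathcal{F}^{\cA,\tau})$ wherever the right side is evaluated at arguments drawn from $\cA$ rather than $\cA*\cB$. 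This reduces the whole statement to the case $k=0$ plus a bookkeeping argument.

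First I would handle $k = 0$. Given $\mathcal{F} \in C_{\tr,\mathcal{S}}(\R^{*d'},\mathscr{M}(\R^{*d_1},\dots,\R^{*d_\ell}))^{d''}$, pick, for each $R$ and $\epsilon$, a trace polynomial $\mathbf{g} \in \TrP_{\mathbf{s}}(\R^{*d'},\mathscr{M}^\ell)$ with $\sup_{(\cA,\tau)}\norm{\mathcal{F}^{\cA,\tau} - \mathbf{g}|_{\cA*\cB,\tau*\sigma}}_{\mathscr{M}^\ell,\tr,R} < \epsilon$ in the relevant sense. Applying $E_{\cA}$ and using that $E_{\cA}$ is $\norm{\cdot}_\alpha$-contractive for every $\alpha$, the function $\mathbf{G}^{\cA,\tau}(\mathbf{X})[\cdots] = E_{\cA}[\mathbf{g}|_{\cA*\cB,\tau*\sigma}(\mathbf{X})[\cdots]]$ satisfies $\norm{\mathbf{F}^{\cA,\tau} - \mathbf{G}^{\cA,\tau}}_{\mathscr{M}^\ell,\tr,R} < \epsilon$; here I am using that the supremum defining $\norm{\cdot}_{\mathscr{M}^\ell,\tr}$ over Hölder-conjugate exponents and unit-ball elements is dominated after applying $E_{\cA}$, which is built in to Hölder-contractivity. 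Next I must check $\mathbf{G} \in C_{\tr}(\R^{*d'},\mathscr{M}^\ell)^{d''}$, i.e. that the conditional expectation of a trace polynomial in $\mathcal{S}$ and $\mathbf{X}$, restricted to $\mathbf{X} \in \cA$, is itself a $C_{\tr}$ function of $\mathbf{X}$. This follows from the free-conditional-expectation lemma (Lemma \ref{lem:freeCE}): since $\mathbf{X}$ and $\mathcal{S}$ are freely independent, $E_{\cA}$ applied to a non-commutative polynomial in $\mathbf{X}$ and (finitely many increments of) $\mathcal{S}$ is again a non-commutative polynomial in $\mathbf{X}$ with coefficients given by a universal formula in traces of polynomials in $\mathbf{X}$ and traces of polynomials in $\mathcal{S}$ — and the latter traces are just numbers (moments of semicircular increments). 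Hence $\mathbf{G}$ is itself a trace polynomial in $\mathbf{X}$ alone, so lies in $\TrP(\R^{*d'},\mathscr{M}^\ell)^{d''} \subseteq C_{\tr}(\R^{*d'},\mathscr{M}^\ell)^{d''}$, and completeness of $C_{\tr}$ finishes the $k=0$ case along with the seminorm bound $\norm{\mathbf{F}}_{C_{\tr}(\R^{*d'},\mathscr{M}^\ell)^{d''},R} \le \norm{\mathcal{F}}_{C_{\tr,\mathcal{S}}(\R^{*d'},\mathscr{M}^\ell)^{d''},R}$.

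For general $k$, I would argue as follows. By the definition of $C_{\tr,\mathcal{S}}^k$, for each $k' \le k$ there is $\mathbf{g}_{k'} = \partial^{k'}\mathcal{F} \in C_{\tr,\mathcal{S}}(\R^{*d'},\mathscr{M}^{\ell+k'})^{d''}$ with $\partial^{k'}\mathcal{F}^{\cA,\tau} = \mathbf{g}_{k'}^{\cA,\tau}$ as functions on $(\cA*\cB)_{\sa}^{d_1+\cdots+\ell+k'}$. Apply the $k=0$ case to each $\mathbf{g}_{k'}$ to obtain $\mathbf{G}_{k'} \in C_{\tr}(\R^{*d'},\mathscr{M}^{\ell+k'})^{d''}$ with $\mathbf{G}_{k'}^{\cA,\tau}(\mathbf{X})[\cdots] = E_{\cA}[\mathbf{g}_{k'}^{\cA,\tau}(\mathbf{X})[\cdots]]$ for $\mathbf{X}$ and the tangent arguments from $\cA$, and with $\norm{\mathbf{G}_{k'}}_{C_{\tr}(\R^{*d'},\mathscr{M}^{\ell+k'})^{d''},R} \le \norm{\partial^{k'}\mathcal{F}}_{C_{\tr,\mathcal{S}}(\R^{*d'},\mathscr{M}^{\ell+k'})^{d''},R}$. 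It then remains to verify that $\mathbf{G}_{k'} = \partial^{k'}\mathbf{F}$, i.e. that the iterated directional derivatives of $\mathbf{F}^{\cA,\tau}$ exist and equal $\mathbf{G}_{k'}^{\cA,\tau}$. This is where the commutation of $E_{\cA}$ with differentiation is used: for $\mathbf{X} \in \cA_{\sa}^{d'}$ and $\mathbf{Y} \in \cA_{\sa}^{d'}$, the difference quotient $t^{-1}(\mathbf{F}^{\cA,\tau}(\mathbf{X}+t\mathbf{Y})[\cdots] - \mathbf{F}^{\cA,\tau}(\mathbf{X})[\cdots])$ equals $E_{\cA}$ of the corresponding difference quotient for $\mathcal{F}^{\cA,\tau}$, which converges in $\norm{\cdot}_\infty$ to $E_{\cA}[\partial\mathcal{F}^{\cA,\tau}(\mathbf{X})[\cdots,\mathbf{Y}]]$ since $E_{\cA}$ is $\norm{\cdot}_\infty$-contractive; iterating gives $\partial^{k'}\mathbf{F}^{\cA,\tau} = E_{\cA}\circ\partial^{k'}\mathcal{F}^{\cA,\tau} = \mathbf{G}_{k'}^{\cA,\tau}$ on the relevant domain. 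Combined with $\mathbf{G}_{k'} \in C_{\tr}(\R^{*d'},\mathscr{M}^{\ell+k'})^{d''}$ and Definition \ref{def:traceCk}, this gives $\mathbf{F} \in C_{\tr}^k(\R^{*d'},\mathscr{M}^\ell)^{d''}$ with $\partial^{k'}\mathbf{F} = \mathbf{G}_{k'}$, and hence the asserted seminorm inequality.

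I expect the main obstacle to be the $k=0$ step, and specifically the interplay between the conditional expectation and the supremum over Hölder exponents in $\norm{\cdot}_{\mathscr{M}^\ell,\tr}$: one must be careful that $E_{\cA}$ applied to a multilinear form and then paired against unit-ball elements is controlled by the original form's $\mathscr{M}^\ell$-norm, which requires the full strength of Hölder-contractivity of $E_{\cA}$ on every $L^\alpha$ rather than just $L^2$. The free-independence computation identifying $E_{\cA}$ of a trace polynomial in $(\mathbf{X},\mathcal{S})$ as a trace polynomial in $\mathbf{X}$ (Lemma \ref{lem:freeCE}) is standard but needs to be invoked at the right level of generality, including the $\mathbf{y}_j$ multilinear slots; however those extra variables $\mathbf{Y}_j$ can simply be absorbed into $\mathbf{X}$ by the usual trick of enlarging $d'$, so this does not cause real difficulty. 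Everything after $k=0$ is the routine bookkeeping of pushing $E_{\cA}$ through difference quotients, which I would not spell out in full.
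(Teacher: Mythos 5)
Your proposal is correct and follows essentially the same route as the paper's proof: commute $E_{\cA}$ with Fr\'echet differentiation using its boundedness, get the seminorm bound from $L^\alpha$-contractivity of $E_{\cA}$, and use Lemma \ref{lem:freeCE} (after rewriting the approximating trace polynomial in terms of the free semicircular increments of $\mathcal{S}$) to see that the conditional expectation of the approximant is again a trace polynomial in $\mathbf{X}$, so each $\partial^{k'}\mathbf{F}$ lies in $C_{\tr}$. Your organization via the $k=0$ case followed by bootstrapping is only a cosmetic repackaging of the paper's argument, which treats all $k' \leq k$ at once via the chain rule.
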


\begin{proof}
	Fix $(\cA,\tau)$.  Recall that $E_{\cA}: \cA * \cB \to \cA$ is a linear map which is bounded map with respect to $\norm{\cdot}_\infty$, the chain rule for Fr\'echet differentiation implies that $\mathbf{F}^{\cA,\tau}$ is Fr\'echet-$C^k$ and that for $k' \leq k$,
	\[
	\partial^{k'} \mathbf{F}^{\cA,\tau}(\mathbf{X})[\mathbf{Y}_1,\dots,\mathbf{Y}_{\ell+k'}] = E_{\cA}[\partial^{k'} \mathcal{F}^{\cA,\tau}(\mathbf{X})[\mathbf{Y}_1,\dots,\mathbf{Y}_{\ell+k'}]].
	\]
	Since $E_{\cA}$ is a contraction with respect to the non-commutative $L^\alpha$ norm for every $\alpha \in [1,\infty]$, we have
	\[
	\norm{\partial^{k'} \mathbf{F}}_{\mathscr{M}^{\ell+k'},\tr,R} \leq \norm{\partial^{k'} \mathcal{F}}_{\mathscr{M}^{\ell+k'},\tr,R}
	\]
	for every $R > 0$.  Note that this estimate is independent of $(\cA,\tau)$.
	
	For each $k'$, $R > 0$, and $\epsilon > 0$, there exists $\mathbf{g} \in \TrP_{\mathcal{S}}(\R^{*d'},\mathscr{M}(\R^{*d_1},\dots,\R^{*d_\ell}))^{d''}$ such that
	\[
	\norm{\partial^{k'} \mathcal{F}^{\cA,\tau} - \mathbf{g}^{(\cA,\tau)}}_{\mathscr{M}^\ell,\tr,R} \leq \epsilon \text{ for all } (\cA,\tau) \in \mathbb{W}.
	\]
	Now $\mathbf{g}$ is really a trace polynomial in the variables $\mathbf{x}$, $\mathbf{y}_1$, \dots, $\mathbf{y}_{\ell+k'}$ and $\mathcal{S}(t_1)$, \dots, $\mathcal{S}(t_m)$ for some finitely many times $0 < t_1 < \dots < t_m$.  We can rewrite this trace polynomial in terms of $\mathbf{X}$, the $\mathbf{Y}_j$'s, and the freely independent increments $\mathcal{S}(t_j) - \mathcal{S}(t_{j-1})$ for $j = 1$, \dots, $m$, where $t_0 := 0$; in other words, there exists $\widehat{\mathbf{g}} \in \TrP(\R^{*(d'+md)},\mathscr{M}(\R^{*d_1},\dots,\R^{*d_\ell})^{d''})$ such that
	\[
	\mathbf{g}^{\cA,\tau}(\mathbf{X})[\mathbf{Y}_1,\dots,\mathbf{Y}_{\ell+k'}] = \widehat{\mathbf{g}}^{\cA*\cB,\tau*\sigma}(\mathbf{X}, (t_1 - t_0)^{-1/2} (\mathcal{S}(t_1) - \mathcal{S}(t_0)), \dots, (t_m - t_{m-1})^{-1/2}(\mathcal{S}(t_m) - \mathcal{S}(t_{m-1}))[\mathbf{Y}_1,\dots,\mathbf{Y}_{\ell+k'}].
	\]
	Now $(t_1 - t_0)^{-1/2} (\mathcal{S}(t_1) - \mathcal{S}(t_0)), \dots, (t_m - t_{m-1})^{-1/2}(\mathcal{S}(t_m) - \mathcal{S}(t_{m-1})$ is a standard free semicircular $dm$-tuple.  Lemma \ref{lem:freeCE} implies that there is a trace polynomial $\mathbf{h} \in \TrP(\R^{*d'}, \mathscr{M}(\R^{*d_1},\dots,\R^{*d_\ell}))^{d''}$ such that
	\[
	E_{\cA} [\mathbf{g}^{\cA,\tau}(\mathbf{X})[\mathbf{Y}_1,\dots,\mathbf{Y}_{\ell+k'}] = \mathbf{h}^{\cA,\tau}(\mathbf{X})[\mathbf{Y}_1,\dots,\mathbf{Y}_{\ell+k'}]
	\]
	for every $(\cA,\tau) \in \mathbb{W}$, every $\mathbf{X} \in \cA_{\sa}^{d'}$, and every $\mathbf{Y}_1$, \dots, $\mathbf{Y}_{\ell+k'}$ with $\mathbf{Y}_j \in \cA_{\sa}^{d_j}$.  Then
	\[
	\norm{\partial^{k'} \mathbf{F}^{\cA,\tau} - \mathbf{h}^{\cA,\tau}}_{\mathscr{M}^{\ell+k'},\tr,R} \leq \epsilon \text{ for all } (\cA,\tau) \in \mathbb{W},
	\]
	and hence $\partial^{k'} \mathbf{F} \in C_{\tr}(\R^{*d_1},\mathscr{M}^{\ell+k'})^{d''}$.  This holds for all $k' \leq k$, hence $\mathbf{F} \in C_{\tr}^k(\R^{*d_1}, \mathscr{M}^\ell)^{d''}$.
\end{proof}

\begin{remark}
	In fact, in the above argument, one can compute $\mathbf{h}$ explicitly from $\mathbf{g}$ by studying the action on trace polynomials of the heat semigroup associated to the flat free Laplacian $L$ as in \cite[\S 2]{Cebron2013}, \cite[\S 3]{DHK2013}, \cite[\S 14.2]{JekelThesis}.  This reasoning could be applied here to those $dm$ inputs of the function $\mathbf{g}$ where the free semicircular family is located.
\end{remark}

\begin{lemma} \label{lem:heatsemigroupbounds}
	Let $k \in \N_0 \cup \{\infty\}$.  Then for $\mathbf{f} \in C_{\tr}^k(\R^{*(d+d')},\mathscr{M}(\R^{*d_1},\dots,\R^{*d_\ell}))^{d''}$, we have $e^{tL_{\mathbf{x},\mathbf{J}}} \mathbf{f} \in C_{\tr}^k(\R^{*(d+d')},\mathscr{M}(\R^{*d_1},\dots,\R^{*d_\ell}))^{d''}$.  Moreover, fix $R > 0$, and let
	\[
	R' = \max(R+2, \norm{\nabla_{\mathbf{X}} W}).
	\]
	then for $k' \leq k$,
	\begin{equation} \label{eq:heatsemigroupestimate1}
		\norm*{ \partial^{k'} [e^{tL_{\mathbf{x},\mathbf{J}}} \mathbf{f}]}_{C_{\tr}(\R^{*(d+d')},\mathscr{M}^{\ell+k'})^{d''},R} \leq C_{k',\mathbf{J},R} \sum_{j=1}^{k'} \norm{\partial^j \mathbf{f}}_{C_{\tr}(\R^{*(d+d')})^{d''},R'},
	\end{equation}
	where $C_{k',\mathbf{J},R}$ is a constant depending only on $k'$ and $W$ and $R$.  Also, if $\partial_{\mathbf{x}} \mathbf{f} \in C_{\tr}^k(\R^{*(d+d')},\mathscr{M}(\R^{*d_1},\dots,\R^{*d_\ell}))^{d''}$, then $k' \leq k$,
	\begin{equation} \label{eq:heatsemigroupestimate2}
		\norm*{ \partial_{\mathbf{x}} \partial^{k'} [e^{tL_{\mathbf{x},\mathbf{J}}} \mathbf{f}]}_{C_{\tr}(\R^{*(d+d')},\mathscr{M}^{\ell+k'+1})^{d''},R}
		\leq e^{-ct} p_{k',\mathbf{J},R}(t) \sum_{j=1}^{k'} \norm{\partial_{\mathbf{x}} \partial^j \mathbf{f}}_{C_{\tr}(\R^{*(d+d')})^{d''},R'},
	\end{equation}
	where $p_{k',\mathbf{J},R}$ is a polynomial of degree $k'$ depending only on $k'$ and $W$ and $R$.
\end{lemma}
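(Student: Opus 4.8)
\textbf{Proof plan for Lemma \ref{lem:heatsemigroupbounds}.}

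The plan is to reduce everything to the chain rule (Theorem \ref{thm:chainrule}, in its $C_{\tr,\mathcal{S}}$ form, Proposition \ref{prop:chainrule2}), the conditional-expectation lemma (Lemma \ref{lem:Ckconditionalexpectation}), and the quantitative estimates on the stochastic process $\mathcal{X}$ obtained in Lemma \ref{lem:processCinfinity}. First I would fix $(\cA,\tau)$ and recall that by definition
\[
(e^{tL_{\mathbf{x},\mathbf{J}}}\mathbf{f})^{\cA,\tau}(\mathbf{X},\mathbf{X}')[\mathbf{Y}_1,\dots,\mathbf{Y}_\ell] = E_{\cA}\bigl[\mathbf{f}^{\cA*\cB,\tau*\sigma}(\mathcal{X}(\mathbf{X},\mathbf{X}',2t),\mathbf{X}')[\mathbf{Y}_1,\dots,\mathbf{Y}_\ell]\bigr].
\]
So $e^{tL_{\mathbf{x},\mathbf{J}}}\mathbf{f}$ is the conditional expectation of the composition $\mathbf{f}\bigl((\mathcal{X}(\cdot,2t),\pi')\bigr)$, where $\mathcal{X}(\cdot,2t)\in C_{\tr,\mathcal{S}}^\infty(\R^{*(d+d')})_{\sa}^d$ by Lemma \ref{lem:processCinfinity} and $\pi'\in\TrP(\R^{*(d+d')})_{\sa}^{d'}$ is the coordinate projection. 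By Proposition \ref{prop:chainrule2} the composition $\mathbf{f}(\mathcal{X}(\cdot,2t),\pi')\#[\Id,\dots,\Id]$ (carrying the original multilinear arguments $\mathbf{Y}_1,\dots,\mathbf{Y}_\ell$ through unchanged) lies in $C_{\tr,\mathcal{S}}^k(\R^{*(d+d')},\mathscr{M}^\ell)^{d''}$; then Lemma \ref{lem:Ckconditionalexpectation} gives $e^{tL_{\mathbf{x},\mathbf{J}}}\mathbf{f}\in C_{\tr}^k(\R^{*(d+d')},\mathscr{M}^\ell)^{d''}$ and that $\partial^{k'}$ of the heat semigroup applied to $\mathbf{f}$ is bounded (in $\norm{\cdot}_{C_{\tr}(\cdots),R}$) by $\norm{\partial^{k'}\bigl[\mathbf{f}(\mathcal{X}(\cdot,2t),\pi')\bigr]}_{C_{\tr,\mathcal{S}}(\cdots),R}$. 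This handles the qualitative claim and sets up both estimates.

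For the estimate \eqref{eq:heatsemigroupestimate1}, I would write out $\partial^{k'}[\mathbf{f}(\mathcal{X}(\cdot,2t),\pi')]$ using the Fa\`a di Bruno formula from Theorem \ref{thm:chainrule}: it is a sum over $j\le k'$ of terms of the form $\bigl(\partial^j\mathbf{f}\bigr)(\mathcal{X}(\cdot,2t),\pi')\#[\partial^{|B_1|}\mathcal{X}(\cdot,2t),\dots,\partial^{|B_j|}\mathcal{X}(\cdot,2t),\,\text{copies of }\Pi']_\sigma$ (the $\Pi'$-terms coming from differentiating the $\pi'$ factor, whose higher derivatives vanish, so only first-order $\Pi'$ appears). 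Applying the submultiplicative Hölder-type bound from Lemma \ref{lem:composition}, each summand is controlled by $\norm{\partial^j\mathbf{f}}_{C_{\tr,\mathcal{S}}(\cdots),R^*}$ — where $R^*$ is an operator-norm bound for $\mathcal{X}(\cdot,2t)$, which by \eqref{eq:SDEestimate} is at most $R'=\max(R+2,\norm{\mathbf{J}-\pi}_{BC})$ (in the $V$ case this is $\max(R+2,\norm{\nabla_{\mathbf{x}}W})$) — times a product of the $\norm{\partial^{|B_i|}\mathcal{X}(\cdot,2t)}_{C_{\tr,\mathcal{S}}(\cdots),R}$, each of which is bounded by the constant $C_{|B_i|,\mathbf{J},R}$ from \eqref{eq:derivativeestimate1}. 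Collecting constants and noting $\norm{\partial^j\mathbf{f}}_{C_{\tr,\mathcal{S}},R'}=\norm{\partial^j\mathbf{f}}_{C_{\tr},R'}$ for honest $C_{\tr}$ functions (there is no $\mathcal{S}$-dependence in $\mathbf{f}$ itself) gives \eqref{eq:heatsemigroupestimate1} with $C_{k',\mathbf{J},R}$ absorbing all the combinatorial factors. One subtlety to state carefully: the $j=0$ term is just $\mathbf{f}(\mathcal{X}(\cdot,2t),\pi')$ with no $\partial^{k'}$ hitting $\mathbf{f}$ — but since $k'\ge 1$ in the sum of the RHS and at least one $\partial$ must land on $\mathbf{f}$ (as $\partial^{\ge 2}\pi'=0$ and every block must be nonempty), the $j$-range on the RHS starting at $1$ is correct; if $k'=0$ the statement is the trivial bound $\norm{e^{tL}\mathbf{f}}\le\norm{\mathbf{f}}$ from $L^1$-contractivity of $E_{\cA}$, and I would note the RHS convention then.

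The refined estimate \eqref{eq:heatsemigroupestimate2} is where the real content lies, and I expect it to be the main obstacle. The point is to gain the exponential decay $e^{-ct}p_{k',\mathbf{J},R}(t)$ when we differentiate once more in the $\mathbf{x}$-variables. I would differentiate $\partial^{k'}[\mathbf{f}(\mathcal{X}(\cdot,2t),\pi')]$ once more by $\partial_{\mathbf{x}}$: in the Fa\`a di Bruno expansion above, $\partial_{\mathbf{x}}$ either hits the outer $\partial^j\mathbf{f}(\mathcal{X}(\cdot,2t),\pi')$ — producing $\partial^{j+1}\mathbf{f}$ contracted with an extra $\partial_{\mathbf{x}}\mathcal{X}(\cdot,2t)$ and with $\Pi$ (since $\partial_{\mathbf{x}}\mathcal{X}$, not $\mathcal{X}$, carries the new tangent direction; note $\partial_{\mathbf{x}}\pi'=0$) — or it hits one of the multilinear inputs $\partial^{|B_i|}\mathcal{X}(\cdot,2t)$, turning it into $\partial_{\mathbf{x}}\partial^{|B_i|}\mathcal{X}(\cdot,2t)$. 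In every resulting term there appears exactly one factor of the form $\partial_{\mathbf{x}}\partial^{m}\mathcal{X}(\cdot,2t)$ for some $m\ge 0$, and by \eqref{eq:derivativeestimate2} this factor is bounded by $e^{-c t}p_{m,\mathbf{J},R}(2t)$ (with our time variable $2t$ in place of $t$, giving $e^{-ct}$ rather than $e^{-ct/2}$ — this is exactly the source of the decay rate $e^{-ct}$ in the statement, and I would double-check the factor-of-$2$ bookkeeping between the SDE time $2t$ and the semigroup time $t$ here). All other factors ($\partial^{|B_i|}\mathcal{X}$, $\partial^{j}\mathbf{f}$, the $\#$-multiplications, the $E_{\cA}$ contraction via Lemma \ref{lem:Ckconditionalexpectation}) contribute only polynomially-bounded-in-$t$ constants. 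Multiplying through and summing over the finitely many terms yields a bound $e^{-ct}p_{k',\mathbf{J},R}(t)\sum_{j=1}^{k'}\norm{\partial_{\mathbf{x}}\partial^j\mathbf{f}}_{C_{\tr}(\cdots),R'}$, where $p_{k',\mathbf{J},R}$ has degree $k'$ (its degree counts the maximal polynomial-in-$t$ growth accumulated from the $\mathcal{X}$-derivative estimates, matching the degree-$k'$ assertion). The one place requiring genuine care is verifying that \emph{every} summand in the twice-differentiated expansion really does pick up at least one $\partial_{\mathbf{x}}\partial^m\mathcal{X}$ factor and hence the decay — i.e.\ that the hypothesis ``$\partial_{\mathbf{x}}\mathbf{f}\in C_{\tr}^k$'' is exactly what lets us route the new derivative so that it always reaches either $\mathcal{X}$ (as a $\partial_{\mathbf{x}}$) or $\mathbf{f}$ (where it becomes a $\partial_{\mathbf{x}}\mathbf{f}$-term, which is the form appearing on the RHS) — and I would write this routing out explicitly rather than leaving it implicit.
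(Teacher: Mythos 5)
Your proposal is correct and follows essentially the same route as the paper: view $\mathbf{f}(\mathcal{X}(\cdot,2t),\pi')$ as a composition in $C_{\tr,\mathcal{S}}^k$ via Lemma \ref{lem:processCinfinity} and Proposition \ref{prop:chainrule2}, push through the conditional expectation with Lemma \ref{lem:Ckconditionalexpectation}, and bound the Fa\`a di Bruno expansion using \eqref{eq:SDEestimate} together with \eqref{eq:derivativeestimate1} for \eqref{eq:heatsemigroupestimate1} and \eqref{eq:derivativeestimate2} for \eqref{eq:heatsemigroupestimate2}. Your explicit ``routing'' of the extra $\partial_{\mathbf{x}}$ onto exactly one factor $\partial_{\mathbf{x}}\partial^m\mathcal{X}$ (and your remarks on the time-$2t$ bookkeeping and the $k'=0$ / $R'$ conventions) just fills in details the paper dispatches with ``similar,'' so there is nothing to correct.
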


\begin{remark}
	These are not the same constants and polynomials from Lemma \ref{lem:processCinfinity}, but they are derived from them.
\end{remark}

\begin{proof}
	Since $C_{\tr}^k(\R^{*(d+d')},\mathscr{M}(\R^{*d_1},\dots,\R^{*d_\ell}))^{d''} \subseteq C_{\tr,\mathcal{S}}^k(\R^{*(d+d')},\mathscr{M}(\R^{*d_1},\dots,\R^{*d_\ell})^{d''}$, we may view $\mathbf{f}$ as an element of the latter space.  By Lemma \ref{lem:processCinfinity}, $\mathcal{X} \in C_{\tr,\mathcal{S}}^\infty(\R^{*(d+d')})^d$ and hence by Proposition \ref{prop:chainrule2}, $\mathbf{f}(\mathcal{X}(\cdot,t),\pi')$ is a function in $C_{\tr}^k(\R^{*(d+d')},\mathscr{M}(\R^{*d_1},\dots,\R^{*d_\ell}))^{d''}$.  So by Lemma \ref{lem:Ckconditionalexpectation}, we $e^{tL_{\mathbf{x},\mathbf{J}}} \mathbf{f} \in C_{\tr}^k(\R^{*(d+d')},\mathscr{M}(\R^{*d_1},\dots,\R^{*d_\ell}))^{d''}$.
	
	To prove \eqref{eq:heatsemigroupestimate1}, observe that by similar reasoning as in \eqref{eq:iterateddifferentiation},
	\begin{multline} \label{eq:iterateddifferentiation2}
		\partial^{k'} [\mathbf{f}(\mathcal{X}(\cdot,2t),\pi')] \\
		= \sum_{k^*=0}^{k'} \sum_{j=0}^{k' - k^*} \binom{j + k^*}{j} \sum_{\substack{(B_1, \dots, B_j) \\ \text{partition of } [k'-k^*] \\ \min(B_1) < \dots < \min(B_j)}}
		\frac{1}{k'!} \sum_{\sigma \in \Perm([k'])} \partial_{\mathbf{x}'}^{k^*} \partial_{\mathbf{x}}^j  \mathbf{f}(\mathcal{X}(\cdot,2t),\pi') \\
		\# \bigl[\underbrace{\Id,\dots,\Id}_\ell, \partial^{|B_1|} \mathcal{X}(\mathbf{X},\mathbf{X}',2t), \dots, \partial^{|B_j|} \mathcal{X}(\mathbf{X},\mathbf{X}',2t), \underbrace{\Pi',\dots,\Pi'}_{k^*} \bigr]_\sigma.
	\end{multline}
	It follows from \eqref{eq:SDEestimate} that
	\[
	\norm{\mathcal{X}(\cdot,2t)}_{C_{\tr,\mathcal{S}}(\R^{*(d+d')})^d,R} \leq R',
	\]
	and the same estimate holds for $(\mathcal{X}(\cdot,2t), \pi')$ since $R' > R$.  Thus, using \eqref{eq:derivativeestimate1}, we can bound $\partial^{k'} [\mathbf{f}(\mathcal{X}(\cdot,t),\pi')]$ by the right-hand side of \eqref{eq:derivativeestimate1}, and then apply Lemma \ref{lem:Ckconditionalexpectation} to finish the proof of \eqref{eq:heatsemigroupestimate1}.  The proof of \eqref{eq:heatsemigroupestimate2} is similar using \eqref{eq:derivativeestimate2} instead of \eqref{eq:derivativeestimate1}.
\end{proof}

\begin{lemma} \label{lem:semigroup}
	For $s, t \geq 0$ and $\mathbf{f} \in C_{\tr}(\R^{*(d+d')}, \mathscr{M}(\R^{*d_1},\dots,\R^{*d_\ell}))^{d''}$, we have
	\[
	e^{sL_{\mathbf{x},\mathbf{J}}} [e^{tL_{\mathbf{x},\mathbf{J}}} \mathbf{f}] = e^{(s+t)L_{\mathbf{x},\mathbf{J}}} \mathbf{f}.
	\]
\end{lemma}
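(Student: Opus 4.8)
The plan is to derive the semigroup property directly from the stochastic representation in Definition~\ref{def:Vheatsemigroup2}, using the uniqueness of SDE solutions (a "flow property" for $\mathcal{X}$) together with the tower property of trace-preserving conditional expectations. Fix $(\cA,\tau) \in \mathbb{W}$, fix $\mathbf{X} \in \cA_{\sa}^d$ and $\mathbf{X}' \in \cA_{\sa}^{d'}$, and fix multilinear arguments $\mathbf{Y}_1,\dots,\mathbf{Y}_\ell \in \cA_{\sa}$ (these ride along linearly and will be suppressed). Let $(\cB_1,\sigma_1)$ and $(\cB_2,\sigma_2)$ each be the tracial $\mathrm{W}^*$-algebra generated by a $d$-variable free Brownian motion, with Brownian motions $\mathcal{S}_1$ and $\mathcal{S}_2$, and work inside $\cC := \cA * \cB_1 * \cB_2$, where $\cB_1$, $\cB_2$ are freely independent of each other and of $\cA$. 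Let $\mathcal{X}^{(1)}(u)$ be the solution of \eqref{eq:mainSDE} with initial condition $\mathbf{X}$ driven by $\mathcal{S}_1$ (so $\mathcal{X}^{(1)}(u) \in (\cA * \cB_1)_{\sa}^d$), and let $\mathcal{X}^{(2)}(u)$ be the solution of \eqref{eq:mainSDE} with initial condition $\mathcal{X}^{(1)}(2t)$ driven by $\mathcal{S}_2$; the latter is well-defined by Lemma~\ref{lem:SDEsolution} applied with base algebra $\cA * \cB_1$, since $\mathcal{S}_2$ is freely independent of $\cA * \cB_1$.

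Unwinding Definition~\ref{def:Vheatsemigroup2} twice: evaluating the non-commutative function $e^{sL_{\mathbf{x},\mathbf{J}}}\mathbf{f}$ on the tracial $\mathrm{W}^*$-algebra $(\cA * \cB_1,\tau * \sigma_1)$ at the point $(\mathcal{X}^{(1)}(2t),\mathbf{X}')$, with the fresh free Brownian motion realized by $\mathcal{S}_2 \subseteq \cB_2$, gives $(e^{sL_{\mathbf{x},\mathbf{J}}}\mathbf{f})^{\cA * \cB_1,\tau * \sigma_1}(\mathcal{X}^{(1)}(2t),\mathbf{X}') = E_{\cA * \cB_1}[\mathbf{f}(\mathcal{X}^{(2)}(2s),\mathbf{X}')]$, where $E_{\cA * \cB_1}\colon \cC \to \cA * \cB_1$ is the trace-preserving conditional expectation. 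Applying the definition of $e^{sL_{\mathbf{x},\mathbf{J}}}$ once more at the point $(\mathbf{X},\mathbf{X}')$ with the Brownian motion $\mathcal{S}_1$, and using that trace-preserving conditional expectations compose along the chain $\cA \subseteq \cA * \cB_1 \subseteq \cC$, so that $E_{\cA} \circ E_{\cA * \cB_1} = E_{\cA}$, we obtain
\[
(e^{sL_{\mathbf{x},\mathbf{J}}}[e^{tL_{\mathbf{x},\mathbf{J}}}\mathbf{f}])^{\cA,\tau}(\mathbf{X},\mathbf{X}') = E_{\cA}\bigl[(e^{tL_{\mathbf{x},\mathbf{J}}}\mathbf{f})^{\cA * \cB_1,\tau * \sigma_1}(\mathcal{X}^{(1)}(2t),\mathbf{X}')\bigr] = E_{\cA}[\mathbf{f}(\mathcal{X}^{(2)}(2s),\mathbf{X}')].
\]

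It remains to identify $\mathcal{X}^{(2)}(2s)$ as a value of the process driven by a single free Brownian motion. Set $\mathcal{S}_3(u) := \mathcal{S}_1(u)$ for $u \le 2t$ and $\mathcal{S}_3(u) := \mathcal{S}_1(2t) + \mathcal{S}_2(u - 2t)$ for $u \ge 2t$. Using the defining properties in Lemma~\ref{lem:Brownian} — stationary semicircular increments, free independence of increments over disjoint time intervals, and the fact that a sum of freely independent centered semicircular families is again semicircular with variances adding — one checks that $\mathcal{S}_3$ is a $d$-variable free Brownian motion lying in $\cB_1 * \cB_2$, hence freely independent of $\cA$. By uniqueness in Lemma~\ref{lem:SDEsolution}, the solution of \eqref{eq:mainSDE} started at $\mathbf{X}$ driven by $\mathcal{S}_3$ coincides with $\mathcal{X}^{(1)}$ on $[0,2t]$; subtracting the integral equation at time $2t$ from the one at time $2t + w$ shows that $w \mapsto \mathcal{X}^{\mathcal{S}_3}(\mathbf{X},\mathbf{X}',2t + w)$ solves \eqref{eq:mainSDE} with initial condition $\mathcal{X}^{(1)}(2t)$ driven by the free Brownian motion $w \mapsto \mathcal{S}_3(2t + w) - \mathcal{S}_3(2t) = \mathcal{S}_2(w)$, so by uniqueness it equals $\mathcal{X}^{(2)}(w)$; in particular $\mathcal{X}^{\mathcal{S}_3}(\mathbf{X},\mathbf{X}',2(s+t)) = \mathcal{X}^{(2)}(2s)$. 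Since $\mathrm{W}^*(\mathcal{S}_3)$ is freely independent of $\cA$, the subalgebra $\cA * \mathrm{W}^*(\mathcal{S}_3) \subseteq \cC$ is a copy of the abstract free product appearing in Definition~\ref{def:Vheatsemigroup2} (with $\mathcal{S}_3$ in the role of the canonical Brownian motion), and the restriction of $E_{\cA}$ to it is again the trace-preserving conditional expectation onto $\cA$; therefore $E_{\cA}[\mathbf{f}(\mathcal{X}^{(2)}(2s),\mathbf{X}')] = E_{\cA}[\mathbf{f}(\mathcal{X}^{\mathcal{S}_3}(\mathbf{X},\mathbf{X}',2(s+t)),\mathbf{X}')] = (e^{(s+t)L_{\mathbf{x},\mathbf{J}}}\mathbf{f})^{\cA,\tau}(\mathbf{X},\mathbf{X}')$, which is the claim.

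The routine part is the verification that $\mathcal{S}_3$ is genuinely a free Brownian motion; the main obstacle is the bookkeeping of algebras — namely, justifying cleanly that the "inner" conditional expectation in the twofold application of Definition~\ref{def:Vheatsemigroup2} is realized by $E_{\cA * \cB_1}$ acting in $\cC$ (so that the tower property applies), and that the flow identity $\mathcal{X}^{\mathcal{S}_3}(\cdot,2t + w) = \mathcal{X}^{(2)}(w)$ is a legitimate consequence of the uniqueness assertion of Lemma~\ref{lem:SDEsolution} applied over the enlarged base algebra $\cA * \cB_1$. None of this requires free stochastic calculus beyond what is already set up in \S\ref{subsec:stochasicprocess}.
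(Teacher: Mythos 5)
Your proposal is correct and follows essentially the same route as the paper's proof: iterate the stochastic representation with two freely independent Brownian motions, concatenate them into a single Brownian motion $\mathcal{S}_3$, identify the composed flow with the single flow via uniqueness of the SDE solution, and collapse the conditional expectations by the tower property, noting that the endpoint lies in $\cA * \mathrm{W}^*(\mathcal{S}_3)$. The only blemish is a cosmetic swap of the roles of $s$ and $t$ in your intermediate display (the outer application of the definition should carry the flow time $2s$ and the inner one $2t$), which is harmless since the identity to be proved is symmetric under relabeling $s \leftrightarrow t$.
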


\begin{proof}
	Fix $(\cA,\tau)$, let $(\cB_1,\sigma_1)$ be a freely independent tracial von $\mathrm{W}^*$-algebra generated by a free Brownian motion $\mathcal{S}_1$, and let $(\cB_2,\sigma_2)$ be another freely independent copy of $(\cB,\sigma)$ generated by another free Brownian motion $\mathcal{S}_2$.  For each algebra $(\cA,\tau)$, and $j = 1, 2$, let $\mathcal{X}_j$ be the solution to \eqref{eq:mainSDE} with $\mathcal{S}_j$ instead of $\mathcal{S}$.  Then
	\begin{align*}
		[e^{sL_{\mathbf{x},\mathbf{J}}}&[e^{tL_{\mathbf{x},\mathbf{J}}} \mathbf{f}]]^{\cA,\tau}(\mathbf{X},\mathbf{X}') \\
		&= E_{\cA}[ [e^{tL_{\mathbf{x},\mathbf{J}}}\mathbf{f}]^{\cA*\cB_1,\tau*\sigma_1}(\mathcal{X}_1^{\cA,\tau}(\mathbf{X},\mathbf{X}',2s),\mathbf{X}')] \\
		&= E_{\cA} \circ E_{\cA * \cB_1}[\mathbf{f}^{\cA * \cB_1 * \cB_2, \tau * \sigma_1 * \sigma_2}(\mathcal{X}_2^{\cA*\cB_1,\tau*\sigma}(\mathcal{X}_1^{\cA,\tau}(\mathbf{X},\mathbf{X}',2s),\mathbf{X}',2t),\mathbf{X}')].
	\end{align*}
	Let
	\[
	\mathcal{S}_3(u) = \begin{cases} \mathcal{S}_1(u), & u \in [0,2s], \\ \mathcal{S}_1(2s) + \mathcal{S}_2(u-2s), & u \in [2s,\infty), \end{cases}
	\]
	and let $\mathcal{S}_4(u) = \mathcal{S}_1(u+2s)$.  Let $(\cB_3,\sigma_3)$ and $(\cB_4,\sigma_4)$ be the associated tracial $\mathrm{W}^*$-algebras.  Then $\cB_3$ and $\cB_4$ are subalgebras of $\cB_1 * \cB_2$, and $\cB_1 * \cB_2 = \cB_3 * \cB_4$.  Since $\mathbf{X}$ and $\mathbf{X}'$ are tuples from $\cA_{\sa}$, we have
	\[
	\mathcal{X}_2^{\cA*\cB_1,\tau*\sigma}(\mathcal{X}_1^{\cA,\tau}(\mathbf{X},\mathbf{X}',2s),\mathbf{X}',2t) = \mathcal{X}_3^{\cA,\tau}(\mathbf{X},\mathbf{X}',2(s+t)),
	\]
	because the flowing for time $2s$ along \eqref{eq:mainSDE} with $\mathcal{S}_1$ and then for time $2t$ with $\mathcal{S}_2$ is the same as flowing for time $2s + 2t$ with $\mathcal{S}_3$.  Now $E_{\cA} \circ E_{\cA * \cB_1}$ is equal to the unique trace-preserving conditional expectation $\cA * \cB_1 * \cB_3 \to \cA$.  Thus, this agrees with first taking the conditional expectation from $\cA * \cB_1 * \cB_2$ onto $\cA * \cB_3$ and then onto $\cA$.  Now $\mathcal{X}_3^{\cA,\tau}(\mathbf{X},\mathbf{X}',2(s+t))$ is in $\cA * \cB_3$ already and hence the above expression reduces to
	\[
	E_{\cA} [\mathbf{f}^{\cA * \cB_3, \tau * \sigma_3}(\mathcal{X}_3^{\cA,\tau}(\mathbf{X},\mathbf{X}',2s+2t),\mathbf{X}')] = [e^{(s+t)L_{\mathbf{x},\mathbf{J}}} \mathbf{f}]^{\cA,\tau}(\mathbf{X},\mathbf{X}').  \qedhere
	\]
\end{proof}

\begin{lemma} \label{lem:semigroupcontinuity}
	Let $\mathbf{f} \in C_{\tr}^k(\R^{*(d+d')}, \mathscr{M}^\ell)^{d''}$.  Then $t \mapsto e^{tL_{\mathbf{x},\mathbf{J}}} \mathbf{f}$ is a continuous function
	\[
	[0,\infty) \to C_{\tr}^k(\R^{*(d+d')},\mathscr{M}(\R^{*d_1},\dots,\R^{*d_\ell}))^{d''}.
	\]
\end{lemma}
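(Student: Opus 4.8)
The plan is to write $e^{tL_{\mathbf{x},\mathbf{J}}}\mathbf{f}$ as a composition of maps each already known to depend continuously on its input, and then combine them. Recall from Definition \ref{def:Vheatsemigroup2} that
\[
(e^{tL_{\mathbf{x},\mathbf{J}}}\mathbf{f})^{\cA,\tau}(\mathbf{X},\mathbf{X}')[\mathbf{Y}_1,\dots,\mathbf{Y}_\ell] = E_{\cA}\bigl[\mathbf{f}^{\cA * \cB,\tau * \sigma}(\mathcal{X}(\mathbf{X},\mathbf{X}',2t),\mathbf{X}')[\mathbf{Y}_1,\dots,\mathbf{Y}_\ell]\bigr].
\]
Accordingly I would set $\mathcal{F}_t := \mathbf{f}(\mathcal{X}(\cdot,2t),\pi')\#[\Id,\dots,\Id]$ with $\ell$ copies of $\Id$ (exactly the composition appearing in \eqref{eq:iterateddifferentiation2}). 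This makes sense because $\mathbf{f} \in C_{\tr}^k(\R^{*(d+d')},\mathscr{M}^\ell)^{d''} \subseteq C_{\tr,\mathcal{S}}^k(\R^{*(d+d')},\mathscr{M}^\ell)^{d''}$ and, by Lemma \ref{lem:processCinfinity}, $(\mathcal{X}(\cdot,2t),\pi') \in C_{\tr,\mathcal{S}}^\infty(\R^{*(d+d')})_{\sa}^{d+d'}$, so the composition lies in $C_{\tr,\mathcal{S}}^k(\R^{*(d+d')},\mathscr{M}^\ell)^{d''}$ by Proposition \ref{prop:chainrule2}. Then $e^{tL_{\mathbf{x},\mathbf{J}}}\mathbf{f}$ is precisely the image of $\mathcal{F}_t$ under the conditional-expectation operation $\Xi \colon C_{\tr,\mathcal{S}}^k(\R^{*(d+d')},\mathscr{M}^\ell)^{d''} \to C_{\tr}^k(\R^{*(d+d')},\mathscr{M}^\ell)^{d''}$ of Lemma \ref{lem:Ckconditionalexpectation} (applied with the ``$d'$'' of that lemma equal to $d+d'$).

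Next I would note that $\Xi$ is linear and, by the estimate in Lemma \ref{lem:Ckconditionalexpectation}, namely $\norm{\partial^{k'}\Xi(\mathcal{F})}_{C_{\tr}(\R^{*(d+d')},\mathscr{M}^{\ell+k'})^{d''},R} \leq \norm{\partial^{k'}\mathcal{F}}_{C_{\tr,\mathcal{S}}(\R^{*(d+d')},\mathscr{M}^{\ell+k'})^{d''},R}$ for all $k' \leq k$ and $R > 0$, it contracts every defining seminorm and is therefore continuous between the two Fréchet spaces. Hence it suffices to show that $t \mapsto \mathcal{F}_t$ is continuous from $[0,\infty)$ into $C_{\tr,\mathcal{S}}^k(\R^{*(d+d')},\mathscr{M}^\ell)^{d''}$. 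But $t \mapsto \mathcal{X}(\cdot,2t)$ is continuous into $C_{\tr,\mathcal{S}}^\infty(\R^{*(d+d')})_{\sa}^d$ (composing the continuous map of Lemma \ref{lem:processCinfinity} with $t \mapsto 2t$), the functions $\pi'$ and $\Id$ are fixed, and composition is jointly continuous in the $C_{\tr,\mathcal{S}}$-spaces by Proposition \ref{prop:chainrule2}; hence $t \mapsto \mathcal{F}_t$ is continuous. Composing with the continuous linear map $\Xi$ then yields the continuity of $t \mapsto e^{tL_{\mathbf{x},\mathbf{J}}}\mathbf{f}$.

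This argument is essentially just an assembly of Lemmas \ref{lem:processCinfinity} and \ref{lem:Ckconditionalexpectation} together with the $C_{\tr,\mathcal{S}}$-version of the chain rule (Proposition \ref{prop:chainrule2}), so there is no genuinely hard step; the only thing demanding care is the bookkeeping. In particular, one must regard the fixed $C_{\tr}^k$ function $\mathbf{f}$ as an element of $C_{\tr,\mathcal{S}}^k$ before composing it with the Brownian-motion-dependent inner function $(\mathcal{X}(\cdot,2t),\pi')$, and one must recognize that taking the trace-preserving conditional expectation onto $\cA$ is exactly the operation that returns the result to the $C_{\tr}$ category while contracting all seminorms. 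Finally, continuity at $t=0$ needs no separate treatment, as it is subsumed by one-sided continuity on $[0,\infty)$; and the result applies verbatim to the non-conditional setting of Theorem \ref{thm:Vheatsemigroup1} by taking $d' = 0$ and $\ell = 0$.
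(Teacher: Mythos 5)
Your proof is correct and follows essentially the same route as the paper: continuity of $t \mapsto \mathcal{X}(\cdot,t)$ from Lemma \ref{lem:processCinfinity}, joint continuity of composition in the $C_{\tr,\mathcal{S}}$ spaces via Proposition \ref{prop:chainrule2}, and the seminorm contraction of the conditional expectation from Lemma \ref{lem:Ckconditionalexpectation}. The extra bookkeeping you supply (viewing $\mathbf{f}$ inside $C_{\tr,\mathcal{S}}^k$ and padding with $\Id$'s in the multilinear slots) only makes explicit what the paper leaves implicit.
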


\begin{proof}
	By Lemma \ref{lem:processCinfinity}, $\mathcal{X}$ is a continuous map $[0,\infty) \to C_{\tr,\mathcal{S}}^\infty(\R^{*(d+d')})_{\sa}^d$.  By continuity of composition in Theorem \ref{thm:chainrule} / Proposition \ref{prop:chainrule2}, $t \mapsto \mathcal{F}(\mathcal{X},\pi')$ defines a continuous map $[0,\infty) \to C_{\tr,\mathcal{S}}^k(\R^{*(d+d')},\mathscr{M}(\R^{*d_1},\dots,\R^{*d_\ell}))^{d''}$.  Using Lemma \ref{lem:Ckconditionalexpectation}, continuity is preserved when we apply the conditional expectation to obtain the heat semigroup.
\end{proof}

\subsection{Kernel projection and pseudo-inverse of the Laplacian}

Our next goal is to construct a ``kernel projection'' $\mathbb{E}_{\mathbf{x},\mathbf{J}}$ and pseudo-inverse $\Psi_{\mathbf{x},\mathbf{J}}$ for the Laplacian $L_{\mathbf{x},\mathbf{J}}$.  The operator $\mathbb{E}_{\mathbf{x},\mathbf{J}}$ is obtained as the limit of $e^{tL_{\mathbf{J}}}$ as $t \to \infty$.

\begin{lemma}
	Let $\mathbf{f} \in C_{\tr}^k(\R^{*(d+d')},\mathscr{M}(\R^{*d_1},\dots,\R^{*d_\ell}))^{d''}$, let $R > 0$, and let
	\[
	R' = \max(R+2, \norm{\mathbf{J} - \pi}_{C_{\tr}(\R^{*d})^d,R}).
	\]
	Then for $k' \leq k$,
	\begin{equation} \label{eq:derivativedifferenceestimate}
		\norm{\partial^{k'} \mathbf{f} - \partial^k e^{tL_{\mathbf{x},\mathbf{J}}} \mathbf{f} }_{C_{\tr}(\R^{(d+d')},\mathscr{M}^{\ell+k})^{d''},R}
		\leq C_{k',\mathbf{J},R} R' \sum_{j=0}^{k'} \norm{\partial_{\mathbf{x}} \partial^j \mathbf{f}}_{C_{\tr}(\R^{*(d+d')},\mathscr{M}^{\ell+j})^{d''},R'},
	\end{equation}
	where $C_{k,\mathbf{J},R}$ is a constant depending only on $k$ and $\mathbf{J}$ and $R$.
\end{lemma}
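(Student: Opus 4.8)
The plan is to represent $\mathbf{f}-e^{tL_{\mathbf{x},\mathbf{J}}}\mathbf{f}$ through the stochastic process $\mathcal{X}$ and a single application of the fundamental theorem of calculus, and then to differentiate the resulting composition by the chain rule. Fix $(\cA,\tau)\in\mathbb{W}$ and let $\mathcal{X}(\cdot,t)$ be the process of Definition~\ref{def:Vheatsemigroup2}. Since $\mathbf{f}$ is Fr\'echet-$C^1$ and $E_{\cA}$ is the identity on $\cA$, for $\mathbf{X}\in\cA_{\sa}^d$, $\mathbf{X}'\in\cA_{\sa}^{d'}$ and multilinear arguments $\mathbf{Y}_j\in\cA_{\sa}^{d_j}$ one has, by integrating $\tfrac{d}{ds}\mathbf{f}^{\cA*\cB,\tau*\sigma}((1-s)\mathcal{X}(\cdot,2t)+s\mathbf{X},\mathbf{X}')$ over $s\in[0,1]$,
\begin{multline*}
\mathbf{f}^{\cA,\tau}(\mathbf{X},\mathbf{X}')[\mathbf{Y}_1,\dots,\mathbf{Y}_\ell]-(e^{tL_{\mathbf{x},\mathbf{J}}}\mathbf{f})^{\cA,\tau}(\mathbf{X},\mathbf{X}')[\mathbf{Y}_1,\dots,\mathbf{Y}_\ell] \\
= E_{\cA}\left[\int_0^1 (\partial_{\mathbf{x}}\mathbf{f})^{\cA*\cB,\tau*\sigma}\bigl((1-s)\mathcal{X}(\mathbf{X},\mathbf{X}',2t)+s\mathbf{X},\,\mathbf{X}'\bigr)\bigl[\mathbf{Y}_1,\dots,\mathbf{Y}_\ell,\,\mathbf{X}-\mathcal{X}(\mathbf{X},\mathbf{X}',2t)\bigr]\,ds\right].
\end{multline*}
Note that only $\partial_{\mathbf{x}}\mathbf{f}$ appears, because the path moves the first $d$ variables from $\mathbf{X}$ to $\mathcal{X}(\cdot,2t)$ while $\mathbf{X}'$ stays fixed, and the difference vector $\mathbf{X}-\mathcal{X}(\cdot,2t)$ sits in the new $\R^{*d}$-multilinear slot.

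At the level of non-commutative functions this says that $\mathbf{f}-e^{tL_{\mathbf{x},\mathbf{J}}}\mathbf{f}$ is obtained by applying the conditional-expectation map of Lemma~\ref{lem:Ckconditionalexpectation} to the $C_{\tr,\mathcal{S}}$-function $\int_0^1 (\partial_{\mathbf{x}}\mathbf{f})(\mathbf{G}_{s,t},\pi')\#[\Id,\dots,\Id,\mathbf{K}_t]\,ds$, where $\mathbf{G}_{s,t}:=(1-s)\,\mathcal{X}(\cdot,2t)+s\,\pi$ and $\mathbf{K}_t:=\pi-\mathcal{X}(\cdot,2t)$, both of which lie in $C_{\tr,\mathcal{S}}^\infty(\R^{*(d+d')})^d$ by Lemma~\ref{lem:processCinfinity}. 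The composition is smooth and continuous by Proposition~\ref{prop:chainrule2}, the Riemann integral over $s\in[0,1]$ is defined in the relevant Fr\'echet space, and $E_{\cA}$ keeps us in $C_{\tr}^k$ without increasing seminorms by Lemma~\ref{lem:Ckconditionalexpectation} (which also lets us commute $\partial^{k'}$ with the $s$-integral and with $E_{\cA}$, at the cost of the estimate $\norm{\partial^{k'}\mathbf{F}}_{\cdots,R}\le\norm{\partial^{k'}\mathcal{F}}_{\cdots,R}$).

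Next I would differentiate $k'$ times and estimate. Expanding $\partial^{k'}$ of $(\partial_{\mathbf{x}}\mathbf{f})(\mathbf{G}_{s,t},\pi')\#[\Id,\dots,\Id,\mathbf{K}_t]$ by the chain rule, exactly in the style of Theorem~\ref{thm:chainrule} and \eqref{eq:iterateddifferentiation2}, one obtains a finite sum (with a number of terms depending only on $k'$) of expressions of the form
\begin{multline*}
\bigl(\partial_{\mathbf{x}'}^{k^*}\partial_{\mathbf{x}}^{j}(\partial_{\mathbf{x}}\mathbf{f})\bigr)(\mathbf{G}_{s,t},\pi') \\
\#\ \bigl[\Id,\dots,\Id,\ \partial^{|B_1|}\mathcal{X}(\cdot,2t),\dots,\ \partial^{|B_0|}\mathbf{K}_t,\ \Pi',\dots,\Pi'\bigr]_\sigma ,
\end{multline*}
with $j+k^*\le k'$, and where $\partial_{\mathbf{x}'}^{k^*}\partial_{\mathbf{x}}^{j}(\partial_{\mathbf{x}}\mathbf{f})$ is, by equality of mixed partials, a permutation of $\partial_{\mathbf{x}}\partial^{\,j+k^*}\mathbf{f}$. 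Now I use the a priori bounds already established: by \eqref{eq:SDEestimate} and the definition of $R'$, $\norm{\mathbf{G}_{s,t}}_{C_{\tr,\mathcal{S}}(\R^{*(d+d')})^d,R}\le R'$ (a convex combination of $\pi$ and $\mathcal{X}(\cdot,2t)$, each of norm $\le R'$ on the $R$-ball), so the outer factor contributes $\norm{\partial_{\mathbf{x}}\partial^{\,j+k^*}\mathbf{f}}_{C_{\tr}(\R^{*(d+d')},\mathscr{M}^{\ell+1+j+k^*})^{d''},R'}$ via continuity of composition; by \eqref{eq:derivativeestimate1} each differentiated inner factor $\partial^{|B_i|}\mathcal{X}(\cdot,2t)$ with $|B_i|\ge1$ is bounded by a constant $C_{|B_i|,\mathbf{J},R}$; and the unique undifferentiated occurrence of $\mathbf{K}_t=\pi-\mathcal{X}(\cdot,2t)$ is bounded by $R+R'\le 2R'$, while a differentiated copy $\partial^{m}\mathbf{K}_t=-\partial^m\mathcal{X}(\cdot,2t)$, $m\ge1$, is again bounded by a constant. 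Hence each chain-rule term is bounded by $C_{k',\mathbf{J},R}\cdot R'\cdot\norm{\partial_{\mathbf{x}}\partial^{\,j+k^*}\mathbf{f}}_{\cdots,R'}$ with $j+k^*\le k'$; summing the finitely many terms, taking $\sup_{s\in[0,1]}$, and applying the contraction bound for $E_{\cA}$ from Lemma~\ref{lem:Ckconditionalexpectation} yields the claimed inequality with $\sum_{j=0}^{k'}$ on the right-hand side.

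The $C^1$ mean-value identity, commuting $\partial^{k'}$ past the $s$-integral and past $E_{\cA}$, and the uses of \eqref{eq:SDEestimate}, \eqref{eq:derivativeestimate1}, and Proposition~\ref{prop:chainrule2} are all routine, being precisely the machinery assembled in \S\ref{subsec:stochasicprocess} (and mirroring \eqref{eq:iterateddifferentiation2}). The one delicate point — and the main obstacle — is the bookkeeping in the chain-rule expansion: one must check that in every term the factor $R'$ enters to at most the first power, all higher-order derivatives $\partial^m\mathcal{X}(\cdot,2t)$ ($m\ge1$) contributing only the $R$-dependent but $R'$-independent constants of Lemma~\ref{lem:processCinfinity} (cf.\ Remark~\ref{rem:boundedcoefficients}), so that no power $R'^{>1}$ survives and the constant can genuinely be taken to depend only on $k'$, $\mathbf{J}$, and $R$.
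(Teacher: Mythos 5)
Your argument is correct, and it reaches the estimate by a genuinely different organization than the paper's proof, although it runs on the same machinery (Lemma \ref{lem:Ckconditionalexpectation}, the bounds \eqref{eq:SDEestimate} and \eqref{eq:derivativeestimate1}, and the chain rule). The paper does not use a fundamental-theorem-of-calculus representation: it expands $\partial^{k'}[\mathbf{f}(\mathcal{X}(\cdot,2t),\pi')]$ directly via \eqref{eq:iterateddifferentiation2}, absorbs all terms containing a factor $\partial^m\mathcal{X}$ with $m\geq 2$ into the right-hand side (such terms force $j\geq 1$, hence at least one $\partial_{\mathbf{x}}$ on $\mathbf{f}$), and then compares the remaining first-order terms with $\partial^{k'}\mathbf{f}$ by swapping each $\partial\mathcal{X}$ for $\Pi$ and finally $\mathcal{X}$ for $\pi$ inside the outer factor, the last swap producing the factor $\norm{\mathcal{X}(\cdot,2t)-\pi}\leq 2R'$ against $\norm{\partial_{\mathbf{x}}\partial^{k'}\mathbf{f}}_{\cdot,R'}$. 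You instead apply the FTC at order zero, writing $\mathbf{f}-e^{tL_{\mathbf{x},\mathbf{J}}}\mathbf{f}$ as the conditional expectation of $\int_0^1(\partial_{\mathbf{x}}\mathbf{f})(\mathbf{G}_{s,t},\pi')\#[\Id,\dots,\Id,\pi-\mathcal{X}(\cdot,2t)]\,ds$ and only then differentiating $k'$ times; this makes the two key features of the bound automatic --- every term carries exactly one factor of size $O(R')$ (the undifferentiated $\pi-\mathcal{X}$) and at least one $\partial_{\mathbf{x}}$ on $\mathbf{f}$ --- so you avoid the paper's case split and swapping argument, at the price of the extra $s$-integral (with the routine exchange of $\partial^{k'}$ and Riemann integration) and of applying the chain rule to $\partial_{\mathbf{x}}\mathbf{f}$, which requires exactly the mixed derivatives $\partial_{\mathbf{x}}\partial^{j}\mathbf{f}$, $j\leq k'$, that appear on the right-hand side (the same implicit requirement the paper's final step uses, so nothing is lost). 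Two small points of bookkeeping: the inner multilinear arguments in your expansion should be the derivatives $\partial^m\mathbf{G}_{s,t}=(1-s)\,\partial^m\mathcal{X}(\cdot,2t)+s\,\delta_{m,1}\Pi$ rather than $\partial^m\mathcal{X}(\cdot,2t)$ itself, but these are bounded by the same constants of Lemma \ref{lem:processCinfinity}; and, as you note, both the evaluation point $\mathbf{G}_{s,t}$ and the difference $\pi-\mathcal{X}(\cdot,2t)$ are controlled via \eqref{eq:SDEestimate}, so only the first power of $R'$ survives and the constant depends only on $k'$, $\mathbf{J}$, and $R$, as required.
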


\begin{proof}
	Using Lemma \ref{lem:Ckconditionalexpectation}, we have
	\[
	\norm{\partial^k \mathbf{f} - \partial^k e^{tL_{\mathbf{x},\mathbf{J}}} \mathbf{f} }_{C_{\tr}(\R^{(d+d')},\mathscr{M}^{\ell+k})^{d''},R} \leq \norm{\partial^k \mathbf{f} - \partial^k [\mathbf{f} \circ (\mathcal{X},\pi') ]}_{C_{\tr,\mathcal{S}}(\R^{(d+d')},\mathscr{M}^{\ell+k})^{d''},R}
	\]
	Recall that $\partial^{k'}[\mathbf{f}(\mathcal{X}(\cdot,2t)]$ is given by \eqref{eq:iterateddifferentiation2}.  Let us first control the terms where $\partial_{\mathbf{x}'}^{k*} \partial_{\mathbf{x}}^j \mathbf{f}$ has some multilinear argument of the form $\partial^m \mathcal{X}$ with $m \geq 2$.  Of course, this can only happen if $j \geq 1$, which means $\mathbf{f}$ is differentiated with respect to $\mathbf{x}$ at least once.  Using \eqref{eq:derivativeestimate1}, we can bound the term
	\[
	\partial_{\mathbf{x}'}^{k'} \partial_{\mathbf{x}}^j \mathbf{f}(\mathcal{X}(\cdot,t),\mathbf{X}') \# [\underbrace{\Id,\dots,\Id}_\ell,\partial^{|B_1|} \mathcal{X}(\cdot,2t), \dots, \partial^{|B_j|} \mathcal{X}(\cdot,2t), \underbrace{\Pi',\dots,\Pi'}_{k'}]_\sigma
	\]
	by a constant times the sum of the norms of $\partial_{\mathbf{x}} \partial^j \mathbf{f}$ for $j \leq k' - 1$.  This produces a bound of the same form as the right-hand side of \eqref{eq:derivativedifferenceestimate} since $j \geq 1$ and since $2 \leq R'$.
	
	The remaining terms of \eqref{eq:iterateddifferentiation2} are those where $|B_i| = 1$ for all $i$.  This implies that $j + k^* = k'$, and hence these terms add up to
	\begin{equation} \label{eq:derivativecombinatorics1}
		\sum_{j=0}^{k'} \binom{k'}{j}
		\frac{1}{k'!} \sum_{\sigma \in \Perm([k'])} \partial_{\mathbf{x}'}^{k^*} \partial_{\mathbf{x}}^j  \mathbf{f}(\mathcal{X}(\cdot,2t),\pi')
		\# [\underbrace{\Id,\dots,\Id}_\ell, \underbrace{\partial \mathcal{X}(\cdot,2t), \dots, \partial \mathcal{X}(\cdot,2t)}_{j}, \underbrace{\Pi',\dots,\Pi'}_{k'-j}]_\sigma.
	\end{equation}
	When $t = 0$, this reduces to
	\begin{equation} \label{eq:derivativecombinatorics2}
		\sum_{j=0}^{k'} \binom{k'}{j}
		\frac{1}{k'!} \sum_{\sigma \in \Perm([k'])} \partial_{\mathbf{x}'}^{k^*} \partial_{\mathbf{x}}^j  \mathbf{f} 
		\# [\underbrace{\Id,\dots,\Id}_\ell,\underbrace{\Pi,\dots,\Pi}_{j}, \underbrace{\Pi',\dots,\Pi'}_{k'-j}]_\sigma = \partial^k \mathbf{f}.
	\end{equation}
	Thus, to complete the proof, it suffices to estimate the difference between \eqref{eq:derivativecombinatorics1} and \eqref{eq:derivativecombinatorics2} by the right-hand side of \eqref{eq:derivativedifferenceestimate}.  Now \eqref{eq:derivativecombinatorics2} is obtained from \eqref{eq:derivativecombinatorics1} by swapping out each $\partial \mathcal{X}$ for $\Pi$ and swapping out $\mathcal{X}$ for $\pi$ inside $\partial^k \mathbf{f}$.
	
	By \eqref{eq:derivativeestimate1}, $\partial \mathcal{X}(\cdot,2t)$ is bounded by a constant.  Hence, when swapping out each $\partial \mathcal{X}$ for $\Pi$, the error is bounded by the right-hand side of \eqref{eq:derivativedifferenceestimate} as desired.  Finally, we must replace $\partial^{k'} \mathbf{f}(\mathcal{X}(\cdot,2t),\pi')$ by $\partial^k \mathbf{f}$. Given $(\cA,\tau)$, if $\norm{(\mathbf{X},\mathbf{X}')}_\infty \leq R$, then $\norm{\mathcal{X}^{\cA,\tau}(\mathbf{X},\mathbf{X}',2t)}_\infty$ is also bounded by $R'$.  Thus, the error can be controlled in $\norm{\cdot}_{C_{\tr,\mathcal{S}}(\R^{*d},\mathscr{M}^{\ell+k'})^{d''},R}$ by
	\[
	\norm{\partial_{\mathbf{x}} \partial^{k'} \mathbf{f}}_{C_{\tr}(\R^{*(d+d')},\mathscr{M}^{\ell+k'})^{d''},R'} \norm{\mathcal{X}(\cdot,2t) - \pi}_{C_{\tr}(\R^{*(d+d')})^d,R}.
	\]
	Then using Lemma \ref{lem:SDEsolution}, we have
	\[
	\norm{\mathcal{X}(\cdot,2t) - \pi}_{C_{\tr,\mathcal{S}}(\R^{*(d+d')})^d,R} \leq 2R'.
	\]
	Thus, we can bound the error by the right-hand side of \eqref{eq:derivativedifferenceestimate} as desired.
\end{proof}

\begin{proposition} \label{prop:kernelprojection}
	There exists a unique continuous operator
	\[
	\mathbb{E}_{\mathbf{x},\mathbf{J}}: C_{\tr}(\R^{*(d+d')}\mathscr{M}(\R^{*d_1},\dots,\R^{*d_\ell}))^{d''} \to C_{\tr}(\R^{*d'},\mathscr{M}(\R^{*d_1},\dots,\R^{*d_\ell}))^{d''}
	\]
	such that
	\begin{equation} \label{eq:defineE}
		(\mathbb{E}_{\mathbf{x},\mathbf{J}} \mathbf{f}) \circ \pi' = \lim_{t \to \infty} e^{tL_{\mathbf{x},\mathbf{J}}} \mathbf{f} \text{ in } C_{\tr}(\R^{*(d+d')}\mathscr{M}(\R^{*d_1},\dots,\R^{*d_\ell}))^{d''}.
	\end{equation}
	For $k \in \N_0 \cup \{\infty\}$, the operator $\mathbb{E}_{\mathbf{x},\mathbf{J}}$ maps $C_{\tr}^k(\R^{*(d+d')},\mathscr{M}^\ell)^{d''}$ into $C_{\tr}^k(\R^{*d},\mathscr{M}^\ell)^{d''}$.  It satisfies
	\begin{equation} \label{eq:Ederivativeestimate}
		\norm{\partial^{k'} \mathbb{E}_{\mathbf{x},\mathbf{J}}\mathbf{f}}_{C_{\tr}(\R^{*d'},\mathscr{M}^\ell)^{d''},R} \leq C_{k',\mathbf{J},R} \sum_{j=1}^{k'} \norm{\partial^j \mathbf{f}}_{C_{\tr}(\R^{*d'},\mathscr{M}^\ell)^{d''},R'}
	\end{equation}
	for $k' \leq k$, where $R' = \max(R+2,\norm{\mathbf{J} - \pi}_{BC_{\tr}(\R^{*(d+d')})^d})$.  Finally, the limit \eqref{eq:defineE} holds in $C_{\tr}^k(\R^{*(d+d')},\mathscr{M}(\R^{*d_1},\dots,\R^{*d_\ell}))^{d''}$ whenever $\mathbf{f} \in C_{\tr}^{k+1}(\R^{*(d+d')},\mathscr{M}(\R^{*d_1},\dots,\R^{*d_\ell}))^{d''}$ (or more generally the closure of $C_{\tr}^{k+1}(\R^{*(d+d')},\mathscr{M}(\R^{*d_1},\dots,\R^{*d_\ell}))^{d''}$ in $C_{\tr}^k(\R^{*(d+d')},\mathscr{M}(\R^{*d_1},\dots,\R^{*d_\ell}))^{d''}$).
\end{proposition}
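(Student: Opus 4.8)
The plan is to define $\mathbb{E}_{\mathbf{x},\mathbf{J}}$ through the $t\to\infty$ limit of $e^{tL_{\mathbf{x},\mathbf{J}}}$ and to propagate all the required bounds from the three preceding lemmas (the decay estimate \eqref{eq:SDEestimate} for $\mathcal{X}$, the heat-semigroup estimates \eqref{eq:heatsemigroupestimate1}, \eqref{eq:heatsemigroupestimate2}, the semigroup property Lemma \ref{lem:semigroup}, and especially the ``derivative-difference'' estimate \eqref{eq:derivativedifferenceestimate}). I would first handle $\mathbf{f}\in C_{\tr}^1(\R^{*(d+d')},\mathscr{M}(\R^{*d_1},\dots,\R^{*d_\ell}))^{d''}$. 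For $s\le t$, Lemma \ref{lem:semigroup} gives $e^{tL_{\mathbf{x},\mathbf{J}}}\mathbf{f}-e^{sL_{\mathbf{x},\mathbf{J}}}\mathbf{f}=e^{(t-s)L_{\mathbf{x},\mathbf{J}}}\mathbf{g}-\mathbf{g}$ with $\mathbf{g}:=e^{sL_{\mathbf{x},\mathbf{J}}}\mathbf{f}\in C_{\tr}^1$. Applying \eqref{eq:derivativedifferenceestimate} with $k'=0$ bounds $\norm{e^{(t-s)L_{\mathbf{x},\mathbf{J}}}\mathbf{g}-\mathbf{g}}_{C_{\tr}(\R^{*(d+d')},\mathscr{M}^\ell)^{d''},R}$ by a constant depending only on $\mathbf{J}$ and $R$ times $\norm{\partial_{\mathbf{x}}\mathbf{g}}_{C_{\tr}(\R^{*(d+d')},\mathscr{M}^{\ell+1})^{d''},R'}$, and then \eqref{eq:heatsemigroupestimate2} (with $k=k'=0$, so the polynomial $p_{0,\mathbf{J},R}$ is a constant) bounds this by $C_{\mathbf{J},R}\,e^{-ct}\norm{\partial_{\mathbf{x}}\mathbf{f}}_{C_{\tr}(\R^{*(d+d')})^{d''},R''}$. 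Hence $(e^{tL_{\mathbf{x},\mathbf{J}}}\mathbf{f})_{t\ge 0}$ is Cauchy as $t\to\infty$ in the complete Fr\'echet space $C_{\tr}(\R^{*(d+d')},\mathscr{M}^\ell)^{d''}$; call the limit $\mathbf{F}$. Since $\partial_{\mathbf{x}}[e^{tL_{\mathbf{x},\mathbf{J}}}\mathbf{f}]\to 0$ while $e^{tL_{\mathbf{x},\mathbf{J}}}\mathbf{f}\to\mathbf{F}$ uniformly on each $\norm{\cdot}_\infty$-ball, $\mathbf{F}^{\cA,\tau}(\mathbf{X},\mathbf{X}')$ is independent of $\mathbf{X}$; restricting trace-polynomial approximants of $\mathbf{F}$ to $\mathbf{x}=0$ exhibits the factored function as an element of $C_{\tr}(\R^{*d'},\mathscr{M}^\ell)^{d''}$, which we name $\mathbb{E}_{\mathbf{x},\mathbf{J}}\mathbf{f}$, so that $(\mathbb{E}_{\mathbf{x},\mathbf{J}}\mathbf{f})\circ\pi'=\mathbf{F}$.

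\textbf{Extension to all continuous $\mathbf{f}$, continuity, uniqueness.} The elementary bound $\norm{e^{tL_{\mathbf{x},\mathbf{J}}}\mathbf{f}}_{\mathscr{M}^\ell,\tr,R}\le\norm{\mathbf{f}}_{\mathscr{M}^\ell,\tr,R'}$ with $R'=\max(R+2,\norm{\mathbf{J}-\pi}_{BC_{\tr}(\R^{*(d+d')})^d})$ follows from Lemma \ref{lem:SDEsolution}, because $e^{-t/2}(R+2)+(1-e^{-t/2})\norm{\mathbf{J}-\pi}\le R'$; passing to the limit gives $\norm{\mathbb{E}_{\mathbf{x},\mathbf{J}}\mathbf{f}}_{C_{\tr}(\R^{*d'},\mathscr{M}^\ell)^{d''},R}\le\norm{\mathbf{f}}_{C_{\tr}(\R^{*(d+d')},\mathscr{M}^\ell)^{d''},R'}$. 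A $3\epsilon$-argument — approximate a general $\mathbf{f}\in C_{\tr}(\R^{*(d+d')},\mathscr{M}^\ell)^{d''}$ by trace polynomials, use this uniform-in-$t$ bound together with the already-established convergence for polynomials — shows $\lim_{t\to\infty}e^{tL_{\mathbf{x},\mathbf{J}}}\mathbf{f}$ exists for every such $\mathbf{f}$ and again depends only on $\mathbf{x}'$. This defines the (clearly linear) continuous operator $\mathbb{E}_{\mathbf{x},\mathbf{J}}$ satisfying \eqref{eq:defineE}, and uniqueness is immediate since \eqref{eq:defineE} determines it.

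\textbf{The $C_{\tr}^k$ statement.} For $\mathbf{f}\in C_{\tr}^{k+1}(\R^{*(d+d')},\mathscr{M}^\ell)^{d''}$ I would rerun the Cauchy argument at each derivative level $k'\le k$: with $\mathbf{g}=e^{sL_{\mathbf{x},\mathbf{J}}}\mathbf{f}\in C_{\tr}^{k+1}$ (Lemma \ref{lem:heatsemigroupbounds}), \eqref{eq:derivativedifferenceestimate} bounds $\norm{\partial^{k'}[e^{(t-s)L_{\mathbf{x},\mathbf{J}}}\mathbf{g}]-\partial^{k'}\mathbf{g}}$ by a constant times $\sum_{j\le k'}\norm{\partial_{\mathbf{x}}\partial^j\mathbf{g}}$, and since $\partial_{\mathbf{x}}\mathbf{f}$ is a ``restriction'' of $\partial\mathbf{f}\in C_{\tr}^k$ (so $\partial_{\mathbf{x}}\mathbf{f}\in C_{\tr}^k$), \eqref{eq:heatsemigroupestimate2} gives $\norm{\partial_{\mathbf{x}}\partial^j[e^{sL_{\mathbf{x},\mathbf{J}}}\mathbf{f}]}\le e^{-cs}p_{j,\mathbf{J},R}(s)\sum_{i\le j}\norm{\partial_{\mathbf{x}}\partial^i\mathbf{f}}\to 0$ as $s\to\infty$. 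Hence $\partial^{k'}[e^{tL_{\mathbf{x},\mathbf{J}}}\mathbf{f}]$ is Cauchy in $C_{\tr}(\R^{*(d+d')},\mathscr{M}^{\ell+k'})^{d''}$ for each $k'\le k$, so $e^{tL_{\mathbf{x},\mathbf{J}}}\mathbf{f}\to(\mathbb{E}_{\mathbf{x},\mathbf{J}}\mathbf{f})\circ\pi'$ in $C_{\tr}^k$, and since a function that factors through $\pi'$ and becomes $C_{\tr}^k$ after composition is itself $C_{\tr}^k$ (restrict to $\mathbf{X}=0$), $\mathbb{E}_{\mathbf{x},\mathbf{J}}\mathbf{f}\in C_{\tr}^k(\R^{*d'},\mathscr{M}^\ell)^{d''}$. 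Passing \eqref{eq:heatsemigroupestimate1} to the limit (it is uniform in $t$) yields \eqref{eq:Ederivativeestimate} for $\mathbf{f}\in C_{\tr}^{k+1}$; then for general $\mathbf{f}\in C_{\tr}^k$, approximating by $C_{\tr}^{k+1}$ (or trace polynomial) functions $\mathbf{f}_n$ and applying \eqref{eq:Ederivativeestimate} to $\mathbf{f}_n-\mathbf{f}_m$ shows $(\mathbb{E}_{\mathbf{x},\mathbf{J}}\mathbf{f}_n)_n$ is Cauchy in $C_{\tr}^k(\R^{*d'},\mathscr{M}^\ell)^{d''}$, its limit agrees with the continuous-case definition of $\mathbb{E}_{\mathbf{x},\mathbf{J}}\mathbf{f}$, lies in $C_{\tr}^k$, and inherits \eqref{eq:Ederivativeestimate}. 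The same approximation, with the uniform bound \eqref{eq:heatsemigroupestimate1}, handles the final parenthetical about the closure of $C_{\tr}^{k+1}$ inside $C_{\tr}^k$.

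\textbf{Main obstacle.} The real content is the decay mechanism: convergence as $t\to\infty$ is reduced, via the semigroup property and \eqref{eq:derivativedifferenceestimate}, to the exponential decay of the \emph{$\mathbf{x}$-derivatives} of $e^{tL_{\mathbf{x},\mathbf{J}}}\mathbf{f}$ furnished by \eqref{eq:heatsemigroupestimate2}, together with the fact that functions constant in $\mathbf{x}$ are fixed by the semigroup. Everything else is bookkeeping — tracking the radii $R\le R'\le R''$ and the orders of derivatives through the three lemmas, and verifying that a $C_{\tr}$-function independent of $\mathbf{x}$ genuinely factors as $(\cdot)\circ\pi'$ with the factor in $C_{\tr}^k(\R^{*d'},\mathscr{M}^\ell)^{d''}$ — so I expect no serious difficulty beyond that core estimate.
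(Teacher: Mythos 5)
Your core argument coincides with the paper's proof: you use the semigroup property (Lemma \ref{lem:semigroup}) together with \eqref{eq:derivativedifferenceestimate} to reduce the Cauchy property of $e^{tL_{\mathbf{x},\mathbf{J}}}\mathbf{f}$ as $t\to\infty$ to the exponential decay of the $\mathbf{x}$-derivatives furnished by \eqref{eq:heatsemigroupestimate2}, you identify the limit as a function of $\mathbf{x}'$ alone because its $\partial_{\mathbf{x}}$-derivative vanishes in the limit, and you extend to all of $C_{\tr}$ by the equicontinuity coming from \eqref{eq:heatsemigroupestimate1} plus density of trace polynomials; the $C_{\tr}^{k+1}$ case and the closure statement are handled exactly as in the paper (via what becomes \eqref{eq:heatsemigroupconvergence} there). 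One harmless slip: with $\mathbf{g}=e^{sL_{\mathbf{x},\mathbf{J}}}\mathbf{f}$ the decay factor in your Cauchy estimate is $e^{-cs}$ (the smaller time), not $e^{-ct}$.

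There is, however, one genuine gap, in your final extension step. You propose to prove that $\mathbb{E}_{\mathbf{x},\mathbf{J}}$ maps all of $C_{\tr}^k$ into $C_{\tr}^k$ with \eqref{eq:Ederivativeestimate} by ``approximating a general $\mathbf{f}\in C_{\tr}^k$ by $C_{\tr}^{k+1}$ (or trace polynomial) functions $\mathbf{f}_n$'' and applying \eqref{eq:Ederivativeestimate} to differences. This presupposes density of $C_{\tr}^{k+1}$ (or of trace polynomials) in $C_{\tr}^k$ with respect to the order-$k$ seminorms, which is not available: trace polynomials are dense in $C_{\tr}$ by the very definition of that space, but $C_{\tr}^k$ is defined by the existence of derivatives lying in $C_{\tr}$-type spaces, not as a completion of trace polynomials in the higher seminorms, and the paper explicitly remarks right after this proposition that density of $C_{\tr}^{k+1}$ in $C_{\tr}^k$ has not been proved. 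The paper's own proof does not argue your way: it obtains convergence of $e^{tL_{\mathbf{x},\mathbf{J}}}\mathbf{f}$ in the $C^k$ seminorms only for $\mathbf{f}\in C_{\tr}^{k+1}$, and then, by equicontinuity of $\{e^{tL_{\mathbf{x},\mathbf{J}}}\}$ on $C_{\tr}^k$, for $\mathbf{f}$ in the closure of $C_{\tr}^{k+1}$ inside $C_{\tr}^k$ — which is precisely why the last sentence of the statement is phrased in terms of that closure. For an $\mathbf{f}\in C_{\tr}^k$ not known to lie in that closure your approximation scheme does not get off the ground, and note that the uniform bounds \eqref{eq:heatsemigroupestimate1} alone cannot substitute for it: uniform boundedness of $\partial^{k'}[e^{tL_{\mathbf{x},\mathbf{J}}}\mathbf{f}]$ together with convergence in the order-zero seminorms does not imply that the limit is again a $C_{\tr}^k$ function.
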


\begin{remark}
	Unfortunately, we have not proved that $C_{\tr}^{k+1}(\R^{*(d+d')},\mathscr{M}(\R^{*d_1},\dots,\R^{*d_\ell}))^{d''}$ is dense in $C_{\tr}^k(\R^{*(d+d')},\mathscr{M}(\R^{*d_1},\dots,\R^{*d_\ell}))^{d''}$.
\end{remark}

\begin{proof}
	First, suppose that $\mathbf{f} \in C_{\tr}^\infty(\R^{*(d+d')},\mathscr{M}(\R^{*d_1},\dots,\R^{*d_\ell}))^{d''}$.  Let
	\begin{align*}
		R' &= \max(R+2,\norm{\mathbf{J} - \pi}_{BC_{\tr}(\R^{*(d+d')})^d}), \\
		R'' &= \max(R'+2,\norm{\mathbf{J} - \pi}_{BC_{\tr}(\R^{*(d+d')})^d}).
	\end{align*}
	Then for $t \geq s$,
	\begin{align}
		& \quad \sum_{j=1}^k \norm{\partial^j [e^{tL_{\mathbf{x},\mathbf{J}}}\mathbf{f}] - \partial^j[e^{sL_{\mathbf{x},\mathbf{J}}} \mathbf{f}]}_{C_{\tr}(\R^{*(d+d')},\mathscr{M}^{\ell_j})^{d''},R} \nonumber \\
		&\leq
		C_{k,\mathbf{J},R} R' \sum_{j=1}^k \norm{\partial_{\mathbf{x}} \partial^j[e^{sL_{\mathbf{x},\mathbf{J}}} \mathbf{f}]}_{C_{\tr}(\R^{*(d+d')},\mathscr{M}^{\ell+j})^{d''},R'} \nonumber \\
		&\leq
		e^{-cs} p_{k,\mathbf{J}}(s) R' \sum_{j=1}^k \norm{\partial_{\mathbf{x}} \partial^j\mathbf{f}}_{C_{\tr}(\R^{*(d+d')},\mathscr{M}^{\ell+j})^{d''},R''}, \label{eq:heatsemigroupconvergence}
	\end{align}
	where the first inequality for some constant $C_{k,\mathbf{J},R}$ follows from Lemma \ref{lem:semigroup} and \eqref{eq:derivativedifferenceestimate}, and the second inequality for some polynomial $p_{k,\mathbf{J}}$ follows from \eqref{eq:heatsemigroupestimate2}.  (As before, the constants and polynomials here are not the same ones as in the previous lemmas.)  Because of the $e^{-cs}$ term, the difference goes to zero as $s, t \to \infty$, and thus $e^{tL_{\mathbf{x},\mathbf{J}}} \mathbf{f}$ is Cauchy with respect to each of the seminorms in $C_{\tr}^\infty(\R^{*(d+d')},\mathscr{M}(\R^{*d_1},\dots,\R^{*d_\ell}))^{d''}$.  So the limit
	\[
	T\mathbf{f} := \lim_{t \to \infty} e^{tL_{\mathbf{x},\mathbf{J}}} \mathbf{f}
	\]
	exists in $C_{\tr}^\infty(\R^{*(d+d')},\mathscr{M}(\R^{*d_1},\dots,\R^{*d_\ell}))^{d''}$.  Let
	\[
	[\mathbb{E}_{\mathbf{x},\mathbf{J}}\mathbf{f}]^{\cA,\tau}(\mathbf{X}') = [T\mathbf{f}]^{\cA,\tau}(0,\mathbf{X}').
	\]
	Note that $\mathbb{E}_{\mathbf{x},\mathbf{J}} \mathbf{f} \in C_{\tr}^\infty(\R^{*d'},\mathscr{M}(\R^{*d_1},\dots,\R^{*d_\ell}))^{d''}$.  Because of \eqref{eq:heatsemigroupestimate2}, we see that $\partial_{\mathbf{x}} T\mathbf{f} = 0$, and therefore,
	\[
	T\mathbf{f} = T\mathbf{f}(0,\pi') = \mathbb{E}_{\mathbf{x},\mathbf{J}} \mathbf{f}(\pi').
	\]
	
	So we have proved existence of the limit for $\mathbf{f} \in C_{\tr}^\infty(\R^{*(d+d')}\mathscr{M}(\R^{*d_1},\dots,\R^{*d_\ell}))^{d''}$.  Next, note that $\TrP(\R^{*(d+d')},\mathscr{M}(\R^{*d_1},\dots,\R^{*d_\ell}))^{d''} \subseteq C_{\tr}^\infty(\R^{*(d+d')}\mathscr{M}(\R^{*d_1},\dots,\R^{*d_\ell}))^{d''}$ is dense in $C_{\tr}(\R^{*(d+d')}\mathscr{M}(\R^{*d_1},\dots,\R^{*d_\ell}))^{d''}$.  By \eqref{eq:heatsemigroupestimate1}, the operators $e^{tL_{\mathbf{x},\mathbf{J}}}$ for $t \in [0,\infty)$ are equicontinuous on $C_{\tr}(\R^{*(d+d')}\mathscr{M}(\R^{*d_1},\dots,\R^{*d_\ell}))^{d''}$.  Thus, since the limit as $t \to \infty$ exists on a dense subset, it exists everywhere.  Thus, $\mathbf{E}_{V,\mathbf{X}}$ is a well-defined continuous operator on $C_{\tr}(\R^{*(d+d')}\mathscr{M}(\R^{*d_1},\dots,\R^{*d_\ell}))^{d''}$.
	
	Similarly, \eqref{eq:heatsemigroupestimate1} shows that the operators $e^{tL_{\mathbf{x},\mathbf{J}}}$ for $t \in [0,\infty)$ are equicontinuous on $C_{\tr}^k(\R^{*(d+d')}\mathscr{M}(\R^{*d_1},\dots,\R^{*d_\ell}))^{d''}$.  Using \eqref{eq:heatsemigroupconvergence}, if $\mathbf{f} \in C_{\tr}^{k+1}(\R^{*(d+d')}\mathscr{M}(\R^{*d_1},\dots,\R^{*d_\ell}))^{d''}$, then the limit of $e^{tL_{\mathbf{x},\mathbf{J}}} \mathbf{f}$ exists in $C_{\tr}^{k+1}(\R^{*(d+d')}\mathscr{M}(\R^{*d_1},\dots,\R^{*d_\ell}))^{d''}$ as $t \to \infty$, and hence the same holds in the closure by equicontinuity.
\end{proof}

\begin{proposition} \label{prop:expectationmultiplicative}
	Let $\mathbf{J} \in \mathscr{J}_{a,c}^d$ for some $c \in (0,1)$ and $a \in \R$.  Then $e^{tL_{\mathbf{x},\mathbf{J}}}$ and $\mathbb{E}_{\mathbf{x},\mathbf{J}}: C_{\tr}(\R^{*(d+d')}) \to C_{\tr}(\R^{*d'})$ are multiplicative over $\tr(C_{\tr}(\R^{*(d+d')}))$, they are positive, and they satisfy $e^{tL_{\mathbf{x},\mathbf{J}}} \circ \tr = \tr \circ e^{tL_{\mathbf{x},\mathbf{J}}}$ and $\mathbb{E}_{\mathbf{J}} \circ \tr = \tr \circ \mathbb{E}_{\mathbf{J}}$.
\end{proposition}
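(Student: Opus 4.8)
The strategy is to reduce everything to the level of the stochastic process $\mathcal{X}$ and the conditional expectation $E_{\cA}$, exploiting the explicit formula for $e^{tL_{\mathbf{x},\mathbf{J}}}$ in Definition~\ref{def:Vheatsemigroup1}. First I would verify the properties for $e^{tL_{\mathbf{x},\mathbf{J}}}$, and then deduce the corresponding properties for $\mathbb{E}_{\mathbf{x},\mathbf{J}}$ by passing to the limit $t \to \infty$ using Proposition~\ref{prop:kernelprojection} (continuity of $\mathbb{E}_{\mathbf{x},\mathbf{J}}$ and the fact that the multiplicativity/positivity/trace-compatibility relations are closed conditions, being defined by equalities and inequalities that pass to limits in $C_{\tr}(\R^{*d'})$).

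For positivity: fix $(\cA,\tau)$ and $\mathbf{X} \in \cA_{\sa}^d$ with $f \geq 0$ in the sense of Definition of positivity, i.e.\ $f^{\cA,\tau}(\mathbf{X}) \geq 0$ in $\cA$ for all $(\cA,\tau)$ and $\mathbf{X}$. Then $f^{\cA*\cB,\tau*\sigma}(\mathcal{X}(\mathbf{X},2t)) \geq 0$ in $\cA*\cB$ since $\mathcal{X}(\mathbf{X},2t)$ is a self-adjoint $d$-tuple in $\cA*\cB$, and $E_{\cA}$ is a positive map (it is a conditional expectation, hence maps positive elements to positive elements). Therefore $(e^{tL_{\mathbf{x},\mathbf{J}}}f)^{\cA,\tau}(\mathbf{X}) = E_{\cA}[f^{\cA*\cB,\tau*\sigma}(\mathcal{X}(\mathbf{X},2t))] \geq 0$. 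For the trace-compatibility relation: I observe that for any $(\cA,\tau)$, the trace map $\tr$ on $C_{\tr}$-functions corresponds pointwise to applying $\tau$, and $E_{\cA}$ is trace-preserving, so $\tau(E_{\cA}[Z]) = (\tau*\sigma)(Z)$ for $Z \in \cA*\cB$; applying this with $Z = f^{\cA*\cB,\tau*\sigma}(\mathcal{X}(\mathbf{X},2t))$ and comparing with the definitions of $\tr \circ e^{tL_{\mathbf{x},\mathbf{J}}}$ and $e^{tL_{\mathbf{x},\mathbf{J}}} \circ \tr$ (where in the latter $\tr(f)$ is already scalar-valued and the process plays no role except through the law of $\mathcal{X}(\mathbf{X},2t)$, and $E_{\cA}$ of a scalar is that scalar) gives the identity. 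More precisely, $(\tr \circ e^{tL_{\mathbf{x},\mathbf{J}}}f)^{\cA,\tau}(\mathbf{X}) = \tau(E_{\cA}[f^{\cA*\cB,\tau*\sigma}(\mathcal{X}(\mathbf{X},2t))]) = (\tau*\sigma)(f^{\cA*\cB,\tau*\sigma}(\mathcal{X}(\mathbf{X},2t))) = (\tr(f))^{\cA*\cB,\tau*\sigma}(\mathcal{X}(\mathbf{X},2t)) = E_{\cA}[(\tr(f))^{\cA*\cB,\tau*\sigma}(\mathcal{X}(\mathbf{X},2t))] = (e^{tL_{\mathbf{x},\mathbf{J}}} \circ \tr f)^{\cA,\tau}(\mathbf{X})$, where the fourth equality uses that $(\tr(f))^{\cA*\cB,\tau*\sigma}$ is scalar-valued and $E_{\cA}$ fixes scalars.

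The main obstacle is multiplicativity over $\tr(C_{\tr})$. The issue is that $E_{\cA}$ is \emph{not} multiplicative in general, so we cannot simply split $E_{\cA}[f^{\cA*\cB}(\mathcal{X})g^{\cA*\cB}(\mathcal{X})]$. However, when $f \in \tr(C_{\tr}^k(\R^{*(d+d')}))$, the function $f^{\cA*\cB,\tau*\sigma}(\mathcal{X}(\mathbf{X},2t))$ is a \emph{scalar} multiple of the identity — this is the content of the equivalence in the Observation characterizing $\tr(C_{\tr}^k(\R^{*(d+d')}))$ as scalar-valued functions. A scalar multiple of the identity can be pulled out of a conditional expectation, since $E_{\cA}$ is unital and a $\cA$-$\cA$-bimodule map (in particular $E_{\cA}[\lambda \cdot Z] = \lambda E_{\cA}[Z]$ for $\lambda \in \C$). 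Hence $E_{\cA}[f^{\cA*\cB}(\mathcal{X})g^{\cA*\cB}(\mathcal{X})] = f^{\cA*\cB}(\mathcal{X}) E_{\cA}[g^{\cA*\cB}(\mathcal{X})]$, and since $f^{\cA*\cB}(\mathcal{X}) = (f^{\cA,\tau} \circ \mathbf{?})$... here one must be careful: $f^{\cA*\cB,\tau*\sigma}(\mathcal{X}(\mathbf{X},2t))$ is a scalar, but is it equal to $(e^{tL_{\mathbf{x},\mathbf{J}}}f)^{\cA,\tau}(\mathbf{X})$? Indeed yes, because $(e^{tL_{\mathbf{x},\mathbf{J}}}f)^{\cA,\tau}(\mathbf{X}) = E_{\cA}[f^{\cA*\cB,\tau*\sigma}(\mathcal{X}(\mathbf{X},2t))]$ and $E_{\cA}$ of a scalar is that scalar. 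So $(e^{tL_{\mathbf{x},\mathbf{J}}}(fg))^{\cA,\tau}(\mathbf{X}) = (e^{tL_{\mathbf{x},\mathbf{J}}}f)^{\cA,\tau}(\mathbf{X}) \cdot (e^{tL_{\mathbf{x},\mathbf{J}}}g)^{\cA,\tau}(\mathbf{X})$, which is multiplicativity over $\tr(C_{\tr})$. Passing $t\to\infty$ via the continuity of $\mathbb{E}_{\mathbf{x},\mathbf{J}}$ (Proposition~\ref{prop:kernelprojection}) and using that multiplication in $C_{\tr}(\R^{*d'})$ is jointly continuous (Lemma~\ref{lem:composition}) then transfers all four properties to $\mathbb{E}_{\mathbf{x},\mathbf{J}}$. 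For the unital property implicit here — needed to know scalars behave correctly — note $e^{tL_{\mathbf{x},\mathbf{J}}}$ fixes constants since $\mathcal{X}(\mathbf{X},2t)$ has the appropriate form and $E_{\cA}$ is unital.
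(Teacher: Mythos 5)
Your proposal is correct and follows essentially the same route as the paper's own proof: positivity and trace-compatibility via the pointwise properties of the trace-preserving conditional expectation $E_{\cA}$, multiplicativity by observing that evaluations of $\tr(C_{\tr})$-functions are scalar-valued and hence pull out of $E_{\cA}$ (with the same identification $\phi^{\cA*\cB,\tau*\sigma}(\mathcal{X},\mathbf{X}') = (e^{tL_{\mathbf{x},\mathbf{J}}}\phi)^{\cA,\tau}(\mathbf{X},\mathbf{X}')$ because $E_{\cA}$ fixes scalars), and the properties of $\mathbb{E}_{\mathbf{x},\mathbf{J}}$ by letting $t \to \infty$ via Proposition \ref{prop:kernelprojection}. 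The only cosmetic difference is that you suppress the auxiliary variable $\mathbf{X}'$ of the conditional setting (Definition \ref{def:Vheatsemigroup2}), but the argument carries over verbatim with it included.
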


\begin{remark}
	In particular, in the case $d' = 0$, we see that $\mathbb{E}_{\mathbf{J}}$ defines a non-commutative law by Lemma \ref{lem:Claw}.  This turns out to be one method to obtain the law $\mu_V$ associated to a potential $V$ when $\nabla V \in \mathcal{J}_{a,c}^d$, as we will explain in \S \ref{subsec:constructtransport}.
\end{remark}

\begin{proof}
	To prove multiplicativity for the heat semigroup, let $\phi \in \tr(C_{\tr}(\R^{*d}))$ and $f \in C_{\tr}(\R^{*d})$.  Then
	\begin{align*}
		e^{tL_{\mathbf{x},\mathbf{J}}} & [\phi f]^{\cA,\tau}(\mathbf{X},\mathbf{X}') \\
		&= E_{\cA}[ \phi^{\cA*\cB,\tau*\sigma}(\mathcal{X}^{\cA,\tau}(\mathbf{X},\mathbf{X}',2t),\mathbf{X}') f^{\cA*\cB,\tau*\sigma}(\mathcal{X}^{\cA,\tau}(\mathbf{X},\mathbf{X}',2t),\mathbf{X}')] \\
		&= \phi^{\cA*\cB,\tau*\sigma}(\mathcal{X}^{\cA,\tau}(\mathbf{X},\mathbf{X}',2t),\mathbf{X}')  E_{\mathbf{A}}[f^{\cA*\cB,\tau*\sigma}(\mathcal{X}^{\cA,\tau}(\mathbf{X},\mathbf{X}',2t),\mathbf{X}')] \\
		&= e^{tL_{\mathbf{x},\mathbf{J}}}[\phi]^{\cA,\tau}(\mathbf{X},\mathbf{X}') e^{tL_{\mathbf{x},\mathbf{J}}}[f]^{\cA,\tau}(\mathbf{X},\mathbf{X}'),
	\end{align*}
	which follows because $\phi^{\cA*\cB,\tau*\sigma}(\mathcal{X}^{\cA,\tau}(\mathbf{X},\mathbf{X}',2t),\mathbf{X}')$ is scalar-valued and thus can be pulled out of the conditional expectation onto $\cA$.  The multiplicativity property for $\mathbb{E}_{\mathbf{x},\mathbf{J}}$ follows by taking $t \to \infty$.
	
	The positivity property is immediate because $e^{tL_{\mathbf{x},\mathbf{J}}} f$ is obtained by evaluating $f$ on some operator and then applying a conditional expectation.
	
	The trace-preserving property follows by similar reasoning.  Indeed,
	\begin{align*}
		[\tr(e^{tL_{\mathbf{x},\mathbf{J}}} f)]^{\cA,\tau}(\mathbf{X},\mathbf{X}') &= \tau[E_{\cA} f^{\cA*\cB,\tau*\sigma}(\mathcal{X}(\mathbf{X},\mathbf{X}',2t),\mathbf{X}')] \\
		&= \tau[f^{\cA*\cB,\tau*\sigma}(\mathcal{X}(\mathbf{X},\mathbf{X}',2t),\mathbf{X}')] \\
		&= E_{\cA}[[\tr(f)]^{\cA*\cB,\tau*\sigma}(\mathcal{X}(\mathbf{X},\mathbf{X}',2t),\mathbf{X}')] \\
		&= [e^{tL_{\mathbf{x},\mathbf{J}}}[\tr(f)]]^{\cA,\tau}(\mathbf{X},\mathbf{X}').
	\end{align*}
	The trace-preserving property for $\mathbb{E}_{\mathbf{x},\mathbf{J}}$ follows by taking $t \to \infty$.
\end{proof}

\begin{proposition} \label{prop:pseudoinverse}
	Let $R' = \max(2 + R, \norm{\mathbf{J} - \Pi}_{BC_{\tr}(\R^{*(d+d')})^d})$.  Let $k \geq 0$.
	\begin{enumerate}[(1)]
		\item For $\mathbf{f} \in C_{\tr}^1(\R^{*(d+d')},\mathscr{M}(\R^{*d_1},\dots,\R^{*d_\ell}))^{d''}$, the integral
		\[
		\Psi_{\mathbf{x},\mathbf{J}}\mathbf{f} := \int_0^\infty e^{tL_{\mathbf{x},\mathbf{J}}} (\mathbf{f} - \mathbb{E}_{\mathbf{x},\mathbf{J}} \mathbf{f} \circ \pi') \,dt := \lim_{T \to \infty} \int_0^T e^{tL_{\mathbf{x},\mathbf{J}}} (\mathbf{f} - \mathbb{E}_{\mathbf{x},\mathbf{J}} \mathbf{f} \circ \pi')\,dt
		\]
		exists as an improper Riemann integral in $C_{\tr}(\R^{*(d+d')},\mathscr{M}(\R^{*d_1},\dots,\R^{*d_\ell}))^{d''}$.
		\item $\Psi_{\mathbf{x},\mathbf{J}}$ maps $C_{\tr}^{k+1}(\R^{*(d+d')},\mathscr{M}(\R^{*d_1},\dots,\R^{*d_\ell}))^{d''}$ into $C_{\tr}^k(\R^{*(d+d')},\mathscr{M}(\R^{*d_1},\dots,\R^{*d_\ell}))^{d''}$ and satisfies
		\[
		\sum_{j=0}^k \norm{\partial^j \Psi_{\mathbf{x},\mathbf{J}}}_{C_{\tr}(\R^{*(d+d')},\mathscr{M}^{\ell+j})^{d''},R} \leq C_{k,\mathbf{J},R} \sum_{j=0}^k \norm{\partial_{\mathbf{x}} \partial^j \mathbf{f}}_{C_{\tr}(\R^{*(d+d')},\mathscr{M}^{\ell+j})^{d''},R'}
		\]
		for some constants $C_{k,\mathbf{J},R}$.
		\item Furthermore, if $\partial_{\mathbf{x}} \mathbf{f}$ is in $C_{\tr}^k(\R^{*(d+d')},\mathscr{M}(\R^{*d_1},\dots,\R^{*d_\ell},\R^{*d}))^{d''}$, then 
		\[
		\sum_{j=0}^k \norm{\partial_{\mathbf{x}} \partial^j \Psi_{\mathbf{J}}}_{C_{\tr}(\R^{*d},\mathscr{M}^{\ell+j+1})^{d''},R} \leq C_{k,\mathbf{J},R}' \sum_{j=0}^k \norm{ \partial_{\mathbf{x}} \partial^j \mathbf{f}}_{C_{\tr}(\R^{*d},\mathscr{M}^{\ell+j+1})^{d''},R'}
		\]
		for some constants $C_{k,\mathbf{J},R}'$.  In particular, in the case $d' = 0$ where there is no $\mathbf{x}'$, the operator, which we will denote $\Psi_{\mathbf{J}}$, maps $C_{\tr}^k(\R^{*d},\mathscr{M}^\ell)^{d''}$ into itself.
	\end{enumerate}
\end{proposition}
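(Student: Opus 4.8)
The plan is to derive all three parts from the decay estimates already established in Lemma~\ref{lem:heatsemigroupbounds} and Proposition~\ref{prop:kernelprojection}, so most of the work is bookkeeping with radii and smoothness orders. First I would record a trivial but crucial observation: if $h$ is a function of $\mathbf{x}'$ alone, then $e^{tL_{\mathbf{x},\mathbf{J}}}(h\circ\pi') = h\circ\pi'$, because in Definition~\ref{def:Vheatsemigroup2} the argument $\mathcal{X}(\mathbf{X},\mathbf{X}',2t)$ never appears and $h^{\cA*\cB,\tau*\sigma}(\mathbf{X}')[\mathbf{Y}_1,\dots,\mathbf{Y}_\ell]$ already lies in $\cA$, so $E_{\cA}$ acts trivially. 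Applying this to $h = \mathbb{E}_{\mathbf{x},\mathbf{J}}\mathbf{f}$ and using the semigroup property (Lemma~\ref{lem:semigroup}) gives, writing $g := \mathbf{f} - \mathbb{E}_{\mathbf{x},\mathbf{J}}\mathbf{f}\circ\pi'$,
\[
e^{sL_{\mathbf{x},\mathbf{J}}}g \;=\; e^{sL_{\mathbf{x},\mathbf{J}}}\mathbf{f} - \mathbb{E}_{\mathbf{x},\mathbf{J}}\mathbf{f}\circ\pi' \;=\; \lim_{t\to\infty}\bigl(e^{sL_{\mathbf{x},\mathbf{J}}}\mathbf{f} - e^{tL_{\mathbf{x},\mathbf{J}}}\mathbf{f}\bigr),
\]
the limit being in $C_{\tr}(\R^{*(d+d')},\mathscr{M}^\ell)^{d''}$ by \eqref{eq:defineE}, and in $C_{\tr}^k$ whenever $\mathbf{f}\in C_{\tr}^{k+1}$ by the last assertion of Proposition~\ref{prop:kernelprojection}. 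I would also note that $\mathbb{E}_{\mathbf{x},\mathbf{J}}\mathbf{f}\circ\pi'$ does not depend on $\mathbf{x}$, hence $\partial_{\mathbf{x}}\partial^{j}g = \partial_{\mathbf{x}}\partial^{j}\mathbf{f}$ for all $j$.

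For part~(1) I would combine this identity with the Cauchy estimate already proved inside Proposition~\ref{prop:kernelprojection} (namely \eqref{eq:heatsemigroupconvergence} in the case $k=0$, which rests on \eqref{eq:derivativedifferenceestimate} applied to $e^{sL_{\mathbf{x},\mathbf{J}}}\mathbf{f}$ and on \eqref{eq:heatsemigroupestimate2}), letting $t\to\infty$, to get a bound of the form $\norm{e^{sL_{\mathbf{x},\mathbf{J}}}g}_{C_{\tr}(\R^{*(d+d')},\mathscr{M}^\ell)^{d''},R}\le e^{-cs}q(s)\norm{\partial_{\mathbf{x}}\mathbf{f}}_{C_{\tr}(\R^{*(d+d')},\mathscr{M}^{\ell+1})^{d''},R'}$ for a polynomial $q$ depending only on $k$, $\mathbf{J}$, $R$. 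Since $c>0$ this is integrable on $[0,\infty)$, and since $s\mapsto e^{sL_{\mathbf{x},\mathbf{J}}}g$ is continuous (Lemma~\ref{lem:semigroupcontinuity}) and $C_{\tr}(\R^{*(d+d')},\mathscr{M}^\ell)^{d''}$ is complete, the improper Riemann integral $\Psi_{\mathbf{x},\mathbf{J}}\mathbf{f}=\int_0^\infty e^{sL_{\mathbf{x},\mathbf{J}}}g\,ds$ converges.

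For parts~(2) and~(3), since each $\partial^{j}$ with $j\le k$ is a continuous linear map of the relevant Fréchet spaces, it passes through the improper Riemann integral once I check that $\int_0^\infty\partial^{j}[e^{sL_{\mathbf{x},\mathbf{J}}}g]\,ds$ converges; so $\partial^{j}\Psi_{\mathbf{x},\mathbf{J}}\mathbf{f}=\int_0^\infty\partial^{j}[e^{sL_{\mathbf{x},\mathbf{J}}}g]\,ds$, and in particular $\Psi_{\mathbf{x},\mathbf{J}}\mathbf{f}$ will automatically lie in $C_{\tr}^k$ because the integrand, and hence the integral, lies in $C_{\tr}(\R^{*(d+d')},\mathscr{M}^{\ell+j})^{d''}$. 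Applying $\partial^{j}$ to the displayed identity and invoking \eqref{eq:heatsemigroupconvergence} (plus its $j=0$ analogue from \eqref{eq:derivativedifferenceestimate} and \eqref{eq:heatsemigroupestimate2}) yields, for $\mathbf{f}\in C_{\tr}^{k+1}$, a bound $\sum_{j=0}^k\norm{\partial^{j}[e^{sL_{\mathbf{x},\mathbf{J}}}g]}_{C_{\tr},R}\le e^{-cs}q_k(s)\sum_{j=0}^k\norm{\partial_{\mathbf{x}}\partial^{j}\mathbf{f}}_{C_{\tr},R'}$, which is integrable; integrating gives~(2) with $C_{k,\mathbf{J},R}$ equal to $\int_0^\infty e^{-cs}q_k(s)\,ds$ times the earlier constants. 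For~(3), under the extra hypothesis $\partial_{\mathbf{x}}\mathbf{f}\in C_{\tr}^k(\R^{*(d+d')},\mathscr{M}(\R^{*d_1},\dots,\R^{*d_\ell},\R^{*d}))^{d''}$, I would instead apply \eqref{eq:heatsemigroupestimate2} directly to $g$ — legitimate because $\partial_{\mathbf{x}}\partial^{j}g=\partial_{\mathbf{x}}\partial^{j}\mathbf{f}$ — to get $\sum_{j=0}^k\norm{\partial_{\mathbf{x}}\partial^{j}[e^{sL_{\mathbf{x},\mathbf{J}}}g]}_{C_{\tr},R}\le e^{-cs}q_k'(s)\sum_{j=0}^k\norm{\partial_{\mathbf{x}}\partial^{j}\mathbf{f}}_{C_{\tr},R'}$ and integrate. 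Finally, when $d'=0$ one has $\partial_{\mathbf{x}}=\partial$, so combining~(2) at order $0$ (giving $\Psi_{\mathbf{J}}\mathbf{f}\in C_{\tr}^0$ with $\norm{\Psi_{\mathbf{J}}\mathbf{f}}$ controlled by $\norm{\partial\mathbf{f}}$) with~(3) at order $k-1$ (controlling $\sum_{j=1}^{k}\norm{\partial^{j}\Psi_{\mathbf{J}}\mathbf{f}}$ by $\sum_{j=1}^{k}\norm{\partial^{j}\mathbf{f}}$) shows $\Psi_{\mathbf{J}}$ maps $C_{\tr}^k(\R^{*d},\mathscr{M}^\ell)^{d''}$ into itself.

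The main obstacle is not any deep new idea but the careful alignment of the radii $R,R',R''$ and of the smoothness orders: the decay of $e^{sL_{\mathbf{x},\mathbf{J}}}g$ in $C_{\tr}^k$ is bought at the price of exactly one $\mathbf{x}$-derivative of $\mathbf{f}$, which is what forces the hypothesis $\mathbf{f}\in C_{\tr}^{k+1}$ in~(2) but not in~(3). The subtlest single point is justifying that $\partial^{j}$ commutes with the \emph{improper} Riemann integral, which uses precisely the $C_{\tr}^k$-convergence statement at the end of Proposition~\ref{prop:kernelprojection} (not merely the $C_{\tr}^0$ one) so that $\partial^{j}[e^{sL_{\mathbf{x},\mathbf{J}}}g]=\lim_{t\to\infty}\partial^{j}(e^{sL_{\mathbf{x},\mathbf{J}}}\mathbf{f}-e^{tL_{\mathbf{x},\mathbf{J}}}\mathbf{f})$ is available and the decay estimate \eqref{eq:heatsemigroupconvergence} can be transferred to it.
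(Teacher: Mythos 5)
Your proposal follows essentially the same route as the paper's proof: write the integrand as $e^{sL_{\mathbf{x},\mathbf{J}}}\mathbf{f}-\mathbb{E}_{\mathbf{x},\mathbf{J}}\mathbf{f}\circ\pi'$ (the semigroup fixes functions of $\mathbf{x}'$ alone), use the decay estimates \eqref{eq:derivativedifferenceestimate}, \eqref{eq:heatsemigroupestimate2}, \eqref{eq:heatsemigroupconvergence} to get exponential decay of the integrand and its derivatives on each operator-norm ball, and conclude convergence of the improper integral in the $C_{\tr}^k$ seminorms together with the stated bounds. The only spot where you are terser than the paper is part (3): there the exchange of $\partial_{\mathbf{x}}\partial^{j}$ with the improper integral cannot be reduced to Fr\'echet-continuity of $\partial^{j}$ on a space in which the partial integrals are already known to converge, and the paper instead invokes uniform convergence of the differentiated integrals for each fixed $(\cA,\tau)$ to exchange differentiation and integration---but your estimates provide exactly the input that standard argument needs, so this is a matter of wording rather than a gap.
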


\begin{proof}
	We shall prove (1) and (2) at the same time.  Let $k \geq 0$ and let $\mathbf{f} \in C_{\tr}^{k+1}(\R^{*(d+d')}, \mathscr{M}^\ell)^{d''}$.  Then by Proposition \ref{prop:kernelprojection}, $E_{V,\mathbf{X}} \mathbf{f}$ is in $C_{\tr}^k(\R^{*(d+d')},\mathscr{M}^\ell)^{d''}$.  Because $t \mapsto e^{tL_{\mathbf{x},\mathbf{J}}} \mathbf{f}$ is a continuous function $[0,\infty) \to C_{\tr}^{k+1}(\R^{*(d+d')},\mathscr{M}(\R^{*d_1},\dots,\R^{*d_\ell}))^{d''}$, the Riemann integral
	\[
	\int_0^T e^{tL_{\mathbf{x},\mathbf{J}}} (\mathbf{f} - \mathbb{E}_{\mathbf{x},\mathbf{J}} \mathbf{f} \circ \pi')\,dt
	\]
	is well-defined in $C_{\tr}^k(\R^{*(d+d')},\mathscr{M}(\R^{*d_1},\dots,\R^{*d_\ell}))^{d''}$.  Then using \eqref{eq:heatsemigroupconvergence} and taking $t \to \infty$, we see that 
	\begin{align*}
		& \quad \sum_{j=1}^k \norm{\partial^j [e^{sL_{\mathbf{x},\mathbf{J}}}\mathbf{f}] - \partial^j[\mathbb{E}_{\mathbf{x},\mathbf{J}} \mathbf{f} \circ \pi']}_{C_{\tr}(\R^{*(d+d')},\mathscr{M}^{\ell_j})^{d''},R} \\
		&\leq
		e^{-cs} p_{k,\mathbf{J}}(s) R' \sum_{j=1}^k \norm{\partial_{\mathbf{x}} \partial^j\mathbf{f}}_{C_{\tr}(\R^{*(d+d')},\mathscr{M}^{\ell+j})^{d''},R''},
	\end{align*}
	which implies convergence of the integral in $C_{\tr}^k(\R^{*(d+d')},\mathscr{M}(\R^{*d_1},\dots,\R^{*d_\ell}))^{d''}$ as $T \to \infty$ with the bounds asserted in (2).  In particular, by taking $k = 0$, we obtain (1).
	
	(3) Using \eqref{eq:heatsemigroupestimate2}, the improper integral $\int_0^\infty \partial_{\mathbf{x}} \partial^j e^{tL_{\mathbf{J}}}\mathbf{f}\,dt$ converges in
	\[
	C_{\tr}(\R^{*(d+d')},\mathscr{M}(\R^{*d_1},\dots,\R^{*d_\ell},\underbrace{\R^{*(d+d')},\dots,\R^{*(d+d')}}_{j},\R^{*d}))^{d''}
	\]
	for $j = 1$, \dots, $k$, and we have
	\[
	\norm*{ \int_0^\infty \partial^j \partial_{\mathbf{x}} e^{tL_{\mathbf{x},\mathbf{J}}}\mathbf{f}\,dt }_{C_{\tr}(\R^{*d},\mathscr{M}^{\ell+j+1})^{d''},R} \leq \int_0^\infty e^{-ct} p_{k,\mathbf{J}}(t)\,dt \sum_{j'=0}^j \norm{ \partial^{j'} \partial_{\mathbf{x}} \mathbf{f}}_{C_{\tr}(\R^{*d},\mathscr{M}^{\ell+j'+1})^{d''},R'},
	\]
	where $R'$ is as above.  Convergence of the integral in this space implies that for a fixed $(\cA,\tau)$, the integral
	\[
	\int_0^\infty \partial_{\mathbf{x}} \partial^j [e^{tL_{\mathbf{x},\mathbf{J}}}\mathbf{f}]^{\cA,\tau}(\mathbf{X},\mathbf{X}')\,dt
	\]
	converges uniformly for $\mathbf{X} \in \cA_{\sa}$ with $\norm{\mathbf{X}}_\infty \leq R$, for each $j = 1$, \dots, $k$.  Uniform convergence implies that we can exchange integration with Fr\'echet-differentiation.  This shows that
	\[
	\partial^j \partial_{\mathbf{x}} [\Psi_{\mathbf{J}} \mathbf{f}]^{\cA,\tau} = \int_0^\infty \partial^j \partial_{\mathbf{x}} [e^{tL_{\mathbf{J}}}\mathbf{f}]^{\cA,\tau}\,dt.
	\]
	Since this holds for all $(\cA,\tau)$, we have
	\[
	\partial^j \partial_{\mathbf{x}} [\Psi_{\mathbf{J}} \mathbf{f}] = \int_0^\infty \partial^j \partial_{\mathbf{x}} [e^{tL_{\mathbf{J}}}\mathbf{f}] \,dt
	\]
	for $j = 0$, \dots, $k$.  This proves the desired estimate.
\end{proof}

\begin{remark} \label{rem:boundedcoefficients2}
	In (2), the constants $C_{k,\mathbf{J},R}$ only depend on $R$ and on the norms of the derivatives up to order $k + 1$ of $\mathbf{J}$ on the ball of radius $R'$.  In (3), the constants $C_{k,\mathbf{J},R}$ only depend on the norms of the derivatives of $\mathbf{J} - \pi$ up to order $k+1$ of $\mathbf{J}$ on the ball of radius $R'$, and there is no direct dependence on $R$, i.e.\ no dependence on $R$ other than through these norms.  In particular, if $\mathbf{J} \in BC_{\tr}^{k+1}(\R^{*d})$, then $\sup_R C_{k,\mathbf{J},R} < \infty$.
\end{remark}

\subsection{Differential equation and continuity properties}

\begin{proposition} \label{prop:heatequation}
	Let $k \in \N_0 \cup \{\infty\}$, and let $\mathbf{f} \in C_{\tr}^{k+2}(\R^{*(d+d')},\mathscr{M}(\R^{*d_1},\dots,\R^{*d_\ell}))^{d''}$.  Let $\mathbf{F}(\mathbf{X},t) = e^{tL_{\mathbf{x},\mathbf{J}}} \mathbf{f}(\mathbf{X})$.  Then $\mathbf{F}$ defines a differentiable map $[0,\infty) \to C_{\tr}^k(\R^{*(d+d')},\mathscr{M}(\R^{*d_1},\dots,\R^{*d_\ell}))^{d''}$, and
	\[
	\frac{d}{dt} \mathbf{F} = L_{\mathbf{x},\mathbf{J}} \mathbf{F} = L_{\mathbf{x}} \mathbf{F} - \partial_{\mathbf{x}} \mathbf{F} \# \mathbf{J}.
	\]
\end{proposition}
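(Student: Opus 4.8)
The plan is to prove this by combining the Itô-type expansion of $t \mapsto \mathbf{f}(\mathcal{X}(\cdot,2t),\pi')$ with the smoothness and estimates for the heat semigroup already established. First I would note that since $\mathbf{f} \in C_{\tr}^{k+2}$, the composed function $\mathbf{f}(\mathcal{X}(\cdot,2t),\pi')$ is a $C_{\tr}^k$ function depending continuously on $t$ (by Lemma \ref{lem:processCinfinity} and Proposition \ref{prop:chainrule2}), and applying $E_{\cA}$ via Lemma \ref{lem:Ckconditionalexpectation} gives $\mathbf{F}(\cdot,t) = e^{tL_{\mathbf{x},\mathbf{J}}}\mathbf{f} \in C_{\tr}^k$. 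The main task is to differentiate in $t$.

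The key step is to compute $(d/dt)\mathbf{F}$ at $t = 0$ first, and then invoke the semigroup property (Lemma \ref{lem:semigroup}) to propagate it to all $t$. For the derivative at $t = 0$, I would fix $(\cA,\tau)$, and expand $\mathcal{X}^{\cA,\tau}(\mathbf{X},\mathbf{X}',2t) - \mathbf{X}$ using the integral equation \eqref{eq:mainSDE}: one gets $\mathcal{S}(2t) - \tfrac12\int_0^{2t}\mathbf{J}(\mathcal{X}(\cdot,u),\mathbf{X}')\,du$. The crucial point is the free Itô calculus heuristic, which here can be made elementary (as in the proof of Lemma \ref{lem:SDEsolution}): over a short time increment, the free Brownian increment $\mathcal{S}(2t)$ contributes a second-order term when paired against itself inside the conditional expectation. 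Concretely, by a Taylor expansion of $\mathbf{f}$ in its first argument,
\[
\mathbf{F}^{\cA,\tau}(\mathbf{X},\mathbf{X}') - \mathbf{f}^{\cA,\tau}(\mathbf{X},\mathbf{X}') = E_{\cA}\Bigl[ \partial_{\mathbf{x}}\mathbf{f}(\mathbf{X},\mathbf{X}')[\mathcal{S}(2t)] - \tfrac12 \partial_{\mathbf{x}}\mathbf{f}(\mathbf{X},\mathbf{X}')\bigl[\textstyle\int_0^{2t}\mathbf{J}(\cdots)\,du\bigr] + \tfrac12 \partial_{\mathbf{x}}^2\mathbf{f}(\mathbf{X},\mathbf{X}')[\mathcal{S}(2t),\mathcal{S}(2t)]\Bigr] + o(t).
\]
The first term vanishes because $\mathcal{S}(2t)$ has mean zero and is free from $\mathbf{X},\mathbf{X}'$. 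The second term, to first order in $t$, is $-t\,\partial_{\mathbf{x}}\mathbf{f}(\mathbf{X},\mathbf{X}')[\mathbf{J}(\mathbf{X},\mathbf{X}')] = -t\,\partial_{\mathbf{x}}\mathbf{f}\#\mathbf{J}$. The third term is where one uses that $E_{\cA}[\partial_{\mathbf{x}}^2\mathbf{f}[\mathcal{S}(2t),\mathcal{S}(2t)]] = 2t\, (L_{\mathbf{x}}\mathbf{f})(\mathbf{X},\mathbf{X}')$ up to $o(t)$, by the definition of $L_{\mathbf{x}} = \partial_{\mathbf{x}}^\dagger\partial_{\mathbf{x}} = \Upsilon\circ\partial_{\mathbf{x}}\circ\partial_{\mathbf{x}}$ combined with the variance normalization of the free Brownian motion (here I should be careful: $\mathcal{S}(2t)/\sqrt{2t}$ is standard semicircular, so $E_{\cA}[\partial_{\mathbf{x}}^2\mathbf{f}[\mathcal{S}(2t),\mathcal{S}(2t)]]$ equals $2t$ times the evaluation defining $\Upsilon$ applied to $\partial_{\mathbf{x}}^2\mathbf{f}$, which is $2t\,\tfrac12 L_{\mathbf{x}}\mathbf{f}$ since $\Upsilon$ plugs in the semicircular family into the last two multilinear slots). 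Dividing by $t$ and letting $t\to 0$ gives $(d/dt)|_{t=0}\mathbf{F} = L_{\mathbf{x}}\mathbf{f} - \partial_{\mathbf{x}}\mathbf{f}\#\mathbf{J} = L_{\mathbf{x},\mathbf{J}}\mathbf{f}$.

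To go from $t = 0$ to general $t$, I would write, for $h > 0$, $\mathbf{F}(\cdot,t+h) - \mathbf{F}(\cdot,t) = e^{tL_{\mathbf{x},\mathbf{J}}}(e^{hL_{\mathbf{x},\mathbf{J}}}\mathbf{f} - \mathbf{f})$ using Lemma \ref{lem:semigroup}, divide by $h$, and let $h\to 0$: since $e^{tL_{\mathbf{x},\mathbf{J}}}$ is continuous (equicontinuous in $t$, by the estimates \eqref{eq:heatsemigroupestimate1} from Lemma \ref{lem:heatsemigroupbounds}) and $e^{hL_{\mathbf{x},\mathbf{J}}}\mathbf{f}\to\mathbf{f}$ with difference quotient converging to $L_{\mathbf{x},\mathbf{J}}\mathbf{f}$ in $C_{\tr}^k$ (this requires $\mathbf{f}\in C_{\tr}^{k+2}$ so that $L_{\mathbf{x},\mathbf{J}}\mathbf{f}\in C_{\tr}^k$ and the $t=0$ computation holds at the level of $C_{\tr}^k$-norms, using the uniform estimates of Lemma \ref{lem:processCinfinity}), we obtain the right derivative $(d/dt)\mathbf{F} = e^{tL_{\mathbf{x},\mathbf{J}}}(L_{\mathbf{x},\mathbf{J}}\mathbf{f}) = L_{\mathbf{x},\mathbf{J}}\mathbf{F}$, where the last equality uses that $L_{\mathbf{x},\mathbf{J}}$ commutes with $e^{tL_{\mathbf{x},\mathbf{J}}}$ (which follows from the semigroup property and the $t=0$ derivative, or directly). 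A symmetric argument handles the left derivative, and continuity of $t\mapsto L_{\mathbf{x},\mathbf{J}}\mathbf{F}(\cdot,t)$ in $C_{\tr}^k$ follows from Lemma \ref{lem:semigroupcontinuity}, so $\mathbf{F}$ is genuinely $C^1$ into $C_{\tr}^k$.

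The main obstacle I anticipate is making the heuristic Itô expansion at $t = 0$ rigorous at the level of the Fréchet $C_{\tr}^k$-topology, rather than just pointwise: one needs the $o(t)$ error terms (from Taylor's theorem applied to $\mathbf{f}$, and from replacing $\int_0^{2t}\mathbf{J}(\mathcal{X}(\cdot,u),\mathbf{X}')\,du$ by $2t\,\mathbf{J}(\mathbf{X},\mathbf{X}')$, and from the higher-order terms in the expansion of $\mathbf{f}$ against the semicircular increments) to be $o(t)$ uniformly on each operator-norm ball and in each $C_{\tr}(\R^{*(d+d')},\mathscr{M}^{\ell+k'})$-seminorm. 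This is exactly the kind of estimate that the uniform bounds \eqref{eq:derivativeestimate1}, \eqref{eq:derivativeestimate2} on $\mathcal{X}$ and its derivatives, together with the contractivity of conditional expectation in all $L^\alpha$-norms (Lemma \ref{lem:Ckconditionalexpectation}) and the $C_{\tr,\mathcal{S}}$-continuity of composition (Proposition \ref{prop:chainrule2}), are designed to supply; the bookkeeping is lengthy but follows the pattern already used repeatedly in this section, so I would carry it out by the same Picard-style arguments and Grönwall estimates as in Lemmas \ref{lem:SDEsolution}--\ref{lem:processCinfinity}. The argument is closely parallel to \cite[\S 3.3]{DGS2016}.
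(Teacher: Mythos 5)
Your computation at $t=0$ is exactly the paper's starting point: the same Taylor/It\^o-type expansion of $\mathbf{f}(\mathcal{X}(\cdot,2\delta),\pi')$, with the first-order Brownian term killed by freeness and the second-order term producing $L_{\mathbf{x}}$ via the definition of $\Upsilon$. Where you diverge is in how this is upgraded to the statement that $t\mapsto \mathbf{F}(\cdot,t)$ is differentiable \emph{into $C_{\tr}^k$}. Your plan is to prove that $(e^{hL_{\mathbf{x},\mathbf{J}}}\mathbf{f}-\mathbf{f})/h\to L_{\mathbf{x},\mathbf{J}}\mathbf{f}$ in the full Fr\'echet topology of $C_{\tr}^k$ and then use the semigroup law plus equicontinuity. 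That convergence is precisely the hard part, and as written it is a gap: it requires an $o(h)$ expansion not just of $e^{hL_{\mathbf{x},\mathbf{J}}}\mathbf{f}$ at fixed points, but of every spatial derivative $\partial^{k'}[e^{hL_{\mathbf{x},\mathbf{J}}}\mathbf{f}]$, $k'\le k$, uniformly over $(\cA,\tau)$ and over operator-norm balls. By the chain-rule formula \eqref{eq:iterateddifferentiation2}, those derivatives involve the time-evolved derivative processes $\partial^{m}\mathcal{X}(\cdot,2h)$, so you would need the exact first-order-in-$h$ behaviour of these processes (e.g.\ of $\partial\mathcal{X}(\cdot,2h)-\Pi$) paired into $\partial^{j}\mathbf{f}$, including the second-order Brownian contributions hitting the derivatives of $\mathbf{f}$. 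The estimates you cite (\eqref{eq:derivativeestimate1}, \eqref{eq:derivativeestimate2}, and the comparison bound \eqref{eq:derivativedifferenceestimate}) give uniform boundedness or $O(1)$/$O(h)$-type control, not the identification of the first-order coefficient, so this is genuinely new bookkeeping rather than "the same Picard/Gr\"onwall pattern"; it is likely feasible (uniform continuity of $\partial^{k'+2}\mathbf{f}$ on balls is available since $\mathbf{f}\in C_{\tr}^{k+2}$), but it is not supplied by the lemmas you invoke.

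The paper avoids this upgrade entirely. It only establishes the weak, pointwise version of your $t=0$ computation (right-derivative at each time $u$, at a fixed $(\cA,\tau)$, $(\mathbf{X},\mathbf{X}')$ and fixed multilinear arguments, in operator norm), and then compares $\mathbf{F}$ with $\mathbf{G}(\cdot,t)=\mathbf{f}+\int_0^t L_{\mathbf{x},\mathbf{J}}\mathbf{F}(\cdot,u)\,du$, which is automatically a differentiable map into $C_{\tr}^k$ by the fundamental theorem of calculus because $u\mapsto L_{\mathbf{x},\mathbf{J}}\mathbf{F}(\cdot,u)$ is continuous into $C_{\tr}^k$ (Lemma \ref{lem:semigroupcontinuity} plus continuity of $L_{\mathbf{x}}$ and of composition). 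Equality $\mathbf{F}=\mathbf{G}$ is then proved by testing against an arbitrary state $\phi$ on $\cA$ and running a mean-value-theorem argument on $\beta(u)=\phi\bigl((\mathbf{F}-\mathbf{G})(\mathbf{X},\mathbf{X}',u)-\tfrac{u}{t}(\mathbf{F}-\mathbf{G})(\mathbf{X},\mathbf{X}',t)\bigr)$, using only that $\mathbf{F}-\mathbf{G}$ has vanishing right-derivative pointwise. So differentiability into $C_{\tr}^k$ with derivative $L_{\mathbf{x},\mathbf{J}}\mathbf{F}$ comes for free, with no uniform It\^o expansion of the spatial derivatives needed. If you want to keep your route, you must actually carry out the uniform expansion described above (including for the derivative processes); otherwise, adopt the integral-comparison trick, which is the "general tricks" step your outline is missing.
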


\begin{proof}
	By considering each coordinate of $\mathbf{f}$ separately, it suffices to consider the case $d'' = 1$.  We will first prove differentiability in a weak sense and then deduce the stronger statement by general tricks.
	
	We claim that for $\mathbf{f} \in C_{\tr}(\R^{*(d+d')},\mathscr{M}(\R^{*d_1},\dots,\R^{*d_\ell}))$ and $(\cA,\tau) \in \mathbb{W}$ and for $(\mathbf{X},\mathbf{X}')$ and $\mathbf{Y}_1$, \dots, $\mathbf{Y}_\ell$ in $\cA_{\sa}^{d+d'}$, we have
	\begin{equation} \label{eq:heatequation0}
		\lim_{\delta \to 0} \frac{(e^{\delta L_{\mathbf{x},\mathbf{J}}} \mathbf{f})^{\cA,\tau}(\mathbf{X},\mathbf{X}')[\mathbf{Y}_1,\dots,\mathbf{Y}_\ell] - \mathbf{f}(\mathbf{X},\mathbf{X}')[\mathbf{Y}_1,\dots,\mathbf{Y}_\ell]}{\delta}
		= [L_{\mathbf{x},\mathbf{J}} \mathbf{f}]^{\cA,\tau}(\mathbf{X},\mathbf{X}')[\mathbf{Y}_1,\dots,\mathbf{Y}_\ell]
	\end{equation}
	with respect to $\norm{\cdot}_{\infty}$.  By \eqref{eq:mainSDE}, we have
	\[
	\mathcal{X}^{\cA,\tau}(\mathbf{X},\mathbf{X}',2\delta) = \mathbf{X} + \mathcal{S}(2\delta) - \frac{1}{2} \int_0^{2\delta} \mathbf{J}^{\cA*\cB,\tau*\sigma}(\mathcal{X}^{\cA,\tau}(\mathbf{X},\mathbf{X}',u),\mathbf{X}')\,du.
	\]
	From the continuity of $\mathcal{X}^{\cA,\tau}$ in $t$, it follows that
	\[
	\mathcal{X}^{\cA,\tau}(\mathbf{X},\mathbf{X}',2\delta) = \mathbf{X} + \mathcal{S}(2\delta) - \delta \mathbf{J}^{\cA*\cB,\tau*\sigma}(\mathbf{X},\mathbf{X}') + o(\delta).
	\]
	Since $\mathbf{f}$ is a Fr\'echet-$C^2$ function and $\mathcal{S}(2\delta)$ is $O(\delta^{1/2})$, we have the Taylor expansion
	\begin{align*}
		\mathbf{f}^{\cA * \cB,\tau * \sigma} & (\mathcal{X}^{\cA,\tau}(\mathbf{X},\mathbf{X}',2\delta),\mathbf{X}')[\mathbf{Y}_1,\dots,\mathbf{Y}_\ell] \\
		&= - \mathbf{f}^{\cA*\cB,\tau*\sigma}(\mathbf{X},\mathbf{X}') \\
		&\quad - \delta \, \partial_{\mathbf{x}} \mathbf{f}^{\cA*\cB,\tau*\sigma}(\mathbf{X},\mathbf{X}') \# [\mathbf{Y}_1,\dots,\mathbf{Y}_\ell, \nabla_{\mathbf{X}} V^{\cA*\cB,\tau*\sigma}(\mathbf{X},\mathbf{X}')] \\
		& \quad + \partial_{\mathbf{x}} \mathbf{f}^{\cA*\cB,\tau*\sigma}(\mathbf{X},\mathbf{X}') \# [\mathbf{Y}_1,\dots,\mathbf{Y}_\ell, \mathcal{S}(2\delta)] \\
		&\quad + \frac{1}{2} \partial_{\mathbf{x}}^2 \mathbf{f}^{\cA*\cB,\tau*\sigma}(\mathbf{X},\mathbf{X}') \# [\mathbf{Y}_1,\dots,\mathbf{Y}_\ell, \mathcal{S}(2\delta),\mathcal{S}(2\delta)] + o(\delta).
	\end{align*}
	The first term on the right-hand side is already in $\cA_{\sa}^d$.  When we apply the expectation $E_{\cA}$, the second term on the right-hand side vanishes using free independence, while the third term (by our very definition of $L_{\mathbf{x}}$ in Definitions \ref{def:freedivergence} and \ref{def:freeLaplacian}) produces
	\[
	\delta (L_{\mathbf{x},\mathbf{J}} \mathbf{f})^{\cA,\tau}(\mathbf{X},\mathbf{X}')[\mathbf{Y}_1,\dots,\mathbf{Y}_\ell].
	\]
	This establishes \eqref{eq:heatequation0}.
	
	Now we begin the main argument.  By Lemma \ref{lem:semigroupcontinuity}, $t \mapsto \mathbf{F}(\cdot,t)$ is a continuous function from $[0,\infty)$ to  $C_{\tr}^{k+2}(\R^{*(d+d')},\mathscr{M}(\R^{*d_1},\dots,\R^{*d_\ell}))$, and hence $t \mapsto L_{\mathbf{x},\mathbf{J}} \mathbf{F}(\cdot,t)$ is a continuous function from $[0,\infty)$ to $C_{\tr}^k(\R^{*(d+d')},\mathscr{M}(\R^{*d_1},\dots,\R^{*d_\ell}))$.  This follows by continuity of
	\[
	L_{\mathbf{x}}: C_{\tr}^{k+2}(\R^{*(d+d')},\mathscr{M}(\R^{*d_1},\dots,\R^{*d_\ell})) \to C_{\tr}^k(\R^{*(d+d')},\mathscr{M}(\R^{*d_1},\dots,\R^{*d_\ell})),
	\]
	which in turn implies continuity of $\mathbf{f} \mapsto \partial_{\mathbf{x}} \mathbf{f} \# \nabla_{\mathbf{X}} V$ using continuity of composition.  Therefore, we may define
	\[
	\mathbf{G}(\cdot,t) = \mathbf{f} + \int_0^t L_{\mathbf{x},\mathbf{J}}\mathbf{F}(\cdot,u)\,du
	\]
	as a Riemann integral with values in $C_{\tr}^k(\R^{*(d+d')},\mathscr{M}(\R^{(d_1},\dots,\R^{*d_\ell}))$.  By the fundamental theorem of calculus, $\mathbf{G}$ is differentiable as a function $[0,\infty) \to C_{\tr}^k(\R^{*(d+d')},\mathscr{M}(\R^{*d_1},\dots,\R^{*d_\ell}))$ with derivative equal to $\mathbf{F}(\cdot,t)$.  Therefore, it suffices to show that $\mathbf{G} = \mathcal{F}$.
	
	Fix $(\mathcal{A},\tau)$, let $(\mathbf{X},\mathbf{X}') \in \cA_{\sa}^{d+d'}$, let $t \in [0,\infty)$, and let $\phi$ be a state on $\cA$, and we will prove that
	\begin{equation} \label{eq:heatequationstate}
		\phi \circ (\mathbf{F} - \mathbf{G})^{\cA,\tau}(\mathbf{X},\mathbf{X}',t) = 0.
	\end{equation}
	As in the proof of the mean value theorem, consider the function $\beta: [0,t] \to \R$ given by
	\[
	\beta(u) = \phi\left( (\mathbf{X},\mathbf{X}',u) - \frac{u}{t} (\mathbf{F} - \mathbf{G})^{\cA,\tau})(\mathbf{X},\mathbf{X}',t) \right).
	\]
	Note that $\beta(0) = \beta(t) = 0$ and $\beta$ is continuous.  Moreover, by \eqref{eq:heatequation0} applied to $e^{uL_{\mathbf{x},\mathbf{J}}} \mathbf{f}$, we have
	\begin{multline*}
		\lim_{\delta \to 0^+} \frac{1}{\delta} \left( (\mathbf{F} - \mathbf{G})^{\cA,\tau}(\mathbf{X},\mathbf{X}',u+\delta) - (\mathbf{F} - \mathbf{G})^{\cA,\tau}(\mathbf{X},\mathbf{X}',u) \right) \\ = (L_{\mathbf{x},\mathbf{J}}\mathbf{F}(\cdot,u) - L_{\mathbf{x},\mathbf{J}} \mathbf{F}(\cdot,u))^{\cA,\tau} = 0.
	\end{multline*}
	This implies (by the product rule) that $\beta$ is right-differentiable in $u$ with right-derivative given by
	\[
	\beta_+'(u) = \frac{1}{t} \phi \circ (\mathbf{F} - \mathbf{G})^{\cA,\tau}(\mathbf{X},\mathbf{X}',t).
	\]
	Since $\beta(0) = \beta(t) = 0$ and $\beta$ is continuous, it must achieve a maximum at some point in $u_0 \in (0,t)$, and at this maximum
	\[
	\frac{1}{t} \phi \circ (\mathbf{F} - \mathbf{G})^{\cA,\tau}(\mathbf{X},\mathbf{X}',t) = \beta_+'(u_0) \leq 0.
	\]
	By the same token, it has a local minimum, so the opposite inequality holds as well, which proves \eqref{eq:heatequationstate}.
\end{proof}

\begin{proposition} \label{prop:Laplacianrelations}
	Let $\mathbf{J} \in \mathscr{J}_{a,b}^d$.  Then the operators $\{e^{tL_{\mathbf{x},\mathbf{J}}}\}_{t \in [0,\infty)}$, $L_{\mathbf{x},\mathbf{J}}$, $\mathbb{E}_{\mathbf{x},\mathbf{J}}[-] \circ \pi'$, and $\Psi_{\mathbf{x},\mathbf{J}}$ all commute as operators on $C_{\tr}^\infty(\R^{*(d+d')},\mathscr{M}(\R^{*d_1},\dots,\R^{*d_\ell}))^{d''}$.  Moreover,
	\begin{equation} \label{eq:Laplacianrelation1}
		L_{\mathbf{x},\mathbf{J}} [\mathbb{E}_{\mathbf{x},\mathbf{J}} \mathbf{f} \circ \pi'] = 0
	\end{equation}
	and
	\begin{equation} \label{eq:Laplacianrelation2}
		(-L_{\mathbf{x},\mathbf{J}} \Psi_{\mathbf{x},\mathbf{J}} + \mathbb{E}_{\mathbf{x},\mathbf{J}})\mathbf{f} =  \mathbf{f}.
	\end{equation}
\end{proposition}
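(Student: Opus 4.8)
The plan is to reduce the proposition to four elementary facts about the heat semigroup and then carry out manipulations formally identical to the classical ones, justifying each interchange of operators with limits, $t$-derivatives, and improper integrals by continuity. Everything takes place on $C_{\tr}^\infty(\R^{*(d+d')},\mathscr{M}(\R^{*d_1},\dots,\R^{*d_\ell}))^{d''}$, on which $L_{\mathbf{x},\mathbf{J}}$, each $e^{tL_{\mathbf{x},\mathbf{J}}}$, $\mathbb{E}_{\mathbf{x},\mathbf{J}}[-]\circ\pi'$, and $\Psi_{\mathbf{x},\mathbf{J}}$ act as continuous linear maps (by continuity of the differential operators together with Lemma \ref{lem:heatsemigroupbounds}, Proposition \ref{prop:kernelprojection}, and Proposition \ref{prop:pseudoinverse}), and on which the convergences in Propositions \ref{prop:kernelprojection} and \ref{prop:pseudoinverse} hold with respect to \emph{every} seminorm, since they hold in each $C_{\tr}^k$ for functions in $C_{\tr}^{k+1}$. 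The four facts are: (i) $e^{tL_{\mathbf{x},\mathbf{J}}}$ fixes any function independent of $\mathbf{x}$ — immediate from Definition \ref{def:Vheatsemigroup2}, because such a function evaluated at $(\mathcal{X}(\cdot,2t),\mathbf{X}')$ equals its value at $\mathbf{X}'$, which already lies in $\cA$ and so is unchanged by $E_{\cA}$; in particular $e^{tL_{\mathbf{x},\mathbf{J}}}$ fixes $\mathbb{E}_{\mathbf{x},\mathbf{J}}\mathbf{g}\circ\pi'$ for any $\mathbf{g}$; (ii) the semigroup law, Lemma \ref{lem:semigroup}; (iii) $\mathbb{E}_{\mathbf{x},\mathbf{J}}\mathbf{f}\circ\pi' = \lim_{t\to\infty} e^{tL_{\mathbf{x},\mathbf{J}}}\mathbf{f}$, Proposition \ref{prop:kernelprojection}; and (iv) the heat equation $\frac{d}{dt}e^{tL_{\mathbf{x},\mathbf{J}}}\mathbf{f} = L_{\mathbf{x},\mathbf{J}} e^{tL_{\mathbf{x},\mathbf{J}}}\mathbf{f}$, Proposition \ref{prop:heatequation} (applicable in every $C_{\tr}^k$ since $\mathbf{f}\in C_{\tr}^\infty$).

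First I would show $L_{\mathbf{x},\mathbf{J}}$ commutes with each $e^{sL_{\mathbf{x},\mathbf{J}}}$: differentiating $e^{tL_{\mathbf{x},\mathbf{J}}}e^{sL_{\mathbf{x},\mathbf{J}}}\mathbf{f} = e^{(t+s)L_{\mathbf{x},\mathbf{J}}}\mathbf{f}$ in $s$, using (iv) on the right and pulling $\frac{d}{ds}$ through the continuous operator $e^{tL_{\mathbf{x},\mathbf{J}}}$ on the left, gives $L_{\mathbf{x},\mathbf{J}}e^{tL_{\mathbf{x},\mathbf{J}}}\mathbf{f} = e^{tL_{\mathbf{x},\mathbf{J}}}L_{\mathbf{x},\mathbf{J}}\mathbf{f}$. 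By (i), $e^{tL_{\mathbf{x},\mathbf{J}}}$ fixes $\mathbb{E}_{\mathbf{x},\mathbf{J}}\mathbf{f}\circ\pi'$, while $\mathbb{E}_{\mathbf{x},\mathbf{J}}(e^{tL_{\mathbf{x},\mathbf{J}}}\mathbf{f})\circ\pi' = \lim_{s\to\infty}e^{sL_{\mathbf{x},\mathbf{J}}}e^{tL_{\mathbf{x},\mathbf{J}}}\mathbf{f} = \mathbb{E}_{\mathbf{x},\mathbf{J}}\mathbf{f}\circ\pi'$ by (ii)--(iii); hence $e^{tL_{\mathbf{x},\mathbf{J}}}$ and $\mathbb{E}_{\mathbf{x},\mathbf{J}}[-]\circ\pi'$ commute. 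Since $\mathbb{E}_{\mathbf{x},\mathbf{J}}\mathbf{f}\circ\pi'$ is $\mathbf{x}$-independent, $\partial_{\mathbf{x}}(\mathbb{E}_{\mathbf{x},\mathbf{J}}\mathbf{f}\circ\pi')=0$, so both $L_{\mathbf{x}}$ and $\partial_{\mathbf{x}}(-)\#\mathbf{J}$ annihilate it and \eqref{eq:Laplacianrelation1} follows; then $\mathbb{E}_{\mathbf{x},\mathbf{J}}(L_{\mathbf{x},\mathbf{J}}\mathbf{f})\circ\pi' = \lim_{s}e^{sL_{\mathbf{x},\mathbf{J}}}L_{\mathbf{x},\mathbf{J}}\mathbf{f} = L_{\mathbf{x},\mathbf{J}}\lim_{s}e^{sL_{\mathbf{x},\mathbf{J}}}\mathbf{f} = L_{\mathbf{x},\mathbf{J}}(\mathbb{E}_{\mathbf{x},\mathbf{J}}\mathbf{f}\circ\pi') = 0$, using the commutation just proved and continuity of $L_{\mathbf{x},\mathbf{J}}$ on $C_{\tr}^\infty$; with \eqref{eq:Laplacianrelation1} this gives that $L_{\mathbf{x},\mathbf{J}}$ and $\mathbb{E}_{\mathbf{x},\mathbf{J}}[-]\circ\pi'$ commute.

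Next I would prove \eqref{eq:Laplacianrelation2}. Put $h = \mathbf{f} - \mathbb{E}_{\mathbf{x},\mathbf{J}}\mathbf{f}\circ\pi'$; then $\mathbb{E}_{\mathbf{x},\mathbf{J}}h\circ\pi' = \mathbb{E}_{\mathbf{x},\mathbf{J}}\mathbf{f}\circ\pi' - \mathbb{E}_{\mathbf{x},\mathbf{J}}(\mathbb{E}_{\mathbf{x},\mathbf{J}}\mathbf{f}\circ\pi')\circ\pi' = 0$, since by (i) the limit defining $\mathbb{E}_{\mathbf{x},\mathbf{J}}(\mathbb{E}_{\mathbf{x},\mathbf{J}}\mathbf{f}\circ\pi')\circ\pi'$ equals $\mathbb{E}_{\mathbf{x},\mathbf{J}}\mathbf{f}\circ\pi'$. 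By Proposition \ref{prop:pseudoinverse}, $\Psi_{\mathbf{x},\mathbf{J}}\mathbf{f} = \int_0^\infty e^{tL_{\mathbf{x},\mathbf{J}}} h\,dt$ with the improper integral converging in $C_{\tr}^\infty$. Because $L_{\mathbf{x},\mathbf{J}}$ is continuous and hence commutes with Riemann integration over $[0,T]$ and with the limit $T\to\infty$, and because $L_{\mathbf{x},\mathbf{J}}e^{tL_{\mathbf{x},\mathbf{J}}}h = \frac{d}{dt}e^{tL_{\mathbf{x},\mathbf{J}}}h$ by (iv), the fundamental theorem of calculus yields
\[
-L_{\mathbf{x},\mathbf{J}}\Psi_{\mathbf{x},\mathbf{J}}\mathbf{f} = -\int_0^\infty \frac{d}{dt}e^{tL_{\mathbf{x},\mathbf{J}}}h\,dt = h - \lim_{T\to\infty}e^{TL_{\mathbf{x},\mathbf{J}}}h = h - \mathbb{E}_{\mathbf{x},\mathbf{J}}h\circ\pi' = \mathbf{f} - \mathbb{E}_{\mathbf{x},\mathbf{J}}\mathbf{f}\circ\pi',
\]
which is \eqref{eq:Laplacianrelation2}. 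The remaining commutations with $\Psi_{\mathbf{x},\mathbf{J}}$ follow quickly: pulling $e^{sL_{\mathbf{x},\mathbf{J}}}$ through the integral (continuity plus (ii)) gives $e^{sL_{\mathbf{x},\mathbf{J}}}\Psi_{\mathbf{x},\mathbf{J}}\mathbf{f} = \int_s^\infty e^{uL_{\mathbf{x},\mathbf{J}}}h\,du = \Psi_{\mathbf{x},\mathbf{J}}e^{sL_{\mathbf{x},\mathbf{J}}}\mathbf{f}$ (using $\mathbb{E}_{\mathbf{x},\mathbf{J}}(e^{sL_{\mathbf{x},\mathbf{J}}}\mathbf{f})\circ\pi' = \mathbb{E}_{\mathbf{x},\mathbf{J}}\mathbf{f}\circ\pi'$); letting $s\to\infty$ there shows $\mathbb{E}_{\mathbf{x},\mathbf{J}}(\Psi_{\mathbf{x},\mathbf{J}}\mathbf{f})\circ\pi' = 0 = \Psi_{\mathbf{x},\mathbf{J}}(\mathbb{E}_{\mathbf{x},\mathbf{J}}\mathbf{f}\circ\pi')$ (the latter because $\Psi_{\mathbf{x},\mathbf{J}}$ annihilates $\mathbf{x}$-independent functions, by the same computation as for $h$); finally $\Psi_{\mathbf{x},\mathbf{J}}L_{\mathbf{x},\mathbf{J}}\mathbf{f} = \int_0^\infty e^{tL_{\mathbf{x},\mathbf{J}}}L_{\mathbf{x},\mathbf{J}}\mathbf{f}\,dt = \lim_{T}e^{TL_{\mathbf{x},\mathbf{J}}}\mathbf{f} - \mathbf{f} = \mathbb{E}_{\mathbf{x},\mathbf{J}}\mathbf{f}\circ\pi' - \mathbf{f} = L_{\mathbf{x},\mathbf{J}}\Psi_{\mathbf{x},\mathbf{J}}\mathbf{f}$, using $\mathbb{E}_{\mathbf{x},\mathbf{J}}(L_{\mathbf{x},\mathbf{J}}\mathbf{f})\circ\pi' = 0$ and \eqref{eq:Laplacianrelation2}.

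The main obstacle — indeed the only non-routine point — is the bookkeeping of topologies: one must confirm that all the operators involved are continuous on the Fr\'echet space $C_{\tr}^\infty$ and that the limits and improper integrals from Propositions \ref{prop:kernelprojection} and \ref{prop:pseudoinverse} converge there (not merely in some single $C_{\tr}^k$), so that the formal interchanges of $L_{\mathbf{x},\mathbf{J}}$, $e^{tL_{\mathbf{x},\mathbf{J}}}$, and $\mathbb{E}_{\mathbf{x},\mathbf{J}}$ with $\lim_{t\to\infty}$, $\frac{d}{dt}$, and $\int_0^\infty$ are legitimate; this is exactly what the estimates uniform over the order of differentiation, established earlier in the section, provide. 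Everything else is the classical semigroup computation transplanted verbatim.
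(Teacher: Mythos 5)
Your proposal is correct and follows essentially the same route as the paper's proof: commutation of $L_{\mathbf{x},\mathbf{J}}$ with the semigroup via differentiating the semigroup law (the paper uses the equivalent difference-quotient formulation with Proposition \ref{prop:heatequation}), commutation with $\mathbb{E}_{\mathbf{x},\mathbf{J}}[-]\circ\pi'$ and $\Psi_{\mathbf{x},\mathbf{J}}$ by passing continuous operators through the $t\to\infty$ limit and the improper integral, \eqref{eq:Laplacianrelation1} from $\mathbf{x}$-independence, and \eqref{eq:Laplacianrelation2} by the fundamental theorem of calculus applied to $\int_0^T \frac{d}{dt}e^{tL_{\mathbf{x},\mathbf{J}}}$ and letting $T\to\infty$. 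The only cosmetic difference is your explicit use of the observation that $e^{tL_{\mathbf{x},\mathbf{J}}}$ fixes $\mathbf{x}$-independent functions, which the paper achieves instead by pulling $e^{sL_{\mathbf{x},\mathbf{J}}}$ through the limit defining $\mathbb{E}_{\mathbf{x},\mathbf{J}}$.
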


\begin{proof}
	By Lemma \ref{lem:semigroup}, the operators $\{e^{tL_{\mathbf{x},\mathbf{J}}}\}_{t \in [0,\infty)}$ form a semigroup, and hence they all commute with each other.  This implies that
	\[
	e^{tL_{\mathbf{x},\mathbf{J}}} \frac{e^{sL_{\mathbf{x},\mathbf{J}}} - 1}{s} \mathbf{f} = \frac{e^{sL_{\mathbf{x},\mathbf{J}}} - 1}{s} e^{tL_{\mathbf{x},\mathbf{J}}} \mathbf{f}.
	\]
	When we take $s \to 0^+$, by Proposition \ref{prop:heatequation} and the continuity of $e^{tL_{\mathbf{x},\mathbf{J}}}$ as an operator on $C_{\tr}^\infty(\R^{*(d+d')},\mathscr{M}^\ell)^{d''}$, we obtain that $e^{tL_{\mathbf{x},\mathbf{J}}}$ and $ L_{\mathbf{x},\mathbf{J}}$ commute.
	
	Similarly, since $e^{tL_{\mathbf{x},\mathbf{J}}} \mathbf{f} \to \mathbb{E}_{\mathbf{x},\mathbf{J}} \mathbf{f} \circ \pi'$ as $t \to \infty$, we see that the operators $e^{sL_{\mathbf{x},\mathbf{J}}}$ and $L_{\mathbf{x},\mathbf{J}}$ commute with $\mathbb{E}_{\mathbf{x},\mathbf{J}}[-] \circ \pi'$.
	
	Next, for each $T \in [0,\infty)$, the operator
	\[
	\mathbf{f} \mapsto \int_0^T [e^{tL_{\mathbf{x},\mathbf{J}}} \mathbf{f} - \mathbb{E}_{\mathbf{x},\mathbf{J}} \mathbf{f} \circ 
	\pi]\,dt
	\]
	commutes with $e^{sL_{\mathbf{x},\mathbf{J}}}$, $L_{\mathbf{x},\mathbf{J}}$, and $\mathbb{E}_{\mathbf{x},\mathbf{J}}[-] \circ \pi$, because the Riemann sum approximations of this integral commute with them.  Then taking $T \to \infty$, we see that $\Psi_{\mathbf{x},\mathbf{J}}$ commutes with all these operators.
	
	To prove \eqref{eq:Laplacianrelation1}, observe that $\mathbb{E}_{\mathbf{x},\mathbf{J}} \mathbf{f} \circ \pi'$ is a function that only depends on $\mathbf{X}'$, and hence the output will be in the kernel of $\nabla_{\mathbf{x}}$ and $\partial_{\mathbf{x}}^2$, and hence in the kernel of $L_{\mathbf{x},\mathbf{J}}$.
	
	To prove \eqref{eq:Laplacianrelation2}, observe that using \eqref{eq:Laplacianrelation1} and the previous proposition, we have
	\begin{align*}
		-L_{\mathbf{x},\mathbf{J}} \int_0^T [e^{tL_{\mathbf{x},\mathbf{J}}} \mathbf{f} - \mathbb{E}_{\mathbf{x},\mathbf{J}} \mathbf{f}]\,dt &= -\int_0^T L_{\mathbf{x},\mathbf{J}} [e^{tL_{\mathbf{x},\mathbf{J}}}\mathbf{f}]\,dt \\
		&= -\int_0^T \frac{d}{dt}[e^{tL_{\mathbf{x},\mathbf{J}}} \mathbf{f}]\,dt \\
		&= \mathbf{f} - e^{TL_{\mathbf{x},\mathbf{J}}} \mathbf{f}.
	\end{align*}
	As we take $T \to \infty$, the right-hand side approaches $\mathbf{f} - \mathbb{E}_{\mathbf{x},\mathbf{J}} \mathbf{f}$ in $C_{\tr}^\infty(\R^{*(d+d')},\mathscr{M}(\R^{*d_1},\dots,\R^{*d_\ell}))^{d''}$ by Proposition \ref{prop:kernelprojection}.  Moreover, as in the proof of Proposition \ref{prop:pseudoinverse}, $\int_0^T e^{tL_{\mathbf{x},\mathbf{J}}}\,dt$ converges in $C_{\tr}^\infty(\R^{*(d+d')},\mathscr{M}(\R^{*d_1},\dots,\R^{*d_\ell}))^{d''}$ as $T \to \infty$ to $\Psi_{\mathbf{x},\mathbf{J}} \mathbf{f}$, and hence
	\[
	-L_{\mathbf{x},\mathbf{J}} \Psi_{\mathbf{x},\mathbf{J}} \mathbf{f} = \mathbf{f} - \mathbb{E}_{\mathbf{x},\mathbf{J}} \mathbf{f},
	\]
	which rearranges to \eqref{eq:Laplacianrelation2}.
\end{proof}

\begin{proposition} \label{prop:expectationbimodule}
	Let $T$ be any one of the operators $\{e^{tL_{\mathbf{x},\mathbf{J}}}\}_{t \in [0,\infty)}$, $L_{\mathbf{x},\mathbf{J}}$, $\mathbb{E}_{\mathbf{x},\mathbf{J}}[-] \circ \pi'$, and $\Psi_{\mathbf{x},\mathbf{J}}$.  Then for $\mathbf{f} \in C_{\tr}^\infty(\R^{*(d+d')},\mathscr{M}(\R^{*d_1},\dots,\R^{*d_\ell}))^{d''}$ and $g \in C_{\tr}^\infty(\R^{*d'})$, we have
	\begin{align} \label{eq:bimodulemaps}
		T[\mathbf{f} \cdot (g \circ \pi')] &= T[\mathbf{f}] \cdot (g \circ \pi'), & T[(g \circ \pi') \cdot \mathbf{f}] &= (g \circ \pi') \cdot T[\mathbf{f}].
	\end{align}
\end{proposition}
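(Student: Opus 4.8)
The plan is to prove the bimodule identities first for the heat semigroup $e^{tL_{\mathbf{x},\mathbf{J}}}$ directly from its stochastic definition, and then transfer them to $L_{\mathbf{x},\mathbf{J}}$, $\mathbb{E}_{\mathbf{x},\mathbf{J}}[-]\circ\pi'$, and $\Psi_{\mathbf{x},\mathbf{J}}$ using the limiting, differentiation, and integration relations established earlier in this section. For the first step, fix $(\cA,\tau)\in\mathbb{W}$, $\mathbf{X}\in\cA_{\sa}^d$, $\mathbf{X}'\in\cA_{\sa}^{d'}$, and let $\mathcal{S}$ be the free Brownian motion generating $(\cB,\sigma)$. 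Since $g\circ\pi'$ depends only on the variables $\mathbf{x}'$, and $\mathbf{X}'$ is a tuple from the subalgebra $\cA\subseteq\cA*\cB$, the evaluation of $g\circ\pi'$ at $(\mathcal{X}(\mathbf{X},\mathbf{X}',2t),\mathbf{X}')$ equals $g^{\cA,\tau}(\mathbf{X}')\in\cA$ (using that the value of a $C_{\tr}$ function on a tuple depends only on the restriction of the trace to the $\mathrm{C}^*$-algebra generated by that tuple). Writing $\mathbf{F}=\mathbf{f}\cdot(g\circ\pi')$, the integrand in Definition \ref{def:Vheatsemigroup2} becomes $\mathbf{f}^{\cA*\cB,\tau*\sigma}(\mathcal{X}(\mathbf{X},\mathbf{X}',2t),\mathbf{X}')[\mathbf{Y}_1,\dots,\mathbf{Y}_\ell]\cdot g^{\cA,\tau}(\mathbf{X}')$; since $E_{\cA}$ is an $\cA$-$\cA$-bimodule map, the factor $g^{\cA,\tau}(\mathbf{X}')$ can be pulled out on the right, yielding $e^{tL_{\mathbf{x},\mathbf{J}}}[\mathbf{f}\cdot(g\circ\pi')]=e^{tL_{\mathbf{x},\mathbf{J}}}[\mathbf{f}]\cdot(g\circ\pi')$, and the left-module identity follows identically.

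Next, $\mathbb{E}_{\mathbf{x},\mathbf{J}}[-]\circ\pi'$ is the limit of $e^{tL_{\mathbf{x},\mathbf{J}}}$ as $t\to\infty$ (Proposition \ref{prop:kernelprojection}): passing to the limit in the first-step identity, and using that right- or left-multiplication by the fixed function $g\circ\pi'$ is a continuous operation on the relevant Fr\'echet spaces (Theorem \ref{thm:chainrule}), gives the bimodule property for $\mathbb{E}_{\mathbf{x},\mathbf{J}}[-]\circ\pi'$; note also that $\mathbb{E}_{\mathbf{x},\mathbf{J}}[\mathbf{f}\cdot(g\circ\pi')]$, as a function of $\mathbf{x}'$, equals $\mathbb{E}_{\mathbf{x},\mathbf{J}}[\mathbf{f}]\cdot g$, which composes correctly with $\pi'$. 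For $L_{\mathbf{x},\mathbf{J}}$, differentiate the first-step identity at $t=0$: by Proposition \ref{prop:heatequation}, $\frac{d}{dt}\big|_{t=0}e^{tL_{\mathbf{x},\mathbf{J}}}\mathbf{F}=L_{\mathbf{x},\mathbf{J}}\mathbf{F}$ in $C_{\tr}^k$ for $\mathbf{F}$ of class $C_{\tr}^{k+2}$, and since right-multiplication by $g\circ\pi'$ is a continuous linear map it commutes with this derivative, giving $L_{\mathbf{x},\mathbf{J}}[\mathbf{f}\cdot(g\circ\pi')]=L_{\mathbf{x},\mathbf{J}}[\mathbf{f}]\cdot(g\circ\pi')$ and its left analogue. (Alternatively one can argue directly from the Leibniz rule: $\partial_{\mathbf{x}}(g\circ\pi')=0$, so in the product rule for $L_{\mathbf{x}}=\partial_{\mathbf{x}}^\dagger\partial_{\mathbf{x}}$ every cross term and every term with $L_{\mathbf{x}}$ falling on $g\circ\pi'$ vanishes, while $g\circ\pi'$ passes through $\Upsilon$ for the same reason as in the first step; the term $\partial_{\mathbf{x}}(-)\#\mathbf{J}$ is treated identically.) Finally, for $\Psi_{\mathbf{x},\mathbf{J}}$ write $\Psi_{\mathbf{x},\mathbf{J}}[\mathbf{f}\cdot(g\circ\pi')]=\int_0^\infty e^{tL_{\mathbf{x},\mathbf{J}}}[\mathbf{f}\cdot(g\circ\pi')-\mathbb{E}_{\mathbf{x},\mathbf{J}}[\mathbf{f}\cdot(g\circ\pi')]\circ\pi']\,dt$; by the second step the subtracted term is $(\mathbb{E}_{\mathbf{x},\mathbf{J}}\mathbf{f}\circ\pi')\cdot(g\circ\pi')$, so the integrand is $e^{tL_{\mathbf{x},\mathbf{J}}}[(\mathbf{f}-\mathbb{E}_{\mathbf{x},\mathbf{J}}\mathbf{f}\circ\pi')\cdot(g\circ\pi')]=e^{tL_{\mathbf{x},\mathbf{J}}}[\mathbf{f}-\mathbb{E}_{\mathbf{x},\mathbf{J}}\mathbf{f}\circ\pi']\cdot(g\circ\pi')$ by the first step; pulling the fixed factor $g\circ\pi'$ out of the convergent Riemann integral (Proposition \ref{prop:pseudoinverse}) yields $\Psi_{\mathbf{x},\mathbf{J}}[\mathbf{f}]\cdot(g\circ\pi')$, and likewise on the left.

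I expect the only genuinely delicate point to be the first step, namely the justification that the evaluation of $g\circ\pi'$ on the process-tuple $(\mathcal{X}(\mathbf{X},\mathbf{X}',2t),\mathbf{X}')$ produces an element of $\cA$ that can legitimately be pulled through the trace-preserving conditional expectation $E_{\cA}\colon\cA*\cB\to\cA$. This rests entirely on $g$ depending only on the auxiliary variables (so the Brownian-motion factor of the free product is irrelevant to that evaluation), together with the locality of evaluation recorded in the remarks following Definition \ref{def:traceCk}. Once this is in place, everything else is bookkeeping with the continuity of multiplication and of the limits and integrals involved, all of which has been set up in the preceding propositions.
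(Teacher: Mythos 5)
Your proof is correct and follows essentially the same route as the paper: establish the bimodule identity for $e^{tL_{\mathbf{x},\mathbf{J}}}$ directly from the stochastic definition by noting $g^{\cA*\cB,\tau*\sigma}(\mathbf{X}') = g^{\cA,\tau}(\mathbf{X}') \in \cA$ and pulling it through the trace-preserving conditional expectation $E_{\cA}$, then transfer the property to $\mathbb{E}_{\mathbf{x},\mathbf{J}}[-]\circ\pi'$, $L_{\mathbf{x},\mathbf{J}}$, and $\Psi_{\mathbf{x},\mathbf{J}}$ via limits, derivatives, and integrals in $t$. The paper compresses this transfer into the single observation that bimodule maps are closed under linear combinations and limits (hence derivatives and integrals with respect to $t$), whereas you spell out each case explicitly; the content is the same.
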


\begin{proof}
	Note that for $(\cA,\tau) \in \mathbb{W}$ and $(\mathbf{X},\mathbf{X}') \in \cA_{\sa}^{d+d'}$,
	\begin{align*}
		e^{L_{\mathbf{x},\mathbf{J}}}[\mathbf{f} \cdot (g \circ \pi')]^{\cA,\tau}(\mathbf{X},\mathbf{X}')
		&= E_{\cA}[\mathbf{f}^{\cA*\cB,\tau*\sigma}(\mathcal{X}^{\cA,\tau}(\mathbf{X},\mathbf{X}',2t),\mathbf{X}') g^{\cA*\cB,\tau*\sigma}(\mathbf{X}')] \\
		&= E_{\cA}[\mathbf{f}^{\cA*\cB,\tau*\sigma}(\mathcal{X}^{\cA,\tau}(\mathbf{X},\mathbf{X}',2t),\mathbf{X}')] g^{\cA,\tau}(\mathbf{X}') \\
		&= e^{L_{\mathbf{x},\mathbf{J}}}[\mathbf{f}]^{\cA,\tau}(\mathbf{X},\mathbf{X}') g^{\cA,\tau}(\mathbf{X}')
	\end{align*}
	since $g^{\cA*\cB,\tau*\sigma}(\mathbf{X}') =g^{\cA,\tau}(\mathbf{X}') \in \cA$.  The same reasoning holds when $g$ is on the left side of $\mathbf{f}$, which proves the first case of \eqref{eq:bimodulemaps}.  In other words, $e^{tL_{\mathbf{x},\mathbf{J}}}$ is a bimodule map over $C_{\tr}^\infty(\R^{*d'})$.  Since the identity is a bimodule map, and bimodule maps are closed under linear combinations and limits (hence also derivatives and integrals with respect to $t$), we see that $L_{\mathbf{x},\mathbf{J}}$, $\mathbb{E}_{\mathbf{x},\mathbf{J}}[-] \circ \pi'$, and $\Psi_{\mathbf{x},\mathbf{J}}$ are also bimodule maps over $C_{\tr}^\infty(\R^{*d'})$.  This proves \eqref{eq:bimodulemaps}.
\end{proof}

We close with the following observation about continuous dependence of $\Psi_{\mathbf{x},\mathbf{J}}$ on $\mathbf{J}$, which has a similar purpose in this paper to \cite[Lemma 44]{DGS2016}.

\begin{proposition} \label{prop:Laplaciancontinuity}
	Fix $c \in (0,1)$ and $a \in (0,\infty)$. Let $T_{\mathbf{J}}$ be one of the operators $e^{tL_{\mathbf{x},\mathbf{J}}}$, $\mathbb{E}_{\mathbf{x},\mathbf{J}}[-] \circ \pi'$, $L_{\mathbf{x},\mathbf{J}}$, or $\Psi_{\mathbf{x},\mathbf{J}}$.  Then $(\mathbf{J},\mathbf{f}) \mapsto T_{\mathbf{J}} \mathbf{f}$ defines a continuous map
	\[
	\mathscr{J}_{a,c}^{d,d'} \times C_{\tr}^\infty(\R^{*(d+d')},\mathscr{M}(\R^{*d_1},\dots,\R^{*d_\ell}))^{d''} \to C_{\tr}^\infty(\R^{*d'},\mathscr{M}(\R^{*d_1},\dot,\R^{*d_\ell}))^{d''},
	\]
	where $\mathscr{J}_{a,c}^{d,d'}$ is equipped with the subspace topology from $C_{\tr}^\infty(\R^{*(d+d')})_{\sa}^d$.
\end{proposition}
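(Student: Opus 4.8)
The plan is to reduce the statement to two facts: (i) for $\mathbf{J}$ ranging over a neighborhood of a fixed $\mathbf{J}_0$ in $\mathscr{J}_{a,c}^{d,d'}$, the family $\{T_{\mathbf{J}}\}$ is equicontinuous as a family of maps between the relevant Fr\'echet spaces; and (ii) for each fixed $\mathbf{f}$, the map $\mathbf{J} \mapsto T_{\mathbf{J}}\mathbf{f}$ is continuous. Given (i) and (ii), joint continuity follows from the standard three-$\varepsilon$ decomposition $T_{\mathbf{J}}\mathbf{f} - T_{\mathbf{J}_0}\mathbf{f}_0 = T_{\mathbf{J}}(\mathbf{f} - \mathbf{f}_0) + (T_{\mathbf{J}} - T_{\mathbf{J}_0})\mathbf{f}_0$, where the first term is controlled by equicontinuity and the second by (ii). Fact (i) is essentially already in hand: the constants $C_{k,\mathbf{J},R}$ and polynomials $p_{k,\mathbf{J},R}$ appearing in Lemma \ref{lem:processCinfinity}, Lemma \ref{lem:heatsemigroupbounds}, Proposition \ref{prop:kernelprojection}, and Proposition \ref{prop:pseudoinverse} depend on $\mathbf{J}$ only through $\norm{\partial^{k'}(\mathbf{J} - \pi)}_{C_{\tr}(\R^{*(d+d')},\mathscr{M}^{k'})^d,R'}$ for finitely many $k'$ and a fixed radius $R'$ (see Remarks \ref{rem:boundedcoefficients} and \ref{rem:boundedcoefficients2}), so they are bounded uniformly over a neighborhood of $\mathbf{J}_0$ in the subspace topology from $C_{\tr}^\infty(\R^{*(d+d')})_{\sa}^d$; the estimates \eqref{eq:heatsemigroupestimate1}, \eqref{eq:Ederivativeestimate}, and Proposition \ref{prop:pseudoinverse}(2) then give the required equicontinuity, and for $L_{\mathbf{x},\mathbf{J}}$ it is immediate from the formula $L_{\mathbf{x},\mathbf{J}}\mathbf{f} = L_{\mathbf{x}}\mathbf{f} - \partial_{\mathbf{x}}\mathbf{f}\#\mathbf{J}$.

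For (ii) the case $L_{\mathbf{x},\mathbf{J}}$ is trivial from continuity of composition (Theorem \ref{thm:chainrule}). The heart of the proof is the case $T_{\mathbf{J}} = e^{tL_{\mathbf{x},\mathbf{J}}}$, which I would handle by first proving that $\mathbf{J} \mapsto \mathcal{X}^{\mathbf{J}}(\cdot,t)$ is continuous from $\mathscr{J}_{a,c}^{d,d'}$ into $C_{\tr,\mathcal{S}}^\infty(\R^{*(d+d')})_{\sa}^d$, uniformly for $t$ in compact subsets of $[0,\infty)$. Fix $\mathbf{J}_0$ and let $\mathbf{J}$ be close to it. Subtracting the integral equations \eqref{eq:mainSDE} for $\mathcal{X}^{\mathbf{J}}$ and $\mathcal{X}^{\mathbf{J}_0}$, writing $\mathbf{J}(\mathcal{X}^{\mathbf{J}}) - \mathbf{J}_0(\mathcal{X}^{\mathbf{J}_0}) = [\mathbf{J}(\mathcal{X}^{\mathbf{J}}) - \mathbf{J}(\mathcal{X}^{\mathbf{J}_0})] + [(\mathbf{J}-\mathbf{J}_0)(\mathcal{X}^{\mathbf{J}_0})]$ and using that $\mathbf{J}$ is $(1+c)$-Lipschitz in its first $d$ arguments, a Gr\"onwall argument as in the proof of Lemma \ref{lem:SDEsolution} bounds $\norm{\mathcal{X}^{\mathbf{J}}(\cdot,t) - \mathcal{X}^{\mathbf{J}_0}(\cdot,t)}_{C_{\tr,\mathcal{S}}(\R^{*(d+d')})^d,R}$ by a constant (depending only on $c$, $t$, $R$) times $\sup_{u\le t}\norm{(\mathbf{J}-\mathbf{J}_0)\circ(\mathcal{X}^{\mathbf{J}_0}(\cdot,u),\pi')}_{C_{\tr}(\R^{*(d+d')})^d,R}$, which tends to $0$ as $\mathbf{J}\to\mathbf{J}_0$. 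Then one upgrades this to continuity of the derivatives $\partial^k \mathcal{X}^{\mathbf{J}}(\cdot,t)$ in $\mathbf{J}$ by induction on $k$: $\partial^k\mathcal{X}^{\mathbf{J}}$ satisfies the linear ODE \eqref{eq:iterateddifferentiation}, whose coefficient of $\partial^k\mathcal{X}^{\mathbf{J}}$ is $-\tfrac12\partial_{\mathbf{x}}\mathbf{J}(\mathcal{X}^{\mathbf{J}},\pi')$ and whose remaining terms involve only $\mathbf{J}$, its lower derivatives, and $\partial^m\mathcal{X}^{\mathbf{J}}$ for $m<k$; comparing with the same equation for $\mathbf{J}_0$, using the inductive hypothesis together with the uniform bounds of (i) and the estimate \eqref{eq:niceGronwall} of Lemma \ref{lem:niceGronwall}, gives continuity of $\partial^k\mathcal{X}^{\mathbf{J}}(\cdot,t)$ in $\mathbf{J}$, uniformly on compact time intervals. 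Once $\mathbf{J}\mapsto\mathcal{X}^{\mathbf{J}}(\cdot,t)$ is continuous into $C_{\tr,\mathcal{S}}^\infty$, continuity of $e^{tL_{\mathbf{x},\mathbf{J}}}\mathbf{f} = E_{\cA}[\mathbf{f}\circ(\mathcal{X}^{\mathbf{J}}(\cdot,2t),\pi')]$ in $(\mathbf{J},\mathbf{f})$ follows from joint continuity of composition (Proposition \ref{prop:chainrule2}) and Lemma \ref{lem:Ckconditionalexpectation}.

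For $\mathbb{E}_{\mathbf{x},\mathbf{J}}$ and $\Psi_{\mathbf{x},\mathbf{J}}$ one must pass to the limit $t\to\infty$ (respectively integrate over $t\in[0,\infty)$) and commute this with the limit $\mathbf{J}\to\mathbf{J}_0$. This is where the locally uniform exponential decay from (i) is used: by \eqref{eq:heatsemigroupconvergence} — valid with constants and polynomials that are bounded over a neighborhood of $\mathbf{J}_0$ — the curves $t\mapsto e^{tL_{\mathbf{x},\mathbf{J}}}\mathbf{f}$ converge to $\mathbb{E}_{\mathbf{x},\mathbf{J}}\mathbf{f}\circ\pi'$ as $t\to\infty$ at a rate uniform in $\mathbf{J}$ near $\mathbf{J}_0$ (for $\mathbf{f}$ in a bounded subset of $C_{\tr}^{k+1}$), and similarly the truncated integrals defining $\Psi_{\mathbf{x},\mathbf{J}}\mathbf{f}$ are uniformly Cauchy. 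Combined with the already-established continuity of $(\mathbf{J},\mathbf{f})\mapsto e^{tL_{\mathbf{x},\mathbf{J}}}\mathbf{f}$ for each fixed $t$, the standard uniform-limit argument yields continuity of $\mathbb{E}_{\mathbf{x},\mathbf{J}}$ and $\Psi_{\mathbf{x},\mathbf{J}}$. I expect the main obstacle to be the inductive step establishing continuity of the derivatives $\partial^k\mathcal{X}^{\mathbf{J}}$ in $\mathbf{J}$: the equation \eqref{eq:iterateddifferentiation} has many terms, each a $\#$-composition of $\mathbf{J}$-derivatives evaluated at $(\mathcal{X}^{\mathbf{J}},\pi')$ with products of $\partial^m\mathcal{X}^{\mathbf{J}}$, and one has to track how each factor changes when $\mathbf{J}$ is perturbed while keeping all estimates uniform in $t$ on compact intervals and uniform in $\mathbf{J}$ near $\mathbf{J}_0$. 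This is bookkeeping-heavy but structurally identical to the proof of Lemma \ref{lem:processCinfinity}, so it should go through; the $t\to\infty$ passages are then comparatively routine given the locally uniform decay already recorded in the remarks.
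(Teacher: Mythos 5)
Your proposal is correct and follows essentially the same route as the paper: continuity of $\mathcal{X}^{\mathbf{J}}$ and all its spatial derivatives in $\mathbf{J}$ via Gr\"onwall and induction on the order of the derivative, then joint continuity of $e^{tL_{\mathbf{x},\mathbf{J}}}$ from continuity of composition and Lemma \ref{lem:Ckconditionalexpectation}, and finally $\mathbb{E}_{\mathbf{x},\mathbf{J}}$ and $\Psi_{\mathbf{x},\mathbf{J}}$ by locally uniform convergence/integration in $t$ using the uniformly bounded constants, with $L_{\mathbf{x},\mathbf{J}}$ handled directly. The equicontinuity-plus-pointwise framing you add is only a cosmetic repackaging of the uniform-constant observations the paper also makes.
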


\begin{proof}
	First, let us prove that $\mathcal{X}$ depends continuously on $\mathbf{J}$ in $\mathscr{J}_{a,c}^{d,d'}$.  Specifically, we will show that for $\mathbf{J}_1 \in \mathscr{J}_{a,c}^{d,d'}$ and $T > 0$, and for every $k$ and $\epsilon > 0$ and $R > 0$, there is a neighborhood $\mathcal{U}$ of $\mathbf{J}_1$ in $\mathscr{J}_{a,c}^{d,d'}$ such that $\mathbf{J}_2 \in \mathcal{U}$ implies that
	\[
	\sup_{t \in [0,T]} \norm{\partial^k \mathcal{X}_1(\cdot,t) - \partial^k \mathcal{X}_2(\cdot,t)}_{C_{\tr,\mathcal{S}}(\R^{*(d+d')},\mathscr{M}^k)^d,R} < \epsilon,
	\]
	where $\mathcal{X}_1$ and $\mathcal{X}_2$ are the processes corresponding to $\mathbf{J}_1$ and $\mathbf{J}_2$ respectively.
	
	As one might expect, the argument proceeds by induction on $k$ using Gr\"onwall's inequality with the differential equations for $\partial^k \mathcal{X}$.  For $k = 0$, by \eqref{eq:mainSDE}, we obtain
	\[
	\mathcal{X}_1(\cdot,t) - \mathcal{X}_2(\cdot,t)
	= - \frac{1}{2} \int_0^t \mathbf{J}_1(\mathcal{X}_1(\cdot,u),\pi') - \mathbf{J}_2(\mathcal{X}_2(\cdot,u),\pi')\,du
	-\frac{1}{2} \int_0^t (\mathbf{J}_1 - \mathbf{J}_2)(\mathcal{X}_2(\cdot,u),\pi')\,du.
	\]
	In the second term on the right-hand side, the integrand is bounded in $\norm{\cdot}_{C_{\tr,\mathcal{S}}(\R^{*(d+d')})^d,R}$ by $(1/2) \norm{\mathbf{J}_1 - \mathbf{J}_2}_{C_{\tr}(\R^{*(d+d')})^d,R'}$ where $R' = \max(R+2,a)$ using \eqref{eq:SDEestimate}.  In the first term on the right-hand side, the integrand is bounded in $\norm{\cdot}_{C_{\tr,\mathcal{S}}(\R^{*(d+d')})^d,R}$ by $2 - c$ times $\norm{\mathcal{X}_1(\cdot,u) - \mathcal{X}_2(\cdot,u)}_{C_{\tr}(\R^{*(d+d')})^d,}$.  Thus, using Gr\"onwall's inequality, we get a bound of the desired form for $k = 0$.
	
	For the induction step, the argument uses \eqref{eq:iterateddifferentiation} instead of \eqref{eq:mainSDE}.  As in the proof of Lemma \ref{lem:processCinfinity}, we separate out the terms $\partial_{\mathbf{x}} \mathbf{J}_j(\mathcal{X},\pi') \# \partial^k \mathbf{X}_j$.  By induction hypothesis, we can arrange that each of the other terms have approximately the same value in $C_{\tr}(\R^{*(d+d')},\mathscr{M}^k)^d$ when $\mathbf{J}_1$ and $\mathbf{J}_2$ are sufficiently close (using an argument where we swap out each $\mathcal{X}_1$ in the product for an $\mathcal{X}_2$ iteratively).  Then we use Gr\"onwall's inequality.  The details are left as an exercise.
	
	Now that we proved our claim about continuous dependence of $\mathcal{X}$ on $\mathbf{J}$, observe that by continuity of composition, $\mathbf{f}(\mathcal{X},\pi')$ in $C_{\tr,\mathcal{S}}^\infty(\R^{*(d+d')},\mathscr{M}^\ell)^{d''}$ depends continuously on $(\mathbf{J},\mathbf{f})$.  Then by Lemma \ref{lem:Ckconditionalexpectation}, we obtain the continuity of $e^{tL_{\mathbf{x},\mathbf{J}}} \mathbf{f}$ asserted in the proposition.
	
	Next, we prove continuity of $(\mathbf{J},\mathbf{f}) \mapsto \mathbb{E}_{\mathbf{J},\mathbf{X}} \mathbf{f} \circ \pi'$.  From our argument about the continuous dependence of $\mathcal{X}$ on $\mathbf{J}$, we can deduce that for each $\mathbf{J}_0$ and $k$ and $R$, there is a neighborhood $\mathcal{U} \subseteq \mathscr{J}_{a,c}^{d,d'}$ such that the constants $C_{k,\mathbf{J},R}$ in Lemma \ref{lem:processCinfinity} are uniformly bounded for $\mathbf{J} \in \mathcal{U}$.  Tracing through our previous arguments, it follows that the constants in Proposition \ref{prop:kernelprojection} are also uniformly bounded for $\mathbf{J}$ in a neighborhood of $\mathbf{J}_0$.  Therefore, we can conclude from Proposition \ref{prop:kernelprojection} the following:  For each $\mathbf{J}_0 \in \mathscr{J}_{a,c}^{d,d'}$ and $\mathbf{f}_0 \in C_{\tr}^\infty(\R^{*(d+d')},\mathscr{M}^\ell)^{d''}$ and $R > 0$, there exists neighborhoods $\mathcal{U} \subseteq \mathcal{W}_{a,c}$ and $\mathcal{V} \subseteq C_{\tr}^\infty(\R^{*(d+d')},\mathscr{M}(\R^{*d_1},\dots,\R^{*d_\ell}))^{d''}$ such that the convergence of $e^{tL_{\mathbf{x},\mathbf{J}}} \mathbf{f}$ in $\norm{\cdot}_{C_{\tr}^k}$ as $t \to \infty$ is uniform for $(\mathbf{J},\mathbf{f}) \in \mathcal{U} \times \mathbf{V}$.  Since continuity is preserved under locally uniform limits, we have that $(\mathbf{J},\mathbf{f}) \mapsto \mathbb{E}_{\mathbf{x},\mathbf{J}} \mathbf{f} \circ \pi'$ is continuous in the sense asserted by this proposition.
	
	In a similar way, using the continuity of $(\mathbf{J},\mathbf{f}) \mapsto e^{tL_{\mathbf{x},\mathbf{J}}} \mathbf{f}$ (which is uniform for $t \in [0,T]$) and $(\mathbf{J},\mathbf{f}) \mapsto \mathbb{E}_{\mathbf{x},\mathbf{J}} \mathbf{f} \circ \pi$, we obtain the continuity of $(\mathbf{J},\mathbf{f}) \mapsto \Psi_{\mathbf{x},\mathbf{J}}$.  Finally, the continuity of $(\mathbf{J},\mathbf{f}) \mapsto L_{\mathbf{x},\mathbf{J}} \mathbf{f}$ can be checked directly from the definition since $L_{\mathbf{x},\mathbf{J}}\mathbf{f}$ is obtained by differentiation and multiplication.
\end{proof}

\section{Free Gibbs laws} \label{sec:freeGibbslaws}

The last section described one method of associating a non-commutative law to a potential $V$.  Namely, if $V \in \tr(C_{\tr}^\infty(\R^{*d}))$ such that $\nabla V \in \mathscr{J}_{a,c}^d$, the non-commutative law is obtained from the expectation functional $\mathbb{E}_V := \mathbb{E}_{\nabla V}: \tr(C_{\tr}(\R^{*d})) \to \C$.

In this section, we describe another approach based on free entropy, which works in greater generality.  For certain potentials $V$, we show the existence of \emph{free Gibbs laws}, that is, non-commutative laws maximizing $\chi^\omega(\nu) - \tilde{\nu}(V)$, where $\chi^\omega$ is a variant of Voiculescu's free entropy depending on a free ultrafilter $\omega$ on $\N$ (Proposition \ref{prop:entropymaximizer}).  This idea was suggested by the results and comments in \cite[\S 3.7]{Voiculescu2002}, \cite{BCG2003}, and \cite{Hiai2005}, but these papers were not able to directly show the existence of maximizers for technical reasons.  We generalize Voiculescu's change of variables formula for entropy to the setting of non-commutative smooth functions (Proposition \ref{prop:changeofvariables}).  We show that any free Gibbs law for $V$ satisfies a certain integration-by-parts relation (Proposition \ref{prop:integrationbyparts}) and we deduce an exponential bound for $\nu$ directly from this equation (Theorem \ref{thm:magicnormbound}).  Finally, we show in Proposition \ref{prop:generic} that (for a fixed $\omega$) ``most'' potentials $V$ with bounded first and second derivative have a unique free Gibbs law.

\subsection{Microstates free entropy and free Gibbs laws}

Free Gibbs laws for a potential $V$ will be defined as the maximizers of a certain entropy functional $\chi_V^\omega$.  This is a variant of Voiculescu's microstates free entropy $\chi$ that uses limits along an ultrafilter.  We also slightly modify Voiculescu's framework.  Rather than assuming a priori that the non-commutative laws arise from bounded operators, we allow ourselves to work with something like measures of finite variance, or more precisely, linear functionals defined on a space $\mathcal{C}$ of test functions with quadratic growth at $\infty$.  Thus, we will work with matricial microstate spaces that do not have any operator-norm cutoff.

In the end, we will show that for $V$ satisfying certain bounds on the first and second derivative, the free Gibbs laws are automatically given as the non-commutative laws of bounded operators.  Thus, the space $\mathcal{C}$ is mostly a technical artifice.  We will therefore allow ourselves an ad hoc definition of $\mathcal{C}$ for the sake of making the statements and proofs cleaner.

Let $V_0 \in \tr(C_{\tr}(\R^{*d}))$ be given by
\[
V_0^{\cA,\tau}(\mathbf{X}) = \frac{1}{2} \sum_{j=1}^d \tau(X_j^*X_j) =  \frac{1}{2} \norm{\mathbf{X}}_2^2.
\]
Note that if $g \in C_{\tr}^1(\R^{*d})$ has bounded first derivatives, then $g$ is $\norm{\cdot}_2$-Lipschitz; more precisely, for all $(\cA,\tau) \in \mathbb{W}$ and $\mathbf{X}, \mathbf{Y} \in \cA_{\sa}^d$, we have
\[
\norm{g^{\cA,\tau}(\mathbf{X}) - g^{\cA,\tau}(\mathbf{Y})}_2 \leq \norm{\partial g}_{BC_{\tr}(\R^{*d},\mathscr{M}^1)} \norm{\mathbf{X} - \mathbf{Y}}_2.
\]
In particular, $\tr(g^*g)$ is bounded by a constant times $1 + V_0$.  Hence, if $g$ and $h$ are in $C_{\tr}^1(\R^{*d})$ and have bounded first derivative, then $\tr(gh) / (1 + V_0)$ is bounded.

We define $\mathcal{C}$ to be the set of $f \in \tr(C_{\tr}(\R^{*d}))$ such that $f / (1 + V_0) \in \tr(BC_{\tr}(\R^{*d}))$ and such that $f / (1 + V_0)$ is the limit in $\tr(BC_{\tr}(\R^{*d}))$ of a sequence $f_n / (1 + V_0)$, where each $f_n$ is a linear combination of functions of the form $\tr(gh)$, where $g$ and $h \in C_{\tr}^1(\R^{*d})$ have bounded first derivatives.  We equip $\mathcal{C}$ with the norm
\[
\norm{f}_{\mathcal{C}} = \norm{f/(1 + V_0)}_{BC_{\tr}(\R^{*d})},
\]
which makes $\mathcal{C}$ into a Banach space.  Note that $V_0 \in \mathcal{C}$, since $V_0 = (1/2) \sum_{j=1}^d \tr(x_j^2)$ and $x_j$ has bounded first derivative.  Clearly, $\mathcal{C}$ also contains $\tr(g) = \tr(1g)$ for any $g \in C_{\tr}(\R^{*d})$ with bounded first derivative.

\begin{remark}
	In fact, the property that elements of the form $\tr(gh)$, where $g$ and $h$ have bounded first derivatives, span a dense subspace of $\mathcal{C}$ is only needed at the end of the proof of Theorem \ref{thm:magicnormbound}.  The rest of the results of this section would hold with $\mathcal{C}$ replaced with the larger space of functions $f \in \tr(C_{\tr}(\R^{*d}))$ such that $f / (1 + V_0)$ is bounded.
\end{remark}

The next lemma describes how non-commutative laws give rise to linear functionals on $\mathcal{C}$.

\begin{lemma}
	Let $\mathcal{C}^\star$ denote the Banach-space dual of $\mathcal{C}$.  There is an injective map $I: \Sigma_d \to \mathcal{C}^\star$ given by
	\[
	I(\lambda)(f) = f^{\cA,\tau}(\mathbf{X}),
	\]
	where $\mathbf{X}$ is a $d$-tuple of operators in $(\cA,\tau)$ which realizes the law $\lambda$.  We also have
	\begin{equation} \label{eq:dualnormequality}
		\norm{I(\lambda)}_{\mathcal{C}^\star} = 1 + \sum_{j=1}^d \lambda(x_j^2).
	\end{equation}
	For each $R > 0$, $I|_{\Sigma_{d,R}}$ is a homeomorphism onto its image with respect to the weak-$\star$ topologies on $\Sigma_{d,R}$ and $\mathcal{C}^\star$
\end{lemma}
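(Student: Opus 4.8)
The plan is to treat each $f \in \mathcal{C} \subseteq \tr(C_{\tr}(\R^{*d}))$ as a continuous function on each ball $\Sigma_{d,R}$ of non-commutative laws and then read off all four assertions from this viewpoint. Recall that scalar-valued trace polynomials are continuous functions on $\Sigma_d$, are dense in $C(\Sigma_{d,R})$ by the Stone--Weierstrass theorem, and that for scalar-valued $f$ the seminorm $\norm{f}_{C_{\tr}(\R^{*d}),R}$ coincides with $\sup_{\lambda \in \Sigma_{d,R}} |f(\lambda)|$ (since laws of $d$-tuples with $\norm{\mathbf{X}}_\infty \le R$ are exactly the elements of $\Sigma_{d,R}$). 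Consequently every $f \in \tr(C_{\tr}(\R^{*d}))$ restricts to a continuous function on each $\Sigma_{d,R}$, and for $\lambda$ realized by a bounded tuple $\mathbf{X} \in \cA_{\sa}^d$ (Proposition \ref{prop:GNS}) the value $f^{\cA,\tau}(\mathbf{X})$ is independent of the realization and equals $f(\lambda)$. In particular $I(\lambda)(f) = f(\lambda)$ is well defined, linear in $f$, and bounded by $\norm{f/(1+V_0)}_{BC_{\tr}(\R^{*d})}\,(1 + V_0^{\cA,\tau}(\mathbf{X}))$, so $I(\lambda) \in \mathcal{C}^\star$ with $\norm{I(\lambda)}_{\mathcal{C}^\star}$ at most the right-hand side of \eqref{eq:dualnormequality}; the reverse inequality follows by evaluating at $f = 1 + V_0$, which lies in $\mathcal{C}$ with $\norm{\,\cdot\,}_{\mathcal{C}} = 1$.

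For injectivity I would exhibit enough members of $\mathcal{C}$ to recover all moments of $\lambda$. Fix $R$ and choose $\psi \in C_c^\infty(\R)$ with $\psi(t) = t$ on $[-R,R]$; by Proposition \ref{prop:smoothfunctionalcalculus} each $\psi(x_j)$ is a bounded element of $C_{\tr}^1(\R^{*d})$ with bounded first derivative, and by the product rule (Theorem \ref{thm:chainrule}) so is every product $\psi(x_{i_1}) \cdots \psi(x_{i_\ell})$. Hence $\tr(\psi(x_{i_1}) \cdots \psi(x_{i_\ell}))$ has the form $\tr(gh)$ with $g,h$ of bounded first derivative, so it lies in $\mathcal{C}$. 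If $\lambda \in \Sigma_{d,R}$ is realized by $\mathbf{X}$ with $\norm{\mathbf{X}}_\infty \le R$, functional calculus gives $\psi(X_{i_j}) = X_{i_j}$, so $I(\lambda)$ applied to this function equals $\lambda(x_{i_1} \cdots x_{i_\ell})$. Thus $I(\lambda)$ determines every moment of $\lambda$, and $I$ is injective.

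For the homeomorphism statement, observe that $\Sigma_{d,R}$ is compact and the weak-$\star$ topology on $\mathcal{C}^\star$ is Hausdorff, so it suffices to prove that $I|_{\Sigma_{d,R}}$ is continuous from the weak-$\star$ topology of $\Sigma_{d,R}$ to that of $\mathcal{C}^\star$: a continuous injection from a compact space into a Hausdorff space is automatically a homeomorphism onto its image. Continuity is immediate from the first paragraph, since if $\lambda_n \to \lambda$ in $\Sigma_{d,R}$ then $I(\lambda_n)(f) = f(\lambda_n) \to f(\lambda) = I(\lambda)(f)$ for every $f \in \mathcal{C}$, $f$ being continuous on $\Sigma_{d,R}$. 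The only steps requiring genuine care are the bookkeeping of the first two paragraphs --- identifying $f^{\cA,\tau}(\mathbf{X})$ with the continuous-function value $f(\lambda)$, and checking that the functional-calculus products indeed belong to $\mathcal{C}$ --- and these are routine given Proposition \ref{prop:smoothfunctionalcalculus} and Theorem \ref{thm:chainrule}; I do not expect any substantive obstacle.
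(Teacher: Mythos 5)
Your proposal is correct, and most of it runs along the same lines as the paper: the boundedness estimate together with evaluation at $f = 1 + V_0$ for the norm identity, and the cut-off trick $\tr\bigl(p(\psi(x_1),\dots,\psi(x_d))\bigr)$ with $\psi \in C_c^\infty$, $\psi(t)=t$ on $[-R,R]$, to recover all moments and get injectivity. The one place you genuinely diverge is the homeomorphism claim: the paper verifies both directions of continuity by hand --- the forward direction by density of trace polynomials in $\tr(C_{\tr}(\R^{*d}))$, and the inverse direction by observing that $\lambda_i(p)$ can be read off from $I(\lambda_i)$ applied to the same cut-off test functions --- whereas you prove only forward continuity and then invoke the soft fact that a continuous injection from the compact space $\Sigma_{d,R}$ into the Hausdorff space $(\mathcal{C}^\star,\text{weak-}\star)$ is a homeomorphism onto its image. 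Both are sound; your route is shorter and leans on the compactness of $\Sigma_{d,R}$ already recorded in the paper, while the paper's direct argument exhibits explicit test functions witnessing inverse continuity and would survive even without compactness. One small point of bookkeeping: what your argument (and the paper's own proof) actually yields is $\norm{I(\lambda)}_{\mathcal{C}^\star} = I(\lambda)(1+V_0) = 1 + \tfrac{1}{2}\sum_{j=1}^d \lambda(x_j^2)$, which differs by the factor $\tfrac12$ from the displayed right-hand side of \eqref{eq:dualnormequality}; this normalization slip is in the statement itself, so you should simply state the value you prove rather than refer to the displayed expression.
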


\begin{proof}
	To see that $I$ is injective, suppose that $\lambda$, $\mu \in \Sigma_{d,R}$ for some $R$ and $I(\lambda) = I(\mu)$.  Let $\phi \in C_c^\infty(\R;\R)$ with $\phi(t) = t$ for $|t| \leq R$.  If $p$ is a non-commutative polynomial in $d$ variables, then $f(x) := \tr(p(\phi(x_1),\dots,\phi(x_d)))$ is in $\tr(BC_{\tr}(\R^{*d}))$, hence $f \in \mathcal{C}$.  Since $f = \tr(p)$ on the ball of radius $R$, we have
	\[
	\lambda(p) = I(\lambda)(f) = I(\mu)(f) = \mu(p).
	\]
	
	Next, to show \eqref{eq:dualnormequality}, note that if $f \in \mathcal{C}$ with $\norm{f}_{\mathcal{C}} \leq 1$, then $|f| \leq 1 + V_0$ and hence
	\[
	|I(\lambda)(f)| \leq I(\lambda)(1 + V_0) = 1 + \frac{1}{2} \sum_{j=1}^d \lambda(x_j^2),
	\]
	while on the other hand equality is clearly achieved for $f = 1 + V_0$.
	
	Finally, we show that $I|_{\Sigma_{d,R}}$ is a weak-$\star$ homeomorphism onto its image.  Consider a net $\lambda_i$ and a potential limit point $\lambda$. Let $\nu_i$ and $\nu$ be the corresponding homomorphisms $\tr(C_{\tr}(\R^{*d})) \to \C$.  If $\lambda_i \to \lambda$ in the weak-$\star$ topology, then $\nu_i(f) \to \nu(f)$ for every scalar-valued trace polynomial $f$ and hence for every $f \in \tr(C_{\tr}(\R^{*d}))$ by density.  Since $I(\lambda_i) = \nu_i|_{\mathcal{C}}$ and $I(\lambda) = \nu|_{\mathcal{C}}$, we have $I(\lambda_i) \to I(\lambda)$ in the weak-$\star$ topology.  Conversely, if $I(\lambda_i) \to I(\lambda)$ in the weak-$\star$ topology, then $\lambda_i \to \lambda$ in the weak-$\star$ topology because we can compute $\lambda_i(p)$ as $I(\lambda)(\tr(p(\phi,\dots,\phi))$, where $\phi$ is a cut-off function as in the first part of the proof.
\end{proof}

We will denote the weak-$\star$ closure of $I(\Sigma_d)$ in $\mathcal{C}^\star$ by $\mathcal{E}$.  By the Banach-Alaoglu theorem, closed and bounded subsets of $\mathcal{C}^{\star}$ (and in particular of $\mathcal{E}$) are compact, which will become important later for proving the existence of maximizers of $\chi_V$.  Indeed, using Voiculescu's original definition of $\chi$, it is possible to find a maximizer of $\Sigma_{d,R}$ (laws where the operator norm is bounded by $R$) because it is compact, but it not clear whether we obtain a global maximum over $\Sigma_d$ (without using external information).  On the other hand, compactness of the space of laws in $\mathcal{E}$ with ``second moment'' bounded by $R$ is enough to obtain a global maximizer in Proposition \ref{prop:entropymaximizer} below.

\begin{remark}
	Unfortunately, the price we pay for such compactness is that there exist ``spurious'' laws in $\mathcal{E}$ that do not arise from any $d$-tuple of operators in $L^2(\cA,\tau)$ for any $(\cA,\tau) \in \mathbb{W}$.  Examples can be constructed as follows.  Let $\mathbf{X}^{(n)}$ be some $d$-tuple of operators with such that $X_j^{(n)}$ has spectral measure $\frac{1}{2n}(\delta_n + \delta_{-n}) + (1 - \frac{1}{2n}) \delta_0$.  Note that the second moment of $X_j^{(n)}$ is $1$.  By compactness, the sequence $(I(\lambda_{\mathbf{X}^{(n)}}))_{n \in \N}$ has a  weak-$\star$ limit point $\nu \in \mathcal{E}$.  Then $\nu(\tr(x_j^2)) = 1$ but $\nu(\tr(\phi(x_j))) = \phi(0)$ for every $\phi \in C_c(\R)$, which is impossible if $\nu$ arose from a $d$-tuple in $L^2$ of a tracial $\mathrm{W}^*$-algebra.
\end{remark}

Free entropy will be defined as the exponential growth rate of microstate spaces.  When studying such exponential growth rates, we do not know whether the limits in question exist; see \cite[\S 2.3, Remark a]{Voiculescu2002} or \cite[\S 7]{BCG2003}.  This stands in contrast with other more classical notions of entropy where subadditivity guarantees the existence of limits.  This problem may seem technical on the surface, but it relates to deep model-theoretic questions about the asymptotic behavior of the matrix algebras $M_N(\C)$ as $N \to \infty$; see \cite[\S 6.4]{FHS2014} and \cite[\S 13.7]{JekelThesis}.  Thus, free entropy has $\limsup$ and $\liminf$ variants as well as a version where we take the limit along a free ultrafilter \cite{Voiculescu2002}.

The ultrafilter approach will be convenient for our purposes.  Let $\beta \N$ denote the Stone-\v{C}ech compactification of $\N$.  Recall that $\beta \N$ is a compact space containing $\N$ as an open dense subset, and any function from $\N$ into a compact Hausdorff space $\Omega$ extends uniquely to a continuous function $\beta \N \to \Omega$.  In particular, if $(a^{(N)})_{N \in \N}$ is a bounded sequence of complex numbers, and if $\omega \in \beta \N$, then $\lim_{N \to \omega} a^{(N)}$ exists.  Similarly, for any sequence in $[-\infty,\infty]$, the limit as $N \to \omega$ exists in $[-\infty,\infty]$.

\begin{definition}
	For $\mathcal{U} \subseteq \mathcal{C}^\star$, we define the microstate space
	\[
	\Gamma^{(N)}(\mathcal{U}) = \{\mathbf{X} \in M_N(\C)_{\sa}^d: I(\lambda_{\mathbf{X}}) \in \mathcal{U}\}.
	\]
\end{definition}

\begin{definition}
	Let $V \in \mathcal{C}$ such that $V^{\cA,\tau}(\mathbf{X}) \geq a V_0 + b$ for some $a > 0$ and $b \in \R$.  Then we define a probability measure $\mu_V^{(N)}$ on $M_N(\C)_{\sa}^d$ by
	\[
	d\mu_V^{(N)}(\mathbf{X}) =  \frac{1}{Z_V^{(N)}} e^{-N^2 V^{M_N(\C),\tr_N}(\mathbf{X})}\,d\mathbf{X},
	\]
	where
	\[
	Z_V^{(N)} = \int_{M_N(\C)_{\sa}^d} e^{-N^2 V^{M_N(\C),\tr_N}(\mathbf{X})}\,d\mathbf{X}.
	\]
	Here $d\mathbf{X}$ denotes Lebesgue measure on $M_N(\C)_{\sa}^d$, which is a real inner product space of dimension $dN^2$ with respect to $\ip{\cdot,\cdot}_2$ and hence has a canonical Lebesgue measure obtained by mapping it onto $\R^{dN^2}$ by a linear isometry.  Note that the lower bound for $V$ implies that $e^{-N^2 V}$ is integrable on $M_N(\C)_{\sa}^d$.
\end{definition}

\begin{definition} \label{def:entropy}
	Let $V$ be as above, let $\nu \in \mathcal{C}^\star$, and let $\omega \in \beta \N \setminus \N$.  We define
	\[
	\chi_V^\omega(\nu) = \inf_{\text{open } \mathcal{U} \ni \nu} \limsup_{N \to \omega} \frac{1}{N^2} \log \mu_V^{(N)}(\Gamma^{(N)}(\mathcal{U})),
	\]
	where the infimum is taken over all weak-$\star$ neighborhoods $\mathcal{U}$ of $\nu$ in $\mathcal{C}^\star$.
\end{definition}

\begin{observation} \label{obs:neighborhoodlimit}
	If $\mathcal{U} \subseteq \mathcal{V}$, then $\mu_V^{(N)}(\Gamma^{(N)}(\mathcal{U})) \leq \mu_V^{(N)}(\Gamma^{(N)}(\mathcal{V}))$.  Hence, $\chi_V^\omega(\nu)$ is the limit of the net $\limsup_{N \to \omega} \frac{1}{N^2} \log \mu_V^{(N)}(\Gamma^{(N)}(\mathcal{U}))$ as $\mathcal{U}$ tends to $\{\nu\}$, that is, the limit of the net over the directed system of neighborhoods of $\mathcal{U}$ ordered by reverse inclusion.
\end{observation}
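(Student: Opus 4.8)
The plan is to verify the two assertions separately; both are elementary monotonicity statements and the only point requiring a little care is the direction of the order.

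For the first assertion, note that $\Gamma^{(N)}(\mathcal{U}) = \{\mathbf{X} \in M_N(\C)_{\sa}^d : I(\lambda_{\mathbf{X}}) \in \mathcal{U}\}$ is monotone in $\mathcal{U}$: if $\mathcal{U} \subseteq \mathcal{V}$, then any $\mathbf{X}$ with $I(\lambda_{\mathbf{X}}) \in \mathcal{U}$ also satisfies $I(\lambda_{\mathbf{X}}) \in \mathcal{V}$, so $\Gamma^{(N)}(\mathcal{U}) \subseteq \Gamma^{(N)}(\mathcal{V})$. (These sets are Borel because the map $\mathbf{X} \mapsto I(\lambda_{\mathbf{X}})$ is continuous from $M_N(\C)_{\sa}^d$ into $\mathcal{C}^\star$ with its weak-$\star$ topology; in fact $\Gamma^{(N)}(\mathcal{U})$ is open when $\mathcal{U}$ is, so the measure $\mu_V^{(N)}(\Gamma^{(N)}(\mathcal{U}))$ is well-defined.) Then $\mu_V^{(N)}(\Gamma^{(N)}(\mathcal{U})) \leq \mu_V^{(N)}(\Gamma^{(N)}(\mathcal{V}))$ is just monotonicity of the measure $\mu_V^{(N)}$.

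For the second assertion, let $D$ be the collection of all weak-$\star$-open neighborhoods of $\nu$ in $\mathcal{C}^\star$, ordered by reverse inclusion: $\mathcal{U} \preceq \mathcal{V}$ means $\mathcal{V} \subseteq \mathcal{U}$. This is a directed set, being nonempty ($\mathcal{C}^\star \in D$) and closed under the "meet" $\mathcal{U}_1 \cap \mathcal{U}_2$, which again lies in $D$ and dominates both $\mathcal{U}_1$ and $\mathcal{U}_2$ in the order $\preceq$. Set
\[
a(\mathcal{U}) := \limsup_{N \to \omega} \frac{1}{N^2} \log \mu_V^{(N)}(\Gamma^{(N)}(\mathcal{U})) \in [-\infty,\infty].
\]
Recall that along the ultrafilter $\omega$ this $\limsup$ is in fact a genuine limit, and that $\lim_{N \to \omega}$ preserves the pointwise order of $[-\infty,\infty]$-valued sequences. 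Combining this with the first assertion, $\mathcal{U} \subseteq \mathcal{V}$ implies $a(\mathcal{U}) \leq a(\mathcal{V})$, so the net $(a(\mathcal{U}))_{\mathcal{U} \in D}$ is nonincreasing with respect to $\preceq$ (moving up in $D$ shrinks $\mathcal{U}$ as a set, hence lowers the value).

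Finally, a nonincreasing net in $[-\infty,\infty]$ converges to its infimum over the directed set: writing $L = \inf_{\mathcal{U} \in D} a(\mathcal{U})$, for any $c > L$ pick $\mathcal{U}_0 \in D$ with $a(\mathcal{U}_0) < c$; then every $\mathcal{U} \succeq \mathcal{U}_0$ satisfies $L \leq a(\mathcal{U}) \leq a(\mathcal{U}_0) < c$, so the net is eventually in any neighborhood of $L$. Since $\inf_{\mathcal{U} \in D} a(\mathcal{U})$ is exactly $\chi_V^\omega(\nu)$ by Definition \ref{def:entropy}, the net converges to $\chi_V^\omega(\nu)$, as claimed. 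There is no real obstacle here; the only thing to keep straight is that shrinking $\mathcal{U}$ corresponds to moving \emph{up} the directed system under reverse inclusion, so that the net is \emph{decreasing} and therefore converges to its \emph{infimum}.
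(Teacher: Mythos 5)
Your proof is correct and is exactly the elementary argument the paper leaves implicit (the statement is an Observation given without proof): monotonicity of $\Gamma^{(N)}$ and of the measure, order-preservation of the limit along $\omega$, and the fact that a nonincreasing net in $[-\infty,\infty]$ converges to its infimum, which is $\chi_V^\omega(\nu)$ by Definition \ref{def:entropy}. Your parenthetical justification that $\Gamma^{(N)}(\mathcal{U})$ is open (via continuity of $\mathbf{X}\mapsto I(\lambda_{\mathbf{X}})$ into the weak-$\star$ topology) is a welcome extra detail that the paper takes for granted.
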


\begin{definition}
	We say that $\nu \in \mathcal{C}^{\star}$ is a \emph{free Gibbs law for $V$ with respect to $\omega$} if it maximizes $\chi_V^\omega$.
\end{definition}

\begin{proposition}~
	Let $V \in \mathcal{C}$ with $V \geq a V_0 + b$ for some $a > 0$ and $b \in \R$.  Let $\omega \in \beta \N \setminus \N$.
	\begin{enumerate}[(1)]
		\item We have $\chi_V^\omega(\nu) \leq 0$.
		\item $\chi_V^\omega$ is upper semi-continuous on $\mathcal{C}^\star$ with respect to the weak-$\star$ topology.
		\item If $\chi_V^\omega(\nu) > -\infty$, then $\nu$ must be in $\mathcal{E}$, that is, the weak-$\star$ closure of $I(\Sigma_d)$.  In particular, we have $\nu(1) = 1$, $\nu(f) \geq 0$ for every nonnegative $f \in \mathcal{C}$, and $\nu(fg) = \nu(f) \nu(g)$ whenever $f$, $g$, and $fg$ are in $\mathcal{C}$.
	\end{enumerate}
\end{proposition}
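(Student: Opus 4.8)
The plan is to verify the three assertions in order; each reduces to a short observation about Definition~\ref{def:entropy}, and none of them requires the full strength of the hypothesis on $V$.

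For (1), I would simply note that $\mu_V^{(N)}$ is a probability measure, so $\mu_V^{(N)}(\Gamma^{(N)}(\mathcal{U})) \leq 1$ for every weak-$\star$ neighborhood $\mathcal{U}$, hence $\tfrac{1}{N^2}\log\mu_V^{(N)}(\Gamma^{(N)}(\mathcal{U})) \leq 0$ for all $N$ (with the convention $\log 0 = -\infty$). Taking $\limsup_{N\to\omega}$ and then the infimum over $\mathcal{U}$ preserves the bound $\leq 0$. For (2), write $g(\mathcal{U}) := \limsup_{N\to\omega}\tfrac{1}{N^2}\log\mu_V^{(N)}(\Gamma^{(N)}(\mathcal{U}))$, so that $\chi_V^\omega(\nu) = \inf_{\mathcal{U}\ni\nu} g(\mathcal{U})$ over open $\mathcal{U}$. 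I would show each sublevel set $\{\nu : \chi_V^\omega(\nu) < c\}$ is weak-$\star$ open: if $\chi_V^\omega(\nu_0) < c$, then by definition of the infimum there is an open $\mathcal{U}_0 \ni \nu_0$ with $g(\mathcal{U}_0) < c$; but $\mathcal{U}_0$ is then an admissible neighborhood for every $\nu \in \mathcal{U}_0$, so $\chi_V^\omega(\nu) \leq g(\mathcal{U}_0) < c$ throughout $\mathcal{U}_0$. Hence $\mathcal{U}_0$ lies in the sublevel set, which is therefore open; this is upper semi-continuity.

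The only real content is in (3), and even there the argument is short. The key observation is that every matricial microstate automatically lands in $I(\Sigma_d)$: for $\mathbf{X} \in M_N(\C)_{\sa}^d$ one has $\lambda_{\mathbf{X}} \in \Sigma_d$, so $I(\lambda_{\mathbf{X}}) \in I(\Sigma_d) \subseteq \mathcal{E}$. Thus if $\nu \notin \mathcal{E}$, then since $\mathcal{E}$ is weak-$\star$ closed the complement $\mathcal{U} := \mathcal{C}^\star \setminus \mathcal{E}$ is an open neighborhood of $\nu$ with $\mathcal{U} \cap I(\Sigma_d) = \emptyset$, which forces $\Gamma^{(N)}(\mathcal{U}) = \{\mathbf{X} : I(\lambda_{\mathbf{X}}) \in \mathcal{U}\} = \emptyset$ for all $N$; hence $\mu_V^{(N)}(\Gamma^{(N)}(\mathcal{U})) = 0$, $g(\mathcal{U}) = -\infty$, and therefore $\chi_V^\omega(\nu) = -\infty$. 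This is precisely the contrapositive of the first assertion of (3). I expect the one point meriting attention is exactly this: that the microstate space is \emph{literally empty}, not merely of Lebesgue measure zero, once the neighborhood avoids $I(\Sigma_d)$, which is what collapses $\chi_V^\omega$ to $-\infty$.

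For the ``in particular'' clause, I would argue that each of $\nu(1) = 1$, $\nu(f) \geq 0$ (for a fixed nonnegative $f \in \mathcal{C}$), and $\nu(fg) = \nu(f)\nu(g)$ (for fixed $f,g$ with $f,g,fg \in \mathcal{C}$) cuts out a weak-$\star$ closed subset of $\mathcal{C}^\star$, since $\nu \mapsto \nu(1)$, $\nu \mapsto \nu(f)$, and $\nu \mapsto \nu(fg) - \nu(f)\nu(g)$ are weak-$\star$ continuous as fixed-argument evaluations and continuous combinations thereof (here using $1 \in \mathcal{C}$). Each such closed set contains $I(\Sigma_d)$: if $\lambda \in \Sigma_d$ is realized by a $d$-tuple $\mathbf{X}$ in a tracial $\mathrm{W}^*$-algebra $(\cA,\tau)$, then $I(\lambda)(1) = \tau(1) = 1$, $I(\lambda)(f) = f^{\cA,\tau}(\mathbf{X}) \geq 0$ for nonnegative $f$, and $I(\lambda)(fg) = (fg)^{\cA,\tau}(\mathbf{X}) = f^{\cA,\tau}(\mathbf{X})\,g^{\cA,\tau}(\mathbf{X}) = I(\lambda)(f)\,I(\lambda)(g)$. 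Therefore each closed set contains $\mathcal{E}$, and since (3) places $\nu$ in $\mathcal{E}$, the stated properties follow. I do not anticipate any genuine obstacle; the whole proposition is a sequence of formal manipulations with the definition together with the Banach--Alaoglu setup already established.
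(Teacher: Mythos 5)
Your proposal is correct and follows essentially the same route as the paper: (1) is the probability-measure bound, (3) rests on the microstate space being literally empty outside $\mathcal{E}$ and on the defining conditions being weak-$\star$ closed and satisfied on $I(\Sigma_d)$. For (2) your direct verification that the strict sublevel sets $\{\chi_V^\omega < c\}$ are open is just a reformulation of the paper's observation that $\chi_V^\omega$ is an infimum of upper semi-continuous functions, so the difference is purely cosmetic.
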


\begin{proof}
	(1) This is immediate since $\mu_V^{(N)}$ is a probability measure.
	
	(2) For each weak-$\star$ open set $\mathcal{U} \subseteq \mathcal{C}^\star$, define
	\[
	\chi_{V,\mathcal{U}}^\omega(\nu) = \begin{cases} \lim_{N \to \omega} \frac{1}{N^2} \log \mu_V^{(N)}(\Gamma^{(N)}(\mathcal{U})), & \nu \in \mathcal{U}, \\ \infty, & \nu \not \in \mathcal{U}. \end{cases}
	\]
	Thus, $\chi_{V,\mathcal{U}}^\omega$ only takes two values, one of which is $\infty$.  Since $\mathcal{U}$ is open, $\chi_{V,\mathcal{U}}^\omega$ is upper semi-continuous.  Observe that $\chi_V = \inf_{\text{open } \mathcal{U}} (\chi_{V,\mathcal{U}}^\omega)$, hence $\chi_V^\omega$ is upper semi-continuous as the infimum of a family of upper semi-continuous functions.
	
	(3) Let $\mathcal{E}$ be the weak-$\star$ closure of $I(\Sigma_d)$.  Then $\mathcal{C}^\star \setminus \mathcal{E}$ is an open set.  Since $I(\lambda_{\mathbf{X}}) \in \mathcal{E}$ for every matrix tuple $\mathbf{X}$, we have $\Gamma^{(N)}(\mathcal{C}^\star \setminus \mathcal{E}) = \varnothing$.  Hence, if $\nu \in \mathcal{C}^\star \setminus \mathcal{E}$, we have
	\[
	\chi_V^\omega(\nu) \leq \lim_{N \to \omega} \frac{1}{N^2} \log \mu_V^{(N)}(\Gamma^{(N)}(\mathcal{C}^\star \setminus \mathcal{E})) = -\infty.
	\]
	Thus, by contrapositive, if $\chi_V(\nu) > -\infty$, then $\nu \in \mathcal{E}$.
	
	Clearly, if $\nu \in I(\Sigma_d)$, then $\nu(1) = 1$, $\nu(f) \geq 0$ for $f \geq 0$, and $\nu(fg) = \nu(f) \nu(g)$ whenever $f$, $g$, and $fg$ are in $\mathcal{C}$.  Since these conditions are given by equalities or non-strict inequalities of quantities that are weak-$\star$ continuous functions in $\nu$, they also hold for $\nu$ in the closure of $I(\Sigma_d)$.
\end{proof}

\begin{proposition}
	Suppose that $V \in \mathcal{C}$ and $V \geq a V_0 + b$ for some $a > 0$ and $b \in \R$ and let $\omega \in \beta \N \setminus \N$.  Then
	\[
	\frac{1}{N^2} \log Z_V^{(N)} + d \log N
	\]
	is bounded as $N \to \infty$.  Moreover, the quantity
	\begin{equation} \label{eq:chi}
		\chi_V^\omega(\nu) + \nu(V) + \lim_{N \to \omega} \left( \frac{1}{N^2} \log Z_V^{(N)} + d \log N \right)
	\end{equation}
	is independent of $V$, so long as $V \geq a V_0 + b$ for some $a > 0$ and $b \in \R$.  Denoting this quantity by $\chi^\omega(\nu)$, we have
	\begin{equation} \label{eq:chiestimate}
		\chi^\omega(\nu) \leq \frac{d}{2} \log \frac{2\nu(V_0)}{d} + \frac{d}{2} \log 2\pi e.
	\end{equation}
\end{proposition}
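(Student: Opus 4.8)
The plan is to establish the three claims in order by explicit Gaussian integration over $M_N(\C)_{\sa}^d$, which we view as $\R^{dN^2}$ via a linear isometry for $\ip{\cdot,\cdot}_2$, so that $\int e^{-(\alpha/2)\norm{\mathbf{X}}_2^2}\,d\mathbf{X} = (2\pi/\alpha)^{dN^2/2}$ for $\alpha>0$. For the boundedness of $z_V^{(N)} := \frac{1}{N^2}\log Z_V^{(N)} + d\log N$, I would use the two-sided control on $V$. Since $V_0 = \frac12\norm{\mathbf{x}}_2^2$, the hypothesis $V \geq aV_0 + b$ with $a>0$ gives $Z_V^{(N)} \leq e^{-N^2 b}(2\pi/(N^2 a))^{dN^2/2}$, hence $z_V^{(N)} \leq -b + \frac{d}{2}\log(2\pi/a)$; conversely $V \in \mathcal{C}$ means $|V^{\cA,\tau}(\mathbf{X})| \leq C(1+V_0)$ with $C=\norm{V}_{\mathcal{C}}$, so $Z_V^{(N)} \geq e^{-N^2 C}(2\pi/(N^2 C))^{dN^2/2}$ and $z_V^{(N)} \geq -C + \frac{d}{2}\log(2\pi/C)$. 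This proves (1) and, crucially for what follows, shows that $z_{V_1}^{(N)} - z_{V_2}^{(N)}$ is a bounded sequence for any two admissible $V_1,V_2$.

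For the independence of the expression in \eqref{eq:chi} on $V$, let $V_1,V_2$ be admissible and fix $\epsilon>0$. Because $V_1,V_2\in\mathcal{C}$, the set $\mathcal{U}_\epsilon = \{\phi\in\mathcal{C}^\star : |\phi(V_i)-\nu(V_i)|<\epsilon,\ i=1,2\}$ is a weak-$\star$ neighborhood of $\nu$. For any $\mathcal{U}\subseteq\mathcal{U}_\epsilon$ and $\mathbf{X}\in\Gamma^{(N)}(\mathcal{U})$ we have $V_i^{M_N(\C),\tr_N}(\mathbf{X}) = I(\lambda_{\mathbf{X}})(V_i)$, so both stay within $\epsilon$ of $\nu(V_1),\nu(V_2)$ there, whence $|(V_1-V_2)^{M_N(\C),\tr_N}(\mathbf{X}) - (\nu(V_1)-\nu(V_2))|<2\epsilon$. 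Writing $e^{-N^2 V_1} = e^{-N^2(V_1-V_2)}e^{-N^2 V_2}$ and integrating over $\Gamma^{(N)}(\mathcal{U})$ gives
\[
e^{-2N^2\epsilon - N^2(\nu(V_1)-\nu(V_2))}\int_{\Gamma^{(N)}(\mathcal{U})} e^{-N^2 V_2} \ \leq\ \int_{\Gamma^{(N)}(\mathcal{U})} e^{-N^2 V_1} \ \leq\ e^{2N^2\epsilon - N^2(\nu(V_1)-\nu(V_2))}\int_{\Gamma^{(N)}(\mathcal{U})} e^{-N^2 V_2}.
\]
Dividing by $Z_{V_1}^{(N)} = (Z_{V_1}^{(N)}/Z_{V_2}^{(N)})\,Z_{V_2}^{(N)}$, taking $\frac{1}{N^2}\log$, then $\lim_{N\to\omega}$ (which exists in $[-\infty,\infty]$, respects the inequalities, and is additive against the bounded correction $z_{V_2}^{(N)}-z_{V_1}^{(N)}$), and finally the infimum over neighborhoods $\mathcal{U}\subseteq\mathcal{U}_\epsilon$ containing $\nu$ (cofinal among all such neighborhoods, by Observation \ref{obs:neighborhoodlimit}), we obtain
\[
\chi_{V_1}^\omega(\nu) + \nu(V_1) + \lim_{N\to\omega}z_{V_1}^{(N)} \ \leq\ \chi_{V_2}^\omega(\nu) + \nu(V_2) + \lim_{N\to\omega}z_{V_2}^{(N)} + 2\epsilon.
\]
Letting $\epsilon\to 0$ and exchanging $V_1,V_2$ yields equality, which is exactly the asserted independence and defines $\chi^\omega(\nu)$.

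For the estimate \eqref{eq:chiestimate}, I would exploit the freedom to compute $\chi^\omega(\nu)$ with the scaled quadratic $V_t := tV_0$ for arbitrary $t>0$: this lies in $\mathcal{C}$ and satisfies $V_t \geq tV_0$ with $t>0$, so it is admissible. Here $\chi_{V_t}^\omega(\nu)\leq 0$ since $\mu_{V_t}^{(N)}$ is a probability measure, $\nu(V_t) = t\nu(V_0)$, and $Z_{V_t}^{(N)} = (2\pi/(N^2 t))^{dN^2/2}$ gives $z_{V_t}^{(N)} = \frac{d}{2}\log(2\pi/t)$ for every $N$. Hence $\chi^\omega(\nu) \leq t\nu(V_0) + \frac{d}{2}\log(2\pi/t)$ for all $t>0$; minimizing the right-hand side at $t = d/(2\nu(V_0))$ (when $\nu(V_0)>0$) produces precisely $\frac{d}{2}\log\frac{2\nu(V_0)}{d} + \frac{d}{2}\log 2\pi e$, while if $\nu(V_0)=0$ the right-hand side tends to $-\infty$ as $t\to\infty$, consistent with the convention $\log 0 = -\infty$.

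The only delicate points are bookkeeping with ultrafilter limits — that $\lim_{N\to\omega}$ passes through the inequalities above and separates the bounded correction $z_{V_2}^{(N)}-z_{V_1}^{(N)}$ from the possibly $-\infty$ microstate term — and that the infimum over the cofinal family $\{\mathcal{U}:\nu\in\mathcal{U}\subseteq\mathcal{U}_\epsilon\}$ agrees with $\chi_V^\omega(\nu)$. Neither is a genuine obstacle; this is precisely why it is convenient to record the boundedness of $z_V^{(N)}$ first.
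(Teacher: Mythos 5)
Your proof is correct, and while the first two claims follow the paper's own route, your argument for \eqref{eq:chiestimate} is genuinely different and worth comparing. For the boundedness of $\frac{1}{N^2}\log Z_V^{(N)} + d\log N$ and the $V$-independence of \eqref{eq:chi}, you do essentially what the paper does: Gaussian sandwiching of $Z_V^{(N)}$ using $aV_0+b \le V \le \norm{V}_{\mathcal{C}}(1+V_0)$, and a comparison of $\int_{\Gamma^{(N)}(\mathcal{U})}e^{-N^2V_1}$ with $\int_{\Gamma^{(N)}(\mathcal{U})}e^{-N^2V_2}$ on small neighborhoods where $V_1-V_2$ is nearly constant equal to $\nu(V_1)-\nu(V_2)$, the bounded correction $z_{V_2}^{(N)}-z_{V_1}^{(N)}$ keeping the ultrafilter limits additive; your cofinality remark about $\{\mathcal{U}: \nu\in\mathcal{U}\subseteq\mathcal{U}_\epsilon\}$ is the same device as the paper's shrinking of $\mathcal{U}$ to $\nu$. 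For \eqref{eq:chiestimate}, however, the paper fixes the single potential $V_0$, proves exponential tail and small-ball estimates for $\mu_{V_0}^{(N)}(\{\norm{\mathbf{X}}_2 \gtrless d^{1/2}R\})$, and runs a case analysis according to whether $\nu(V_0)$ is larger than, equal to, or smaller than $d/2$ to bound $\chi_{V_0}^\omega(\nu)$ first. You instead exploit the freedom just established in \eqref{eq:chi}: evaluate it at the scaled quadratics $tV_0$, where $\chi_{tV_0}^\omega(\nu)\le 0$ is trivial and $Z_{tV_0}^{(N)}$ is computed exactly, obtaining $\chi^\omega(\nu)\le t\nu(V_0)+\frac{d}{2}\log(2\pi/t)$ for every $t>0$, and then optimize at $t=d/(2\nu(V_0))$. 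This avoids all concentration estimates and the case split, recovers exactly the constant $\frac{d}{2}\log\frac{2\nu(V_0)}{d}+\frac{d}{2}\log 2\pi e$ (indeed without the $\log_+$ that appears in the paper's intermediate bound \eqref{eq:chigestimate}), and handles $\nu(V_0)=0$ by letting $t\to\infty$; what it gives up is the explicit bound on $\chi_{V_0}^\omega(\nu)$ itself and the measure-concentration statement \eqref{eq:measurebound}, which the paper's route produces along the way and reuses in the proof of Proposition \ref{prop:entropymaximizer}. Both arguments silently assume $\nu(V_0)\ge 0$ when interpreting the right-hand side, which is harmless since otherwise $\nu\notin\mathcal{E}$ and $\chi^\omega(\nu)=-\infty$.
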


\begin{proof}
	Let $\sigma_{d,a}^{(N)}$ be the Gaussian measure on $M_N(\C)_{\sa}^d$ given by
	\[
	d\sigma_{d,a}^{(N)}(\mathbf{X}) = \frac{1}{Z_{aV_0}^{(N)}} e^{-N^2 aV_0^{M_N(\C),\tr_N}(\mathbf{X})}\,d\mathbf{X} = \frac{1}{Z_{aV_0}^{(N)}} e^{-N^2 a \norm{\mathbf{X}}_2^2/2}\,d\mathbf{X},
	\]
	where
	\[
	Z_{aV_0}^{(N)} = \int e^{-N^2 a V_0(\mathbf{X})} \,d\mathbf{X}.
	\]
	Since $M_N(\C)_{\sa}^d$ is a real inner product space of dimension $dN^2$, we have from a well-known computation that
	\[
	Z_{aV_0}^{(N)} = \left( \sqrt{2\pi / N^2 a} \right)^{dN^2} = \frac{(2\pi)^{dN^2/2}}{a^{dN^2/2} N^{dN^2}},
	\]
	hence
	\[
	\frac{1}{N^2} \log Z_{aV_0}^{(N)} + d \log N = \frac{d}{2} \log \frac{2\pi}{a}.
	\]
	We assumed that $V \in \mathcal{C}$ and $V \geq a V_0 + b$.  Since $V \in \mathcal{C}$, we also have $V \leq AV_0 + B$ for some $A > 0$ and $B \in \R$.  Thus,
	\[
	e^{-N^2AV_0} e^{-N^2B} \leq e^{-N^2 V} \leq e^{-N^2aV_0} e^{-N^2 b}.
	\]
	Hence,
	\[
	Z_{AV_0}^{(N)} e^{-N^2B} \leq Z_V^{(N)} \leq Z_{aV_0}^{(N)} e^{-N^2b}
	\]
	and
	\[
	-B + \frac{d}{2} \log \frac{2\pi}{A} \leq \log Z_V^{(N)} + d \log N \leq -a + \frac{d}{2} \log \frac{2\pi}{a},
	\]
	which proves the first claim about boundedness.
	
	Next, to show that \eqref{eq:chi} is independent of $V$, consider two potentials $V_1$ and $V_2$ satisfying the given assumptions.  Let $\mathcal{U}$ be a weak-$\star$ neighborhood of $\nu$ in $\mathcal{C}^\star$ such that $\psi(V_1 - V_2)$ is bounded for $\psi \in \mathcal{U}$.  Then
	\begin{align*}
		\mu_{V_1}^{(N)}(\Gamma^{(N)}(\mathcal{U}))
		&= \frac{1}{Z_{V_1}^{(N)}} \int_{\Gamma^{(N)}(\mathcal{U})} e^{-N^2 V_1(\mathbf{X})}\,d\mathbf{X} \\
		&\leq \frac{1}{Z_{V_1}^{(N)}} e^{N^2 \sup_{\psi \in \mathcal{U}} \psi(V_2 - V_1)} \int_{\Gamma^{(N)}(\mathcal{U})} e^{-N^2 V_2(\mathbf{X})}\,\mathbf{X} \\
		&= \frac{Z_{V_2}^{(N)}}{Z_{V_1}^{(N)}} e^{N^2 \sup_{\psi \in \mathcal{U}} \psi(V_2 - V_1)} \mu_{V_2}^{(N)}(\Gamma^{(N)}(\mathcal{U})).
	\end{align*}
	Thus,
	\begin{multline*}
		\frac{1}{N^2} \log \mu_{V_1}^{(N)}(\Gamma^{(N)}(\mathcal{U})) + \frac{1}{N^2} \log Z_{V_1}^{(N)} + d \log N \\
		\leq \frac{1}{N^2} \log \mu_{V_2}^{(N)}(\Gamma^{(N)}(\mathcal{U})) + \frac{1}{N^2} \log Z_{V_2}^{(N)} + d \log N + \sup_{\psi \in \mathcal{U}} \psi(V_2 - V_1).
	\end{multline*}
	Taking the limit $N \to \omega$ and then the limit as $\mathcal{U}$ shrinks to $\nu$ (see Observation \ref{obs:neighborhoodlimit}), we have
	\[
	\chi_{V_1}^\omega(\nu) + \lim_{N \to \omega} \left( \frac{1}{N^2} \log Z_{V_1}^{(N)} + d \log N \right)
	\leq \chi_{V_2}^\omega(\nu) + \lim_{N \to \omega} \left( \frac{1}{N^2} \log Z_{V_2}^{(N)} + d \log N \right) + \nu(V_2 - V_1).
	\]
	Now we add $\nu(V_1)$ to both sides and observe that the same result holds with $V_1$ and $V_2$ switched, which proves that \eqref{eq:chi} yields the same value for $V_1$ and $V_2$.
	
	To prove \eqref{eq:chiestimate}, we will use the potential $V_0$ for the computation of $\chi^\omega$.  The associated measure $\mu_{V_0}^{(N)}$ gives a Gaussian random variable $\mathbf{S}^{(N)}$ in $M_N(\C)_{\sa}^d$ with mean zero and covariance matrix $N^{-2} I$.
	Now for $R > 1$,
	\begin{align*}
		\int_{\norm{\mathbf{X}}_2 > d^{1/2} R} e^{-N^2 \norm{\mathbf{X}}_2^2/2}\,d\mathbf{X} &= \int_{\norm{\mathbf{Y}}_2 > d^{1/2}} R^{dN^2} e^{-N^2 R^2 \norm{\mathbf{Y}}_2^2/2}\,d\mathbf{Y} \\
		&= R^{dN^2} \int_{\norm{\mathbf{Y}}_2 > d^{1/2}} e^{-N^2 \norm{\mathbf{Y}}_2^2/2} e^{-N^2(R^2 - 1)\norm{\mathbf{Y}}_2^2/2} \,d\mathbf{Y} \\
		&\leq R^{dN^2} e^{-dN^2(R^2 - 1)} \int_{M_N(\C)_{\sa}^d} e^{-N^2 \norm{\mathbf{X}}_2^2/2}\,d\mathbf{X}. 
	\end{align*}
	so that
	\[
	\mu_{V_0}^{(N)}( \{ \mathbf{X}: \norm{\mathbf{X}}_2 \geq d^{1/2} R\} ) \leq (Re^{-(R^2-1)/2})^{-dN^2}.
	\]
	(This can also be deduced from the Chernoff bound for the chi-squared distribution.)  Hence, for $R > 1$,
	\[
	\frac{1}{N^2} \log \mu_{V_0}^{(N)}( \{ \mathbf{X}: \norm{\mathbf{X}}_2 \geq d^{1/2} R\} ) \leq d\left(\log R - \frac{1}{2} (R^2 - 1) \right).
	\]
	Let $\nu \in \mathcal{C}^\star$ and assume that $2 \nu(V_0) > d$.  Let $1 < R < \sqrt{2 \nu(V_0)/d}$.  Then let $\mathcal{U} = \{ \psi \in \mathcal{C}^\star: 2 \psi(V_0) / d > R^2 \}$.  Thus,
	\[
	\Gamma^{(N)}(\mathcal{U}) = \{ \mathbf{X} \in M_N(\C)_{\sa}^d: \norm{\mathbf{X}}_2 > d^{1/2} R \}.
	\]
	Hence,
	\begin{equation} \label{eq:measurebound}
		\chi_{V_0}^\omega(\nu) \leq \lim_{N \to \omega} \frac{1}{N^2} \log \mu_{V_0}^{(N)}(\Gamma^{(N)}(\mathcal{U})) \leq d\left(\log R - \frac{1}{2} (R^2 - 1) \right).
	\end{equation}
	Taking $R \to \sqrt{2 \nu(V_0)/d}$, we obtain
	\begin{equation} \label{eq:chigestimate}
		\chi_{V_0}^\omega(\nu) \leq \frac{d}{2} \log_+ \frac{2\nu(V_0)}{d} - \nu(V_0) + \frac{d}{2}.
	\end{equation}
	In the case where $\nu(V_0) = d/2$, the right-hand side is zero and hence \eqref{eq:chigestimate} holds automatically.  In the case where $\nu(V_0) < d/2$, we can verify \eqref{eq:chigestimate} with symmetrical reasoning to the $\nu(V_0) > d/2$ case; we use the estimate that
	\[
	\mu_{V_0}^{(N)}(\{\mathbf{X}: \norm{\mathbf{X}}_2 < d^{1/2} R \}) \leq R^{dN^2} e^{-dN^2(R^2 - 1)} \text{ for } R < 1,
	\]
	which is obtained in the same way except that now $\norm{\mathbf{Y}}_2^2 < d$ and $R^2 - 1 < 0$.  Now \eqref{eq:chiestimate} follows easily from \eqref{eq:chigestimate} because
	\[
	\lim_{N \to \omega} \frac{1}{N^2} \left( \log Z_{V_0}^{(N)} + d \log N \right) = \frac{d}{2} \log 2\pi. \qedhere
	\]
\end{proof}

\begin{proposition} \label{prop:entropymaximizer}
	Let $V \in \mathcal{C}$ with $V \geq aV_0 + b$, and let $\omega \in \beta \N \setminus \N$.  If $\mathcal{F}$ is a weak-$\star$ closed subset of $\mathcal{C}^\star$, then $\chi_V^\omega$ achieves a maximum on $\mathcal{F}$, and
	\begin{equation} \label{eq:LDP}
		\inf_{\mathcal{U} \supseteq \mathcal{F} \text{ open}} \lim_{N \to \omega} \frac{1}{N^2} \log \mu_V^{(N)}(\Gamma^{(N)}(\mathcal{U})) = \max_{\nu \in \mathcal{F}} \chi_V^\omega(\nu).
	\end{equation}
	In particular, the maximum of $\chi_V^\omega$ over $\mathcal{C}^\star$ is achieved and the maximum is zero.  Thus, a free Gibbs law for $V$ with respect to $\omega$ exists.
\end{proposition}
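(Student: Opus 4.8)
The plan is to run the standard ``soft'' large deviations argument: combine the upper semicontinuity of $\chi_V^\omega$ (from the preceding proposition) with a coercivity statement asserting that the superlevel sets $\{\chi_V^\omega \geq c\}$ are weak-$\star$ compact. These two facts together will give both the identity \eqref{eq:LDP} and the existence of a maximizer, and the ``in particular'' assertions will then follow by specializing $\mathcal{F} = \mathcal{C}^\star$.

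First I would establish coercivity. If $\chi_V^\omega(\nu) > -\infty$, then $\nu$ lies in the weak-$\star$ closure $\mathcal{E}$ of $I(\Sigma_d)$ and is positive on nonnegative elements of $\mathcal{C}$, so testing against $1 + V_0$ (and using $\norm{1+V_0}_{\mathcal{C}} = 1$) gives $\norm{\nu}_{\mathcal{C}^\star} = 1 + \nu(V_0)$, consistently with \eqref{eq:dualnormequality}. Writing $\chi_V^\omega(\nu) = \chi^\omega(\nu) - \nu(V) - C_V$ with $C_V := \lim_{N\to\omega}(\tfrac1{N^2}\log Z_V^{(N)} + d\log N)$ finite, and feeding \eqref{eq:chiestimate} for $\chi^\omega(\nu)$ together with the lower bound $\nu(V) \geq a\,\nu(V_0) + b$, one obtains $\chi_V^\omega(\nu) \leq \tfrac d2 \log\tfrac{2\nu(V_0)}{d} + \mathrm{const} - a\,\nu(V_0)$, which tends to $-\infty$ as $\nu(V_0)\to\infty$. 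Hence for each $c\in\R$ the set $\{\nu : \chi_V^\omega(\nu)\geq c\}$ has bounded $\nu(V_0)$, so it is norm-bounded in $\mathcal{C}^\star$; being weak-$\star$ closed (by upper semicontinuity of $\chi_V^\omega$, and since $\mathcal{E}$ and $\{\nu : \nu(V_0)\leq M\}$ are weak-$\star$ closed), it is weak-$\star$ compact by Banach--Alaoglu, and its intersection with the weak-$\star$ closed set $\mathcal{F}$ remains compact.

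The hard part will be the large deviations upper bound: $\inf_{\mathcal{U}\supseteq\mathcal{F}\text{ open}}\lim_{N\to\omega}\tfrac1{N^2}\log\mu_V^{(N)}(\Gamma^{(N)}(\mathcal{U})) \leq s := \sup_{\nu\in\mathcal{F}}\chi_V^\omega(\nu)$. Fix $\epsilon > 0$ (the case $s = -\infty$ is similar and easier). By Definition~\ref{def:entropy}, each $\nu\in\mathcal{F}$ has an open neighborhood $\mathcal{U}_\nu$ with $\lim_{N\to\omega}\tfrac1{N^2}\log\mu_V^{(N)}(\Gamma^{(N)}(\mathcal{U}_\nu)) \leq \chi_V^\omega(\nu)+\epsilon \leq s+\epsilon$. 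The non-compactness of $\mathcal{F}$ is dealt with by peeling off the tail $\mathcal{T}_M := \{\nu : \nu(V_0) > M\}$: since $\Gamma^{(N)}(\mathcal{T}_M) = \{\mathbf{X} : \norm{\mathbf{X}}_2^2 > 2M\}$, the Gaussian comparison argument already used for \eqref{eq:measurebound} (comparing $\mu_V^{(N)}$ to the Gaussian measure associated with $aV_0$ via $V\geq aV_0+b$, with $\tfrac1{N^2}\log Z_V^{(N)} + d\log N$ bounded) shows $\limsup_{N\to\omega}\tfrac1{N^2}\log\mu_V^{(N)}(\Gamma^{(N)}(\mathcal{T}_M)) \leq s+\epsilon$ once $M$ is large. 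For such $M$ the set $\mathcal{F}_M := \mathcal{F}\setminus\mathcal{T}_M = \mathcal{F}\cap\{\nu : \nu(V_0)\leq M\}$ is weak-$\star$ compact, hence covered by finitely many $\mathcal{U}_{\nu_1},\dots,\mathcal{U}_{\nu_k}$; then $\mathcal{U} := \mathcal{T}_M \cup \mathcal{U}_{\nu_1}\cup\cdots\cup\mathcal{U}_{\nu_k}$ is an open superset of $\mathcal{F}$, and from $\mu_V^{(N)}(\Gamma^{(N)}(\mathcal{U})) \leq \mu_V^{(N)}(\Gamma^{(N)}(\mathcal{T}_M)) + \sum_i \mu_V^{(N)}(\Gamma^{(N)}(\mathcal{U}_{\nu_i}))$ together with the elementary inequality $\tfrac1{N^2}\log(a_0+\cdots+a_k) \leq \tfrac{\log(k+1)}{N^2} + \max_i \tfrac1{N^2}\log a_i$ and the fact that the $\omega$-limit commutes with finite maxima, one gets $\lim_{N\to\omega}\tfrac1{N^2}\log\mu_V^{(N)}(\Gamma^{(N)}(\mathcal{U})) \leq s+\epsilon$. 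Letting $\epsilon\downarrow 0$ yields the upper bound. The matching lower bound is immediate: any open $\mathcal{U}\supseteq\mathcal{F}$ is a neighborhood of each $\nu\in\mathcal{F}$, so Definition~\ref{def:entropy} gives $\lim_{N\to\omega}\tfrac1{N^2}\log\mu_V^{(N)}(\Gamma^{(N)}(\mathcal{U})) \geq \chi_V^\omega(\nu)$; taking the infimum over $\mathcal{U}$ and then the supremum over $\nu$ proves \eqref{eq:LDP}, the infimum being attained as the limit of the net of neighborhoods by Observation~\ref{obs:neighborhoodlimit}.

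Finally I would harvest the conclusions. If $s = -\infty$ (or $\mathcal{F}=\varnothing$) there is nothing to prove; otherwise pick a finite $c < s$, so that $K := \mathcal{F}\cap\{\chi_V^\omega\geq c\}$ is nonempty and weak-$\star$ compact by the coercivity step, and the upper semicontinuous function $\chi_V^\omega$ attains its maximum on $K$ at some $\nu_\ast$; since $\chi_V^\omega < c \leq \chi_V^\omega(\nu_\ast)$ off $K$, this $\nu_\ast$ maximizes $\chi_V^\omega$ over all of $\mathcal{F}$. For the ``in particular'' statements, take $\mathcal{F} = \mathcal{C}^\star$, which is weak-$\star$ closed; the only open superset is $\mathcal{C}^\star$ itself and $\Gamma^{(N)}(\mathcal{C}^\star) = M_N(\C)_{\sa}^d$, so the left side of \eqref{eq:LDP} equals $\lim_{N\to\omega}\tfrac1{N^2}\log 1 = 0$. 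Hence $\max_{\nu\in\mathcal{C}^\star}\chi_V^\omega(\nu) = 0$ and is attained, and any maximizer is by definition a free Gibbs law for $V$ with respect to $\omega$.
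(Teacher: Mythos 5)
Your argument is essentially the paper's (tail estimate from the Gaussian comparison, upper semicontinuity plus compactness for the maximizer, finite subcover plus union bound for \eqref{eq:LDP}, then $\mathcal{F}=\mathcal{C}^\star$ for the last claims), and the coercivity paragraph, the maximizer paragraph, and the lower bound in \eqref{eq:LDP} are all fine. But there is one genuine gap in the covering step of the upper bound: the set $\mathcal{F}_M=\mathcal{F}\cap\{\nu:\nu(V_0)\leq M\}$ is \emph{not} weak-$\star$ compact in general. A weak-$\star$ compact subset of $\mathcal{C}^\star$ must be norm-bounded (test against each fixed $f\in\mathcal{C}$ and apply uniform boundedness), whereas $\{\nu:\nu(V_0)\leq M\}$ is norm-unbounded: it contains $t\nu_0$ for all $t\geq 0$ whenever $\nu_0(V_0)\leq 0$. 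Your norm bound $\norm{\nu}_{\mathcal{C}^\star}\leq 1+\nu(V_0)$ uses unitality and positivity, i.e.\ it is only available for $\nu\in\mathcal{E}$, and $\mathcal{F}$ is an arbitrary weak-$\star$ closed set --- indeed $\mathcal{F}=\mathcal{C}^\star$ in the very application you need at the end. So as written you cannot extract a finite subcover of $\mathcal{F}_M$, and the union bound does not get off the ground.

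The repair is short and is exactly what the paper does: replace $\mathcal{F}_M$ by $\mathcal{K}:=\mathcal{F}\cap\mathcal{E}\cap\{\nu:\nu(V_0)\leq M\}$, which is norm-bounded by $1+M$ (elements of $\mathcal{E}$ are unital and positive, so $|\nu(f)|\leq 1+\nu(V_0)$ for $\norm{f}_{\mathcal{C}}\leq 1$), weak-$\star$ closed, hence compact; cover $\mathcal{K}$ by finitely many $\mathcal{U}_{\nu_1},\dots,\mathcal{U}_{\nu_k}$, and take $\mathcal{U}:=\mathcal{E}^c\cup\mathcal{T}_M\cup\mathcal{U}_{\nu_1}\cup\dots\cup\mathcal{U}_{\nu_k}\supseteq\mathcal{F}$. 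Since $\Gamma^{(N)}(\mathcal{E}^c)=\varnothing$, the extra open set $\mathcal{E}^c$ contributes nothing to the union bound, and the rest of your estimate (tail bound for $\mathcal{T}_M$, $\tfrac{1}{N^2}\log(k+2)\to 0$, $\omega$-limits commuting with finite maxima, then $\epsilon\downarrow 0$) goes through unchanged. A cosmetic remark: your coercivity route via \eqref{eq:chiestimate} and $\nu(V)\geq a\nu(V_0)+b$ is a slight repackaging of the paper's direct measure estimate \eqref{eq:measurebound} for excluding large $\nu(V_0)$, and \eqref{eq:dualnormequality} is stated for $I(\Sigma_d)$; for $\nu\in\mathcal{E}$ you only need the inequality $\norm{\nu}_{\mathcal{C}^\star}\leq 1+\nu(V_0)$, which is what your positivity argument actually yields.
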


\begin{proof}
	Let $\mathcal{F}$ be a given closed set, and let us prove that the maximum is achieved in $\mathcal{F}$.  If $\chi_V^\omega$ is identically $-\infty$ on $\mathcal{F}$, then there is nothing to prove, so assume that $\nu_0 \in \mathcal{F}$ with $\chi_V^\omega(\nu_0) > - \infty$.
	
	In order to restrict our attention to a compact set, we first exclude a neighborhood of $\infty$ from achieving the maximum.  Since $V \geq aV_0 + b$, similar reasoning as in the previous proposition shows that
	\begin{align*}
		\mu_V^{(N)}(\Gamma^{(N)}(\nu(V_0) > dR^2))
		&\leq \frac{Z_{aV_0+b}^{(N)}}{Z_V^{(N)}} \mu_{aV_0+b}^{(N)}(\Gamma^{(N)}(\{\nu: \nu(V_0) > dR^2\})) \\
		&\leq \frac{Z_{aV_0+b}^{(N)}}{Z_V^{(N)}} \mu_{V_0}^{(N)}(\Gamma^{(N)}(\{\nu: \nu(V_0) > adR^2\}))
	\end{align*}
	and hence for $R > a^{-1/2}$,
	\begin{multline*}
		\lim_{N \to \omega} \frac{1}{N^2} \log \mu_V^{(N)}\left(\Gamma^{(N)}\left(\nu(V_0) > daR^2/2 \right) \right) \\
		\leq \lim_{N \to \omega} \frac{1}{N^2} \log \frac{Z_{aV_0+b}^{(N)}}{Z_V^{(N)}} + d \left( \log a^{1/2}R - \frac{1}{2} (aR^2 - 1) \right).
	\end{multline*}
	Let
	\[
	C = \lim_{N \to \omega} \frac{1}{N^2} \log \frac{Z_{aV_0+b}^{(N)}}{Z_V^{(N)}},
	\]
	which is finite by the previous proposition.  Fix $R$ sufficiently large that $C + d (\log a^{1/2}R - (1/2)(aR^2 - 1)) < \chi_V^\omega(\nu_0)$.
	
	Let $\mathcal{E}$ be the weak-$\star$ closure of $I(\Sigma_d)$, and let
	\begin{equation} \label{eq:defineK}
		\mathcal{K} = \mathcal{F} \cap \mathcal{E} \cap \{\nu: \nu(V_0) \leq daR^2/2 \}.
	\end{equation}
	Then $\mathcal{K}$ is weak-$\star$ closed.  Moreover, $\mathcal{K}$ is contained in the ball of radius $1 + M$ in $\mathcal{C}^\star$.  Indeed, if $\norm{f}_{\mathcal{C}} \leq 1$, then $-(1 + V_0) \leq \re f \leq (1 + V_0)$.  Since $\nu \in \mathcal{E}$, it is unital and positive and hence
	\[
	-(1 + \nu(V_0)) \leq \re \nu(f) \leq 1 + \nu(V_0).
	\]
	Since the same holds for $\alpha f$ for all $\alpha$ in the unit circle, we have $|\nu(f)| \leq 1 + M$.  By Banach-Alaoglu, the ball of radius $1 + M$ is weak-$\star$ compact, hence $\mathcal{K}$ is weak-$\star$ compact.
	
	Since $\chi_V^\omega$ is weak-$\star$ upper semi-continuous, it achieves a maximum on $\mathcal{K}$.  In fact, this is the maximum over all of $\mathcal{F}$.  Indeed, if $\nu$ is not in $\mathcal{E}$, then $\chi_V^\omega(\nu) = -\infty$.  Moreover, if $\nu(V_0) > daR^2 / 2$, then by our choice of $R$,
	\[
	\chi_V^\omega(\nu) \leq C + \log a^{1/2}R - \frac{1}{2} (aR^2 - 1) < \chi_V^\omega(\nu_0) \leq \max_{\mathcal{K}} \chi_V^\omega.
	\]
	Thus, the maximum over $\mathcal{K}$ is the maximum over $\mathcal{F}$.
	
	Next, we prove \eqref{eq:LDP}.  The inequality $\geq$ is immediate because every neighborhood $\mathcal{U}$ of $\mathcal{F}$ is also a neighborhood of each $\nu \in \mathcal{F}$.  To prove the opposite inequality, fix $M > \max_{\mathcal{F}} \chi_V^\omega$.  (Here the maximum of $\chi_V^\omega$ on $\mathcal{F}$ is allowed to be $-\infty$.)   Choose $R$ sufficiently large that $C + \log a^{1/2}R - \frac{1}{2}(aR^2 - 1) < M$, and let $\mathcal{K}$ be given again by \eqref{eq:defineK}.  For each $\nu \in \mathcal{K}$, there is a neighborhood $\mathcal{U}_\nu$ such that
	\[
	\lim_{N \to \omega} \frac{1}{N^2} \log \mu_V^{(N)}(\Gamma^{(N)}(\mathcal{U}_\nu)) < M.
	\]
	By compactness, we may choose finitely many $\nu_1$, \dots, $\nu_k$ such that the neighborhoods $\mathcal{U}_j = \mathcal{U}_{\nu_j}$ cover $\mathcal{K}$.  Let \[
	\mathcal{U}_0 = \{\nu: \nu(V_0) > daR^2/2\}, \qquad  \mathcal{U} = \mathcal{E}^c \cup \bigcup_{j=0}^k \mathcal{U}_j.
	\]
	Since $\Gamma^{(N)}(\mathcal{E}^c) = \varnothing$, we have
	\[
	\Gamma^{(N)}(\mathcal{U}) = \bigcup_{j=0}^k \Gamma^{(N)}(\mathcal{U}_j).
	\]
	For each $j = 0$, \dots, $k$, we have $\lim_{N \to \omega} (1/N^2) \log \mu_V^{(N)}(\Gamma^{(N)}(\mathcal{U}_j)) < M$, so for $N$ sufficiently close to $\omega$,
	\[
	\mu_V^{(N)}(\Gamma^{(N)}(\mathcal{U}_j)) < e^{-N^2 M}.
	\]
	Thus,
	\[
	\mu_V^{(N)}(\Gamma^{(N)}(\mathcal{U})) < (k+1) e^{-N^2M}.
	\]
	This implies that $\lim_{N \to \omega} \frac{1}{N^2} \log \mu_V^{(N)}(\Gamma^{(N)}(\mathcal{U})) \leq M$.  Since $M > \max_{\mathcal{F}} \chi_V^\omega$ was arbitrary, \eqref{eq:LDP} holds.
	
	By considering $\mathcal{F} = \mathcal{C}^\star$, we see that $\chi_V^\omega$ achieves a maximum.  Moreover,
	\[
	0 = \lim_{N \to \omega} \mu_V^{(N)}(\Gamma^{(N)}(\mathcal{C}^\star)) \leq \max \chi_V^\omega \leq 0. \qedhere
	\]
\end{proof}

\begin{corollary}
	If there is a unique free Gibbs law $\nu$ for $V$ with respect to $\omega$, then for every weak-$\star$ neighborhood $\mathcal{U}$ of $\nu$, we have
	\[
	\lim_{N \to \infty} \frac{1}{N^2} \log \mu_V^{(N)}(\Gamma^{(N)}(\mathcal{U})^c) < 0.
	\]
\end{corollary}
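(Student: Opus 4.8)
The plan is to apply Proposition~\ref{prop:entropymaximizer} to the complement of $\mathcal{U}$. First I would reduce to the case that $\mathcal{U}$ is weak-$\star$ open: a weak-$\star$ neighborhood of $\nu$ contains a weak-$\star$ open set $\mathcal{U}_0\ni\nu$, and since $\Gamma^{(N)}(\mathcal{U}^c)\subseteq\Gamma^{(N)}(\mathcal{U}_0^c)$ the desired bound for $\mathcal{U}_0$ implies it for $\mathcal{U}$. So assume $\mathcal{U}$ is open, so that $\mathcal{F}:=\mathcal{U}^c$ is weak-$\star$ closed with $\nu\notin\mathcal{F}$. Next I would show $\max_{\psi\in\mathcal{F}}\chi_V^\omega(\psi)<0$. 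By Proposition~\ref{prop:entropymaximizer} this maximum is attained at some $\psi_0\in\mathcal{F}$, and the maximum of $\chi_V^\omega$ over all of $\mathcal{C}^\star$ is $0$. If the maximum over $\mathcal{F}$ were also $0$, then $\psi_0$ would be a global maximizer of $\chi_V^\omega$, i.e.\ a free Gibbs law for $V$ with respect to $\omega$; but the unique such law is $\nu\notin\mathcal{F}$, a contradiction. Hence $\max_{\mathcal{F}}\chi_V^\omega=:-\delta<0$.

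Then I would invoke the ``large deviation'' identity \eqref{eq:LDP} of Proposition~\ref{prop:entropymaximizer} applied to $\mathcal{F}$:
\[
\inf_{\mathcal{V}\supseteq\mathcal{F}\text{ open}}\ \lim_{N\to\omega}\frac{1}{N^2}\log\mu_V^{(N)}(\Gamma^{(N)}(\mathcal{V}))=\max_{\psi\in\mathcal{F}}\chi_V^\omega(\psi)=-\delta<0.
\]
Thus there is a weak-$\star$ open $\mathcal{V}\supseteq\mathcal{F}=\mathcal{U}^c$ with $\lim_{N\to\omega}\frac{1}{N^2}\log\mu_V^{(N)}(\Gamma^{(N)}(\mathcal{V}))\le-\delta/2<0$. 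Since $\mathcal{U}^c\subseteq\mathcal{V}$ gives $\Gamma^{(N)}(\mathcal{U}^c)\subseteq\Gamma^{(N)}(\mathcal{V})$ and hence $\mu_V^{(N)}(\Gamma^{(N)}(\mathcal{U}^c))\le\mu_V^{(N)}(\Gamma^{(N)}(\mathcal{V}))$, we conclude $\lim_{N\to\omega}\frac{1}{N^2}\log\mu_V^{(N)}(\Gamma^{(N)}(\mathcal{U})^c)<0$.

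It remains to upgrade the limit along $\omega$ to the limit as $N\to\infty$. The quantities $\frac{1}{N^2}\log\mu_V^{(N)}(\Gamma^{(N)}(\mathcal{U})^c)$ are all $\le 0$, so $\limsup_{N\to\infty}$ of this sequence equals $\lim_{N\to\omega'}$ along some free ultrafilter $\omega'$, and the three steps above go through verbatim with $\omega'$ in place of $\omega$ provided $\nu$ is also the unique free Gibbs law for $V$ with respect to $\omega'$. This $\omega$-independence of uniqueness is the one genuinely nontrivial point, and it is where I expect the main obstacle: for the potentials to which this corollary is actually applied in the paper — those of Theorem~\ref{thm:magicnormbound} with $\nabla V\in\mathscr{J}_{a,c}^d$ — it holds automatically, since any free Gibbs law satisfies the Dyson--Schwinger equation (Proposition~\ref{prop:integrationbyparts}), is realized by bounded operators (Theorem~\ref{thm:magicnormbound}), and the Dyson--Schwinger equation has the unique solution $\mu_V$ by the results of \S\ref{sec:pseudoinverse} and \S\ref{subsec:constructtransport}; without such structural input the conclusion should be read as $\lim_{N\to\omega}$, which is exactly what the argument above delivers.
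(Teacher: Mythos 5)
Your proof is correct and takes essentially the same route as the paper's own (two-line) argument: $\mathcal{U}^c$ is weak-$\star$ closed, so by Proposition \ref{prop:entropymaximizer} $\chi_V^\omega$ attains a maximum on it which is strictly negative by uniqueness of the maximizer, and then \eqref{eq:LDP} gives the exponential decay. Your closing caveat is also well taken: the argument only yields the limit along $\omega$, so the conclusion as printed with $\lim_{N \to \infty}$ really needs uniqueness with respect to every free ultrafilter, which is exactly the $\omega$-independent hypothesis (uniqueness of the Dyson--Schwinger solution) used later in Corollary \ref{cor:randommatrixconvergence}, where the paper passes to $\limsup_{N\to\infty}$ by quantifying over all $\omega$.
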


\begin{proof}
	Note that $\mathcal{U}^c$ is closed and so $\chi_V^\omega$ achieves a maximum on this set, which must be strictly less than $\chi_V^\omega(\nu) = 0$ because we assumed $\nu$ is the unique maximizer.  Hence, the claim follows from the previous proposition.
\end{proof}

\subsection{Change of variables for free entropy}

Next, we will prove a change-of-variables formula for free entropy for $\nu \in \mathcal{C}^{\star}$, a generalization of Voiculescu's result in \cite[\S 3]{VoiculescuFE5}.  Since $\nu$ is only in $\mathcal{C}^{\star}$ rather than $\Sigma_d$, we will assume that the transport function $\mathbf{f}$ and its inverse have bounded derivatives.  We begin by describing the action of diffeomorphisms on $\mathcal{C}$ and $\mathcal{C}^{\star}$, along the same lines as Lemma \ref{lem:groupaction}.

\begin{lemma}~
	\begin{enumerate}[(1)]
		\item There is a right group action $\mathcal{C} \times \BDiff_{\tr}^1(\R^{*d}) \to \mathcal{C}$ given by $(h,\mathbf{f}) \mapsto h \circ \mathbf{f}$.  Each element of $\BDiff_{\tr}^1(\R^{*d})$ induces a Banach-space automorphism of $\mathcal{C}$.
		\item There is a left group action of $\BDiff_{\tr}^1(\R^{*d})$ on $\mathcal{C}^{\star}$ by weak-$\star$ homeomorphisms given by $(\mathbf{f}_* \nu)(h) = \nu(h \circ \mathbf{f})$.
		\item There is a left group action of $\BDiff_{\tr}^2(\R^{*d})$ on the set of potentials $V \in \mathcal{C}$ satisfying $V \geq a V_0 + b$ for some $a > 0$ and $b \in \R$, given by
		\[
		\mathbf{f}_* V = V \circ \mathbf{f}^{-1} - \log \Delta_\#(\partial \mathbf{f}^{-1}).
		\]
	\end{enumerate}
\end{lemma}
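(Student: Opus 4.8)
The plan is to extract all the content into part (1) and then obtain (2) by transposition and (3) by essentially the computation already carried out in Lemma~\ref{lem:groupaction}. For (1), the key preliminary fact is that for $\mathbf{f}\in\BDiff_{\tr}^1(\R^{*d})$ both $\mathbf{f}$ and $\mathbf{f}^{-1}$ have uniformly bounded first derivatives, hence by Remark~\ref{rem:Lipschitz} are uniformly Lipschitz with respect to $\norm{\cdot}_2$ (and $\norm{\cdot}_\infty$). Combining the Lipschitz bounds for $\mathbf{f}$ and for $\mathbf{f}^{-1}$, one sees that $(1+V_0)\circ\mathbf{f}$ is bounded above and below by positive affine multiples of $1+V_0$, and that $g\mapsto g\circ\mathbf{f}$ is an isometry of $\tr(BC_{\tr}(\R^{*d}))$ (isometric because $\mathbf{f}$ is surjective, invertible because $g\mapsto g\circ\mathbf{f}^{-1}$ is its inverse). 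One also needs $(1+V_0)^{-1}\in\tr(C_{\tr}(\R^{*d}))$, which holds since $V_0$ is a trace polynomial so $(1+V_0)^{-1}$ is approximable by trace polynomials on each operator-norm ball.

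Given $h\in\mathcal{C}$, I would then write
$(h\circ\mathbf{f})/(1+V_0)=\big((h/(1+V_0))\circ\mathbf{f}\big)\cdot\big(((1+V_0)\circ\mathbf{f})/(1+V_0)\big)$: the first factor lies in $\tr(BC_{\tr}(\R^{*d}))$ because $h/(1+V_0)$ does and $g\mapsto g\circ\mathbf{f}$ preserves that space, and the second factor lies there by the two-sided bound above. To see $h\circ\mathbf{f}$ actually lies in $\mathcal{C}$, approximate $h/(1+V_0)$ in $\tr(BC_{\tr}(\R^{*d}))$ by $h_n/(1+V_0)$ with $h_n=\sum_i\tr(g_ik_i)$, $g_i,k_i$ of bounded first derivative; then $h_n\circ\mathbf{f}=\sum_i\tr\big((g_i\circ\mathbf{f})(k_i\circ\mathbf{f})\big)$ is again of that form, since by the chain rule (Theorem~\ref{thm:chainrule}) $\partial(g_i\circ\mathbf{f})=(\partial g_i\circ\mathbf{f})\#\partial\mathbf{f}$ is a product of bounded functions, and $(h_n\circ\mathbf{f})/(1+V_0)\to(h\circ\mathbf{f})/(1+V_0)$ in $\tr(BC_{\tr}(\R^{*d}))$ by the factorization. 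Continuity and boundedness of $h\mapsto h\circ\mathbf{f}$ on $\mathcal{C}$ follow from the same factorization; the group-action identities $(h\circ\mathbf{f}_1)\circ\mathbf{f}_2=h\circ(\mathbf{f}_1\circ\mathbf{f}_2)$ and $h\circ\id=h$ are just associativity of composition; and since $\mathbf{f}^{-1}\in\BDiff_{\tr}^1(\R^{*d})$, the map $h\mapsto h\circ\mathbf{f}^{-1}$ is a two-sided inverse, so $h\mapsto h\circ\mathbf{f}$ is a Banach-space automorphism. Part (2) is then immediate by transposition: writing $R_{\mathbf{f}}\colon h\mapsto h\circ\mathbf{f}$, which is bounded $\mathcal{C}\to\mathcal{C}$ with $R_{\mathbf{f}_1\circ\mathbf{f}_2}=R_{\mathbf{f}_2}R_{\mathbf{f}_1}$, its Banach-space adjoint $\mathbf{f}_*:=R_{\mathbf{f}}^\star$ — which is exactly $(\mathbf{f}_*\nu)(h)=\nu(h\circ\mathbf{f})$ — satisfies $(\mathbf{f}_1\circ\mathbf{f}_2)_*=(\mathbf{f}_1)_*(\mathbf{f}_2)_*$ and $\id_*=\mathrm{id}$, is weak-$\star$ continuous (adjoints always are), and has inverse $(\mathbf{f}^{-1})_*$, hence is a weak-$\star$ homeomorphism.

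For (3): since $\mathbf{f}\in\BDiff_{\tr}^2(\R^{*d})$ gives $\mathbf{f}^{-1}\in\BDiff_{\tr}^2(\R^{*d})\subseteq\BDiff_{\tr}^1(\R^{*d})$, the term $V\circ\mathbf{f}^{-1}$ lies in $\mathcal{C}$ by (1). The determinant term is handled as follows: $\partial\mathbf{f}^{-1}$ is bounded with bounded $\#$-inverse $(\partial\mathbf{f})\circ\mathbf{f}^{-1}$, so it lies in $GL(C_{\tr}(\R^{*d},\mathscr{M}(\R^{*d}))^d)$ and $\log\Delta_\#(\partial\mathbf{f}^{-1})\in\tr(C_{\tr}(\R^{*d}))$ by Proposition~\ref{prop:logdeterminant}; moreover $\partial^2\mathbf{f}^{-1}$ is bounded (this is where $\BDiff^2$ rather than $\BDiff^1$ enters), so by Lemma~\ref{lem:differentiatedeterminant} the function $\partial[\log\Delta_\#(\partial\mathbf{f}^{-1})]$ is bounded, i.e.\ $\log\Delta_\#(\partial\mathbf{f}^{-1})$ is a scalar-valued function with bounded first derivative, hence lies in $\mathcal{C}$. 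So $\mathbf{f}_*V\in\mathcal{C}$ (and then automatically $\mathbf{f}_*V\le A'V_0+B'$). For the lower bound, the lower Lipschitz bound for $\mathbf{f}^{-1}$ gives $V\circ\mathbf{f}^{-1}\ge aV_0\circ\mathbf{f}^{-1}+b\ge a'V_0+b'$ with $a'>0$, while $-\log\Delta_\#(\partial\mathbf{f}^{-1})$, being $\norm{\cdot}_2$-Lipschitz, grows at most linearly, hence $\ge-\varepsilon V_0-C_\varepsilon$ for every $\varepsilon>0$; choosing $\varepsilon<a'$ yields $\mathbf{f}_*V\ge(a'-\varepsilon)V_0+(b'-C_\varepsilon)$. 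Finally, that this is a left action is the same computation as in Lemma~\ref{lem:groupaction}, using the cocycle identity $\log\Delta_\#(\mathbf{F}\#\mathbf{G})=\log\Delta_\#(\mathbf{F})+\log\Delta_\#(\mathbf{G})$, the chain rule $\partial((\mathbf{f}_1\circ\mathbf{f}_2)^{-1})=(\partial\mathbf{f}_2^{-1}\circ\mathbf{f}_1^{-1})\#\partial\mathbf{f}_1^{-1}$, and $(\log\Delta_\#\mathbf{G})\circ\mathbf{f}_1^{-1}=\log\Delta_\#(\mathbf{G}\circ\mathbf{f}_1^{-1})$ (from the pointwise description of $\log\Delta_\#$), with $\id$ acting trivially since $\log\Delta_\#(\Id)=0$.

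I expect the main obstacle to be the bookkeeping in (3): verifying that $\mathbf{f}_*V$ remains in $\mathcal{C}$ and keeps a strictly positive quadratic lower bound, which hinges on the fact that the log-determinant correction grows only linearly and is therefore dominated by any fraction of $V_0$ — this is precisely what forces the hypothesis $\mathbf{f}\in\BDiff_{\tr}^2(\R^{*d})$ and what is supplied by Lemma~\ref{lem:differentiatedeterminant}. Everything else is either a routine composition/Lipschitz estimate or a direct transcription of earlier arguments.
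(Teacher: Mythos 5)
Your argument is correct and follows essentially the same route as the paper: reduce (1) to the weighted-sup-norm bound for precomposition plus stability of the generating class $\tr(gh)$ under the chain rule, obtain (2) as the Banach-space adjoint, and deduce (3) from Lemma \ref{lem:groupaction} together with the observation that $\log\Delta_\#(\partial\mathbf{f}^{-1})$ has bounded first derivative (Lemma \ref{lem:differentiatedeterminant}) and hence lies in $\mathcal{C}$. The only cosmetic difference is in the lower quadratic bound: you absorb the log-determinant term via its at-most-linear growth into $\varepsilon V_0+C_\varepsilon$, whereas the paper notes it is in fact uniformly bounded (since $\partial\mathbf{f}^{-1}$ and its $\#$-inverse are bounded), which makes that absorption step unnecessary.
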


\begin{proof}
	(1) Let $\mathbf{f} \in \BDiff_{\tr}^1(\R^{*d})$.  If $g, h \in C_{\tr}(\R^{*d})$ have bounded first derivatives, then so do $g \circ \mathbf{f}$ and $h \circ \mathbf{f}$.  Thus, $\tr(gh) \circ \mathbf{f} \in \mathcal{C}$.  Recall that linear combinations of functions of the form $\tr(gh)$ are dense in $\mathcal{C}$ by definition.  Thus, to show that precomposition with $\mathbf{f}$ maps $\mathcal{C}$ into $\mathcal{C}$, it suffices to show that $\norm{(u \circ \mathbf{f}) / (1 + V_0)}_{BC_{\tr}(\R^{*d})} \leq C \norm{u/(1 + V_0)}_{BC_{\tr}(\R^{*d})}$ for some constant $C$.  However, because $\mathbf{f}$ is $\norm{\cdot}_2$-Lipschitz by Remark \ref{rem:Lipschitz}, we obtain $\norm{\mathbf{f}(\mathbf{X})}_2 \leq a' \norm{\mathbf{X}}_2 + b'$ for some constants $a'$ and $b'$.  It follows that $1 + V_0 \circ \mathbf{f} \leq (1/C)(1 + V_0)$ for some $C > 0$ and hence $1 / (1 + V_0) \leq C / (1 + V_0 \circ \mathbf{f})$, which implies the desired bound.  The linearity and associativity properties of this action are clear.  It follows that the action of $\mathbf{f}$ defines a Banach-space automorphism of $\mathcal{C}$.
	
	(2) The map $\mathbf{f}_*: \mathcal{C}^{\star} \to \mathcal{C}^{\star}$ is simply the adjoint of the map $h \mapsto h \circ \mathbf{f}$ and thus it is weak-$\star$ continuous.  Since the same considerations apply to $\mathbf{f}^{-1}$, the inverse map $h \mapsto h \circ \mathbf{f}^{-1}$ is also weak-$\star$ continuous.
	
	(3) This follows by similar reasoning as Lemma \ref{lem:groupaction}.  Note that $\log \Delta_{\#}(\partial \mathbf{f}^{-1})$ has bounded first derivative and therefore is in $\mathcal{C}$.
\end{proof}

\begin{proposition} \label{prop:changeofvariables}
	Let $V \in \mathcal{C}$ with $V \geq aV_0 + b$ for some $a > 0$ and $b \in \R$, let $\nu \in \mathcal{C}^{\star}$, and let $\mathbf{f} \in \BDiff_{\tr}^2(\R^{*d})$.  Then we have the following relations:
	\begin{equation} \label{eq:COV1}
		\lim_{N \to \omega} \frac{1}{N^2} \log \frac{Z_{\mathbf{f}_*V}^{(N)}}{Z_V^{(N)}} = 0,
	\end{equation}
	\begin{equation} \label{eq:COV2}
		\chi_{\mathbf{f}_* V}^\omega(\mathbf{f}_* \nu) = \chi_{V}^\omega(\nu),
	\end{equation}
	\begin{equation} \label{eq:COV3}
		\chi^\omega(\mathbf{f}_* \nu) = \chi^\omega(\nu) + \nu [\log \Delta_\#(\partial \mathbf{f})].
	\end{equation}
	In particular, $\nu$ is a free Gibbs law for $V$ if and only if $\mathbf{f}_* \nu$ is a free Gibbs law for $\mathbf{f}_* V$ (both with respect to the given $\omega$), and hence $V$ has a unique free Gibbs law if and only if $\mathbf{f}_* V$ has a unique free Gibbs law.
\end{proposition}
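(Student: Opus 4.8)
The plan is to push everything down to the matrix integrals on $M_N(\C)_{\sa}^d$ and apply the ordinary change-of-variables formula, using Lemma \ref{lem:classicaltrace2} to identify the matricial log-Jacobian of $\mathbf{f}^{M_N(\C),\tr_N}$ with $\log\Delta_\#(\partial\mathbf{f})$ up to an error that is $o(1)$ uniformly on $\norm{\cdot}_2$-balls. A preliminary point is that $\mathbf{f}_* V$ is again an admissible potential: since $\mathbf{f}\in\BDiff_{\tr}^2(\R^{*d})$, both $\mathbf{f}$ and $\mathbf{f}^{-1}$ are bi-Lipschitz for $\norm{\cdot}_2$ by Remark \ref{rem:Lipschitz}, while $\log\Delta_\#(\partial\mathbf{f}^{-1})$ lies in $\mathcal{C}$ and is in fact globally bounded (its Fuglede--Kadison value is sandwiched between the extreme singular values of $\partial\mathbf{f}^{-1}$, which are bounded above and below because $\partial\mathbf{f}^{-1}$ is invertible in $C_{\tr}(\R^{*d},\mathscr{M}(\R^{*d}))^d$ with bounded inverse $\partial\mathbf{f}\circ\mathbf{f}^{-1}$); hence $\mathbf{f}_* V\in\mathcal{C}$ and $\mathbf{f}_* V\geq a'V_0+b'$ for suitable $a'>0$, $b'\in\R$.

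For \eqref{eq:COV1}, writing $\mathbf{F}=\partial\mathbf{f}^{-1}$, the push-forward of $\mu_V^{(N)}$ by the diffeomorphism $\mathbf{f}^{M_N(\C),\tr_N}$ has Lebesgue density $\tfrac{1}{Z_V^{(N)}}\,e^{-N^2 V(\mathbf{f}^{-1}(\mathbf{Y}))}\,\lvert\det[\mathbf{F}^{M_N(\C),\tr_N}(\mathbf{Y})]\rvert$. Lemma \ref{lem:classicaltrace2} applies to $\mathbf{F}$ (its $\#$-inverse $\partial\mathbf{f}\circ\mathbf{f}^{-1}$ and its derivative $\partial^2\mathbf{f}^{-1}$ are bounded), so $\tfrac{1}{N^2}\log\lvert\det[\mathbf{F}^{M_N(\C),\tr_N}(\mathbf{Y})]\rvert=[\log\Delta_\#(\partial\mathbf{f}^{-1})]^{M_N(\C),\tr_N}(\mathbf{Y})+\varepsilon_N(\mathbf{Y})$ with $\sup_{\norm{\mathbf{Y}}_2\leq R}\lvert\varepsilon_N(\mathbf{Y})\rvert\to0$ for each $R$; thus the density equals $\tfrac{1}{Z_V^{(N)}}\,e^{-N^2(\mathbf{f}_* V)(\mathbf{Y})}e^{N^2\varepsilon_N(\mathbf{Y})}$, and integrating gives $Z_V^{(N)}/Z_{\mathbf{f}_* V}^{(N)}=\int e^{N^2\varepsilon_N}\,d\mu_{\mathbf{f}_* V}^{(N)}$. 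Splitting at $\norm{\mathbf{Y}}_2=R$: on the ball this is $e^{\pm N^2\sup_{\norm{\mathbf{Y}}_2\leq R}\lvert\varepsilon_N\rvert}$ times a mass tending to $1$, and off the ball one bounds $\varepsilon_N$ by a global constant (again from the bounded singular values of $\mathbf{F}$ and the boundedness of $\log\Delta_\#(\partial\mathbf{f}^{-1})$) and uses $\mu_{\mathbf{f}_* V}^{(N)}(\{\norm{\mathbf{Y}}_2>R\})\leq e^{-cN^2R^2}$ for $R$ large, which follows from $\mathbf{f}_* V\geq a'V_0+b'$ exactly as in the estimates preceding \eqref{eq:chiestimate}. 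Letting $N\to\infty$ and then $R\to\infty$ yields \eqref{eq:COV1}.

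For \eqref{eq:COV2}, since $\lambda_{\mathbf{f}(\mathbf{X})}=\mathbf{f}_*\lambda_{\mathbf{X}}$ one has $\mathbf{f}(\Gamma^{(N)}(\mathcal{U}))=\Gamma^{(N)}(\mathbf{f}_*\mathcal{U})$ for every weak-$\star$ open $\mathcal{U}$; restricting to the cofinal family of neighborhoods $\mathcal{U}$ of $\nu$ contained in some $\{\psi:\psi(V_0)<R_0\}$, so that $\Gamma^{(N)}(\mathcal{U})$ sits in a fixed $\norm{\cdot}_2$-ball, the same density identity and tail estimates give $\tfrac{1}{N^2}\log\mu_{\mathbf{f}_* V}^{(N)}(\Gamma^{(N)}(\mathbf{f}_*\mathcal{U}))=\tfrac{1}{N^2}\log\tfrac{Z_V^{(N)}}{Z_{\mathbf{f}_* V}^{(N)}}+\tfrac{1}{N^2}\log\mu_V^{(N)}(\Gamma^{(N)}(\mathcal{U}))+o(1)$. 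Taking $\limsup_{N\to\omega}$, using \eqref{eq:COV1}, and then the infimum over $\mathcal{U}$ (equivalently over neighborhoods of $\mathbf{f}_*\nu$, since $\mathbf{f}_*$ is a weak-$\star$ homeomorphism) gives \eqref{eq:COV2}. Then \eqref{eq:COV3} is purely algebraic: substitute $\mathbf{f}_* V$ and $\mathbf{f}_*\nu$ into $\chi^\omega(\mathbf{f}_*\nu)=\chi_{\mathbf{f}_* V}^\omega(\mathbf{f}_*\nu)+(\mathbf{f}_*\nu)(\mathbf{f}_* V)+\lim_{N\to\omega}(\tfrac{1}{N^2}\log Z_{\mathbf{f}_* V}^{(N)}+d\log N)$, apply \eqref{eq:COV1}--\eqref{eq:COV2}, and evaluate $(\mathbf{f}_*\nu)(\mathbf{f}_* V)=\nu((\mathbf{f}_* V)\circ\mathbf{f})=\nu(V)+\nu[\log\Delta_\#(\partial\mathbf{f})]$, using that $\mathbf{f}\mapsto\mathbf{f}_*$ is a group action on potentials (Lemma \ref{lem:groupaction} and the remarks after it), so $\mathbf{f}_*(\mathbf{f}^{-1}_* V)=V$, whence $\log\Delta_\#(\partial\mathbf{f}^{-1})\circ\mathbf{f}=-\log\Delta_\#(\partial\mathbf{f})$ by the homomorphism property of $\log\Delta_\#$ from Proposition \ref{prop:logdeterminant}. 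Finally, because $\mathbf{f}_*$ is a bijection of $\mathcal{C}^\star$ and $\chi_V^\omega=\chi_{\mathbf{f}_* V}^\omega\circ\mathbf{f}_*$ by \eqref{eq:COV2}, the maximizers of $\chi_V^\omega$ are carried bijectively onto those of $\chi_{\mathbf{f}_* V}^\omega$, which gives the asserted equivalence of free Gibbs laws and of their uniqueness.

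I expect the main obstacle to be the tail control in \eqref{eq:COV1} and \eqref{eq:COV2}: the identification of the matricial log-Jacobian with $\log\Delta_\#(\partial\mathbf{f}^{-1})$ is only uniform on $\norm{\cdot}_2$-balls, so one must exploit the quadratic lower bounds on $V$ and $\mathbf{f}_* V$ to confine the Gibbs measures near the origin at exponential scale, and separately one needs the crude global bounds on $\tfrac{1}{N^2}\log\lvert\det[\mathbf{F}^{M_N(\C),\tr_N}]\rvert$ and on $\log\Delta_\#(\partial\mathbf{f}^{-1})$ — both coming from the uniformly bounded (above and below) singular values of $\partial\mathbf{f}^{-1}$ — so that the contribution of the tails cannot overwhelm the estimate. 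Everything else is bookkeeping with neighborhoods, the weak-$\star$ homeomorphism property of $\mathbf{f}_*$, and the definition of $\chi^\omega$.
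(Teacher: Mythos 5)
Your argument is correct, and most of it runs along the same track as the paper: the matricial change of variables on microstate spaces combined with Lemma \ref{lem:classicaltrace2} (valid uniformly on $\norm{\cdot}_2$-balls, which is why one shrinks the neighborhood inside $\{\psi: |\psi(V_0)|<R_0\}$), the identity $\log\Delta_\#(\partial\mathbf{g})\circ\mathbf{f} = -\log\Delta_\#(\partial\mathbf{f})$ for \eqref{eq:COV3}, and the bijectivity of $\mathbf{f}_*$ for the final claim are all exactly the paper's steps. The genuine difference is in how \eqref{eq:COV1} is obtained. You prove it head-on: push forward $\mu_V^{(N)}$, write $Z_V^{(N)}/Z_{\mathbf{f}_*V}^{(N)} = \int e^{N^2\varepsilon_N}\,d\mu_{\mathbf{f}_*V}^{(N)}$, and control the tail using the quadratic lower bound on $\mathbf{f}_*V$ together with a global bound on $\varepsilon_N$ coming from the uniformly invertible $\partial\mathbf{f}^{-1}$. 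This works (the tail bound should really read $e^{N^2(K-cR^2)}$, with $K$ coming from the comparison of $Z_{\mathbf{f}_*V}^{(N)}$ with a Gaussian partition function as in the proof of Proposition \ref{prop:entropymaximizer}, but that constant is absorbed for large $R$, as you indicate). The paper instead proves only the ``local'' identity
\begin{equation*}
\chi_V^\omega(\nu) = \chi_{\mathbf{f}_*V}^\omega(\mathbf{f}_*\nu) + \lim_{N\to\omega}\tfrac{1}{N^2}\log\tfrac{Z_{\mathbf{f}_*V}^{(N)}}{Z_V^{(N)}},
\end{equation*}
and then deduces \eqref{eq:COV1} for free by taking suprema over $\nu$, since $\max\chi_V^\omega = \max\chi_{\mathbf{f}_*V}^\omega = 0$ (Proposition \ref{prop:entropymaximizer}) and $\mathbf{f}_*$ is a bijection of $\mathcal{C}^\star$; \eqref{eq:COV2} then follows by substituting back. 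The paper's route buys you freedom from any global tail analysis (the microstate spaces are already confined to $\norm{\cdot}_2$-balls), at the price of invoking the variational normalization of $\chi^\omega$; your route is more self-contained for \eqref{eq:COV1} but has to carry the global bounds on the log-Jacobian discrepancy and the Gibbs tails that the paper never needs. Also note that in your derivation of \eqref{eq:COV2} the ``tail estimates'' are actually unnecessary once you restrict to the cofinal family of small neighborhoods; only the uniform error on balls is used there.
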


\begin{proof}
	As an intermediate step to proving \eqref{eq:COV1} and \eqref{eq:COV2}, we will show that for $\nu \in \mathcal{C}^{\star}$, we have
	\begin{equation} \label{eq:COV0}
		\chi_{V}^\omega(\nu) = \chi_{\mathbf{f}_* V}^\omega(\mathbf{f}_* \nu) + \lim_{N \to \omega} \frac{1}{N^2} \log \frac{Z_{\mathbf{f}_*V}^{(N)}}{Z_V^{(N)}}.
	\end{equation}
	Let $\mathcal{U}$ be a neighborhood of $\mathbf{f}_* \nu$ in $\mathcal{C}^{\star}$ and let $\mathcal{V} = (\mathbf{f}_*)^{-1}(\mathcal{U})$, which is a neighborhood of $\nu$.  Let $\mathbf{g} = \mathbf{f}^{-1}$.  Observe that by change of variables,
	\begin{align*}
		& \quad \int_{\Gamma^{(N)}(\mathcal{V})} e^{-N^2 V^{M_N(\C),\tr_N}(\mathbf{X})}\,d\mathbf{X} \\
		&= \int_{\Gamma^{(N)}(\mathcal{U})} e^{-N^2 (V \circ \mathbf{g})^{M_N(\C),\tr_N}(\mathbf{X})} |\det [\partial \mathbf{g}]^{M_N(\C),\tr_N}(\mathbf{X})|\,d\mathbf{X} \\
		&= \int_{\Gamma^{(N)}(\mathcal{U})} \exp\left( -N^2 \left( (V \circ \mathbf{g})^{M_N(\C),\tr_N}(\mathbf{X}) - \frac{1}{N^2} \log |\det [\partial \mathbf{g}]^{M_N(\C),\tr_N}(\mathbf{X})| \right) \right)\,d\mathbf{X}.
	\end{align*}
	By choosing $\mathcal{U}$ small enough, we may guarantee that $\norm{\mathbf{X}}_2^2$ is uniformly bounded on $\Gamma^{(N)}(\mathcal{U})$ independently of $N$.  Hence, since $\partial^2 \mathbf{g} \in BC_{\tr}(\R^{*d},\mathscr{M}^2)^d$, by Lemma \ref{lem:classicaltrace2}, we have
	\[
	\lim_{N \to \omega} \sup_{\mathbf{X} \in \Gamma^{(N)}(\mathcal{U})} \left| \frac{1}{N^2} \log |\det [\partial \mathbf{g}]^{M_N(\C),\tr_N}(\mathbf{X})| - (\log \Delta_\#(\mathbf{g}))^{M_N(\C),\tr_N}(\mathbf{X}) \right| = 0.
	\]
	Therefore,
	\[
	\lim_{N \to \omega} \frac{1}{N^2} \left( \log \int_{\Gamma^{(N)}(\mathcal{V})} e^{-N^2 V^{M_N(\C),\tr_N}(\mathbf{X})}\,d\mathbf{X} - \log \int_{\Gamma^{(N)}(\mathcal{U})} e^{-N^2 (\mathbf{f}_*V)^{M_N(\C),\tr_N}(\mathbf{X})}\,d\mathbf{X} \right) = 0.
	\]
	This implies
	\[
	\lim_{N \to \omega} \frac{1}{N^2} \log \mu_V^{(N)}(\Gamma^{(N)}(\mathcal{V})) = \lim_{N \to \omega} \frac{1}{N^2} \log \mu_{\mathbf{f}_*V}(\Gamma^{(N)}(\mathcal{U})) + \lim_{N \to \omega} \frac{1}{N^2} \log \frac{Z_{\mathbf{f}_*V}^{(N)}}{Z_V^{(N)}}.
	\]
	Then we take the limit as $\mathcal{U}$ shrinks to $\mathbf{f}_* \nu$, which is equivalent to $\mathcal{V}$ shrinking to $\nu$, since $\mathbf{f}_*$ is a weak-$\star$ homeomorphism.  This yields \eqref{eq:COV0}.
	
	By Proposition \ref{prop:entropymaximizer}, the maximum of $\chi_V^\omega$ and the maximum of $\chi_{\mathbf{f}_*V}^\omega$ are both equal to zero.  This fact, together with \eqref{eq:COV0} and that the fact that $\mathbf{f}_*$ is a bijection on $\mathcal{C}^{\star}$, implies \eqref{eq:COV1}.  Then substituting \eqref{eq:COV1} back into \eqref{eq:COV0} produces \eqref{eq:COV2}.  Next, from the definition of $\chi^\omega$ and \eqref{eq:COV1}, we have
	\begin{align*}
		\chi^\omega(\mathbf{f}_* \nu) &= \chi^\omega(\nu) - \nu(V) + (\mathbf{f}_* \nu)(\mathbf{f}_*V) \\
		&= \chi^\omega(\nu) - \nu(V) + (\mathbf{f}_* \nu)(V \circ \mathbf{g}) - (\mathbf{f}_* \nu)(\log \Delta_\# \partial \mathbf{g}) \\
		&= \chi^\omega(\nu) - \nu( \log \Delta_\# \partial \mathbf{g} \circ \mathbf{f} ) \\
		&= \chi^\omega(\nu) + \nu( \log \Delta_\# \partial \mathbf{f}),
	\end{align*}
	since $\partial \mathbf{g} \circ \mathbf{f}$ is the $\#$-inverse of $\partial \mathbf{f}$, and this proves \eqref{eq:COV3}.  Then from \eqref{eq:COV2}, it follows immediately that $\nu$ is a free Gibbs law for $V$ if and only if $\mathbf{f}_* \nu$ is a free Gibbs law for $\mathbf{f}_* V$.
\end{proof}

Next, by applying the change-of-variables formula to diffeomorphisms obtained from flows along vector fields, we will show that any maximizer of $\chi_V^\omega$ must satisfy a certain ``integration-by-parts'' relation.

\begin{proposition} \label{prop:integrationbyparts}
	Let $V \in \mathcal{C} \cap \tr(C_{\tr}^2(\R^{*d}))_{\sa}$ satisfies
	\begin{align*}
		|\partial V^{\cA,\tau}(\mathbf{X})[\mathbf{Y}]| &\leq (a_1 + b_1 \norm{\mathbf{X}}_2^2) \norm{\mathbf{Y}}_\infty \\
		|\partial^2 V^{\cA,\tau}(\mathbf{X})[\mathbf{Y}_1,\mathbf{Y}_2]| &\leq (a_2 + b_2 \norm{\mathbf{X}}_2^2) \norm{\mathbf{Y}_1}_\infty \norm{\mathbf{Y}_2}_\infty
	\end{align*}
	for some constants $a_1$, $b_1$, $a_2$, $b_2 > 0$.  Suppose that $\nu$ is a free Gibbs law for $V$ with respect to $\omega$.  Then for all $\mathbf{h} \in C_{\tr}^2(\R^{*d})^d$ with $\partial \mathbf{h} \in BC_{\tr}^1(\R^{*d},\mathscr{M}^1)^d$, we have
	\begin{equation} \label{eq:DSE}
		\nu \left( \partial V \# \mathbf{h} - \Tr_\#(\partial \mathbf{h}) \right) = 0.
	\end{equation}
\end{proposition}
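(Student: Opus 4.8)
The plan is to exploit the variational characterization of the free Gibbs law together with Voiculescu's change-of-variables formula for free entropy (Proposition~\ref{prop:changeofvariables}): I would perturb $\nu$ by the family of push-forwards $(\mathbf{f}_t)_*\nu$, where $(\mathbf{f}_t)$ is the flow generated by $\mathbf{h}$, extract the inequality forced by the fact that $\nu$ maximizes $\chi_V^\omega$, and differentiate it at $t=0$. Since $\nabla_V^*\mathbf{h} = \partial V\#\mathbf{h} - \Tr_\#(\partial\mathbf{h})$ depends $\C$-linearly on $\mathbf{h}$, I would first reduce to the case that $\mathbf{h}$ is self-adjoint, i.e.\ $\mathbf{h}^{\cA,\tau}(\mathbf{X})\in\cA_{\sa}^d$ (write $\mathbf{h} = \mathbf{h}_1 + i\mathbf{h}_2$ with $\mathbf{h}_1,\mathbf{h}_2$ self-adjoint and of the same regularity). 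For self-adjoint $\mathbf{h}$ with $\partial\mathbf{h}\in BC_{\tr}^1(\R^{*d},\mathscr{M}^1)^d$, Lemma~\ref{lem:flow} produces a flow $\mathbf{f}_t = \id + \int_0^t \mathbf{h}\circ\mathbf{f}_u\,du$ with $\mathbf{f}_t,\mathbf{f}_t^{-1}\in C_{\tr}^2(\R^{*d})_{\sa}^d$ and $\partial^{k'}\mathbf{f}_t,\partial^{k'}\mathbf{f}_t^{-1}$ uniformly bounded on compact time intervals for $1\le k'\le 2$; hence $\mathbf{f}_t\in\BDiff_{\tr}^2(\R^{*d})$ and Proposition~\ref{prop:changeofvariables} applies to it.

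Unwinding the definition of $\chi^\omega$, the statement that $\nu$ maximizes $\chi_V^\omega$ (with maximal value $0$) is equivalent to $\chi^\omega(\psi) - \psi(V) \le \chi^\omega(\nu) - \nu(V)$ for every $\psi\in\mathcal{C}^{\star}$. Applying this with $\psi = (\mathbf{f}_t)_*\nu$, and using both $\chi^\omega((\mathbf{f}_t)_*\nu) = \chi^\omega(\nu) + \nu[\log\Delta_\#(\partial\mathbf{f}_t)]$ from \eqref{eq:COV3} and $((\mathbf{f}_t)_*\nu)(V) = \nu(V\circ\mathbf{f}_t)$, and recalling $(\mathbf{f}_t^{-1})_*V = V\circ\mathbf{f}_t - \log\Delta_\#(\partial\mathbf{f}_t)$, I obtain
\[
g(t) := \nu\bigl[V - (\mathbf{f}_t^{-1})_*V\bigr] \le 0 \qquad\text{for all small } t,
\]
while $g(0) = 0$ since $\mathbf{f}_0 = \id$. (Here everything is evaluated in $\mathcal{C}$: $V\circ\mathbf{f}_t\in\mathcal{C}$ because precomposition by a $\BDiff_{\tr}^1$-map preserves $\mathcal{C}$, and $\log\Delta_\#(\partial\mathbf{f}_t)\in\mathcal{C}$ because it has bounded first derivative by Lemma~\ref{lem:differentiatedeterminant}.) Thus $t=0$ is an interior maximum of $g$; and by Lemma~\ref{lem:transport0} applied with $V_t\equiv V$, $\mathbf{h}_t\equiv\mathbf{h}$ (so $\dot V_t = 0$), we have $\tfrac{d}{dt}\big|_{t=0}(\mathbf{f}_t^{-1})_*V = \nabla_V^*\mathbf{h} = \partial V\#\mathbf{h} - \Tr_\#(\partial\mathbf{h})$ as a limit in $\tr(C_{\tr}(\R^{*d}))$. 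Provided this convergence can be upgraded so that $\nu$ passes through it, $g'(0) = -\nu(\nabla_V^*\mathbf{h}) = 0$, which is exactly \eqref{eq:DSE}.

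The main obstacle is precisely that upgrade: showing $t\mapsto(\mathbf{f}_t^{-1})_*V$ is differentiable at $0$ as a curve in $\mathcal{C}$ rather than merely in $\tr(C_{\tr}(\R^{*d}))$. This is where the growth hypotheses on $\partial V$ and $\partial^2 V$ enter, via the Taylor remainder $V\circ\mathbf{f}_t - V - t(\partial V\#\mathbf{h})$ (expressed through $\partial^2 V$ along the flow) and the analogous expansion of $\log\Delta_\#(\partial\mathbf{f}_t)$. If $\mathbf{h}$ is in addition bounded, i.e.\ $\mathbf{h}\in BC_{\tr}^2(\R^{*d})_{\sa}^d$, these two bounds control the remainder by $O(t^2)(1+V_0)$ and put $\partial V\#\mathbf{h}$ in $\mathcal{C}$, so the difference quotient for $g$ converges in the $\mathcal{C}$-norm and $\nu$ passes through; this bounded case already suffices for the application to Theorem~\ref{thm:magicnormbound}. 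To reach a general $\mathbf{h}$ with only $\partial\mathbf{h}\in BC_{\tr}^1$, I would apply the bounded case to the truncations $\mathbf{h}_R := \mathbf{h}\circ\mathbf{g}_R$, where $\mathbf{g}_R^{\cA,\tau}(\mathbf{X}) = (\phi_R(X_1),\dots,\phi_R(X_d))$ for a smooth $\phi_R$ equal to the identity on $[-R,R]$ with $\norm{\phi_R'}_\infty\le 1$ and $\norm{\phi_R}_\infty\le R+1$ (so $\mathbf{g}_R\in BC_{\tr}^2(\R^{*d})$ by the functional-calculus results of \S\ref{subsec:smoothfunctionalcalculus}, $\mathbf{h}_R\in BC_{\tr}^2(\R^{*d})_{\sa}^d$ by the chain rule, and $\mathbf{h}_R = \mathbf{h}$ on the operator-norm ball of radius $R$), obtaining $\nu(\partial V\#\mathbf{h}_R - \Tr_\#(\partial\mathbf{h}_R)) = 0$ for each $R$, and then letting $R\to\infty$. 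This last limit is the genuinely delicate point: $\partial V\#\mathbf{h}_R$ and $\partial V\#\mathbf{h}$ can differ far out in operator norm, so passing to the limit requires controlling $\nu$ away from large operator norm, which is the same uniform-integrability phenomenon that also underlies the a priori norm bound of Theorem~\ref{thm:magicnormbound}.
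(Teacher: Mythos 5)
Your core argument is exactly the paper's proof: reduce to self-adjoint $\mathbf{h}$ by linearity, run the flow of Lemma \ref{lem:flow}, combine maximality of $\chi_V^\omega$ at $\nu$ with the change-of-variables identity \eqref{eq:COV3} to get the one-sided inequality $\nu\bigl(V\circ\mathbf{f}_t - V - \log\Delta_\#(\partial\mathbf{f}_t)\bigr)\ge 0$, and then differentiate at $t=0$, with the growth hypotheses on $\partial V$ and $\partial^2 V$ entering precisely through the Taylor remainder of $V\circ\mathbf{f}_t$ and the series expansion of $\log\Delta_\#(\partial\mathbf{f}_t)$ so that the difference quotient converges in $\mathcal{C}$ and $\nu$ can be passed through (the paper then replaces $\mathbf{h}$ by $-\mathbf{h}$, which is the same as your interior-maximum step). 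In the regime where $\mathbf{h}$ is bounded, your write-up is the paper's argument for \eqref{eq:COVdiff}.

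The place where you diverge is the unbounded (merely Lipschitz) case, and two remarks are in order. First, your fallback claim that the bounded case ``already suffices for the application to Theorem \ref{thm:magicnormbound}'' is not accurate: in the step ``Agreement of $\nu$ and $I(\lambda_{\mathbf{X}})$ on $\mathcal{C}$'' that proof applies \eqref{eq:DSE2} with $\mathbf{h}=\mathbf{x}$, which has bounded derivative but is unbounded, so the Lipschitz-unbounded case of \eqref{eq:DSE} is genuinely used downstream, and your truncation scheme $\mathbf{h}_R=\mathbf{h}\circ\mathbf{g}_R$ stops exactly at the step that matters (the limit $R\to\infty$ under $\nu$); as submitted, your proposal therefore does not establish the proposition under its stated hypotheses. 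Second, to be fair, the paper does not take the truncation detour either: its quantitative estimates are written directly in terms of $\norm{\mathbf{h}}_{BC_{\tr}(\R^{*d})^d}$ (e.g.\ $\norm{\mathbf{f}_t-\id-t\mathbf{h}}\le \tfrac{t^2}{2}\norm{\partial\mathbf{h}}\,\norm{\mathbf{h}}$ and the $t^2\norm{\mathbf{h}}^2$ bound on the second-order remainder), so the proof as printed also effectively treats $\mathbf{h}$ as bounded; indeed for unbounded Lipschitz $\mathbf{h}$ even the membership $\partial V\#\mathbf{h}\in\mathcal{C}$ is unclear under the stated growth bounds, since $(a_1+b_1\norm{\mathbf{X}}_2^2)\norm{\mathbf{h}(\mathbf{X})}_\infty$ is not $O(1+\norm{\mathbf{X}}_2^2)$. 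So you correctly isolated a real wrinkle, but your resolution of it is incomplete, and the honest statement is that both your proof and the paper's written estimates cover the bounded case, with the general case requiring either a boundedness hypothesis on $\mathbf{h}$ or an additional argument of the kind you only sketch.
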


\begin{remark} \label{rem:IBPhypotheses}
	The hypotheses are chosen so that if $V$ satisfies the hypotheses and $\mathbf{g} \in \BDiff^3(\R^{*d})$, then $V \circ \mathbf{g}$ also satisfies the hypotheses.  This is straightforward to verify from the fact that $\log \Delta_\# \mathbf{g}^{-1}$ has bounded first and second derivatives, while
	\[
	\partial(V \circ \mathbf{g}^{-1}) = \partial V(\mathbf{g}^{-1}) \# \partial \mathbf{g}^{-1}
	\]
	and
	\[
	\partial^2(V \circ \mathbf{g}^{-1}) = \partial^2 V(\mathbf{g}^{-1}) \# [\partial \mathbf{g}^{-1}, \partial \mathbf{g}^{-1}] + \partial V(\mathbf{g}^{-1}) \# \partial^2 \mathbf{g}^{-1}.
	\]
	Furthermore, the hypotheses are satisfied in the case where $\nabla V - \id$ is bounded and $\partial^2 V$ is bounded, which is the case we usually focus on in this paper.
\end{remark}

\begin{proof}[Proof of Proposition \ref{prop:integrationbyparts}]
	By linearity, it suffices to prove \eqref{eq:DSE} in the case where $\mathbf{h}$ is self-adjoint.
	
	Let $\mathbf{f}_t$ and $\mathbf{g}_t$ be the functions constructed by Lemma \ref{lem:flow} by taking $\mathbf{h}_t \equiv \mathbf{h}$, and note that $\mathbf{f}_t \in \BDiff^2(\R^{*d})$.  Hence, by \eqref{eq:COV3},
	\[
	\chi^\omega((\mathbf{f}_t)_*\nu) = \chi^\omega(\nu) + \nu(\log \Delta_\#(\partial \mathbf{f}_t))
	\]
	Since $\nu$ is a free Gibbs law for $V$, we have $\chi_V^\omega((\mathbf{f}_t)_* \nu) \leq \chi_V^\omega(\nu)$.  Since $\chi_V^\omega(\nu)$ is equal to $\chi^\omega(\nu) - \nu(V)$ plus a constant, this amounts to
	\[
	0 \leq (\mathbf{f}_t)_*\nu(V) - \nu(V) - \nu(\log \Delta_\#(\partial \mathbf{f}_t)) = \nu(V \circ \mathbf{f}_t - V - \log \Delta_\#(\partial \mathbf{f}_t)).
	\]
	We claim that
	\begin{equation} \label{eq:COVdiff}
		\lim_{t \to 0^+} \left( V \circ \mathbf{f}_t - V - \log \Delta_\#(\partial \mathbf{f}_t) \right) = \partial V \# \mathbf{h} - \Tr_\#(\partial \mathbf{h}) \text{ in } \mathcal{C}.
	\end{equation}
	
	To prove this, let us first derive error bounds for the Taylor expansion of $t \mapsto \mathbf{f}_t$ as $t \to 0^+$.  Note that
	\[
	\norm*{ \mathbf{f}_t - \id }_{BC_{\tr}(\R^{*d})^d} \leq \int_0^t \norm{ \mathbf{h} \circ \mathbf{f}_u }_{BC_{\tr}(\R^{*d})^d}\,du \leq t \norm{\mathbf{h}}_{BC_{\tr}(\R^{*d})^d}.
	\]
	This implies that
	\begin{align*}
		\norm{\mathbf{h} \circ \mathbf{f}_t - \mathbf{h}}_{BC_{\tr}(\R^{*d})^d} &\leq \norm{\partial \mathbf{h}}_{BC_{\tr}(\R^{*d},\mathscr{M}^1)^d} \norm{\mathbf{f}_t - \id}_{BC_{\tr}(\R^{*d})^d} \\
		&\leq t \norm{\partial \mathbf{h}}_{BC_{\tr}(\R^{*d},\mathscr{M}^1)^d} \norm{\mathbf{h}}_{BC_{\tr}(\R^{*d})^d}.
	\end{align*}
	Hence,
	\begin{align*}
		\norm*{ \mathbf{f}_t - \id - t \mathbf{h} }_{BC_{\tr}(\R^{*d})^d} &\leq \int_0^t \norm*{ \mathbf{h} \circ \mathbf{f}_u - \mathbf{h} }_{BC_{\tr}(\R^{*d})^d}\,du \\
		&\leq \frac{t^2}{2} \norm{\partial \mathbf{h}}_{BC_{\tr}(\R^{*d},\mathscr{M}^1)^d} \norm{\mathbf{h}}_{BC_{\tr}(\R^{*d})^d}.
	\end{align*}
	
	By Taylor expansion, we have
	\[
	V \circ \mathbf{f}_t - V = \partial V \# [\mathbf{f}_t - \id] + \frac{1}{2} \int_0^1 \partial^2 V*((1-s)\id + s\mathbf{f}_t) \# [\mathbf{f}_t - \id, \mathbf{f}_t - \id]\,ds.
	\]
	Since $\partial \mathbf{f}_t$ is bounded, we have $\norm{\mathbf{f}_t^{\cA,\tau}(\mathbf{X})}_2 \leq a_3 + b_3 \norm{\mathbf{X}}_2$ for some constants $a_3$ and $b_3$.  Hence,
	\begin{multline*}
		\left| \int_0^1 \partial^2 V*((1-s)\id + s\mathbf{f}_t)^{\cA,\tau}(\mathbf{X}) \# [\mathbf{f}_t^{\cA,\tau}(\mathbf{X}) - \mathbf{X}, \mathbf{f}_t^{\cA,\tau}(\mathbf{X}) - \mathbf{X}]\,ds \right| \\
		\leq (a_2 + b_2(1 + a_3 + b_3 \norm{\mathbf{X}}_2)^2) t^2  \norm{\mathbf{h}}_{BC_{\tr}(\R^{*d})_{\sa}^d}^2.
	\end{multline*}
	Therefore, this term is $O(t^2)$ in $\mathcal{C}$.  So computing the limit of $(1/t)(V \circ \mathbf{f}_t - V)$ in $\mathcal{C}$ is equivalent to computing the limit of $(1/t)\partial V \# (\mathbf{f}_t - \id)$.  Our earlier estimates show that
	\[
	\frac{\mathbf{f}_t - \id}{t} \to \mathbf{h} \text{ in } BC_{\tr}(\R^{*d})^d.
	\]
	Combining this with our hypothesis on $\partial V$, we get that
	\[
	\lim_{t \to 0^+} \frac{1}{t} \left( V \circ \mathbf{f}_t - V \right) = \lim_{t \to 0^+} \frac{1}{t} \ip{\nabla V, \mathbf{f}_t - \id}_{\tr} = \ip{\nabla V, \mathbf{h}}_{\tr} = \partial V \# \mathbf{h} \text{ in } \mathcal{C}.
	\]
	Next, we deal with the second term on the right-hand side of \eqref{eq:COVdiff}.  Note that
	\[
	\partial \mathbf{f}_t - \Id = \int_0^t (\partial \mathbf{h} \circ \mathbf{f}_u) \# \partial \mathbf{f}_u \,du.
	\]
	Recall that (similar to Gr\"onwall's formula)
	\[
	\norm{\partial \mathbf{f}_t}_{BC_{\tr}(\R^{*d},\mathscr{M}^1)^d} \leq \exp(t \norm{\mathbf{h}}_{BC_{\tr}(\R^{*d},\mathscr{M}^1)^d}).
	\]
	Plugging this into the integral, we obtain
	\[
	\partial \mathbf{f}_t = \Id + O(t) \text{ in } BC_{\tr}(\R^{*d},\mathscr{M}^1)^d.
	\]
	Then because $\partial^2 \mathbf{h}$ is bounded, we get
	\[
	\partial \mathbf{h} \circ \mathbf{f}_u \# \partial \mathbf{f}_u = \partial \mathbf{h} \# \id + O(u)
	\]
	and thus
	\[
	\partial \mathbf{f}_t - I = \int_0^t (\partial \mathbf{h} + O(u))\,du = t \partial \mathbf{h} + O(t^2)
	\]
	in $BC_{\tr}(\R^{*d},\mathscr{M}(\R^{*d}))^d$.  If the right-hand side is strictly smaller than $1$, then we may evaluate
	\[
	\log \Delta_\#(\partial \mathbf{f}_t) = \frac{1}{2} \Tr_{\#} \left( \sum_{m=1}^\infty \frac{(-1)^{m+1}}{m} ((\partial \mathbf{f}_t)^{\varstar} \# \partial \mathbf{f}_t - I)^{\# m} \right) = \frac{t}{2} \Tr_{\#} (\partial \mathbf{h} + \partial \mathbf{h}^{\varstar} ) + O(t^2).
	\]
	Therefore, by the same reasoning as in Lemma \ref{lem:groupactiontangent}
	\[
	\lim_{t \to 0^+} \frac{1}{t} \log \Delta_\#(\partial \mathbf{f}_t) = \Tr_\#(\partial \mathbf{h}) \text{ in } BC_{\tr}(\R^{*d},\mathscr{M}^1)^d,
	\]
	and hence the same limit also holds in $\mathcal{C}$.  This completes the proof of \eqref{eq:COVdiff}.
	
	It follows from \eqref{eq:COVdiff} that
	\[
	\nu \left( \partial V \# \mathbf{h} - \Tr_{\#}(\partial \mathbf{h}) \right) \geq 0.
	\]
	But the same argument applies with $-\mathbf{h}$ instead of $\mathbf{h}$, so that \eqref{eq:DSE} holds.
\end{proof}

\subsection{Consequences of the Dyson-Schwinger equation}

The equation \eqref{eq:DSE} is sometimes called the \emph{Dyson-Schwinger equation},  In the classical setting, this relation can be proved directly using integration-by-parts.  The Dyson-Schwinger equation and the considerations of the previous section lead to the following result.

\begin{corollary} \label{cor:randommatrixconvergence}
	Let $\mathcal{E}$ be the weak-$\star$ closure of $I(\Sigma_d)$ in $\mathcal{C}^{\star}$.  Suppose that there is a unique $\nu \in \mathcal{E}$ satisfying \eqref{eq:DSE}.  Then for every neighborhood $\mathcal{U}$ of $\nu$ in $\mathcal{C}^{\star}$, we have
	\[
	\limsup_{N \to \infty} \frac{1}{N^2} \log \mu_V^{(N)}(\Gamma^{(N)}(\mathcal{U})^c) < 0.
	\]
	More generally, if $f \in \mathcal{C}$ and $\nu(f) = c$ for every $\nu$ satisfying \eqref{eq:DSE}, then for every $\epsilon > 0$, we have
	\[
	\limsup_{N \to \infty} \frac{1}{N^2} \log \mu_V^{(N)}(\{\mathbf{X}: |f(\mathbf{X}) - c| \geq \epsilon\}) < 0.
	\]
\end{corollary}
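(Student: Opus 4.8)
The plan is to assemble three earlier facts: the large deviation upper bound \eqref{eq:LDP} of Proposition~\ref{prop:entropymaximizer}, the fact that every free Gibbs law for $V$ is a point of $\mathcal{E}$ satisfying the Dyson--Schwinger equation \eqref{eq:DSE} (Proposition~\ref{prop:integrationbyparts}, under the hypotheses there on $\partial V$ and $\partial^2 V$), together with the fact that the global maximum of $\chi_V^\omega$ is $0$, and finally a routine subsequence-and-ultrafilter argument to convert statements about $\lim_{N\to\omega}$ into statements about $\limsup_{N\to\infty}$.

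\emph{Step 1: a general estimate.} I would first prove that if $\mathcal{F}\subseteq\mathcal{C}^\star$ is weak-$\star$ closed and $\nu\notin\mathcal{F}$, then $\limsup_{N\to\infty}\frac{1}{N^2}\log\mu_V^{(N)}(\Gamma^{(N)}(\mathcal{F}))<0$. Since $\mu_V^{(N)}$ is a probability measure this $\limsup$ is $\le 0$; suppose it equals $0$. Then there is a sequence $N_k\to\infty$ with $\frac{1}{N_k^2}\log\mu_V^{(N_k)}(\Gamma^{(N_k)}(\mathcal{F}))\to 0$. Pick a free ultrafilter $\omega\in\beta\N\setminus\N$ containing $\{N_k\}$; since $\omega$ then contains every cofinite-in-$\{N_k\}$ subset, $\lim_{N\to\omega}\frac{1}{N^2}\log\mu_V^{(N)}(\Gamma^{(N)}(\mathcal{F}))=0$. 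For every weak-$\star$ open $\mathcal{U}\supseteq\mathcal{F}$ we have $\Gamma^{(N)}(\mathcal{F})\subseteq\Gamma^{(N)}(\mathcal{U})$, so \eqref{eq:LDP} gives
\[
0=\lim_{N\to\omega}\frac{1}{N^2}\log\mu_V^{(N)}(\Gamma^{(N)}(\mathcal{F}))\le\inf_{\mathcal{U}\supseteq\mathcal{F}\text{ open}}\lim_{N\to\omega}\frac{1}{N^2}\log\mu_V^{(N)}(\Gamma^{(N)}(\mathcal{U}))=\max_{\psi\in\mathcal{F}}\chi_V^\omega(\psi).
\]
The maximum on the right is attained (Proposition~\ref{prop:entropymaximizer}) at some $\psi^\ast\in\mathcal{F}$; if $\chi_V^\omega(\psi^\ast)=0$, then, since $0$ is the global maximum of $\chi_V^\omega$, $\psi^\ast$ is a free Gibbs law for $V$ with respect to $\omega$, hence $\psi^\ast\in\mathcal{E}$ and $\psi^\ast$ satisfies \eqref{eq:DSE} by Proposition~\ref{prop:integrationbyparts}; by the uniqueness hypothesis $\psi^\ast=\nu$, contradicting $\nu\notin\mathcal{F}$. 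Therefore $\max_{\mathcal{F}}\chi_V^\omega<0$, contradicting the displayed equality.

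\emph{Step 2: the two assertions.} For the first, take $\mathcal{F}=\mathcal{U}^c$: it is weak-$\star$ closed, $\Gamma^{(N)}(\mathcal{U}^c)=\Gamma^{(N)}(\mathcal{U})^c$, and $\nu\in\mathcal{U}$ so $\nu\notin\mathcal{U}^c$; Step~1 applies. For the second, take $\mathcal{F}=\mathcal{F}_\epsilon:=\{\psi\in\mathcal{C}^\star:|\psi(f)-c|\ge\epsilon\}$, which is weak-$\star$ closed because $\psi\mapsto\psi(f)$ is weak-$\star$ continuous, and observe that $\Gamma^{(N)}(\mathcal{F}_\epsilon)=\{\mathbf{X}\in M_N(\C)_{\sa}^d:|f^{M_N(\C),\tr_N}(\mathbf{X})-c|\ge\epsilon\}$ since $f(\mathbf{X})=I(\lambda_{\mathbf{X}})(f)$. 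Because $\nu$ is itself an element of $\mathcal{E}$ satisfying \eqref{eq:DSE}, the hypothesis gives $\nu(f)=c$, so $\nu\notin\mathcal{F}_\epsilon$, and Step~1 applies again.

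The argument is essentially bookkeeping; the one point requiring care is that \eqref{eq:LDP} is stated only for the infimum over \emph{open} supersets of a closed set, so one uses the monotonicity $\Gamma^{(N)}(\mathcal{F})\subseteq\Gamma^{(N)}(\mathcal{U})$ rather than $\mathcal{F}$ directly. The only substantive input beyond the cited results is that Proposition~\ref{prop:integrationbyparts} applies to $V$, i.e.\ that $V$ satisfies the bounds on $\partial V$ and $\partial^2 V$ used there; these hold in the situations of interest in this paper (in particular whenever $\nabla V-\id$ and $\partial^2 V$ are bounded, by Remark~\ref{rem:IBPhypotheses}) and should be read as part of the hypotheses of the corollary. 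As a byproduct, Step~1 shows that when $\nu$ is the unique element of $\mathcal{E}$ satisfying \eqref{eq:DSE}, it is simultaneously the free Gibbs law for $V$ with respect to every free ultrafilter $\omega$.
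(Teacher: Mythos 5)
Your proposal is correct and takes essentially the same route as the paper: every free Gibbs law satisfies \eqref{eq:DSE} (Proposition \ref{prop:integrationbyparts}), so by the hypothesis it must equal $\nu$, and then the upper bound \eqref{eq:LDP} from Proposition \ref{prop:entropymaximizer} applied to the closed set gives $\lim_{N\to\omega}<0$ for every $\omega$, which yields the $\limsup$ statement (your subsequence-and-ultrafilter argument just makes this last passage explicit). One minor remark: for the second claim the paper does not use uniqueness at all (any maximizer of $\chi_V^\omega$ on the closed set with value $0$ would be a Gibbs law, hence a DSE solution, hence would satisfy $\psi(f)=c$), so if one reads ``more generally'' as dropping the uniqueness hypothesis, you should replace the step ``$\psi^\ast=\nu$ by uniqueness'' in Step~1 by ``$\psi^\ast$ satisfies \eqref{eq:DSE}, hence $\psi^\ast(f)=c$,'' which is a trivial modification.
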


\begin{proof}
	For each $\omega \in \beta \N \setminus \N$, a free Gibbs law must satisfy \eqref{eq:DSE}.  Thus, $\nu$ is the unique free Gibbs law with respect to $\omega$, so that for each neighborhood $\mathcal{U}$ of $\nu$, we have
	\begin{equation} \label{eq:limit}
		\lim_{N \to \omega} \frac{1}{N^2} \log \mu_V^{(N)}(\Gamma^{(N)}(\mathcal{U})^c) < 0.
	\end{equation}
	But since this holds for every $\omega$, it must also hold for the $\limsup$ as $N \to \infty$.  For the second claim, let $\mathcal{U} = \{\nu: |\nu(f) - c| < \epsilon\}$.  For each $\omega$, the entropy $\chi_V^\omega$ achieves a maximum on $\mathcal{U}^c$ that is strictly less than zero.  Thus, \eqref{eq:limit} also holds, and we conclude as before.
\end{proof}

Amazingly, for a potential $V_0 + W$ with $\partial W$ and $\partial^2 W$ bounded, the Dyson-Schwinger equation is enough to guarantee that an element of $\mathcal{E}$ actually agrees with a law in $\Sigma_d$ with an explicit bound on the ``support radius.''

\begin{theorem} \label{thm:magicnormbound}
	Let $k \geq 2$.  Let $V = V_0 + W \in \tr(C_{\tr}^2(\R^{*d}))$ with $\partial W \in BC_{\tr}^1(\R^{*d},\mathscr{M}(\R^{*d}))$.  Suppose that $\nu \in \mathcal{E}$ satisfies
	\begin{equation} \label{eq:DSE2}
		\nu( \partial V \# \mathbf{h} -  \Tr_{\#}(\partial \mathbf{h} )) = 0 \text{ for } \mathbf{h} \in C_{\tr}^k(\R^{*d})^d \text{ with } \partial \mathbf{h} \in BC_{\tr}^k(\R^{*d},\mathscr{M}(\R^{*d}))^d.
	\end{equation}
	Then there exists $(\cA,\tau) \in \mathbb{W}$ and $\mathbf{X} \in \cA_{\sa}^d$ such that $\nu = I(\lambda_{\mathbf{X}})$ and
	\begin{equation} \label{eq:DSEnormestimate}
		\norm{\mathbf{X}}_\infty \leq C \left( \norm{\partial W}_{BC_{\tr}(\R^{*d},\mathscr{M}^1)} + \sqrt{\norm{\partial W}_{BC_{\tr}(\R^{*d},\mathscr{M}^1)}^2 + 4} \right),
	\end{equation}
	where $C$ is a universal constant.  Moreover, \eqref{eq:DSE2} holds for all $\mathbf{h} \in C_{\tr}^k(\R^{*d})$.
\end{theorem}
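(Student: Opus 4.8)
The plan is to extract from the Dyson--Schwinger equation \eqref{eq:DSE2}, by feeding it an explicit family of test functions, a self-consistent recursion for the moments of $\nu$; to solve that recursion so as to obtain an exponential bound of the announced form; and then to realise $\nu$ by a $d$-tuple of bounded operators via the GNS construction (Proposition \ref{prop:GNS}), after which the last assertion follows by a truncation argument. The shape of the bound is already dictated by the simplest instance. Write $M := \norm{\partial W}_{BC_{\tr}(\R^{*d},\mathscr{M}^1)}$, and recall $\nabla V_0 = \mathbf{x}$, so $\nabla_j V = x_j + \nabla_j W$ with $\nabla_j W$ self-adjoint and $\norm{\nabla_j W}_\infty \le M$. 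Applying \eqref{eq:DSE2} to $\mathbf{h} = (0,\dots,0,x_j,0,\dots,0)$ (admissible, since $\partial\mathbf{h}$ is constant and the higher derivatives vanish) and using $\Tr_{\#}(\partial\mathbf{h}) = 1$, one gets $\nu(\tr(x_j^2)) + \nu(\tr((\nabla_j W)x_j)) = 1$; Cauchy--Schwarz for the positive tracial functional $f \mapsto \nu(\tr f)$ then yields
\[
\nu(\tr(x_j^2)) \;\le\; 1 + M\sqrt{\nu(\tr(x_j^2))} \;\le\; \Bigl(\tfrac{M+\sqrt{M^2+4}}{2}\Bigr)^2 .
\]
Since for $W = 0$ the law is a free semicircular family, whose operator norm is exactly twice its $L^2$-norm, one expects the operator norm bound to be $2$ times the $L^2$-estimate up to a universal (Catalan-type) factor, which is precisely \eqref{eq:DSEnormestimate}.

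To reach the operator norm one iterates over all moments. For a word $w=(i_1,\dots,i_\ell)$ set $x_w = x_{i_1}\cdots x_{i_\ell}$; testing \eqref{eq:DSE2} against $\mathbf{h}$ whose only nonzero coordinate is $h_j = x_w$, the quantity $\langle\mathbf{S}, (\partial\mathbf{h})[\mathbf{S}]\rangle_{\tau*\sigma}$ for a free semicircular family $\mathbf{S}$ splits at each occurrence of $x_j$ in $w$ (via $(\tau*\sigma)(S_j A S_j B) = \tau(A)\tau(B)$ and multiplicativity of $\nu$ over $\tr(C_{\tr}(\R^{*d}))$), producing
\[
\nu(\tr(x_{jw})) = -\nu(\tr((\nabla_j W)x_w)) + \sum_{p:\ i_p = j}\nu(\tr(x_{i_1\cdots i_{p-1}}))\,\nu(\tr(x_{i_{p+1}\cdots i_\ell})),
\]
with $|\nu(\tr((\nabla_j W)x_w))| \le M\,\nu(\tr(|x_w|))$. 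The monomials $x_w$ do not have bounded derivatives, so this must be run with truncations: one replaces $x_{i_p}$ by $\phi_\Lambda(x_{i_p})$, or uses functions agreeing with a monomial on $[-\Lambda,\Lambda]$ whose derivative is tapered at infinity. Such functions lie in $C_{\tr}^k(\R^{*d})$ with $\partial\mathbf{h}\in BC_{\tr}^k$ by Proposition \ref{prop:smoothfunctionalcalculus} and Lemma \ref{lem:Fouriertransformmisc}, and the truncation errors are controlled, via Chebyshev's inequality for $\nu$ (i.e.\ $\nu(\tr(\mathbf{1}_{\{|x_j|>\Lambda\}}))$ is dominated by $\Lambda^{-2n}$ times a truncated $2n$-th moment), by quantities tending to $0$ once crude moment bounds are available. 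Specialising to single-variable powers $h_j = x_j^{2n-1}$, the recursion collapses to the coupled quadratic inequality
\[
u_{2n}^{(j)} \le M\sqrt{u_{2n}^{(j)}u_{2n-2}^{(j)}} + \sum_{a+b=2n-2}|u_a^{(j)}|\,|u_b^{(j)}|, \qquad u_k^{(j)} := \nu(\tr(x_j^k)) \text{ (via truncation)},
\]
with $u_0^{(j)}=1$, $u_{2k}^{(j)}\ge 0$, and the odd moments controlled by $|u_{2k+1}^{(j)}| \le \sqrt{u_{2k}^{(j)}u_{2k+2}^{(j)}}$.

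The heart of the matter, and the main obstacle, is to run this recursion as a bootstrap so that it closes with a geometric rate of the claimed size. Starting from the $L^2$-bound above and inducting on $n$, one establishes $u_{2n}^{(j)} \le C_0\,\kappa_n\,\rho^{2n}$, where the $\kappa_n$ are (shifted) Catalan numbers, so $\kappa_n^{1/2n}\to 2$, matching the quadratic splitting term, and $\rho = C'(M+\sqrt{M^2+4})$ for universal $C_0,C'$, the residual $M$- and odd-moment contributions being absorbed by the quadratic-formula slack as in the warm-up. An equivalent, more conceptual bookkeeping packages the recursion into the Cauchy transform $G_j(z) = \nu(\tr((z-x_j)^{-1}))$ (obtained by testing against truncations of $h_j = (z-x_j)^{-1}$), which satisfies the perturbed semicircular equation $G_j(z)^2 - zG_j(z) + 1 - \nu(\tr((\nabla_j W)(z-x_j)^{-1})) = 0$, whose branch points---hence the support radius of $\nu$ in the $j$-th variable---lie at $|z| \approx M + \sqrt{M^2+4}$; this is in essence the mechanism of \cite{BS2001,GS2014,DGS2016}, but the moment form is cleaner to make uniform in $\Lambda$. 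From the single-variable bounds $u_{2n}^{(j)} \le C_0(2\rho)^{2n}$ one then deduces, by the noncommutative H\"older inequality (Lemma \ref{lem:NCHolder}), a bound $|\nu(\tr(x_w))| \le C_0(2\rho)^{|w|}$ for every word. Passing to the limit $\Lambda\to\infty$ (the tail estimates guarantee convergence of the truncated moments), the functional $\lambda(q) := \lim_\Lambda \nu(\tr(q(\phi_\Lambda(x_1),\dots,\phi_\Lambda(x_d))))$ is a unital, positive, tracial, exponentially bounded functional on $\C\ip{x_1,\dots,x_d}$ with exponential bound $A := 2\rho = C(M+\sqrt{M^2+4})$; its GNS pair $(\cA,\tau)$ with generating tuple $\mathbf{X}$ satisfies $\norm{\mathbf{X}}_\infty \le A$, and a further tail estimate in $\mathcal{C}$ (again Chebyshev, using that the $\tr(gh)$ span a dense subspace of $\mathcal{C}$) identifies $\nu$ with $I(\lambda_{\mathbf{X}})$, giving \eqref{eq:DSEnormestimate}.

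Finally, for the last sentence: once $\nu = I(\lambda_{\mathbf{X}})$ with $\norm{\mathbf{X}}_\infty \le A$, take any $\mathbf{h} \in C_{\tr}^k(\R^{*d})^d$, fix $\Lambda > A$, and let $\mathbf{g}_\Lambda$ truncate each coordinate to $[-\Lambda,\Lambda]$. Then $\mathbf{h}\circ\mathbf{g}_\Lambda \in C_{\tr}^k(\R^{*d})^d$ with $\partial(\mathbf{h}\circ\mathbf{g}_\Lambda)\in BC_{\tr}^k(\R^{*d},\mathscr{M}(\R^{*d}))^d$, and since in $\partial V\#\mathbf{h} - \Tr_{\#}(\partial\mathbf{h})$ all of $V$, $\mathbf{h}$, and their derivatives are evaluated only at $\mathbf{X}$, which has norm $\le A < \Lambda$, replacing $\mathbf{h}$ by $\mathbf{h}\circ\mathbf{g}_\Lambda$ leaves $\nu(\partial V\#\mathbf{h} - \Tr_{\#}(\partial\mathbf{h}))$ unchanged. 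Hence \eqref{eq:DSE2} holds for all $\mathbf{h} \in C_{\tr}^k(\R^{*d})^d$. The delicate points, in decreasing order of difficulty, are: closing the moment recursion at the sharp rate $2\rho$ (requiring the Catalan ansatz or the Cauchy-transform analysis together with a careful bootstrap), and making the truncation limits rigorous in the presence of the abstract spurious laws in $\mathcal{E}$.
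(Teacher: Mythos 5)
Your overall strategy is the same in spirit as the paper's (feed the Dyson--Schwinger equation one-variable functional-calculus test functions, close a Catalan-type recursion to get the radius $\sim M+\sqrt{M^2+4}$, build $\mathbf{X}$ by GNS, identify $\nu$ with $I(\lambda_{\mathbf{X}})$, and finally remove the boundedness hypothesis on $\mathbf{h}$ by a cutoff, which you do correctly). But the two points you yourself flag as ``delicate'' are genuine gaps, and the tools you propose for them do not suffice. First, the recursion: testing against truncated monomials $\psi_\Lambda(x_j)^{2n-1}$ and letting $\Lambda\to\infty$ is circular as stated, because the truncation errors are controlled by exactly the (undefined) high moments you are trying to bound, and moreover your bound $\Tr_{\#}\bigl(\psi_\Lambda^{i}\,\partial[\psi_\Lambda(x_j)]\,\psi_\Lambda^{2n-2-i}\bigr)\le \mathrm{const}\cdot\tr(\psi_\Lambda^{i})\tr(\psi_\Lambda^{2n-2-i})$ needs positivity of the sandwiching factors, which fails for odd powers of a signed truncation. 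The paper sidesteps both problems at once by testing against powers $\phi_R(x_j)^n$ of a \emph{nonnegative, increasing, bounded} bump with $\norm{\partial\phi_R(x_j)}$ bounded uniformly in $R$ and with the pointwise domination $\phi_R(t)^{n+1}\le \mathrm{const}\cdot t\,\phi_R(t)^n$: then every quantity in the recursion is a well-defined moment of a bounded positive element, the positivity/trace inequality applies, and the Catalan induction closes exactly, uniformly in $R$ and $n$ --- no $\Lambda\to\infty$ bootstrap is needed at all.

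Second, and more seriously, the identification $\nu = I(\lambda_{\mathbf{X}})$ cannot be obtained by ``Chebyshev tail estimates.'' Since $\nu$ is only known to lie in $\mathcal{E}$, it may a priori be a spurious functional with second-moment mass at infinity (e.g.\ $\nu(\tr\phi(x_j))=\phi(0)$ for $\phi\in C_c$ while $\nu(\tr(x_j^2))=1$); no estimate built from truncated moments, nor the Cauchy-transform equation (which only sees the ``measure part'' of $x_j$), can exclude this, so your limit law $\lambda$ could differ from $\nu$ precisely on $\tr(x_j^2)$-type elements of $\mathcal{C}$. The paper closes this by using the Dyson--Schwinger equation a second time, with $\mathbf{h}=\mathbf{x}$, for \emph{both} $\nu$ and $I(\lambda_{\mathbf{X}})$: each second moment equals $1-\nu(\tr(x_j\nabla_{x_j}W))$, and the correction term is computed identically for the two functionals because $x_j\nabla_{x_j}W$ is bounded with bounded derivative (where they already agree). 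Hence $\nu\circ\tr[x_j^2]=\nu\circ\tr[\zeta(x_j)^2x_j^2]$ for a cutoff $\zeta$, so $\nu\circ\tr[(x_j-\zeta(x_j)x_j)^2]=0$, which kills the escaping mass; agreement on the dense span of elements $\tr(gh)$ with $g,h$ of bounded derivative, and hence on all of $\mathcal{C}$, then follows by Cauchy--Schwarz. You need this (or an equivalent use of the equation itself) --- a tail estimate alone will not do.
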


\begin{proof}
	{\bf GNS Construction:} Let $\cB$ be the set of functions $f \in BC_{\tr}(\R^{*d})$ such that $f$ is uniformly $\norm{\cdot}_2$-continuous on each $\norm{\cdot}_2$-ball.  Note that $\cB$ is a $\mathrm{C}^*$-subalgebra of $BC_{\tr}(\R^{*d})$.  Moreover, we may define a trace $\tau$ on $\cB$ by
	\[
	\tau(f) = \nu[\tr(f)],
	\]
	which makes sense because $\tr(f) \in \mathcal{C}$.  Let $\mathcal{H}_\tau$ be the GNS Hilbert space associated to $\cB$ and $\tau$, that is, the separation-completion of $\cB$ with respect to $\ip{\cdot,\cdot}_\tau$.  Let $\pi_\tau: \cB \to B(\mathcal{H}_\tau)$ be the GNS representation.  Recall $\tau$ passes to a well-defined faithful trace on $\pi_\tau(\cB)$, and $\pi_\tau(\cB)$ can be completed to a $\mathrm{W}^*$-algebra $\cA \subseteq B(\mathcal{H}_\tau)$, and we will denote the associated trace also by $\tau$ by a slight abuse of notation.
	
	{\bf Bump functions:} Let $\rho \in C_c^\infty(\R)$ be a nonnegative symmetric function supported in $[-1,1]$ which integrates to $2$.  Then let $\psi(t) = \int_0^t \rho$, so that $\widehat{\psi}(s) = \widehat{\rho}(s) / 2\pi i s$.  As in \S \ref{subsec:smoothfunctionalcalculus}, let $\psi(x_j)$ denote the function in $C_{\tr}(\R^{*d})$ given by $[\psi(x_j)]^{\cA_0,\tau_0}(\mathbf{X}) = \psi(X_j)$ for $\mathbf{X} \in (\cA_0)_{\sa}^d$ for $(\cA_0,\tau_0) \in \mathbb{W}$; here $x_j$ denotes a formal self-adjoint variable while $X_j$ denotes an operator from $(\cA_0,\tau_0)$ as in our notation for trace polynomials.  It follows from Lemma \ref{lem:Fouriertransformmisc} that $\psi(x_j) \in C_{\tr}^1(\R^{*d})$, and we have
	\[
	\norm{\partial \psi(x_j)}_{BC_{\tr}(\R,\mathscr{M}^1)} \leq \int_{\R} |\widehat{\rho}(s)|\,ds.
	\]
	In particular, $\psi$ is uniformly $\norm{\cdot}_2$-Lipschitz and hence $\psi(x_j)$ is in $\mathcal{C}$ for $j = 1$, \dots, $d$.  Let
	\[
	\phi_R(t) = R (\psi(t / R + 1) + 1)
	\]
	Note that $\phi_R \geq 0$.  Since $\phi_R$ is defined by scaling and translation of $\psi$, we obtain that
	\[
	\partial[\phi_R(x_j)] = \partial \psi(x_j / R + 1),
	\]
	and hence
	\[
	\norm{\partial \phi_R(x_j)}_{BC_{\tr}(\R^{*1},\mathscr{M}^1)} \leq \int_{\R} |\widehat{\rho}(s)|\,ds.
	\]
	So $\phi_R(x_j) \in \cB$.  In fact, since $\rho \in C_c^\infty(\R)$, we have $\phi_R(x_j) \in BC_{\tr}^\infty(\R^{*d})$.
	
	{\bf Application of Dyson-Schwinger equation:} Recall that $V = V_0 + W$, hence $\nabla V = \nabla V_0 + \nabla W = \id + \nabla W$, and thus for $n \in \N_0$, we have
	\begin{equation} \label{eq:DSEapplied}
		\nu(\ip{x_j, \phi_R(x_j)^n }_{\tr}) = \nu(\ip{\nabla_{x_j} W, \phi_R(x_j)^n }_{\tr}) + \nu(\Tr_{\#}(\partial(\phi_R(x_j)^n))).
	\end{equation}
	Note that $x_j \phi_R(x_j)^n$ is obtained by applying a $C_c^\infty(\R)$ function to $x_j$ and hence is in $\mathcal{C}$.  Thus,
	\[
	\nu(\ip{x_j, \phi_R(x_j)^n }_{\tr}) = \nu(\tr(x_j \phi_R(x_j)^n)) = \tau( x_j \phi_R(x_j)^n)).
	\]
	Also, $\phi_R(t) \leq t \norm{\rho}_{L^\infty(\R)} \leq t \norm{\widehat{\rho}}_{L^1(\R)}$, so that $\phi_R(t)^{n+1} \leq \norm{\widehat{\rho}}_{L^1(\R)} t \phi_R(t)^n$, which implies that
	\[
	\tau( \phi_R(x_j)^{n+1} ) \leq \norm{\widehat{\rho}}_{L^1(\R)} \tau (x_j \phi_R(x_j)^{n+1} ) = \nu(\ip{x_j, \phi_R(x_j)^n }_{\tr}).
	\]
	Meanwhile, the first term on the right-hand side of \eqref{eq:DSEapplied} gives
	\[
	\nu(\ip{\nabla_{x_j} W, \phi_R(x_j)^n }_{\tr}) = \tau(\nabla_{x_j} W \cdot \phi_R(x_j)^n) \leq \norm{\nabla_{x_j} W}_{\mathcal{B}} \tau( \phi_R(x_j)^n) \leq \norm{\partial W}_{BC_{\tr}(\R^{*d},\mathscr{M}^1)},
	\]
	where we have used the fact that $\phi_R(x_j)^n \geq 0$ in the $\mathrm{C}^*$-algebra $\cB$.  Finally, for the second term on the right-hand side of \eqref{eq:DSEapplied}, observe that by the product rule (which follows from the chain rule Theorem \ref{thm:chainrule}),
	\[
	\partial(\phi_R(x_j)^n)) = \sum_{i=0}^{n-1} \phi_R(x_j)^i \partial[\phi_R(x_j)] \phi_R(x_j)^{n-1-i}.
	\]
	For $f, g \in BC_{\tr}(\R^{*d})$ and $i,j = 1,\dots,d$, we may define an element $E_{i,j} \otimes f \otimes g \in BC_{\tr}(\R^{*d},\mathscr{M}^1)^d$
	\[
	[E_{i,j} \otimes f \otimes g]^{\cA_0,\tau_0}(\mathbf{X})[\mathbf{Y}]_{i'} = \delta_{i=i'} f(\mathbf{X}) Y_j g(\mathbf{X}).
	\]
	Note that $(E_{i,j} \otimes f \otimes g)^{\varstar} = E_{j,i} \otimes f^* \otimes g^*$ and
	\[
	[E_{i,j} \otimes f_1 \otimes g_1] \# [E_{i',j'} \otimes f_2 \otimes g_2] = \delta_{j=i'} E_{i,j'} \otimes f_1 f_2 \otimes g_2 g_1,
	\]
	and
	\[
	\Tr_\#(E_{i,j} \otimes f \otimes g) = \delta_{i=j} \tr(f) \tr(g),
	\]
	which follows from a straightforward computation with free independence.  In particular, since $\phi_R(x_j)$ is positive in $BC_{\tr}(\R^{*d})$, we can write
	\[
	E_{j,j} \otimes \phi_R(x_j)^i \otimes \phi_R(x_j)^{n-1-i} = [E_{j,j} \otimes \phi_R(x_j)^{i/2} \otimes \phi_R(x_j)^{(n-1-i)/2}]^{\# 2},
	\]
	which is positive in $BC_{\tr}(\R^{*d},\mathscr{M}^1)$.  Since this is positive and $(1/d) \Tr_{\#}$ defines a $\tr(BC_{\tr}(\R^{*d}))$-valued trace on $BC_{\tr}(\R^{*d},\mathscr{M}^1)^d$, we obtain
	\begin{align*}
		\Tr_{\#}\bigl( \phi_R(x_j)^i & \partial[\phi_R(x_j)] \phi_R(x_j)^{n-1-i} \bigr) \\
		&= \Tr_{\#} \bigl( [E_{j,j} \otimes \phi_R(x_j)^i \otimes \phi_R(x_j)^{n-1-i}] \# \partial [\phi_R(x_j)] \bigr) \\
		&\leq \norm{\partial[\phi_R(x_j)]}_{BC_{\tr}(\R^{*d},\mathscr{M}^1)^d} \Tr_{\#}\bigl( E_{j,j} \otimes \phi_R(x_j)^i \otimes \phi_R(x_j)^{n-1-i} \bigr) \\
		&\leq \norm{\widehat{\rho}}_{L^1(\R)} \tr(\phi_R(x_j)^i) \tr(\phi_R(x_j)^{n-1-i}),
	\end{align*}
	where the inequality holds in $\tr(BC_{\tr}(\R^{*d}))$.  Then using positivity of $\nu$, we have
	\begin{align*}
		\nu(\Tr_{\#}(\partial(\phi_R(x_j)^n))) &= \nu \left( \Tr_{\#}\left( \sum_{i=0}^{n-1} \phi_R(x_j)^i \partial[\phi_R(x_j)] \phi_R(x_j)^{n-1-i} \right) \right) \\
		&\leq \norm{\widehat{\rho}}_{L^1(\R)} \nu\left( \sum_{i=0}^{n-1} \tr(\phi_R(x_j)^i) \tr(\phi_R(x_j)^{n-1-i} \right) \\
		&= \norm{\widehat{\rho}}_{L^1(\R)} \sum_{i=0}^{n-1} \tau(\phi_R(x_j)^i) \tau(\phi_R(x_j)^{n-1-i}.
	\end{align*}
	Putting all these inequalities together, \eqref{eq:DSEapplied} implies
	\begin{multline} \label{eq:DSEapplied2}
		\tau(\phi_R(x_j)^{n+1}) \\
		\leq \norm{\widehat{\rho}}_{L^1(\R)} \left( \norm{\partial W}_{BC_{\tr}(\R^{*d},\mathscr{M}^1)} \tau(\phi_R(x_j)^n) + \norm{\widehat{\rho}}_{L^1(\R)} \sum_{i=0}^{n-1} \tau(\phi_R(x_j)^i) \tau(\phi_R(x_j)^{n-1-i} \right).
	\end{multline}
	
	{\bf Combinatorial estimate:} We use a similar trick as in \cite[proof of Theorem 3.2.1]{BS1998}.  Recall that the \emph{Catalan numbers} are given by
	\[
	C_n = \frac{1}{n+1} \binom{2n}{n}.
	\]
	The Catalan numbers are increasing in $n$, and they satisfy the recursive formula
	\[
	C_{n+1} = \sum_{j=0}^n C_j C_{n-j}. 
	\]
	Moreover, $C_n$ is the $2n$th moment of the semicircular measure $\frac{1}{\pi} \sqrt{4 - x^2} \mathbf{1}_{[-2,2]}(x) \,dx$, so that in particular $C_n \leq 4^n$.
	
	Let $M = \norm{\partial W}_{BC_{\tr}(\R^{*d},\mathscr{M}^1)}$, and let
	\[
	R_0 = \norm{\widehat{\rho}}_{L^1(\R)} \frac{M + \sqrt{M^2 + 4}}{2}.
	\]
	so that
	\[
	R_0^2 = \norm{\widehat{\rho}}_{L^1(\R)} M R_0 + \norm{\widehat{\rho}}_{L^1(\R)}^2.
	\]
	We claim that for $n \in \N_0$, we have
	\[
	\tau(\phi_R(x_j)^n) \leq R_0^n C_n.
	\]
	The base case $n = 0$ is trivial.  For the induction step, using \eqref{eq:DSEapplied2}, we get
	\begin{align*}
		& \quad \tau(\phi_R(x_j)^{n+1}) \\
		&\leq \norm{\rho}_{L^\infty(\R)} \left( M \tau(\phi_R(x_j)^n) + \norm{\widehat{\rho}}_{L^1(\R)} \sum_{i=0}^{n-1} \tau(\phi_R(x_j)^i) \tau(\phi_R(x_j)^{n-1-i} \right) \\
		&\leq \norm{\rho}_{L^\infty(\R)} \left( M R_0^n C_n + \norm{\widehat{\rho}}_{L^1(\R)} \sum_{i=0}^{n-1} R_0^{n-1} C_i C_{n-1-i} \right) \\
		&= \norm{\rho}_{L^\infty(\R)} \left( M R_0 + \norm{\widehat{\rho}}_{L^1(\R)} \right) R_0^{n-1} C_n \\
		&= R_0^{n+1} C_n \leq R_0^{n+1} C_{n+1}.
	\end{align*}
	This completes the induction step.  This implies that
	\[
	\tau(\phi_R(x_j)^n) \leq (4R_0)^n
	\]
	for all $n$, and hence $\norm{\pi_\tau(\phi_R(x_j))} \leq 4R_0$.
	
	{\bf Choice of operators:}  We claim that if $\zeta \in C_c(\R)$ with $\supp(\zeta) \subseteq (4R_0,\infty)$, then $\pi_\tau(\zeta(x_j)) = 0$.  To see this, let $R = \inf \supp(\zeta) > 4R_0$.  Note that for $n \in \N_0$,
	\[
	|\zeta|^2 \leq \norm{\zeta}_{C_0(\R)}^2 1_{[R,\infty)} \leq \norm{\zeta}_{C_0(\R)}^2 \frac{\phi_R^n}{R^n}.
	\]
	Hence,
	\[
	\tau(|\zeta(x_j)|^2) \leq \norm{\zeta}_{C_0(\R)}^2 \frac{\tau(\phi_R(x_j)^n)}{R^n} \leq \norm{\zeta}_{C_0(\R)}^2 \left( \frac{4R_0}{R} \right)^n.
	\]
	Taking $n \to \infty$, we see that $\tau(\zeta(x_j)^*\zeta(x_j)) = 0$ and hence $\pi_\tau(\zeta(x_j)) = 0$.
	
	The same reasoning can be applied with $-\mathbf{x}$ substituted for $\mathbf{x}$ since the $(-\id)_* \phi$ will satisfy the Dyson-Schwinger equation with $-\partial W \circ (-\id)$.  Thus, we also have $\pi_\tau(\zeta(x_j)) = 0$ when $\supp(\zeta) \subseteq (-\infty,-4R_0)$.
	
	Let $X_j = \pi_\tau[\eta(x_j)]$ where $\eta \in C_c^\infty(\R;\R)$ is some function with $\eta(t) = t$ for $|t| < 4R_0 + \epsilon$, for some $\epsilon > 0$.  The preceding argument implies that the resulting operator $X_j$ is independent of the particular choice of $\eta$.  Moreover, for any $\epsilon > 0$, we can arrange that $\norm{\eta}_{C_0(\R)} \leq 4R_0 + \epsilon$, hence $\norm{X_j} \leq \norm{\zeta(x_j)}_{\cB} \leq 4R_0 + \epsilon$.  Since $\epsilon$ was arbitrary, we have $\norm{X_j} \leq 4R_0$, which proves \eqref{eq:DSEnormestimate} with $C = 2 \norm{\widehat{\rho}}_{L^1(\R)}$.
	
	{\bf Agreement of $\nu$ and $I(\lambda_{\mathbf{X}})$ on functions with bounded derivative:}  We claim that $\nu[f] = f^{\cA,\tau}(\mathbf{X})$ for $f \in \tr(C_{\tr}^1(\R^{*d}))$ with $\partial f$ bounded.
	
	Let $\eta \in C_c^\infty(\R;\R)$ with $\eta(t) = t$ for $t$ in a neighborhood of $[-4R_0,4R_0]$.  Since $\pi_\tau$ is a $*$-homomorphism, we have for any $p \in \C\ip{x_1,\dots,x_d}$ that
	\[
	\pi_\tau(p(\eta(x_1),\dots,\eta(x_d))) = p(X_1,\dots,X_d),
	\]
	and hence
	\[
	\nu[\tr(p(\eta(x_1),\dots,\eta(x_d)))] = \tau(p(\eta(x_1),\dots,\eta(x_d))) = \tau(p(X_1,\dots,X_d)).
	\]
	Since $\nu$ is multiplicative on $\cB$ and $\lambda_{\mathbf{X}}$ is also multiplicative, it follows that $\nu[f(\eta(x_1), \dots, \eta(x_d))] = f(X_1,\dots,X_d)$ whenever $f \in \tr(\TrP(\R^{*d}))$.
	
	Next, consider $f(\eta(x_1),\dots,\eta(x_d))$ where $f \in \tr(C_{\tr}^1(\R^{*d}))$ with $\partial f$ bounded.  If we choose $R > \norm{\eta}_{C_0(\R)}$, then we can approximate $f$ uniformly on the $\norm{\cdot}_\infty$-ball of radius $R$ by trace polynomials $(f_n)_{n \in \N}$.  Since $\norm{\eta(x_j)}_{BC_{\tr}(\R^{*d})} < R$, this implies that $f_n(\eta(x_1),\dots,\eta(x_d))$ approximates $f(\eta(x_1),\dots,\eta(x_n))$ in $\tr(BC_{\tr}(\R^{*d}))$ (and hence in $\mathcal{C}$), and therefore in this case we still have the identity
	\[
	\nu[f(\eta(x_1), \dots, \eta(x_d))] = f(X_1,\dots,X_d) = f(\eta(X_1),\dots,\eta(X_d)).
	\]
	
	Keeping $f$ fixed, we use a sequence of functions $\eta_R$ to approximate the identity.  We can arrange that $\eta(t)$ is between $0$ and $t$ for all $t \in \R$ and $\eta(t) = t$ for $|t| \leq 1$.  Then let $\eta_R(t) = R \eta(t/R)$.  Note that for any self-adjoint operator $Y$ from $(\cA_0,\tau_0)$, we have
	\[
	\norm{\eta_R(Y) - Y}_1 \leq \frac{\norm{Y}_2^2}{R}.
	\]
	Since $\partial f$ is bounded, we know that $f$ is uniformly $\norm{\cdot}_1$-continuous, and hence as $R \to \infty$, we have
	\[
	f(\eta_R(x_1),\dots,\eta_R(x_d)) \to f(x_1,\dots,x_d)
	\]
	uniformly on $\norm{\cdot}_2$-balls.  Also $f$ is uniformly $\norm{\cdot}_2$-continuous and hence for all $(\cA_0,\tau_0) \in \mathbb{W}$, we have  $\norm{f^{\cA_0,\tau_0}(\mathbf{Y})}_2 \leq A(1 + V_0^{\cA_0,\tau_0}(\mathbf{Y}))^{1/2}$ for some constant $A$.  We also have $\norm{f^{\cA_0,\tau_0}(\eta(Y_1),\dots,\eta(Y_d))}_2 \leq A(1 + V_0^{\cA_0,\tau_0}(\mathbf{Y}))^{1/2}$ since $\norm{\eta(Y_j)}_2 \leq \norm{Y_j}_2$.  Thus, $|f^{\cA_0,\tau_0}(\eta_R(Y_1),\dots,\eta_R(X_d)) - f^{\cA_0,\tau_0}(Y_1,\dots,Y_d)| / (1 + V_0^{\cA_0,\tau_0}(\mathbf{Y}))$ is bounded by $2 A(1 + V_0^{\cA_0,\tau_0}(\mathbf{Y}))^{-1/2}$, which can be made arbitrarily small outside of $\norm{\cdot}_2$-ball (independently of $(\cA_0,\tau_0)$).  Therefore,
	\[
	\frac{1}{1 + V_0(\mathbf{x})} | f(\eta_R(x_1),\dots,\eta_R(x_d)) - f(x_1,\dots,x_d)| \to 0
	\]
	in $\tr(BC_{\tr}(\R^{*d}))$.  This means that $f(\eta_R(x_1),\dots,\eta_R(x_d)) \to f(x_1,\dots,x_d)$ in $\mathcal{C}$, and therefore,
	\[
	\nu(f(x_1,\dots,x_d)) = \lim_{R \to \infty} \nu(f(\eta(x_1),\dots,\eta(x_d))) = f^{\cA,\tau}(X_1,\dots,X_d).
	\]
	
	{\bf $I(\lambda_{\mathbf{X}})$ satisfies the Dyson-Schwinger equation \eqref{eq:DSE2}:}  Let $R > 4R_0$.  Let $\zeta \in C_c^\infty(\R,[0,1])$ be a function which equals $1$ on $[-R,R]$.  Suppose that $\mathbf{h} \in BC_{\tr}^k(\R^{*d})^d$.  Then
	\[
	\sum_{j=1}^d \nu \circ \tr[x_j h_j + h_j \nabla_{x_j} W] - \nu(\Tr_{\#}(\partial \mathbf{h}) = 0.
	\]
	Because $\nu \circ \tr$ agrees with $I(\lambda_{\mathbf{X}})$ on $BC_{\tr}^1(\R^{*d})$ and because $\nabla_{x_j} W h_j$ and $\Tr_{\#}(\partial \mathbf{h})$ are in $BC_{\tr}^1(\R^{*d})$, we have
	\begin{align*}
		\nu \circ \tr[h_j \nabla_{x_j} W] &= (h_j \nabla_{x_j} W)^{\cA,\tau}(\mathbf{X}) &
		\nu(\Tr_{\#}(\partial \mathbf{h})) = \Tr_{\#}(\partial \mathbf{h})^{\cA,\tau}(\mathbf{X}).
	\end{align*}
	The only term that remains to substitute is $\tr[x_j h_j]$.  But note that
	\begin{align*}
		\nu \circ \tr[x_j(1 - \zeta(x_j))h_j]
		&\leq (\nu \circ \tr[x_j^2])^{1/2} (\nu \circ \tr[(1 - \zeta(x_j))h_j^*h_j(1 - \zeta(x_j))])^{1/2} \\
		&= (\nu \circ \tr(x_j^2))^{1/2} (\tau((1 - \zeta(X_j))(h_j^*h_j)^{\cA,\tau}(\mathbf{X})(1 - \zeta(X_j))))^{1/2} \\
		&= 0
	\end{align*}
	because $(1 - \zeta(x_j))h_j^*h_j(1 - \zeta(x_j))$ has bounded first derivative since $h_j$ and $\partial h_j$ are bounded.  Therefore,
	\begin{equation} \label{eq:Gibbslawagreement}
		\nu \circ \tr[x_j h_j] = \nu \circ \tr[\zeta(x_j) x_j h_j] = \tau[\zeta(X_j) X_j h_j^{\cA,\tau}(\mathbf{X})] = \tau[X_j h_j^{\cA,\tau}(\mathbf{X})],
	\end{equation}
	where we have used the fact that $\zeta(x_j) x_j h_j \in BC_{\tr}^1(\R^{*d})$ and $\zeta(X_j) X_j = X_j$.  This establishes \eqref{eq:DSE2} when $\mathbf{h} \in BC_{\tr}^k(\R^{*d})$.
	
	However, using smooth cut-off functions, every $\mathbf{h} \in C_{\tr}^k(\R^{*d})$ agrees on the ball of radius $R$ with some function $\mathbf{g}$ in $BC_{\tr}^k(\R^{*d})$.  It follows from the definition of Fr\'echet differentiation that $\partial \mathbf{h} = \partial \mathbf{g}$ on the open ball of radius $R$.  Hence, both sides of \eqref{eq:DSE2} are the same for $\mathbf{h}$ and for $\mathbf{g}$.  So $I(\lambda_{\mathbf{X}})$ satisfies \eqref{eq:DSE2} for all $\mathbf{h} \in C_{\tr}(\R^{*d})^d$ as desired.  In particular, the last claim of the theorem will be proved as soon as we know that $\nu = I(\lambda_{\mathbf{X}})$.
	
	{\bf Agreement of $\nu$ and $I(\lambda_{\mathbf{X}})$ on $\mathcal{C}$:}  Let $\zeta$ be as above.  Using \eqref{eq:DSE2} for $\nu$, we have
	\[
	\nu \circ \tr[x_j^2] = \nu \circ \tr[x_j \nabla_{x_j} V(\mathbf{x})] - \nu \circ \tr[x_j \nabla_{x_j} W(\mathbf{x})] = 1 - \nu \circ \tr[x_j \nabla_{x_j} W(\mathbf{x})].
	\]
	since $\partial(\mathbf{x}) = \Id \in BC_{\tr}^\infty(\R^{*d},\mathscr{M}^1(\R^{*d}))$.  The same holds for $I(\lambda_{\mathbf{X}})$ because it also satisfies \eqref{eq:DSE2}.  Hence,
	\[
	\nu\circ \tr[x_j^2] - \nu \circ \tr[\zeta(x_j)^2 x_j^2] = \nu(x_j)^2 - \tau(X_j^2) = - \nu(x_j \nabla_{x_j} W(\mathbf{x})) + \tau[X_j \nabla_{x_j} W(\mathbf{X})].
	\]
	Because the function $h_j = \nabla_{x_j} W$ is bounded and has bounded first derivative, \eqref{eq:Gibbslawagreement} applies and shows that
	\[
	\tau[X_j \nabla_{x_j} W(\mathbf{X})] = \nu \circ \tr[x_j \nabla_{x_j} W(\mathbf{X})].
	\]
	Therefore, $\nu \circ \tr[x_j^2] = \nu \circ \tr[\zeta(x_j)^2 x_j^2]$.  Now $\tr[\zeta(x_j)^2 x_j^2] \leq \tr[\zeta(x_j) x_j^2] \leq \tr[x_j^2]$, hence $\nu \circ \tr[\zeta(x_j)x_j^2]$ is equal to the common value of $\nu \circ \tr[x_j^2]$ and $\nu \circ \tr[\zeta(x_j)^2 x_j^2]$.  This implies that
	\[
	\nu \circ \tr[(x_j - \zeta(x_j)x_j)^2] = \nu \circ \tr[x_j^2 - 2 \zeta(x_j)x_j^2 + \zeta(x_j)^2 x_j^2] = 0.
	\]
	Now suppose that $g, h \in C_{\tr}^1(\R^{*d})$ have bounded first derivative.  Then writing $\mathbf{z}(\mathbf{x}) =  (x_1 \zeta(x_1),\dots,x_2\zeta(x_d))$, we have
	\[
	\nu \circ \tr[(g(\mathbf{x}) - g(\mathbf{z}(\mathbf{x})))^2] \leq \norm{\partial g}_{BC_{\tr}(\R^{*d},\mathscr{M}(\R^{*d}))}^2 \sum_{j=1}^d \nu \circ \tr[(x_j - \zeta(x_j)x_j)^2] = 0.
	\]
	The same holds for $h$.  Hence, because of the Cauchy-Schwarz inequality,
	\[
	\nu \circ \tr[gh] = \nu \circ \tr[(g \circ \mathbf{z})(h \circ \mathbf{z})] = \tau(g(\mathbf{X}) h(\mathbf{X})).
	\]
	Because linear combinations of functions like $\tr[gh]$ are dense in $\mathcal{C}$ by definition, it follows that $\nu$ and $I(\lambda_{\mathbf{X}})$ agree on all of $\mathcal{C}$.
\end{proof}

\subsection{Existence of potentials with unique free Gibbs laws}

We shall show in the next section that for perturbations of $V_0$, there is a unique law satisfying the Dyson-Schwinger equation, and hence in particular a unique free Gibbs law for every ultrafilter $\omega$.  But we pause here to first establish a more general result that for each $\omega$, generic potentials $V$ with bounded first and second derivatives have a unique free Gibbs law with respect to $\omega$.

\begin{proposition} \label{prop:generic}
	Fix $\omega \in \beta \N \setminus \N$ and $k \geq 2$ and $C_1$, $C_2 > 0$.  Consider the space
	\[
	\mathscr{V}_{C_1,C_2}^k := \{V_0 + W: W \in \tr(C_{\tr}^k(\R^{*d}))_{\sa} \text{ with } \norm{\partial^{j-1} \nabla W}_{BC_{\tr}(\R^{*d},\mathscr{M}^{j-1})^d} \leq C_j \text{ for } j = 1, 2\},
	\]
	equipped with the subspace topology inherited from $\tr(C_{\tr}^k(\R^{*d}))$.  Then the set of $V \in \mathscr{V}_{C_1,C_2}^k$ which have a unique free Gibbs law with respect to $\omega$ is a dense $G_\delta$-set.
\end{proposition}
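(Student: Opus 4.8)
Fix $\omega \in \beta\N \setminus \N$, $k \ge 2$, and $C_1,C_2>0$; write $E = \tr(C_{\tr}^k(\R^{*d}))_{\sa}$ and $\mathscr{V} = \mathscr{V}_{C_1,C_2}^k \subseteq V_0 + E$. The plan is to realize the free Gibbs laws for a potential $V$ as the maximizers of a convex functional on the affine space $V_0+E$, and then invoke the classical fact that a continuous convex function has a unique subgradient (equivalently is G\^{a}teaux differentiable) on a comeager set; the real work is to run this argument \emph{inside} the closed convex subset $\mathscr{V}$. First I would note that the bounds defining $\mathscr{V}$ pass to limits (the $BC_{\tr}$ norm is a supremum of the seminorms $\norm{\cdot}_{C_{\tr}(\R^{*d}),R}$), so $\mathscr{V}$ is closed in the Fr\'echet space $V_0+E$, hence completely metrizable and a Baire space in its subspace topology. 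For $V = V_0 + W \in \mathscr{V}$ the bounds on $\nabla W$ and $\partial\nabla W$ put $V$ in $\mathcal{C}$ (with $V \ge \tfrac14 V_0 + b$ by an elementary AM--GM estimate for $\norm{\cdot}_2$-Lipschitz functions) and verify the hypotheses of Proposition \ref{prop:integrationbyparts} and Theorem \ref{thm:magicnormbound}. Therefore there is a radius $R_0 = R_0(C_1)$, furnished by \eqref{eq:DSEnormestimate}, such that every free Gibbs law for every $V \in \mathscr{V}_{C_1',C_2'}^k$ with $C_1'\ge C_1$ is of the form $I(\lambda_{\mathbf{X}})$ with $\norm{\mathbf{X}}_\infty \le R_0(C_1')$; set $R_1 := R_0(C_1+1)$ and $\mathscr{V}' := \mathscr{V}_{C_1+1,C_2+1}^k$. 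Then for $V \in \mathscr{V}'$ the free energy
\[
\Phi(V) := \lim_{N\to\omega}\Bigl(\tfrac{1}{N^2}\log Z_V^{(N)} + d\log N\Bigr) = \max_{\nu \in I(\Sigma_{d,R_1})}\bigl[\chi^\omega(\nu) - \nu(V)\bigr],
\]
where the second equality uses $\max_\nu \chi_V^\omega(\nu)=0$ together with the norm bound, so that the maximizers over $I(\Sigma_{d,R_1})$ are exactly the free Gibbs laws for $V$. Since for each fixed $\nu$ the map $V \mapsto \chi^\omega(\nu)-\nu(V)$ is affine and $\norm{\cdot}_{C_{\tr}(\R^{*d}),R_1}$-Lipschitz (as $\nu = I(\lambda_{\mathbf{X}})$ with $\norm{\mathbf{X}}_\infty \le R_1$), $\Phi$ is convex and $\norm{\cdot}_{C_{\tr}(\R^{*d}),R_1}$-Lipschitz, hence continuous, on $\mathscr{V}'$.

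Next I would introduce a countable family of test directions adapted to this bounded regime. Fix $\phi \in C_c^\infty(\R;\R)$ with $\phi|_{[-R_0(C_1),R_0(C_1)]} = \mathrm{id}$, let $\mathcal{D}_0 \subseteq E$ be the countable set of $(\Q+i\Q)$-linear combinations of $\tr(p(\phi(x_1),\dots,\phi(x_d)))$ and their adjoints (each such element has bounded gradient and Hessian by \S\ref{sec:NCfunc2} and Proposition \ref{prop:smoothfunctionalcalculus}), and let $\{W'_m\}_{m\in\N}$ enumerate those $W \in \mathcal{D}_0$ with $\norm{\nabla W}_{BC_{\tr}(\R^{*d})^d} \le \tfrac12$ and $\norm{\partial\nabla W}_{BC_{\tr}(\R^{*d},\mathscr{M}^1)^d} \le \tfrac12$. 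For $V \in \mathscr{V}$ and $|s|\le 1$ we have $V + sW'_m \in \mathscr{V}'$, so $h_{V,m}(s) := \Phi(V+sW'_m)$ is a finite convex function of $s$ (a supremum of affine functions along the line); an envelope/compactness argument over the compact set $I(\Sigma_{d,R_1})$ gives $h_{V,m}'(0^+) = -\min_\nu \nu(W'_m)$ and $h_{V,m}'(0^-) = -\max_\nu \nu(W'_m)$, the extrema over the free Gibbs laws $\nu$ of $V$, so $g_m(V) := h_{V,m}'(0^+) - h_{V,m}'(0^-)\ge 0$ vanishes iff all free Gibbs laws of $V$ agree on $W'_m$. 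Since $h_{V,m}'(0^+) = \inf_{0<s\le 1}\tfrac{\Phi(V+sW'_m)-\Phi(V)}{s}$ and $h_{V,m}'(0^-) = \sup_{-1\le s<0}\tfrac{\Phi(V+sW'_m)-\Phi(V)}{s}$ with the difference quotient jointly continuous on $\mathscr{V}\times([-1,1]\setminus\{0\})$, $g_m$ is upper semicontinuous on $\mathscr{V}$, hence $\mathscr{U}_m := \{V\in\mathscr{V}: g_m(V)=0\}$ is a $G_\delta$ subset of $\mathscr{V}$.

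The density of $\mathscr{U}_m$ in $\mathscr{V}$ is where the constrained setting bites, and is the step I expect to be the main obstacle: perturbing $V_* \in \mathscr{V}$ along $W'_m$ may leave $\mathscr{V}$ when $V_*$ lies on the boundary of the norm constraints, so one cannot simply slide along a line inside $\mathscr{V}$. To get around this I would first contract toward $V_0$: given $V_*\in\mathscr{V}$ and a basic neighborhood $\mathcal{O}$, set $\hat V := V_0 + (1-\eta)(V_*-V_0) \in \mathscr{V}_{(1-\eta)C_1,(1-\eta)C_2}^k$, which tends to $V_*$ as $\eta\downarrow 0$, hence lies in $\mathcal{O}$ for small $\eta$; then $\hat V + sW'_m \in \mathscr{V}$ for all $|s|\le\delta$ once $\delta \le 2\min(\eta C_1,\eta C_2)$, because $W'_m$ has small $BC_{\tr}$-norms. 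The one-variable convex function $s\mapsto\Phi(\hat V+sW'_m)$ is differentiable off a countable subset of $(-\delta,\delta)$, so one can pick $s_0$ with $|s_0|$ arbitrarily small at which it is differentiable; then $V := \hat V + s_0 W'_m \in \mathcal{O}\cap\mathscr{V}$ and the two-sided directional derivative of $\Phi$ at $V$ along $W'_m$ exists, i.e.\ $V \in \mathscr{U}_m$. Thus each $\mathscr{U}_m$ is dense, and by the Baire property of $\mathscr{V}$ the set $\mathscr{U} := \bigcap_m \mathscr{U}_m$ is a dense $G_\delta$ subset of $\mathscr{V}$. Finally, if $V\in\mathscr{U}$ then any two free Gibbs laws $I(\lambda_1),I(\lambda_2)$ (with generating tuples of norm $\le R_0(C_1)$) agree on every $W'_m$, hence, by rescaling, on every $\tr(p(\phi(x_1),\dots,\phi(x_d)))$ and its symmetrizations; since $\phi$ is the identity on the spectra of the relevant operators this forces $\lambda_1(p)=\lambda_2(p)$ for all non-commutative polynomials $p$, so $\lambda_1=\lambda_2$; conversely a unique free Gibbs law makes every $g_m$ vanish. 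Hence $\mathscr{U}$ is precisely the set of $V\in\mathscr{V}$ with a unique free Gibbs law with respect to $\omega$, and it is a dense $G_\delta$. The bookkeeping needed to keep all maximizer sets inside the fixed compact $I(\Sigma_{d,R_1})$ (so that the envelope formulas for $h_{V,m}'(0^\pm)$ are valid) and the contraction trick for density are the only genuinely delicate points; everything else is routine convex analysis plus the cited results.
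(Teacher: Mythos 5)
Your proof is correct, but it follows a genuinely different route from the paper. The paper does not use Baire category at all: for density it takes $V_0+W$ in a given open set, shrinks to $V_0+tW$ to get strict inequality in the norm constraints, picks one free Gibbs law $I(\lambda)$, and adds $\epsilon f$ where $f\ge 0$ is the explicit witness function of Lemma \ref{lem:testfunction} vanishing exactly on tuples with law $\lambda$ --- this tilts the entropy functional so that $I(\lambda)$ becomes the unique maximizer of $\chi^\omega(\cdot)-(\cdot)(V_0+tW+\epsilon f)$; for the $G_\delta$ part it shows directly that the sets $\{V:\mathcal{G}(V)\subseteq B_{1/n}(\mu)\text{ for some }\mu\}$ are open, using upper semicontinuity of $\chi_V^\omega$, compactness of $\Sigma_{d,R}$, and stability of the strict entropy gap under perturbations small in $\norm{\cdot}_{C_{\tr}(\R^{*d}),R}$. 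You instead run the classical ``generic differentiability of a convex pressure'' (Mazur/tangent-functional) argument: you encode uniqueness as vanishing of the derivative gaps $g_m$ of the free energy $\Phi$ along a countable family of cut-off test directions, get $G_\delta$ from semicontinuity of one-sided derivatives, get density from a.e.\ differentiability of one-variable convex functions combined with your contraction into the interior of the constraint set (the analogue of the paper's $W\mapsto tW$ shrinking), and then conclude by Baire. Both routes rest on the same two pillars from the paper --- Theorem \ref{thm:magicnormbound} confining all free Gibbs laws for the whole class to a fixed $I(\Sigma_{d,R})$, and upper semicontinuity of the entropy --- and your separation family (functional calculus by a $\phi$ equal to the identity on $[-R,R]$) plays the role that Lemma \ref{lem:testfunction} plays in the paper. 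What your approach buys is that it avoids constructing the witness function entirely and is a soft argument that transfers to other pressure functionals; what the paper's buys is a Baire-free, constructive density argument and the slightly stronger structural fact that ``the Gibbs set has small diameter'' is an open condition. The one step you leave as a black box, the Danskin/envelope identity $h_{V,m}'(0^\pm)=-\min/-\max_{\nu\in\mathcal{G}(V)}\nu(W_m')$, does go through with $\chi^\omega$ merely upper semicontinuous: maximizers at parameters $s_n\downarrow 0$ have weak-$\star$ cluster points in the compact set $I(\Sigma_{d,R_1})$, and upper semicontinuity together with continuity of $\Phi$ forces any cluster point to be a maximizer at $s=0$, which is exactly what the upper bound on the difference quotients requires; so this is bookkeeping rather than a gap.
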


Recall that a \emph{$G_\delta$ set} in a topological space is a countable intersection of open sets.  Moreover, the Baire category theorem states that in a complete metric space, a countable intersection of dense open sets is dense.  Such a set is often called \emph{generic}.  Also, note that $\mathscr{V}_{C_1,C_2}^k$ is a complete metric space.  Indeed, since the topology of $\tr(C_{\tr}^k(\R^{*d}))$ is defined by a countable family of seminorms, it is metrizable.  It is straightforward to check that $\mathscr{V}_{C_1,C_2}^k$ is a closed subset of $\tr(C_{\tr}^k(\R^{*d}))$, hence complete.

\begin{remark}
	As far as we know, $\chi_V^\omega$ may depend in general on $\omega$, and hence so does the dense $G_\delta$ set in the proposition.  The proof would apply equally well to the entropy $\chi_V$ defined by using the $\limsup$ rather than limit as $N \to \omega$ in the definition.  However, then the condition of being a free Gibbs law (maximizer of $\chi_V$) only implies convergence of the random matrix models along a subsequence of $\mu_V^{(N)}$.
\end{remark}

To prove the proposition, we do not in fact need to use the Baire category theorem.  Rather, if a potential does not have a unique free Gibbs law, we will perturb it using the following lemma.

\begin{lemma} \label{lem:testfunction}
	Let $\lambda \in \Sigma_d$.  Then there exists $f \in \tr(BC_{\tr}^\infty(\R^{*d}))_{\sa}$ such that $f^{\cA,\tau}(\mathbf{X}) \geq 0$ for all $(\cA,\tau) \in \mathbb{W}$ and $\mathbf{X} \in \cA_{\sa}^d$, and $f^{\cA,\tau}(\mathbf{X}) = 0$ if and only if $\lambda_{\mathbf{X}} = \lambda$.
\end{lemma}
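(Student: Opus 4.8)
The plan is to build $f$ as a countable positive combination of bounded smooth scalar-valued functions, split into two families. One family will detect whether the spectrum of each coordinate $X_j$ is confined to $[-R_\lambda,R_\lambda]$, where $R_\lambda$ is the smallest exponential bound of $\lambda$; recall from \S\ref{subsec:operatoralgebras} that $R_\lambda$ equals $\norm{\mathbf{X}}_\infty$ for any $\mathbf{X}$ realizing $\lambda$, hence is determined by $\lambda$. The second family will detect whether the moments of a bounded truncation of $\mathbf{X}$ agree with those of $\lambda$. The point is the elementary fact that $\lambda_{\mathbf{X}}=\lambda$ if and only if (i) $\norm{X_j}_\infty\le R_\lambda$ for every $j$ and (ii) $\tau(p(\mathbf{X}))=\lambda(p)$ for every monomial $p$; and once (i) holds, (ii) can be read off from bounded functional calculus data.

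First I would fix, for each $\ell\in\N$, a function $\psi_\ell\in C_c^\infty(\R)$ with $\psi_\ell\ge 0$ and $\psi_\ell(t)>0$ precisely for $R_\lambda<|t|<\ell$ (hence $\psi_\ell=0$ on $[-R_\lambda,R_\lambda]$). By Proposition~\ref{prop:smoothfunctionalcalculus}(1) (the Fourier transform of a $C_c^\infty$ function decays rapidly) each $\psi_\ell(x_j)\in BC_{\tr}^\infty(\R^{*d})$, so $a_{\ell,j}:=\tr(\psi_\ell(x_j))\in\tr(BC_{\tr}^\infty(\R^{*d}))_{\sa}$ is nonnegative, and since $\psi_\ell$ vanishes on $[-R_\lambda,R_\lambda]$ and is positive off it, $a_{\ell,j}^{\cA,\tau}(\mathbf{X})=\tau(\psi_\ell(X_j))=0$ for all $\ell,j$ exactly when the spectral measure of every $X_j$ is supported in $[-R_\lambda,R_\lambda]$, i.e.\ $\norm{X_j}_\infty\le R_\lambda$. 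Next, I would fix $\eta\in C_c^\infty(\R;\R)$ with $\eta(t)=t$ on $[-R_\lambda,R_\lambda]$, so $\eta(x_j)\in BC_{\tr}^\infty(\R^{*d})$. Enumerating the monomials $p_1,p_2,\dots$ of $\C\langle x_1,\dots,x_d\rangle$, the expression $p_m(\eta(x_1),\dots,\eta(x_d))$ is a finite sum of finite products of the $\eta(x_j)$, hence lies in $BC_{\tr}^\infty(\R^{*d})$ by closure under sums and products (Theorem~\ref{thm:chainrule}), so $q_m:=\tr\bigl(p_m(\eta(x_1),\dots,\eta(x_d))\bigr)\in\tr(BC_{\tr}^\infty(\R^{*d}))$ by Corollary~\ref{cor:tracemap}. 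Setting
\[
\phi_m:=\bigl(q_m-\lambda(p_m)\,1\bigr)^*\bigl(q_m-\lambda(p_m)\,1\bigr)\in\tr(BC_{\tr}^\infty(\R^{*d}))_{\sa},
\]
which is nonnegative with $\phi_m^{\cA,\tau}(\mathbf{X})=\bigl|\tau(p_m(\eta(X_1),\dots,\eta(X_d)))-\lambda(p_m)\bigr|^2$, one sees that if $\norm{X_j}_\infty\le R_\lambda$ for all $j$ then $\eta(X_j)=X_j$, so $\phi_m^{\cA,\tau}(\mathbf{X})=|\lambda_{\mathbf{X}}(p_m)-\lambda(p_m)|^2$, and vanishing of all $\phi_m$ forces $\lambda_{\mathbf{X}}=\lambda$ by linearity.

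Finally I would enumerate the countable collection $\{a_{\ell,j}\}\cup\{\phi_m\}$ as $g_1,g_2,\dots$ and set $f:=\sum_{k\ge 1}c_k g_k$ with $c_k:=2^{-k}\bigl(1+\max_{0\le k'\le k}\norm{\partial^{k'}g_k}_{BC_{\tr}(\R^{*d},\mathscr{M}^{k'})}\bigr)^{-1}$. For each fixed $k'$ the series $\sum_k c_k\norm{\partial^{k'}g_k}_{BC_{\tr}(\R^{*d},\mathscr{M}^{k'})}$ converges, since all terms with $k\ge k'$ are bounded by $2^{-k}$; thus $f$ converges in the Fréchet space $BC_{\tr}^\infty(\R^{*d})$, hence in its closed subspace $\tr(BC_{\tr}^\infty(\R^{*d}))_{\sa}$, and $f\ge 0$ as a positive combination of nonnegative functions. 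Since every $g_k$ is scalar and nonnegative, $f^{\cA,\tau}(\mathbf{X})=0$ iff every $g_k$ vanishes at $\mathbf{X}$, which by the previous paragraph occurs exactly when $\norm{X_j}_\infty\le R_\lambda$ for all $j$ and $\lambda_{\mathbf{X}}=\lambda$ — equivalently just $\lambda_{\mathbf{X}}=\lambda$, the norm condition being automatic. The main obstacle is the routine but slightly delicate verification that $\psi_\ell(x_j)$, $\eta(x_j)$, and the degree-two polynomial expressions $\phi_m$ genuinely lie in $BC_{\tr}^\infty(\R^{*d})$ with all derivative seminorms finite (so that the weights $c_k$ are well-defined), for which Proposition~\ref{prop:smoothfunctionalcalculus}, Theorem~\ref{thm:chainrule}, Corollary~\ref{cor:tracemap}, and closure of $BC_{\tr}^\infty$ under products are the relevant tools, together with checking that the weighted series converges in each of the countably many seminorms.
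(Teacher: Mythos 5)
Your proof is correct, but it takes a somewhat different route from the paper's, so let me compare. Both arguments share the same core device: compare the moments of a bounded truncation of $\mathbf{X}$ with $\lambda$, and arrange matters so that vanishing of $f$ forces the truncation to act as the identity. The difference lies in how the operator-norm constraint $\norm{X_j}_\infty \le R_\lambda$ is extracted and how the countable family is summed. The paper uses a \emph{single} family, namely all expressions $|\tr(\phi(x_{i_1})\cdots\phi(x_{i_m})) - \lambda(x_{i_1}\cdots x_{i_m})|^2$, where $\phi$ equals the identity on $[-R,R]$ and is chosen (via a nonnegative, compactly supported $\phi'$) so that $|\phi(t)|>R$ for $|t|>R$; vanishing of $f$ then says the truncated tuple $\phi(\mathbf{X})$ has law $\lambda$, hence norm at most $R$, and the spectral mapping theorem plus strict growth of $\phi$ off $[-R,R]$ forces $\norm{X_j}\le R$ and so $\phi(X_j)=X_j$ — no separate spectral-detection family is needed. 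You instead add the explicit family $\tr(\psi_\ell(x_j))$ of bump-function tests, which detects $\norm{X_j}_\infty\le R_\lambda$ directly via faithfulness of the trace (support of the spectral distribution equals the spectrum); this is slightly more hands-on but makes the logic of the two conditions (norm bound, then moment matching of the truncated tuple) completely transparent. The summation step also differs: the paper weights the $m$th-degree terms by $1/m!$ and controls convergence in every $BC_{\tr}^k$ seminorm through the submultiplicativity estimate of Lemma \ref{lem:submultiplicativenorm}, whereas you use the generic Fréchet-space trick of normalizing each term by its first $k$ seminorms and weighting by $2^{-k}$, which avoids any quantitative norm estimate on the individual terms. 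Both mechanisms are sound; yours is more robust but less explicit, the paper's is a single closed-form series. The only points you gloss over — that $\tr(BC_{\tr}^\infty(\R^{*d}))_{\sa}$ is closed under the limits you take, and that faithfulness of $\tau$ is what converts ``$\tau(\psi_\ell(X_j))=0$ for all $\ell$'' into the spectral support statement — are routine and consistent with the paper's framework.
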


\begin{proof}
	Let $R$ be an exponential bound for $\lambda$, so that $\lambda \in \Sigma_{d,R}$.  Let $R' > R$.  Let $\phi \in C^\infty(\R)$ be a function such that $\phi(t) = t$ on $[-R,R]$ and $\phi'$ is nonnegative, symmetric, and supported in $[-R',R']$.  Similar to the bump function construction in the proof of Theorem \ref{thm:magicnormbound}, Lemma \ref{lem:Fouriertransformmisc} implies that $\phi(x_j) \in BC_{\tr}^\infty(\R^{*d})$.  We claim that the sum
	\[
	f(\mathbf{x}) = \sum_{m \geq 1} \frac{1}{m!} \sum_{i_1,\dots,i_m \in \{1,\dots,d\}} |\tr(\phi(x_{i_1}) \dots \phi(x_{i_m})) - \lambda(x_{i_1} \dots x_{i_m})|^2
	\]
	converges in $\tr(BC_{\tr}^\infty(\R^{*d}))$.  For each $k > 0$ and $g \in BC_{\tr}^\infty(\R^{*d})$,
	\[
	\norm{g}_{BC_{\tr}^k(\R^{*d})} = \sum_{j=0}^k \frac{1}{j!} \norm{\partial^j g}_{BC_{\tr}^k(\R^{*d})}.
	\]
	By the same reasoning as in Lemma \ref{lem:submultiplicativenorm}, we have
	\[
	\norm{g_1g_2}_{BC_{\tr}^k(\R^{*d})} \leq \norm{g_1}_{BC_{\tr}^k(\R^{*d})} \norm{g_1}_{BC_{\tr}^k(\R^{*d})}.
	\]
	In particular,
	\begin{align*}
		\lVert(\tr(\phi(x_{i_1}) \dots \phi(x_{i_m}))& - \lambda(x_{i_1} \dots x_{i_m}))^2 \rVert_{BC_{\tr}^k(\R^{*d})} \\
		&\leq \norm{\tr(\phi(x_{i_1}) \dots \phi(x_{i_m})) - \lambda(x_{i_1} \dots x_{i_m})}_{BC_{\tr}^k(\R^{*d})}^2 \\
		&\leq \left(\norm{\tr(\phi(x_{i_1}) \dots \phi(x_{i_m})}_{BC_{\tr}^k(\R^{*d})} + |\lambda(x_{i_1} \dots x_{i_m})| \right)^2 \\
		&\leq \left( \norm{\phi(x_1)}_{BC_{\tr}^k(\R^{*d})}^m + R^m \right)^2.
	\end{align*}
	Note that
	\[
	\sum_{m \geq 1} \frac{1}{m!} \sum_{i_1,\dots,i_m \in \{1,\dots,d\}} \left( \norm{\phi(x_1)}_{BC_{\tr}^k(\R^{*d})}^m + R^m \right)^2 = \sum_{m \geq 1} \frac{1}{m!} d^m \left( \norm{\phi(x_1)}_{BC_{\tr}^k(\R^{*d})}^m + R^m \right)^2 < \infty.
	\]
	Therefore, the sum defining $f$ converges in $\tr(BC_{\tr}^k(\R^{*d}))$ for every $k$, which means it converges in $\tr(BC_{\tr}^\infty(\R^{*d}))$.
	
	Clearly, $f \geq 0$.  If $f^{\cA,\tau}(\mathbf{X}) = 0$, then $\tau(\phi(X_{i_1}) \dots \phi(X_{i_m})) = \lambda(x_{i_1} \dots x_{i_m})$ for all $m$ and $i_1$, \dots, $i_m \in \{1,\dots,d\}$.  Thus, the tuple $\mathbf{Y} = (\phi(X_1),\dots,\phi(X_d))$ satisfies $\lambda_{\mathbf{Y}} = \lambda$.  In particular, $\norm{Y_j} \leq R$.  Recall $\phi$ is an increasing function and $\phi' = 1$ on $[-R,R]$, and therefore, $|\phi(t)| > R$ whenever $|t| > R$.  By the spectral mapping theorem, the only way that $\norm{\phi(X_j)}$ can be less than or equal to $R$ is if $\norm{X_j} \leq R$.  Hence, $\phi(X_j) = X_j$, and so $\lambda_{\mathbf{X}} = \lambda$.
\end{proof}

\begin{proof}[Proof of Proposition \ref{prop:generic}]
	By Theorem \ref{thm:magicnormbound}, there exists $R > 0$ depending only on $C_1$ such that every free Gibbs law for any $V \in \mathscr{V}_{C_1,C_2}^k$ is in $I(\Sigma_{d,R})$.  
	
	We claim that any open subset $\mathscr{U}$ of $\mathscr{V}_{C_1,C_2}^k$ contains some potential which has a unique free Gibbs law with respect to $\omega$.  Let $V_0 + W \in \mathscr{U}$.  Fix $t \in (0,1)$ sufficiently close to $1$ that $V_0 + t W \in \mathscr{U}$, and note that $\norm{t \partial^j W}_{BC_{\tr}(\R^{*d},\mathscr{M}^j)} < C_j$ for $j = 1$, $2$.  Let $I(\lambda)$ be some free Gibbs law for $V_0 + t W$.  Let $f$ be as in Lemma \ref{lem:testfunction} for $\lambda$.  By choosing $\epsilon > 0$ small enough, we can guarantee that $V_0 + t W + \epsilon f$ is in $\mathscr{U}$.
	
	We claim that $I(\lambda)$ is the unique free Gibbs law for $V = V_0 + t W + \epsilon f$.  Recall that
	\[
	\chi_V^\omega(\nu) = \chi^\omega(\nu) - \nu(V) + K
	\]
	for some constant $K$.  Any free Gibbs law has the form $I(\mu)$ for some $\mu \in \Sigma_{d,R}$.  Now
	\[
	\chi^\omega(I(\mu)) - I(\mu)[V_0 + tW] \leq \chi^\omega(I(\lambda)) - I(\lambda)[V_0 + tW]
	\]
	By our choice of $f$,
	\[
	I(\mu)[-f] \leq 0 = I(\lambda)[-f]
	\]
	with equality if and only if $\mu = \lambda$.  It follows that $I(\lambda)$ is the unique maximizer of $\chi_V^\omega$.
	
	It remains to show that the set of $V$ which have a unique free Gibbs law is a $G_\delta$ set.  Recall that $\Sigma_{d,R}$ is compact and metrizable, so let $\rho$ be a metric.  Let $V \in \mathscr{V}_{C_1,C_2}^k$, let $\mathcal{G}(V) \subseteq \Sigma_{d,R}$ be the set of $\lambda$ such that $I(\lambda)$ is a free Gibbs law for $V$ with respect to $\omega$.  By upper semi-continuity of $\chi_V^\omega$, the space of free Gibbs laws for $V$ is closed in $\mathcal{C}^{\star}$, hence in light of Lemma \ref{lem:Claw}, $\mathcal{G}(V)$ is closed in $\Sigma_{d,R}$.  Let
	\[
	\mathscr{U}_n = \{ V \in \mathscr{V}_{C_1,C_2}^k: \mathcal{G}(V) \subseteq B_{1/n}(\mu) \text{ for some } \mu \in \Sigma_{d,R} \},
	\]
	where $B_{1/n}(\mu)$ is the open ball of radius $n$ in $\Sigma_{d,R}$ with respect to the metric $\rho$.  Observe that $V \in \bigcap_{n=1}^\infty \mathscr{U}_n$ if and only if the set $\mathcal{G}(V)$ has diameter zero if and only if $V$ has a unique free Gibbs law.
	
	We claim that $\mathscr{U}_n$ is open.  Fix $V \in \mathscr{U}_n$.  Let $\mu \in \Sigma_{d,R}$ such that $\mathcal{G}(V) \subseteq B_{1/n}(\mu)$.  Note that $\Sigma_{d,R} \setminus B_{1/n}(\mu)$ is compact, hence its image in $\mathcal{C}^{\star}$ is a closed set, so $\chi_V^\omega$ achieves a maximum, which must be strictly less than zero since all the free Gibbs laws for $V$ are in $B_{1/n}(\mu)$.  Call the maximum $-\epsilon$.  Let $I(\lambda)$ be a free Gibbs law for $V$.  Then
	\[
	\sup_{\nu \in \Sigma_{d,R} \setminus B_{1/n}(\mu)} \left( \chi^\omega(I(\nu)) - I(\nu)[V] \right) \leq \chi^\omega(I(\lambda)) - I(\lambda)[V] - \epsilon.
	\]
	If $V' \in \mathscr{V}_{C_1,C_2}^k$ such that $\norm{V' - V}_{C_{\tr}(\R^{*d}),R} \leq \epsilon / 3$, then
	\begin{align*}
		\sup_{\nu \in \Sigma_{d,R} \setminus B_{1/n}(\mu)} \chi^\omega(I(\nu)) - I(\nu)[V']
		&\leq \sup_{\nu \in \Sigma_{d,R} \setminus B_{1/n}(\mu)} \chi^\omega(I(\nu)) - I(\nu)[V] + \frac{\epsilon}{3} \\
		&\leq \chi^\omega(I(\lambda)) - I(\lambda)[V] - \frac{2 \epsilon}{3} \\
		&\leq \chi^\omega(I(\lambda)) - I(\lambda)[V'] - \frac{\epsilon}{3}.
	\end{align*}
	Hence, for $V'$ in a neighborhood of $V$, the elements of $\Sigma_{d,R} \setminus B_{1/n}(\mu)$ are not free Gibbs laws, which implies that $\mathcal{G}(V') \subseteq B_{1/n}(\mu)$, so $V' \in \mathscr{U}_n$.  Thus, $\mathscr{U}_n$ is open as desired.
\end{proof}

\section{Rigorous transport results in the perturbative setting} \label{sec:rigoroustransport}

In this section, we will combine the results of \S \ref{sec:pseudoinverse} and \S \ref{sec:freeGibbslaws} to study free transport for potentials $V$ sufficiently close to $(1/2) \sum_j \tr(x_j^2)$.  If $V$ satisfies $\nabla V \in \mathscr{J}_{a,c}^d$ (see Definition \ref{def:J}).  In \S \ref{sec:pseudoinverse}, we constructed an expectation map $\mathbb{E}_V := \mathbb{E}_{\nabla V}$.  We will also use the notation $L_V$, $e^{tL_V}$, and $\Psi_V$ rather than $L_{\nabla V}$, $e^{tL_{\nabla V}}$, and $\Psi_{\nabla V}$.  We will show in Proposition \ref{prop:convexDSE} that $\mathbb{E}_V$ describes the unique free Gibbs law for $V$.  Then Theorem \ref{thm:transport} will complete the strategy of \ref{subsec:discussion} to construct transport.

We use the same strategy to prove a more refined result (Theorem \ref{thm:triangulartransport}), which produces triangular smooth transport  which produces a triangular smooth transport, and hence triangular isomorphisms of $\mathrm{C}^*$- and $\mathrm{W}^*$-algebras.  Several of the necessary ingredients, such as a conditional version of the Dyson-Schwinger equation, cannot be deduced directly from the results of \S \ref{sec:freeGibbslaws}.  We rely instead upon the relationship between $\mathbb{E}_{\mathbf{x},V}$ to conditional expectations from random matrix theory and operator algebras, which is also of interest in its own right.

\subsection{Existence of transport} \label{subsec:constructtransport}

\begin{proposition} \label{prop:convexDSE}
	Let $V$ satisfy $\nabla V \in \mathscr{J}_{a,c}^d$ for some $a \in \R$ and $c \in (0,1)$.  Then $\mathbb{E}_V|_{\mathcal{C}}$ is the unique element of $\mathcal{C}^{\star}$ satisfying \eqref{eq:DSE}.  In particular, for any $\omega \in \beta \N \setminus \N$, it is the unique free Gibbs law for $V$ with respect to $\omega$.
\end{proposition}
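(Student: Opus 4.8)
The plan is to establish the statement in two halves: first that $\mathbb{E}_V|_{\mathcal{C}}$ \emph{does} satisfy the Dyson--Schwinger equation \eqref{eq:DSE}, and second that it is the \emph{only} element of $\mathcal{C}^{\star}$ that does. For the existence half, I would combine the kernel projection $\mathbb{E}_{\mathbf{J}}$ from \S\ref{sec:pseudoinverse} (with $\mathbf{J} = \nabla V$) with the adjointness identity of Lemma \ref{lem:adjointness} and the relation $-L_V\Psi_V + \mathbb{E}_V = \id$ from Theorem \ref{thm:Vheatsemigroup1}(4)/Proposition \ref{prop:Laplacianrelations}. Concretely: for $\mathbf{h} \in C_{\tr}^\infty(\R^{*d})^d$ with $\partial\mathbf{h}$ bounded, the quantity $\partial V \# \mathbf{h} - \Tr_\#(\partial \mathbf{h})$ is exactly $\nabla_V^*\mathbf{h}$ (Lemma \ref{lem:groupactiontangent}), which lies in $C_{\tr}^\infty(\R^{*d})$; since $\nabla_V^*\nabla = -L_V$, one shows $\mathbb{E}_V[\nabla_V^*\mathbf{h}] = \mathbb{E}_V[L_V(-\Psi_V\nabla_V^*\mathbf{h}) + \mathbb{E}_V(\cdots)]$ and uses $\mathbb{E}_V \circ L_V = 0$ (which follows from $L_V\mathbb{E}_V = 0$ and commutativity in Proposition \ref{prop:Laplacianrelations}, together with the trace-preserving/multiplicative structure in Proposition \ref{prop:expectationmultiplicative}) to conclude $\mathbb{E}_V[\nabla_V^*\mathbf{h}] = 0$. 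One must check that $\mathbb{E}_V$ extends to $\mathcal{C}^{\star}$ — this follows because $\mathbb{E}_V$ is a non-commutative law (Proposition \ref{prop:expectationmultiplicative}, $d'=0$ case) realized by bounded operators of norm controlled by $\norm{\mathbf{J} - \mathrm{id}}$ via the estimate \eqref{eq:SDEestimate} on the stochastic process, hence defines an element of $I(\Sigma_{d,R}) \subseteq \mathcal{C}^{\star}$ — and that \eqref{eq:DSE} for $\mathbf{h}$ with the weaker smoothness hypotheses of Proposition \ref{prop:integrationbyparts} reduces to the smooth case by density/cutoff.

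For the uniqueness half, I would invoke Theorem \ref{thm:magicnormbound}: writing $V = V_0 + W$ with $\nabla V \in \mathscr{J}_{a,c}^d$ forcing $\partial W = \partial \nabla V - \Id$ bounded (in $BC_{\tr}(\R^{*d},\mathscr{M}^1)$) and $\partial^2 W$ bounded, any $\nu \in \mathcal{E}$ satisfying \eqref{eq:DSE} is of the form $I(\lambda_{\mathbf{X}})$ for a bounded tuple $\mathbf{X}$ with an explicit norm bound. The Dyson--Schwinger equation then reads $\tilde\nu(\nabla_V^*\mathbf{h}) = 0$ for all $\mathbf{h}$, i.e. $\tilde\nu$ annihilates $\im(\nabla_V^*)$, which by Proposition \ref{prop:Laplacianrelations2}(2) equals $\ker(\tilde{\mu}_V) = \im(L_V)$. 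Hence $\tilde\nu - \mathbb{E}_V$ annihilates $\im(L_V)$ on $\tr(C_{\tr}^\infty(\R^{*d}))$; but $\tr(C_{\tr}^\infty(\R^{*d})) = \iota(\C) \oplus \im(L_V)$ (again Proposition \ref{prop:Laplacianrelations2}), and both $\tilde\nu$ and $\mathbb{E}_V$ are unital, so $\tilde\nu = \mathbb{E}_V$ on $\tr(C_{\tr}^\infty(\R^{*d}))$, and then on all of $\mathcal{C}$ by the density of scalar trace polynomials (or the agreement-on-$\mathcal{C}$ argument at the end of Theorem \ref{thm:magicnormbound}, using that $\nu = I(\lambda_{\mathbf{X}})$). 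The final sentence of the proposition follows since any free Gibbs law for $V$ with respect to $\omega$ satisfies \eqref{eq:DSE} by Proposition \ref{prop:integrationbyparts} (the hypotheses of which are met since $\nabla V - \id$ and $\partial^2 V$ are bounded, as noted in Remark \ref{rem:IBPhypotheses}), and such a law lies in $\mathcal{E}$; so it must coincide with $\mathbb{E}_V$, and Proposition \ref{prop:entropymaximizer} guarantees at least one exists.

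The main obstacle I anticipate is bookkeeping around \emph{where} each identity lives — $L_V$, $\nabla_V^*$, $\mathbb{E}_V$, $\Psi_V$ are a priori only defined on $\tr(C_{\tr}^\infty(\R^{*d}))$ or $C_{\tr}^k$ spaces, whereas \eqref{eq:DSE} is a statement about functionals on the Banach space $\mathcal{C}$, so one must carefully justify that $\mathbb{E}_V$ really is an element of $\mathcal{C}^{\star}$ (via the boundedness/realizability of the associated law, using the operator-norm bound from the SDE estimates or Theorem \ref{thm:magicnormbound}) and that the test functions $\mathbf{h}$ appearing in \eqref{eq:DSE} — which need only be $C_{\tr}^2$ with bounded derivatives — can be approximated well enough to transfer the identity from the smooth dense class. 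A secondary subtlety is that Proposition \ref{prop:Laplacianrelations2} presupposes Assumptions \ref{ass:freeGibbs} and \ref{ass:Laplacian}; Assumption \ref{ass:Laplacian} is exactly what \S\ref{sec:pseudoinverse} delivers for $\nabla V \in \mathscr{J}_{a,c}^d$, but Assumption \ref{ass:freeGibbs} (uniqueness of $\mu_V$ solving the Dyson--Schwinger equation) is precisely what this proposition is meant to supply, so the clean way to organize the argument is to \emph{not} cite Proposition \ref{prop:Laplacianrelations2} circularly but instead prove the kernel/image splitting of $L_V$ directly from $-L_V\Psi_V + \mathbb{E}_V = \id$ and $L_V\mathbb{E}_V = 0$, which are available unconditionally from \S\ref{sec:pseudoinverse}.
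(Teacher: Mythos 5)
Your uniqueness half is essentially the paper's argument: take any $\nu \in \mathcal{C}^{\star}$ satisfying the Dyson--Schwinger equation, use Theorem \ref{thm:magicnormbound} to realize it as $I(\lambda_{\mathbf{X}})$ with $\tilde{\nu}$ satisfying the equation for all smooth test functions, then test against $\mathbf{h} = \nabla \Psi_V f$ and use $-L_V\Psi_V f = f - \mathbb{E}_V[f]$ to get $\tilde{\nu}(f) = \mathbb{E}_V(f)$ for smooth $f$, and conclude by density. Your own suggested repair of the circular appeal to Proposition \ref{prop:Laplacianrelations2} (work directly from $-L_V\Psi_V + \mathbb{E}_V = \id$, which \S\ref{sec:pseudoinverse} gives unconditionally) is exactly how the paper avoids the circle, so that half is fine.

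The genuine gap is in your ``existence'' half. The relations $-L_V\Psi_V + \mathbb{E}_V = \id$ and $\mathbb{E}_V \circ L_V = 0$ only tell you that $\mathbb{E}_V$ annihilates $\im(L_V) = \{\nabla_V^*\nabla g\}$, i.e.\ $\nabla_V^*$ applied to \emph{gradients}. The Dyson--Schwinger equation \eqref{eq:DSE} demands $\mathbb{E}_V[\nabla_V^*\mathbf{h}] = 0$ for \emph{arbitrary} vector fields $\mathbf{h}$, and $\nabla_V^*\mathbf{h}$ need not lie in $\im(L_V)$. Your proposed manipulation is in fact vacuous: substituting $g = \nabla_V^*\mathbf{h}$ into $g = -L_V\Psi_V g + \mathbb{E}_V[g]$ and applying $\mathbb{E}_V$ gives $\mathbb{E}_V[\nabla_V^*\mathbf{h}] = \mathbb{E}_V[\nabla_V^*\mathbf{h}]$, nothing more. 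The paper flags precisely this point in \S\ref{subsec:discussion}: the semigroup construction alone yields a unique law killing $L_V f$ for all $f$, ``however, this alone does not imply that $\mu_V$ satisfies \eqref{eq:DSE0}.'' (Citing Lemma \ref{lem:adjointness} does not help either, since that lemma already presupposes Assumption \ref{ass:freeGibbs}.) The correct source of existence is the free-entropy side, which you do invoke in your closing sentence: by Proposition \ref{prop:entropymaximizer} a free Gibbs law exists, by Proposition \ref{prop:integrationbyparts} (whose hypotheses hold here, cf.\ Remark \ref{rem:IBPhypotheses}) it satisfies \eqref{eq:DSE}, and then your uniqueness half identifies it with $\mathbb{E}_V|_{\mathcal{C}}$ --- which simultaneously shows $\mathbb{E}_V|_{\mathcal{C}}$ satisfies \eqref{eq:DSE} and is the unique free Gibbs law. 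So the fix is structural: delete the semigroup-based existence paragraph and let the entropy argument carry existence, as the paper does. (A small bookkeeping slip: what $\mathscr{J}_{a,c}^d$ bounds is $\nabla W = \nabla V - \id$ and $\partial\nabla W = \partial\nabla V - \Id$, which is what Theorem \ref{thm:magicnormbound} needs; $\partial W$ is not literally $\partial\nabla V - \Id$.)
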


\begin{proof}
	Let $\nu \in \mathcal{C}^{\star}$ satisfy \eqref{eq:DSE2}.  By Theorem \ref{thm:magicnormbound}, $\nu = I(\lambda)$ for some $\lambda \in \Sigma_{d,R}$ for some $R > 0$, and the corresponding homomorphism $\tilde{\lambda}: \tr(C_{\tr}(\R^{*d})) \to \C$ satisfies the Dyson-Schwinger equation for all smooth test functions.  If $f \in \tr(C_{\tr}^\infty(\R^{*d}))$, then Proposition \ref{prop:pseudoinverse} we have $\Psi_V f \in \tr(C_{\tr}^\infty(\R^{*d}))$ and hence $\nabla (\Psi_V f) \in C_{\tr}^\infty(\R^{*d})^d$.  Thus, by \eqref{eq:DSE2},
	\[
	0 = \tilde{\lambda}[\nabla_V^* \nabla (\Psi_V f)] = \tilde{\lambda}[f - \mathbb{E}_V[f]] = \tilde{\lambda}(f) - \mathbb{E}_V(f).
	\]
	Therefore, $\tilde{\lambda}[f] = \mathbb{E}_V[f]$ for all smooth $f$.  By density, this extends to all of $\tr(C_{\tr}(\R^{*d}))$.  Hence, $\tilde{\lambda} = \mathbb{E}_V$ and $\nu = \mathbb{E}_V|_{\mathcal{C}}$.
\end{proof}

\begin{corollary}
	If $V$ satisfies $\nabla V \in \mathcal{J}_{a,c}^d$ for some $c > 0$ and $a \in \R$, then for every $f \in \tr(C_{\tr}^1(\R^{*d}))$ with $\partial f$ bounded and for every $\epsilon > 0$, we have
	\[
	\limsup_{N \to \infty} \frac{1}{N^2} \log \mu_V^{(N)}(\{\mathbf{X}: |f(\mathbf{X}) - \mathbb{E}_V(f)| \geq \epsilon \}) < 0.
	\]
\end{corollary}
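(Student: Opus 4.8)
The plan is to obtain this as an immediate consequence of Proposition~\ref{prop:convexDSE} together with the second (``more generally'') half of Corollary~\ref{cor:randommatrixconvergence}. All the substantive work has already been done: the only thing left is to check that the hypotheses of those two results are met, namely that $f$ lies in the test-function space $\mathcal{C}$, and that there is a unique $\nu \in \mathcal{E}$ satisfying the Dyson--Schwinger equation \eqref{eq:DSE}, which moreover assigns $f$ the value $\mathbb{E}_V(f)$.

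First I would verify that $f \in \mathcal{C}$. Since $\partial f$ is bounded, Remark~\ref{rem:Lipschitz} shows that for every $(\cA,\tau) \in \mathbb{W}$ the function $f^{\cA,\tau}$ is Lipschitz with respect to $\norm{\cdot}_2$ on all of $\cA_{\sa}^d$, so
\[
|f^{\cA,\tau}(\mathbf{X})| \le |f^{\cA,\tau}(0)| + \norm{\partial f}_{BC_{\tr}(\R^{*d},\mathscr{M}^1)}\,\norm{\mathbf{X}}_2,
\]
which is dominated by a constant multiple of $1 + V_0^{\cA,\tau}(\mathbf{X})$; hence $f/(1+V_0) \in \tr(BC_{\tr}(\R^{*d}))$. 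Moreover, since $f$ is scalar-valued we have $f = \tr(1 \cdot f)$, a product of two functions with bounded first derivative, so $f$ is itself one of the generating elements used in the definition of $\mathcal{C}$. Thus $f \in \mathcal{C}$.

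Next I would invoke Proposition~\ref{prop:convexDSE}: because $\nabla V \in \mathscr{J}_{a,c}^d$, the functional $\mathbb{E}_V|_{\mathcal{C}}$ is the unique element of $\mathcal{C}^{\star}$ satisfying \eqref{eq:DSE}; in particular it is the unique such element of $\mathcal{E} \subseteq \mathcal{C}^{\star}$, and it does belong to $\mathcal{E}$ since $\mathbb{E}_V$ is a non-commutative law by Proposition~\ref{prop:expectationmultiplicative}. Consequently the standing hypothesis of Corollary~\ref{cor:randommatrixconvergence} is satisfied, and trivially every $\nu \in \mathcal{E}$ satisfying \eqref{eq:DSE} has $\nu(f) = \mathbb{E}_V(f)$. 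Applying the second statement of Corollary~\ref{cor:randommatrixconvergence} with $c = \mathbb{E}_V(f)$ then yields precisely
\[
\limsup_{N \to \infty} \frac{1}{N^2} \log \mu_V^{(N)}(\{\mathbf{X}: |f(\mathbf{X}) - \mathbb{E}_V(f)| \geq \epsilon \}) < 0.
\]

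There is no genuine obstacle at this stage: the content lives in the heat-semigroup construction of $\Psi_V$ (used in Proposition~\ref{prop:convexDSE}), in Theorem~\ref{thm:magicnormbound}, and in the large-deviation upper bound of Proposition~\ref{prop:entropymaximizer} behind Corollary~\ref{cor:randommatrixconvergence}. The only points requiring a moment's attention, as above, are the membership $f \in \mathcal{C}$ and the identification of $\mathbb{E}_V|_{\mathcal{C}}$ as the unique Dyson--Schwinger solution, both of which are routine.
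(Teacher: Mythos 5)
Your proposal is correct and is exactly the route the paper intends: the corollary is stated as an immediate consequence of Proposition \ref{prop:convexDSE} combined with the second statement of Corollary \ref{cor:randommatrixconvergence}, and your two verifications (that $f = \tr(1\cdot f) \in \mathcal{C}$ via the Lipschitz bound from $\partial f$ bounded, and that $\mathbb{E}_V|_{\mathcal{C}} \in \mathcal{E}$ is the unique Dyson--Schwinger solution so every such $\nu$ gives $\nu(f) = \mathbb{E}_V(f)$) are precisely the routine checks the paper leaves implicit.
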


As a consequence of \eqref{eq:DSE0} and Proposition \ref{prop:Laplacianrelations}, any such $V$ satisfies Assumptions \ref{ass:freeGibbs} and \ref{ass:Laplacian}.  Hence, all the properties of Propositions \ref{prop:Laplacianrelations2} and \ref{prop:Laplacianrelations3} hold.  Now we give a rigorous proof of transport for log-densities close to the quadratic, and in fact ``infinitesimally optimal'' transport.

\begin{theorem} \label{thm:transport}
	Let $V_t = (1/2) \ip{\mathbf{x},\mathbf{x}}_{\tr} + W_t$, where $t \mapsto W_t$ be a continuously differentiable path $[0,T] \to \tr(C_{\tr}^\infty(\R^{*d}))_{\sa}$.  Suppose that
	\begin{align*}
		\norm{\partial^{k-1} \nabla W}_{BC_{\tr}(\R^{*d},\mathscr{M}^{k-1})^d} \leq C_k \text{ for } k = 1,2,3, \\
		\norm{\partial^{k-1} \nabla \dot{W}}_{BC_{\tr}(\R^{*d},\mathscr{M}^{k-1})^d} \leq C_k' \text{ for } k = 1,2, 
	\end{align*}
	for constants $C_1$, $C_2$, $C_3$, $C_1'$, $C_2' \in [0,\infty)$ such that $C_2 < 1$.  Let $V_t = \norm{\mathbf{x}}_{2,\tr}^2 + W_t$.  Let
	\[
	\mathbf{h}_t = -\nabla \Psi_{V_t} \dot{V}_t.
	\]
	Then the solution $\mathbf{f}_t$ to \eqref{eq:flow} satisfies $(\mathbf{f}_t)_* V_0 = V_t$ modulo constants for all $t$.  Moreover, this choice of $\mathbf{h}_t$ minimizes
	\[
	\int_0^T \mathbb{E}_{V_t} \norm{\mathbf{h}_t}_{2,\tr}^2\,dt
	\]
	among all maps $t \mapsto \mathbf{h}_t$ satisfying the hypotheses of Lemma \ref{lem:transport0} with $(\mathbf{f}_t)_* V_0 = V_t$ for all $t$.
\end{theorem}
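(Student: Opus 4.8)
The argument has two independent parts: first, that $t \mapsto \mathbf{h}_t$ is admissible for Lemma~\ref{lem:transport0} and produces the required transport; second, the infinitesimal optimality. For the first part I would verify the hypotheses needed to apply Lemma~\ref{lem:flow} and Lemma~\ref{lem:transport0}, namely that $t \mapsto \mathbf{h}_t$ is a continuous map $[0,T] \to C_{\tr}^1(\R^{*d})_{\sa}^d$ with $\norm{\partial \mathbf{h}_t}_{BC_{\tr}(\R^{*d},\mathscr{M}^1)^d}$ uniformly bounded. Continuity of $t \mapsto \nabla V_t = \id + \nabla W_t$ in $C_{\tr}^\infty(\R^{*d})_{\sa}^d$ follows since $t \mapsto W_t$ is $C^1$ into $\tr(C_{\tr}^\infty(\R^{*d}))$, and the bounds on $\partial^{k-1}\nabla W_t$ for $k=1,2,3$ together with $C_2 < 1$ guarantee $\nabla V_t \in \mathscr{J}_{a,c}^d$ for suitable $a, c$ uniformly in $t$. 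Then Proposition~\ref{prop:Laplaciancontinuity} gives joint continuity of $(\mathbf{J},\mathbf{f}) \mapsto \Psi_{\mathbf{J}}\mathbf{f}$, so $t \mapsto \Psi_{V_t}\dot{V}_t$ is continuous into $\tr(C_{\tr}^\infty(\R^{*d}))$ (using continuity of $t \mapsto \dot{V}_t$), hence $t \mapsto \mathbf{h}_t = -\nabla \Psi_{V_t}\dot{V}_t$ is continuous into $C_{\tr}^\infty(\R^{*d})_{\sa}^d$. For the uniform bound on $\partial \mathbf{h}_t$, I would invoke Proposition~\ref{prop:pseudoinverse}(3) together with Remark~\ref{rem:boundedcoefficients2}: since $\nabla V_t - \pi$ is bounded in $BC_{\tr}^2$ uniformly in $t$ (this is where $C_1, C_2, C_3$ and $C_1', C_2'$ enter --- $\partial^{k-1}\nabla \dot{V}_t$ bounded for $k=1,2$ controls $\partial \Psi_{V_t}\dot{V}_t$ and $\partial^2 \Psi_{V_t}\dot{V}_t$), the operator $\Psi_{V_t}$ sends $\dot{V}_t$ to something with uniformly bounded first derivative, and applying $\nabla$ preserves boundedness by \eqref{eq:gradientestimate}.

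Once admissibility is established, I would apply Lemma~\ref{lem:transport0}: the identity $-\nabla_{V_t}^* \mathbf{h}_t = \dot{V}_t$ holds modulo constants because $\mathbf{h}_t = \nabla(-L_{V_t})^{-1}\dot{V}_t = -\nabla\Psi_{V_t}\dot{V}_t$ and $-\nabla_{V_t}^*\nabla\Psi_{V_t}\dot{V}_t = L_{V_t}\Psi_{V_t}\dot{V}_t = -\dot{V}_t + \mathbb{E}_{V_t}(\dot{V}_t)$ by Proposition~\ref{prop:Laplacianrelations}, and $\mathbb{E}_{V_t}(\dot{V}_t)$ is a constant. (Here one may assume without loss of generality that $\dot{V}_t$ has $\mathbb{E}_{V_t}$-expectation zero, or simply work modulo constants throughout, as the Wasserstein manifold is defined modulo additive constants.) Then \eqref{eq:transdiff} gives $\frac{d}{dt}[(\mathbf{f}_t^{-1})_* V_t] = (\dot{V}_t + \nabla_{V_t}^*\mathbf{h}_t)\circ \mathbf{f}_t = 0$ modulo constants, so $(\mathbf{f}_t^{-1})_* V_t = V_0$ modulo constants for all $t$, i.e. $(\mathbf{f}_t)_* V_0 = V_t$ modulo constants. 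One must also check $\mathbf{f}_t \in \mathscr{D}(\R^{*d})$: Lemma~\ref{lem:flow} with the uniform bounds on $\partial^{k'}\mathbf{h}_t$ for $k' \leq 3$ (which again require the hypotheses on the third derivative of $W_t$ via Proposition~\ref{prop:pseudoinverse}(2)) gives $\mathbf{f}_t, \mathbf{f}_t^{-1} \in C_{\tr}^\infty(\R^{*d})_{\sa}^d$ with bounded first derivatives, so $\mathbf{f}_t \in \Diff_{\tr}(\R^{*d}) \cap \BDiff_{\tr}^1(\R^{*d}) = \mathscr{D}(\R^{*d})$.

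For the optimality statement, suppose $t \mapsto \tilde{\mathbf{h}}_t$ is any other admissible family with $(\mathbf{f}_t)_* V_0 = V_t$ for all $t$, where now $\mathbf{f}_t$ denotes the flow of $\tilde{\mathbf{h}}_t$. By the last assertion of Lemma~\ref{lem:transport0}, this forces $-\nabla_{V_t}^*\tilde{\mathbf{h}}_t = \dot{V}_t$ modulo constants for all $t$; subtracting, $\nabla_{V_t}^*(\tilde{\mathbf{h}}_t - \mathbf{h}_t) = 0$, so $\tilde{\mathbf{h}}_t - \mathbf{h}_t \in \ker(\nabla_{V_t}^*)$. Since $\mathbf{h}_t = -\nabla\Psi_{V_t}\dot{V}_t \in \im(\nabla) = \im(\mathbb{P}_{V_t})$ while $\tilde{\mathbf{h}}_t - \mathbf{h}_t \in \ker(\nabla_{V_t}^*) = \ker(\mathbb{P}_{V_t})$, Proposition~\ref{prop:Laplacianrelations3}(5) (orthogonality of $\im(\nabla)$ and $\ker(\nabla_V^*)$ with respect to $\tilde{\mu}_V \circ \ip{\cdot,\cdot}_{\tr}$, where here $\tilde{\mu}_{V_t} = \mathbb{E}_{V_t}$) gives
\[
\mathbb{E}_{V_t}\norm{\tilde{\mathbf{h}}_t}_{2,\tr}^2 = \mathbb{E}_{V_t}\norm{\mathbf{h}_t}_{2,\tr}^2 + \mathbb{E}_{V_t}\norm{\tilde{\mathbf{h}}_t - \mathbf{h}_t}_{2,\tr}^2 \geq \mathbb{E}_{V_t}\norm{\mathbf{h}_t}_{2,\tr}^2,
\]
and integrating over $[0,T]$ yields the claim. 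The main obstacle I anticipate is not the structural argument but the careful bookkeeping in the first part: showing that the composition $\nabla \circ \Psi_{V_t}$ applied to $\dot{V}_t$ has first derivative uniformly bounded in $t$ requires chasing the precise dependence of the constants in Proposition~\ref{prop:pseudoinverse}(2)--(3) on the $BC_{\tr}$-norms of $\nabla V_t - \pi$ and its derivatives (Remarks~\ref{rem:boundedcoefficients} and \ref{rem:boundedcoefficients2}), and verifying that the stated hypotheses on $W_t$ and $\dot{W}_t$ are exactly what is needed --- in particular that bounds through the \emph{third} derivative of $W_t$ and the \emph{second} derivative of $\dot{W}_t$ suffice to feed Lemma~\ref{lem:flow}'s requirement of uniformly bounded derivatives of $\mathbf{h}_t$.
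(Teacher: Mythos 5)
Your proposal is correct and follows essentially the same route as the paper's proof: verify that $\mathbf{h}_t = -\nabla\Psi_{V_t}\dot{V}_t$ is admissible via Proposition \ref{prop:pseudoinverse}(3) and Remark \ref{rem:boundedcoefficients2} (plus Proposition \ref{prop:Laplaciancontinuity} for continuity in $t$), apply Lemmas \ref{lem:flow} and \ref{lem:transport0} to get $(\mathbf{f}_t)_*V_0 = V_t$, and deduce optimality from the forced identity $-\nabla_{V_t}^*\tilde{\mathbf{h}}_t = \dot{V}_t$ together with the orthogonality of $\ker(\nabla_{V_t}^*)$ and gradients in Proposition \ref{prop:Laplacianrelations3}. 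The only cosmetic differences are that the paper cites part (4) rather than (5) of that proposition and only needs $\partial\mathbf{h}_t$, $\partial^2\mathbf{h}_t$ bounded, but these do not change the argument.
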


\begin{remark}
	The last condition says that the transport is ``infinitesimally optimal.''
\end{remark}

\begin{proof}
	Note that $\nabla V_t \in \mathscr{J}_{C_1,1-C_2}^d$, and thus Proposition \ref{prop:pseudoinverse} constructs a pseudo-inverse $\Psi_{V_t}$ for $-L_{V_t}$.  Let
	\[
	\mathbf{h}_t = -\nabla \Psi_{V_t} \dot{W}_t.
	\]
	We apply Proposition \ref{prop:pseudoinverse} (3) and Remark \ref{rem:boundedcoefficients2}, observing that $\partial_{\mathbf{x}}$ reduces to $\partial$ since there is no $\mathbf{x}'$.  Because $\nabla W_t \in BC_{\tr}^3(\R^{*d})_{\sa}^d$, we have
	\[
	\norm{\partial^2 \Psi_{V_t} \dot{W}_t}_{BC_{\tr}(\R^{*d},\mathscr{M}^2),R}
	\leq \text{constant} \sum_{k=0}^2 \norm{\partial^k \dot{W}_t}_{BC_{\tr}(\R^{*d},\mathscr{M}^2),R'},
	\]
	which is bounded by a constant, and similarly $\partial^3 \Psi_{V_t} \dot{W}_t$ is bounded by a constant.  Therefore, $\partial \mathbf{h}_t$ and $\partial^2 \mathbf{h}_t$ are bounded by constants.  By Lemma \ref{lem:flow}, there is a family of diffeomorphisms $\mathbf{f}_t$ satisfying $\dot{\mathbf{f}}_t = \mathbf{h}_t \circ \mathbf{f}_t$ and $\mathbf{f}_0 = \id$.  Note that $-\nabla_{V_t}^* \mathbf{h}_t = \nabla_{V_t}^* \nabla \Psi_{V_t} \dot{W}_t = \dot{W}_t$ modulo constants.  Therefore, by Lemma \ref{lem:transport0}, we have $(\mathbf{f}_t)_* V_0 = V_t$ modulo constants.
	
	Finally, consider another possible choice of functions $\tilde{\mathbf{h}}_t$.  If the flow generated by $\tilde{\mathbf{h}}_t$ transports $V_0$ to $V_t$ modulo constants, then by the previous proposition, we must have $\nabla_{V_t}^* \tilde{\mathbf{h}}_t = \dot{W}_t = \nabla_{V_t}^* \mathbf{h}_t$ modulo constants.  Since $\tilde{\mathbf{h}}_t - \mathbf{h}_t$ is in the kernel of $\nabla_V^*$, it is orthogonal with respect to $\mathbb{E}_{V_t}$ to any gradient by Proposition \ref{prop:Laplacianrelations3} (4), and in particular orthogonal to $\mathbf{h}_t$.  Hence,
	\[
	\mathbb{E}_{V_t} \norm*{\mathbf{h}_t}_{2,\tr}^2 \leq \mathbb{E}_{V_t} \norm*{\tilde{\mathbf{h}}_t}_{2,\tr}^2,
	\]
	which shows the desired optimality condition.
\end{proof}

In the situation of Theorem \ref{thm:transport}, the law $\mu_{V_t}$ is the unique free Gibbs law associated to $V_t$ by Proposition \ref{prop:convexDSE}.  Therefore, $(\mathbf{f}_t)_* V_0 = V_t$ implies that $(\mathbf{f}_t)_* \mu_{V_0} = \mu_{V_t}$ by Proposition \ref{prop:changeofvariables}.  This directly implies isomorphism of $\mathrm{W}^*$- and $\mathrm{C}^*$-algebras associated to $\mu_{V_0}$ and $\mu_{V_t}$.  This result is closely related to those of \cite{GS2014,DGS2016,JekelExpectation,JekelThesis}, and can be stated precisely as follows.

\begin{observation} \label{obs:transportapplication}
	Suppose that $V_0$ and $V_1 \in \tr(C_{\tr}^\infty(\R^{*d})))$ such that $V_j = (1/2)\norm{\mathbf{x}}_2^2 + W_j$ with
	\[
	\norm{\partial^k W_j}_{BC_{\tr}(\R^{*d},\mathscr{M}^k)} < C_k \text{ for } k = 1, 2, 3,
	\]
	with $C_2 < 1$.  Then the path $W_t = (1 - t) W_0 + t W_1$ satisfies the assumptions of Theorem \ref{thm:transport}.  Hence, by the theorem, there exists some $\mathbf{f} \in C_{\tr}^\infty(\R^{*d})_{\sa}^d$ such that $\mathbf{f}_* \mu_{V_0} = \mu_{V_1}$.  Because $\mathbf{f}$ is given by solving the ODE \eqref{eq:flow}, the function $\mathbf{f}$ also has an inverse $\mathbf{g} \in C_{\tr}^\infty(\R^{*d})_{\sa}^d$.  In particular, by Observation \ref{obs:pushforwardisomorphism2}, there is a tracial $\mathrm{W}^*$ isomorphism between the GNS representations of $\mu_{V_0}$ and $\mu_{V_1}$ which also restricts to an isomorphism of the associated $\mathrm{C}^*$-algebras.
\end{observation}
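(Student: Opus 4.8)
The plan is to read the statement off as a combination of results already in hand: Theorem~\ref{thm:transport} produces the transport family, Proposition~\ref{prop:convexDSE} identifies the laws $\mu_{V_0}$ and $\mu_{V_1}$, Proposition~\ref{prop:changeofvariables} shows the transport map carries one law to the other, and Observation~\ref{obs:pushforwardisomorphism2} converts this into the asserted isomorphism. So the only work is to check that the hypotheses line up and to keep track of regularity.

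First I would verify that the affine path $W_t = (1-t)W_0 + tW_1$ meets the hypotheses of Theorem~\ref{thm:transport}. It is visibly a $C^\infty$ (indeed affine) path $[0,1] \to \tr(C_{\tr}^\infty(\R^{*d}))_{\sa}$, with $\dot W_t \equiv W_1 - W_0$. The gradient estimate \eqref{eq:gradientestimate}, iterated and combined with Lemma~\ref{lem:duality}, turns the hypothesis $\norm{\partial^k W_j}_{BC_{\tr}(\R^{*d},\mathscr{M}^k)} < C_k$ for $k = 1,2,3$ into bounds $\norm{\partial^{k-1}\nabla W_j}_{BC_{\tr}(\R^{*d},\mathscr{M}^{k-1})^d} < C_k$, and by convexity the same bounds hold for $W_t$ uniformly in $t \in [0,1]$; likewise $\norm{\partial^{k-1}\nabla\dot W_t}_{BC_{\tr}(\R^{*d},\mathscr{M}^{k-1})^d} \le 2\max(C_1,C_2)$ for $k = 1,2$. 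In particular $\nabla V_t = \mathbf{x} + \nabla W_t$ lies in $\mathscr{J}_{C_1,\,1-C_2}^d$ with $1-C_2 > 0$, which is exactly the constraint $C_2 < 1$. Thus Theorem~\ref{thm:transport} applies and yields, with $\mathbf{h}_t = -\nabla\Psi_{V_t}\dot V_t$, a solution $\mathbf{f}_t$ of \eqref{eq:flow} with $(\mathbf{f}_t)_* V_0 = V_t$ modulo constants for all $t$, and (from its proof, via Proposition~\ref{prop:pseudoinverse}) with $\partial\mathbf{h}_t$ and $\partial^2\mathbf{h}_t$ bounded uniformly on $[0,1]$.

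Next I would record the regularity of $\mathbf{f} := \mathbf{f}_1$. Since $t \mapsto \nabla V_t$ is continuous into $C_{\tr}^\infty(\R^{*d})_{\sa}^d$ and $t \mapsto \dot V_t$ is constant, Proposition~\ref{prop:Laplaciancontinuity} shows $t \mapsto \mathbf{h}_t$ is continuous into $C_{\tr}^\infty(\R^{*d})_{\sa}^d$; hence Lemma~\ref{lem:flow} gives $\mathbf{f}_t$ and its inverse $\mathbf{g}_t := \mathbf{f}_t^{-1}$ in $C_{\tr}^\infty(\R^{*d})_{\sa}^d$ with $\partial\mathbf{f}_t$, $\partial^2\mathbf{f}_t$, $\partial\mathbf{g}_t$, $\partial^2\mathbf{g}_t$ bounded uniformly in $t$. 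Therefore $\mathbf{f} = \mathbf{f}_1 \in \BDiff_{\tr}^2(\R^{*d}) \cap \Diff_{\tr}^\infty(\R^{*d}) \subseteq \mathscr{D}(\R^{*d})$, $\mathbf{g} := \mathbf{g}_1 \in C_{\tr}^\infty(\R^{*d})_{\sa}^d$, and $\mathbf{f}\circ\mathbf{g} = \mathbf{g}\circ\mathbf{f} = \id$; evaluating the transport relation at $t = 1$ gives $\mathbf{f}_* V_0 = V_1$ modulo constants.

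Finally I would assemble the conclusion. By Proposition~\ref{prop:convexDSE} (together with the corollary following it), $V_0$ and $V_1$ each satisfy Assumption~\ref{ass:freeGibbs}, with unique free Gibbs laws $\mu_{V_0}$ and $\mu_{V_1}$; after the routine check that $V_j \in \mathcal{C}$ and $V_j \ge \tfrac12 V_0 + b_j$ (which uses only that $\partial W_j$ is bounded), so that $V_0$ lies in the domain of Proposition~\ref{prop:changeofvariables}, and noting that the push-forward action on potentials and the free Gibbs law both depend on the potential only modulo additive constants, $\mathbf{f}_* V_0 = V_1 \pmod{\text{constants}}$ lets us invoke Proposition~\ref{prop:changeofvariables} to get $\mathbf{f}_*\mu_{V_0} = \mu_{V_1}$ (compatibly with the push-forward on $\Sigma_d$). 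Applying Observation~\ref{obs:pushforwardisomorphism2} to $\mu = \mu_{V_0}$ and the pair $\mathbf{f},\mathbf{g}$ then produces a tracial $\mathrm{W}^*$-isomorphism between the GNS representations of $\mu_{V_0}$ and $\mathbf{f}_*\mu_{V_0} = \mu_{V_1}$ that restricts to an isomorphism of the associated $\mathrm{C}^*$-algebras. I do not expect a serious obstacle here: the content is all in Theorem~\ref{thm:transport} and its supporting propositions, and the only points demanding care are (i) propagating the bounds on $\mathbf{h}_t$ through Lemma~\ref{lem:flow} so that $\mathbf{f}\in\BDiff_{\tr}^2(\R^{*d})$, which is what makes the change-of-variables formula Proposition~\ref{prop:changeofvariables} legitimate, and (ii) matching the harmless ``modulo constants'' ambiguity of the transport relation with the fact that $\mu_V$ is insensitive to additive constants in $V$.
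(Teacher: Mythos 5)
Your proposal is correct and follows essentially the same route as the paper: the paper's own justification is exactly the chain Theorem~\ref{thm:transport} (applied to the affine path $W_t$, whose hypotheses follow from the assumed bounds via Lemma~\ref{lem:duality}/\eqref{eq:gradientestimate}) $\Rightarrow$ Proposition~\ref{prop:convexDSE} $\Rightarrow$ Proposition~\ref{prop:changeofvariables} $\Rightarrow$ Observation~\ref{obs:pushforwardisomorphism2}. Your extra bookkeeping (uniform bounds on $\partial\mathbf{h}_t,\partial^2\mathbf{h}_t$ so that $\mathbf{f}\in\BDiff_{\tr}^2(\R^{*d})$, and the ``modulo constants'' insensitivity of $\mu_V$) is exactly the care the paper leaves implicit, so nothing further is needed.
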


\begin{corollary} \label{cor:transportapplication}
	Suppose that $V = (1/2)\norm{\mathbf{x}}_2^2 + W$ where $\partial W \in \tr(BC_{\tr}(\R^{*d},\mathscr{M}(\R^{*d})))$ and $\norm{\partial^2 W}_{BC_{\tr}(\R^{*d},\mathscr{M}^2)} < 1$.  Then the GNS representation of $\mu_V$ is isomorphic to the tracial $\mathrm{W}^*$-algebra generated by a standard semicircular family $\mathbf{S} = (S_1,\dots,S_d)$, and the isomorphism restricts to an isomorphism of the $\mathrm{C}^*$-algebras.
\end{corollary}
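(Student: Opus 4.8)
The plan is to apply the transport machinery of Theorem~\ref{thm:transport} and Observation~\ref{obs:transportapplication} to the straight-line path $V_t := \tfrac{1}{2}\ip{\mathbf{x},\mathbf{x}}_{\tr} + tW$, $t \in [0,1]$, joining the quadratic $V_0$ to $V_1 = V$, and then to invoke Observation~\ref{obs:pushforwardisomorphism2}. First, $\nabla V_0 = \id$, and since $\norm{\partial^2 W}_{BC_{\tr}} < 1$ we have $\norm{\partial\nabla W}_{BC_{\tr}} \le \norm{\partial^2 W}_{BC_{\tr}} < 1$ and $\norm{\nabla W}_{BC_{\tr}} \le \norm{\partial W}_{BC_{\tr}} < \infty$; hence $\nabla V_t = \id + t\nabla W \in \mathscr{J}_{a,c}^d$ for all $t \in [0,1]$ with fixed $a = \norm{\partial W}_{BC_{\tr}}$ and $c = 1 - \norm{\partial^2 W}_{BC_{\tr}} > 0$. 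In particular Proposition~\ref{prop:convexDSE} applies to each $V_t$: the law $\mu_{V_t} = \mathbb{E}_{V_t}$ is well defined, and is the unique solution of the Dyson--Schwinger equation for $V_t$. For $t=0$ the free stochastic process of Definition~\ref{def:Vheatsemigroup1} with $\mathbf{J} = \id$ is linear, $\mathcal{X}(\mathbf{X},t) = e^{-t/2}\mathbf{X} + \int_0^t e^{-(t-u)/2}\,d\mathcal{S}(u)$, and converges in distribution as $t\to\infty$ to a standard semicircular family; hence $\mathbb{E}_{V_0}f = \lim_{t\to\infty} e^{tL_{V_0}}f$ is the standard semicircular law $\sigma$, i.e.\ $\mu_{V_0} = \sigma$. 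Therefore it suffices to produce $\mathbf{f} \in C_{\tr}(\R^{*d})_{\sa}^d$ with a two-sided inverse $\mathbf{g} \in C_{\tr}(\R^{*d})_{\sa}^d$ and $\mathbf{f}_* \mu_{V_0} = \mu_V$; Observation~\ref{obs:pushforwardisomorphism2} then yields the claimed $\mathrm{W}^*$- and $\mathrm{C}^*$-isomorphisms.

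To build $\mathbf{f}$, set $\mathbf{h}_t := -\nabla\Psi_{V_t}W$. By Proposition~\ref{prop:pseudoinverse}(3) with $k=0$ and Remark~\ref{rem:boundedcoefficients2}, $\norm{\partial\Psi_{V_t}W}_{BC_{\tr}}$ is bounded by a constant depending only on $\norm{\nabla W}_{BC_{\tr}}$ and $\norm{\partial\nabla W}_{BC_{\tr}}$, so $M_0 := \sup_t \norm{\mathbf{h}_t}_{BC_{\tr}} < \infty$; moreover $\Psi_{V_t}W \in C_{\tr}^\infty$, $\mathbf{h}_t \in C_{\tr}^\infty(\R^{*d})_{\sa}^d$, and $t\mapsto\mathbf{h}_t$ is continuous into $C_{\tr}^\infty$ by Proposition~\ref{prop:Laplaciancontinuity} together with continuity of $t\mapsto\nabla V_t$. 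The only departure from Observation~\ref{obs:transportapplication} is that $\norm{\partial\mathbf{h}_t}_{BC_{\tr}}$ need not be finite, since the bound from Proposition~\ref{prop:pseudoinverse}(3) with $k=1$ involves $\norm{\partial^2\nabla W}_{C_{\tr},R} \le \norm{\partial^3 W}_{C_{\tr},R}$, which is finite for each $R$ (as $W \in \tr(C_{\tr}^\infty)$) but possibly not uniformly in $R$. Because $\mathbf{h}_t$ is nevertheless globally bounded in sup-norm, the flow is unaffected: running the Picard iteration of Lemma~\ref{lem:flow} ball by ball --- a trajectory starting in the operator-norm ball of radius $R$ stays in that of radius $R+M_0$ on $[0,1]$, where $\mathbf{h}_t$ is Lipschitz with a finite ($R$-dependent) constant --- gives $\mathbf{f}_t,\mathbf{g}_t \in C_{\tr}^\infty(\R^{*d})_{\sa}^d$ with $\mathbf{f}_t\circ\mathbf{g}_t = \mathbf{g}_t\circ\mathbf{f}_t = \id$, whose derivatives are bounded on each ball though not globally. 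Since $-\nabla_{V_t}^*\mathbf{h}_t = \nabla_{V_t}^*\nabla\Psi_{V_t}W = -L_{V_t}\Psi_{V_t}W = W - \mathbb{E}_{V_t}(W) = \dot V_t$ modulo constants (Proposition~\ref{prop:Laplacianrelations}), the version of Lemma~\ref{lem:transport0} valid in this ball-wise setting gives $(\mathbf{f}_t)_* V_0 = V_t$ modulo constants for all $t$ (using the group action $\Diff_{\tr}^{k+1}\curvearrowright\tr(C_{\tr}^k)$ of Lemma~\ref{lem:groupaction}, which does not require global bounds); set $\mathbf{f}:=\mathbf{f}_1$, $\mathbf{g}:=\mathbf{g}_1$, so that $\mathbf{f}_* V_0 = V$ modulo constants.

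It remains to pass from $\mathbf{f}_* V_0 = V$ to $\mathbf{f}_*\mu_{V_0} = \mu_V$, and this is the step I expect to be the main obstacle. Since $\sup_t\norm{\mathbf{h}_t}_{BC_{\tr}} < \infty$, the tuple $\mathbf{X} := \mathbf{f}(\mathbf{X}_0)$ (with $\mathbf{X}_0$ a standard semicircular family) is bounded, lies in $\mathrm{C}^*(\mathbf{X}_0)$, and satisfies $\mathbf{g}(\mathbf{X}) = \mathbf{X}_0$; thus $\lambda_{\mathbf{X}} = \mathbf{f}_*\mu_{V_0}$, $\mathrm{C}^*(\mathbf{X}) = \mathrm{C}^*(\mathbf{X}_0)$, and $\mathrm{W}^*(\mathbf{X}) = \mathrm{W}^*(\mathbf{X}_0)$, so by Proposition~\ref{prop:convexDSE} it is enough to show $\lambda_{\mathbf{X}}$ solves the Dyson--Schwinger equation for $V$. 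When $\mathbf{f} \in \BDiff_{\tr}^2(\R^{*d})$ this is immediate from the change-of-variables formula for free entropy (Proposition~\ref{prop:changeofvariables}: $\mathbf{f}_*\mu_{V_0} = \mathbf{f}_*\mathbb{E}_{V_0} = \mathbb{E}_{\mathbf{f}_*V_0} = \mathbb{E}_V = \mu_V$), but here $\mathbf{f}$ only has ball-wise bounded derivatives, so Proposition~\ref{prop:changeofvariables} does not apply verbatim. I would close this gap by running the change-of-variables argument directly at the level of the Dyson--Schwinger equation: there is a pullback identity expressing $(\nabla_V^*\mathbf{h})\circ\mathbf{f}$ as $\nabla_{V_0}^*\mathbf{k}$ for an explicit vector field $\mathbf{k}$ built from $\mathbf{h}$, $\partial\mathbf{f}$ and $\log\Delta_\#(\partial\mathbf{f})$ --- the infinitesimal form of which underlies Lemmas~\ref{lem:groupactiontangent} and \ref{lem:transport0} and needs only $\mathbf{f}\in C_{\tr}^1$ --- so that $\tilde\lambda_{\mathbf{X}}(\nabla_V^*\mathbf{h}) = \tilde\lambda_{\mathbf{X}_0}\bigl((\nabla_V^*\mathbf{h})\circ\mathbf{f}\bigr) = \tilde\lambda_{\mathbf{X}_0}(\nabla_{V_0}^*\mathbf{k}) = 0$ by the Dyson--Schwinger equation for $V_0$. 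An alternative that sidesteps reworking those lemmas is to modify $W$ outside a large operator-norm ball containing the (bounded, by Theorem~\ref{thm:magicnormbound}) ``support'' of $\mu_V$ so that all its derivatives become globally bounded, which leaves $\mu_V$ unchanged by Proposition~\ref{prop:convexDSE} and reduces to Observation~\ref{obs:transportapplication} directly; but arranging such a truncation while keeping the Hessian norm below $1$ is itself delicate, since functional-calculus truncations inflate the operator-Lipschitz norms controlling $\partial^2$, so I would favour the pullback-identity route.
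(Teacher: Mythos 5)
Your overall route is the paper's route: take the straight-line path $V_t=\tfrac12\ip{\mathbf{x},\mathbf{x}}_{\tr}+tW$, set $\mathbf{h}_t=-\nabla\Psi_{V_t}\dot V_t$, flow, identify $\mu_{V_0}$ with the semicircular law, and finish with Observation \ref{obs:pushforwardisomorphism2}. Indeed, the paper's own proof of this corollary is nothing more than Observation \ref{obs:transportapplication} with $W_0=0$, $W_1=W$, read so that Theorem \ref{thm:transport} applies --- i.e.\ with a global bound on $\partial^2\nabla W$ (equivalently $\partial^3 W$) implicitly in force; the paper says in \S\ref{subsec:discussion} that it deliberately did not restate precise smoothness hypotheses each time, and explicitly that ``to get $\mathbf{h}_t$ to have bounded first derivative with our methods requires us to assume that $\partial^3 V_t$ is bounded.'' You took the stated hypotheses literally, and your diagnosis of what then breaks is accurate: Proposition \ref{prop:pseudoinverse}(3) with $k=1$ only controls $\partial\mathbf{h}_t$ through $\partial^2\nabla W$, so without the third-derivative bound you lose the global bound on $\partial\mathbf{h}_t$, and with it the hypotheses of Lemma \ref{lem:flow}, Lemma \ref{lem:transport0}, and --- crucially --- Proposition \ref{prop:changeofvariables}, which requires $\mathbf{f}\in\BDiff_{\tr}^2(\R^{*d})$. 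With the extra hypothesis added, your first two paragraphs are exactly the paper's argument and the proof is complete.

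As a proof of the literal statement, however, your proposal has a genuine gap, and it sits exactly where you flag it. The ball-wise reworkings of Lemmas \ref{lem:flow} and \ref{lem:transport0} are plausible (boundedness of $\sup_t\norm{\mathbf{h}_t}_{BC_{\tr}(\R^{*d})^d}$ does confine trajectories to operator-norm balls) but are asserted, not proved. More seriously, the passage from $\mathbf{f}_*V_0=V$ to $\mathbf{f}_*\mu_{V_0}=\mu_V$ is, in the paper, Proposition \ref{prop:changeofvariables}, whose proof is a random-matrix change-of-variables argument that genuinely uses global bounds on $\partial\mathbf{f}^{-1}$ and $\partial^2\mathbf{f}^{-1}$ (via Lemma \ref{lem:classicaltrace2}); your substitute --- the pullback identity $(\nabla_V^*\mathbf{h})\circ\mathbf{f}=\nabla_{V_0}^*\bigl[(\partial\mathbf{f})^{\#-1}\#(\mathbf{h}\circ\mathbf{f})\bigr]$ --- is never stated precisely or established. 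That identity is the noncommutative analogue of the Piola/divergence change-of-variables formula and would need its own careful proof with $\Tr_\#$ and $\log\Delta_\#$ (an integrated version of the computations in Lemmas \ref{lem:groupactiontangent} and \ref{lem:transport0}), plus a verification that the resulting test field lies in $C_{\tr}^2(\R^{*d})^d$ so that the Dyson--Schwinger equation for $\mu_{V_0}$ (Theorem \ref{thm:magicnormbound}, last claim) may be applied. Without it, the heart of the corollary --- that the pushed-forward law is $\mu_V$ --- is not established; note the paper itself regards removing the third-derivative hypotheses from its $\mathrm{C}^*$-level transport results as unverified. So either add the hypothesis $\partial^2\nabla W\in BC_{\tr}(\R^{*d},\mathscr{M}^2)^d$ and simply cite Observation \ref{obs:transportapplication}, or carry out the pullback identity in full; as written, the argument is a plan with an open central step rather than a proof.
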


\subsection{Matrix approximation and non-commutative functions} \label{subsec:matrixapproximation}

Although the construction of $\mathbb{E}_{\mathbf{x},V}$ nowhere used matrix approximations, we will use the matrix approximations to prove various relations among different conditional expectation maps.  Even in the previous section, we could only prove the properties of Proposition \ref{prop:Laplacianrelations2} after knowing the Dyson-Schwinger equation $\mathbf{E}_V \nabla_V^* \mathbf{h} = 0$ for $\mathbf{h} \in BC_{\tr}^2(\R^{*d})^d$.  The Dyson-Schwinger equation in turn was deduced from the fact that the free Gibbs law maximized the free entropy $\chi_V^\omega$.  But free entropy is defined in terms of matricial microstates.  Hence, even our previous results depended on matrix approximation.

As we do not yet know a good definition for conditional microstate entropy, our results in the conditional setting will rely on the random matrix models in a more explicit fashion.  As in \cite{Jekel2018,JekelExpectation,JekelThesis}, we will view the functions in $C_{\tr}(\R^{*d})$ as large-$N$ asymptotic descriptions of certain sequences of functions on $M_N(\C)_{\sa}^d$.  For this reason, we desire a function $f$ to be uniquely determined by knowing its restrictions $f^{M_N(\C),\tr_N}$ for all $N$.  Thus, we must restrict our attention to tracial $\mathrm{W}^*$-algebras that can be approximated by matrices in a certain sense.

We say that $(\cA,\tau)$ is \emph{Connes-approximable} or \emph{Connes-embeddable} if for every $d$ and every $\mathbf{X} \in \cA_{\sa}^d$, there exists a sequence of $d$-tuples $\mathbf{X}^{(N)} \in M_N(\C)^d$ that converges in non-commutative law to $\mathbf{X}$.  It is well-known in von Neumann algebras that this is equivalent to the embeddability of $(\cA,\tau)$ into the ultrapower $(\mathcal{R},\tau_{\mathcal{R}})^\omega$ for some $\omega \in \beta \N \setminus \N$.  However, recent work has shown that not every tracial von Neumann algebra has this property \cite{JNVWY2020}.

The space $C_{\tr}(\R^{*d})$ by definition consists of tuples of functions on $d$-tuples for any separable tracial $\mathrm{W}^*$-algebra, since we used a set of isomorphism class representative of such tracial $\mathrm{W}^*$-algebras to define the norm.  However, the same constructions can be performed using some subclass of tracial $\mathrm{W}^*$-algebras.  When we replace the set of representatives $\mathbb{W}$ with a set of representatives $\mathbb{W}_{\app}$ for Connes-approximable tracial $\mathrm{W}^*$-algebras, we obtain analogous spaces to $C_{\tr}^k(\R^{*d}, \mathscr{M}(\R^{*d_1}, \dots, \R^{*d_\ell}))$ which we will denote $C_{\tr,\app}^k(\R^{*d}, \mathscr{M}(\R^{*d_1}, \dots, \R^{*d_\ell}))$, where the subscript $\app$ stands for ``approximable.''

All the results in the paper work with $C_{\tr}^k$ replaced with $C_{\tr,\app}^k$.  For \S \ref{sec:pseudoinverse}, one of course has to define the Connes-approximable versions of $C_{\tr,\mathcal{S}}$ where $\mathcal{S}$ is a Brownian motion.  It is well-known that if $(\cA,\tau)$ is Connes-embeddable and if $(\cB,\sigma)$ is the tracial $\mathrm{W}^*$-algebra generated by the free Browian motion $\mathcal{S}$, then $(\cA,\tau) * (\cB,\sigma)$ is Connes-embeddable \cite[Proposition 3.3]{Voiculescu1998}.

The next lemma shows that functions in $C_{\tr,\app}^k(\R^{*d}, \mathscr{M}(\R^{*d_1}, \dots, \R^{*d_\ell}))$ are uniquely determined by their values on matrix tuples.  The proof may be obvious to those familiar with folklore about Connes-approximability, but nonetheless we will explain the argument here for the sake of completeness.

\begin{lemma} \label{lem:matrixnorm}
	Given $\mathbf{f}^{(N)}: M_N(\C)_{sa}^d \times M_N(\C)^{d_1} \times \dots \times M_N(\C)^{d_\ell} \to M_N(\C)^{d'}$ that is multilinear in the last $\ell$ arguments, define as in \ref{def:basicnorm}
	\[
	\norm{\mathbf{f}^{(N)}}_{\mathscr{M}^\ell,\tr,R} = \sup \{ \norm{\mathbf{f}^{(N)}(\mathbf{X})}_{\mathscr{M}^\ell,\tr}: \mathbf{X} \in M_N(\C)_{\sa}^d, \norm{\mathbf{X}} \leq R\}.
	\]
	Let $\mathbf{f} \in \C_{\tr,\app}(\R^{*d},\mathscr{M}(\R^{*d_1},\dots,\R^{*d_\ell})^{d'}$.  Then
	\[
	\norm{\mathbf{f}}_{C_{\tr,\app}(\R^{*d})^{d'},R} = \sup_N \norm{\mathbf{f}^{M_N(\C),\tr_N}}_{\mathscr{M}^\ell,\tr,R} = \lim_{N \to \infty} \norm{\mathbf{f}^{M_N(\C),\tr_N}}_{\mathscr{M}^\ell,\tr,R}.
	\]
\end{lemma}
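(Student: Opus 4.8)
The plan is to prove the two equalities in turn, the first being the more substantive one. For the second equality, recall that by definition of $C_{\tr,\app}$, a function $\mathbf{f} \in C_{\tr,\app}(\R^{*d},\mathscr{M}(\R^{*d_1},\dots,\R^{*d_\ell}))^{d'}$ is, for each $R>0$ and $\epsilon > 0$, within $\epsilon$ in $\norm{\cdot}_{\mathscr{M}^\ell,\tr,R}$ (uniformly over all Connes-approximable $(\cA,\tau)$) of a trace polynomial $\mathbf{g}$. For a trace polynomial $\mathbf{g}$, continuity of each evaluation $\mathbf{g}^{\cA,\tau}(\mathbf{X})[\mathbf{Y}_1,\dots,\mathbf{Y}_\ell]$ in $\norm{\cdot}_\alpha$-norms, together with the fact that a Connes-approximable $(\cA,\tau)$ with a distinguished $d$-tuple $\mathbf{X}$ (of operator norm $\le R$) can be approximated in $\ast$-moments by matrix tuples $\mathbf{X}^{(N)} \in M_N(\C)_{\sa}^d$ with $\norm{\mathbf{X}^{(N)}} \le R + o(1)$, should give $\norm{\mathbf{g}}_{C_{\tr,\app}(\R^{*d})^{d'},R} = \lim_{N\to\infty} \norm{\mathbf{g}^{M_N(\C),\tr_N}}_{\mathscr{M}^\ell,\tr,R}$; an $\epsilon/3$ argument then transfers this to $\mathbf{f}$. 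So the second equality reduces to the first together with this matrix-approximation fact for trace polynomials.

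For the first equality $\norm{\mathbf{f}}_{C_{\tr,\app}(\R^{*d})^{d'},R} = \sup_N \norm{\mathbf{f}^{M_N(\C),\tr_N}}_{\mathscr{M}^\ell,\tr,R}$, the inequality $\ge$ is trivial since each $(M_N(\C),\tr_N)$ is Connes-approximable and hence appears (up to isomorphism) in $\mathbb{W}_{\app}$. For the reverse inequality $\le$, I would fix an arbitrary Connes-approximable $(\cA,\tau)$, a tuple $\mathbf{X} \in \cA_{\sa}^d$ with $\norm{\mathbf{X}}_\infty \le R$, exponents $\alpha, \alpha_1,\dots,\alpha_\ell$ with $1/\alpha = \sum 1/\alpha_j$, and tuples $\mathbf{Y}_j$ in the unit $L^{\alpha_j}$-ball, and I would approximate the data $(\mathbf{X},\mathbf{Y}_1,\dots,\mathbf{Y}_\ell)$ by matrix tuples. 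The key subtlety is that the $\mathbf{Y}_j$ live in $L^{\alpha_j}$, not in $\cA$, and may be unbounded, so a direct $\ast$-moment approximation does not immediately suffice; I would first truncate each $\mathbf{Y}_j$ (via functional calculus on $|Y_{j,i}|$) to a bounded tuple $\mathbf{Y}_j^{(\epsilon)}$ with $\norm{\mathbf{Y}_j - \mathbf{Y}_j^{(\epsilon)}}_{\alpha_j}$ small, using the Lipschitz-in-$L^\alpha$ continuity of $\mathbf{f}$ on the $L^{\alpha_j}$-balls coming from Remark \ref{rem:Lipschitz} (applied to a trace-polynomial approximant of $\mathbf{f}$) to control the resulting error. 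Once all inputs are bounded, I would invoke Connes-approximability of the algebra generated by $\mathbf{X}, \mathbf{Y}_1^{(\epsilon)},\dots,\mathbf{Y}_\ell^{(\epsilon)}$ to produce matrix tuples $\mathbf{X}^{(N)}, \mathbf{Y}_j^{(N)}$ converging in non-commutative law, and then normalize them so that $\norm{\mathbf{X}^{(N)}} \le R$ and $\norm{\mathbf{Y}_j^{(N)}}_{\alpha_j} \le 1$ (with vanishing error as $N \to\infty$). Then continuity of trace polynomials under convergence in non-commutative law — and an $\epsilon/3$ argument to pass from a trace-polynomial approximant of $\mathbf{f}$ to $\mathbf{f}$ itself — gives $\norm{\mathbf{f}^{\cA,\tau}(\mathbf{X})[\mathbf{Y}_1,\dots,\mathbf{Y}_\ell]}_\alpha \le \limsup_N \norm{\mathbf{f}^{M_N(\C),\tr_N}}_{\mathscr{M}^\ell,\tr,R} \le \sup_N \norm{\mathbf{f}^{M_N(\C),\tr_N}}_{\mathscr{M}^\ell,\tr,R}$. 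Taking the supremum over all such $\mathbf{X}$, $\mathbf{Y}_j$, exponents, and $(\cA,\tau)$ yields $\le$.

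Finally, for the third equality (that the supremum over $N$ equals the limit over $N$), I would establish that the sequence $N \mapsto \norm{\mathbf{f}^{M_N(\C),\tr_N}}_{\mathscr{M}^\ell,\tr,R}$ converges, for which it suffices to show $\limsup_N \le \norm{\mathbf{f}}_{C_{\tr,\app}(\R^{*d})^{d'},R} \le \liminf_N$. The upper bound on $\limsup$ is immediate from the definition of the seminorm (each matrix algebra is in $\mathbb{W}_{\app}$). The lower bound on $\liminf$ is precisely what the $\le$-direction argument above produces: any evaluation realizing (up to $\epsilon$) the seminorm $\norm{\mathbf{f}}_{C_{\tr,\app}}$ on a Connes-approximable algebra can be approximated by matrix evaluations at level $N$ for all large $N$, hence $\norm{\mathbf{f}^{M_N(\C),\tr_N}}_{\mathscr{M}^\ell,\tr,R} \ge \norm{\mathbf{f}}_{C_{\tr,\app}} - 2\epsilon$ eventually. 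The main obstacle, and the step I would be most careful with, is the handling of the $L^{\alpha_j}$-unbounded input tuples $\mathbf{Y}_j$: one needs the truncation-plus-approximation scheme and the Lipschitz estimate of Remark \ref{rem:Lipschitz} to make the matrix approximation legitimate for the full $\mathscr{M}^\ell$-norm rather than merely for bounded inputs, and one must check that the normalization of $\mathbf{X}^{(N)}$ and $\mathbf{Y}_j^{(N)}$ to satisfy the norm constraints exactly (or with $o(1)$ error) does not spoil the convergence in non-commutative law.
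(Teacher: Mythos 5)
Your proposal is correct and follows essentially the same route as the paper: reduce to trace polynomials, use Connes-approximability to approximate the evaluation data $(\mathbf{X},\mathbf{Y}_1,\dots,\mathbf{Y}_\ell)$ jointly in non-commutative law by matrix tuples, apply a cut-off to enforce $\norm{\mathbf{X}^{(N)}}\leq R$ and the $L^{\alpha_j}$ constraints, and conclude via the chain $\norm{\mathbf{f}}_{C_{\tr,\app}(\R^{*d},\mathscr{M}^\ell)^{d'},R}\leq \liminf_N \leq \limsup_N \leq \sup_N \leq \norm{\mathbf{f}}_{C_{\tr,\app}(\R^{*d},\mathscr{M}^\ell)^{d'},R}$. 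The one superfluous step is your truncation of the $\mathbf{Y}_j$: by Definition \ref{def:Lalphamultilinearnorm} the supremum defining $\norm{\cdot}_{\mathscr{M}^\ell,\tr}$ already ranges over $\mathbf{Y}_j\in\cA^{d_j}$, i.e.\ bounded operators with $\norm{\mathbf{Y}_j}_{\alpha_j}\leq 1$, so no truncation (and no appeal to Remark \ref{rem:Lipschitz}, which concerns the $\mathbf{X}$-variable) is needed, and the only point your sketch glosses over is the case $\alpha=\infty$, where the output norm is merely lower semicontinuous under convergence in law --- the paper handles this by recovering $\norm{\cdot}_\infty$ as the limit of the $\norm{\cdot}_\beta$ norms, which yields exactly the inequality direction your argument requires.
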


\begin{proof}
	Note that it suffices to prove both equalities when $\mathbf{f}$ is a trace polynomial, since any $\mathbf{f} \in C_{\tr,\app}(\R^{*d},\mathscr{M}(\R^{*d_1},\dots,\R^{*d_\ell}))^{d'}$ can be approximated in $\norm{\cdot}_{C_{\tr,\app}(\R^{*d},\mathscr{M}^\ell)^{d'},R}$ by trace polynomials, and this norm clearly dominates the matrix version on the right-hand side. Now given some Connes-approximable $\cA$, some $\alpha$, $\alpha_1$, \dots, $\alpha_\ell$ with $1/\alpha = 1/\alpha_1 + \dots + 1/\alpha_\ell$,  and some $\mathbf{X}$, $\mathbf{Y}_1$, \dots, $\mathbf{Y}_\ell \in \cA_{\sa}^d$, we may choose some matrix tuples $\mathbf{X}^{(N)} \in M_N(\C)_{\sa}^d$, $\mathbf{Y}_1^{(N)} \in M_N(\C)^{d_1}$, \dots, $\mathbf{Y}_\ell \in M_N(\C)_{\sa}^{d_\ell}$ such that $\mathbf{X}^{(N)}$ and the real and imaginary parts of $\mathbf{Y}_j^{(N)}$ converge in joint non-commutative law to $\mathbf{X}$ and the real and imaginary parts of $\mathbf{Y}_1$, \dots, $\mathbf{Y}_\ell$.  By applying a cut-off function to $\mathbf{X}^{(N)}$ and the real and imaginary parts of $\mathbf{Y}_j^{(N)}$, we may also assume that $\norm{\mathbf{X}^{(N)}} \leq R$ and $\norm{\mathbf{Y}_j^{(N)}} \leq 2 \norm{\mathbf{Y}_j}$.  Convergence in law also implies convergence of the $L^\beta$ norms of $\mathbf{X}^{(N)}$, $\mathbf{Y}_1^{(N)}$, \dots, $\mathbf{Y}_\ell^{(N)}$ to those of the corresponding operators for $\beta \in [1,\infty)$.  Using convergence in law again, we also have
	\[
	\lim_{N \to \infty} \norm{f^{(N)}(\mathbf{X}^{(N)})[\mathbf{Y}_1^{(N)},\dots,\mathbf{Y}_\ell^{(N)}]}_\beta = \norm{f(\mathbf{X})[\mathbf{Y}_1,\dots,\mathbf{Y}_\ell]}_\beta \text{ for } \beta \in [1,\infty)
	\]
	and because the $\infty$-norm can be recovered as the limit of the $\beta$-norms as $\beta \to \infty$, we have
	\[
	\norm{f(\mathbf{X})[\mathbf{Y}_1,\dots,\mathbf{Y}_\ell]}_\infty \leq \liminf_{N \to \infty} \norm{f^{(N)}(\mathbf{X}^{(N)})[\mathbf{Y}_1^{(N)},\dots,\mathbf{Y}_\ell^{(N)}]}_\infty.
	\]
	This implies that
	\begin{align*}
		\norm{\mathbf{f}}_{C_{\tr,\app}(\R^{*d},\mathscr{M}^\ell)^{d'},R} &\leq \liminf_{N \to \infty} \norm{\mathbf{f}^{M_N(\C),\tr_N}}_{\mathscr{M}^\ell,\tr,R}^{(N)} \\
		&\leq \limsup_{N \to \infty} \norm{\mathbf{f}^{M_N(\C),\tr_N}}_{\mathscr{M}^\ell,\tr,R}^{(N)} \\
		&\leq \sup_N \norm{\mathbf{f}^{M_N(\C),\tr_N}}_{\mathscr{M}^\ell,\tr,R}^{(N)} \\
		&\leq \norm{\mathbf{f}}_{C_{\tr,\app}(\R^{*d},\mathscr{M}^\ell)^{d'},R} \qedhere
	\end{align*}
\end{proof}

Next, we define a precise notion of an element of $C_{\tr,\app}(\R^{*d},\mathscr{M}^\ell)$ describing the large $N$ limit of a sequence of functions on $M_N(\C)_{\sa}^d$.

\begin{definition} \label{def:asymptotic}
	Let
	\[
	\mathbf{f}^{(N)}: M_N(\C)_{\sa}^d \times M_N(\C)_{\sa}^{d_1} \times \dots \times M_N(\C)_{\sa}^{d_\ell} \to M_N(\C)^{d'}
	\]
	and let $\mathbf{f} \in C_{\tr,\app}(\R^{*d}, \mathscr{M}(\R^{*d_1}, \dots, \R^{*d_\ell}))^{d'}$.  We say that $(\mathbf{f}^{(N)})_{N \in \N}$ is \emph{asymptotic to} $\mathbf{f}$, or $\mathbf{f}^{(N)} \rightsquigarrow \mathbf{f}$ if
	\[
	\lim_{N \to \infty} \norm{\mathbf{f}^{(N)} - \mathbf{f}^{M_N(\C),\tr_N}}_{\tr,R} = 0.
	\]
\end{definition}

\begin{remark}
	In the case $\ell = 0$, the error is measured in $\norm{\cdot}_\infty$ uniformly on operator norm balls.  This condition is stronger than the one in \cite{JekelExpectation} and \cite{JekelThesis}, which measured the error in $\norm{\cdot}_2$.
\end{remark}

\begin{remark} \label{rem:uniquelimit}
	It follows from Lemma \ref{lem:matrixnorm} that the condition $\mathbf{f}^{(N)} \rightsquigarrow \mathbf{f}$ uniquely determines $\mathbf{f}$.
\end{remark}

\begin{lemma} \label{lem:asymptoticcomposition}
	Let $\mathbf{f} \in C_{\tr}(\R^{*d'},\mathscr{M}(\R^{*d_1},\dots,\R^{*d_n}))^{d''}$ for some $n, d' \in \N_0$ and $d''$, $d_1$, \dots, $d_n \in \N$.  Let $\mathbf{g} \in C_{\tr}(\R^{*d})_{\sa}^{d'}$ for some $d \in \N_0$.  For each $m = 1$, \dots, $n$, let $\mathbf{h}_m \in C_{\tr}(\R^{*d}, \mathscr{M}(\R^{*d_{m,1}},\dots,\R^{*d_{m,\ell_m}})^{d_m}$ for some $\ell_m \in \N_0$ and $d_{m,1}$, \dots, $d_{m,\ell_m}$.  Similarly, let
	\begin{align*}
		\mathbf{f}^{(N)}: & M_N(\C)_{\sa}^{*d'} \times M_N(\C)^{d_1} \times \dots \times M_N(\C)^{d_n} \to M_N(\C)^{d''} \\
		\mathbf{g}^{(N)}: & M_N(\C)_{\sa}^d \to M_N(\C)_{\sa}^{d'} \\
		\mathbf{h}_m^{(N)}: & M_N(\C)_{\sa}^{d'} \times M_N(\C)^{d_{m,1}} \times M_N(\C)^{d_{m,\ell_m}} \to M_N(\C)^{d''},
	\end{align*}
	where $\mathbf{f}^{(N)}$ and $\mathbf{h}_m^{(N)}$ are multilinear in the last $n$ and $\ell_m$ arguments respectively.  If $\mathbf{f}^{(N)} \rightsquigarrow \mathbf{f}$, $\mathbf{g}^{(N)} \rightsquigarrow \mathbf{g}$, and $\mathbf{h}_m^{(N)} \rightsquigarrow \mathbf{h}_m$ for each $m$, then
	\[
	\mathbf{f}^{(N)}(\mathbf{g}^{(N)})[\mathbf{h}_1^{(N)},\dots,\mathbf{h}_n^{(N)}] \rightsquigarrow \mathbf{f}(\mathbf{g})[\mathbf{h}_1,\dots,\mathbf{h}_n].
	\]
\end{lemma}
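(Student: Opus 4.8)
\textbf{Plan for Lemma \ref{lem:asymptoticcomposition}.} The plan is to reduce the statement to the composition estimate of Lemma \ref{lem:composition} together with the continuity property of non-commutative functions (Lemma \ref{lem:Ctrcontinuity}), using the triangle inequality to swap out the finite-dimensional restrictions one layer at a time. Write $\mathbf{F}^{(N)} = \mathbf{f}^{(N)}(\mathbf{g}^{(N)})\#[\mathbf{h}_1^{(N)},\dots,\mathbf{h}_n^{(N)}]$ and $\mathbf{F} = \mathbf{f}(\mathbf{g})\#[\mathbf{h}_1,\dots,\mathbf{h}_n]$, and fix $R > 0$. The goal is to show $\norm{\mathbf{F}^{(N)} - \mathbf{F}^{M_N(\C),\tr_N}}_{\mathscr{M}^{L_n},\tr,R} \to 0$, where $L_n = \ell_1 + \dots + \ell_n$. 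First I would record the uniform-boundedness facts that make everything go through: by Lemma \ref{lem:matrixnorm}, $\sup_N \norm{\mathbf{g}^{M_N(\C),\tr_N}}_{\tr,R}$ is finite (it equals $\norm{\mathbf{g}}_{C_{\tr,\app}(\R^{*d})^{d'},R}$), and since $\mathbf{g}^{(N)} \rightsquigarrow \mathbf{g}$ there is a single radius $R^* := \sup_N \norm{\mathbf{g}^{(N)}}_{\tr,R} < \infty$; similarly the matrix norms of $\mathbf{f}^{(N)}$ on the ball of radius $R^*$ and of $\mathbf{h}_m^{(N)}$ on the ball of radius $R$ are uniformly bounded in $N$.

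\textbf{Key steps.} I would split the difference into four pieces corresponding to swapping $\mathbf{g}^{(N)} \to \mathbf{g}^{M_N}$, then $\mathbf{h}_m^{(N)} \to \mathbf{h}_m^{M_N}$, then $\mathbf{f}^{(N)} \to \mathbf{f}^{M_N}$, where I abbreviate $\mathbf{g}^{M_N} = \mathbf{g}^{M_N(\C),\tr_N}$ etc. Concretely, write
\[
\mathbf{F}^{(N)} - \mathbf{F}^{M_N}
= \bigl( \mathbf{F}^{(N)} - \widetilde{\mathbf{F}}_1^{(N)} \bigr)
+ \bigl( \widetilde{\mathbf{F}}_1^{(N)} - \widetilde{\mathbf{F}}_2^{(N)} \bigr)
+ \bigl( \widetilde{\mathbf{F}}_2^{(N)} - \mathbf{F}^{M_N} \bigr),
\]
where $\widetilde{\mathbf{F}}_1^{(N)} = \mathbf{f}^{(N)}(\mathbf{g}^{M_N})\#[\mathbf{h}_1^{(N)},\dots,\mathbf{h}_n^{(N)}]$ and $\widetilde{\mathbf{F}}_2^{(N)} = \mathbf{f}^{(N)}(\mathbf{g}^{M_N})\#[\mathbf{h}_1^{M_N},\dots,\mathbf{h}_n^{M_N}]$. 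For the first term, I would use the H\"older-type estimate \eqref{eq:niceHolderinequality} (evaluated on matrices) to bound it by $\norm{\mathbf{h}_1^{(N)}}_{\mathscr{M}^{\ell_1},\tr,R}\cdots$ times $\norm{\mathbf{f}^{(N)}(\mathbf{g}^{(N)}) - \mathbf{f}^{(N)}(\mathbf{g}^{M_N})}_{\mathscr{M}^n,\tr,R}$, and then control the latter: since $\mathbf{f}^{(N)} \rightsquigarrow \mathbf{f}$ and $\mathbf{g}^{(N)}, \mathbf{g}^{M_N}$ both have operator norm $\le R^*$, I would interpolate through $\mathbf{f}^{M_N}$ and apply Lemma \ref{lem:Ctrcontinuity} to $\mathbf{f}$ (uniform continuity on the ball of radius $R^*$) together with $\norm{\mathbf{g}^{(N)} - \mathbf{g}^{M_N}}_{\tr,R} = \norm{\mathbf{g}^{(N)} - \mathbf{g}^{M_N(\C),\tr_N}}_{\tr,R} \to 0$. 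For the second term, again use \eqref{eq:niceHolderinequality} to peel off one $\mathbf{h}_m$ at a time, each swap contributing a factor $\norm{\mathbf{h}_m^{(N)} - \mathbf{h}_m^{M_N}}_{\mathscr{M}^{\ell_m},\tr,R} \to 0$ times bounded factors. For the third term, $\widetilde{\mathbf{F}}_2^{(N)} - \mathbf{F}^{M_N} = (\mathbf{f}^{(N)} - \mathbf{f}^{M_N})(\mathbf{g}^{M_N})\#[\mathbf{h}_1^{M_N},\dots,\mathbf{h}_n^{M_N}]$, which by \eqref{eq:niceHolderinequality} is bounded by $\norm{\mathbf{f}^{(N)} - \mathbf{f}^{M_N}}_{\mathscr{M}^n,\tr,R^*}$ times bounded factors, and this tends to $0$ by $\mathbf{f}^{(N)} \rightsquigarrow \mathbf{f}$. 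Finally, by Lemma \ref{lem:composition} (applied in the Connes-approximable setting), $\mathbf{F} \in C_{\tr,\app}(\R^{*d},\mathscr{M}^{L_n})^{d''}$, so the statement is well-posed, and Remark \ref{rem:uniquelimit} guarantees $\mathbf{F}$ is the unique limit.

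\textbf{Main obstacle.} The only subtlety I anticipate is the first term: I need the uniform continuity of $\mathbf{f}$ (not $\mathbf{f}^{(N)}$) to transfer to control of $\mathbf{f}^{(N)}(\mathbf{g}^{(N)}) - \mathbf{f}^{(N)}(\mathbf{g}^{M_N})$. The clean way is to interpolate: $\mathbf{f}^{(N)}(\mathbf{g}^{(N)}) - \mathbf{f}^{M_N}(\mathbf{g}^{(N)})$ and $\mathbf{f}^{M_N}(\mathbf{g}^{M_N}) - \mathbf{f}^{(N)}(\mathbf{g}^{M_N})$ are each bounded by $\norm{\mathbf{f}^{(N)} - \mathbf{f}^{M_N}}_{\mathscr{M}^n,\tr,R^*} \to 0$ (using that $\mathbf{g}^{(N)}, \mathbf{g}^{M_N}$ stay in the ball of radius $R^*$), while $\mathbf{f}^{M_N}(\mathbf{g}^{(N)}) - \mathbf{f}^{M_N}(\mathbf{g}^{M_N})$ is controlled by Lemma \ref{lem:Ctrcontinuity} applied to $\mathbf{f}$ on that ball, since $\norm{\mathbf{g}^{(N)} - \mathbf{g}^{M_N}}_\infty$ is small uniformly. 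Everything else is a routine bookkeeping of H\"older factors, all of which are uniformly bounded by the opening remarks; this is exactly parallel to the proof of Lemma \ref{lem:composition} and I would leave the detailed arithmetic as an exercise in the same style.
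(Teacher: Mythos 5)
Your argument is correct and is exactly the route the paper intends: the paper simply notes that the proof "is essentially the same as the proof of continuity of composition in Lemma \ref{lem:composition}" and leaves the details to the reader, and your layer-by-layer swapping via the H\"older estimate \eqref{eq:niceHolderinequality}, the uniform continuity of $\mathbf{f}$ from Lemma \ref{lem:Ctrcontinuity}, and the uniform bounds from Lemma \ref{lem:matrixnorm} is precisely that argument carried out for the matrix restrictions. No gaps; the interpolation through $\mathbf{f}^{M_N(\C),\tr_N}$ to handle the first term is the right way to exploit $\mathbf{f}^{(N)} \rightsquigarrow \mathbf{f}$.
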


The proof is essentially the same as the proof of continuity of composition in Lemma \ref{lem:composition}, hence we leave the details to the reader.

\subsection{$\mathbb{E}_{\mathbf{x},V}$ and conditional expectations} \label{subsec:conditional}

\begin{definition}
	For each choice of $C_1$, $C_2$, $C_3 > 0$, let $\mathscr{V}_{d,C_1,C_2,C_3}$ be the set of functions $V = \frac{1}{2} \norm{\mathbf{x}}_2^2 + W \in \tr(C_{\tr}^\infty(\R^{*d}))_{\sa}$ satisfying
	\[
	\norm{ \partial^{k-1} \nabla W }_{BC_{\tr}(\R^{*d},\mathscr{M}^k)^d} \leq C_k
	\]
	for $k = 1$, $2$, $3$.
\end{definition}

For $V \in \mathscr{V}_{d,C_1,C_2,C_3}$, we will denote the expectation $\mathbb{E}_{\mathbf{x},\nabla_{\mathbf{x} V}}$ from \S \ref{sec:pseudoinverse} simply by $\mathbb{E}_{\mathbf{x},V}$.  In this subsection, we will show that the expectation map $\mathbb{E}_{\mathbf{x},V}$ describes the large $N$ limit of classical conditional expectations associated to the measures $\mu_V^{(N)}$.

Given a potential $V^{(N)}: M_N(\C)_{\sa}^{d+d'} \to \R$ such that $e^{-N^2 V^{(N)}}$ is integrable, we define
\[
d\mu_{V^{(N)}}(\mathbf{X},\mathbf{X}') = \frac{e^{-N^2} V^{(N)}(\mathbf{X},\mathbf{X}')\,d\mathbf{X}\,d\mathbf{X}'}{\int_{M_N(\C)_{\sa}^{d+d'}} e^{-N^2 V^{(N)}(\mathbf{X},\mathbf{X}')}\,d\mathbf{X}\,d\mathbf{X}' }.
\]
Moreover, we define the conditional distribution
\[
d\mu_{V^{(N)}}(\mathbf{X} | \mathbf{X}') = \frac{ e^{-N^2 V^{(N)}(\mathbf{X},\mathbf{X}')}\,d\mathbf{X}}{\int_{M_N(\C)_{\sa}^d} e^{-N^2 V^{(N)}(\mathbf{X},\mathbf{X}')}\,d\mathbf{X}}.
\]
If $\mathbf{f}^{(N)}: M_N(\C)_{\sa}^{d+d'} \times M_N(\C)_{\sa}^{d_1} \times \dots \times M_N(\C)_{\sa}^{d_\ell} \to M_N(\C)^{d_2}$ is real-multilinear in the last $\ell$ arguments, we set
\[
\mathbf{E}_{\mathbf{x},V^{(N)}}[\mathbf{f}^{(N)}](\mathbf{X}')[\mathbf{Y}_1,\dots,\mathbf{Y}_\ell] = \frac{\int_{M_N(\C)_{\sa}^d} \mathbf{f}^{(N)}(\mathbf{X})[\mathbf{Y}_1',\dots,\mathbf{Y}_\ell] e^{-N^2 V^{(N)}(\mathbf{X},\mathbf{X}')}\,d\mathbf{X}}{\int_{M_N(\C)_{\sa}^d} e^{-N^2 V^{(N)}(\mathbf{X},\mathbf{X}')}\,d\mathbf{X}}.
\]
This describes the conditional expectation of $\mathbf{f}^{(N)}(\mathbf{X},\mathbf{X}')$ given $\mathbf{X}'$, when $(\mathbf{X},\mathbf{X}')$ is a random variable with the distribution $\mu^{(N)}$.  Note that the subscript $\mathbf{x}$ denotes integration with respect to $\mathbf{x}$, hence conditioning on $\mathbf{x}'$.

\begin{theorem} \label{thm:classicalconditionalexpectation}
	Let $V \in \mathscr{V}_{C_1,C_2,C_3}$ for some $C_2 < 1$.  Let $V^{(N)}: M_N(\C)_{\sa}^{d+d'} \to \R$ such that
	\begin{enumerate}[(1)]
		\item $V^{(N)}$ is invariant under conjugation of $X_1$, \dots, $X_{d+d'}$ by a fixed unitary $U$.
		\item $V^{(N)}$ is a $C^1$ function and $\nabla V^{(N)} \rightsquigarrow \nabla V$.
		\item $V^{(N)}(\mathbf{X}) - \frac{1}{2} c \norm{\mathbf{X}}_2^2$ is convex and $V^{(N)}(\mathbf{X}) - \frac{1}{2} C \norm{\mathbf{X}}_2^2$ is concave for some $0 < c < C$.
	\end{enumerate}
	Let $\mathbf{f} \in C_{\tr}(\R^{*(d+d')}, \mathscr{M}(\R^{*d_1}, \dots, \R^{*d_\ell})^{d''}$, and let $\mathbf{f}^{(N)}: M_N(\C)_{\sa}^{(d+d')}  \times M_N(\C)_{\sa}^{d_1} \times \dots \times M_N(\C)_{\sa}^{d_\ell} \to M_N(\C)_{\sa}^{d''}$ with $\mathbf{f}^{(N)} \rightsquigarrow \mathbf{f}$ and
	\[
	\norm{\mathbf{f}^{(N)}(\mathbf{X},\mathbf{X}')}_{\mathscr{M}^\ell,\tr} \leq K_1 e^{K_2 \norm{(\mathbf{X},\mathbf{X}')}_\infty}
	\]
	for some constants $K_1$ and $K_2$.  Then
	\[
	\mathbb{E}_{\mathbf{x},V^{(N)}}[\mathbf{f}^{(N)}] \rightsquigarrow \mathbb{E}_{\mathbf{x},V}[\mathbf{f}].
	\]
\end{theorem}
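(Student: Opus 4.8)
\textbf{Proof plan for Theorem \ref{thm:classicalconditionalexpectation}.}

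The plan is to recover the random-matrix conditional expectation $\mathbb{E}_{\mathbf{x},V^{(N)}}[\mathbf{f}^{(N)}]$ as the large-time limit of a classical heat semigroup on $M_N(\C)_{\sa}^d$ (conditioned on $\mathbf{x}'$) and to match that finite-dimensional semigroup with the free semigroup $e^{tL_{\mathbf{x},\mathbf{J}}}$ from \S\ref{sec:pseudoinverse} in the large $N$ limit. Concretely, for fixed $\mathbf{X}'$ consider the diffusion on $M_N(\C)_{\sa}^d$ generated by $\mathcal{L}_{V^{(N)},\mathbf{X}'}^{(N)} = \frac{1}{N^2}\Delta_{\mathbf{x}} - \langle \nabla_{\mathbf{x}} V^{(N)}(\cdot,\mathbf{X}'),\nabla_{\mathbf{x}}\cdot\rangle$, whose invariant measure is exactly $\mu_{V^{(N)}}(\mathbf{X}\mid\mathbf{X}')$. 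Since $V^{(N)}(\cdot,\mathbf{X}') - \frac12 c\|\cdot\|_2^2$ is convex (hypothesis (3)), the Bakry--\'Emery criterion gives a spectral gap bounded below by $c$ uniformly in $N$ and in $\mathbf{X}'$, so the semigroup $e^{t\mathcal{L}^{(N)}_{V^{(N)},\mathbf{X}'}}$ converges exponentially fast (rate independent of $N$) to the conditional expectation $\mathbb{E}_{\mathbf{x},V^{(N)}}[\,\cdot\,](\mathbf{X}')$. The moment-growth hypothesis on $\mathbf{f}^{(N)}$ together with a uniform sub-Gaussian tail for $\mu_{V^{(N)}}$ (again from (3), via concentration for log-concave measures, cf.\ Lemma~\ref{lem:operatornormtailbound}-type estimates) guarantees all the relevant integrals are finite and the convergence applies to $\mathbf{f}^{(N)}$.

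Next I would realize the finite-dimensional semigroup by the SDE $d\mathcal{X}^{(N)}(t) = \sqrt{2/N^2}\,dB_t - \nabla_{\mathbf{x}}V^{(N)}(\mathcal{X}^{(N)}(t),\mathbf{X}')\,dt$, where $B_t$ is a Brownian motion on $M_N(\C)_{\sa}^d$; then $e^{t\mathcal{L}^{(N)}}\mathbf{f}^{(N)}(\mathbf{X},\mathbf{X}') = \mathbb{E}[\mathbf{f}^{(N)}(\mathcal{X}^{(N)}(2t),\mathbf{X}')]$ where $\mathcal{X}^{(N)}(0)=\mathbf{X}$. The key matching step is that, because $\nabla_{\mathbf{x}}V^{(N)}\rightsquigarrow\nabla_{\mathbf{x}}V = \mathbf{x} + \nabla_{\mathbf{x}}W$ and $\nabla_{\mathbf{x}}W \in BC^2_{\tr}$, the solution $\mathcal{X}^{(N)}(t)$ converges (in the sense of Definition~\ref{def:asymptotic}, after the usual truncation of the drift on operator-norm balls and a Gr\"onwall estimate) to the free process $\mathcal{X}(\cdot,\cdot,t)$ of Definition~\ref{def:Vheatsemigroup2} with $\mathbf{J} = \nabla_{\mathbf{x}}V$; here the Brownian motion $\sqrt{2/N^2}B_t$ on $M_N(\C)_{\sa}^d$ converges in non-commutative law to the free Brownian motion $\mathcal{S}(t)$ by the standard matrix-model-to-free-probability statement. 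Applying Lemma~\ref{lem:asymptoticcomposition} (composition respects $\rightsquigarrow$) and the fact that classical expectation over the Gaussian increments plus conditioning on $\mathbf{X}'$ becomes the trace-preserving conditional expectation $E_{\cA}$ in the limit (Lemma~\ref{lem:freeCE}), we get $e^{t\mathcal{L}^{(N)}_{V^{(N)},\mathbf{x}'}}\mathbf{f}^{(N)} \rightsquigarrow e^{tL_{\mathbf{x},\nabla_{\mathbf{x}}V}}\mathbf{f}$ for each fixed $t$, uniformly on operator-norm balls.

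Finally I would combine the two limits: for $\varepsilon>0$ pick $T$ large so that (using the $N$-uniform spectral gap on the matrix side, and Proposition~\ref{prop:kernelprojection}/\eqref{eq:heatsemigroupconvergence} with the bound $C_2<1$ on the free side) both $\|e^{T\mathcal{L}^{(N)}}\mathbf{f}^{(N)} - \mathbb{E}_{\mathbf{x},V^{(N)}}\mathbf{f}^{(N)}\|_{\tr,R}$ and $\|e^{TL_{\mathbf{x},\nabla_{\mathbf{x}}V}}\mathbf{f} - \mathbb{E}_{\mathbf{x},V}\mathbf{f}\circ\pi'\|_{\tr,R}$ are $<\varepsilon/3$ uniformly in $N$; then for this fixed $T$ use the single-time convergence just established to make $\|e^{T\mathcal{L}^{(N)}}\mathbf{f}^{(N)} - e^{TL_{\mathbf{x},\nabla_{\mathbf{x}}V}}\mathbf{f}\|_{\tr,R} < \varepsilon/3$ for $N$ large, and conclude by the triangle inequality that $\mathbb{E}_{\mathbf{x},V^{(N)}}[\mathbf{f}^{(N)}] \rightsquigarrow \mathbb{E}_{\mathbf{x},V}[\mathbf{f}]$; uniqueness of the limit is Remark~\ref{rem:uniquelimit}. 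The unitary-invariance hypothesis (1) is used to ensure that $\mathbb{E}_{\mathbf{x},V^{(N)}}[\mathbf{f}^{(N)}]$ is again a (trace-polynomial-like, unitarily covariant) function of $\mathbf{X}'$ of the right type, so that the statement $\rightsquigarrow$ even makes sense. The main obstacle I expect is the \emph{uniformity in $\mathbf{X}'$} of all estimates: the spectral gap, the sub-Gaussian tail bounds, and the Gr\"onwall comparison between $\mathcal{X}^{(N)}$ and $\mathcal{X}$ must hold with constants independent of $\mathbf{X}'$ on each operator-norm ball $\|\mathbf{X}'\|_\infty\le R$ --- this is exactly where hypothesis (3) (two-sided convexity of $V^{(N)}$) and the $BC_{\tr}$ bounds on $W$ do the work, but carrying the truncation-of-drift argument through while keeping track of the $\mathbf{x}'$-dependence (and the fact that the truncated drift may lose convexity, so one truncates only outside a large ball and controls the excursion probability) is the technically delicate part.
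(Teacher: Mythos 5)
Your plan is workable in outline, but it takes a genuinely different and substantially heavier route than the paper, and one of its middle steps is not justified by anything proved here. The paper's proof is ``static'': it writes $\mathbf{f}-\mathbb{E}_{\mathbf{x},V}[\mathbf{f}]\circ\pi' = -L_{\mathbf{x},V}\Psi_{\mathbf{x},V}\mathbf{f}$ using the pseudo-inverse of Proposition \ref{prop:pseudoinverse}, observes that the finite-$N$ conditional expectation annihilates the finite-$N$ generator exactly (integration by parts: $\int L_{\mathbf{x},V^{(N)}}\mathbf{g}^{M_N(\C),\tr_N}\,d\mu_{V^{(N)}}(\mathbf{X}\mid\mathbf{X}')=0$), and then only has to compare the two generators applied to the single fixed smooth function $\mathbf{g}=\Psi_{\mathbf{x},V}\mathbf{f}$, via Lemma \ref{lem:classicaltrace} together with the tail bound of Lemma \ref{lem:operatornormtailbound}; the general $\mathbf{f}^{(N)}$ is then handled by approximation and the same tail bound. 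No matching of time-dependent dynamics is ever needed; the heat semigroup appears only on the free side, inside the construction of $\Psi_{\mathbf{x},V}$.

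By contrast, the load-bearing step of your proposal --- that $e^{T\mathcal{L}^{(N)}}\mathbf{f}^{(N)}\rightsquigarrow e^{TL_{\mathbf{x},\nabla_{\mathbf{x}}V}}\mathbf{f}$ for fixed $T$ --- is itself a quantitative matrix-SDE-versus-free-SDE comparison that must hold \emph{uniformly over all deterministic initial tuples} in an operator-norm ball (that is what Definition \ref{def:asymptotic} demands), not merely along initial data with convergent non-commutative law, and uniformly in $\mathbf{X}'$. That requires a quantitative asymptotic-freeness statement for the Gaussian increments against arbitrary norm-bounded deterministic matrices, plus operator-norm control of the matrix process; this is essentially the content of \S 3 of \cite{DGS2016} (or of \cite{Jekel2018}), and nothing in the present paper (Lemma \ref{lem:asymptotictrace} only matches the Laplacians, not the semigroups) lets you cite it. So as written this is a genuine gap unless you import those external results, at which point your proof is much longer than the one-line identity the paper exploits. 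A second, smaller issue: the Bakry--\'Emery spectral gap gives decay of $e^{t\mathcal{L}^{(N)}}\mathbf{f}^{(N)}-\mathbb{E}_{\mathbf{x},V^{(N)}}\mathbf{f}^{(N)}$ in $L^2(\mu_{V^{(N)}}(\cdot\mid\mathbf{X}'))$, whereas the $\rightsquigarrow$ relation needs smallness uniformly over $\mathbf{X}$ in an operator-norm ball; under hypothesis (3) you should instead use synchronous coupling (gradient contraction at rate $c/2$, uniform in $N$ and $\mathbf{X}'$) applied to Lipschitz truncations of $\mathbf{f}^{(N)}$, combined with the tail bound, to get the pointwise-uniform exponential convergence your $\varepsilon/3$ argument requires. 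With those two repairs your dynamic proof goes through, but it buys nothing over the paper's argument except independence from the pseudo-inverse machinery, which you in any case still need on the free side of the triangle inequality.
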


\begin{remark} \label{rem:canonicalapproximant}
	If we take $V^{(N)} = V^{M_N(\C),\tr_N}$, then the hypotheses (1), (2), (3) are automatically satisfied.  For the condition (3), we set $c = 1 - C_2$ and $C = 1 + C_2$ where $C_2 = \norm{\partial \nabla V - \Id}_{BC_{\tr}(\R^{*(d+d')},\mathscr{M}^1)}$.
\end{remark}

Since the asymptotic approximation relation $\rightsquigarrow$ relies on approximation for each operator norm ball, we will have to truncate the conditional distribution $\mu_{V^{(N)}}(\mathbf{X} | \mathbf{X}')$ to operator-norm balls.  The following lemma from \cite{JekelThesis} relies on concentration of measure (see e.g.\ \cite{Gross1975}, \cite{Ledoux1992}, \cite{BL2000}, \cite[\S 2.3.3 and 4.4.2]{AGZ2009}) and its application to random matrices (see \cite{GZ2000}) through an $\epsilon$-net argument (see \cite[\S 2.3.1]{Tao2012}) as well as the fact that the conditional expectation of a Lipschitz function is Lipschitz when $V^{(N)}$ satisfies (3).  For the proof, refer to \cite[p.\ 277]{JekelThesis}.  The constant $R_3$ on p.\ 277 is the $R'$ in the lemma statement here.

\begin{lemma} \label{lem:operatornormtailbound}
	Suppose that $V^{(N)}: M_N(\C)_{\sa}^{d+d'}$ satisfies assumptions (1), (2), and (3) of the theorem, and let $K > 0$ and $R > 0$.  Then there is some constant $R'$ such that
	\[
	\lim_{N \to \infty} \sup_{\norm{\mathbf{X}'}_\infty \leq R} \int_{\norm{\mathbf{x}}_\infty \geq R'} e^{K \norm{\mathbf{X}}_\infty}\,d\mu^{(N)}(\mathbf{X} | \mathbf{X}') = 0.
	\]
\end{lemma}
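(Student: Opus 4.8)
The statement to prove is Lemma \ref{lem:operatornormtailbound}: a uniform (in $\mathbf{X}'$ and asymptotically in $N$) exponential-moment tail bound for the conditional measure $\mu^{(N)}(\mathbf{X}\mid\mathbf{X}')$ on the event $\{\norm{\mathbf{x}}_\infty\geq R'\}$. The strategy is the standard one for log-concave matrix models: combine (a) a concentration inequality coming from convexity of $V^{(N)}$ with (b) a bound on the median (or mean) of $\norm{\mathbf{X}}_\infty$ under $\mu^{(N)}(\cdot\mid\mathbf{X}')$, obtained by comparison with a Gaussian after conditioning. First I would observe that assumption (3) says $V^{(N)}(\mathbf{X},\mathbf{X}') - \tfrac{c}{2}\norm{(\mathbf{X},\mathbf{X}')}_2^2$ is convex; hence for fixed $\mathbf{X}'$ the function $\mathbf{X}\mapsto V^{(N)}(\mathbf{X},\mathbf{X}') - \tfrac{c}{2}\norm{\mathbf{X}}_2^2$ is convex on $M_N(\C)_{\sa}^d$. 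Therefore the conditional density $\propto e^{-N^2 V^{(N)}(\cdot,\mathbf{X}')}$ is $e^{-N^2 c/2}$-log-concave relative to Lebesgue measure, i.e.\ it is a probability measure whose potential has Hessian bounded below by $cN^2 I$ on the $dN^2$-dimensional inner product space $(M_N(\C)_{\sa}^d,\ip{\cdot,\cdot}_2)$.

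\textbf{Concentration step.} By the Bakry--\'Emery criterion (or the Brascamp--Lieb / Herbst argument), a probability measure on a Euclidean space whose potential has Hessian $\geq \kappa I$ satisfies a log-Sobolev inequality with constant $1/\kappa$, and hence Gaussian concentration: for any $1$-Lipschitz $\Phi$ (with respect to $\norm{\cdot}_2$) and any $r>0$,
\[
\mu^{(N)}\bigl(\{|\Phi - \mathbb{E}_{\mu^{(N)}(\cdot\mid\mathbf{X}')}\Phi| \geq r\}\bigm| \mathbf{X}'\bigr) \leq 2 e^{-\kappa r^2/2}
\]
with $\kappa = cN^2$. The map $\mathbf{X}\mapsto\norm{\mathbf{X}}_\infty$ is $1$-Lipschitz with respect to $\norm{\cdot}_2$ (since $\norm{X_j}_\infty\leq\norm{X_j}_2$ and the operator norm on a tuple is $\max_j$). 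Applying the inequality with $\Phi = \norm{\cdot}_\infty$ gives, uniformly in $\mathbf{X}'$,
\[
\mu^{(N)}\bigl(\{\norm{\mathbf{X}}_\infty \geq m^{(N)}(\mathbf{X}') + r\}\bigm| \mathbf{X}'\bigr) \leq 2 e^{-cN^2 r^2/2},
\]
where $m^{(N)}(\mathbf{X}')$ is the mean of $\norm{\mathbf{X}}_\infty$ under the conditional law. Integrating the tail then yields $\int_{\norm{\mathbf{X}}_\infty\geq m^{(N)}(\mathbf{X}')+s} e^{K\norm{\mathbf{X}}_\infty}\,d\mu^{(N)}(\mathbf{X}\mid\mathbf{X}') \leq \tilde C e^{K(m^{(N)}(\mathbf{X}')+s)} e^{-cN^2 s^2/2 + Ks}$ for $N$ large, which $\to 0$ as soon as $s$ is bounded away from $0$ and $m^{(N)}(\mathbf{X}')$ is bounded above uniformly for $\norm{\mathbf{X}'}_\infty\leq R$.

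\textbf{Bounding the conditional mean of the operator norm.} This is the step I expect to be the main obstacle, and it is where the structure of the problem enters. The conditional potential dominates $\tfrac{c}{2}\norm{\mathbf{X}}_2^2 + \ip{\mathbf{X},\nabla_{\mathbf{x}}V^{(N)}(0,\mathbf{X}')}$ plus lower-order terms, so $\mu^{(N)}(\cdot\mid\mathbf{X}')$ is stochastically controlled by a Gaussian centered at a point whose $\norm{\cdot}_2$-norm grows linearly in $\norm{\nabla_{\mathbf{x}}V^{(N)}(0,\mathbf{X}')}$; since $\nabla V^{(N)}\rightsquigarrow\nabla V$ with $\nabla V\in\mathscr{J}^{d,d'}_{a,c}$, the function $\nabla_{\mathbf{x}}V^{(N)}(0,\mathbf{X}')$ has $\norm{\cdot}_2$ bounded by $a' + b'\norm{\mathbf{X}'}_2 \leq a'+b'R$ uniformly for $\norm{\mathbf{X}'}_\infty\leq R$ and $N$ large (using Lemma \ref{lem:matrixnorm} to pass from the $C_{\tr,\app}$ bound to the matrix bound). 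For the operator-norm mean, I would then invoke the standard fact — proved via an $\epsilon$-net argument combined with the concentration inequality above, or more simply by noting that under a comparison Gaussian (GUE-type) the expected operator norm of a $d$-tuple in $M_N(\C)_{\sa}^d$ with covariance $\tfrac{1}{cN^2}I$ is $O(1)$ as $N\to\infty$ (see the discussion preceding Lemma \ref{lem:operatornormtailbound} and \cite{AGZ2009}) — to conclude $m^{(N)}(\mathbf{X}') \leq C(c, a', b', R)$ for all $N$ large and all $\norm{\mathbf{X}'}_\infty\leq R$. Concretely, since the conditional law is $cN^2$-log-concave, its covariance is $\leq \tfrac{1}{cN^2}I$; an $\epsilon$-net over the unit balls of $M_N(\C)^d$ (with $\epsilon=1/4$, say, needing $\leq 9^{dN^2}$ points) together with the sub-Gaussian bound $\mathbb{E}\,e^{t(\langle X_j,E\rangle - \text{mean})} \leq e^{t^2/(2cN^2)}$ for each net element gives $\mathbb{E}\,\norm{\mathbf{X}}_\infty \leq \norm{\text{center}}_\infty + O(\sqrt{dN^2}/\sqrt{cN^2}) = O(1 + R)$, with all constants independent of $N$ and $\mathbf{X}'$. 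Setting $R' = C(c,a',b',R) + 1$ and taking any fixed $s\geq 1$ in the previous step then gives the claimed limit. The details of the $\epsilon$-net estimate and the precise form of the concentration constant are exactly the routine calculations I would not grind through here; they are carried out on p.\ 277 of \cite{JekelThesis}, whose constant $R_3$ plays the role of $R'$.
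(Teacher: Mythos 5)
Your overall route coincides with the paper's: the paper does not write this argument out but defers to \cite[p.~277]{JekelThesis}, citing exactly the ingredients you use (uniform log-concavity of the conditional density from assumption (3), log-Sobolev/Herbst concentration, an $\epsilon$-net bound for the operator norm, and a Gaussian comparison), so there is no divergence of method. But two steps in your sketch need repair. The lighter one is a normalization slip: with the paper's convention $\norm{X}_2=(\tr_N(X^*X))^{1/2}$ the inequality goes the other way, $\norm{X}_2\le\norm{X}_\infty$, so $\norm{\cdot}_\infty$ is only $\sqrt{N}$-Lipschitz with respect to $\norm{\cdot}_2$ (equivalently $1$-Lipschitz for the unnormalized Hilbert--Schmidt norm). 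Since the log-Sobolev constant of the conditional law relative to $\norm{\cdot}_2$ is of order $1/(cN^2)$, the correct concentration rate for $\norm{\mathbf{X}}_\infty$ is $e^{-cNr^2/2}$, not $e^{-cN^2r^2/2}$; this still beats the factor $e^{K\norm{\mathbf{X}}_\infty}$, so the conclusion survives, but your displayed inequality is false as written, and the same normalization confusion infects your $\epsilon$-net bookkeeping (for the operator norm one nets pairs of unit vectors in $\C^N$, giving $e^{O(N)}$ points with sub-Gaussian proxy $O(1/(cN))$, rather than a $9^{dN^2}$-point net of a matrix ball).

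The substantive gap is the uniform bound on the conditional mean $m^{(N)}(\mathbf{X}')$. Your net argument concludes with $\mathbb{E}\,\norm{\mathbf{X}}_\infty \le \norm{\text{center}}_\infty + O(1)$, but the only control you establish on the center is in $\norm{\cdot}_2$ (via $\norm{\nabla_{\mathbf{x}}V^{(N)}(0,\mathbf{X}')}_2 \lesssim 1+R$ and strong convexity), and $\norm{\cdot}_\infty \le \sqrt{N}\,\norm{\cdot}_2$ is all that comes for free, which is useless here. You also cannot simply evaluate $\nabla V^{(N)} \rightsquigarrow \nabla V$ at the minimizer to say it is $\approx -\nabla_{\mathbf{x}}W$ there, because the asymptotic approximation is uniform only on operator-norm balls and you do not yet know the minimizer (or the conditional mean) lies in one --- that is precisely the point to be proved. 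Closing this requires using assumption (2) in operator norm (e.g.\ that $\nabla_{\mathbf{x}}V(\mathbf{X},\mathbf{X}')-\mathbf{X}$ is uniformly $\norm{\cdot}_\infty$-bounded) together with either a partition-function comparison of the conditional measure against a Gaussian combined with GUE-type operator-norm tail bounds, or the Lipschitz property of conditional expectations that the paper lists among the ingredients; as it stands, your sketch asserts rather than proves the uniform boundedness of $m^{(N)}(\mathbf{X}')$, which is the crux of the lemma.
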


\begin{proof}[Proof of Theorem \ref{thm:classicalconditionalexpectation}]
	First, consider the case where $\mathbf{f}^{(N)}$ is exactly equal to $\mathbf{f}^{M_N(\C),\tr_N}$ and
	\[
	\mathbf{f} \in BC_{\tr,\app}^2(\R^{*(d+d')},\mathscr{M}(\R^{*d_1},\dots,\R^{*d_\ell})^{d''} \cap C_{\tr,\app}^\infty(\R^{*(d+d')},\mathscr{M}(\R^{*d_1},\dots,\R^{*d_\ell})^{d''}.
	\]
	Let $\mathbf{g} = \Psi_{\mathbf{x},V} \mathbf{f}$.  Recall that
	\[
	\mathbf{f} = \mathbb{E}_{\mathbf{x},V}[\mathbf{f}] \circ \pi' - L_{\mathbf{x},V} \mathbf{g},
	\]
	and hence
	\[
	\mathbb{E}_{\mathbf{x},V^{(N)}}[\mathbf{f}^{M_N(\C),\tr_N}] - \mathbb{E}_{\mathbf{x},V}[\mathbf{f}]^{M_N(\C),\tr_N} = \mathbb{E}_{\mathbf{x},V^{(N)}}[L_{\mathbf{x},V} \mathbf{g}^{M_N(\C),\tr_N}].
	\]
	
	For a function $\mathbf{h}$ on $M_N(\C)_{\sa}^{d+d'} \times (M_N(\C)_{\sa}^{d+d'})^{\ell}$, let
	\[
	L_{\mathbf{x},V^{(N)}} \mathbf{h} = \frac{1}{N^2} \Delta_{\mathbf{x}} \mathbf{h} - \partial_{\mathbf{x}} \mathbf{h} \# \nabla_{\mathbf{x}} V^{(N)}.
	\]
	Because of Lemma \ref{lem:classicaltrace}, we have
	\[
	\frac{1}{N^2} \Delta_{\mathbf{x}} [\mathbf{g}^{M_N(\C),\tr_N}] \rightsquigarrow L_{\mathbf{x}} \mathbf{g}.
	\]
	Similarly, using Lemma \ref{lem:asymptoticcomposition}, we have
	\[
	\partial_{\mathbf{x}} \mathbf{g}^{M_N(\C),\tr_N} \# \nabla_{\mathbf{x}} V^{(N)} \rightsquigarrow \partial_{\mathbf{x}} \mathbf{g} \# \nabla_{\mathbf{x}} V.
	\]
	Thus,
	\[
	L_{\mathbf{x},V^{(N)}} \mathbf{g}^{M_N(\C),\tr_N} \rightsquigarrow L_{\mathbf{x},V} \mathbf{g}.
	\]
	Note that because of integration by parts
	\begin{equation} \label{eq:CEproof2}
		\int_{M_N(\C)_{\sa}^d} L_{\mathbf{x},V^{(N)}} \mathbf{g}^{M_N(\C),\tr_N}(\mathbf{X},\mathbf{X}')\,d\mu^{(N)}(\mathbf{X} | \mathbf{X}') = 0.
	\end{equation}
	
	Fix $R > 0$, and let $R'$ be the radius associated to $R$ as in Lemma \ref{lem:operatornormtailbound}, and let $M = \max(R,R')$. Because of assumption (3), $\nabla V^{(N)}$ is $C$-Lipschitz with respect to $\norm{\cdot}_2$.  Since $\norm{\nabla V^{(N)}(0)}_2$ is bounded as $N \to \infty$, we have
	\[
	\norm{\nabla_{x_j} V^{(N)}(\mathbf{X},\mathbf{X}')}_2 \leq A + B \norm{(\mathbf{X},\mathbf{X}')}_2
	\]
	for some constants $A$ and $B$.  But it follows from \cite[Lemma 11.5.4]{JekelThesis} that
	\[
	\norm{\nabla_{x_j} V^{(N)}(\mathbf{X},\mathbf{X}') - \tr_N(\nabla_{x_j} V^{(N)}(\mathbf{X},\mathbf{X}'))}_\infty \leq B' \norm{(\mathbf{X},\mathbf{X}')}_\infty
	\]
	for some constant $B'$.  Thus, overall,
	\[
	\norm{\nabla_{x_j} V^{(N)}(\mathbf{X},\mathbf{X}') - \tr_N(\nabla_{x_j} V^{(N)}(\mathbf{X},\mathbf{X}'))}_\infty \leq A + (Bd + B') \norm{(\mathbf{X},\mathbf{X}')}_\infty.
	\]
	Moreover, note that $\partial \mathbf{f}_t^{M_N(\C),\tr_N}$ and $(1/N^2) \Delta f^{M_N(\C),\tr_N}$ are uniformly bounded for every $N$ and $(\mathbf{X},\mathbf{X}')$ and $t$ since $\partial \mathbf{f}_t$ and $\partial^2 \mathbf{f}_t$ is uniformly bounded.  Therefore, using Lemma \ref{lem:operatornormtailbound}, we see that
	\[
	\lim_{N \to \infty} \sup_{\norm{\mathbf{X}'} \leq R} \int_{\norm{\mathbf{X}}_\infty \geq M} \norm{(L_{\mathbf{x},V^{(N)}} \mathbf{g}^{M_N(\C),\tr_N}(\mathbf{X},\mathbf{X}') - [L_{\mathbf{x},V} \mathbf{g}]^{M_N(\C),\tr_N}(\mathbf{X},\mathbf{X}')}_{\mathscr{M}^\ell,\tr} \,d\mu_{V^{(N)}}(\mathbf{X} | \mathbf{X}') = 0.
	\]
	Meanwhile, we can estimate the same integral over $\norm{\mathbf{X}}_\infty \leq M$ by using the condition that $L_{\mathbf{x},V^{(N)}} \mathbf{g}^{M_N(\C),\tr_N} \rightsquigarrow L_{\mathbf{x},V} \mathbf{f}_t$, and thus putting the two pieces together,
	\[
	\lim_{N \to \infty} \sup_{\norm{\mathbf{X}'} \leq R} \int \norm{(L_{\mathbf{x},V^{(N)}} \mathbf{g}^{M_N(\C),\tr_N}(\mathbf{X},\mathbf{X}') - [L_{\mathbf{x},V} \mathbf{g}]^{M_N(\C),\tr_N}(\mathbf{X},\mathbf{X}')}_{\mathscr{M}^\ell,\tr} \,d\mu_{V^{(N)}}(\mathbf{X} | \mathbf{X}') = 0.
	\]
	Since $R$ was arbitrary, it follows that
	\[
	\mathbb{E}_{\mathbf{x},V^{(N)}}[L_{\mathbf{x},V^{(N)}} [\mathbf{g}^{M_N(\C),\tr_N}] - [L_{\mathbf{x},V} \mathbf{g}]^{M_N(\C),\tr_N}] \rightsquigarrow 0
	\]
	and thus in light of \eqref{eq:CEproof2}, we have
	\[
	\mathbb{E}_{\mathbf{x},V^{(N)}}[\mathbf{f}^{M_N(\C),\tr_N}] \rightsquigarrow \mathbb{E}_{\mathbf{x},V}[\mathbf{f}].
	\]
	
	For the more general case, suppose that $\mathbf{f}^{(N)} \rightsquigarrow \mathbf{f}$ and that $\mathbf{f}^{(N)}$ satisfies the given operator norm bounds.  Fix $R$ and let $M$ be as above and also let $M' = \max(M, R + 2, C_1)$.  If $\epsilon > 0$, then we may choose some
	\[
	\mathbf{g} \in C_{\tr,\app}^\infty(\R^{*(d+d')}, \mathscr{M}(\R^{*d_1},\dots,\R^{*d_\ell})^{d''} \cap BC_{\tr,\app}(\R^{*(d+d')}, \mathscr{M}(\R^{*d_1},\dots,\R^{*d_\ell})^{d''}
	\]
	with $\norm{\mathbf{g} - \mathbf{f}}_{C_{\tr}(\R^{*(d+d')})^{d_2},M} < \epsilon$ (here $\mathbf{g}$ can be taken to be a trace polynomial composed with a smooth cut-off function in $(\mathbf{X},\mathbf{X}')$).  Then observe that $\norm{\mathbb{E}_{\mathbf{x},V} \mathbf{f} - \mathbb{E}_{\mathbf{x},V} \mathbf{g}}_{C_{\tr}(\R^{*d'},\mathscr{M}^\ell)^{d_2},R} \leq \epsilon$ using Proposition \ref{prop:kernelprojection} and the definition of $M'$.  Moreover,
	\[
	\limsup_{N \to \infty} \sup_{\norm{\mathbf{x}'}_\infty \leq R} \int_{\norm{\mathbf{x}}_\infty \leq M} \norm{\mathbf{f}^{(N)}(\mathbf{X},\mathbf{X}') - \mathbf{g}^{M_N(\C),\tr_N}(\mathbf{X},\mathbf{X}')}_{\mathscr{M}^\ell,\tr} \,d\mu_{V^{(N)}}(\mathbf{X} | \mathbf{X}') \leq \epsilon,
	\]
	while the integral over $\norm{\mathbf{X}}_{\tr_N,\infty} > M$ can be estimated using Lemma \ref{lem:operatornormtailbound}.  Hence,
	\[
	\limsup_{N \to \infty} \norm{\mathbb{E}_{\mathbf{x},V^{(N)}}[\mathbf{f}^{(N)}] - \mathbb{E}_{\mathbf{x},V}[\mathbf{f}] }_{\mathscr{M}^\ell,\tr} \leq 2 \epsilon,
	\]
	and since $R$ and $\epsilon$ were arbitrary, we are done.
\end{proof}

Next, given a potential $V(\mathbf{x},\mathbf{x}')$ in $\mathscr{V}_{C_1,C_2,C_3}^{d+d'}$, we want to describe the ``marginal potential'' $\widehat{V}(\mathbf{x}')$ for the distribution of $\mathbf{x}'$, that is, the function describing the large $N$ limit of the log of the marginal density of $\mu_V^{(N)}$ for $\mathbf{x}'$.  Choose $V^{(N)}$ as in Theorem \ref{thm:classicalconditionalexpectation}.  We can define the marginal potential
\[
\widehat{V}^{(N)}(\mathbf{X}') = - \frac{1}{N^2} \log \int e^{-N^2 V^{(N)}(\mathbf{X},\mathbf{X}')}\,d\mathbf{X}
\]
A straightforward computation shows that
\[
\nabla \widehat{V}^{(N)}(\mathbf{X}') = \mathbb{E}_{\mathbf{x},V^{(N)}}[\nabla_{\mathbf{x}'} V^{(N)}].
\]
Now it follows from the previous theorem that
\[
\nabla \widehat{V}^{(N)} \rightsquigarrow \mathbb{E}_{\mathbf{x},V}[\nabla_{\mathbf{x}'} V].
\]
Our next goal is to show that $\mathbb{E}_{\mathbf{x},V}[\nabla_{\mathbf{x}'} V]$ is the gradient of some function $\widehat{V} \in \tr(C_{\tr,\app}^\infty(\R^{*d}))$.  To this end, we use the following lemma.

\begin{lemma} \label{lem:asymptoticgradient}
	Let $\mathbf{g} \in C_{\tr}^k(\R^{*d})_{\sa}^d$.  If there exist $C^1$ functions $f^{(N)}: M_N(\C)_{\sa}^d \to \R$ such that $\nabla f^{(N)} \rightsquigarrow \mathbf{g}$, then there exists $f \in \tr(C_{\tr,\app}^{k+1}(\R^{*d}))_{\sa}$ such that $\nabla f = \mathbf{g}$.  This $f$ is unique up to an additive constant.  It also satisfies $f^{(N)} - f^{(N)}(0) \rightsquigarrow f - f(0)$.
\end{lemma}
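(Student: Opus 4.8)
The plan is to recover $f$ as the line integral of $\mathbf{g}$ based at the origin, identify this with the large-$N$ limit of $f^{(N)}-f^{(N)}(0)$, and read off differentiability of the limit from the one-variable fundamental theorem of calculus applied on the matrix level. Throughout I work with the $C_{\tr,\app}$-versions of the function spaces, using that $C_{\tr}^k\subseteq C_{\tr,\app}^k$. Uniqueness comes first and is easy: if $f_1,f_2\in\tr(C_{\tr,\app}^{k+1}(\R^{*d}))$ both have gradient $\mathbf{g}$, then $\partial(f_1-f_2)=0$, so for every $(\cA,\tau)\in\mathbb{W}_{\app}$ the Fr\'echet derivative of $(f_1-f_2)^{\cA,\tau}$ vanishes on the connected set $\cA_{\sa}^d$, whence $(f_1-f_2)^{\cA,\tau}$ is constant; evaluating at $\mathbf{X}=0$ (which lies in every $\cA_{\sa}^d$) and using that near $0$ the function is a uniform limit of trace polynomials whose constant terms do not depend on $(\cA,\tau)$, this constant is the same for all $(\cA,\tau)$, so $f_1-f_2$ is a constant function.

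For existence, I would \emph{define} $f$ by $f^{\cA,\tau}(\mathbf{X})=\int_0^1\ip{\mathbf{X},\mathbf{g}^{\cA,\tau}(t\mathbf{X})}_\tau\,dt$. To see this is a genuine element of $\tr(C_{\tr,\app}^k(\R^{*d}))_{\sa}$: the integrand equals $t\mapsto\ip{\id,\mathbf{g}\circ(t\,\id)}_{\tr}$, which lies in $\tr(C_{\tr,\app}^k(\R^{*d}))$ by the chain rule (Theorem \ref{thm:chainrule}) together with Corollary \ref{cor:tracemap}, is self-adjoint by traciality of $\tau$, and depends continuously on $t$ by continuity of composition; hence the Riemann integral exists in the Fr\'echet space $\tr(C_{\tr,\app}^k(\R^{*d}))$, and $f(0)=0$. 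Next I would check $f^{(N)}-f^{(N)}(0)\rightsquigarrow f$ by a direct estimate: the fundamental theorem of calculus on the real inner product space $M_N(\C)_{\sa}^d$ gives $f^{(N)}(\mathbf{X})-f^{(N)}(0)=\int_0^1\ip{\mathbf{X},\nabla f^{(N)}(t\mathbf{X})}_{\tr_N}\,dt$, while evaluation at a fixed $\mathbf{X}$ commutes with the Fr\'echet-valued Riemann integral, so $f^{M_N(\C),\tr_N}(\mathbf{X})=\int_0^1\ip{\mathbf{X},\mathbf{g}^{M_N(\C),\tr_N}(t\mathbf{X})}_{\tr_N}\,dt$. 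Subtracting and using Cauchy--Schwarz, $\sup_{\norm{\mathbf{X}}_\infty\le R}|f^{(N)}(\mathbf{X})-f^{(N)}(0)-f^{M_N(\C),\tr_N}(\mathbf{X})|\le dR\,\norm{\nabla f^{(N)}-\mathbf{g}^{M_N(\C),\tr_N}}_{\tr,R}\to 0$, since $t\mathbf{X}$ stays in the ball of radius $R$ and $\nabla f^{(N)}\rightsquigarrow\mathbf{g}$.

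The main step, and the step I expect to be the real obstacle, is to prove $f\in\tr(C_{\tr,\app}^{k+1}(\R^{*d}))$ with $\nabla f=\mathbf{g}$. Fix $(\cA,\tau)\in\mathbb{W}_{\app}$ and $\mathbf{X},\mathbf{Y}\in\cA_{\sa}^d$, and pick matrix approximants $\mathbf{X}^{(N)},\mathbf{Y}^{(N)}$ converging to $\mathbf{X},\mathbf{Y}$ in non-commutative law with operator norms bounded (apply a cutoff function). Starting from the matricial identity $f^{(N)}(\mathbf{X}^{(N)}+s\mathbf{Y}^{(N)})-f^{(N)}(\mathbf{X}^{(N)})=\int_0^s\ip{\mathbf{Y}^{(N)},\nabla f^{(N)}(\mathbf{X}^{(N)}+u\mathbf{Y}^{(N)})}_{\tr_N}\,du$, I would pass to the limit $N\to\infty$: the left-hand side tends to $f^{\cA,\tau}(\mathbf{X}+s\mathbf{Y})-f^{\cA,\tau}(\mathbf{X})$ using $f^{(N)}-f^{(N)}(0)\rightsquigarrow f$, the uniform continuity of $C_{\tr,\app}$ functions on operator-norm balls (Lemma \ref{lem:Ctrcontinuity}), and convergence in law; the right-hand side tends to $\int_0^s\ip{\mathbf{Y},\mathbf{g}^{\cA,\tau}(\mathbf{X}+u\mathbf{Y})}_\tau\,du$ by the analogous facts for $\mathbf{g}$ plus dominated convergence in $u$, all evaluations staying in a fixed ball. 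Differentiating the resulting identity $f^{\cA,\tau}(\mathbf{X}+s\mathbf{Y})-f^{\cA,\tau}(\mathbf{X})=\int_0^s\ip{\mathbf{Y},\mathbf{g}^{\cA,\tau}(\mathbf{X}+u\mathbf{Y})}_\tau\,du$ at $s=0$ shows $\partial f^{\cA,\tau}(\mathbf{X})[\mathbf{Y}]=\ip{\mathbf{Y},\mathbf{g}^{\cA,\tau}(\mathbf{X})}_\tau$ for every $(\cA,\tau)$, i.e.\ $f\in\tr(C_{\tr,\app}^1(\R^{*d}))$ and $\nabla f=\mathbf{g}$; since $\mathbf{g}\in C_{\tr,\app}^k(\R^{*d})^d$, the remark following \eqref{eq:gradientestimate} upgrades this to $f\in\tr(C_{\tr,\app}^{k+1}(\R^{*d}))$.

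Finally, $f(0)=0$ gives $f^{(N)}-f^{(N)}(0)\rightsquigarrow f=f-f(0)$, and uniqueness was established above, completing the proof. The one point worth emphasizing is that the differentiability step — which is really the assertion that the gradient operation is compatible with $\rightsquigarrow$-limits — proceeds through matrix approximants rather than any intrinsic manipulation of $f$, precisely because the $f^{(N)}$ are only assumed $C^1$, so one cannot work with a Hessian; the Fr\'echet-space Riemann integral, the fundamental theorem of calculus, and the uniqueness argument are all routine by comparison.
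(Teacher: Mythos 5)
Your proof is correct, and its skeleton --- define $f$ as the radial line integral $\int_0^1 \ip{\mathbf{g}(t\mathbf{x}),\mathbf{x}}_{\tr}\,dt$, transfer the fact that $\nabla f^{(N)}$ is a gradient from the matrix level to the limit, and then upgrade regularity via the equivalence $f \in \tr(C_{\tr,\app}^{k+1}(\R^{*d}))$ if and only if $\nabla f \in C_{\tr,\app}^k(\R^{*d})^d$ --- is the same as the paper's. Where you diverge is the transfer step. The paper packages path-independence into a single auxiliary function $\mathbf{h}\in\tr(C_{\tr,\app}(\R^{*3d}))$, the integral of $\mathbf{g}$ around the triangle with vertices $\mathbf{x}_1,\mathbf{x}_2,\mathbf{x}_3$: the matrix analogues $\mathbf{h}^{(N)}$ built from $\nabla f^{(N)}$ vanish identically because $\nabla f^{(N)}$ is a gradient, $\mathbf{h}^{(N)}\rightsquigarrow\mathbf{h}$, and the uniqueness of $\rightsquigarrow$-limits (Lemma \ref{lem:matrixnorm}, Remark \ref{rem:uniquelimit}) then forces $\mathbf{h}=0$; evaluating $\mathbf{h}(0,\mathbf{X}_1,\mathbf{X}_2)=0$ yields the two-point formula from which $\nabla f=\mathbf{g}$ is read off. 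You instead prove the segment formula pointwise: for each Connes-approximable $(\cA,\tau)$ and each pair $(\mathbf{X},\mathbf{Y})$ you choose norm-bounded matrix approximants converging in joint law and pass to the limit in the matricial fundamental theorem of calculus, which needs the additional (routine) facts that evaluations of scalar-valued $C_{\tr,\app}$ functions converge under convergence in law and that the error from $\nabla f^{(N)}\rightsquigarrow\mathbf{g}$ is controlled on a fixed operator-norm ball. The paper's $\mathbf{h}$-trick buys a one-shot, point-free argument that invokes the $\rightsquigarrow$-uniqueness machinery once and never mentions approximants of individual operators; your version is more hands-on, makes the role of Connes-approximability explicit, and obtains $f^{(N)}-f^{(N)}(0)\rightsquigarrow f$ by a self-contained Cauchy--Schwarz estimate rather than as a closing remark. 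The uniqueness arguments are the same in substance, and your final regularity upgrade matches the paper's.
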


\begin{proof}
	We may define a function $\mathbf{h}(\mathbf{x}_1,\mathbf{x}_2,\mathbf{x}_3)$ in $\tr(C_{\tr,\app}(\R^{*3d}))$ by
	\[
	\mathbf{h}(\mathbf{x}_1,\mathbf{x}_2,\mathbf{x}_3) = \sum_{j=1}^3 \int_0^1 \ip{\mathbf{g}(t \mathbf{x}_j + (1 - t)\mathbf{x}_{j+1}), \mathbf{x}_j - \mathbf{x}_{j+1}}_{\tr} \,dt,
	\]
	where the index $j + 1$ is reduced modulo $3$.  The function $\mathbf{h}$ is intuitively the path integral of $\mathbf{g}$ around a triangle with vertices $\mathbf{x}_1$, $\mathbf{x}_2$, $\mathbf{x}_3$.  Here $\mathbf{x}_1$, $\mathbf{x}_2$, $\mathbf{x}_3$ are formal variables, and thus $\ip{\mathbf{g}(t \mathbf{x}_j + (1 - t)\mathbf{x}_{j+1}), \mathbf{x}_j - \mathbf{x}_{j+1}}$ is an element of $\tr(C_{\tr,\app}(\R^{*3d}))$.  Moreover, it depends continuously on $t$ in this space by continuity of composition.  It follows that the Riemann integral of these functions is defined.
	
	Next, let
	\[
	\mathbf{h}^{(N)}(\mathbf{X}_1,\mathbf{X}_2,\mathbf{X}_3) = \sum_{j=1}^3 \int_0^1 \ip{\nabla f^{(N)} (t \mathbf{X}_j + (1 - t)\mathbf{X}_{j+1}), \mathbf{X}_j - \mathbf{X}_{j+1}}_{\tr_N} \,dt,
	\]
	where $\mathbf{X}_1$, $\mathbf{X}_2$, $\mathbf{X}_3$ represent elements of $M_N(\C)_{\sa}^d$.  It is straightforward to show that since $\nabla f^{(N)} \rightsquigarrow \mathbf{g}$, we have $\mathbf{h}^{(N)} \rightsquigarrow \mathbf{h}$.  But because $\nabla f^{(N)}$ is a gradient, we have $\mathbf{h}^{(N)} = 0$.  Therefore, $\mathbf{h} = 0$.
	
	Define
	\[
	f(\mathbf{x}) = \int_0^1 \ip{\mathbf{g}(t\mathbf{x}), \mathbf{x}}_{\tr} \,dt.
	\]
	Given that $\mathbf{h} = 0$, we have for any $(\cA,\tau)$ and any $\mathbf{X}_1$, $\mathbf{X}_2$, $\mathbf{X}_3 \in \cA_{\sa}^d$ that
	\[
	0 = \mathbf{h}^{\cA,\tau}(0,\mathbf{X}_1,\mathbf{X}_2) = f^{\cA,\tau}(\mathbf{X}_2) - f^{\cA,\tau}(\mathbf{X}_1) + \int_0^1 \ip{\mathbf{g}^{\cA,\tau}(t \mathbf{X}_1 + (1 - t)\mathbf{X}_2), \mathbf{X}_1 - \mathbf{X}_2}_{\tau} \,dt.
	\]
	It follows easily that $\nabla f = \mathbf{g}$.
	
	Moreover, $f$ is unique up to an additive constant because $f^{\cA,\tau}(\mathbf{X}) - f^{\cA,\tau}(0)$ can be evaluated by integrating the $\nabla f^{\cA,\tau}$ along the path from $0$ to $\mathbf{X}$.  Similarly, since $f^{(N)}(\mathbf{X}) - f^{(N)}(0) = \int_0^1 \ip{\nabla f^{(N)}(t\mathbf{X}), \mathbf{X}}_{\tr_N}\,dt$, we obtain $f^{(N)} - f^{(N)}(0) \rightsquigarrow f - f(0)$.
	
	Finally, observe that if $\mathbf{g} = \nabla f \in C_{\tr,\app}^k(\R^{*d})^d$, then $f \in \tr(C_{\tr,\app}^{k+1}(\R^{*d}))$
\end{proof}

\begin{proposition} \label{prop:marginalpotential}
	Let $V \in \mathscr{V}_{d+d',C_1,C_2,C_3}$ for some $C_2 < 1$.  Then there exists $\widehat{V} \in \tr(C_{\tr,\app}^\infty(\R^{*d'}))_{\sa}$, unique up to an additive constant, such that
	\[
	\nabla \widehat{V} = \mathbb{E}_{\mathbf{x},V}[\nabla_{\mathbf{x}'} V].
	\]
	Furthermore, we have $\widehat{V} \in \mathscr{V}_{d',C_1',C_2',C_3'}$ for some constants $C_1'$, $C_2'$, and $C_3'$ depending only on $C_1$, $C_2$, and $C_3$, where specifically
	\begin{align*}
		C_1' &= C_1 & 
		C_2' &= \frac{C_2(1 + C_2)}{1 - C_2}
	\end{align*}
\end{proposition}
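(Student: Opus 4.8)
By the discussion preceding Lemma~\ref{lem:asymptoticgradient}, for the canonical approximants $V^{(N)} = V^{M_N(\C),\tr_N}$ (which satisfy hypotheses (1)--(3) of Theorem~\ref{thm:classicalconditionalexpectation} by Remark~\ref{rem:canonicalapproximant}, with $c = 1-C_2 > 0$) we already have that the marginal potentials $\widehat{V}^{(N)}$ satisfy $\nabla\widehat{V}^{(N)} = \mathbb{E}_{\mathbf{x},V^{(N)}}[\nabla_{\mathbf{x}'} V^{(N)}] \rightsquigarrow \mathbf{g} := \mathbb{E}_{\mathbf{x},V}[\nabla_{\mathbf{x}'} V]$; here Theorem~\ref{thm:classicalconditionalexpectation} applies because $\nabla_{\mathbf{x}'} V = \mathbf{x}' + \nabla_{\mathbf{x}'} W$ has at most linear operator-norm growth (as $\norm{\nabla W}_{BC_{\tr}} \le C_1$). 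The first task is to observe that $\mathbf{g} \in C_{\tr,\app}^\infty(\R^{*d'})^{d'}_{\sa}$: smoothness holds because $\nabla_{\mathbf{x}'} V \in C_{\tr,\app}^\infty$ and $\mathbb{E}_{\mathbf{x},V}$ preserves each $C_{\tr,\app}^k$ by Proposition~\ref{prop:kernelprojection}, and self-adjointness holds because $\mathbb{E}_{\mathbf{x},V}\mathbf{f}$ is a trace-preserving conditional expectation applied to an evaluation of $\mathbf{f}$ on a self-adjoint tuple. With this, Lemma~\ref{lem:asymptoticgradient} applies directly and furnishes $\widehat{V} \in \tr(C_{\tr,\app}^\infty(\R^{*d'}))_{\sa}$, unique up to an additive constant, with $\nabla\widehat{V} = \mathbf{g} = \mathbb{E}_{\mathbf{x},V}[\nabla_{\mathbf{x}'} V]$. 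This settles the existence and uniqueness statement.

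It remains to estimate the derivatives of $\widehat{W} := \widehat{V} - \tfrac12\ip{\mathbf{x}',\mathbf{x}'}_{\tr}$. Each coordinate $x_j'$ of $\mathbf{x}'$ depends only on $\mathbf{x}'$, hence is annihilated by $L_{\mathbf{x},\nabla_{\mathbf{x}}V}$ and so is fixed by $e^{tL_{\mathbf{x},\nabla_{\mathbf{x}}V}}$ and by $\mathbb{E}_{\mathbf{x},V}$; therefore $\nabla\widehat{W} = \mathbb{E}_{\mathbf{x},V}[\nabla_{\mathbf{x}'} W]$. Since $\mathbb{E}_{\mathbf{x},V}$ is an operator-norm contraction (evaluation on a self-adjoint tuple followed by a trace-preserving conditional expectation, both $\norm{\cdot}_\infty$-contractions), $\norm{\nabla\widehat{W}}_{BC_{\tr}(\R^{*d'},\mathscr{M}^1)} \le \norm{\nabla_{\mathbf{x}'} W}_{BC_{\tr}} \le C_1$, so one may take $C_1' = C_1$.

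For the first and second derivatives of $\nabla\widehat{W}$, differentiate the stochastic representation
\[
[\mathbb{E}_{\mathbf{x},V}\mathbf{f}]^{\cA,\tau}(\mathbf{X}') = \lim_{t\to\infty} E_{\cA}\bigl[\mathbf{f}^{\cA*\cB,\tau*\sigma}(\mathcal{X}(0,\mathbf{X}',2t),\mathbf{X}')\bigr]
\]
in $\mathbf{X}'$, using the chain rule of Proposition~\ref{prop:chainrule2} and the fact (Proposition~\ref{prop:kernelprojection}) that the limit holds in $C_{\tr,\app}^\infty$. The spatial derivatives $\partial_{\mathbf{x}'}^{k}\mathcal{X}(\cdot,t)$ satisfy the linear ODEs obtained exactly as in the proof of Lemma~\ref{lem:processCinfinity}; since $\nabla_{\mathbf{x}}V \in \mathscr{J}_{C_1,1-C_2}^{d,d'}$, its $\mathbf{x}$-derivative stays within $C_2$ of the identity, so the relevant Gr\"onwall estimate retains the decaying factor $e^{-(1-C_2)t/2}$ and gives, in the limit $t \to \infty$, a bound on $\norm{\partial_{\mathbf{x}'}\mathcal{X}(\cdot,t)}_{BC_{\tr,\mathcal{S}}}$ of order $\tfrac{C_2}{1-C_2}$, with analogous bounds at higher order controlled by $\norm{\partial\nabla W}_{BC_{\tr}} \le C_2$ and $\norm{\partial^2\nabla W}_{BC_{\tr}} \le C_3$ in the forcing terms. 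Feeding these through the chain rule for $\nabla\widehat{W} = \mathbb{E}_{\mathbf{x},V}[\nabla_{\mathbf{x}'} W]$, together with the bounds on $\partial\nabla_{\mathbf{x}'}W$ and $\partial^2\nabla_{\mathbf{x}'}W$, yields $\norm{\partial\nabla\widehat{W}}_{BC_{\tr}} \le C_2' = \frac{C_2(1+C_2)}{1-C_2}$ and $\norm{\partial^2\nabla\widehat{W}}_{BC_{\tr}} \le C_3'$ for a suitable $C_3'$ depending only on $C_1, C_2, C_3$.

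The main obstacle is this last step: organizing the Gr\"onwall estimates for $\partial_{\mathbf{x}'}\mathcal{X}$ and $\partial_{\mathbf{x}'}^2\mathcal{X}$ so that the resulting bounds are uniform in the radius $R$ — which is precisely where one needs $\nabla W \in BC_{\tr}^3$ (cf.\ Remark~\ref{rem:boundedcoefficients}), so that the constants $C_{k,\mathbf{J},R}$ stay bounded as $R \to \infty$ — and tracking the various contributions through the chain rule tightly enough to land the stated value of $C_2'$. A minor bookkeeping point is to keep everything in the Connes-approximable category: one reads $\mathscr{V}_{d',C_1',C_2',C_3'}$ with $C_{\tr,\app}$ in place of $C_{\tr}$, and uses that the expectation operators $\mathbb{E}_{\mathbf{x},\mathbf{J}}$ and the construction of $\mathcal{X}$ restrict to $\mathbb{W}_{\app}$ because free products with a free Brownian motion preserve Connes-embeddability.
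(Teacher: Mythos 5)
Your proposal follows the same route as the paper at every stage: existence and uniqueness of $\widehat{V}$ via $\nabla \widehat{V}^{(N)} = \mathbb{E}_{\mathbf{x},V^{(N)}}[\nabla_{\mathbf{x}'}V^{(N)}] \rightsquigarrow \mathbb{E}_{\mathbf{x},V}[\nabla_{\mathbf{x}'}V]$ (Remark \ref{rem:canonicalapproximant}, Theorem \ref{thm:classicalconditionalexpectation}, Lemma \ref{lem:asymptoticgradient}), the reduction $\nabla\widehat{W} = \mathbb{E}_{\mathbf{x},V}[\nabla_{\mathbf{x}'}W]$, and $C_1' = C_1$ by contractivity of the semigroup/conditional expectation; those parts are complete and match the paper. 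The one genuine shortfall is the step you yourself flag as "the main obstacle": you never derive the stated value $C_2' = \tfrac{C_2(1+C_2)}{1-C_2}$, only assert that the chain rule "yields" it from a bound on $\partial_{\mathbf{x}'}\mathcal{X}$ "of order $\tfrac{C_2}{1-C_2}$". Since the proposition claims a specific constant, that derivation is the content of the second half and cannot be left as an obstacle. Note also that your worry about uniformity in $R$ (and the appeal to Remark \ref{rem:boundedcoefficients} and $\nabla W \in BC_{\tr}^3$) is misplaced for $C_2'$: every quantity entering the estimate is already controlled by the global bounds $\norm{\partial \nabla W} \le C_2$, $\norm{\partial^2 \nabla W} \le C_3$, so no $R$-dependent constants appear; that remark is only relevant if one wants to make $C_3'$ explicit, and the paper itself leaves $C_3'$ as "similar reasoning."

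The missing computation is short. Since $\mathcal{X}(\cdot,0) = \mathbf{X}$ does not depend on $\mathbf{x}'$, the integral equation for $\partial_{\mathbf{x}'}\mathcal{X}$ has zero initial datum and forcing term $\partial_{\mathbf{x}'}\nabla_{\mathbf{x}}V(\mathcal{X},\pi')\#\Pi' = \partial_{\mathbf{x}'}\nabla_{\mathbf{x}}W(\mathcal{X},\pi')\#\Pi'$, whose norm is at most $C_2$; applying Lemma \ref{lem:niceGronwall} with $c = 1-C_2$ (as in the base case of Lemma \ref{lem:processCinfinity}) gives a bound on $\norm{\partial_{\mathbf{x}'}\mathcal{X}(\cdot,t)}$ whose limit as $t \to \infty$ is $\tfrac{2C_2}{1-C_2}$ in the paper's normalization. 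Then the chain rule splits $\partial_{\mathbf{x}'}\bigl[\nabla_{\mathbf{x}'}W(\mathcal{X}(\cdot,2t),\pi')\bigr]$ into $\partial_{\mathbf{x}}\nabla_{\mathbf{x}'}W(\mathcal{X},\pi')\#\partial_{\mathbf{x}'}\mathcal{X}$ and $\partial_{\mathbf{x}'}\nabla_{\mathbf{x}'}W(\mathcal{X},\pi')\#\Pi'$, bounded respectively by $C_2 \cdot \tfrac{2C_2}{1-C_2}$ and $C_2$; passing through the conditional expectation (Lemma \ref{lem:Ckconditionalexpectation}) and letting $t \to \infty$ gives $\norm{\partial\, \mathbb{E}_{\mathbf{x},V}[\nabla_{\mathbf{x}'}W]}_{BC_{\tr}(\R^{*d'},\mathscr{M}^1)} \le \tfrac{2C_2}{1-C_2}C_2 + C_2 = \tfrac{C_2(1+C_2)}{1-C_2}$, exactly the stated $C_2'$. (If you instead track the factors of $\tfrac12$ in the drift and get your "order $\tfrac{C_2}{1-C_2}$" coefficient, the same two-term estimate gives $\tfrac{C_2}{1-C_2} \le C_2'$, so the conclusion still holds; but to produce the displayed constant you must run this bookkeeping explicitly rather than defer it.)
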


\begin{proof}
	Let $V^{(N)} = V^{M_N(\C),\tr_N}$, so that $\nabla V^{(N)} \rightsquigarrow \nabla V$.  By Theorem \ref{thm:classicalconditionalexpectation} and Remark \ref{rem:canonicalapproximant}, we have
	\[
	\mathbb{E}_{\mathbf{x},V^{(N)}}[\nabla_{\mathbf{x}'} V^{(N)}] \rightsquigarrow \mathbb{E}_{\mathbf{x},V}[\nabla_{\mathbf{x}'} V]
	\]
	We know that $\mathbb{E}_{\mathbf{x},V^{(N)}}[\nabla_{\mathbf{x}'} V^{(N)}] = \nabla \widehat{V}^{(N)}$ for the function $\widehat{V}^{(N)}$ discussed above.  Hence, by Lemma \ref{lem:asymptoticgradient}, there exists $\widehat{V} \in C_{\tr}^\infty$ with $\nabla \widehat{V} = \mathbb{E}_{\mathbf{x},V}[\nabla_{\mathbf{x}'} V]$, which is unique up to an additive constant.
	
	Next, we must show that $\widehat{V} \in \mathscr{V}_{d',C_1',C_2',C_3'}$.  Let $W = V - (1/2) \ip{\mathbf{x},\mathbf{x}}_{\tr} - (1/2) \ip{\mathbf{x}',\mathbf{x}'}_{\tr}$ and $\widehat{W} = V - (1/2) \ip{\mathbf{x}',\mathbf{x}'}_{\tr}$.  Note that $\nabla_{\mathbf{x}'} V(\mathbf{x},\mathbf{x}') = \mathbf{x}' + \nabla_{\mathbf{x}'} W(\mathbf{x},\mathbf{x}')$ and $\nabla \widehat{V}(\mathbf{x}') = \mathbf{x}' + \nabla \widehat{W}(\mathbf{x}')$.  Thus, since $\mathbb{E}_{\mathbf{x},V}[\mathbf{x}'] = \mathbf{x}'$, we have $\nabla \widehat{W} = \mathbb{E}_{\mathbf{x},V}[\nabla_{\mathbf{x}'} W]$.
	
	Now recall that $e^{tL_{\mathbf{x},V}} \mathbf{f}$ is obtained as a conditional expectation of the function $\mathbf{f}(\mathcal{X},\pi')$, and hence
	\[
	\norm{e^{tL_{\mathbf{x},V}} \nabla_{\mathbf{x}'} W}_{BC_{\tr,\app}(\R^{*(d+d')})^{d'}} \leq \norm{\nabla_{\mathbf{x}'} W}_{BC_{\tr,\app}(\R^{*(d+d')})^{d'}}.
	\]
	Taking $t \to \infty$, we get $\norm{\nabla \widehat{W}}_{BC_{\tr,\app}(\R^{*(d+d')})^{d'}} \leq C_1$.
	
	Next, recall that the process $\mathcal{X}$ from \S \ref{sec:pseudoinverse} satisfies
	\[
	\partial_{\mathbf{x}'} \mathcal{X}(\cdot,t) = \int_0^t [\partial_{\mathbf{x}} \nabla_{\mathbf{x}} V(\mathcal{X}(\cdot,u),\pi') \# \partial_{\mathbf{x}'} \mathcal{X} + \partial_{\mathbf{x}'} \nabla_{\mathbf{x}} V(\mathcal{X}(\cdot,u),\pi')] \,du.
	\]
	In the proof of the base case of Lemma \ref{lem:processCinfinity}, we applied Lemma \ref{lem:niceGronwall} to get a bound for this function.  The $c$ from that proof is here $1 - C_2$ and the constant $C_{1,\mathbf{J}}' = C_{1,\nabla_{\mathbf{x}} W}'$ is here $C_2$.  Thus,
	\[
	\norm{\partial_{\mathbf{x}'} \mathcal{X}(\cdot,t)}_{BC_{\tr,\mathcal{S}}(\R^{*(d+d')},\mathscr{M}^1)} \leq e^{-(1 - C_2)t} \left( 1 + \frac{2C_2}{1 - C_2} (e^{(1 - C_2)t} - 1)\right).
	\]
	It follows as in the proof of Lemma \ref{lem:heatsemigroupbounds} that
	\begin{multline*}
		\norm{\partial_{\mathbf{x}'} e^{tL_{\mathbf{x},V}}\nabla_{\mathbf{x}'} W}_{BC_{\tr}(\R^{*(d+d')},\mathscr{M}^1)} \\
		\leq e^{-(1 - C_2)t/2} \left( 1 + \frac{2C_2}{1 - C_2} (e^{(1 - C_2)t/2} - 1)\right) \norm{\partial_{\mathbf{x}} \nabla_{\mathbf{x}'} W}_{BC_{\tr}(\R^{*(d+d')},\mathscr{M}^1)} + \norm{\partial_{\mathbf{x}'} \nabla_{\mathbf{x}'} W}_{BC_{\tr}(\R^{*(d+d')},\mathscr{M}^1)}.
	\end{multline*}
	Taking $t \to \infty$, we obtain
	\[
	\norm{\partial_{\mathbf{x}'} \mathbb{E}_{\mathbf{x},V} \nabla_{\mathbf{x}'} W}_{BC_{\tr}(\R^{*d'},\mathscr{M}^1)} \leq \frac{2C_2}{1 - C_2} \cdot C_2 + C_2 = \frac{C_2(1 + C_2)}{1 - C_2}.
	\]
	The existence of $C_3'$ follows by similar reasoning, which we leave as an exercise.
\end{proof}

\begin{proposition} \label{prop:iteratedCE}
	Consider variables $\mathbf{x}$, $\mathbf{x}'$, $\mathbf{x}''$ which are $d$, $d'$, and $d''$-tuples respectively.  Let $V \in \mathscr{V}_{d+d'+d'',C_1,C_2,C_3}$ for some $C_2 < \sqrt{2} - 1$.  Let $\widehat{V}$ be the marginal potential for $(\mathbf{x}',\mathbf{x}'')$.  Then
	\[
	\mathbb{E}_{\mathbf{x}',\widehat{V}} \circ  \mathbb{E}_{\mathbf{x},V}[\mathbf{f}] = \mathbb{E}_{(\mathbf{x},\mathbf{x}'),V}[\mathbf{f}]
	\]
	for $\mathbf{f} \in C_{\tr,\app}(\R^{*(d+d'+d'')},\mathscr{M}(\R^{*d_1},\dots,\R^{*d_\ell}))^{d^*}$.
\end{proposition}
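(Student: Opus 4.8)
The plan is to run everything through the random matrix models and the tower property of classical conditional expectation, exactly in the spirit of the proofs of Theorem~\ref{thm:classicalconditionalexpectation} and Proposition~\ref{prop:marginalpotential}.

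First I would set up reductions. By Remark~\ref{rem:uniquelimit} and Lemma~\ref{lem:matrixnorm}, both sides of the claimed identity are uniquely determined by their restrictions to matrix tuples, and each is an $\rightsquigarrow$-limit of explicit matrix quantities, so it suffices to exhibit asymptotic approximants that coincide for every $N$. Moreover $\mathbb{E}_{\mathbf{x},V}$, $\mathbb{E}_{\mathbf{x}',\widehat{V}}$ and $\mathbb{E}_{(\mathbf{x},\mathbf{x}'),V}$ are continuous operators on the relevant $C_{\tr,\app}$ spaces by Proposition~\ref{prop:kernelprojection}, so by density of trace polynomials it is enough to treat $\mathbf{f}$ a trace polynomial — which also sidesteps any growth condition on $\mathbf{f}$ itself. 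I would also record at the outset that $C_2 < \sqrt 2 - 1$ is exactly the inequality $C_2(1+C_2)/(1-C_2) < 1$; hence by Proposition~\ref{prop:marginalpotential} one has $\widehat{V} \in \mathscr{V}_{d'+d'',C_1',C_2',C_3'}$ with $C_2' < 1$, so that $\nabla_{\mathbf{x}'}\widehat{V} \in \mathscr{J}^{d',d''}_{C_1',1-C_2'}$, the operator $\mathbb{E}_{\mathbf{x}',\widehat{V}}$ is defined, and Theorem~\ref{thm:classicalconditionalexpectation} is applicable to $\widehat{V}$. (Likewise $\nabla_{\mathbf{x}}V$ and $\nabla_{(\mathbf{x},\mathbf{x}')}V$ lie in the appropriate $\mathscr{J}$ classes since $C_2 < 1$, so $\mathbb{E}_{\mathbf{x},V}$ and $\mathbb{E}_{(\mathbf{x},\mathbf{x}'),V}$ are defined.)

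Next, fix a trace polynomial $\mathbf{f}$, put $V^{(N)} = V^{M_N(\C),\tr_N}$, let $\widehat{V}^{(N)}(\mathbf{X}',\mathbf{X}'') = -\tfrac{1}{N^2}\log\int e^{-N^2 V^{(N)}(\mathbf{X},\mathbf{X}',\mathbf{X}'')}\,d\mathbf{X}$ be the genuine matricial marginal potential for $(\mathbf{x}',\mathbf{x}'')$, and set $\mathbf{g}^{(N)} = \mathbb{E}_{\mathbf{x},V^{(N)}}[\mathbf{f}^{M_N(\C),\tr_N}]$. The classical tower property — using that, under $\mu_{V^{(N)}}$, the conditional law of $(\mathbf{x},\mathbf{x}')$ given $\mathbf{x}''$ factors as the conditional law of $\mathbf{x}$ given $(\mathbf{x}',\mathbf{x}'')$ followed by the conditional law of $\mathbf{x}'$ given $\mathbf{x}''$, and that the latter is precisely $\mu_{\widehat{V}^{(N)}}(\mathbf{x}'\mid\mathbf{x}'')$ — gives the exact identity
\[
\mathbb{E}_{\mathbf{x}',\widehat{V}^{(N)}}\big[\mathbf{g}^{(N)}\big] = \mathbb{E}_{(\mathbf{x},\mathbf{x}'),V^{(N)}}\big[\mathbf{f}^{M_N(\C),\tr_N}\big]
\]
for every $N$. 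I would then pass to the limit in both sides. On the right, Theorem~\ref{thm:classicalconditionalexpectation} applied with the ``inner'' variables relabeled to be $(\mathbf{x},\mathbf{x}')$ (and Remark~\ref{rem:canonicalapproximant} for its hypotheses) gives $\mathbb{E}_{(\mathbf{x},\mathbf{x}'),V^{(N)}}[\mathbf{f}^{M_N(\C),\tr_N}] \rightsquigarrow \mathbb{E}_{(\mathbf{x},\mathbf{x}'),V}[\mathbf{f}]$. On the left, the same theorem with inner variables $\mathbf{x}$ gives first $\mathbf{g}^{(N)} \rightsquigarrow \mathbf{g} := \mathbb{E}_{\mathbf{x},V}[\mathbf{f}] \in C_{\tr,\app}^\infty$, and then — applied a second time, now with potential $\widehat{V}^{(N)}$ and function $\mathbf{g}^{(N)}$ — gives $\mathbb{E}_{\mathbf{x}',\widehat{V}^{(N)}}[\mathbf{g}^{(N)}] \rightsquigarrow \mathbb{E}_{\mathbf{x}',\widehat{V}}[\mathbf{g}]$. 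Comparing the limits of the two sides of the displayed identity and invoking uniqueness of $\rightsquigarrow$-limits yields $\mathbb{E}_{\mathbf{x}',\widehat{V}}\circ\mathbb{E}_{\mathbf{x},V}[\mathbf{f}] = \mathbb{E}_{(\mathbf{x},\mathbf{x}'),V}[\mathbf{f}]$ for trace polynomials $\mathbf{f}$, and hence for all $\mathbf{f}$ by density and continuity.

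The hard part will be the second application of Theorem~\ref{thm:classicalconditionalexpectation}, i.e.\ checking that the matricial marginal $\widehat{V}^{(N)}$ and the function $\mathbf{g}^{(N)}$ satisfy its hypotheses uniformly in $N$. Unitary invariance of $\widehat{V}^{(N)}$ is immediate (the marginal of a unitarily invariant density is unitarily invariant), and $\widehat{V}^{(N)} \in C^1$ with $\nabla \widehat{V}^{(N)} \rightsquigarrow \nabla \widehat{V}$ is exactly what is shown in the proof of Proposition~\ref{prop:marginalpotential} via $\nabla \widehat{V}^{(N)} = \mathbb{E}_{\mathbf{x},V^{(N)}}[\nabla_{(\mathbf{x}',\mathbf{x}'')}V^{(N)}]$. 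The substantive point is the uniform convexity/concavity hypothesis: differentiating under the integral gives $\operatorname{Hess}\widehat{V}^{(N)} = \mathbb{E}_{\mathbf{x}\mid(\mathbf{x}',\mathbf{x}'')}[\operatorname{Hess}_{(\mathbf{x}',\mathbf{x}'')}V^{(N)}] - \operatorname{Cov}_{\mathbf{x}\mid(\mathbf{x}',\mathbf{x}'')}(\nabla_{(\mathbf{x}',\mathbf{x}'')}V^{(N)})$; the covariance term is $\succeq 0$ and $\operatorname{Hess}_{(\mathbf{x}',\mathbf{x}'')}V^{(N)}$ is a principal submatrix of $\operatorname{Hess}V^{(N)} \preceq (1+C_2)I$, whence $\operatorname{Hess}\widehat{V}^{(N)} \preceq (1+C_2)I$; meanwhile the Brascamp--Lieb inequality bounds the covariance by $\mathbb{E}$ of the appropriate Schur complement of $\operatorname{Hess}V^{(N)}$, and since $\operatorname{Hess}V^{(N)} \succeq (1-C_2)I$ every such Schur complement is $\succeq (1-C_2)I$, so $\operatorname{Hess}\widehat{V}^{(N)} \succeq (1-C_2)I$ — the lower bound is also just Pr\'ekopa's theorem on log-concavity of marginals. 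Finally, the growth bound on $\mathbf{g}^{(N)}$ required by Theorem~\ref{thm:classicalconditionalexpectation} follows because $\mathbf{f}$ is a trace polynomial, so $\|\mathbf{f}^{M_N(\C),\tr_N}\|$ is polynomially bounded, and $\mu_{V^{(N)}}(\cdot\mid\mathbf{x}',\mathbf{x}'')$ is log-concave with Hessian $\succeq (1-C_2)N^2 I$, so the concentration estimates underlying Lemma~\ref{lem:operatornormtailbound} (as in \cite{JekelThesis}) control $\mathbb{E}_{\mathbf{x}\mid(\mathbf{x}',\mathbf{x}'')}[e^{K\|\mathbf{X}\|_\infty}]$ uniformly in $N$ on balls in $(\mathbf{x}',\mathbf{x}'')$, giving the control on $\mathbf{g}^{(N)}$ needed to feed it into the theorem.
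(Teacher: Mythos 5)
Your argument is essentially the paper's: pass to the matrix models $V^{(N)}$, introduce the matricial marginal $\widehat{V}^{(N)}$, check that it satisfies the hypotheses of Theorem \ref{thm:classicalconditionalexpectation} (unitary invariance, $\nabla\widehat{V}^{(N)}\rightsquigarrow\nabla\widehat{V}$ from Proposition \ref{prop:marginalpotential}, and uniform convexity/semi-concavity with the same constants $c=1-C_2$, $C=1+C_2$ via Brascamp--Lieb/Pr\'ekopa), use the classical tower property exactly at finite $N$, and pass to the limit three times, with $C_2<\sqrt2-1$ entering only to guarantee $C_2'=C_2(1+C_2)/(1-C_2)<1$ so that $\mathbb{E}_{\mathbf{x}',\widehat V}$ exists and the theorem applies to $\widehat V$. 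All of that matches the paper, and your Hessian/Schur-complement justification of the convexity bounds is the content of the citation to \cite{BL1976} there.

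The one place you diverge is the choice of dense class, and it is the one place your write-up does not quite close. The paper reduces (via continuity of the expectation operators) to \emph{bounded} $\mathbf{f}$, so that after the first application of Theorem \ref{thm:classicalconditionalexpectation} the functions $\mathbf{g}^{(N)}=\mathbb{E}_{\mathbf{x},V^{(N)}}[\mathbf{f}^{(N)}]$ are uniformly bounded and the growth hypothesis for the second application is automatic. You reduce to trace polynomials instead, and then you must verify the hypothesis $\norm{\mathbf{g}^{(N)}(\mathbf{X}',\mathbf{X}'')}\le K_1e^{K_2\norm{(\mathbf{X}',\mathbf{X}'')}_\infty}$ for \emph{all} $(\mathbf{X}',\mathbf{X}'')$, uniformly in $N$; controlling $\mathbb{E}_{\mathbf{x}\mid(\mathbf{x}',\mathbf{x}'')}[e^{K\norm{\mathbf{X}}_\infty}]$ ``uniformly on balls in $(\mathbf{x}',\mathbf{x}'')$,'' as you state it, is strictly weaker than what the theorem asks for. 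This is repairable rather than fatal: since $\nabla_{\mathbf{x}}V=\mathbf{x}+\nabla_{\mathbf{x}}W$ with $\norm{\nabla W}_{BC_{\tr}}\le C_1$, the conditional minimizer in $\mathbf{x}$ has operator norm at most $C_1$ \emph{independently} of $(\mathbf{X}',\mathbf{X}'')$, and the $(1-C_2)N^2$-uniform log-concavity plus the $\epsilon$-net concentration argument behind Lemma \ref{lem:operatornormtailbound} then gives a bound on conditional moments of $\norm{\mathbf{X}}_\infty$ that is uniform in $(\mathbf{X}',\mathbf{X}'')$ and $N$, whence $\mathbf{g}^{(N)}$ has polynomial (hence exponential) growth globally. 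Alternatively, simply switch your dense class to bounded functions as the paper does and this extra step disappears.
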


	\begin{proof}
	By Proposition \ref{prop:Laplaciancontinuity}, it suffices to prove the relation for $\mathbf{f}$ in a dense subset of
	\[
	C_{\tr,\app}(\R^{*(d+d'+d'')}, \mathscr{M}(\R^{*d_1}, \dots, \R^{*d_\ell}))^{d^*}.
	\]
	In particular, we may restrict our attention to bounded $\mathbf{f}$.
	
	Let $V^{(N)} = V^{M_N(\C),\tr_N}$ which satisfies the assumptions of Theorem \ref{thm:classicalconditionalexpectation} with $c = 1 - C_2$ and $C = 1 + C_2$.  Let $\widehat{V}^{(N)}$ be the marginal potential for $(\mathbf{X}',\mathbf{X}'')$, which satisfies
	\[
	\nabla \widehat{V}^{(N)} = \mathbb{E}_{\mathbf{x},V^{(N)}}[\nabla_{\mathbf{x}',\mathbf{x}''} V^{(N)}].
	\]
	By Theorem \ref{thm:classicalconditionalexpectation},
	\[
	\nabla \widehat{V}^{(N)} \rightsquigarrow \nabla \widehat{V}.
	\]
	By Proposition \ref{prop:marginalpotential}, $\widehat{V} \in \mathscr{V}_{d'+d'',C_1',C_2',C_3'}$ with $C_2' = C_2(1 + C_2) / (1 - C_2)$.  Note that $C_2' < 1$ provided that $C_2 < \sqrt{2} - 1$.
	
	It follows from the work of Brascamp and Lieb \cite[Theorem 4.3]{BL1976} that $\widehat{V}^{(N)}(\mathbf{X}) - (c/2) \norm{\mathbf{X}}_2^2$ is convex and $\widehat{V}^{(N)}(\mathbf{X}) - (C/2) \norm{\mathbf{X}}_2^2$ is concave for the same constants $c$ and $C$ that worked for $V^{(N)}$.  (Note equation (4.18) of \cite{BL1976} should read $D = A - BC^{-1}B^*$.  Of course, if the block $2 \times 2$ matrix is a constant multiple of the identity, then the Schur complement matrix $D$ is the same scalar multiple of the appropriately sized identity matrix.)  Overall, we conclude that $\widehat{V}^{(N)}$ and $\widehat{V}$ also satisfy the hypotheses of Theorem \ref{thm:classicalconditionalexpectation}.
	
	Now let $\mathbf{f}^{(N)} = \mathbf{f}^{M_N(\C),\tr_N}$.  Then by Theorem \ref{thm:classicalconditionalexpectation} applied to $V$ and $V^{(N)}$, we have
	\[
	\mathbb{E}_{\mathbf{x},V^{(N)}}[\mathbf{f}^{(N)}] \rightsquigarrow \mathbb{E}_{\mathbf{x},V}[\mathbf{f}].
	\]
	Note that these functions are uniformly bounded because we assumed $\mathbf{f}$ was bounded.  By Theorem \ref{thm:classicalconditionalexpectation} applied to $\widehat{V}$ and $\widehat{V}^{(N)}$, we have
	\[
	\mathbb{E}_{\mathbf{x},\widehat{V}^{(N)}} \circ \mathbb{E}_{\mathbf{x}',V^{(N)}}[\mathbf{f}^{(N)}] \rightsquigarrow \mathbb{E}_{\mathbf{x},\widehat{V}} \circ \mathbb{E}_{\mathbf{x},V}[\mathbf{f}].
	\]
	From the well-known properties of classical conditional expectations,
	\[
	\mathbb{E}_{\mathbf{x},\widehat{V}^{(N)}} \circ \mathbb{E}_{\mathbf{x}',V^{(N)}}[\mathbf{f}^{(N)}] = \mathbb{E}_{(\mathbf{x},\mathbf{x}'),V^{(N)}}[\mathbf{f}^{(N)}].
	\]
	By another application of Theorem \ref{thm:classicalconditionalexpectation},
	\[
	\mathbb{E}_{(\mathbf{x},\mathbf{x}'),V^{(N)}}[\mathbf{f}^{(N)}] \rightsquigarrow \mathbb{E}_{(\mathbf{x},\mathbf{x}'),V}[\mathbf{f}].
	\]
	Therefore, $\mathbb{E}_{\mathbf{x},\widehat{V}} \circ \mathbb{E}_{\mathbf{x},V}[\mathbf{f}] = \mathbb{E}_{(\mathbf{x},\mathbf{x}'),V}[\mathbf{f}]$ as desired.
\end{proof}

As a corollary, in the situation of the previous proposition, we will get the same answer for the marginal potential for $\mathbf{x}''$ whether we compute it from $V$ or from $\widehat{V}$.  There is a variant of the previous proposition that does not explicitly refer to $\widehat{V}$ and hence works whenever $C_2 < 1$.

\begin{proposition} \label{prop:iteratedCE2}
	Consider variables $\mathbf{x}$, $\mathbf{x}'$, $\mathbf{x}''$ which are $d$, $d'$, and $d''$-tuples respectively.  Fix $\ell \geq 0$ and $d_1$, \dots, $d_\ell \in \N$.  Let $\iota$ be the canonical inclusion map
	\[
	\iota: C_{\tr,\app}(\R^{*(d'+d'')},\mathscr{M}(\R^{*d_1},\dots,\R^{*d_\ell})) \to C_{\tr,\app}(\R^{*(d'+d'')},\mathscr{M}(\R^{*d_1},\dots,\R^{*d_\ell})),
	\]
	obtained by viewing a function of $(\mathbf{x}',\mathbf{x}'')$ as a function of $(\mathbf{x},\mathbf{x}',\mathbf{x}'')$.  Let $V \in \mathscr{V}_{d+d'+d'',C_1,C_2,C_3}$ for some $C_2 < 1$. Then
	\[
	\mathbb{E}_{(\mathbf{x},\mathbf{x}'),V} \circ \iota \circ \mathbb{E}_{\mathbf{x},V}[\mathbf{f}] = \mathbb{E}_{(\mathbf{x},\mathbf{x}'),V}[\mathbf{f}]
	\]
	for $\mathbf{f} \in C_{\tr,\app}(\R^{*(d+d'+d'')},\mathscr{M}(\R^{(*d_1},\dots,\R^{*d_\ell}))^{d_2}$.
\end{proposition}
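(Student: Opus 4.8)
The plan is to mirror the proof of Proposition~\ref{prop:iteratedCE}, using the matrix-model description of the conditional expectation maps provided by Theorem~\ref{thm:classicalconditionalexpectation} together with the tower property of classical conditional expectations. The advantage over Proposition~\ref{prop:iteratedCE} is that here we never need to form the marginal potential $\widehat{V}$, so no Brascamp--Lieb convexity estimate is invoked and only the hypothesis $C_2 < 1$ is required.

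First I would reduce to the case of bounded $\mathbf{f}$. The maps $\iota$, $\mathbb{E}_{\mathbf{x},V}$, and $\mathbb{E}_{(\mathbf{x},\mathbf{x}'),V}$ are all continuous on the relevant $C_{\tr,\app}$ spaces (Proposition~\ref{prop:kernelprojection} and Proposition~\ref{prop:Laplaciancontinuity}; $\iota$ is the canonical continuous inclusion obtained by viewing a function of $(\mathbf{x}',\mathbf{x}'')$ as one of $(\mathbf{x},\mathbf{x}',\mathbf{x}'')$), and trace polynomials precomposed with smooth cut-off functions in $(\mathbf{x},\mathbf{x}',\mathbf{x}'')$ form a dense set of bounded functions, so it suffices to prove the identity for such $\mathbf{f}$.

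Next, set $V^{(N)} = V^{M_N(\C),\tr_N}$, which satisfies hypotheses (1)--(3) of Theorem~\ref{thm:classicalconditionalexpectation} with $c = 1 - C_2$ and $C = 1 + C_2$ by Remark~\ref{rem:canonicalapproximant}, and put $\mathbf{f}^{(N)} = \mathbf{f}^{M_N(\C),\tr_N}$, which is bounded uniformly in $N$. Applying Theorem~\ref{thm:classicalconditionalexpectation} with $\mathbf{x}$ as the distinguished block of variables gives $\mathbb{E}_{\mathbf{x},V^{(N)}}[\mathbf{f}^{(N)}] \rightsquigarrow \mathbb{E}_{\mathbf{x},V}[\mathbf{f}]$. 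On the matrix side, $\mathbb{E}_{\mathbf{x},V^{(N)}}[\mathbf{f}^{(N)}]$ is literally the classical conditional expectation of $\mathbf{f}^{(N)}$ given $(\mathbf{X}',\mathbf{X}'')$ under $\mu_{V^{(N)}}$, and it is bounded by $\norm{\mathbf{f}}_{BC_{\tr}(\R^{*(d+d'+d'')},\mathscr{M}^\ell)^{d^*}}$ independently of $N$; viewing it as a function of $(\mathbf{X},\mathbf{X}',\mathbf{X}'')$ constant in $\mathbf{X}$, it defines an ($N$-dependent) sequence asymptotic to $\iota(\mathbb{E}_{\mathbf{x},V}[\mathbf{f}])$ that obeys the exponential growth bound of Theorem~\ref{thm:classicalconditionalexpectation} trivially. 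A second application of Theorem~\ref{thm:classicalconditionalexpectation}, now with $(\mathbf{x},\mathbf{x}')$ as the distinguished block, yields
\[
\mathbb{E}_{(\mathbf{x},\mathbf{x}'),V^{(N)}}\bigl[\iota(\mathbb{E}_{\mathbf{x},V^{(N)}}[\mathbf{f}^{(N)}])\bigr] \rightsquigarrow \mathbb{E}_{(\mathbf{x},\mathbf{x}'),V}\bigl[\iota(\mathbb{E}_{\mathbf{x},V}[\mathbf{f}])\bigr].
\]
By the tower property of classical conditional expectations under $\mu_{V^{(N)}}$ --- conditioning on $\mathbf{X}''$ of a conditional expectation given $(\mathbf{X}',\mathbf{X}'')$ reproduces the conditional expectation given $\mathbf{X}''$ --- the left-hand side equals $\mathbb{E}_{(\mathbf{x},\mathbf{x}'),V^{(N)}}[\mathbf{f}^{(N)}]$, which by a third application of Theorem~\ref{thm:classicalconditionalexpectation} is asymptotic to $\mathbb{E}_{(\mathbf{x},\mathbf{x}'),V}[\mathbf{f}]$. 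Since an element of $C_{\tr,\app}$ is determined by an asymptotic matrix sequence (Remark~\ref{rem:uniquelimit}), we conclude $\mathbb{E}_{(\mathbf{x},\mathbf{x}'),V} \circ \iota \circ \mathbb{E}_{\mathbf{x},V}[\mathbf{f}] = \mathbb{E}_{(\mathbf{x},\mathbf{x}'),V}[\mathbf{f}]$ for bounded $\mathbf{f}$, hence for all $\mathbf{f}$ by the first step.

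The main obstacle is bookkeeping rather than anything conceptual: one must verify that Theorem~\ref{thm:classicalconditionalexpectation} is being applied to admissible data at each stage, in particular that the genuinely $N$-dependent sequence $\iota(\mathbb{E}_{\mathbf{x},V^{(N)}}[\mathbf{f}^{(N)}])$ (not a fixed non-commutative function evaluated on matrices) is asymptotic to the expected limit with the required growth, and that relabelling $(\mathbf{x},\mathbf{x}')$ as the distinguished block of variables in the theorem is legitimate. The identification of $\mathbb{E}_{\mathbf{x},V^{(N)}}$ and $\mathbb{E}_{(\mathbf{x},\mathbf{x}'),V^{(N)}}$ with honest probabilistic conditional expectations under $\mu_{V^{(N)}}$, which is what makes the tower property available, should be immediate from the definitions.
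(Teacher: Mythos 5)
Your proposal is correct and follows exactly the route the paper sketches (and leaves to the reader): approximate with $V^{(N)} = V^{M_N(\C),\tr_N}$, invoke the tower property of the classical conditional expectations under $\mu_{V^{(N)}}$, and pass to the large-$N$ limit via Theorem~\ref{thm:classicalconditionalexpectation} together with uniqueness of asymptotic limits, after reducing to bounded $\mathbf{f}$ by density and continuity. Your observation that no marginal potential $\widehat{V}$ (hence no Brascamp--Lieb estimate) is needed, so $C_2<1$ suffices, matches the reason the paper states this variant separately from Proposition~\ref{prop:iteratedCE}.
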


The proof of the proposition is similar to the previous one.  Use the fact that the analogous result holds for the classical conditional expectation maps associated to $V^{(N)}$ and then take the large $N$ limit using Theorem \ref{thm:classicalconditionalexpectation}.  We leave the details to the reader.

The next proposition relates the map $\mathbb{E}_{\mathbf{x},V}$ to $\mathrm{W}^*$-algebraic conditional expectations.  This result is similar to \cite[Theorem 15.1.7]{JekelThesis}.  The only difference is that we have a smaller space of non-commutative functions, and hence we are able to make conclusions about the $\mathrm{C}^*$-algebras, not only the $\mathrm{W}^*$-algebras.

\begin{proposition}
	Let $V \in \mathscr{V}_{d+d',C_1,C_2,C_3}$ where $C_2 < \sqrt{2} - 1$.  Let $(\cA,\tau)$ be a tracial $\mathrm{W}^*$-algebra with self-adjoint generators $(\mathbf{X},\mathbf{X}')$ satisfying
	\[
	\tau(f^{\cA,\tau}(\mathbf{X},\mathbf{X}')) = \mathbb{E}_V[\tr(f(\mathbf{X},\mathbf{X}'))] \text{ for } f \in C_{\tr,\app}(\R^{*(d+d')}).
	\]
	Then we have
	\[
	E_{\mathrm{W}^*(\mathbf{X}')}[f^{\cA,\tau}(\mathbf{X},\mathbf{X}')] = (\mathbb{E}_{\mathbf{x},V}[f])^{\cA,\tau}(\mathbf{X}') \text{ for } f \in C_{\tr,\app}(\R^{*(d+d')}),
	\]
	where $\mathrm{W}^*(\mathbf{X}')$ is the $\mathrm{W}^*$-subalgebra of $\cA$ generated by $\mathbf{X}'$ and $E_{\mathrm{W}*(\mathbf{X}')}: \cA \to \mathrm{W}^*(\mathbf{X}')$ is the unique trace-preserving conditional expectation.  Furthermore, $E_{\mathrm{W}^*(\mathbf{X}')}$ maps $\mathrm{C}^*(\mathbf{X},\mathbf{X}')$ into $\mathrm{C}^*(\mathbf{X}')$.
\end{proposition}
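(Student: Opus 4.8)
The plan is to deduce the statement by approximating $(\mathbf{X},\mathbf{X}')$ by matrix tuples and transferring the classical conditional expectation result (Theorem \ref{thm:classicalconditionalexpectation}) through the GNS construction. First I would note that by Proposition \ref{prop:convexDSE}, the hypothesis on $\tau$ says exactly that $(\cA,\tau)$ is the GNS representation of the free Gibbs law $\mu_V$, which by Theorem \ref{thm:magicnormbound} is realized by bounded operators $(\mathbf{X},\mathbf{X}')$; in particular $\mathrm{W}^*(\mathbf{X},\mathbf{X}')=\cA$ and $\mathrm{C}^*(\mathbf{X},\mathbf{X}')$ makes sense. Since $V\in\mathscr{V}_{d+d',C_1,C_2,C_3}$ is smooth with the required derivative bounds and $C_2<\sqrt2-1<1$, the von Neumann algebra $(\cA,\tau)$ is Connes-embeddable (e.g.\ by Corollary \ref{cor:transportapplication}, since for $C_2<1$ it is isomorphic to a free semicircular algebra, which is hyperfinite-embeddable), so all the $C_{\tr,\app}$ machinery applies to it; alternatively one can work directly with matrix approximants $\mathbf{X}^{(N)}$ of $(\mathbf{X},\mathbf{X}')$ whose non-commutative laws converge to $\mu_V$, which exist by definition of the random matrix models with $V^{(N)}=V^{M_N(\C),\tr_N}$ (concentration of measure, Corollary to Proposition \ref{prop:convexDSE}, gives that $\lambda_{\mathbf{X}^{(N)}}\to\mu_V$ almost surely).

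Second I would fix $f\in C_{\tr,\app}(\R^{*(d+d')})$ and, using smooth cut-off functions as in the proof of Theorem \ref{thm:magicnormbound}, reduce to the case $f\in BC_{\tr,\app}(\R^{*(d+d')})$, which is legitimate because both sides of the desired identity only depend on the values of $f$ on an operator-norm ball containing $(\mathbf{X},\mathbf{X}')$. For such bounded $f$ I would invoke Theorem \ref{thm:classicalconditionalexpectation} with $V^{(N)}=V^{M_N(\C),\tr_N}$ and $\mathbf{f}^{(N)}=f^{M_N(\C),\tr_N}$ (the hypotheses (1)--(3) hold automatically by Remark \ref{rem:canonicalapproximant}), giving $\mathbb{E}_{\mathbf{x},V^{(N)}}[f^{M_N(\C),\tr_N}]\rightsquigarrow\mathbb{E}_{\mathbf{x},V}[f]$. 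Now I would use that, for a random matrix tuple $\mathbf{X}^{(N)}$ distributed according to $\mu_V^{(N)}$, the matricial conditional expectation operator $\mathbb{E}_{\mathbf{x},V^{(N)}}$ is precisely the classical conditional expectation of $f^{(N)}(\mathbf{X}^{(N)},(\mathbf{X}')^{(N)})$ given $(\mathbf{X}')^{(N)}$, and its composition with evaluation at $(\mathbf{X}')^{(N)}$ converges to $E_{\mathrm{W}^*(\mathbf{X}')}[f^{\cA,\tau}(\mathbf{X},\mathbf{X}')]$ by the $\mathrm{W}^*$-compatibility of conditional expectations under non-commutative law convergence (this is where one uses that the classical conditional expectation is $C$-Lipschitz in $\norm{\cdot}_2$ by Brascamp--Lieb, so it converges to the $\sigma$-WOT-continuous $E_{\mathrm{W}^*(\mathbf{X}')}$, cf.\ Lemma \ref{lem:freeCE} and the discussion preceding Lemma \ref{lem:operatornormtailbound}). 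Matching the two limits yields $E_{\mathrm{W}^*(\mathbf{X}')}[f^{\cA,\tau}(\mathbf{X},\mathbf{X}')]=(\mathbb{E}_{\mathbf{x},V}[f])^{\cA,\tau}(\mathbf{X}')$ for all bounded $f$, hence for all $f\in C_{\tr,\app}(\R^{*(d+d')})$ by the cut-off reduction.

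For the last sentence, the key point is that the $\rightsquigarrow$ convergence in Theorem \ref{thm:classicalconditionalexpectation} is \emph{uniform on operator-norm balls}, so the convergence $\mathbb{E}_{\mathbf{x},V^{(N)}}[f^{(N)}]\rightsquigarrow\mathbb{E}_{\mathbf{x},V}[f]$ takes place in $C_{\tr,\app}$ (not merely in $L^2$). Since $f^{\cA,\tau}(\mathbf{X},\mathbf{X}')\in\mathrm{C}^*(\mathbf{X},\mathbf{X}')$ whenever $f$ is a trace polynomial, and $\mathbb{E}_{\mathbf{x},V}[f]$ is then a $C_{\tr,\app}$ function of $\mathbf{x}'$ alone, its evaluation $(\mathbb{E}_{\mathbf{x},V}[f])^{\cA,\tau}(\mathbf{X}')$ lies in $\mathrm{C}^*(\mathbf{X}')$; approximating a general $f$ by trace polynomials uniformly on the ball containing $(\mathbf{X},\mathbf{X}')$ and using continuity of $\mathbb{E}_{\mathbf{x},V}$ on $C_{\tr,\app}$ (Proposition \ref{prop:kernelprojection}) shows $E_{\mathrm{W}^*(\mathbf{X}')}$ maps $\mathrm{C}^*(\mathbf{X},\mathbf{X}')$ into $\mathrm{C}^*(\mathbf{X}')$. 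I expect the main obstacle to be the careful bookkeeping in the truncation/tail-estimate step: one must control the contribution of the region $\norm{\mathbf{X}}_\infty>R'$ to the matricial conditional expectation uniformly in $\norm{\mathbf{X}'}_\infty\le R$ (Lemma \ref{lem:operatornormtailbound}) while simultaneously matching it against the $\sigma$-WOT limit giving $E_{\mathrm{W}^*(\mathbf{X}')}$, and to ensure that the law-convergence of $(\mathbf{X}^{(N)},(\mathbf{X}')^{(N)})$ to $(\mathbf{X},\mathbf{X}')$ is strong enough (convergence of joint moments together with operator-norm boundedness) to pass conditional expectations to the limit; all of this is handled in the proof of Theorem \ref{thm:classicalconditionalexpectation} and by the standard folklore recalled in \S\ref{subsec:matrixapproximation}, so the remaining work is assembling these pieces rather than proving anything genuinely new.
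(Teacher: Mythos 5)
The central step of your argument --- ``its composition with evaluation at $(\mathbf{X}')^{(N)}$ converges to $E_{\mathrm{W}^*(\mathbf{X}')}[f^{\cA,\tau}(\mathbf{X},\mathbf{X}')]$ by the $\mathrm{W}^*$-compatibility of conditional expectations under non-commutative law convergence'' --- is a genuine gap: there is no such general principle, and Lemma \ref{lem:freeCE}, which you cite, concerns conditional expectations onto a freely independent subalgebra and is irrelevant here. Convergence in non-commutative law of $(\mathbf{X}^{(N)},(\mathbf{X}')^{(N)})$ together with Lipschitzness of the classical conditional expectation does not by itself identify any limit with the trace-preserving conditional expectation onto $\mathrm{W}^*(\mathbf{X}')$. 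The only way to make that identification is to verify the defining adjointness relation: one must show that $(\mathbb{E}_{\mathbf{x},V}[f])^{\cA,\tau}(\mathbf{X}')$ lies in $\mathrm{W}^*(\mathbf{X}')$ (clear, since $\mathbb{E}_{\mathbf{x},V}[f]$ is a function of $\mathbf{x}'$ alone) and that $\tau[(\mathbb{E}_{\mathbf{x},V}[f])^{\cA,\tau}(\mathbf{X}')\, g^{\cA,\tau}(\mathbf{X}')] = \tau[f^{\cA,\tau}(\mathbf{X},\mathbf{X}')\, g^{\cA,\tau}(\mathbf{X}')]$ for $g$ ranging over a WOT-dense set in $\mathrm{W}^*(\mathbf{X}')$ (e.g.\ non-commutative polynomials in $\mathbf{x}'$). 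Your proposal never formulates this pairing, so the limit is never actually matched with $E_{\mathrm{W}^*(\mathbf{X}')}$; you would need to pass the classical relation $\int E[f\,|\,\mathbf{X}']\,g(\mathbf{X}')\,d\mu_V^{(N)} = \int f\,g(\mathbf{X}')\,d\mu_V^{(N)}$ to the limit, which in turn requires the concentration/unique-Gibbs-law input, i.e.\ essentially reproving Proposition \ref{prop:iteratedCE2}.

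The paper avoids the random-matrix limit at this stage entirely: since the bimodule property $\mathbb{E}_{\mathbf{x},V}[f\cdot(g\circ\pi')]=\mathbb{E}_{\mathbf{x},V}[f]\cdot g$ (Proposition \ref{prop:expectationbimodule}) and the tower property $\mathbb{E}_V[\mathbb{E}_{\mathbf{x},V}[h]\circ\pi']=\mathbb{E}_V[h]$ (Proposition \ref{prop:iteratedCE2}, itself already proved via Theorem \ref{thm:classicalconditionalexpectation}) are available, the pairing identity above follows by a three-line computation using the hypothesis $\tau(h^{\cA,\tau}(\mathbf{X},\mathbf{X}'))=\mathbb{E}_V[\tr(h)]$, and the conclusion follows from the uniqueness/characterization of the trace-preserving conditional expectation together with WOT-density of polynomials in $\mathbf{X}'$. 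Your treatment of the $\mathrm{C}^*$ statement has the right idea but is more complicated than needed: once the $\mathrm{W}^*$ identity is established, $E_{\mathrm{W}^*(\mathbf{X}')}[f^{\cA,\tau}(\mathbf{X},\mathbf{X}')]=(\mathbb{E}_{\mathbf{x},V}[f])^{\cA,\tau}(\mathbf{X}')\in\mathrm{C}^*(\mathbf{X}')$ simply because $\mathbb{E}_{\mathbf{x},V}[f]\in C_{\tr,\app}(\R^{*d'})$, and density of the elements $f^{\cA,\tau}(\mathbf{X},\mathbf{X}')$ in $\mathrm{C}^*(\mathbf{X},\mathbf{X}')$ finishes the argument; no uniformity-of-$\rightsquigarrow$ or trace-polynomial approximation of the conditional expectation is required.
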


\begin{proof}
	Let $f \in C_{\tr}(\R^{*(d+d')})$ and $g \in C_{\tr}(\R^{*d'})$.  Then using \ref{eq:bimodulemaps} and Proposition \ref{prop:iteratedCE2},
	\begin{align*}
		\tau[(\mathbb{E}_{\mathbf{x},V}[f])^{\cA,\tau}(\mathbf{X}') g^{\cA,\tau}(\mathbf{X}')] &= \tau[(\mathbb{E}_{\mathbf{x},V}[f] g)^{\cA,\tau}(\mathbf{X}')] \\
		&= \tau[\mathbb{E}_{\mathbf{x},V}[f \cdot (g \circ \pi')]^{\cA,\tau}(\mathbf{X}')] \\
		&= \mathbb{E}_V[\mathbb{E}_{\mathbf{x},V}[f \cdot (g \circ \pi')] \circ \pi'] \\
		&= \mathbb{E}_V[f \cdot (g \circ \pi')] \\
		&= \tau(f^{\cA,\tau}(\mathbf{X},\mathbf{X}') g^{\cA,\tau}(\mathbf{X}')).
	\end{align*}
	Since this holds for all $g$, it holds in particular for non-commutative polynomials.  Non-commutative polynomials in $\mathbf{X}'$ are dense in $\mathrm{W}^*(\mathbf{X}')$ with respect to the weak operator topology.  Thus, the above relation shows that $(\mathbb{E}_{\mathbf{x},V}[f])^{\cA,\tau}(\mathbf{X}')$ equals the conditional expectation of $f^{\cA,\tau}(\mathbf{X},\mathbf{X}')$ onto $\mathrm{W}^*(\mathbf{X}')$.
	
	Because $\mathbf{E}_{\mathbf{x},V}[f] \in C_{\tr}(\R^{*d'})$, the operator $\mathbf{E}_{\mathbf{x},V}[f]^{\cA,\tau}(\mathbf{X}')$ is in $\mathrm{C}^*(\mathbf{X}')$.  Hence, $E_{\mathrm{W}^*(\mathbf{X}')}[f^{\cA,\tau}(\mathbf{X},\mathbf{X}')] \in \mathrm{C}^*(\mathbf{X}')$ whenever $f \in C_{\tr}(\R^{*(d+d')})$.  But elements of the form $f^{\cA,\tau}(\mathbf{X},\mathbf{X}')$ are dense in $\mathrm{C}^*(\mathbf{X},\mathbf{X}')$, and therefore, $E_{\mathrm{W}^*(\mathbf{X}')}$ maps $\mathrm{C}^*(\mathbf{X},\mathbf{X}')$ into $\mathrm{C}^*(\mathbf{X}')$.
\end{proof}

\subsection{Triangular transport} \label{subsec:triangulartransport}

In this section, we will prove a triangular transport result similar to \cite[Theorem 8.11]{JekelExpectation}.  However, in both the hypotheses and conclusion we will use $C_{\tr}^\infty$ functions rather than $\norm{\cdot}_2$-Lipschitz functions, and thus our new result yields triangular isomorphisms of the $\mathrm{C}^*$-algebras generated by our non-commutative random variables, not only the $\mathrm{W}^*$-algebras.  Moreover, our current result constructs triangular transport at the infinitesimal level and thus allows us to construct a family of transport maps along any path of potentials $V_t$ that are sufficiently close to the quadratic, whereas \cite{JekelExpectation} performed the transport one variable at a time and at each stage only used a path obtained by freely convolving the distribution with a freely independent semicircular family.

\begin{definition}
	For $j \leq d$, let $\iota_{j,d}: C_{\tr,\app}(\R^{*j}) \to C_{\tr,\app}(\R^{*d})$ be the canonical inclusion $\iota_{j,d}(f)(x_1,\dots,x_d) = f(x_1,\dots,x_j)$.  A function $\mathbf{f} = (f_1,\dots,f_d) \in C_{\tr,\app}(\R^{*d})_{\sa}^d$ is said to be \emph{lower-triangular} if $f_j \in \iota_{j,d}(C_{\tr,\app}(\R^{*d}))$ for every $j = 1$, \dots, $d$, or in other words $f_j$ is a function of $x_1$, \dots, $x_j$ alone.
\end{definition}

\begin{theorem} \label{thm:triangulartransport}
	Fix $C_1$, $C_2$, $C_3$ with $C_2 < \sqrt{2} - 1$, and let $t \mapsto V_t$ be a continuously differentiable path $[0,T] \to \mathscr{V}_{C_1,C_2,C_3}^d$ (where differentiation again occurs with respect to the topology on $\tr(C_{\tr}^\infty(\R^{*d}))_{\sa}$), and assume that
	\[
	\norm{ \partial^k \dot{V} }_{BC_{\tr}(\R^{*d},\mathscr{M}^k)} \leq C_k' \text{ for } k = 1, 2.
	\]
	Then there exists a family of triangular functions $(\mathbf{f}_{t,s})_{s, t \in [0,T]}$ in $C_{\tr}^\infty(\R^{*d})_{\sa}^d$ such that $\mathbf{f}_{u,t} \circ \mathbf{f}_{t,s} = \mathbf{f}_{u,s}$ for $s, t, u \in [0,T]$ and $(\mathbf{f}_{t,s})_* V_s = V_t$ for $s, t \in [0,T]$.
\end{theorem}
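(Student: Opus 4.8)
\textbf{Proof proposal for Theorem \ref{thm:triangulartransport}.}

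The plan is to mimic the argument for Theorem \ref{thm:transport}, but to choose the infinitesimal transport vector field $\mathbf{h}_t$ to be lower-triangular at each time, by performing a variable-by-variable construction using the conditional expectation machinery of \S\ref{subsec:conditional}. First I would set up notation: write $V_t = (1/2)\ip{\mathbf{x},\mathbf{x}}_{\tr} + W_t$ with the stated bounds, and for each $j \leq d$ let $\mathbf{x}^{(j)} = (x_1,\dots,x_j)$, $\mathbf{x}^{(j,d)} = (x_{j+1},\dots,x_d)$. Using Proposition \ref{prop:marginalpotential} (and the hypothesis $C_2 < \sqrt2 - 1$, which guarantees the marginal potentials stay in a class $\mathscr{V}_{d',C_1',C_2',C_3'}$ with $C_2' < 1$), I would define the marginal potential $V_t^{(j)} \in \mathscr{V}_{j,\cdot,\cdot,\cdot}$ describing the distribution of $(x_1,\dots,x_j)$ under $\mu_{V_t}$; so $V_t^{(d)} = V_t$. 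Differentiating the defining relation $\nabla V_t^{(j)} = \mathbb{E}_{\mathbf{x}^{(j,d)},V_t}[\nabla_{\mathbf{x}^{(j)}} V_t]$ in $t$, and using the continuity of $\mathbb{E}_{\mathbf{x},V}$ in $(V,f)$ from Proposition \ref{prop:Laplaciancontinuity}, I would check that $t \mapsto V_t^{(j)}$ is continuously differentiable into $\tr(C_{\tr}^\infty(\R^{*j}))_{\sa}$ and that $\dot V_t^{(j)}$ satisfies uniform bounds on its first two derivatives (this is where one must track constants carefully — the derivative of $\mathbb{E}_{\mathbf{x},V}$ with respect to $V$ hits $\dot V_t$ via the formula for $\partial \Psi$, and one needs Remark \ref{rem:boundedcoefficients2}).

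Next I would build the triangular vector field iteratively. For $j = 1$: $V_t^{(1)}$ is a potential in one free variable close to the quadratic, so by Proposition \ref{prop:pseudoinverse}(3) the field $h_{t,1} := -\nabla \Psi_{V_t^{(1)}}\dot V_t^{(1)} \in C_{\tr}^\infty(\R^{*1})_{\sa}$ has uniformly bounded first and second derivatives and transports $V_0^{(1)}$ to $V_t^{(1)}$ infinitesimally. Inductively, suppose $h_{t,1},\dots,h_{t,j-1}$ have been constructed, with $h_{t,i} \in C_{\tr}^\infty(\R^{*i})_{\sa}$. Consider the conditional potential for $x_j$ given $(x_1,\dots,x_{j-1})$ relative to $V_t^{(j)}$; concretely one works with $\dot V_t^{(j)}$ minus the contribution already accounted for by the lower block, and applies the \emph{conditional} pseudo-inverse $\Psi_{x_j, V_t^{(j)}}$ from \S\ref{sec:pseudoinverse} (the construction there is carried out precisely in the conditional setting with auxiliary variable $\mathbf{x}' = (x_1,\dots,x_{j-1})$). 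One sets $h_{t,j} := -\nabla_{x_j}\Psi_{x_j,V_t^{(j)}}\big(\dot V_t^{(j)} + \nabla_{x^{(j-1)},V_t^{(j)}}^*(h_{t,1},\dots,h_{t,j-1},0)\big)$, so that $h_{t,j}$ depends only on $x_1,\dots,x_j$, has bounded first two derivatives by Proposition \ref{prop:pseudoinverse}(3) and Remark \ref{rem:boundedcoefficients2}, and is designed so that $\mathbf{h}_t^{(j)} := (h_{t,1},\dots,h_{t,j})$ satisfies $-\nabla_{V_t^{(j)}}^* \mathbf{h}_t^{(j)} = \dot V_t^{(j)}$ modulo constants. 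Verifying this identity is the crux: it requires combining the recursive relation $L_{x_j,V^{(j)}}\Psi_{x_j,V^{(j)}} = \mathbb{E}_{x_j,V^{(j)}} - 1$ (Proposition \ref{prop:Laplacianrelations}) with the compatibility of the conditional expectations across levels (Propositions \ref{prop:iteratedCE}, \ref{prop:iteratedCE2}) and with the fact that $\nabla\widehat V = \mathbb{E}_{\mathbf{x},V}[\nabla_{\mathbf{x}'}V]$ (Proposition \ref{prop:marginalpotential}), so that the $x_j$-marginal of the Dyson--Schwinger equation at level $j$ reduces to the full Dyson--Schwinger equation at level $j-1$.

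Having produced $\mathbf{h}_t = \mathbf{h}_t^{(d)}$, lower-triangular with uniformly bounded $\partial \mathbf{h}_t$ and $\partial^2 \mathbf{h}_t$ and continuous in $t$, I would invoke Lemma \ref{lem:flow} to get the two-parameter flow $\mathbf{f}_{t,s}$ solving $\mathbf{f}_{t,s} = \id + \int_s^t \mathbf{h}_u \circ \mathbf{f}_{u,s}\,du$, with $\mathbf{f}_{u,t}\circ\mathbf{f}_{t,s} = \mathbf{f}_{u,s}$ and $\mathbf{f}_{t,s} \in C_{\tr}^\infty(\R^{*d})_{\sa}^d$; triangularity of $\mathbf{h}_u$ is preserved under the flow because the $j$-th Picard iterate's $j$-th coordinate only ever involves $x_1,\dots,x_j$ (formally, the flow of a lower-triangular vector field is lower-triangular, which one checks coordinate-by-coordinate in the Picard iteration). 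Then Lemma \ref{lem:transport0} gives $(\mathbf{f}_{t,s})_* V_s = V_t$ modulo constants, and since $\nabla V_s \in \mathscr{J}_{C_1,1-C_2}^d$, Proposition \ref{prop:convexDSE} shows $V_t$ and hence the law $\mu_{V_t}$ is uniquely determined, so the constants are forced and $(\mathbf{f}_{t,s})_* V_s = V_t$ exactly. The main obstacle I anticipate is the bookkeeping in the inductive step: correctly identifying which ``conditional'' Dyson--Schwinger equation $h_{t,j}$ must satisfy and proving the telescoping identity $-\nabla_{V_t^{(j)}}^*\mathbf{h}_t^{(j)} = \dot V_t^{(j)}$, since this is where the conditional-expectation tower lemmas interact with the pseudo-inverse; a secondary obstacle is propagating the uniform derivative bounds on $\dot V_t^{(j)}$ through the marginalization, which requires that the constant $C_2' = C_2(1+C_2)/(1-C_2)$ produced by Proposition \ref{prop:marginalpotential} stays $< 1$ at every level — guaranteed by $C_2 < \sqrt2 - 1$ but needing the observation that the levels only marginalize, never compound the constant beyond one application.
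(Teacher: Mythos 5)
Your overall architecture is the same as the paper's: marginal potentials for $(x_1,\dots,x_j)$, a level-by-level construction of a lower-triangular vector field via the conditional pseudo-inverses $\Psi_{x_j,\cdot}$, then the flow of Lemma \ref{lem:flow} and the conclusion via Lemma \ref{lem:transport0}. The genuine gap is at the start of your implementation: you feed the \emph{time-derivative of the marginal potential} $\dot{V}_t^{(j)}$ into the level-$j$ pseudo-inverse, and your inductive verification of $-\nabla_{V_t^{(j)}}^*\mathbf{h}_t^{(j)} = \dot{V}_t^{(j)}$ (modulo constants) then hinges on the identity $\mathbb{E}_{x_j,V_t^{(j)}}[\dot{V}_t^{(j)}] = \dot{V}_t^{(j-1)}$ modulo constants, i.e.\ that time-differentiation commutes with marginalization. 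You assert that differentiating $\nabla V_t^{(j)} = \mathbb{E}_{x_{j+1},\dots,x_d,V_t}[\nabla_{x_1,\dots,x_j} V_t]$ in $t$ is justified by Proposition \ref{prop:Laplaciancontinuity}, but that proposition only gives joint \emph{continuity} of $(V,f)\mapsto \mathbb{E}_{\mathbf{x},V}f$, not differentiability in $V$, and no formula for the derivative; the only differentiation-in-$V$ result available (Proposition \ref{prop:differentiateexpectation}) is unconditional and does not apply. This is exactly the step the paper's construction avoids: in Lemma \ref{lem:triangularsolution} the level-$j$ source is taken to be $\mathbb{E}_{x_{j+1},\dots,x_d,V_t}(\dot{V}_t)$, the conditional expectation of $\dot V_t$ itself (continuous in $t$ by Proposition \ref{prop:Laplaciancontinuity}, no differentiation of marginals needed), and the identity $\nabla_V^* T_V\phi = \phi - \mathbb{E}_V(\phi)$ is checked by a direct telescoping computation using only the tower property (Proposition \ref{prop:iteratedCE}) and the bimodule property of $\mathbb{E}_{x_j,V_j}$. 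Either make this substitution — after which your recursion essentially becomes \eqref{eq:triangularsolution} and the telescoping argument replaces your induction — or prove the commutation identity separately (e.g.\ through the random-matrix approximation of Theorem \ref{thm:classicalconditionalexpectation}, with a uniform-in-$t$ argument to exchange the $N\to\infty$ limit with $\partial_t$); as written, the crux of your induction is unsupported.

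Two secondary problems. First, your correction term is the full conditional adjoint $\nabla_{x^{(j-1)},V_t^{(j)}}^*(h_1,\dots,h_{j-1},0)$, which contains the divergence terms $\Tr_{\#}(\partial_{x_i}h_i)$; bounding $\partial h_j$ via Proposition \ref{prop:pseudoinverse}(3) and Remark \ref{rem:boundedcoefficients2} then requires a global bound on $\partial^2 h_i$ for $i<j$, which in turn requires third derivatives of the lower-level sources, and so on — an escalation of derivative requirements not covered by the hypotheses (only $\partial^k\dot V$, $k=1,2$, are globally bounded). The paper's correction $\sum_{i<j}\partial_{x_i}V_j\# h_i$ avoids this: its derivative costs only $\partial^{\leq 3}V_j$ (controlled via $C_3$) and $\partial^{\leq 1}h_i$, so the estimate in Lemma \ref{lem:triangularsolution}(3) closes at first derivatives; note also that Lemma \ref{lem:flow} needs only $\partial\mathbf{h}_t$ bounded, so your claimed bound on $\partial^2\mathbf{h}_t$ is both unnecessary and unobtainable from the stated hypotheses. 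Second, uniqueness of the free Gibbs law cannot ``force the constants'': $V$ and $V+c$ have the same Gibbs law, so Proposition \ref{prop:convexDSE} says nothing about the additive constant. The correct conclusion, as in the paper, is $(\mathbf{f}_{t,s})_*V_s = V_t$ modulo constants, which is the meaningful statement for $\mathscr{W}(\R^{*d})$ and suffices for the operator-algebraic consequences.
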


Similar to the proof of Theorem \ref{thm:transport}, we rely on Lemma \ref{lem:transport0}, and thus we will first construct a triangular function $\mathbf{h}$ satisfying $L_V^* \mathbf{h} = \phi$ for a given $V$ and $\phi$.

\begin{lemma} \label{lem:triangularsolution}
	Fix $C_1$, $C_2$, $C_3$ with $C_2 < \sqrt{2} - 1$.  Then for $V \in \mathscr{V}_{C_1,C_2,C_3}^d$, there exists a linear operator $T_V: \tr(C_{\tr,\app}^\infty(\R^{*d}))_{\sa} \to C_{\tr,\app}^\infty(\R^{*d})_{\sa}^d$ such that the following conditions hold:
	\begin{enumerate}[(1)]
		\item $T_V \phi$ is a lower-triangular for every $\phi$.
		\item $\nabla_V^* T_V \phi = \phi - \mathbb{E}_V(\phi)$.
		\item We have
		\begin{multline*}
			\norm{T_V \phi}_{BC_{\tr,\app}(\R^{*d})^d} + \norm{\partial T_V \phi}_{BC_{\tr,\app}(\R^{*d},\mathscr{M}^1)^d} \\
			\leq \text{\rm constant}(C_1,C_2,C_3,d)\left( \norm{\partial \phi}_{BC_{\tr,\app}(\R^{*d},\mathscr{M}^1)} + \norm{\partial^2 \phi}_{BC_{\tr,\app}(\R^{*d},\mathscr{M}^1)} \right).
		\end{multline*}
		\item We have continuity of the map
		\[
		\mathscr{V}_{C_1,C_2,C_3}^d \times \tr(C_{\tr,\app}^\infty(\R^{*d}))_{\sa} \to  C_{\tr,\app}^\infty(\R^{*d})_{\sa}^d : (V,\phi) \mapsto T_V \phi.
		\]
	\end{enumerate}
\end{lemma}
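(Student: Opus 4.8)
The idea is to build $T_V$ one coordinate at a time, peeling off the variables $x_d, x_{d-1}, \dots, x_1$ in that order and using the conditional pseudo-inverse $\Psi_{\mathbf{x},\mathbf{J}}$ from \S\ref{sec:pseudoinverse} together with the marginal potentials from Proposition \ref{prop:marginalpotential}. More precisely, given $V \in \mathscr{V}_{C_1,C_2,C_3}^d$, let $V^{(d)} = V$ and, for $1 \le j < d$, let $V^{(j)} \in \tr(C_{\tr,\app}^\infty(\R^{*j}))_{\sa}$ be the marginal potential for $(x_1,\dots,x_j)$ obtained by integrating out $x_{j+1}$ from $V^{(j+1)}$; by Proposition \ref{prop:marginalpotential} each $V^{(j)}$ lies in some $\mathscr{V}_{j,C_1^{(j)},C_2^{(j)},C_3^{(j)}}$ with constants depending only on $C_1,C_2,C_3$ and $d$, and by the hypothesis $C_2 < \sqrt 2 - 1$ and the explicit recursion $C_2' = C_2(1+C_2)/(1-C_2)$ one checks by induction that $C_2^{(j)} < 1$ for all $j$ (this is exactly the same constant bookkeeping as in Proposition \ref{prop:iteratedCE}). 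Thus $\nabla_{\mathbf{x}} V^{(j+1)} \in \mathscr{J}_{a^{(j)},c^{(j)}}^{1,j}$ for suitable $a^{(j)}, c^{(j)}$, where the splitting is into the single variable $x_{j+1}$ (the ``$\mathbf{x}$'') and the tuple $(x_1,\dots,x_j)$ (the ``$\mathbf{x}'$''), so the machinery of \S\ref{sec:pseudoinverse} applies with $d=1$, $d'=j$.

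\textbf{Construction of $T_V$.} Given $\phi \in \tr(C_{\tr,\app}^\infty(\R^{*d}))_{\sa}$, define the $j$-th coordinate of $T_V\phi$ by downward induction. Set $\phi^{(d)} = \phi$. Having $\phi^{(j+1)} \in \tr(C_{\tr,\app}^\infty(\R^{*(j+1)}))_{\sa}$, apply $\Psi_{x_{j+1}, \nabla_{x_{j+1}} V^{(j+1)}}$ (the pseudo-inverse of $L_{x_{j+1}, \nabla_{x_{j+1}} V^{(j+1)}}$ from Proposition \ref{prop:pseudoinverse}) to get $\psi^{(j+1)}$, let the $(j+1)$-th coordinate of $\mathbf{h} = T_V\phi$ be $h_{j+1} = \partial_{x_{j+1}} \psi^{(j+1)}$ viewed inside $C_{\tr,\app}^\infty(\R^{*d})$ via $\iota_{j+1,d}$, and let $\phi^{(j)} = \mathbb{E}_{x_{j+1}, \nabla_{x_{j+1}} V^{(j+1)}}[\phi^{(j+1)}] \in \tr(C_{\tr,\app}^\infty(\R^{*j}))_{\sa}$. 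Finally $h_1$ is built from a single-variable pseudo-inverse of $L_{V^{(1)}}$ applied to $\phi^{(1)} = \mathbb{E}_{x_2,\nabla_{x_2}V^{(2)}}[\phi^{(2)}]$. By construction each $h_j$ depends only on $x_1,\dots,x_j$, giving (1). For (2): $\nabla_V^* \mathbf{h} = -\Tr_\#(\partial\mathbf{h}) + \partial V \# \mathbf{h}$; because $\mathbf{h}$ is lower-triangular, $\Tr_\#(\partial\mathbf{h}) = \sum_j (\text{contribution of } h_j \text{ via } \partial_{x_j})$, and one rewrites this sum using the relation $-L_{x_{j+1},\nabla_{x_{j+1}}V^{(j+1)}} \psi^{(j+1)} = \phi^{(j+1)} - \mathbb{E}_{x_{j+1},\nabla_{x_{j+1}}V^{(j+1)}}\phi^{(j+1)}$ from Proposition \ref{prop:Laplacianrelations} together with the fact that $\nabla_{x_{j+1}} V^{(j+1)}$ is, by Proposition \ref{prop:marginalpotential} / the definition of marginal potential, the $\mathbb{E}_{(x_{j+2},\dots,x_d)}$-conditional expectation of $\nabla_{x_{j+1}} V$; this telescopes exactly as in the classical triangular (Knothe--Rosenblatt) computation, the cross terms cancelling, to give $\nabla_V^*\mathbf{h} = \phi - \mathbb{E}_{(x_1,\dots,x_d),V}\phi = \phi - \mathbb{E}_V\phi$, where the iterated-conditioning identity $\mathbb{E}_{x_1,V^{(1)}}\circ\cdots$ collapsing to $\mathbb{E}_V$ is Proposition \ref{prop:iteratedCE} (which needs precisely $C_2 < \sqrt 2 - 1$).

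\textbf{Estimates and continuity.} For (3), the key input is the derivative bound Proposition \ref{prop:pseudoinverse}(3) and Remark \ref{rem:boundedcoefficients2}: since $\nabla_{x_{j+1}} V^{(j+1)} - \pi$ lies in $BC_{\tr,\app}^\infty(\R^{*(j+1)})$ with norms bounded in terms of $C_1,C_2,C_3,d$, the operator $\Psi_{x_{j+1},\nabla_{x_{j+1}}V^{(j+1)}}$ satisfies $\norm{\partial_{x_{j+1}} \Psi \phi^{(j+1)}}_{BC} + \norm{\partial_{x_{j+1}}\partial \Psi\phi^{(j+1)}}_{BC} \le \mathrm{const} \cdot (\norm{\partial_{x_{j+1}}\phi^{(j+1)}}_{BC} + \norm{\partial_{x_{j+1}}\partial\phi^{(j+1)}}_{BC})$ uniformly in $R$ (the ``$d'=0$ in the $\mathbf{x}$ variable'' case, which is what the triangular splitting buys us). Then $\norm{\partial\phi^{(j)}}_{BC} \le \norm{\partial\phi^{(j+1)}}_{BC}$ and likewise for second derivatives, by the contractivity built into $\mathbb{E}_{x_{j+1},\cdot}$ (Lemma \ref{lem:Ckconditionalexpectation} together with the equicontinuity in Proposition \ref{prop:kernelprojection}); iterating $d$ times gives (3) with a constant depending only on $C_1,C_2,C_3,d$. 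For (4), continuity of $(V,\phi)\mapsto T_V\phi$ follows by composing: $V\mapsto V^{(j)}$ is continuous on $\mathscr{V}$ (Proposition \ref{prop:marginalpotential}, via the asymptotic-gradient Lemma \ref{lem:asymptoticgradient} and Theorem \ref{thm:classicalconditionalexpectation}, or more directly via Proposition \ref{prop:Laplaciancontinuity} applied to $\nabla\widehat V = \mathbb{E}_{\mathbf{x},V}[\nabla_{\mathbf{x}'}V]$), $(J,\psi)\mapsto \Psi_{\mathbf{x},J}\psi$ and $(J,\psi)\mapsto\mathbb{E}_{\mathbf{x},J}\psi$ are continuous by Proposition \ref{prop:Laplaciancontinuity}, and $\partial_{x_{j+1}}$ and $\iota_{j+1,d}$ are continuous linear maps.

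\textbf{Main obstacle.} The delicate point is step (2): verifying that the telescoping identity $\nabla_V^* T_V\phi = \phi - \mathbb{E}_V\phi$ actually holds, which requires matching the Laplacian $L_{x_{j+1},\nabla_{x_{j+1}}V^{(j+1)}}$ appearing in the definition of $h_{j+1}$ against the pieces of the global $\nabla_V^*$. The subtlety is that $\nabla_{x_{j+1}} V^{(j+1)}$ is the partial marginal potential's gradient, not $\nabla_{x_{j+1}} V$, so one must feed through the identity $\nabla_{x_{j+1}} V^{(j+1)} = \mathbb{E}_{(x_{j+2},\dots,x_d),V}[\nabla_{x_{j+1}} V]$ (from the definition of marginal potential in Proposition \ref{prop:marginalpotential}) at each stage, and one must know that applying $\mathbb{E}_{x_{j+1},\nabla_{x_{j+1}}V^{(j+1)}}$ then $\mathbb{E}_{x_j,\nabla_{x_j}V^{(j)}}$ etc.\ agrees with $\mathbb{E}_{(x_1,\dots,x_d),V}$ — this is exactly Proposition \ref{prop:iteratedCE} and is the reason the hypothesis is $C_2<\sqrt 2-1$ rather than merely $C_2<1$. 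Once this bookkeeping is organized, the rest is the routine chaining of already-established continuity and boundedness facts. With Lemma \ref{lem:triangularsolution} in hand, the proof of Theorem \ref{thm:triangulartransport} is then exactly as in Theorem \ref{thm:transport}: set $\mathbf{h}_t = -T_{V_t}\dot V_t$ (triangular, with $\partial\mathbf{h}_t,\partial^2\mathbf{h}_t$ bounded by (3), continuous in $t$ by (4)), solve $\mathbf{f}_{t,s} = \id + \int_s^t \mathbf{h}_u\circ\mathbf{f}_{u,s}\,du$ via Lemma \ref{lem:flow} (which preserves lower-triangularity since the $j$-th coordinate equation only involves coordinates $\le j$), and apply Lemma \ref{lem:transport0} to conclude $(\mathbf{f}_{t,s})_* V_s = V_t$.
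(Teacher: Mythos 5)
The central gap is in your verification of (2). With your definition $h_{j} = \nabla_{x_{j}}\Psi_{x_{j},V^{(j)}}\phi^{(j)}$ (no correction terms), the claimed telescoping does not occur. Write $V_j$ for the marginal potential for $(x_1,\dots,x_j)$ and $\phi_j = \mathbb{E}_{x_{j+1},\dots,x_d,V}(\phi)$. Your construction gives $\nabla_{x_j,V_j}^* h_j = \phi_j - \phi_{j-1}$, but what enters $\nabla_V^*\mathbf{h}$ is $\nabla_{x_j,V}^* h_j = \nabla_{x_j,V_j}^* h_j + \partial_{x_j}(V - V_j)\# h_j$, so summing over $j$ yields $\nabla_V^* \mathbf{h} = \phi - \mathbb{E}_V(\phi) + \sum_j \partial_{x_j}(V - V_j)\# h_j$. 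The residue $\sum_j \partial_{x_j}(V-V_j)\#h_j$ is not zero as a function (already classically with $d=2$: it is $(\partial_{x_1}V(x_1,x_2) - \partial_{x_1}V_1(x_1))h_1(x_1)$, which merely has $\mu_V$-expectation zero), and Lemma \ref{lem:transport0} requires the exact identity modulo constants, so property (2) fails for your $T_V$. The repair is to build the cancellation into the construction: take $h_j = \nabla_{x_j}\Psi_{x_j,V_j}\bigl(\phi_j - \sum_{i<j}\partial_{x_i}V_j \# h_i\bigr)$. Then $\nabla_{x_j,V_j}^* h_j = \phi_j - \phi_{j-1} - \sum_{i<j}\partial_{x_i}(V_j - V_{j-1})\#h_i$, and summing the correction terms over $j > i$ produces $\partial_{x_i}(V - V_i)\#h_i$ (since $V_d = V$), which exactly cancels the residue above; this is the actual telescoping argument, and your proposal is missing the ingredient that makes it work.

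There is a second, independent problem with your constant bookkeeping. The recursion $c \mapsto c(1+c)/(1-c)$ is strictly increasing, and $C_2 < \sqrt{2}-1$ only guarantees that a \emph{single} application lands below $1$; iterating it, as your one-variable-at-a-time definition of $V^{(j)}$ from $V^{(j+1)}$ requires, blows up (for $C_2 = 0.4$ one gets roughly $0.93$ after one step and about $27$ after two), so the claimed induction ``$C_2^{(j)} < 1$ for all $j$'' is false, and after two steps you could not even invoke Proposition \ref{prop:marginalpotential} to define the next marginal. The remedy is to define every $V_j$ directly from $V$ by a single conditioning, $\nabla V_j = \mathbb{E}_{x_{j+1},\dots,x_d,V}[\nabla_{x_1,\dots,x_j}V]$, so that each $V_j$ has second-derivative constant $C_2(1+C_2)/(1-C_2) < 1$; where one needs compatibility of iterated and direct conditioning (for instance to identify your $\phi^{(j)}$ with $\phi_j$), one invokes Proposition \ref{prop:iteratedCE} and its corollary, and this is precisely where the hypothesis $C_2 < \sqrt{2}-1$ is used. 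Your outline of (3) and (4) is in the right spirit (pseudo-inverse bounds uniform in $R$, continuity via Proposition \ref{prop:Laplaciancontinuity}), but both must be redone for the corrected $h_j$'s, via an induction on $j$ that also carries the terms $\partial_{x_i}V_j \# h_i$.
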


\begin{proof}
	Let $V_j$ be the marginal potential on the variables $x_1$, \dots, $x_j$ obtained from $V$ given by
	\[
	\nabla V_j = \mathbb{E}_{x_{j+1},\dots,x_d,V}[\nabla_{x_1,\dots,x_j} V],
	\]
	with the normalization $V_j(0) = 0$.  Note that
	\[
	V_j \in \mathscr{V}_{C_1',C_2',C_3'}^j
	\]
	with $C_2' = 2C_2^2 / (1 - C_2) < 1$ since $C_2 < 1/2$.  Therefore, the pseudoinverse operators $\Psi_{V_j}$ are well-defined by Proposition \ref{prop:pseudoinverse}.
	
	To simplify notation, we will view $C_{\tr}(\R^{*j})$ as a subset of $C_{\tr}(\R^{*d})$ using the canonical inclusion $\iota_{j,d}$.  Given $\phi \in \tr( C_{\tr}(\R^{*d}))_{\sa}$, we define functions $h_j \in C_{\tr}(\R^{*j})_{\sa}$ inductively by
	\begin{equation} \label{eq:triangularsolution}
		h_j = \nabla_{x_j} \Psi_{x_j,V_j} \left( \mathbb{E}_{x_{j+1},\dots,x_d,V}(\phi) - \sum_{i=1}^{j-1} \partial_{x_i} V_j \# h_i \right).
	\end{equation}
	It makes sense to apply $\Psi_{x_j,V_j}$ to $\mathbb{E}_{x_{j+1},\dots,x_d,V}(\phi) - \sum_{i=1}^{j-1} \nabla_{x_i,V_j}^* h_i$ since the latter is a function of $x_1$, \dots, $x_j$.  We set $T_V \phi = (h_1,\dots,h_d)$.  Clearly, $T_V$ is a linear operator and satisfies (1) by construction, and now we shall check that it has the other desired properties.
	
	(2) Observe that
	\begin{align*}
		\nabla_{x_j,V}^* h_j &= \partial_{x_j} V \# h_j - \Div_{x_j} h_j \\
		&= \partial_{x_j}(V - V_j) \# h_j + \partial_{x_j} V_j \# h_j - \Div_{x_j} h_j \\
		&= \partial_{x_j}(V - V_j) \# h_j + \nabla_{x_j,V_j}^* h_j.
	\end{align*}
	Meanwhile,
	\begin{align*}
		\nabla_{x_j,V}^* h_j &= \nabla_{x_j,V}^* \nabla_{x_j} \Psi_{x_j,V_j} \left( \mathbb{E}_{x_{j+1},\dots,x_d,V}(\phi) - \sum_{i=1}^{j-1} \partial_{x_i} V_j \# h_i \right) \\
		&= (1 - \E_{x_j,V_j}) \left[ \mathbb{E}_{x_{j+1},\dots,x_d,V}(\phi) - \sum_{i=1}^{j-1} \partial_{x_i} V_j \# h_i \right] \\
		&= \mathbb{E}_{x_{j+1},\dots,x_d,V}(\phi) - \mathbb{E}_{x_j,\dots,x_d,V}(\phi) - \sum_{i=1}^{j-1} \partial_{x_i} (V_j - V_{j-1}) \# h_i,
	\end{align*}
	where we have observed that for $i \leq j - 1$,
	\[
	\mathbb{E}_{x_j,V_j}[\partial_{x_i}V_j \# h_i] = \mathbb{E}_{x_j,V_j}[\partial_{x_i}V_j] \# h_i = \partial_{x_i} V_{j-1} \# h_i,
	\]
	since $h_i$ does not depend on $x_j$.  Therefore,
	\begin{align*}
		\nabla_V^* \mathbf{h} &= \sum_{j=1}^d \nabla_{x_j,V}^* h_j \\
		&= \sum_{j=1}^d \partial_{x_j}(V - V_j) \# h_j - \sum_{j=1}^d \sum_{i=1}^{j-1} \partial_{x_i}(V_j - V_{j-1}) \# h_i + \sum_{j=1}^d \left( \mathbb{E}_{x_{j+1},\dots,x_d,V}(\phi) - \mathbb{E}_{x_j,\dots,x_d,V}(\phi) \right) \\
		&= \sum_{j=1}^d \partial_{x_j}(V - V_j) \# h_j - \sum_{i=1}^{d-1} \sum_{j=i+1}^d \partial_{x_i}(V_j - V_{j-1}) \# h_i + \bigl( \phi - \mathbb{E}_V(\phi) \bigr) \\
		&= \sum_{j=1}^d \partial_{x_j}(V - V_j) \# h_j - \sum_{i=1}^{d-1} \partial_{x_i}(V - V_i) \# h_i + \bigl( \phi - \mathbb{E}_V(\phi) \bigr) \\
		&= \phi - \mathbb{E}_V(\phi).
	\end{align*}
	
	(3) Because $V_j \in \mathscr{V}_{C_1',C_2',C_3'}^j$, it follows from Proposition \ref{prop:pseudoinverse} and Remark \ref{rem:boundedcoefficients2} that for some constants $K_1$, $K_2$, $K_3$ depending only on $C_1$, $C_2$, $C_3$, we have
	\begin{align*}
		& \quad \sum_{k=0}^1 \norm{\partial^k h_j}_{BC_{\tr,\app}(\R^{*d},\mathscr{M}^k)} \\
		&= \sum_{k=0}^1 \norm*{ \partial^k \partial_{x_j} \Psi_{x_j,V_j} \left( \mathbb{E}_{x_{j+1},\dots,x_d,V}(\phi) - \sum_{i=1}^{j-1} \partial_{x_i} V_j \# h_i \right) }_{BC_{\tr,\app}(\R^{*d},\mathscr{M}^k)} \\
		&\leq K_1 \sum_{k=0}^1 \norm*{ \partial^k \partial_{x_j} \mathbb{E}_{x_{j+1},\dots,x_d,V}(\phi)}_{BC_{\tr,\app}(\R^{*d},\mathscr{M}^{1+k})} + K_1 \sum_{i=1}^{j-1} \sum_{k=0}^1 \norm*{ \partial \partial_{x_j}[\partial_{x_i} V_j \# h_i]}_{BC_{\tr,\app}(\R^{*d},\mathscr{M}^{1+k})} \\
		&\leq K_2 \sum_{k=0}^1 \norm*{ \partial^{1+k} \phi}_{BC_{\tr,\app}(\R^{*d},\mathscr{M}^{1+k})} + K_1 \sum_{i=1}^{j-1} \sum_{k=0}^1 \norm*{ \partial [\partial_{x_j}\partial_{x_i} V_j \# h_i]}_{BC_{\tr,\app}(\R^{*d},\mathscr{M}^{1+k})} \\
		&\leq K_2 \sum_{k=0}^1 \norm*{ \partial^{1+k} \phi}_{BC_{\tr,\app}(\R^{*d},\mathscr{M}^{1+k})} + K_3 \sum_{i=1}^{j-1} \sum_{k=0}^1 \norm*{ \partial^k h_i}_{BC_{\tr,\app}(\R^{*d},\mathscr{M}^{1+k})}
	\end{align*}
	where we have used Proposition \ref{prop:kernelprojection} and the fact that $\partial_{x_j} h_i = 0$.  Based on this inequality, it is easy to check by induction that each $h_j$ satisfies the desired bounds.
	
	(4) By Proposition \ref{prop:Laplaciancontinuity}, $V_j$ depends continuously on $V$.  Similarly, by applying Proposition \ref{prop:Laplaciancontinuity} to each part of \eqref{eq:triangularsolution}, we see by induction that $h_j$ depends continuously on $(V,\phi)$.
\end{proof}

\begin{proof}[Proof of Theorem \ref{thm:triangulartransport}]
	Let $\mathbf{h}_t = -T_{V_t} \dot{V}_t$, where $T_{V_t}$ is as in the previous lemma.  The lemma implies that $t \mapsto \mathbf{h}_t$ is continuous and $\partial \mathbf{h}_t$ is bounded.  Thus, Lemma \ref{lem:flow} shows that there are functions $\mathbf{f}_{t,s}$ satisfying
	\[
	\mathbf{f}_{t,s} = \id + \int_s^t \mathbf{h}_u \circ \mathbf{f}_{u,s}\,du.
	\]
	Because $\mathbf{h}_t$ is lower-triangular, so is $\mathbf{f}_{t,s}$ (for instance because the Picard iterates are lower-triangular).  From basic results on ODE, the functions satisfy the asserted properties under composition.  Finally, by Lemma \ref{lem:transport0}, since $-\nabla_{V_t}^* \mathbf{h}_t = \dot{V}_t$ modulo constants, we have $(\mathbf{f}_{t,s})_* V_s = V_t$ modulo constants for every $s, t \in [0,T]$.
\end{proof}

The operator algebraic consequences of this theorem are similar to Observation \ref{obs:transportapplication} and Corollary \ref{cor:transportapplication}.

\begin{corollary} \label{cor:triangulartransport}
	Let $V \in \mathscr{V}_{C_1,C_2,C_3}^d$ with $C_2 < \sqrt{2} - 1$, and let $\mathbf{X}$ be a $d$-tuple of non-commutative random variables that generate a tracial $\mathrm{W}^*$-algebra $(\cA,\tau)$ such that
	\[
	\mathbb{E}_V[f] = \tau(f(\mathbf{X})).
	\]
	Let $\mathbf{S}$ be a standard free semicircular $d$-tuple that generates the tracial $\mathrm{W}^*$-algebra $(\cB,\sigma) \cong L(\mathbb{F}_d)$.  Then there exists a tracial $\mathrm{W}^*$-isomorphism $\phi: (\cA,\tau) \to (\cB,\sigma)$ such that
	\[
	\phi(\mathrm{C}^*(X_1,\dots,X_j)) = \mathrm{C}^*(S_1,\dots,S_j)) \text{ for } j = 1, \dots, d.
	\]
	In particular, for each $j = 1$, \dots, $d$, $\mathrm{C}^*(X_1,\dots,X_d)$ is the internal reduced free product of $\mathrm{C}^*(X_1,\dots,X_j)$ and $\mathrm{C}^*(\phi^{-1}(S_{j+1}),\dots,\phi^{-1}(S_d))$.
\end{corollary}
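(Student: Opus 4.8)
\textbf{Proof proposal for Corollary \ref{cor:triangulartransport}.}
The plan is to deduce this corollary from Theorem \ref{thm:triangulartransport} by running triangular transport along the straight-line path of potentials from a free semicircular family to $V$, and then translating the resulting triangular diffeomorphism into a triangular isomorphism of $\mathrm{C}^*$-algebras via the observations in \S \ref{subsec:matrixapproximation}. First I would set $V_0 = (1/2)\ip{\mathbf{x},\mathbf{x}}_{\tr}$ (whose unique free Gibbs law is the standard semicircular $d$-tuple, with associated $\mathrm{W}^*$-algebra $L(\mathbb{F}_d)$ by Corollary \ref{cor:transportapplication}), set $V_1 = V$, and consider the path $V_t = (1-t)V_0 + tV = (1/2)\ip{\mathbf{x},\mathbf{x}}_{\tr} + tW$ where $W = V - V_0$. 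Since $V \in \mathscr{V}_{C_1,C_2,C_3}^d$ with $C_2 < \sqrt 2 - 1$, each $V_t$ lies in $\mathscr{V}_{C_1,C_2,C_3}^d$ (the bounds on derivatives of $tW$ are no worse than those for $W$), and $t \mapsto V_t$ is continuously differentiable with $\dot V_t = W$ having uniformly bounded first and second derivatives. Thus Theorem \ref{thm:triangulartransport} applies and produces a family of lower-triangular functions $\mathbf{f}_{t,s} \in C_{\tr}^\infty(\R^{*d})_{\sa}^d$ with $(\mathbf{f}_{1,0})_* V_0 = V_1 = V$, and by the flow/ODE construction (Lemma \ref{lem:flow}) the function $\mathbf{f} := \mathbf{f}_{1,0}$ has a two-sided inverse $\mathbf{g} := \mathbf{f}_{0,1} \in C_{\tr}^\infty(\R^{*d})_{\sa}^d$, which is also lower-triangular.

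Next I would upgrade the transport of potentials to transport of laws. By Proposition \ref{prop:convexDSE}, the unique free Gibbs law of any potential whose gradient lies in $\mathscr{J}_{a,c}^d$ is given by the expectation functional $\mathbb{E}_V$; since $\nabla V_t - \id$ is bounded and $\norm{\partial\nabla V_t - \Id}_{BC} \le C_2 < 1$, this applies to $V_0$ and to $V$. Proposition \ref{prop:changeofvariables} (the change-of-variables formula for free entropy) then gives $\mathbf{f}_* \mu_{V_0} = \mu_V$, i.e. $\mathbf{f}_* \lambda_{\mathbf{S}} = \lambda_{\mathbf{X}}$, where $\mathbf{S}$ is the standard semicircular tuple and $\mathbf{X}$ is as in the statement. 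Now I would invoke Observation \ref{obs:pushforwardisomorphism2}: since $\mathbf{f} \circ \mathbf{g} = \mathbf{g} \circ \mathbf{f} = \id$, there is an isomorphism of the tracial $\mathrm{W}^*$-algebras associated to $\lambda_{\mathbf{S}}$ and $\mathbf{f}_* \lambda_{\mathbf{S}} = \lambda_{\mathbf{X}}$, which restricts to an isomorphism of the associated $\mathrm{C}^*$-algebras; call it $\psi: (\cB,\sigma) \to (\cA,\tau)$, characterized by $\psi(S_j) = \mathbf{g}^{\cB,\sigma}(\mathbf{S})_j$ wait — more precisely, the inclusion map of Observation \ref{obs:pushforwardisomorphism} sends the canonical generators $X_j$ of $\mu_V$ to $\mathbf{f}^{\cA,\tau}(\mathbf{X}')$ in the notation there; here since $d = d'$ and $\mathbf{g}$ inverts $\mathbf{f}$, we get the two-sided isomorphism with $\mathbf{X} = \mathbf{f}^{\cB,\sigma}(\mathbf{S})$ and $\mathbf{S} = \mathbf{g}^{\cA,\tau}(\mathbf{X})$. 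Let $\phi = \psi^{-1}: (\cA,\tau) \to (\cB,\sigma)$, so $\phi(X_j) = \mathbf{g}^{\cA,\tau}(\mathbf{X})_j \in \mathrm{C}^*(\mathbf{X})$ maps to $S_j$, and conversely $\phi^{-1}(S_j) = X_j' := \mathbf{f}^{\cB,\sigma}(\mathbf{S})_j$.

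Finally, I would extract the triangular statement. Since $\mathbf{g} = (g_1,\dots,g_d)$ is lower-triangular, $g_j \in \iota_{j,d}(C_{\tr}^\infty(\R^{*j}))$, so $g_j^{\cA,\tau}(\mathbf{X})$ depends only on $X_1,\dots,X_j$ and hence lies in $\mathrm{C}^*(X_1,\dots,X_j)$; thus $\phi(\mathrm{C}^*(X_1,\dots,X_j)) \subseteq \mathrm{C}^*(S_1,\dots,S_j)$ (as $\mathrm{C}^*(X_1,\dots,X_j) = \mathrm{C}^*(g_1(\mathbf{X}),\dots,g_j(\mathbf{X}))$ follows from $\mathbf{f}$ being the inverse of $\mathbf{g}$ and also lower-triangular). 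The reverse inclusion is symmetric using that $\mathbf{f}$ is lower-triangular and $\phi^{-1}(S_i) = f_i^{\cB,\sigma}(\mathbf{S})$ depends only on $S_1,\dots,S_i$, together with $\mathrm{C}^*(S_1,\dots,S_j) = \mathrm{C}^*(f_1(\mathbf{S}),\dots,f_j(\mathbf{S}))$; hence $\phi(\mathrm{C}^*(X_1,\dots,X_j)) = \mathrm{C}^*(S_1,\dots,S_j)$ for each $j$. The last sentence of the corollary, about the internal reduced free product decomposition, then follows because $\mathbf{S} = (S_1,\dots,S_d)$ is by definition a freely independent family, so $\mathrm{C}^*(S_1,\dots,S_j)$ and $\mathrm{C}^*(S_{j+1},\dots,S_d)$ are free inside $(\cB,\sigma)$ and generate it; transporting back through $\phi^{-1}$ gives the claim with $\mathrm{C}^*(\phi^{-1}(S_{j+1}),\dots,\phi^{-1}(S_d))$. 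I expect the main obstacle to be purely bookkeeping: verifying carefully that the triangular structure of $\mathbf{f}$ and $\mathbf{g}$ passes correctly through the evaluation maps and the identification of $\mathrm{C}^*$-subalgebras (in particular that $\mathrm{C}^*(X_1,\dots,X_j) = \mathrm{C}^*(g_1(\mathbf{X}),\dots,g_j(\mathbf{X}))$, which uses that $\mathbf{f}$ restricted to the first $j$ coordinates inverts $\mathbf{g}$ restricted to the first $j$ coordinates — a consequence of lower-triangularity), and confirming that Observation \ref{obs:pushforwardisomorphism2} gives exactly the generator-respecting isomorphism needed. The analytic heavy lifting has all been done in Theorem \ref{thm:triangulartransport} and Proposition \ref{prop:changeofvariables}.
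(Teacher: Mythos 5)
Your proposal is correct and follows essentially the same route the paper intends: the corollary is deduced exactly as in Observation \ref{obs:transportapplication} and Corollary \ref{cor:transportapplication}, by running Theorem \ref{thm:triangulartransport} along the linear path $V_t = (1-t)\tfrac12\ip{\mathbf{x},\mathbf{x}}_{\tr} + tV$, pushing forward the free Gibbs laws via Propositions \ref{prop:convexDSE} and \ref{prop:changeofvariables}, and reading off the triangular $\mathrm{C}^*$-inclusions from the lower-triangularity of $\mathbf{f}_{1,0}$ and its inverse via Observation \ref{obs:pushforwardisomorphism2}. The only blemish is the momentary direction mix-up ($\phi(X_j)$ should be $f_j^{\cB,\sigma}(\mathbf{S})$, while $\phi(g_j^{\cA,\tau}(\mathbf{X})) = S_j$), which you effectively correct in the subsequent bookkeeping, so the argument stands as written.
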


\section{Equations on the free Wasserstein manifold} \label{sec:applications}

In this section, we compute the derivatives of certain functions on $\mathscr{W}(\R^{*d})$.

\subsection{Differentiation of the expectation map}

If $\mathscr{F}$ is a function from $\mathscr{W}(\R^{*d})$ to some topological vector space, then we will denote the $k$th iterated directional derivative with respect to $V$ in tangent directions $\dot{V}_1$, \dots, $\dot{V}_k$ by
\[
\delta^k \mathscr{F}(V)[\dot{V}_1,\dots,\dot{V}_k],
\]
whenever such a derivative makes sense.  If $\mathscr{F}: \mathscr{W}(\R^{*d}) \to \C$ and there is a function $\mathscr{G}$ mapping a potential $V$ to some element $\mathscr{G}(V) \in T_V \mathscr{W}(\R^{*d})$ that satisfies
\[
\delta \mathscr{F}(V)[\dot{V}] = \ip{\dot{V},\mathscr{G}(V)}_{T_V \mathscr{W}(\R^{*d})},
\]
then it is natural we say that $\mathscr{G}(V)$ is \emph{a gradient} for $\mathscr{F}$.  Due to the degeneracy of $\ip{\cdot,\cdot}_{T_V\mathscr{W}(\R^{*d})}$ we do not expect gradients to be unique.  However, in some circumstances there may turn out to be a canonical choice of gradient that the describes the large $N$ limit of the gradients associated to the random matrix models in the sense of \S \ref{subsec:matrixapproximation}.

The most basic functional we can try to differentiate is $V \mapsto \tilde{\mu}_V(g)$ for a  fixed $g \in \tr(C_{\tr}^\infty(\R^{*d}))$.  The next lemma is a precise version of the statement that
\[
\delta[\tilde{\mu}_V(g)][\dot{V}] = \ip{\dot{V}, L_V g}_{T_V \mathscr{W}(\R^{*d})},
\]
or that $V \mapsto L_V g$ is a gradient for the expectation functional of $g$.

\begin{proposition} \label{prop:differentiateexpectation}
	Suppose $t \mapsto V_t$ is a tangent vector to $V = V_0$ in $\mathscr{W}(\R^{*d})$.  Assume that each $V_t$ satisfies Assumption \ref{ass:freeGibbs} and that $V$ satisfies Assumption \ref{ass:Laplacian} and that for some fixed $R$, we have $\mu_{V_t} \in \Sigma_{d,R}$ for all $t$.  Then
	\[
	\frac{d}{dt} \Bigr|_{t=0} \tilde{\mu}_{V_t}(g) = -\tilde{\mu}_V[\ip{\nabla \dot{V}_0, \nabla \Psi_V g}_{\tr}] = \ip{\dot{V}_0, L_V g}_{T_V \mathscr{W}(\R^{*d})}.
	\]
\end{proposition}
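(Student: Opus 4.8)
The plan is to differentiate the two-variable identity that characterizes $\mu_{V_t}$ and then use the pseudo-inverse of the Laplacian to rewrite the answer. First I would introduce a convenient common ground for all the $\mu_{V_t}$: since all of them lie in $\Sigma_{d,R}$ and the path $t\mapsto V_t$ is continuously differentiable into $\tr(C_{\tr}^\infty(\R^{*d}))_{\sa}$, I would like to express the scalar $\tilde\mu_{V_t}(g)$ as a function of $t$ in an explicit enough way that the derivative at $0$ can be computed. The natural input is the Dyson--Schwinger equation from Assumption~\ref{ass:freeGibbs} (equation~\eqref{eq:DSE0}), together with the fact, from Assumption~\ref{ass:Laplacian} and Proposition~\ref{prop:Laplacianrelations2}(2), that every mean-zero element of $\tr(C_{\tr}^\infty(\R^{*d}))$ lies in $\im(L_{V_t}) = \im(\nabla_{V_t}^*\nabla)$. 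Concretely, write $g = \tilde\mu_V(g) + L_V(-\Psi_V g)$ using the fundamental relation $-L_V\Psi_V + \mathbb{E}_V = \id$ from Theorem~\ref{thm:Vheatsemigroup1}(4) (restated as Proposition~\ref{prop:Laplacianrelations}).

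The core computation proceeds as follows. Set $\psi := \Psi_V g \in \tr(C_{\tr}^\infty(\R^{*d}))$, so that $g - \tilde\mu_V(g) = -L_V\psi = \nabla_V^*\nabla\psi$. Now for each $t$ consider the quantity $\tilde\mu_{V_t}\bigl(\nabla_{V_t}^*\nabla\psi\bigr)$, which vanishes identically in $t$ by the Dyson--Schwinger equation for $V_t$. Expanding $\nabla_{V_t}^* = \nabla_V^* + (\partial V_t - \partial V)\#(\cdot)$, we get
\[
0 = \tilde\mu_{V_t}\bigl(\nabla_V^*\nabla\psi\bigr) + \tilde\mu_{V_t}\bigl((\partial V_t - \partial V)\#\nabla\psi\bigr)
= \tilde\mu_{V_t}\bigl(g - \tilde\mu_V(g)\bigr) + \tilde\mu_{V_t}\bigl((\partial V_t - \partial V)\#\nabla\psi\bigr).
\]
Rearranging and differentiating at $t=0$: the first term contributes $\frac{d}{dt}\big|_{t=0}\tilde\mu_{V_t}(g)$ (the constant $\tilde\mu_V(g)$ drops out since $t\mapsto \tilde\mu_{V_t}(1) = 1$), and the second term contributes $\tilde\mu_V\bigl(\partial\dot V_0\#\nabla\psi\bigr) = \tilde\mu_V\bigl(\ip{\nabla\dot V_0,\nabla\psi}_{\tr}\bigr)$, using $\partial\dot V_0\#\nabla\psi = \ip{\nabla\dot V_0,\nabla\psi}_{\tr}$ from Definition~\ref{def:gradient}. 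This yields
\[
\frac{d}{dt}\Bigr|_{t=0}\tilde\mu_{V_t}(g) = -\tilde\mu_V\bigl(\ip{\nabla\dot V_0,\nabla\Psi_V g}_{\tr}\bigr),
\]
and the second equality in the statement is then immediate from the definition of $\ip{\cdot,\cdot}_{T_V\mathscr{W}(\R^{*d})}$ together with Proposition~\ref{prop:Laplacianrelations3}(3) (self-adjointness of $\Psi_V$ under $\tilde\mu_V\circ\ip{\cdot,\cdot}_{\tr}$) and the relation $-\nabla L_V\Psi_V\dot V_0 = \nabla(\dot V_0 - \tilde\mu_V(\dot V_0))$, which lets us replace $\nabla\Psi_V L_V g$-type expressions appropriately; more directly, $\ip{\dot V_0, L_V g}_{T_V\mathscr{W}} = \tilde\mu_V(\ip{\nabla\Psi_V\dot V_0,\nabla\Psi_V L_V g}_{\tr})$ and $\nabla\Psi_V L_V g = -\nabla(g - \tilde\mu_V(g)) = -\nabla g + 0$... here one uses $\Psi_V L_V g = \tilde\mu_V(g) - g$ and then Proposition~\ref{prop:Laplacianrelations3}(3) twice to move operators across the inner product.

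The main obstacle I anticipate is \emph{justifying the differentiation under the functional $\tilde\mu_{V_t}$}: we need to know that $t\mapsto \tilde\mu_{V_t}(h)$ is differentiable at $0$ with derivative linear in $h$ for $h$ ranging over the relevant functions (namely $g$ and $\ip{\nabla\dot V_\bullet,\nabla\psi}_{\tr}$-type expressions), and that the two-variable expansion of $\nabla_{V_t}^*$ combines with this differentiability legitimately. The hypothesis $\mu_{V_t}\in\Sigma_{d,R}$ for a fixed $R$ gives weak-$\star$ precompactness and uniform exponential bounds, so it suffices to establish \emph{continuity} of $t\mapsto\mu_{V_t}$ in $\Sigma_{d,R}$ plus a one-sided differentiability estimate; continuity follows because any weak-$\star$ limit point $\mu_*$ of $\mu_{V_{t_n}}$ satisfies the Dyson--Schwinger equation for $V = \lim V_{t_n}$ (the map $(W,\nu)\mapsto \tilde\nu(\nabla_W^*\mathbf h)$ is jointly continuous on $\tr(C_{\tr}^\infty)\times\Sigma_{d,R}$ for fixed smooth $\mathbf h$), hence $\mu_* = \mu_V$ by uniqueness in Assumption~\ref{ass:freeGibbs}. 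For differentiability one tests against $\mathbf h = \nabla\Psi_V g$ as above; the error terms are controlled using the $C^1$-dependence of $t\mapsto V_t$ and the exponential bound $R$. I would present this continuity/differentiability argument carefully as the technical heart of the proof, and treat the algebraic manipulation with $\Psi_V$, $L_V$, $\nabla$, $\nabla_V^*$ as routine given Propositions~\ref{prop:Laplacianrelations2} and~\ref{prop:Laplacianrelations3}.
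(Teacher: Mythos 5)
Your proposal is correct and follows essentially the same route as the paper: write $g - \tilde{\mu}_V(g) = \nabla_V^*\nabla\Psi_V g$, compare $\nabla_V^*$ with $\nabla_{V_t}^*$ so that the Dyson--Schwinger equation for $V_t$ kills the leading term and leaves $\tilde{\mu}_{V_t}(g)-\tilde{\mu}_V(g) = -\tilde{\mu}_{V_t}[\ip{\nabla V_t - \nabla V, \nabla\Psi_V g}_{\tr}]$, pass to the limit using the uniform exponential bound $R$, and finish with the symmetry of $\Psi_V$ from Proposition \ref{prop:Laplacianrelations3}(3). The only (harmless) divergence is your compactness-plus-uniqueness argument for continuity of $t\mapsto\mu_{V_t}$; the paper gets this more directly, since the displayed identity itself already shows $\tilde{\mu}_{V_t}(g)\to\tilde{\mu}_V(g)$ for every smooth scalar-valued $g$.
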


\begin{remark}
	Our previous results show that all the assumptions of the lemma are satisfied if $\nabla V_t$ is uniformly bounded and $\partial \nabla V_t$ is uniformly bounded by a constant strictly less than $1$.
\end{remark}

\begin{proof}[Proof of Proposition \ref{prop:differentiateexpectation}]
	Since $V$ satisfies Assumption \ref{ass:Laplacian}, we have
	\[
	g - \tilde{\mu}_V[g] = \nabla_V^* \nabla \Psi_V g = \nabla_{V_t}^* \nabla \Psi_V g - \ip{\nabla V_t - \nabla V, \nabla \Psi_V g}_{\tr}.
	\]
	When we apply $\tilde{\mu}_t$, the term $\nabla_{V_t}^* \nabla \Psi_V g$ will vanish, and thus,
	\[
	\tilde{\mu}_{V_t}[g] - \tilde{\mu}_V[g] = - \tilde{\mu}_{V_t}[\ip{\nabla V_t - \nabla V, \nabla \Psi_V g}_{\tr}].
	\]
	Now $\nabla V_t - \nabla V \to 0$ in $C_{\tr}^\infty(\R^{*d})^d$ as $t \to 0$.  Since we assumed $\mu_{V_t} \in \Sigma_{d,R}$ for all $t$, this implies that $\tilde{\mu}_{V_t}[\ip{\nabla V_t - \nabla V, \nabla \Psi_V g}_{\tr}] \to 0$.  Since $f$ was an arbitrary smooth scalar-valued function, we therefore have $\mu_{V_t} \to \mu_V$ as $t \to 0$.  Thus,
	\[
	\lim_{t \to 0} \frac{\tilde{\mu}_{V_t}[g] - \tilde{\mu}_V[g]}{t} = -\lim_{t \to 0} \tilde{\mu}_{V_t}\left[\ip*{\frac{\nabla V_t - \nabla V}{t}, \nabla \Psi_V g}_{\tr}\right] = -\tilde{\mu}_V[\ip{\nabla \dot{V}_0,\nabla \Psi_V g}_{\tr}].
	\]
	It follows from Proposition \ref{prop:Laplacianrelations3} (3) that
	\[
	-\ip{\nabla \dot{V}_0,\nabla \Psi_V g}_{\tr} = -\ip{\nabla \Psi_V \dot{V}_0,\nabla g}_{\tr} = \ip{\dot{V}_0, L_V g}_V.
	\]
\end{proof}

\begin{remark}
	There is another heuristic in terms of infinitesimal transport for why this identity is true.  Suppose that $V_t = (\mathbf{f}_t)_*V$.  Then we expect that $\mu_{V_t} = (\mathbf{f}_t)_* \mu_V$.  Hence,
	\[
	\frac{d}{dt} \Bigr|_{t=0} \tilde{\mu}_{V_t}(g) = \frac{d}{dt} \Bigr|_{t=0} \tilde{\mu}_V(g \circ \mathbf{f}_t) = \tilde{\mu}_V[\ip{\dot{\mathbf{f}}_t,\nabla g}_{\tr}] = \tilde{\mu}_V[\ip{\mathbb{P}_V \dot{\mathbf{f}}_0,\nabla g}_{\tr}] = -\tilde{\mu}_V[\ip{\nabla \Psi_V \dot{V}_0,\nabla g}_{\tr}].
	\]
\end{remark}

\subsection{Heat flow and entropy dissipation}

\begin{definition}
	The \emph{heat flow for non-commutative log-densities} is the equation $\dot{V}_t = L_{V_t} V_t = L V_t - \ip{\nabla V_t, \nabla V_t}_{\tr}$ for some smooth map $t \mapsto V_t: [0,\infty) \to \mathscr{W}(\R^{*d})$, where $\dot{V}_t$ denotes the time-derivative.
\end{definition}

As in \cite{Jekel2018}, this equation describes the large $N$ limit of the equation that a function $V_t^{(N)}$ on $M_N(\C)_{\sa}^d$ satisfies when $\partial_t[e^{-N^2 V_t^{(N)}}] = (1/N^2) \Delta[e^{-N^2 V_t^{(N)}}]$.  Following the classical works of \cite{Otto2001} and \cite{OV2000}, we will explain why the heat equation can be viewed as the gradient flow on $\mathscr{W}(\R^{*d})$ of the entropy functional.  We remark that past work on the single variable case has studied the gradient flow for free entropy as a functional on $\mathcal{P}(\R)$ with the Wasserstein metric, which leads to a free Fokker-Planck equation or McKean-Vlasov equation \cite{LLX2020}.

Fix $\omega \in \beta \N \setminus \N$.  For $V$ satisfying Assumption \ref{ass:freeGibbs}, we can consider the functional $\mathscr{X}(V) := \chi^\omega(\mu_V)$.  More properly in the notation of \S \ref{sec:freeGibbslaws}, we should write $I(\mu_V)$ rather than $\mu_V$, but since the meaning is clear, we will simplify the notation hereafter.  The functional $\mathscr{X}$ is the analog of the classical entropy of the free Gibbs law associated to a potential $V$; for a precise relation between the free entropy and classical entropy of random matrix models, see \cite{Jekel2018} or \cite[\S 16.1]{JekelThesis}.   Based on the classical case, the natural guess for the derivative of $\mathscr{X}$ is
\[
\delta \mathscr{X}(V)[\dot{V}] = \ip{L_V V, \dot{V}}_{T_V\mathscr{W}(\R^{*d})},
\]
that is to say, $V \mapsto L_V V$ is a gradient for $\mathscr{X}$.  We will only prove this in the case where the tangent vector $t \mapsto V_t$ is given by transport.

\begin{proposition} \label{prop:differentiateentropy}
	Let $V \in \mathscr{W}(\R^{*d})$ with bounded first and second derivatives and let $V_t = (\mathbf{f}_t)_* V$, where $t \mapsto \mathbf{f}_t$ is a tangent vector to $\id$.  Suppose that $V_t$ satisfies Assumption \ref{ass:freeGibbs} for all $t$, and assume that $\partial^2 \mathbf{f}_t$ and $\partial^2 \mathbf{f}_t^{-1}$ are bounded.  Then
	\[
	\frac{d}{dt} \Bigr|_{t=0} \mathscr{X}(V_t) = \ip{L_V V, \dot{V}_0}_{T_V \mathscr{W}(\R^{*d})}.
	\]
\end{proposition}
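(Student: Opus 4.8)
The plan is to reduce the claim to the change-of-variables formula for free entropy (Proposition \ref{prop:changeofvariables}), combined with the computation of the derivative of $\log \Delta_\#$ along the flow (as in Lemma \ref{lem:groupactiontangent} and Proposition \ref{prop:integrationbyparts}). First I would set $\mathbf{h} = \dot{\mathbf{f}}_0$, so that by Lemma \ref{lem:groupactiontangent} the tangent vector is $\dot{V}_0 = -\nabla_V^* \mathbf{h}$. Since $V_t = (\mathbf{f}_t)_* V = V \circ \mathbf{f}_t^{-1} - \log \Delta_\#(\partial \mathbf{f}_t^{-1})$, and $\mathbf{f}_t$ has bounded first and second derivatives (hence $\mathbf{f}_t \in \BDiff_{\tr}^2(\R^{*d})$), we have by \eqref{eq:COV3} that
\[
\chi^\omega(\mu_{V_t}) = \chi^\omega((\mathbf{f}_t)_* \mu_V) = \chi^\omega(\mu_V) + \tilde{\mu}_V[\log \Delta_\#(\partial \mathbf{f}_t)],
\]
where I am using that $(\mathbf{f}_t)_* \mu_V = \mu_{V_t}$ (which follows from Proposition \ref{prop:changeofvariables} together with Assumption \ref{ass:freeGibbs}, or from Proposition \ref{prop:changeofvariables}'s statement that $\mathbf{f}_*$ preserves the property of being a free Gibbs law, combined with the uniqueness hypothesis). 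So the derivative of $\mathscr{X}(V_t) = \chi^\omega(\mu_{V_t})$ reduces to computing $(d/dt)|_{t=0} \tilde{\mu}_V[\log \Delta_\#(\partial \mathbf{f}_t)]$.

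Next I would compute this derivative. Exactly as in the proof of Lemma \ref{lem:groupactiontangent} (or the Taylor expansion in the proof of Proposition \ref{prop:integrationbyparts}, where it is shown that $(1/t)\log \Delta_\#(\partial \mathbf{f}_t) \to \Tr_\#(\partial \mathbf{h})$ in $\mathcal{C}$), we get
\[
\frac{d}{dt} \Bigr|_{t=0} \log \Delta_\#(\partial \mathbf{f}_t) = \Tr_\#(\partial \mathbf{h}),
\]
with convergence in a topology strong enough (e.g.\ the $\mathcal{C}$-topology, using that $\partial^2 \mathbf{f}_t$ is bounded) to justify differentiating under $\tilde{\mu}_V$; here one also needs that $\mu_{V_t}$ stays in a fixed $\Sigma_{d,R}$, which follows from Theorem \ref{thm:magicnormbound} applied to the $V_t$ (their first and second derivatives being uniformly bounded since $V$ and the $\mathbf{f}_t$, $\mathbf{f}_t^{-1}$ have bounded first two derivatives, cf.\ Remark \ref{rem:IBPhypotheses}). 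Therefore
\[
\frac{d}{dt} \Bigr|_{t=0} \mathscr{X}(V_t) = \tilde{\mu}_V[\Tr_\#(\partial \mathbf{h})].
\]

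Finally I would identify $\tilde{\mu}_V[\Tr_\#(\partial \mathbf{h})]$ with $\ip{L_V V, \dot{V}_0}_{T_V \mathscr{W}(\R^{*d})}$. By definition of $\nabla_V^*$ we have $\Tr_\#(\partial \mathbf{h}) = \partial V \# \mathbf{h} - \nabla_V^* \mathbf{h} = \ip{\nabla V, \mathbf{h}}_{\tr} + \dot{V}_0$. Applying $\tilde{\mu}_V$ and using the Dyson-Schwinger equation \eqref{eq:DSE0} in the form $\tilde{\mu}_V[\nabla_V^* \mathbf{h}] = 0$ (so $\tilde{\mu}_V[\dot{V}_0] = 0$, consistent with $\dot{V}_0$ being a legitimate tangent vector), I get $\tilde{\mu}_V[\Tr_\#(\partial \mathbf{h})] = \tilde{\mu}_V[\ip{\nabla V, \mathbf{h}}_{\tr}]$. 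Now by Proposition \ref{prop:Laplacianrelations3}(4), $\mathbf{h}$ is equivalent modulo $\ker(\nabla_V^*)$ to $\mathbb{P}_V \mathbf{h} = \nabla \Psi_V \nabla_V^* \mathbf{h} = -\nabla \Psi_V \dot{V}_0$, and $\nabla V - \mathbb{P}_V(\nabla V)$ lies in... more directly: write $\tilde{\mu}_V[\ip{\nabla V, \mathbf{h}}_{\tr}] = \tilde{\mu}_V[\ip{\nabla V, \mathbb{P}_V \mathbf{h}}_{\tr}]$ using self-adjointness of $\mathbb{P}_V$ (Proposition \ref{prop:Laplacianrelations3}(5)) together with the fact that $\nabla V = \mathbb{P}_V(\nabla V)$ is itself a gradient, hence $\tilde{\mu}_V[\ip{(1-\mathbb{P}_V)\mathbf{h}, \nabla V}_{\tr}] = 0$ by Proposition \ref{prop:Laplacianrelations3}(4). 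Then $\mathbb{P}_V \mathbf{h} = -\nabla \Psi_V \dot{V}_0$, so $\tilde{\mu}_V[\ip{\nabla V, \mathbf{h}}_{\tr}] = -\tilde{\mu}_V[\ip{\nabla V, \nabla \Psi_V \dot{V}_0}_{\tr}]$. On the other hand, by Proposition \ref{prop:Laplacianrelations3}(3), $\ip{L_V V, \dot{V}_0}_{T_V\mathscr{W}(\R^{*d})} = \tilde{\mu}_V[\ip{\nabla \Psi_V L_V V, \nabla \Psi_V \dot{V}_0}_{\tr}] = -\tilde{\mu}_V[\ip{\nabla V, \nabla \Psi_V \dot{V}_0}_{\tr}]$ since $\nabla \Psi_V L_V V = -\nabla V$ (because $L_V V \in \ker(\tilde{\mu}_V)$ after subtracting its expectation, and $\Psi_V L_V V = V - \tilde{\mu}_V(V)$ up to the constant which $\nabla$ kills). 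Matching the two expressions completes the proof. The main obstacle I anticipate is the analytic bookkeeping in the second step: justifying that the derivative of $\tilde{\mu}_V[\log \Delta_\#(\partial \mathbf{f}_t)]$ at $t = 0$ is $\tilde{\mu}_V[\Tr_\#(\partial \mathbf{h})]$ requires controlling the $O(t^2)$ Taylor remainder uniformly and knowing $\mu_{V_t}$ stays in a fixed norm-ball so that evaluation against $\tilde{\mu}_{V_t}$ is continuous — this is where the boundedness hypotheses on $\partial^2 \mathbf{f}_t$, $\partial^2 \mathbf{f}_t^{-1}$ and on $\partial^2 V$ are essential, and it parallels (but is slightly more delicate than) the corresponding step in Proposition \ref{prop:integrationbyparts}.
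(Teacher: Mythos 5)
Your proposal is correct and follows essentially the same route as the paper: identify $\mu_{V_t} = (\mathbf{f}_t)_*\mu_V$ via the change-of-variables formula and uniqueness of free Gibbs laws, reduce to differentiating $\tilde{\mu}_V[\log\Delta_\#(\partial\mathbf{f}_t)]$ as in Lemma \ref{lem:groupactiontangent}, and then convert $\tilde{\mu}_V[\Tr_\#(\partial\dot{\mathbf{f}}_0)]$ into $\ip{L_V V,\dot{V}_0}_{T_V\mathscr{W}(\R^{*d})}$ using the Dyson--Schwinger equation and Proposition \ref{prop:Laplacianrelations3}. The only (harmless) extra is your worry about $\mu_{V_t}$ staying in a fixed $\Sigma_{d,R}$: since \eqref{eq:COV3} expresses everything against the fixed functional $\tilde{\mu}_V$, one only needs differentiability of $t\mapsto\log\Delta_\#(\partial\mathbf{f}_t)$ in a topology on which $\tilde{\mu}_V$ is continuous, which is exactly the Taylor-remainder control you cite.
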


\begin{proof}
	By Theorem \ref{thm:magicnormbound}, any free Gibbs law for $V$ is actually a non-commutative law (it is exponentially bounded).  Since $\mu_V$ is the unique non-commutative law satisfying the Dyson-Schwinger equation by assumption, it is the unique free Gibbs law for $V$.  Hence, by Proposition \ref{prop:changeofvariables}, $(\mathbf{f}_t)_* \mu_V$ is the unique free Gibbs law for $V_t$, so it satisfies the Dyson-Schwinger equation and thus $(\mathbf{f}_t)_* \mu_V = \mu_{V_t}$.  By Proposition \ref{prop:changeofvariables} again,
	\[
	\chi^\omega((\mathbf{f}_t)_* \mu_V) = \chi^\omega(\mu_V) + \tilde{\mu}_V[\log \Delta_{\#}(\partial \mathbf{f}_t)].
	\]
	Hence, using the Dyson-Schwinger equation and Proposition \ref{prop:Laplacianrelations3},
	\begin{align*}
		\frac{d}{dt} \Bigr|_{t=0} \chi^\omega((\mathbf{f}_t)_* \mu_V) &= \tilde{\mu}_V \circ \Tr_{\#}(\partial \dot{\mathbf{f}}_0) \\
		&= \tilde{\mu}_V[\ip{\nabla V, \dot{\mathbf{f}}_0}_{\tr}] \\
		&= \tilde{\mu}_V[\ip{\nabla V, \mathbb{P}_V \dot{\mathbf{f}}_0}_{\tr}] \\
		&= -\tilde{\mu}_V[\ip{\nabla V,\nabla \Psi_V \dot{V}_0}_{\tr}] \\
		&= \ip{L_V V, \dot{V}_0}_{T_V \mathscr{W}(\R^{*d})}.  \qedhere
	\end{align*}
\end{proof}

Using $L_V V$ as a (conjectural) gradient of the entropy functional $\mathscr{X}(V)$, the (upward) gradient flow of $\mathscr{X}(V)$ is given by the heat equation $\dot{V}_t = L_{V_t} V_t$.  Solutions to the corresponding equation on $M_N(\C)_{\sa}^d$ were studied in the large $N$ limit by \cite{Jekel2018} under the assumption that $V_0$ was uniformly convex and semi-concave.  In the paper, the equation was viewed as a ``mixture'' of the flat heat equation $\dot{V}_t = L V_t$, which can be solved explicitly using free Brownian motion, and the Hamilton-Jacobi equation $\dot{V}_t = -\ip{\nabla V_t, \nabla V_t}_{\tr}$, which can be solved using the Hopf-Lax inf-convolution semigroup.  The earlier approach of Dabrowski \cite{Dabrowski2017} applied the Clark-Ocone formula to study the solution on matrices through a stochastic optimization problem.  In the non-commutative setting, there are subtle technical questions about which stochastic processes to optimize over (and in particular, in what von Neumann algebra these stochastic processes live in).

The derivative of entropy along this gradient flow is computed in the same way as for the classical Wasserstein manifold, namely,
\[
\frac{d}{dt} \mathscr{X}(V_t) = \ip{L_{V_t} V_t, \dot{V}_t}_{T_{V_t} \mathscr{W}(\R^{*d})} = \ip{L_{V_t} V_t, L_{V_t} V_t}_{T_{V_t} \mathscr{W}(\R^{*d})} = \tilde{\mu}_{V_t}[\ip{\nabla V_t,\nabla V_t}_{\tr}].
\]
The right-hand side (under suitable assumptions) is the \emph{free Fisher information} of $V_t$; see \cite[16.2]{JekelThesis}.  This is the motivation for Voiculescu's definition of the free Fisher information and free entropy $\chi^*$ in \cite{VoiculescuFE5}.  Of course, it is challenging to make this computation rigorous for general $V$; for further discussion, see \cite{BCG2003}, \cite{Dabrowski2017}, \cite{Jekel2018}, \cite{JekelThesis}.

Since $\dot{V}_t = L_{V_t} V_t = -\nabla_{V_t}^* \nabla V_t$, in light of Lemma \ref{lem:transport0}, there is a natural family of transport maps $\mathbf{f}_t$ associated to the path $t \mapsto V_t$ given by
\[
\mathbf{f}_t = \id + \int_0^t \nabla V_u \circ \mathbf{f}_u\,du.
\]
These equations were used in \cite{JekelExpectation} and \cite[\S 17]{JekelThesis} to construct transport in the non-commutative setting.  Of course, the classical analog of these equations has been well-studied, since it comes naturally out of Lafferty's insight that the transport provides local coordinates for the Wasserstein manifold \cite[\S 3]{Lafferty1988} and Otto's result that the heat equation is the gradient flow of the entropy functional \cite{Otto2001}.  The transport maps arising from the gradient flow were also used by Otto and Villani in their proof of the Talagrand inequality \cite[Theorem 1]{OV2000}.

More generally, one can write down the gradient flow of the relative entropy functional
\[
\mathscr{X}_W(V) := \chi_W^\omega(\mu_V) = \chi^\omega(\mu_V) - \tilde{\mu}_V(W).
\]
Using Proposition \ref{prop:differentiateexpectation}, the natural guess is that
\[
\delta \mathscr{X}_W(V)[\dot{V}] = \ip{L_V V, \dot{V}}_{T_V\mathscr{W}(\R^{*d})} - \ip{\dot{V}, L_V W}_{T_V\mathscr{W}(\R^{*d})},
\]
that is, that $V \mapsto L_V[V - W]$ is a gradient for $\mathscr{X}_W$.  The gradient flow thus becomes
\[
\dot{V}_t = L_{V_t}[V_t - W] = LV_t - LW - \ip{\nabla V_t, \nabla V_t}_{\tr} + \ip{\nabla W, \nabla V_t}_{\tr},
\]
and the vector field for constructing transport is $\nabla [V_t - W]$.  It would be very interesting to study this equation when $V_0 \in \mathscr{W}(\R^{*d})$ is arbitrary and $W$ is close to $(1/2) \ip{\mathbf{x},\mathbf{x}}_{\tr}$ in order to obtain a ``transport'' proof that $W$ satisfies the non-commutative Talagrand inequality, parallel to \cite{OV2000}; for an SDE proof of the free Talagrand inequality, see \cite{HU2006}.

The case where $W = (1/2) \ip{\mathbf{x},\mathbf{x}}_{\tr}$ was studied in \cite{JekelExpectation,JekelThesis}, and in fact the conditional version of the equation was used to construct triangular transport to the Gaussian case.  That paper was able to show $\mathrm{W}^*$ triangular transport using functions that were only approximated in uniform $\norm{\cdot}_2$ by trace polynomials rather than in uniform $\norm{\cdot}_\infty$.  However, since many of the ingredients for that argument have been proved here with the new function spaces $C_{\tr}^k(\R^{*d})$, it is likely that the same argument would work to produce $\mathrm{C}^*$ triangular transport under the assumption that $\norm{\partial \nabla V - \Id}_{BC_{\tr}(\R^{*d},\mathscr{M}^1)}$ is bounded by some universal constant smaller than $1$.  That is, it is likely unnecessary to assume bounds on the third derivatives to obtain the result of Corollary \ref{cor:triangulartransport}.

\subsection{Geodesic equation and optimal transport} \label{subsec:geodesic}

\begin{definition}[Geodesic equation]
	The \emph{geodesic equation on $\mathscr{W}(\R^{*d})$} is the pair of equations
	\[
	\left\{ \begin{aligned}
		\dot{V}_t &= L_{V_t} \phi_t \\
		\dot{\phi}_t &= -\frac{1}{2} \ip{\nabla \phi_t, \nabla \phi_t}_{\tr}.
	\end{aligned} \right.
	\]
	The first equation is called the \emph{continuity equation} and the second one is called the \emph{Hamilton-Jacobi equation}.
\end{definition}

This equation arises as the large $N$ limit of the geodesic equation for densities $e^{-N^2V^{(N)}}$ on $M_N(\C)_{\sa}^d$ after expressing it in log-density coordinates and using the normalized Laplacian $(1/N^2) \Delta$ and renormalization of time.  Moreover, we could formally derive it as a Hamiltonian flow as in the classical case (Lemma \ref{lem:Hamiltonian}), relying on Proposition \ref{prop:differentiateexpectation} to differentiate $\tilde{\mu}_V[\ip{\nabla \phi, \nabla \phi}_{\tr}]$ with respect to $V$.  At present, in order to highlight the connections with optimal transport, we will give a heuristic derivation based on minimizing length, which is closely parallel to the classical case (and also related to the Hamiltonian formulation).

Consider a smooth path $[0,T] \to \mathscr{W}(\R^{*d}): t \mapsto V_t$ such that $V_t$ satisfies Assumptions \ref{ass:freeGibbs} and \ref{ass:Laplacian}.  With appropriate continuity assumptions, it makes sense to write down
\[
\int_0^T \ip{\dot{V}_t,\dot{V}_t}_{T_V\mathscr{W}(\R^{*d})}\,dt.
\]
If the curve $t \mapsto V_t$ is a geodesic, then it should minimize this quantity over all paths with the start and end points $V_0$ and $V_T$.  Assume that $\tilde{\mu}_{V_t}[\dot{V}_t] = 0$, and let $\phi_t = -\Psi_{V_t} \dot{V}_t$ (plus an arbitrary constant), so that $L_{V_t} \phi_t = \dot{V}_t$.  Then
\[
\int_0^T \ip{\dot{V}_t,\dot{V}_t}_{T_V\mathscr{W}(\R^{*d})}\,dt = \int_0^T \tilde{\mu}_{V_t}[\ip{\nabla \phi_t,\nabla \phi_t}_{\tr}]\,dt.
\]
Assume we can solve the equation $\dot{\mathbf{f}}_t = \nabla \phi_t \circ \mathbf{f}_t$ to obtain a path of diffeomorphisms $\mathbf{f}_t$ satisfying $V_t = (\mathbf{f}_t)_* V_0$ as in Lemma \ref{lem:transport0}.  This implies under appropriate assumptions that $(\mathbf{f}_t)_* \mu_{V_0} = \mu_{V_t}$ by the same reasoning as in Proposition \ref{prop:heatequation}.
Then note that
\begin{align*}
	\int_0^T \tilde{\mu}_{V_t}[\ip{\nabla \phi_t, \nabla \phi_t}_{\tr}] \,dt
	&= \int_0^T ((\mathbf{f}_t)_* \tilde{\mu}_{V_0})[\ip{\nabla \phi_t, \nabla \phi_t}_{\tr}]\,dt \\
	&= \int_0^T \tilde{\mu}_{V_0}[\ip{\nabla \phi_t \circ \mathbf{f}_t, \nabla \phi_t \circ \mathbf{f}_t}_{\tr}]\,dt \\
	&= \int_0^T \tilde{\mu}_{V_0}[\ip{ \dot{\mathbf{f}}_t, \dot{\mathbf{f}}_t}_{\tr}]\,dt.
\end{align*}
Now we could have replaced $\nabla \phi_t$ by an arbitrary vector field $\mathbf{h}_t$ satisfying $-\nabla_{V_t}^* \mathbf{h}_t = 0$, and then the diffeomorphisms $\mathbf{g}_t$ generated as the flow along $\mathbf{h}_t$ would also satisfy $(\mathbf{g}_t)_* V_0 = V_t$.  However, since $\ker(\nabla_{V_t}^*)$ and $\im(\nabla)$ are orthogonal with respect to $\mu_{V_t}$, we would have
\[
\int_0^T \tilde{\mu}_{V_0}[\ip{ \dot{\mathbf{g}}_t, \dot{\mathbf{g}}_t}_{\tr}]\,dt = \int_0^T \tilde{\mu}_{V_t}[\ip{\mathbf{h}_t, \mathbf{h}_t}_{\tr}] \,dt \geq \int_0^T \tilde{\mu}_{V_t}[\ip{\nabla \phi_t, \nabla \phi_t}_{\tr}] \,dt.
\]
Thus, we expect that $\mathbf{f}_t$ minimizes $\int_0^T \tilde{\mu}_{V_0}[\ip{ \dot{\mathbf{f}}_t, \dot{\mathbf{f}}_t}_{\tr}]\,dt$ among all paths $\mathbf{f}_t$ of diffeomorphisms satisfying $\mathbf{f}_0 = \id$ and $(\mathbf{f}_T)_* V_0 = V_T$.

Next, we use minimality to show that $\ddot{\mathbf{f}}_t = 0$ in $L^2(\mu_{V_0})$.  Let $t \mapsto \mathbf{h}_t$ be a smooth  map $[0,T] \to C_{\tr}^\infty(\R^{*d})_{\sa}^d$ such that $\partial \mathbf{h}_t$ and $\partial^2 \mathbf{h}_t$ are uniformly bounded, $\mathbf{h}_0 = \mathbf{h}_T = 0$.  Let $\mathbf{g}_{t,\epsilon}$ be diffeomorphisms given by
\[
\frac{d}{d\epsilon} \mathbf{g}_{t,\epsilon} = \mathbf{h}_t \circ \mathbf{g}_{t,\epsilon}, \qquad \mathbf{g}_{t,0} = \id,
\]
or in other words $\mathbf{g}_{t,\epsilon} = \exp(\epsilon \mathbf{h}_t)$.  Note that $\mathbf{g}_{0,\epsilon} = \mathbf{g}_{T,\epsilon} = \id$. Using e.g.\ the integral equation for $\mathbf{g}_{t,\epsilon}$, one can show that $(t,\epsilon) \mapsto \mathbf{g}_{t,\epsilon}$ and $(t,\epsilon) \mapsto \mathbf{g}_{t,\epsilon} \circ \mathbf{f}_t$ are continuously differentiable maps into $C_{\tr}(\R^{*d})_{\sa}^d$, similar to classical ODE results on smooth dependence.  Therefore, by minimality
\begin{align*}
	0 &= \frac{d}{d\epsilon} \Bigr|_{\epsilon = 0} \int_0^T \tilde{\mu}_{V_0} \left[\ip*{\frac{d}{dt}[\mathbf{g}_{t,\epsilon} \circ \mathbf{f}_t],\frac{d}{dt}[\mathbf{g}_{t,\epsilon} \circ \mathbf{f}_t]}_{\tr}\right]\,dt \\
	&= 2 \int_0^T \tilde{\mu}_{V_0} \left[\ip*{\frac{d}{dt} \frac{d}{d\epsilon} \Bigr|_{\epsilon = 0}[\mathbf{g}_{t,\epsilon} \circ \mathbf{f}_t],\dot{\mathbf{f}}_t}_{\tr}\right]\,dt \\
	&= 2 \int_0^T \tilde{\mu}_{V_0} \left[\ip*{\frac{d}{dt}[\mathbf{h}_t \circ \mathbf{f}_t],\dot{\mathbf{f}}_t}_{\tr}\right]\,dt \\
	&= -2 \int_0^T \tilde{\mu}_{V_0} \left[\ip*{\mathbf{h}_t \circ \mathbf{f}_t,\ddot{\mathbf{f}}_t}_{\tr}\right]\,dt
\end{align*}
using integration by parts.  Since $\mathbf{h}_t$ is arbitrary except for its values at the endpoints and since $\mathbf{f}_t$ is invertible, we get that $\ddot{\mathbf{f}}_t = 0$ in $L^2(\mu_{V_0})$ for $t \in (0,T)$.

Due to degeneracy of the metric, this does not imply that $\ddot{\mathbf{f}}_t = 0$ in $C_{\tr}(\R^{*d})^d$.  Nonetheless, let us proceed to impose the condition $\ddot{\mathbf{f}}_t = 0$; although this is a leap of faith, it is plausible because the same equations would hold in the random matrix setting.  By computation
\begin{align*}
	\ddot{\mathbf{f}}_t &= \frac{d}{dt} [\nabla \phi_t \circ \mathbf{f}_t] \\
	&= \nabla \dot{\phi}_t \circ \mathbf{f}_t + [\partial \nabla \phi_t \circ \mathbf{f}_t] \# \dot{\mathbf{f}}_t \\
	&= \nabla \dot{\phi}_t \circ \mathbf{f}_t + [\partial \nabla \phi_t \circ \mathbf{f}_t] \# [\nabla \phi_t \circ \mathbf{f}_t] \\
	&= [\nabla \dot{\phi}_t + \partial \nabla \phi_t \# \nabla \phi_t] \circ \mathbf{f}_t.
\end{align*}
Hence,
\begin{equation} \label{eq:phiequation}
	\nabla \dot{\phi}_t + \partial \nabla \phi_t \# \nabla \phi_t = 0.
\end{equation}
But note that $\nabla \ip{\nabla \phi_t, \nabla \phi_t}_{\tr} = 2\partial \nabla \phi_t \# \nabla \phi_t$, which follows from the computation
\begin{align*}
	\ip{\nabla \ip{\nabla \phi_t^{\cA,\tau}(\mathbf{X}), \nabla \phi_t^{\cA,\tau}(\mathbf{X})}_\tau, \mathbf{Y} }_\tau
	&= \partial[\ip{\nabla \phi_t^{\cA,\tau}(\mathbf{X}), \nabla \phi_t^{\cA,\tau}(\mathbf{X})}_\tau][\mathbf{Y}] \\
	&= \ip{\nabla \phi_t^{\cA,\tau}(\mathbf{X}),\partial \nabla \phi_t^{\cA,\tau}(\mathbf{X})[\mathbf{Y}]}_\tau \\
	&= \ip{\partial \nabla \phi_t^{\cA,\tau}(\mathbf{X})[\nabla \phi_t^{\cA,\tau}(\mathbf{X})],\mathbf{Y}}_\tau,
\end{align*}
where we use the fact that $(\nabla \partial \phi)^{\varstar} = \nabla \partial \phi$ since $\phi$ is real-valued.  Therefore, \eqref{eq:phiequation} becomes
\[
\nabla \left[\dot{\phi}_t + \frac{1}{2} \ip{\nabla \phi_t, \nabla \phi_t}_{\tr} \right] = 0.
\]
Thus, we can modify $\phi_t$ by an additive constant (depending on $t$) to achieve that $\dot{\phi}_t = -(1/2) \ip{\nabla \phi_t,\nabla \phi_t}_{\tr}$.  This is exactly the Hamilton-Jacobi equation, so our derivation is complete.

If $\phi_t$ satisfies the Hamilton-Jacobi equation, the same computations show that $\ddot{\mathbf{f}}_t = 0$, and hence $\mathbf{f}_t = \id + t \dot{\mathbf{f}}_0 = \id + t \nabla \phi_0$.  Thus, $V_t = (\id + t \nabla \phi_t)_* V_0$ for some $\phi$, or in other words, a path in $\mathscr{W}(\R^{*d})$ that solves the geodesic equation is a displacement interpolation just as in the classical case.

However, does such a displacement interpolation actually minimize the Riemannian distance?  If $\mathbf{f}_t$ is any family of transport maps with $\mathbf{f}_0 = \id$ and $(\mathbf{f}_T)_* V_0 = V_T$, then (still assuming the validity of $(\mathbf{f}_T)_* \mu_{V_0} = \mu_{V_T}$)
\begin{align*}
	\tilde{\mu}_{V_0}[\ip{\mathbf{f}_T - \id, \mathbf{f}_T - \id}_{\tr}]^{1/2} &\leq \int_0^T \tilde{\mu}_{V_0}[\ip{\dot{\mathbf{f}}_t, \dot{\mathbf{f}}_t}_{\tr}]^{1/2}\,dt \\
	&\leq T^{1/2} \left( \int_0^T \tilde{\mu}_{V_0}[\ip{\dot{\mathbf{f}}_t, \dot{\mathbf{f}}_t}_{\tr}] \,dt \right)^{1/2} \\
	&= T^{1/2} \left( \int_0^T \ip{\dot{V}_t, \dot{V}_t}_{T_V \mathscr{W}(\R^{*d})} \,dt \right)^{1/2},
\end{align*}
and equality is achieved when $\dot{\mathbf{f}}_t$ is constant.  Hence, to show that a family of transport maps $\mathbf{f}_t$ is minimal, it suffices to show that $\mathbf{f}_T$ minimizes $\tilde{\mu}_{V_0}[\ip{\mathbf{f} - \id, \mathbf{f} - \id}_{\tr}]^{1/2}$ among all $\mathbf{f}$ with $\mathbf{f}_* \mu_{V_0} = \mu_{V_T}$.  And this is a much stronger condition since we could easily have $\mathbf{f}_* \mu_{V_0} = \mu_{V_1}$ without $\mathbf{f}_* V_0 = V_1$ due to the degeneracy of the Riemannian metric.

The quantity $\tilde{\mu}_{V_0}[\ip{\mathbf{f}_1 - \id, \mathbf{f}_1 - \id}_{\tr}]^{1/2}$ is related to the non-commutative $L^2$ Wasserstein distance of \cite{BV2001} defined as follows.

\begin{definition}[Non-commutative $L^2$ coupling distance] \label{def:couplingdistance}
	As in \cite{BV2001}, for $\mu$ and $\nu \in \Sigma_d$, we define
	\[
	d_{W,2}(\mu,\nu) = \inf \{\norm{\mathbf{X} - \mathbf{Y}}_2: \mathbf{X}, \mathbf{Y} \in \cA_{\sa}^d, (\cA,\tau) \in \mathbb{W}, \lambda_{\mathbf{X}} = \mu, \lambda_{\mathbf{Y}} = \nu \}.
	\]
	If $(\cA,\tau)$ and $\mathbf{X}$, $\mathbf{Y} \in \cA_{\sa}^d$ achieve the infimum above, then they are called an \emph{optimal coupling} of $\mu$ and $\nu$.
\end{definition}

\begin{remark}
	The existence of optimal couplings is immediate from compactness \cite[Proposition 1.4]{BV2001}.  Indeed, let $\Pi(\mu,\nu)$ be the set of $\pi \in \Sigma_{2d}$ such that the marginals on the first and last $d$ cooordinates are $\mu$ and $\nu$ respectively.  Then $\Pi(\mu,\nu)$ is contained in $\Sigma_{2d,R}$ and is compact.  Because $\pi \mapsto \ip{\mathbf{x} - \mathbf{y},\mathbf{x} - \mathbf{y}}_\pi^{1/2}$ is a continuous function on $\Pi(\mu,\nu)$, it achieves a minimum.  However, it is challenging in the non-commutative case to establish any regularity for the optimal coupling, and indeed we know that there are many non-isomorphic diffuse tracial $\mathrm{W}^*$-algebras \cite{Ozawa2004}, so we do not expect optimal couplings to be given by transport functions in general.
\end{remark}

Returning to our geodesic $V_t = (\id + t \nabla \phi)_* V_0$, we want to show that $\id + t \nabla \phi$ provides an optimal coupling between $\mu_{V_0}$ and $\mu_{V_t}$ where $V_t = (\id + t \nabla \phi)_* \mu_{V_0}$.  In fact, since the potential $V_t$ and the interpolation $\id + t \nabla \phi$ are no longer important for the proof, let us proceed more generally.  Forgetting about $V_t$ and renaming $(1/2) \ip{\mathbf{x},\mathbf{x}}_{\tr} + t \phi$ as $\phi$, it suffices to show that if $\partial \nabla \phi$ is close enough to $\Id$, then $\nabla \phi$ provides an optimal coupling between $\mu$ and $(\nabla \phi)_* \mu$ for every non-commutative law $\mu$.  That is the content of the next proposition.  This is a non-commutative version of one of the easier implications of the Monge-Kantorovich characterization of transport, and it holds without any assumption that $\mu$ is a free Gibbs law or even Connes-approximable.

\begin{proposition}[Optimality of certain transport maps] \label{prop:optimaltransport}
	Let $\phi \in \tr(C_{\tr}^k(\R^{*d}))_{\sa}$ for some $k \geq 2$.  Suppose that for some $K > 0$, we have $\norm{\partial \nabla \phi - K \Id }_{BC_{\tr}(\R^{*d},\mathscr{M}^1)^d} < K$.  Then for every $\mu \in \Sigma_d$, we have
	\[
	d_{W,2}(\mu, (\nabla \phi)_* \mu) = \tilde{\mu}[\ip{\nabla \phi - \id, \nabla \phi - \id}_{\tr}].
	\]
	In other words, if $\mathbf{X}$ is a self-adjoint $d$-tuple from $(\cA,\tau) \in \mathbb{W}$, then $\mathbf{X}$ and $\nabla \phi^{\cA,\tau}(\mathbf{X})$ are an optimal coupling of $\lambda_{\mathbf{X}}$ and $(\nabla \phi)_* \lambda_{\mathbf{X}}$.
\end{proposition}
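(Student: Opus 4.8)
The plan is to use the classical argument for optimality of gradient transport, transcribed into the non-commutative setting with the convexity hypothesis on $\phi$ playing the role of the Monge--Kantorovich duality. The key observation is that the hypothesis $\norm{\partial \nabla \phi - K\Id}_{BC_{\tr}(\R^{*d},\mathscr{M}^1)^d} < K$ means that, after rescaling by $1/K$, the function $\psi := (1/K)\phi$ satisfies $\norm{\partial \nabla \psi - \Id}_{BC_{\tr}(\R^{*d},\mathscr{M}^1)^d} < 1$. By Proposition \ref{prop:IFT} applied to $\mathbf{f} := \nabla \psi$ (with $K' = 1$ there), $\nabla \psi$ is a diffeomorphism in $\Diff_{\tr}^{k-1}(\R^{*d})$, and in particular $\nabla\phi = K\,\nabla\psi(\cdot)$ composed with scaling is invertible; we will not actually need invertibility, only the convexity estimate. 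The point is that the estimate on $\partial \nabla \phi$ gives us, for any tracial $\mathrm{W}^*$-algebra $(\cA,\tau)$ and $\mathbf{X}, \mathbf{Y} \in \cA_{\sa}^d$, a convexity-type inequality. Concretely, by the fundamental theorem of calculus along the segment from $\mathbf{X}$ to $\mathbf{Y}$,
\[
\phi^{\cA,\tau}(\mathbf{Y}) - \phi^{\cA,\tau}(\mathbf{X}) - \ip{\nabla\phi^{\cA,\tau}(\mathbf{X}), \mathbf{Y} - \mathbf{X}}_\tau = \int_0^1 (1-t)\,\partial\nabla\phi^{\cA,\tau}((1-t)\mathbf{X}+t\mathbf{Y})[\mathbf{Y}-\mathbf{X},\mathbf{Y}-\mathbf{X}]\,dt,
\]
and since $\partial\nabla\phi$ is self-adjoint in the $\varstar$-sense and within $K$ of $K\Id$ in the relevant norm, the integrand lies between $0$ and $2K\norm{\mathbf{Y}-\mathbf{X}}_2^2$ in an appropriate sense. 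This is the non-commutative analog of the statement that $\phi - (K/2)\ip{\mathbf{x},\mathbf{x}}_{\tr}$ is concave and $\phi$ is convex, which is exactly what makes $\nabla\phi$ an optimal transport map in the classical Brenier/McCann picture.

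The second step is the core inequality. Let $\mathbf{X}$ be a self-adjoint $d$-tuple from $(\cA,\tau)$ realizing $\mu$, set $\mathbf{Z} = \nabla\phi^{\cA,\tau}(\mathbf{X})$, so $\lambda_{\mathbf{Z}} = (\nabla\phi)_*\mu$, and let $(\mathbf{Y}_0, \mathbf{W}_0)$ be \emph{any} coupling of $\mu$ and $(\nabla\phi)_*\mu$ in some $(\cB,\sigma)$, i.e. $\lambda_{\mathbf{Y}_0} = \mu$ and $\lambda_{\mathbf{W}_0} = (\nabla\phi)_*\mu$. The trick (following \cite{BV2001}) is that we may realize $\mathbf{Y}_0$ and $\mathbf{X}$ as having the same law, hence (by Lemma \ref{lem:lawisomorphism}) identify $\mathrm{W}^*(\mathbf{Y}_0)$ with $\mathrm{W}^*(\mathbf{X})$; then $\nabla\phi(\mathbf{Y}_0)$ corresponds to $\mathbf{Z}$, so it suffices to compare $\norm{\mathbf{X} - \mathbf{Z}}_2$ with $\norm{\mathbf{Y}_0 - \mathbf{W}_0}_2$ for couplings where $\mathbf{W}_0$ has the law $(\nabla\phi)_*\mu$ but is \emph{not} necessarily $\nabla\phi(\mathbf{Y}_0)$. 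I would write
\[
\norm{\mathbf{Y}_0 - \mathbf{W}_0}_2^2 - \norm{\mathbf{Y}_0 - \nabla\phi(\mathbf{Y}_0)}_2^2 = \norm{\mathbf{W}_0}_2^2 - \norm{\nabla\phi(\mathbf{Y}_0)}_2^2 - 2\re\ip{\mathbf{Y}_0, \mathbf{W}_0 - \nabla\phi(\mathbf{Y}_0)}_\sigma,
\]
and then use the convexity inequality above applied with $\mathbf{X} \rightsquigarrow$ a variable of law $\mu$ and exploiting that $\mathbf{W}_0$ has the law $(\nabla\phi)_*\mu$: there is a variable $\mathbf{X}'$ of law $\mu$ with $\nabla\phi(\mathbf{X}') $ having the law $(\nabla\phi)_*\mu$, so by comparing the two realizations of $(\nabla\phi)_*\mu$ we can estimate $\norm{\mathbf{W}_0}_2^2 = \sigma(\ip{\mathbf{W}_0,\mathbf{W}_0})$ via the law. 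The convexity inequality, after taking traces, yields the Kantorovich-type bound showing the difference is $\ge 0$, hence $\norm{\mathbf{Y}_0 - \mathbf{W}_0}_2 \ge \norm{\mathbf{X} - \mathbf{Z}}_2$. Taking the infimum over couplings gives $d_{W,2}(\mu, (\nabla\phi)_*\mu) = \norm{\mathbf{X} - \mathbf{Z}}_2 = \tilde\mu[\ip{\nabla\phi - \id, \nabla\phi - \id}_{\tr}]^{1/2}$, which is the claim (noting the statement as written should have a square root, or equivalently is written for $d_{W,2}^2$).

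The main obstacle I anticipate is the second step — making the "Kantorovich inequality" argument rigorous in the non-commutative setting without a genuine Legendre transform. In the classical proof one writes $\phi(x) + \phi^*(z) \ge \ip{x,z}$ with equality iff $z = \nabla\phi(x)$, and integrates against an arbitrary coupling $\pi$. Here there is no clean operator-valued Legendre duality, so instead I would work with the integral remainder form of convexity directly: the inequality $\tr[\phi(\mathbf{Y}) - \phi(\mathbf{X}) - \ip{\nabla\phi(\mathbf{X}),\mathbf{Y}-\mathbf{X}}_{\tr}] \ge 0$ (valid pointwise by the $\partial\nabla\phi \ge 0$ estimate, using that $\Tr_\#$ is positive and tracial and that $\partial\nabla\phi$ is $\varstar$-self-adjoint) combined with the analogous two-sided bound. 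Applying this with the two realizations of the coupling and summing the inequality with its "reverse" (swapping the roles) should telescope to the desired comparison of $\norm{\cdot}_2$ distances, exactly as the sum $\phi(x) - \phi(y) - \ip{\nabla\phi(y), x-y} \ge 0$ and its counterpart do classically. The delicate point is keeping track of which variable lives in which algebra and invoking Lemma \ref{lem:lawisomorphism} to transfer $\nabla\phi$ between them; once the bookkeeping is set up, the inequality itself is just the positivity of the Hessian remainder integrated against the trace, and no smoothness beyond $C_{\tr}^2$ and the stated norm bound is needed.
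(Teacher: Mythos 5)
Your overall strategy (first-order convexity of $\phi$ plus the fact that scalar-valued $C_{\tr}$ functions depend only on the non-commutative law) is sound, but as written the core step has a gap, located exactly where the proposal becomes vague. To bound the coupling-dependent cross term $\re\ip{\mathbf{Y}_0,\mathbf{W}_0}_\sigma$ by $\tilde{\mu}[\ip{\mathbf{x},\nabla\phi(\mathbf{x})}_{\tr}]$ via the convexity inequality $\phi^{\cB,\sigma}(\mathbf{Y}_0) \ge \phi^{\cB,\sigma}(\mathbf{V}) + \re\ip{\nabla\phi^{\cB,\sigma}(\mathbf{V}),\mathbf{Y}_0-\mathbf{V}}_\sigma$, you must produce $\mathbf{V}$ in the \emph{same} algebra $(\cB,\sigma)$ with $\nabla\phi^{\cB,\sigma}(\mathbf{V}) = \mathbf{W}_0$ \emph{as operators}, not merely in law. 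Your phrase ``there is a variable $\mathbf{X}'$ of law $\mu$ with $\nabla\phi(\mathbf{X}')$ having the law $(\nabla\phi)_*\mu$'' is true but does not do the job: matching laws controls $\norm{\mathbf{W}_0}_2$, but says nothing about the joint quantity $\ip{\mathbf{Y}_0,\mathbf{W}_0}_\sigma$, which is precisely what distinguishes one coupling from another. The way to get such a $\mathbf{V}$ is $\mathbf{V} = \mathbf{g}(\mathbf{W}_0)$ with $\mathbf{g} = (\nabla\phi)^{-1}$ supplied by Proposition \ref{prop:IFT}; then $\lambda_{\mathbf{V}} = \mathbf{g}_*(\nabla\phi)_*\mu = \mu$, so $\phi(\mathbf{V}) = \phi(\mathbf{Y}_0)$ and $\ip{\nabla\phi(\mathbf{V}),\mathbf{V}}_\sigma = \tilde{\mu}[\ip{\nabla\phi(\mathbf{x}),\mathbf{x}}_{\tr}]$, and the inequality closes. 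In particular your claim that ``we will not actually need invertibility, only the convexity estimate'' is wrong: invertibility (at least surjectivity of $\nabla\phi^{\cB,\sigma}$ together with law control of the preimage) is indispensable, because an arbitrary coupling hands you $\mathbf{W}_0$ with no a priori representation as a gradient image inside $\cB$.

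Once this is repaired, your argument is correct and differs from the paper's only cosmetically: the paper packages the step ``evaluate the convexity inequality at $\mathbf{g}(\mathbf{W}_0)$'' into an explicit smooth Legendre transform $\psi(\mathbf{y}) = \ip{\mathbf{y},\mathbf{g}(\mathbf{y})}_{\tr} - \phi(\mathbf{g}(\mathbf{y}))$ (Lemma \ref{lem:Legendretransform}), proves the Fenchel--Young inequality $\ip{\mathbf{Y},\mathbf{X}}_\tau \le \psi(\mathbf{Y}) + \phi(\mathbf{X})$ with equality along the graph of $\nabla\phi$, and then uses law-invariance of $\phi$ and $\psi$ exactly as you would use law-invariance of $\phi$ and of $\ip{\nabla\phi(\mathbf{x}),\mathbf{x}}_{\tr}$. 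Your telescoping identity for $\norm{\mathbf{Y}_0-\mathbf{W}_0}_2^2 - \norm{\mathbf{Y}_0 - \nabla\phi(\mathbf{Y}_0)}_2^2$, the transfer of the coupling via Lemma \ref{lem:lawisomorphism}, and your remark about the missing square root in the displayed formula are all fine; note that only the lower half of the two-sided Hessian bound (convexity) enters the inequality itself, while both halves are used to invert $\nabla\phi$.
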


In the proof, we ``reverse-engineer'' the Monge-Kantorovich duality.  We must first construct the Legendre transform of $\psi$ of $\phi$.  The Legendre transform in the classical setting is a convex function given by
\[
\psi(x) = \sup[\ip{x,y} - \phi(y)].
\]
If $\phi$ is smooth and strictly convex, then the infimum for $\psi(x)$ is achieved at $y = (\nabla \phi)^{-1}(x)$.  Hence, the cheapest way to obtain a smooth Legendre transform for a smooth non-commutative function $\phi$ is to invert $\nabla \phi$.

\begin{lemma}[Smooth non-commutative Legendre transform] \label{lem:Legendretransform}
	Let $\phi \in \tr(C_{\tr}^k(\R^{*d}))_{\sa}$ for some $k \geq 2$.  Suppose that for some $K > 0$, we have $\norm{\partial \nabla \phi - K \Id }_{BC_{\tr}(\R^{*d},\mathscr{M}^1)^d} < K$.  Let $\mathbf{g}$ be the inverse of $\nabla \phi$ as in Proposition \ref{prop:IFT}, and let $\psi$ be given by
	\begin{equation} \label{eq:Legendre}
		\psi^{\cA,\tau}(\mathbf{Y}) := \ip{\mathbf{Y},\mathbf{g}^{\cA,\tau}(\mathbf{Y})}_{\tau} - \phi^{\cA,\tau}(\mathbf{g}^{\cA,\tau}(\mathbf{Y})).
	\end{equation}
	Then for all $(\cA,\tau) \in \mathbb{W}$ and $\mathbf{X} \in \cA_{\sa}^d$, we have
	\begin{equation} \label{eq:Legendremaximizer}
		\psi^{\cA,\tau}(\mathbf{Y}) = \sup_{\mathbf{X} \in \cA_{\sa}^d} \left[ \ip{\mathbf{Y},\mathbf{X}}_\tau - \phi^{\cA,\tau}(\mathbf{X}) \right].
	\end{equation}
	Moreover, $\nabla \psi = \mathbf{g}$ and hence $\psi \in \tr(C_{\tr}^k(\R^{*d}))$.
\end{lemma}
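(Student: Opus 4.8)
\textbf{Proof proposal for Lemma \ref{lem:Legendretransform}.}

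The plan is to establish the three assertions in order: first that $\nabla \psi = \mathbf{g}$, then that $\psi$ is the pointwise supremum \eqref{eq:Legendremaximizer}, with smoothness of $\psi$ a byproduct of the first claim. I would begin by noting that Proposition \ref{prop:IFT} applies: since $\nabla \phi \in C_{\tr}^{k-1}(\R^{*d})_{\sa}^d$ (here I use $k \geq 2$) and $\norm{\partial \nabla \phi - K\Id}_{BC_{\tr}(\R^{*d},\mathscr{M}^1)^d} < K$, there is a unique inverse $\mathbf{g} \in C_{\tr}^{k-1}(\R^{*d})_{\sa}^d$ with $\nabla\phi \circ \mathbf{g} = \mathbf{g} \circ \nabla \phi = \id$. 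Then $\psi$ as defined by \eqref{eq:Legendre} is built from $\mathbf{g}$ and $\phi$ via multiplication, composition, and the trace pairing $\ip{\cdot,\cdot}_{\tr}$, all of which preserve the $C_{\tr}^{k-1}$ class by Theorem \ref{thm:chainrule} and Corollaries \ref{cor:Ckalgebra}, \ref{cor:tracemap}; so a priori $\psi \in \tr(C_{\tr}^{k-1}(\R^{*d}))_{\sa}$. I would then compute $\nabla \psi$ directly. Differentiating \eqref{eq:Legendre} in a direction $\mathbf{Z}$: the first term $\ip{\mathbf{Y},\mathbf{g}(\mathbf{Y})}_\tau$ contributes $\ip{\mathbf{Z},\mathbf{g}(\mathbf{Y})}_\tau + \ip{\mathbf{Y}, \partial\mathbf{g}(\mathbf{Y})[\mathbf{Z}]}_\tau$, while the second term $\phi(\mathbf{g}(\mathbf{Y}))$ contributes $\partial\phi(\mathbf{g}(\mathbf{Y}))\#\partial\mathbf{g}(\mathbf{Y})[\mathbf{Z}] = \ip{\nabla\phi(\mathbf{g}(\mathbf{Y})), \partial\mathbf{g}(\mathbf{Y})[\mathbf{Z}]}_\tau = \ip{\mathbf{Y},\partial\mathbf{g}(\mathbf{Y})[\mathbf{Z}]}_\tau$, using $\nabla\phi\circ\mathbf{g} = \id$. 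The two $\ip{\mathbf{Y},\partial\mathbf{g}(\mathbf{Y})[\mathbf{Z}]}_\tau$ terms cancel, leaving $\partial\psi(\mathbf{Y})[\mathbf{Z}] = \ip{\mathbf{Z},\mathbf{g}(\mathbf{Y})}_\tau$, i.e. $\nabla\psi = \mathbf{g}$. Since $\mathbf{g} \in C_{\tr}^{k-1}(\R^{*d})_{\sa}^d$, the characterization after Definition \ref{def:gradient} upgrades $\psi$ to $\tr(C_{\tr}^k(\R^{*d}))$, giving the last claim.

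For the variational identity \eqref{eq:Legendremaximizer}, fix $(\cA,\tau)$ and $\mathbf{Y} \in \cA_{\sa}^d$, and consider the real-valued function $F(\mathbf{X}) := \ip{\mathbf{Y},\mathbf{X}}_\tau - \phi^{\cA,\tau}(\mathbf{X})$ on $\cA_{\sa}^d$. Its Fréchet derivative is $\partial F(\mathbf{X})[\mathbf{Z}] = \ip{\mathbf{Z}, \mathbf{Y} - \nabla\phi^{\cA,\tau}(\mathbf{X})}_\tau$, so the critical point equation is $\nabla\phi^{\cA,\tau}(\mathbf{X}) = \mathbf{Y}$, whose unique solution is $\mathbf{X} = \mathbf{g}^{\cA,\tau}(\mathbf{Y})$, and $F(\mathbf{g}^{\cA,\tau}(\mathbf{Y})) = \psi^{\cA,\tau}(\mathbf{Y})$ by definition. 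To see this critical point is a global maximum I would use concavity of $F$: the hypothesis $\norm{\partial\nabla\phi - K\Id}_{BC_{\tr}(\R^{*d},\mathscr{M}^1)^d} < K$ means $\partial^2\phi^{\cA,\tau}(\mathbf{X})[\mathbf{Z},\mathbf{Z}] = \ip{\mathbf{Z},\partial\nabla\phi^{\cA,\tau}(\mathbf{X})[\mathbf{Z}]}_\tau \geq (K - \norm{\partial\nabla\phi - K\Id})\norm{\mathbf{Z}}_2^2 > 0$ for $\mathbf{Z}\neq 0$ — wait, more carefully, $\partial\nabla\phi^{\cA,\tau}(\mathbf{X})$ is a self-adjoint operator on $L^2(\cA,\tau)_{\sa}^d$ whose distance to $K\Id$ in operator norm is $< K$, hence it is bounded below by a strictly positive multiple of the identity; so $\phi^{\cA,\tau}$ is strongly convex along every line in $\cA_{\sa}^d$, and therefore $F$ is strongly concave, so the unique critical point $\mathbf{g}^{\cA,\tau}(\mathbf{Y})$ is the global maximizer. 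This yields \eqref{eq:Legendremaximizer}.

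The main obstacle I anticipate is the convexity/maximization argument in the last paragraph: one must be careful that $\partial^2\phi^{\cA,\tau}(\mathbf{X})$, viewed as a symmetric bilinear form on $\cA_{\sa}^d$ equipped with $\ip{\cdot,\cdot}_\tau$, is genuinely bounded below by a positive constant uniformly in $\mathbf{X}$ and in $(\cA,\tau)$ — this requires translating the $BC_{\tr}(\R^{*d},\mathscr{M}^1)$-norm hypothesis on $\partial\nabla\phi - K\Id$ into an operator-norm bound on the relevant multilinear forms, using the observation after Definition \ref{def:Lalphamultilinearnorm} relating $\norm{\cdot}_{2;2,2}$-type bounds to the $\mathscr{M}^1$-norm, together with the equality of mixed partials (so that $\partial\nabla\phi$ is self-adjoint as an operator, which also uses that $\phi$ is real-valued so $\nabla\partial\phi = (\nabla\partial\phi)^\varstar$ as noted after \eqref{eq:phiequation}). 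One also needs that $F$ attains its supremum rather than merely approaching it — strong concavity plus coercivity (which follows since $\phi \geq$ a strictly convex quadratic up to lower order, from integrating the Hessian bound) handles this, but writing it cleanly in the operator-algebraic language takes some care. The derivative computation establishing $\nabla\psi = \mathbf{g}$ is routine given the chain rule Theorem \ref{thm:chainrule} and the definition of $\nabla$, so I would present it briefly.
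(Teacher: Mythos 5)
Your proposal is correct and takes essentially the same route as the paper's proof: the cancellation computation using $\nabla \phi \circ \mathbf{g} = \id$ gives $\nabla \psi = \mathbf{g}$ (hence $\psi \in \tr(C_{\tr}^k(\R^{*d}))$), and the maximization claim is the same Hessian-positivity/concavity argument, which the paper runs along the single line $t \mapsto \mathbf{g}^{\cA,\tau}(\mathbf{Y}) + t\mathbf{Z}$. Your worries about coercivity and attainment of the supremum are unnecessary, since the candidate maximizer $\mathbf{g}^{\cA,\tau}(\mathbf{Y})$ is an explicit point and concavity along each line through it already yields $\ip{\mathbf{Y},\mathbf{X}}_\tau - \phi^{\cA,\tau}(\mathbf{X}) \leq \psi^{\cA,\tau}(\mathbf{Y})$ for every $\mathbf{X}$.
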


\begin{proof}
	Fix $(\cA,\tau)$ and $\mathbf{Y}, \mathbf{Z} \in \cA_{\sa}^d$.  Let $h: \R \to \R$ be given by
	\[
	h(t) = \ip{\mathbf{Y},\mathbf{g}^{\cA,\tau}(\mathbf{Y}) + t \mathbf{Z}}_\tau - \phi^{\cA,\tau}(\mathbf{g}^{\cA,\tau}(\mathbf{Y}) + t \mathbf{Z}).
	\]
	Then
	\[
	h'(t) = \ip{\mathbf{Y}, \mathbf{Z}}_\tau - \ip{\nabla \phi^{\cA,\tau}(\mathbf{g}^{\cA,\tau}(\mathbf{Y}) + t \mathbf{Z}),\mathbf{Z}}_\tau
	\]
	and
	\[
	h''(t) = -\ip{\partial \nabla \phi^{\cA,\tau}(\mathbf{g}^{\cA,\tau}(\mathbf{Y}) + t \mathbf{Z})[\mathbf{Z}],\mathbf{Z}}_\tau.
	\]
	Because $\norm{\partial \nabla \phi - K \Id}_{\tr} < K$, we obtain $h''(t) > 0$, so $h$ is concave.  Also, since $\nabla \phi \circ \mathbf{g} = \id$, we have $h'(0) = 0$.  Therefore, $h$ is maximized at $t = 0$, so that
	\[
	\ip{\mathbf{Y},\mathbf{g}^{\cA,\tau}(\mathbf{Y}) + \mathbf{Z}}_\tau - \phi^{\cA,\tau}(\mathbf{g}^{\cA,\tau}(\mathbf{Y}) + \mathbf{Z}) \leq \ip{\mathbf{Y},\mathbf{g}^{\cA,\tau}(\mathbf{Y})}_\tau - \phi^{\cA,\tau}(\mathbf{g}^{\cA,\tau}(\mathbf{Y})) = \psi^{\cA,\tau}(\mathbf{Y}).
	\]
	By substituting $\mathbf{X} - \mathbf{g}^{\cA,\tau}(\mathbf{Y})$ for $\mathbf{Z}$, we obtain \eqref{eq:Legendremaximizer}.
	
	Next, by direct computation,
	\begin{align*}
		\ip{\nabla \psi^{\cA,\tau}(\mathbf{Y}),\mathbf{Z}}_\tau
		&= \partial[\ip{\mathbf{Y},\mathbf{g}^{\cA,\tau}(\mathbf{Y})}_\tau - \phi^{\cA,\tau}(\mathbf{g}^{\cA,\tau}(\mathbf{Y}))][\mathbf{Z}] \\
		&= \ip{\mathbf{Z},\mathbf{g}^{\cA,\tau}(\mathbf{Y})}_\tau + \ip{\mathbf{Y},\partial \mathbf{g}^{\cA,\tau}(\mathbf{Y})[\mathbf{Z}]}_\tau - \ip{\nabla \phi^{\cA,\tau}(\mathbf{g}^{\cA,\tau}(\mathbf{Y})), \partial \mathbf{g}^{\cA,\tau}(\mathbf{Y})[\mathbf{Z}]}_\tau \\
		&= \ip{\mathbf{Z},\mathbf{g}^{\cA,\tau}(\mathbf{Y})}_\tau.
	\end{align*}
	Hence, $\nabla \psi = \mathbf{g}$, and so $\psi$ is $C_{\tr}^k$ by the chain rule.
\end{proof}

\begin{proof}[Proof of Proposition \ref{prop:optimaltransport}]
	Let $\mathbf{X}$ be a self-adjoint $d$-tuple from $(\cA,\tau)$ with non-commutative law $\mu$, and let $\nu = (\nabla \phi)_* \mu$.  As in the previous lemma, let $\mathbf{g} = (\nabla \phi)^{-1}$ and let $\psi$ be the Legendre transform of $\phi$.  Writing $\mathbf{Y} = (\nabla \phi)^{-1}(\mathbf{X})$, we have
	\[
	\ip{\mathbf{Y},\mathbf{X}}_\tau
	= \ip{\mathbf{Y},\mathbf{g}^{\cA,\tau}(\mathbf{Y})}_\tau
	= \psi^{\cA,\tau}(\mathbf{Y}) + \phi^{\cA,\tau}(\mathbf{g}(\mathbf{Y})
	= \psi^{\cA,\tau}(\mathbf{Y}) + \phi^{\cA,\tau}(\mathbf{X}).
	\]
	If $\mathbf{X}'$ and $\mathbf{Y}'$ are any other $d$-tuples from some $(\cB,\sigma)$ with the same law as $\mathbf{X}$ and $\mathbf{Y}$, then by \eqref{eq:Legendremaximizer}
	\[
	\ip{\mathbf{Y}',\mathbf{X}'}_\sigma \leq \psi^{\cB,\sigma}(\mathbf{Y}') + \phi^{\cB,\sigma}(\mathbf{X}')
	= \psi^{\cA,\tau}(\mathbf{Y}) + \phi^{\cA,\tau}(\mathbf{X})
	= \ip{\mathbf{Y},\mathbf{X}}_\tau,
	\]
	where we have used the fact that evaluation of $\phi$ and $\psi$ only depends on the non-commutative law of the argument.  Therefore, the coupling $\mathbf{X}$, $\mathbf{Y}$ maximizes the inner product and therefore minimizes the $L^2$-distance (since $\norm{\mathbf{X}}_2^2$ and $\norm{\mathbf{Y}}_2^2$ are uniquely determined by the fixed laws $\mu$ and $\nu$).  Hence, we have an optimal coupling.
\end{proof}

\begin{remark}
	Proposition \ref{prop:optimaltransport} partially answers a question of \cite[\S 5]{GS2014}.  That paper considered the free Gibbs law $\mu_V$ with $V = \tr(f)$ for some non-commutative power series $f$ on an operator-norm ball of some radius $R$, and showed the existence of another power series $g$ such that $(\id + \nabla \tr(g))_* \sigma = \mu_V$ where $\sigma$ is the law of a semicircular family.  Moreover, $\tr(g)$ goes to zero in a certain power-series norm as $\tr(f)$ goes to zero.  The paper did not settle whether the transport map constructed there was optimal, but we can prove this with Proposition \ref{prop:optimaltransport} if $\tr(g)$ is small enough.  Let $\gamma: \R \to [-R,R]$ be a smooth compactly supported function with $\gamma(t) = t$ for $t \in [-R,R]$.  If $\tr(g)$ is sufficiently small, then $\phi = \tr(g) \circ (\gamma(x_1),\dots,\gamma(x_d))$ will satisfy $\norm{\partial \nabla \phi}_{BC_{\tr}(\R^{*d},\mathscr{M}^1)^d} < 1$.  Hence, Proposition \ref{prop:optimaltransport} shows that $\id + \nabla \phi$ defines an optimal coupling between $\sigma$ and $\mu$.
\end{remark}

\subsection{Incompressible Euler equation and inviscid Burgers' equation} \label{subsec:Euler}

\begin{definition}
	Let $V$ satisfy Assumptions \ref{ass:freeGibbs} and \ref{ass:Laplacian}.  Let $\mathbb{P}_V = \nabla \Psi \nabla_V^*$, and let $\Pi_V = 1 - \mathbb{P}_V$ be the Leray projection.  The \emph{(tracial non-commutative) incompressible Euler equation} is the equation
	\[
	\left\{ \begin{aligned}
		\dot{\mathbf{u}}_t &= -\Pi_V[\partial \mathbf{u}_t \# \mathbf{u}_t] \\
		\nabla_V^* \mathbf{u}_t &= 0.
	\end{aligned} \right.
	\]
\end{definition}

This equation was formulated in the framework of non-commutative polynomials (and from there a certain completion of the space) in \cite{VoiculescuEuler}.  Here Voiculescu imitated the approach of Arnold in the classical setting.  Arnold related the incompressible Euler equation to the geodesic equation on the group of diffeomorphisms on some Riemannian manifold that preserve a given measure; more precisely, if $t \mapsto \mathbf{f}_t$ is the geodesic, then $\mathbf{u}_t = \dot{\mathbf{f}}_t \circ \mathbf{f}_t^{-1}$, that is, the right-shift of the $\dot{\mathbf{f}}_t$ to a tangent vector at $\id$.

The non-commutative incompressible Euler equation could be derived by normalizing the classical incompressible Euler equation on $M_N(\C)_{\sa}^d$, but we will give a direct heuristic based on geodesics minimizing length, similar to the earlier derivation of the geodesic equation on $\mathscr{W}(\R^{*d})$.  Recall that $\mathscr{D}(\R^{*d},V)$ is the group of non-commutative diffeomorphisms $\mathbf{f}$ with $\mathbf{f}_* V = V$.  A semi-inner product can be defined on $T_{\id} \mathcal{D}(\R^{*d},V)$ by
\[
\ip{\mathbf{h}_1,\mathbf{h}_2}_{T_{\id} \mathcal{D}(\R^{*d},V)} = \tilde{\mu}_V[\ip{\mathbf{h}_1,\mathbf{h}_2}_{\tr}].
\]
We extend this to a right-invariant formal Riemannian metric on $\mathscr{D}(\R^{*d},V)$.  Since the diffeomorphisms are elements of the vector space $C_{\tr}^d(\R^{*d})_{\sa}^d$, we can view tangent vectors at $\mathbf{f}$ concretely as elements of $C_{\tr}(\R^{*d})_{\sa}^d$, and the right-shift by $\mathbf{f}^{-1}$ of a tangent vector $\mathbf{h}$ at $\mathbf{f}$ produces the tangent vector $\mathbf{h} \circ \mathbf{f}^{-1}$ at $\id$.  Since $\mathbf{f}$ preserves $V$ and hence (again under some reasonable assumptions) $\mu_V$, the Riemannian metric at an arbitrary point is given by the same formula as at $\id$.

Suppose that $[0,T] \to \mathscr{D}(\R^{*d},V): t \mapsto \mathbf{f}_t$ minimizes the integral
\[
\int_0^T \tilde{\mu}_{V} [\ip{\dot{\mathbf{f}}_t,\dot{\mathbf{f}}_t}_{\tr}]\,dt
\]
over all paths with the same start and end points.  Let $\mathbf{u}_t = \dot{\mathbf{f}}_t \circ \mathbf{f}_t^{-1}$, so that $\dot{\mathbf{f}}_t = \mathbf{u}_t \circ \mathbf{f}_t$ and $\nabla_V^* \mathbf{u}_t = 0$ by Lemma \ref{lem:transport0} since $\mathbf{f}_t$ preserves $V$.  Let $\mathbf{h}_t$ be another time-dependent vector field with bounded first derivative such that $\mathbf{h}_0 = \mathbf{h}_T = 0$ and $\nabla_V^* \mathbf{h}_t = 0$.  Let $\mathbf{g}_{t,\epsilon} = \exp(\epsilon \mathbf{h}_t)$, and note that $\mathbf{g}_{t,\epsilon}$ is in $\mathscr{D}(\R^{*d},V)$ by Corollary \ref{cor:measurepreservinggroup}, hence $\mathbf{g}_{t,\epsilon} \circ \mathbf{f}_t$ is another candidate for the minimizer.  Thus, as in the previous section,
\begin{align*}
	0 &= \frac{d}{d\epsilon} \Bigr|_{\epsilon = 0} \int_0^T \tilde{\mu}_{V} \left[\ip*{\frac{d}{dt} [\mathbf{g}_{t,\epsilon} \mathbf{f}_t], \frac{d}{dt} [\mathbf{g}_{t,\epsilon} \mathbf{f}_t]}_{\tr} \right]\,dt \\
	&= 2 \int_0^T \tilde{\mu}_{V} \left[\ip*{\frac{d}{dt} \frac{d}{d\epsilon} \Bigr|_{\epsilon = 0}  [\mathbf{g}_{t,\epsilon} \mathbf{f}_t], \dot{\mathbf{f}}_t]}_{\tr} \right]\,dt \\
	&= -2 \int_0^T \tilde{\mu}_{V} \left[\ip*{\mathbf{h}_t \circ \mathbf{f}_t, \ddot{\mathbf{f}}_t}_{\tr} \right]\,dt \\
	&= -2 \int_0^T \tilde{\mu}_{V} \left[\ip*{\mathbf{h}_t, \ddot{\mathbf{f}}_t \circ \mathbf{f}_t^{-1}}_{\tr} \right]\,dt
\end{align*}
Now $\mathbf{h}_t$ was arbitrary with $\nabla_V^* \mathbf{h}_t = 0$ and $\mathbf{h}_0 = \mathbf{h}_T = 0$.  Although we have not proved that elements of $\ker(\nabla_V^*)$ with bounded derivative are dense in $\ker(\nabla_V^*)$, we proceed under the assumption that $\ddot{\mathbf{f}}_t \circ \mathbf{f}_t$ is orthogonal to $\ker(\nabla_V^*)$.  Then, despite the degeneracy of the Riemannian metric, we posit that $\ddot{\mathbf{f}}_t \circ \mathbf{f}_t$ is a gradient, or that $\Pi_V[\ddot{\mathbf{f}}_t \circ \mathbf{f}_t] = 0$.  But note that
\[
\ddot{\mathbf{f}}_t = \frac{d}{dt} [\mathbf{u}_t \circ \mathbf{f}_t] = \dot{\mathbf{u}}_t \circ \mathbf{f}_t + (\partial \mathbf{u}_t \circ \mathbf{f}_t) \# (\mathbf{u}_t \circ \mathbf{f}_t),
\]
hence
\[
\Pi_V[\dot{\mathbf{u}}_t + \partial \mathbf{u}_t \# \mathbf{u}_t] = 0.
\]
Now $\Pi_V \dot{\mathbf{u}}_t = \dot{\mathbf{u}}_t$, so this is the incompressible Euler equation.

One can also proceed using Arnold's framework for geodesics on Lie groups with a right-invariant Riemannian metric.  He showed that the angular velocity $\mathbf{u}_t$ of a geodesic must satisfy $\dot{\mathbf{u}}_t = -B(\mathbf{u}_t,\mathbf{u}_t)$, where $B$ is the bilinear form on the Lie algebra defined by $\ip{[\mathbf{h}_1,\mathbf{h}_2], \mathbf{h}_3} = \ip{B(\mathbf{h}_3,\mathbf{h}_1),\mathbf{h}_2}$.  This was the approach followed by Voiculescu \cite{VoiculescuEuler} in the non-commutative setting.   We present here a version of \cite[Lemma 1]{VoiculescuEuler} for tracial non-commutative smooth functions.

\begin{lemma}
	Let $V$ satisfy Assumptions \ref{ass:freeGibbs} and \ref{ass:Laplacian}.  For $\mathbf{h}_1$, $\mathbf{h}_2 \in \ker(\nabla_V^*)$, let
	\[
	B(\mathbf{h}_1,\mathbf{h}_2) := \Pi_V[\partial \mathbf{h}_1 \# \mathbf{h}_2 + (\partial \mathbf{h}_2)^{\varstar} \# \mathbf{h}_1].
	\]
	Then for $\mathbf{h}_1$, $\mathbf{h}_2$, $\mathbf{h}_3 \in \ker(\nabla_V^*) \cap C_{\tr}^\infty(\R^{*d})_{\sa}^d$, we have
	\[
	\tilde{\mu}_V[\ip{[\mathbf{h}_1,\mathbf{h}_2], \mathbf{h}_3}_{\tr}] = \tilde{\mu}_V[\ip{B(\mathbf{h}_3,\mathbf{h}_1), \mathbf{h}_2}_{\tr}].
	\]
	Moreover,
	\[
	B(\mathbf{h},\mathbf{h}) = \Pi_V[\partial \mathbf{h} \# \mathbf{h}].
	\]
\end{lemma}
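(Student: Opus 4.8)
The plan is to verify the identity $\tilde\mu_V[\ip{[\mathbf{h}_1,\mathbf{h}_2],\mathbf{h}_3}_{\tr}] = \tilde\mu_V[\ip{B(\mathbf{h}_3,\mathbf{h}_1),\mathbf{h}_2}_{\tr}]$ by a direct computation starting from the definition of the Lie bracket $[\mathbf{h}_1,\mathbf{h}_2] = \partial\mathbf{h}_1\#\mathbf{h}_2 - \partial\mathbf{h}_2\#\mathbf{h}_1$ and then invoking the adjointness-type relations from Proposition \ref{prop:Laplacianrelations3} and Lemma \ref{lem:adjointness}. First I would expand the left-hand side into two terms, $\tilde\mu_V[\ip{\partial\mathbf{h}_1\#\mathbf{h}_2,\mathbf{h}_3}_{\tr}]$ and $-\tilde\mu_V[\ip{\partial\mathbf{h}_2\#\mathbf{h}_1,\mathbf{h}_3}_{\tr}]$. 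The goal is to move the derivative off of $\mathbf{h}_1$, and the key tool is Lemma \ref{lem:adjointness}: reading it with $\mathbf{f} = \mathbf{h}_1$ and a suitable matrix-valued function $\mathbf{F}$ built from $\mathbf{h}_2$ and $\mathbf{h}_3$ expresses $\tilde\mu_V\Tr_\#[(\partial\mathbf{h}_1)^{\varstar}\mathbf{F}]$ in terms of $\tilde\mu_V\ip{\mathbf{h}_1,\partial_V^*\mathbf{F}}_{\tr}$. The term $\ip{\partial\mathbf{h}_1\#\mathbf{h}_2,\mathbf{h}_3}_{\tr}$ should be rewritten as $\Tr_\#$ of an appropriate composition so that Lemma \ref{lem:adjointness} applies; here one uses that $\ip{\partial\mathbf{h}_1\#\mathbf{h}_2,\mathbf{h}_3}_{\tr} = \Tr_\#[(\partial\mathbf{h}_1)^{\varstar}\#(E_{\cdot}\otimes\cdots)]$ using the rank-one tensor notation from the proof of Theorem \ref{thm:magicnormbound} and the traciality of $\Tr_\#$ in Lemma \ref{lem:tracehash}.

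After applying $\partial_V^*\mathbf{F} = \mathbf{F}\#\nabla V - \partial^\dagger\mathbf{F}$, the $\mathbf{F}\#\nabla V$ piece produces a term involving $\partial V\#(\cdot)$ and the $\partial^\dagger\mathbf{F}$ piece produces a divergence term; the crucial cancellation is that these combine so that the $\mathbf{h}_1$-derivative terms pair against $\nabla_V^*\mathbf{h}_2 = 0$ and $\nabla_V^*\mathbf{h}_3 = 0$, leaving only terms of the form $\partial\mathbf{h}_2\#(\cdot)$ and $(\partial\mathbf{h}_3)^{\varstar}\#(\cdot)$ paired against $\mathbf{h}_1$. Collecting these and comparing with $-\tilde\mu_V[\ip{\partial\mathbf{h}_2\#\mathbf{h}_1,\mathbf{h}_3}_{\tr}]$, one should find that the combination is precisely $\tilde\mu_V[\ip{\Pi_V(\partial\mathbf{h}_3\#\mathbf{h}_1 + (\partial\mathbf{h}_3)^{\varstar}\#\cdots),\mathbf{h}_2}_{\tr}]$ — at which point one uses Proposition \ref{prop:Laplacianrelations3}(5), namely that $\mathbb{P}_V$ (hence $\Pi_V$) is self-adjoint for $\tilde\mu_V\circ\ip{\cdot,\cdot}_{\tr}$, together with the fact that $\mathbf{h}_2 \in \ker(\nabla_V^*)$ so $\Pi_V\mathbf{h}_2 = \mathbf{h}_2$, to insert or remove $\Pi_V$ freely and arrive at the stated form $\tilde\mu_V[\ip{B(\mathbf{h}_3,\mathbf{h}_1),\mathbf{h}_2}_{\tr}]$. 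The identity $B(\mathbf{h},\mathbf{h}) = \Pi_V[\partial\mathbf{h}\#\mathbf{h}]$ is then immediate: since $\mathbf{h}$ is self-adjoint the tangent vector $\partial\mathbf{h}^{\cA,\tau}(\mathbf{X})$ maps $\cA_{\sa}^d$ into $\cA_{\sa}^d$, so by the self-adjointness argument already used in Lemma \ref{lem:groupactiontangent} we get $\tilde\mu_V\ip{(\partial\mathbf{h})^{\varstar}\#\mathbf{h},\mathbf{k}}_{\tr} = \tilde\mu_V\ip{\partial\mathbf{h}\#\mathbf{h},\mathbf{k}}_{\tr}$ for all test $\mathbf{k}$; applying $\Pi_V$ and using its self-adjointness shows $\Pi_V[(\partial\mathbf{h})^{\varstar}\#\mathbf{h}] = \Pi_V[\partial\mathbf{h}\#\mathbf{h}]$ as elements of the relevant $L^2(\mu_V)$-type space, whence $B(\mathbf{h},\mathbf{h}) = \Pi_V[\partial\mathbf{h}\#\mathbf{h} + (\partial\mathbf{h})^{\varstar}\#\mathbf{h}]$ — wait, one must be careful: $B(\mathbf{h},\mathbf{h}) = \Pi_V[\partial\mathbf{h}\#\mathbf{h} + (\partial\mathbf{h})^{\varstar}\#\mathbf{h}]$ would give a factor of $2$, so the correct reading is that in the bilinear form $B$ one of the two terms is redundant modulo the pairing, and the clean statement $B(\mathbf{h},\mathbf{h}) = \Pi_V[\partial\mathbf{h}\#\mathbf{h}]$ holds as an identity in $C_{\tr}^\infty(\R^{*d})_{\sa}^d$ only after noting $\Pi_V[(\partial\mathbf{h})^{\varstar}\#\mathbf{h}] = -\Pi_V[\partial\mathbf{h}\#\mathbf{h}]$ is false; rather the correct route is that the \emph{symmetrization} $B(\mathbf{h},\mathbf{h})$ simplifies because $(\partial\mathbf{h})^{\varstar}\#\mathbf{h} - \partial\mathbf{h}\#\mathbf{h}$ is (modulo the kernel) a gradient, which is killed after the length functional variation — so I would phrase the second claim as holding at the level of the geodesic equation rather than as a pointwise identity, or re-examine whether Voiculescu's convention absorbs the factor.

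\textbf{Main obstacle.} The hard part will be bookkeeping the $\varstar$-operation and the $\Tr_\#$-traciality carefully enough to apply Lemma \ref{lem:adjointness} in the form needed: the lemma is stated for $\tilde\mu_V\Tr_\#[(\partial\mathbf{f})^{\varstar}\mathbf{F}]$, and recasting $\ip{\partial\mathbf{h}_1\#\mathbf{h}_2,\mathbf{h}_3}_{\tr}$ in this shape requires choosing $\mathbf{F}$ as a rank-one-type tensor $E_{i,j}\otimes(\cdot)\otimes(\cdot)$ encoding $\mathbf{h}_2$ and $\mathbf{h}_3$ and then checking that $\partial_V^*$ of that $\mathbf{F}$ unwinds correctly into the divergence and potential terms — the composition algebra for these tensors (as in the proof of Theorem \ref{thm:magicnormbound}) is where sign errors and misplaced adjoints are most likely. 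A secondary subtlety, flagged above, is pinning down the exact form of $B$ so that $B(\mathbf{h},\mathbf{h}) = \Pi_V[\partial\mathbf{h}\#\mathbf{h}]$ comes out without a spurious factor of $2$; I expect this is resolved by observing that $\Pi_V$ applied to $(\partial\mathbf{h})^{\varstar}\#\mathbf{h} - \partial\mathbf{h}\#\mathbf{h}$ vanishes on the relevant subspace, using that $\ip{((\partial\mathbf{h})^{\varstar} - \partial\mathbf{h})\#\mathbf{h},\nabla g}_{\tr}$ integrates to zero against $\tilde\mu_V$ for every $g$ by the Dyson--Schwinger equation, but verifying this cleanly is the second place care is needed.
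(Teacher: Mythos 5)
There is a genuine gap, and it is concentrated in the second claim. You never identify the one identity that makes the whole lemma short, namely
\[
\nabla \ip{\mathbf{h}_2,\mathbf{h}_3}_{\tr} = (\partial \mathbf{h}_2)^{\varstar} \# \mathbf{h}_3 + (\partial \mathbf{h}_3)^{\varstar} \# \mathbf{h}_2,
\]
which one checks by pairing against a tangent direction $\mathbf{Y}$. In particular $(\partial \mathbf{h})^{\varstar}\#\mathbf{h} = \nabla\bigl[\tfrac12 \ip{\mathbf{h},\mathbf{h}}_{\tr}\bigr]$ is \emph{exactly} a gradient, and $\Pi_V$ annihilates gradients exactly (not just in $L^2(\mu_V)$): by Proposition \ref{prop:Laplacianrelations2}, $\mathbb{P}_V \nabla g = \nabla \Psi_V \nabla_V^* \nabla g = -\nabla \Psi_V L_V g = \nabla g$, so $\Pi_V \nabla g = 0$. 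Hence $B(\mathbf{h},\mathbf{h}) = \Pi_V[\partial\mathbf{h}\#\mathbf{h}]$ is a pointwise identity with no factor of $2$ anywhere; your worry about a "spurious factor of $2$", your attempted fix via $\Pi_V[(\partial\mathbf{h})^{\varstar}\#\mathbf{h}] = \pm\Pi_V[\partial\mathbf{h}\#\mathbf{h}]$, and your fallback suggestion that the statement should only hold "at the level of the geodesic equation" are all symptoms of missing this identity; the self-adjointness argument from Lemma \ref{lem:groupactiontangent} you invoke instead does not give what you need (it would compare traces, not the two vector fields $\partial\mathbf{h}\#\mathbf{h}$ and $(\partial\mathbf{h})^{\varstar}\#\mathbf{h}$, which are genuinely different).

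For the first identity your route through Lemma \ref{lem:adjointness}, rank-one tensors $E_{i,j}\otimes f\otimes g$, and $\partial_V^*\mathbf{F} = \mathbf{F}\#\nabla V - \partial^\dagger\mathbf{F}$ is not carried out ("one should find", "I expect"), and it is far heavier than necessary: the paper needs no $\Tr_\#$ integration by parts at all. The move $\ip{\partial\mathbf{h}_1\#\mathbf{h}_2,\mathbf{h}_3}_{\tr} = \ip{\mathbf{h}_2,(\partial\mathbf{h}_1)^{\varstar}\#\mathbf{h}_3}_{\tr}$ is just the defining property \eqref{eq:fancyadjoint} of $\varstar$, applied pointwise; the only probabilistic input is Proposition \ref{prop:Laplacianrelations3}(4), orthogonality of $\ker(\nabla_V^*)$ to gradients, used via the displayed identity to trade $-\tilde{\mu}_V[\ip{\mathbf{h}_1,(\partial\mathbf{h}_2)^{\varstar}\#\mathbf{h}_3}_{\tr}]$ for $+\tilde{\mu}_V[\ip{\mathbf{h}_1,(\partial\mathbf{h}_3)^{\varstar}\#\mathbf{h}_2}_{\tr}] = \tilde{\mu}_V[\ip{\partial\mathbf{h}_3\#\mathbf{h}_1,\mathbf{h}_2}_{\tr}]$; then one inserts $\Pi_V$ using $\mathbf{h}_2 = \Pi_V\mathbf{h}_2$ and Proposition \ref{prop:Laplacianrelations3}(5), as you correctly anticipated for the final step. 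Your integration-by-parts plan could in principle be pushed through (it would amount to re-proving part (1)/(4) of Proposition \ref{prop:Laplacianrelations3} by hand), but as written the crucial cancellation is asserted rather than verified, and without the gradient identity above you cannot close either part of the lemma.
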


\begin{proof}
	Note that
	\[
	\nabla \ip{\mathbf{h}_2,\mathbf{h}_3}_{\tr} = (\partial \mathbf{h}_2)^{\varstar} \# \mathbf{h}_3 + (\partial \mathbf{h}_3)^{\varstar} \# \mathbf{h}_2,
	\]
	which we can see from evaluating at some $\mathbf{X} \in \cA_{\sa}^d$ and pairing with a tangent vector $\mathbf{Y} \in \cA^d$.  By Proposition \ref{prop:Laplacianrelations3}, $\mathbf{h}_1$ is orthogonal to gradients.  Thus,
	\[
	\tilde{\mu}_V[\ip{\mathbf{h}_1, (\partial \mathbf{h}_2)^{\varstar} \# \mathbf{h}_3}_{\tr} + \ip{\mathbf{h}_1, (\partial \mathbf{h}_3)^{\varstar} \# \mathbf{h}_2}_{\tr}] = 0.
	\]
	Therefore,
	\begin{align*}
		\tilde{\mu}_V[\ip{[\mathbf{h}_1,\mathbf{h}_2], \mathbf{h}_3}_{\tr}] &= \tilde{\mu}_V[\ip{\partial \mathbf{h}_1 \# \mathbf{h}_2 - \partial \mathbf{h}_2 \# \mathbf{h}_1, \mathbf{h}_3}_{\tr}] \\
		&= \tilde{\mu}_V[\ip{\mathbf{h}_2, (\partial \mathbf{h}_1)^{\varstar} \# \mathbf{h}_3}] - \tilde{\mu}_V[\ip{\mathbf{h}_1, (\partial \mathbf{h}_2)^{\varstar} \# \mathbf{h}_3}_{\tr}] \\
		&= \tilde{\mu}_V[\ip{(\partial \mathbf{h}_1)^{\varstar} \# \mathbf{h}_3,\mathbf{h}_2}] + \tilde{\mu}_V[\ip{\mathbf{h}_1, (\partial \mathbf{h}_3)^{\varstar} \# \mathbf{h}_2}_{\tr}] \\
		&= \tilde{\mu}_V[\ip{(\partial \mathbf{h}_1)^{\varstar} \# \mathbf{h}_3 + \partial \mathbf{h}_3 \# \mathbf{h}_1, \mathbf{h}_2}_{\tr}].
	\end{align*}
	Since $\mathbf{h}_2$ is in the kernel of $\nabla_V^*$, we have $\mathbf{h}_2 = \Pi_V \mathbf{h}_2$.  After inserting the $\Pi_V$ into the equation, we can move it to the other side of the inner product by Proposition \ref{prop:Laplacianrelations3} (5) to obtain $\tilde{\mu}_V[\ip{B(\mathbf{h}_3,\mathbf{h}_1), \mathbf{h}_2}_{\tr}]$.
	
	For the second claim, note that $(\partial \mathbf{h})^{\varstar} \# \mathbf{h} = \nabla \ip{\mathbf{h},\mathbf{h}}_{\tr}$, and thus it is killed by $\Pi_V$.  The only remaining term is $\Pi_V[\partial \mathbf{h} \# \mathbf{h}]$.
\end{proof}

The formula $\dot{\mathbf{u}}_t = -B(\mathbf{u}_t,\mathbf{u}_t)$ clearly gives the same incompressible Euler equation.

We remark that the geodesic equation on $\mathscr{D}(\R^{*d})$ can be heuristically derived in a similar way.  Fixing $V$, we can define a right-invariant Riemannian metric by $\ip{\mathbf{h}_1,\mathbf{h}_2}_{T_{\mathbf{f}} \mathscr{D}(\R^{*d})} = \tilde{\mu}_V[\ip{\mathbf{h}_1 \circ \mathbf{f}^{-1},\mathbf{h}_2 \circ \mathbf{f}^{-1}}_{\tr}]$.  The minimality condition results in $\ddot{\mathbf{f}}_t \circ \mathbf{f}_t^{-1}$ being zero in $L^2(\mu_V)^d$.  We posit that $\ddot{\mathbf{f}}_t$ is actually zero, which results in the equation
\[
\dot{\mathbf{u}}_t = \partial \mathbf{u}_t \# \mathbf{u}_t,
\]
where $\mathbf{u}_t = \dot{\mathbf{f}}_t \circ \mathbf{f}_t^{-1}$.  This is the tracial non-commutative \emph{inviscid Burgers' equation}.  The case where $\mathbf{u}_t = \nabla \phi_t$ gives exactly the Wasserstein geodesics.

\end{document}